\title{A Manual for Ends, Semistability and Simple Connectivity at Infinity for Groups and Spaces}
\author{Michael Mihalik}
\newtheorem{theorem}{Theorem}[subsection]
\newtheorem{proposition}[theorem]{Proposition}
\newtheorem{lemma}[theorem]{Lemma}
\newtheorem{corollary}[theorem]{Corollary}
\newtheorem{remark}[theorem]{Remark}
\newtheorem{question}[theorem]{Question}
\newtheorem{conjecture}[theorem]{Conjecture}
\newtheorem{example}[theorem]{Example}
\newcounter{claimnum}
\newcounter{definitionnum}
\newenvironment{definition}{\addvspace{12pt}\refstepcounter{definitionnum}
\noindent{\bf Definition \arabic{definitionnum}.}}{\par\addvspace{12pt}}
\newenvironment{proof}{\addvspace{12pt}\noindent{\bf Proof:}}{
$\Box$\par\addvspace{12pt}}
\date{\today}
\begin{document}
\maketitle
\newpage
\centerline{\bf Table of Contents}

\medskip

\noindent {\bf Chapter} \ref{Ends}. {\bf Ends of Groups and Spaces}

\noindent\hspace{.6in} \S \ref{Esp} {\bf Ends of Spaces}

\noindent\hspace{.6in} \S \ref{Egps} {\bf Ends of Groups}

\noindent\hspace{.6in} \S \ref{PairsG} {\bf Ends and Pairs of Groups}

\noindent\hspace{.6in} \S \ref{1eg} {\bf One Ended Groups}

\noindent\hspace{1.2in} \S \ref{KGs} {\bf Knot Groups}

\noindent\hspace{1.2in} \S \ref{normalE} {\bf Normal and Commensurated Subgroups} 

\noindent\hspace{1.2in} \S \ref{compGP}  {\bf Combination Results and Graph Products} 

\noindent\hspace{1.2in} \S \ref{ACGE} {\bf Artin and Coxeter Groups}

\noindent\hspace{1.2in} \S \ref{noF2} {\bf Groups with No Free Subgroups of Rank 2}

\noindent\hspace{1.2in} \S \ref{AHNN} {\bf Ascending HNN-Extensions} 

\noindent\hspace{1.2in} \S \ref{Touik} {\bf Touikan's Theorem for Ends of Graphs of} 

\noindent \hspace{1.7in}{\bf Groups}

\noindent\hspace{1.2in} \S \ref{Hspace} {\bf  Halfspaces of Splittings of Finitely Generated} 

\noindent \hspace{1.7in}{\bf Groups}

 \medskip
 
\noindent {\bf Chapter} \ref{SSscH}. {\bf Semistability and Simple Connectivity at $\infty$ of Groups and Spaces and $H^2(G,\mathbb ZG)$}

\noindent\hspace{.6in} \S \ref{DefSSsp} {\bf Semistability and Simple Connectivity at $\infty$ of a Space}

\noindent\hspace{.6in} \S \ref{prosp} {\bf Fundamental Group and Pro-Group at $\infty$ of a Space}

\noindent \hspace{.95in}{\bf and the Equivalence of Several Definitions} 

\noindent\hspace{.6in} \S \ref{SSproGp} {\bf Proper $n$-Equivalence, Semistability at $\infty$, Simple} 
 
\noindent \hspace{.95in}{\bf Connectivity at $\infty$, Fundamental Pro-Group at $\infty$} 

\noindent \hspace{.95in}{\bf and Fundamental Group at $\infty$ - for a Group}

\noindent\hspace{1.2in} \S \ref{Pr2eq} {\bf The Definitions} 

\noindent\hspace{1.2in} \S \ref{Coax} {\bf Pro-mono and Coaxial Actions}

\noindent\hspace{1.2in} \S \ref{SCS} {\bf Semistability and Co-semistability at $\infty$}

\noindent\hspace{1.2in} \S \ref{C2C} {\bf The Cayley 2-Complex for a Finite} 

\noindent \hspace{1.7in}{\bf Presentation of a Group}

\noindent\hspace{1.2in} \S \ref{FGss} {\bf Semistability at $\infty$ for Finitely Generated Groups}

\noindent\hspace{1.2in} \S \ref{GSSCinf} {\bf Simply Connected at $\infty$ Finitely Generated} 

\noindent \hspace{1.7in}{\bf Groups}

\noindent\hspace{.6in} \S \ref{Gresults} {\bf Semistability and Simple Connectivity at $\infty$ Results} 

\noindent\hspace {.95in}{\bf for Classes of Groups}

\noindent\hspace{1.2in} \S \ref{CC} {\bf Semistability, Simple Connectivity at $\infty$ and} 

\noindent\hspace{1.7in}{\bf Manifolds (Historical)} 

\noindent\hspace{1.2in} \S \ref{SubCom} {\bf Normal, Subnormal, Commensurated and Sub} 

\noindent\hspace{1.7in}{\bf Commensurated Subgroups}

\noindent\hspace{1.2in} \S \ref{BSgps} {\bf Bieri-Stallings Groups}

\noindent\hspace{1.2in} \S \ref{OUT} {\bf Out($F_n$)}

\noindent\hspace{1.2in} \S \ref{MCG} {\bf Mapping Class Groups of Surfaces}

\noindent\hspace{1.2in} \S \ref{GLNZ} {\bf GL($\mathbb Z,n)$}

\noindent\hspace{1.2in} \S \ref{SolMet} {\bf Solvable and Metanilpotent Groups}

\noindent\hspace{1.2in} \S \ref{WHyp} {\bf Word Hyperbolic and CAT(0) Groups}

\noindent\hspace{1.2in} \S \ref{RHGs} {\bf Relatively Hyperbolic Groups}

\noindent\hspace{1.2in} \S \ref{AHNNE} {\bf Ascending HNN Extensions and Eventually} 

\noindent\hspace{1.7in} {\bf Injective Endomorphisms}

\noindent\hspace{1.2in} \S \ref{TGF} {\bf Thompson's Group $F$} 

\noindent\hspace{1.2in} \S \ref{AGCG} {\bf Artin Groups and Coxeter Groups}

\noindent\hspace{1.2in} \S \ref{Sed} {\bf Sidki Doubles} 

\noindent\hspace{1.2in} \S \ref{StablePP1} {\bf Stable Fundamental Group at $\infty$}

\noindent\hspace{1.2in} \S \ref{WildPro} {\bf Exotic Fundamental Pro-group and} 

\noindent\hspace{1.7in} {\bf Fundamental Group at $\infty$ for a Group}

\noindent\hspace{1.2in} \S \ref{NonSS} {\bf The Lamplighter Group and Other Non-Semistable} 

\noindent\hspace{1.7in} {\bf at $\infty$ Finitely Generated Examples}

\noindent\hspace{1.2in} \S \ref{Comb1R} {\bf Amalgamated Products, HNN-Extensions,}

\noindent\hspace{1.7in} {\bf 1-Relator Groups, and Subgroup} 

\noindent\hspace{1.7in} {\bf Combination Results}

\noindent\hspace{1.2in} \S \ref{GPofG} {\bf Graph Products of Groups}

\noindent\hspace{1.2in} \S \ref{AD2} {\bf Asymptotic Dimension 2 and Simple} 

\noindent\hspace{1.7in} {\bf Connectivity at $\infty$}

\noindent\hspace{1.2in} \S \ref{tech} {\bf Useful Technical Results}

\noindent\hspace{.6 in} \S \ref{SSHom} {\bf Semistable Homology at Infinity}

\noindent\hspace{1.2in} \S \ref{RedH2-1} {\bf Reducing the $H^2(G,\mathbb ZG)$ Problem to 1-ended}

\noindent\hspace{1.7in} {\bf Groups}

\noindent\hspace{1.2in} \S \ref{EVHS} {\bf Equivalent Versions of Homological}

\noindent\hspace{1.7in} {\bf Semistability}

\noindent\hspace{1.2in} \S \ref{ac1} {\bf Groups, 1-acyclicity at Infinity and Pro-finite}  

\noindent\hspace{1.7in} {\bf First Homology at $\infty$ }

\noindent\hspace{1.2in} \S \ref{OnePt} {\bf One-point Compactifications and Semistabilty}

\noindent\hspace{1.2in} \S \ref{CatRe} {\bf A Catalogue of Homological Results} 

\noindent\hspace{.6 in} \S \ref{LSCIR} {\bf List of Simply Connected at Infinity Group Results}

\noindent\hspace{.6 in} \S \ref{GSI} {\bf Group/Subgroup Index}

\newpage
\begin{abstract}\label{abst}
This $2^{nd}$-edition article is intended to be an up-to-date archive of the current state of the questions: Which finitely generated groups $G$: have semistable fundamental group at infinity; are simply connected at infinity; are such that $H^2(G,\mathbb ZG)$ is free abelian or trivial. The idea is not to reprove these results, but to provide a historical record of the progress on these questions and provide a  list of the most general results. We also prove or cite all of the results that make up the basic theory. The first Chapter is devoted to ends of groups and spaces, and the second to semistability at infinity, simple connectivity at infinity and second cohomology of groups. Definitions, basic facts and lists of general results are given in each Chapter. A number of results  proven here are new and a number of authors have contributed results. We end with an Index for simply connected at infinity groups and an Index of Groups and Subgroups which is intended to help a reader quickly locate results about certain types of groups/subgroups. The main updates from the first edition is section 2.4.5 on mapping class groups and the addition of the simply connected at infinity index.
\end{abstract}
\section{Ends of Groups and Spaces}\label{Ends}
\subsection{Ends of Spaces} \label{Esp} 

Sections 13.4 and 13.5 of  R. Geoghegan's book \cite{G} give an excellent account of the notion of ends of groups and spaces. We select a few results from these sections and provide others that lead into results for semistability at $\infty$ and simple connectivity at $\infty$. Ends make sense in a wide variety of spaces. As far as groups are concerned CW-complexes, and perhaps absolute neighborhood retracts (ANRs) generally suffice. The following result is one of the most general results we know that allows for a definition for ends of a space. 

\begin{theorem} \label{ES}  Suppose $X$ is connected, locally compact, locally connected
and Hausdorff and $C$ is compact in $X$,
then $C$ union all bounded components of $X - C$ is compact, and $X-C$ has only finitely many unbounded components.
Here bounded means compact closure.
\end{theorem}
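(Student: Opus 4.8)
The plan is to first replace the compact set $C$ by a compact \emph{neighborhood}. Since $X$ is locally compact and Hausdorff, every point of $C$ has an open neighborhood with compact closure; by compactness of $C$ finitely many such neighborhoods cover it, and their union $W$ is an open set with $C \subseteq W$ and $\overline{W}$ compact. Set $F = \overline{W}\setminus W$, a compact set disjoint from $C$. Because $X$ is locally connected, each component of the open set $X-C$ is open; and since a component is closed in the subspace $X-C$, the frontier in $X$ of each component is contained in $C$. These two facts --- an open neighborhood with compact frontier, and components whose frontiers lie in $C$ --- are the whole engine of the argument.

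For the finiteness statement I would observe that $F \subseteq X-C$ is compact and is covered by the pairwise disjoint open components of $X-C$, so only finitely many components meet $F$. It then suffices to show that every component $U$ missing $F$ is bounded. Such a $U$ lies in the disjoint open union $W \sqcup (X\setminus \overline{W})$, hence, being connected, in exactly one piece. If $U \subseteq W$ then $\overline{U} \subseteq \overline{W}$ is compact and $U$ is bounded. The remaining possibility, $U \subseteq X \setminus \overline{W}$, is the crux and the step I expect to be the main obstacle: here $\overline{U} \subseteq X \setminus W$ is disjoint from $W \supseteq C$, yet the frontier of $U$ lies in $C$, which forces the frontier to be empty; then $U$ is a nonempty clopen subset of the connected space $X$, so $U = X$, contradicting $U \cap C = \emptyset$ (assuming $C \neq \emptyset$; the case $C = \emptyset$ is immediate, as $X-C=X$ is then a single component). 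Thus every unbounded component meets $F$, and there are only finitely many of these.

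Finally, for the compactness of $K := C \cup (\text{union of all bounded components})$, I would show that $K$ is closed and contained in a compact set. Its complement $X - K$ is exactly the union of the unbounded components, which is open, so $K$ is closed. For the containment, every bounded component missing $F$ lies in $W$, while the bounded components meeting $F$ are finite in number, say $U_1,\dots,U_k$, and each has compact closure; hence $K \subseteq \overline{W} \cup \overline{U_1}\cup\cdots\cup\overline{U_k}$, a finite union of compacta. Since a closed subset of a compact set in a Hausdorff space is compact, $K$ is compact, which completes the proof.
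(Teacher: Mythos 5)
Your proof is correct and follows essentially the same route as the paper's: your pair $W$, $\overline{W}$ and the compact set $F = \overline{W}\setminus W$ play exactly the roles of the paper's $int(D)$, $D$ and $Bd(D)$, and both arguments rest on the same pillars --- components of $X-C$ are open with frontier in $C$, connectedness of $X$ rules out a component avoiding the boundary annulus, and compactness of that annulus forces finiteness. The only cosmetic differences are that you obtain finiteness by a finite-subcover argument over the disjoint open components where the paper uses a cluster-point argument, and you verify compactness of $C$ union the bounded components by trapping it in the compact set $\overline{W}\cup\overline{U_1}\cup\cdots\cup\overline{U_k}$ rather than assembling it in two stages as the paper does.
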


\begin{proof}
We may assume that $X$ is not compact and $C$ is not empty. Since $X$ is Hausdorff, compact subsets of $X$ are closed in $X$. Since $X$ is locally compact there is a compact set $D$ such that $C$ is a subset of $int(D)$, the interior of $D$. Let $Bd(D)=D-int(D)$. Every point of $Bd(D)$ belongs to some component of $X-C$. Since $X$ is locally connected each component of $X-C$ is open in $X$. 

{\it We first show that if $K$ is a component of $X-C$ then $K\cap D\ne \emptyset$.} Otherwise, let $A$ be the union of all components of $X-C$ that intersect $D$ and $B$ be the union of all other components. Consider the disjoint open sets $int(D) \cup A$ and $B$. Since $K\subset B$, these sets are non-empty and separate the connected space $X$, a contradiction.

{\it If $K$ is a component of $X-C$ that is not a subset of $D$, then $K\cap Bd(D)\ne\emptyset$} (since $K$ is connected).

Next we show: {\it All but finitely many components of $X-C$ are subsets of $D$.} Suppose that for $i\geq 1$, the sets $C_i$ are distinct components of $X-C$ such that $C_i\not\subset D$. Let $x_i$ be a point of $C_i\cap Bd(D)$. Since $Bd(D)$ is a closed subset of $D$, it is compact. Let $x$ be a cluster point of $\{x_i\}_{i=1}^\infty$ (any open neighborhood of $x$ contains a point of $\{x_i\}$ other than $x$).  
Then $x\in K$ for some component $K$ of $X-C$. Since $K$ is open in $X$, there is $x_i\ne x$ such that $x_i\in K$. Since points are closed in a Hausdorff space, $K-\{x_i\}$ is a neighborhood of $x$ and so there is $x_j\in K$ such that $x_i\ne x_j\ne x$.   But the $x_k$ belong to exactly one component of $X-C$, a contradiction.

Any component that belongs to $D$ is bounded, so {\it all but finitely many components of $X-C$ are bounded.} If $E$ is $C$ union all components of $X-C$ that are subsets of $D$ then $X-E$ is a union of components and hence open. Thus $E$ is a closed subset of $D$ and hence compact. There are only finitely many bounded components of $X-C$ that are not a subset of $D$. List these as $B_1, \ldots , B_n$. Let $F=E\cup_{i=1}^nB_i$. Then $F$ is bounded and $X-F$ is a union of components of $X-C$, so that $F$ is closed. This means $F$ is compact. As $F$ is equal to $C$ union all bounded components of $X-C$, we are finished.
\end{proof}

From this point on, all spaces considered will be  connected, locally compact, locally connected, $\sigma$-compact (a countable union of compact sets)
and Hausdorff.  We only reiterate when making definitions. 
For $C$ compact in $X$ let $U(C)$ denote the number of unbounded components of $X-C$. 
Note that if $C\subset D$ are compact in  $X$ then for each unbounded component $K$ of $X-C$ there is an unbounded component $K'$ of $X-D$ such that $K'\subset K$. In particular $U(D)\geq U(C)$. If $\mathcal U(X-C)$ is the set of unbounded components of $X-C$ then there is an epimorphism of sets from $\mathcal U(X-D)$ to $\mathcal U(X-C)$ induced by inclusion. 

\begin{definition}\label{DefEnd}
Suppose $X$ is connected, locally compact, locally connected and Hausdorff. The {\it number of ends} of $X$ is the largest number $U(C)$ taken over all compact sets $C\subset X$. If these numbers are unbounded, then $X$ has infinitely many ends. 
\end{definition}

\begin{definition} 
A collection $\{C_i\}_{i\in \{0,1,\ldots\}}$ of compact sets in a space $X$ is {\it cofinal} if $C_i$ is a subset of the interior of $C_{i+1}$ and $\cup_{i=0}^\infty C_i=X$.\end{definition}

\begin{remark}
The reason we use cofinal sequences of compact sets (as opposed to a collection of (nested) compact sets that cover our space $X$) is to ensure that every compact set in $X$ belongs to some member of the cofinal sequence. As an example consider the following (not cofinal) nested sequence of compact subsets of $ [0,\infty)$. For $n\geq 1$, let 
$$C_n=[0, 1-{1\over n}] \cup [1,n].$$ 
While $[0,\infty)=\cup_{n=1}^\infty C_n$, no $C_n$ contains the interval $[0,1]$. 
\end{remark}
See \S 11.2 of \cite{G} for the basic notions of inverse systems and inverse limits of sets. 

\begin{definition}\label{DProI1}  
Suppose $G_1{\buildrel p_1\over \longleftarrow}G_2 {\buildrel p_2\over \longleftarrow}\cdots$ and 
$H_1{\buildrel q_1\over \longleftarrow}H_2 {\buildrel q_2\over \longleftarrow}\cdots$ are inverse sequences of topological spaces with bonding maps continuous functions. These sequences are {\it pro-isomorphic} if there are sequences of integers $0< m_1<m_2<\cdots$ and $0<n_1<n_2<\cdots$ and continuous functions $f_{n_{k}}:G_{m_{k}}\to H_{n_{k}}$ and $g_{m_k}:H_{n_{k+1}}\to G_{m_k}$ making the following diagram commute for all $k$. (Horizontal arrows are compositions of bonding maps.)
\end{definition}

\vspace {-.7in}
\vbox to 2in{\vspace {-1in} \hspace {-.2in}
%\hspace{-1 in}
\includegraphics[scale=1]{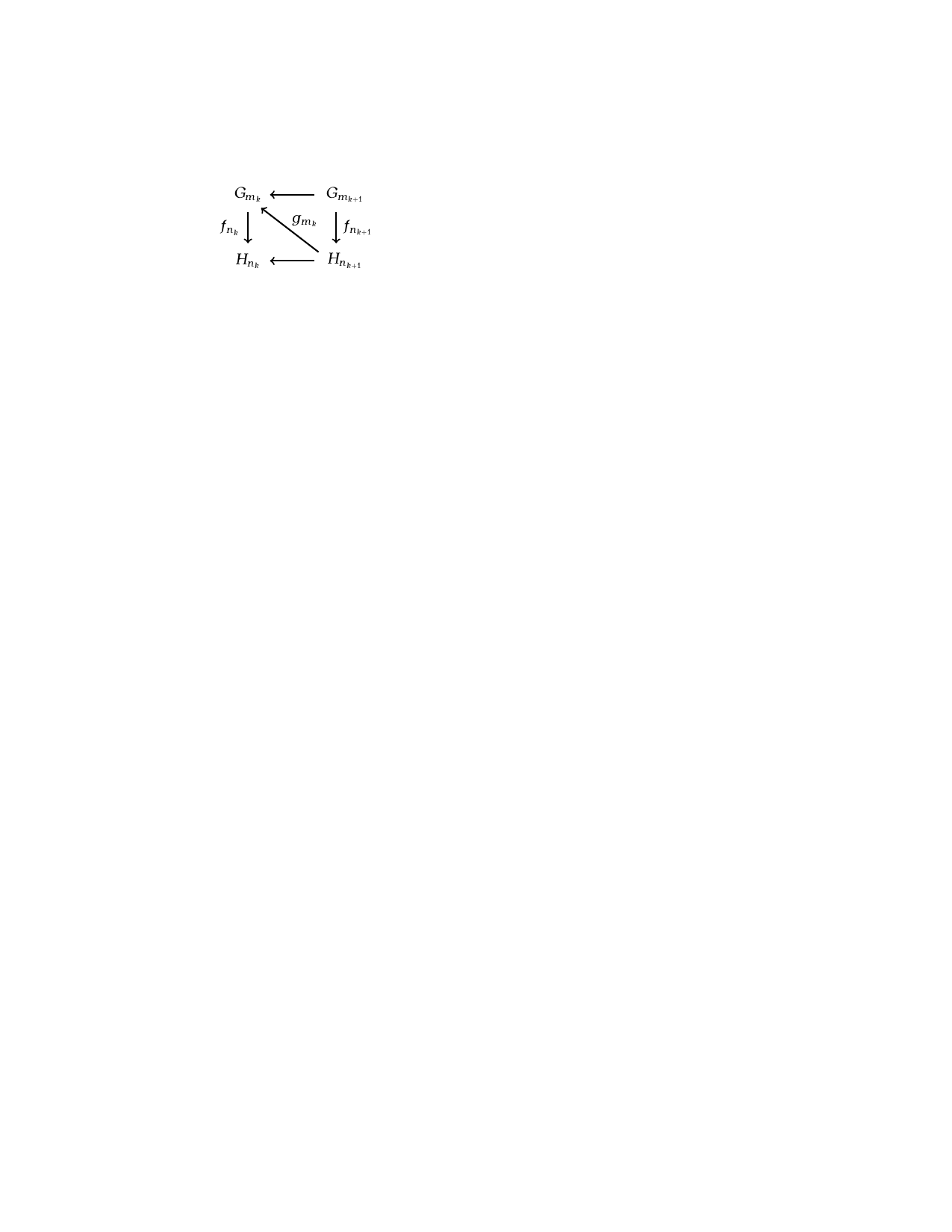}
\vss }
\vspace{-.3in}

The following result is straightforward. 

\begin{theorem} %(Proposition 13.4.7,\cite{G}) 
If $C_i$ and $D_i$ are cofinal sequences of compact subsets of $X$, then there is a pro-isomorphism between the inverse sequences of sets (each with the discrete topology and with bonding maps induced by inclusion) 
$$\mathcal U(X-C_0) \leftarrow \mathcal U(X-C_1)\leftarrow\cdots\hbox{ and }\mathcal U(X-D_0)\leftarrow \mathcal U(X-D_1)\leftarrow\cdots .$$  Hence there is a homeomorphism between the inverse limits of these two inverse sequences (topologies on the inverse limits are subspace topologies from the product topologies). 
\end{theorem}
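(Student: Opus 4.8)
The plan is to establish the pro-isomorphism by the standard interleaving (zig-zag) construction, drawing only on the hypothesis that each sequence is cofinal together with the inclusion-induced epimorphisms on sets of unbounded components recorded just before Definition \ref{DefEnd}. Throughout, write $G_i=\mathcal U(X-C_i)$ and $H_j=\mathcal U(X-D_j)$, so that each is a finite (by Theorem \ref{ES}) discrete set and the bonding maps are induced by the inclusions $C_i\subset C_{i+1}$ and $D_j\subset D_{j+1}$. First I would build a chain of inclusions interlacing the two sequences: strictly increasing index sequences $0<m_1<m_2<\cdots$ and $0<n_1<n_2<\cdots$ with
$$D_{n_1}\subset C_{m_1}\subset D_{n_2}\subset C_{m_2}\subset D_{n_3}\subset\cdots .$$
This is produced inductively. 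Starting from $D_{n_1}$ with $n_1=1$, cofinality of $\{C_i\}$ and compactness of $D_{n_1}$ give $D_{n_1}\subset\text{int}(C_{m_1})\subset C_{m_1}$ for some $m_1$ (finitely many of the open sets $\text{int}(C_i)$ cover the compact $D_{n_1}$); then cofinality of $\{D_j\}$ gives $C_{m_1}\subset D_{n_2}$ for some $n_2>n_1$; continuing, at each stage one chooses the next index strictly larger than the preceding index of its own type.

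Next I would take the connecting maps required by Definition \ref{DProI1} to be the inclusion-induced maps on unbounded components. Concretely, $f_{n_k}\colon G_{m_k}\to H_{n_k}$ is the map induced by $D_{n_k}\subset C_{m_k}$ (sending an unbounded component of $X-C_{m_k}$ to the unbounded component of $X-D_{n_k}$ containing it), and $g_{m_k}\colon H_{n_{k+1}}\to G_{m_k}$ is the map induced by $C_{m_k}\subset D_{n_{k+1}}$. Because all the sets involved are finite and discrete, each such map is automatically continuous, so the only thing left to check is the commutativity of the ladder.

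The verification is where the (light) content lies, and it rests on the functoriality of inclusion-induced maps: for compact sets $A\subset B\subset E$, the map $\mathcal U(X-E)\to\mathcal U(X-A)$ induced by the composite inclusion equals the composite of the maps induced by $A\subset B$ and $B\subset E$, since each map simply sends a component to the (unique) component containing it. Granting this, $g_{m_k}\circ f_{n_{k+1}}$ is the map induced by $C_{m_k}\subset D_{n_{k+1}}\subset C_{m_{k+1}}$, hence by $C_{m_k}\subset C_{m_{k+1}}$, which is precisely the composite bonding map in the $G$-sequence; likewise $f_{n_k}\circ g_{m_k}$ is induced by $D_{n_k}\subset C_{m_k}\subset D_{n_{k+1}}$, hence by $D_{n_k}\subset D_{n_{k+1}}$, the composite bonding map in the $H$-sequence. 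These are exactly the two commutativity relations demanded by Definition \ref{DProI1}, so the sequences are pro-isomorphic.

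Finally, for the ``Hence'' clause I would invoke the general fact that a pro-isomorphism of inverse sequences induces a homeomorphism of inverse limits (see \S 11.2 of \cite{G}): the maps $f_{n_k}$ and $g_{m_k}$ assemble into continuous maps between the two inverse limits, and the commutativity relations above force these maps to be mutually inverse. I expect the main obstacle to be purely a matter of bookkeeping, namely aligning the interleaved chain $D_{n_1}\subset C_{m_1}\subset D_{n_2}\subset\cdots$ with the precise index conventions $f_{n_k},g_{m_k}$ of Definition \ref{DProI1}, rather than any genuine difficulty; once the chain is in place the remainder is formal.
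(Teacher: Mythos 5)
Your proposal is correct, and it supplies exactly the argument the paper has in mind: the paper gives no proof at all (it simply labels the result ``straightforward''), and the standard interleaving of the two cofinal sequences via cofinality-plus-compactness, with inclusion-induced maps on unbounded components and functoriality of ``send a component to the component containing it,'' is the intended verification. The passage to homeomorphic inverse limits via \S 11.2 of \cite{G} is likewise the expected conclusion, so nothing is missing.
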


\begin{definition}
The {\it space of ends} of a space $X$ is the inverse limit of  the inverse sequence $\pi_0(X-C_1)\leftarrow \pi_0(X-C_2)\leftarrow\cdots$ for any cofinal sequence of compact sets $\{C_i\}_{i=1}^\infty$ in $X$ and denoted $\mathcal E(X)$. 
\end{definition}

For the following result, consider Proposition 13.4.7 of \cite{G}  and the paragraph that follows.  
\begin{theorem} 
The space $\mathcal E(X)$ is a compact, totally disconnected metrizable space. If $\mathcal E(X)$ is finite, then its cardinality is the number of ends of $X$. Otherwise $X$ has infinitely many ends. 
\end{theorem}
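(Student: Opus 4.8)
The plan is to reduce the statement to the standard fact that a countable inverse limit of finite discrete sets is a compact, totally disconnected, metrizable space, and then to track cardinalities through the bonding maps. Fix a cofinal sequence $\{C_i\}_{i=1}^\infty$; by the theorem above (independence of the cofinal sequence up to homeomorphism) the homeomorphism type of the limit is well defined. The first step is to replace $\pi_0(X-C_i)$ by the set $\mathcal U(X-C_i)$ of \emph{unbounded} components. I claim every thread of $\pi_0(X-C_1)\leftarrow\pi_0(X-C_2)\leftarrow\cdots$ consists of unbounded components: if $y_i$ is a bounded component of $X-C_i$, then $\overline{y_i}$ is compact, so by cofinality $\overline{y_i}\subset C_j$ for some $j>i$; but on a thread $y_j\subset y_{j-1}\subset\cdots\subset y_i\subset C_j$, forcing the component $y_j$ of $X-C_j$ to meet $C_j$, a contradiction. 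Hence $\mathcal E(X)$ is canonically homeomorphic to $Y:=\varprojlim\mathcal U(X-C_i)$, an inverse limit of sets that are \emph{finite} by Theorem~\ref{ES} and whose bonding maps are \emph{surjective} by the observation preceding Definition~\ref{DefEnd} (inclusion induces an epimorphism $\mathcal U(X-D)\to\mathcal U(X-C)$ for $C\subset D$).

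Next I would establish the three topological assertions by viewing $Y$ as a subspace of the product $P:=\prod_{i}\mathcal U(X-C_i)$ of finite discrete spaces. Since each factor is finite (hence compact), $P$ is compact by Tychonoff's theorem, and $Y=\{x\in P: p_i(x_{i+1})=x_i\text{ for all }i\}$ is the intersection of the sets $\{x: p_i(x_{i+1})=x_i\}$, each closed because it is the preimage of the diagonal of a finite (hence Hausdorff) space; thus $Y$ is a closed, and therefore compact, subspace of $P$. A product of discrete spaces is totally disconnected and total disconnectedness passes to subspaces, so $Y$ is totally disconnected. Finally a countable product of metrizable spaces is metrizable (e.g.\ via $d(x,y)=\sum_i 2^{-i}\min\{1,d_i(x_i,y_i)\}$ with each $d_i$ the discrete metric), so $P$, and hence its subspace $Y$, is metrizable.

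For the cardinality statement write $n_i=|\mathcal U(X-C_i)|=U(C_i)$. Because $C_i\subset C_{i+1}$ the sequence $n_i$ is non-decreasing, and because $\{C_i\}$ is cofinal every compact $C$ lies in some $C_i$, so the number of ends of $X$ (Definition~\ref{DefEnd}) equals $\sup_i n_i$. The key point is that each projection $Y\to\mathcal U(X-C_i)$ is \emph{surjective}: given $x_i$, surjectivity of the bonding maps lets me lift it step by step to a compatible family $x_{i+1},x_{i+2},\dots$ above level $i$, while the images determine $x_{i-1},\dots,x_1$, producing a thread over $x_i$. Consequently $|Y|\ge n_i$ for every $i$. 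If $\sup_i n_i=\infty$ this forces $Y$ infinite, so $\mathcal E(X)$ is infinite and $X$ has infinitely many ends. If instead $\mathcal E(X)=Y$ is finite of cardinality $m$, then $n_i\le m$ for all $i$, so the bounded non-decreasing sequence $n_i$ stabilizes at some value $n$; for large $i$ the surjections between equinumerous finite sets are bijections, whence $|Y|=n=\sup_i n_i$ is exactly the number of ends.

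The main obstacle I anticipate is organizing the reduction to unbounded components correctly — in particular making essential use of cofinality to kill bounded threads — together with the surjectivity-of-projections argument, which is the finite combinatorial incarnation of the statement that a surjective inverse sequence of nonempty finite sets has nonempty limit with surjective coordinate maps. Everything else is a routine application of Tychonoff's theorem and standard facts about products of discrete and metrizable spaces.
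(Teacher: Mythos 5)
Your proposal is correct in its main structure, and it is worth noting that it fills a genuine hole: the paper gives no proof of this theorem at all, deferring instead to Proposition 13.4.7 of \cite{G} and the paragraph that follows it. Your argument is essentially the standard textbook one. The reduction from $\pi_0(X-C_i)$ to $\mathcal U(X-C_i)$ via the observation that cofinality kills bounded threads is exactly the right bridge between the paper's definition of $\mathcal E(X)$ and its earlier theorem about the $\mathcal U$-systems; Theorem~\ref{ES} makes each $\mathcal U(X-C_i)$ finite, the remark preceding Definition~\ref{DefEnd} makes the bonding maps surjective, and realizing the limit as a closed subspace of a countable product of finite discrete spaces delivers compactness (Tychonoff), total disconnectedness, and metrizability by inheritance. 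The surjectivity of the projections $Y\to\mathcal U(X-C_i)$, obtained by lifting along the surjective bonding maps, is also the right tool for comparing cardinalities.

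One point needs tightening. Your endgame splits into the cases ``$\sup_i n_i=\infty$'' and ``$Y$ finite,'' and these are not a priori exhaustive: the remaining possibility, ``$\sup_i n_i<\infty$ and $Y$ infinite,'' is precisely what the assertion ``$\mathcal E(X)$ infinite $\Rightarrow$ $X$ has infinitely many ends'' requires you to rule out, so omitting it assumes the very dichotomy you are trying to prove. What you actually establish is the converse implication (infinitely many ends $\Rightarrow$ $\mathcal E(X)$ infinite) together with the finite case. The repair uses only what is already on your page: run the stabilization argument from the hypothesis $\sup_i n_i<\infty$ instead of from ``$Y$ finite.'' If $\sup_i n_i=n<\infty$, the non-decreasing sequence $n_i$ stabilizes at $n$; for large $i$ the bonding maps are surjections between $n$-element sets, hence bijections; a thread is then determined by its coordinate at any level in the stable range (higher coordinates are recovered by inverse bijections, lower ones by the bonding maps), so the projection to that level is injective as well as surjective and $|Y|=n$. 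This proves ``number of ends finite $\Rightarrow$ $|\mathcal E(X)|=$ number of ends,'' and combined with your first case it yields all three assertions of the theorem.
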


Note that $X$ is compact if and only if $U(X)=0$ and if $X$ has $n$ ends for $0\leq n<\infty$ then, the space of ends is discrete with $n$ points. 

\begin{definition}
A map (continuous function) $f:X\to Y$ is {\it proper} if $f^{-1}(C)$ is compact for each compact $C\subset Y$. 
\end{definition}

\begin{theorem} 
If $X$ and $Y$ are locally compact spaces and each is the union of a countable collection of compact sets, then a proper map $f:X\to Y$ induces a continuous map of $\mathcal E(X)$ into  $\mathcal E(Y)$. 
\end{theorem}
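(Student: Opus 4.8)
The plan is to realize $\mathcal E(X)$ and $\mathcal E(Y)$ as inverse limits computed with respect to cofinal sequences of compact sets that are \emph{adapted} to $f$, and then to show that $f$ induces a morphism between the two inverse sequences, which automatically passes to a continuous map of inverse limits. First I would fix a cofinal sequence $\{D_i\}_{i=1}^\infty$ of compact sets in $Y$ (such a sequence exists since $Y$ is locally compact, Hausdorff and a countable union of compact sets). Using properness, each $f^{-1}(D_i)$ is compact. I would then build a cofinal sequence $\{C_i\}_{i=1}^\infty$ in $X$ with the extra property that $f^{-1}(D_i)\subseteq C_i$ for every $i$. The construction is by induction: having chosen $C_i$, let $K=C_i\cup f^{-1}(D_{i+1})\cup A_{i+1}$, where $\{A_i\}$ is a fixed exhaustion of $X$ by compact sets, and use local compactness to pick a compact $C_{i+1}$ with $K\subseteq\operatorname{int}(C_{i+1})$. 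Since $A_i\subseteq C_i$ we get $\bigcup_i C_i=X$, so $\{C_i\}$ is cofinal. By the independence-of-cofinal-sequence result (the pro-isomorphism theorem above), $\mathcal E(X)=\varprojlim\pi_0(X-C_i)$ and $\mathcal E(Y)=\varprojlim\pi_0(Y-D_i)$.

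The key step is the following. The containment $f^{-1}(D_i)\subseteq C_i$ gives $X-C_i\subseteq f^{-1}(Y-D_i)$, hence $f(X-C_i)\subseteq Y-D_i$. Consequently each component $K$ of $X-C_i$ has connected image $f(K)$ lying in $Y-D_i$, so $f(K)$ is contained in a unique component of $Y-D_i$; sending $[K]$ to that component defines a function $f_i^{\ast}\colon\pi_0(X-C_i)\to\pi_0(Y-D_i)$. This is exactly where properness is used, and it is the heart of the argument.

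Next I would check that the $f_i^{\ast}$ commute with the inclusion-induced bonding maps, i.e. that the square relating $f_{i+1}^{\ast}\colon\pi_0(X-C_{i+1})\to\pi_0(Y-D_{i+1})$ and $f_i^{\ast}\colon\pi_0(X-C_i)\to\pi_0(Y-D_i)$ commutes. For a component $K$ of $X-C_{i+1}$, both ways of traversing the square produce a component of $Y-D_i$ containing the nonempty connected set $f(K)$; since components are maximal connected subsets, these two components coincide. Thus $\{f_i^{\ast}\}$ is a morphism of inverse sequences and induces a map $\varprojlim\pi_0(X-C_i)\to\varprojlim\pi_0(Y-D_i)$, that is, a map $\mathcal E(X)\to\mathcal E(Y)$. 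Continuity is then automatic: each $\pi_0(Y-D_i)$ carries the discrete topology, so each coordinate $\mathcal E(X)\to\pi_0(X-C_i)\to\pi_0(Y-D_i)$ (a projection followed by $f_i^{\ast}$) is continuous, and a map into a product—here the subspace $\varprojlim\pi_0(Y-D_i)\subseteq\prod_i\pi_0(Y-D_i)$—is continuous precisely when each coordinate is.

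The main obstacle is the setup rather than any deep difficulty: one cannot use arbitrary cofinal sequences on both sides, because a component of $X-C_i$ is only guaranteed to map into $Y-D_i$ once $f^{-1}(D_i)\subseteq C_i$. Arranging this adapted pairing, and invoking independence of the end space from the chosen cofinal sequence, is the crux; the commuting-square verification and the continuity check are then routine. (One can further observe, though the statement does not require it, that the resulting map on $\mathcal E(X)$ is independent of all the choices, again by the pro-isomorphism theorem.)
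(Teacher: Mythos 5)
Your proof is correct, and in outline it is the same argument as the paper's: pick a pair of cofinal sequences adapted to $f$, send a component of $X-C_i$ to the component of $Y-D_i$ containing its image, verify compatibility with the inclusion-induced bonding maps, and deduce continuity from the discreteness of the coordinate sets. The substantive difference is the direction in which you adapt the two sequences, and yours is the one that actually works at every level. The paper fixes $\{C_i\}$ in $X$ first and chooses $\{D_i\}$ with $f(C_i)\subset D_i$; but that containment does not force $f(X-C_i)\subset Y-D_i$ (take $f$ the identity on $\mathbb R$, $C_i=[-i,i]$, $D_i=[-2i,2i]$: the component $(i,\infty)$ of $X-C_i$ meets $D_i$), so the paper's assertion that $f(K_i)$ lies in a single unbounded component $L_i$ of $Y-D_i$ requires repair --- e.g.\ by noting that, for fixed $i$, properness makes $f^{-1}(D_i)$ compact, hence $f^{-1}(D_i)\subset C_j$ and $f(K_j)\subset Y-D_i$ only for sufficiently large $j$. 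You instead fix $\{D_i\}$ in $Y$ first and build $C_i\supset f^{-1}(D_i)$, which yields $f(X-C_i)\subset Y-D_i$ outright; this is exactly the inclusion that makes the component-to-component map $f_i^\ast$ well defined at each level, with properness entering only to guarantee that $f^{-1}(D_i)$ is compact. Your use of $\pi_0$ in place of unbounded components is also harmless: a bounded component cannot occur in a thread of the inverse limit, since cofinality would force some later component of the thread to be empty. So your write-up is a cleaner version of the paper's proof and quietly closes a small gap in it.
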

\begin{proof}
Let $C_i$ be a cofinal sequence of compact sets for $X$. Choose a cofinal sequence of compact sets $D_i$ in $Y$ such that (the compact sets) $f(C_i)\subset D_i$. Suppose $K_1$, $K_2,\ldots$ is a sequence of unbounded components of $X-C_i$ such that $K_i\subset K_{i-1}$ for all $i>1$. Since $f$ is proper, $f(K_i)$ is an unbounded connected subset of $L_i$, an unbounded component of $Y-D_i$. Note that $L_i\subset L_{i-1}$ for all $i>1$, and $f$ induces a function from $\mathcal E(X)$ to $\mathcal E(Y)$. Suppose $K_i'$, is another component of $X-C_i$ for each $i$, $K_i'\subset K+{i-1}'$ for all $i$ and $L_i'$ is the (unbounded) component of $Y-D_i$ containing $f(K_i')$. If $K_i=K_i'$ for $i\leq n$, then $L_i=L_i'$ for $i\leq n$. Hence $f$ induces a continuous function from $\mathcal E(X)$ to $\mathcal E(Y)$. 
\end{proof}

\begin{theorem}  [Proposition 13.4.1, \cite{G}] \label{E1sk}  
If $X$ is a connected, locally finite CW complex, then the spaces of ends $\mathcal E(X)$ and $\mathcal E(X^1)$ are homeomorphic. In particular, the number of ends of $X$ is the same as the number of ends of its 1-skeleton. 
\end{theorem}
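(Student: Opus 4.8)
The plan is to show that the inclusion $i\colon X^1\hookrightarrow X$ realizes the desired homeomorphism. First I would verify that $i$ is proper: since $X$ is a locally finite CW complex, each compact $C\subset X$ meets only finitely many cells, so $i^{-1}(C)=C\cap X^1$ is a closed subset of a finite subcomplex and hence compact. By the theorem above that a proper map induces a continuous map on spaces of ends, $i$ induces a continuous $i_*\colon\mathcal E(X^1)\to\mathcal E(X)$. By the theorem that every space of ends is compact, totally disconnected and metrizable (in particular Hausdorff), it then suffices to prove that $i_*$ is a bijection, since a continuous bijection from a compact space to a Hausdorff space is a homeomorphism.

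To analyze $i_*$ I would fix one cofinal sequence adapted to both spaces. Using local finiteness and $\sigma$-compactness, choose finite subcomplexes $K_1\subset K_2\subset\cdots$ exhausting $X$ with the thickening property that $K_{n+1}$ contains every closed cell meeting $K_n$; then $K_n\subset\mathrm{int}(K_{n+1})$ and the sets $K_n^1:=K_n\cap X^1$ are cofinal in $X^1$. The pivotal elementary observation is that because each $K_n$ is a subcomplex, every open cell lies either entirely in $K_n$ or entirely in $X-K_n$; hence each component of $X-K_n$ (and of $X^1-K_n^1$) is a union of open cells, and each component of $X^1-K_n^1$ lies in a single component of $X-K_n$. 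I would then argue that the inclusion-induced map of inverse systems $\{\pi_0(X^1-K_n^1)\}\to\{\pi_0(X-K_n)\}$ is a pro-isomorphism. Note that at a fixed level these maps are typically neither injective (two components of $X^1-K_n^1$ may be joined through a $2$-cell) nor surjective, so the whole argument must be carried out in the inverse limit.

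For surjectivity I would establish the retraction lemma that every unbounded component $W$ of $X-K_n$ contains an unbounded component of $X^1-K_n^1$. If $W$ met no cell of dimension $\le 1$, then for a cell $e\subset W$ of least dimension every face of $e$ would lie in $\overline W\setminus W\subset K_n$; iterating this and using that only finitely many cells meet the finite complex $K_n$ forces $W$ to contain only finitely many cells, contradicting unboundedness. Given an end $(W_n)$ of $X$, set $A_n$ to be the collection of unbounded components of $X^1-K_n^1$ contained in $W_n$. By Theorem \ref{ES} there are only finitely many unbounded components at each level, so each $A_n$ is a nonempty finite set, and the bonding maps $A_{n+1}\to A_n$ are well defined because $W_{n+1}\subset W_n$; the standard fact that an inverse limit of nonempty finite sets is nonempty then yields a compatible thread $(V_n)$ with $i_*(V_n)=(W_n)$.

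For injectivity I would use cellular approximation together with the thickening of the $K_n$. If two threads $(V_n),(V_n')$ have the same image, then for each $n$ the components $V_{n+1},V_{n+1}'$ lie in one component of $X-K_{n+1}$; a joining path $\gamma\subset X-K_{n+1}$ has carrier a finite subcomplex which, by the thickening property, is disjoint from $K_n$, hence contained in $X-K_n$. Performing the cellular approximation rel endpoints inside this carrier homotopes $\gamma$ to a path $\gamma'\subset X^1$ that still avoids $K_n$, so $V_{n+1}$ and $V_{n+1}'$ agree in $\pi_0(X^1-K_n^1)$; as this holds for all $n$, the threads coincide. The main obstacle I expect is precisely this coherent passage to the inverse limit: since the level-wise maps are neither injective nor surjective, both halves must be packaged as a pro-isomorphism, and this is what forces the thickened cofinal sequence (to keep cellular homotopies off $K_n$ with a controlled index shift) and the finiteness-of-ends extraction of a thread. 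The residual point-set fussiness—degenerate attaching maps whose images avoid vertices, and the equivalence of "unbounded component" with "meets infinitely many cells"—is routine once local finiteness is invoked, but it is where a fully rigorous write-up concentrates its effort.
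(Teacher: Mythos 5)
Since the paper offers no proof of Theorem \ref{E1sk} at all (it is quoted from Proposition 13.4.1 of \cite{G}), your proposal has to stand on its own. Its architecture is sound: the inclusion $X^1\hookrightarrow X$ is proper, hence induces a continuous $i_*:\mathcal E(X^1)\to\mathcal E(X)$, and a continuous bijection from a compact space to a Hausdorff space is a homeomorphism. The first genuine gap is in your surjectivity step. Your ``retraction lemma'' must produce an \emph{unbounded component of $X^1-K_n^1$ inside} each unbounded component $W$ of $X-K_n$ (otherwise your sets $A_n$ may be empty), but the argument you sketch only treats the case $W\cap X^1=\emptyset$; it says nothing when $W\cap X^1$ is nonempty yet consists entirely of bounded components of $X^1-K_n^1$ chained to one another through $2$-cells, which is a priori exactly how $W$ could stay connected and unbounded. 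The repair, in finite dimensions, has two steps: first show $W\cap X^1$ lies in no compact set (run your face-induction upward: a cell of $W$ of dimension $d+1$ has boundary in $K_n$ union the closures of the cells of $W$ of dimension $\le d$, so if $W\cap X^1$ were bounded then each dimension would contribute only finitely many cells and $W$ would be bounded); then apply Theorem \ref{ES} to the locally finite graph $X^1$: the union of $K_n^1$ with all bounded components of $X^1-K_n^1$ is compact, so the unbounded set $W\cap X^1$ must meet, and therefore contain, an unbounded component of $X^1-K_n^1$. Without the second step the surjectivity argument does not close.

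The second gap is that both this induction and your injectivity step tacitly assume a uniform bound on the dimensions of the cells involved: the face-iteration terminates only after $\dim X$ steps, and each cellular push of a path off a $d$-cell lands it only in the closed cells meeting the ones already used, so it costs one thickening level per dimension. (A smaller error in the same step: the carrier of $\gamma$ need \emph{not} avoid $K_n$ --- a carrier can pick up a vertex of $K_n$ at the far end of an edge that an attaching map crosses only partially; one-step thickening guarantees only that the \emph{closed cells} meeting $\operatorname{im}\gamma$ avoid $K_n$.) For finite-dimensional $X$ all of this is repairable by iterating your thickening $\dim X$ times. It is not repairable in general, because the statement as quoted, for arbitrary connected locally finite CW complexes, is false in infinite dimensions: take a closed $2$-disk, wedge a $3$-sphere (a single $3$-cell with constant attaching map) onto an interior point of the disk, wedge a $4$-sphere onto a point of that $3$-sphere, and so on. This string of beads is connected, locally finite, and has exactly one end, while its $1$-skeleton is the boundary circle, which is compact and has no ends. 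So the ``degenerate attaching maps'' you dismissed as routine point-set fussiness are precisely where the theorem can die; the result in \cite{G} is proved under hypotheses (strong local finiteness, which coincides with local finiteness exactly when $\dim X<\infty$) that exclude this example, and those hypotheses hold in every application made in this paper. Your outline becomes correct once you add finite-dimensionality (or strong local finiteness) and make the two repairs above.
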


\begin{definition}
Suppose $X$ is a locally finite connected CW complex. A {\it ray} in $X$ is a map $r:[0,\infty)\to X$. The proper rays $r,s$  {\it converge to the same end of $X$} (denoted $r\sim s$) if for any compact set $C$ in $X$ there is an integer $N(C)$ such that $r([N(C),\infty)$ and $s([N(C),\infty)$ are subsets of the same unbounded path component of $X-C$. Let $\ast$ be a base point in $X$, then $R(X,\ast)\subset R(X)$ is the set of proper rays $r$ in $R(X)$ such that $r(0)=\ast$.  
\end{definition} 

\begin{theorem} \label{EQend} %(See Proposition 13.4.7 of \cite{G}) 
Let $X$ be a locally finite connected CW complex and $R(X)$ be the set of proper rays $r:[0,\infty)\to X$. Then $\sim$ is an equivalence relation on $R(X)$.  There is an natural bijection from  the equivalence classes $R(X,\ast)/\sim$ to $R(X)/\sim$ induced by inclusion and a natural bijection from $R(X)/\sim$ to $\mathcal E(X)$.
\end{theorem}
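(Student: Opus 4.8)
The plan is to treat the three assertions in turn: that $\sim$ is an equivalence relation, that inclusion induces a bijection $R(X,\ast)/\sim\ \to R(X)/\sim$, and that there is a bijection $R(X)/\sim\ \to\mathcal E(X)$. Throughout I will use that a locally finite CW complex is locally path connected (so that components and path components of $X-C$ agree, matching the $\pi_0$ in the definition of $\mathcal E(X)$), that bounded components have compact closure and are finite in number (cf.\ Theorem~\ref{ES}), and that $X$ admits a cofinal sequence of compact sets $\{C_i\}$ (e.g.\ finite subcomplexes), so that by the Remark following the definition of \emph{cofinal} every compact $C\subset X$ lies in some $C_i$. A preliminary observation I will use repeatedly: if $r$ is proper then for each compact $C$ the set $r^{-1}(C)$ is compact, hence bounded in $[0,\infty)$, so some tail $r([N,\infty))$ misses $C$; moreover that tail is \emph{unbounded} in $X$, since otherwise its closure would be a compact set whose $r$-preimage contains $[N,\infty)$, contradicting properness. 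Consequently the connected set $r([N,\infty))$ determines a well-defined unbounded path component of $X-C$. Reflexivity and symmetry of $\sim$ are then immediate, and transitivity follows by taking, for a given $C$, the maximum of the two thresholds coming from $r\sim s$ and $s\sim t$ and using that the common tail of $s$ forces the three components to coincide.

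For the second assertion, well-definedness and injectivity of the inclusion-induced map are formal, since $\sim$ on $R(X,\ast)$ is literally the restriction of $\sim$ on $R(X)$. The only content is surjectivity: given $r\in R(X)$, pick a path $\alpha$ in the path-connected complex $X$ from $\ast$ to $r(0)$ and let $s=\alpha\cdot r$ be the concatenation, reparametrized on $[0,\infty)$. Since $\alpha$ has compact image and $r$ is proper, $s$ is proper, and because $s$ and $r$ share a common tail (up to reparametrization) we get $s\sim r$ with $s(0)=\ast$.

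The heart of the theorem is the third assertion. Fix a cofinal sequence $\{C_i\}$. For a proper ray $r$ and each $i$, let $K_i(r)$ be the unbounded path component of $X-C_i$ containing a sufficiently far tail of $r$; the preliminary observation makes this independent of the chosen threshold, and $K_{i+1}(r)\subseteq K_i(r)$ because a common far tail lies in both. Thus $(K_i(r))_i$ is a thread in $\varprojlim\pi_0(X-C_i)=\mathcal E(X)$, and indeed every thread consists of unbounded components: a bounded $K_i$ would have compact closure inside some $C_j$, leaving no room for a nonempty $K_j\subseteq K_i$ in $X-C_j$. Define $\Phi([r])=(K_i(r))_i$. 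This is well defined on $\sim$-classes: if $r\sim s$ then for each $i$, applying the definition of $\sim$ with $C=C_i$ puts far tails of $r$ and $s$ in a common unbounded component, i.e.\ $K_i(r)=K_i(s)$. For injectivity, suppose $\Phi([r])=\Phi([s])$; given any compact $C$, choose $i$ with $C\subseteq C_i$, so far tails of $r$ and $s$ both lie in $K_i(r)=K_i(s)\subseteq X-C_i\subseteq X-C$, whence they lie in a single (unbounded) path component of $X-C$ and $r\sim s$.

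The main obstacle is surjectivity of $\Phi$, which requires building a proper ray realizing a prescribed thread. Given $(K_i)\in\mathcal E(X)$, I would choose a point $x_i\in K_i$ for each $i$; since $x_{i+1}\in K_{i+1}\subseteq K_i$ and $K_i$ is path connected, there is a path $\gamma_i$ in $K_i$ from $x_i$ to $x_{i+1}$. Concatenating $\gamma_1\gamma_2\cdots$ (with $\gamma_i$ parametrized on $[i-1,i]$) yields a ray $r$. Properness is the crux: since $\gamma_j\subseteq K_j\subseteq X-C_i$ for all $j\geq i$, the tail $r([i-1,\infty))$ avoids $C_i$, so $r^{-1}(C_i)\subseteq[0,i-1]$ is compact; as every compact $C$ lies in some $C_i$, $r$ is proper. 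The same containment $r([i-1,\infty))\subseteq K_i$ shows $K_i(r)=K_i$, so $\Phi([r])=(K_i)$. Finally, combining the three bijections, and noting that $\mathcal E(X)$ is independent of the cofinal sequence up to the canonical homeomorphism coming from the pro-isomorphism of inverse systems associated to two cofinal sequences, gives the asserted natural bijections $R(X,\ast)/\sim\ \to R(X)/\sim\ \to\mathcal E(X)$.
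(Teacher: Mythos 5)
Your proof is correct. There is, however, nothing in the paper to compare it against: Theorem \ref{EQend} is stated without proof (it is offered as a standard consequence of the preceding definitions, with the heavy lifting of Theorem \ref{ES} already done), so your write-up simply supplies the omitted argument, and it does so along the standard route --- the tail-of-a-proper-ray observation yields the equivalence relation, concatenation with a path to $\ast$ yields surjectivity of $R(X,\ast)/\sim\ \to R(X)/\sim$, and the thread map $\Phi$ yields the bijection with $\mathcal E(X)$. You also handle correctly the two points most often glossed over: that every thread in $\varprojlim \pi_0(X-C_i)$ automatically consists of unbounded components (so the $\pi_0$ in the paper's definition of $\mathcal E(X)$ causes no mismatch with unbounded components), and that the ray assembled from the paths $\gamma_i$ is proper because $\gamma_j\subseteq K_j\subseteq K_i$ for all $j\geq i$.
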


\begin{definition} If $X$ is a locally finite connected CW complex, then the set of equivalence classes $R(X)$ (equivalently $R(X,\ast)$ for any base point $\ast\in X$) is called the {\it set of ends of $X$} and is denoted $E(X)$ (respectively $E(X,\ast)$).
\end{definition}

\begin{example} \label{EE}
The wedge of $n$ proper rays at a point has $n$-ends and the space of ends is discrete on $n$ points.  If $n\geq 2$ then $R^n$ has 1-end. If  the valence of each vertex of a locally finite tree is $\geq 3$, then the tree has infinitely many ends and the space of ends for this tree is a Cantor set.
\end{example}

\begin{theorem} \label{geoend} 
If $X$ is a locally finite connected CW complex then for any proper ray $r:[0,\infty)\to X$ there is a proper edge path ray $r':[0,\infty)\to X^1$ such that $[r]=[r']$. If $r$ begins at a vertex $v$ of $X$, then $r'$ can be selected to begin at $v$. Furthermore, for any vertex $w\in X$ there is a geodesic edge path ray $s$ at $w$, such that $[r]=[s]$. 
\end{theorem}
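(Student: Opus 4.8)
The plan is to establish the three assertions in turn: first produce an edge path ray $r'$ in $X^1$ converging to the same end as $r$, then arrange its initial vertex, and finally extract a geodesic ray to any prescribed vertex. Throughout I work with the combinatorial metric on $X^1$ in which each edge has length $1$; since $X$ is locally finite and connected this makes $X^1$ a proper geodesic space, as every closed metric ball is a finite subcomplex and hence compact. Recall that by definition $[r]=[r']$ means $r\sim r'$, i.e.\ for every compact $C$ both rays are eventually contained in a common unbounded path component of the complement.

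For the first assertion I would fix a cofinal sequence $\{C_n\}_{n\ge 0}$ of compact subcomplexes of $X$ with $C_n\subset\mathrm{int}(C_{n+1})$, which exists because $X$ is $\sigma$-compact and locally finite. As $r$ is proper, each $r^{-1}(C_n)$ is compact, so there are parameters $T_0<T_1<\cdots\to\infty$ with $r([T_n,\infty))\subset X-C_n$. Writing $x_n=r(T_n)$, I would choose a vertex $v_n$ joined to $x_n$ by a short path inside the open, path-connected component of $X-C_{n-1}$ containing $x_n$; since consecutive points $x_n,x_{n+1}$ are already joined by $r|_{[T_n,T_{n+1}]}$ inside $X-C_n$, I can join $v_n$ to $v_{n+1}$ by an edge path $e_n$ in $X^1$ lying in that same component of $X-C_{n-1}$. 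Concatenating the $e_n$ produces an edge path ray $r'$, which is proper because beyond stage $n$ it avoids $C_{n-1}$, and which satisfies $[r']=[r]$ because for each compact $C$ both rays are eventually trapped in the common unbounded component of the complement determined by the tail of $r$. The verification of these containments is routine bookkeeping; the only point needing care is keeping each connecting path $e_n$ out of $C_{n-1}$. For the second assertion, if $r(0)=v$ is a vertex I simply prepend to $r'$ a fixed edge path in $X^1$ from $v$ to the initial vertex of $r'$ (available since $X^1$ is connected); this alters $r'$ only on a compact initial segment, so it changes neither properness nor the end and now begins at $v$.

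For the geodesic assertion, fix a vertex $w$, let $e$ be the end of $X^1$ determined by $r'$, and let $V_k$ be the unbounded component of $X^1-C_k$ containing a tail of $r'$. For each $m$ pick a vertex $u_m$ on $r'$ lying beyond the exit time for $C_m$, so that $u_m\in V_k$ for all $m\ge k$ and $d(w,u_m)\to\infty$, and let $s_m$ be a geodesic edge path from $w$ to $u_m$. Since $X^1$ is locally finite, only finitely many edges issue from each vertex, so a K\"onig's lemma (diagonal) argument over the $s_m$ yields a subsequence whose initial segments stabilize, defining a geodesic edge path ray $s$ at $w$. The remaining step, which I expect to be the main obstacle, is to show $s\sim r'$. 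Here I would fix $k$, choose $\rho_k$ with $C_k\cap X^1\subset\bar B(w,\rho_k)$, and exploit that along a geodesic issuing from $w$ the combinatorial distance to $w$ equals arc length: every vertex of $s_m$ beyond distance $\rho_k$ lies outside $C_k$, and the edges of the terminal subpath past such a vertex cannot be cells of the subcomplex $C_k$ (they have an endpoint outside it), so this connected terminal subpath avoids $C_k$ and, ending at $u_m\in V_k$, lies entirely in $V_k$. Passing to the stabilized limit, every vertex of $s$ beyond distance $\rho_k$ lies in $V_k$, so $s$ and $r'$ are eventually in a common component of $X^1-C_k$ for every $k$. Since every compact subset of $X$ is contained in some $C_k$, these components sit inside common unbounded components of the corresponding $X-C$, so $[s]=[r']=[r]$, completing the proof.
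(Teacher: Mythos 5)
Your proof is correct and follows essentially the same route as the paper's: the paper likewise treats the existence of $r'$ as immediate, extracts the geodesic ray $s$ by the identical local-finiteness/pigeonhole argument applied to geodesics from $w$ to the vertices of $r'$, and verifies $[s]=[r']$ using the same key fact that a geodesic issuing from $w$ eventually avoids any ball $B_N(w)$ containing the given compact set. The only cosmetic difference is that the paper exhibits a single explicit connecting path (the tail of one approximating geodesic that extends an initial segment of $s$ and ends at a vertex $v_K$ far out on $r'$), whereas you show directly that the terminal subpaths of all approximating geodesics, and hence of $s$ itself, lie in the unbounded component $V_k$ containing the tail of $r'$.
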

\begin{proof}
The first part of the result is evident. We construct the geodesic $s$. Let $r'$ be a proper edge path ray such that $[r]=[r']$. List the consecutive vertices of $r'$ as $v_i=r'(i)$. For each $i$, let $\alpha_i$ be a geodesic edge path from $w$ to $v_i$. Since $X$ is locally finite, infinitely many of the $\alpha_i$ have the same first edge - call it $e_0$. Of these, infinitely many have the same second edge, $e_1$. Continuing, we obtain the edge path $s=(e_0, e_1,\ldots)$ based at $w$. Certainly $s$ is geodesic since any initial segment is a subsegment of a geodesic. If $C$ is compact in $X^1$, choose $N$ such that $C\subset B_N(w)$ (here $B_N(w)$ is the ball of radius $N$ (in $X^1$) about $w$). Choose $M$ such that $r'([M,\infty))\subset X-B_N(w)$. Let $w_i$ be the initial vertex of $e_i$. Observe that $(e_{N+2}, e_{N+3},\ldots)$ avoids $B_N(w)$. We need only connect $(e_{N+2}, e_{N+3},\ldots)$ and $r'([M,\infty))$ in $X^1-B_N(w)$.
By construction, infinitely many of the $\alpha_i$ are extensions of the geodesic $(e_1,\ldots, e_{N+1})$.  Select one that terminates at a vertex $v_K$ of $r'$, where $K>M$. Then the tail of this geodesic from the initial point of $e_{N+2}$ to a vertex of $r'([M,\infty))$ is in  $X^1-B_N(w)$. 
\end{proof}
\begin{lemma}\label{geodesicline} 
Suppose $G$ is a group acting transitively on the vertices of a locally finite connected infinite graph $\Gamma$. Then there is a geodesic line through any vertex of $\Gamma$.
\end{lemma}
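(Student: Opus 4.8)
The plan is to build the line as a limit of long finite geodesics that are all centered at the prescribed vertex $v$, using vertex-transitivity to do the centering and a K\"onig-type diagonal argument (the two-sided analogue of the construction in Theorem \ref{geoend}) to pass to the limit.

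First I would record two standing facts. Because $\Gamma$ is connected, locally finite and infinite, each ball $B_N(v)$ is finite, so the vertex metric $d$ is unbounded; hence for every $n\geq 1$ there are vertices at distance at least $2n$. Taking a geodesic edge path joining two vertices at distance $\geq 2n$ and restricting to a subpath of length exactly $2n$, I get for each $n$ a geodesic whose central vertex $m_n$ is at distance $n$ from both endpoints. This is where transitivity is used: since $G$ acts on $\Gamma$ by graph automorphisms, its action on vertices is by isometries of $d$, so choosing $g_n\in G$ with $g_n(m_n)=v$ and translating by $g_n$ yields a geodesic edge path $\sigma_n\colon\{-n,\dots,n\}\to\Gamma$ with $\sigma_n(0)=v$ and $d(\sigma_n(i),\sigma_n(j))=|i-j|$ for all $i,j$ in range. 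Thus I obtain geodesic segments of every length, each centered at $v$.

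Next I would extract the line. The two edges of $\sigma_n$ incident to $v$ form one of finitely many pairs, since $v$ has finite valence, so by pigeonhole infinitely many $\sigma_n$ share the pair $(\sigma_n(1),\sigma_n(-1))$; passing to that infinite subfamily and repeating the argument at indices $\pm 2$, then $\pm 3$, and so on, produces by diagonalization a bi-infinite edge path $\ell\colon\mathbb Z\to\Gamma$ with $\ell(0)=v$ and with the property that for each $k$ infinitely many $\sigma_n$ agree with $\ell$ on $\{-k,\dots,k\}$. To see that $\ell$ is a geodesic line, fix $i<j$, choose $k=\max(|i|,|j|)$ and some $\sigma_n$ in the surviving subfamily with $n\geq k$; then $\ell$ and $\sigma_n$ coincide on $\{i,\dots,j\}$, and since a subpath of a geodesic is geodesic we get $d(\ell(i),\ell(j))=j-i$. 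Hence $\ell$ is a geodesic line through $v$.

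The step I expect to require the most care is the limit: getting the diagonal bookkeeping right so that the nested infinite subfamilies stabilize every finite central subsegment, and confirming geodesy of $\ell$ from the subsegment property. It is worth emphasizing that transitivity is genuinely needed here, not merely for convenience: a locally finite connected infinite graph always carries a geodesic ray from $v$ (by Theorem \ref{geoend}), but such a ray need not extend to a bi-infinite geodesic --- for instance at the endpoint of a half-line --- and it is precisely the ability to center arbitrarily long geodesics at $v$ that transitivity supplies.
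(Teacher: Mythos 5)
Your proof is correct and follows essentially the same route as the paper: translate arbitrarily long geodesics so they are centered at $v$ using vertex-transitivity, then apply a pigeonhole/diagonal argument at successive distances from $v$ (using local finiteness) to extract a bi-infinite limit path, which is geodesic because every finite subsegment lies in one of the centered geodesics. The only difference is that you spell out details the paper leaves implicit, namely the existence of geodesics of length $2n$ and the final verification that the limit is geodesic.
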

\begin{proof} Choose geodesics $\alpha_1,\alpha_2,\ldots$ in $\Gamma$ such that $|\alpha_i|=2i$. Let $v$ be a vertex of $\Gamma$. By the action of $G$, we may assume the midpoint of $\alpha_i$ is $v$ for all $i$. By local finiteness, infinitely many of the $\alpha_i$ have the same edge before and after $v$ (say, $(e_{-1},e_1)$). Of these, infinitely many have the same edge preceding $e_{-1}$ and following $e_1$ (say $(e_{-2},e_{-1}, e_1,e_2)$). Continuing, we have the desired geodesic line is $(\ldots, e_{-2}, e_{-1}, e_1,e_2, \ldots)$. 
\end{proof}

\begin{theorem} \label{line=2} 
If $X$ is a connected locally finite graph and $l$ is a geodesic line in $X$ such that $X=B_k(im(l))$ for some $k>0$, then $X$ is 2-ended. 
\end{theorem}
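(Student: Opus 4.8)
The plan is to exhibit $X$ as coarsely one-dimensional by projecting it onto the line $l$, and then to count unbounded components of complements of the preimages of intervals. Parametrize the vertices of $l$ by $\mathbb Z$, so that $d(l(m),l(n))=|m-n|$ since $l$ is geodesic, and (absorbing a bounded additive error into $k$) assume every vertex $v$ of $X$ satisfies $d(v,l(n))\le k$ for some $n$. Define a \emph{coarse projection} $f$ on vertices by letting $f(v)$ be any integer $n$ with $d(v,l(n))\le k$, chosen so that $f(l(n))=n$. The one estimate that drives everything is that $f$ is coarsely Lipschitz: if $v,w$ are adjacent then $|f(v)-f(w)|=d(l(f(v)),l(f(w)))\le d(l(f(v)),v)+1+d(w,l(f(w)))\le 2k+1$. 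Also, if $f(v)=n$ then $|n|-k\le d(v,l(0))\le |n|+k$, so $f^{-1}([-M,M])$ lies in $B_{M+k}(l(0))$ and is therefore a finite subcomplex $S_M$, while every compact $C\subseteq B_R(l(0))$ sits inside $S_{R+k}$.

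First I would reduce the theorem to a statement about the sets $S_M$. By Definition~\ref{DefEnd} the number of ends is $\sup_C U(C)$, and since every compact $C$ is contained in some $S_M$, the epimorphism $\mathcal U(X-S_M)\to\mathcal U(X-C)$ induced by inclusion (the remark following Theorem~\ref{ES}) gives $U(C)\le U(S_M)$. Hence it suffices to prove that for all $M\ge k$ the set $X-S_M$ has exactly two unbounded components; this simultaneously yields the lower bound $U(S_M)=2$ and the upper bound $U(C)\le 2$ for every compact $C$, forcing the number of ends to be $2$. Writing $V^{+}=\{v:f(v)>M\}$ and $V^{-}=\{v:f(v)<-M\}$ for the surviving vertices, the Lipschitz bound shows there are no edges between $V^{+}$ and $V^{-}$ once $2M+2>2k+1$, i.e.\ for $M\ge k$; thus every component of $X-S_M$ lies entirely in $V^{+}$ or entirely in $V^{-}$. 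The two geodesic sub-rays $\{l(n):n>M\}$ and $\{l(n):n<-M\}$ are connected, unbounded, and lie on opposite sides, so $X-S_M$ has at least one unbounded component on each side, giving at least two in all.

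The crux, and the step I expect to be the main obstacle, is the matching upper bound: each side contains only \emph{one} unbounded component. I would argue that an unbounded component $K\subseteq V^{+}$ must coincide with the component $K^{+}$ containing the positive ray. Since $K$ is unbounded in a locally finite graph it contains vertices $v$ with $d(v,l(0))$, and hence $f(v)=n$, arbitrarily large; fix such a $v$ with $n>M+k(2k+1)$. Take a geodesic from $v$ to $l(n)$ of length at most $k$; along it $f$ can drop by at most $2k+1$ per edge, so $f$ stays above $n-k(2k+1)>M$, meaning the whole path avoids $S_M$ and lies in $V^{+}$. Therefore $l(n)\in K$, but $l(n)$ also lies on the positive ray, so $K=K^{+}$. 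The same argument on the negative side gives a unique unbounded component $K^{-}$, so $X-S_M$ has exactly two unbounded components and the proof concludes. The delicate point throughout is the control of how far $f$ can decrease along a bounded-length geodesic back to the line: it is precisely the boundedness of this drop (a consequence of the $2k+1$ edge estimate together with the $k$-density hypothesis) that prevents a ``deep'' vertex from being separated from the ray by $S_M$, which is what would otherwise allow extra ends.
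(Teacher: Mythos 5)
Your proof is correct. It rests on the same two estimates as the paper's proof --- the bound $|q_1-q_2|\le 2k+1$ forced by geodesicity of $l$ whenever points within $k$ of $l(q_1)$ and of $l(q_2)$ are joined by an edge, and the observation that a path of length at most $k$ from a deep vertex back to the line cannot re-enter the excised compact set --- but you organize them around a genuinely different decomposition. The paper exhausts $X$ by the sets $K_n=B_n(\ast)$ (together with the bounded complementary components) and then must prove that $l([n+1,\infty))$ and $l((-\infty,-n-1])$ lie in distinct components of $X-K_n$ by contradiction, tracking the first edge along which a hypothetical connecting path crosses from the positive to the negative side of $l$ and playing $d(l(q_1),l(q_2))\ge 2k+2$ against a connecting path of length $\le 2k+1$. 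You instead package that same inequality, once and for all, as the coarse-Lipschitz property of a projection $f$ to $\mathbb Z$, exhaust by the interval preimages $S_M=f^{-1}([-M,M])$, and then the separation of the two sides comes for free: no edge joins $\{f>M\}$ to $\{f<-M\}$ once $2M+2>2k+1$. Your uniqueness step (one unbounded component per side) is the same as the paper's, with the Lipschitz control on $f$ replacing the paper's raw distance bookkeeping. What your route buys: the contradiction argument disappears, the constants are transparent, and $f$ is visibly a coarse inverse to the isometric, $k$-dense embedding $im(l)\hookrightarrow X$, so your argument in effect exhibits $X$ as quasi-isometric to the line --- meaning it could be compressed further by quoting Theorem \ref{QIGra} and Example \ref{EE}. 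What the paper's route buys: it works directly with metric balls and needs no auxiliary map or choices. One small point to tidy in your write-up: $f^{-1}([-M,M])$ is a set of vertices, not a subcomplex, so a compact set containing interior points of edges does not literally lie in $S_M$; replace $S_M$ by the full subcomplex it spans (or by the union of the closed edges having both endpoints in it). Since every component of the complement still contains a vertex, and connectivity of vertices outside $S_M$ is decided by edges with both endpoints outside, this changes nothing in your component count; it is a routine adjustment, not a gap.
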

\begin{proof} 
Let $\ast$ be a vertex of $l$ and $n>2k$ an integer. Let $K_n$ be the compact subcomplex of $X$ consisting of $B_{n}(\ast)$ union all bounded components of $X-B_n(\ast)$. It suffices to show that $X-K_n$ has exactly 2 components. Assume that $l$ is parametrized so that $l(0)=\ast$. Note that $l([n+1, \infty))$ and $l([-n-1,\-\infty))$ belong to components $Q^+$ and $Q^-$ (respectively) of $X-K_n$. 

First we show that $Q^+\ne Q^-$ (so that $X$ has at least 2-ends). Otherwise there is an edge path $\alpha$ from $l(n+1)$ to $l(-n-1)$ that avoids $B_n(\ast)$. Each vertex of $\alpha$ is within $k$ of a vertex of $l$ and no vertex of $\alpha$ is within $k$ of a vertex of $l([-k,k])$ (since $n>2k$). Let $m$ be the first integer such that $\alpha(m)$ is within $k$ of a vertex of $l([-k-1,-\infty))$. Note that $m\ne 0$ or $l(n+1)$ is within $k$ of $l(q)$ for $q<-k$. But that is impossible since $d(l(q), l(n+1)=n+1-q>k$. There is an edge path of length $\leq k$ from $\alpha(m-1)$ to $l(q_1)$ for some $q_1>k$, and an edge path from $\alpha(m)$ to $l(q_2)$ where $q_2<-k$. Then there is an edge path of length $\leq 2k+1$ from $l(q_1)$ to $l(q_2)$. But $d(l(q_1), l(q_2))=q_1-q_2\geq 2k+2$. Instead $Q^+\ne Q^-$ and $X$ has at least 2-ends. 

If $Q_3$ is an unbounded component of $X-K_n$. We show $Q_3$ is either $Q^+$ or $Q^-$. Let $v$ be a vertex of $Q_3$ such that $d(v,\ast)>n+k$. Since $d(v,\ast)> n+k>3k$, $v$ is not within $k$ of a vertex of $l([-k,k])$. Without loss, say $d(v,l(m))\leq k$ for $m>k$. If $\beta$ is an edge path of length $\leq k$ from $v$ to $l(m)$, then since $d(v,\ast)>n+k$, each vertex of $\alpha$ avoids $B_n(\ast)$. In particular $m>n$. Then $v$ is in the same component of $X-B_n(\ast)$ as is $l(m)$ (which is the unbounded component $Q^+$).  This means $Q_3=Q^+$. 
\end{proof}

\begin{definition} Let $(X, d_X )$ and $(Y, d_Y )$ be metric spaces.

(i) A map $f : X \to Y$ is called a {\it quasi-isometric embedding} if there exist non-negative constants
$K, C\in \mathbb R$, such that
$${1\over K} d_X (x_1 , x_2 ) -C\leq  d_Y (f (x_1 ), f (x_2 )) \leq K d_X (x_1 , x_2 ) + C$$

(ii) A quasi-isometric embedding $f : X \to Y$ is called a {\it quasi-isometry}, if there exists a constant $D>0$ such that for every $y\in Y$ there exists an $x\in X$ so that $d_Y (f(x),y)\leq D$.

(iii) The metric spaces $(X, d_X )$ and $(Y, d_Y )$ are {\it quasi-isometric} if there exists a quasi-isometry $f : X\to Y$.
\end{definition}

%\begin{definition} 
%Suppose $X$ and $Y$ are connected, locally finite CW complexes. Let $X^n$ denote the $n$-skeleton of $X$. A proper map $f:X\to Y$ is a {\it proper $n$-equivalence} if there is a proper map $g:Y\to X$ and proper maps $F:X^n\times I\to X$ and $G:Y^n\times I\to Y$ such that $F_0=gf$, $F_1=1_{X^n}$, $G_0=fg$ and $G_1=1_{Y^n}$. The map $g$ is a {\it proper $n$-inverse} for $f$. 
%\end{definition}

%\begin{theorem} (ref Ross' book???)
%If $X$ and $Y$ are connected, locally finite CW complexes and $f:X\to Y$ is a proper 0-equivalence, then $f$ induces a homeomorphism from $\mathcal E(X)$ to $\mathcal E(Y)$. In particular, $X$ and $Y$ have the same number of ends.
%\end{theorem}

\begin{theorem} [Proposition 2.2, \cite{BR93}] \label{QIGra} 
If two locally finite connected graphs are quasi-isometric, then they have the same number of ends. 
\end{theorem}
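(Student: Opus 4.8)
The plan is to realize the ends of each graph through proper rays, using the identifications $R(X,\ast)/\!\sim\;\cong R(X)/\!\sim\;\cong\mathcal E(X)$ from Theorem \ref{EQend}, and to show that a quasi-isometry induces a bijection on these sets of ray classes. The one structural subtlety to keep in mind from the outset is that a quasi-isometry need not be continuous, so I cannot simply invoke the earlier theorem that a proper map induces a map of spaces of ends; instead I will push rays forward by hand and control everything metrically.

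First I would record that a quasi-isometric embedding $f:X\to Y$ of locally finite connected graphs is proper in the topological sense. If $f(x)$ lies in a ball $B_R(y_0)$ and we fix one point $x_0$ of the preimage, then the lower quasi-isometry bound gives $\frac1K d_X(x,x_0)-C\leq 2R$, so $f^{-1}(B_R(y_0))$ is metrically bounded; in a locally finite graph a bounded set meets only finitely many cells and hence has compact closure. Next comes the pushforward of a ray. By Theorem \ref{geoend} every end of $X$ is represented by a geodesic edge-path ray $r$ based at a fixed vertex, so $d_X(r(0),r(n))=n$. I apply $f$ to the vertices $r(n)$ and join $f(r(n))$ to $f(r(n+1))$ by a geodesic in $Y$; since $d_Y(f(r(n)),f(r(n+1)))\leq K+C$, these connecting segments are short and assemble into an edge-path ray $f_\#(r)$. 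It is proper because $d_Y(f(r(0)),f(r(n)))\geq n/K-C\to\infty$ while the connecting segments stay within $K+C$ of the points $f(r(n))$, forcing the ray to leave every bounded set.

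The technical heart of the argument, and the step I expect to be the main obstacle, is a \emph{fellow-traveling lemma}: if $r,s$ are proper edge-path rays in a locally finite graph with $d(r(n),s(n))\leq M$ for all $n$, then $r\sim s$. To prove it, given a compact $C\subset B_N(\ast)$, I choose $N'$ so that $r([N',\infty))$ and $s([N',\infty))$ avoid $B_{N+M}(\ast)$; a path of length $\leq M$ joining $r(n)$ to $s(n)$ then stays outside $B_N(\ast)$ and hence avoids $C$, so $r(n)$ and $s(n)$ lie in the same unbounded component of $X-C$ for all large $n$, giving $r\sim s$. This lemma does the heavy lifting: it shows $f_\#$ is independent of the geodesic choices made in its definition (different choices give rays with $d(\,\cdot\,)\leq K+C$, hence fellow-travel), and it shows $f_\#$ descends to equivalence classes, since if $r\sim s$ then after suitable reparametrization their images remain boundedly close.

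Finally I would establish bijectivity using a quasi-inverse $g:Y\to X$. Because $g\circ f$ displaces every point of $X$ by a bounded amount, the ray $g_\#(f_\#(r))$ fellow-travels $r$, so the lemma gives $g_\#\circ f_\#=\mathrm{id}$ on $R(X)/\!\sim$; symmetrically $f_\#\circ g_\#=\mathrm{id}$ on $R(Y)/\!\sim$. Hence $f_\#$ is a bijection of sets of ends. A bijection preserves cardinality, so if one side is finite the other is finite of the same size, and if one side is infinite so is the other; by the identification with $\mathcal E(X)$ and the characterization of the number of ends in terms of $\lvert\mathcal E(X)\rvert$, the two graphs have the same number of ends.
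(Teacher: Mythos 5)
The paper itself gives no proof of Theorem \ref{QIGra}; it simply cites \cite{BR93}, so your attempt has to stand on its own. Your architecture is reasonable: realize ends by proper rays via Theorem \ref{EQend}, push rays forward vertex-by-vertex along a quasi-isometry, and use a quasi-inverse. Moreover, two of the three places where you invoke your fellow-traveling lemma are sound: independence of $f_\#$ from the choice of connecting geodesics, and $g_\#(f_\#(r))\sim r$. (Both of these comparisons involve rays whose \emph{images} are at bounded Hausdorff distance, so they need the harmless strengthening of your lemma from synchronized parametrizations $d(r(n),s(n))\leq M$ to ``bounded Hausdorff distance between images, plus properness''; the same proof works after choosing the connecting points a bit more carefully.)

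The genuine gap is the descent of $f_\#$ to equivalence classes. You justify it by the claim that if $r\sim s$ then ``after suitable reparametrization their images remain boundedly close.'' That claim is false, already in the most basic example: in the integer grid $\mathbb Z\times\mathbb Z$ (a locally finite connected one-ended graph), the ray $r$ along the positive $x$-axis and the ray $s$ along the positive $y$-axis are proper and converge to the same (unique) end, yet $r(n)=(n,0)$ is at distance $n$ from the image of $s$, and no reparametrization changes the images. Convergence to the same end carries no such metric information, so your lemma cannot be applied here; and without descent there is no induced map $E(X)\to E(Y)$ at all, which is the crux of the theorem rather than a routine verification. The step can be repaired, but by a different mechanism, namely the metric properness you established in your first paragraph: given compact $D\subset Y$, the set of points of $X$ mapped by $f$ within distance $K+C$ of $D$ is bounded, hence contained in a compact $C'\subset X$; if $r\sim s$, their tails lie in one unbounded component of $X-C'$, so some edge path $\gamma$ in $X-C'$ joins a point of $r$ to a point of $s$; pushing $\gamma$ forward vertex-by-vertex, every point of $f_\#(\gamma)$ lies within $K+C$ of $f(v)$ for some vertex $v$ of $\gamma$, and $f(v)$ is at distance greater than $K+C$ from $D$, so $f_\#(\gamma)$ avoids $D$ and connects the tails of $f_\#(r)$ and $f_\#(s)$ in $Y-D$; the tails themselves avoid $D$ by the same estimate. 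With this replacement (and its mirror image for $g_\#$), the rest of your argument---the quasi-inverse computation and the cardinality conclusion---goes through.
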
 
Combining this result with Theorem \ref{E1sk} produces: 

\begin{theorem} \label {QIE} 
The number of ends of a connected, locally finite CW-complex is a quasi-isometry invariant of its 1-skeleton.
\end{theorem}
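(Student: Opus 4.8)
The plan is to read the statement as asserting that if $X$ and $Y$ are connected, locally finite CW-complexes whose $1$-skeleta $X^1$ and $Y^1$ (each metrized by the edge-path metric that assigns every edge length $1$) are quasi-isometric, then $X$ and $Y$ have the same number of ends. Once the statement is made precise in this way, the result is a two-line concatenation of the two theorems immediately preceding it, so I would not expect any genuine difficulty; the work is entirely in arranging the citations.

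First I would invoke Theorem \ref{E1sk}. Since $X$ is a connected, locally finite CW-complex, $\mathcal E(X)$ is homeomorphic to $\mathcal E(X^1)$, and in particular $X$ and its $1$-skeleton $X^1$ have the same number of ends; the identical argument applied to $Y$ shows that $Y$ and $Y^1$ have the same number of ends. This reduces the entire question to a statement about the $1$-skeleta, where the given quasi-isometry actually lives.

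Next I would observe that $X^1$ and $Y^1$ are locally finite connected graphs: local finiteness of the complexes passes to the $1$-skeleta, and connectivity of a connected CW-complex forces connectivity of its $1$-skeleton. By hypothesis $X^1$ and $Y^1$ are quasi-isometric, so Theorem \ref{QIGra} applies directly and yields that $X^1$ and $Y^1$ have the same number of ends. Combining the two steps, the number of ends of $X$ equals that of $X^1$, which equals that of $Y^1$, which equals that of $Y$, establishing the claimed invariance.

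The only point that requires any care — and it is bookkeeping rather than a real obstacle — is to confirm that the metric implicit in the phrase ``quasi-isometry invariant of its 1-skeleton'' is precisely the edge-path metric to which Theorem \ref{QIGra} is stated to apply, so that the quasi-isometry hypothesis of the present statement matches the hypothesis of the cited graph result verbatim. With that alignment noted, nothing further is needed.
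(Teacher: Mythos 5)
Your proposal is correct and is precisely the paper's own argument: the paper derives Theorem \ref{QIE} by "combining" Theorem \ref{QIGra} (quasi-isometric locally finite connected graphs have the same number of ends) with Theorem \ref{E1sk} (a connected, locally finite CW-complex has the same number of ends as its 1-skeleton), which is exactly your two-step reduction. Your added remark about matching the edge-path metric to the hypothesis of Theorem \ref{QIGra} is sensible bookkeeping but introduces nothing beyond what the paper already does implicitly.
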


\begin{definition}\label{n-equiv}
A (proper) cellular map $f:X\to Y$ between CW-complexes is a {\it (proper) $k$-equivalence} if there is a (proper) cellular map $g:Y\to X$ such that $gf:X^{k-1}\to X$ is (properly) homotopic to the inclusion $X^{k-1}\to X$ and $fg:Y^{k-1}\to Y$ is (properly) homotopic to the inclusion $Y^{k-1}\to Y$.  
\end{definition}

The following is elementary:
\begin{theorem}
Suppose $X$ and $Y$ are connected locally finite CW-complexes.  A proper 1-equivalence $f:X^1\to Y^1$ induces a bijection between $E(X)$ and $E(Y)$ as well as a homeomorphism between $\mathcal E (X)$ and $\mathcal E (Y)$.\end{theorem}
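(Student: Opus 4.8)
The plan is to reduce everything to the one-skeletons and then to show that the end-space maps induced by $f\colon X^1\to Y^1$ and by its quasi-inverse $g\colon Y^1\to X^1$ are mutually inverse. By Theorem \ref{E1sk} the inclusions $X^1\hookrightarrow X$ and $Y^1\hookrightarrow Y$ induce homeomorphisms $\mathcal E(X^1)\to\mathcal E(X)$ and $\mathcal E(Y^1)\to\mathcal E(Y)$ (and, via Theorem \ref{EQend}, corresponding bijections of the sets of ends), so it suffices to produce a homeomorphism $\mathcal E(X^1)\to\mathcal E(Y^1)$ and a bijection $E(X^1)\to E(Y^1)$ induced by $f$. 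Since $X^1$ and $Y^1$ are connected, locally finite, hence locally compact, $\sigma$-compact, locally connected and Hausdorff, and since $f$ and $g$ are proper, the theorem above on proper maps gives continuous maps $f_\ast\colon\mathcal E(X^1)\to\mathcal E(Y^1)$ and $g_\ast\colon\mathcal E(Y^1)\to\mathcal E(X^1)$. By the ray description of ends (Theorems \ref{geoend} and \ref{EQend}) every point of $\mathcal E(X^1)$ is represented by a proper edge path ray $r$, and unwinding the construction of the induced map shows $f_\ast[r]=[f\circ r]$. It therefore remains to prove that $g_\ast f_\ast$ and $f_\ast g_\ast$ are the identity maps.

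The heart of the argument is to show, for every proper edge path ray $r$ in $X^1$ with consecutive vertices $v_0,v_1,\ldots\in X^0$, that $gf\circ r\sim r$, so that $g_\ast f_\ast[r]=[gf\circ r]=[r]$. Because $f$ is a proper $1$-equivalence with quasi-inverse $g$ (Definition \ref{n-equiv} applied to the one-skeletons), there is a proper homotopy $H\colon X^0\times[0,1]\to X^1$ from $gf|_{X^0}$ to the inclusion $X^0\hookrightarrow X^1$. For each vertex $v$ write $\gamma_v=H(v,\cdot)$ for the resulting path in $X^1$ from $gf(v)$ to $v$. Since $X^0$ is discrete, a compact subset of $X^0\times[0,1]$ is contained in $F\times[0,1]$ for a finite vertex set $F$; thus properness of $H$ says precisely that for each compact $C\subset X^1$ only finitely many vertices $v$ satisfy $\gamma_v\cap C\neq\emptyset$.

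Now fix a compact set $C\subset X^1$. Using properness of $r$, of the composite $gf\circ r$ (a composition of proper maps restricted along the proper ray $r$), and of $H$, I would choose $N$ so large that the tails $r([N,\infty))$ and $(gf\circ r)([N,\infty))$ both miss $C$ and the connecting path $\gamma_{v_n}$ misses $C$ for every $n\geq N$. Then for $n\geq N$ the vertex $v_n=r(n)$ and its image $gf(v_n)=(gf\circ r)(n)$ lie in one path component of $X^1-C$, joined by $\gamma_{v_n}$, while each tail is a connected set lying in $X^1-C$; hence the two tails lie in the same (unbounded) path component of $X^1-C$. As $C$ was arbitrary, $gf\circ r\sim r$, giving $g_\ast f_\ast=\mathrm{id}_{\mathcal E(X^1)}$. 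The symmetric argument applied to $fg\colon Y^1\to Y^1$ and the proper homotopy $fg|_{Y^0}\simeq\mathrm{incl}$ yields $f_\ast g_\ast=\mathrm{id}_{\mathcal E(Y^1)}$. Thus $f_\ast$ is a homeomorphism with inverse $g_\ast$, and composing with the homeomorphisms of Theorem \ref{E1sk} produces $\mathcal E(X)\cong\mathcal E(Y)$; the induced bijection $E(X)\to E(Y)$, $[r]\mapsto[f\circ r]$, then follows from the natural bijection $E(\cdot)\cong\mathcal E(\cdot)$ of Theorem \ref{EQend}.

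The main obstacle I anticipate is that the $1$-equivalence hypothesis controls $gf$ and $fg$ only on the $0$-skeletons, not on all of $X^1$; the argument should succeed precisely because the space of ends of a locally finite graph is detected by its vertices, so the proper homotopy on $X^0$ — whose connecting paths $\gamma_{v_n}$ escape every compact set — is enough to drag the tail of $gf\circ r$ back onto the tail of $r$ inside the complement of any compact set. Secondary care is needed to confirm that $gf\circ r$ really is proper, so that it represents an end at all, and that the set-level assignment $[r]\mapsto[f\circ r]$ coincides with the topologically induced $f_\ast$ under the identification $E\cong\mathcal E$.
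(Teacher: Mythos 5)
The paper gives no proof of this statement at all---it is introduced with ``The following is elementary:''---so there is no argument of the paper's to compare against; your proof is correct and supplies precisely the details the paper omits. Your reduction to $1$-skeletons via Theorem \ref{E1sk}, the use of the paper's induced continuous maps $f_\ast$ and $g_\ast$ on end spaces, and the identification $f_\ast[r]=[f\circ r]$ on ray classes are all sound, and you correctly isolate and resolve the one genuinely nontrivial point: the $1$-equivalence controls $gf$ and $fg$ only on $0$-skeletons, but since $X^0$ is discrete, properness of $H\colon X^0\times[0,1]\to X^1$ is exactly the statement that only finitely many connecting paths $\gamma_v$ meet any compact $C$, and properness of $r$ then pushes the finitely many offending vertices past some $N$, so the tails of $r$ and $gf\circ r$ are joined inside $X^1-C$ by some $\gamma_{v_n}$ and hence lie in the same unbounded component.
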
  

\newpage

\subsection{Ends of Groups}\label{Egps}
Ends of groups were introduced by H. Hopf \cite{H44} and H. Freudenthal \cite{F45}.

\begin{definition} \label{Cayley}
Suppose $S$ is a generating set for the group $G$. The {\it Cayley graph} $\Gamma(G,S)$ is a labeled locally finite 1-complex with vertex set $G$. There is an edge between vertices $a$ and $b$ if and only if there is an element $s\in S$ such that $as=b$. The label of the edge $(a,b)$ is $s$. 
\end{definition}

\begin{definition}\label{DQIG} 
Suppose $G$ and $H$ are finitely generated groups. The group $G$ is quasi-isometric to $H$ if for some (equivalently any by Theorem \ref{QIEFG}) finite generating sets $A$ and $B$ for $G$ and $H$ respectively, $\Gamma(G,A)$ is quasi-isometric to $\Gamma(H,B)$. 
\end{definition}

Suppose $X$ is a connected locally finite graph and $G$ is a finitely generated group acting properly and cocompactly by graph isomorphisms on $X$, (in particular, if $X$ is the Cayley graph on a finite generating set for $G$). Let   $\ast$ be a vertex in $X$. Suppose $A$ is a finite generating set for $G$ and for each $a\in A$, $\alpha_a$ is a shortest edge path from $\ast$ to $a\ast$. Let $M(X,A):\Gamma(G,A)\to X$ as follows: If $v\in G$ is a vertex of $\Gamma$, then $M(v)=v\ast$. If $e$ is an edge between vertices $v$ and $av$ in $\Gamma$  (for some $a\in A$), then let $M(e)$ be the edge path $v\alpha_a$ between $v\ast$ and $av\ast$. The map $M(X,A)$ is a quasi-isometry from $\Gamma(G,A)$ to  $X$. Theorem \ref{QIE} implies $\Gamma(G,A)$ and $ X$ have the same number of ends (for any finite generating set $A$ of $G$). The following definition is now consistent:

%For any set $A$, let $F_A$ be the free group on $A$. If $A$ is a generating set for the group $G$, let $M(A,G):F_A\to G$ be the map induced by $a\to a$ for all $a\in A$. 

%Suppose $A$ and $B$ are finite generating sets for the group $G$. For each $a$ in $A$ let $\beta_a\in F(B)$ be such that $M(B,F)(\beta_a)=a$. For each $b$ in $B$ let $\alpha_b\in F(A)$ be such that $M(A,F)(\alpha_b)=a$. Then if $v$ is a vertex of $\Gamma(B,G)$ the edge path at $v$ with labeling $\beta_a$ ends at $va$. Similarly for $\Gamma(A,G)$. 

%Consider the map $e(A,B):\Gamma(A,G)\to \Gamma(B,G)$ such that $e(v)=v$ for all vertices $v\in G$. If $a\in A$, then for each edge $(v,va)$ of $\Gamma(A,G)$, define $e$ to be a piecewise linear map from $(v,va)$ to the edge path at $v\in \Gamma(B,G)$ with label $\beta_a$.

%The next theorem explains why the Cayley graphs of a finitely generated group on two different finite sets of generators have the same space of ends and hence the same number of ends. 

%\begin{theorem}\label{NoEG} %(Proposition 13.5.1,\cite{G}) 
%{\bf(NoEG)} 
%Suppose $G$ is a finitely generated group. Let $X$ be any locally finite graph such that $G$ acts properly by graph isomorphisms on $X$. Then $e(A,B):\Gamma(A,G)\to \Gamma(B,G)$ is a proper 1-equivalence with inverse $e(B,A)$. In particular $\Gamma(A,G)$ and $\Gamma(B,G)$ have the same number of ends.
%\end{theorem}

\begin{definition}\label{NuE} 
If $G$ is a finitely generated group, define the number of ends of $G$ to be the number of ends of $\Gamma(G,A)$ for any finite generating set $A$ for $G$. The space of ends of $G$, denoted $\mathcal E(G)$, is the space of ends of $\Gamma(G,A)$. 
\end{definition}

We immediately have: 

\begin{theorem} [Corollary 2.3, \cite{BR93}] \label{QIEFG} 
The number of ends of a finitely generated group is a quasi-isometry invariant of the group. 
\end{theorem}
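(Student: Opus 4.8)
The plan is to reduce the statement directly to Theorem \ref{QIGra}, its graph-level counterpart, by unwinding what quasi-isometry means for groups. First I would recall Definition \ref{DQIG}: saying that $G$ is quasi-isometric to $H$ means precisely that there exist finite generating sets $A$ of $G$ and $B$ of $H$ for which the Cayley graphs $\Gamma(G,A)$ and $\Gamma(H,B)$ are quasi-isometric as metric spaces. Thus the hypothesis already supplies a quasi-isometry between two specific graphs, and there is no work needed to produce one.

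Next I would verify that these Cayley graphs satisfy the hypotheses of Theorem \ref{QIGra}: each is connected, because $A$ and $B$ are generating sets, and each is locally finite, because $A$ and $B$ are finite. Theorem \ref{QIGra} (Proposition 2.2 of \cite{BR93}) then immediately yields that $\Gamma(G,A)$ and $\Gamma(H,B)$ have the same number of ends. Finally I would invoke Definition \ref{NuE}, by which the number of ends of $G$ is the number of ends of $\Gamma(G,A)$, and likewise for $H$ and $\Gamma(H,B)$; the equality of end counts for the two graphs then transfers verbatim to the equality of the number of ends of the two groups.

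There is essentially no serious obstacle, since the real content has been front-loaded into Theorem \ref{QIGra} and into the legitimacy of Definition \ref{NuE}. The only point deserving explicit mention is that Definition \ref{DQIG} asserts merely the existence of \emph{some} pair of finite generating sets with quasi-isometric Cayley graphs; to conclude a statement about the groups themselves one must know that the number of ends of a finitely generated group is independent of the chosen finite generating set. That independence is exactly what the quasi-isometry $M(X,A)$ discussion preceding Definition \ref{NuE}, combined with Theorem \ref{QIE}, establishes, so the argument closes without further hypotheses.
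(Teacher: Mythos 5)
Your proposal is correct and matches the paper's own (implicit) argument: the paper derives this theorem immediately from Definition \ref{DQIG}, the well-definedness of Definition \ref{NuE} (established via the map $M(X,A)$ and Theorem \ref{QIE}), and the graph-level invariance of Theorem \ref{QIGra}, exactly as you do. Your explicit attention to the independence of the number of ends from the choice of finite generating set is the same point the paper addresses in the discussion preceding Definition \ref{NuE}.
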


\begin{theorem} Suppose $K$ is a finite connected complex and $\tilde K$ is the universal cover of $K$ then $\tilde K$ has the same number of ends as $\pi_1(K)$. 
\end{theorem}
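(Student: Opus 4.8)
The plan is to recognize this statement as an instance of the Milnor--\v{S}varc principle already packaged in the discussion preceding Definition \ref{NuE}, and to reduce everything to $1$-skeleta. Write $G=\pi_1(K)$. Since $K$ is a finite complex, $G$ is finitely presented, and in particular finitely generated, so the number of ends of $G$ is defined via Definition \ref{NuE}. The universal cover $\tilde K$ inherits a CW structure from $K$ (each cell of $K$ is evenly covered), and because $K$ has only finitely many cells and the covering projection is a local homeomorphism, $\tilde K$ is connected and locally finite. The group $G$ acts on $\tilde K$ by deck transformations; this action is cellular, free, properly discontinuous, and cocompact, the last point because $\tilde K/G=K$ is compact.

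First I would pass to $1$-skeleta. By Theorem \ref{E1sk}, the number of ends of $\tilde K$ equals the number of ends of its $1$-skeleton $\tilde K^1$, which is a connected (the $1$-skeleton of a connected CW complex is connected) locally finite graph. Since deck transformations are cellular they preserve $\tilde K^1$, and the restriction of the action to this $G$-invariant subcomplex is again an action by graph isomorphisms that is proper and cocompact (with $\tilde K^1/G=K^1$ compact).

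Next I would invoke the quasi-isometry constructed in the discussion preceding Definition \ref{NuE}: for any finite generating set $A$ of $G$, the map $M(\tilde K^1,A):\Gamma(G,A)\to \tilde K^1$ is a quasi-isometry, so by Theorem \ref{QIE} the graphs $\Gamma(G,A)$ and $\tilde K^1$ have the same number of ends. By Definition \ref{NuE} the number of ends of $\Gamma(G,A)$ is precisely the number of ends of $G=\pi_1(K)$. Chaining the three equalities---ends of $\tilde K$ equal ends of $\tilde K^1$, equal ends of $\Gamma(G,A)$, equal ends of $\pi_1(K)$---finishes the argument.

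The only real content, and the step I expect to be the main obstacle, is verifying that the deck action on $\tilde K^1$ meets the exact hypotheses (proper, cocompact, by graph isomorphisms, on a connected locally finite graph) required to apply that discussion. These are standard covering-space facts, but they deserve a careful check: local finiteness of $\tilde K$ from finiteness of $K$, cocompactness from $\tilde K/G\cong K$, and properness from the properly discontinuous (here even free) nature of the deck action. Everything after that verification is citation.
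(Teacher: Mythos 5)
Your proposal is correct and follows essentially the same route as the paper's own proof: the paper also applies the quasi-isometry $M(\tilde K^1,A):\Gamma(G,A)\to \tilde K^1$ from the discussion preceding Definition \ref{NuE} together with Theorem \ref{QIE} (and implicitly Theorem \ref{E1sk}) to identify the ends of $\tilde K$ with those of $\pi_1(K)$. Your additional verification of the hypotheses on the deck action (local finiteness, connectedness, properness, cocompactness) is a more careful spelling-out of what the paper leaves implicit, not a different argument.
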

\begin{proof}
If $A$ is a finite generating set for $\pi_1(K)$, then the map (defined above) $M(\tilde K^1,A):\Gamma(G,A)\to \tilde K^1$ is a quasi-isometry. Theorem \ref{QIE} implies $\tilde K$ and $\tilde K^1$ have the same number of ends.
\end{proof}

\begin{theorem}\label{E3inf} 
Suppose a group $G$ acts (by homeomorphisms) on a connected, locally connected, locally compact Hausdorff space $X$, such that for each compact set $C\subset X$ there is an element $g_C\in G$ such that $g_C(C)\cap C=\% !TEX spellcheck = emptyset$. Then $X$ has $0$, $1$, $2$ or infinitely many ends.
\end{theorem}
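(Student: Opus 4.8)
The values $0,1,2$ are permitted, so the real content is that a \emph{finite} number of ends cannot exceed $2$; equivalently, if $X$ has $n$ ends with $3\le n<\infty$ we must reach a contradiction. The plan is to fix a compact set realizing the maximal number $U(C)=n$ of unbounded complementary components (such a set exists by Definition~\ref{DefEnd}), to translate it off itself using the hypothesis on the action, and then to show that the union of the set with its translate has strictly more than $n$ unbounded complementary components, which is impossible once $n$ is the maximum.

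First I would normalize the separating set. Given any compact $C_0$ with $U(C_0)=n$, I would enlarge it to a compact \emph{connected} set $C_1\supseteq C_0$; this is where connectedness, local connectedness and local compactness of $X$ are used, since every point has a neighbourhood basis of connected open sets with compact closure, and in a connected, locally connected space any two points lie in a common compact connected set, so a finite subcover argument encloses $C_0$ in a compact connected set. By monotonicity of $U$ and maximality of $n$ we still have $U(C_1)=n$. Then, using Theorem~\ref{ES}, I would adjoin to $C_1$ all bounded components of $X-C_1$ to obtain a compact \emph{connected} set $C$ with $U(C)=n$ and with \emph{no} bounded complementary components; write $X-C=E_1\sqcup\cdots\sqcup E_n$ for its (open, connected, unbounded) components.

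Now I invoke the action. Choose $g=g_C$ with $g(C)\cap C=\emptyset$. Being a homeomorphism, $g$ carries $X-C=\bigsqcup E_i$ to $X-gC=\bigsqcup gE_i$, again $n$ open connected unbounded pieces. Since $gC$ is connected and disjoint from $C$ it lies in a single component, say $gC\subseteq E_1$; dually $C\subseteq gE_j$ for a unique $j$. The key geometric claim is that \emph{every} $gE_i$ with $i\ne j$ lies in $E_1$: indeed such a $gE_i$ is connected and disjoint from $C$ (as $C\subseteq gE_j$), hence is contained in one $E_k$, while its frontier lies in $gC\subseteq E_1$; this forces $k=1$, since any other choice would make the frontier of $gE_i$ empty and hence $gE_i=X$. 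I expect this containment claim to be the main obstacle, as it is precisely the point requiring a careful argument with frontiers of complementary components in a merely locally connected Hausdorff space rather than in a graph.

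Finally I would count the unbounded components of the compact set $C\cup gC$. The pieces $E_2,\dots,E_n$ are untouched by deleting $gC\subseteq E_1$ and remain components of $X-(C\cup gC)$, giving $n-1$ unbounded ones; and inside $E_1$ the $n-1$ sets $gE_i$ with $i\ne j$ are disjoint unbounded components of $E_1-gC$, hence of $X-(C\cup gC)$, by the claim above. These two families are disjoint, so $U(C\cup gC)\ge 2(n-1)=2n-2$. Since $n\ge 3$ gives $2n-2>n$, this contradicts the maximality of $n=U(C)$. Therefore no finite $n\ge 3$ can occur, and $X$ has $0$, $1$, $2$, or infinitely many ends. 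The hypothesis on the action enters only through the translate $g_C$; without it a space such as a tripod genuinely has exactly three ends, which shows the hypothesis is essential.
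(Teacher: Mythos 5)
Your proof is correct and follows essentially the same route as the paper's: translate the compact set off itself via $g_C$, use the fact that the frontier of each complementary component lies in the deleted compact set to show the translated components sit inside a single original component, and conclude that $X-(C\cup g_C(C))$ has at least $2n-2\geq n+1$ disjoint unbounded components, contradicting maximality of $n$. The one place you go beyond the paper is your preliminary enlargement of $C$ to a compact \emph{connected} set (your local-compactness/local-connectedness argument for this is sound); the paper instead simply asserts ``without loss assume $gC\subset U_1$,'' which is justified only when $gC$ is connected, so your normalization supplies a step the paper's write-up actually needs rather than changing its strategy.
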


\begin{proof}
Suppose $X$ has at least $n(\geq 3)$ ends, then it is enough to show that $X$ has at least $n+1$ ends. Otherwise, there is a compact subset $C$ of $X$ such that $X-C$ has $n$ unbounded components but no compact subset $D$ of $X$ such that $X-D$ has $n+1$ unbounded components. Theorem \ref{ES} implies that we may assume $X-C$ is a union of unbounded components $U_1,\ldots, U_n$. Choose $g\in G$ such that $gC\cap C=\emptyset$. Without loss assume $gC\subset U_1$. Then $U_2,\ldots, U_n$ are unbounded connected subsets of $X-(C_1\cup gC_1)$. Certainly for $i\geq 2$, $U_i$ is not properly contained in a connected subset of $X-(C_1\cup C_2)$. So, for $i\geq 2$, $U_i$ is an unbounded component of $X-(C_1\cup C_2)$. Since $g$ is a homeomorphism, $X-gC$ is the union of disjoint unbounded connected sets $V_1,\ldots, V_n$. Without loss, assume that $C\subset V_1$. As above, for $i\geq 2$, $V_i$ is an unbounded component of $X-(C\cup gC)$. 

If $u$ is a point in the topological boundary of $U_i$, then $U_i\cup\{u\}$ is connected. Since $U_i$ is a maximal connected subset of $X-C$, $u\in C$. Hence the topological boundary of $U_i$ is a (non-empty) subset of $C$ and so for $i\geq 2$, the closure of $U_i$ in $X$ is a connected subset of $X-gC$ that contains a point of $V_1$. This implies that for $i\geq 2$, $U_i\subset V_1$. Similarly, for $i\geq 2$, $V_i\subset U_1$. This implies that the $2n-2$ sets $U_2,\ldots, U_n, V_2,\ldots, V_n$ are disjoint. Since $n\geq 3$, $2n-2=n+(n-2)\geq n+1$.
\end{proof}

\begin{corollary}
The number of ends of a finitely generated group is 0, 1, 2 or $\infty$. If a group has infinitely many ends, then the space of ends is a Cantor set.
\end{corollary}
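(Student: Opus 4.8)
The plan is to reduce both assertions to the Cayley graph $X = \Gamma(G,A)$ for a finite generating set $A$, since by Definition~\ref{NuE} the number of ends and the space of ends of $G$ are those of $X$, and $G$ acts on $X$ by graph isomorphisms (hence homeomorphisms), transitively on vertices. For the first assertion I would dispose of the finite case first: if $G$ is finite then $X$ is a finite graph, hence compact, with $0$ ends. If $G$ is infinite I would verify the hypothesis of Theorem~\ref{E3inf} for this action. Given a compact $C \subseteq X$, it lies in some ball $B_n(\ast)$; since $G$ is infinite there is $g\in G$ with $d(\ast, g\ast) > 2n$, and then $gC \subseteq B_n(g\ast)$ is disjoint from $B_n(\ast) \supseteq C$, so $gC \cap C = \emptyset$. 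Theorem~\ref{E3inf} then yields that $X$, and hence $G$, has $0$, $1$, $2$, or infinitely many ends.

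For the second assertion, assume $G$ has infinitely many ends, so $\mathcal E(G) = \mathcal E(X)$ is infinite. It was established earlier that $\mathcal E(X)$ is compact, totally disconnected, and metrizable. By the Brouwer characterization of the Cantor set---a nonempty, perfect, compact, totally disconnected, metrizable space is homeomorphic to the Cantor set---it suffices to prove that $\mathcal E(X)$ has no isolated points. I would argue by contradiction, exploiting the homogeneity of $X$ under the $G$-action.

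The mechanism is as follows. An isolated end $e$ means there is a ball $C = B_m(\ast)$ and an unbounded component $K$ of $X - C$ such that $e$ is the unique end whose $C$-component is $K$; equivalently, $K$ contains exactly one end. Since $X$ has infinitely many ends, choose a ball $D = B_r(\ast)$ with at least three unbounded components in $X - D$. Pick a vertex $v$ deep inside $K$ with $d(v,\ast) > m + r + 1$, and choose $g \in G$ with $g\ast = v$ (possible by vertex-transitivity). The distance bound guarantees $gD = B_r(v) \subseteq K$ and $C \cap B_r(v) = \emptyset$, and $X - gD$ has the same number ($\geq 3$) of unbounded components $gV_1, gV_2, \dots$ as $X - D$.

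The key geometric step---and the main obstacle---is to show that all but one of these translated components lie inside $K$. Let $gV_1$ be the component of $X - gD$ containing $\ast$. Since $C$ is connected, disjoint from $gD$, and contains $\ast$, it lies entirely in $gV_1$, so for $i \geq 2$ we have $gV_i \cap C = \emptyset$ and thus $gV_i \subseteq X - C$. Moreover each $gV_i$ is adjacent to $B_r(v) \subseteq K$, and every vertex neighboring $B_r(v)$ lies outside $C$ by the distance bound on $v$; hence $gV_i$ meets $K$, so the connected set $gV_i$ is contained in $K$. As at least two such components $gV_2, gV_3$ exist, are disjoint, and each unbounded component contains at least one end (by Theorem~\ref{ES} together with compactness of the associated inverse limit), $K$ contains at least two distinct ends---contradicting that $K$ contains exactly one. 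Therefore $\mathcal E(X)$ is perfect and hence a Cantor set. The delicate points to get right are the distance bookkeeping ensuring $B_r(v) \subseteq K$ and $C \cap B_r(v) = \emptyset$, and the adjacency argument placing the inner components inside $K$.
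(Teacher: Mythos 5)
Your proposal is correct. For the first assertion you do exactly what the paper intends: the corollary is stated immediately after Theorem \ref{E3inf} with no written proof, and the intended argument is precisely your verification that an infinite finitely generated group acting on its Cayley graph moves any compact set off itself (balls are finite, so pick $g$ with $d(\ast,g\ast)>2n$ when $C\subseteq B_n(\ast)$). For the second assertion the paper supplies no argument at all, so your perfectness proof is genuinely additional content rather than a restatement. Your route --- Brouwer's characterization of the Cantor set plus a translation argument pushing a ball $D$ with at least three unbounded complementary components deep into the component $K$ of a putatively isolated end --- checks out: the distance bookkeeping ($d(v,\ast)>m+r+1$ forces $B_r(v)\subseteq K$ and $B_r(v)\cap C=\emptyset$), the observation that the connected set $C$ lies in a single component $gV_1$ of $X-gD$, the adjacency argument placing the remaining components $gV_2, gV_3$ inside $K$, and the fact that every unbounded component carries at least one end (an inverse limit of nonempty finite sets is nonempty, via Theorem \ref{ES}) are all sound, and together they contradict the isolation of the end determined by $K$. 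It is worth noting that your argument is close in spirit to the paper's own proof of Theorem \ref{E3inf}, and to the branching statement the paper later extracts from that proof in the theorem on density of the orbit $GE$ (every unbounded component of $\Gamma-B_{N_i}(\ast)$ contains at least two unbounded components of $\Gamma-B_{N_{i+1}}(\ast)$); that branching statement gives perfectness immediately, and your translation argument is essentially a self-contained derivation of it.
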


\begin{theorem}
A finitely generated group $G$ has 0-ends if and only if $G$ is finite.
\end{theorem}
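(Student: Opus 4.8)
The plan is to reduce everything to the Cayley graph and exploit local finiteness. By Definition \ref{NuE}, the number of ends of $G$ equals the number of ends of $X=\Gamma(G,A)$ for any finite generating set $A$, and $X$ is a connected, locally finite graph with vertex set $G$. So it suffices to show that $X$ has $0$ ends if and only if $G$ is finite.

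For the forward (easy) direction, I would argue that if $G$ is finite then $X$ is a finite graph, hence compact. In a compact space every component of $X-C$ has compact closure, so $U(C)=0$ for every compact $C$, and therefore $X$ has $0$ ends. This is exactly the observation recorded after the definition of the space of ends, namely that $X$ is compact if and only if it has $0$ ends.

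For the converse I would prove the contrapositive: if $G$ is infinite then $X$ has at least one end. Fix a base vertex $\ast$ and let $C$ be a finite subcomplex containing $\ast$ (for instance its closed star). Since $X$ is locally finite and $C$ is finite, only finitely many edges meet $C$; as $X$ is connected, every component of $X-C$ is joined to $C$ by such an edge, and distinct components use distinct edges, so $X-C$ has only finitely many components. If every component of $X-C$ were bounded (a finite subcomplex), then $X=C\cup(\text{finitely many finite subcomplexes})$ would be finite, contradicting that $X$ has the infinite vertex set $G$. Hence some component of $X-C$ is unbounded, so $U(C)\geq 1$ and $G$ has at least one end.

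The main obstacle, such as it is, lies in the converse, and specifically in using local finiteness to guarantee that $X-C$ has only finitely many components; without it one could not rule out infinitely many bounded pieces exhausting an infinite graph. An alternative route for the converse that I would keep in reserve is to invoke Lemma \ref{geodesicline}: since $G$ acts transitively on the (infinite, locally finite, connected) vertex set of $X$, there is a geodesic line through $\ast$, and this proper line escapes every compact set, so $X$ is non-compact and hence has at least one end.
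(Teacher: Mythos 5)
Your proof is correct. In fact, the paper states this theorem with no proof at all, treating it as elementary, so your argument simply supplies details the paper omits. Both directions are sound: for a finite group the Cayley graph is a finite complex, hence compact, so $U(C)=0$ for every compact $C$; and for an infinite group your use of local finiteness to show $X-C$ has only finitely many components, so that not all of them can be bounded, is exactly right (the injection from components to edges meeting $C$ works because $C$ is a subcomplex, so an edge not contained in $C$ meets only one component). Two small remarks. First, the converse can be shortened by citing Theorem \ref{ES} directly: if every component of $X-C$ were bounded, that theorem says $C$ together with all bounded components is compact, i.e.\ $X$ itself would be compact, contradicting that the vertex set $G$ is an infinite closed discrete subset; so $U(C)\geq 1$ for \emph{every} compact $C$. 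Second, your reserve argument via Lemma \ref{geodesicline} also works, but it is heavier machinery than needed, since non-compactness of $X$ alone already forces at least one end by the observation above (or even by noting that $X$ is an unbounded component of $X-\emptyset$).
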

 
 \begin{theorem} The group $G$ has 2-ends if and only if any one of the following statements holds:
 
 (1) $G$ has an infinite cyclic subgroup of finite index and so any infinite cyclic subgroup of $G$ has finite index in $G$. (Originally due to H. Hopf \cite{H44} - See for instance Theorem 13.5.9, \cite{G}.)
 
 (2) $G$ has a normal infinite cyclic subgroup of finite index. 
 
 (3) $G$ has a finite normal subgroup $N$ such that $G/N$ is isomorphic to $\mathbb Z$ or $\mathbb Z_2\ast \mathbb Z_2$. (Lemma 4.1, \cite{Wall67}) or (Theorem 5.1, \cite{St68})
 \end{theorem}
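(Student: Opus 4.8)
The plan is to prove the four-way equivalence as a cycle, isolating the one genuinely geometric implication. The easy bookkeeping is this: (2) $\Rightarrow$ (1) is immediate, and conversely (1) $\Rightarrow$ (2) by passing to the \emph{normal core} $\bigcap_{g\in G} g\langle a\rangle g^{-1}$ of a finite-index $\langle a\rangle\cong\mathbb Z$, since the core is normal and of finite index in $G$ and, being a nontrivial subgroup of $\langle a\rangle\cong\mathbb Z$, is itself infinite cyclic. The ``so any infinite cyclic subgroup has finite index'' clause follows from a pigeonhole argument: if $[G:\langle a\rangle]=k$ and $b$ has infinite order, two of $b,b^2,\dots,b^{k+1}$ lie in the same coset of $\langle a\rangle$, so $\langle a\rangle\cap\langle b\rangle$ is nontrivial, hence of finite index in $\langle a\rangle$ and therefore in $G$, forcing $[G:\langle b\rangle]<\infty$. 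For the ``$\Rightarrow 2$-ended'' directions I would use that the number of ends is a quasi-isometry invariant (Theorem \ref{QIEFG}): a finite-index $\mathbb Z$ makes $G$ quasi-isometric to $\mathbb Z$, and in (3) the finite normal $N$ makes $G$ quasi-isometric to $G/N\in\{\mathbb Z,\ \mathbb Z_2*\mathbb Z_2\}$; all of these are $2$-ended.

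For (2) $\Rightarrow$ (3) I would let conjugation on the normal $\langle a\rangle\cong\mathbb Z$ give a homomorphism $G\to\mathrm{Aut}(\mathbb Z)=\mathbb Z_2$, whose kernel $Z_0=C_G(a)$ has index $1$ or $2$ and contains $\langle a\rangle$ as a central finite-index subgroup. By Schur's theorem a central-by-finite group has finite commutator subgroup, so taking $N$ to be the preimage in $Z_0$ of the (finite) torsion subgroup of the finitely generated abelian group $Z_0/[Z_0,Z_0]$ yields a finite subgroup, characteristic in $Z_0$ and hence normal in $G$, with $Z_0/N\cong\mathbb Z$. If $G=Z_0$ this gives $G/N\cong\mathbb Z$; if $[G:Z_0]=2$ the nontrivial coset inverts the generator, and a short computation ($y^2\in\langle x\rangle$ together with $yxy^{-1}=x^{-1}$ forces $x^{2k}=1$, hence $y^2=1$) identifies $G/N$ with $\langle x,y\mid yxy^{-1}=x^{-1},\ y^2=1\rangle\cong\mathbb Z_2*\mathbb Z_2$.

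The substance is the single implication $2$-ended $\Rightarrow$ (1). I would work in a Cayley graph $\Gamma=\Gamma(G,A)$, on which $G$ acts freely, transitively on vertices, cocompactly, and by isometries. Fix a finite connected $K$ with $\Gamma-K=E^+\sqcup E^-$ its two unbounded components (Theorem \ref{ES} lets me absorb all bounded components into $K$). Since $G^+$, the index-$\leq 2$ subgroup fixing each end, has coarsely dense orbit, I can choose $g\in G^+$ with $g\ast$ so deep in $E^+$ that $gK\subset E^+$ and $gK\cap K=\emptyset$; because $g$ fixes $e^+$ this forces the strict nesting $gE^+\subsetneq E^+$ of halfspaces, and strictness shows the $g^nE^+$ are all distinct, so $g$ has infinite order. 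The key point is that the ``annulus'' $E^+\setminus gE^+$ is \emph{bounded}: any unbounded connected subset of $\Gamma$ contains a proper ray and hence converges to $e^+$ or $e^-$, but to reach either end from the region trapped between the finite separating sets $K$ and $gK$ one would have to cross $gK$ or $K$ — this is exactly where having only two ends is used. Boundedness of this fundamental annulus gives $\Gamma=B_C(\langle g\rangle\ast)$ for some $C$, so $\langle g\rangle$ acts cocompactly; then, since $G$ acts freely and transitively on the vertex set, the number of $\langle g\rangle$-orbits of vertices equals $[G:\langle g\rangle]$ and is finite. Thus $\langle g\rangle\cong\mathbb Z$ has finite index, establishing (1).

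The main obstacle is this last implication, and within it the two geometric claims that must be pinned down carefully: that $g$ can be chosen in $G^+$ to genuinely \emph{translate}, producing the strict nesting $gE^+\subsetneq E^+$ rather than an inclusion the wrong way or a swap of the ends, and that the resulting fundamental annulus $E^+\setminus gE^+$ is bounded. Both rest on the hypothesis of exactly two ends; Lemma \ref{geodesicline} and Theorem \ref{line=2} supply the complementary picture (a coarsely dense geodesic line is equivalent to two-endedness) and can be used to cross-check the construction. Everything else is either immediate or a standard application of quasi-isometry invariance and Schur's theorem.
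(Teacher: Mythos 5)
The paper never proves this theorem: it is stated as a catalogue result, with (1) attributed to Hopf (via Theorem 13.5.9 of \cite{G}) and (3) to Wall and Stallings. So there is no internal proof to compare yours against; what you have supplied is a self-contained proof of precisely the kind those citations point to, and it is correct. Its architecture is the classical one: quasi-isometry invariance of the number of ends (Theorem \ref{QIEFG}) disposes of every ``implies 2-ended'' direction; normal core and pigeonhole give $(1)\Leftrightarrow(2)$; Schur's theorem, the characteristic finite subgroup $N$, and the index-2 analysis give $(2)\Rightarrow(3)$; and the nested-halfspace argument carries the one substantive implication, 2-ended $\Rightarrow(1)$. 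Relative to the paper, your write-up adds genuine content, since the paper's reader must otherwise consult Hopf, Wall, or Stallings.

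Two points in the geometric implication deserve to be pinned down when you write it out. First, the strict nesting: ``$g$ fixes $e^+$'' alone is not the mechanism. The clean argument is that $K\cup E^-$ is connected, is disjoint from $gK$ (because $gK\subset E^+$), and meets $gE^-$ (tails of rays to $e^-$ lie in $E^-\cap gE^-$ since $g\in G^+$); hence $K\cup E^-\subset gE^-$, so $gE^+$ is a connected subset of $\Gamma-K$ meeting $E^+$ near $e^+$, giving $gE^+\subset E^+$, strict because $gK\subset E^+$ is disjoint from $gE^+$. This simultaneously kills your ``inclusion the wrong way'' worry. Relatedly, $E^+\cap gE^-$ need not be connected, so run the ray argument on an unbounded component of $\Gamma-(K\cup gK)$ inside it, using Theorem \ref{ES} to know the union of bounded components is bounded. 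Second---and this is the one step you assert as immediate that is not---boundedness of the annulus $A=E^+\setminus gE^+$ does not by itself yield $\Gamma=B_C(\langle g\rangle\ast)$: you also need the exhaustion $E^+=\bigcup_{n\ge 0}g^nA$, equivalently $\bigcap_{n\ge 0}g^nE^+=\emptyset$, together with the mirror statement for $E^-$ using the annulus $E^-\setminus g^{-1}E^-$ of $g^{-1}$. This is true and short: a point $x\in\bigcap_{n\ge 0}g^nE^+$ is separated from $\ast\in K\subset g^nE^-$ by $g^nK$ for every $n\ge 1$, so a finite path from $\ast$ to $x$ would have to meet infinitely many pairwise disjoint translates $g^nK$, which is impossible. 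With those details inserted, your proof is complete.
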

 
 \begin{corollary}
 If $H$ is a subgroup of a 2-ended group, then $H$ is either 2-ended or finite. 
 \end{corollary}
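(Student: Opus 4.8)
The plan is to reduce everything to the structure of two-ended groups supplied by part (1) of the preceding theorem: a two-ended group $G$ contains an infinite cyclic subgroup $C$ of finite index. I would fix such a $C$ once and for all and then analyze an arbitrary subgroup $H\le G$ through its intersection $H\cap C$.

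The first step is the elementary but crucial observation that $H\cap C$ has finite index in $H$. Indeed, the assignment $h(H\cap C)\mapsto hC$ is a well-defined injection from the left cosets of $H\cap C$ in $H$ into the left cosets of $C$ in $G$: if $h_1C=h_2C$ then $h_2^{-1}h_1\in C$, and since it also lies in $H$ we get $h_2^{-1}h_1\in H\cap C$, so $h_1(H\cap C)=h_2(H\cap C)$. Hence $[H:H\cap C]\le [G:C]<\infty$, and note that no normality of $C$ is required for this.

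The second step is to use that $H\cap C$ is a subgroup of the infinite cyclic group $C$, so it is either trivial or itself infinite cyclic. If $H\cap C$ is trivial, then $H$ is the union of $[H:H\cap C]<\infty$ singletons and so is finite. If $H\cap C$ is infinite cyclic, then $H$ contains an infinite cyclic subgroup of finite index; in particular $H$ is finitely generated, so that its number of ends is even defined, and by part (1) of the preceding theorem $H$ is two-ended. These two cases are exhaustive and give the conclusion.

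I do not anticipate a serious obstacle. The only point requiring a little care is that $H$ is not assumed finitely generated in advance, so one must notice that finite generation is forced in the infinite cyclic case (and finiteness outright in the other); this is exactly what makes the phrase ``two-ended'' legitimate for $H$. The index inequality $[H:H\cap C]\le[G:C]$ in the first step is what carries the whole argument, and it is the intersection of an arbitrary subgroup with a \emph{finite-index} subgroup that makes it work.
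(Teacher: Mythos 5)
Your proof is correct and follows essentially the same route as the paper: intersect $H$ with the finite-index infinite cyclic subgroup guaranteed by part (1) of the preceding theorem, note that this intersection has finite index in $H$, and conclude $H$ is finite or 2-ended. The paper's proof is just a terser version of yours, leaving the index inequality $[H:H\cap C]\le[G:C]$ and the case analysis implicit.
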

 \begin{proof}
 If $H$ is infinite, then let $g\in G$ be such that $\langle g\rangle$ has finite index in $G$. Then $\langle g\rangle\cap H$ has finite index in $H$.
 \end{proof}

 \begin{theorem} [See Theorem 13.5.5, \cite{G}] \label{HEnds} 
If $R$ is a PID, then the number of ends of a finitely generated infinite group $G$ is equal to 
$$rank_{R}(H^1(G, RG))+1.$$ 
 \end{theorem}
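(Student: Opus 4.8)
The plan is to bypass ends directly and instead compute everything through the coinduced module $R^G$ of all functions $G\to R$, inside which $RG$ (the finitely supported functions) sits as a $G$-submodule under the translation action. First I would record the two homological inputs. Since $R^G=\mathrm{Coind}_{1}^{G}R$, Shapiro's lemma gives $H^0(G,R^G)\cong R$ and $H^n(G,R^G)=0$ for $n\ge 1$, with no finiteness hypothesis on $G$. Also, because $G$ is infinite, $H^0(G,RG)=(RG)^G=0$. Feeding these into the long exact cohomology sequence of $0\to RG\to R^G\to R^G/RG\to 0$ collapses it to
$$0\longrightarrow R\longrightarrow H^0(G,R^G/RG)\longrightarrow H^1(G,RG)\longrightarrow 0 .$$
Since rank over the PID $R$ is additive on short exact sequences, this yields $\mathrm{rank}_R H^0(G,R^G/RG)=1+\mathrm{rank}_R H^1(G,RG)$, so it suffices to prove that $\mathrm{rank}_R H^0(G,R^G/RG)$ equals the number of ends of $G$.

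Next I would unwind $H^0(G,R^G/RG)=(R^G/RG)^G$: it is precisely the module of \emph{almost invariant} $R$-valued functions (those $f\colon G\to R$ with $f-g\cdot f$ finitely supported for every $g\in G$) modulo the finitely supported functions. I would then interpret this on the Cayley graph $\Gamma=\Gamma(G,A)$ for a finite generating set $A$, whose number of ends equals that of $G$. Almost invariance is equivalent to saying that for each $a\in A$ the set of vertices $x$ with $f(x)\ne f(xa)$ is finite; hence there is a finite subcomplex $C$ outside of which $f$ is constant along every edge, i.e. constant on each component of $\Gamma-C$.

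The geometric heart of the argument is to promote this to a clean isomorphism, and here Theorem \ref{ES} is the key tool. Given an almost invariant $f$ with associated finite set $C$, Theorem \ref{ES} guarantees that $\Gamma-C$ has only finitely many unbounded components and that $C$ together with all bounded components is compact, hence a finite vertex set. Assigning to each end the eventual constant value of $f$ on the unbounded component carrying it defines a locally constant function $\hat f$ on the space of ends $\mathcal E(G)$, and I would check that this descends to an $R$-isomorphism
$$\{\text{almost invariant }f\}/\{\text{finitely supported }f\}\ \xrightarrow{\ \cong\ }\ \{\text{locally constant }R\text{-valued functions on }\mathcal E(G)\}.$$
Injectivity uses the compactness half of Theorem \ref{ES}: if $\hat f=0$ then $f$ vanishes off the finite set consisting of $C$ and its bounded components, so $f$ is finitely supported. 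Surjectivity is by explicit lifting: a function that is constant on the finitely many unbounded components of some $\Gamma-C$ is realized by setting $f$ equal to those constants and $0$ on the finite remainder.

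Finally I would compute the rank of the target. When $G$ has finitely many ends, $\mathcal E(G)$ is discrete with $e(G)$ points, so the locally constant functions form the free module $R^{e(G)}$ of rank $e(G)$; when $G$ has infinitely many ends, $\mathcal E(G)$ is a Cantor set and the locally constant $R$-valued functions form a free $R$-module of countably infinite rank, so the rank is $\infty=e(G)$. In either case $\mathrm{rank}_R H^0(G,R^G/RG)=e(G)$, and combining this with the short exact sequence above gives $e(G)=\mathrm{rank}_R H^1(G,RG)+1$. The main obstacle is the geometric identification in the third step: making precise that almost invariance forces genuine eventual constancy on the finitely many unbounded components, and that nothing survives modulo finite support, both of which rest squarely on Theorem \ref{ES}; by contrast the surrounding homological bookkeeping is routine.
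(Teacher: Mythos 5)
Your proof is correct, but note that the paper itself gives no argument for this theorem at all: it is stated with a citation to Theorem 13.5.5 of Geoghegan's book, so there is no in-paper proof to match. What you have written is the classical algebraic proof (in the Specker--Cohen--Scott--Wall tradition): Shapiro's lemma for the coinduced module $R^G$, the collapse of the long exact sequence of $0\to RG\to R^G\to R^G/RG\to 0$ to $0\to R\to H^0(G,R^G/RG)\to H^1(G,RG)\to 0$, rank additivity over the PID, and the identification of $(R^G/RG)^G$ with locally constant $R$-valued functions on $\mathcal E(G)$ via Theorem \ref{ES}. This parallels, in purely module-theoretic dress, the more topological treatment in the cited source (which runs through compactly supported cohomology and end cohomology of a complex on which $G$ acts); both proofs turn on exactly the same geometric fact you isolate, namely that an almost invariant function is eventually constant on the finitely many unbounded complementary components of a finite set. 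Two points you pass over quickly are fine but worth making explicit: (i) reducing almost invariance from all $g\in G$ to the finite generating set $A$ uses that $\{g\in G : gf-f\in RG\}$ is a subgroup of $G$ (translates of finitely supported functions are finitely supported); and (ii) when $G$ has infinitely many ends, $\mathcal E(G)$ is a Cantor set (uncountable) while the module of locally constant functions has countably infinite rank, so the equality $\mathrm{rank}_R H^1(G,RG)+1=e(G)$ must be read under the paper's convention (Definition \ref{DefEnd}) that the number of ends is then simply the symbol $\infty$; with that convention your conclusion is exactly the stated theorem.
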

 
 More general conditions, which include the case $R=\mathbb ZG$ are given in \cite{Co73}, P. 36.
 
 Two of the most important results in the theory of ends of groups are due to J. Stallings and M. Dunwoody. In an as yet unpublished historical article (Ends and Accessibility), Dunwoody gives an illuminating account of the development of these two results and their connection to 3-manifold theory. 
 
 \begin{theorem} [J. Stallings, \cite{Stall71}] \label{Stall}   The finitely generated group $G$ has more than 1-end if and only if $G$ splits as a non-trivial amalgamated product $A\ast_CB$ (so $A\ne C\ne B$) or an HNN-extension $A\ast_C$ where $C$ is finite. The group $G$ has 2-ends if and only if $C$ has index 2 in both $A$ and $B$ in the amalgamated product case, and $C=A$ in the HNN case.  
 \end{theorem}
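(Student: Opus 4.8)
The plan is to split the biconditional into the easy implication---a nontrivial splitting over a finite subgroup forces more than one end---and Stallings' deep converse, and then to extract the two-ended refinement from the geometry of the Bass--Serre tree. Throughout I use that the number of ends lies in $\{0,1,2,\infty\}$ (the corollary above), so the two things to establish are ``more than one end'' versus ``exactly two.''

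For the forward implication, suppose $G=A\ast_C B$ with $C$ finite, $A\neq C$ and $C\neq B$, or $G=A\ast_C$ an HNN-extension with $C$ finite. By Bass--Serre theory $G$ acts without inversions and cocompactly on the associated tree $T$, with edge stabilizers conjugate to the finite group $C$, and the nontriviality hypotheses force $T$ to be infinite. Fix a base vertex $v_0$ and an edge $e$; removing the interior of $e$ splits $T$ into half-trees $T^+,T^-$, and I would take the half-space $W=\{g\in G: g v_0\in T^+\}$. This $W$ is a cut: its coboundary in any Cayley graph $\Gamma(G,S)$ is finite (the standard fact that a cocompact action on a tree with finite edge stabilizers turns half-trees into almost invariant sets---a finite $S$ moves $v_0$ a bounded amount, and only finitely many translates of $e$ lie within that bound), while $W$ and its complement are both infinite because each half-tree meets the orbit of $v_0$ infinitely often. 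Hence some finite $C_0\subset\Gamma(G,S)$ has at least two unbounded complementary components, so $G$ has at least two ends by Definition \ref{DefEnd}.

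The converse is the substance of the theorem and the main obstacle. Suppose $G$ has more than one end; equivalently (Theorem \ref{HEnds} with $R=\mathbb Z$, or directly from Definition \ref{DefEnd}) there is a subset $W\subseteq G$ with $W$ and $G\setminus W$ infinite and with finite coboundary in a Cayley graph---a cut. A single cut need not be compatible with its own $G$-translates, so it does not by itself produce a tree, and the real work is to promote it to a $G$-invariant family of pairwise nested cuts (any two translates disjoint or nested). Two classical routes accomplish this: Stallings' bipolar structures on $G$ (\cite{Stall71}), or Dunwoody's structure-tree and track machinery applied to the Cayley graph, in which one minimizes the weight of the cut and extracts a nested system. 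Dual to a nested $G$-invariant family of finite-coboundary cuts is a simplicial tree $T$ on which $G$ acts cocompactly, without a global fixed point, and with finite edge stabilizers; Bass--Serre theory then converts this action into a nontrivial amalgam $A\ast_C B$ or HNN-extension $A\ast_C$ with $C$ finite. I expect the passage from a single cut to a nested invariant family---the minimal-track or bipolar-structure step---to be the genuine difficulty; the subsequent Bass--Serre translation is routine.

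Finally, for the two-ended refinement, I would use the characterization of two-ended groups proved above: such a group is virtually $\mathbb Z$. A virtually-$\mathbb Z$ group acting minimally and without a global fixed point on a tree must leave a line invariant---a generator of a finite-index $\mathbb Z$ acts hyperbolically (otherwise $G$ has a finite orbit and hence a fixed vertex or edge, contradicting the nontrivial minimal action), and minimality collapses $T$ onto its axis---so every vertex of $T$ has valence $2$. In $A\ast_C B$ the two vertex valences are $[A:C]$ and $[B:C]$, forcing $[A:C]=[B:C]=2$; in the HNN case the single vertex valence is the sum of the two indices of $C$ in $A$, forcing both inclusions to be onto, i.e. $C=A$. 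Conversely each configuration manifestly yields a virtually-$\mathbb Z$ group ($C\trianglelefteq G$ with $G/C\cong\mathbb Z_2\ast\mathbb Z_2$ in the amalgam case, and $G\cong A\rtimes\mathbb Z$ with $A$ finite in the HNN case), hence exactly two ends. This matches the stated conditions and completes the plan.
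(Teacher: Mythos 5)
The paper does not prove this statement at all: it is quoted as one of the two foundational results of the theory (Stallings' Splitting Theorem) with a citation to \cite{Stall71}, so there is no in-paper argument to measure your proposal against. With that understood, your proposal is a faithful outline of how the theorem is established in the literature, and the parts you actually prove are correct. In the forward direction, one caution: the Bass--Serre tree need not be locally finite (the valence $[A:C]$ can be infinite), so your parenthetical ``only finitely many translates of $e$ lie within that bound'' should instead be justified by finiteness of edge stabilizers: any $g$ contributing to $W\triangle Ws$ satisfies $g^{-1}e\subset[v_0,sv_0]$, that geodesic has finitely many edges, and each edge is of the form $g^{-1}e$ for only finitely many $g$ because $\mathrm{Stab}(e)$ is a conjugate of the finite group $C$. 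Your two-ended refinement is sound: if a generator $z$ of the normal finite-index infinite cyclic subgroup (which exists by the paper's characterization of 2-ended groups) were elliptic, $G$ would have a finite, hence bounded, vertex orbit and therefore a global fixed point, contradicting nontriviality of the splitting; hyperbolicity plus minimality collapses the tree to the axis, and reading off valences gives $[A:C]=[B:C]=2$, resp.\ $C=A$ (and since $C$ is finite, $C=A$ automatically forces the other associated subgroup to equal $A$ too). The converse implications of the refinement ($C\trianglelefteq G$ with quotient $\mathbb Z_2\ast\mathbb Z_2$, resp.\ finite-by-$\mathbb Z$) are also correct.

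The one thing to be explicit about is that your proposal is not an independent proof of the statement: the entire depth of the theorem lies in the converse (more than one end implies a splitting), and there you invoke Stallings' bipolar structures or Dunwoody's nested-cut/track machinery rather than prove them. That is a reasonable, indeed unavoidable, choice for a result of this magnitude, and it mirrors exactly what the paper does by pure citation; but the proposal should be read as ``easy direction, two-ended refinement, and citation of the hard direction,'' not as a self-contained proof of Stallings' theorem.
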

 
 A finitely generated group $G$ is {\it accessible} over splitting by finite groups if there is no infinite sequence of groups $\{G_i\}_{i=0}^\infty$ where $G=G_0$, $G_i$ splits non-trivially as $A_i\ast_{C_i} B_i$ or $A_i\ast_{C_i}$ where $C_i$ is finite and $G_i=A_{i-1}$ for all $i\geq 1$.
 
 \begin{theorem} [P. Linnell,\cite{Linn83}] \label{LinnAcc} 
 If there is a global bound on the order of finite order elements in a finitely generated group, then the group is accessible over splittings by finite groups. 

 \end{theorem}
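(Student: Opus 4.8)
The natural setting is Bass--Serre theory. A splitting of $G$ as $A\ast_C B$ or $A\ast_C$ with $C$ finite is exactly a minimal action of $G$ on a tree with finite edge stabilizers and one orbit of edges, and the refinement process in the definition preceding the statement corresponds to blowing up vertices of such actions. Thus $G$ is accessible over finite subgroups precisely when there is a uniform upper bound on the number of edge orbits taken over all \emph{reduced} minimal $G$-trees with finite edge stabilizers (reduced meaning that no non-loop edge $e=(u,v)$ has $G_e=G_u$ or $G_e=G_v$, so that no edge can be collapsed). The plan is to establish such a bound as a function of a generating number $d$ for $G$ and the torsion bound $n$. If no bound existed, the infinite refinement sequence $\{G_i\}$ would assemble, as the direct limit of the successive blow-ups, into a single minimal $G$-tree $T$ with all edge stabilizers finite of order $\le n$ and with infinitely many edge orbits; the goal becomes to rule this out.

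First I would dispose of the \emph{loop part} of the quotient using only finite generation. If $G=\langle g_1,\dots,g_d\rangle$ acts minimally on a tree with finite quotient graph $Y$, then $G$ surjects onto the free group $\pi_1(Y)$ of rank $b_1(Y)$, whence $b_1(Y)\le d$. Since for a finite connected graph $|E(Y)|=|V(Y)|-1+b_1(Y)$, bounding the number of edge orbits is equivalent to bounding the number of \emph{vertex} orbits, i.e.\ to controlling the ``tree part'' of $Y$. Concretely, one must rule out arbitrarily long reduced segments $v_0,e_1,v_1,\dots,e_k,v_k$ of valence-two vertices; in a reduced decomposition each interior $G_{v_i}$ properly contains its two finite neighboring edge groups, so a long such segment realises inside $G$ an iterated amalgam along a long interval with finite edge groups bounded by $n$.

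The heart of the proof is to bound the number of such reduced edges using the torsion hypothesis. I would translate each reduced edge of $T$ into an \emph{almost invariant} subset of $G$ (a set $A\subseteq G$ with $A+Ag$ finite for every $g$), obtained from the partition of $T$ determined by removing the edge orbit; these sets are essential because the edge cannot be collapsed, and they are invariant, up to finite sets, under the finite edge group, so they are $G_e$-almost-invariant with $|G_e|\le n$. By Theorem \ref{HEnds} (taking $R=\mathbb F_2$), essential almost invariant sets modulo finite sets are detected in $H^1(G;\mathbb F_2G)$, and the splittings furnish a family of such sets that is \emph{nested}, organised by the tree order of $T$. The uniform order bound $n$ is what controls this family: it bounds the length of any strictly nested chain and the number of independent ``directions'' that essential $G_e$-almost-invariant sets with $|G_e|\le n$ can produce. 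A counting/length argument of this kind --- the substitute for the failure of $\dim H^1(G;\mathbb F_2G)$ to be finite when $G$ has infinitely many ends --- then caps the number of edge orbits by a function of $d$ and $n$, contradicting the infinitely many edge orbits of $T$ and proving accessibility. Equivalently, one exhibits a complexity, built from the orders of the edge groups and an Euler-characteristic-type count, that is bounded over all reduced decompositions, so that a maximal one exists.

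The main obstacle is exactly that neither the vertex groups nor the number of ends of $G$ need be finite: a free group already has infinitely many ends yet is accessible, so one cannot bound edge orbits by $\dim_{\mathbb F_2}H^1(G;\mathbb F_2G)$. The genuine work is to produce a bound that is \emph{uniform} across all reduced decompositions (equivalently, across all stages of the refinement), and this is precisely where the hypothesis enters: the uniform bound $n$ on the orders of finite subgroups is what Dunwoody's finitely generated inaccessible groups lack, and it is what prevents the nested family of almost invariant sets --- and hence the tree part of $Y$ --- from being arbitrarily large. A secondary point requiring care is the limit construction of the first paragraph: one must check that an infinite refinement sequence really does yield a single $G$-tree whose edge stabilizers remain finite of order $\le n$, rather than edge groups that grow without bound in the limit.
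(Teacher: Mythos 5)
The paper offers no proof of this statement at all---it is quoted directly from Linnell's paper \cite{Linn83}---so your attempt must stand on its own, and it does not: it reduces the theorem, correctly, to its hardest step and then asserts that step. The parts you do carry out are fine. Accessibility is indeed equivalent to a uniform bound on the number of edge orbits over all reduced minimal $G$-trees with finite edge stabilizers, and the ``loop part'' is indeed bounded by the number of generators $d$, since $G$ surjects onto the free group $\pi_1(Y)$. But the heart of the matter is your claim that the torsion bound $n$ ``bounds the length of any strictly nested chain and the number of independent directions'' of essential $G_e$-almost-invariant sets. That claim \emph{is} Linnell's theorem; it is not a lemma you can invoke, and you give no argument for it. Nothing in the paper supplies one either: as you yourself observe, $H^1(G,\mathbb{F}_2G)$ (Theorem \ref{HEnds}) is infinite-dimensional as soon as $G$ has infinitely many ends, so no dimension count is available, and ``a counting/length argument of this kind'' is not a proof. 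What fills this hole in Linnell's actual argument is input of a genuinely different kind, with no counterpart in your sketch: group-ring techniques applied to the idempotents $\frac{1}{|G_e|}\sum_{g\in G_e}g\in\mathbb{Q}G$ attached to the finite edge groups, whose traces yield a bound on $\sum_e 1/|G_e|$ over the edges of any reduced decomposition in terms of the number of generators; since each term is at least $1/n$, the number of edges is at most a function of $d$ and $n$. Whether one uses this route or a later folding-type argument, some such substantive mechanism is required, and your proposal contains none.

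A second, smaller defect is your opening contradiction setup, which assembles an infinite refinement sequence into ``a single minimal $G$-tree $T$ with infinitely many edge orbits.'' The natural maps between successive stages are collapses $T_{i+1}\to T_i$, which run the wrong way for a direct limit, and the correct limit object of such a sequence is in general a protree rather than a tree---this is exactly the structure underlying Dunwoody's inaccessible finitely generated groups \cite{Dun93}, so any construction that always produced an honest $G$-tree would prove too much. You flag this as a point ``requiring care,'' but it is not a technicality. Fortunately it is also unnecessary: once one has a uniform bound on the size of reduced decompositions, the refinement process must terminate, and no limit object is needed at all.
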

 
\begin{theorem} [M. Dunwoody, \cite{Dun85}]  \label{DunAcc} 
All finitely presented groups (and in fact, all ``almost" finitely presented groups) are accessible over splittings by finite groups. In particular, if $G$ is a finitely presented group, then $G$ has a graph of groups decomposition, where each edge group is finite and each vertex group is either finite or a 1-ended finitely presented group. 
\end{theorem}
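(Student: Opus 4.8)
The plan is to translate the accessibility question into a finiteness statement about codimension-one patterns in a simply connected $2$-complex, following Dunwoody's analogue of the Kneser--Haken finiteness theorem for normal surfaces. Since $G$ is finitely presented, I would begin by taking a finite presentation, forming the presentation $2$-complex $K$, and letting $X=\tilde K$ be its universal cover. Then $X$ is simply connected, $G$ acts freely, properly and cocompactly on $X$, and the quotient $X/G=K$ is a finite $2$-complex. (For the ``almost finitely presented'' case one works instead with a simply connected complex on which $G$ acts with the weaker homological cocompactness/finiteness hypotheses on the $2$-skeleton, which is precisely what the combinatorial argument below requires.)

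The central device is the notion of a \emph{track}: a compact subset of $X$ meeting each edge in finitely many interior points and each $2$-cell in a finite disjoint union of embedded ``normal'' arcs joining such points. A track is a one-dimensional object dual to a partition of the edges, and it is the $2$-complex analogue of a normal curve. The key dictionary, supplied by Bass--Serre theory, is that a $G$-invariant system of disjoint separating tracks in $X$ determines a graph-of-groups decomposition of $G$ in which the edge groups are the stabilizers of the track components (an amalgam when a track separates the quotient, an HNN-extension otherwise). Because such a track component in $X$ can be taken to be compact --- a finite graph --- while $G$ acts freely and properly, its stabilizer, and hence each edge group, is finite. Conversely, every nontrivial splitting of $G$ over a finite group can be realized by such a track. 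An iterated accessibility sequence $G=G_0,G_1,\dots$ therefore corresponds to building up a family of disjoint, pairwise non-parallel (i.e.\ not cobounding a product region) essential tracks in the finite complex $X/G$.

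The heart of the proof --- and the step I expect to be the main obstacle --- is the finiteness estimate: there is an integer $N=N(K)$, depending only on the finite complex $K=X/G$, such that any family of more than $N$ pairwise disjoint tracks must contain two that are parallel. The argument is combinatorial and Euler-characteristic-theoretic: each track cuts the finitely many $2$-cells of $K$ into disk pieces, and a counting argument in the spirit of Kneser's bound shows that once the number of disjoint tracks exceeds a bound determined by the number and type of cells, two of them must cut every $2$-cell into the same pattern and hence cobound a product region $t\times[0,1]$, i.e.\ are parallel. Parallel tracks encode a redundant, absorbable splitting, so a maximal family of pairwise non-parallel essential tracks is finite. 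Verifying that parallel tracks interact with the Bass--Serre machinery so that they genuinely can be discarded, and that finite presentability is exactly what forces the uniform bound (it fails for general finitely generated groups), is the delicate part.

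Finally, I would assemble the conclusion. A maximal finite system of disjoint non-parallel essential tracks yields, via Bass--Serre theory, a finite graph-of-groups decomposition of $G$ with finite edge groups and finitely presented vertex groups. Maximality means no vertex group admits a further nontrivial splitting over a finite subgroup, so by Stallings' theorem (Theorem~\ref{Stall}) each vertex group has $0$, $1$, or $2$ ends; a two-ended group would itself split over a finite group, so in fact every vertex group is finite or one-ended. This is the asserted decomposition, and since the system is finite there can be no infinite descending accessibility sequence, which establishes accessibility.
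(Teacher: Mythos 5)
The paper gives no proof of this statement: it is quoted directly from Dunwoody's paper \cite{Dun85}, so there is no internal argument to compare yours against. That said, your outline is a faithful reconstruction of Dunwoody's actual proof: presentation $2$-complex and its universal cover, tracks as combinatorial normal curves, the Bass--Serre dictionary between $G$-invariant systems of disjoint compact tracks and splittings over finite groups (finiteness of edge stabilizers coming from properness and freeness of the action plus compactness of the tracks), and a Kneser-style bound $N(K)$ forcing two of any sufficiently many disjoint tracks in the finite quotient to be parallel.

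Two cautions. First, your gloss on ``almost finitely presented'' is off: the relevant hypothesis is not a weaker cocompactness or finiteness condition on a simply connected complex, but that $G$ acts freely and cocompactly on a connected $2$-complex $X$ with $H^1(X;\mathbb{F}_2)=0$, which need not be simply connected; what the track machinery actually needs is that every track separates $X$, and that is exactly what the mod-$2$ condition guarantees. Second, as you yourself flag, the two pillars of the argument --- the finiteness theorem for patterns, and the realization of an arbitrary splitting over a finite subgroup (and, iteratively, of splittings of the resulting vertex groups by essential tracks disjoint from and non-parallel to those already present) --- are precisely the content of Dunwoody's paper; your text names them and explains their role but does not prove them, so what you have is a correct plan organized around the right key lemmas rather than a proof. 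Your final assembly, using Stallings' theorem (Theorem \ref{Stall}) to upgrade ``no further nontrivial splitting over a finite subgroup'' to ``each vertex group is finite or $1$-ended,'' is correct as stated.
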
 

 Dunwoody \cite{Dun93} gives examples of finitely generated (recursively presented) groups that are not accessible over splittings by finite groups. 
 
Since splittings over a finite group are always ``compatible" with any other splitting (if $C$ is a finite subgroup of a group $G$ with graph of groups decomposition $\mathcal G$, then $C$ is a subgroup of a conjugate of a vertex group of $\mathcal G$). This mean a vertex group of $\mathcal G$ can be split over $C$ and this splitting can be incorporated into the splitting $\mathcal G$, by replacing that vertex group by it's splitting over $C$. Since this process ends by Dunwoody's Theorem, a final graph of groups decomposition of $G$ (with each vertex group either 0 or 1-ended and each edge group 0-ended) is called a {\it Dunwoody decomposition of $G$}

\begin{theorem} If $G$ is an almost finitely presented group, then $G$ has a finite graph of groups decomposition where each edge group is finite and each vertex group is either finite or 1-ended. 
\end{theorem}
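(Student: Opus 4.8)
The plan is to obtain this as the combination of Stallings' splitting theorem (Theorem \ref{Stall}) with Dunwoody's accessibility theorem (Theorem \ref{DunAcc}): I would build the decomposition by repeatedly splitting vertex groups over finite subgroups, and then invoke accessibility to guarantee that the splitting process terminates. Since $G$ is almost finitely presented, Theorem \ref{DunAcc} tells us outright that $G$ is accessible over splittings by finite groups, so the whole argument is really about organizing the iterated splittings so that an infinite run would contradict the definition of accessibility.

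Before iterating I would record the one ancillary fact I need: the factors produced by splitting over a \emph{finite} edge group remain finitely generated, so that Stallings' theorem (which requires finite generation) can be reapplied to them. This is the standard fact that when a finitely generated group decomposes as a \emph{finite} graph of groups (equivalently, acts cocompactly on the associated Bass--Serre tree) with finitely generated---here finite---edge groups, the vertex groups are finitely generated. Each single splitting we perform is over one edge or loop, so the action on its Bass--Serre tree is cocompact and this applies at every stage.

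Now I would run the refinement. If $G$ is finite or $1$-ended, the one-vertex graph of groups already works. Otherwise $G$ has more than $1$ end, so by Theorem \ref{Stall} it splits nontrivially as $A\ast_C B$ (with $A\ne C\ne B$) or as $A\ast_C$, with $C$ finite; this is the initial finite graph of groups $\mathcal G_1$, whose vertex groups are finitely generated by the previous paragraph. I then inspect each vertex group: any vertex group with more than $1$ end splits again over a finite subgroup by Theorem \ref{Stall}, and---using the compatibility of finite splittings noted just before the statement---I incorporate this splitting into the current decomposition, replacing that vertex by an edge joining the two new factors (or by a loop in the HNN case). Each such refinement strictly increases the edge count, keeps all edge groups finite, and produces vertex groups that are again finitely generated. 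Note that a $2$-ended vertex group is not terminal: by the $2$-ended case of Theorem \ref{Stall} it splits with finite factors, so it too gets pushed down until only $0$- or $1$-ended groups remain.

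For termination I would view the refinement as a rooted tree of factors whose root is $G$ and whose children of any group are the vertex groups of its splitting. If the process never stopped, then at every stage some vertex group would still have more than $1$ end, so the tree would have infinitely many nodes; since each finite stage has only finitely many vertices, the tree is finitely branching, and K\"onig's lemma yields an infinite branch $G=G_0, G_1, G_2,\dots$ in which each $G_i$ is a factor of a nontrivial splitting of $G_{i-1}$ over a finite subgroup and each $G_i$ itself splits nontrivially over a finite subgroup. Relabelling the chosen factor as $A_{i-1}$, this is precisely the infinite sequence forbidden by the definition of accessibility, contradicting Theorem \ref{DunAcc}. Hence the refinement halts after finitely many steps, leaving a finite graph of groups with all edge groups finite and every vertex group $0$- or $1$-ended, i.e. finite or $1$-ended. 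The main obstacle is the bookkeeping tying the refinement to the chain-based definition of accessibility: one must keep the factors finitely generated at every level (so that "ends" make sense and Stallings applies) and verify that each refinement is genuinely nontrivial and strictly increases complexity, so that a nonterminating refinement really does produce the infinite descending chain of finite splittings ruled out by Dunwoody's theorem.
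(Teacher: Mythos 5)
Your proposal is correct and follows essentially the same route as the paper: the paper obtains this result by iteratively splitting vertex groups over finite subgroups via Stallings' theorem, incorporating each splitting into the current graph of groups (using the compatibility of finite splittings), and invoking Dunwoody's accessibility theorem to terminate the process, exactly as you do. Your additional bookkeeping---the finite generation of factors so Stallings can be reapplied, the treatment of $2$-ended vertex groups, and the K\"onig's lemma extraction of an infinite chain contradicting accessibility---just makes explicit the details the paper leaves implicit in its discussion of Dunwoody decompositions.
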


J. Dydak asked the following question: If a finitely generated group $G$ has infinitely many ends, is there an end $E$ such that $GE$ is dense in the space of ends of $G$? As it turns out $GE$ is dense in the space of ends of $G$ for every end $E$.
\begin{theorem}
If $G$ is a finitely generated infinite ended group and $E$ is an end of $G$, then $GE$ is dense in the space of ends of $G$.
\end{theorem}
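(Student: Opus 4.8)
The plan is to work in a Cayley graph $\Gamma=\Gamma(G,A)$ for a finite generating set $A$, with $G$ acting on the left by graph isometries; this action induces the action of $G$ on the space of ends $\mathcal E(G)$, which (since $G$ is infinite-ended) is a Cantor set. A neighborhood basis for $\mathcal E(G)$ is given by the sets $\hat U$ consisting of all ends that live in $U$, as $U$ ranges over the unbounded components of $\Gamma-B_n(\ast)$ (here $\ast=e$ and $B_n(\ast)$ is the ball of radius $n$). Thus density of $GE$ amounts to: for every such $U$ there is $g\in G$ with $gE\in\hat U$. I would represent $E$ by a geodesic edge-path ray $r$ from $\ast$ (Theorem \ref{geoend}), and let $U_E$ be the unbounded component of $\Gamma-B_n(\ast)$ containing the tail of $r$, so that $E$ lives in $U_E$.

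The key reduction is the observation that if $g\in G$ satisfies $gU_E\subseteq U$, then $gE\in\hat U$. Indeed, $gE$ is represented by the ray $gr$, whose tail $g\cdot r([T,\infty))$ lies in $g\,U_E\subseteq U\subseteq \Gamma-B_n(\ast)$; being connected and meeting $U$, this tail lies in the single component $U$, so $gE$ lives in $U$. Hence the whole theorem reduces to the purely geometric statement: for any two unbounded components $U_E,U$ of $\Gamma-B_n(\ast)$ there is $g\in G$ with $gU_E\subseteq U$.

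To produce such a $g$ I would use the doubling technique from the proof of Theorem \ref{E3inf}. First enlarge $C:=B_n(\ast)$ by absorbing the bounded components of its complement (compact by Theorem \ref{ES}), so that $\Gamma-C$ is a union of unbounded components. Since $U$ is unbounded I can choose $g$ with $gC=B_n(g)$ lying entirely inside $U$ (take $g\in U$ with $|g|$ large, so that the connected set $B_n(g)$ misses $C$ and therefore sits in the component $U$ containing $g$). The displacement argument of Theorem \ref{E3inf}, applied to the disjoint compacta $C$ and $gC$ with $gC\subseteq U$, then shows that every component of $\Gamma-gC$ other than the one $V_0$ containing $C$ is contained in $U$. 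Now $gU_E$ is itself a component of $\Gamma-gC$, so it suffices to guarantee $gU_E\ne V_0$; since $V_0$ contains $\ast$, this is exactly the requirement that $\ast\notin gU_E$, i.e. $g^{-1}\notin U_E$.

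The main obstacle is this last bit of bookkeeping: arranging a single $g$ that is simultaneously deep in $U$ (so that $gC\subseteq U$) and has $g^{-1}\notin U_E$. This is where I expect to genuinely use that $G$ has infinitely many ends rather than two: because $\Gamma-C$ can be taken to have at least three unbounded components, the inverses of the deep vertices of $U$ cannot all be trapped in the single component $U_E$, so a valid $g$ exists (the abundance is transparent in the free group, where the outward direction of the deep word $g\in U$ can be chosen so as to force $g^{-1}$ outside $U_E$). An alternative I would keep in reserve, should this counting become awkward in full generality, is to invoke Stallings' Theorem \ref{Stall}: the splitting of $G$ over a finite group yields an action on a tree with a hyperbolic element whose north--south dynamics on $\mathcal E(G)$ push $E$ into any prescribed $\hat U$, again realizing $gU_E\subseteq U$ for a suitable power $g$.
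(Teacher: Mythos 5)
Your reduction is sound as far as it goes: if $gU_E\subseteq U$ then indeed $gE\in\hat U$, and your use of the closure/boundary (``displacement'') argument from Theorem \ref{E3inf} correctly shows that once $gC\cap C=\emptyset$ and $gC\subseteq U$, every component of $\Gamma-gC$ except the one containing $C$ lies in $U$, so that $gU_E\subseteq U$ holds exactly when $g^{-1}\notin U_E$. The genuine gap is the existence claim you then assert: that some $g$ deep in $U$ has $g^{-1}\notin U_E$. Your justification --- that with at least three unbounded components ``the inverses of the deep vertices of $U$ cannot all be trapped in the single component $U_E$'' --- is not an argument. Left translations are automorphisms of $\Gamma$, but inversion $g\mapsto g^{-1}$ is not; it preserves each sphere $S_M=\{g:|g|=M\}$ and nothing else, so nothing controls where inverses of deep vertices of $U$ land. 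Counting cannot rule the bad case out either: components of $\Gamma-C$ may have very different sizes, so it is consistent with inversion being an involution of each $S_M$ that all deep vertices of $U$ invert into $U_E$. Worse, the presence of a third component does not rescue you: using your own displacement bookkeeping one can check that if the claim fails for the pair $(U_E,U)$, then every deep vertex of \emph{every} component other than $U_E$ has its inverse in $U_E$ (briefly: badness of $(U_E,U)$ gives $h(g')^{-1}\in U_E$ for every $h\in G$ and every $g'\in U$ deep relative to $|h|$, since $g'h^{-1}$ is again deep in $U$; if some deep $z$ in a component $Z$ had $z^{-1}\notin U_E$, then $zU_E\subseteq Z$ while $z(g')^{-1}\in zU_E\cap U_E$, a contradiction). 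So the two-step detour through a third component is also unavailable, and what must be excluded is the global configuration $G=U_E\cup U_E^{-1}\cup(\hbox{a finite set})$. Excluding that is a real statement about arbitrary infinite-ended groups, not bookkeeping, and your proposal contains no proof of it.

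This missing step is exactly where the paper's proof does its work, by a mechanism your outline never touches: label rigidity of the Cayley graph. The paper takes a geodesic ray $s$ representing $E$, uses infinite-endedness and Theorem \ref{geoend} to produce many vertices $a\in U$ all at the same distance $M$ from $\ast$, and notes that if every translate $as$ re-entered the ball, then by pigeonhole (on the $2N_1+1$ possible hitting times and the finitely many vertices of the ball) two distinct $a\neq b$ would have initial segments of $as$ and $bs$ of equal length ending at the same vertex; since these segments carry identical edge labels, this forces $a=b$, a contradiction. Thus some $as$ stays in $U$ and $aE\in\hat U$ directly --- no control of $a^{-1}$ and no containment $aU_E\subseteq U$ is ever needed. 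Note also that your target is strictly stronger than what the theorem requires: the paper only puts the \emph{tail} of the translated ray in $U$, not the whole translated component, and it is not clear your stronger statement is achievable in general. Your fallback via Stallings' Theorem \ref{Stall} is a genuinely different route, but it is only gestured at: it needs the correspondence between $\mathcal E(G)$ and tree ends (including ends of vertex groups), a hyperbolic element whose attracting end lies in the prescribed $\hat U$, north--south dynamics on all of $\mathcal E(G)$, and a separate treatment of the case where $E$ is the repelling end --- and it rests on a much deeper theorem than the elementary, self-contained argument the paper gives.
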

\begin{proof}
Let $\Gamma$ be a Cayley graph for $G$ with respect to some finite generating set $S$. Let $E$ be an end of $\Gamma$. Theorem \ref{geoend} implies there is a geodesic edge path ray $s$ at the identity vertex $\ast$ of $\Gamma$ such that $s\in E$. Let $N_1>0$ be an integer and let $U_1$ be an unbounded component of $\Gamma-B_{N_1}(\ast)$. It suffices to show that there is a vertex $v\in U_1$ (so $v\in G$) such that $vs$ has image in $U_1$. Since $G$ is infinite ended, the proof of Theorem \ref{E3inf} shows there are integers $N_2,N_3,\ldots $ such that if $U_i$ is an unbounded component of $\Gamma-B_{N_i}(\ast)$ then $U_i$ contains at least 2 unbounded components of $\Gamma-B_{N_{i+1}}(\ast)$. This implies that $U_1$ contains at least $2^k$ unbounded components of $\Gamma-B_{N_{k+1}}$. Given any of these unbounded components, Theorem \ref{geoend} supplies a geodesic from $\ast$ with a tail in that unbounded component. Hence given these unbounded components each of contains a vertex such that all of these vertices are the same distance from $\ast$.
 
\medskip

\noindent $(\ast)$ Hence for any integer $K$, there are at least $K$ distinct vertices in $U$ such that each is the same distance from $\ast$. 

\medskip

Say $B_{N_1}$ has $K_1$ vertices. Choose $A$, a set of $(2N_1+1)(K_1+1)$ distinct vertices of $U$, each of distance $M$ from $\ast$. If some vertex $a\in A$ is such that $as$ has image in $\Gamma-B_{N_1}(\ast)$ we are finished. 

Assume $as$ intersects $B_{N_1}(\ast)$ for each $a\in A$. For $a\in A$, let $D_a$ be the length of the shortest initial segment of $as$ that terminates on $B_{N_1}$. By the triangle inequality
$$M+N_1\geq D_a\geq M-N_1.$$
Since $A$ has $(2N_1+1)(K_1+1)+1$ distinct vertices of $U_1$ and there are only $2N_1+1$ choices for the $D_a$ ($a\in A$), there must be some number $D$ such that for at least $K_1+1$ of the $a\in A$, $D_a=D$. Since $B_{N_1}(\ast)$ has only $K_1$ vertices, we have distinct vertices $a,b\in A$ such $D_a=D_b=D$ and the initial segments of $as$ and $bs$ of length $D$ end at the same point. Since these two segments have the same edge labelings, this is impossible. Instead some $a\in A$ is such that $as$ avoids $B_{N_1}(\ast)$ and so has image entirely in $U_1$. 
\end{proof}

\newpage

\subsection{Ends and Pairs of Groups}\label{PairsG}
If $H$ is a subgroup of a finitely generated groups $G$, then {\it the number of ends of the pair} $(G,H)$ is considered in \S13.5 of \cite{G}. Then number of {\it filtered} ends of $(G,H)$ is considered in \S14.5 of \cite{G}. We are interested in a different notion here, namely the number of ends of $H$ in $G$. This is a particularly useful idea that generalizes to the notion of a subgroup being semistable in an over group (see Definition \ref{SSSin} and Theorem \ref{SSSComb}).

\begin{definition} Suppose $H$ is a subgroup of a group $G$ and $S$ is a finite generating set for $G$. Say $H$ {\it has $K$ ends in} $G$ (or $H$ {\it is $K$-ended in }$G$) if $K$ is the largest integer such that for some compact set $C\subset \Gamma(G,S)$ there are $K$ components of $X-C$, each containing infinitely many points of $H$. If there is no such largest integer, then we say $H$ {\it has infinitely many ends in} $G$ or $H$ {\it is $\infty$-ended in }$G$).
\end{definition}

It is straightforward to show that this definition does not depend on the generating set $S$. 

\begin{theorem} 
The subgroup $H$ of $G$ has 0, 1, 2 or infinitely many ends in $G$.
\end{theorem}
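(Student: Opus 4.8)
The plan is to mimic the proof of Theorem \ref{E3inf}, with left multiplication by elements of $H$ playing the role of the translations used there, and with ``component containing infinitely many points of $H$'' (call such a component \emph{$H$-unbounded}) replacing ``unbounded component.'' Fix a Cayley graph $\Gamma=\Gamma(G,S)$ and base vertex $\ast$. Observe first that an $H$-unbounded component has infinitely many vertices, hence is unbounded in the graph metric, so by Theorem \ref{ES} there are only finitely many $H$-unbounded components of $\Gamma-C$ for each compact $C$. Thus the number of ends of $H$ in $G$ is either a well-defined finite maximum $n$ or is infinite, and it suffices to prove: if some compact $C$ yields $n\geq 3$ $H$-unbounded components, then some larger compact set yields at least $n+1$.

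Before translating, I would normalize $C$ to a ball, the point being an auxiliary claim that enlarging the compact set never decreases the number of $H$-unbounded components. Indeed, if $D\supseteq C$ is compact and $U$ is an $H$-unbounded component of $\Gamma-C$, then applying Theorem \ref{ES} to the space $U$ with the compact set $U\cap D$ shows that $U\cap D$ together with all bounded components of $U-D$ is compact, hence finite; so all but finitely many points of $H$ lying in $U$ lie in the finitely many unbounded components of $U-D$, and at least one of these is $H$-unbounded. Distinct $U$ produce disjoint such components, proving the claim. Enlarging $C$ to a ball $B_R(\ast)\supseteq C$ we may therefore assume $C=B_R(\ast)$ is connected, with exactly $n$ $H$-unbounded components $U_1,\dots,U_n$.

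Now I would choose the translating element. Since $U_1\cap H$ is infinite and each ball in the locally finite graph $\Gamma$ is finite, there is $h\in H\cap U_1$ with $d(\ast,h)>2R$. Then $hC=B_R(h)$ is disjoint from $C$, and being connected and containing the vertex $h\in U_1$, it satisfies $hC\subset U_1$. Because left multiplication by $h$ is a graph automorphism fixing $H$ setwise, the components of $\Gamma-hC$ are exactly the $h$-translates of the components of $\Gamma-C$, and its $H$-unbounded ones are precisely $V_i=hU_i$. From here the argument of Theorem \ref{E3inf} carries over: the boundary-containment facts $\partial U_i\subset C$ and $\partial V_i\subset hC$ show that $U_2,\dots,U_n$ are components of $\Gamma-(C\cup hC)$ contained in the single $V$-component containing $C$, while the surviving $V_j$ are components contained in $U_1$, so the $2n-2$ sets $U_2,\dots,U_n,V_2,\dots,V_n$ are pairwise disjoint components of $\Gamma-(C\cup hC)$. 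Each $U_i$ is $H$-unbounded by hypothesis and each surviving $V_j$ is $H$-unbounded because $h$ permutes $H$; since $n\geq 3$ this gives $2n-2\geq n+1$ $H$-unbounded components, contradicting the maximality of $n$.

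The one genuinely new feature compared with Theorem \ref{E3inf} is the presence of components of $\Gamma-C$ carrying only finitely many (or no) points of $H$: these have no analogue in the ends-of-spaces setting and block the naive step of ``absorbing all bounded components.'' I expect the main obstacle to be exactly this bookkeeping, resolved by the two points above, namely verifying via Theorem \ref{ES} that enlarging $C$ preserves the count, and guaranteeing that the translate $hC$ lands inside an $H$-unbounded component by selecting $h$ inside $U_1$. I would also double-check the degenerate case in which $C$ happens to lie in an $H$-\emph{bounded} translate component of $\Gamma-hC$; there all of $V_1,\dots,V_n$ survive and one obtains the even larger count $2n-1\geq n+1$, so the conclusion is unaffected, and the proof concludes that $H$ has $0$, $1$, $2$, or infinitely many ends in $G$.
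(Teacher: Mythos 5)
Your proposal is correct and takes exactly the approach the paper intends: the paper's entire proof of this statement is the single line ``Follow the proof of Theorem \ref{E3inf} using elements of $H$,'' which is precisely what you carried out, and your normalization of $C$ to a ball, your choice of $h\in H\cap U_1$ with $d(\ast,h)>2R$, and your handling of the degenerate case where $C$ lies in an $H$-bounded translate component are all sound (indeed more careful than the paper's own sketch of Theorem \ref{E3inf}). One small repair to your auxiliary monotonicity claim: $U\cap D$ need not be compact in the space $U$ (a sequence in $U\cap D$ can converge to a point of $\partial U\subset C$), so instead apply Theorem \ref{ES} to $\Gamma$ with the compact set $D$ --- all but finitely many points of $H\cap U$ then lie in the finitely many unbounded components of $\Gamma-D$, each such component meeting $U$ is contained in $U$, and the pigeonhole principle yields an $H$-unbounded component of $\Gamma-D$ inside $U$, as you wanted.
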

\begin{proof}
Follow the proof of Theorem \ref{E3inf} using elements of $H$.
\end{proof}

\begin{theorem}
The subgroup $H$ of $G$ has 0 ends if and only if $H$ is finite.
\end{theorem}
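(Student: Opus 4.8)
The plan is to prove both implications, with the reverse one (equivalently, the contrapositive of the forward one) carrying the real content. Throughout write $\Gamma=\Gamma(G,S)$ and let $\ast$ denote the identity vertex. Recall that $\Gamma$ is a connected, locally finite graph, so every ball $B_n(\ast)$ is a finite subcomplex and, more to the point, every compact subset of $\Gamma$ contains only finitely many vertices.

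First I would dispose of the easy direction. If $H$ is finite, then $H$ has only finitely many vertices in $\Gamma$, so no subset of $\Gamma$ whatsoever, and in particular no component of $\Gamma-C$ for any compact $C$, can contain infinitely many points of $H$. Hence the supremum defining the number of ends of $H$ in $G$ is $0$, and $H$ has $0$ ends in $G$.

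For the converse I would argue the contrapositive: assuming $H$ is infinite, I exhibit a compact set $C$ and a single component of $\Gamma-C$ containing infinitely many points of $H$, which shows $H$ has at least one end in $G$. Take $C=B_1(\ast)$ (any fixed compact set will do). By Theorem \ref{ES}, the set $C'$ consisting of $C$ together with all bounded components of $\Gamma-C$ is compact, and $\Gamma-C$ has only finitely many unbounded components $U_1,\ldots,U_m$. Since $C'$ is compact it contains only finitely many vertices, hence only finitely many points of $H$; but $H$ is infinite, so $H\setminus C'$ is infinite. Every point of $H\setminus C'$ lies in one of the unbounded components $U_1,\ldots,U_m$, because a point of $H$ in a bounded component would already lie in $C'$. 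As there are finitely many such components, the pigeonhole principle yields some $U_j$ containing infinitely many points of $H$, as required.

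The one point needing care, and the only real obstacle, is that $\Gamma-C$ may have infinitely many bounded components, so one cannot simply pigeonhole over all components of $\Gamma-C$. The role of Theorem \ref{ES} is precisely to package all the bounded components into a single compact set $C'$, which therefore absorbs only finitely many points of $H$, while leaving only finitely many unbounded components over which the pigeonhole argument runs. I would also note, to match the wording of the definition exactly, that any component containing infinitely many points of $H$ is automatically unbounded: a bounded (that is, compact-closure) subset of the locally finite graph $\Gamma$ meets $H$ in only finitely many points.
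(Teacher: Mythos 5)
Your proof is correct. The paper states this theorem without any proof at all, treating it as an elementary consequence of the definition; your argument supplies exactly the natural missing details. In particular, you correctly identify the one genuine subtlety — that $\Gamma-C$ may have infinitely many bounded components, so a naive pigeonhole over all components fails — and you resolve it in the intended way, by invoking Theorem \ref{ES} to absorb all bounded components into a single compact set $C'$ (which then meets the infinite subgroup $H$ in only finitely many points, since compact subsets of the locally finite graph $\Gamma$ contain finitely many vertices) and pigeonholing over the finitely many unbounded components. Your closing remark, that any component containing infinitely many points of $H$ is automatically unbounded, correctly reconciles your argument with the exact wording of the paper's definition.
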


\begin{theorem}
If $H$ has 1-end in $G$, then every infinite subgroup $K$ of $H$ has one end in $G$. In particular, if $G$ is 1-ended, then every infinite subgroup of $G$ has 1-end in $G$.
\end{theorem}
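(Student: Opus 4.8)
The plan is to exploit the monotonicity of the ``number of ends in $G$'' count under inclusion of subgroups, which follows essentially directly from the definition. Fix a finite generating set $S$ for $G$ and write $X=\Gamma(G,S)$; since the ``ends in $G$'' count is independent of the generating set (as already noted), it suffices to work with this single choice.

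First I would record that $K$ has at least one end in $G$. Since $K$ is infinite, it is not $0$-ended in $G$ by the theorem characterizing the $0$-ended subgroups as exactly the finite ones; and by the theorem that any subgroup of $G$ has $0$, $1$, $2$, or infinitely many ends in $G$, the number of ends of $K$ in $G$ is at least $1$. So the only thing left to rule out is that $K$ has two or more ends in $G$.

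The heart of the argument is a one-line contradiction. Suppose $K$ had more than one end in $G$. Then there is a compact set $C\subset X$ such that at least two components of $X-C$ each contain infinitely many points of $K$. Since $K\subseteq H$, every point of $K$ is a point of $H$, so each of these two components contains infinitely many points of $H$ as well. But then the same compact set $C$ witnesses that $H$ has at least two ends in $G$, contradicting the hypothesis that $H$ is $1$-ended in $G$. Hence $K$ has exactly one end in $G$. For the ``in particular'' clause I would take $H=G$, after observing that the number of ends of $G$ in $G$ coincides with the ordinary number of ends of $G$: the points of $G$ are precisely the vertices of $X$, so a component of $X-C$ contains infinitely many points of $G$ exactly when it is unbounded, whence the relevant count equals $U(C)$. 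Thus a $1$-ended group $G$ is $1$-ended in $G$, and applying the main statement with $H=G$ yields the conclusion.

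I do not expect a genuine obstacle here; the whole content is the inclusion $K\subseteq H$ pushing infinitude of $K$-points in a component to infinitude of $H$-points. The only steps needing a word of care are the independence of the count from the generating set (already asserted in the text) and, for the special case, the identification of ``ends of $G$ in $G$'' with the ends of the space $X$.
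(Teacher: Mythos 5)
Your proof is correct: the monotonicity argument (infinitely many $K$-points in a component forces infinitely many $H$-points, since $K\subseteq H$) is exactly the elementary reasoning this statement rests on, and the paper in fact states the theorem without any proof, treating it as immediate from the definition. Your added care in identifying ``ends of $G$ in $G$'' with the ordinary ends of $G$ (bounded components have finitely many vertices, unbounded ones infinitely many) supplies the one detail the paper leaves implicit for the ``in particular'' clause.
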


\begin{theorem}
If $H$ is 2-ended in $G$ then $H$ is finitely generated and 2-ended. 
\end{theorem}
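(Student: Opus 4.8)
The plan is to locate an infinite cyclic subgroup $\langle h\rangle$ of finite index in $H$. Once this is achieved, both conclusions follow immediately: a group containing an infinite cyclic subgroup of finite index is two-ended by the structural characterization recalled above, and a group with a finitely generated subgroup of finite index is itself finitely generated (take $h$ together with finitely many coset representatives). So the whole problem reduces to manufacturing such an $h$ and bounding the index.

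First I would fix a finite generating set $S$, work in $\Gamma=\Gamma(G,S)$ with identity vertex $\ast$, and use the hypothesis together with Theorem \ref{ES} to choose a ball $C=B_n(\ast)$ for which $\Gamma-C$ has \emph{exactly} two components $U^{+},U^{-}$ carrying infinitely many points of $H$. Enlarging a witnessing compact set to a ball cannot create a third such component, since that would exhibit more than two ends of $H$ in $G$. Writing $H^{\pm}=H\cap U^{\pm}$ and $H^{0}=H\setminus(U^{+}\cup U^{-})$, the set $H^{0}$ is finite: there are finitely many other components of $\Gamma-C$ by Theorem \ref{ES}, and each carries only finitely many points of $H$ (else a third end again). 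Next I would produce the translation via the left action of $H$ on $\Gamma$ by graph isometries, which permutes the two ``ends of $H$ in $G$'' and so gives a homomorphism $H\to\mathrm{Sym}\{+,-\}\cong\mathbb Z_2$ whose kernel $H_0$ has index at most $2$ and is therefore infinite. Choosing $h\in H_0$ with $d(\ast,h)>2n$, one has $hC\cap C=\emptyset$, so $hC=B_n(h)$ lies wholly inside whichever of $U^{\pm}$ contains $h$; after replacing $h$ by $h^{-1}$ if necessary we may assume $h\in U^{+}$, whence $hC\subseteq U^{+}$. Because $h$ fixes the two ends, a component-chase (the connected set $C\cup U^{-}$ misses $hC$ and reaches the $-$ end, so it sits in the $e^{-}$-component $hU^{-}$ of $\Gamma-hC$) gives $C\cup U^{-}\subseteq hU^{-}$ and hence $hU^{+}\subsetneq U^{+}$. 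The strict nesting $U^{+}\supsetneq hU^{+}\supsetneq h^{2}U^{+}\supsetneq\cdots$ forces $h$ to have infinite order, and since $h^{-1}$ pushes toward the $-$ end one gets $\bigcap_{k}h^{k}U^{+}=\emptyset$.

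The hard part, and the step that genuinely uses the $2$-ended hypothesis a second time, is showing that the ``annulus'' $A:=H^{+}\setminus hH^{+}$ is \emph{finite}. Since $\Gamma-hU^{+}=hC\cup hU^{-}\cup(\text{finite-}H\ \text{components})$, the set $A$ splits into $H$-points in $hC$ (finite, as $hC$ is bounded), $H$-points in the translated small components (finite), and the slab $H\cap U^{+}\cap hU^{-}$. This last set is not visibly finite, because $\Gamma$ may be coarsely much fatter than a line, so $U^{+}$ and $hU^{-}$ can each be enormous. I would control it by applying the definition of ``$2$-ended in $G$'' to the compact set $C''=C\cup hC$: the three regions $U^{+}\cap hU^{+}$, $U^{-}\cap hU^{-}$ and $U^{+}\cap hU^{-}$ are pairwise disjoint, the first contains $hU^{+}$ and the second contains $U^{-}$, so each already carries infinitely many points of $H$. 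If the slab $U^{+}\cap hU^{-}$ carried infinitely many points of $H$ as well, then by Theorem \ref{ES} one of its components of $\Gamma-C''$ would be a third component with infinitely many points of $H$, contradicting that $H$ has only two ends in $G$. Hence the slab, and therefore $A$, is finite.

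To finish, the telescoping $H^{+}=\bigsqcup_{k\ge 0}(h^{k}H^{+}\setminus h^{k+1}H^{+})=\bigsqcup_{k\ge 0}h^{k}A$ (using $\bigcap_{k}h^{k}H^{+}=\emptyset$) writes every element of $H^{+}$ as $h^{k}a$ with $k\ge 0$ and $a$ in the finite set $A$; the symmetric argument with $h^{-1}$ handles $H^{-}$ with a finite set $A'$, and $H^{0}$ is already finite. Thus $H=\langle h\rangle\cdot F$ for the finite set $F=A\cup A'\cup H^{0}$, so $[H:\langle h\rangle]\le|F|<\infty$, completing the proof. The only places where the $2$-ended (rather than merely $\ge 2$-ended) hypothesis is essential are the finiteness of $H^{0}$ and the finiteness of the slab; everything else is bookkeeping with the halfspaces $U^{\pm}$ and their $h$-translates.
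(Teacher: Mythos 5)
The paper states this theorem without proof (it is one of several unproved assertions in \S\ref{PairsG}), so there is no argument of the author's to compare yours against; your proposal has to stand on its own, and in substance it does. Your route is the classical Hopf--Wall argument that a two-ended group is virtually infinite cyclic --- fix the two ends with an index-two subgroup, produce a translation-like element $h$, and exhibit a finite fundamental domain for $\langle h\rangle$ --- transplanted to the relative setting. The genuinely new ingredient, finiteness of the slab $H\cap U^{+}\cap hU^{-}$, you handle correctly: the three regions $hU^{+}$, $U^{-}$, $U^{+}\cap hU^{-}$ are unions of components of $\Gamma-(C\cup hC)$, the first two already carry infinitely many $H$-points, and Theorem \ref{ES} (finitely many unbounded components, bounded ones inside a compact set) converts infinitely many $H$-points in the slab into a forbidden third component. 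The telescoping then gives $[H:\langle h\rangle]<\infty$, whence both conclusions.

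Four spots need patching, all routine but one worth writing out. (a) If $h\in U^{-}$ you cannot simply replace $h$ by $h^{-1}$: nothing forces $h^{-1}$ into $U^{+}$. The correct move is to swap the labels of $U^{+}$ and $U^{-}$, which costs nothing. (b) You must also choose $h$ outside the finite set $H^{0}$ so that it lies in $U^{+}\cup U^{-}$ at all, and when you enlarge the witnessing compact set to a ball you need not only that no third component appears but that two survive; both follow from the same Theorem \ref{ES} counting you use for $H^{0}$. (c) The homomorphism $H\to\mathrm{Sym}\{+,-\}$ needs the two ends to be honest objects: for every compact $K\supseteq C$ exactly two components of $\Gamma-K$ carry infinitely many $H$-points and the bonding maps between these two-element sets are bijections, so $H$ acts on the two-element inverse limit; this is standard but should be said. (d) Most importantly, $\bigcap_{k}h^{k}U^{+}=\emptyset$ is exactly what the telescoping identity $H^{+}=\bigsqcup_{k\ge 0}h^{k}A$ rests on, and ``$h^{-1}$ pushes toward the $-$ end'' is a hint, not a proof. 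Here is the missing argument: translating your inclusion $C\cup U^{-}\subseteq hU^{-}$ by $h^{-1}$ and iterating gives $h^{-k}C\subseteq U^{-}$ for all $k\geq 1$; the vertices $h^{-k}$ are distinct ($h$ has infinite order) and so leave every ball by local finiteness. Now if $x\in\bigcap_{k}h^{k}U^{+}$ and $D=d(\ast,x)$, pick $k$ with $d(\ast,h^{-k})>n+D$: the ball $B_{D}(h^{-k})$ is connected, misses $C=B_{n}(\ast)$, and contains both $h^{-k}\in U^{-}$ and $h^{-k}x$, so $h^{-k}x\in U^{-}$, contradicting $h^{-k}x\in U^{+}$. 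With these repairs your proof is complete and correct.
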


\begin{theorem}\label{FI}
If $H$ has finite index in $K$ for $K$ a subgroup of $G$ then $H$ and $K$ have the same number of ends in $G$. 
\end{theorem}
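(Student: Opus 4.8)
The plan is to show that $H$ and $K$ sit at finite Hausdorff distance from one another inside $X=\Gamma(G,S)$ (for any fixed finite generating set $S$), and that the number of ends of a subgroup \emph{in} $G$ is insensitive to this. Recall the metric on $X$ is left-invariant, $d(a,b)=\lvert a^{-1}b\rvert_S$. Since $[K:H]=n<\infty$, I would fix right coset representatives $k_1,\dots,k_n$ with $K=\bigsqcup_{i=1}^n Hk_i$ and set $R=\max_i\lvert k_i\rvert_S$. Each $k\in K$ has a unique expression $k=hk_i$, and then $d(k,h)=\lvert k_i\rvert_S\le R$; write $\pi(k)=h\in H$ for this ``$H$-part''. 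Two observations drive everything: first, $H\subseteq K\subseteq B_R(H)$; second, the map $\pi:K\to H$ is at most $n$-to-one, since a given $h\in H$ can only be $\pi(hk_i)$ for one of the $n$ representatives. Write $n(H)$ and $n(K)$ for the number of ends of $H$ and of $K$ in $G$.

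One inequality is immediate: because $H\subseteq K$, any component of $X-C$ containing infinitely many points of $H$ contains infinitely many points of $K$, so $n(K)\ge n(H)$.

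For the reverse inequality I would use the \emph{same} compact set. Suppose $C$ is compact and $U_1,\dots,U_j$ are components of $X-C$ each containing infinitely many points of $K$; the claim is that each $U_i$ already contains infinitely many points of $H$. The crucial step: if $k\in U_i\cap K$ satisfies $d(k,C)>R$, then any edge path of length $\le R$ from $k$ to $\pi(k)$ stays in $X-C$ (every vertex on it is within $R$ of $k$, hence not in $C$), so $\pi(k)$ lies in the same component $U_i$. Since $X$ is locally finite and $C$ is compact, $B_R(C)$ contains only finitely many vertices, so all but finitely many of the infinitely many points of $U_i\cap K$ satisfy $d(k,C)>R$; applying $\pi$ to these and using that $\pi$ is at most $n$-to-one yields infinitely many distinct points of $H$ in $U_i$. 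Hence $X-C$ has at least $j$ components each with infinitely many $H$-points, so $n(H)\ge j$. Taking $j=n(K)$ (or, when $n(K)=\infty$, letting $j\to\infty$) gives $n(H)\ge n(K)$, and the two inequalities finish the argument.

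I expect the delicate points to be bookkeeping rather than conceptual: verifying that the $H$-points produced in a given $U_i$ are genuinely \emph{distinct} (handled by the at-most-$n$-to-one count for $\pi$) and that they land in the \emph{correct} component (handled by the length-$\le R$ path staying outside $C$). A minor wrinkle worth stating cleanly is the uniform treatment of the finite and infinite cases, which the argument above manages by never needing to enlarge $C$.
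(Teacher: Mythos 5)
Your proof is correct and complete. The paper itself states this result without proof---it appears in a list of elementary facts recorded just after the definition of the number of ends of a subgroup in $G$---so there is no argument in the paper to compare yours against; the route you take (bounded Hausdorff distance between $H$ and $K$ via coset representatives, together with the at-most-$n$-to-one projection $\pi$) is the natural one, and both inequalities, including the case of infinitely many ends, are handled soundly. One cosmetic point: where you argue that the length-$\le R$ path from $k$ to $\pi(k)$ misses $C$, you should say every \emph{point} of the path (not just every vertex) is within $R$ of $k$, since a compact set $C\subset\Gamma(G,S)$ may meet interiors of edges; your hypothesis $d(k,C)>R$ already covers this, so no change to the argument is needed.
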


\begin{theorem}
Suppose $G$ is a finitely generated group that splits non-trivially as  $A\ast_C B$ or $A\ast_C$ where $C$ is not equal to $A$ or $B$ and $C$ is finite. If $H$ is 1-ended in $G$ and $H$ is a subgroup of $A$, then $H$ is 1-ended in $A$.  
\end{theorem}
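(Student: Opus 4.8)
The plan is to work in the Bass--Serre tree $T$ of the splitting and to prove the contrapositive-type implication: if $H$ has at least two ends in $A$, then $H$ has at least two ends in $G$, contradicting the hypothesis. Since $H$ is infinite (being $1$-ended in $G$) it has at least one end in $A$, so once the possibility of two or more ends in $A$ is excluded, the trichotomy for ends of a subgroup (established above) forces $H$ to be exactly $1$-ended in $A$. I will treat the case $G=A\ast_C B$; the HNN case $G=A\ast_C$ is handled identically, the only change being the bookkeeping of the (still finitely many) edge types at $v_0$.

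Fix a finite generating set $S$ of $G$ and write $X=\Gamma(G,S)$. The group $G$ acts on $T$ with $v_0$ a vertex whose stabilizer is $A$ and with finite edge stabilizers. Given an edge path in $X$ joining two elements of $A$, say the word $w=s_1\cdots s_n$ with prefixes $g_i=s_1\cdots s_i$, consider its trace $v_0=g_0v_0,\,g_1v_0,\dots,\,g_nv_0=v_0$ in $T$. This is a closed edge-walk at $v_0$. I will decompose $w$ at the indices $i$ with $g_i\in A$ (equivalently $g_iv_0=v_0$); between two consecutive such indices $i_\ell<i_{\ell+1}$ the walk either takes a single step in $S\cap A$, or performs an excursion that leaves $v_0$ and returns without revisiting it. The key point is that, because $T$ is a tree, such an excursion must exit and re-enter $v_0$ across the \emph{same} edge; as that edge has finite stabilizer and there are only finitely many ``generator edges'' $\{v_0,sv_0\}$ at $v_0$, the endpoints satisfy $g_{i_\ell}^{-1}g_{i_{\ell+1}}\in P$ for a fixed finite set $P\subseteq A$. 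Establishing this finite set $P$ and the ``same edge'' property is the technical heart of the argument.

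Replacing each excursion by a single edge labelled by its element of $P$ and keeping the $S\cap A$ steps converts $w$ into an edge path in $\Gamma\big(A,(S\cap A)\cup P\big)$ joining the same endpoints. Applying this to an arbitrary $a\in A$ shows $(S\cap A)\cup P$ is a finite generating set of $A$, so that ``ends of $H$ in $A$'' is well defined and independent of the generating set. Crucially, every vertex of the projected $A$-path is literally one of the vertices $g_{i_\ell}\in A$ of the original $X$-path.

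To finish, suppose $H$ has at least two ends in $A$, witnessed by a finite set $F\subseteq\Gamma\big(A,(S\cap A)\cup P\big)$ whose complement has two components each meeting $H$ in an infinite set. Any $X$-path between a point of $H$ in one component and a point of $H$ in the other projects, by the above, to an $A$-path that must cross $F$; since the vertices of the projected path are vertices of the original $X$-path, the $X$-path itself passes through $F\subseteq A\subseteq X$. Thus the finite set $F$ separates these two infinite subsets of $H$ inside $X$. Because $X$ is locally finite and $F$ is finite, $X-F$ has only finitely many components, so by the pigeonhole principle two distinct components of $X-F$ each contain infinitely many points of $H$; hence $H$ has at least two ends in $G$, the desired contradiction. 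The main obstacle is the excursion analysis of the second paragraph---pinning down the finite set $P$ from the tree structure and the finiteness of the edge stabilizers---after which the separation argument is routine.
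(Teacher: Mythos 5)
Your endgame (collapse excursions to get a path in a Cayley graph of $A$, then the separation--pigeonhole argument and the $0/1/2/\infty$ trichotomy) is fine, but the step you correctly identify as the technical heart is not just unproven --- it is false in the generality you state it, namely for an \emph{arbitrary} finite generating set $S$ of $G$. The error is in treating the trace $g_0v_0, g_1v_0,\dots$ as an edge-walk in $T$. In the Bass--Serre tree of $A\ast_CB$ the vertices $v_0$ and $sv_0$ are never adjacent (both are $A$-type vertices and $T$ is bipartite), so your ``generator edges $\{v_0,sv_0\}$'' are not edges: a single generator step moves the trace a distance $d_T(v_0,sv_0)$ that can be large, and the geodesic interpolating between $g_jv_0$ and $g_{j+1}v_0$ can pass through $v_0$ even though $g_j,g_{j+1}\notin A$. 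Hence an excursion need not exit and re-enter $v_0$ through the same edge, and its endpoint difference need not lie in any finite set $P$. Concretely, take $G=\langle a\rangle\ast\langle b\rangle=F_2$, $A=\langle a\rangle$, $B=\langle b\rangle$, $C=1$, and $S=\{a,b,u,v\}$ with $u=bab$, $v=b^{-1}ab^{-1}$. The path with successive vertices
$$1,\ \ b,\ \ ab^{-1},\ \ a^{2}b,\ \ a^{3}b^{-1},\ \ a^{4}b,\ \ a^{4}$$
(successive steps use the generators $b,\,v,\,u,\,v,\,u,\,b^{-1}$) is a single excursion: every intermediate vertex lies outside $A$, yet it exits $v_0$ across the edge $C$ and re-enters across the edge $a^4C$, and continuing the $v,u$ zigzag gives excursions from $1$ to $a^{2k}$ for every $k$. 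So the set of excursion differences is infinite and no finite $P$ exists; your projection into $\Gamma\bigl(A,(S\cap A)\cup P\bigr)$, and with it the separation argument, collapses.

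The gap is repaired exactly by doing what the paper does: choose the generating set adapted to the splitting, $S=\mathcal A\cup\mathcal B$ with $\mathcal A$ generating $A$, $\mathcal B$ generating $B$, and $\mathcal A\cap\mathcal B$ generating $C$. (Such a set exists because finite generation of $G$ plus finiteness of $C$ forces $A$ and $B$ to be finitely generated --- a fact you would otherwise have to prove separately, since your derivation that $(S\cap A)\cup P$ generates $A$ rests on the false lemma.) With adapted generators, a step by $s\in\mathcal A$ fixes the trace, and a step by $s\in\mathcal B\setminus C$ moves it across the two edges $gA$ --- $gB$ --- $gsA$, whose interior vertex $gB$ is of $B$-type and hence never $v_0$; so during an excursion the refined trace genuinely stays in one component of $T-\{v_0\}$, the exit and entry edges coincide, and the excursion endpoints $g,g'$ satisfy $gC=g'C$, i.e.\ $g^{-1}g'\in C$, giving $P=C$. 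This is precisely the fact the paper's proof uses (``the initial and end points of $\beta$ belong to some coset $aC$''); the paper then replaces each excursion by a path inside $aC$, which is why it includes generators of $C$ in the generating set, and proves $1$-endedness in $A$ directly rather than by contraposition. Once you make this change, your argument goes through and is essentially the paper's proof.
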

\begin{proof}
Let $X$ be the Cayley graph of $G$ on generating sets for $A$, $B$ and $C$. Let $K$ be a finite connected subgraph of $X$. Let $K_1$ be $K$ union all cosets $aC$ such that $a\in A$ and $aC\cap K\ne\emptyset$. Since $H$ is 1-ended in $X$, there is a finite subgraph $D$ of $X$ such that any two elements of $H-D$ can be connected by a path in $X-K_1$. Say $h_1$ and $h_2$ are elements of $H-D$ and $\alpha$ is a path connecting $h_1$ to $h_2$ in $X-K_1$. If $\beta$ is the first subpath of $\alpha$ with end points in $A$ and every other vertex in $X-A$, then the initial and end points of $\beta$ belong to some coset $aC$ for $a\in A$. By the definition of $K_1$, $aC\cap K=\emptyset$. Hence $\beta$ can be replaced by a path with vertices in $aC\subset A$. Inductively there is a path from $h_1$ to $h_2$ with vertices in $A-K$. 
\end{proof}

\begin{theorem} \label{OneEndSplit} 
Suppose $G$ is a finitely generated group and $G$ splits as $A\ast_C B$ or $A\ast_C$ where $C$ is not equal to $A$ or $B$ and $C$ is finite. If $H$ is 1-ended in $G$, then $H$ is a subgroup of a conjugate of $A$ or $B$. 
\end{theorem}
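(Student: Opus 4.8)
The plan is to realize the splitting by the action of $G$ on its Bass--Serre tree $T$ and to translate ``$1$-ended in $G$'' into a statement about how far $H$ can spread in $T$. Recall that the vertices of $T$ are the cosets $gA,gB$ (in the HNN case, the cosets $gA$), the edges are the cosets $gC$, the vertex stabilizers are the conjugates of $A$ and $B$, and the edge stabilizers are the conjugates of $C$ --- all finite. Since $G$ is finitely generated and $C$ is finite, the vertex groups $A$ and $B$ are themselves finitely generated, so I may take the finite generating set $S$ to contain finite generating sets of $A$, $B$ and $C$. With this choice $\Gamma=\Gamma(G,S)$ is a ``tree of spaces'' over $T$: each coset $gA$ (resp.\ $gB$) spans a copy of the Cayley graph of $A$ (resp.\ $B$), and two such vertex spaces meet exactly in a coset $gC$, a set of $|C|$ vertices. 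The goal is to show that $H$ fixes a vertex of $T$; then $H$ lies in that vertex stabilizer, a conjugate of $A$ or $B$, and we are done.

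The key geometric fact is a separation lemma: for an edge $e=gC$ of $T$, deleting the finite vertex set $gC$ from $\Gamma$ disconnects $\Gamma$ along the two half-trees determined by $e$. Indeed, an $A$-edge of $\Gamma$ keeps one inside a single $A$-coset and a $B$-edge inside a single $B$-coset, so any edge path in $\Gamma$ projects to an edge path in $T$; to pass from one side of $e$ to the other the $\Gamma$-path must traverse a vertex whose $C$-coset is $gC$, i.e.\ a vertex of the deleted set. Moreover a vertex $h\in H$ (with $h\notin gC$) lies on the side of $e$ containing the vertex $hv_0$, where $v_0$ is the base vertex $A$ of $T$ (so $hv_0=hA$). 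Consequently, to contradict $1$-endedness it suffices to produce a single edge $e$ of $T$ whose two sides each contain infinitely many points of the orbit $Hv_0$: the sets $gC$ then separate $H$ into two infinite pieces, giving $H$ at least $2$ ends in $G$.

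I then argue by contradiction, assuming $H$ fixes no vertex of $T$, and analyze the action $H\curvearrowright T$. First, the orbit $Hv_0$ must be unbounded: a bounded orbit has a center fixed by $H$, which is either a vertex (excluded by hypothesis) or the midpoint of an edge $e$, in which case $H\le\mathrm{Stab}(e)$ is finite --- impossible, since a $1$-ended-in-$G$ subgroup is infinite. An unbounded orbit has a nonempty, $H$-invariant limit set $\Lambda\subseteq\mathcal E(T)$. If $|\Lambda|\ge 2$, choose distinct $\xi_1,\xi_2\in\Lambda$ and an edge $e$ on the bi-infinite geodesic $(\xi_1,\xi_2)$; infinitely many orbit points converge to $\xi_1$ and infinitely many to $\xi_2$, so both sides of $e$ are infinite and the separation lemma yields the contradiction.

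The one remaining case --- $\Lambda=\{\xi\}$ a single fixed end --- is the real obstacle, since there no single edge separates the orbit into two infinite pieces and the naive separation argument fails. The resolution is that this case cannot occur. Fixing $\xi$ forces every element of $H$ to be elliptic (a hyperbolic element would contribute a second limit end), so along a ray $w_0w_1w_2\cdots\to\xi$ one has $H=\bigcup_N\bigl(H\cap\mathrm{Stab}(w_N)\bigr)$, an increasing union. But an element fixing both $w_N$ and $\xi$ fixes the whole ray $[w_N,\xi)$, hence fixes the edge $[w_N,w_{N+1}]$, and therefore lies in the finite group $\mathrm{Stab}([w_N,w_{N+1}])$, a conjugate of $C$ of order at most $|C|$. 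Thus $H$ is an increasing union of groups of uniformly bounded order, so $H$ is finite --- again contradicting that $H$ is infinite. Hence the assumption that $H$ fixes no vertex is untenable, $H$ stabilizes a vertex, and $H$ is contained in a conjugate of $A$ or $B$. The HNN case $A\ast_C$ is identical, using the tree with a single orbit of vertices (the conjugates of $A$) and edge stabilizers the conjugates of $C$.
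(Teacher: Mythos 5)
Your proof is correct, and it takes a genuinely different route from the paper's, although both are Bass--Serre tree arguments that engineer the same contradiction: a finite edge coset $gC$ with infinitely many elements of $H$ on each side, so that $H$ has at least two ends in $G$. The paper's case division is algebraic: either $H$ is locally finite, handled by an increasing chain of finite subgroups, each fixing a vertex of $\mathcal T$, together with the observation that a subgroup with more than $|C|$ elements cannot lie in an edge stabilizer; or $H$ contains a finitely generated infinite subgroup $H_1$, to which the structure theorem for finitely generated groups acting on trees is applied ($H_1$ fixes a vertex or splits as $J_1\ast_{C_1}J_2$ or $J_1\ast_{C_1}$ over a subgroup $C_1$ of $C$), the splitting case being killed by the infinite-order element $j_1j_2$ whose powers $(j_1j_2)^n$ and $(j_2j_1)^n$ lie on opposite sides of $C$; a final step then promotes the fixed vertex of $H_1$ to a fixed vertex of all of $H$. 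You instead classify the dynamics of the action of the full group $H$ on $T$: bounded orbit (fixed point), at least two limit ends (separation along an edge of the geodesic joining them), or exactly one limit end (every element elliptic, so $H$ is an increasing union of subgroups of edge stabilizers, hence of order at most $|C|$, contradicting that $H$ is infinite). Your route buys uniformity --- there is no separate locally finite case and no extension step, since you never pass to a finitely generated subgroup --- and you prove explicitly the separation lemma that the paper uses only implicitly (``on opposite sides of $C$ in $\Gamma$''). The price is that you invoke, without proof, the classification of actions of possibly infinitely generated groups on trees: the assertion that an unbounded orbit has nonempty limit set is exactly the dichotomy ``$H$ contains a hyperbolic element, or every element of $H$ is elliptic and $H$ fixes an end,'' which is standard but a heavier citation than the finitely generated structure theorem the paper leans on; note also that your one-limit-end case is, at bottom, the same bounded-order counting as the paper's locally finite case, rearranged as a contradiction. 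One step worth making explicit: since $\Gamma-gC$ has only finitely many components (by local finiteness of $\Gamma$), a pigeonhole argument is needed to pass from ``infinitely many points of $H$ on each side of $e$'' to two components of $\Gamma-gC$, each containing infinitely many points of $H$, which is what the paper's definition of ends of a subgroup in $G$ requires.
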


\begin{proof}
We consider the case where $G=A\ast_CB$, the amalgamated product case is completely similar. Choose finite generating sets for $A$ and $B$ each containing  the same generating set for $C$.
Let $\Gamma$ be the Cayley graph of $G$ where the generating set used is the union of the chosen generating sets for $A$ and $B$. The vertices of $\mathcal T$, the Bass-Serre tree for the splitting $A\ast_CB$, are the cosets of $A$ and $B$ in $G$. The edges of $\mathcal T$  are the cosets of $C$ in $G$. 
The first case we consider is when $H$ is locally finite. List the elements of $H$ as $h_1,h_2,\ldots$. Say $C$ contains $K$ elements. Let $H_k=\langle h_1,\ldots h_k\rangle$. Since $H_k$ is finite, it is a subgroup of a conjugate of $A$ or $B$. Say $H_i\subset g_i^{-1} Ag_i$ (or $g_i^{-1} Bg_i$) for some $g_i\in G$. Then note that $H_i\leq H_j$ for $i\leq j$.
If $i>K$ then $H_i$ then $H_{K+1}$ stabilizes the vertices $g_{K+1}A $ and $g_iA$ of $\mathcal T$. If $i\ne K+1$, then $H_{K+1}$ stabilizes the edges between these two vertices of $\mathcal T$ and hence is a subgroup of a conjugate of $C$. This is impossible since $C$ only contains $K$ elements. Instead, $i=K+1$ for all $i>K$. Since every element of $H$ is in $H_i$ for some $i>K$, we have that $H<g_{K+1}^{-1}Ag_{K+1}$. 

The final case we must consider is when $H$ contains a finitely generated infinite subgroup $H_1$. If $H_1$ is not a subgroup of $g^{-1} Ag$ or $g^{-1}Bg$ for some $g\in G$, then since $H_1$ acts on $\mathcal T$, it splits nontrivially over a subgroup $C_1$ of $C$. Say $H_1=J_1\ast _{C_1} J_2$. Choose $j_1\in J_1-C_1$ and $j_2\in J_2-C_1$. Then $j_1j_2$ is an element of infinite order in $H_1$ and elements $(j_1j_2)^n$ and $(j_2j_1)^n$ are on opposite sides of $C$ in $\Gamma$ for all $n\ne 0$. This implies that $H_1$ (and hence $H$) is not 1-ended in $G$, contrary to our hypothesis. If $H_1=J_1\ast_{C_1}$ then let $t$ be the stable letter of this splitting. The elements $t^n$ and $t^{-n}$ are on opposite sides of $C$ in $\Gamma$ for all $n\ne 0$. This is impossible since $H_1$ is 1-ended in $G$. 
Instead, every finitely generated infinite subgroup of $H$ is contained in $g^{-1}Ag$ or $g^{-1} Bg$ for some $g\in G$. 
Assume that $H_1<g^{-1}Ag$ for some $g\in G$. Let $h$ be any element of $H$ and let $H_2=\langle H_1\cup h\rangle$. Then $H_2<g_1^{-1}Ag_1$ or $H_2<g_1^{-1}Bg_1$ for some $g_1\in G$. It must be that $H_2<g^{-1}Ag$ since otherwise,  $H_1$ stabilizes the edges between these two distinct vertices. But that is impossible since stabilizers of edges are finite. We conclude that $H$ is a subgroup of a conjugate of $A$ or $B$.
\end{proof}

\begin{corollary} 
Suppose $G$ is a finitely generated group and $\mathcal D$ is a Dunwoody decomposition of $G$ (a graph of groups decomposition such that each edge group is finite and each vertex group is either 0 or 1-ended). Then $H$ is 1-ended in $G$ if and only if $H$ is a subgroup of a conjugate of a 1-ended vertex group of $\mathcal D$. 
\end{corollary}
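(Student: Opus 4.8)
The plan is to prove the two implications separately: the forward direction (being $1$-ended in $G$ forces $H$ into a conjugate of a $1$-ended vertex group) by induction on the number of edges of $\mathcal D$, using Theorem \ref{OneEndSplit} as the engine, and the converse by a direct comparison of Cayley graphs.

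For the forward direction, first I would reduce $\mathcal D$ so that no edge group coincides with an incident vertex group; collapsing such an edge merely absorbs a finite vertex group into a neighbor and leaves the conjugacy classes of the $1$-ended vertex groups unchanged, so this costs nothing. If the reduced $\mathcal D$ has no edges then $G$ is its single vertex group and the claim is immediate. Otherwise I pick an edge $e$ and collapse every other edge of $\mathcal D$, expressing $G$ as a one-edge splitting $A\ast_C B$ or $A\ast_C$ over the finite group $C=C_e$; reducedness guarantees $C\ne A$ and $C\ne B$, so the splitting is nontrivial and Theorem \ref{OneEndSplit} applies, placing $H$ inside a conjugate $g^{-1}Ag$ (or $g^{-1}Bg$). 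Replacing $H$ by $gHg^{-1}$, which has the same number of ends in $G$ since it lies within bounded Hausdorff distance of a left-translate of $H$, I may assume $H\le A$. The preceding theorem on splittings (a subgroup of $A$ that is $1$-ended in $G$ is $1$-ended in $A$) then gives that $H$ is $1$-ended in $A$. Since $A$ is the fundamental group of a sub-graph-of-groups of $\mathcal D$ with strictly fewer edges, itself a Dunwoody decomposition whose $1$-ended vertex groups are among those of $\mathcal D$, the inductive hypothesis finishes the forward direction, once the conjugating elements are carried back.

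For the converse I first show that a $1$-ended vertex group $V$ is $1$-ended in $G$. Working in a Cayley graph $\Gamma(G,S)$ whose generating set contains a finite generating set for $V$, the subgraph on $V$ is a copy of a Cayley graph $\Gamma(V)$, so any edge path in $\Gamma(V)$ avoiding a finite set is also such a path in $\Gamma(G)$. Given any finite $C\subset \Gamma(G)$, one-endedness of $V$ forces $\Gamma(V)-(C\cap V)$ to have a single unbounded component, carrying all but finitely many elements of $V$; this component lies in a single component of $\Gamma(G)-C$, so at most one component of $\Gamma(G)-C$ contains infinitely many points of $V$. Hence the number of ends of $V$ in $G$ is at most the number of ends of $V$, namely one, and $V$ is infinite, so $V$ is exactly $1$-ended in $G$. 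Conjugation invariance of the number of ends in $G$ upgrades this to every conjugate $gVg^{-1}$, and the earlier result that an infinite subgroup of a subgroup with one end in $G$ again has one end in $G$ shows every infinite $H\le gVg^{-1}$ is $1$-ended in $G$; this is the converse (for infinite $H$, a finite $H$ having $0$ ends).

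The main obstacle is bookkeeping rather than a single hard step: I must make the collapsing/reducing precise so that the one-edge splitting genuinely meets the hypotheses of Theorem \ref{OneEndSplit}, and I must track the conjugating elements through the induction so the final containment lands in a conjugate of a vertex group of the \emph{original} $\mathcal D$. The one genuinely new estimate is the inequality underlying the converse, that the number of ends of $V$ in $G$ is bounded by the number of ends of $V$, which relies on $V$ being finitely generated and on the inclusion of Cayley graphs being a graph embedding.
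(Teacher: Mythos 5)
Your proposal is correct and takes essentially the same route as the paper: the paper's entire proof is the one-line remark that the corollary "follows inductively from Theorem \ref{OneEndSplit}," and your forward direction is exactly that induction made explicit, with the theorem immediately preceding Theorem \ref{OneEndSplit} (a subgroup of $A$ that is 1-ended in $G$ is 1-ended in $A$) correctly identified as the ingredient needed to descend to the sub-decomposition. Your converse (1-ended vertex groups are 1-ended in $G$ via the embedded Cayley subgraph, then the result on infinite subgroups of 1-ended-in-$G$ subgroups) and your flagged bookkeeping (degenerate loop edges, which only arise when $G$ is 2-ended and the statement is vacuous, and the finite-$H$ caveat) are routine supplements to what the paper leaves implicit.
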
 
\begin{proof}
This follows inductively from Theorem \ref{OneEndSplit}.
\end{proof}

\begin{theorem}
Suppose $G$ is a finitely generated group that splits non-trivially as  $A\ast_C B$ or $A\ast_C$ where $C$ is not equal to $A$ or $B$ and $C$ is finite. Let $\mathcal T$ be the Bass-Serre tree for this splitting and suppose that the subgroup  $H$ is 1-ended in  $G$. Let $\mathcal T_1$ be the subtree of $\mathcal T$ containing all vertices $hA$ and $hB$ for $h\in H$, so that $H$ acts on $\mathcal T_1$ by graph isomorphisms. Theorem \ref{OneEndSplit} implies $H$ is a subgroup of an $A$ or $B$ coset $Q$ of $G$. Then

(1) If $D$ is the distance in $\mathcal T$ from $A$ to $Q$, then for all $h\in H$, the distance from $hA$ to $Q$ is $D$. Similarly for $B$. 

(2) If $h_1, h_2\in H$ are such that $h_1A\ne h_2A$ and $\alpha$ is the (even length) geodesic in $\mathcal T_1$ between $h_1A$ and $h_2A$, then the closest point $M$ of $\alpha$ to $Q$  is the midpoint (vertex) of $\alpha$. The geodesics from $h_1A$ to $Q$ and $h_2A$ to $Q$, both contain $M$. Furthermore, $M$ is stabilized by $h_2h_1^{-1}$. In particular, if $E$ is the first edge of $\alpha$, then the last edge of $\alpha$ is $h_2h_1^{-1}(E)$. Similarly for $B$.

(3) Either $H\subset A$ or $H\subset B$, or both $H\cap hA$ and $H\cap hB$ are finite for all $h\in H$. Note that if $H\subset A$ or $H\subset B$, then $\mathcal T_1$ is a single vertex. 

(4) If $H\not\subset A$ and $H\not\subset B$, then either all cosets $hA$ in $\mathcal T_1$ are of valence 1 and all cosets $hB$ in $\mathcal T_1$ have the same finite valence which is equal to 1 plus the index of $H\cap C$ in $H\cap A$, or all cosets $hB$ in $\mathcal T_1$ are of valence 1 and all cosets $hA$ in $\mathcal T_1$ have the same finite valence which is equal to 1 plus the index of $H\cap C$ in $H\cap B$. Furthermore, if $hA$ is not valence 1, then all but one edge at $hA$ ends with an (valence 1) $h'B$ vertex for some $h'\in H$. Similarly if $hB$ is not valence 1. 

(5) Assume that $A$ is closer to $Q$ than is $B$ (so that $B$ has valence 1 in $\mathcal T_1$). If $h_1,h_2\in H$ and $h_1A\ne h_2A$ then there are no $hA$ or $hB$ vertices between $h_1A$ and $h_2A$ in $\mathcal T_1$. Similarly for $B$. 

(6) The vertex $Q$ is the midpoint of the geodesic between $A$ and $hA$ for some $h\in H$. 
\end{theorem}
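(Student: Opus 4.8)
The plan is to work entirely inside the Bass--Serre tree $\mathcal{T}$, using that $H$ acts by simplicial isometries and fixes the vertex $Q$ furnished by Theorem \ref{OneEndSplit}. The one preliminary observation that drives everything is that $H$ is $1$-ended, hence infinite, while every edge stabilizer is a conjugate of the \emph{finite} group $C$; therefore $H$ fixes no edge, so its fixed-point set contains no edge and is a single vertex, which must be $Q$. I would then record three standard facts about trees for repeated use: $\mathcal{T}$ is bipartite into $A$-type and $B$-type vertices, so two vertices of the same type lie at even distance and the midpoint of the geodesic between them is a vertex; for any three vertices there is a median, and the nearest-point projection of $Q$ onto a geodesic $[p,q]$ is $\mathrm{med}(p,q,Q)$, with $d(\cdot,Q)$ attaining its maximum over $[p,q]$ at the two endpoints; and every leaf of the convex hull of a set of vertices already belongs to that set.

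Parts (1) and (2) are then short. For (1), each $h\in H$ is an isometry fixing $Q$, so $d(hv_A,Q)=d(hv_A,hQ)=d(v_A,Q)=D$, and similarly for $B$. For (2), put $M=\mathrm{med}(h_1v_A,h_2v_A,Q)$. By (1) the endpoints of $\alpha=[h_1v_A,h_2v_A]$ are equidistant from $Q$, so $M$ is the midpoint of $\alpha$; as $\alpha$ has even length, $M$ is a vertex, and $M$ is the closest point of $\alpha$ to $Q$. Setting $g=h_2h_1^{-1}\in H$, the isometry $g$ fixes $Q$ and sends $h_1v_A$ to $h_2v_A$, hence carries $[h_1v_A,Q]$ onto $[h_2v_A,Q]$; since $M$ lies on both of these geodesics and $d(gM,Q)=d(M,Q)$, comparing positions along $[h_2v_A,Q]$ forces $gM=M$. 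Tracking the first edge $E$ of $\alpha$ (which $g$ carries onto the second half of $\alpha$) identifies $gE$ as the last edge of $\alpha$.

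For (3) I would split on whether $Q$ is a base-type vertex. If $H\not\subseteq A$ and $H\not\subseteq B$, then $H$ fixes neither $v_A$ nor $v_B$, so $Q$ lies in neither orbit $Hv_A$ nor $Hv_B$ (both of which sit at distance $D>0$ from $Q$); consequently each vertex $hv_A$ and $hv_B$ is distinct from $Q$, and its $H$-stabilizer fixes a nondegenerate geodesic to $Q$, hence fixes an edge on it, hence is finite. The excluded alternatives $H\subseteq A$ and $H\subseteq B$ are exactly the cases $Q=v_A$ and $Q=v_B$, where the corresponding orbit of base vertices collapses to one point and $\mathcal{T}_1$ degenerates.

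The remaining parts describe $\mathcal{T}_1$ in the immediate vicinity of $Q$. By (1) exactly one orbit lies strictly closer to $Q$; assuming as in (5) that $A$ is closer, $Hv_A$ sits at distance $D$ and $Hv_B$ at distance $D+1$. Since $d(\cdot,Q)$ is maximized at the endpoints of any geodesic between orbit points, no interior vertex of such a geodesic can reach distance $D$ from $Q$; this yields (5) (no orbit vertex lies strictly between two $A$-vertices) and shows that the $B$-vertices are leaves of $\mathcal{T}_1$, while each $A$-vertex $v$ has a single edge pointing toward $Q$ and all other edges running to leaf $B$-vertices, which is the ``furthermore'' clause of (4). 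As $H$ permutes the $A$-vertices (and the $B$-vertices) transitively, all $A$-vertices share one valence; a one-line orbit--stabilizer count identifies the away-neighbors of $v$ with the $\mathrm{Stab}_H(v)$-translates of the base edge $e_0$, so the common valence of the non-leaf vertices is $1+[\mathrm{Stab}_H(v):\mathrm{Stab}_H(e_0)]$, i.e. one plus the relevant index of $H\cap C$. Finally, for (6): because $H$ fixes no edge it cannot fix the first edge $e^{*}$ of $[Q,v_A]$, so some $h\in H$ satisfies $he^{*}\neq e^{*}$; then $[Q,v_A]$ and $[Q,hv_A]=h[Q,v_A]$ diverge at $Q$, so $Q$ lies on $[v_A,hv_A]$, and by (1) it is the midpoint. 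I expect the main obstacle to be organizing the tree geometry around $Q$ cleanly---in particular the convexity of $d(\cdot,Q)$ along geodesics and the leaf/convex-hull bookkeeping---together with steering the valence count onto the correct edge- and vertex-stabilizers; once $Q$ is known to be the unique fixed vertex of an isometric $H$-action, the rest follows directly.
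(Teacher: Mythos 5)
Your proof is correct, and it takes a genuinely different route from the paper's for the substantive parts (3)--(6); parts (1)--(2) are handled essentially the same way in both. The paper's engine is the 1-endedness of $H$ in $G$ applied inside the Cayley graph $\Gamma$ of $G$: each edge of $\mathcal T$ corresponds to a finite subset of $\Gamma$ (a coset of $C$), and the paper's intermediate facts (I)--(IV) all rest on the observation that such a set cannot separate two infinite subsets of $H$; conclusion (6) is then obtained by a minimality argument on the midpoints $M_h$. You use 1-endedness only through two of its consequences: $H$ is infinite, and $H$ fixes the vertex $Q$ (Theorem \ref{OneEndSplit}, quoted in the statement). Since edge stabilizers are conjugates of the finite group $C$, an infinite group fixing a vertex fixes exactly one, and everything else is elementary tree geometry: equidistance of the orbits from $Q$ gives (1)--(2); the stabilizer in $H$ of any vertex other than $Q$ fixes a nondegenerate geodesic to $Q$, hence an edge, hence is finite, giving (3); strict unimodality of $d(\cdot,Q)$ along geodesics gives (5) and the leaf/hub picture of (4); and choosing $h$ that does not fix the first edge of $[Q,v_A]$ gives (6) more cleanly than the paper's minimality argument. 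What your approach buys is economy and generality (the theorem becomes a statement about any infinite group acting on a tree with finite edge stabilizers and a global fixed vertex); what the paper's route buys is the stronger intermediate facts (I)--(II) (one side of each relevant edge contains only finitely many elements of $H$), which are tied to, and reusable within, its 1-ended-in-$G$ framework.

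Two points to tighten. First, the ``one-line orbit--stabilizer count'' in (4) needs the inclusion that every away-edge at a hub vertex $v_A$ ends at an $(H\cap A)$-translate of $v_B$; orbit--stabilizer alone does not give this, but your (5) does: if $h\in H$, $hv_B$ is adjacent to $v_A$, and $hv_A\neq v_A$, then $(v_A,hv_B,hv_A)$ is a geodesic with the orbit vertex $hv_B$ strictly between two $A$-vertices, contradicting (5); hence $hv_A=v_A$ and $h\in H\cap A$. You should spell that out. Second, your resulting formula -- hub valence $1+[\mathrm{Stab}_H(v):\mathrm{Stab}_H(e_0)]$, i.e.\ $1+[H\cap A:H\cap C]$ when the $B$-vertices are leaves -- agrees with fact (IV) in the paper's own proof, but not with the literal wording of conclusion (4), which attaches the index of $H\cap C$ in $H\cap B$ to that case. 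The version you (and the paper's proof) derive is the correct one, so this mismatch is a typo in the statement rather than a gap in your argument.
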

\begin{proof}
Let $\Gamma$ be the Cayley graph of $G$ on generating sets for $A$, $B$ and $C$.
Call the vertices of $\mathcal T$ that contain elements of $H$, $H$-vertices. Note that if $h\in H$ is such that $h\in gA$ for some $g\in G$, then $gA=hA$. So the $H$-vertices of $\mathcal T$ are all cosets of the form $hA$ or $hB$ for $h\in H$. 
Since $H$ fixes $Q$, conclusion (1) is immediate. Conclusion (2) follows from (1) and the following fact: If $\alpha$ is a geodesic in $\mathcal T_1$ and $M$ is a closest point of $\alpha$ to $Q$ then the geodesic from $Q$ to $M$, followed by the segment of $\alpha$ from $M$ to an end point of $\alpha$, is a geodesic. This fact and (1) certainly imply that $M$ is the midpoint of $\alpha$. Note that $h_2h_1^{-1}$ maps $h_1A$ to $h_2A$ and so maps the geodesic from $Q$ to $h_1A$ to the geodesic from $Q$ to $h_2A$. Hence $h_2h_1^{-1}$ stabilizes the segment between $M$ and $Q$. Conclusion $(2)$ is verified. 

If $hA\cap H$ is infinite, for some $h\in H$, then for any $h'\in H$, $(h'(hA\cap H))=(hh'A)\cap H$ is infinite and so $hA\cap H$ is infinite for all $h\in H$. Suppose $A\cap H$ is infinite. If $hA\ne A$ for some $h\in H$ and $E$ is an edge in $\mathcal T$ separating $A$ and $hA$, then the finite subset of $\Gamma$ corresponding to $E$ separates infinitely many elements of $H$ in $A$ from infinitely many elements of $H$ in $hA$. But this implies that $H$ is not 1-ended in $G$. Instead, if $A\cap H$ is infinite, then $hA=A$ for all $h\in H$, and so $H\subset A$ and $\mathcal T_1$ consists of a single vertex. Conclusion (3) is verified. 

\medskip

$(\dagger)$ From this point on we assume that $hA\cap H$ and $hB\cap H$ are finite for all $h\in H$. 

\medskip 

Consider the vertex $A$ of $\mathcal T_1$ and an element $h\in H$ such that $hA\ne A$. %and there is no vertex $h'A$ on the geodesic between $A$ and $hA$ for $h'\in A$. %Let $E$ be the first edge of the geodesic between $A$ and $hA$. 
%Consider the vertices $HV_0$. If this is a single vertex, then $H$ stabilizes $V_0$ and we are finished. Assume $HV_0$ contains at least two vertices.  By the minimality of $\mathcal T_1$, every vertex of $\mathcal V_1$ is between two vertices of $HV_0$. Let $V_1\ne V_0$ be a closest vertex to $V_0$ such that $V_1\in HV_0$. 
Suppose $E$ is an edge of $\mathcal T_1$ separating $A$ and $hA$. If there are infinitely many distinct elements of $H$ contained in the $\mathcal T_1$ vertices (cosets) on the $A$ side of $E$ and infinitely many elements of $H$ contained in the $\mathcal T_1$ vertices on the $hA$ side of $E$, then the finite graph corresponding to $E$ in $\Gamma$ would separate two infinite sets of elements of $H$. But then $H$ would not be 1-ended in $\Gamma$. 

Instead, there are only finitely many elements of $H$ in the union of all $hA$ and $hB$ cosets on exactly one side of $E$. We are interested in the case where $A$ is a vertex of $E$. (See Figure \ref{FigEndPair1}). The last part of conclusion $(2)$ implies that $hE$ is the last edge of the geodesic between $A$ and $hA$. 

\begin{figure}
\vbox to 3in{\vspace {-2in} \hspace {-.1in}
\hspace{-1 in}
\includegraphics[scale=1]{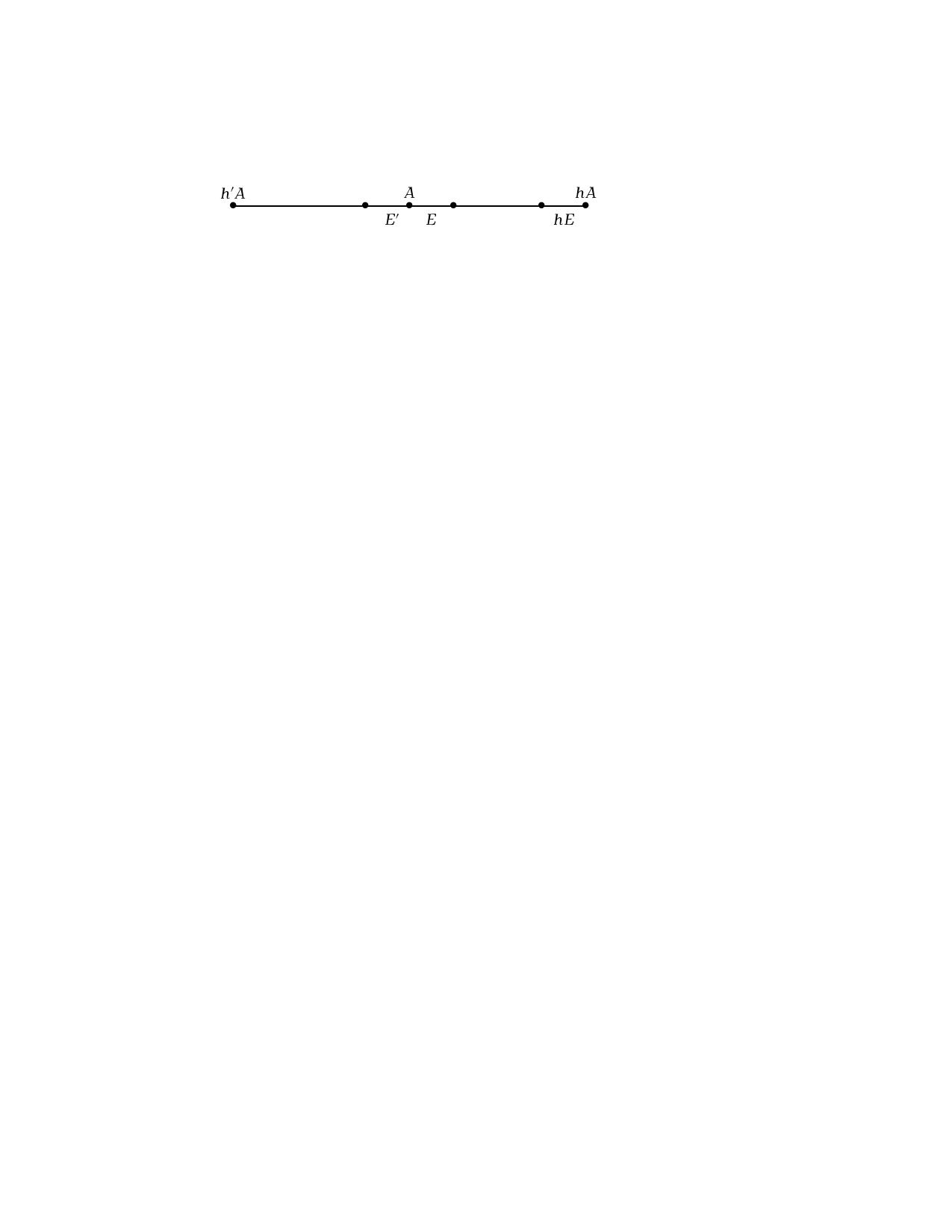}
\vss }
\vspace{-3in}
\caption{Separating elements of $H$} 
\label{FigEndPair1}
\end{figure}

Suppose there are infinitely many elements of $H$ on the $A$ side of $E$ and say there are $K$ elements of $H$ on the $hA$ side of $E$.  Now consider the graph isomorphism $h$ acting on $\mathcal T_1$. %If $hE$ is not the last edge of the geodesic between $A$ and $hA$ then there are $K$ elements of $H$ on the $h^2A$ side of $hE$.  All of these elements of $H$ as well as $h$ are on the $hA$ side of $E$. But that is impossible since there are only $K$ such elements on the $hA$ side of $E$. 
Since $hE$ is the last edge of the geodesic between $A$ and $hA$, then there are infinitely many elements of $H$ on $hA$ side of $hE$, but all of these elements of $H$ are on the $hA$ side of $E$. This contradicts the fact that there are only $K$ elements of $H$ on the $hA$ side of $E$. 
This implies: 

\medskip

$(I)$ Suppose $h\in H$, $A\ne hA$ and $E$ is the first edge of a geodesic in $\mathcal T_1$ from $A$ and $hA$, then there are only finitely many elements of $H$ on the $A$ side of $E$ in $\mathcal T_1$, and infinitely many elements of $H$ on the $hA$ side of $E$.  
Similarly for $B$. 
 
%$(II)$ Under the hypotheses of $(I)$, $hE$ is the last edge of the geodesic from $A$ to $hA$. Similarly for $B$. 

\medskip

%Say there are $K$ elements of $H$ on the $A$ side of $E$. Suppose $hE$ is not the last edge of the geodesic from $A$ to $hA$. By translation, there are $K$ elements of $H$ on the $hA$ side of $hE$. But the $K$ elements of $H$ on the $A$ side of $E$ as well as $h$ are on the $hA$ side of of $hE$ (the desired  contradiction). 
Next we show:

\medskip

$(II)$ Under the hypotheses of $(I)$,  if $E'\ne E$ is an edge containing $A$, then the branch of $\mathcal T_1$ on the side of $E'$ opposite $A$ cannot contain a vertex $h'A$ for any $h'\in H$. Similarly for $B$. 

\medskip

Otherwise, $(I)$ implies that there are only finitely many elements of $H$ on the $A$ side of $E'$. (See Figure \ref{FigEndPair1}.) But all  (infinitely many) elements of $H$ on the $hA$ side of $E$ are on the $A$ side of $E'$, a contradiction and $(II)$ is verified.

%Let $D$ be the first edge of the geodesic between $B$ and $hB$. We have:

%\medskip

%$(\ast\ast )'$ If $D'\ne D$ is an edge containing $B$, then the branch of $\mathcal T_1$ on the side of $D'$ opposite $B$ cannot contain a vertex $h'B$ for any $h'\in H$. 

\medskip

Again we assume the hypotheses of $(I)$: Suppose $h\in H$, $A\ne hA$ and $E$ is the first edge of a geodesic $\alpha$ between $A$ and $hA$. Let $F$ be the edge connecting $A$ and $B$ in $\mathcal T$. Then $F$ is an edge of $\mathcal T_1$. 
If $F\ne E$, then $(II)$ (with $F$ playing the role of $E'$) implies:

\medskip

\noindent $(\ast)$ There is no $h'\in H$ such that $h'A$ is a vertex of $\mathcal T_1$ on the side of $F$ opposite $A$. 

\medskip

\begin{figure}
\vbox to 3in{\vspace {-1.5in} \hspace {.6in}
\hspace{-1 in}
\includegraphics[scale=1]{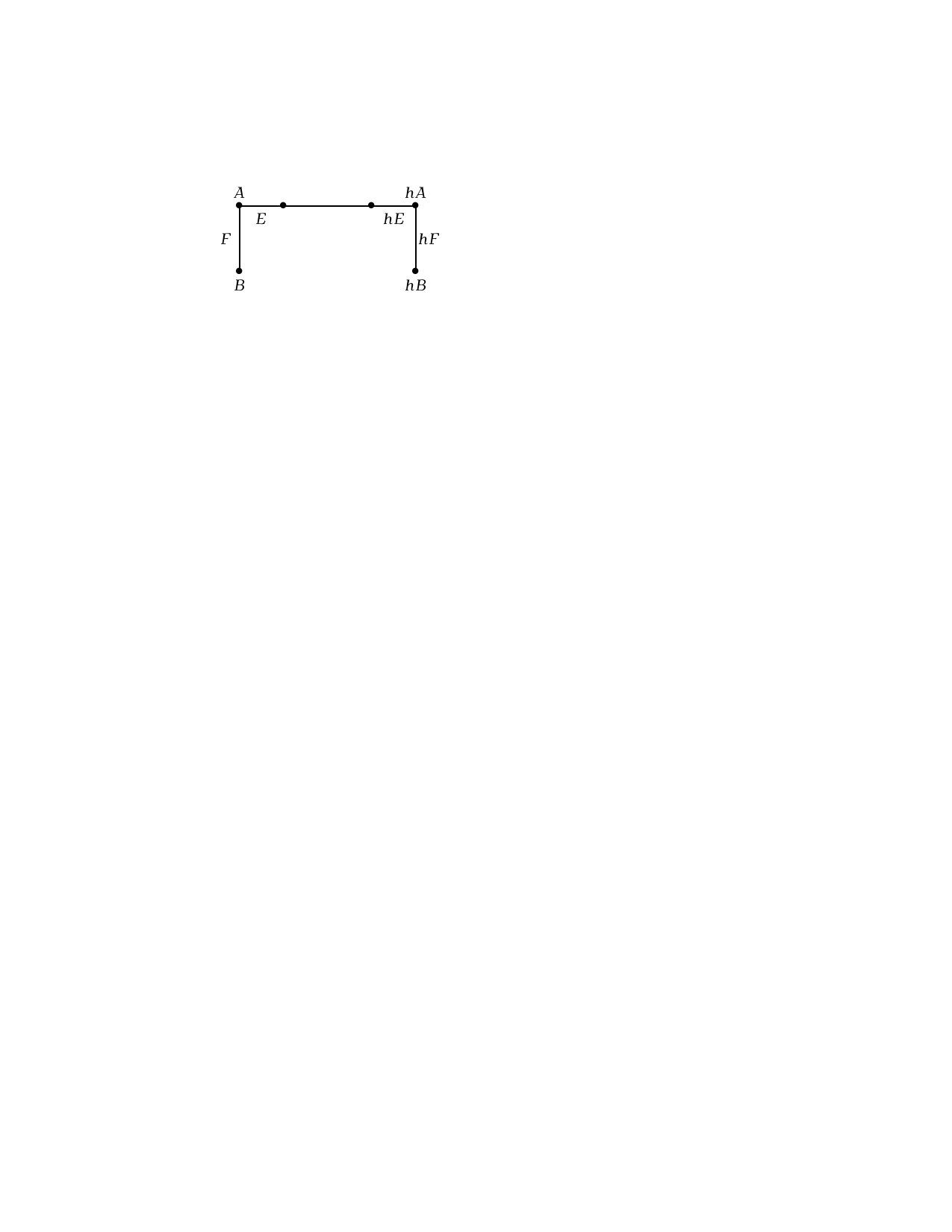}
\vss }
\vspace{-1.7in}
\caption{Separating elements of $H$} 
\label{FigEndPair2}
\end{figure}

\noindent Since $hB$ is adjacent to $hA$ we have that $B\ne hB$. We have the configuration of Figure \ref{FigEndPair2} (see the last part of conclusion $(2)$). We apply $(II)$ to $B$ and $F$ to see that the branch of $\mathcal T_1$ on the side of $F$ opposite $B$ cannot contain a vertex $h'B$ for any $h'\in H$. Combining with $(\ast)$, we have that if $F\ne E$, then $B$ has valence 1 in $\mathcal T_1$.  Note that if $F=E$ then $A$ is valence 1 in $\mathcal T_1$. In any case: 

\medskip

$(III)$ Under the hypotheses of $(I)$, either $A$ is on the geodesic between $B$ and $hB$ and $B$ has valence 1 in $\mathcal T_1$ or $B$ is on the geodesic between $A$ and $hA$ and $A$ has valence 1 in $\mathcal T_1$. 

Without loss, assume that $B$ (and hence $h'B$ for all $h'\in H$) has valence 1 in $\mathcal T_1$.

\medskip

Next suppose $E'$ is an edge containing $A$, $F\ne E'\ne E$ and $h_1B$ is on the side of $E'$ opposite $A$. Then $h_1A$ is adjacent to $h_1B$ and $(II)$ implies that $h_1A=A$. Then $h_1\in A$. Since $H\cap A$ is finite, we have: 

\medskip

$(IV)$ The valence of $A$ in $\mathcal T_1$ is finite and with the exception of $E$, each edge $E'$ of $\mathcal T_1$ containing $A$ has as other vertex $h'B$ for some $h'\in H\cap A$. Observe that for the number of $H\cap C$ cosets in  $H\cap A$ is the number of edges at the vertex $A$ in $\mathcal T_1$ with end point $hB$ for some $h\in H$. 

\medskip

This completes the proof of conclusion (4).

Since $h'B$ has valence 1 for all $h'\in H$, there is no such vertex in the geodesic between $A$ and $hA$. Statement $(II)$ implies there is no vertex $h'A$ (with $h'\in H$), between $A$ and $hA$. Together this implies conclusion conclusion (5). 

For each $h\in H$ let $M_h$ be the midpoint of the geodesic between $A$ and $hA$. Choose $h\in H$ such that the distance between $M_h$ and $Q$ is minimal over all $h\in H$. If $h'\in H$, then  
Conclusion $(2)$ implies that $M_h$ and $M_h'$ are on the geodesic from  $Q$ to $A$. By the minimality of $h$, $M_{h}$ is between $M_{h'}$ and $Q$. Conclusion $(2)$ implies that $h'$ stabilizes $M_{h'}$ and $Q$ and hence $h'$ stabilizes $M_h$ for all $h'\in H$. This implies that $M_h=Q$. 
\end{proof}

We wonder what can be said about inaccessible finitely generated groups. 

\begin{conjecture}
The subgroup $H$ of the finitely generated group $G$ is 1-ended in $G$ if and only if for each splitting $A\ast_CB$ of $G$ (where $A\ne C\ne B$ and $C$ is finite), $H$ is a subgroup of a conjugate of $A$ or $B$
\end{conjecture}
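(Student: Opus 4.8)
My first move is to observe that the two implications are of very different character. The forward implication---if $H$ is $1$-ended in $G$ then $H$ lies in a conjugate of a factor of every amalgamated splitting $A\ast_C B$ over a finite group---is not a conjecture at all but precisely the amalgamated case of Theorem \ref{OneEndSplit}; the HNN case of that theorem gives the same conclusion for $A\ast_C$ splittings. So all the real content is the converse, and I would concentrate entirely there.

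For the converse I would argue contrapositively and first settle the accessible case, where the machinery is already available. Suppose $H$ is infinite but not $1$-ended in $G$, and let $\mathcal D$ be a Dunwoody decomposition of $G$ with Bass--Serre tree $\mathcal T$; by Theorem \ref{DunAcc} this tree is finite when $G$ is almost finitely presented. If $H$ fixed a vertex of $\mathcal T$ it would lie in a conjugate of a vertex group, which (being infinite) must be $1$-ended, and then the corollary to Theorem \ref{OneEndSplit} together with the fact that every infinite subgroup of a subgroup that is $1$-ended in $G$ is again $1$-ended in $G$ would force $H$ to be $1$-ended---contrary to assumption. Hence $H$ acts without a global fixed point; when $H$ is finitely generated it then contains an element acting hyperbolically on $\mathcal T$, and collapsing every edge orbit except one crossed by its axis produces a one-edge splitting of $G$ over a finite group in which $H$ is not elliptic. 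That splitting exhibits $H$ as a subgroup of no conjugate of a factor, so the right-hand condition fails, as required.

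Before going further I would flag that the statement needs adjustment even in this case. First, $H$ must be assumed infinite: a finite subgroup fixes a vertex of every Bass--Serre tree, so it lies in a conjugate of every vertex group while being $0$-ended. Second, HNN splittings must be admitted alongside amalgamated products, since the separating one-edge collapse produced above is often an HNN splitting. Third, and most seriously, the degenerate $2$-ended phenomenon must be excluded: for $G=H=\mathbb Z$ the only splitting over a finite group is the HNN form $A\ast_C$ with $C=A$ (the degenerate case barred by the hypothesis $C\ne A$ in Theorems \ref{Stall} and \ref{OneEndSplit}), so the right-hand side holds vacuously while $H$ is $2$-ended in $G$. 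An honest statement therefore has to include HNN splittings and either restrict to $G$ one-ended or otherwise rule out this two-ended degeneracy.

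The genuine obstacle, and the reason this is left as a conjecture, is the inaccessible case. There no finite Dunwoody decomposition exists, so the collapse argument has nothing to collapse to and the infinitely-ended behaviour of $H$ inside $\Gamma(G,S)$ cannot be localized on a single edge of a finite tree. The natural substitute is a relative form of Stallings' theorem (Theorem \ref{Stall}): starting from a finite set $C\subset\Gamma(G,S)$ that separates $H$ into several pieces each meeting $H$ infinitely, one would try to build an almost-invariant set and from it an honest splitting of $G$ over a finite subgroup in which $H$ is non-elliptic. The hard part will be controlling the edge group, since Stallings-type constructions produce splittings whose edge groups are dictated by the separating data and need not be finite, and without accessibility there is no terminating induction to repair this. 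Indeed it is not even clear that a set separating $H$ in the Cayley graph of $G$ must arise from any splitting of $G$ at all, and resolving that is essentially the whole problem.
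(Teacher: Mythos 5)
You have judged the situation correctly: the paper offers no proof of this statement --- it is posed as an open conjecture, immediately after the sentence ``We wonder what can be said about inaccessible finitely generated groups,'' and the only thing the paper establishes is the remark following it, namely that the forward implication is exactly the content of Theorem \ref{OneEndSplit}. Your attribution of that direction, your handling of the accessible case via a Dunwoody decomposition, and your identification of inaccessibility as the entire remaining difficulty line up with the paper's own framing; your collapse argument is in substance the Corollary following Theorem \ref{OneEndSplit} (the characterization of 1-ended-in-$G$ subgroups as those inside conjugates of 1-ended vertex groups of a Dunwoody decomposition), repackaged as a statement about one-edge splittings. Your counterexamples to the literal statement are also genuine: a finite $H$ is elliptic in every splitting yet 0-ended in $G$, and for $G=H=\mathbb Z$ the right-hand side is vacuous over amalgamated splittings while $H$ is 2-ended in $G$, so the conjecture as written does need $H$ infinite, HNN splittings admitted, and the 2-ended degeneracy excluded.

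One gap in the part you do prove: the conjecture quantifies over arbitrary subgroups $H$, but your contrapositive argument in the accessible case invokes a hyperbolic element and hence needs $H$ finitely generated. This hypothesis can be removed. If the infinite group $H$ acts on the Bass--Serre tree $\mathcal T$ of a Dunwoody decomposition with no global fixed point and no hyperbolic element, then (two elliptic isometries with disjoint fixed trees have hyperbolic product, so fixed trees pairwise meet and finitely generated subgroups of $H$ are elliptic) $H$ fixes an end $\xi$ of $\mathcal T$. Each $h\in H$ is elliptic and fixes $\xi$, hence fixes pointwise a tail of the ray toward $\xi$; so the subgroups $H_i$ consisting of elements fixing the $i$-th tail form an increasing union exhausting $H$, with each $H_i$ contained in a finite edge stabilizer. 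Since the Dunwoody decomposition is a finite graph of groups, the edge groups have bounded order, so this increasing union stabilizes and $H$ is finite, a contradiction. With that repair your argument settles the suitably corrected statement for accessible $G$, and the inaccessible case remains, exactly as you say, the whole problem.
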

Certainly if $H$ is 1-ended in $G$ then $H$ is a subgroup of a conjugate of $A$ or $B$. 

\begin{definition}
Suppose $\Gamma$ is a Cayley graph of $G$ with respect to a finite generating set. A subgroup $H$ of $G$ is {\it locally finite at $\infty$} in $G$ if for any compact subset $C\subset \Gamma$, only finitely many cosets $gH$ are separated by $C$. This means that for all but finitely many cosets $gH$ of $H$ in $G$ any two points of $gH$ can be joined by a path in $\Gamma-C$. 
\end{definition}

\begin{theorem} 
If $H$ is finitely generated, then $H$ is locally finite at $\infty$ in $G$. 
\end{theorem}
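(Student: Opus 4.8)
The plan is to use finite generation of $H$ to produce a \emph{uniform} bound on the lengths of the edge paths needed to connect points within a coset, and then observe that only the cosets lying close to the compact set $C$ can possibly be separated by it. Fix a finite generating set $S$ for $G$, so that $\Gamma=\Gamma(G,S)$ is locally finite, and write $H=\langle h_1,\dots,h_n\rangle$. For each $i$ let $w_i$ be an $S$-word representing $h_i$, and set $L=\max_i |w_i|$. Since $\Gamma$ is a locally finite graph, the compact set $C$ is contained in some ball $B_R(\ast)$, and consistent with the paper's notation (as in Theorem \ref{line=2}) I write $B_L(Y)$ for the $L$-neighborhood of a vertex set $Y$.

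The heart of the argument is the following claim: \emph{if $gH$ is a coset with $B_L(gH)\cap C=\emptyset$, then any two points of $gH$ can be joined by a path in $\Gamma-C$.} To prove it, take $gh,gh'\in gH$ and write $h^{-1}h'=h_{i_1}^{\varepsilon_1}\cdots h_{i_m}^{\varepsilon_m}$ with each $\varepsilon_j=\pm 1$. Define waypoints $v_0=gh$ and $v_j=v_{j-1}h_{i_j}^{\varepsilon_j}$, so that $v_m=gh'$ and every $v_j$ lies in $gH$. Between consecutive waypoints $v_{j-1}$ and $v_j$, insert the edge path spelled out by the word $w_{i_j}^{\varepsilon_j}$. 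Each such segment has length at most $L$ and begins at the coset vertex $v_{j-1}$, so every vertex it visits lies within distance $L$ of $v_{j-1}\in gH$, hence inside $B_L(gH)$. Concatenating these segments gives a path from $gh$ to $gh'$ whose image lies entirely in $B_L(gH)$, and therefore misses $C$.

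It remains to count the exceptional cosets. A coset $gH$ fails the hypothesis of the claim precisely when $B_L(gH)\cap C\neq\emptyset$, which, since distance is symmetric, is equivalent to $gH\cap B_L(C)\neq\emptyset$. But $B_L(C)\subseteq B_{R+L}(\ast)$ is a finite set of vertices because $\Gamma$ is locally finite, so only finitely many distinct cosets $gH$ can meet it. Every coset outside this finite list satisfies the hypothesis of the claim, so for all but finitely many cosets any two points can be joined in $\Gamma-C$. Since $C$ was arbitrary, $H$ is locally finite at $\infty$ in $G$.

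The step requiring the most care is the claim, and it is exactly where finite generation is indispensable: the uniform bound $L$ is what keeps all the connecting segments inside the single thickened coset $B_L(gH)$. Without a finite generating set for $H$ one would need $S$-words of unbounded length, the neighborhood width $L$ could not be chosen uniformly, and the clean reduction ``$gH$ is separated only if it meets the fixed finite set $B_L(C)$'' would break down. Everything else is routine bookkeeping with word lengths and the symmetry of the graph metric.
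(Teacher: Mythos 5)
Your proof is correct, and since the paper states this theorem without any proof (it is left as one of the elementary facts following the definition of locally finite at $\infty$), your argument supplies exactly the intended one: the uniform word-length bound $L$ coming from a finite generating set of $H$ keeps all intra-coset connecting paths inside the $L$-neighborhood of the coset, so only the finitely many cosets meeting $B_L(C)$ can be separated by the compact set $C$. The only point worth polishing is that $C$ may meet interiors of edges, not just vertices, so $B_L(gH)$ should be read as the metric $L$-neighborhood in $\Gamma$ (or one first enlarges $C$ to the finite subcomplex of all closed cells it meets); with that reading every point of your concatenated path, including edge interiors, lies within distance $L$ of $gH$, and the counting step is unchanged.
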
 

\begin{theorem}
Suppose $H$ and $K$ are infinite, locally finite at $\infty$ subgroups of $G$,  $H\cap K$ is infinite  and both $H$ and $K$ are 1-ended in $G$. Then $\langle H\cup K\rangle$ is 1-ended in $G$. 
\end{theorem}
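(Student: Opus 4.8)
The plan is to work in a fixed Cayley graph $\Gamma=\Gamma(G,S)$ and to show that $L:=\langle H\cup K\rangle$ has \emph{at most} one end in $G$; it has at least one since it contains the infinite group $H$. Throughout I will use that left multiplication $\lambda_g\colon x\mapsto gx$ is a graph automorphism of $\Gamma$ carrying the compact set $C$ to $gC$ and the coset $g'J$ to $gg'J$. The first reduction is standard: using Theorem \ref{ES} I may replace any compact $C$ by the compact set $C'=C\cup(\text{all bounded components of }\Gamma-C)$, so that $\Gamma-C'$ consists of finitely many unbounded components, and two distinct components of $\Gamma-C$ each carrying infinitely many points of $L$ survive as two distinct such components of $\Gamma-C'$. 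Hence it suffices to prove, for every such ``nice'' $C$, that all but finitely many vertices of $L$ lie in a single component of $\Gamma-C$.

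Next I record the basic consequence of $1$-endedness: if $J$ is $1$-ended in $G$, then for \emph{every} compact set $C$ and \emph{every} coset $gJ$ there is a unique component $W(gJ)$ of $\Gamma-C$ containing all but finitely many points of $gJ$. For $g=e$ this follows by enlarging $C$ to a nice set as above: exactly one of the finitely many unbounded components carries infinitely many points of $J$, while $J\cap C$ is finite. The general case follows by applying $\lambda_g$ to the statement for the compact set $g^{-1}C$. The crucial point, and where the hypothesis that $H\cap K$ is \emph{infinite} is used, is the identity $W(gH)=W(gK)$ for every $g$: the set $g(H\cap K)$ is an infinite subset of both $gH$ and $gK$, so all but finitely many of its points lie in $W(gH)$ and all but finitely many lie in $W(gK)$; two cofinite subsets of an infinite set meet, so the two components coincide. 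Write $\omega(g)$ for this common component.

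Now I claim $\omega$ is \emph{constant} on $L$. Since $gsH=gH$ for $s\in H$ and $gsK=gK$ for $s\in K$, the value $\omega(g)$ is unchanged when $g$ is multiplied on the right by an element of $H$ or of $K$; as $L=\langle H\cup K\rangle$, every element of $L$ is obtained from $e$ by such multiplications, so $\omega\equiv\omega(e)=W(H)=:U$ on all of $L$. Finally I bring in \emph{local finiteness at infinity}: for the fixed $C$ only finitely many cosets $gH$ are separated by $C$, and for a non-separated coset $gH$ the set $gH\setminus C$ lies in a single component of $\Gamma-C$, which must be $W(gH)=U$. Thus every non-separated coset $gH$ with $g\in L$ satisfies $gH\setminus U\subseteq C$, while each of the finitely many separated cosets $gH\subseteq L$ contributes only finitely many points to $L\setminus U$ (by $W(gH)=U$); since $L\cap C$ is finite, $L\setminus U$ is finite, as desired. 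The main obstacle the argument must overcome is precisely that $\omega(g)=U$ controls a coset only \emph{up to finitely many exceptions}, so knowing that every coset is ``mostly in $U$'' is not by itself enough, as there may be infinitely many cosets; it is local finiteness at infinity that upgrades this to \emph{no} exceptions for all but finitely many cosets and thereby bounds the total exceptional set.
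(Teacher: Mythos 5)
Your proof is correct, but it takes a genuinely different route from the paper's. The paper's argument is path-based: it uses local finiteness at $\infty$ of \emph{both} $H$ and $K$, together with translated $1$-endedness, to produce a single compact set $D$ such that any two points of any coset $gH-D$ or $gK-D$ can be joined by a path in $\Gamma-C$ (the property $(\dagger)$ there); it then fixes a coset $aH$, $a\in\langle H\cup K\rangle$, that is not separated by $C$, and for $b\in \langle H\cup K\rangle-D$ writes $a=bh_1k_1\cdots h_nk_n$ and chains explicit paths from $b$ through the alternating cosets $bH,\ bh_1K,\ bh_1k_1H,\ldots$, consecutive cosets meeting in a translate of the infinite set $\langle H\cap K\rangle$. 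Since a non-separated coset $aH$ with $a$ in $\langle H\cup K\rangle$ need not exist when $H$ has finite index in $\langle H\cup K\rangle$, the paper must dispose of that case separately by citing Theorem \ref{FI}. You replace all path-building by the component-valued invariant $\omega(g)=W(gH)=W(gK)$: the right-invariance of $\omega$ under $H$ and $K$, applied letter by letter, plays exactly the role of the paper's chaining along the word $h_1k_1\cdots h_nk_n$, and local finiteness at $\infty$ enters only once, at the very end, to upgrade ``all but finitely many points of each coset lie in $U$'' to a finite total exceptional set $L\setminus U$. This buys three things: no case split (the finite-index case is absorbed, since finitely many cosets each contributing finitely many exceptional points still give a finite set); no explicit path constructions; and a visibly weaker hypothesis, since your argument never uses local finiteness at $\infty$ of $K$, only of $H$. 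What the paper's approach yields in exchange is the connectivity formulation directly --- a uniform compact $D$ such that every point of $\langle H\cup K\rangle-D$ is joined to $aH$ by a path in $\Gamma-C$ --- which is the form of $1$-endedness invoked elsewhere, e.g.\ in the proof of Theorem \ref{subCend}; of course the two formulations are equivalent (given your component $U$, take $D=C\cup(\langle H\cup K\rangle\setminus U)$), so nothing is lost.
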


\begin{proof}
Again, $\Gamma$ is a Cayley graph of $G$ with respect to a finite set. Let $C$ be compact in $\Gamma$.
Let $g_1H,\ldots, g_nH$ be the cosets of $H$ that are separated by $C$ and $a_1K,\ldots, a_mK$ be the cosets of $K$ that are separated by $C$. Choose $D_i$ compact for $g_iH$ and $C$ so that any two points of $g_iH-D_i$ can be joined by a path in $\Gamma-C$. 

Consider the compact set $g_i^{-1} C$ and choose $D_i'$ such that any two points of $H-D_i'$ can be joined in $\Gamma -g_i^{-1}(C)$. Then any two points of $g_iH-g_iD'$ can be joined in $\Gamma-C$. Now let $D_i=g_iD_i'$.
Similarly choose $E_i$ compact for $a_iK$ and $C$ so that any two points of $a_iK-D_i$ can be joined by a path in $\Gamma-C$. 
Let $D=(\cup _{i=1}^n D_i)\cup (\cup_{i=1^m}E_i)$. 

\medskip

$(\dagger )$ Observe that for any $g\in G$, any two points of $gH-D$ (or $gK-D$) can be joined by a path in $\Gamma-C$. 

\medskip

If $H$ has finite index in $\langle H\cup K\rangle$ we are finished by Theorem \ref{FI}.
Choose $a\in \langle H\cup K\rangle$ so that any two points of $aH$ can be joined in $\Gamma-C$. 
It is enough to show that any point of $\langle H\cup K\rangle-D$ can be joined to $aH$ by a path in $\Gamma-C$. If $b\in \langle H\cup K\rangle -D$, then there are elements $h_1,\ldots h_n$ of $H$ and $k_1,\ldots, k_n$ of $K$ such that $a=bh_1k_1\cdots h_nk_n$. 

Notice that $bh_1H=bH$ and $(\dagger)$ implies that there is a path in $\Gamma-C$ from $b$ to a point 
$$p_1\in bh_1\langle H\cap K\rangle (\subset bH\cap bh_1K).$$ 
Since $bh_1K=bh_1k_1K$, $(\dagger)$ implies that there is a path in $\Gamma-C$ from $p_1$ to a point 
$$q_1\in bh_1k_1 \langle H\cap K\rangle (\subset bh_1K\cap bh_1k_1H).$$ 

Inductively, we have a path in $\Gamma-C$ from $b$ to $aH$.
\end{proof}

\begin{corollary}
Suppose that $H$ and $K$ are finitely generated and 1-ended in $G$ and $H\cap K$ is infinite. Then $\langle H\cup K\rangle$ is 1-ended in $G$. 
\end{corollary}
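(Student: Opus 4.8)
The plan is to deduce the corollary directly from the immediately preceding theorem, which asserts that if $H$ and $K$ are infinite, locally finite at $\infty$ in $G$, each 1-ended in $G$, and $H\cap K$ is infinite, then $\langle H\cup K\rangle$ is 1-ended in $G$. Under the stronger hypotheses of the corollary, the whole task reduces to verifying that the four hypotheses of that theorem are satisfied.

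First I would observe that $H$ and $K$ are infinite: each is 1-ended in $G$ by assumption, and an earlier result states that a subgroup has $0$ ends in $G$ if and only if it is finite, so a 1-ended subgroup cannot be finite. Next, since $H$ and $K$ are finitely generated, I would invoke the theorem stating that a finitely generated subgroup is locally finite at $\infty$ in $G$, which gives that both $H$ and $K$ are locally finite at $\infty$. The remaining two hypotheses — that $H\cap K$ is infinite and that both $H$ and $K$ are 1-ended in $G$ — are assumed outright in the corollary. With all four hypotheses in hand, the preceding theorem immediately yields that $\langle H\cup K\rangle$ is 1-ended in $G$.

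There is essentially no real obstacle here: the corollary is just a repackaging of the theorem under the more familiar hypotheses of finiteness and finite generation. The only point that even merits comment is the implication that being 1-ended in $G$ forces a subgroup to be infinite, and this is immediate from the characterization of $0$-endedness as finiteness. Consequently the proof is a short hypothesis-check rather than a new argument.
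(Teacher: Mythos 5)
Your proof is correct and matches the paper's intent: the corollary is stated there without proof precisely because it follows by the hypothesis-check you describe, combining the preceding theorem with the facts that finitely generated subgroups are locally finite at $\infty$ in $G$ and that a subgroup with $0$ ends in $G$ is exactly a finite subgroup (so 1-ended subgroups are infinite). Nothing further is needed.
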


\newpage
\subsection{One-Ended Groups}\label{1eg}

Dunwoody's accessibility result allows many questions about certain classes of finitely presented groups to be reduced to the same question about 1-ended groups in that class. It is important to understand algebraic and geometric conditions on a group that force the group to be 1-ended. In this section, we list a number of such results.

\subsubsection{Knot Groups and the Covering Conjecture}\label{KGs} 
This section is for historical perspective. 
In his thesis, E. Specker \cite{Sp49} (page 329) proved the following: 

\begin{theorem} [(page 329), \cite{Sp49}] 
The conjecture that  ``the knot space $\mathbb S^3-K$ is aspherical", is  equivalent to the conjecture  ``the knot group $\pi_1(\mathbb S^3-K)$ has one or two ends". 
\end{theorem}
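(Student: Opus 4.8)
The plan is to pass to the compact complement $M=\mathbb S^3-N(K)$, a compact orientable $3$-manifold with a single torus boundary $T$; this is homotopy equivalent to $\mathbb S^3-K$, so asphericity and all homotopy and homology groups agree with those of $\mathbb S^3-K$, and I write $G=\pi_1(M)$. Since $H_1(M)=\mathbb Z$, the group $G$ is infinite.

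First I would reduce ``aspherical'' to ``$\pi_2(M)=0$''. Let $\widetilde M$ be the universal cover, a simply connected, noncompact, orientable $3$-manifold with boundary on which $G$ acts freely, properly and cocompactly. By Hurewicz $\pi_2(M)\cong H_2(\widetilde M)$; if this vanishes, then iterated Hurewicz together with $H_n(\widetilde M)=0$ for $n\ge 3$ (a noncompact $3$-manifold has no absolute homology above degree $2$) shows $\widetilde M$ is weakly contractible, hence contractible, so $M$ is aspherical; the converse is trivial. The point of isolating this step is that it avoids the Sphere Theorem (which postdates \cite{Sp49}): we never assert $\pi_2=0$, only that it is the obstruction to asphericity.

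Next I would tie $\pi_2(M)=H_2(\widetilde M)$ to the number of ends of $G$. Because $G$ acts cocompactly on $\widetilde M$, the number of ends of $\widetilde M$ equals the number of ends of $G$ (as for the universal cover of any finite complex), and by Theorem \ref{HEnds} this number is $\operatorname{rank}_{\mathbb Z}H^1(G,\mathbb Z G)+1$; hence $G$ has one or two ends exactly when $\operatorname{rank}_{\mathbb Z}H^1(G,\mathbb Z G)\le 1$. The cellular chain complex $C_*(\widetilde M)$ is a complex of finitely generated free $\mathbb Z G$-modules (here $M$ is compact) that is exact in degrees $0$ and $1$ (as $\widetilde M$ is simply connected); dualizing gives $\operatorname{Hom}_{\mathbb Z G}(C_*(\widetilde M),\mathbb Z G)=C^*_c(\widetilde M)$, and since the degree-$1$ cohomology depends only on the $1$-cycles $Z_1=\ker\partial_1=\operatorname{im}\partial_2$ (which are the same for $C_*(\widetilde M)$ as for a genuine resolution, the discrepancy at $C_2$ affecting only $H^2$), I obtain $H^1_c(\widetilde M)\cong H^1(G,\mathbb Z G)$. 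Poincaré--Lefschetz duality for the oriented $3$-manifold-with-boundary $\widetilde M$ then yields $H^1(G,\mathbb Z G)\cong H^1_c(\widetilde M)\cong H_2(\widetilde M,\partial\widetilde M)$.

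Finally I would compare $H_2(\widetilde M)$ with $H_2(\widetilde M,\partial\widetilde M)$ through the long exact sequence of the pair, $H_2(\partial\widetilde M)\to H_2(\widetilde M)\to H_2(\widetilde M,\partial\widetilde M)\to H_1(\partial\widetilde M)\to H_1(\widetilde M)=0$, in which $\partial\widetilde M$ is the preimage of the peripheral torus $T$. I expect the main obstacle to be precisely this peripheral term: controlling $H_*(\partial\widetilde M)$ amounts to understanding whether $T$ is compressible, the unknot being the compressible two-ended case ($\widetilde M\cong\mathbb R\times D^2$, $G\cong\mathbb Z$, so $H_1(\partial\widetilde M)=\mathbb Z$ while $H_2(\widetilde M)=0$) and the remaining knots being one-ended. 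Carrying this out without the Loop/Sphere Theorem is exactly the content that makes the statement an equivalence of two then-open conjectures rather than a computation of either side; matching $\operatorname{rank}_{\mathbb Z}H^1(G,\mathbb Z G)$, hence the number of ends, against the vanishing of $H_2(\widetilde M)=\pi_2(M)$ after accounting for the meridian's contribution to $H_1(\partial\widetilde M)$ then delivers the biconditional.
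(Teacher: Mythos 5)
The paper does not prove this statement at all: it is quoted from Specker's thesis in a section the author explicitly labels as historical, so your proposal has to stand on its own. Its first three steps do stand: passing to the compact exterior, reducing asphericity to $\pi_2(M)\cong H_2(\widetilde M)=0$, the identification $H^1(G,\mathbb ZG)\cong H^1_c(\widetilde M)$ (the cellular chains of $\widetilde M$ form a free $\mathbb ZG$-complex, exact in degrees $0$ and $1$, so the dual complex computes group cohomology through degree $1$), Poincar\'e--Lefschetz duality $H^1_c(\widetilde M)\cong H_2(\widetilde M,\partial\widetilde M)$, and the rank formula of Theorem \ref{HEnds} are all correct and standard.

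The gap is that your final paragraph is a plan, not an argument, and the theorem lives precisely there. Two distinct things are missing. (i) Even the direction ``one or two ends $\Rightarrow$ aspherical'' is not finished. You first need $H_2(\partial\widetilde M)=0$, i.e.\ that no component of $\partial\widetilde M$ is a torus; this holds because the peripheral image contains the meridian, which has infinite order since it generates $H_1(M)\cong\mathbb Z$, so every component is a plane or an open annulus. That yields the exact sequence $0\to H_2(\widetilde M)\to H^1(G,\mathbb ZG)\to H_1(\partial\widetilde M)\to 0$ with $H_1(\partial\widetilde M)$ free abelian. One end then kills $H_2(\widetilde M)$ at once; two ends forces $H_1(\partial\widetilde M)$ to be $0$ or $\mathbb Z$, and the case $H_1(\partial\widetilde M)=0$ must be excluded because it would embed $\pi_1(T)\cong\mathbb Z^2$ into a virtually cyclic group. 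None of this appears in your write-up, though it is within reach of your framework. (ii) The direction ``aspherical $\Rightarrow$ one or two ends'' cannot be finished by this bookkeeping at all. Incompressible boundary gives one end, as you suggest; but if $T$ is compressible, asphericity (hence torsion-freeness of $G$) forces the kernel of $\pi_1(T)\to G$ to be generated by the longitude and the peripheral image to be $\langle\mu\rangle\cong\mathbb Z$, so $H^1(G,\mathbb ZG)\cong H_1(\partial\widetilde M)$ is free abelian of rank $[G:\langle\mu\rangle]$; Hopf's theorem on the possible numbers of ends then leaves exactly two cases, $G=\langle\mu\rangle$ (two ends) or $G$ infinitely ended. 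Ruling out the infinitely ended case --- equivalently, proving that a knot group in which the longitude dies must be infinite cyclic --- is Dehn's-lemma-strength content, and you offer no substitute for it; remarking that this ``is exactly the content that makes the statement an equivalence of two then-open conjectures'' does not discharge the obligation, since the equivalence asserted by Specker still requires an actual argument handling this case (this is the heart of his theorem and of Papakyriakopoulos's Corollary 3 quoted immediately after it in the paper). As written, the biconditional is not established in either direction.
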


It seemed reasonable that  $\pi_1(\mathbb S^3-K)$ should have 2-ends if and only if $K$ is {\it algebraically unknotted} i.e. when $\pi_1(\mathbb S^3-K)$ is $\mathbb Z$. 
In his early attempts to prove asphericity of knots C. D. Papakyriakopoulos proved (in the Annals of Mathematics paper \cite{Pap55}): 
\begin{theorem} [Corollary 3, \cite{Pap55}] \label{knot}  
If the asphericity of knots holds, then a knot group has one or two ends according as the knot is algebraically knotted or unknotted. 
\end{theorem}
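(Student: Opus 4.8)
The plan is to combine Specker's equivalence (the theorem stated immediately above) with the structure theory of two-ended groups and the torsion-freeness that asphericity forces on the knot group. First I would feed the hypothesis through Specker's theorem: if the knot space $\mathbb{S}^3-K$ is aspherical, then $G=\pi_1(\mathbb{S}^3-K)$ has exactly one or two ends. Note that $G$ is finitely presented and infinite, since $H_1(\mathbb{S}^3-K)\cong\mathbb{Z}$ by Alexander duality, so the zero-ended case is automatically excluded. The only real work is then to decide \emph{which} of the two cases occurs, and to match this with the dichotomy ``algebraically knotted'' ($G\not\cong\mathbb{Z}$) versus ``algebraically unknotted'' ($G\cong\mathbb{Z}$).

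The key intermediate step is to establish that $G$ is torsion-free. Since $\mathbb{S}^3-K$ is an aspherical $3$-manifold, its universal cover $\widetilde{X}$ is contractible and finite-dimensional, and $G$ acts freely and properly discontinuously on $\widetilde{X}$. If $g\in G$ had finite order $n>1$, then $\langle g\rangle\cong\mathbb{Z}/n$ would act freely on $\widetilde{X}$, so $\widetilde{X}/\langle g\rangle$ would be a finite-dimensional $K(\mathbb{Z}/n,1)$; this is impossible because a nontrivial finite group has nonzero cohomology in infinitely many degrees and hence infinite cohomological dimension. Therefore $G$ has no torsion.

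Now I would invoke the structure theorem for two-ended groups stated above (condition (3)): a two-ended group has a finite normal subgroup $N$ with $G/N\cong\mathbb{Z}$ or $G/N\cong\mathbb{Z}_2\ast\mathbb{Z}_2$. Torsion-freeness forces $N$ to be trivial and rules out the quotient $\mathbb{Z}_2\ast\mathbb{Z}_2$ (which contains $2$-torsion), so a torsion-free two-ended group must be infinite cyclic. Consequently $G$ has two ends if and only if $G\cong\mathbb{Z}$, i.e. exactly when the knot is algebraically unknotted. Since Specker's theorem leaves only the one- or two-ended possibilities, $G$ therefore has one end precisely when $G\not\cong\mathbb{Z}$, i.e. when the knot is algebraically knotted. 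The reverse implications are immediate: $\mathbb{Z}$ visibly has two ends, and a one-ended group is not infinite cyclic.

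The step I expect to be the main obstacle is justifying torsion-freeness cleanly from asphericity alone, and in particular being careful that ``aspherical'' is used together with the finite-dimensionality of the $3$-manifold complement, so that the cohomological-dimension obstruction to finite torsion genuinely applies. Everything else is a direct assembly of the cited results; notably, no deep three-dimensional topology (Dehn's lemma, the sphere theorem) is needed here, since asphericity is taken as a hypothesis and merely routed through Specker's equivalence and the two-ended structure theorem.
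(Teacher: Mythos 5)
Your proof is correct, but note that the paper itself offers no proof to compare against: this statement sits in the historical section \S\ref{KGs} and is simply cited as Corollary 3 of \cite{Pap55}. Judged on its own, your argument is a sound and efficient assembly of results stated nearby in the paper. Specker's equivalence \cite{Sp49} reduces everything to deciding which of the one- or two-ended cases occurs (and your Alexander duality remark correctly rules out finiteness of the group, though Specker's conclusion already excludes zero ends); your cohomological-dimension argument for torsion-freeness is valid (a finite cyclic subgroup acting freely on the contractible, finite-dimensional universal cover of the complement would yield a finite-dimensional $K(\mathbb{Z}/n,1)$, contradicting the fact that a nontrivial finite group has nonzero cohomology in infinitely many degrees); and combining torsion-freeness with part (3) of the two-ended structure theorem in \S\ref{Egps} correctly forces a two-ended knot group to be infinite cyclic, i.e.\ algebraically unknotted, while the reverse implications are immediate. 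The only caveat worth recording is historical rather than mathematical: the characterization you invoke (\cite{Wall67}, \cite{St68}) postdates Papakyriakopoulos's 1955 paper, so this cannot be his original route; with only Hopf's result \cite{H44} (part (1) of that theorem) available, one would instead argue that a torsion-free group containing an infinite cyclic subgroup of finite index is itself infinite cyclic, which takes slightly more work but leads to the same conclusion. That substitution does not affect the validity of what you wrote.
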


In 1957 Papakyriakopoulos proved Dehn's Lemma and the Sphere Theorem. From this he can conclude the asphericity of knots. 

\begin{theorem} [Corollary 2, \cite{Pap57}, \cite{Pap57b}]
If $F$ is  a connected graph or knot, then $\mathbb S^3-F$ is aspherical.
\end{theorem}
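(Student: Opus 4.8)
The plan is to reduce the asphericity of $M := \mathbb{S}^3 - F$ to the single statement $\pi_2(M)=0$, and then to bootstrap from there to the vanishing of all higher homotopy groups via the universal cover. The deep input I would draw on is the Sphere Theorem (together with Dehn's Lemma / the Loop Theorem) that the preceding text attributes to Papakyriakopoulos: if $M$ is an orientable $3$-manifold with $\pi_2(M)\ne 0$, then $M$ contains a \emph{piecewise-linearly embedded} $2$-sphere representing a nontrivial element of $\pi_2(M)$. Note first that $M$ is a connected, orientable, noncompact open $3$-manifold: it is an open subset of the orientable manifold $\mathbb{S}^3$, it is connected because $F$ has codimension $2$, and it is noncompact because it is a proper open (hence non-closed) subset of the Hausdorff space $\mathbb{S}^3$. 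So the hypotheses of the Sphere Theorem are met.

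The core step is to show $\pi_2(M)=0$ by contradiction. Assuming $\pi_2(M)\ne 0$, the Sphere Theorem supplies an embedded $2$-sphere $S\subset M$ that is not null-homotopic in $M$. By Alexander's theorem (the $\mathbb{S}^3$ Schoenflies theorem in the PL category), $S$ separates $\mathbb{S}^3$ into two closed balls $B_1,B_2$ with $B_1\cap B_2=S$ and $B_1\cup B_2=\mathbb{S}^3$. Since $F$ is \emph{connected} and disjoint from $S$, it lies entirely in the interior of one of the two balls, say $F\subset \mathrm{int}(B_1)$. Then $B_2$ is disjoint from $F$, so $B_2\subset M$, and $S=\partial B_2$ bounds the ball $B_2$ inside $M$. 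Hence $S$ is null-homotopic in $M$, contradicting the choice of $S$. Therefore $\pi_2(M)=0$. This is exactly the point where connectedness of $F$ (as a knot or as a connected graph) is essential, and it is the step I would write out most carefully.

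For the bootstrap, let $\widetilde M\to M$ be the universal cover; it is a simply connected, noncompact $3$-manifold with $\pi_2(\widetilde M)\cong\pi_2(M)=0$. The Hurewicz theorem gives $H_2(\widetilde M)\cong\pi_2(\widetilde M)=0$, and since a connected noncompact $n$-manifold has $H_k=0$ for all $k\ge n$, we get $H_k(\widetilde M)=0$ for all $k\ge 2$ as well; thus $\widetilde M$ has the reduced homology of a point. A simply connected space with vanishing reduced homology is weakly contractible (apply Hurewicz inductively to the first nonvanishing homotopy group), and since manifolds have the homotopy type of CW complexes, Whitehead's theorem makes $\widetilde M$ contractible. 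Consequently $M=\mathbb{S}^3-F$ is a $K(\pi_1(M),1)$, i.e.\ aspherical. The main obstacle is genuinely the Sphere Theorem itself; once it is granted, the argument above is a clean separation-plus-Hurewicz routine, and the only real care needed is in the PL-embeddedness of $S$ so that Alexander's theorem applies.
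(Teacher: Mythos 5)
Your proposal is correct, and it is essentially the argument the paper points to: the paper itself gives no proof of this statement, citing it as Papakyriakopoulos's Corollary 2 and noting only that asphericity follows once Dehn's Lemma and the Sphere Theorem are established. Your deduction---Sphere Theorem to get a PL embedded essential sphere, Alexander's theorem plus connectedness of $F$ to force that sphere to bound a ball in $\mathbb{S}^3-F$ (hence $\pi_2=0$), then the Hurewicz/covering-space bootstrap to kill all higher homotopy groups---is precisely the standard derivation of this corollary from the Sphere Theorem.
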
 

Combining with Specker's Theorem: 
\begin{theorem} [Theorem 3, \cite{Pap57}, \cite{Pap57b}] 
All knot groups have 1 or 2-ends.
\end{theorem}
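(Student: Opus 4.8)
The plan is to read this statement as an immediate consequence of the two results just established, so the work amounts to assembling them correctly. First I would invoke the asphericity theorem (Corollary 2 of \cite{Pap57}, \cite{Pap57b}): for every knot $K$, the complement $\mathbb{S}^3 - K$ is aspherical, i.e.\ it is a $K(\pi,1)$ for the knot group $\pi_1(\mathbb{S}^3 - K)$. This is the deep input, and it is exactly the hypothesis that Specker's Theorem requires.

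Second, I would apply Specker's Theorem, which asserts that asphericity of the knot space $\mathbb{S}^3 - K$ is logically equivalent to the statement that $\pi_1(\mathbb{S}^3 - K)$ has one or two ends. Since asphericity now holds unconditionally for all knots, the equivalence forces every knot group to have one or two ends. That finishes the argument; the entire proof is the single implication ``asphericity $\Rightarrow$ one or two ends,'' fed by Papakyriakopoulos's theorem. The genuinely hard part is not visible at this level: it is the asphericity theorem itself, which Papakyriakopoulos deduced from Dehn's Lemma and the Sphere Theorem. Since that result is cited rather than reproven, there is no real obstacle remaining in the present proof.

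Alternatively, I would note a more group-theoretic route that stays inside the machinery of this chapter. A knot group is infinite (for the unknot it is $\mathbb{Z}$, and otherwise it is strictly larger), so it is not $0$-ended; being the fundamental group of an aspherical manifold it is torsion-free. By the corollary that a finitely generated group has $0$, $1$, $2$, or infinitely many ends, it then remains only to rule out infinitely many ends. If the knot group had infinitely many ends, then by Stallings's Theorem~\ref{Stall} it would split nontrivially over a finite subgroup; torsion-freeness makes that finite subgroup trivial, so the knot group would be a nontrivial free product. The obstacle in this approach is precisely excluding such a free product decomposition, which is where one again appeals to irreducibility of the knot complement (a consequence of the Sphere Theorem). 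Thus both routes ultimately rest on Papakyriakopoulos's $3$-manifold input, and the shortest honest proof is the Specker combination above.
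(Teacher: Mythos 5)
Your first argument is precisely the paper's proof: the paper obtains this theorem by combining Papakyriakopoulos's asphericity result (Corollary 2 of \cite{Pap57}, \cite{Pap57b}) with Specker's equivalence between the asphericity conjecture and the ends conjecture, exactly as you assemble it, including the correct observation that unconditional asphericity for all knots discharges the hypothesis of the equivalence. Your alternative Stallings-based route is a sensible extra but is not the paper's argument and is not needed.
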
 

Also in 1957, Papakyriakopoulos considered ends of fundamental groups of $3$-manifolds with boundary (see \cite{Pap57c}).

\subsubsection{Normal and Commensurated Subgroups}\label{normalE}

\begin{theorem} [Proposition 2.1, \cite{Co72}] \label{cohen} 
If $N\leq A\leq G$ are groups with $N$ a non-locally finite normal subgroup of $G$ and $A$ a finitely generated subgroup of infinite index in $G$, then $G$ is 1-ended.
\end{theorem}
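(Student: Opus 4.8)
The plan is to argue by contradiction, assuming $G$ is finitely generated (as the notion of ends requires) and has more than one end, and to exploit the normality of $N$ against the infinite index of $A$. First I would observe that a non-locally-finite group contains a finitely generated infinite subgroup, so $N$ is infinite and $G$ is infinite; hence $G$ has $1$, $2$, or infinitely many ends. Supposing $G$ has more than one end, Theorem~\ref{Stall} gives a nontrivial splitting $A'\ast_C B'$ or $A'\ast_C$ with $C$ finite. Passing to the associated minimal Bass--Serre tree $\mathcal T$ (which I may take to be without inversions), $G$ acts with finite edge stabilizers and with no proper nonempty invariant subtree.

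The first real step is to show $N$ fixes no vertex of $\mathcal T$. If it did, then since $N\trianglelefteq G$ the fixed-point set $\mathcal T^N$ is a nonempty $G$-invariant subtree, hence all of $\mathcal T$ by minimality; but then $N$ fixes an edge and lies in a finite edge stabilizer, contradicting that $N$ is infinite. With $N$ fixing no vertex, I would split according to whether $N$ contains a hyperbolic element.

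In the hyperbolic case, the union of axes $\mathcal T_N$ is the unique minimal $N$-invariant subtree; normality makes $\mathcal T_N$ $G$-invariant, so $\mathcal T_N=\mathcal T$. Since $N\le A$ inherits a hyperbolic element to $A$, the tree $\mathcal T$ is also $A$-minimal, and as $A$ is finitely generated its quotient $A\backslash\mathcal T$ is a finite graph. Here the infinite-index hypothesis is contradicted: writing the edge set of $\mathcal T$ as a finite disjoint union of $G$-orbits $G/G_{e_j}$ with each stabilizer $G_{e_j}$ \emph{finite}, finitely many $A$-orbits on each $G/G_{e_j}$ forces
$$G=\bigcup_{i,j} A\,g_{j,i}\,G_{e_j},$$
a finite union of left cosets of $A$ (since each $G_{e_j}$ is finite), whence $[G:A]<\infty$, contrary to hypothesis. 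I expect this counting step, together with the appeal to the standard Bass--Serre fact that a finitely generated group acting minimally on a tree has finite quotient, to be the main obstacle to write cleanly.

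In the remaining case every element of $N$ is elliptic; since $N$ fixes no vertex it must fix a unique end $\xi$ of $\mathcal T$ (an infinite group of elliptic elements fixing two ends would fix the line between them pointwise, hence an edge), and by normality $G$ fixes $\xi$. Now I use that $N$ is non-locally-finite: choose a finitely generated infinite $H\le N$. All elements of $H$ are elliptic, so by the Helly property for fixed subtrees (Serre's lemma) $H$ fixes a vertex $v$; fixing both $v$ and the end $\xi$, it fixes the geodesic ray from $v$ to $\xi$ pointwise, hence an edge, so $H$ lies in a finite edge stabilizer, contradicting that $H$ is infinite. Both cases being impossible, $G$ has exactly one end.
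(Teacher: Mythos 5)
Your argument is correct at every step, but be aware that the paper gives no proof of this statement at all: it is quoted from Cohen \cite{Co72}, whose original argument is algebraic and is phrased for arbitrary (not necessarily finitely generated) groups with the Specker--Cohen notion of ends. So the real comparison is with the machinery the paper does develop nearby. Your route --- Stallings' theorem (Theorem \ref{Stall}) to obtain a one-edge splitting over a finite group, then Bass--Serre theory: if $N$ contains a hyperbolic element, normality forces the minimal $N$-subtree to be all of $\mathcal T$, hence $\mathcal T$ is also $A$-minimal, and the double-coset count $G=\bigcup_{i,j}A\,g_{j,i}\,G_{e_j}$ over finite edge stabilizers contradicts $[G:A]=\infty$; if instead $N$ is elliptic-only, it fixes a unique end, normality makes the end $G$-fixed, and a finitely generated infinite $H\leq N$ (this is exactly where non-local-finiteness enters) is trapped in a finite edge stabilizer --- is sound, including the uniqueness argument for the fixed end and the finite-index count. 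However, the paper's own proved result in this direction, Proposition \ref{superC}, subsumes your theorem in the finitely generated setting: since $N\trianglelefteq G$ and $N\leq A$, one has $gAg^{-1}\cap A\supseteq gNg^{-1}\cap N=N$ for every $g\in G$, so Theorem \ref{cohen} for finitely generated $G$ is an immediate special case of Proposition \ref{superC}, whose proof is an elementary connectivity argument in the Cayley graph requiring only that $N$ be infinite, not non-locally finite. Your approach costs more (Stallings plus Bass--Serre) but gives a sharper structural picture of how a normal subgroup interacts with a splitting.

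One caveat you should state explicitly: your proof covers only finitely generated $G$, while the statement as quoted does not assume this. Cohen's proposition is genuinely more general, and the hypothesis ``non-locally finite'' (rather than merely ``infinite'') is precisely what the general case requires: countably infinite locally finite groups have infinitely many ends yet admit no splitting over a finite subgroup, so a Stallings-based argument cannot reach non-finitely-generated $G$. Within this paper's conventions (Definition \ref{NuE} defines ends only for finitely generated groups) your reading is the only one available, so this is a limitation inherited from the framework rather than an error --- but it deserves a sentence.
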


If $1\to N\to G\to K\to 1$ is a short exact sequence of finitely generated infinite groups then $G$ is 1-ended. Since non-trivial finitely generated free groups have more than 1-end, this implies the well known result that a free group contains no infinite finitely generated normal subgroup of infinite index: 

(Suppose $F$ is free of infinite rank and $N$ is finitely generated infinite and of infinite index in $F$. Let $B$ be a basis for $F$. Let $B_1$ be a finite subset of $B$ such that $N$ is a subgroup of the finitely generated free group $\langle B_1\rangle=F(B_1)$.  Let $b\in B-B_1$, then $N$ is normal in and has infinite index in $F(B_1\cup \{b\})$. But finitely generated free groups are not 1-ended.)

The Theorems \ref{superC} and \ref{subCend} give broad generalizations of this result.

\begin{proposition} [Proposition 4.1,\cite{CM2}] \label{superC}  Suppose $A$ is a finitely generated, infinite subgroup of infinite index in a finitely generated group $G$, and $gAg^{-1}\cap A$ is infinite for all $g\in G$ (in particular if $A$ is ``commensurated" in $G$ - see Definition \ref{Dcomm}). Then $G$ is 1-ended.
\end{proposition}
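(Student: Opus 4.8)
The plan is to argue by contradiction: assume $G$ is not one-ended and produce, from the commensuration hypothesis, a conjugate $gAg^{-1}$ meeting $A$ in a \emph{finite} group. First I would dispose of the trivial end counts. Since $A$ is infinite, $G$ is infinite, so $G$ has $1$, $2$ or infinitely many ends. If $G$ were two-ended then every infinite subgroup of $G$ is two-ended of finite index (by the structure theorem for two-ended groups and the corollary that an infinite subgroup of a two-ended group is again two-ended), contradicting $[G:A]=\infty$. Thus, if $G$ is not one-ended it has more than one end, and Stallings' Theorem \ref{Stall} gives a nontrivial splitting of $G$ over a finite group $C$ with Bass--Serre tree $\mathcal T$. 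I take the $G$-action to be minimal, so $\mathcal T$ has no valence-one vertices, every edge stabilizer is a conjugate of the finite group $C$, and $G$ fixes no vertex.

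Now I examine the induced action of $A$ on $\mathcal T$. The cleanest case is when $A$ is \emph{elliptic}, i.e.\ fixes a vertex $v$. Because the splitting is nontrivial, $G$ does not fix $v$, so I choose $g\in G$ with $gv\ne v$; then $gAg^{-1}$ fixes $gv$, so $H:=gAg^{-1}\cap A$ fixes both $v$ and $gv$, hence fixes the nondegenerate geodesic $[v,gv]$ pointwise, and in particular fixes an edge. Therefore $H$ lies in a finite edge stabilizer and is finite, contradicting the hypothesis that $gAg^{-1}\cap A$ is infinite. (This step uses only that $A$ is infinite.)

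If $A$ is not elliptic I would first reduce away the degenerate possibilities: if $A$ had no hyperbolic element it would fix an end $\xi$, and since each elliptic generator then fixes a ray toward $\xi$, the fixed subtrees of a finite generating set pairwise meet, so by the Helly property for subtrees $A$ fixes a vertex and we are back to the elliptic case. Hence I may assume $A$ contains a hyperbolic element and, being finitely generated, has a unique minimal invariant subtree $\mathcal T_A$ on which it acts cocompactly, with end set $\Lambda A=\partial\mathcal T_A$ of at least two points. The same edge-fixing idea works \emph{provided some translate $g\mathcal T_A$ is disjoint from $\mathcal T_A$}: any $H=gAg^{-1}\cap A$ preserves both $\mathcal T_A$ and $g\mathcal T_A$, hence fixes the bridge between them pointwise, hence fixes an edge and is finite, again a contradiction. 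When $\mathcal T_A\subsetneq\mathcal T$ one has $\Lambda A\subsetneq\partial\mathcal T$, and a hyperbolic element of $G$ whose attracting end lies in a gap of $\Lambda A$, raised to a high power, pushes $\mathcal T_A$ off itself and supplies the required disjoint translate.

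The main obstacle is the remaining case $\mathcal T_A=\mathcal T$, in which $A$ already acts minimally and cocompactly on the whole tree (equivalently $\Lambda A=\partial\mathcal T$). Here no translate of the minimal subtree can be separated from it, the bridge argument is unavailable, and the infinite-index hypothesis must be exploited algebraically rather than geometrically. My plan is to descend to a vertex: cocompactness of both the $A$- and $G$-actions on the \emph{same} tree, together with finiteness of the edge groups, forces $[G_v:A\cap G_v]=\infty$ for some vertex $v$ with infinite vertex group $G_v$, and the goal is to show that the commensuration condition passes in a suitable form to the infinite-index subgroup $A\cap G_v$ of $G_v$ and contradicts cocompactness — morally, a subgroup that is commensurated and cocompact in its ambient group cannot have infinite index. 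Making this descent precise, in particular verifying that $gAg^{-1}\cap A$ stays infinite after intersecting with a vertex group and ruling out that the process merely recreates the same configuration, is where the real difficulty lies; and since $G$ is only finitely generated (so Dunwoody accessibility is unavailable), this final step cannot be closed by an induction that terminates by accessibility and must instead be settled directly.
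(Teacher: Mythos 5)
Your reduction to Bass--Serre theory is fine as far as it goes: the disposal of the $0$- and $2$-ended cases, the elliptic case, and the bridge argument when some translate $g\mathcal T_A$ is disjoint from $\mathcal T_A$ are all correct. But the proof has a genuine, and in fact central, gap: the case $\mathcal T_A=\mathcal T$ (where $A$ acts minimally, hence cocompactly, on the whole tree) is exactly where the content of the proposition lives, and your sketch does not settle it. This case is unavoidable, not marginal: if $A$ happens to be normal in $G$ (the classical special case this proposition generalizes) and $A$ contains a hyperbolic element, then $g\mathcal T_A$ is the minimal subtree of $gAg^{-1}=A$ for every $g$, so $\mathcal T_A$ is $G$-invariant and minimality of $\mathcal T$ forces $\mathcal T_A=\mathcal T$; thus every interesting instance of the theorem funnels into the case you leave open. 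Moreover the descent you propose does not work as stated: from cocompactness of both actions one does get a vertex $v$ with $[G_v:A\cap G_v]=\infty$, but $A\cap G_v$ may perfectly well be \emph{finite} (nothing rules out, a priori, that $A$ acts on $\mathcal T$ with finite vertex stabilizers, i.e. that $A$ is virtually free), so there is no infinite subgroup of $G_v$ to which any form of the hypothesis can be handed down, and the slogan ``commensurated and cocompact implies finite index'' is not an argument. A smaller, fixable error: in the case $\mathcal T_A\subsetneq\mathcal T$ you need a hyperbolic element with \emph{both} fixed ends outside $\Lambda A$; if only the attracting end avoids $\Lambda A$, then $g^n\mathcal T_A$ and $\mathcal T_A$ share the repelling end in their boundaries and hence still intersect (two subtrees of a tree with a common boundary end always meet). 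Such an element exists by density of axes for a minimal irreducible action, but this needs to be said and proved.

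For comparison, the paper proves the proposition with no tree machinery at all, by a direct connectivity argument in the Cayley graph $\Gamma$ of $G$ (taken with respect to a generating set containing generators of $A$): given a finite subcomplex $C$, only finitely many translates $g\Gamma_A$ of the Cayley graph of $A$ meet $C$; one fixes $g_0$ with $g_0\Gamma_A\cap C=\emptyset$ and shows that every vertex $v$ outside a suitable larger finite set can be joined to $g_0$ in $\Gamma-C$, using the hypothesis that $vAv^{-1}\cap g_0Ag_0^{-1}$ is infinite to produce a point $y$ far from $C$ from which short paths (of lengths $|v|$ and $|g_0|$) reach both $v\Gamma_A$ and $g_0\Gamma_A$. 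That argument treats all configurations uniformly -- in particular the minimal-cocompact one your outline cannot close -- so to finish along your lines you would need a genuinely new idea for that last case, whereas the direct Cayley-graph approach avoids it entirely.
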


\begin{proof}
Suppose $S$ is a finite set of generators for $G$ containing a generating set $S_A$ for $A$. Let $\Gamma$ be the Cayley graph of $G$ with respect to $S$ and let $\Gamma_A$ be the Cayley graph of $A$ with respect to $S_A$. We consider $\Gamma_A$ to be a subset of $\Gamma$ containing $\ast$, the identity vertex. Let $C$ be a finite subcomplex of $\Gamma$. List elements $g_1,\ldots ,g_n$ of $G$ such that $g_iA\ne g_jA$ for $i\ne j$ and $g\Gamma_A\cap C\ne\emptyset$ if and only if $gA=g_iA$ for some $i\in\{1,\ldots , n\}$. Choose $g_0$ such that $g_0\Gamma_A\cap C=\emptyset$. Let $D$ be a finite subcomplex of $\Gamma$ containing $C$ and all bounded components of $g_i\Gamma_A-C$ for all $i\in \{1,\ldots ,n\}$. 

It suffices to show that any vertex of $\Gamma-D$ can be joined by a path in $\Gamma-C$ to $g_0$. Suppose $v$ is a vertex of $\Gamma-D$. 

First we consider the case $v\Gamma_A\cap C=\emptyset$. Choose $y$ (in the infinite set) $(v\Gamma_Av^{-1})\cap (g_0\Gamma_Ag_0^{-1})$ such that $d(y, C)> max\{ |v|, |g_0|\}$. Then there are paths from $y$ to $v\Gamma_A$ and from $y$ to $g_0\Gamma_A$ avoiding $C$. Hence there is a path from $v$ to $g_0$ avoiding $C$ and the first case is finished.

Next suppose $v\Gamma_A\cap C\ne \emptyset$. Then $v$ is in an unbounded component $K$ of $v\Gamma_A-C$. Let $N=max\{|g_0|,\ldots, |g_n|\}$. Choose $k$ a vertex of  $K$ such that $d(k,C)>N$. Then there are paths from $v$ to $k$ and from $k$ to $kg_i$ for each $i\in \{0,\ldots ,n\}$, all avoiding $C$. At least one of $kg_iA$ does not intersect $C$, so by the first case we can connect $v$ to $g_0$ avoiding $C$. 
\end{proof}

\begin{definition}\label{Dcomm} The {\it commensurator} of a subgroup $H$ of a group $G$ is the set of all $g\in G$ such that $gHg^{-1}\cap H$ has finite index in both $gHg^{-1}$ and $H$. The commensurator of $H$ in $G$ is denoted $Comm_H(G)$. 
 A subgroup $H$ of a group $G$ is {\it commensurated} in $G$, written $H\prec G$ if $Comm_H(G)=G$. %for each $g\in G$, the group $g^{-1} Cg\cap C$ has finite index in both $C$ and $g^{-1}Cg$. 
 \end{definition}

\begin{theorem} [Theorem 2.3, \cite{CM13}] \label{T1} 
If $H$ is a subgroup of $K$ and $k\in K$ then $k\in Comm_H(K)$ iff there are finite subsets $A$ and $B$ of $K$ such that for each $h\in H$ there is an $a\in A$ and  $b\in B$ such that $ha\in kH$, and $khb\in H$. (Equivalently, there is $a\in A$ such that  $h(ak^{-1})\in kHg^{-1}$ and $b\in B$ such that $khk^{-1}(kb)\in H$.) 
\end{theorem}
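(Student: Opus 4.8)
The plan is to unwind the definition of the commensurator into two independent finite-index conditions and then translate each one into a counting statement about right cosets of $H$. Recall that by Definition~\ref{Dcomm}, $k\in Comm_H(K)$ precisely when $H\cap kHk^{-1}$ has finite index in both $H$ and $kHk^{-1}$. The map $x\mapsto k^{-1}xk$ is an index-preserving bijection carrying $kHk^{-1}$ to $H$ and $H\cap kHk^{-1}$ to $H\cap k^{-1}Hk$, so the condition $[kHk^{-1}:H\cap kHk^{-1}]<\infty$ is equivalent to $[H:H\cap k^{-1}Hk]<\infty$. Thus $k\in Comm_H(K)$ if and only if both $[H:H\cap kHk^{-1}]$ and $[H:H\cap k^{-1}Hk]$ are finite, and I would match these two indices to the finite sets $A$ and $B$ respectively.

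For the $A$-condition I would observe that $ha\in kH$ is equivalent to $k^{-1}ha\in H$, i.e. to the equality of right cosets $Hk^{-1}h=Ha^{-1}$. Hence the existence of a finite $A$ with the stated property is exactly the assertion that the set $\{Hk^{-1}h:h\in H\}$ is finite (one direction bounds it by $|A|$, the converse chooses, for each coset $Hk^{-1}h_0$, the element $a=h_0^{-1}k$, so that $Ha^{-1}=Hk^{-1}h_0$). A one-line computation gives $Hk^{-1}h=Hk^{-1}h'$ iff $hh'^{-1}\in H\cap kHk^{-1}$, so $h\mapsto Hk^{-1}h$ descends to a bijection from the right coset space $(H\cap kHk^{-1})\backslash H$ onto $\{Hk^{-1}h:h\in H\}$. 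Therefore the $A$-condition holds if and only if $[H:H\cap kHk^{-1}]<\infty$.

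For the $B$-condition I would argue symmetrically: $khb\in H$ is equivalent to $Hkh=Hb^{-1}$, so the existence of a finite $B$ is equivalent to finiteness of $\{Hkh:h\in H\}$; and $Hkh=Hkh'$ iff $hh'^{-1}\in H\cap k^{-1}Hk$, which identifies the number of such cosets with $[H:H\cap k^{-1}Hk]$. Hence the $B$-condition holds if and only if $[H:H\cap k^{-1}Hk]<\infty$. Combining the two previous paragraphs with the reduction of the first paragraph yields the claimed equivalence, with the single set $A$ handling all $h$ in the first finiteness statement and the single set $B$ handling all $h$ in the second.

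The main difficulty here is bookkeeping rather than a genuine obstacle: one must keep the left/right coset conventions consistent and pair the correct conjugate with each set ($kHk^{-1}$ for $A$, $k^{-1}Hk$ for $B$), but once the coset equalities $Hk^{-1}h=Ha^{-1}$ and $Hkh=Hb^{-1}$ are set up, the index identifications are routine. Finally, the parenthetical reformulation in the statement is literally the same pair of conditions rewritten, since $hak^{-1}\in kHk^{-1}$ is equivalent to $ha\in kH$ and $khk^{-1}\cdot kb=khb$, so it needs no separate argument.
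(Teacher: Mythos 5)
Your proof is correct. Note that this paper does not actually contain a proof of this statement---it is quoted as Theorem 2.3 of the cited paper \cite{CM13}---so there is no in-paper argument to compare against; your write-up supplies a complete, self-contained proof. The two translations you set up are exactly right: the $A$-condition is equivalent, via $ha\in kH \iff Hk^{-1}h = Ha^{-1}$, to finiteness of $\{Hk^{-1}h : h\in H\}$, which your coset computation identifies with $[H : H\cap kHk^{-1}]$; the $B$-condition is equivalent, via $khb\in H \iff Hkh = Hb^{-1}$, to finiteness of $\{Hkh : h\in H\}$, identified with $[H : H\cap k^{-1}Hk]$; and the conjugation $x\mapsto k^{-1}xk$ correctly converts $[kHk^{-1} : H\cap kHk^{-1}]$ into $[H : H\cap k^{-1}Hk]$, so the conjunction of the two conditions is precisely membership in $Comm_H(K)$ as given in Definition \ref{Dcomm}. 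You also handled the garbled parenthetical sensibly: reading $kHg^{-1}$ as the evident typo for $kHk^{-1}$, both reformulations are literal rewritings ($hak^{-1}\in kHk^{-1}\iff ha\in kH$ and $khk^{-1}(kb)=khb$) and need no separate proof, as you say.
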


As a direct corollary we have
\begin{corollary} \label{C1}
Suppose the group $H$ is commensurated in the group $K$ and $K$ is a subgroup of the finitely generated group $G$. Let $S$ be a finite generating set for $G$ and $\Gamma$ be the Cayley graph of $G$ with respect to $S$. Then for each element $k\in K$ there is an integer $N_k$ such that each point of $H$ in $\Gamma$ is within $N_k$ of a point of $kH$ and each point of $kH$ is within $N_k$ of $H$. In other words, the Hausdorff distance between $H$  and $kH$ is $\leq N_k$. 
\end{corollary}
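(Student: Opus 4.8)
The plan is to apply Theorem \ref{T1} directly and translate its two membership conditions into metric bounds in the Cayley graph $\Gamma$, exploiting the fact that left multiplication by any element of $G$ is an isometry of $\Gamma$ (so that $d(g, gx) = d(\ast, x)$ for all $g,x \in G$, where $\ast$ is the identity vertex).

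First I would unwind the hypothesis: $H \prec K$ means $Comm_H(K) = K$, so every $k \in K$ lies in $Comm_H(K)$. Fixing $k \in K$, Theorem \ref{T1} then supplies finite subsets $A, B \subseteq K$ (hence $A, B \subseteq G$) with the property that for every $h \in H$ there are $a \in A$ and $b \in B$ satisfying $ha \in kH$ and $khb \in H$. Because $A$ and $B$ are finite, I would define $N_k$ to be the maximum of $d(\ast, a)$ and $d(\ast, b)$ taken over all $a \in A$ and $b \in B$; finiteness of $A$ and $B$ is precisely what makes this well-defined.

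For the first inclusion, given $h \in H$, choose $a \in A$ with $ha \in kH$. Since left translation by $h$ is an isometry, $d(h, ha) = d(\ast, a) \leq N_k$, so $h$ lies within $N_k$ of the point $ha$ of $kH$. Symmetrically, given a point $kh \in kH$ with $h \in H$, choose $b \in B$ with $khb \in H$; then $d(kh, khb) = d(\ast, b) \leq N_k$ and $khb \in H$, so every point of $kH$ lies within $N_k$ of $H$. Combining the two bounds yields Hausdorff distance at most $N_k$.

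There is essentially no obstacle here, since all the genuine content resides in Theorem \ref{T1}; the corollary is a translation between the algebraic and the geometric languages. The only points deserving a moment of care are, first, confirming that the two conditions $ha \in kH$ and $khb \in H$ control the two opposite directions of the Hausdorff distance (the former moving from $H$ into $kH$, the latter from $kH$ back into $H$), and second, observing that membership in a coset obtained by right-translating by a uniformly bounded element corresponds, via left-invariance of the word metric, to a uniformly bounded displacement in $\Gamma$.
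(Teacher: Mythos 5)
Your proposal is correct and takes exactly the paper's approach: the paper states this as a direct corollary of Theorem \ref{T1} with no further argument, and your unwinding --- taking $N_k$ to be the maximum $S$-word length over the finite sets $A$ and $B$, then using left-invariance of the word metric so that $ha \in kH$ gives $d(h, ha) = d(\ast, a) \leq N_k$ and $khb \in H$ gives $d(kh, khb) = d(\ast, b) \leq N_k$ --- is precisely the intended translation from the algebraic statement to the Hausdorff-distance bound.
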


We are most interested in Corollary \ref{Stabv} which follows easily from the next result. 
The first two parts of the next theorem are elementary and well known. We are more interested in part 3).

\begin{theorem} \label{StabP} 
Suppose $G$ is a group acting bijectively on the set of points $V$.  Let $G_v$ be the subgroup of $G$ that stabilizes $v\in V$. 

(1) If  $v, w\in V$ and $g\in G$ such that $v=gw$ then $G_w= g^{-1} G_vg$. 

(2) If $V=\{v_0,\ldots, v_n\}$ is finite, then $G_{v_i}$ has finite index in $G$.

(3) Suppose $v\in V$ and $g\in G$ are such that both $G_v(gv)$ and $G_v(g^{-1}v)$ are finite subsets of $V$. Then $G_v\cap gG_vg^{-1}$ has finite index in both $G_v$ and $gG_vg^{-1}$. In particular, if $G_v(gv)$ is finite for all $g\in G$, then $G_v$ is commensurated in $G$. (Note that it may be the case that $G_v$ is infinite and $G_vw$ is finite.)

\end{theorem}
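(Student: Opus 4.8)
The plan is to reduce everything to the orbit--stabilizer correspondence. First I would record the basic identification that conjugating a point stabilizer produces the stabilizer of the translated point: for the $G$-action on $V$, a direct computation (or part (1) applied to the relation $gv = g\cdot v$) shows that $gG_vg^{-1}$ is exactly the stabilizer $G_{gv}$ of the point $gv$. With this in hand, the subgroup $G_v \cap gG_vg^{-1} = G_v \cap G_{gv}$ is precisely the set of elements of $G$ fixing both $v$ and $gv$, and I would estimate its index in each factor separately.

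For the index in $G_v$: the group $G_v$ acts on $V$, and its orbit of the point $gv$ is the finite set $G_v(gv)$ by hypothesis. The stabilizer of $gv$ under this $G_v$-action is $G_v \cap G_{gv}$, so orbit--stabilizer gives $[G_v : G_v \cap gG_vg^{-1}] = |G_v(gv)| < \infty$. For the index in $gG_vg^{-1} = G_{gv}$: now let $G_{gv}$ act on $V$ and consider its orbit of $v$. The stabilizer of $v$ under $G_{gv}$ is again $G_{gv}\cap G_v$, so orbit--stabilizer gives $[gG_vg^{-1} : G_v \cap gG_vg^{-1}] = |G_{gv}(v)|$. The key computation is that this last orbit is finite: since $G_{gv} = gG_vg^{-1}$, we have $G_{gv}(v) = gG_vg^{-1}(v) = g\,(G_v(g^{-1}v))$, and because $g$ acts as a bijection on $V$ this set has the same cardinality as $G_v(g^{-1}v)$, which is finite by hypothesis. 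This proves $G_v \cap gG_vg^{-1}$ has finite index in both $G_v$ and $gG_vg^{-1}$.

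For the final ``in particular'' statement, suppose $G_v(gv)$ is finite for every $g\in G$. Then for each $g$ the hypotheses of the first part hold, since $G_v(g^{-1}v)$ is just the value of the standing hypothesis at the element $g^{-1}$; hence $G_v \cap gG_vg^{-1}$ has finite index in both $G_v$ and $gG_vg^{-1}$ for all $g\in G$. By Definition \ref{Dcomm} this says exactly that every $g\in G$ lies in $Comm_{G_v}(G)$, so $Comm_{G_v}(G)=G$ and $G_v$ is commensurated in $G$.

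The argument is essentially bookkeeping with orbit--stabilizer, and the only point requiring care is the second index bound. One must not apply the finiteness hypothesis directly to $G_{gv}(v)$, but instead rewrite this orbit as $g\,(G_v(g^{-1}v))$ and invoke both the conjugation identity $gG_vg^{-1}=G_{gv}$ and the bijectivity of the $G$-action to transfer finiteness from the hypothesis on $G_v(g^{-1}v)$. This is precisely why the statement lists finiteness of $G_v(g^{-1}v)$ as a separate hypothesis rather than only $G_v(gv)$.
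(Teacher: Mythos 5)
Your proof is correct and follows essentially the same route as the paper's: both rest on the conjugation identity $gG_vg^{-1}=G_{gv}$ (part (1)) together with the principle that the stabilizer of a point with finite orbit has finite index (part (2), i.e.\ orbit--stabilizer), applied first to the $G_v$-orbit of $gv$ and then, via the hypothesis on $G_v(g^{-1}v)$, to obtain finite index in $gG_vg^{-1}$. The only cosmetic difference is in that second step: the paper runs the first argument with $g^{-1}$ in place of $g$ and then conjugates the resulting finite-index inclusion by $g$, whereas you equivalently rewrite the orbit $G_{gv}(v)$ as $g\,(G_v(g^{-1}v))$ and use bijectivity of $g$ on $V$ to transfer finiteness.
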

\begin{proof} (1):  If $v=gw$ and  $h\in G_v$, then 
$$g^{-1}hgw=g^{-1}hv=g^{-1}v=w$$
So, if $v=gw$, then $g^{-1}G_vg<G_w$. This implies (since $w=g^{-1}v$) that  $gG_wg^{-1}<G_v$ and so $G_w<g^{-1}G_vg$.
Combining:
$$g^{-1}G_vg<G_w<g^{-1}G_vg$$
This completes the proof of Part (1)

\medskip

(2): Let $G_i=G_{v_i}$. We show $G_0$ has finite index in $G$. Choose $g_i$ such that $g_iv_0=v_i$. Let $g\in G$ and say $gv_0=v_j$. Then $g_j^{-1}gv_0=v_0$,  $g_j^{-1}g\in G_0$ and $g\in G_0g_j$. This means $G=\cup_{i=0}^n G_0g_i$ and $G_0$ has finite index in $G$. 

\medskip

(3): Let $gv=w$. The group $G_v$ acts bijectively on the finite set $G_vw$. Certainly $w\in G_vw$ and by (2), the stabilizer of $w$ in $G_v$ (which is $G_v\cap G_w$) has finite index in $G_v$. Since $v=g^{-1}w$, part (1) implies $G_w=gG_vg^{-1}$. We have $G_v\cap gG_vg^{-1}$ has finite index in $G_v$. 
Similarly if $g^{-1}v=w'$, we have $G_v\cap g^{-1}G_vg$ has finite index in $G_v$. Conjugating we have $gG_vg^{-1}\cap G_v$ has finite index in $gG_vg^{-1}$. 
\end{proof}

\begin{corollary}\label{Stabv}
Suppose $G$ is a group acting as graph isomorphisms on a connected locally finite graph $\Gamma$. If $v$ is a vertex of $\Gamma$ then  $G_v$ (the stabilizer of $v$ in $G$) is commensurated in $G$. 
\end{corollary}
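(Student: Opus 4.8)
The plan is to reduce the statement directly to part (3) of Theorem \ref{StabP}. I take $V$ to be the vertex set of $\Gamma$ and let $G$ act on $V$ through its graph-isomorphism action, so that $G_v$ is exactly the stabilizer of $v$ in this action. Theorem \ref{StabP}(3) asserts that if $G_v(gv)$ is finite for every $g\in G$, then $G_v$ is commensurated in $G$. Thus the only thing that needs checking is that each orbit $G_v(gv)$ is a finite subset of the vertices, and the corollary follows at once.

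The key observation is that graph isomorphisms preserve the edge-path metric $d$ on $\Gamma$. Fix $g\in G$, set $w=gv$ and $n=d(v,w)$. For any $h\in G_v$, since $h$ is an isometry fixing $v$, one has $d(v,h(w))=d(h^{-1}(v),w)=d(v,w)=n$. Hence the entire orbit $G_v(w)$ lies on the sphere of radius $n$ about $v$, and in particular inside the ball $B_n(v)$. Now I invoke local finiteness: in a connected locally finite graph every ball $B_n(v)$ contains only finitely many vertices, by an immediate induction on $n$ using that each vertex has finite valence. Therefore $G_v(gv)\subset B_n(v)$ is finite.

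Since $g$ was arbitrary, the hypothesis of Theorem \ref{StabP}(3) holds for all $g\in G$, giving that $G_v$ is commensurated in $G$. There is no genuine obstacle here; the only point worth stating explicitly is that the argument rests on the action being by isometries (graph isomorphisms) together with connectedness and local finiteness, which together force spheres about $v$ to be finite. For a merely bijective action on an abstract set the conclusion would fail, which is precisely why Theorem \ref{StabP}(3) was isolated with its finiteness hypothesis.
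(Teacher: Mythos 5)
Your proof is correct and follows essentially the same route as the paper: both verify the hypothesis of Theorem \ref{StabP}(3) by showing each orbit $G_v(gv)$ lies within a fixed distance of $v$ and then invoking local finiteness. The paper phrases this by translating an edge path from $v$ to $gv$ by elements of $G_v$, while you phrase it via the isometry property of the edge-path metric; these are the same argument.
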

\begin{proof}.  For $g\in G$, let $\alpha$ be an edge path from $v$ to $gv$. Then for each $h\in G_v$, the path $h\alpha$ begins at $v$ and ends at $hgv$. If $\alpha$ has length $n$ then each point of $G_v(gv)$ is within $n$ of $v$. As $\Gamma$ is locally finite, $G_v(gv)$ is finite for all $g\in G$. Apply Theorem \ref{T1}(3) to the vertex set of $\Gamma$.
\end{proof}

\begin{theorem}\label{subCend} 
Suppose  
$$C_1\prec C_2\prec \cdots \prec C_n=G$$
is a commensurated sequence of groups, $C_1$ and $G$ are  finitely generated, $C_1$ is infinite, and  $C_1$ has infinite index in $G$, then $G$ is 1-ended. In particular, no free group contains an infinite finitely generated subcommensurated subgroup of infinite index.
\end{theorem}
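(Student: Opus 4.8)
The plan is to deduce the theorem from Proposition \ref{superC}. That proposition takes a finitely generated, infinite subgroup $A$ of infinite index in $G$ satisfying $gAg^{-1}\cap A$ infinite for all $g\in G$, and its proof uses this "infinite intersection of conjugates" hypothesis only to cross-connect distinct cosets $vA$ and $wA$ far out in the Cayley graph. Taking $A=C_1$, the finite generation, infiniteness, and infinite index are all given, so the whole content is to verify
$$(\star)\qquad gC_1g^{-1}\cap C_1 \text{ is infinite for every } g\in G.$$
I would prove $(\star)$ by induction on the chain length $n$. When $n=2$ we have $C_1\prec G$, and $(\star)$ is immediate from Definition \ref{Dcomm}, since $gC_1g^{-1}\cap C_1$ has finite index in the infinite group $C_1$.

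For the inductive step I would peel off the top commensuration $C_{n-1}\prec C_n=G$. Fix $g\in G$. Because $C_{n-1}$ is commensurated in $G$, the group $D:=gC_{n-1}g^{-1}\cap C_{n-1}$ has finite index in $gC_{n-1}g^{-1}$; since $gC_1g^{-1}\le gC_{n-1}g^{-1}$, the group $E:=gC_1g^{-1}\cap C_{n-1}=gC_1g^{-1}\cap D$ has finite index in $gC_1g^{-1}$, hence is infinite, and $g^{-1}Eg$ is a finite-index (so infinite) subgroup of $C_1$. As $C_1\le C_{n-1}$ we have $gC_1g^{-1}\cap C_1=E\cap C_1$, so it remains to show that the infinite subgroup $E$ of $C_{n-1}$ meets $C_1$ infinitely. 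Here a finite-index dichotomy on $[G:C_{n-1}]$ cleanly disposes of one case: if $[G:C_{n-1}]<\infty$ then $C_{n-1}$ is finitely generated and quasi-isometric to $G$, hence has the same number of ends by Theorem \ref{QIEFG}; applying the inductive hypothesis to $C_1\prec\cdots\prec C_{n-1}$ (note $C_1$ still has infinite index and is infinite inside $C_{n-1}$) makes $C_{n-1}$, and therefore $G$, one-ended.

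The main obstacle is the remaining case $[G:C_{n-1}]=\infty$, where $C_{n-1}$ need not be finitely generated, so one cannot simply apply Proposition \ref{superC} to $C_{n-1}$ itself. The sharp difficulty is that the conjugating element $g$ lies in $C_n$ and in general escapes $C_{n-1}$; consequently $E$ need not be a $C_{n-1}$-conjugate of $C_1$, and the inductive hypothesis about $C_{n-1}$-conjugates does not apply verbatim. I expect to overcome this using the Hausdorff control of Corollary \ref{C1}: the commensuration relations $C_i\prec C_{i+1}$ keep each $C_i$ within a bounded Hausdorff distance of its $C_{i+1}$-translates in the Cayley graph of $G$, and telescoping this control down the chain should pin the finite-index subgroup $g^{-1}Eg\le C_1$ close enough to $C_1$ to force $E\cap C_1$ infinite. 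This Hausdorff/telescoping transfer of the commensuration data from the top of the chain to $C_1$ is the step I expect to be technically hardest, since the infinite index of $C_1$ is essential precisely to provide the room for the required rerouting (without it the argument would falsely apply to two-ended groups).

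Finally, the "in particular" clause is immediate: a finitely generated free group of rank $\ge 2$ has infinitely many ends (its Cayley tree has valence $\ge 3$, see Example \ref{EE}), and the infinite cyclic group is two-ended, where every infinite finitely generated subgroup has finite index; either way a free group cannot be one-ended while containing an infinite finitely generated subcommensurated subgroup of infinite index, contradicting the theorem.
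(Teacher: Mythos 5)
Your proposed reduction to Proposition \ref{superC} cannot be completed, because the claim $(\star)$ you set out to verify is simply false once the chain has length at least $3$. Take the discrete Heisenberg group $G=\langle x,y,z \mid [x,y]=z,\ [x,z]=[y,z]=1\rangle$ with $C_1=\langle x\rangle$ and $C_2=\langle x,z\rangle$. Since $C_2$ is abelian, $C_1\lhd C_2$, hence $C_1\prec C_2$; and $C_2\lhd G$, hence $C_2\prec G$. So $C_1\prec C_2\prec G$ satisfies every hypothesis of Theorem \ref{subCend}: $C_1$ is infinite, finitely generated, and of infinite index in the finitely generated group $G$. Yet $yx^ny^{-1}=x^nz^{\pm n}$, so by uniqueness of normal forms $yx^ny^{-1}\notin\langle x\rangle$ for $n\ne 0$, i.e. $yC_1y^{-1}\cap C_1=\{1\}$. (The same failure occurs in $\mathbb Z\wr\mathbb Z$ with $C_2$ the base group $\bigoplus_{i\in\mathbb Z}\mathbb Z$ and $C_1$ the coordinate copy at the origin: the shift conjugates $C_1$ to a copy meeting it trivially.) So the ``sharp difficulty'' you flagged in the case $[G:C_{n-1}]=\infty$ is not a technical obstacle to be overcome by telescoping the Hausdorff control of Corollary \ref{C1}; it is fatal, because the statement you are trying to reach is false. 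Your base case $n=2$ is fine (for $n=2$ the theorem \emph{is} the commensurated case of Proposition \ref{superC}), and the finite-index branch of your induction is fine, but the infinite-index branch cannot be repaired within this strategy.

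For comparison, the paper's proof never produces infinite intersections of conjugates of $C_1$. It works directly in the Cayley graph $\Gamma$ of $G$ and proves, by induction up the chain, that each $C_i$ ($i\geq 2$) --- and uniformly every coset $gC_i$, $g\in G$ --- is ``1-ended in $\Gamma$'': given compact $C$ there is a compact $D_i$ such that any two points of any coset $gC_i$ lying outside $D_i$ can be joined by a path missing $C$. The inductive step covers $C\cap C_{i+1}$ by finitely many cosets $k_jC_i$, chooses a coset $cC_i$ (with $c\in C_{i+1}$) disjoint from $C$ --- possible exactly because $C_i$ has infinite index in $C_{i+1}$ after the paper's initial reduction --- and then uses Corollary \ref{C1} to walk from far-out points of the other $C_i$-cosets into $cC_i$ along uniformly short paths, with travel inside individual cosets supplied by the inductive hypothesis. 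In other words, the commensuration data is used to transfer connectivity \emph{between} cosets via bounded Hausdorff distance, not to force distinct conjugates of $C_1$ to share infinitely many elements, which, as the examples above show, they need not do. Your instinct to invoke Corollary \ref{C1} was the right one, but it has to be deployed as a path-building tool, not as an intersection-forcing tool.
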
 

\begin{proof}  
If $\Gamma(G,S)$ is the Cayley graph of $G$ with respect to the finite generating set $S$, then say the subgroup $H$ of $G$ is {\it 1-ended} in $\Gamma$ if for any compact set $C$ of $\Gamma$ there is a compact set $D$ in $\Gamma$ such that any two points of $H$ in $\Gamma-D$ can be connected by a path in $\Gamma-C$. If $C_i$ has finite index in $C_{i+1}$ then $C_i$ is commensurated in $C_{i+1}$ (Lemma 3.7, \cite{CM13}). Hence we may assume that $C_i$ has infinite index in $C_{i+1}$. 
We show that for $i\geq 2$, $C_i$ is 1-ended in $\Gamma$. (Note that if $G$ is 1-ended in $\Gamma$ then $G$ is 1-ended.)

Assume the generating set $S$ of $G$ contains generators for $C_1$. Let $C$ be compact in $\Gamma$. Choose distinct $C_1$-cosets of $C_2$, $h_1C_1, \ldots, h_nC_1$ such $h_i\in C_2\cap C$, and $C\cap C_2\subset \cup_{i=1}^n h_iC_1$. Since $C_1$ has infinite index in $C_2$, there is $c\in C_2$ such that $cC_1\cap C=\emptyset$.  For each $i\in \{1,\ldots, n\}$ choose an integer $N_i$ (see Corollary \ref{C1}) such that there is an edge path of length $\leq N_i$ from each point of $h_iC_1$ to a point of $cC_1$. Let $N=max\{N_1,\ldots, N_n\}$ and $D_2'=st^N(C)$. For each point $p\in \cup_{i=1}^nh_iC_1-D_2'$, there is an edge path in $\Gamma-C$ (of length $\leq N$) from $p$ to a point of $cC_1$. Since $cC_1$ avoids $C$, there is an edge path from $p$ to $c$ avoiding $C$. 

If $d\in C_2-\cup _{i-1}^nh_iC_1$, let $N_d$ be such that there is an edge path of length $\leq N_d$ from each point of $dC_1$ to $cC_1$. Choose any point $q$ of $dC_1-st^{N_d}(C)$. Then there is a path from $q$ to $cC_1$ avoiding $C$ and so a path from $q$ to $c$ avoiding $C$. As any point of $dC_1$ is connected to $q$ by a path in $dC_1$ (and so avoiding $C$)  each point of $dC_1$ is joined to $c$ by a path avoiding $C$. (In particular, if $C_2\cap C=\emptyset$, then any two points of $ C_2$ can be joined by a path in $\Gamma-C$.) We have shown $C_2$ is 1-ended in $\Gamma$. 

If $g\in G$ then we have shown that shows that there is a compact $D_g$ such that and two points of  $C_2$ in $\Gamma-D_g$ can be joined in $\Gamma-g^{-1}C$; and if $C_2\cap g^{-1}C=\emptyset$ then any two points of $C_2$ can be joined by a path in $\Gamma-g^{-1}C$. Translating by $G$, any two points of $gC_2$ in $\Gamma-gD_g$ can be joined in $\Gamma-C$; and if $gC_2\cap C=\emptyset$ then any two points of $gC_2$ can be joined by a path in $G-C$. Since only finitely many cosets $gC_2$ intersect $C$, there is a single compact set $D_2$ such that for every $g\in G$, any two points of $gC_2-D_2$ can be joined by a path in $\Gamma-C$.

Our inductive hypothesis is $H(i)$: For $i\geq 3$ there is a compact set $D_i$ such that for any $g\in G$, any two points of  $gC_i-D_i$ can be joined by a path in $\Gamma-C$ and if $gC_i\cap C=\emptyset$ then then any two points in $gC_i$ can be joined by a path in $\Gamma-C$. We need only show that $H(i)$ implies $H(i+1)$. 

Assume $H(i)$. We follow the above argument closely.  Let $k_1C_i, \ldots, k_mC_i$ be distinct cosets of $C_i$ in $C_{i+1}$ such that  $k_j\in C_{i+1}\cap C$, and $C\cap C_{i+1}\subset \cup_{i=1}^m k_iC_{i}$. Since $C_i$ has infinite index in $C_{i+1}$, there is $c\in C_{i+1}$ such that $cC_i\cap C=\emptyset$.  For each $j\in \{1,\ldots, m\}$ choose an integer $M_j$ (see Corollary \ref{C1}) such that there is an edge path of length $\leq M_j$ from each point of $k_jC_i$ to a point of $cC_i$. Let $M=max\{M_1,\ldots, M_m\}$ and $D_{i+1}'=st^M(C)$. For each point $p\in k_jC_i-D_{i+1}'$, there is an edge path in $\Gamma-C$ (of length $\leq M$) from $p$ to a point of $cC_i$. Since $cC_i$ avoids $C$, there is an edge path from $p$ to $c$ in $\Gamma-C$. We have shown that for any point $p$ of $\cup_{j=1}^mk_jC_i-D_{i+1}'$ there is a path from $p$ to $c$ avoiding $C$.

If $d\in C_{i+1}-\cup _{j=1}^mk_jC_i$, let $M_d$ be such that there is an edge path of length $\leq M_d$ from each point of $dC_i$ to $cC_i$. Choose any point $q$ of $dC_i-st^{M_d}(C)$. Then there is a path from $q$ to $cC_i$ avoiding $C$ and so a path from $q$ to $c$ avoiding $C$. As any point of $dC_i$ is connected to $q$ by a path in $\Gamma-C$  each point of $dC_i$ is joined to $c$ by a path avoiding $C$. We have shown $C_{i+1}$ is 1-ended in $\Gamma$. 

If $g\in G$ then we have shown that shows that there is a compact $D_g$ such that and two points of  $C_{i+1}$ in $\Gamma-D_g$ can be joined in $\Gamma-g^{-1}C$; and if $C_{i+1}\cap g^{-1}C=\emptyset$ then any two points of $C_{i+1}$ can be joined by a path in $\Gamma-g^{-1}C$. Translating by $g$, any two points of $gC_{i+1}$ in $\Gamma-gD_g$ can be joined in $\Gamma-C$; and if $gC_{i+1}\cap C=\emptyset$ then any two points of $gC_{i+1}$ can be joined by a path in $G-C$. Since only finitely many cosets $gC_{i+1}$ intersect $C$, there is a single compact set $D_{i+1}$ such that for every $g\in G$, any two points of $gC_{i+1}-D_{i+1}$ can be joined by a path in $\Gamma-C$.
\end{proof}

Note that if $C$ is a subgroup of finite index in the group $A$, then $C$ is commensurated in $A$. 

\begin{corollary}\label{FIcomm} 
Suppose $G=A\ast_C B$, where $A$ and $B$ are finitely generated and $C$ has finite index (at least 2) in both $A$ and $B$. Then (certainly $C$ is commensurated in both $A$ and $B$ and hence in $G$) $G$ has 1-end.
\end{corollary}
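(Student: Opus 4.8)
The plan is to recognize the edge group $C$ as an infinite, finitely generated, commensurated subgroup of infinite index in $G$, and then invoke Theorem \ref{subCend} applied to the two-term commensurated sequence $C\prec G$ (equivalently, one may cite Proposition \ref{superC} directly, since the sequence has length two). Before starting I note that the statement is only sensible when $C$ is infinite: if $C$ were finite then $A$ and $B$ would be finite, $G$ would be virtually free, and Theorem \ref{Stall} would force $G$ to have $2$ or infinitely many ends. So I assume throughout that $C$ (equivalently $A$, or $B$) is infinite, which is the intended hypothesis.

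First I would establish that $C\prec G$. Since $C$ has finite index in $A$, the remark preceding the corollary gives $C\prec A$, that is, $A\subseteq Comm_C(G)$; likewise $C\prec B$ gives $B\subseteq Comm_C(G)$. Because the commensurator $Comm_C(G)$ is a subgroup of $G$ and $A\cup B$ generates $G$, it follows that $Comm_C(G)=G$, i.e.\ $C\prec G$. This is precisely the parenthetical assertion in the statement, and it is the only genuinely clever point: one cannot instead build a chain $C\prec A\prec G$, since $A$ is a vertex group and is typically \emph{not} commensurated in $G$.

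Next I would check the remaining hypotheses of Theorem \ref{subCend}. The group $G=\langle A\cup B\rangle$ is finitely generated, and $C$, being a finite-index subgroup of the finitely generated group $A$, is itself finitely generated (Reidemeister--Schreier); by assumption $C$ is infinite. The one point requiring real work is that $C$ has infinite index in $G$, and this is where I expect the main (though still routine) obstacle. I would argue via the Bass--Serre tree $\mathcal T$ of the splitting $A\ast_C B$: its vertices are the cosets $gA$ and $gB$, each $A$-vertex has valence $[A:C]\geq 2$, and each $B$-vertex has valence $[B:C]\geq 2$. A tree in which every vertex has valence at least $2$ is infinite, so $\mathcal T$ has infinitely many $A$-vertices; hence $[G:A]=\infty$, and therefore
$$[G:C]=[G:A]\,[A:C]=\infty .$$

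With $C$ infinite, finitely generated, of infinite index in $G$, and commensurated in $G$, Theorem \ref{subCend} (applied to the sequence $C\prec G$) yields that $G$ is $1$-ended, completing the proof.
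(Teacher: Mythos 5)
Your proof is correct and takes exactly the route the paper intends: the corollary is stated in the paper with no separate proof, its parenthetical remark ($C$ commensurated in $A$ and $B$, hence in $G$) together with an application of Theorem \ref{subCend} (equivalently Proposition \ref{superC}) being precisely your argument. Your additional details — that $C$ must implicitly be infinite (otherwise $\mathbb{Z}_2\ast\mathbb{Z}_2$ is a counterexample), that $Comm_C(G)$ is a subgroup containing $A\cup B$, and the Bass--Serre tree argument giving $[G:C]=\infty$ — are correct fillings of steps the paper leaves unstated.
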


\subsubsection{Combination Results and Graph Products}\label{compGP} 
  
 \begin{theorem} [Theorem 3, \cite{M4}] \label{combE}  
Suppose $G$ is a finitely generated group, $A$ and $B$ are finitely generated 1 or 2-ended subgroups of $G$, $A\cup B$ generates $G$ and $A\cap B$ is infinite. Then $G$ is 1 or 2-ended.
\end{theorem}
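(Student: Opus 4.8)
The plan is to argue by contradiction and rule out that $G$ has infinitely many ends. Since $A$ is $1$- or $2$-ended it is infinite, so $G$ is infinite and hence not $0$-ended; once infinitely many ends are excluded, the general dichotomy (a finitely generated group has $0$, $1$, $2$ or infinitely many ends) forces $G$ to be $1$- or $2$-ended. So suppose $G$ had infinitely many ends. By Stallings' Theorem (Theorem \ref{Stall}), $G$ splits nontrivially as $P\ast_C Q$ (with $C\ne P$, $C\ne Q$) or as an HNN extension $P\ast_C$ with $C$ finite. Let $\mathcal T$ be the associated Bass--Serre tree: the $G$-action on $\mathcal T$ is minimal and cocompact, no vertex is fixed by all of $G$, and every edge stabilizer is a conjugate of the finite group $C$. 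I would then study how $A$ and $B$ act on $\mathcal T$.

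The key lemma to establish is that a finitely generated $1$- or $2$-ended subgroup $H\le G$ is either \emph{elliptic} (fixes a vertex, equivalently lies in a conjugate of a vertex group) or \emph{lineal} (stabilizes a line $\ell\subset\mathcal T$ on which it acts by translations), and that in the lineal case $H$ is necessarily $2$-ended. To prove it, recall that a finitely generated group acting on a tree either fixes a vertex or contains a hyperbolic element (otherwise every element is elliptic and Serre's lemma yields a global fixed vertex). If $H$ does not fix a vertex, it contains a hyperbolic element, so its minimal invariant subtree $\mathcal T_H$ is defined and carries a cocompact $H$-action, exhibiting $H$ as the fundamental group of a finite graph of groups with finite edge groups. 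When $\mathcal T_H$ is not a line this is a nontrivial splitting of $H$ over a finite subgroup, which Stallings' Theorem forbids for a $1$-ended $H$; and for a $2$-ended $H$ the minimal subtree is forced to be a line anyway, since all hyperbolic elements of a virtually cyclic group share a common axis. Hence $1$-ended subgroups are elliptic and $2$-ended subgroups are elliptic or lineal.

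With the lemma in hand I would split into three cases according to the behaviour of $A$ and $B$, using crucially that $D:=A\cap B$ is infinite and that edge stabilizers are finite. If both $A$ and $B$ are elliptic, then $D\le A\cap B$ fixes the convex hull of $\mathrm{Fix}(A)\cup\mathrm{Fix}(B)$; were these fixed subtrees disjoint, the hull would contain an edge fixed by the infinite group $D$, contradicting finiteness of edge stabilizers, so $A$ and $B$ share a fixed vertex $v$ and $G=\langle A\cup B\rangle$ fixes $v$, contradicting minimality of the splitting. If one factor is elliptic and the other lineal, then $D$ is an infinite subgroup of a $2$-ended lineal group, hence of finite index there, so $D$ contains a hyperbolic element; but $D$ also lies in an elliptic factor, all of whose elements are elliptic, a contradiction. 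If both $A$ and $B$ are lineal, $D$ is $2$-ended of finite index in each and contains a hyperbolic element translating along both invariant lines; since a hyperbolic tree isometry has a unique axis, the two lines coincide, so $G$ stabilizes a single line $\ell$, and minimality forces $\mathcal T=\ell$, making $G$ $2$-ended --- contradicting infinitely many ends. Every case is impossible, so $G$ is $1$- or $2$-ended.

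The hardest part, I expect, is the lemma --- specifically the verification that a finitely generated $1$-ended subgroup cannot act on $\mathcal T$ without a fixed vertex, ruling out both the possibility of a nontrivial splitting over the finite edge groups and any ``end-fixing'' behaviour. This is exactly the step where finite generation (via Serre's lemma) and Stallings' Theorem are indispensable. Once elliptic-or-lineal is secured, the three-case analysis is short and is driven entirely by the infiniteness of $A\cap B$ together with the finiteness of edge stabilizers.
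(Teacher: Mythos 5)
Your proof is correct. Its core---assume more than one end, apply Stallings' Theorem \ref{Stall}, act on the Bass--Serre tree of the resulting splitting over a finite group, use that finitely generated $1$-ended subgroups must be elliptic, and observe that the infinite group $A\cap B$ cannot stabilize an edge, so the fixed subtrees of $A$ and $B$ meet and $G$ fixes a vertex, a contradiction---is exactly the paper's argument in the case where both $A$ and $B$ are $1$-ended. Where you genuinely diverge is in the handling of $2$-ended factors. The paper keeps them outside the tree framework: when both $A$ and $B$ are $2$-ended it produces an infinite cyclic subgroup $\langle c\rangle\le A\cap B$ that is normal in both factors and hence in $G$, and concludes from Theorem \ref{subCend} that $G$ is $1$-ended when $\langle c\rangle$ has infinite index (and $2$-ended otherwise); when $A$ is $1$-ended and $B$ is $2$-ended, it runs an induction showing that the entire normal closure of $A$ in $G$ is $1$-ended and elliptic, which is impossible since an infinite group cannot stabilize the edges of the tree. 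You instead absorb the $2$-ended subgroups into the tree argument through the elliptic-or-lineal dichotomy, with the unique-axis argument forcing the invariant lines of $A$ and $B$ to coincide. Your route buys uniformity---one lemma feeds a three-case analysis, with no appeal to Theorem \ref{subCend} or to a normal-closure induction---at the price of the standard facts that minimal actions of finitely generated groups on trees are cocompact and that a hyperbolic isometry preserving a line has that line as its axis; the paper's route is piecemeal but more elementary in the $2$-ended cases, and there it yields the extra information of an explicit normal infinite cyclic subgroup, pinpointing when $G$ has $1$ versus $2$ ends. One small point of phrasing: in your first case the contradiction is cleanest as ``a nontrivial splitting admits no global fixed vertex'' (the paper's Lemma \ref{noFI} supplies the hyperbolic element witnessing this), which is equivalent to your appeal to minimality once the tree has an edge.
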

\begin{proof}
If $A$ and $B$ are 2-ended, then say $a$ ($b$) is an element of infinite order in $A$ ($B$) which generates a normal subgroup of finite index in $A$ ($B$). As $\langle a\rangle$ has finite index in $A$, there is $m$ such that $a^m\in C$. Similarly there is $n$ such that $b^n\in C$. Then there is $k,l$ such that $c\equiv a^{mk}= b^{ln}\in C$ and $\langle c\rangle$ is normal in $A$ and $B$. Then $\langle c\rangle$ is normal in $G$. If $\langle c\rangle$ has finite index in $G$, then $G$ is 2-ended. Otherwise, Theorem \ref{subCend} implies $G$ is 1-ended. 

From this point on, assume that $A$ is 1-ended, $B$ is 1 or 2-ended and $G$ has more than one end. Then $G$ splits as $D\ast_CE$ where $C$ is finite. Let $T$ be the Bass-Serre tree for this splitting. Then $A$ and $B$ act on $T$. Since $A$ is 1-ended, it must stabilize a vertex $gF$ of $T$ (where $g\in G$ and $F\in \{D, E\}$). Then 

$$A <gFg^{-1}$$ 

If $B$ is 1-ended, then $B$ stabilizes $g'F'$ for some $g'\in G$ and $F'\in \{D,E\}$. Note that $A\cap B$ stabilizes the geodesic between $gF$ and $g'F'$ in $T$. If $gD\ne g'D'$ then $A\cap B$ stabilizes an edge of $T$, but the stabilizer of any edge of $T$ is finite. Hence $G=\langle A\cup B\rangle$ stabilizes $gD$ and so is a subgroup of $gDg^{-1}$. But then $G$ is a subgroup of $hDh^{-1}$ for every $h\in G$ and so $G$ stabilizes $hD$ for every $h\in G$. This implies $G$ stabilizes edges of $T$ and so is finite (which it is not).

Finally, assume $A$ is 1-ended and $B$ is 2-ended. Say $b\in B$ and $\langle b\rangle$ generates a normal infinite cyclic subgroup of $B$. Choose $m$ such that $b^m\in A\cap B$.  Let $\hat g\in G$ and write $\hat g$ as $a_nb_n\cdots a_1b_1$ where $a_i\in A$ and $b_i\in B$ for all $i$. The group $b_1Ab_1^{-1}$ is 1-ended and 
$$b_1\langle b^m\rangle b_1^{-1}=\langle b^m\rangle<b_1Ab_1^{-1}\cap A$$ 
By the previous argument, $A_1\equiv \langle A, b_1Ab_1^{-1}\rangle$ is 1-ended (contains $b^m$ and $b_1Ab_1^{-1}$) and is a subgroup of $gFg^{-1}$ (recall $g\in G$ and $F\in \{A,B\}$). The group $a_1A_1a_1^{-1}$ is 1-ended and $A<a_1A_1a_1^{-1}\cap A_1$,  so $A_1'\equiv \langle A_1, a_1A_1a_1^{-1}\rangle$ is 1-ended, contains both $b^m$ and $a_1b_1Ab_1^{-1}a_1^{-1}$, and is a subgroup of $gFg^{-1}$. Inductively, $A_n'\equiv \langle A_n,a_nA_na_n^{-1}\rangle$ is 1-ended contains both $b^m$ and $\hat gA\hat g^{-1}$, and is a subgroup of $gFg^{-1}$. In particular, $N_G(A)$ (the normal closure of $A$ in $G$) is a subgroup of $gFg^{-1}$. But then $N_G(A)<hFh^{-1}$ for every $h\in G$ and $N_G(A)$ stabilizes edges of $T$. This is impossible since $N_G(A)$ is infinite.
\end{proof}

\begin{definition} 
Given a graph $\Gamma$ with vertex set $V(\Gamma)$, and a group $G_v$ for each $v\in V(\Gamma)$, the {\it graph product} for $(\Gamma, \{G_v\}_{v\in V(\Gamma)})$
is the quotient of the free product of the $G_v$ by the normal closure of the set of all commutators $[g,h]$ where $g$ and $h$ are elements of adjacent vertex groups. This group is denoted by $G_\Gamma$.
\end{definition}
Every right angled Coxeter group and right Angled Artin group is a graph product where vertex groups are copies of $\mathbb Z_2$ and $\mathbb Z$ respectively.

Olga Varghese completely determines the number of ends of a graph product of finitely generated groups.

\begin{theorem} [(O. Varghese, \cite{OV}] \label{OV}   Suppose $G$ is a finitely generated graph product group on the graph $\Gamma$ (so that each $G_v$ is finitely generated and $\Gamma$ is finite) and $G$ has more than one end, then either:

(i) $\Gamma$ is a complete graph such that one vertex group has more than one end and all others are finite, or

(ii) $G$ visually splits over a finite group. This means that there are non-empty full subgraphs $\Gamma_1$  and $\Gamma_2$ of $\Gamma$  (neither containing the other) such that $\Gamma=\Gamma_1\cup \Gamma_2$, (so $\Gamma_1\cap \Gamma_2$ separates $\Gamma$) and $\langle \Gamma_1\cap \Gamma_2\rangle$ is a finite group. In particular, $G$ visually decomposes as the (non-trivial) amalgamated product
$$G \cong \langle \Gamma_1\rangle\ast _{\langle\Gamma_1\cap \Gamma_2\rangle} \langle\Gamma_2\rangle$$  
\end{theorem}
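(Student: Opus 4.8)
The engine is Stallings' structure theorem (Theorem \ref{Stall}): since $G=G_\Gamma$ has more than one end, it splits nontrivially over a finite subgroup. The whole difficulty is to convert this abstract splitting into the visual data recorded in (i) and (ii), so the plan is to run the argument on the graph $\Gamma$ itself rather than on an abstract Bass--Serre tree. I would first record the three standard facts about graph products that I intend to use freely (all from the normal-form theory for graph products): that for a full subgraph $\Lambda\subseteq\Gamma$ the natural map $G_\Lambda\to G_\Gamma$ is injective and $G_{\Lambda_1}\cap G_{\Lambda_2}=G_{\Lambda_1\cap\Lambda_2}$; that $G_\Lambda$ is finite if and only if $\Lambda$ is complete with all vertex groups finite; and that whenever a full subgraph $\Gamma_0$ separates $\Gamma$ into $\Gamma_1,\Gamma_2$ (so $\Gamma=\Gamma_1\cup\Gamma_2$, $\Gamma_0=\Gamma_1\cap\Gamma_2$, and no edges join $\Gamma_1\setminus\Gamma_0$ to $\Gamma_2\setminus\Gamma_0$) one has the visual decomposition $G_\Gamma=G_{\Gamma_1}\ast_{G_{\Gamma_0}}G_{\Gamma_2}$. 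Combining the last two facts, a visual splitting over a finite group is exactly a separating clique with finite vertex groups, which is the content of (ii).

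With these in hand I would split into cases on $\Gamma$. If $\Gamma$ is disconnected, $G_\Gamma$ is the free product of the graph products of its components, i.e.\ a visual splitting with $\Gamma_0=\emptyset$, so (ii) holds. If $\Gamma$ is connected but contains a clique $\sigma$ with all vertex groups finite whose removal disconnects $\Gamma$, then the visual decomposition splits $G$ nontrivially over the finite group $G_\sigma$ and (ii) holds again. The entire remaining case is: $\Gamma$ connected with no separating clique of finite vertex groups. Here I must show that the hypothesis ``more than one end'' forces the rigid configuration (i); equivalently, I will prove the contrapositive Lemma: if $\Gamma$ is connected, has no separating finite-vertex-group clique, and is not of the form in (i), then $G_\Gamma$ has at most one end.

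The proof of that Lemma is where the real work lies, and the two tools are Cohen's Theorem \ref{cohen} and the combination Theorem \ref{combE}. The basic building block is that if $u\sim v$ are adjacent vertices with $G_u,G_v$ both infinite, then $G_u\times G_v$ is one-ended: taking $N=G_u$ (infinite and finitely generated, hence not locally finite) as a normal subgroup of infinite index (as $G_v$ is infinite) in the finitely generated group $G_u\times G_v$, Theorem \ref{cohen} applies, and this holds regardless of how many ends $G_u$ itself has. The plan is to process the full subgraph $\Lambda$ spanned by the infinite vertex groups: along each edge of $\Lambda$ I get a one-ended product, and two such products sharing a vertex group of $\Lambda$ overlap in an infinite subgroup, so Theorem \ref{combE} glues them into a one-ended subgroup; iterating over a connected $\Lambda$ shows $G_\Lambda$ is one-ended whenever $\Lambda$ is connected with at least one edge. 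More generally, whenever $\Gamma$ is a nontrivial join $\Sigma\ast\Psi$ with both $G_\Sigma,G_\Psi$ infinite, $G_\Sigma$ is an infinite finitely generated normal subgroup of infinite index, so Theorem \ref{cohen} again gives one end; this is the mechanism that absorbs even infinitely-ended vertex groups. I would then assemble $G_\Gamma$ from these one-ended chunks and the finite vertex groups using the ``add a vertex'' visual decomposition $G_{\Gamma_k}=G_{\Gamma_{k-1}}\ast_{G_{L_k}}(G_{v_k}\times G_{L_k})$, where $L_k$ is the link of $v_k$ inside a connected build-up $\Gamma_1\subset\cdots\subset\Gamma_n=\Gamma$, applying Theorem \ref{combE} at each stage where the link group $G_{L_k}$ is infinite.

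The main obstacle is exactly the bookkeeping of intersection sizes in this assembly. Theorem \ref{combE} only yields one (or two) ends when the two pieces share an \emph{infinite} subgroup, i.e.\ when the relevant link group $G_{L_k}$ is infinite, and the global hypothesis ``no separating finite-vertex-group clique'' must be leveraged to guarantee that the build-up can always be routed so as to glue along infinite overlaps, since an intermediate finite link would otherwise introduce spurious ends. The second delicate point is isolating the degenerate case: I must show that the only way the absorption process can fail to produce a single end is when $\Lambda$ is a single vertex $v_0$, every other vertex group is finite, and $\Gamma$ is complete, for then $G_\Gamma=G_{v_0}\times(\text{finite})$ has the same number of ends as $G_{v_0}$ and no visual splitting exists, which is precisely alternative (i). Treating the infinitely-ended vertex groups cleanly (they are permitted in (i) but must be neutralized by an infinite neighbour or a join in every other situation) and verifying that no separating finite clique survives the reductions are the steps I expect to require the most care.
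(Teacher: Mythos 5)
There is a genuine gap, and it sits exactly where you parked the ``main obstacle.'' (For reference: the paper offers no proof of Theorem \ref{OV} at all --- it is quoted from Varghese's paper \cite{OV} --- so your attempt can only be judged on its own merits.) Your reductions are fine: disconnected $\Gamma$ gives (ii), a separating complete subgraph with finite vertex groups gives (ii), and the complete-graph bookkeeping for (i) is routine. The problem is the remaining Lemma --- $\Gamma$ connected, no separating finite-vertex-group clique, not of type (i), implies at most one end --- for which your only engines are Theorem \ref{cohen} (needs an infinite finitely generated normal subgroup of infinite index, hence two adjacent infinite vertex groups or a join with both sides infinite) and Theorem \ref{combE} (needs \emph{both} pieces to be 1- or 2-ended and the intersection infinite). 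These tools cannot reach the hardest case of the Lemma, namely when the infinite vertex groups are scarce or absent.

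Concretely, let $\Gamma$ be the $5$-cycle with every $G_v=\mathbb Z_2$, i.e.\ the right-angled Coxeter group on the pentagon. This satisfies every hypothesis of your Lemma (connected, not complete, no separating clique whatsoever), and the conclusion is true: this group is $1$-ended (it is a hyperbolic reflection group, and this is also what Theorem \ref{CoxE} asserts). But your machinery never fires. There are no infinite vertex groups and the pentagon is not a join, so Theorem \ref{cohen} is never applicable. As for Theorem \ref{combE} applied to visual subgroups: the only $1$- or $2$-ended visual subgroups are the clique subgroups (finite) and the subgroups on paths of three vertices (isomorphic to $(\mathbb Z_2\ast\mathbb Z_2)\times \mathbb Z_2$, $2$-ended), and any two of these intersect in a clique subgroup, which is finite; meanwhile every full subgraph on four vertices is a path with a separating vertex, so its subgroup is infinitely-ended and disqualified as an input to \ref{combE}. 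In particular your ``add a vertex'' induction collapses at the last step: $G_{\Gamma_4}$ (the Coxeter group on the $4$-path) is infinitely-ended, and \ref{combE} cannot glue an infinitely-ended piece to $G_{v_5}\times G_{L_5}$, even though the link group $G_{L_5}\cong \mathbb Z_2\ast\mathbb Z_2$ is infinite. So the failure is not bookkeeping about routing the build-up; it is structural: whenever the full subgraph spanned by infinite vertex groups is edgeless and $\Gamma$ is not a join (in particular whenever all vertex groups are finite), neither \ref{cohen} nor \ref{combE} can ever be invoked, yet this is precisely where the theorem has real content. Closing this case requires genuinely different input --- e.g.\ the visual one-endedness criterion for Coxeter groups (Theorem \ref{CoxE}) extended to graph products of finite groups, or an argument along the lines of Varghese's, working with the action on the Davis complex/Bass--Serre trees of all visual splittings rather than with subgroup combination theorems.
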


\begin{theorem} [(O. Varghese, \cite{OV}] \label{OV2}   Let $G_\Gamma$ be a graph product of non-trivial finitely generated groups. Decompose $\Gamma$ as a join $\Gamma_1\ast  \Gamma_2$ where $\Gamma_1$ is a subgraph generated by the vertices which are adjacent to all the vertices of $\Gamma$. The graph product $G_\Gamma$ is 2-ended if and only if either:

(i) The graph $\Gamma$ is complete with exactly one 2-ended vertex group and all other vertex groups being finite, or

(ii) The subgraph $\Gamma_1$ is complete with finite vertex groups (and so $G_{\Gamma_1}$ is finite) and $G_{\Gamma_2} = \mathbb Z_2\ast \mathbb Z_2$.
\end{theorem}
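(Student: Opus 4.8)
The plan is to reduce everything to the elementary behaviour of ends under direct products, using the standard fact that a graph product over a join splits as the direct product of the graph products over the two join factors. Since $\Gamma_1$ is generated by the vertices adjacent to \emph{every} vertex of $\Gamma$, the decomposition $\Gamma=\Gamma_1\ast\Gamma_2$ really is a join, so $G_\Gamma\cong G_{\Gamma_1}\times G_{\Gamma_2}$. Moreover $\Gamma_1$ is complete (two universal vertices are mutually adjacent), so $G_{\Gamma_1}=\prod_{v\in\Gamma_1}G_v$, while $\Gamma_2$ has \emph{no} universal vertex of its own: a vertex of $\Gamma_2$ adjacent to all other vertices of $\Gamma_2$ would, via the join, be adjacent to all of $\Gamma$ and hence lie in $\Gamma_1$. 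In particular $\Gamma_2$ is either empty or has at least two vertices and is not complete. The tools I would use are: a direct product of two infinite finitely generated groups is $1$-ended (the short exact sequence consequence of Theorem \ref{cohen}); passing to a finite normal subgroup quotient preserves the number of ends; a subgroup of a $2$-ended group is finite or $2$-ended; and Stallings' Theorem \ref{Stall} together with the classification of $2$-ended groups.

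First I would dispose of the case $\Gamma_2=\emptyset$, i.e. $\Gamma$ complete. Then $G_\Gamma=\prod_v G_v$ is a finite direct product of the vertex groups, and such a product is $2$-ended exactly when one factor is $2$-ended and the rest are finite (two infinite factors force $1$ end, all finite factors force $0$ ends). This is precisely alternative (i). For the remaining case $\Gamma_2\neq\emptyset$, the remark above gives a non-adjacent pair $u,w$ in $\Gamma_2$, so $G_u\ast G_w\leq G_{\Gamma_2}$ is infinite and $G_{\Gamma_2}$ is infinite. If $G_{\Gamma_1}$ were also infinite, the product $G_{\Gamma_1}\times G_{\Gamma_2}$ of two infinite finitely generated groups would be $1$-ended; hence $2$-endedness of $G_\Gamma$ forces $G_{\Gamma_1}$ finite, i.e. every universal vertex group is finite — the first half of alternative (ii). With $G_{\Gamma_1}$ finite and normal in $G_\Gamma$ with quotient $G_{\Gamma_2}$, the group $G_\Gamma$ is $2$-ended if and only if $G_{\Gamma_2}$ is, so everything reduces to showing that a $2$-ended $G_{\Gamma_2}$ (with $\Gamma_2$ having no universal vertex) must be $\mathbb{Z}_2\ast\mathbb{Z}_2$.

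This last step is the technical heart. First, for any non-adjacent pair $u,w$ the free product $G_u\ast G_w$ is an infinite subgroup of the $2$-ended group $G_{\Gamma_2}$, hence itself $2$-ended, so Stallings' Theorem \ref{Stall} (with trivial edge group) forces $|G_u|=|G_w|=2$. Because $\Gamma_2$ has no universal vertex, every vertex lies in such a non-adjacent pair, so \emph{every} vertex group is $\mathbb{Z}_2$ and $G_{\Gamma_2}$ is a right-angled Coxeter group. Now $G_{\Gamma_2}$ has more than one end, so Theorem \ref{OV} applies; its alternative (i) is impossible since no vertex group is infinite-ended, so $G_{\Gamma_2}$ visually splits as $\langle\Gamma_a\rangle\ast_{\langle\Gamma_a\cap\Gamma_b\rangle}\langle\Gamma_b\rangle$ over a finite group. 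Applying Stallings again to this $2$-ended amalgam, the edge group has index $2$ in each factor; since all these groups are elementary abelian $2$-groups, $\Gamma_a$ and $\Gamma_b$ are complete and each contains exactly one vertex ($a$, resp. $b$) outside $S:=\Gamma_a\cap\Gamma_b$, with $a\not\sim b$. Then every vertex of the complete graph $S$ is adjacent to $a$, to $b$, and to all of $S$, i.e. is universal in $\Gamma_2$; the no-universal-vertex hypothesis forces $S=\emptyset$, leaving $\Gamma_2=\{a,b\}$ with $a\not\sim b$, that is $G_{\Gamma_2}=\mathbb{Z}_2\ast\mathbb{Z}_2$. The main obstacle is exactly this structural extraction — ruling out larger $2$-ended right-angled Coxeter groups such as $D_\infty\times(\text{finite})$ — which is where the hypothesis that $\Gamma_2$ has no universal vertex does the essential work; note that a naive "find a bad three-vertex induced subgraph" argument fails (for instance for the $4$-cycle, which is a join and gives a $1$-ended group), which is why I route through Theorem \ref{OV} and Stallings. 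Finally I would record the easy converse: in case (i) the product is a finite group times a $2$-ended group, hence $2$-ended, and in case (ii) $G_\Gamma=G_{\Gamma_1}\times(\mathbb{Z}_2\ast\mathbb{Z}_2)$ is a finite group times $D_\infty$, again $2$-ended.
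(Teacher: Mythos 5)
Your proof is correct, but note that the paper itself offers no argument for this statement: Theorem \ref{OV2} is simply quoted from Varghese's paper \cite{OV}, so there is no in-paper proof to compare against. What you have done is give a self-contained derivation from results the paper does state: the join decomposition $G_\Gamma\cong G_{\Gamma_1}\times G_{\Gamma_2}$, the consequence of Theorem \ref{cohen} that a direct product of two infinite finitely generated groups is $1$-ended, the corollary that subgroups of $2$-ended groups are finite or $2$-ended, Stallings' Theorem \ref{Stall}, and --- as the key structural input --- Theorem \ref{OV}. Your observation that $\Gamma_2$ contains no universal vertex, so that every vertex of $\Gamma_2$ lies in a non-adjacent pair, is exactly what converts the $2$-ended hypothesis into the statement that every vertex group of $\Gamma_2$ is $\mathbb Z_2$, and the final collapse of the separator $S=\Gamma_a\cap\Gamma_b$ to the empty set via the same no-universal-vertex property is clean and correct, as is the converse direction. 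One step deserves more careful wording: in the analysis of the visual splitting $\langle\Gamma_a\rangle\ast_{\langle S\rangle}\langle\Gamma_b\rangle$ you justify completeness of $\Gamma_a$ and $\Gamma_b$ by saying ``since all these groups are elementary abelian $2$-groups,'' but that is the conclusion rather than the hypothesis. The correct order is: $\langle S\rangle$ is finite by Theorem \ref{OV}(ii); Stallings gives $[\langle\Gamma_a\rangle:\langle S\rangle]=2$, hence $\langle\Gamma_a\rangle$ is finite; a graph product of copies of $\mathbb Z_2$ is finite only if its graph is complete (two non-adjacent generators already generate an infinite dihedral group), and only then is it elementary abelian, after which the index-$2$ count yields exactly one vertex of $\Gamma_a$ outside $S$. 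With that reordering, and with the separation clause of Theorem \ref{OV}(ii) supplying $a\not\sim b$, the argument is complete.
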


%\begin{theorem} {rel1} {\bf (rel1)} 
%Suppose $G$ is a finitely generated 1-relator group $\langle a_1,\langle a_n:r\rangle$ where $a_i^{\pm1}$ appears in $r$ at least twice for each $i\in\{1,\ldots, n\}$, then $G$ is 1-ended. 
%\end{theorem}
\subsubsection{Artin and Coxeter Groups}\label{ACGE}
\begin{definition} \label{Artin}  Let $\Lambda$ be a  graph with vertex set $S=\{s_1,\ldots, s_n\}$.  If there is an edge between distinct elements $s_i$ and $s_j$  (for $i<j$) it is labeled by an integer $m_{ij}\geq 2$. Let $(s_i,s_j)_{m_{ij}}$ be the word of length $m_{ij}$, beginning with $s_i$ and alternating letters between $s_i$ and $s_j$. The {\it Artin group} on the (Artin) diagram $\Lambda$ is a group with presentation $\langle S\ |\ R\rangle$ where $R$ is the set of all words $(s_i,s_j)_{m_{ij}}=(s_j,s_i)_{m_{ij}}$ such that there is an edge between $s_i$ and $s_j$ for $i<j$.
\end{definition} 

\begin{theorem}\label{ArtinE} 
If $A$ is a finitely generated Artin group on a connected Artin diagram with at least 2 vertices, then $G$ is 1-ended.
\end{theorem}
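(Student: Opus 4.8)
The strategy I would follow is to assemble $A$ from the two-generator (dihedral) Artin subgroups attached to the edges of the defining diagram $\Lambda$, feeding them into the combination theorem \ref{combE}. Since $\Lambda$ is connected and has at least two vertices, it has no isolated vertices, and I may list its edges $e_1,e_2,\dots,e_k$ so that for every $i$ the edge $e_{i+1}$ shares a vertex with $e_1\cup\cdots\cup e_i$ and so that every vertex of $\Lambda$ lies on some $e_j$ (for instance, order the edges of a spanning tree outward from a root and then append the remaining edges). Writing $e_j=\{s_{p_j},s_{q_j}\}$ with label $m_j$, let $A_{e_j}=\langle s_{p_j},s_{q_j}\rangle\le A$ and $A_i=\langle s_{p_1},s_{q_1},\dots,s_{p_i},s_{q_i}\rangle$, so that $A_k=A$. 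By van der Lek's theorem that a standard parabolic subgroup of an Artin group is the Artin group on the corresponding sub-diagram, each $A_{e_j}$ is isomorphic to the dihedral Artin group $A(I_2(m_j))$. The exponent-sum homomorphism $A\to\mathbb Z$ sending every generator to $1$ shows each $s_i$ has infinite order, so the vertex shared between $e_{i+1}$ and $e_1\cup\cdots\cup e_i$ generates an infinite cyclic group lying in $A_i\cap A_{e_{i+1}}$.

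Granting that every dihedral Artin group is $1$-ended (the base case, discussed below), I would then induct on $i$ to prove each $A_i$ is $1$-ended. The base $A_1=A_{e_1}$ is dihedral, hence $1$-ended. For the inductive step, $A_{i+1}=\langle A_i\cup A_{e_{i+1}}\rangle$, both $A_i$ and $A_{e_{i+1}}$ are finitely generated and $1$-ended, and their intersection contains the infinite cyclic group noted above; hence Theorem \ref{combE} gives that $A_{i+1}$ is $1$- or $2$-ended. It cannot be $2$-ended, because it contains the infinite, $1$-ended subgroup $A_i$, whereas every subgroup of a $2$-ended group is finite or $2$-ended; and it is infinite, so it is not $0$-ended. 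Therefore $A_{i+1}$ is $1$-ended, completing the induction and giving that $A=A_k$ is $1$-ended.

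The crux is the base case: a dihedral Artin group $A(I_2(m))$, $m\ge 2$, is $1$-ended. Here I would invoke the classical fact (Brieskorn--Saito, Deligne) that $A(I_2(m))$ has an infinite cyclic centre $Z$ (generated by $\Delta=(s_p,s_q)_m$ when $m$ is even and by $\Delta^2$ when $m$ is odd), together with the fact that $Z$ has infinite index: for $m=2$ the group is $\mathbb Z^2$ and one may instead take $Z=\langle s_p\rangle$, while for $m\ge 3$ the central quotient is an infinite triangle-type group (equivalently $A(I_2(m))$ contains a copy of $\mathbb Z^2$ and so is not virtually cyclic). Since $Z$ is then a non-locally-finite normal subgroup that is itself finitely generated and of infinite index, Theorem \ref{cohen} applies with $N=Z$ and yields that $A(I_2(m))$ is $1$-ended.

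I expect the main obstacle to be exactly this base case --- pinning down that the centre of the dihedral Artin group is infinite cyclic of infinite index --- since the combination step and the exclusion of the $2$-ended case are formal once Theorems \ref{combE}, \ref{cohen}, and the structure of parabolic subgroups are in hand. A secondary point to verify carefully is the appeal to van der Lek's parabolic-subgroup theorem, which is what guarantees that the two-generator subgroups $A_{e_j}$ really are dihedral Artin groups (hence $1$-ended) rather than some uncontrolled quotient; without it one cannot conclude that each $A_{e_j}$ has only one or two ends, and the induction would not get started.
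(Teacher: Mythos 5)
Your proof is correct, and its overall skeleton---inducting over the edges of the connected diagram and gluing the dihedral edge subgroups together with Theorem \ref{combE}---is the same as the paper's. Where you genuinely diverge is the base case. The paper handles the dihedral subgroup $A_{ij}=\langle s_i,s_j\rangle$ by one-relator theory: since the relator is not a proper power, $A_{ij}$ is torsion free, so if it had more than one end Stallings' Theorem \ref{Stall} would force it to split as a nontrivial free product, hence (being $2$-generated, by Grushko) to be free of rank $2$, which it is not. You instead invoke Brieskorn--Saito/Deligne to get an infinite cyclic centre $Z$ of infinite index and feed $N=A=Z$ into Theorem \ref{cohen}. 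Both routes are valid; the paper's stays within tools already established in the text (Theorem \ref{Stall} plus classical one-relator facts), while yours trades that for the structure theory of dihedral Artin groups---which, to be fair, can be made elementary via the explicit presentations $\langle u,v\mid [v,u^{m/2}]=1\rangle$ for $m$ even and $\langle x,y\mid x^2=y^m\rangle$ for $m$ odd. Two further points in your favor: you make explicit the appeal to van der Lek's parabolic subgroup theorem, which the paper uses silently when it asserts that $\langle s_i,s_j\rangle$ has the dihedral presentation; and you spell out the parts of the induction the paper compresses into ``now inductively apply Theorem \ref{combE}''---the spanning-tree ordering of the edges, the exponent-sum argument guaranteeing the intersections contain an infinite cyclic group, and the exclusion of the $2$-ended alternative in the conclusion of Theorem \ref{combE} via the fact that subgroups of $2$-ended groups are finite or $2$-ended.
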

\begin{proof}
Let $\Lambda$ be the Artin diagram with vertex set $S=\{s_1,\ldots, s_n\}$. If there is an edge between distinct element $s_i$ and $s_j$, the subgroup $A_{ij}=\langle s_i,s_j\rangle$ has presentation $\langle s_i,s_j\ |\  (s_i,s_j)_{m_{ij}}=(s_j,s_i)_{m_{ij}}\rangle$. The group $A_{ij}$ is a 1-relator group and the relation is not a proper power, so $A_{ij}$ is torsion free. If $A_{ij}$ had more than 1-end, Theorem \ref{Stall} would imply $A_{ij}$ splits as a non-trivial free product. Since it is a 2-generator group, it would be free on two generators (which it is not). Instead, $A_{ij}$ has 1-end. Now inductively apply Theorem \ref{combE}
\end{proof}

\begin{definition}\label{Coxeter}  Let $\Lambda$ be a  graph with vertex set $S$.  If there is an edge between distinct elements $s_i$ and $s_j$  of $S$ it is labeled by an integer $m_{ij}\geq 2$. The {\it Coxeter group} $W$ with {\it presentation diagram} $\Lambda$ has presentation: 
$$\langle S \ | \ s^2=1 \hbox{ for all } s\in S, (s_is_j)^{m_{ij}}\rangle$$
The pair $(W,S)$ is called a {\it Coxeter system}. 
\end{definition}

The finite Coxeter groups are well understood and so the following results explain how to determine the number of ends of a finitely generated Coxeter group from a simple examination of a presentation graph. 

\begin{theorem} [Corollary 16, \cite{MT09}] \label{CoxE}   Suppose $W$ is  a finitely generated Coxeter group with presentation diagram $\Lambda$. Then $W$ has more than one end if and only if $\Lambda$ contains a complete separating subgraph, the vertices of which generate a finite subgroup of $W$.
\end{theorem}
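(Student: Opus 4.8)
The plan is to prove the two directions separately, with the forward (``if'') direction being a direct application of the visual splitting of Coxeter groups together with Stallings' Theorem~\ref{Stall}, and the reverse (``only if'') direction requiring the promotion of an abstract splitting to a visual one. At the outset I would record the elementary observation that, in the convention of Definition~\ref{Coxeter}, a missing edge means $m_{ij}=\infty$ and hence $\langle s_i,s_j\rangle$ is infinite dihedral; consequently any \emph{finite} special subgroup $W_T$ automatically has complete diagram, so the hypotheses ``complete'' and ``generates a finite subgroup'' are compatible, and a visual edge group will always be complete.

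For the ``if'' direction, suppose $T\subseteq S$ is the vertex set of a complete separating subgraph with $W_T=\langle T\rangle$ finite. Since $T$ separates $\Lambda$, I would write $S\setminus T=U\sqcup V$ with $U,V$ nonempty and no edges of $\Lambda$ joining $U$ to $V$, and set $S_1=T\cup U$, $S_2=T\cup V$. Because the only Coxeter relation that could link a letter of $U$ to a letter of $V$ comes from an edge (a finite $m_{ij}$), and there are none, the standard visual decomposition of Coxeter groups gives $W=W_{S_1}\ast_{W_T}W_{S_2}$. Here $W_T$ is finite, and $W_{S_1},W_{S_2}\supsetneq W_T$ since each contains a generator outside $T$, which cannot lie in $W_T$ (as $W_T\cap S=T$). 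Thus the amalgam is nontrivial with finite edge group, and Theorem~\ref{Stall} yields that $W$ has more than one end.

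For the ``only if'' direction, suppose $W$ has more than one end. By Theorem~\ref{Stall}, $W$ splits nontrivially as $A\ast_C B$ or $A\ast_C$ with $C$ finite; let $\mathcal T$ be the Bass--Serre tree of a minimal such splitting, on which $W$ acts with finite edge stabilizers and no global fixed point. The key structural input is the rigidity of finite subgroups on trees: every finite subgroup of $W$ fixes a vertex of $\mathcal T$, and every finite subgroup of a Coxeter group is conjugate into a finite special subgroup. In particular each generator $s$, and each spherical pair $\langle s,t\rangle$, fixes a vertex. The heart of the argument is to convert this abstract action into a \emph{visual} splitting $W=W_{S_1}\ast_{W_{S_0}}W_{S_2}$ (or visual HNN) with $S_0$ spherical, using the Mihalik--Tschantz visual-decomposition machinery underlying Corollary~16 of \cite{MT09}: one analyzes the minimal $W$-tree, locates an edge whose stabilizer is a finite special subgroup, and sorts the generators according to the side of that edge on which their fixed subtrees lie. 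Once such a visual splitting is obtained, $W_{S_0}$ is finite (so $S_0$ is complete), and the defining property of a visual splitting---no edges between $S_1\setminus S_0$ and $S_2\setminus S_0$---shows that $S_0$ separates $\Lambda$, producing the required complete separating subgraph.

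The main obstacle is precisely this visual-promotion step: Stallings' theorem delivers only an abstract splitting over \emph{some} finite subgroup, and one must replace it by a splitting whose edge group is a finite \emph{special} subgroup $W_{S_0}$ and whose factors are special subgroups. The fixed-point property of finite subgroups on trees, together with their conjugacy into finite special subgroups, is what makes this possible; but carrying it out in general---tracking the generators' fixed subtrees and showing they assemble into a genuine separating subset $S_0$ of the diagram---is the substantive content, and it is exactly here that I would invoke the visual decomposition theorem of \cite{MT09} rather than reprove it from scratch.
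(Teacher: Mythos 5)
Your proposal is correct and takes essentially the same route as the paper, which offers no independent argument but simply cites Corollary 16 of \cite{MT09}: your forward direction (the visual amalgam $W_{T\cup U}\ast_{W_T}W_{T\cup V}$ over the finite special subgroup plus Theorem \ref{Stall}) and your reduction of the converse to the Mihalik--Tschantz visual decomposition theorem are precisely how the cited source derives that corollary. No gaps worth flagging.
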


\begin{theorem} [Corollary 17, \cite{MT09}] \label{Cox2E}  A finitely generated Coxeter group with system $(W,S)$ and corresponding diagram $\Lambda$ is 2-ended if and only if $\Lambda$ contains a separating subdiagram $\Lambda_0$ whose vertices generate a finite group, and $\Lambda-\Lambda_0$ consists of two vertices each of which is connected to each vertex of $\Lambda_0$ by edges labeled 2 (but not connected to each other). Equivalently, $W$ is isomorphic to $\langle x,y\rangle\times \langle H\rangle$ where $\{x,y\}\cup H=S$, $x$ and $y$ are unrelated and $\langle H\rangle$ is finite. 
\end{theorem}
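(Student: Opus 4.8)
The plan is to prove that the two displayed characterizations are equivalent to one another and to being $2$-ended, treating the elementary ``equivalent'' reformulation together with the two implications. First I would dispose of the direction that produces $2$ ends. Suppose $S=\{x,y\}\cup H$ with $x,y$ unrelated, $\langle H\rangle$ finite, and $x,y$ joined to every vertex of $H$ by edges labelled $2$. Reading off the presentation of Definition~\ref{Coxeter}, the only relations between $\{x,y\}$ and $H$ are the commuting relations coming from the label-$2$ edges, while $x,y$ satisfy only $x^2=y^2=1$. Hence $W\cong\langle x,y\rangle\times\langle H\rangle$ with $\langle x,y\rangle\cong\mathbb Z_2\ast\mathbb Z_2$ and $\langle H\rangle$ finite. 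Then $\langle H\rangle$ is a finite normal subgroup with quotient $\mathbb Z_2\ast\mathbb Z_2$, so criterion (3) of the structure theorem for $2$-ended groups shows $W$ is $2$-ended. The same bookkeeping shows the diagrammatic description and the direct-product description coincide, once one observes that the separating data is exactly the statement that $S=\{x,y\}\cup\Lambda_0$ with $x,y$ commuting with $\Lambda_0$ and unrelated to each other.

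For the converse, suppose $W$ is $2$-ended. Being $2$-ended it has more than one end, so Theorem~\ref{CoxE} supplies a complete separating subgraph $\Lambda_0$ whose vertices generate a finite group. Writing $\Lambda=\Lambda_1\cup\Lambda_2$ with $\Lambda_1\cap\Lambda_2=\Lambda_0$ and no edges between $\Lambda_1\setminus\Lambda_0$ and $\Lambda_2\setminus\Lambda_0$, the standard visual splitting of a Coxeter group over a special subgroup gives $W=\langle\Lambda_1\rangle\ast_{\langle\Lambda_0\rangle}\langle\Lambda_2\rangle$, an amalgam over the finite group $\langle\Lambda_0\rangle$. Since $W$ is $2$-ended, the amalgam clause of Stallings' Theorem~\ref{Stall} forces $\langle\Lambda_0\rangle$ to have index $2$ in each of $\langle\Lambda_1\rangle$ and $\langle\Lambda_2\rangle$.

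The heart of the argument is then a parabolic index computation: for a special subgroup $\langle T\rangle$ and $v\in S\setminus T$, the index $[\langle T\cup\{v\}\rangle:\langle T\rangle]$ equals $2$ precisely when $v$ commutes with every element of $T$ (in which case $\langle T,v\rangle=\langle T\rangle\times\langle v\rangle$); if some $t\in T$ has $m_{tv}\geq 3$, then the reduced words $1,v,vt$ already represent three distinct cosets, forcing index $\geq 3$. Combining this with the fact that a proper special subgroup $\langle T\cup\{v\}\rangle$ meets $S$ in exactly $T\cup\{v\}$, two or more vertices on one side of $\Lambda_0$ would give index $\geq 4$. Hence each side contributes exactly one extra vertex, say $x$ on the $\Lambda_1$ side and $y$ on the $\Lambda_2$ side, and each of $x,y$ commutes with all of $\Lambda_0$. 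As $\Lambda_0$ separates $x$ from $y$ there is no edge between them, so $\langle x,y\rangle\cong\mathbb Z_2\ast\mathbb Z_2$ and $W=\langle x,y\rangle\times\langle\Lambda_0\rangle$, which is exactly the asserted form with $H=\Lambda_0$.

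The main obstacle I expect is the index computation together with its Coxeter-combinatorial inputs — the special-subgroup intersection property $\langle T\rangle\cap S=T$ and the reduced-word (deletion condition) bookkeeping needed to certify that $1,v,vt$ lie in distinct cosets. Everything else is formal: the reduction to an amalgam over a finite group via Theorem~\ref{CoxE}, the application of Stallings' $2$-ended criterion, and the recognition of the resulting group as $\mathbb Z_2\ast\mathbb Z_2$ times a finite group. As a sanity check one should also note that a Coxeter group admits no surjection onto $\mathbb Z$ (every generator is an involution), so its only available $2$-ended structure is finite-by-$(\mathbb Z_2\ast\mathbb Z_2)$, consistent with landing in the amalgam rather than the HNN case of Theorem~\ref{Stall}.
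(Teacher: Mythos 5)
Your proposal is correct, but note that the paper contains no proof of this statement at all: both Theorem \ref{CoxE} and Theorem \ref{Cox2E} are simply quoted from \cite{MT09}, where they fall out of the general visual-decomposition machinery (every splitting of a Coxeter group over a finite subgroup is realized, up to conjugacy, by a separating subdiagram). Your argument instead takes the quoted Theorem \ref{CoxE} as a black box and derives the $2$-ended statement from it directly, which is a legitimately different and more elementary route: Theorem \ref{CoxE} plus the visual amalgam $W=\langle\Lambda_1\rangle\ast_{\langle\Lambda_0\rangle}\langle\Lambda_2\rangle$ puts you in the amalgam case of Theorem \ref{Stall}, the index-$2$ conclusion there is then converted into diagram data by your parabolic index computation, and the converse direction via criterion (3) of the $2$-ended structure theorem is exactly right. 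The Coxeter-theoretic inputs you flag are the correct ones and are standard: $\langle T\rangle\cap S=T$, the well-definedness of the support of an element (so a reduced word containing $v$ cannot lie in $\langle T\rangle$), and the presentation-level fact that a separating subdiagram gives a visual amalgam. One small point of bookkeeping: in the index computation you must work with cosets of the form $\langle T\rangle g$ (right cosets), since the left cosets $v\langle T\rangle$ and $vt\langle T\rangle$ coincide trivially; with right cosets, $\langle T\rangle v=\langle T\rangle vt$ would force $vtv\in\langle T\rangle$, which fails for $m_{tv}\geq 3$ (including $m_{tv}=\infty$) because $vtv$ is then a reduced word with support $\{v,t\}\not\subseteq T$. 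With that reading, every step checks out, including the multiplicativity argument showing two extra vertices on one side would force index at least $4$, and the observation that completeness of $\Lambda_0$ is automatic from finiteness of $\langle\Lambda_0\rangle$.
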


For ``Dunwoody" decompositions of Coxeter groups see \S 5 of \cite{MT09}.
\subsubsection{Groups with No Free Subgroups of Rank 2}\label{noF2}
\begin{theorem} \label{free} 
Every finitely generated infinite ended group contains a free group on 2-generators. So every solvable group is either finite, 1-ended or 2-ended. Similarly for finitely generated amenable groups. Finitely generated torsion groups are finite or 1-ended.
\end{theorem}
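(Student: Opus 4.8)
The plan is to isolate the single substantive assertion — that a finitely generated group $G$ with infinitely many ends contains a free subgroup of rank $2$ — and then read off the three consequences formally. Since a finitely generated group has $0$, $1$, $2$, or infinitely many ends, the hypothesis places $G$ in the last case. By Stallings' Theorem~\ref{Stall}, $G$ splits as a nontrivial amalgam $A\ast_C B$ or an HNN-extension $A\ast_C$ with $C$ finite; moreover, because $G$ is \emph{not} $2$-ended, the ``$2$-ended'' clause of the same theorem rules out the degenerate possibilities. Thus in the amalgam case at least one index, say $[A:C]$, is $\geq 3$ (both are $\geq 2$ since $A\ne C\ne B$), and in the HNN case the base group properly contains the edge subgroup, so there is $a\in A\setminus C$ (after possibly replacing the stable letter $t$ by $t^{-1}$). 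I would phrase everything through the action of $G$ on the Bass--Serre tree $T$, whose edge stabilizers are the \emph{finite} conjugates of $C$.

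The core step is to exhibit two hyperbolic isometries of $T$ with disjoint fixed-point pairs in the end space $\mathcal E(T)$, and then to invoke the Klein ping-pong lemma: sufficiently high powers of two such isometries freely generate a rank-$2$ free group. In the amalgam case I would take a hyperbolic element $g=ab$ with $a\in A\setminus C$, $b\in B\setminus C$, and set $h=a'ga'^{-1}$, where $a'\in A$ is an elliptic element that rotates the axis of $g$ off itself; since $a'$ fixes the vertex $v_A$ but fixes the relevant edge only when $a'\in C$, the condition $[A:C]\geq 3$ is exactly what supplies an $a'$ making the two axes diverge on \emph{both} sides of $v_A$ (when $[A:C]=[B:C]=2$ this collapses to the infinite-dihedral, $2$-ended picture, and no $F_2$ exists — which is why the largeness is genuinely needed). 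In the HNN case the natural pair is $t$ and its conjugate $ata^{-1}$: both are hyperbolic, and $a\in A\setminus C$ rotates the axis of $t$ around $v_A$ to a transverse line. The finiteness of $C$ is what makes these elements honestly hyperbolic with controlled axes.

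The main obstacle is precisely the transversality bookkeeping: one must confirm that the attracting and repelling ends of $g$ and of $h$ (respectively of $t$ and $ata^{-1}$) are four \emph{distinct} points of $\mathcal E(T)$, i.e.\ that the two axes share no common end. This reduces to checking that the initial syllables of the relevant geodesic rays lie in distinct cosets of $C$, and it is here that the extracted largeness ($[A:C]\geq 3$, or the proper inclusion in the HNN case) is consumed. This verification is routine but is the real content; once the four ends are separated, ping-pong applied to small clopen neighborhoods of them in the totally disconnected space $\mathcal E(T)$ finishes the construction. Equivalently, one may package the whole step as the tree form of the Tits alternative for an action with finite edge stabilizers and no invariant end or line.

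Finally, the three consequences follow with no further geometry. A nonabelian free group is not solvable (its derived series never terminates), is not amenable (it admits a paradoxical decomposition), and is torsion-free; and solvability and amenability pass to subgroups. Hence a finitely generated solvable or amenable group contains no rank-$2$ free subgroup, so by the first assertion it cannot be infinite-ended, leaving only the finite, $1$-ended, and $2$-ended cases. A finitely generated torsion group likewise contains no $F_2$, so it is not infinite-ended; and it cannot be $2$-ended either, since a $2$-ended group is virtually infinite cyclic and therefore contains an element of infinite order. Thus a finitely generated torsion group is finite or $1$-ended, completing Theorem~\ref{free}.
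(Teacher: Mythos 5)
Your strategy---Stallings' splitting followed by ping-pong with hyperbolic isometries of the Bass--Serre tree---is a genuinely different route from the paper's, which instead writes down the explicit words $u=a_2ba_1^{-1}$, $v=ba_1ba_2^{-1}b^{-1}$ and proves freeness by induction on syllable length in the amalgam normal form. But your amalgam case has a genuine gap at precisely the step you call ``the real content.'' You claim that distinctness of the four ends ``reduces to checking that the initial syllables of the relevant geodesic rays lie in distinct cosets of $C$,'' and that $[A:C]\geq 3$ supplies an elliptic $a'$ passing this test. It does not. Take $G=\mathbb Z_3\ast\mathbb Z_2$, so $A=\langle a\rangle\cong\mathbb Z_3$, $C=\{1\}$, $[A:C]=3$, and $g=ab$: the axis of $g$ leaves the vertex $A$ through the edges $C$ and $aC$, the axis of $a'ga'^{-1}$ leaves through $a'C$ and $a'aC$, so your first-edge test requires $a'\notin C\cup aC\cup Ca^{-1}\cup aCa^{-1}=\{1,a,a^2\}=A$, and no admissible $a'$ exists. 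Of course the theorem holds for $PSL(2,\mathbb Z)$, and in fact for $a'=a$ the two axes share only the length-two segment from $A$ to $abA$ and then diverge, so their ends \emph{are} distinct---but this is invisible to first syllables. Your test is sufficient for end-distinctness yet not necessary, and it is not achievable in general when $[A:C]=3$, so the verification you planned would simply fail. A correct repair must control end stabilizers: the bad set of conjugators is a union of at most four cosets of the finite groups $\mathrm{Stab}_A(\xi_+)\subseteq aCa^{-1}$ and $\mathrm{Stab}_A(\xi_-)\subseteq C$, and one must show this union is proper in $A$ (B.~H.~Neumann's coset-covering lemma does this when $A$ is infinite; when $A$ and $B$ are both finite, $G$ is virtually free and needs a separate, easy argument). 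This bookkeeping is exactly what the paper's longer words are engineered to bypass.

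Two smaller points. Your fallback---the Tits alternative for tree actions---is legitimate, but its hypothesis ``no invariant end or line'' is part of the content and you never verify it; it does follow from $G$ having infinitely many ends (a finitely generated group fixing an end of a cocompact tree with finite edge stabilizers is finite-by-cyclic, hence has at most two ends, and similarly for an invariant line), and that argument should be supplied. Your HNN pair $t$, $ata^{-1}$ with $a\in A\setminus C$ is essentially fine for the high-powers version (words of the form $t^{\pm n}at^{\pm n}$ are Britton-reduced, and a short periodicity argument rules out $a$ fixing the repelling end when $a$ lies in the other associated subgroup), and it matches the paper's choice $t$, $btb^{-1}$. The closing deductions for solvable, amenable, and torsion groups are correct and agree in substance with the paper.
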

\begin{proof}
Suppose $G$ is infinite ended and $G=A\ast_C B$, where $C$ is finite and of index at least 2 in $B$ and index at least 3 in $A$. (Recall that if $C$ has index 2 in both $A$ and $B$, then $G$ is 2-ended.) Let $a_1C\ne C$ and $a_2C\ne C$ be distinct cosets in $A$. Let $bC\ne C$ for some $b\in B$. An elementary argument shows $u=a_2ba_1^{-1}$ and $v=ba_1ba_2^{-1}b^{-1}$ generate a free group of rank 2 in $G$. 

Suppose $w$ is a reduced word of length $n$ in $u^{\pm 1}$, $v^{\pm 1}$. Let $|w|$ be the length of the corresponding element of $G$ in the $A\ast_CB$ word length structure. We show by induction that $|w|\geq 3n$. Furthermore, if the last letter of $w$ is $u$, $v$, $u^{-1}$ or $v^{-1}$ then the last syllable of $w$ in any reduced $A\ast_CB$ representation is $a_1^{-1}$, $b^{-1}$, $a_2^{-1}$ or $b^{-1}$ respectively. Certainly if $n=1$ our inductive statement is true since $u^{\pm1}$ and $v^{\pm 1}$ are $A\ast_C B$ reduced words of length 3 and 5 respectively. Assume our inductive hypothesis is true for $n-1$ ($n>1$) and let $w=w_1\cdots w_n$ be a reduced word of length $n$ in $u^{\pm 1}$, $v^{\pm 1}$. If $w_{n-1}$ is $u$ then $|w_1\cdots w_{n-1}|\geq 3(n-1)$. If $w_{n-1}=u$,  then $w$ has a reduced $A\ast_CB$ form with last syllable $a_1^{-1}$. If $w_n=u,v$ or $v^{-1}$ then $|w|\geq 3n$ and $w$ has reduced $A\ast_CB$ form with last syllable $a_1^{-1}$, $b^{-1}$ or $b^{-1}$ respectively. All other cases are completely analogous. 

For the HNN extension $B\ast_C$ with stable letter $t$, let $b\in B$ such that $bC\ne C$. The elements $btb^{-1}$ and $t$ generate a free group of rank 2. 
\end{proof}

\subsubsection{Ascending HNN-Extensions} \label{AHNN}

Suppose $B$ is a group with presentation $\langle S  \ |\ R\rangle$ and $\phi:B\to B$ is a monomorphism. Let $p:F(S)\to B$ be the map that takes $s\in S\subset F(S)$ to $s\in S\subset B$ and $\phi' :F(S)\to F(S)$ be the homomorphism such that $P \phi'=\phi P$. The ascending HNN-extension $H=B\ast_B$ (also written $B\ast_\phi$) with stable letter $t$ has presentation $\langle t, S\ |\  R, t^{-1}st=\phi'(s) \hbox{ for } s\in S\rangle$. The relations $t^{-1}st=\phi'(s) \hbox{ for } s\in S$ are called conjugation relations. 

First a few elementary observations. Using conjugation relations, it is straightforward to show elements of $H$ have a normal form:

\noindent (1) Each element of $H$ can be written as $t^abt^{-c}$ for some $a,c\geq 0$ and some $b\in B$.

\noindent (2) If $h=t^abt^{-c}$, then for $N\geq 0$, $t^{-(N+a)}(t^abt^{-c}) t^{N+c}=t^{-N}bt^N=\phi^N(b)\in B$.

Next, we consider a generalization of ascending HNN-extensions. 
\begin{theorem} [(Theorem 3.1, \cite{Mih22}] \label{MMFIE}  
Suppose $H_0$ is an infinite finitely generated group, $H_1$ is a subgroup of finite index in $H_0$, $\phi:H_1\to H_0$ is a monomorphism and $G=H_0\ast_\phi$ is the resulting HNN extension. Then $G$ is 1-ended.
\end{theorem}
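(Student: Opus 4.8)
The plan is to recast one-endedness homologically and then exploit the HNN structure. Since $G$ is finitely generated (by $t$ together with a finite generating set of $H_0$) and infinite (it contains $H_0$), Theorem \ref{HEnds} with $R=\mathbb Z$ tells us that $G$ is one-ended exactly when $H^1(G,\mathbb ZG)=0$; this is what I would aim to prove. A first observation I would record is that $H_1$ is infinite, being a finite-index subgroup of the infinite group $H_0$. Consequently $H^0(H_0,\mathbb ZG)=H^0(H_1,\mathbb ZG)=0$: as a module over $\mathbb ZH_0$ (resp.\ $\mathbb ZH_1$) the group ring $\mathbb ZG$ is free on the coset set $H_0\backslash G$ (resp.\ $H_1\backslash G$), and an infinite group acting by translations on a free module has no nonzero finitely supported invariants.

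Next I would feed the HNN splitting $G=H_0\ast_\phi$, with vertex group $H_0$ and edge group $H_1$, into the Mayer--Vietoris (Wang) sequence with $\mathbb ZG$-coefficients:
$$\cdots\to H^n(G,\mathbb ZG)\to H^n(H_0,\mathbb ZG)\xrightarrow{\ \theta\ }H^n(H_1,\mathbb ZG)\to H^{n+1}(G,\mathbb ZG)\to\cdots,$$
where $\theta=\alpha^\ast-\beta^\ast$ is the difference of the two edge maps: $\alpha^\ast$ is ordinary restriction along $H_1\hookrightarrow H_0$, while $\beta^\ast$ is restriction along the monomorphism $\phi$, twisted by the action of $t$ on $\mathbb ZG$. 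Because $H^0(H_1,\mathbb ZG)=0$, exactness in low degrees yields an isomorphism $H^1(G,\mathbb ZG)\cong\ker\bigl(\theta\colon H^1(H_0,\mathbb ZG)\to H^1(H_1,\mathbb ZG)\bigr)$, so everything reduces to showing that $\theta$ is injective.

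To do this I would grade $\mathbb ZG$ by the homomorphism $\tau\colon G\to\mathbb Z$ with $\tau(t)=1$ and $\tau(H_0)=0$: writing $(\mathbb ZG)_k$ for the span of the group elements of $\tau$-value $k$, each $(\mathbb ZG)_k$ is an $H_0$-submodule and $\mathbb ZG=\bigoplus_k(\mathbb ZG)_k$ as $H_0$-modules. Since $H_0$ is finitely generated (hence of type $FP_1$), $H^1(H_0,-)$ commutes with this direct sum, so $H^1(H_0,\mathbb ZG)$ inherits a $\mathbb Z$-grading, and likewise for $H^1(H_1,\mathbb ZG)$. With respect to these gradings $\alpha^\ast$ preserves degree, whereas the $t$-twist makes $\beta^\ast$ raise degree by one. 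The finite-index hypothesis enters decisively through $\alpha^\ast$: by the transfer, $\mathrm{cor}^{H_0}_{H_1}\circ\alpha^\ast=[H_0:H_1]\cdot\mathrm{id}$ on the (torsion-free) groups $H^1(H_0,(\mathbb ZG)_k)$, so $\alpha^\ast$ is injective.

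The hard part, and the step I expect to be the main obstacle, is assembling these facts into injectivity of $\theta$. I would argue by looking at the extreme graded piece: if $\theta(\xi)=0$ for a nonzero, finitely supported $\xi$, let $k$ be the smallest degree present in $\xi$. In the degree-$k$ component of the equation $\alpha^\ast\xi=\beta^\ast\xi$ the right-hand term, which receives its degree-$k$ contribution only from $\xi$ in degree $k-1$, vanishes; hence $\alpha^\ast(\xi_k)=0$, and injectivity of $\alpha^\ast$ forces $\xi_k=0$, contradicting the choice of $k$. Thus $\theta$ is injective, $H^1(G,\mathbb ZG)=0$, and $G$ is one-ended. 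I would stress that finite index is essential and is exactly what powers the last step: for $H_0=F(x,y)$, $H_1=\langle x\rangle$ and $\phi(x)=y$ the extension degenerates to the free group $\langle x,t\rangle$, which has infinitely many ends, and indeed there $\alpha^\ast$ (restriction to an infinite-index subgroup) is no longer injective. As a purely geometric alternative one could rule out any splitting of $G$ over a finite subgroup via Stallings' Theorem \ref{Stall}; here the finite-index condition again forces the relevant infinite subgroups (such as $H_1\subseteq H_0\cap tH_0t^{-1}$) to be elliptic in any such action, which is incompatible with finite edge stabilizers.
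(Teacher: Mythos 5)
Your main homological argument is correct, but it takes a genuinely different route from the paper. The paper argues geometrically: it assumes $G$ has more than one end, invokes Stallings' Theorem \ref{Stall} to produce a $G$-tree $T_{\mathcal F}$ with finite edge stabilizers, takes $T_\infty$ to be the Bass--Serre tree of $H_0\ast_\phi$ (whose edge groups have finite index in the incident vertex groups), and then applies Touikan's Theorem \ref{Toui} to find a vertex group $G_v$ (a conjugate of $H_0$) with a nontrivial splitting in which an incident edge group $G_f$ (a conjugate of $H_1$, of finite index in $G_v$) acts elliptically; this contradicts Lemma \ref{noFI}. You instead convert one-endedness into $H^1(G,\mathbb ZG)=0$ via Theorem \ref{HEnds}, feed the HNN splitting into Bieri's Mayer--Vietoris sequence (Theorem \ref{HNNH2}), and kill $\ker(\alpha^\ast-\beta^\ast)$ by combining the transfer for the finite-index inclusion $H_1\le H_0$ with the $\mathbb Z$-grading of $\mathbb ZG$ induced by $\tau(t)=1$, $\tau(H_0)=0$; the observation that $\alpha^\ast$ preserves the grading while $\beta^\ast$ shifts it by one, so that the extremal graded piece of any kernel element must die under $\alpha^\ast$ alone, is a clean and valid finish. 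The facts you assert without proof are all true and standard: each $(\mathbb ZG)_k$ is a free $\mathbb ZH_0$-module (translation by $H_0$ is free and preserves $\tau$-levels), finite generation of $H_0$ and $H_1$ makes $H^1$ commute with direct sums, and $H^1(H,\mathbb ZH)$ is torsion-free for finitely generated $H$, which is exactly what upgrades the transfer identity to injectivity of $\alpha^\ast$. As for what each approach buys: yours is self-contained modulo classical homological algebra and yields the slightly stronger cohomological conclusion $H^1(G,\mathbb ZG)=0$ directly, whereas the paper's proof is designed as an illustration of Touikan's machinery, and that same machinery immediately handles the amalgam and HNN situations of Theorem \ref{SS3}, where your Mayer--Vietoris bookkeeping would have to be redone.

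One caution: your closing ``purely geometric alternative'' is not a proof as stated. Ellipticity of $H_0$ or $H_1$ in an arbitrary $G$-action on a tree with finite edge stabilizers is not automatic---$H_0$ is just an infinite finitely generated group, may itself have infinitely many ends, and may act without a global fixed point---and supplying precisely this ellipticity (in a suitably blown-up tree) is what Touikan's Theorem \ref{Toui} accomplishes in the paper's proof. So that aside should be dropped or treated as a pointer to the paper's argument, not as an independent second proof; your homological argument, however, stands on its own.
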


\subsubsection{Touikan's Theorem on Ends of Graphs of Groups} \label{Touik} 

The main theorem of \cite{Tou15} can be used to prove many of the 1-ended splitting results of this chapter.  The group labeled $G$ in this section will always be  finitely generated. First some definitions.

A group $G$ is {\it 1-ended relative to a collection $\mathcal H$ of subgroups} if for any minimal nontrivial $G$-tree with finite edge stabilizers, there exists a subgroup $H\in \mathcal H$ that acts without a global fixed point. Otherwise $G$ is {\it many-ended relative to $\mathcal H$}. 

A $G$-equivariant map $S\to T$ of simplicial $G$-trees is called a {\it collapse} if $T$ is obtained by identifying some edge orbits of $S$ to points. In this case we also say that $S$ is obtained from $T$ by a {\it blow up}. We call the pre-image $\hat T_v\subset S$ of a vertex $v\in T$ its {\it blowup}.

Say  $H\preceq G$ to signify that $G$ splits essentially as a graph of groups with finite edge groups and $H$ is a vertex group. If a group $G$ acts on a set $A$ and $S\subset A$ then $G_S=\{g\in G : gS=S\}$.

A $G$-tree $T$ is {\it essential} if every edge of $T$ divides it into two infinite components. The tree $T$ is {\it minimal} if there are no proper subtrees $S\subset T$ with $G_S = G$. The tree $T$ is cocompact if $G/T$ is compact. An element $g$ or a subgroup $H$ of $G$ are said to {\it act  elliptically} on $T$ if the groups $\langle g\rangle$  or $H$ fix some $v\in  Vertices(T )$.

Fix a collection $\mathcal H$ of subgroups of the finitely generated group $G$. We let $T_\infty$  and $T_{ \mathcal F}$ be cocompact, minimal $G$-trees in which the subgroups in $\mathcal H$ act elliptically. The edge groups of $T_\infty$  are infinite and finitely generated and the edge groups of $T_{\mathcal F}$  are finite. Note that any nontrivial tree obtained by a collapse of $T_\infty$ has infinite edge groups whereas any collapse of $T_{\mathcal F}$ has finite edge groups. It follows that $\mathcal T_\infty$  and $T_{\mathcal F}$ have no nontrivial common collapses.%, satisfy the hypotheses of Theorem 2.3 CITE.

\begin{theorem} [Theorem 3.1, \cite{Tou15}] \label{Toui} Let $\mathcal H$  be a collection of subgroups of $G$ and let $T_\infty$ and $T_{\mathcal F}$ be cocompact, minimal $G$-trees in which the subgroups in $\mathcal H$ act elliptically.
Suppose furthermore that the edge groups of $T_{\mathcal F}$ are finite and that the edge groups of $T_\infty$ are infinite. Then there exists a vertex $v\in Vertices(T_\infty)$ and a nontrivial, cocompact, minimal $G_v$-tree $\hat T_v$ such that

\item (i) for every $f\in Edges(T_\infty)$ incident to $v$ the subgroups $G_f\leq G_v$ act elliptically
on $\hat T_v$, and

\item (ii) for every $H\in \mathcal H$ and $g\in G$, the subgroup $H^g \cap G_v\leq G_v$ acts elliptically on $\hat T_v$.

Moreover, either 
\begin{enumerate}

\item  every edge of $\hat T_v$ is finite or

\item  there is some edge $e\in Edges(T_\infty)$, incident to $v$, that not only satisfies condition (i), but
also satisfies the following:

\subitem (a) $G_e$ splits essentially as an amalgamated free product or an HNN extension
with finite edge group.

\subitem (b) $G_e = G_{v_e}$ for some vertex $v_e \in Vertices (\hat T_v)$.

\subitem (c) The edge stabilizers of $\hat T_v$ are conjugate in $G_v$ to the vertex group(s) of the
splitting of $G_e$ found in (a); in particular, the edge groups of $\hat T_v$ are $\prec G_e$. 

\subitem (d) The vertex groups of $\hat T_v$  that are not conjugate in $G_v$ to $G_e$ are also vertex groups of a one-edge splitting of $G_v$ with a finite edge group; in particular these vertex groups of $\hat T_v$ are $\prec G_v$.
\end{enumerate}
\end{theorem}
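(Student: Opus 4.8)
The plan is to analyze the two given trees through their restrictions to the vertex stabilizers of $T_\infty$, and to let the finite-edge tree $T_{\mathcal F}$ dictate a splitting of a single well-chosen vertex group $G_v$. First I would fix a vertex $v$ of $T_\infty$ and restrict the action of $G$ on $T_{\mathcal F}$ to $G_v$; since $G$ is finitely generated and $T_\infty$ is cocompact with finitely generated edge groups, Bass--Serre theory forces each $G_v$ to be finitely generated, so its minimal invariant subtree in $T_{\mathcal F}$ is again cocompact and minimal. The candidate $\hat T_v$ is built from this restricted action, possibly after a collapse that makes the incident edge groups $G_f$ of $T_\infty$ elliptic (this is condition (i)). Conditions (i) and (ii) are the soft part: every $H\in\mathcal H$, and every conjugate $H^g$, is elliptic on $T_{\mathcal F}$ by hypothesis, so $H^g\cap G_v$ fixes a point of $T_{\mathcal F}$ and hence of any $G_v$-subtree or collapse thereof, giving (ii), while (i) is arranged by construction.

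The first real step is to produce a vertex $v$ at which $\hat T_v$ is \emph{nontrivial}. I would argue by contradiction: if the restricted action of every vertex group $G_v$ of $T_\infty$ were elliptic on $T_{\mathcal F}$, then every elliptic subgroup of $T_\infty$ would be elliptic on $T_{\mathcal F}$, yielding a $G$-equivariant domination map $T_\infty\to T_{\mathcal F}$. As $T_\infty$ has infinite edge stabilizers and $T_{\mathcal F}$ has finite ones, such a map must collapse every edge of $T_\infty$ and so be constant, contradicting minimality of $T_{\mathcal F}$ --- equivalently, it would exhibit a nontrivial common collapse of the two trees, which the transversality hypothesis forbids. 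Hence some $G_v$ acts nonelliptically, and its minimal subtree (suitably collapsed) is the desired nontrivial $\hat T_v$. To make \emph{domination} and \emph{common collapse} precise I would lean on Guirardel--Levitt deformation-space theory and the geometry of Guirardel's core of a pair of trees.

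The dichotomy in the \emph{Moreover} clause is governed entirely by whether the edge groups $G_e$ of $T_\infty$ incident to $v$ are elliptic on $T_{\mathcal F}$. If all of them are, then $\hat T_v$ may be taken to be the minimal $G_v$-subtree of $T_{\mathcal F}$ itself; its edges are edges of $T_{\mathcal F}$ and therefore carry finite stabilizers, which is conclusion (1). If instead some incident $G_e$ fails to be elliptic, then $G_e$ is an infinite, finitely generated group acting on $T_{\mathcal F}$ with finite edge stabilizers and no global fixed point, so Theorem \ref{Stall} shows $G_e$ splits essentially over a finite subgroup --- this is (a). I would then realize this splitting inside the blowup by introducing a vertex $v_e$ of $\hat T_v$ with $G_{v_e}=G_e$ (condition (b)), so that the edges of $\hat T_v$ abutting the $G_e$-part are carried by the vertex groups of the $G_e$-splitting; this forces the edge stabilizers of $\hat T_v$ to be commensurated-essential subgroups $\prec G_e$ (condition (c)) and the remaining vertex groups to be the vertex groups of a one-edge finite splitting of $G_v$, hence $\prec G_v$ (condition (d)).

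The main obstacle is exactly this second case: one must show that the finite splittings terminate and assemble into a single \emph{cocompact}, \emph{minimal} $G_v$-tree that is simultaneously compatible with the prescribed $T_\infty$-edge data of (i) and (a)--(c) and with the finite-edge data inherited from $T_{\mathcal F}$. Controlling cocompactness and the precise identification of the edge and vertex groups as the $\prec$-subgroups of (c) and (d) requires a relative accessibility argument (in the spirit of Theorems \ref{LinnAcc} and \ref{DunAcc}, but relative to $\mathcal H$ and to the infinite-edge structure), together with a careful folding analysis of the common refinement of the two transverse trees. This bookkeeping is the technical heart of Touikan's proof, and it is here that the fine structure theory of pairs of $G$-trees is indispensable.
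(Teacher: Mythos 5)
A preliminary but important point: the paper does not prove Theorem \ref{Toui} at all. It is quoted verbatim from Touikan \cite{Tou15} and used as a black box to derive Theorems \ref{MMFIE} and \ref{SS3}, so your proposal can only be judged on its own merits, not against an argument in the text. On those merits, your opening steps are sound: vertex groups of $T_\infty$ are finitely generated (finite quotient graph, finitely generated $G$ and edge groups); if every vertex stabilizer of $T_\infty$ were elliptic on $T_{\mathcal F}$, the resulting equivariant map $T_\infty\to T_{\mathcal F}$ would have to collapse every edge (an infinite $G_e$ cannot fix an edge of $T_{\mathcal F}$, whose edge stabilizers are finite), forcing a global fixed point and contradicting nontriviality and minimality of $T_{\mathcal F}$. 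So some $G_v$ acts non-elliptically, its minimal subtree in $T_{\mathcal F}$ is nontrivial and cocompact, condition (ii) passes to invariant subtrees and collapses, and your case (1) goes through.

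The genuine gap is everything in case (2), in two respects. First, your initial claim that condition (i) is ``arranged by construction, possibly after a collapse'' is not tenable: to make an infinite incident edge group $G_f$ elliptic in the minimal $G_v$-subtree of $T_{\mathcal F}$ you must equivariantly collapse the whole $G_v$-orbit of the minimal $G_f$-subtree, and since the tree has only finitely many edge orbits this will typically collapse it to a point; this failure is exactly why the dichotomy exists. Second, and decisively, in case (2) you never construct $\hat T_v$. You ``introduce a vertex $v_e$ with $G_{v_e}=G_e$'' and say the finite splitting of $G_e$ is ``realized inside the blowup,'' but a splitting of a subgroup $G_e\leq G_v$ does not in general extend to, or sit inside, a splitting of the ambient group $G_v$. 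Producing a nontrivial, cocompact, minimal $G_v$-tree in which (b) $G_e$ is a vertex stabilizer, (c) the edge stabilizers are conjugates of the vertex groups of the finite splitting of $G_e$, (d) the remaining vertex groups come from one-edge finite splittings of $G_v$, and in which every \emph{other} incident edge group $G_f$ is also elliptic --- a requirement of (i) that your sketch ignores when several incident edge groups act non-elliptically on $T_{\mathcal F}$ --- is precisely the content of Touikan's theorem. Writing ``this forces the edge stabilizers to be $\prec G_e$'' inverts the logic: nothing is forced until the tree whose properties are being described has been shown to exist. Your appeals to relative accessibility, folding, and the structure theory of pairs of trees name this difficulty without resolving it, as your own final sentence concedes. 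A minor further point: Theorem \ref{Stall} is not what gives (a); a finitely generated group acting without global fixed point on a tree with finite edge stabilizers splits over a finite subgroup directly by Bass--Serre theory applied to its minimal subtree.
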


As an application of this theorem we prove Theorem \ref{MMFIE}.
First we need an elementary fact for group splittings.
\begin{lemma} \label{noFI}
Suppose a group $G$ splits nontrivially then no vertex group of the splitting contains a subgroup of finite index in $G$.
\end{lemma}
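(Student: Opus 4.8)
The plan is to realize the splitting by the action of $G$ on its Bass--Serre tree $T$ and to argue by contradiction. Recall that a nontrivial splitting of $G$ (as $A\ast_C B$ with $A\neq C\neq B$, or as an HNN extension $A\ast_C$) is exactly a minimal action of $G$ on the tree $T$ without inversions and with no global fixed vertex; the vertex groups of the splitting are the vertex stabilizers $G_v$, and the conjugacy classes of vertex groups are the vertex orbits of the action. So it suffices to show that if some vertex stabilizer $G_v$ contained a finite-index subgroup $H$ of $G$, then $G$ would fix a vertex of $T$, contradicting nontriviality.

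Suppose then that $H\leq G_v$ with $[G:H]<\infty$. Since $H\leq G_v$ we have $[G:G_v]\leq [G:H]<\infty$, and the orbit $Gv$ is in bijection with the coset space $G/G_v$, so $Gv$ is a \emph{finite} set of vertices of $T$. The first step is thus the purely index-theoretic passage from ``$G_v$ has finite index'' to ``$G$ has a finite (hence bounded) orbit on $T$''.

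The heart of the argument is the standard fact that a group acting on a tree with a bounded orbit fixes a point. Concretely, I would take $T'$ to be the convex hull in $T$ of the finite set $Gv$; this is a finite subtree, and it is $G$-invariant because $Gv$ is $G$-invariant and convex hulls are preserved by tree automorphisms. A finite tree has a canonical \emph{center} --- either a single vertex or the midpoint of a single edge --- obtained by minimizing the maximal distance to the other vertices of $T'$; being canonical, it is fixed by every automorphism of $T$ preserving $T'$, in particular by all of $G$. The final step is to convert this fixed point into a fixed \emph{vertex}: if the center is a vertex we are done, while if it is an edge-midpoint then $G$ stabilizes that edge, and since the Bass--Serre action is without inversions, $G$ fixes both of its endpoints. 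In every case $G$ fixes a vertex of $T$, so the splitting is trivial --- the desired contradiction.

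The main obstacle, and the only nonformal point, is justifying the implication ``bounded orbit $\Rightarrow$ $G$-invariant point'' together with the inversion bookkeeping in the last step; everything else is an index count. Equivalently, one may phrase the whole lemma as the assertion that in any nontrivial splitting each vertex group has \emph{infinite} index in $G$ (a finite vertex orbit would yield a finite invariant subtree and hence a global fixed vertex), from which the statement about finite-index subgroups is immediate, since $H\leq G_v$ forces $[G:G_v]\leq[G:H]$.
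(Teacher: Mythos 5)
Your proof is correct, but it takes a genuinely different route from the paper's. The paper argues via normal forms in the one-edge splitting itself: for $G=A\ast_C B$ with $A\ne C\ne B$ it picks $a\in A-C$, $b\in B-C$ and observes that $ab$ has infinite order with $(ab)^n\notin A$ for all $n>0$ (and for the HNN extension $A\ast_C$ the stable letter $t$ plays this role, $t^n\notin A$); since a finite-index subgroup of $G$ must contain some positive power of every element, no subgroup of $A$ can have finite index in $G$. Your argument instead runs through Bass--Serre fixed-point theory: finite index of $H\leq G_v$ forces $[G:G_v]<\infty$, hence a finite orbit $Gv$, hence a finite $G$-invariant subtree (the convex hull), whose center is a $G$-fixed vertex or, after using absence of inversions, yields one --- contradicting the fact that a nontrivial splitting admits no global fixed vertex. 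Both are complete; the trade-off is that the paper's proof is elementary and self-contained (two lines of normal-form theory, no appeal to the bounded-orbit fixed-point theorem), while yours is the more conceptual and more general statement --- it applies verbatim to any nontrivial $G$-tree, i.e., to arbitrary graph-of-groups decompositions rather than just one-edge splittings, and it is phrased in exactly the elliptic/fixed-point language the paper itself uses later (e.g., in the section on Touikan's theorem, where this lemma is applied). The one point you should make sure is genuinely justified rather than folded into the word ``exactly'' is that nontriviality of the splitting implies the Bass--Serre action has no global fixed vertex; this is standard (it again comes down to Britton's lemma / normal forms showing $A\ne G$), but it is the same normal-form input the paper uses directly.
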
 
\begin{proof}
If $G=A\ast_CB$ with $A\ne C\ne B$ then let $a\in A-C$ and $b\in B-C$. The element $ab$ has infinite order in $G$ and $(ab)^n\not\in A$ for any $n>0$. If $G=A\ast_\rho$ and $t$ is the stable letter, then $t^n\not\in A$ for all $n>0$. Hence $A$ cannot contain a subgroup of finite index in $G$. 
\end{proof}

\begin{proof} (of Theorem \ref{MMFIE}) Here $H_0$ is an infinite finitely generated group, $H_1$ is a subgroup of finite index in $H_0$, $\phi:H_1\to H_0$ is a monomorphism and $G=H_0\ast_\phi$ is the resulting HNN extension. We want to show that $G$ is 1-ended. 
We apply Theorem \ref{Toui} with $\mathcal H=\emptyset$. Let $T_\infty$ be the Bass-Serre tree for $H_0\ast_\phi$. The stabilizer of each vertex $v$ of $T_\infty$ is conjugate to $H_0$ and the stabilizer of each edge ``exiting" $v$ is a conjugate of $H_1$ (that has finite index in the stabilizer of $v$). Suppose that $G$ has more than one end, so that $G$ splits over a finite group and $T_\mathcal F$ is the Bass-Serre tree for that splitting. Theorem \ref{Toui} implies there is a vertex $v$ of $T_\infty$ and a non trivial, cocompact, minimal $G_v$ tree $\hat T_v$ satisfying (i). Let $f$ be an edge incident to $v$ such that $G_f$ (a conjugate of $H_1$) has finite index in $G_v$ (a conjugate of $H_0$).
Theorem \ref{Toui} (i) implies $G_f$ acts elliptically on $\hat T_v$. But then the resulting non-trivial splitting of $G_v$ has a vertex group that contains a subgroup of finite index in $G_v$, contradicting Lemma \ref{noFI}.
\end{proof}

Next we generalize the 1-ended part of Lemma 5.1 of \cite{Mih22} from finitely presented groups to finitely generated groups.

\begin{theorem} \label{SS3} 
Suppose $G=A\ast_CB$ or $G=A\ast_C$, where $A$ and $B$ are finitely generated, $C$ is infinite, finitely generated  and in the first case, of finite index in $B$. If $A$ is 1-ended, then $G$ is 1-ended.
\end{theorem}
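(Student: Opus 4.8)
The plan is to argue by contradiction using Stallings' structure theorem together with elementary Bass--Serre geometry, in exactly the spirit of the proof of Theorem \ref{combE}. Since $A$ is infinite, so is $G$; hence if $G$ fails to be one-ended it has more than one end, and by Theorem \ref{Stall} it splits nontrivially over a finite subgroup. Let $T$ be the Bass--Serre tree of such a splitting: a nontrivial, minimal $G$-tree on which $G$ acts with finite edge stabilizers and with no global fixed vertex. The given splitting $A\ast_C B$ or $A\ast_C$ will be used only to record that $G=\langle A,B\rangle$ (resp.\ $G=\langle A,t\rangle$) and that the edge group(s) sit inside $A$; the contradiction is extracted from the \emph{new} tree $T$.

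First I would locate a fixed vertex for $A$. Because $A$ is finitely generated and one-ended, Theorem \ref{Stall} forbids $A$ from splitting nontrivially over a finite group; hence the restricted action of $A$ on $T$ (whose edge stabilizers are finite) cannot be without a global fixed point, so $A$ fixes a vertex $v\in T$. This is precisely the fact already invoked in the proof of Theorem \ref{combE} (``since $A$ is 1-ended, it must stabilize a vertex''). In particular every subgroup of $A$, and so the edge group $C\leq A$, fixes $v$.

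Next, in the amalgamated case, I would propagate the fixed vertex to $B$. For every $b\in B$, since $C$ has finite index in $B$ the subgroup $bCb^{-1}\cap C$ is an intersection of two finite-index subgroups, hence has finite index in $B$ and in $C$, and is therefore infinite. This infinite subgroup fixes $v$ (it lies in $C\leq A$) and also fixes $bv$ (it lies in $bCb^{-1}$, and $C$ fixes $v$). An infinite subgroup cannot fix two distinct vertices of $T$, since it would then fix every edge of the geodesic joining them, contradicting finiteness of edge stabilizers; hence $bv=v$. Thus $B$ fixes $v$, so $G=\langle A,B\rangle$ fixes $v$, forcing the minimal tree $T$ to be a single vertex and contradicting nontriviality. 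For the HNN case $G=A\ast_C$ with stable letter $t$, let $C_1,C_2\leq A$ be the two associated subgroups with $tC_2t^{-1}=C_1$; both are isomorphic to the infinite group $C$, and both fix $v$. Then $C_1=tC_2t^{-1}$ also fixes $tv$, so the infinite group $C_1$ fixes both $v$ and $tv$; as before this forces $tv=v$, whence $t$ fixes $v$ and $G=\langle A,t\rangle$ fixes $v$, the same contradiction.

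The only step that is more than bookkeeping is the passage from ``$A$ one-ended'' to ``$A$ fixes a vertex of $T$'': one must rule out $A$ fixing only an end of $T$ rather than a vertex. This is the standard consequence of Stallings' theorem obtained by passing to a minimal $A$-invariant subtree (on which a finitely generated group acts cocompactly), which yields a nontrivial splitting of $A$ over a finite edge stabilizer unless $A$ is elliptic; it is exactly the fact used in Theorem \ref{combE}, so I would cite it identically. Everything else reduces to the elementary tree fact that an infinite group cannot stabilize two distinct vertices when all edge stabilizers are finite, so I expect no further obstacle.
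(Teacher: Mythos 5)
Your proof is correct, and it takes a genuinely different route from the paper's. The paper proves this theorem as an application of Touikan's Theorem~\ref{Toui}: assuming $G$ splits over a finite group, it plays the Bass--Serre tree $T_{\mathcal F}$ of that splitting against the tree $T_\infty$ of the given splitting over the infinite group $C$, applies Theorem~\ref{Toui} with $\mathcal H=\{A\}$, and derives contradictions from the ellipticity conclusions (i) and (ii) together with Lemma~\ref{noFI} (the delicate case being $G_v=gBg^{-1}$, where $gCg^{-1}$ is forced to be elliptic in the nontrivial $G_v$-tree $\hat T_v$ while having finite index in $G_v$). You bypass Touikan's theorem entirely and argue inside the tree of the hypothetical finite splitting: the one-ended group $A$ is elliptic (the same standard fact the paper invokes without proof in Theorem~\ref{combE}), and since an infinite subgroup cannot fix two distinct vertices of a tree with finite edge stabilizers, the fixed vertex of $A$ propagates to every $b\in B$ via the infinite subgroups $bCb^{-1}\cap C$ (this is exactly where the finite-index hypothesis enters) and to the stable letter via $tC_2t^{-1}=C_1$, producing a global fixed point and contradicting nontriviality of the splitting. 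Your argument is more elementary and self-contained, needing only Stallings' Theorem~\ref{Stall} and basic Bass--Serre facts, and it actually proves slightly more in the amalgamated case, since all it uses is that $bCb^{-1}\cap C$ is infinite for every $b\in B$ rather than that $C$ has finite index in $B$; what the paper's route buys is uniformity, Theorem~\ref{Toui} being the single engine behind Theorem~\ref{MMFIE} and the other one-endedness results of that section. The one step you should make explicit in a final write-up is the ellipticity of $A$: when no element of $A$ acts hyperbolically, the minimal invariant subtree does not exist and you need Serre's lemma (a finitely generated group all of whose elements are elliptic has a global fixed point); since the paper relies on the identical fact in Theorem~\ref{combE}, citing it as you do is consistent with the paper's conventions.
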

\begin{proof}
Suppose $G$ has more than one end (in either case). Let $T_{\mathcal F}$ be the Bass-Serre tree for a nontrivial splitting of $G$ over a finite group. Let $T_\infty$ be the Bass-Serre tree for $A\ast_CB$ or $A\ast_C$.
Apply Theorem \ref{Toui} with $\mathcal H=\{A\}$. Let $v\in Vertices(T_\infty)$ and $\hat T_v$ be the vertex and tree guaranteed by Theorem \ref{Toui}. First consider the HNN case. In this case, $G_v=gAg^{-1}$ for some $g\in G$.
Since $A\in \mathcal H$,  (ii) implies that $G_v$ acts elliptically on $\hat T_v$, contradicting Lemma \ref{noFI}. 

Next assume $G=A\ast_CB$. Again,  let $v\in Vertices(T_\infty)$ and $\hat T_v$ be the vertex and tree guaranteed by Theorem \ref{Toui}. Either $G_v=gAg^{-1}$ or $G_v=gBg^{-1}$ for some $g\in G$. If $G_v=gAg^{-1}$, then as above (ii) implies  that $G_v$ acts elliptically on $\hat T_v$, contradicting that fact that $\hat T_v$ induces a non-trivial splitting of $G_v$. If $G_v=gBg^{-1}$, Then (ii) implies $gAg^{-1}\cap gBg^{-1} (=gCg^{-1})$ acts elliptically on $\hat T_v$. But $gCg^{-1}$ has finite index in $gBg^{-1}$ contradicting Lemma \ref{noFI}. 
\end{proof}

\subsubsection{Halfspaces of Splittings of Finitely Generated Groups} \label{Hspace} 

In this section, we are interested in when a ``halfspace" of a splitting of a finitely generated 1-ended group is 1-ended. Three equivalent definitions of a half space of a graph of groups splitting of a finitely generated group with finitely generated edge groups are given in \cite{MS24}. Suppose $G=A\ast_CB$ is a non-trivial splitting of a finitely generated group with $A$, $B$ and $C$ finitely generated. Let $\mathcal A$ and $\mathcal B$ be finite generating sets of $A$ and $B$ with $\mathcal C=\mathcal A\cap \mathcal B$ a generating set for $C$. The Cayley graphs $\Gamma(A,\mathcal A)$, $\Gamma (B, \mathcal B)$ and $\Gamma(\mathcal C)$ sit naturally in $\Gamma (G, \mathcal A\cup \mathcal B)$ with $\Gamma(A,\mathcal A)\cap \Gamma (B, \mathcal B)=\Gamma(\mathcal C)$. 
The graph $\Gamma( C, \mathcal C)$ separates $\Gamma(G,\mathcal A\cup \mathcal B)$ into two {\it halfspaces}, one containing $\Gamma(A,\mathcal A)$ and the other containing $\Gamma(B,\mathcal B)$. One of the main theorems of \cite{MS24} is the following:

\begin{theorem}[Theorem 1.1, \cite{MS24}]\label{Half1}
	Suppose the finitely generated 1-ended group $G$ has a non-trivial graph of groups decomposition $\mathcal G$. Suppose the edge stabilizers are finitely generated and accessible.
	Then there is a non-trivial graph of groups decomposition $\mathcal G'$ of $G$ with minimal action such that:
	\begin{enumerate}
		\item The halfspaces for $\mathcal G'$ are 1-ended.
		\item Edge stabilizers (resp. vertex stabilizers) of $\mathcal G'$ are finitely generated and are subgroups of the edge stabilizers (resp. vertex stabilizers) of $\mathcal G$.
		\item For each edge $e'$ in $\mathcal G'$ there exists an edge $e$ in $\mathcal G$ such that $G_{e'}$ is a vertex stabilizer in some finite splitting of $G_e$ over finite subgroups.
	\end{enumerate}
\end{theorem}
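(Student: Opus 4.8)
The plan is to detect exactly when a halfspace fails to be one-ended, to show that this failure is always witnessed by a splitting of the relevant edge group over a finite subgroup, and then to remove the failure by refining that edge, using accessibility of the edge groups to guarantee that the refinement process terminates. First I would record the basic constraint: since $G$ is one-ended, Stallings' Theorem \ref{Stall} forbids any splitting of $G$ over a finite group, so (after reducing trivial edges) every edge group of $\mathcal{G}$ -- and of any refinement of it whose edge groups remain infinite -- is infinite. I would then reduce to a single edge: it suffices to treat one non-trivial one-edge splitting $G = A \ast_C B$ (or $A\ast_C$) obtained by collapsing all but one orbit of edges in the Bass--Serre tree of $\mathcal{G}$, together with its two halfspaces $H_A$ and $H_B$ separated by the copy of $\Gamma(C,\mathcal{C})$, and then to reassemble over all edges.

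The target of the analysis is the key criterion: \emph{if $C$ is one-ended, then $H_A$ is one-ended}. Its contrapositive is what I would actually prove. A coarse separation of $H_A$ by a compact set $K$ must be inherited by the boundary $\Gamma(C)$, producing a finite subset of $\Gamma(C)$ that separates $C$ into two infinite subsets lying in distinct unbounded components of $H_A - K$; Stallings' Theorem \ref{Stall} then converts this into a splitting of $C$ over a finite subgroup $F$ -- one that, by construction, genuinely \emph{separates} the halfspace, in the sense that the two $F$-sides of $C$ are pushed into different ends of $H_A$. The surface-group example, where $C \cong \mathbb{Z}$ splits over the trivial group (as an ascending HNN extension) yet the halfspace is a hyperbolic half-plane and hence one-ended, shows precisely why only the \emph{separating} finite splittings matter and why the implication cannot be reversed.

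Next I would run the refinement. Whenever a halfspace $H_A$ is not one-ended, the criterion supplies a separating splitting of the edge group $C = G_e$ over a finite group $F$ with infinite vertex groups $C_1, C_2$; I would use this to perform Bass--Serre surgery on $\mathcal{G}$, blowing up the edge so that the new edge groups are the $C_i$. These are infinite, since a finite new edge group would re-split $G$ over a finite group, contradicting one-endedness, and by construction each is a vertex group of a finite splitting of the original edge group $G_e$ -- which is exactly conditions (2) and (3), with finite generation preserved because the edge groups are finitely generated and accessible. Iterating, each refinement replaces an edge group by vertex groups of one of its finite splittings, so accessibility of the edge groups (hypothesized; compare Theorem \ref{DunAcc}) bounds the number of iterations: the process must halt, and when it halts no halfspace admits a separating finite splitting of its edge group, whence by the criterion every halfspace is one-ended, giving (1). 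Passing to the minimal $G$-subtree yields the minimal action.

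The hard part will be the criterion of the second paragraph -- turning a purely geometric coarse separation of the halfspace into an honest group-theoretic splitting of the \emph{edge} group. The difficulty is localization: the ends of $H_A$ are a global feature, and one must show the separating compact set can be taken to meet $\Gamma(G)$ essentially along $\Gamma(C)$ (so that the separation descends to $C$ rather than to some vertex group of $A$), and that the two resulting pieces of $C$ are each infinite. I expect this to require a Dunwoody-track / almost-invariant-set analysis inside the halfspace rather than a bare appeal to Stallings, and it is here that finite generation and accessibility of the edge groups do the real work. A secondary obstacle is bookkeeping: verifying that refining one edge does not reopen the multi-endedness of a previously settled halfspace, which I would control with a complexity monovariant on the edge groups tied to the accessibility bound. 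I note that Touikan's Theorem \ref{Toui} does not apply directly here, since a one-ended $G$ admits no non-trivial $T_{\mathcal{F}}$; the compatibility phenomenon it encodes must instead be produced by hand at the level of the edge groups.
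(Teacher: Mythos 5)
The manual does not actually prove this statement; it is quoted from \cite{MS24} as background for \S\ref{Hspace}, so your proposal has to be judged on its own merits rather than against an in-paper argument. Judged that way, it is an outline that correctly isolates where the difficulties live, but its two central steps are not carried out, and the first of them fails as described. The fatal gap is the refinement step: when a halfspace of $G=A\ast_C B$ is multi-ended you extract a splitting $C=C_1\ast_F C_2$ over a finite group and then propose to ``perform Bass--Serre surgery on $\mathcal G$, blowing up the edge so that the new edge groups are the $C_i$.'' Blow-ups are performed at vertices, not edges: a splitting of an \emph{edge} group does not, in general, induce any refinement of the ambient decomposition of $G$. To obtain a decomposition of $G$ whose edge groups are the $C_i$, the splitting of $C$ must be compatible with both inclusions $C\hookrightarrow A$ and $C\hookrightarrow B$, i.e.\ one needs actions of the vertex groups on trees in which the $C_i$ arise as edge stabilizers. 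That compatibility is exactly the content of Touikan-type results --- compare conclusion (2)(c) of Theorem \ref{Toui}, whose phrase ``edge stabilizers of $\hat T_v$ are conjugate to the vertex group(s) of the splitting of $G_e$'' is precisely condition (3) of Theorem \ref{Half1}. You correctly note that Theorem \ref{Toui} cannot be applied because a $1$-ended $G$ admits no nontrivial tree with finite edge stabilizers, but you then assert the compatibility ``must be produced by hand'' without supplying any mechanism. Until that mechanism exists, $\mathcal G'$ has not been constructed, and none of conclusions (1)--(3) is established.

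The second gap is that your key criterion is asserted rather than proved. You claim a coarse separation of the halfspace $H_A$ ``must be inherited by the boundary $\Gamma(C)$,'' i.e.\ that at least two unbounded components of $H_A-K$ each contain infinitely many points of $C$, so that Stallings' Theorem \ref{Stall} applies to $C$. A priori $H_A$ can have ends meeting $C$ in only finitely many points (ends lying deep inside cosets of the vertex groups, far from the separating coset of $C$); excluding or handling these is exactly where the $1$-endedness of $G$ and the actual structure of halfspaces must be used, and you explicitly defer this (``Dunwoody-track / almost-invariant-set analysis'') rather than do it. Finally, termination is not controlled: accessibility of an edge group $C$ bounds iterated splittings of $C$ over finite subgroups, but your process replaces edge groups by new groups, re-splits those, and creates new halfspaces at every stage; the ``complexity monovariant'' that is supposed to prevent previously settled halfspaces from becoming multi-ended again is named but never defined. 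So the proposal is a plausible program whose load-bearing steps --- the separation criterion, the edge refinement, and the termination argument --- all remain open.
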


A corollary of this theorem is:

\begin{corollary}[Corollary 2.1, \cite{MS24}]\label{Half2}
	If a 1-ended group splits non-trivially over a 2-ended group then the halfspaces of this splitting are 1-ended.
\end{corollary}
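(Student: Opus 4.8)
The plan is to obtain the corollary as an application of Theorem \ref{Half1}, taking $\mathcal G$ to be the given one-edge splitting $G=A\ast_CB$ (or $G=A\ast_C$) over the $2$-ended group $C$, and then exploiting the rigidity that $2$-endedness of the edge group forces on the conclusion of that theorem. First I would check that the hypotheses of Theorem \ref{Half1} hold for $\mathcal G$. A $2$-ended group contains an infinite cyclic subgroup of finite index, so $C$ is finitely generated; moreover $C$ has a finite normal subgroup with quotient $\mathbb Z$ or $\mathbb Z_2\ast\mathbb Z_2$, so there is a global bound on the orders of its finite-order elements and Theorem \ref{LinnAcc} shows $C$ is accessible over splittings by finite groups. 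Thus Theorem \ref{Half1} applies and yields a non-trivial decomposition $\mathcal G'$ of $G$ whose halfspaces are $1$-ended and whose edge groups satisfy conditions (2) and (3).

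Next I would pin down the edge groups of $\mathcal G'$ using condition (3) together with the structure of $C$. By Theorem \ref{Stall} applied to the $2$-ended group $C$, every \emph{non-trivial} splitting of $C$ over finite subgroups has \emph{finite} vertex groups. Hence each edge group $G_{e'}$ of $\mathcal G'$, being a vertex group of some finite splitting of $C$ over finite subgroups, is either finite or, via the trivial (one-vertex) splitting, a subgroup of $C$ of finite index, which is then itself $2$-ended. Since $G$ is $1$-ended, Stallings' theorem (Theorem \ref{Stall}) forbids any non-trivial splitting of $G$ over a finite group, so no $G_{e'}$ can be finite. Therefore every edge group of $\mathcal G'$ is a finite-index $2$-ended subgroup of $C$, and in particular $\mathcal G'$ is again a splitting of $G$ with edge groups commensurable to $C$.

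Finally I would compare the halfspaces of the original $\mathcal G$ with those of $\mathcal G'$. Working in a fixed Cayley graph $\Gamma(G,S)$, the ``walls'' $\Gamma(C)$ of $\mathcal G$ and $\Gamma(G_{e'})$ of $\mathcal G'$ are orbits of commensurable $2$-ended subgroups, so by Corollary \ref{C1} they lie at bounded Hausdorff distance from one another; the corresponding halfspaces then differ by a bounded neighborhood and are quasi-isometric, so by Theorem \ref{QIGra} (equivalently Theorem \ref{QIE}) they have the same number of ends. Since the halfspaces of $\mathcal G'$ are $1$-ended by Theorem \ref{Half1}(1), the halfspaces of $\mathcal G$ are $1$-ended as well, which is the assertion of the corollary.

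The hard part will be this last step. Theorem \ref{Half1} produces $\mathcal G'$ abstractly and does not assert that it is the given splitting, so one must genuinely show that the halfspace of the prescribed $A\ast_C B$ (or $A\ast_C$) agrees up to finite Hausdorff distance with the halfspace of the manufactured decomposition $\mathcal G'$. Making the ``same wall up to bounded distance'' claim precise requires controlling how halfspaces of distinct splittings with commensurable $2$-ended edge groups sit inside $\Gamma(G)$ — essentially a compatibility/uniqueness statement for splittings over the slender group $C$ — rather than a purely formal consequence of Theorem \ref{Half1}; this is where the real work of the proof lies.
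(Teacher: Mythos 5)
Your first two paragraphs are essentially correct: $C$ is finitely generated and accessible (via Theorem \ref{LinnAcc}, or Dunwoody's theorem, since $2$-ended groups are finitely presented), so Theorem \ref{Half1} applies; and since a $2$-ended group acting on a tree with finite edge stabilizers either has all vertex stabilizers finite or fixes a unique vertex (all other stabilizers then being finite), condition (3) together with Stallings' Theorem \ref{Stall} and minimality of $\mathcal G'$ does force every edge group of $\mathcal G'$ to be infinite, hence a full conjugate of $C$. The fatal problem is the step you yourself flag as ``the hard part'': it is not merely left open, the bridge you propose is false. A halfspace is an invariant of a splitting, not of its wall, so walls at bounded Hausdorff distance do not yield halfspaces at bounded Hausdorff distance. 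Concretely, let $G=F_2\times\mathbb Z=\langle a,b\rangle\times\langle z\rangle$ (a $1$-ended group) and $C=\langle z\rangle$. The HNN splitting with vertex group $\langle a,z\rangle$, edge group $C$ and stable letter $b$, and the HNN splitting with vertex group $\langle b,z\rangle$, edge group $C$ and stable letter $a$, have literally the same wall $\Gamma(C)$. But the halfspaces of the first are, coarsely, $\{w\in F_2: w \hbox{ reduced, beginning with } b\}\times\mathbb Z$ and its complement, while those of the second are $\{w\in F_2: w \hbox{ reduced, beginning with } a\}\times\mathbb Z$ and its complement; the points $b^n$, $b^{-n}$, $a^n$, $a^{-n}$ show that no halfspace of one splitting lies at finite Hausdorff distance from any halfspace of the other. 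So the ``compatibility/uniqueness statement for splittings over the slender group $C$'' that you hope to invoke simply does not hold, and Corollary \ref{C1} together with Theorem \ref{QIGra} cannot finish the argument.

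The same example also shows what your outline discards and what any correct derivation must use: condition (2) of Theorem \ref{Half1} makes every vertex group of $\mathcal G'$ a subgroup of a conjugate of a vertex group of the given splitting $\mathcal G$, and it is exactly this constraint that prevents the theorem, applied to the first splitting above, from returning the second one ($\langle b,z\rangle$ lies in no conjugate of $\langle a,z\rangle$). A genuine proof has to exploit this ellipticity of the vertex groups of $\mathcal G'$ in the Bass--Serre tree of $\mathcal G$ (for instance via a $G$-equivariant map from the tree of $\mathcal G'$ to the tree of $\mathcal G$) so as to match halfspaces of $\mathcal G'$, edge by edge and side by side, with halfspaces of the prescribed splitting; none of this appears in your proposal. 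Since the paper records the corollary purely by citation to \cite{MS24} and gives no in-text derivation, there is no argument of the paper to compare yours against, but as written your proof does not establish the statement, and the route you sketch for closing it cannot work.
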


\newpage
\section{Semistability and Simple Connectivity at $\infty$ of Groups and Spaces and $H^2(G,\mathbb ZG)$}\label{SSscH}

The topological spaces we are most interested in are connected locally finite CW complexes. Occasionally we will consider connected locally compact ANR's. (The class of ANR's contains topological manifolds and locally finite CW-complexes.)The question of whether or not all finitely presented groups are semistable at $\infty$ is over 40 years old. Finite and 2-ended groups are all semistable (in fact simply connected) at $\infty$. An infinite ended finitely presented group is semistable at $\infty$ if and only if each 1-ended group in a ``Dunwoody" decomposition of $G$ is semistable at $\infty$ \cite{M87}. Hence we concentrate mostly on 1-ended groups. Many classes of groups are known to contain only semistable at $\infty$ groups. If a 1-ended group is semistable at $\infty$ then it has a well defined fundamental group at $\infty$. The notions of  semistability and simple connectivity at $\infty$ have been extended from finitely presented groups to finitely generated groups - for the most part to assist in proving certain finitely presented groups are semistable or simply connected at $\infty$. Examples of 1-ended finitely generated groups that are NOT semistable at $\infty$ are provided in \S \ref{NonSS}. The theory of simple connectivity at $\infty$ for groups and space is older than that of semistability. The question of whether or not $H^2(G,\mathbb ZG)$ is free abelian for all finitely presented groups $G$ dates back to H. Hopf. 

Corollary \ref{GM2} connects semistability and simple connectivity at $\infty$ of a group $G$ to $H^2(G,\mathbb ZG)$: If the finitely presented group $G$ is semistable at $\infty$, then $H^2(G,\mathbb ZG)$ is free abelian and if $G$ is simply connected at $\infty$, then $H^2(G,\mathbb ZG)$ is trivial. Section \ref{SSHom} contains our homological results.

\subsection{Semistability, and Simple Connectivity at $\infty$ of a space} \label{DefSSsp}
For the most part the spaces we are concerned with are connected, locally finite CW complexes.  The semistability and simple connectivity of groups is determined in such spaces. Occasionally, we consider locally compact ANRs. Many of the basic definitions in this section make sense for more general topological spaces. Recall, proper rays $r,s:[0,\infty)\to X$ converge to the same end of $X$ if for any compact set $C$ in $X$, there is an integer $N(C)$ such that $r([N(C),\infty)$ and $s([N(C),\infty)$ belong to the same component of $X-C$. 

\begin{definition} 
The topological space $X$ {\it has semistable fundamental group at $\infty$} if any two proper rays $r,s:[0,\infty)\to X$ converging to the same end of $X$, are properly homotopic. This means there is a proper map: 

\noindent $H:[0,\infty)\times [0,1]\to X$ such that $H(t,0)=r(t)$  and $H(t,1)=s(t).$ 
\end{definition}

\begin{remark} \label{basess} 
Suppose $X$ is path connected, $v\in X$ and any two proper rays at $v$ that converge to the same end of $X$ are properly homotopic. Suppose $r$ and $s$ be proper rays that converge to the same end of $X$. Let $\alpha$ be a path from $v$ to $r(0)$ and $\beta$ a path from $v$ to $s(0)$. Then since $(\alpha, r)$ is properly homotopic to $(\beta,s)$, it is easy to see that $r$ is properly homotopic to $s$. Hence $X$ is semistable at $\infty$. 
\end{remark}

\begin{definition} 
The space $X$ is {\it simply connected at $\infty$} if for any compact set $C$ there is a compact set $D$ such that loops $\alpha$ in $X-D$ are homotopically trivial in $X-C$. This means there is a homotopy $H:[0,1]\times [0,1]\to X-C$ such that $H(t,0)=\alpha(t)$, $H(0,t)=H(1,t)=H(t,1)=H(0,0)$ for all $t\in [0,1]$.
\end{definition}

It is an elementary exercise to show:
\begin{theorem}\label{SCtoSS} 
Suppose $X$ is a locally compact, locally connected, path connected, Hausdorff space that is $\sigma$-compact. If $X$ is simply connected at $\infty$ then $X$ is semistable at $\infty$. 
\end{theorem}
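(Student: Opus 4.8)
The plan is to build the required proper homotopy between two proper rays $r,s$ converging to the same end directly, filling a strip $[0,\infty)\times[0,1]$ rectangle by rectangle, and to use the precise ``direction'' of simple connectivity at $\infty$ to force the filling disks out to infinity, which is exactly what makes the resulting homotopy proper.

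First I would fix the combinatorial data. Using $\sigma$-compactness and local compactness, choose a cofinal sequence of compact sets $C_0\subset C_1\subset\cdots$ with $C_i\subset \mathrm{int}(C_{i+1})$, and refine it so that simple connectivity at $\infty$ yields, for every $i$, that each loop in $X-C_{i+1}$ is null-homotopic in $X-C_i$ (given $C_i$, the hypothesis produces a compact $D$, which I absorb into the next term). Since $r$ and $s$ are proper and converge to the same end, I can choose parameters $0=t_0<t_1<t_2<\cdots\to\infty$ with $r([t_i,\infty)),\,s([t_i,\infty))\subset X-C_{i+1}$ and with $r(t_i),s(t_i)$ lying in the same component of $X-C_{i+1}$; local (path) connectivity then provides a path $\gamma_i$ from $r(t_i)$ to $s(t_i)$ inside $X-C_{i+1}$ for $i\ge 1$, and I would set $\gamma_0:=r|_{[0,t_1]}\cdot\gamma_1\cdot(s|_{[0,t_1]})^{-1}$ so that the initial loop is trivially fillable in $X$.

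Next I would assemble the homotopy. On each sub-strip $[t_i,t_{i+1}]\times[0,1]$ prescribe the four edges to be $r|_{[t_i,t_{i+1}]}$ (bottom), $s|_{[t_i,t_{i+1}]}$ (top), $\gamma_i$ (left) and $\gamma_{i+1}$ (right); their concatenation is a loop $L_i$. The key computation is that all four pieces lie in $X-C_{i+1}$ (the ray-tails because $t\ge t_i$, the path $\gamma_i$ by choice, and $\gamma_{i+1}\subset X-C_{i+2}\subset X-C_{i+1}$), so $L_i\subset X-C_{i+1}$; by the refined simple connectivity at $\infty$, $L_i$ is null-homotopic in $X-C_i$, hence the prescribed boundary map extends over the whole square to a map into $X-C_i$. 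Adjacent squares agree on the shared edge $\{t_{i+1}\}\times[0,1]$ (both equal $\gamma_{i+1}$), so the pasting lemma yields a continuous $H:[0,\infty)\times[0,1]\to X$ with $H(t,0)=r(t)$ and $H(t,1)=s(t)$; differing initial points cause no difficulty (cf.\ Remark~\ref{basess}). Properness is then immediate and is the whole point: given compact $K\subset X$, pick $m\ge 1$ with $K\subset C_m$; for $i\ge m$ the square $[t_i,t_{i+1}]\times[0,1]$ maps into $X-C_i\subset X-K$, so $H^{-1}(K)\subset[0,t_m]\times[0,1]$ is a closed subset of a compact set, hence compact.

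The step I expect to be the main obstacle is guaranteeing properness, i.e.\ ensuring that the filling disks escape every compact set. This is exactly where the orientation of the definition of simple connectivity at $\infty$ is essential: a naive null-homotopy of $L_i$ could be pulled deep into $X$, destroying properness, whereas the hypothesis bounds loops lying far out (beyond $C_{i+1}$) by disks that still avoid $C_i$, so the disks march to infinity in step with $t_i\to\infty$. A secondary technical point is the passage from ``$r(t_i),s(t_i)$ lie in the same component of $X-C_{i+1}$'' to ``they are joined by a path there''; this uses that the components of the open set $X-C_{i+1}$ are path components, which holds in the locally path-connected spaces (CW complexes and ANRs) to which the notion is applied, so one reads ``component'' as ``path component'' throughout, consistent with the definition of convergence to the same end.
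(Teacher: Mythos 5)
The paper offers no proof of Theorem \ref{SCtoSS} at all --- it is dismissed as an elementary exercise --- and your argument is exactly the intended one: refine a cofinal sequence of compact sets so that loops in $X-C_{i+1}$ bound in $X-C_i$, fill the strip $[0,\infty)\times[0,1]$ square by square, and read properness off the fact that the $i$-th filling lies in $X-C_i$; your treatment of the initial square and of properness is correct, so the proof goes through. The only point of friction is the one you flag yourself: under the paper's literal hypotheses ($X$ locally connected, not locally path-connected) and its general definition of two rays converging to the same end (which uses components of $X-C$, not path components), the existence of the connecting paths $\gamma_i$ inside $X-C_{i+1}$ is not actually justified, since with bare local connectedness a component of an open set need not be path-connected. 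That mismatch is an imprecision in the paper's setup rather than a flaw in your argument --- it vanishes for the locally path-connected spaces (locally finite CW complexes and ANRs) where the paper actually uses the result, which is how you resolve it.
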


For brevity we often simply say semistable at $\infty$, instead of semistable fundamental group at $\infty$.

\begin{remark} 
 The simple connectivity at $\infty$ of a locally
finite CW-complex only depends on the 2-skeleton of the complex (see for
example, Lemma 3 \cite{LR75} or Proposition 16.2.2, \cite{G}). 
\end{remark}

\begin{example} [Theorem 2.2 ,\cite{M1}] \label{prodss}  
If $A$ and $B$ are locally finite and connected infinite CW-complexes, then $A\times B$ is semistable at $\infty$.
If additionally $A$ and $B$ are simply connected and either is 1-ended, then $A\times B$ is simply connected at $\infty$.
Let $A=[0,\infty)$ (a 1-ended simply connected space that is simply connected at $\infty$) and $B$ be the (1-ended) lattice in the plane. The space $A\times B$ is not simply connected at $\infty$.
\end{example}

\begin{theorem}\label{NotScI} [Theorem 16.18.1, \cite{G} \label{NotSc1}
If $A_1$ and $A_2$ are locally finite  1-connected infinite CW-complexes and neither $A_1$ nor $A_2$ is 1-ended then $A_1\times A_2$ is not simply connected at $\infty$.
\end{theorem}

\newpage

\subsection{The Fundamental Group and Pro-Group of a Space and the Equivalence of Several Definitions}\label{prosp}
If $K$ has semistable fundamental group at $\infty$, then we can define the fundamental pro-group of $K$ and the fundamental group at $\infty$ of $K$, but first we need to develop an equivalent notion of semistability defined in terms of inverse sequences of groups.

\begin{definition}\label{DProI}
Suppose $G_1{\buildrel p_1\over \longleftarrow}G_2 {\buildrel p_2\over \longleftarrow}\cdots$ and 
$H_1{\buildrel q_1\over \longleftarrow}G_2 {\buildrel q_2\over \longleftarrow}\cdots$ are inverse sequences of groups with bonding maps homomorphisms. These sequences are {\it pro-isomorphic} if there are sequences of integers $0< m_1<m_2<\cdots$ and $0<n_1<n_2<\cdots$ and homomorphisms $f_{n_{k}}:G_{m_{k}}\to H_{n_{k}}$ and $g_{m_k}:H_{n_{k+1}}\to G_{m_k}$ making the following diagram commute for all $k$. (Horizontal arrows are compositions of bonding maps.)
\end{definition}

\vspace {-.7in}
\vbox to 2in{\vspace {-1in} \hspace {-.2in}
%\hspace{-1 in}
\includegraphics[scale=1]{FigProIso}
\vss }
\vspace{-.3in}

\begin{definition} 
An inverse sequence of groups 
$$G_1{\buildrel p_1\over \longleftarrow}G_2 {\buildrel p_2\over \longleftarrow}\cdots$$
is {\it semistable} (or {\it Mettag-Leffler}) if for each $m$ there exists $\phi(m)\geq m$ such that for all $k\geq \phi(m)$  the image of $G_{\phi(m)}$ in $G_m$ is equal to the image of $G_k$ in $G_m$. 
\end{definition}
The following result is an elementary consequence of the definitions.

\begin{theorem} [Proposition 11.3.1, \cite{G}] \label{MLSS}   An inverse sequence of groups is semistable if and only if it is pro-isomorphic to an inverse sequence of groups with epimorphisms as bonding maps.
\end{theorem}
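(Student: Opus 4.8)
The plan is to prove the two implications separately, with the eventual-image construction carrying the forward direction and a surjectivity observation carrying the reverse. Throughout, for $k\ge m$ write $p_{k,m}=p_m\circ p_{m+1}\circ\cdots\circ p_{k-1}:G_k\to G_m$ for the composite bonding map (with $p_{m,m}$ the identity), and note that the subgroups $\mathrm{im}(p_{k,m})\le G_m$ form a decreasing chain as $k$ grows. Semistability is exactly the statement that for each $m$ this chain stabilizes at a subgroup $H_m:=\mathrm{im}(p_{\phi(m),m})=\bigcap_{k\ge m}\mathrm{im}(p_{k,m})$, and I may assume $\phi$ is strictly increasing with $\phi(m)>m$.

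For the forward direction, I would first check that each bonding map restricts to a surjection $H_{m+1}\to H_m$. That $p_m(H_{m+1})\subseteq H_m$ follows by writing an element of $H_{m+1}$ as $p_{k,m+1}(y)$ and applying $p_m$; surjectivity follows by taking $z\in H_m$, choosing $k\ge\max\{\phi(m),\phi(m+1)\}$ so that $z=p_{k,m}(y)$, and observing that $p_{k,m+1}(y)$ lies in $H_{m+1}$ and maps to $z$. This produces an inverse sequence $\{H_m\}$ with epimorphic bonding maps. It remains to exhibit a pro-isomorphism between $\{G_m\}$ and $\{H_m\}$ in the sense of Definition \ref{DProI}, using the inclusions $\iota_m:H_m\hookrightarrow G_m$ as the upward maps and the corestrictions $\pi_m:G_{\phi(m)}\twoheadrightarrow H_m$ of $p_{\phi(m),m}$ as the downward maps. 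The two triangle identities reduce to $\iota_m\circ\pi_m=p_{\phi(m),m}$ and $\pi_m\circ\iota_{\phi(m)}$ equals the $H$-bonding map $H_{\phi(m)}\to H_m$, both immediate; matching these to the interleaved indices of Definition \ref{DProI} is achieved by setting $n_1=1$, $n_{k+1}=\phi(n_k)$, and $m_k=n_{k+1}$, which are strictly increasing because $\phi(m)>m$.

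For the reverse direction, suppose $\{G_m\}$ is pro-isomorphic to an epimorphic sequence $\{H_n\}$, with downward maps $f$ and upward maps $g$ as in Definition \ref{DProI}, so that each $f\circ g$ is an $H$-bonding composite and each $g\circ f$ a $G$-bonding composite. The key observation is that every $f$ is automatically surjective: since $f\circ g$ equals a surjective $H$-bonding map, its image is all of the relevant $H_n$, forcing $f$ to be onto. Expanding a composite $G$-bonding map along the ladder as $p_{m_l,m_k}=g\circ(f\circ g)\circ(f\circ g)\circ\cdots\circ f$ and using that every interior factor $f\circ g$ (an $H$-bonding map) and the final $f$ are surjective, I would conclude that the middle composite hits all of $H_{n_{k+1}}$, so that $\mathrm{im}(p_{m_l,m_k})=\mathrm{im}(g)$ is independent of $l$. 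Thus the images stabilize along the cofinal subsequence $\{G_{m_k}\}$, giving semistability of that subsequence; since the images $\mathrm{im}(p_{j,m})$ are squeezed between consecutive stabilized subsequence values, semistability of the full sequence follows.

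The main obstacle I anticipate is bookkeeping rather than conceptual: in the forward direction, correctly interleaving the two index sequences of Definition \ref{DProI} so that the corestriction/inclusion pair satisfies both triangle identities simultaneously; and in the reverse direction, the clean algebraic observation that pro-isomorphism to an epimorphic sequence forces the downward maps $f$ to be surjective, which is what converts the commuting ladder into a stabilization statement for the images. Once surjectivity of the $f$'s is in hand, the remaining decrease-and-squeeze argument transferring stabilization from the cofinal subsequence $\{G_{m_k}\}$ to the whole sequence is routine.
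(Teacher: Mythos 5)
Your proof is correct. Note that the paper does not actually prove this statement---it is quoted from Geoghegan's book (Proposition 11.3.1 of \cite{G}) with the remark that it is an elementary consequence of the definitions---so there is no in-paper argument to compare against; your eventual-image construction (stabilized images $H_m$ with corestriction/inclusion maps giving the interleaving, and in the converse direction the observation that pro-isomorphism to an epimorphic sequence forces the downward maps to be surjective) is precisely the standard argument, and both the triangle identities and the final squeeze from the cofinal subsequence $\{G_{m_k}\}$ back to the full sequence check out.
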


\begin{definition} 
If $\{G_n\}$ is an inverse sequence of groups with bonding maps $f_n:G_{n+1}\to G_{n}$, then 
$\varprojlim ^1 \{G_n\}$ is the quotient set of the set $\prod_{n=1}^\infty G_n$ under the equivalence
relation generated by: $(x_n )\sim (y_n )$ if there exists $(z_n )\in G_n$ such that
$y_n = z_nx_nf_n(z_{n+1}^{-1} )$ for all $n$. We say that  $\varprojlim ^1 \{G_n\}$ is trivial if it is the set with one element $(1,1,\ldots)$.
\end{definition}

\begin{theorem}  [Theorem 11.3.2, \cite{G}] \label{lim1} 
If the inverse sequence of groups $\{G_n\}$ is semistable then  $\varprojlim ^1\{G_n\}
$  is trivial. If $\varprojlim ^1\{G_n\}$ is trivial and each $G_n$ is countable, then $\{G_n\}$ is semistable. 
\end{theorem}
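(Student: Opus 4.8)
The plan is to base everything on the description of $\varprojlim^1$ as an orbit set. The relation defining $\varprojlim^1\{G_n\}$ is exactly the orbit relation of the left action of the group $\prod_n G_n$ on itself by twisted conjugation, $(z_n)\cdot(x_n)=(z_nx_nf_n(z_{n+1})^{-1})$; thus $\varprojlim^1\{G_n\}$ is trivial if and only if this action is transitive, i.e. if and only if the orbit of $(1,1,\dots)$ is everything. Unwinding, $(x_n)$ is equivalent to the trivial element precisely when the recursion $f_n(z_{n+1})=x_n^{-1}z_n$ (for all $n$) has a solution $(z_n)\in\prod_nG_n$. So the theorem becomes the question of for which $(x_n)$ this recursion is solvable, and triviality of $\varprojlim^1$ means it is solvable for every $(x_n)$.

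For the first implication I would reduce to the case of surjective bonding maps. If every $f_n$ is onto, the recursion is solved by downward induction: put $z_1=1$ and, having chosen $z_n$, pick any $z_{n+1}\in f_n^{-1}(x_n^{-1}z_n)$, which is nonempty since $f_n$ is surjective. Hence $\varprojlim^1$ is trivial whenever the bonding maps are epimorphisms. For a general semistable sequence, Theorem \ref{MLSS} supplies a pro-isomorphism to an epimorphic inverse sequence; concretely, the stable images $H_n=\mathrm{im}(G_{\phi(n)}\to G_n)$ satisfy $f_n(H_{n+1})=H_n$ (an equality of finite-stage images that needs no countability), so $\{H_n\}$ is an epimorphic inverse sequence, and the inclusions $H_n\hookrightarrow G_n$ together with the bonding-map surjections $G_{\phi(n)}\twoheadrightarrow H_n$ exhibit $\{H_n\}$ as pro-isomorphic to $\{G_n\}$. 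Since $\varprojlim^1$ is an invariant of the pro-isomorphism class of an inverse sequence (the functoriality developed in the basic theory of \S11.3 of \cite{G}), triviality for $\{H_n\}$ yields triviality for $\{G_n\}$.

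For the converse I would argue the contrapositive: assuming each $G_n$ countable and $\{G_n\}$ not semistable, I construct an $(x_n)$ for which the recursion has no solution, so $\varprojlim^1\{G_n\}$ is nontrivial. Non-semistability gives an index $m$ for which the images $J_k=\mathrm{im}(G_k\to G_m)\subseteq G_m$ do not stabilize, hence there are infinitely many $k$ with $J_{k+1}\subsetneq J_k$. The key observation is that if the recursion has any solution then, unfolding $z_m=x_mf_m(z_{m+1})$ downward, the $G_m$-coordinate must satisfy $z_m\in Q_kJ_k$ for all $k$, where $Q_k=x_mf_m(x_{m+1})\cdots(f_m\cdots f_{k-2})(x_{k-1})$. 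These cosets descend, $Q_{k+1}J_{k+1}\subseteq Q_kJ_k$, because $Q_{k+1}=Q_k\cdot(f_m\cdots f_{k-1})(x_k)$ with $(f_m\cdots f_{k-1})(x_k)\in J_k$; moreover, letting $x_k$ range over $G_k$ makes this last factor range over all of $J_k$, so I can place $Q_{k+1}J_{k+1}$ in any chosen $J_{k+1}$-coset of $Q_kJ_k$, and at a strict-drop index there are at least two such cosets. A solution forces $z_m\in\bigcap_kQ_kJ_k$, so it suffices to choose the $x_k$ making this intersection empty.

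This last step is where countability is essential and is the main obstacle. I would enumerate $G_m=\{a_1,a_2,\dots\}$ and diagonalize: at the $j$-th strict-drop index, having already fixed $Q_k$, I choose $x_k$ so that the coset $Q_{k+1}J_{k+1}$ avoids $a_j$ (possible because $Q_kJ_k$ splits into at least two $J_{k+1}$-cosets), while at all other indices I take $x_k$ trivial, and for $n<m$ I set $x_n=1$. Then every $a_j$ is excluded from the tail coset $Q_{k+1}J_{k+1}$, hence from $\bigcap_kQ_kJ_k$, so that intersection is empty, no valid $z_m$ exists, and $(x_n)\not\sim(1)$. The countability of $G_m$ is exactly what allows all of its elements to be killed one at a time; over an uncountable group a strictly descending chain of cosets can be forced to have nonempty intersection, which is precisely why the hypothesis cannot be dropped.
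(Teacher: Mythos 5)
Your proof is correct, but note at the outset that the paper contains no proof of this statement to compare against: it is quoted verbatim from Geoghegan's book (Theorem 11.3.2 of \cite{G}), so the comparison below is with the standard argument in that reference. Your converse (countable) direction is complete as written and is essentially that standard argument: reformulating $(x_n)\sim(1,1,\dots)$ as solvability of the recursion $f_n(z_{n+1})=x_n^{-1}z_n$, observing that any solution forces $z_m\in\bigcap_k Q_kJ_k$, and diagonalizing against an enumeration of the countable group $G_m$, using that a strict drop $J_{k+1}\subsetneq J_k$ leaves at least two available left $J_{k+1}$-cosets at each stage. All the coset bookkeeping there checks out.

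The forward direction is also correct, but it hinges on the one fact you cite rather than prove: pro-isomorphism invariance of $\varprojlim^1$. That is a true and standard fact, but it is a genuine theorem whose proof (an interleaving/reindexing computation) is comparable in length to what it replaces, so your argument is only as self-contained as that citation. You can close this by noticing that the coset bookkeeping you set up for the converse already proves the forward direction directly, with no reduction to the epimorphic case and no pro-invariance. For each $n$ and $k>n$ set $Q_k^{(n)}=x_nf_n(x_{n+1})\cdots(f_n\cdots f_{k-2})(x_{k-1})$ and $J_k^{(n)}=\mathrm{im}(G_k\to G_n)$; unfolding the recursion shows that the set of values of $z_n$ extendable to a partial solution $(z_n,\dots,z_k)$ is exactly the coset $Q_k^{(n)}J_k^{(n)}$, and these descend in $k$. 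Semistability makes each such chain stabilize: for $k\geq\phi(n)$ the increment $(f_n\cdots f_{k-1})(x_k)$ lies in $J_k^{(n)}=H_n$, so $Q_{k+1}^{(n)}J_{k+1}^{(n)}=Q_k^{(n)}J_k^{(n)}$. Hence $C_n:=\bigcap_{k>n}Q_k^{(n)}J_k^{(n)}$ is a nonempty coset of $H_n$, and $x_nf_n(C_{n+1})=C_n$ because $x_nf_n(Q_k^{(n+1)})=Q_k^{(n)}$ and $f_n(J_k^{(n+1)})=J_k^{(n)}$. Choosing $z_1\in C_1$ and inductively $z_{n+1}\in C_{n+1}$ with $x_nf_n(z_{n+1})=z_n$ solves the recursion for every $(x_n)$, so $\varprojlim^1\{G_n\}$ is trivial. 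This makes the whole proof self-contained and exhibits the two halves of the theorem as one computation: semistability forces the coset chains to stabilize (solutions always exist), while a non-stabilizing chain in a countable group can be steered, as you did, to have empty intersection (some $(x_n)$ admits no solution).
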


\begin{theorem} \label{pi1} 
Let $K$ be a locally finite, connected CW-complex. Suppose $r$ and $s$ are proper rays in $K$ that are properly homotopic in  $K$. Suppose $\{C_i\}_{i=0}^\infty$ and $\{D_i\}_{i=0}^\infty$ are cofinal sequences of compact sets, and  $0=x_0<x_1<\cdots$ and $0=y_0<y_1<\cdots$ are sequences of real numbers such that for $i\geq 1$, $r([x_i,\infty))\subset K-C_{i-1}$ and $s([y_i,\infty))\subset K-D_{i-1}$. Then the following inverse sequences of groups are pro-isomorphic:
$$\pi_1(K-C_0,r(x_1)){\buildrel p_1\over \longleftarrow}\pi_1(K-C_1,r(x_2)) {\buildrel p_2\over \longleftarrow}\cdots$$
$$\pi_1(K-D_0,s(y_1)){\buildrel q_1\over \longleftarrow}\pi_1(K-D_1,s(y_2)) {\buildrel q_2\over \longleftarrow}\cdots$$
Here the map $p_i:\pi_1(K-C_i,r(x_{i+1})) \to \pi_1(K-C_{i-1},r(x_i))$ takes the class $[\alpha]$ to $[\tau_i \alpha \tau_i^{-1}]$ where $\tau_i$ is the restriction of $r$ to $[x_{i}, x_{i+1}]$ and similarly for $q_i$. 
\end{theorem}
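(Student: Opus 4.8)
The plan is to extract the pro-isomorphism data of Definition \ref{DProI} directly from a proper homotopy. Using that $r$ and $s$ are properly homotopic, fix a proper map $H\colon[0,\infty)\times[0,1]\to K$ with $H(t,0)=r(t)$ and $H(t,1)=s(t)$. The single consequence of properness I will invoke is that for every compact $E\subset K$ the set $H^{-1}(E)$ is compact, so there is a threshold $\theta(E)$ with $H([\theta(E),\infty)\times[0,1])\subset K-E$; since $r$ and $s$ are themselves proper we may enlarge $\theta(E)$ so that $r([\theta(E),\infty))$ and $s([\theta(E),\infty))$ also miss $E$. Abbreviate $G_i=\pi_1(K-C_{i-1},r(x_i))$ and $H_i=\pi_1(K-D_{i-1},s(y_i))$, and recall that each bonding map $p_i$ (resp. $q_i$) is the composite of the inclusion-induced map with conjugation by the segment of $r$ (resp. $s$) joining the two base points.

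First I would fix the interleaving. Choose $0<m_1<m_2<\cdots$ and $0<n_1<n_2<\cdots$ inductively so that the compacta nest as
$$D_{n_k-1}\subset C_{m_k-1}\subset D_{n_{k+1}-1}\subset C_{m_{k+1}-1}\subset\cdots,$$
which is possible because both filtrations are cofinal. The comparison maps are then conjugate-and-include maps, but the connecting paths must be chosen to lie in the correct complement. Here is the device that makes this painless: to build the path realizing $f_{n_k}\colon G_{m_k}\to H_{n_k}$, start at $r(x_{m_k})$, travel \emph{outward} along $r$ to some far parameter $x_J$ with $x_J\ge\theta(D_{n_k-1})$ and $x_J\ge y_{n_k}$, cross from $r(x_J)$ to $s(x_J)$ along $H(x_J,\cdot)$, and return along $s$ to $s(y_{n_k})$. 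The outward $r$-segment lies in $K-C_{m_k-1}\subset K-D_{n_k-1}$ by the hypothesis on $r$ and the nesting, the crossing segment misses $D_{n_k-1}$ because $x_J\ge\theta(D_{n_k-1})$, and the returning $s$-segment lies in $s([y_{n_k},\infty))\subset K-D_{n_k-1}$; so the whole connecting path $\beta_k$ lies in $K-D_{n_k-1}$, and $f_{n_k}[\alpha]=[\overline{\beta_k}\,\alpha\,\beta_k]$ is a well-defined homomorphism (its class is independent of $J$, since two choices differ by an $H$-rectangle beyond $\theta(D_{n_k-1})$). I would define $g_{m_k}\colon H_{n_{k+1}}\to G_{m_k}$ by the symmetric recipe, traveling outward along $s$ before crossing back to $r$, confined to $K-C_{m_k-1}$.

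It then remains to check the two families of triangles of Definition \ref{DProI}: that $g_{m_k}\circ f_{n_{k+1}}$ equals the $G$-bonding map $G_{m_{k+1}}\to G_{m_k}$, and that $f_{n_k}\circ g_{m_k}$ equals the $H$-bonding map $H_{n_{k+1}}\to H_{n_k}$. Each composite is once more conjugation by a connecting path followed by inclusion, and after the backtracking $r$- and $s$-segments cancel, the composite connecting path differs from the bonding map's ray segment by a single closed loop that is the boundary of a rectangle $H([a,b]\times[0,1])$ for far-out parameters $a,b$. Choosing the outward excursions far enough (all thresholds involved are thresholds of the \emph{fixed} compacta $C_{m_k-1}$, $D_{n_k-1}$, each beaten by taking parameters large), this rectangle maps into $K-C_{m_k-1}$ (resp. $K-D_{n_k-1}$), so $H$ restricted to it is a homotopy rel endpoints confining the two connecting paths to the same class; the two conjugations therefore induce the same homomorphism, and the triangle commutes.

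I expect the essential content to be concentrated in two places. Conceptually, the heart is the observation that the squares of the comparison diagram commute only up to a defect loop, and that this defect loop always bounds a coordinate rectangle of $H$ lying in the required complement — this is exactly where properness of $H$ is used and is the step I would write most carefully. The remaining difficulty is purely organizational: arranging one inductive choice of the staircase $(m_k),(n_k)$ together with the auxiliary outward parameters so that the nesting of compacta, the orderings of the parameters $x_{m_k},y_{n_k}$, and all the thresholds $\theta(\cdot)$ hold simultaneously. The ``travel outward first'' device is what keeps this bookkeeping honest, since it ensures every threshold that appears is the threshold of an already-chosen compact set rather than a circular same-index condition.
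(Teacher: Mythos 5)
Your proposal is correct and is essentially the paper's own argument: both extract the comparison maps of Definition \ref{DProI} from the proper homotopy $H$ by conjugating along connecting paths built from the rays and vertical slices $H(a,\cdot)$, and both verify the commuting triangles by observing that the defect loops bound coordinate rectangles $H([a,b]\times[0,1])$ which, by properness, lie in the required complements. The only difference is organizational: the paper first passes to subsequences and reparametrizes so that $x_k=y_k=k$ and $H^{-1}(C_k\cup D_k)\subset [0,k)\times[0,1]$, letting it cross at the current level via $\gamma_k=H(k,\cdot)$, whereas you keep the original data, interleave the two compact filtrations explicitly, and cross only beyond an explicit threshold after travelling outward along the rays.
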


\begin{proof} 
Let $H:[0,\infty)\times [0,1] \to K$ be a proper map such that $H(t,0)=r(t)$ and $H(t,1)=s(t)$. Any subsequence of an inverse sequence of groups and the inverse sequence itself are  pro-isomorphic. Combining that fact with a reparametrization of $[0,\infty)\times [0,1]$ we can assume that $x_k=y_k=k$ and:
$$H^{-1} (C_k\cup D_k)\subset [0,k)\times [0,1]$$
Let $\delta_k$ be the restriction of $s$ to $[k,k+1]$ and note that $\tau_k$ is now the restriction of $r$ to $[k,k+1]$. Let $\gamma_k:[0,1]\to K$ by $\gamma_k(t)=H(k,t)$ (see Figure \ref{Fpi1}).

\begin{figure}
\vbox to 3in{\vspace {-2in} \hspace {-.5in}
\hspace{-1 in}
\includegraphics[scale=1]{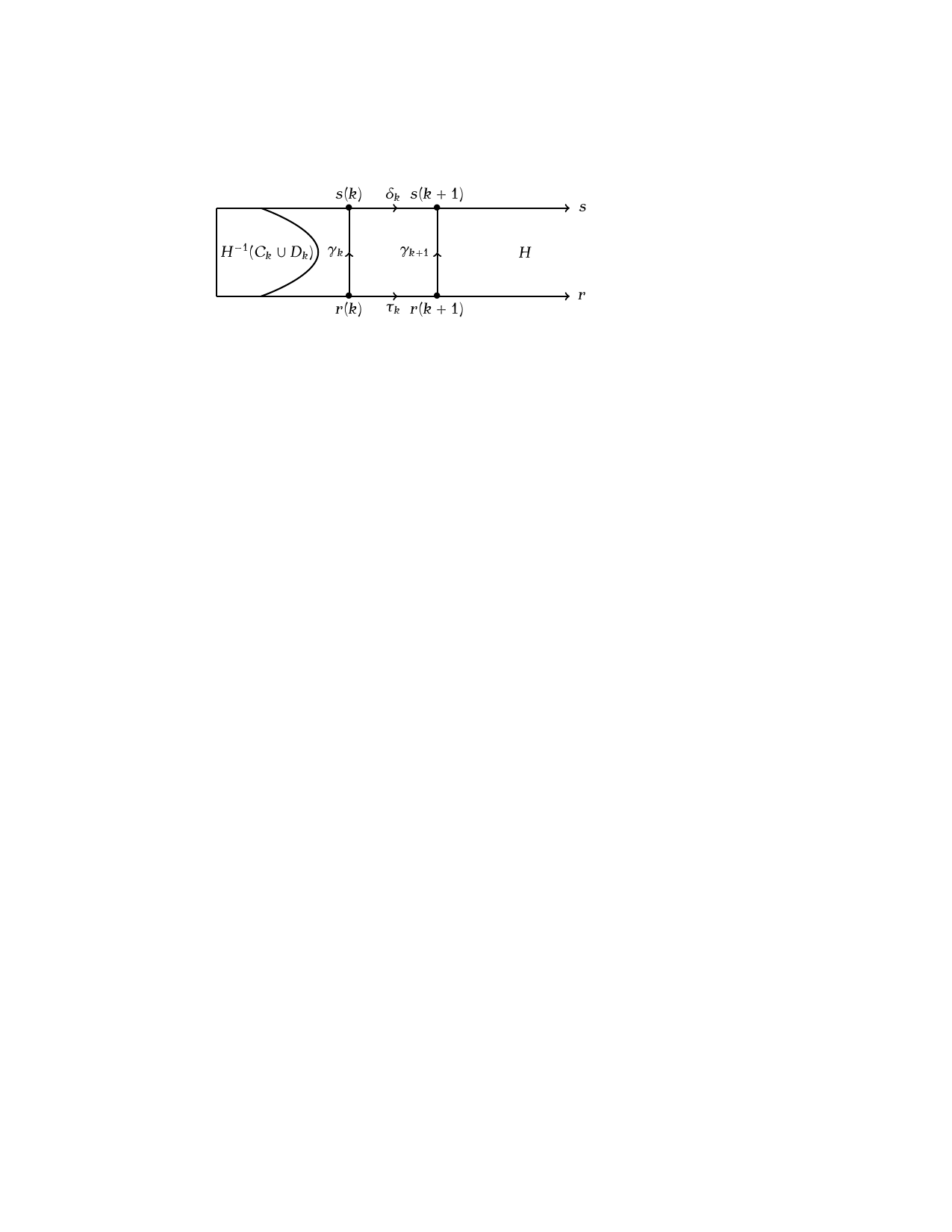}
\vss }
\vspace{-2.2in}
\caption{$r$ is properly homotopic to $s$} 
\label{Fpi1}
\end{figure}

\medskip

\medskip

\noindent ($\ast$) Note that $H$ restricted to $[k,k+1]\times [0,1]$ is a homotopy killing the loop $(\delta_{k}^{-1},\gamma_k^{-1},\tau_{k},\gamma_{k+1})$ in $K-(C_k\cup D_k)$ so that $\delta_k$ is homotopic rel$\{0,1\}$ to $(\gamma_k^{-1},\tau_{k},\gamma_{k+1})$ in $K-(C_k\cup D_k)$.

\medskip

\medskip

%$$[(\delta_{i}^{-1},\gamma_i,\tau_{i+1},\gamma_{i+1}^{-1})]=1$$

\noindent Let $f_k:\pi_1(K-C_k, r(k))\to \pi_1(K-D_k, s(k))$ and 

\noindent $g_k:\pi_1(K-D_{k+1},s(k+1)) \to \pi_1(K-C_k,r(k))$ be defined by: 
$$f_k([\alpha])=[(\gamma_k^{-1}, \alpha, \gamma_k)]\hbox{ and } g_k([\beta])=[(\tau_{k}, \gamma_{k+1}, \beta, \gamma_{k+1}^{-1},\tau_{k}^{-1})]$$

According to Definition \ref{DProI}, it is enough to show that the following diagram commutes for all $k$:

\vspace {-.5in}
\vbox to 2in{\vspace {-1in} \hspace {-1in}
%\hspace{-1 in}
\includegraphics[scale=1]{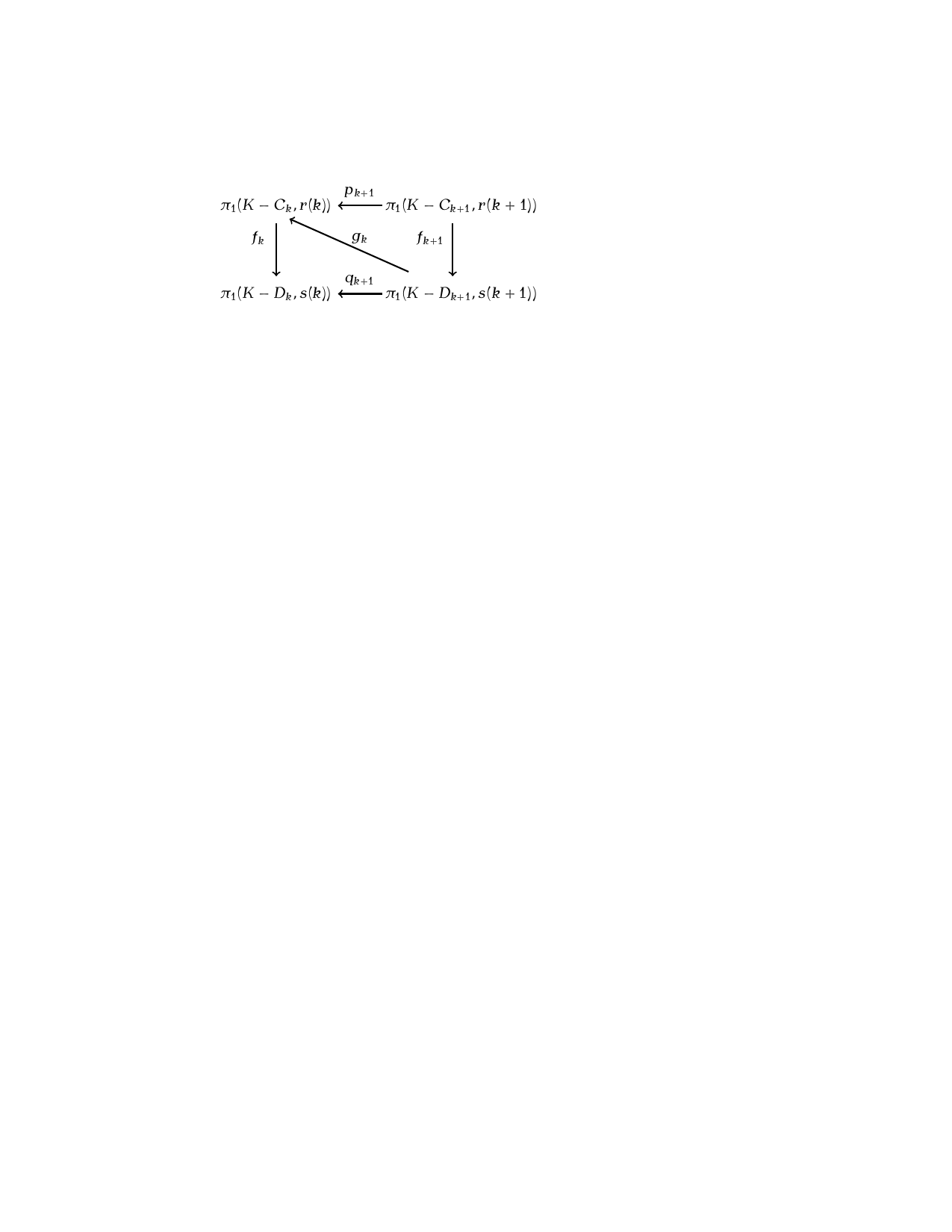}
\vss }
\vspace{-.1in}

Let $\alpha$ be a loop at $r(k+1)$ and $\beta$ be a loop at $s(k+1)$ then:
$$g_k(f_{k+1}([\alpha]))=g_k[(\gamma_{k+1}^{-1},\alpha, \gamma_{k+1})]=(\tau_{k},\alpha,\tau_{k}^{-1})=p_{k+1}([\alpha])$$

$$f_k(g_k([\beta]))=f_k([(\tau_{k},\gamma_{k+1},\beta,\gamma_{k+1}^{-1},\tau_{k}^{-1})] =$$
$$[(\gamma_k^{-1},\tau_{k},\gamma_{k+1},\beta,\gamma_{k+1}^{-1},\tau_{k}^{-1},\gamma_k]\in \pi_1(K-D_k,s(k))$$
By ($\ast$), $\delta_{k}$ is homotopic rel$\{0,1\}$ to $(\gamma_k^{-1}, \tau_{k},\gamma_{k+1})$ in $K-D_k$. Combining:
$$f_k(g_k([\beta]))=[(\delta_{k}, \beta, \delta_{k}^{-1})]=q_{k+1}([\beta])$$
\end{proof}

Theorem \ref{pi1} implies the following definition is independent (up to pro-isomorphism) of choice of cofinal sequence of compact sets.

\begin{definition}
Suppose $K$ is a locally finite, connected CW-complex, and $r$ is a proper ray in $X$. Let $\{C_i\}_{i=0}^\infty$ be a cofinal sequence of compact sets in $K$ and $0=x_0<x_1<\cdots$ a sequence of real numbers such that for $i\geq 1$, $r([x_i,\infty))\subset K-C_{i-1}$. Then the {\it fundamental pro-group of $K$ based at $r$} (denoted $\pi_1(\varepsilon K,r)$) is (up to pro-isomorphism) the inverse sequence of groups:
$$\pi_1(K-C_0,r(x_1)){\buildrel p_1\over \longleftarrow}\pi_1(K-C_1,r(x_2)) {\buildrel p_2\over \longleftarrow}\cdots$$ 
Here the map $p_i:\pi_1(K-C_i,r(x_{i+1})) \to \pi_1(K-C_{i-1},r(x_i))$ takes the class $[\alpha]$ to $[\tau_i \alpha \tau_i^{-1}]$ where $\tau_i$ is the restriction of $r$ to $[x_{i}, x_{i+1}]$. The inverse limit of this inverse sequence is called the {\it fundamental group at $\infty$ of $K$ based at $r$} and is denoted $\pi_1^e(K,r)$. 
\end{definition}
The next result is a change of base ray result that follows directly from Theorem \ref{pi1}. %The proof is similar to the proof that fundamental group of a path connected space is independent of base point. 
\begin{theorem} \label{pro1} 
 Suppose $K$ is a locally finite, connected, CW-complex. If  $r$ and $s$ are proper rays that are properly homotopic in $K$, then $\pi_1(\varepsilon K,r)$ is pro-isomorphic to $\pi_1(\varepsilon K,s)$. 
 \end{theorem}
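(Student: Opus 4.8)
The plan is to reduce the statement directly to Theorem \ref{pi1}, which already establishes a pro-isomorphism between the inverse sequences associated to two properly homotopic rays, once one works with explicitly parametrized rays and fixed cofinal sequences. The key point is that the fundamental pro-group is defined only up to pro-isomorphism and, by the remark following its definition, is insensitive to the choice of cofinal sequence of compact sets. I am therefore free to compute both $\pi_1(\varepsilon K, r)$ and $\pi_1(\varepsilon K, s)$ using one and the same cofinal sequence.

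First I would fix a single cofinal sequence of compact sets $\{C_i\}_{i=0}^\infty$ in $K$. Since $r$ and $s$ are proper rays, each eventually leaves every compact set, so I can choose real numbers $0 = x_0 < x_1 < \cdots$ and $0 = y_0 < y_1 < \cdots$ with $r([x_i,\infty)) \subset K - C_{i-1}$ and $s([y_i,\infty)) \subset K - C_{i-1}$ for all $i \geq 1$; this is exactly where properness of the two rays is used. By the definition of the fundamental pro-group, $\pi_1(\varepsilon K, r)$ is pro-isomorphic to the inverse sequence $\{\pi_1(K - C_i, r(x_{i+1}))\}$ with the indicated conjugation bonding maps, and likewise $\pi_1(\varepsilon K, s)$ is pro-isomorphic to $\{\pi_1(K - C_i, s(y_{i+1}))\}$.

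Next I would apply Theorem \ref{pi1} with the choice $D_i = C_i$: since $r$ and $s$ are properly homotopic, that theorem yields a pro-isomorphism between $\{\pi_1(K - C_i, r(x_{i+1}))\}$ and $\{\pi_1(K - C_i, s(y_{i+1}))\}$. Chaining the three pro-isomorphisms and using that pro-isomorphism is an equivalence relation (in particular transitive) gives that $\pi_1(\varepsilon K, r)$ is pro-isomorphic to $\pi_1(\varepsilon K, s)$, as desired.

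There is essentially no hard step here; the entire content is carried by Theorem \ref{pi1}. The only things to be careful about are bookkeeping: that the same cofinal sequence may legitimately be used for both rays (immediate, since cofinal sequences are chosen independently of any base ray) and that the required parametrizations exist (immediate from properness). If anything counts as an obstacle, it is merely verifying that pro-isomorphisms compose to a pro-isomorphism — a standard fact about the pro-category that I would invoke rather than reprove.
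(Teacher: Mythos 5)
Your proposal is correct and matches the paper's approach exactly: the paper states that Theorem \ref{pro1} ``follows directly from Theorem \ref{pi1},'' and your argument simply makes explicit the (routine) bookkeeping of applying Theorem \ref{pi1} with $D_i = C_i$ and invoking transitivity of pro-isomorphism. Nothing further is needed.
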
 
Theorem \ref{pro1} implies the following definition is independent (up to pro-isomorphism) of choice of base ray.

\begin{definition} 
Suppose $K$ is a locally finite CW-complex that is connected, 1-ended and semistable at $\infty$. Then {\it the fundamental pro-group of $K$} (denoted $\pi_1(\varepsilon K)$)  is $\pi_1(\varepsilon K,r)$, for any proper ray $r$ in $K$. The {\it fundamental group at $\infty$} of $K$ is $\pi_1^e(K,r)$ (the inverse limit of $\pi_1(\varepsilon K,r)$) and is denoted $\pi_1^e(K)$. 
\end{definition}

\begin{theorem} \label{SSloop}
Let $K$ be a connected locally finite CW-complex. Suppose $r$ is a proper ray in $K$ and $\pi_1(\varepsilon K, r)$ is semistable.  Then for any compact set $C\subset K$ there is a compact set $D(C)\subset K$ such that for any %any third compact set $E$ and 
loop  $\alpha$ based at $r(x)$ and with image in $K-D$, the proper rays $r|_{[x,\infty)}$ and $(\alpha, r|_{[x,\infty)})$ are properly homotopic rel$\{r(x)\}$ %to a loop in $K-E$ by a homotopy 
in $K-C$. 
\end{theorem}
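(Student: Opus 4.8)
The plan is to realize the required proper homotopy as an infinite ``ladder'' joining the two rays. Note that $R:=r|_{[x,\infty)}$ and $\alpha\cdot R$ trace out the same path beyond the initial loop $\alpha$, so the rungs of the ladder will be correction loops $g_0,g_1,g_2,\dots$ placed at points $r(s_k)$ marching out to infinity along $r$, with $g_0$ the constant loop at $r(x)$. The squares of the ladder close up precisely when the $g_k$ form a thread of the fundamental pro-group lying over the class $[\alpha]$, and both properness of the homotopy and the requirement that it avoid $C$ will come from choosing the rungs to be represented arbitrarily deep in $K$. The whole point of semistability (Mittag--Leffler) is to manufacture such a thread.

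First I would fix the data presenting $\pi_1(\varepsilon K,r)$: a cofinal sequence $\{C_i\}$, basepoints $r(x_i)$, segments $\tau_i=r|_{[x_i,x_{i+1}]}$, and the stabilizing function $\phi$ from the definition of semistability, so that $\operatorname{im}\big(\pi_1(K-C_k)\to\pi_1(K-C_m)\big)$ equals the stable image $\mathcal I_m$ for every $k\geq\phi(m)$. Choose $m_0$ with $C\subset C_{m_0}$, enlarging $m_0$ so that $C_{m_0}$ also contains the compact initial arc of $r$ up to its last exit from $C$. I would then set $D=C_{\phi(m_0)}$, enlarged in the same way. This choice is what makes the lemma work: if $\alpha$ is based at $r(x)$ with image in $K-D$, then on the one hand $r(x)\notin D$ forces the entire tail $r|_{[x,\infty)}$ into $K-C$ (so that the homotopy we build can live in $K-C$ at all), and on the other hand $\alpha\subset K-D=K-C_{\phi(m_0)}$ guarantees that the class of $\alpha$, transported along $r$ into $\pi_1(K-C_{m_0},r(x))$, is a \emph{stable} class, i.e. lies in $\mathcal I_{m_0}$.

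Next I would build the thread. Pick $x=s_0<s_1<\cdots\to\infty$ so that $r([s_k,\infty))\subset K-C_{m_k}$ with $m_k=\phi(m_{k-1})$; this is possible because $r$ is proper. The inverse sequence $\Pi_k:=\pi_1(K-C_{m_k},r(s_k))$, with bonding maps given by conjugation by the segments $r|_{[s_{k-1},s_k]}$, is a subsequence of $\pi_1(\varepsilon K,r)$ up to a basepoint shift along $r$, hence is again semistable. By Theorem \ref{MLSS} (in the form that stable images carry surjective bonding maps) the stable images $\mathcal I_k^{\Pi}$ satisfy $\mathcal I_k^{\Pi}\twoheadrightarrow\mathcal I_{k-1}^{\Pi}$. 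Starting from $[g_0]=[\alpha]\in\mathcal I_0^{\Pi}$, I would lift successively through these surjections to obtain loops $g_k\in\mathcal I_k^{\Pi}$ with $[g_{k-1}]$ equal to the conjugate $[\,r|_{[s_{k-1},s_k]}\,g_k\,r|_{[s_{k-1},s_k]}^{-1}\,]$ in $\Pi_{k-1}$. Since each $g_k$ lies in $\mathcal I_k^{\Pi}$, it is representable by a loop lying deep in $K-C_{m_{k+1}}$.

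Finally I would assemble the ladder with rungs $g_k$ at $r(s_k)$ and the initial and degenerate squares between consecutive rungs. The thread relations are exactly what make each square bound: the first square (between the constant rung at $r(x)$ and $g_1$) closes because $[\alpha]$ is the conjugate of $[g_1]$, and the $k$-th square ($k\geq 2$), whose top and bottom are the common path $r|_{[s_{k-1},s_k]}$, closes because $[g_{k-1}]$ is the conjugate of $[g_k]$; each filling disc can be taken in $K-C_{m_{k-1}}\subseteq K-C$. Concatenating the fillings gives a map $H:[x,\infty)\times[0,1]\to K-C$ that is constant at $r(x)$ on $\{x\}\times[0,1]$ and restricts to $R$ and $\alpha\cdot R$ on the two ends. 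The step I expect to be the real work is the properness and continuity bookkeeping for this infinite concatenation: because the $k$-th square, together with its rung and segment, lies in $K-C_{m_{k-1}}$ with $m_{k-1}\to\infty$, every compact subset of $K-C$ meets only finitely many squares, which yields properness; continuity at the terminal parameter follows since the nontrivial part of $H$ recedes to infinity. The remaining ingredients --- the basepoint transports along $r$ needed to compare $\alpha$ with classes at the standard basepoints, and the reparametrizations aligning $\alpha\cdot R$ with $R$ beyond the loop --- are routine but must be tracked to keep everything rel $\{r(x)\}$.
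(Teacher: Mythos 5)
Your proposal is correct and takes essentially the same route as the paper's proof: the paper likewise uses semistability to replace a loop in $K-C_n$ based on $r$ by a loop in $K-C_{n+1}$ via a homotopy (rel the track of $r$) in $K-C_{n-1}$ --- your thread $g_0,g_1,g_2,\dots$ obtained from surjectivity of the stable images is exactly this sequence of receding loops --- and then stacks the resulting square fillings into a single homotopy, with properness coming from the fact that the $k$-th filling avoids $C_{k-1}$ and these compacta exhaust $K$. The only cosmetic difference is that you package the receding loops in the pro-group language of threads and stable images (with the attendant basepoint-transport bookkeeping), where the paper states it as an iterated homotopy claim after passing to a subsequence of the $C_i$.
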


\noindent Geometrically this means if $\pi_1(\varepsilon K,r)$ is semistable, then loops in $K-D$ that are based on $r$  can be properly ``slid off to infinity" along $r$ by a homotopy in  $K-C$.% to a loop in $K-E$. 

\begin{proof} 
Certainly if the theorem is true for some $x$ with $r(x)\in K-D$ then it is true for all $x$ such that $r(x)\in K-D$. 
Let $\{C_i\}_{i=0}^\infty$ be a cofinal sequence of compact subsets of $K$ and $x_0<x_1<\cdots$ a sequence of integers such that $r([x_i,\infty))\subset K-C_i$ for all $i$. If $\tau:([0,1],\{0,1\})\to (K-C_n, r(x_n))$, then let $[\tau]$ be the corresponding element of $\pi_1(K-C_n,r(x_n))$. Without loss, assume that $C\subset C_0$. We are assuming that
$$\pi_1(K-C_0,r(x_0)){\buildrel p_1\over \longleftarrow}\pi_1(K-C_1,r(x_1)) {\buildrel p_2\over \longleftarrow}\cdots$$
is semistable.  So there is an integer $N$ such that for all $n\geq N$ the image of $\pi_1(K-C_n, r(x_n))$ in $\pi_1(K-C_0, r(x_0))$ is equal to the image of $\pi_1(K-C_N,r(x_N))$ in $\pi_1(K-C_0, r(x_0))$. Then for any loop $\alpha$ based at $r(x_N)$ and with image in $K-C_N$, and any integer $n>N$, there is a loop $\beta_n$ based at $r(x_n)$ such that the image of the elements  $[\alpha]\in \pi_1(K-C_N,r(x_n))$ and $[\beta_n]\in \pi_1(K-C_n, r(x_n))$ are the same in $\pi_1(K-C_0,r(x_0))$. This means that $\alpha$ and $\beta_n$ are homotopic rel$\{r\}$ by a homotopy in $K-C$. Then any loop based at $r(x_N)$, with image in $X-C_N$ is homotopic rel$\{r\}$ and by a homotopy in $K-C_0$ to a loop in $K-C_n$ for any $n>N$.  %Let $D=C_N$ and simply observe that $E\subset C_n$ for some $n\geq N_0$ 

\begin{figure}
\vbox to 3in{\vspace {-2in} \hspace {-.5in}
\hspace{-1 in}
\includegraphics[scale=1]{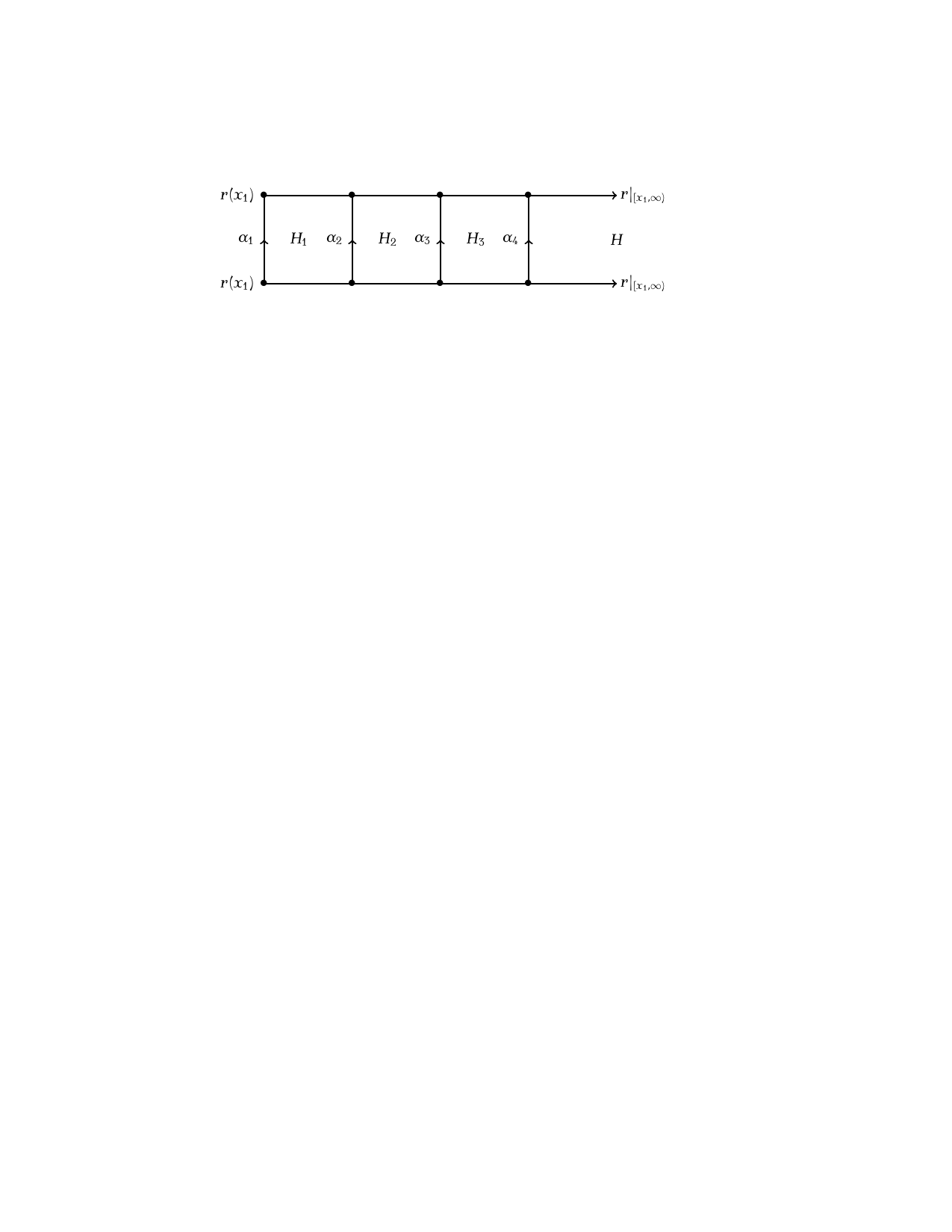}
\vss }
\vspace{-2.4in}
\caption{Stacking Homotopies} 
\label{FigSSloop}
\end{figure}

Without loss assume $C_N=C_1$. Inductively we may assume that if $n>0$ and $\alpha$ is a loop in $K-C_n$ based at $r(x_n)$ then $\alpha$ is homotopic rel$\{r\}$ to a loop in $K-C_{n+1}$ based at $r(x_{n+1})$ by a homotopy in $K-C_{n-1}$. Let $\alpha_1$ be a loop in $K-C_1$ based at $r(x_1)$. Then $\alpha_1$ is homotopic rel$\{r\}$ to a loop $\alpha_2$ with image in $K-C_2$ and based at $r(x_2)$ by a homotopy $H_1$ in $K-C_0$. Inductively $\alpha_n$ is a loop in $K-C_n$ based at $r(x_n)$ which is homotopic rel$\{r\}$ to a loop $\alpha_{n+1}$ with image in $K-C_{n+1}$ and based at $r(x_{n+1})$ by a homotopy $H_n$ in $K-C_{n-1}$. Stacking the homotopies $H_1, H_2,\ldots$ as in Figure \ref{FigSSloop} produces the desired proper homotopy of $r|_{[x_1,\infty)}$ to $(\alpha_1, r|_{[x_1,\infty)})$ rel$\{r(x_1)\}$
 in $K-C_0$. Here $C_1$ plays the role of $D$ in the statement of our theorem. 
\end{proof}

(Theorem 2.1, \cite{M1}), and Lemma 9 of \cite{M86},  provide several equivalent notions of semistability. A slight modification of proofs give the following result.

\begin{theorem}\label{ssequiv} 
Suppose $K$ is a locally finite, connected CW-complex. Then the following are equivalent:
\begin{enumerate}
\item $K$ has  semistable fundamental group at $\infty$.
\item Let $\mathcal E$ be an end of $K$ and $r$ some 
(equivalently any) proper ray in $\mathcal E$, then the fundamental pro-group of $K$ based at $r$ ($\pi_1(\varepsilon K,r)$) is semistable (equivalently $\pi_1(\varepsilon K,r)$ is pro-isomorphic to an inverse sequence of groups with epimorphic bonding maps - see Theorem \ref{MLSS}). 
\item Let $\mathcal E$ be an end of $K$, $r$ some 
(equivalently any) proper ray in $\mathcal E$, and $C$ a compact subset of $K$, then there is a compact set $D$ in $K$ such that for any third compact set $E$ in $K$ and loop $\alpha$ based on $r$ and with image in $K-D$, $\alpha$ is homotopic rel$\{r\}$ to a loop in $K-E$, by a homotopy with image in $K-C$. 
\item For any compact subset $C$ of $K$ there is a compact subset $D$ of $K$ such that if $r$ and $s$ are proper rays based at $v$, converging to the same end of $K$ and with image in $K-D$, then $r$ and $s$ are properly homotopic rel$\{v\}$, by a proper homotopy in $K-C$. 
\item If $C$ is compact in $K$ then there is a compact set $D$ in $K$ such that for
any third compact set $E$ in $K$ and proper rays $r$ and $s$ based at a vertex $v$, converging to the same end of $K$ and with
image in $K-D$, then there is a path $\alpha$ in $K-E$ connecting points of $r$ and
$s$ such that the loop determined by $\alpha$ and the initial segments of $r$
and $s$ is homotopically trivial in $K-C$.

\medskip

\hbox{If in addition $K$ is simply connected, then:}

\item If $r$ and $s$ are proper rays based at $v$ and converging to the same end of $K$, then
$r$ and $s$ are properly homotopic $rel\{v\}$. (Also see Lemma \ref{FGSS6}.)
\end{enumerate}
\end{theorem}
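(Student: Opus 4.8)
The plan is to run a single cycle of implications through the algebraic condition (2), the semistability (Mittag--Leffler property) of the fundamental pro-group, which serves as the hub; conditions (3), (4), (5) are geometric reformulations of it, (1) is the global proper-homotopy statement, and (6) is the simply connected upgrade. Throughout I fix a base ray $r$ converging to the end $\mathcal{E}$, a cofinal sequence of compacta $C_0\subset C_1\subset\cdots$, and reals $x_0<x_1<\cdots$ with $r([x_i,\infty))\subset K-C_{i-1}$, so that the pro-group is $\pi_1(K-C_0,r(x_1))\leftarrow\pi_1(K-C_1,r(x_2))\leftarrow\cdots$, as in the definition of $\pi_1(\varepsilon K,r)$. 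Since $K$ is locally finite, each group in the sequence is countable, so Theorem \ref{lim1} lets me pass freely between ``the pro-group is semistable'' and ``$\varprojlim^1$ of the pro-group is trivial.'' I would also invoke Theorem \ref{pro1} at the outset, so that the truth of (2) is independent of the base ray within $\mathcal{E}$; this is what legitimizes the ``for some (equivalently any)'' phrasing.

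The first and easiest block is $(2)\Rightarrow(3)$, essentially packaged in Theorem \ref{SSloop}: for the given $C$ it produces a compact $D$ so that any loop $\alpha$ based on $r$ with image in $K-D$ is properly homotopic rel $\{r(x)\}$ to a tail of $r$ by a homotopy in $K-C$; properness means that beyond the preimage of a prescribed compact $E$ the homotopy avoids $E$, so truncating exhibits $\alpha$ as homotopic rel $r$ to a loop in $K-E$, which is (3). For the (redundant but clarifying) converse $(3)\Rightarrow(2)$: given $m$, apply (3) with $C=C_m$ to get $D$, pick $\phi(m)$ with $C_{\phi(m)}\supset D$, and read off that for every $k\ge\phi(m)$ each class in the image of $\pi_1(K-C_{\phi(m)})$ in $\pi_1(K-C_m)$ is carried by a loop in $K-C_k$, which is exactly the image-stabilization defining semistability. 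I would then pass from the single-loop statement (3) to the two-ray statement (5): given rays $r,s$ based at $v$ converging to $\mathcal{E}$ with image in $K-D$, a short path joining them far out (available because they share an unbounded component of the complement of a compactum) turns the pair into a loop based on $r$, and (3) applied to that loop yields a filling in $K-C$, i.e. (5).

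The next block assembles the global statements from the local ones. For $(5)\Rightarrow(4)$ I would stack fillings: running (5) over a cofinal sequence of compacta $E$ and concatenating the resulting squares exactly as in the stacking argument of Theorem \ref{SSloop} (Figure \ref{FigSSloop}) builds a \emph{proper} homotopy rel $\{v\}$ whose image stays in $K-C$, since each individual filling does. For $(4)\Rightarrow(1)$ I would remove the basepoint and full-image hypotheses: given arbitrary proper rays to $\mathcal{E}$, truncate to tails lying in $K-D$, join their initial points by a path in $K-D$ (possible as they share an unbounded component), apply (4) to the rebased tails, and splice back the fixed initial segments by the argument of Remark \ref{basess}; letting $C$ run to infinity and stacking yields a single proper homotopy, which is (1).

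The cycle closes with $(1)\Rightarrow(2)$, which I expect to be the main obstacle, since it is the one step where a purely geometric hypothesis must force the algebraic condition, and it requires realizing a failure of semistability as an honest pair of non-properly-homotopic rays. The plan is contrapositive: if (2) fails then, the groups being countable, Theorem \ref{lim1} furnishes a nontrivial element of $\varprojlim^1\pi_1(\varepsilon K,r)$, represented by a sequence of loops $(\alpha_i)$ that cannot be simultaneously slid off toward infinity; splicing $\alpha_i$ into $r$ at the $i$-th stage produces a proper ray $s$ converging to $\mathcal{E}$, and a proper homotopy from $r$ to $s$ would slide the $\alpha_i$ off, forcing the $\varprojlim^1$ class to be trivial, so $s$ is not properly homotopic to $r$, contradicting (1). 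Making precise the correspondence between the $\varprojlim^1$ class and the witnessing ray, and checking that the spliced ray is genuinely proper and asymptotic to $\mathcal{E}$, is the delicate part. Finally, for the simply connected case I would add $(1)\Leftrightarrow(6)$: here $(6)\Rightarrow(1)$ is again the basepoint reduction of Remark \ref{basess} and needs no extra hypothesis, while $(1)\Rightarrow(6)$ is where simple connectivity enters, since a proper homotopy between based rays traces out a loop of basepoints which, being null-homotopic in the simply connected $K$, can be absorbed to make the homotopy rel $\{v\}$.
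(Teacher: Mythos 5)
Your overall architecture is the paper's: Theorem \ref{SSloop} drives the passage from the algebraic condition (2) to the geometric ones, stacking builds the proper homotopies, and the closing step $(1)\Rightarrow(2)$ is the $\varprojlim^1$ computation with countability (Theorem \ref{lim1}). The paper runs that last step directly --- assuming (1), it shows any two elements of $\prod_{n>0}\pi_1(K-C_n,r(n))$ are equivalent, so $\varprojlim^1$ is trivial --- while your contrapositive splicing argument is the same computation read backwards; the ``delicate part'' you defer (extracting the equivalence from the proper homotopy) is exactly the paper's construction of the elements $h_i$ from the homotopy $F$. The treatment of (6) is identical. The structural difference is that you route the cycle as $2\Rightarrow3\Rightarrow5\Rightarrow4\Rightarrow1\Rightarrow2$, which promotes the links the paper treats as routine ($2\Leftrightarrow3$, $4\Leftrightarrow5$) into load-bearing steps of the cycle, and two of those steps are not carried out correctly.

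The concrete gap is in $(5)\Rightarrow(4)$. You assert the stacked homotopy is proper ``since each individual filling does [stay in $K-C$].'' Image in $K-C$ is not properness: with $C$ fixed, the fillings that (5) provides are only guaranteed to avoid $C$, so the $i$-th block of your stacked map may meet a fixed compact set $A\supsetneq C$ for every $i$, making the preimage of $A$ unbounded. This is precisely why, in the stacking arguments you cite (Theorem \ref{SSloop}, Figure \ref{FigSSloop}, and the paper's proof of $2\Rightarrow4$), the $i$-th homotopy is arranged to lie in $K-C_{i-1}$ for a \emph{cofinal} sequence $\{C_i\}$, and properness is then verified by: given compact $A$, choose $n$ with $A\subset C_n$; all blocks with $i\geq n+1$ miss $A$. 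Repairing your step requires applying (5) with a growing sequence of first compacta (not a fixed $C$ with only $E$ varying), rebasing to the tails of $r$ and $s$ at each stage --- at which point you have essentially reconstructed the paper's proof of $2\Rightarrow4$. A secondary problem is $(3)\Rightarrow(5)$: condition (3) pushes a loop off to infinity along the base ray; it does not fill it. The loop formed from $r$, $s$ and an \emph{arbitrary} far-out connecting path need not be null-homotopic in $K-C$; condition (5) only asserts that \emph{some} connecting path in $K-E$ makes the resulting loop trivial in $K-C$, so ``(3) applied to that loop yields a filling'' is a non sequitur. (By contrast, $(4)\Rightarrow(5)$ is immediate: truncate the proper homotopy from (4) along a vertical slice beyond the preimage of $E$.) There is also a uniformity issue your routing inherits: in (3) the compact set $D$ depends on the base ray, while in (5) a single $D$ must serve all pairs of rays based anywhere in $K-D$; the paper handles the within-end uniformity by the connecting-path conjugation trick at the end of its $2\Rightarrow4$ argument, and any correct version of your chain needs that step as well.
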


%\noindent By Theorem \ref{SSloop} (2 implies 3)

\begin{proof} ({\it 2} implies {\it 4}) 
Suppose  $r$ is a proper ray in the end $\mathcal E$ of $K$ and the inverse sequence of groups:
 $$(\ast) \ \ \ \ \ \ \ \pi_1(K-C_0,r(x_0)){\buildrel p_1\over \longleftarrow}\pi_1(K-C_1,r(x_1)) {\buildrel p_2\over \longleftarrow}\cdots$$ 
 is semistable. Here $r([x_i,\infty))\subset K-C_{i}$. Theorem \ref{SSloop} implies that  
%Let $\{C_i\}_{i=0}^\infty$ be a cofinal sequence of compact sets in $K$. 
for  any  compact set  $C\subset K$ there is a compact set $D(C)\subset K$, such that  for any %compact $E$ containing  $D$ and  
loop, $\alpha$,  in $K - D$ (with $\alpha$  based at  $r(x)$),  the proper rays $r|_{[x,\infty)}$ and $(\alpha,r|_{[x,\infty)})$ are properly homotopic rel$\{r(x)\}$ %$\alpha$  is homotopic rel$(r)$ to a loop  in  $K - E$ 
by  a homotopy whose image lies in $K - C$. 
Without loss  assume that  $D(C_i)\subset C_{i+1}$ for all $i$. So that: 

\medskip

\noindent ($\ast$) If $\alpha$ is a loop in  $K - C_i$, based at  $r(x)$,  then $r|_{[x,\infty)}$ is properly homotopic rel$\{r(x)\}$ to $(\alpha, r|_{[x,\infty)})$, by a homotopy in $K- C_{i-1}$. 

\medskip

Theorem \ref{ES} allows us to assume that $K-C_i$ is a union of unbounded path components. Let $C$ be any compact subset of $X$. We assume $C\subset C_0$ and show any two proper rays in $K-C_1$ that are based at a vertex $v$ and converge to $\mathcal E$, are properly homotopic rel$\{v\}$ in $K-C_0$. First we let $v=r(x_1)$.  Let  $s:  ([0, \infty),\{0\}) \to (K-C_1, \{r(x_1)\})$ be  a  proper ray in $\mathcal E$. 
  
We show $s$ and  $r|_{[x_1,\infty)}$ are properly homotopic rel$\{r(x_1)\}$ in $K-C_0$. Since $s$ is arbitrary, this implies any two $\mathcal E$ proper rays based at $r(x_1)$ and with image in $K-C_1$ are properly homotopic rel$\{r(x_1)\}$ in $K-C_0$. For $i\geq 1$, let $\beta_i=r|_{[x_i,x_{i+1}]}$. Let $a_1=0$ (so that $r(x_1)=s(0)=s(a_1)$) and for $i>1$ choose $a_i\in [0, \infty)$ such that  $s([a_i, \infty)) \subset  K-C_i$. Without loss assume that  $a_i <  a_{i+1}$ for  all $i$. Let $\alpha_i=s|_{[a_{i},a_{i+1}]}$.   Let  $\gamma_1$ be the constant path at $r(x_1)$. For $i>1$, let
$\gamma_i: [0,1] \to  K-C_i$, such that  $\gamma_i(0) = s(a_i)$ and  $\gamma_i(1) = r(x_i)$ (See Figure \ref{Fig231a}).   
  
\begin{figure}
\vbox to 3in{\vspace {-2in} \hspace {.5in}
\hspace{-1 in}
\includegraphics[scale=1]{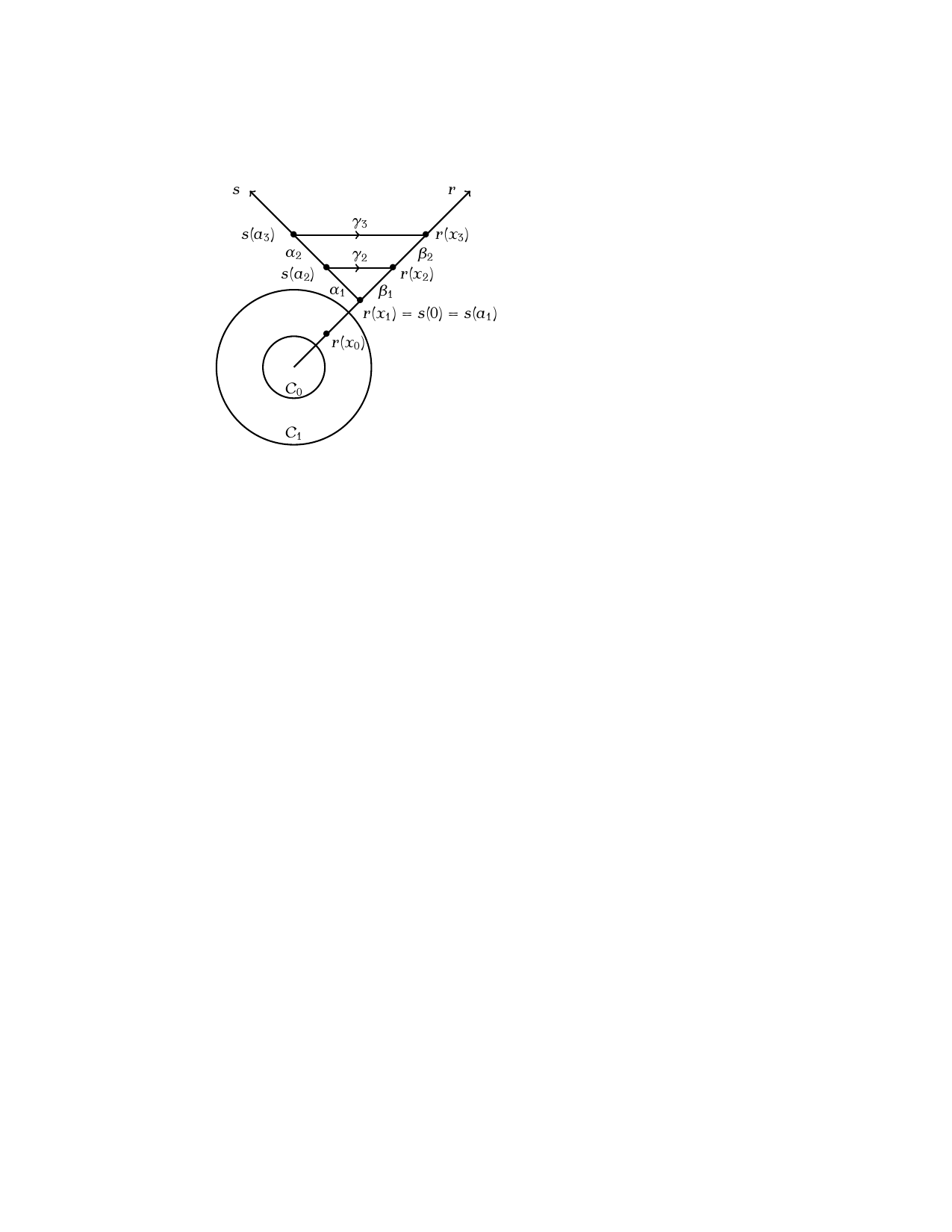}
\vss }
\vspace{-1in}
\caption{The case $v=r(x_1)$} 
\label{Fig231a}
\end{figure}

\medskip

For $i\geq 1$, let $\delta_i$  be the loop $(\gamma_{i}^{-1},\alpha_i,\gamma_{i+1},\beta_i^{-1})$. It is easy to  show that  $s$, which we represent by $(\alpha_1,\alpha_2,\ldots)$, is properly homotopic to the map of $[0, \infty) \to K$ represented by  $(\delta_1,\beta_1,\delta_2,\beta_2,\ldots)$ (simply eliminate backtracking). For $i\geq 1$, ($\ast$) implies that $r|_{[x_i,\infty)}$ is properly homotopic to $(\delta_i,r|_{[x_i,\infty)})$ rel$\{r(x_i)\}$ in $K-C_{i-1}$. 

This means there is a proper map
$H_i:[0,1]\times [0,\infty)\to K-C_{i-1}$  such that on $\{0\}\times [0,\infty)$ and on $\{1\}\times [0,\infty)$, $H_i$ is $(\beta_i, \beta_{i+1}, \ldots)=r|_{[x_i,\infty)}$ and on $[0,1]\times \{0\}$, $H_i$ is $\delta_i$. Combining the $H_i$ as in Figure \ref{Fig231b}, produces a homotopy rel$\{r(x_1)\}$ of $r|_{[x_1,\infty)}=(\beta_1,\beta_2,\ldots )$ to $(\delta_1,\beta_1,\delta_2,\beta_2, \ldots)$ in $K-C_0$.

\begin{figure}
\vbox to 3in{\vspace {-2in} \hspace {.5in}
\hspace{-1 in}
\includegraphics[scale=1]{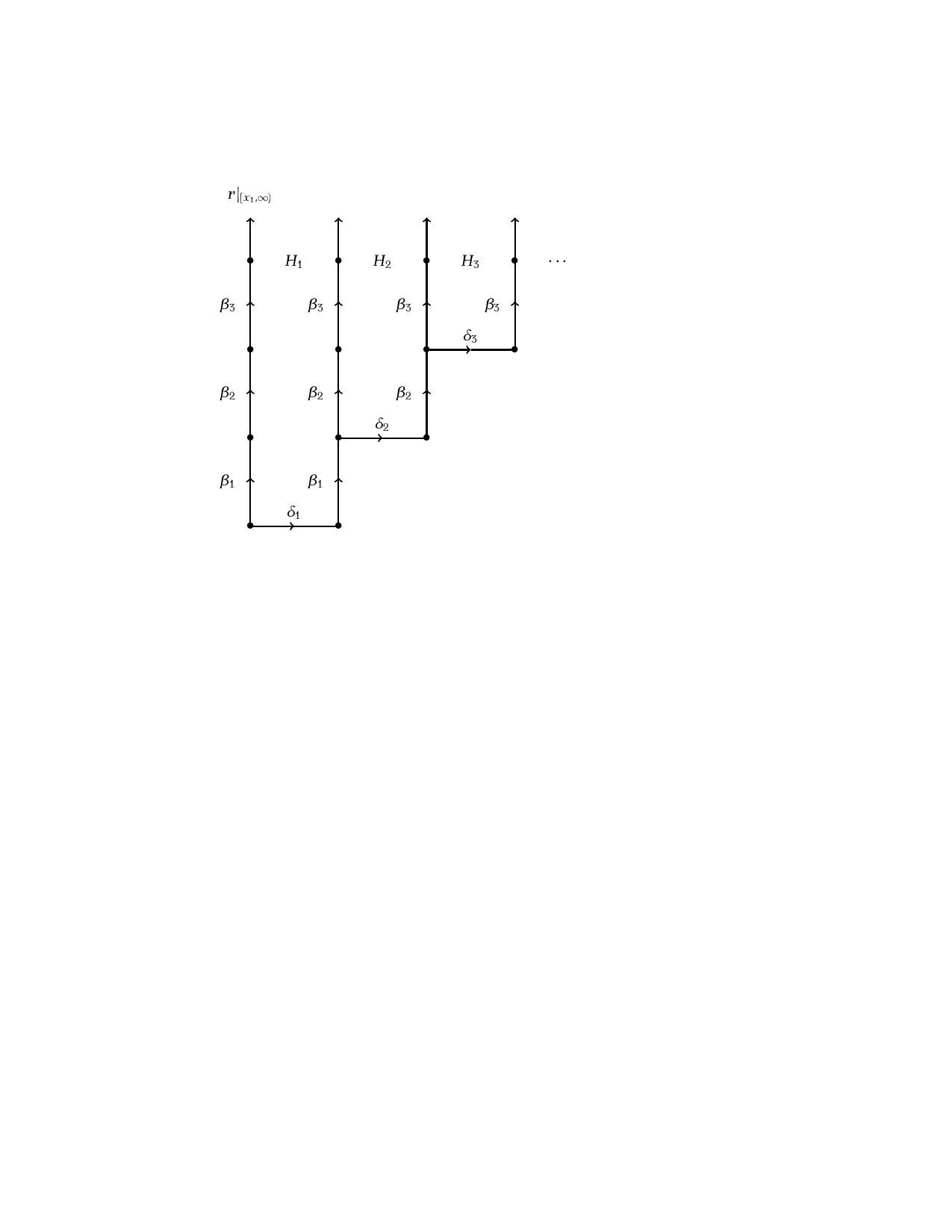}
\vss }
\vspace{-.1in}
\caption{Combining the $H_i$} 
\label{Fig231b}
\end{figure}

\medskip

Next we show this homotopy (call it $H$) is proper. Suppose $A$ is compact in $K$. Choose $n$ such that $A\subset C_n$. Then for $i\geq n+1$, the image of $H_i$ avoids $C_n$ (and hence avoids $A$). So $H^{-1}(A)=\cup_{i=1}^nH_i^{-1}(A)$ (a finite union of compact sets). So $H$ is proper. 

We have $r|_{[x_1,\infty)}$ is properly homotopic rel$\{r(x_1)\}$ to $(\delta_1, \beta_1,\delta_2,\beta_2,\ldots)$ in $K-C_0$ which is properly homotopic rel$\{r(x_1)\}$ to $s$ in $K-C_0$. As $s$ is an arbitrary $\mathcal E$ proper ray in $K-C_1$ we have any two $\mathcal E$ proper rays based at $r(x_1)$ and with image in $K-C_1$ are properly homotopic in $K-C_0$. 

Next suppose $s_1$ and $s_2$ are $\mathcal E$ proper rays based at the vertex $v$ and with image in $K-C_1$. Let $\tau$ be a path in $X-C_1$ from $r(x_1)$ to $v$. Then the proper rays $(\tau, s_1)$ and $(\tau,s_2)$ are properly homotopic rel$\{x_1)\}$ in $X-C_0$. This implies $s_1$ is properly homotopic rel$\{v\}$ to $s_2$ by a homotopy in $K-C_0$. 
\end{proof} 

\begin{proof} ({\it 4} implies {\it 1})
Let $C=\emptyset$. Then {\it 4} implies there is a compact set $D$ such that any two proper rays converging to the same end in $K-D$ are properly homotopic. Say $r$ and $s$ are proper rays converging to the same end of $K$. Choose an integer $n$ such that $r([n,\infty)$ and $s([n,\infty))$ have image in $K-D$. Let $\tau$ be a path in $K-D$ from $r(n)$ to $s(n)$. Then there is a proper homotopy $H$ of $r|_{[n,\infty)}$ to $(\tau, s|_{[n,\infty)})$ rel$\{r(n)\}$. This gives a proper homotopy $H':[0,\infty)\times [0,1]\to K$ such that $H'(t,0)=r(t)$, $H'(t,1)=s(t)$ and $H'|_{\{0\}\times [0,1]} =(r|_{[0,n]}, \tau, s|_{[0,n]}^{-1})$
\end{proof}

\begin{proof} ({\it 1} implies {\it 2})
 Let  $G_1  {\buildrel \phi_1 \over \leftarrow} G_2{\buildrel \phi_2\over \leftarrow}\cdots$ be  an  inverse sequence of  
groups. Recall $\varprojlim ^1 \{G_n\}$ is  the  pointed set  of  equivalent classes under the  equivalence relation on $\Pi_{n>0}G_n$  defined by  $\langle x_n\rangle \sim \langle y_n\rangle$ if  there is  $\langle g_n\rangle$ such that  $\langle y_n\rangle= \langle g_nx_n\phi_n(g_{n+1}^{-1})\rangle$. Theorem \ref{lim1} implies that if each $G_n$ is countable, then $\varprojlim^1\{G_n\}$ is trivial if and only if $\{G_n\}$ is semistable. 

\begin{figure}
\vbox to 3in{\vspace {-2in} \hspace {.5in}
\hspace{-1 in}
\includegraphics[scale=1]{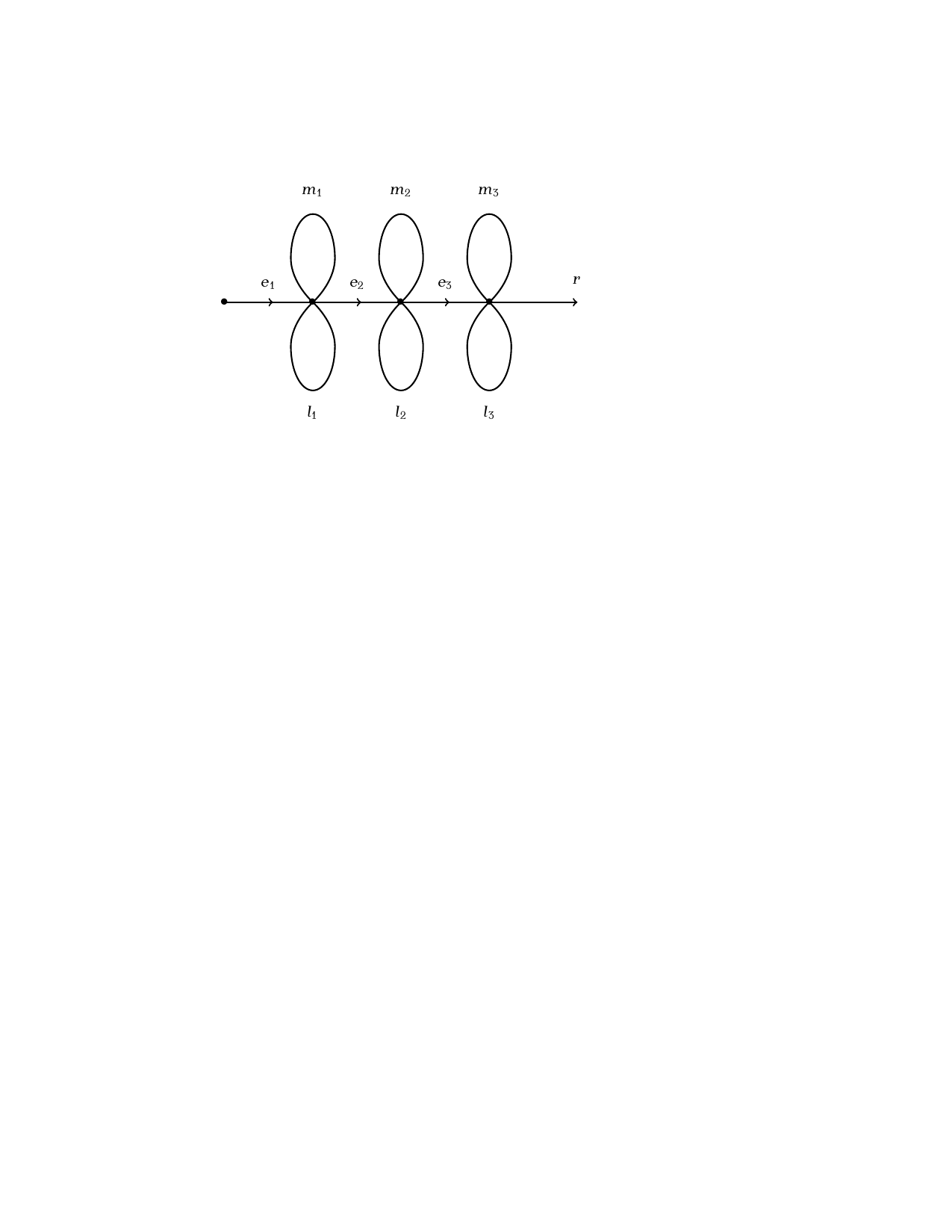}
\vss }
\vspace{-1in}
\caption{Equivalent elements} 
\label{Fig231c}
\vspace{.3in}
\end{figure}

Let $\{C_i\}_{i=1}^\infty$ be a cofinal sequence of compact sets in $K$. Our goal is to show that if any two proper maps $[0, \infty) \to  K$ converging to the same end of $K$  are  properly homotopic, then $\varprojlim \pi_1(\varepsilon K, r)$ is  trivial for $r:[0,\infty)\to K$ a proper map. For simplicity assume we have reparametrized $r$ so that $r([n,\infty))\subset K-C_n$ for all $n\geq 1$.  
Represent $r$ by $(e_1,  e_2,\ldots)$  where  $e_n(t) =  r(n-1  +  t)$ for $t\in [0,1]$.   Assume that  $r(n)$ is the  base point of $\pi_1(K - C_n, r(n))$ for $n\geq 1$. 
Choose $\langle [m_n]\rangle$ and  $\langle [l_n]\rangle$  in  $\Pi_{n>0}\pi_1(K-C_n,r(n))$ (see  Figure \ref{Fig231c}).  We  show  $\langle [m_n]\rangle \sim\langle [l_n]\rangle$. 

Since all  proper maps $[0, \infty)\to K$ converging to the same end are  properly homotopic there is a  proper homotopy $F:  [0,\infty) \times [0,1] \to K$  of  $(e_1,  m_1,  e_2,m_2,\ldots)$ to  $(e_1, l_1, e_2, l_2,\ldots)$. Choose $n(i)$  so that  $F([2(n(i)), \infty)\times [0,1])\subset K- C_i$. (Note that the length of $(e_1,m_1,\ldots, e_{n(i)},m_{n(i)})$ is $2(n(i))$.)  Let $g_i:  [0,1] \to K$  be  defined by $g_i(t) =  F(2n(i),t)$ and let:
$$h_i =(m_i,e_{i+1},m_{i+1},e_{i+2},\cdots ,m_{n(i)},g_i,l_{n(i)}^{-1},\cdots ,e_{i+2}^{-1},l_{i+1}^{-1},e_{i+1}^{-1}, l_i^{-1}).$$

See  Figure \ref{Fig231d}  representing $[0,  \infty) \times [0,1]$. By definition, the  bonding maps $\phi_i$ of $\pi_1(\varepsilon K,r)$ are  such that  $\phi_i([h_{i+1}])=[(e_{i+1},h_{i+1},e_{i+1}^{-1})]$.  It remains to show that  $m_i$, is homotopic rel$\{0,1\}$ to $(h_i,l_i,\phi_i(h_{i+1}))^{-1})$  in $K-C_i$.  We  show $m_1$ is homotopic to $(h_1, l_1, \phi_1(h_2)^{-1})$ rel$\{0,1\}$ in  $K -  C_1$.  The general case is completely analogous.

\begin{figure}
\vbox to 3in{\vspace {-2in} \hspace {-1in}
\hspace{-1 in}
\includegraphics[scale=1]{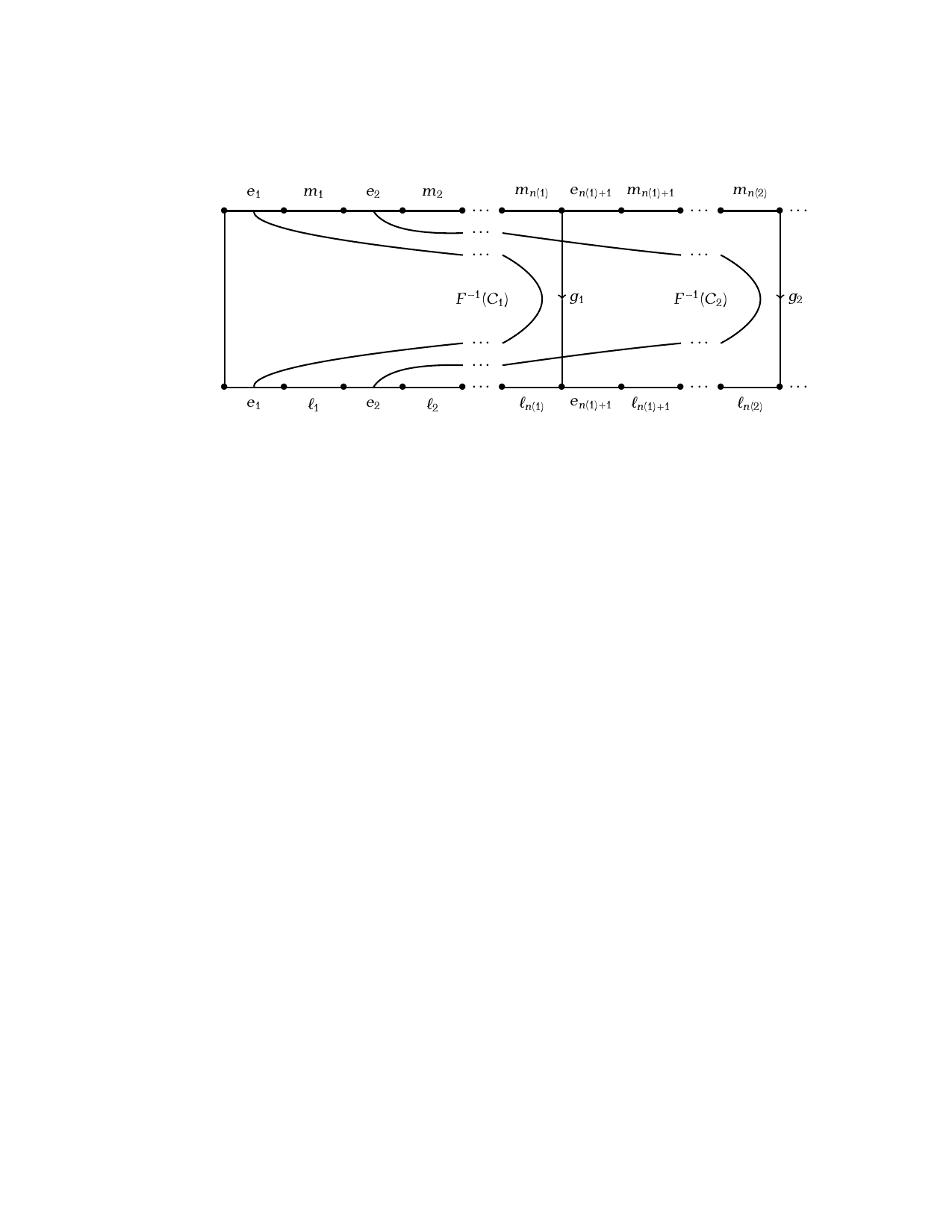}
\vss }
\vspace{-1.2in}
\caption{Avoiding compact sets} 
\label{Fig231d}
\end{figure}
%\vspace{.5in}

$$(h_1,l_1, \phi_1(h_2)^{-1})=(m_1, e_2, m_2,\cdots, m_{n(1)}, g_1, l_{n(1)}^{-1},\cdots, l_2^{-1},e_2^{-1},l_1^{-1},$$
$$l_1,e_2, l_2, e_3, l_3, \cdots l_{n(2)}, g_2^{-1}, m_{n(2)}^{-1},\cdots, m_3^{-1},e_3^{-1},m_2^{-1},e_2^{-1})$$

Eliminate edges followed by  their inverses, and  the  loop:
$$(g_1,e_{n(1)+1}, l_{n(1)+1},\cdots, l_{n(2)}, g_2^{-1}, m_{n(2)}^{-1},\cdots, m_{n(1)+1}^{-1},e_{n(1)+1}^{-1})$$
 which is homotopically trivial in  $K-C_1$   (see Figure \ref{Fig231d}).  What remains is $m_1$, and  Figure \ref{Fig231d} shows the induced homotopy takes place in $K-C_1$
\end{proof}

It is straightforward to see that
{\it 2} is equivalent to {\it 3} and {\it 4} is equivalent to {\it 5}.

\begin{proof} (If $K$ is simply connected, then {\it 1} is equivalent to {\it 6}.) 
Certainly {\it 6} implies {\it 1}. 

Suppose {\it 1}. Let $r$ and $s$ be proper rays at $v$ converging to the same end of $K$. By hypothesis, there is a proper homotopy $H:[0,\infty)\times [0,1]\to K$ such that $H(t,0)=r(t)$, $H(t,1)=s(t)$. Since $K$ is simply connected the loop $H|_{\{0\}\times [0,1]}$ is homotopically trivial by a homotopy $H'$. The homotopies  $H$ and $H'$ can be combined to show $r$ is properly homotopic to $s$ rel$\{v\}$. 
\end{proof}

\begin{remark} 
 The semistability of the fundamental group at $\infty$ of a locally
finite CW-complex $K$ only depends on the 2-skeleton of the complex (see for
example, (Lemma 3, \cite{LR75}) or (Proposition 16.2.2, \cite{G}). If $r$ is a proper ray in $K$, then $r$ is properly homotopic to a proper edge path ray in $K$. 
\end{remark}

\begin{example}
Note that the 1-ended CW-complex obtained by attaching a loop at $0$ to the interval $[0,\infty)$ is  semistable at $\infty$. Consider a proper ray $r$ which maps $[0,\infty)$ homeomorphically to $[0,\infty) $ and a proper ray $s$ which maps $[0,1]$ once around the loop and then maps  $[1,\infty)$ homeomorphically to $[0,\infty)$. Clearly $r$ and $s$ are properly homotopic, but not by a proper homotopy rel$\{0\}$. 
\end{example}

\newpage

\subsection{Proper $n$-Equivalence, Semistability, Simple Connectivity, Fundamental Pro-Group and Fundamental Group at $\infty$ - for a Group}\label{SSproGp}

We begin this section by defining what it means for finitely presented groups to be proper $n$-equivalent and then connect this idea to quasi-isometries. 

\subsubsection{The Definitions} \label{Pr2eq} 

Let $D^k$ be the {\it $k$-disk} in $\mathbb R^{k}$ (all points of distance $\leq 1$ from the origin) and $S^{k-1}$ the {\it $(k-1)$-sphere} in $\mathbb R^k$ (all points of distance $1$ from the origin). Let $n$ be an integer $\geq 1$. A path connected space $X$ is {\it $n$-connected}  ({\it $n$-aspherical}) if for every $1\leq k\leq n$ (respectively $2\leq k\leq n$), every map $S^k\to X$ extends to a map of $D^k\to X$. A CW-complex $X$ is {\it $n$-connected} ($n$-aspherical) if and only if $X^{n+1}$ (its $(n+1)^{st}$ skeleton) is $n$-connected ($n$-aspherical). A space $X$ is {\it aspherical} if $X$ is $n$-aspherical for all $n$. 
\begin{proposition} [Proposition 7.1.3,\cite{G}] 
A path connected CW complex $X$ is $n$-aspherical if and only if its universal cover $\tilde X$ is $n$-connected. The complex $X$ is aspherical if and only if $\tilde X$ is contractible. 
\end{proposition}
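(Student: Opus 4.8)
The plan is to reduce both assertions to the single covering-space mechanism that lets one transfer sphere-extension problems between $X$ and $\tilde X$ in dimensions $\geq 2$, while $\tilde X$ handles dimension $1$ for free because it is simply connected. Throughout I would invoke three standard facts about the universal covering projection $p:\tilde X\to X$: that $\tilde X$ is simply connected; that a map into $X$ from a simply connected, locally path-connected space lifts to $\tilde X$ once a base point is chosen; and that two lifts of a given map agreeing at one point of a connected, locally path-connected domain agree everywhere. I would also record that, since $X$ is a CW complex, $\tilde X$ inherits a CW structure by lifting cells, a point that matters only for the contractibility half.

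For the first equivalence, recall that $X$ is $n$-aspherical precisely when every map of a sphere $S^k$ with $2\le k\le n$ extends over the disk it bounds, and $\tilde X$ is $n$-connected precisely when the same holds for $1\le k\le n$. First I would dispose of dimension $1$: since $\tilde X$ is simply connected, every loop bounds, so the $k=1$ clause of $n$-connectivity is automatic. For the forward direction with $2\le k\le n$, given $f:S^k\to\tilde X$ I would push it down to $pf:S^k\to X$, extend $pf$ over the bounding disk using $n$-asphericity of $X$, lift this extension to $\tilde X$ with base point chosen to agree with $f$ at one point of $S^k$ (the disk is simply connected, so the lift exists), and then invoke uniqueness of lifts on the connected sphere $S^k$ to conclude that the lift restricts to $f$ on the boundary; this is the sought extension. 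For the converse with $2\le k\le n$, given $g:S^k\to X$ I would lift $g$ to $\tilde g:S^k\to\tilde X$ (legitimate since $\pi_1(S^k)=0$ for $k\ge 2$ makes the lifting criterion vacuous), extend $\tilde g$ over the disk using $n$-connectivity of $\tilde X$, and project the extension back down by $p$. Together these give that $X$ is $n$-aspherical if and only if $\tilde X$ is $n$-connected.

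For the second statement I would let $n\to\infty$: by definition $X$ is aspherical iff it is $n$-aspherical for every $n$, which by the first part holds iff $\tilde X$ is $n$-connected for every $n$, that is, iff every sphere bounds a disk in $\tilde X$ in all dimensions. This last condition says exactly that $\tilde X$ is weakly contractible, i.e.\ all its homotopy groups vanish. Here the one non-formal input enters: because $\tilde X$ is a CW complex, Whitehead's theorem upgrades weak contractibility to genuine contractibility, and conversely a contractible $\tilde X$ trivially has every sphere nullhomotopic, hence is $n$-connected for all $n$, so $X$ is aspherical.

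The main obstacle I anticipate is twofold and concentrated in two spots rather than being diffuse. The first is the careful base-point and uniqueness-of-lifts bookkeeping in the two lifting arguments of the first equivalence: this is routine but is exactly where connectedness of $S^k$ and its simple connectivity in dimensions $\geq 2$ are genuinely used, and each must be invoked in the correct dimension. The second, and the only truly non-elementary step, is the appeal to Whitehead's theorem, which is indispensable because weak contractibility does \emph{not} imply contractibility for arbitrary spaces; it is precisely the CW hypothesis on $X$, inherited by $\tilde X$, that licenses passing from vanishing homotopy groups to an actual contraction.
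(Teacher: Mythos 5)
Your proof is correct. Note that the paper itself contains no proof of this statement: it is quoted verbatim from Geoghegan's book as [Proposition 7.1.3, \cite{G}], so there is no in-paper argument to compare against. Your route -- the dimension-$1$ clause coming for free from simple connectivity of $\tilde X$, the two lifting arguments (lifting an extension over the disk with a base point chosen on the boundary sphere plus uniqueness of lifts for the forward direction, and lifting a sphere directly when $k\geq 2$ for the converse), and the appeal to Whitehead's theorem to upgrade weak contractibility of the CW complex $\tilde X$ to genuine contractibility -- is the standard covering-space proof, essentially the one found in the cited source; equivalently one can phrase the first part as $p_*:\pi_k(\tilde X)\to\pi_k(X)$ being an isomorphism for $k\geq 2$, which is the same mechanism. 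One harmless discrepancy: the paper defines $n$-connectivity and $n$-asphericity of a CW complex via its $(n+1)$-skeleton, while you work with the space-level sphere-extension definition; these agree by cellular approximation, so nothing is lost.
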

Let $G$ be a group. A {\it $K(G,1)$-complex} is a path connected aspherical CW-complex $X$ with $\pi_1(X)$ isomorphic to $G$. The space $X$ is also called an {\it Eilengerg-MacLane complex of type $(G,1)$} or a {\it classifying space for $G$}.  A group $G$ has type $F_n$ if there exists a $K(G,1)$-complex having finite $n$-skeleton. A group has type $FP_n$ (over $\mathbb Z$) if there is a projective $\mathbb ZG$ resolution over $\mathbb Z$ which is finitely generated in dimensions $\leq n$. When $G$ is finitely presented the notions of $F_n$ and $FP_n$ are the same (see [Section8.2,\cite{G}] for more on this subject). 

Definition \ref{n-equiv} describes a proper $n$-equivalence. 

\begin{theorem} [See the first paragraph of \S 16.5,\cite{G}] \label{P1E} 
Suppose $X$ and $Y$ are finite connected CW-complexes with $\pi_1(X)$ isomorphic to $\pi_1(Y)$. Then there is a 2-equivalence between $X$ and $Y$, and hence (by lifting) a proper 2-equivalence between the 2-skeletons  of the universal covers of $X$ and $Y$.
\end{theorem}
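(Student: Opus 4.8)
The plan is to realize a chosen isomorphism $\psi:\pi_1(X)\to\pi_1(Y)$ by cellular maps that are mutually inverse up to the $1$-skeleton, doing all the real work on the $2$-skeleta, and then to lift everything $\psi$-equivariantly to the universal covers. Since a $2$-equivalence in the sense of Definition~\ref{n-equiv} only constrains the compositions $gf$ and $fg$ on the $1$-skeleta, and since the conclusion we actually want concerns the $2$-skeleta of $\tilde X$ and $\tilde Y$, I would build the maps $f$ and $g$ at the level of $2$-skeleta, where no higher homotopy obstructions intervene.

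First I would fix base vertices and the isomorphism $\psi:\pi_1(X)\to\pi_1(Y)$, and construct a cellular map $f:X^2\to Y^2$ inducing $\psi$. On $X^1$, after choosing a maximal tree, $\pi_1(X^1)$ is free on the non-tree edges; I send the tree to the base vertex of $Y$ and each non-tree edge to an edge loop in $Y^1$ representing the $\psi$-image of its class. To extend over a $2$-cell of $X$, note that its attaching loop represents a relator of $\pi_1(X)$, so its image under $\psi$ is trivial in $\pi_1(Y)$; hence the composite attaching loop is null-homotopic in $Y$ and the cell can be coned off (after a cellular approximation landing in $Y^2$). Running the same construction with $\psi^{-1}$ produces $g:Y^2\to X^2$.

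The composite $gf:X^1\to X^2$ then induces $\psi^{-1}\circ\psi=\mathrm{id}$ on $\pi_1$. Because homotopy classes of maps from a graph into a complex are detected by the induced homomorphism on fundamental groups (a based map $\bigvee S^1\to X^2$ is determined up to based homotopy by the tuple of images of the generators in $\pi_1(X^2)$), it follows that $gf|_{X^1}$ is homotopic to the inclusion $X^1\hookrightarrow X$, and symmetrically $fg|_{Y^1}$ is homotopic to the inclusion; this exhibits the desired $2$-equivalence. For the universal-cover statement I would lift $f$ and $g$ to $\psi$-equivariant cellular maps $\tilde f:\tilde X^2\to\tilde Y^2$ and $\tilde g:\tilde Y^2\to\tilde X^2$, which exist precisely because $f$ and $g$ induce $\psi$ and $\psi^{-1}$ on the deck groups. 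The two homotopies above, being based homotopies of maps realizing the identity on $\pi_1$, lift to $\pi_1$-equivariant homotopies of the restrictions to $\tilde X^1$ and $\tilde Y^1$. Properness is the one genuinely geometric point: since $X$ and $Y$ are finite, $\pi_1$ acts freely, properly discontinuously and cocompactly on $\tilde X$ and $\tilde Y$, so an equivariant cellular map carries each orbit of cells into only finitely many cell-orbits, whence preimages of compact sets are compact; the same cocompactness makes the lifted homotopies proper. This yields the proper $2$-equivalence of $\tilde X^2$ and $\tilde Y^2$.

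The main obstacle I anticipate is not in the $2$-skeletal construction, which is obstruction-free, but in the temptation to produce honestly defined maps $f:X\to Y$ on all of $X$: extending $f|_{X^2}$ over cells of dimension $\geq 3$ meets successive obstructions in $H^{n+1}(X;\pi_n(Y))$ with local coefficients, which need not vanish. The point of the argument is that both the $2$-equivalence conditions and the universal-cover conclusion see only the $1$- and $2$-skeleta, so these higher obstructions are irrelevant and one simply works with the $2$-skeleta (cf. the fact, used repeatedly in this chapter, that semistability and simple connectivity at $\infty$ depend only on the $2$-skeleton). The remaining care is the bookkeeping for properness of the equivariant lifts and homotopies, where finiteness of $X$ and $Y$ is essential.
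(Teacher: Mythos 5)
Your construction is correct in its essentials, and it is worth saying up front that the paper itself gives no proof of Theorem \ref{P1E} (it only cites the first paragraph of \S 16.5 of \cite{G}); what you write -- realize a chosen isomorphism $\psi$ by cellular maps on $2$-skeleta (obstruction-free since attaching loops of $2$-cells die under $\psi$), use the fact that based maps of a graph are classified by their effect on $\pi_1$ to obtain the homotopies on $1$-skeleta, then lift $\psi$-equivariantly to the universal covers -- is precisely the standard argument that citation encodes. One step, however, is justified by a false principle: ``an equivariant cellular map carries each orbit of cells into only finitely many cell-orbits, whence preimages of compact sets are compact'' is not a valid implication. The lift $\widetilde{S^1\vee S^1}\to\widetilde{S^1}=\mathbb{R}$ of the map collapsing one circle is equivariant over the surjection $F_2\twoheadrightarrow\mathbb{Z}$ and carries cell-orbits into cell-orbits, yet is badly non-proper. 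What makes your $\tilde f$ proper is the \emph{injectivity} of $\psi$: if $\tilde f^{-1}(C)$ met infinitely many cells, write them as $\gamma_n\cdot e_n$ with the $e_n$ in a fixed compact fundamental domain $D$ and the $\gamma_n$ distinct; equivariance gives $\psi(\gamma_n)\tilde f(D)\cap C\neq\emptyset$ for infinitely many \emph{distinct} elements $\psi(\gamma_n)$ of $\pi_1(Y)$, contradicting proper discontinuity of the deck action on $\tilde Y^2$. The same remark handles the lifted homotopies. This is a one-line fix, but injectivity is exactly the hypothesis that cannot be dropped, so it must be said.

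Your refusal to extend the maps over the cells of dimension $\geq 3$ is not merely a convenience; it is forced, and your suspicion about obstructions actually exposes an inaccuracy in the theorem as literally stated. With Definition \ref{n-equiv}, a $2$-equivalence between $X$ and $Y$ requires maps defined on all of $X$ and $Y$, and the homotopy conditions on the $1$-skeleta force $f_*:\pi_1(X)\to\pi_1(Y)$ to be an isomorphism (since $\pi_1(X^1)\to\pi_1(X)$ is onto, the two conditions make $g_*f_*$ and $f_*g_*$ inner). Such maps need not exist: for $X=\mathbb{R}P^3$ and $Y=\mathbb{R}P^2$, finite complexes with $\pi_1\cong\mathbb{Z}_2$, no map $X\to Y$ induces a $\pi_1$-isomorphism, since such a map would carry the class $\alpha^3=0$ in $H^3(\mathbb{R}P^2;\mathbb{Z}_2)$ to the nonzero cube of the generator of $H^1(\mathbb{R}P^3;\mathbb{Z}_2)$. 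So the first clause of Theorem \ref{P1E} is false as written and should be read as asserting a $2$-equivalence between $X^2$ and $Y^2$; that weaker clause is what your proof establishes, and it is all that the lifting step and the rest of the paper (Corollary \ref{pro2}, Definition \ref{defss2}) ever use.
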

%(Outline: Take a maximal tree in 1-skeleton of $X$, collapse it to a point. The resulting space $X'$ is homotopy equivalent to $X$ and it is a presentation 2-complex with universal cover having 1-skeleton a Cayley graph for $\pi_1(X)$.  Do the same for $Y$ to obtain $Y'$.  There is an easy to see proper 2-equivalence between $X'$ and $Y'$ using Dehn Diagrams.)
%$This result implies: 

More generally: 

\begin{theorem} [Theorem 18.2.11, \cite{G})] \label{pnequiv-qi} 
Let $G$ and $H$ be finitely generated groups of type $F_n$. Let $X$ be a $K(G,1)$ with finite $n$-skeleton, and let $Y$ be a $K(H,1)$ with finite $n$-skeleton. If $G$ and $H$ are quasi-isometric then there is a proper $n$-equivalence between the universal covers $\tilde X^n \to \tilde Y^ n$.
\end{theorem}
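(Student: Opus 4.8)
The plan is to promote the quasi-isometry between the groups to an honest proper cellular map between the $n$-skeleta of the universal covers, and then to verify the two one-sided proper homotopy conditions of Definition \ref{n-equiv}. Throughout I use that, since $X$ and $Y$ are aspherical, their universal covers $\tilde X$ and $\tilde Y$ are contractible; hence $\tilde X^n$ and $\tilde Y^n$ are $(n-1)$-connected, and $G$ (resp. $H$) acts freely and cocompactly by cellular homeomorphisms on $\tilde X^n$ (resp. $\tilde Y^n$) because $X$ (resp. $Y$) has finite $n$-skeleton. As in the construction of the map $M(X,A)$ preceding Definition \ref{NuE}, the $1$-skeleton $\tilde X^1$ is quasi-isometric to $\Gamma(G,A)$ and $\tilde Y^1$ to $\Gamma(H,B)$; composing with the given quasi-isometry $G\to H$ yields a quasi-isometry $\Phi\colon \tilde X^n\to \tilde Y^n$ of the path metrics together with a quasi-inverse $\Psi\colon \tilde Y^n\to \tilde X^n$, where $\Psi\Phi$ and $\Phi\Psi$ are each a bounded distance from the respective identities.

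First I would build proper cellular maps $f\colon \tilde X^n\to \tilde Y^n$ and $g\colon \tilde Y^n\to \tilde X^n$ that are a bounded distance from $\Phi$ and $\Psi$, by induction on skeleta. On $0$-cells, send each vertex $v$ to a vertex $f(v)$ within a bounded distance of $\Phi(v)$; coarse properness of $\Phi$ (the lower quasi-isometry inequality) makes $f|_{\tilde X^0}$ proper. To extend over a $k$-cell ($1\le k\le n$) I restrict $f$ to the attaching sphere, obtaining a cellular map $S^{k-1}\to \tilde Y^{k-1}$ whose image has uniformly bounded diameter (there are only finitely many $G$-orbits of cells, so boundedly many attaching-map types), and fill it with a map $D^k\to \tilde Y^n$ of uniformly bounded diameter.

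The key technical step, and the main obstacle, is this \emph{uniform filling}: in a cocompact, $(n-1)$-connected complex such as $\tilde Y^n$, for each $k\le n$ and each $R$ there is $R'$ so that every cellular map $S^{k-1}\to \tilde Y^n$ of bounded complexity and image-diameter $\le R$ extends to $D^k\to \tilde Y^n$ with image-diameter $\le R'$. This holds because cocompactness lets one translate the image by a group element into a fixed ball $B_{R'}(\ast)$ about a base vertex, where $(k-1)$-connectivity (valid since $k-1\le n-1$) supplies an extension, and only finitely many cellular maps of bounded complexity meet that compact ball, so the filling size is uniformly bounded; translating back gives the claim. Feeding this lemma into the inductive construction keeps all cell images of uniformly bounded diameter, so $f$ (and symmetrically $g$) is proper. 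Because the obstruction to extending over $n$-cells is $\pi_{n-1}(\tilde Y^n)=\pi_{n-1}(\tilde Y)=0$, the induction runs all the way to dimension $n$.

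Finally I would verify that $gf|_{\tilde X^{n-1}}$ is properly homotopic to the inclusion. Since $\Psi\Phi$ is a bounded distance from $\mathrm{id}_{\tilde X^n}$ and $f,g$ are bounded distances from $\Phi,\Psi$, the proper cellular map $gf$ agrees with the inclusion up to bounded distance on $\tilde X^{n-1}$. I then construct a proper homotopy $F\colon \tilde X^{n-1}\times I\to \tilde X^n$ from $gf|_{\tilde X^{n-1}}$ to the inclusion by induction on the skeleta of $\tilde X^{n-1}\times I$: on vertices join $gf(v)$ to $v$ by a path of bounded length, and over a product cell of dimension $k\le n$ the boundary is a sphere of bounded diameter, filled by the uniform filling lemma applied in $\tilde X^n$ (only $(n-1)$-connectivity is needed, since the product cells have dimension at most $n$, which is exactly why the homotopy is guaranteed only on $\tilde X^{n-1}$ and not on all of $\tilde X^n$). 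The uniform bounds make $F$ proper, and the identical argument with the roles of $X$ and $Y$ exchanged shows $fg|_{\tilde Y^{n-1}}$ is properly homotopic to the inclusion. Thus $f$ is a proper $n$-equivalence, as required.
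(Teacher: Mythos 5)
The paper itself contains no proof of this statement: it is quoted from Geoghegan's book \cite{G} (Theorem 18.2.11), so your attempt can only be measured against the standard argument given there. Your architecture matches it: promote the quasi-isometry to proper cellular maps $f,g$ by induction on skeleta, then build proper homotopies from $gf$ and $fg$ to the inclusions on the $(n-1)$-skeleta, with all control coming from a uniform filling lemma in a cocompact $(n-1)$-connected complex. Your accounting of where $(n-1)$-connectivity is used — in particular your explanation of why the homotopies are only guaranteed on the $(n-1)$-skeleton, matching Definition \ref{n-equiv} — is correct.

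The genuine gap is in your justification of the uniform filling lemma, which is the crux of the proof. The claim ``only finitely many cellular maps of bounded complexity meet that compact ball'' is false as stated: a cellular map may wrap a cell around the target arbitrarily, so there are uncountably many cellular maps into a finite subcomplex $B$, and even infinitely many homotopy classes (already $[S^1,B]$ is infinite whenever $\pi_1(B)$ is infinite). Finiteness genuinely holds only for combinatorial data — say, simplicial maps whose domains have a bounded number of simplices, since such a map is determined by its vertex images — and nothing in your induction supplies such a bound for the spheres you must actually fill: at stage $k$ the sphere is $f\circ a_e$, where $a_e$ is one of finitely many attaching maps up to the $G$-action, but $f$ on the image of $a_e$ is an assembly of fillings chosen at earlier stages whose combinatorial complexity is uncontrolled unless those fillings were chosen coherently. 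Nor can the complexity bound simply be dropped in favor of ``bounded image diameter'': the exhaustion argument (every sphere in $B$ dies in some larger ball, so take a maximum) gives a uniform bound only when $\pi_{k-1}(B)$ is finitely generated, which holds for $k-1=1$ but can fail for $k-1\geq 2$, so for $n\geq 3$ this is a real issue, not pedantry. The missing idea is the standard bookkeeping: strengthen the inductive hypothesis to say that, up to post-composition by elements of $H$, the map $f$ restricted to closed cells takes only finitely many values (finitely many ``local models''). Since the $H$-action on $\tilde Y^n$ is free and properly discontinuous, the relative translations between the models appearing on the cells of a single attaching sphere lie in a finite set, so $f\circ a_e$ takes only finitely many values up to a single $H$-translation; one then fills each model sphere once and transports the fillings by $H$, which preserves the inductive hypothesis, keeps diameters and complexity bounded, and yields properness. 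With that repair (applied symmetrically to $g$ and to the prism homotopies), the rest of your argument goes through.
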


\begin{definition}
The $FP_n$ groups $G$ and $H$ are {\it proper $n$-equivalent} if for some (any) $K(G,1)$with finite $n$-skeleton and some (any) $K(H,1)$ with finite $n$-skeleton, the universal covers $\tilde X^n$ and $\tilde Y^n$ are proper $n$-equivalent. 
\end{definition}
Theorem \ref{pnequiv-qi} implies:
\begin{corollary} \label{qi-pnequiv} 
If $G$ and $H$ are quasi-isometric $FP_n$ groups, then they are proper $n$-equivalent.
\end{corollary}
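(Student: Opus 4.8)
The plan is to deduce the corollary immediately from Theorem \ref{pnequiv-qi} together with the definition of proper $n$-equivalence of groups, the only real work being the bookkeeping between the finiteness conditions $FP_n$ and $F_n$. The notion of proper $n$-equivalence for groups is phrased using a $K(G,1)$ with finite $n$-skeleton, i.e. type $F_n$; since the excerpt records that $F_n$ and $FP_n$ agree for finitely presented groups, I would read the hypothesis as furnishing classifying spaces $X$ of type $K(G,1)$ and $Y$ of type $K(H,1)$, each with finite $n$-skeleton. (The quasi-isometry hypothesis already forces finite generation, which handles the case $n=1$.)

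With such $X$ and $Y$ fixed, the argument is direct. Since $G$ and $H$ are quasi-isometric and of type $F_n$, Theorem \ref{pnequiv-qi} applies and produces a proper $n$-equivalence $\tilde X^n \to \tilde Y^n$ between the $n$-skeleta of the universal covers, in the sense of Definition \ref{n-equiv}. This is precisely the condition appearing in the definition of proper $n$-equivalent groups, so $G$ and $H$ are proper $n$-equivalent. I would also observe that Theorem \ref{pnequiv-qi} is stated for an \emph{arbitrary} choice of $X$ and $Y$ with finite $n$-skeleton, so it simultaneously validates the ``some (any)'' clause of the definition; alternatively, this independence of choice can be seen from Theorem \ref{P1E} (and its higher-dimensional analogue obtained by lifting) together with the fact that proper $n$-equivalence is transitive.

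I expect no substantive obstacle, since all the geometric content --- upgrading a quasi-isometry between the groups to a proper $n$-equivalence between universal covers --- is already supplied by Theorem \ref{pnequiv-qi}. The one point demanding care is the $FP_n$-versus-$F_n$ distinction: for $n\ge 2$ a group can be $FP_n$ without being finitely presented, and then it admits no $K(G,1)$ with finite $n$-skeleton, so the definition does not literally apply. The cleanest remedy is to state the corollary for type $F_n$ groups, or to record explicitly that $FP_n$ is being invoked in the finitely presented regime where it coincides with $F_n$; with that understood, the corollary follows verbatim from Theorem \ref{pnequiv-qi}.
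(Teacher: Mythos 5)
Your proposal is correct and matches the paper exactly: the paper offers no separate argument, presenting the corollary as an immediate consequence of Theorem \ref{pnequiv-qi} combined with the definition of proper $n$-equivalence for groups, which is precisely your reasoning. Your side remark about the $FP_n$-versus-$F_n$ mismatch is a fair observation about how the statement is phrased (the paper itself only identifies the two notions in the finitely presented case), but it concerns the hypotheses rather than any gap in the deduction.
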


\begin{theorem} [(Proposition 16.2.3, \cite{G}] \label{2-equiv}
If $X$ and $Y$ are locally finite, connected CW-complexes and there is a proper 2-equivalence $f:X\to Y$, then 
\begin{enumerate} 
\item $X$ has semistable fundamental group if and only if $Y$ has semistable fundamental group at $\infty$;
\item  $X$ is simply connected at $\infty$  if and only if $Y$ is simply connected at $\infty$;
\item the fundamental pro-group $\pi_1(\varepsilon X,r)$ is pro-isomorphic to $\pi_1(\varepsilon Y,f(r))$ for any proper ray $r$ in $X$;
\item the fundamental group at $\infty$,  $\pi_1^e(X, r)$ is isomorphic (as a topological group) to  $\pi_1^e(Y,f(r))$ for any proper ray $r$ in $X$. 
\end{enumerate}
\end{theorem}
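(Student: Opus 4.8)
The plan is to prove part (3) — the pro-isomorphism of fundamental pro-groups — as the main step, and then read off (1), (2) and (4), since semistability, the inverse limit, and (with a little care) simple connectivity at $\infty$ are all invariants of the pro-isomorphism type of the fundamental pro-group. Let $g:Y\to X$ be a proper cellular $2$-inverse of $f$, so $gf$ is properly homotopic to the inclusion $X^1\hookrightarrow X$ and $fg$ is properly homotopic to $Y^1\hookrightarrow Y$. Fix a proper ray $r$ in $X$; by Theorem \ref{geoend} I may assume $r$ has image in $X^1$. Since $f$ and $g$ are proper, $fr$ and $gfr$ are proper rays in $Y$ and $X$, and restricting the proper homotopy $gf\simeq\mathrm{incl}$ to $r$ shows that $gfr$ is properly homotopic to $r$; by Theorem \ref{pro1} this lets me freely replace the base ray $gfr$ by $r$ at the end.

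First I would set up interleaved cofinal sequences of compacta by exploiting properness. Starting from cofinal sequences $\{C_i\}$ of compact subcomplexes of $X$ and $\{D_i\}$ of $Y$, and using that $f^{-1}(\text{compact})$, $g^{-1}(\text{compact})$, $f(\text{compact})$ and $g(\text{compact})$ are all compact, I can pass to subsequences and reindex so that $f^{-1}(D_k)\subseteq C_k$ and $g^{-1}(C_k)\subseteq D_{k+1}$ for all $k$. The first inclusion gives $f(X-C_k)\subseteq Y-D_k$, so $f$ induces homomorphisms $f_k:\pi_1(X-C_k,\,r(x_{k+1}))\to\pi_1(Y-D_k,\,fr(x_{k+1}))$; the second gives $g(Y-D_{k+1})\subseteq X-C_k$, so $g$ induces $g_k:\pi_1(Y-D_{k+1},\,\cdot)\to\pi_1(X-C_k,\,\cdot)$. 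These are well defined precisely because $f$ and $g$ are cellular, hence carry loops to loops and nullhomotopies to nullhomotopies.

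The heart of the argument is to verify the commuting zigzag of Definition \ref{DProI}: that $g_k\circ f_{k+1}$ equals the bonding map $p_{k+1}$ and $f_k\circ g_k$ equals $q_{k+1}$, up to the base-ray adjustment. Here I would restrict the proper homotopy $H:X^1\times[0,1]\to X$ from $\mathrm{incl}$ to $gf$ to a loop $\alpha$ in $X^1-C_{k+1}$: this yields a homotopy from $\alpha$ to $gf\alpha$ which, because $H$ is proper, stays in $X-C_k$ once $\alpha$ is deep enough, and whose track at the base point is exactly the path along which $p_{k+1}$ conjugates. Thus $g_k\circ f_{k+1}=p_{k+1}$ after identifying base points via the track of $H$, and symmetrically $f_k\circ g_k=q_{k+1}$ using $fg\simeq\mathrm{incl}$. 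The crucial — and most delicate — point is that a $2$-equivalence only controls the $1$-skeleta up to proper homotopy, yet this suffices for $\pi_1$: each $\pi_1(X-C_k)$ is generated by loops already living in $X^1-C_k$, so $H$ moves every generator into $X-C_k$, while the relations (boundaries of $2$-cells) are respected automatically since $f$ and $g$ are cellular. Combining with Theorem \ref{pro1} to restore the base ray $r$ establishes (3).

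Finally I would deduce the remaining parts. Statement (4) is immediate, as a pro-isomorphism of inverse sequences induces a homeomorphic group isomorphism of inverse limits. Statement (1) follows because semistability is a pro-isomorphism invariant by Theorem \ref{MLSS}. For (2) I would run the same push--kill--pull scheme directly on loops: given compact $C\subseteq X$, choose $D$ deep enough that any loop $\alpha$ in $X-D$ has $f\alpha$ beyond the compactum supplied by simple connectivity at $\infty$ of $Y$, so $f\alpha$ is nullhomotopic in a prescribed complement in $Y$; applying $g$ and then restricting $gf\simeq\mathrm{incl}$ to $\alpha$ (which keeps the homotopy in $X-C$ by properness) shows $\alpha$ is nullhomotopic in $X-C$. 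Since nullhomotopy is base-point free, no conjugation bookkeeping is needed here, and the argument is symmetric in $X$ and $Y$.
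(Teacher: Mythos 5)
Your proposal cannot be checked against a proof in this paper, because the paper never proves Theorem \ref{2-equiv}: it is quoted directly from Proposition 16.2.3 of \cite{G}. Measured against the standard argument (which is what \cite{G} carries out, with the same ingredients you list: properness-based interleaving of cofinal sequences, the two proper homotopies restricted to $1$-skeleta, a uniform base-point track supplying the conjugation, and then (1), (2), (4) as consequences of (3)), your outline is the right one; your handling of (2) and (4) is fine, and (1) follows from (3) provided you add the remark that $g$ is itself a proper $2$-equivalence (with $2$-inverse $f$), so that (3) applied to $g$ covers the rays and ends of $Y$ that are not of the form $fr$.

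However, the step you yourself single out as crucial is asserted falsely, and as written the verification that $g_k\circ f_{k+1}$ equals the bonding map would fail. It is not true that $\pi_1(X-C_k)$ is generated by (let alone that every class is represented by) loops lying in $X^1-C_k$: when a compactum sits inside the interiors of $2$-cells, the complement can carry fundamental group invisible to the $1$-skeleton. Concretely, let $X$ be a $2$-disk (one vertex, one edge, one $2$-cell) with a proper ray attached at a boundary vertex, and let $C$ be two disjoint closed disks in the interior of the $2$-cell. Then $\pi_1(X-C)$ is free of rank $2$ on the two hole classes $a,b$, while loops in $X^1-C=X^1$ generate only the normal closure of $ab$, which is a proper subgroup since $F(a,b)/\langle\langle ab\rangle\rangle\cong\mathbb Z$. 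Consequently your homotopy $H$, which is defined only on $X^1\times[0,1]$, cannot be applied to every class of $\pi_1(X-C_{k+1})$. The standard repair --- and the reason \cite{G} builds its machinery this way --- is to replace open complements by CW-complements: let $X_k$ be the largest subcomplex of $X$ contained in $X-C_k$. Local finiteness gives $X-N(C_k)\subseteq X_k\subseteq X-C_k$, where $N(C_k)$, the union of the closed cells meeting $C_k$, is compact; hence the inverse systems $\{\pi_1(X_k)\}$ and $\{\pi_1(X-C_k)\}$ interleave and are pro-isomorphic, compatibly with the base ray. Inside the genuine subcomplex $X_k$, cellular approximation makes every class an edge-loop class, so $H$ and $K$ apply to every element, with the track of $H$ at the base point giving the single conjugating path needed to identify the composite with the bonding map. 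With that substitution --- and with the interleaving also required to absorb the projections of $H^{-1}(C_k)$ and $K^{-1}(D_k)$, which you did note --- your proof of (3), and hence of the whole theorem, goes through.
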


\begin{corollary}\label{pro2}
Suppose $X$ and $Y$ are finite connected CW-complexes with respective universal covers $\tilde X$ and $\tilde Y$ and $\pi_1(X)$ is isomorphic to $\pi_1(Y)$. Then  
\begin{enumerate} 
\item $\tilde X$ has semistable fundamental group if and only if $\tilde Y$ has semistable fundamental group at $\infty$;
\item  $\tilde X$ is simply connected at $\infty$  if and only if $\tilde Y$ is simply connected at $\infty$;
\item the fundamental pro-group $\pi_1(\varepsilon \tilde X,r)$ is pro-isomorphic to $\pi_1(\varepsilon \tilde Y,f(r))$ for any  proper ray $r$ in $X$;
\item the fundamental group at $\infty$, $\pi_1^e(\tilde X,r)$ is isomorphic (as a topological group) to $\pi_1^e(\tilde Y,f(r))$ for any proper ray $r$ in $X$. 
\end{enumerate}
\end{corollary}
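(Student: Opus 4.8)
The plan is to chain together Theorem \ref{P1E} and Theorem \ref{2-equiv}, then use the fact that each of the four properties in question depends only on the 2-skeleton of a locally finite complex. First I would note that since $X$ and $Y$ are finite complexes, their universal covers $\tilde X$ and $\tilde Y$ are connected and locally finite, so that the standing hypotheses of Theorem \ref{2-equiv} become available as soon as a proper 2-equivalence is produced. Because $\pi_1(X)\cong\pi_1(Y)$, Theorem \ref{P1E} furnishes a 2-equivalence between $X$ and $Y$ and, by lifting, a proper 2-equivalence $f\colon \tilde X^2\to \tilde Y^2$ between the 2-skeletons of the universal covers; this $f$ is the map named in the statement.

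Next I would apply Theorem \ref{2-equiv} directly to $f$, with $\tilde X^2$ and $\tilde Y^2$ playing the roles of the complexes $X$ and $Y$ in that theorem. This immediately yields all four conclusions for the pair $(\tilde X^2,\tilde Y^2)$: the equivalence of semistability at $\infty$, the equivalence of simple connectivity at $\infty$, the pro-isomorphism $\pi_1(\varepsilon\tilde X^2,r)\cong\pi_1(\varepsilon\tilde Y^2,f(r))$ of fundamental pro-groups, and the topological isomorphism $\pi_1^e(\tilde X^2,r)\cong\pi_1^e(\tilde Y^2,f(r))$ of fundamental groups at $\infty$. No further homotopy-theoretic input is needed here; Theorem \ref{2-equiv} does all the work once $f$ is in hand.

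Finally I would transfer each conclusion from the 2-skeletons to the full universal covers. By the remarks recorded earlier, semistability at $\infty$ and simple connectivity at $\infty$ of a locally finite CW-complex depend only on its 2-skeleton, so conclusions (1) and (2) for $(\tilde X^2,\tilde Y^2)$ give (1) and (2) for $(\tilde X,\tilde Y)$. For (3) and (4) I would observe that a proper ray in $\tilde X$ is properly homotopic to a proper edge-path ray lying in $\tilde X^1\subset\tilde X^2$ (so $r$ may be taken with image in $\tilde X^2$ and $f(r)$ is defined), and that the fundamental pro-group and its inverse limit are computed from compact sets and loops in the 2-skeleton; hence $\pi_1(\varepsilon\tilde X,r)$ and $\pi_1^e(\tilde X,r)$ agree with their 2-skeletal counterparts, and similarly for $\tilde Y$. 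Combining these identifications with the conclusions of the previous paragraph gives (3) and (4). The only step requiring genuine care is this last bookkeeping, namely confirming that the ``depends only on the 2-skeleton'' principle applies not merely to the yes/no statements of semistability and simple connectivity at $\infty$ but also to the pro-group and its inverse limit, and that the base ray can legitimately be pushed into $\tilde X^2$ so that $f(r)$ is meaningful; everything else is a direct citation.
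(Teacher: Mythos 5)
Your proposal is correct and is essentially the paper's intended argument: the paper states Corollary \ref{pro2} without proof immediately after Theorem \ref{2-equiv}, precisely because it follows by combining Theorem \ref{P1E} (the lifted proper 2-equivalence $f\colon \tilde X^2\to\tilde Y^2$) with Theorem \ref{2-equiv}, and then invoking the recorded remarks that semistability, simple connectivity at $\infty$, and the fundamental pro-group of a locally finite complex depend only on its 2-skeleton (together with pushing the base ray into the 1-skeleton). Your final ``bookkeeping'' paragraph correctly identifies and fills the only step the paper leaves implicit.
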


\begin{definition} \label{defss2}
If $G$ is a 1-ended finitely presented group and $X$ is some (equivalently any by Corollary \ref{pro2}) finite CW-complex with fundamental group $G$ and universal cover $\tilde X$, then we say: 
\begin{enumerate} 
\item $G$ is {\it semistable at $\infty$} if  $\tilde X$ has semistable fundamental group at $\infty$;
\item  $G$ is {\it simply connected at $\infty$}  if $\tilde X$ is simply connected at $\infty$;
\item if $G$ is semistable at $\infty$, then the fundamental pro-group $\pi_1(\varepsilon G)$ is (up to  pro-isomorphism) the fundamental pro-group $\pi_1(\varepsilon \tilde X,r)$ for any proper ray $r$;
\item if $G$ is semistable at $\infty$, then the fundamental group at $\infty$ of $G$,  denoted $\pi_1^e(G)$ is (up to isomorphism as a topological group) the fundamental group at $\infty$, $\pi_1^e(\tilde X,r)$ for any proper ray $r$. 
\end{enumerate}
\end{definition}

\begin{theorem} If the finitely presented group $G$ is simply connected at $\infty$, then $G$ is semistable at $\infty$, the fundamental pro-group  $\pi_1(\varepsilon G)$ is pro-trivial and the fundamental group at $\infty$,  $\pi_1^e(G)$ is the trivial group.
\end{theorem}

\begin{theorem}\label{QIsp}   Semistability at $\infty$, and simple connectivity at $\infty$, are quasi-isometry invariants of finitely presented groups (\cite{G} and \cite{BR93}). 
\end{theorem}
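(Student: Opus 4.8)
The plan is to reduce the statement to the space-level invariance result Theorem \ref{2-equiv} by exhibiting a proper $2$-equivalence between suitable $2$-complexes, using that a quasi-isometry between groups of type $F_2$ produces such an equivalence. First I would recall that a finitely presented group is exactly a group of type $F_2$ (equivalently $FP_2$), so each of the quasi-isometric groups $G$ and $H$ admits a $K(\cdot,1)$ with finite $2$-skeleton: take a finite presentation $2$-complex and attach cells of dimension $\geq 3$ to make it aspherical, leaving the (finite) $2$-skeleton unchanged. Call these complexes $X$ (for $G$) and $Y$ (for $H$); their universal covers $\tilde X$, $\tilde Y$ are simply connected, and the $2$-skeletons $\tilde X^2$, $\tilde Y^2$ are locally finite connected CW-complexes since $X$ and $Y$ are finite.

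Next I would invoke Corollary \ref{qi-pnequiv} with $n=2$: because $G$ and $H$ are quasi-isometric groups of type $F_2$, they are proper $2$-equivalent, which by definition means that there is a proper $2$-equivalence $f:\tilde X^2\to \tilde Y^2$ between the $2$-skeletons of the universal covers. (Equivalently, one appeals directly to Theorem \ref{pnequiv-qi}, which manufactures this proper $2$-equivalence from the quasi-isometry.) Now apply Theorem \ref{2-equiv} to the proper $2$-equivalence $f$: parts (1) and (2) give that $\tilde X^2$ has semistable fundamental group at $\infty$ if and only if $\tilde Y^2$ does, and that $\tilde X^2$ is simply connected at $\infty$ if and only if $\tilde Y^2$ is.

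Finally I would transfer this back to the groups. By the remarks recorded earlier, the semistability of the fundamental group at $\infty$ and the simple connectivity at $\infty$ of a locally finite CW-complex depend only on its $2$-skeleton; hence $\tilde X$ is semistable at $\infty$ (resp.\ simply connected at $\infty$) if and only if $\tilde X^2$ is, and likewise for $\tilde Y$. Combining with the previous paragraph and Definition \ref{defss2} (whose consistency across choices of finite complex is guaranteed by Corollary \ref{pro2}), I conclude that $G$ is semistable at $\infty$ if and only if $H$ is, and $G$ is simply connected at $\infty$ if and only if $H$ is. The interesting case is when $G$ (hence $H$, since the number of ends is a quasi-isometry invariant by Theorem \ref{QIEFG}) is $1$-ended, so that Definition \ref{defss2} applies; if $G$ is finite or $2$-ended the conclusion is automatic since such groups are always simply connected---hence semistable---at $\infty$.

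As for the main obstacle: the substantive content is entirely carried by Corollary \ref{qi-pnequiv}/Theorem \ref{pnequiv-qi} (quasi-isometry implies proper $2$-equivalence) and by Theorem \ref{2-equiv} (proper $2$-equivalence preserves semistability and simple connectivity at $\infty$), both cited from \cite{G}; once these are in hand the argument is purely organizational. The only point requiring care is bookkeeping: making sure the complexes fed into Theorem \ref{2-equiv} really are the locally finite connected $2$-skeletons of universal covers of finite complexes, and that the group-level definitions are invoked only in the $1$-ended case where they are defined.
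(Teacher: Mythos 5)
Your proof is correct and is exactly the route the paper takes: Theorem \ref{QIsp} is cited from \cite{G} and \cite{BR93}, and the paper's own assembly of this fact (recorded as Theorem \ref{ssqi}, introduced by ``Combining results we have'') is precisely your chain of Corollary \ref{qi-pnequiv} (quasi-isometric groups of type $F_2$ are proper $2$-equivalent, via Theorem \ref{pnequiv-qi}) followed by Theorem \ref{2-equiv} (proper $2$-equivalence preserves semistability and simple connectivity at $\infty$), with Corollary \ref{pro2} supplying independence of the chosen finite complex. The one cosmetic remark is that the space-level step needs no hypothesis on the number of ends, so your closing restriction to the $1$-ended case is unnecessary rather than a genuine limitation of the argument.
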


We wonder if this result can be extend this to say: {\it Semistability at $\infty$, simple connectivity at $\infty$, the fundamental group at $\infty$  and the fundamental pro-group are all quasi-isometry and proper 2-equivalence invariants of simply connected locally finite CW-complexes.}

Combining results we have:

\begin{theorem}\label{ssqi} 
Semistability at $\infty$, and simple connectivity at $\infty$, 
are quasi-isometry and proper 2-equivalence invariants for finitely presented groups.
\end{theorem}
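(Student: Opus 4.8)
The plan is to reduce both invariance assertions to the single complex-level result Theorem \ref{2-equiv}, handling proper $2$-equivalence first and then obtaining quasi-isometry invariance as a formal consequence via Corollary \ref{qi-pnequiv}.

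First I would treat proper $2$-equivalence invariance head-on. Suppose $G$ and $H$ are finitely presented (hence of type $F_2$, equivalently $FP_2$) and proper $2$-equivalent. Choosing a $K(G,1)$-complex $X$ with finite $2$-skeleton and a $K(H,1)$-complex $Y$ with finite $2$-skeleton, the definition of proper $2$-equivalence for $FP_2$ groups supplies a proper $2$-equivalence $f:\tilde X^2\to\tilde Y^2$ between the universal covers of the respective $2$-skeletons. Since $\tilde X^2$ and $\tilde Y^2$ are locally finite connected CW-complexes, parts (1) and (2) of Theorem \ref{2-equiv} apply: $\tilde X^2$ has semistable fundamental group at $\infty$ if and only if $\tilde Y^2$ does, and $\tilde X^2$ is simply connected at $\infty$ if and only if $\tilde Y^2$ is. The one point requiring care is that semistability and simple connectivity at $\infty$ of the \emph{group} $G$ are defined (Definition \ref{defss2}) on the full universal cover $\tilde X$ of a finite complex with fundamental group $G$, not on its $2$-skeleton $\tilde X^2$. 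This is exactly what the earlier remarks that both properties of a locally finite CW-complex depend only on its $2$-skeleton are for; together with the well-definedness of these notions (Corollary \ref{pro2}), they identify semistability (resp. simple connectivity at $\infty$) of $G$ with that of $\tilde X^2$, and likewise for $H$. Hence both properties transfer between $G$ and $H$, establishing proper $2$-equivalence invariance.

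Next I would deduce the quasi-isometry statement. A finitely presented group is $FP_2$, so Corollary \ref{qi-pnequiv} (itself a consequence of Theorem \ref{pnequiv-qi}) asserts that quasi-isometric finitely presented groups are proper $2$-equivalent. Applying the proper $2$-equivalence invariance just proved then yields that semistability at $\infty$ and simple connectivity at $\infty$ are quasi-isometry invariants of finitely presented groups, which is precisely Theorem \ref{QIsp} re-derived within this framework.

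Most of the work here is assembly, since the hard analysis lives in the cited results (Theorem \ref{2-equiv} for the complex-level transfer and Theorem \ref{pnequiv-qi}/Corollary \ref{qi-pnequiv} for the bridge from quasi-isometry to proper $2$-equivalence). The only genuine subtlety, and the step I would write out most carefully, is the dimension-shifting reconciliation: the proper $2$-equivalence acts on the $2$-skeleta $\tilde X^2,\tilde Y^2$, whereas the group invariants are defined on the full universal covers. Closing this gap cleanly relies on the facts that semistability and simple connectivity at $\infty$ of a locally finite CW-complex depend only on the $2$-skeleton and that they are independent of the chosen finite model for $G$ (Corollary \ref{pro2}, with Theorem \ref{P1E} guaranteeing compatible models exist).
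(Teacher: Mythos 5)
Your proposal is correct and matches the paper's intent: Theorem \ref{ssqi} is stated there with no separate argument ("Combining results we have"), the combination being exactly the one you assemble — Theorem \ref{2-equiv} together with Corollary \ref{pro2} and Definition \ref{defss2} for proper 2-equivalence invariance, and Theorem \ref{pnequiv-qi}/Corollary \ref{qi-pnequiv} to pass from quasi-isometry to proper 2-equivalence. Your careful handling of the 2-skeleton issue (noting that $\tilde X^2$ is itself the universal cover of the finite complex $X^2$ with $\pi_1(X^2)\cong G$, so the group-level invariants can be read off there) is precisely the reconciliation the paper leaves implicit.
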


\begin{theorem}\label{Fin2E}
All finite and 2-ended groups are semistable (in fact, simply connected) at $\infty$. 
\end{theorem}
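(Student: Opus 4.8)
The plan is to treat the finite and $2$-ended cases separately, in each reducing to an explicit model and then invoking the invariance results already available. Throughout, I read the assertion through the space-level definitions of \S\ref{DefSSsp}: "$G$ is semistable (resp. simply connected) at $\infty$" means that the universal cover $\tilde X$ of some (equivalently any, by Corollary \ref{pro2}) finite CW-complex $X$ with $\pi_1(X)\cong G$ has that property. Note that $X$ is only required to have the right fundamental group, not to be aspherical, so one may always take $X$ to be the presentation $2$-complex of a finite presentation.

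For a finite group $G$, I would take $X$ to be the presentation $2$-complex of a finite presentation of $G$. Since $G$ is finite, the covering $\tilde X\to X$ is finite-sheeted over a finite complex, so $\tilde X$ is compact. A compact space admits no proper ray $r\colon[0,\infty)\to \tilde X$ at all, since $r^{-1}(\tilde X)=[0,\infty)$ is not compact; hence the semistability condition is vacuous. Likewise, taking $D=\tilde X$ in the definition of simple connectivity at $\infty$ leaves no loops to contract. Thus finite groups are vacuously simply connected, and hence (by Theorem \ref{SCtoSS}) semistable, at $\infty$.

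For a $2$-ended group $G$ I would first record that $G$ is finitely presented: by the structure theorem $G$ has a finite normal subgroup $N$ with $G/N$ isomorphic to $\mathbb Z$ or $\mathbb Z_2\ast \mathbb Z_2$, and since both $N$ and $G/N$ are finitely presented, so is $G$. Next, $G$ contains an infinite cyclic subgroup of finite index, so $G$ is quasi-isometric to $\mathbb Z$. I then verify the claim for $\mathbb Z$ directly and transport it. Taking $X=S^1$, the universal cover is $\mathbb R$ with its standard CW structure; for any compact $C\subset \mathbb R$ choose a closed interval $D\supseteq C$, so that $\mathbb R-D$ is a disjoint union of two contractible half-lines and every loop in $\mathbb R-D$ is null-homotopic inside $\mathbb R-D\subseteq \mathbb R-C$. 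Hence $\mathbb R$ is simply connected at $\infty$, i.e. $\mathbb Z$ is. Now Theorem \ref{ssqi} (equivalently Theorem \ref{QIsp}), stating that simple connectivity at $\infty$ is a quasi-isometry invariant of finitely presented groups, gives that $G$ is simply connected at $\infty$, and Theorem \ref{SCtoSS} upgrades this to semistability. As an alternative that avoids citing the quasi-isometry invariance directly, since $G$ and $\mathbb Z$ are quasi-isometric groups of type $F_2$, Corollary \ref{qi-pnequiv} supplies a proper $2$-equivalence between $\tilde X$ and $\mathbb R$, and Theorem \ref{2-equiv} transfers simple connectivity at $\infty$ across it.

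I expect no deep obstacle. The only points needing care are bookkeeping: interpreting the two properties through the space-level definitions applied to the universal cover of a finite (not necessarily aspherical) complex, confirming that a $2$-ended group is genuinely finitely presented so that Theorem \ref{ssqi} applies, and observing that this universal cover is compact in the finite case. The substantive inputs—quasi-isometry invariance of simple connectivity at $\infty$ for finitely presented groups, its transfer across proper $2$-equivalences, and the implication from simple connectivity to semistability—are all already established in the excerpt, so the argument is essentially an assembly of these tools.
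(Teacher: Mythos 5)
Your proof is correct and follows essentially the same route as the paper: the finite case is handled as trivial/vacuous, and the $2$-ended case reduces to $\mathbb Z$ via the real line covering the circle, then transfers to $G$ by the quasi-isometry invariance of simple connectivity at $\infty$ (Theorem \ref{QIsp}), with Theorem \ref{SCtoSS} giving semistability. The extra bookkeeping you supply (finite presentability of $2$-ended groups, the explicit check for $\mathbb R$, and the alternative transfer via Corollary \ref{qi-pnequiv} and Theorem \ref{2-equiv}) only makes explicit what the paper leaves implicit.
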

\begin{proof}
The result is trivial for finite groups. If $G$ is 2-ended, then it has an infinite cyclic subgroup $J$ of finite index. As the real line covers the circle, $J$ is simply connected at $\infty$. Theorem \ref{QIsp} implies $G$ is simply connected at $\infty$. 
\end{proof}

Proper 2-equivalence is more of a topological invariant than a geometric invariant.  As a simple example $\mathbb Z\oplus \mathbb Z$ is not quasi-isometric to a hyperbolic surface group, but all (non-sphere) surfaces have universal covers homeomorphic to $\mathbb R^2$ and so their fundamental groups are proper 2-equivalent. Gromov's idea to classify finitely presented groups up to quasi-isometry fits well with idea of classifying finitely presented groups up to proper 2-equivalence. If one could classify the finitely presented groups in a class of groups up proper 2-equivalence, then one could attempt to classify the groups in a proper 2-equivalence class up to quasi-isometry. This reduction idea is supported by the following result.

\begin{theorem} [(Proposition 2.9, \cite{CLQR20}] \label{fgi-2eq}
Finitely presented 1-ended groups that are semistable at $\infty$ are proper 2-equivalent if and only if their fundamental groups at $\infty$ are pro-isomorphic. 
\end{theorem}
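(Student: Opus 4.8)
The plan is to prove the two implications separately; the forward direction is immediate and the converse carries all the weight. Suppose first that $G$ and $H$ are proper $2$-equivalent. Realizing them (Definition \ref{defss2}) by finite $2$-complexes $X,Y$ with $\pi_1 X=G$, $\pi_1 Y=H$ and a proper $2$-equivalence $f:\tilde X\to\tilde Y$ between the universal covers (each simply connected, locally finite, $1$-ended and semistable), Theorem \ref{2-equiv}(3) says $\pi_1(\varepsilon\tilde X,r)$ is pro-isomorphic to $\pi_1(\varepsilon\tilde Y,f(r))$ for any proper ray $r$; by Definition \ref{defss2} these are exactly $\pi_1(\varepsilon G)$ and $\pi_1(\varepsilon H)$, so their fundamental pro-groups at $\infty$ are pro-isomorphic. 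Nothing more is needed here.

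For the converse, fix finite $2$-complexes $X,Y$ as above and choose cofinal sequences of compact subcomplexes $C_0\subset C_1\subset\cdots$ of $\tilde X$ and $D_0\subset D_1\subset\cdots$ of $\tilde Y$ whose complements are connected (possible since both spaces are $1$-ended, using Theorem \ref{ES}). Fix proper edge-path rays $r$ in $\tilde X$ and $s$ in $\tilde Y$ adapted to these filtrations, giving the inverse sequences defining $\pi_1(\varepsilon\tilde X,r)$ and $\pi_1(\varepsilon\tilde Y,s)$. By hypothesis these pro-groups are pro-isomorphic, so Definition \ref{DProI} supplies integers $0<m_1<m_2<\cdots$, $0<n_1<n_2<\cdots$ and homomorphisms $f_{n_k}:\pi_1(\tilde X-C_{m_k})\to\pi_1(\tilde Y-D_{n_k})$ and $g_{m_k}:\pi_1(\tilde Y-D_{n_{k+1}})\to\pi_1(\tilde X-C_{m_k})$ making the ladder commute; using Theorem \ref{pro1} I may first arrange that $f$ will carry $r$ to a ray properly homotopic to $s$.

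Next I would realize this algebra geometrically. The key simplification is that $\tilde Y$ is simply connected, so once a map is defined on $\tilde X^1$ it extends over every $2$-cell of $\tilde X$ with no obstruction (each attaching loop maps to a null-homotopic loop). Working outward from infinity, I would build a proper cellular map $f:\tilde X\to\tilde Y$ by defining it on the successive neighborhoods of infinity $\tilde X-C_{m_k}$ so that $f(\tilde X-C_{m_k})\subset\tilde Y-D_{n_k}$, which forces properness, and so that on $\pi_1$ the restriction induces $f_{n_k}$ up to the bonding maps; the standard realization of a prescribed homomorphism by a based map of $1$-complexes, extended over $2$-cells by simple connectivity, does this on each stage, and the commutativity of the ladder lets the stages be glued by homotopies into a single proper map. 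Symmetrically I would build a proper cellular $g:\tilde Y\to\tilde X$ realizing the $g_{m_k}$.

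The main obstacle is the final step: showing $gf|_{\tilde X^1}$ is properly homotopic to the inclusion $\tilde X^1\hookrightarrow\tilde X$ (and symmetrically for $fg$), which is exactly what Definition \ref{n-equiv} demands. Because $\tilde X$ is simply connected, any two maps of the graph $\tilde X^1$ into $\tilde X$ are homotopic, so $gf|_{\tilde X^1}\simeq\mathrm{incl}$ holds non-properly for free; the difficulty is upgrading this to a \emph{proper} homotopy. Here the commuting ladder is essential, since it forces $g_{m_k}\circ f_{n_{k+1}}$ to equal the inclusion-induced bonding map $\pi_1(\tilde X-C_{m_{k+1}})\to\pi_1(\tilde X-C_{m_k})$, so that $gf$ induces the identity on the fundamental pro-group. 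Semistability then converts this into properness: by Theorem \ref{SSloop} loops lying deep in $\tilde X-C_i$ can be slid off to infinity along $r$ by a homotopy staying in $\tilde X-C_{i-1}$, and stacking such homotopies telescope-fashion (exactly as in the proof of Theorem \ref{ssequiv}, with each compact set eventually avoided) yields a genuine proper homotopy from $gf|_{\tilde X^1}$ to the inclusion. Running the same argument on the $Y$ side completes the proper $2$-equivalence, and I expect this interplay of the commuting ladder with the Mittag--Leffler/semistability control to be the hardest part to make precise.
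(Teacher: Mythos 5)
First, a point of comparison: the paper does not prove this statement at all — it is quoted verbatim as Proposition 2.9 of \cite{CLQR20} — so there is no in-paper argument to measure you against; your proposal has to stand on its own. Your forward direction does: proper 2-equivalence gives pro-isomorphic fundamental pro-groups immediately from Theorem \ref{2-equiv}(3) and Definition \ref{defss2}, exactly as you say. The converse, however, is where all of the content of the cited result lives, and your sketch has genuine gaps at each of its three load-bearing steps.

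Concretely: (a) Extending your 1-skeleton map over the 2-cells of $\tilde X$ ``with no obstruction'' because $\tilde Y$ is simply connected does not give a \emph{proper} extension. In a locally finite complex, properness forces all but finitely many 2-cells to map off each compact set, so you need each attaching loop lying outside $C_{m_k}$ to map to a loop that is null-homotopic in $\tilde Y-D_{n_{k-1}}$, i.e.\ null-homotopic \emph{far out}; $\pi_1(\tilde Y)=1$ gives null-homotopies with no control whatsoever. This can be salvaged only if the 1-skeleton map genuinely induces the prescribed homomorphisms on $\pi_1$ of complements — which is precisely the step you have not constructed. (b) That construction, dismissed as ``the standard realization of a prescribed homomorphism\dots glued by homotopies,'' is the heart of the theorem. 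The $f_{n_k}$ of Definition \ref{DProI} are abstract homomorphisms between groups based at points along two different base rays; producing a single proper cellular map whose restrictions induce them, with the stage-to-stage adjusting homotopies supported outside prescribed compacta so that properness survives the gluing, is exactly the Edwards--Hastings-type realization machinery that \cite{CLQR20} invokes, not a routine step. (Your appeal to Theorem \ref{pro1} to ``arrange'' that $f$ carries $r$ near $s$ before $f$ exists is also circular.) (c) The final upgrade of $gf|_{\tilde X^1}\simeq \mathrm{incl}$ from a free homotopy to a proper one is not delivered by Theorem \ref{SSloop} plus the stacking of Theorem \ref{ssequiv}: those arguments compare two proper \emph{rays}, or slide a single loop along a base ray. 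Here you must build a proper homotopy on an entire locally finite 1-skeleton — a track for each of infinitely many vertices and a filled square for each of infinitely many edges, with uniform control forcing the homotopy off every compact set. Semistability is indeed the right tool, and the pro-$\pi_1$ identity from the commuting ladder is the right input, but the coherent all-cells-at-once argument is missing, and it is the hardest part of the proof rather than a corollary of results already in the paper.
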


If $1\to H\to G\to K\to 1$ is a short exact sequence of infinite finitely presented groups, then (see Theorem \ref{P2E}) $G$ is proper 2-equivalent to either $\mathbb Z\times \mathbb Z\times \mathbb Z$, $\mathbb Z\times \mathbb Z$ or $\mathbb F_2\times \mathbb Z$ (a rather limited collection of possibilities). But if $G$ is a recursively presented group then there is a right angled Artin group $A$ whose fundamental group at $\infty$ contains a subgroup isomorphic to $C$ (see Theorem \ref{RecRA}). 

\subsubsection{Pro-mono and Coaxial Actions}\label{Coax}
In this section the space $Y$ is a 1-ended, simply connected, locally compact, absolute neighborhood retract (ANR). (The class of ANR's includes topological manifolds and locally finite CW-complexes.) Certain types of group actions on $Y$ can insure that $Y$ has fundamental pro-group that is both semistable and pro-free. 

\begin{definition} 
Suppose the infinite cyclic group $J$ acts as covering transformations on $Y$ and $j$ is a generator of $J$. Then $j$ is {\it coaxial} if given any compact set $C$ in $Y$, there is a larger compact set $D$ in  $Y$ such that any loop in $Y-J\cdot D$ is homotopically trivial in $Y-C$ (here $J\cdot D=\cup _{m\in \mathbb Z}(j^m(D))$).
\end{definition}

\begin{theorem} [Theorem 0.1, \cite{GGM16}] \label{GGm}  
If there exists an infinite cyclic group $J$ acting as covering transformations on $Y$ and generated by a coaxial homeomorphism then there is an infinite tree $\mathbb T$ and a proper 2-equivalence $\tilde f:Y\to \mathbb T\times \mathbb R$. 
\end{theorem}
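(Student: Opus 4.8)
The plan is to realize $\mathbb{T}\times\mathbb{R}$ through two ``coordinate'' maps out of $Y$: an $\mathbb{R}$-coordinate coming from the covering $Y\to Y/J$, and a tree-coordinate recording the end structure of the quotient transverse to the $J$-axis, with the coaxial hypothesis supplying the proper $2$-dimensional fillings. First I would record the basic consequences of the hypotheses. Since $J\cong\mathbb{Z}$ acts as covering transformations and $Y$ is simply connected, $Y$ is the universal cover of $X:=Y/J$ and $\pi_1(X)\cong J$. Using the ANR hypothesis and Theorem \ref{2-equiv} I may replace $Y$ equivariantly by a locally finite, simply connected CW-complex, so that $X$ is a locally finite CW-complex with $\pi_1(X)\cong\mathbb{Z}$; this changes nothing below, since only the proper $2$-equivalence type is claimed. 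The covering $Y\to X$ is classified by a cellular map $c:X\to S^1$ inducing an isomorphism on $\pi_1$, and I lift $c$ to a $J$-equivariant map $h:Y\to\mathbb{R}$ with $h(jy)=h(y)+1$; this is the $\mathbb{R}$-coordinate. Note $J$ cannot act cocompactly, for then $Y$ would be quasi-isometric to $\mathbb{Z}$ and hence $2$-ended, contrary to $1$-endedness; so $X$ is noncompact, which is what will force $\mathbb{T}$ to be infinite.

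Next I would build the tree. Fix a cofinal sequence of compact sets $A_0\subset A_1\subset\cdots$ in $X$. Take the vertices of $\mathbb{T}$ to be the unbounded components of the various $X-A_n$ together with a root, and join each level-$(n+1)$ component by an edge to the unique level-$n$ component containing it. By Theorem \ref{ES} each $X-A_n$ has only finitely many unbounded components (after absorbing the bounded ones into $A_n$), and each such component contains only finitely many unbounded components of $X-A_{n+1}$, so $\mathbb{T}$ is an infinite, locally finite tree. A point of $X$ lying in a level-$n$ component determines a vertex, and I define a proper map $\bar\phi:X\to\mathbb{T}$ sending each point along a path to the deepest such vertex; composing with $p:Y\to X$ gives the $J$-invariant tree coordinate. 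The two coordinates assemble to
\[
\tilde f=(\bar\phi\circ p,\,h):Y\longrightarrow \mathbb{T}\times\mathbb{R}.
\]
Properness of $\tilde f$ holds because the preimage of (finite subtree)$\,\times\,$(bounded interval) is $p^{-1}$ of a compact subset of $X$ intersected with finitely many $J$-translates of a fundamental slab, hence compact.

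The core of the argument is to produce a proper homotopy inverse $g:\mathbb{T}\times\mathbb{R}\to Y$ and to check that $g\tilde f$ and $\tilde fg$ are properly homotopic to the inclusions on $1$-skeleta, as Definition \ref{n-equiv} requires. Over each vertex $v$ and edge $e$ of $\mathbb{T}$ I would map the slabs $\{v\}\times\mathbb{R}$ and $e\times\mathbb{R}$ into the corresponding region of $Y$, $J$-equivariantly, using $h$ to place the image along the axis. On $0$- and $1$-cells the composites are properly homotopic to the identity by sliding along the tree and translating along $\mathbb{R}$, in the ``stacking of homotopies'' style of the proof of Theorem \ref{SSloop}. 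The only obstruction to upgrading these $1$-dimensional homotopies to proper homotopies of the $1$-skeleton, that is, to filling the $2$-cells bounded by the loops of the homotopy, is the existence of proper nullhomotopies of loops lying far out in $Y$; and this is supplied verbatim by coaxiality: given a compact target region $C$ one selects $D$ so that every loop occurring in $Y-J\cdot D$ is nullhomotopic in $Y-C$. Interleaving the coaxial pairs with the tree levels $A_n$ makes all these fillings simultaneously proper.

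The hard part will be exactly this coordination: one must choose the filtration $\{A_n\}$, the coaxial pairs, and the regions over the tree edges compatibly so that (a) $\mathbb{T}$ is locally finite, (b) $\tilde f$ and $g$ are proper, and (c) the $2$-cell fillings remain proper throughout the homotopies. The $\mathbb{R}$-coordinate is essentially forced and causes no trouble; all the genuine content lies in showing that, transverse to the $J$-axis, the end structure of $Y$ is ``tree-like to order $2$'', which is precisely the translation of the coaxial hypothesis. A secondary technical point is the initial reduction from an ANR to a CW-complex carrying the $J$-action, handled by equivariant ANR/CW approximation since only the proper $2$-equivalence type is at stake.
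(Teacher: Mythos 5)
Your overall plan (an $\mathbb{R}$-coordinate from the infinite cyclic cover plus a tree coordinate transverse to $J$, with coaxiality supplying the $2$-dimensional fillings) is the right general shape, and since the paper only cites \cite{GGM16} for this statement there is no in-text proof to compare against; but your construction of the tree is wrong, not merely incomplete. You build $\mathbb{T}$ from the unbounded complementary components of the quotient $X=Y/J$, and this is the wrong data: unbounded components of $X-A$ need not correspond bijectively to the $J$-unbounded components of $Y-J\cdot D$. If $U$ is an unbounded component of $X-A$ whose fundamental group maps onto a proper subgroup $n\mathbb{Z}$ of $\pi_1(X)\cong\mathbb{Z}$, then $p^{-1}(U)$ has $n$ components, permuted nontrivially by $J$. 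Concretely, let $Y=\mathbb{R}^2$ and let $j$ be the glide reflection $j(x,y)=(x+1,-y)$. Then $J=\langle j\rangle$ acts by covering transformations and $j$ is coaxial (for $D$ a large ball, $Y-J\cdot D$ lies in two half-planes, each simply connected and disjoint from $C$), while $X=Y/J$ is the open M\"obius band, which has exactly one end. Your $\mathbb{T}$ is therefore a ray and your model space is the half-plane $[0,\infty)\times\mathbb{R}$, which is simply connected at infinity; but $Y=\mathbb{R}^2$ is not simply connected at infinity (its fundamental pro-group is the constant system $\mathbb{Z}$). By parts (2) and (3) of Theorem \ref{2-equiv}, no proper $2$-equivalence from $Y$ to your model can exist. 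The correct tree here is a line --- two rays corresponding to the two half-planes above and below $J\cdot D$ --- giving $\mathbb{T}\times\mathbb{R}=\mathbb{R}^2$, as it must.

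The same error is visible in your construction of the homotopy inverse $g$: you propose to map $\{v\}\times\mathbb{R}$ ``$J$-equivariantly into the corresponding region of $Y$,'' but that region is $p^{-1}(U)$, which in the example above is disconnected; the image of the connected slab must lie in a single component, while $j$ interchanges the components, so no such equivariant map exists. The repair is to build $\mathbb{T}$ from the $J$-unbounded components of $Y-J\cdot D_n$ themselves (the objects appearing in Definition \ref{Coss}), not from the ends of $X$; note that $J$ then acts nontrivially on $\mathbb{T}$, so your equivariance bookkeeping and properness arguments have to be redone (for instance, one must control the $J$-bounded components of $Y-J\cdot D_n$, whose union need not be contained in $J\cdot(\text{compact})$ without further argument). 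Finally, the step you defer --- producing $g$ and upgrading the $1$-dimensional comparisons to proper homotopies of $1$-skeleta with coordinated $2$-cell fillings --- is the actual content of the theorem, and coaxiality does not supply it ``verbatim'': it kills only loops lying in $Y-J\cdot D$, whereas the loops arising in the comparison homotopies must first be pushed into such sets, properly and compatibly over all of $\mathbb{T}\times\mathbb{R}$.
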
 
Since a proper 2-equivalence preserves the pro-isomorphism type of the fundamental pro-group of spaces we have:

\begin{corollary}[Corollary 0.2, \cite{GGM16}] The existence of such a coaxial $j$ ensures that $Y$ has semistable and pro-free fundamental pro-group. 
\end{corollary}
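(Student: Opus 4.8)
The plan is to obtain the statement directly from Theorem \ref{GGm} together with the invariance Theorem \ref{2-equiv}, reducing everything to an explicit computation of the fundamental pro-group at infinity of a product $\mathbb{T}\times\mathbb{R}$. By hypothesis there is a coaxial generator $j$ of $J$, so Theorem \ref{GGm} supplies an infinite tree $\mathbb{T}$ and a proper $2$-equivalence $\tilde f:Y\to \mathbb{T}\times\mathbb{R}$; since Theorem \ref{2-equiv} is stated for locally finite complexes, $\mathbb{T}$ is necessarily locally finite. By Theorem \ref{2-equiv}(1),(3), $Y$ has semistable fundamental group at infinity if and only if $\mathbb{T}\times\mathbb{R}$ does, and in that case their fundamental pro-groups are pro-isomorphic. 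Thus it suffices to prove that $\mathbb{T}\times\mathbb{R}$ is $1$-ended, is semistable at infinity, and has pro-free fundamental pro-group.

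Semistability is immediate: $\mathbb{T}$ and $\mathbb{R}$ are infinite connected locally finite CW-complexes, so Example \ref{prodss} gives that $\mathbb{T}\times\mathbb{R}$ is semistable at infinity.

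For pro-freeness I would fix a base vertex $t_0\in\mathbb{T}$ and use the cofinal sequence of compact sets $C_n=B_n(t_0)\times[-n,n]$, where $B_n(t_0)$ is the finite ball of radius $n$ in $\mathbb{T}$. The key step is to identify the homotopy type of each neighborhood of infinity $W_n=(\mathbb{T}\times\mathbb{R})-C_n$. I would cover $W_n$ by the open sets $A^{+}=\mathbb{T}\times(n,\infty)$ and $A^{-}=\mathbb{T}\times(-\infty,-n)$, together with the sets $B_1,\dots,B_{k_n}$, where $B_i=T_i\times\mathbb{R}$ and $T_1,\dots,T_{k_n}$ are the components (branches) of $\mathbb{T}-B_n(t_0)$ and $k_n$ is the number of edges of $\mathbb{T}$ leaving $B_n(t_0)$. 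Each of these pieces is a product of a subtree with a ray or a line, hence contractible; the intersections $A^{+}\cap B_i=T_i\times(n,\infty)$ and $A^{-}\cap B_i=T_i\times(-\infty,-n)$ are contractible, while $A^{+}\cap A^{-}=\emptyset$, $B_i\cap B_j=\emptyset$ for $i\neq j$, and all triple intersections are empty. This is a good cover, so the nerve lemma identifies $W_n$ up to homotopy with its nerve, the complete bipartite graph $K_{2,k_n}$. In particular $\pi_1(W_n)$ is free (of rank $k_n-1$), and since $W_n$ is connected (the nerve is connected), $\mathbb{T}\times\mathbb{R}$ is $1$-ended. Consequently the fundamental pro-group of $\mathbb{T}\times\mathbb{R}$ is represented by the inverse sequence of free groups $\{\pi_1(W_n)\}$ with bonding maps induced by inclusion along a base ray, and is therefore pro-free by definition.

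The hard part is precisely this homotopy analysis of $W_n$: choosing an exhaustion for which the complements of infinity admit a good cover by contractible sets whose intersections are contractible or empty, so that the nerve lemma applies and yields a graph. Once that is in hand, combining the pro-free inverse sequence with the semistability supplied by Example \ref{prodss} and transporting both conclusions back to $Y$ through Theorem \ref{2-equiv} finishes the argument. A minor point to verify carefully is that the inclusion-induced bonding maps — after the base-point conjugation built into the definition of the fundamental pro-group — really are the maps of this free inverse sequence; this is routine once the identifications $W_n\simeq K_{2,k_n}$ are arranged compatibly with a fixed proper base ray running to infinity inside a single branch crossed with a constant height.
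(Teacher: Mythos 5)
Your proposal follows the paper's own route: the paper derives this corollary in one line from Theorem \ref{GGm} together with the fact that a proper 2-equivalence preserves the pro-isomorphism type of the fundamental pro-group (Theorem \ref{2-equiv}), exactly as you do, and your nerve-lemma identification of the neighborhoods of infinity of $\mathbb{T}\times\mathbb{R}$ with the graphs $K_{2,k_n}$ simply fills in the standard computation that the paper leaves implicit. The one soft spot is your inference that $\mathbb{T}$ ``is necessarily locally finite'' because Theorem \ref{2-equiv} requires it --- local finiteness is a property of the tree supplied by \cite{GGM16}, not something deducible from the theorem you intend to apply to it --- but since the cited source does provide such a tree, this is the same implicit assumption the paper itself makes and does not affect correctness.
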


For comparison, F. E. A. Johnson \cite{Joh75} considered short exact sequences of finitely presented groups $1\to H\to G\to K\to 1$ and in certain cases proved that $G$ was 2-equivalent to $H\times K$. 

An inverse sequence of groups is {\it pro-mono} if it is pro-isomorphic to an inverse sequence of groups with monomorphic bonding maps. This notion is dual to semistability (pro-epi).  If an inverse sequence of groups is both pro-mono and pro-epi, then it is stable (pro-isomorphic to an inverse system with bonding maps all isomorphisms). 
Building on earlier work by D. Wright, \cite{W92}, R. Geoghegan and C. Guilbault \cite{GG12} proved:

\begin{theorem} [Theorem 1.2, \cite{GG12}] \label{Gg12} 
If the fundamental pro-group at infinity of $Y$ is pro-mono and there is an infinite cyclic group $J$ acting as covering transformations on $Y$ then the fundamental pro-group of $Y$ is stably a finitely generated free group.
\end{theorem}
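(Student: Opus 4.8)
The plan is to use the $J$-action to bring $Y$ into the ``tree-times-line'' model furnished by Theorem \ref{GGm}, and then to play the pro-monomorphism hypothesis against the semistability that this model automatically produces. The point is that a coaxial $\mathbb{Z}$-action forces the fundamental pro-group at infinity to be both semistable and pro-free (by Theorem \ref{GGm} and its Corollary 0.2), and pro-mono is exactly the extra input needed to upgrade ``semistable and pro-free'' to ``stable and finitely generated free.''

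First I would show that a generator $j$ of $J$ is coaxial. This is the geometric heart of the argument and the step I expect to be the main obstacle. Given a compact set $C\subset Y$, the $\mathbb{Z}$-action provides a translation direction along which one can push a neighborhood of infinity: for a suitable compact $D$ one uses powers of $j$ to translate loops lying in $Y-J\cdot D$ into a region where they may be contracted, and then translate the contracting disks back, arranging that the whole homotopy misses $C$. Since $Y$ is $1$-ended and simply connected, the only obstruction to contracting a loop near infinity lies in the fundamental pro-group; verifying that this obstruction can be trivialized off $C$ uniformly along the $J$-orbit is the delicate part, and is where the control coming from the action (together with the hypotheses on $Y$) must be exploited.

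Once $j$ is coaxial, Theorem \ref{GGm} supplies an infinite tree $\mathbb{T}$ and a proper $2$-equivalence $\tilde f:Y\to\mathbb{T}\times\mathbb{R}$; local compactness of $Y$ forces $\mathbb{T}$ to be locally finite. By Theorem \ref{2-equiv} the fundamental pro-groups $\pi_1(\varepsilon Y)$ and $\pi_1(\varepsilon(\mathbb{T}\times\mathbb{R}))$ are pro-isomorphic, so it suffices to analyze the latter. Taking the cofinal family of neighborhoods of infinity obtained by deleting $B_n\times[-n,n]$, where $B_n$ is the ball of radius $n$ in $\mathbb{T}$, a direct homotopy computation shows that each such neighborhood has free fundamental group of finite rank (one less than the number of branches of $\mathbb{T}$ beyond $B_n$) and that the bonding maps are surjective. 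Thus $\pi_1(\varepsilon(\mathbb{T}\times\mathbb{R}))$ is semistable and pro-free with finitely generated free groups at every stage, which is precisely the content of Corollary 0.2 refined by local finiteness.

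Finally I would combine the two properties. The pro-monomorphism hypothesis transports across the proper $2$-equivalence to $\pi_1(\varepsilon(\mathbb{T}\times\mathbb{R}))$, which by the previous step is also pro-epimorphic; an inverse sequence that is simultaneously pro-mono and pro-epi is stable, i.e. pro-isomorphic to a sequence of isomorphisms (this follows from Theorem \ref{MLSS} together with its dual for monomorphisms). Since the stage groups are free of finite rank and the eventual bonding maps are isomorphisms, the ranks stabilize at a single finite value $r$, and the fundamental pro-group of $Y$ is stably the free group $F_r$ of rank $r$, with $r+1$ equal to the number of ends of $\mathbb{T}$. This establishes the theorem; the genuine work lies entirely in verifying coaxiality, after which everything reduces to the transparent inverse system of $\mathbb{T}\times\mathbb{R}$.
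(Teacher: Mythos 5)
Your outline has the same skeleton as the paper's own treatment: the paper disposes of Theorem \ref{Gg12} in one sentence, as a corollary of Theorem \ref{GGm}, by quoting a lemma of Wright for coaxiality and then combining the resulting semistable, pro-free fundamental pro-group (Corollary 0.2 of \cite{GGM16}) with the pro-mono hypothesis, exactly as in your final paragraph (pro-mono plus pro-epi implies stable). Your explicit computation of $\pi_1$ of the complements of the boxes $B_n\times[-n,n]$ in $\mathbb{T}\times\mathbb{R}$ is a correct and pleasant refinement of that corollary, with one unjustified assertion: a proper $2$-equivalence need not preserve local compactness, so ``local compactness of $Y$ forces $\mathbb{T}$ to be locally finite'' does not follow as stated. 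If $\mathbb{T}$ had a vertex of infinite valence, your stage groups would be free of infinite rank, and the finite generation in the conclusion would need a separate argument; local finiteness must be extracted from the construction in \cite{GGM16}, not from $Y$ formally.

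The genuine gap is the step you yourself flag as the main obstacle: coaxiality of $j$. First, the argument you sketch cannot work as described. Translating a loop $\alpha\subset Y-J\cdot D$ far away and contracting it there is fine (since $Y$ is simply connected, $\alpha$ bounds a compact disk, and properness of the action makes the translate of that disk under $j^n$ miss $C$ for large $n$), but ``translating the contracting disks back'' yields a null-homotopy of $\alpha$ in $Y-j^{-n}(C)$, not in $Y-C$: the excluded compact set translates along with the disk, and you are back where you started. Second, and decisively, coaxiality is not a formal consequence of having a $\mathbb{Z}$-action by covering transformations: if it were, Theorem \ref{GGm} would force every $1$-ended, simply connected, locally finite complex admitting such an action to have pro-free fundamental pro-group, contradicting Davis's examples (Example \ref{DavisEx}), whose groups are infinite and torsion-free (so contain $\mathbb{Z}$ acting as covering transformations on the contractible, locally finite universal cover) while their fundamental pro-groups contain torsion. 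So the pro-monomorphism hypothesis must be invoked precisely in this step, and your proposal never says where or how it enters. This is exactly what the paper cites Wright's lemma for: \cite[Lemma 3.1]{W92} (see also Theorem 16.3.4 of \cite{G} and Remark \ref{Remark in Intro}) states that if the fundamental pro-group of $Y$ is pro-mono, then the generator of any infinite cyclic group acting as covering transformations on $Y$ is coaxial, i.e.\ satisfies condition $(\ast)$ of \S\ref{SCS}. With that lemma substituted for your first step, the remainder of your outline does complete the proof.
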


Theorem \ref{Gg12} is a corollary of Theorem \ref{GGm}, because a lemma from \cite{W92} implies that when $Y$ has pro-mono fundamental pro-group then, given any infinite cyclic group $J$ acting as covering transformations on $Y$, the generator $j$ of $J$ is coaxial.

\subsubsection {Semistability and Co-semistability at $\infty$}\label{SCS} 

The main theorem of \cite{GGM1} shows that a locally finite, simply connected, 1-ended  CW-complex $Y$ has semistable fundamental group at $\infty$ if and only if $Y$ satisfies two other conditions. These conditions involve non-cocompact group actions on $Y$ and are rather technical, but the ideas are exceptionally useful in showing a certain class of groups (long though to contain potential non-semistable groups) only contains groups with semistable fundamental group at $\infty$.

In its simplest form a group $G$ acts cocompactly on $Y$ and this result restricts attention to the subaction on $Y$ of an infinite finitely
generated subgroup $J$ having infinite index in $G$. The
topology of $Y$ at infinity is separated into 
\textquotedblleft the $J$-directions\textquotedblright\ and 
\textquotedblleft the directions in $Y$ orthogonal to $J$\textquotedblright, with the main result
being that, having
appropriate analogs of semistability in the two directions, implies that
$Y$ has semistable fundamental group at $\infty$.

First a special case of the
Main Theorem of \cite{GGM1}. A more far-reaching, but more technical,
version of the Main Theorem is given later in this section

\begin{definition}\label{ss+cossD} 
Suppose $J$ is a finitely generated group acting by cell-permuting covering
transformations on a 1-ended locally finite and simply connected CW
complex $Y$. Let $\Gamma\left(  J,J^{0}\right)  $ be the Cayley graph of $J$ with
respect to a finite generating set $J^{0}$ and let $m:\Gamma\rightarrow Y$ be
a $J$-equivariant map. Then\medskip

\begin{itemize}
\item[a)] $J$ is {\it semistable at infinity in }$Y$ if for any compact set
$C\subseteq Y$ there is a compact set $D\subseteq Y$ such that if $r$ and $s$
are two proper rays (based at the same point) in $\Gamma\left(  J,J^{0}\right)
-m^{-1}\left(  D\right)  $ then $mr$ and $ms$ are properly homotopic in $Y-C$
relative to $mr\left(  0\right)  =ms\left(  0\right)  $.

Standard methods show that the above property does not depend on the choice of finite
generating set $J^{0}$ or the $J$-equivariant map $m$.

\item[b)] $J$ is {\it co-semistable at infinity in }$Y$ if for any compact
set $C\subseteq Y$ there is a compact set $D\subseteq Y$ such that for any
proper ray $r$ in $Y-J\cdot D$ and any loop $\alpha$ based at $r(0)$ whose image lies in
$Y-J\cdot D$, $\alpha$ can be pushed to infinity by a proper homotopy in $Y-C$ with the base point
tracking  $r$.
\end{itemize}
\end{definition}

\begin{remark}
The ``initial" definition of co-semistability given in \cite{GGM1} should have been the one listed in item $(b)$ above. Instead, it required all loops $\alpha$ in $Y-D$ (as opposed to only loops in $Y-J\cdot D$) to satisfy the conclusion of $(b)$. In that sense, the theorem listed next is considerably stronger than Theorem 1.1 stated in \cite{GGM1}. One need only consider Definition \ref{Coss} to see why the following theorem holds under the weaker hypothesis listed here. The main theorem of \cite{GGM1} - Theorem 3.1 - and the more delicate definitions of co-semistability are stated correctly in \cite{GGM1}.
\end{remark}
\begin{theorem}
[A special case]\label{Theorem: Special case of Main Theorem}If
$J$ is both semistable at infinity in $Y$ and co-semistable at infinity in
$Y$, then $Y$ has semistable fundamental group at infinity.
\end{theorem}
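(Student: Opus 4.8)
The plan is to verify the loop-pushing characterization of semistability, namely item (3) of Theorem \ref{ssequiv}: for a fixed proper ray and each compact $C\subseteq Y$ we must produce a compact $D$ so that every loop based on the ray and lying in $Y-D$ is homotopic, rel the ray and by a homotopy missing $C$, to a loop outside any prescribed third compact set. (Theorem \ref{SSloop} shows this is exactly what semistability buys; here we run the implication in reverse, building the required homotopies by hand.) As base ray I would take $\omega=m\rho$, where $\rho$ is a proper edge-path ray in $\Gamma(J,J^{0})$ supplied by Lemma \ref{geodesicline} and Theorem \ref{geoend}, and $m\colon\Gamma\to Y$ is the fixed $J$-equivariant map of Definition \ref{ss+cossD}. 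Because $J$ acts cocompactly on $\Gamma$ and as covering transformations (hence properly discontinuously and freely) on $Y$, the equivariant map $m$ is proper, so $\omega$ is a genuine proper ray; since $Y$ is $1$-ended there is only one end to reach.

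The engine of the argument is a decomposition of each loop into an ``along-$J$'' part and an ``orthogonal-to-$J$'' part, matched respectively to the two hypotheses. Fix $C$. Feeding $C$ into co-semistability (Definition \ref{ss+cossD}(b)) produces a compact $E$ such that any loop lying in the complement of the orbit-slab $J\cdot E$, and based on a proper ray in that same complement, can be pushed to infinity by a proper homotopy inside $Y-C$. The difficulty is that a loop $\alpha$ based far out on $\omega$ sits near the $J$-orbit and so generally meets $J\cdot E$; the semistability of $J$ in $Y$ (Definition \ref{ss+cossD}(a)) is precisely the tool for the portions of $\alpha$ that run near the orbit. First I would subdivide $\alpha$ into arcs that either lie in $Y-J\cdot E$ or lie in a bounded neighborhood of the orbit $J\cdot(\text{fundamental domain})$. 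An arc of the second type is, up to a controlled homotopy missing $C$, the $m$-image of an edge path in $\Gamma$; two such arcs with matching far-out endpoints in $\Gamma$ have $m$-images that are properly homotopic in $Y-C$ by $J$-semistability. Using these moves I replace the near-orbit portions of $\alpha$ so that, after a homotopy in $Y-C$, the entire loop is pushed clear of $J\cdot E$.

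Once the loop lies in $Y-J\cdot E$ and is based on a portion of $\omega$ also lying in $Y-J\cdot E$, co-semistability pushes it one stage further out within $Y-C$. Iterating—sliding outward along $\omega$ one stage at a time and \emph{stacking} the resulting homotopies exactly as in the proof of Theorem \ref{SSloop} (cf.\ Figure \ref{FigSSloop})—produces a single homotopy carrying $\alpha$ arbitrarily far along $\omega$; choosing the stages so that their images eventually leave every compact set makes this stacked homotopy proper, which is what item (3) of Theorem \ref{ssequiv} demands. Transporting pieces by elements of $J$ is legitimate because the action is cell-permuting by covering transformations, and this $J$-equivariance is what lets the slab-avoiding homotopies of co-semistability be glued to the $J$-translated Cayley-graph pieces.

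I expect the main obstacle to be the interface between the two hypotheses: co-semistability controls homotopies only in the complement of the \emph{non-compact} orbit-slab $J\cdot E$, whereas $J$-semistability and the target condition are phrased with honest compact sets. Reconciling these requires a careful nested choice of compact sets $E\subseteq D_{1}\subseteq D_{2}\subseteq\cdots$ together with matching orbit-neighborhoods, so that each near-orbit arc can be shuttled outward without re-entering the region one is trying to avoid, and so that the infinite stack of homotopies stays proper. The subsidiary technical point—that a near-orbit arc is boundedly homotopic to an $m$-image of a $\Gamma$-edge-path—rests on $m$ being a coarse equivalence onto the orbit, which is exactly where the cocompact, free, cell-permuting covering action is used.
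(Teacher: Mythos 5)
This manual does not prove Theorem \ref{Theorem: Special case of Main Theorem}; it is quoted from \cite{GGM1} (a special case of the main theorem there), and your outline --- separating the ``$J$-directions'' from the ``orthogonal'' directions and matching them to hypotheses (a) and (b) --- is indeed that paper's guiding philosophy. But the two steps you present as controlled constructions are precisely where the content lies, and both have genuine gaps. First, your reduction of near-orbit arcs to $\Gamma$-paths is unsupported: you assert that an arc of $\alpha$ lying in the slab $J\cdot E$ is, ``up to a controlled homotopy missing $C$,'' the $m$-image of an edge path in $\Gamma$. Nothing available yields such control. Simple connectivity of $Y$ produces homotopies with no constraint whatever on their images; hypothesis (a) of Definition \ref{ss+cossD} controls only $m$-images of proper \emph{rays} in $\Gamma$ based at a common vertex, not arbitrary arcs of $Y$ that happen to lie near the orbit; and hypothesis (b) is silent about anything inside $J\cdot E$. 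In particular, a loop lying in a single translate $jE$, far from $C$, need not bound in $Y-C$: no hypothesis gives null-homotopies of small far-away loops with controlled image, and that sort of local control at infinity is essentially what the theorem is trying to establish, not something one may assume in its proof.

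Second, the iteration breaks down at the co-semistability step. Definition \ref{ss+cossD}(b) (and, more sharply, Definition \ref{Coss}) pushes only loops lying in $Y-J\cdot E$ along \emph{$J$-bounded} proper rays that themselves lie in $Y-J\cdot E$. Your base ray $\omega=m\rho$ lies in $J\cdot B$ for a fixed compact $B$ (by equivariance of $m$), hence lies entirely inside $J\cdot E$ as soon as $E\supseteq B$; and since the co-semistability hypothesis supplies only \emph{some} compact $E$ --- a set which may always be enlarged but never shrunk --- you cannot arrange for $E$ to miss $\omega$. Thus ``based on a portion of $\omega$ also lying in $Y-J\cdot E$'' is vacuous: no tail of $\omega$ avoids $J\cdot E$. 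The pushes provided by (b) therefore track rays other than $\omega$, so the stacked homotopy is not rel $\omega$ as Theorem \ref{ssequiv}(3) demands, and exchanging those tracking rays for tails of $\omega$ within $Y-C$ is itself a semistability-type statement you have not proved (one must also show that the relevant $J$-unbounded components of $Y-J\cdot E$ contain $J$-bounded proper rays at all). You flag this interface as ``the main obstacle,'' but flagging it does not discharge it; resolving exactly these two points is what occupies the proof in \cite{GGM1}.
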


We find the following remark in \cite{GGM1}.

\begin{remark} \label{Remark in Intro}
\hspace{.25in}
\begin{enumerate}

\item  To our knowledge, the theorems
stated here are the first non-obvious results that imply semistable
fundamental group at $\infty$ for a space $Y$ which might not admit a
cocompact action by covering transformations.

\item \label{Intro remark 2}In the special case where $J$ is an infinite
cyclic group, condition $(a)$ above is always satisfied since $\Gamma\left(
J,J^{0}\right)  $ can be chosen to be homeomorphic to $%
%TCIMACRO{\U{211d} }%
%BeginExpansion
\mathbb{R};
%EndExpansion
$ any two proper rays in $%
%TCIMACRO{\U{211d} }%
%BeginExpansion
\mathbb{R}
%EndExpansion
$ which begin at the same point and lie outside a nonempty compact subset of $%
%TCIMACRO{\U{211d} }%
%BeginExpansion
\mathbb{R}
%EndExpansion
$ are properly homotopic in their own images. Moreover, since condition
$\left(  b\right)  $ is implied by the main hypothesis of \cite{GG12} (via
\cite[Lemma 3.1]{W92} or \cite[Th.16.3.4]{G}), Theorem
\ref{Theorem: Special case of Main Theorem} implies the main theorem of
\cite{GG12} .

\item The converse of Theorem \ref{Theorem: Special case of Main Theorem} is
trivial. If $Y$ is semistable at infinity and $J$ is any finitely generated
group acting as covering transformations on $Y$, it follows directly from the
definitions that $J$ is both semistable at infinity in $Y$ and co-semistable at
infinity in $Y$. So, our theorem effectively reduces checking the semistability
of the fundamental group at infinity of a space to separately checking two
strictly weaker conditions.

\item \label{Intro remark 4}In the more general version of Theorem
\ref{Theorem: Special case of Main Theorem} (see Theorem \ref{MTGGM}), the group $J$
will be permitted to vary for different choices of compact set $C$. No over-group containing these various groups is needed unless one wants to extend such results to locally compact ANRs. 
\end{enumerate}
\end{remark}

Now for the main theorem of \cite{GGM1}.
A detailed discussion of the definitions that
go into the theorem will follow. 

\begin{theorem}\label{MTGGM}
Let $Y$ be a 1-ended simply connected locally finite
CW complex. Assume that for each compact subset $C_{0}$ of $Y$ there is a
finitely generated group $J$ acting as cell preserving covering
transformations on $Y$, so that (a) $J$ is semistable at $\infty$ in $Y$ with
respect to $C_{0}$, and (b) $J$ is co-semistable at $\infty$ in $Y$ with
respect to $C_{0}$. Then $Y$ has semistable fundamental group at $\infty$.
\end{theorem}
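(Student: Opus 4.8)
The plan is to establish one of the controlled, compact-set-by-compact-set characterizations of semistability in Theorem~\ref{ssequiv} rather than to compute the fundamental pro-group directly. Because $Y$ is $1$-ended and simply connected, it is enough to verify criterion~(3) of Theorem~\ref{ssequiv} (read through Theorem~\ref{SSloop}): for a proper ray $r$ and a compact set $C\subseteq Y$ there is a compact set $D\subseteq Y$ so that every loop $\alpha$ based on $r$ with image in $Y-D$ can be pushed off to infinity along $r$, i.e.\ is homotopic rel $r$, past any prescribed compact set $E$, by a homotopy whose image stays in $Y-C$. This is almost verbatim the co-semistability conclusion of Definition~\ref{ss+cossD}(b); the only difference is that (b) is allowed to assume the stronger hypothesis that $\alpha$ and $r$ avoid the full $J$-sweep $J\cdot D$, not merely the compact set $D$. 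Having fixed $C$, I would enlarge it to a compact $C_0\supseteq C$ and invoke the hypothesis to obtain a finitely generated $J$, a finite generating set $J^0$, a $J$-equivariant map $m:\Gamma(J,J^0)\to Y$, and the validity of conditions (a) and (b) with respect to $C_0$.

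Next I would extract the two tools. Condition (b) furnishes a compact $D_b$ for which any loop based on a proper ray, with both lying in $Y-J\cdot D_b$, pushes to infinity along that ray inside $Y-C$. Condition (a) furnishes a compact $D_a$ governing the ``$J$-directions'': whenever two proper rays in the Cayley graph $\Gamma(J,J^0)$ share a basepoint and avoid $m^{-1}(D_a)$, their $m$-images are properly homotopic in $Y-C$ relative to that basepoint. The geometry to keep in mind is that the $J$-cocompact track $m\big(\Gamma(J,J^0)\big)\subseteq Y$ realises the ``$J$-directions'', while the complement of the sweep $J\cdot D_b$ isolates the ``directions orthogonal to $J$'': condition (a) moves configurations along the track and condition (b) pushes transverse loops to infinity.

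The argument then splits along this dichotomy. If the loop $\alpha$ and the tail of $r$ already lie in $Y-J\cdot D_b$, condition (b) finishes the job directly. The real content is the reduction of the general loop --- which is only guaranteed to avoid the compact set $D$ --- to this favourable position: one slides $\alpha$ and its basepoint along the track using the proper homotopies supplied by (a) (modelled on rays of $\Gamma(J,J^0)$ via $m$, exactly as $mr$ and $ms$ are compared in Definition~\ref{ss+cossD}(a)), until the relevant loops are pushed clear of the sweep, and then applies (b) transversally. As in the proof of Theorem~\ref{SSloop} and of the implication $(2)\Rightarrow(4)$ in Theorem~\ref{ssequiv}, I would assemble the finitely many intermediate homotopies by stacking, and check properness by arranging that only finitely many stages meet any given compact subset of $Y$; here the fact that $J$ acts by covering transformations (hence properly discontinuously) is decisive, since it guarantees that only finitely many translates $jD_b$ meet any compact set, and Theorem~\ref{ES} lets one take the relevant complements to be unions of unbounded components.

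The hard part will be precisely this reconciliation of the two conditions across the gap between the \emph{compact} set $D$ and the \emph{non-compact}, $J$-invariant sweep $J\cdot D_b$. An arbitrary loop avoiding only $D$ need not avoid the tube $J\cdot D_b$ demanded by (b), and a proper ray may run to infinity purely transversally, far from the track $m(\Gamma)$, where (a) gives no direct control; that this gap is the essential issue is confirmed by the Remark preceding the theorem, which notes that the \emph{weaker} hypothesis (loops in $Y-J\cdot D$ rather than in $Y-D$) is exactly what makes the theorem hold and strictly strengthens the earlier statement. Carrying out the reduction therefore forces one to use the more delicate formulation of co-semistability (Definition~\ref{Coss}), in which the pushing homotopy is allowed to interact with the $J$-translates, and the genuinely technical step is to organise the interleaving of (a)-slides and (b)-pushes so that the fully assembled homotopy is both supported in $Y-C$ and proper.
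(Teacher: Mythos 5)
Two preliminary observations. First, the paper you are working from does not actually prove Theorem \ref{MTGGM}: it is quoted verbatim from \cite{GGM1} (it is Theorem 3.1 there), so there is no internal proof to compare your argument against; your proposal has to stand on its own. Second, judged on its own, your proposal is a plan rather than a proof, and the gap it leaves open is not a routine technicality --- it is the entire content of the theorem.

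Concretely: you reduce the problem to ``slide $\alpha$ along the track using (a) until it is clear of the sweep $J\cdot D_b$, then apply (b),'' and you yourself flag this reduction as the hard part. But neither tool, as defined, applies to the objects you need to move. Condition (a) only compares $m$-images of proper rays in the Cayley graph $\Gamma(J,J^{0})$; an arbitrary loop in $Y-D$ based on an arbitrary proper ray is not a $\Gamma$-path, and nothing in (a) lets you ``slide'' it --- to invoke (a) one must first replace portions of the loop and of the base ray by $\Gamma$-paths, which is only possible for $J$-bounded pieces and requires a genuine approximation-and-decomposition construction. Condition (b), in the form actually needed for this theorem (Definition \ref{Coss}, not the special-case Definition \ref{ss+cossD}(b)), applies only to loops lying in a single $J$-unbounded component $U$ of $Y-J\cdot C$ and based at a point of a \emph{$J$-bounded} proper ray in $U$; an arbitrary loop avoiding only the compact set $D$ may cross the sweep many times, and an arbitrary proper ray need not be $J$-bounded, so neither hypothesis of (b) is met. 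The missing step is therefore a decomposition theorem: every loop far from $C_{0}$ must be exhibited, up to controlled conjugation, as a product of (i) loops in $J$-unbounded components of $Y-J\cdot C$ based on $J$-bounded rays and (ii) loops tracked by $\Gamma$-paths near the orbit $J\ast$, together with a verification that the stacked homotopies produced by (a) and (b) on these pieces assemble into a single \emph{proper} homotopy avoiding $C_{0}$ (proper discontinuity of the action gives finiteness of translates meeting a compact set, but it does not by itself give properness of an infinite stack whose stages are produced by two different mechanisms with different base rays). Naming this interleaving problem, as you do, is not the same as solving it; as it stands the proposal proves the easy implication (loops already in good position) and defers everything else.
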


\begin{remark}
If there is a group $G$ (not necessarily finitely generated) acting as
covering transformations on $Y$ such that each of the groups $J$ of Theorem
\ref{MTGGM} is isomorphic to a subgroup of $G$, then the condition that $Y$ is a
locally finite CW complex can be relaxed to: $Y$ is a locally compact absolute
neighborhood retract (ANR) (see Corollary 9.1 of \cite{GGM1}).
\end{remark}

In what follows, the {\it distance} between vertices of a CW complex will always
be the number of edges in a shortest edge path connecting them. The space $Y$
is a 1-ended simply connected locally finite CW complex, and for each compact
subset $C_{0}$ of $Y$, $J(C_{0})$ is an infinite finitely generated group
acting as covering transformations on $Y$ and preserving some locally finite
cell structure on $Y$. Fix $\ast$ a base vertex in $Y$. Let $J^{0}$ be a
finite generating set for $J$ and $\Gamma(J,J^{0})$ be the Cayley graph of
$J$ with respect to $J^{0}$. Let $z_{(J,J^{0})}:(\Gamma(J,J^{0}
),1)\rightarrow(Y,\ast)$ be a $J$-equivariant map so that each edge of
$\Gamma$ is mapped to an edge path of length $\leq K(J^{0})$. If $r$ is an
edge path in $\Gamma$, then $z(r)$ is called a $\Gamma${\it -path} in
$Y$. The vertices $J\ast$ are called $J${\it -vertices}.

\begin{definition} 
If $C_{0}$ is a compact subset of $Y$ then the group $J$ is {\it semistable
at $\infty$ in $Y$ with respect to $C_{0}$} if there exists a compact set $C$
in $Y$ and some (equivalently any) finite generating set $J^{0}$ for $J$ such
that for any third compact set $D$ and proper edge path rays $r$ and $s$ in
$\Gamma(J,J^{0})$ which are based at the same vertex $v$ and are such that
$z(r)$ and $z(s)$ have image in $Y-C$ then there is a path $\delta$ in $Y-D$
connecting $z(r)$ and $z(s)$ such that the loop determined by $\delta$ and the
initial segments of $z(r)$ and $z(s)$ is homotopically trivial in $Y-C_{0}$
\end{definition}
Note that this definition requires less than one requiring $z(r)$ and $z(s)$
be properly homotopic rel$\{z(v)\}$ in $Y-C_{0}$. 
 It may be that the path $\delta$ is not homotopic to a path
in the image of $z$ by a homotopy in $Y-C_{0}$. This definition is independent
of generating set $J^{0}$ and base point $\ast$ by a standard argument,
although $C$ may change as $J^{0}$, $\ast$ and $z$ do. When $J$ is semistable
at infinity in $Y$ with respect to $C_{0}$, we may say $J$ is
\textit{semistable at $\infty$ in $Y$ with respect to $J^{0}$, $C_{0}$, $C$
and $z$}. Observe that if $\hat C$ is compact containing $C$ then $J$ is also
semistable at $\infty$ in $Y$ with respect to $J^{0}$, $C_{0}$, $\hat C$ and
$z$.

If $J$ is 1-ended and semistable at $\infty$ or 2-ended, then $J$ is always
semistable at $\infty$ in $Y$ with respect to any compact subset $C_{0}$ of
$Y$.

The notion of $J$ being co-semistable at infinity in a space $Y$ is a bit
technical, but has its roots in a simple idea that is fundamental to the main
theorems of \cite{GG12} and \cite{W92}. In both of these papers 
 $J$ is an infinite cyclic group acting as covering transformations on a 1-ended
simply connected space $Y$ with pro-monomorphic fundamental group at $\infty$.
Wright \cite{W92} showed that under these conditions the following could be proved:

$(\ast)$ Given any compact set $C_{0}\subset Y$ there is a compact set
$C\subset Y$ such that any loop in $Y-J\cdot C$ is homotopically trivial in
$Y-C_{0}$.

Condition ($\ast$) is all that is needed in \cite{GG12} and \cite{W92} in
order to prove the main theorems. In \cite{GGM16} condition ($\ast$) is used
to show $Y$ is proper 2-equivalent to $T\times\mathbb{R}$ (where $T$ is a
tree). Interestingly, there are many examples of finitely presented groups $G
$ (and spaces) with infinite cyclic subgroups satisfying $(\ast)$ but the
fundamental group at $\infty$ of $G$ is not pro-monomorphic (see
\cite{GGM16}). In fact, if $G$ has pro-monomorphic fundamental group at
$\infty$, then either $G$ is simply connected at $\infty$ or (by a result of
B. Bowditch \cite{Bo04}) $G$ is virtually a closed surface group and $\pi
_{1}^{\infty}(G)=\mathbb{Z}$.

The co-semistability definition generalizes the conditions of $(\ast)$ in two
fundamental ways and Theorem \ref{MTGGM} still concludes that $Y$ has semistable
fundamental group at $\infty$ (just as in the main theorem of \cite{GG12}).

(1) First  $J$ is expanded from an infinite cyclic group to an arbitrary finitely
generated group and $J$ is allowed to change as compact subsets of $Y$ become larger.

(2) The requirement that loops in $Y-J\cdot C$ be trivial in
$Y-C_{0}$ is weakened to only requiring that loops in $Y-J\cdot C$ can be ``pushed"
arbitrarily far out in $Y-C_{0}$.

Now for the co-semistability definition. A subset $S$ of
$Y$ is \textit{bounded} in $Y$ if $S$ is contained in a compact subset of $Y$.
Otherwise $S$ is \textit{unbounded} in $Y$. Fix an infinite finitely generated
group $J$ acting as covering transformations on $Y$ and a finite generating
set $J^{0}$ of $J$. Assume $J$ respects a cell structure on $Y$. Let
$p:Y\rightarrow J\backslash Y$ be the quotient map. If $K$ is a subset of $Y$,
and there is a compact subset $D$ of $Y$ such that $K\subset J\cdot D$
(equivalently $p(K)$ has image in a compact set), then $K$ is a $J$%
-\textit{bounded} subset of $Y$. Otherwise $K$ is a $J$-\textit{unbounded}
subset of $Y$. If $r:[0,\infty)\rightarrow Y$ is proper and $pr$ has image in
a compact subset of $J\backslash Y$ then $r$ is said to be $J$%
-\textit{bounded}. Equivalently, $r$ is a $J$-bounded proper edge path in $Y$
if and only if $r$ has image in $J\cdot D$ for some compact set $D\subset Y$.
In this case, there is an integer $M$ (depending only on $D$) such that each
vertex of $r$ is within (edge path distance) $M$ of a vertex of $J\ast$. Hence
$r$ `determines' a unique end of the Cayley graph $\Gamma(J,J^{0})$.

\begin{definition} \label{Coss}  For a non-empty compact set $C_{0}\subset Y$ and finite subcomplex $C$
containing $C_{0}$ in $Y$, let $U$ be a $J$-unbounded component of $Y-J\cdot
C$ and let $r$ be a $J$-bounded proper ray with image in $U$. We say $J$ is
co-\textit{semistable at $\infty$ in }$U$ \textit{with respect to $r$, $C_0$  and}
$C$ if for any compact set $D$ and loop $\alpha:[0,1]\rightarrow U$ with
$\alpha(0)=\alpha(1)=r(0)$ there is a homotopy $H:[0,1]\times\lbrack
0,n]\rightarrow Y-C_{0}$ such that $H(t,0)=\alpha(t)$ for all $t\in
\lbrack0,1]$ and $H(0,s)=H(1,s)=r(s)$ for all $s\in\lbrack0,n]$ and
$H(t,n)\subset Y-D$ for all $t\in\lbrack0,1]$. This means that $\alpha$ can be
pushed along $r$ by a homotopy in $Y-C_{0}$ to a loop in $Y-D$. We say $J$ is
\textit{co-semistable at $\infty$ in }$Y$ \textit{with respect to $C_{0}$} if there exists a finite complex $C$ containing $C_0$
such that  $J$ is co-semistable at $\infty$ in $U$ with respect to $r$, $C_0$ and $C$ for each $J$-unbounded component $U$ of $Y-J\cdot C$, and any proper
$J$-bounded proper ray $r$ in $U$. 
\end{definition}

Note that if $\hat{C}$ is a finite complex
containing $C$, then $J$ is also co-semistable at $\infty$ in $Y$ with respect
to $C_{0}$ and $\hat{C}$.
It is important to notice that this definition only requires that loops in $U$
can be pushed arbitrarily far out in $Y-C_{0}$ along proper $J$-bounded proper rays
in $U$ (as opposed to all proper rays in $U$).

We extend Definition \ref{ss+cossD} 

\begin{definition}\label{ss+cossD2} 
Suppose $J$ is a finitely generated group acting by cell-permuting covering
transformations on a 1-ended locally finite and simply connected CW
complex $Y$. Let $\Gamma\left(  J,J^{0}\right)  $ be the Cayley graph of $J$ with
respect to a finite generating set $J^{0}$ and let $m:\Gamma\rightarrow Y$ be
a $J$-equivariant map. Then\medskip

\begin{itemize}
\item [c)] $J$ is {\it simply connected at $\infty$ in $Y$} if for any compact set $C$ in $Y$ there is a compact set $D$ in $Y$ such that any loop $\alpha$ in $\Gamma(J,J^0)-m^{-1}(D)$ has the property that $m(\alpha)$ is homotopically trivial in $Y-C$. 

\item [d)] $J$ is {\it coaxial in $Y$} if for any compact set $C$ in $Y$ there is a compact set $D$ in $Y$ such that any loop  in $Y-JD$ is homotopically trivial in $Y-C$. 

\item [e)] $J$ is {\it co-connected in $Y$} if for any compact set $D$ in $Y$, $Y-JD$ has exactly one $J$-unbounded component. 

\medskip
\end{itemize}
\end{definition}

The following two results are straightforward. 

\begin{theorem} \label{cosc1} 
Suppose $Y$ is a simply connected locally finite complex and $J$ is a finitely generated 1-ended group acting as covering 
transformations on $Y$. Assume 
\begin{itemize}
\item[(i)] $J$ is simply connected at $\infty$ in $Y$.
\item [(ii)] $J$ is coaxial in $Y$.
\end{itemize}
Then $Y$ is simply connected at each end.
\end{theorem}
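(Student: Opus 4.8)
The plan is to verify, for each end, the standard ``simply connected at infinity'' condition: given a compact set $C\subseteq Y$, produce a compact set $E\subseteq Y$ so that every loop with image in $Y-E$ is homotopically trivial in $Y-C$. Since $J$ is $1$-ended and the construction will be uniform, this yields simple connectivity at each end of $Y$. First I would unwind the two hypotheses. Applying the coaxial condition (ii) to $C$ yields a compact set $A\supseteq C$ such that every loop in $Y-J\cdot A$ is null-homotopic in $Y-C$. Applying the simply-connected-at-$\infty$-in-$Y$ condition (i) to $C$ yields a compact set $B$ such that for every edge loop $\lambda$ in $\Gamma(J,J^0)-m^{-1}(B)$ the image $m(\lambda)$ is null-homotopic in $Y-C$. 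The remaining task is to show that a sufficiently far-out loop in $Y$ can be broken, within $Y-C$, into pieces each controlled by one of these two statements.

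The geometric picture driving the decomposition is that $Y$ consists of the $J$-cocompact ``spine'' $m(\Gamma(J,J^0))$, which is $1$-ended because $J$ is, together with the transverse ``gaps'' forming the complement of the orbit $J\cdot A$. A loop $\alpha$ lying outside a large compact $E$ meets only finitely many translates $jA$, by local finiteness and properness of $m$, and because $\alpha$ is far out these are translates by elements $j$ whose $J$-vertices $j\ast$ lie far from $\ast$; in particular the portions of $\alpha$ running near the orbit are carried by the part of $\Gamma$ outside $m^{-1}(B)$ once $E$ is large enough. I would then subdivide $\alpha$ at the points where it passes between a bounded neighborhood of $J\cdot A$ and the deep region $Y-J\cdot A$, insert short connecting arcs running in the far-out region to close the pieces into loops, and push each near-orbit arc onto $m(\Gamma)$ by the bounded, $J$-equivariant homotopy supplied by $m$, so that the resulting loop is the $m$-image of an edge loop $\lambda$ in $\Gamma-m^{-1}(B)$. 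This exhibits $\alpha$, up to homotopy in $Y-C$, as a product of loops of two kinds: loops $m(\lambda)$ with $\lambda\subseteq\Gamma-m^{-1}(B)$, and loops lying in $Y-J\cdot A$. By (i) the first kind die in $Y-C$ and by (ii) the second kind die in $Y-C$, so $\alpha$ is null-homotopic in $Y-C$.

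The main obstacle is the decomposition step itself, namely realizing the homotopy that pushes the near-orbit portions of $\alpha$ onto the Cayley-graph image while keeping the entire homotopy, together with all connecting arcs, inside $Y-C$ and landing outside $m^{-1}(B)$. This requires choosing $E$ with enough buffer beyond $C$ to absorb both the uniformly bounded distance by which $m$ moves edges and the push-off distance, and it uses properness of $m$ and local finiteness to guarantee that only translates $jA$ with $j\ast$ far from $\ast$ are involved, so that every auxiliary path and homotopy automatically avoids $C$. The $1$-endedness of $J$ enters precisely here: it ensures that the far part of the orbit is connected at infinity, so the connecting arcs can be drawn in the far-out region and the resulting $\Gamma$-loops genuinely lie outside $m^{-1}(B)$. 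Once the decomposition is in place the conclusion is immediate from (i) and (ii).
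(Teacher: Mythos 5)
Your overall strategy (split a far-out loop into near-orbit pieces and deep pieces, kill the former via (i) and the latter via (ii)) is the natural one, and for what it is worth the paper offers no proof of its own to compare against -- it simply declares the result straightforward. But your decomposition step has a genuine gap, and it sits exactly where the content of the theorem lies. First, a technical point: there is no ``bounded, $J$-equivariant homotopy supplied by $m$'' pushing a near-orbit arc onto $m(\Gamma)$. Such a push has to be built by hand (connect the vertices of the arc to nearby orbit points $j\ast$, compare with the $m$-image of a Cayley path), and it generates a family of bounded ``fellow-traveller'' correction loops that are of \emph{neither} killable kind; these can be handled by equivariance plus proper discontinuity (a loop in $jA''$ bounds a disk in $jK$ for a uniform finite complex $K$, and $jK$ misses $C$ for all but finitely many $j$), but that argument is missing and it forces every point of the pushed loop to lie over orbit points $j\ast$ far from $C$. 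Second, and centrally: in the telescoping identity $[\alpha]=\prod(\hbox{conjugates of }[\gamma_i c_i^{-1}])\cdot[\beta_1c_1\beta_2c_2\cdots\beta_kc_k]$ each connecting arc $c_i$ is used twice -- once to close a deep piece $\gamma_i$ into a loop, once inside the combined near-orbit loop. For both resulting loops to be killable, $c_i$ must lie simultaneously \emph{outside} $J\cdot A$ (else $\gamma_ic_i^{-1}$ is not a loop in $Y-J\cdot A$ and (ii) does not apply) and \emph{within a bounded neighborhood of the orbit, clear of the finitely many bad translates} (else the combined loop cannot be pushed onto $m(\Gamma)$ inside $Y-C$). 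Your connectors are drawn ``in the far-out region'' using 1-endedness of $\Gamma$, i.e.\ along the orbit; since $\ast\in A$ (after enlarging $A$), such connectors run through $J\cdot A$, so the loops $\gamma_ic_i^{-1}$ come out mixed -- partly deep, partly along the orbit -- and are killed by neither hypothesis.

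That this is not pedantry is shown by $Y=\mathbb R^2$ with $J=\mathbb Z$ acting by translation: hypotheses (i) and (ii) both hold (far-out loops in the line $\Gamma$ die in their own contractible components; loops off a wide horizontal strip lie in a half-plane), every loop of either killable kind, and every bounded loop near a far-out orbit point, is null-homotopic in $Y-C$, yet $\mathbb R^2$ is \emph{not} simply connected at infinity. Hence no formal bookkeeping can produce the decomposition you assert; the only hypothesis separating the theorem from this false instance is 1-endedness of $J$, and a correct proof must spend it precisely on the connector-placement problem -- producing connectors in the ``shell'' (near the orbit but outside $J\cdot A$) that also stay off the finitely many translates meeting $C$. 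Connectivity of the far part of $\Gamma$ does not give this: it gives paths along the orbit, not in the shell, and even shell connectors can be forced over $C$ (in the $\mathbb R^2$ picture the near-orbit loop built from shell connectors winds over $C$ and cannot be pushed onto $m(\Gamma)$ within $Y-C$). Until that step is supplied, the argument does not go through.
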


\begin{theorem} \label{cosc2} 
Suppose $Y$ is a simply connected locally finite complex and $J$ is a finitely generated (not necessarily 1-ended) group acting as covering transformations on $Y$. Assume 
\begin{itemize}
\item[(i)] $J$ is co-connected in $Y$.
\item [(ii)] $J$ is coaxial in $Y$.
\end{itemize}
Then $Y$ is simply connected at each end.
\end{theorem}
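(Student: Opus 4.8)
The plan is to verify the standard characterization of simple connectivity at infinity: for every compact $C \subseteq Y$ I must produce a compact $D'$ so that every loop in $Y - D'$ is null-homotopic in $Y - C$. Since a loop is connected it lies in a single component of $Y - D'$, so establishing this is precisely establishing that $Y$ is simply connected at each end. First I would fix $C$, enlarge it to a finite subcomplex, and apply the coaxiality hypothesis (Definition \ref{ss+cossD2}(d)) to obtain a finite subcomplex $D \supseteq C$ such that every loop lying in $Y - J\cdot D$ is null-homotopic in $Y - C$. The whole argument then reduces to a single geometric claim: there is a compact $D'$ such that every loop in $Y - D'$ is homotopic, through a homotopy supported in $Y - C$, to a loop that misses the entire orbit $J\cdot D$. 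Granting this claim, coaxiality finishes the proof, since the resulting loop bounds in $Y - C$ and the two homotopies concatenate.

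To set up the push-off I would invoke co-connectedness (Definition \ref{ss+cossD2}(e)) applied to $D$: the complement $Y - J\cdot D$ has exactly one $J$-unbounded component $U$, and $U$ is $J$-invariant because it is the unique such component. The role of $U$ is to serve as the single ``channel at infinity'' through which offending arcs are pushed clear of the orbit. Next I would choose $D'$. Because $J$ acts as covering transformations on the locally finite complex $Y$, the action is properly discontinuous, so any compact set meets only finitely many translates $j\cdot D$. I would take $D'$ to be a finite subcomplex large enough to contain $C$ and $D$ and to swallow every translate $j\cdot D$ lying within homotopy-reach of $C$; local finiteness and proper discontinuity guarantee this is a finite, hence compact, collection.

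Now take a loop $\alpha$ in $Y - D'$. Its image meets only finitely many translates $j_1\cdot D, \dots, j_k\cdot D$, and by the choice of $D'$ each of these is far from $C$. The arcs of $\alpha$ already lying in $Y - J\cdot D$ need no attention, while each arc running through some $j_i\cdot D$ I would push off $j_i\cdot D$ into an adjacent component of $Y - J\cdot D$; co-connectedness lets these pushes be performed consistently, so the final result is a single loop $\alpha'$ disjoint from $J\cdot D$. Each push is a homotopy supported in a neighborhood of $j_i\cdot D$, which is disjoint from $C$, so the total homotopy stays in $Y - C$. Coaxiality then null-homotopes $\alpha'$ in $Y - C$, completing the argument.

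The main obstacle — and the precise point where co-connectedness is indispensable — is this transversal push-off of an arc off a far translate $j_i\cdot D$ while keeping control of where it lands. If $Y - J\cdot D$ had several $J$-unbounded components, an arc could be ``trapped'' between two of them with no unobstructed way to slide it clear of the orbit; the hypothesis that there is exactly one $J$-unbounded component is exactly what supplies a single globally connected region into which every such arc can be routed. Making the push-off rigorous requires a careful local analysis near the frontier of $j_i\cdot D$, showing the active arcs are homotopic rel endpoints into $Y - J\cdot D$, using that $\alpha$ avoids the large set $D'$ and that all the active translates are far from $C$; this bookkeeping, rather than any deep new idea, is where the real work lies. (Compare Theorem \ref{cosc1}, where the analogous role is instead played by the hypotheses that $J$ is one-ended and simply connected at infinity in $Y$.)
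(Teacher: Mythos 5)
Your global skeleton is sound: reduce to producing a compact $D'$ such that every loop in $Y-D'$ can be homotoped off $J\cdot D$ by a homotopy lying in $Y-C$, then finish with coaxiality; and your choice of $D'$ (swallowing the finitely many translates of $D$ that meet $C$, which is legitimate by proper discontinuity) does ensure that every translate met by a loop $\alpha\subset Y-D'$ is disjoint from $C$. The gap is the push-off itself, and it is not ``bookkeeping'' --- it is the entire content of the theorem (which the paper states without proof). Your mechanism is to slide each arc of $\alpha$ crossing a translate $j_i\cdot D$ off that translate by a homotopy ``supported in a neighborhood of $j_i\cdot D$.'' Two things go wrong. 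First, no such homotopy need exist: if $\beta$ is an arc through $j_i\cdot D$ and $\beta'$ is a proposed replacement arc missing $J\cdot D$ with the same endpoints, the obstruction to deforming $\beta$ to $\beta'$ rel endpoints inside $Y-C$ is the loop $\beta\cdot(\beta')^{-1}$, and this loop still meets $J\cdot D$, so coaxiality says nothing about it, while simple connectivity of $Y$ only kills it through homotopies that may cross $C$. Killing such loops in $Y-C$ is precisely an instance of the statement being proved, so the argument is circular at its crucial step. Second, translates cannot be treated as isolated far-away pockets: they chain together into unbounded pieces of $J\cdot D$ reaching from $C$ out to infinity (e.g.\ $J=\mathbb{Z}$ translating $\mathbb{R}^3$, with $J\cdot D$ a thickened line through $C$), so a homotopy removing a crossing may be forced to travel along or around such a chain, and nothing in your setup keeps it away from $C$.

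The litmus test is that your proof never uses co-connectedness in any step that would fail without it (``co-connectedness lets these pushes be performed consistently'' is not such a use), yet the theorem is false without that hypothesis. Let $Y=T\times\mathbb{R}$ be the universal cover of $(S^1\vee S^1)\times S^1$, so $T$ is the $4$-valent tree, and let $J=\mathbb{Z}$ be the central factor acting by translation in the $\mathbb{R}$ coordinate. Given compact $C$, take $D=T_0\times[0,k]$ with $T_0$ a finite subtree, $k$ the translation length and $C\subset T_0\times\mathbb{R}$; then $J\cdot D=T_0\times\mathbb{R}$ and every component of $Y-J\cdot D$ is $(\hbox{subtree})\times\mathbb{R}$, hence simply connected, so $J$ is coaxial. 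But there are several $J$-unbounded components (co-connectedness fails), and $Y$ is not simply connected at infinity, since both factors of $F_2\times\mathbb{Z}$ have more than one end (see the discussion following Theorem \ref{JConv} and Theorem \ref{P2E}). Now run your argument there: the loop that goes up one branch, across the top through a slab $T_0\times[Mk,(M+1)k]$, down a second branch, and back across the bottom through a low slab satisfies all of your hypotheses on $\alpha$ (it avoids $D'$ and meets only translates disjoint from $C$), but its crossing arcs have endpoints in \emph{different} components of $Y-J\cdot D$, so no push-off exists at all --- and indeed this loop is essential at infinity. So a correct proof must make co-connectedness do real work exactly at the push-off: knowing the endpoints lie in the single $J$-unbounded component $U$, one must still produce the rel-endpoints homotopy in $Y-C$ into $U$, and this cannot be done arc-by-arc with local homotopies; it requires a global mechanism (for instance, an excision argument in the spirit of Theorem \ref{excise}, applied to a simplicial null-homotopy of $\alpha$ in $Y$, refilling excised boundary circles that lie in $U\subset Y-J\cdot D$ via coaxiality). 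That missing global step is the theorem.
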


\begin{theorem} \label{ProTriv}
If the finitely presented group $G$ is simply connected at $\infty$, then $G$ is semistable at $\infty$, the fundamental pro-group  $\pi_1(\varepsilon G)$ is pro-trivial and the fundamental group at $\infty$,  $\pi_1^e(G)$ is the trivial group.
\end{theorem}

\subsubsection{The Cayley 2-Complex for a Finite Presentation of a Group} \label{C2C}
Corollary \ref{pro2} allows us to select spaces that reflect special algebraic properties of a finitely presented group in order to show the group is semistable or simply connected at $\infty$. Many of the theorems that conclude a finitely presented group is semistable or simply connected at $\infty$ have their proofs based in a ``Cayley 2-complex" of a finite presentation of that group.  

\begin{definition}
Given a finite presentation $\mathcal P=\langle S \ | \ R\rangle$ for a group $G$, the {\it standard} 2-complex $X(\mathcal P)$ for this presentation has a single vertex $v$ and one directed loop (labeled $s$) at $v$ for each generator $s\in S$.  For each relation  $r=a_1^{\epsilon_1}a_2^{\epsilon_2} \cdots a_n^{\epsilon_n}\in R$ (here $a_i\in S$ and $\epsilon_i=\pm 1$ for all $i$) there is an $n$-gon $D_r$ with boundary labeled by $r$. The 2-cell $D_r$ is attached along it's boundary to the loops at $v$ so that the $i^{th}$ edge (labeled $a_i$) is wrapped around the loop with labeled $a_i$ with positive orientation if $\epsilon_i=1$ and with negative orientation if $\epsilon_i=-1$. The resulting 2-complex is $X(\mathcal P)$
\end{definition}

\noindent An elementary application of van Kampen's Theorem implies:
\begin{theorem}
If $G$ is a finitely presented group with finite presentation $\mathcal P$ then $\pi_1(X(\mathcal P))$ is isomorphic to $G$. 
\end{theorem}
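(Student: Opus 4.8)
The plan is to use van Kampen's theorem, computing the fundamental group in two stages: first for the 1-skeleton, then tracking the effect of attaching the 2-cells. First I would note that the 1-skeleton $X(\mathcal P)^1$ is a wedge of $|S|$ circles, one directed loop at $v$ for each generator $s\in S$. Applying van Kampen's theorem to this wedge (or the universal property of free products of the groups $\pi_1(S^1)\cong\mathbb Z$) identifies $\pi_1(X(\mathcal P)^1,v)$ with the free group $F(S)$, where the loop labeled $s$ represents the generator $s$.

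Next I would attach the 2-cells one at a time and induct. Writing $R=\{r_1,\ldots,r_m\}$, set $X_0=X(\mathcal P)^1$ and $X_k=X_{k-1}\cup D_{r_k}$, so that $X_m=X(\mathcal P)$. To pass from $\pi_1(X_{k-1})$ to $\pi_1(X_k)$, let $c$ be the barycenter of $D_{r_k}$ and take the open sets $U=X_k\setminus\{c\}$ and $V=\mathrm{int}(D_{r_k})$. Then $U$ deformation retracts onto $X_{k-1}$ by radial projection away from $c$, so $\pi_1(U)\cong\pi_1(X_{k-1})$; the open disk $V$ is contractible; and $U\cap V$ is a punctured open disk, hence homotopy equivalent to a circle with $\pi_1(U\cap V)\cong\mathbb Z$. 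Choosing a basepoint $x_0\in U\cap V$ and applying van Kampen's theorem, $\pi_1(X_k,x_0)$ is the pushout of $\pi_1(U,x_0)\leftarrow\pi_1(U\cap V,x_0)\to\pi_1(V,x_0)$. Since $\pi_1(V)$ is trivial, this pushout is the quotient of $\pi_1(U)\cong\pi_1(X_{k-1})$ by the normal closure of the image of the generator of $\pi_1(U\cap V)\cong\mathbb Z$.

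The step requiring the most care — and the one I expect to be the main obstacle — is verifying that the core loop of the annulus $U\cap V$ maps, under the retraction of $U$ onto $X_{k-1}$ and after a change of basepoint back to $v$, to precisely the word $r_k\in\pi_1(X_{k-1})$. This is exactly where the attaching data of $D_{r_k}$ (wrapping its boundary around the loops according to the letters of $r_k$) enters: the boundary circle of $D_{r_k}$ traverses the generator loops in the order and orientation prescribed by $r_k$, so its class is $r_k$. Granting this, each attachment imposes the single relation $r_k=1$, and by induction $\pi_1(X(\mathcal P),v)\cong F(S)/\langle\langle r_1,\ldots,r_m\rangle\rangle=\langle S\mid R\rangle=G$. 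A minor point to dispatch along the way is the basepoint bookkeeping, since $v\notin V$; this is handled by connecting $x_0$ to $v$ by an arc in $U$ and noting that the resulting basepoint-change isomorphism is compatible with the retraction.
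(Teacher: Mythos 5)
Your proposal is correct and matches the paper's approach: the paper proves this statement simply by invoking van Kampen's Theorem (``An elementary application of van Kampen's Theorem implies\ldots''), and your argument is precisely that elementary application, carried out in detail with the standard cell-by-cell induction and basepoint bookkeeping. Nothing further is needed.
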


\begin{definition}
If $G$ is a group with finite presentation $\mathcal P$ then let $\Gamma(\mathcal P)$ be the universal cover of $X(\mathcal P)$. The 2-complex $\Gamma(\mathcal P)$ is the {\it Cayley 2-complex} for $\mathcal P$. 
\end{definition}
The following is elementary:
\begin{theorem}
If $\mathcal P=\langle S\ | \  R\rangle$ is a finite presentation of a group $G$, then the 1-skeleton of $\Gamma(\mathcal P)$ is the Cayley graph $\Gamma(G,S)$. If for each vertex $v$ of $\Gamma(G,S)$ and each $r\in R$, a 2-cell is attached to $\Gamma(G,S)$ according to the word $r$, then the resulting 2-complex is $\Gamma(\mathcal P)$. 
\end{theorem}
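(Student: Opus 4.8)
The plan is to use covering space theory throughout, exploiting that $\Gamma(\mathcal P)$ is by definition the universal cover of $X(\mathcal P)$, that $\pi_1(X(\mathcal P))\cong G$, and hence that the deck transformation group of the covering $p:\Gamma(\mathcal P)\to X(\mathcal P)$ is $G$, acting freely and cellularly with quotient $X(\mathcal P)$. Since $X(\mathcal P)$ has a single vertex $v$, the fiber $p^{-1}(v)$ is one free $G$-orbit; fixing a base vertex $\tilde v$ over $v$ identifies $p^{-1}(v)$ with $G$ via $g\mapsto g\tilde v$. The two assertions will be proved in turn: first that the $1$-skeleton is $\Gamma(G,S)$, and then that the $2$-cells attach exactly as described.

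For the $1$-skeleton I would observe that $\Gamma(\mathcal P)^1=p^{-1}(X(\mathcal P)^1)$ is a connected covering of the wedge of circles $X(\mathcal P)^1$ (connectedness holds because $\Gamma(\mathcal P)$ is connected and attaching cells of dimension $\geq 2$ does not change the path components of the $1$-skeleton). Its vertex set is the fiber $p^{-1}(v)=G$. For each generator $s\in S$, the unique lift of the loop $\ell_s$ (the edge of $X(\mathcal P)$ labeled $s$) beginning at a vertex $g$ terminates at $gs$: writing $g=[\gamma]$ for a loop $\gamma$ at $v$, the lift of $\gamma\cdot\ell_s$ at $\tilde v$ ends at $[\gamma][\ell_s]=g\,s$, so the lift of $\ell_s$ at $g$ ends at $gs$. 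By Definition \ref{Cayley} this is precisely the Cayley graph, giving $\Gamma(\mathcal P)^1\cong\Gamma(G,S)$. (Equivalently, the covering $\Gamma(\mathcal P)^1\to X(\mathcal P)^1$ is the pullback of $p$ along $X(\mathcal P)^1\hookrightarrow X(\mathcal P)$, so it corresponds to the normal subgroup $N=\ker(F(S)\to G)$, and the connected normal cover of a wedge of $|S|$ circles associated to $N$ is by construction $\Gamma(F(S)/N,S)=\Gamma(G,S)$.)

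For the $2$-cells I would use that $p$ is a cellular covering map, so each open cell of $\Gamma(\mathcal P)$ maps homeomorphically onto an open cell of $X(\mathcal P)$ and every cell downstairs is covered by such lifts. The open $2$-cells of $X(\mathcal P)$ are the interiors of the polygons $D_r$, $r\in R$. For fixed $r$, the attaching map $\partial D_r\to X(\mathcal P)^1$ reads the word $r$; since $r$ lies in $N=\ker(F(S)\to G)$ it represents the identity of $G$, so the lift of this attaching map starting at any vertex $g\in G$ is a \emph{closed} edge loop in $\Gamma(G,S)$ reading $r$ from $g$. Hence the lifts of $D_r$ are in bijection with the choices of basepoint $g\in p^{-1}(v)=G$, and the lift indexed by $(g,r)$ is a $2$-cell attached along the loop in $\Gamma(G,S)$ labeled $r$ and based at $g$. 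Thus the $2$-cells of $\Gamma(\mathcal P)$ are exactly indexed by pairs $(g,r)$ with $g$ a vertex of $\Gamma(G,S)$ and $r\in R$, each attached according to $r$, which is the asserted construction.

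The main obstacle is not any single hard step but the bookkeeping of conventions: matching the right action of $G$ by deck transformations (equivalently the monodromy of $p$) to the right-multiplication convention $as=b$ of Definition \ref{Cayley}, so that the lift of $\ell_s$ at $g$ genuinely ends at $gs$ and not $g^{-1}s$ or $sg$. The two facts that make the identification cohere are (i) the connectedness of the $1$-skeleton, which guarantees the cover is the full Cayley graph rather than a disjoint union of copies, and (ii) the observation that each relator $r$ represents the trivial element of $G$, which forces the lifted attaching loops to close up and so yields exactly one $2$-cell per vertex per relation. Everything else is the routine machinery of lifting cell structures along covering maps.
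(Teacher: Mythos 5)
Your proof is correct. The paper offers no proof of this statement at all—it is simply prefaced with ``The following is elementary''—and your covering-space argument (identifying the fiber over the unique vertex with $G$, checking that lifts of the generator loops implement right multiplication consistent with Definition \ref{Cayley}, and noting that lifted attaching maps of the relator disks close up at every vertex because each $r\in R$ represents the identity in $G$) is exactly the standard argument the paper is taking for granted.
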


\subsubsection{Semistability at $\infty$ for Finitely Generated Groups}\label{FGss}

We now define the notion of semistability for a finitely generated group following Mihalik  \cite{M4}. The (finitely generated) Lamplighter group is not semistable (see section \ref{NonSS}). %We give the definition for 1-ended groups since this is the case that concerns us. 
Suppose $G$ is a finitely generated group with generating set $S\equiv \{s_1, s_2,\ldots , s_n\}$ and let $\Gamma
(G,S)$ be the Cayley graph of $G$ with respect to this generating set.  Suppose $\{r_1, r_2,\ldots , r_m\}$ is a finite set of relations in $G$ written in the letters $\{s_1^\pm, s_2^\pm,\ldots , s_n^\pm\}$.  For any vertex $v\in \Gamma(G,S)$, there is an edge path cycle labeled $r_i$ at $v$.   The two dimensional CW-complex $\Gamma_{(G,S)}(r_1,\ldots , r_m)$ is obtained by attaching to each vertex of $\Gamma(G,S)$, $2$-cells corresponding to the relations $r_1,\ldots ,r_n$.

Mihalik  \cite{M4}, shows that if $S$ and $T$ are finite generating sets for the group $G$ and there are finitely many $S$-relations $P$ such that $\Gamma_{(G,S)}(P)$ is semistable at $\infty$, then there are finitely many $T$-relations $Q$ such that $\Gamma_{(G,T)}(Q)$ is semistable at $\infty$. Hence the following definition:

\begin{definition} \label{defss3}
We say {\it $G$ is semistable at $\infty$} if  for some finite generating set $S$ for $G$ and finite set of $S$-relations $P$ the complex $\Gamma_{(G,S)}(P)$ is semistable at $\infty$. 
\end{definition}

There is another way to view $\Gamma_{(G,S)}(P)$. Let $\phi:F(S)\to G$ be the quotient map sending $s$ to $s$ for all $s\in S$. If $G_1$ has finite presentation $\langle S\ | \ P\rangle$, and $\phi_1:F(S)\to G_1$ is the quotient map sending $s$ to $s$ for all $s\in S$, then $ker (\phi_1)\subset ker(\phi)$ and $G_1$ maps onto $G$ with kernel $\phi_1(ker(\phi))$. Let $\Gamma_1$ be the Cayley 2-complex for the presentation $\langle S \ | \ P\rangle $. The group of covering transformations of $\Gamma_1$ is $G_1$ and $\phi_1(ker(\phi))$ is a normal subgroup of $G_1$. The space $\Gamma_{(G,S)}(P)$ is the quotient of $\Gamma_1$ by $\phi_1(ker(\phi))$ (with $G$ as a group of covering transformations).

\[
\xymatrix{
F(S) \ar[d]^{\phi_1} \ar[dr]^\phi & \\
G_1 \ar[r] & G
 }
\]
    
Note that if $G$ has finite presentation $\langle S \ | \ P\rangle$, then $G$ is semistable at $\infty$ with respect to Definition \ref{defss2} if and only if $G$ is semistable at $\infty$ with respect to Definition \ref{defss3} if and only if $\Gamma_{(G,S)}(P)$ is semistable at $\infty$.

\begin{lemma} [(Lemma 2,  \cite{M4}] \label{rel} 
Suppose the finitely generated group $G$ is 1-ended and semistable at $\infty$. If $S$ is a finite generating set for $G$ and $P$ is a finite set of $S$-relations in $G$ such that $\Gamma _{(G,S)}(P)$ is semistable at $\infty$, then there is a finite set $Q$ of $S$-relations such that: if $r$ and $s$ are proper rays 
in $\Gamma_{(G,S)}(P\cup Q)$, with $r(0)=s(0)$, then $r$ is properly homotopic to $s$ rel$\{r(0)\}$. 
\end{lemma}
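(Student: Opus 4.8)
The plan is to recast the conclusion as the triviality of a natural action and then to kill that action by attaching finitely many $2$-cells. Write $K=\Gamma_{(G,S)}(P)$; its $1$-skeleton is the Cayley graph of the $1$-ended group $G$, so $K$ is $1$-ended, and $K$ is semistable at $\infty$ by hypothesis. Fix a base vertex $\ast$ and let $\mathcal R$ be the set of proper rays in $K$ based at $\ast$ modulo proper homotopy $\mathrm{rel}\{\ast\}$. The group $\pi_1(K,\ast)$ acts on $\mathcal R$ by pre-concatenation, $[\ell]\cdot[r]=[\ell\cdot r]$, and the standard dictionary says that two proper rays at $\ast$ are freely properly homotopic exactly when they lie in one orbit, while they are properly homotopic $\mathrm{rel}\{\ast\}$ exactly when they are equal in $\mathcal R$. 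Because $K$ is $1$-ended and semistable, all proper rays at $\ast$ are freely properly homotopic, so the action is transitive; hence, after fixing a reference ray $r_0$ at $\ast$, the Lemma (at the basepoint $\ast$) is equivalent to the assertion $\mathrm{Stab}([r_0])=\pi_1(K',\ast)$, where $K'=\Gamma_{(G,S)}(P\cup Q)$. So I would choose $Q$ to force every element of $\pi_1$ to fix $[r_0]$.

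First I would isolate the part of $\pi_1(K,\ast)$ that already fixes $[r_0]$. Applying Theorem \ref{SSloop} (using the semistability of $\pi_1(\varepsilon K,r_0)$) together with Theorem \ref{ES}, choose a finite subcomplex $D\ni\ast$ with $K-D$ connected and unbounded, such that every loop based on $r_0$ out beyond $D$ slides to infinity along $r_0$; such loops fix $[r_0]$, so $\mathrm{Stab}([r_0])$ contains a conjugate of the image of $\pi_1(K-D)$ in $\pi_1(K,\ast)$. The finiteness now comes from van Kampen's theorem applied to $K=D\cup_F\overline{K-D}$, where the frontier $F$ and the complex $D$ are finite: $\pi_1(K,\ast)$ is generated by the image of $\pi_1(K-D)$ together with finitely many classes $g_1,\dots,g_m$ coming from $\pi_1(D)$. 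Represent each $g_i$ by an edge loop $\lambda_i$ at $\ast$.

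Next I would choose $Q$. Since $K$ is semistable, each ray $\lambda_i\cdot r_0$ is freely properly homotopic to $r_0$; let $H_i$ be such a homotopy and let $\mu_i=H_i(0,\cdot)$ be its bottom loop, an edge loop at $\ast$, i.e. a word in $S$ that is an $S$-relation of $G$. Set $Q=\{\mu_1,\dots,\mu_m\}$, a finite set. In $K'$ each $\mu_i$ bounds an attached $2$-cell, so capping the bottom edge of $H_i$ with this cell converts $H_i$ into a proper homotopy $\mathrm{rel}\{\ast\}$ from $\lambda_i\cdot r_0$ to $r_0$; thus $g_i\in\mathrm{Stab}([r_0])$ in $K'$. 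The sliding homotopies of the previous paragraph take place in $K\subseteq K'$, so the image of $\pi_1(K-D)$ still fixes $[r_0]$. As $\pi_1(K',\ast)$ is a quotient of $\pi_1(K,\ast)$ generated by the images of $\pi_1(K-D)$ and of the $g_i$, and $\mathrm{Stab}([r_0])$ is a subgroup, we obtain $\mathrm{Stab}([r_0])=\pi_1(K',\ast)$.

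Finally I would close the argument and pass to arbitrary basepoints. The complex $K'$ has the same $1$-skeleton as $K$, so it is $1$-ended, and every proper ray in $K'$ is properly homotopic to an edge-path ray lying in $K$; hence all proper rays in $K'$ are freely properly homotopic and $K'$ is semistable, so the action of $\pi_1(K',\ast)$ on $\mathcal R(K')$ is transitive. Transitivity together with $\mathrm{Stab}([r_0])=\pi_1(K',\ast)$ forces $\mathcal R(K')$ to be a single point, i.e. any two proper rays at $\ast$ are properly homotopic $\mathrm{rel}\{\ast\}$. Since $Q$ is attached at every vertex, the construction is $G$-equivariant, so the same holds at every vertex; for a pair of rays sharing a non-vertex initial point one pushes the basepoint to a nearby vertex along a path inside a single contractible cell, which does not disturb properness. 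The step I expect to be the main obstacle is the finiteness input — verifying via van Kampen that only finitely many generators $g_i$ lie outside the slid-off subgroup, and matching the hypotheses of Theorem \ref{SSloop} across the frontier of $D$ (choosing $D$, the conjugating initial segments of $r_0$, and the basepoints compatibly) — together with checking that the capped homotopies $H_i$ remain proper and genuinely $\mathrm{rel}\{\ast\}$.
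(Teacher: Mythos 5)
Your proposal is correct in outline, and the points you flag as delicate --- the van Kampen generation statement, cellular approximation of the basepoint tracks $\mu_i$, and properness of the capped homotopies --- are all standard and fixable for these combinatorial, locally finite complexes (one minor imprecision: some of your extra generators $g_i$ are not classes ``coming from $\pi_1(D)$'' but stable-letter elements joining the finitely many components of the frontier neighborhood; this is harmless since your argument only uses that there are finitely many of them). Be aware, though, that this paper never proves Lemma \ref{rel} itself; it is quoted from \cite{M4}, and the nearest in-paper argument is the proof of the stronger Theorem \ref{FGSS6}, which shows the set $Q$ is unnecessary. Your route is genuinely different from that one. You translate the conclusion into orbit--stabilizer language for the $\pi_1$-action on rel-basepoint proper homotopy classes of rays: Theorem \ref{SSloop} puts the (conjugated) image of $\pi_1(K-D)$ into the stabilizer of $[r_0]$, van Kampen supplies finitely many additional generators $g_i$, and $Q$ consists of the basepoint tracks of free proper homotopies $\lambda_i\cdot r_0\simeq r_0$; once these tracks bound $2$-cells, each $g_i$ lands in the stabilizer, and transitivity finishes. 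The paper's proof of Theorem \ref{FGSS6} instead exploits the $G$-homogeneity of $\Gamma_{(G,S)}(P)$: Theorem \ref{ssequiv}(4) with $C=\emptyset$ yields one ball $B_K(\ast)$ outside of which based rays are rel-basepoint properly homotopic, a geodesic line (Lemma \ref{geodesicline}) provides translates $wl^+$ and translated obstruction loops $w\beta$ avoiding that ball, and translating back by $w^{-1}$ slides the obstruction off with no new $2$-cells at all. What each approach buys: yours gives a transparent structural reason why a \emph{finite} $Q$ suffices (finitely many van Kampen generators, one relation each) and uses the $G$-action only to move the conclusion between basepoints; the paper's uses vertex-transitivity in an essential way but obtains the sharper conclusion $Q=\emptyset$, which is why Lemma \ref{rel} is superseded there.
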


\begin{remark}
In Theorem \ref{FGSS6} we show that in fact the set of relations $Q$ in the previous lemma are unnecessary in order to draw the same conclusion.  If $\Gamma_{(G,S)}(P)$ is semistable at $\infty$, and $r$ and $s$ are proper rays 
in $\Gamma_{(G,S)}(P)$ with $r(0)=s(0)$, then $r$ is properly homotopic to $s$ rel$\{r(0)\}$. 
\end{remark}

\begin{definition}\label{def4} 
For a group $G$ with finite generating set $S$ and a subset $T$ of $S$, we say an edge path in $\Gamma(G, S)$ is a {\it $T$-path} if each edge of the path is labeled by an element of  $T^{\pm}$. If the path is infinite and proper we call it a proper {\it  $T$-ray}.
\end{definition}
\noindent The next result generalizes Lemma \ref{rel} (compare with Theorem \ref{ssequiv} (6)). 
\begin{theorem} \label{FGSS6}  Suppose $G$ is a finitely generated group that is 1-ended and semistable at $\infty$. Let $S$ be a finite generating set for $G$ and $P$ a finite set of $S$-relations such that $\Gamma_{(G,S)}(P)$ is semistable at $\infty$. Then two proper rays $r$ and $s$ in $\Gamma_{(G,S)}(P)$, with $r(0)=s(0)$ are properly homotopic $rel\{r(0)\}$.
\end{theorem}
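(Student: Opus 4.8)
The plan is to reduce the statement to a single claim about sliding a based loop to infinity, and then to combine Lemma \ref{rel} with the sliding-to-infinity principle of Theorem \ref{SSloop}. Write $K=\Gamma_{(G,S)}(P)$. Since the $1$-skeleton of $K$ is the Cayley graph $\Gamma(G,S)$, which is $1$-ended because $G$ is, Theorem \ref{E1sk} shows $K$ is $1$-ended; and since semistability depends only on the $2$-skeleton, by Theorem \ref{geoend} I may assume $r$ and $s$ are proper edge-path rays. As $K$ is semistable and $1$-ended, $r$ and $s$ converge to the unique end of $K$, so they are properly homotopic by some free proper homotopy $H\colon[0,\infty)\times[0,1]\to K$ with $H(t,0)=r(t)$ and $H(t,1)=s(t)$. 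The restriction $\gamma=H|_{\{0\}\times[0,1]}$ is a loop at $v=r(0)=s(0)$, and $H$ exhibits a proper homotopy rel $\{v\}$ of $r$ to the ray $(\gamma,s)$. Thus the theorem is equivalent to the claim that for every loop $\gamma$ based at $v$, the rays $(\gamma,s)$ and $s$ are properly homotopic rel $\{v\}$ in $K$; that is, $\gamma$ can be slid to infinity along $s$. This is exactly the place where, in the proof of Theorem \ref{ssequiv}$(6)$, simple connectivity was used to fill $\gamma$ with a disk — an option not available here since $K$ is typically not simply connected.

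To prove the claim I would first dispose of the ``far'' part cheaply: given a compact $C$, Theorem \ref{SSloop} provides a compact $D$ so that any loop based on $s$ with image in $K-D$ can be pushed to infinity along $s$ by a homotopy in $K-C$. The genuine difficulty is that $\gamma$ sits over the compact part of $K$ near $v$ and need not be null-homotopic in $K$, so Theorem \ref{SSloop} does not apply to $\gamma$ directly. Here I would bring in Lemma \ref{rel}: it furnishes a finite set $Q$ of $S$-relations so that, in the larger complex $K'=\Gamma_{(G,S)}(P\cup Q)$, which has the same $1$-skeleton as $K$, any two proper rays with a common initial point are properly homotopic rel that point. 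In particular there is a proper homotopy $F\colon[0,\infty)\times[0,1]\to K'$ rel $\{v\}$ from $(\gamma,s)$ to $s$. The remaining task is to replace $F$ by a homotopy whose image lies in $K$, i.e. to remove the $2$-cells of $K'$ coming from $Q$.

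The removal is the crux. Put $F$ in cellular and transverse position, so that $F$ meets the interiors of the $Q$-cells in a discrete set of disks, each mapped with some local winding. Because $F$ is proper and the $Q$-cells attached at vertices far from $v$ are far out in $K$, all but finitely many of these crossings occur over $Q$-cells lying in $K-D$; for those I would invoke the slide-to-infinity of Theorem \ref{SSloop} in $K$ to push each crossing off to infinity, rerouting the corresponding piece of $F$ through $K$ rather than through the cell. After this, $F$ agrees with a map into $K$ outside a single compact region $\Sigma_0\subset[0,\infty)\times[0,1]$, while on $\partial\Sigma_0$ it already maps into $K$. What is left is a relative map $(\Sigma_0,\partial\Sigma_0)\to(K',K)$ that crosses only finitely many $Q$-cells near $v$, to be homotoped rel $\partial\Sigma_0$ into $K$.

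This last step is the main obstacle. The obstruction to pushing $(\Sigma_0,\partial\Sigma_0)$ off the remaining cells is their total winding, a relative class whose image under $\partial\colon\pi_2(K',K)\to\pi_1(K,v)$ records the generally non-trivial classes of the loops $q$; hence no purely local push-off exists, and semistability must be used globally. The point I would have to make precise is that although each $q$ is essential in $\pi_1(K)$, it must become inessential ``at the end'' of the $1$-ended complex $K$: using the cocompact $G$-action to translate the finitely many offending cells outward along $s$ (as in the rel-endpoints calculation $(\gamma,s|_{[0,n]})\simeq(s|_{[0,n]},\gamma_n)$), together with repeated applications of Theorem \ref{SSloop} and the stacking-of-homotopies device from the proof of Theorem \ref{ssequiv} $(2\Rightarrow 4)$, should trade the residual near-$v$ windings for pushes to infinity and assemble the countably many slides into one proper homotopy in $K$. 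Verifying that this translation-and-slide genuinely disposes of the near cells — equivalently, that the residual relative class dies at the end of $K$ — is the technical heart of the argument and the step I expect to occupy most of the work; it is precisely here that the hypotheses ``$1$-ended'' and ``semistable'' are used in full, and where the extra relations $Q$ of Lemma \ref{rel} are shown to be dispensable.
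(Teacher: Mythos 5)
Your reduction is correct: replacing the free proper homotopy $H$ from $r$ to $s$ by a rel-basepoint proper homotopy from $r$ to $(\gamma,s)$, where $\gamma=H|_{\{0\}\times[0,1]}$, converts the theorem into the claim that every loop $\gamma$ based at $v$ can be slid to infinity along $s$ inside $K=\Gamma_{(G,S)}(P)$. But that claim \emph{is} the theorem, and your proposal never proves it. The detour through Lemma \ref{rel} --- working in $K'=\Gamma_{(G,S)}(P\cup Q)$, making the homotopy $F$ transverse to the $Q$-cells, and excising crossings --- stalls exactly where you admit it does: after the far crossings are pushed off via Theorem \ref{SSloop}, you are left with a relative map $(\Sigma_0,\partial\Sigma_0)\to (K',K)$ whose boundary class in $\pi_1(K,v)$ is in general nontrivial, and no step in your sketch kills it. The sentence asserting that translating the offending cells outward and repeatedly applying Theorem \ref{SSloop} ``should trade the residual near-$v$ windings for pushes to infinity'' is not an argument; it is a restatement of the sliding claim you set out to prove. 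So there is a genuine gap, located at the only step that carries the content of the theorem.

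The gap can be closed --- and this is what the paper does --- by applying the translation idea directly to the loop rather than to cells of an auxiliary homotopy, which makes Lemma \ref{rel} and $K'$ unnecessary (the paper proves Theorem \ref{FGSS6} precisely to show the relations $Q$ are dispensable). Theorem \ref{ssequiv} (4) with $C=\emptyset$ gives a ball $B_N(\ast)$ such that any two proper rays with a common basepoint and image in $K-B_N(\ast)$ are properly homotopic rel that basepoint. Take a geodesic line $l$ through $\ast$ (Lemma \ref{geodesicline}); for infinitely many vertices $w$, the translate $wl^+$ (say) avoids $B_N(\ast)$. If $\beta$ is the loop at $\ast$ recording the discrepancy between $r$ and $l^+$ (built from their initial segments and a connecting path chosen far out), pick such a $w$ with $w\beta$ also outside $B_N(\ast)$; then $wl^+$ and $(w\beta,wl^+)$ are properly homotopic rel $w$, and translating by $w^{-1}$ --- a cell-permuting homeomorphism, so properness and basepoints are preserved --- yields $l^+\simeq(\beta,l^+)$ rel $\ast$. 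Concatenating this with the proper homotopy between the tails of $r$ and $l^+$ (both based at a common vertex outside $B_N(\ast)$) finishes the proof. Your closing remarks contain the germ of this trick, but in your write-up it appears only as a hoped-for patch on an obstruction-theoretic scaffold that is never completed; to be a proof, the translation must be the argument itself, applied to the loop, not a repair applied to cells.
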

\begin{proof}
Let $X=\Gamma_{(G,S)}(P)$ and let $l$ be a geodesic edge path line in $X$ through the identity $\ast$, parametrized so that $l(0)=\ast$. Let $l^+(n)=l(n)$ for $n\geq 0$ and $l^-(n)=l(n)$ for all $n\leq 0$. Theorem \ref{ssequiv} (4) (with $C=\emptyset$) implies there is a compact ball $B_K(\ast)$ of radius $K$, such that any two proper rays $r$ and $s$ in $X-B_K(\ast)$ (with $r(0)=s(0)$) are properly homotopic $rel\{r(0)\}$.

For $v\in G(\equiv X^0)$ let $vl^+$ and $vl^-$ be the translates of $l^+$ and $l^-$ respectively to $v$. Given any vertex $v\in X-B_{2K+1}(\ast)$ only one of the geodesic rays $vl^+$ or $vl^-$ can intersect $B_K(\ast)$. Hence either there are infinitely many vertices $v\in X$ such that $vl^+$ avoids $B_K(\ast)$ or infinitely many vertices $v\in X$ such that $vl^-$ avoids $B_K(\ast)$. Without loss, assume the former holds. It is enough to show that every proper ray $r$ at $\ast$ is properly homotopic to $l^+$ relative to $\ast$. 

\begin{figure}
\vbox to 3in{\vspace {-1.7in} \hspace {.1in}
\hspace{-1 in}
\includegraphics[scale=1]{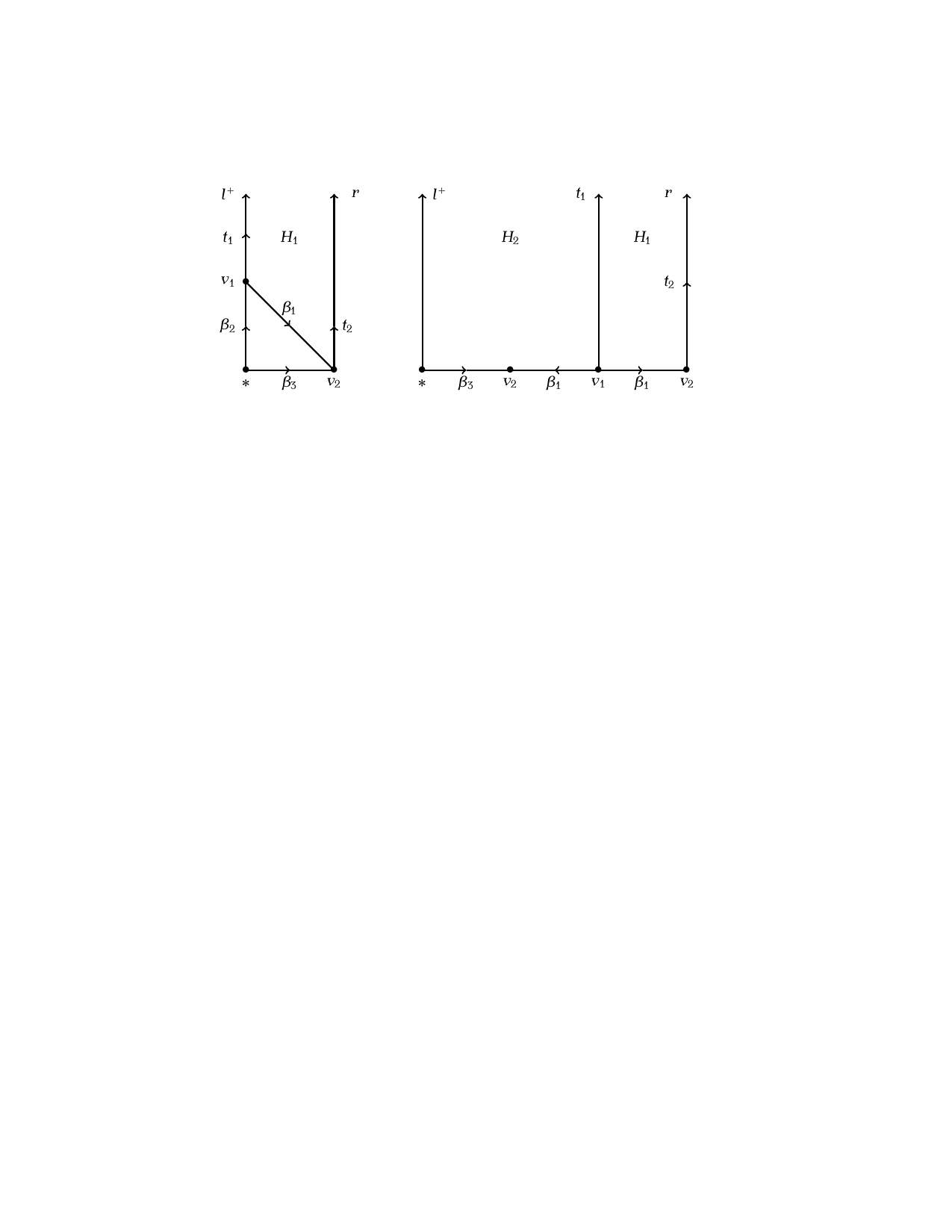}
\vss }
\vspace{-1.2in}
\caption{Combining the $H_i$} 
\label{Hfig}
\end{figure}

Choose a path $\beta_1$ from a vertex $v_1$ of $l^+$ to a vertex $v_2$ of $r$ such that the image of $\beta_1$ and the tail of $r$ at $v_2$ avoid $B_K(\ast)$. By the definition of $K$, $t_1$ (the tail of $l^+$ at $v_1$) is properly homotopic (relative to $v_1$) to $\beta_1$ followed by $t_2$ (the tail of $r$ at $v_2$) by a proper homotopy $H_1$. Let $\beta_2$ be the initial segment of $l^+$ from $\ast$ to $v_1$ and let $\beta_3$ be the initial segment of $r$ from $\ast$ to $v_2$. Let $\beta$ be the loop $(\beta_3, \beta_1^{-1}, \beta_2^{-1})$.  Choose a vertex (infinitely many exist) $w\in X$ such that the translate $wl^+$ of $l^+$ to $w$ and the translate $w\beta$ both avoids $B_K(\ast)$. By the definition of $K$, $wl^+$ is properly homotopic to $(w\beta, wl^+)$ relative to $w$. Translating by $w^{-1}$, $l^+$ is properly homotopic to $(\beta, l^+)$ relative to $\ast$. Equivalently, $l^+$ is properly homotopic to $(\beta_3, w\beta_2^{-1},t_1)$ relative to $\ast$ by a proper homotopy $H_2$. Combining $H_1$ and $H_2$ as in Figure \ref{Hfig} (and canceling $(\beta_1,\beta_1^{-1})$) shows that $l^+$ and $r$ are properly homotopic relative to $\ast$. 
\end{proof}

\subsubsection{Simply Connected at $\infty$ for Finitely Generated Groups}\label{GSSCinf}

The following definition defines what it means for a finitely generated subgroup of a finitely presented group to be simply connected at $\infty$ relatively to the finitely presented over group. This notion has been used successfully to prove interesting classes of finitely presented groups are simply connected at $\infty$. 

\begin{definition}\label{SCin} 
A finitely generated subgroup $A$ of a finitely presented group $G$ is {\it simply connected at $\infty$ in $G$ (or relative to $G$)} if for some (equivalently any by Lemma \ref{equiv} with $N=0$) finite presentation $\mathcal P=\langle \mathcal A,\mathcal B\ | \ R\rangle$ of the group $G$  (where $\mathcal A$ generates $A$ and $\mathcal A\cup \mathcal B$ generates $G$), the 2-complex $\Gamma_{(G,\mathcal A\cup \mathcal B)}(R)=\Gamma(\mathcal P)$ has the following property:

Given any compact set $C\subset \Gamma ( \mathcal P)$ there is a compact set $D\subset \Gamma ( \mathcal P)$ such that any edge path loop in  $\Gamma (A,\mathcal A)-D$ is homotopically trivial in $\Gamma ( \mathcal P)-C$. 
\end{definition}

In order to define what  it means for a finitely generated group $G$ to be simply connected at $\infty$, we must know that $G$ embeds in some finitely presented group. In 1961,G. Higman proved:

\begin{theorem} [Higman, \cite{Hig61}]  A finitely generated infinite group $G$ can be embedded in a finitely presented group if and only if  $G$ is recursively presented.
\end{theorem}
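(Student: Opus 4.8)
The plan is to prove the two implications separately, with essentially all of the work concentrated in the direction ``recursively presented $\Rightarrow$ embeddable.'' First I would dispatch the easy direction. Suppose $G$ embeds in a finitely presented group $H=\langle y_1,\dots,y_m \mid s_1,\dots,s_k\rangle$. The set $W$ of words on $\{y_i^{\pm1}\}$ representing the identity of $H$ is the normal closure of $\{s_1,\dots,s_k\}$ in the free group $F(y_1,\dots,y_m)$, and this normal closure is recursively enumerable: one mechanically enumerates the free reductions of all finite products of conjugates of the $s_j^{\pm1}$. Writing the finitely many generators of $G$ as fixed words $g_i=w_i(y)$ and letting $\phi\colon F(x_1,\dots,x_n)\to H$ send $x_i\mapsto w_i$, a word $v(x)$ is a relator of $G$ precisely when the (computable) substitution $v(w_1,\dots,w_n)$ lies in $W$. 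Since $W$ is recursively enumerable and substitution is effective, the relator set of $G$ on the $x_i$ is recursively enumerable, so $G$ is recursively presented.

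For the hard direction, suppose $G=\langle x_1,\dots,x_n\mid R\rangle$ with $R$ a recursively enumerable set of words in the free group $F=F(x_1,\dots,x_n)$, and let $N$ be the normal closure of $R$, so that $G=F/N$. I would organize the argument around Higman's notion of a \emph{benign} subgroup: a subgroup $L$ of a finitely generated group $K$ is benign in $K$ if $K$ embeds in some finitely presented group $M$ containing a finitely generated subgroup $Q$ with $L=K\cap Q$. The two structural facts I would establish first are (i) a catalogue of closure properties --- benignness is preserved under intersection, under joining two benign subgroups, under forming normal closures, and under pulling back along maps between finitely generated groups --- all proved by repeatedly forming amalgamated products and HNN extensions of finitely presented groups over finitely generated subgroups, operations that keep the ambient group finitely presented; and (ii) the reduction lemma: if a normal subgroup $N\trianglelefteq F$ is benign in $F$, then $F/N$ embeds in a finitely presented group. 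Fact (ii) is itself an HNN-style construction that uses the witnessing data $N=F\cap Q$ to manufacture a finite presentation in which exactly the elements of $N$ die.

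The heart of the proof --- and the step I expect to be the main obstacle --- is the \emph{Higman Rope Trick}: showing that the normal closure of a recursively enumerable set of words is benign in $F$. Here I would exploit the recursion-theoretic characterization that every recursively enumerable set is the projection (existential quantification) of a computable relation, and that a computable relation is in turn assembled from finitely many primitive ``machine moves.'' Group-theoretically, projection corresponds to intersecting with a suitable finitely generated subgroup and then intersecting the result back into $F$, while the primitive computable data is encoded by the finitely many stable-letter relations of an HNN extension that simulates the steps of a Turing machine enumerating $R$. The two delicate points are (a) arranging the encoding so that a word is \emph{accepted} by the simulated machine exactly when the corresponding group element lies in the target finitely generated subgroup, and (b) controlling the interaction between the infinitely many conjugates needed to pass from a generating set to a \emph{normal} closure and the single finitely generated witness subgroup --- which is precisely what the closure properties of (i) are built to absorb.

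Once the rope trick supplies that $N$ is benign in $F$, the reduction lemma (ii) embeds $G=F/N$ into a finitely presented group, and combined with the easy direction this establishes the equivalence. I would expect the bookkeeping in the closure properties (i) to be routine once the amalgamation/HNN toolkit is set up, the reduction lemma (ii) to be a clean but nontrivial construction, and the genuine difficulty to lie entirely in the faithful translation of a Turing-machine computation into a chain of finite group-theoretic data that nonetheless captures an infinite, merely recursively enumerable, set of relators.
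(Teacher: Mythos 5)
The paper itself offers no proof of this statement: it is quoted as Higman's 1961 embedding theorem and cited to \cite{Hig61}, so there is no in-paper argument to compare yours against. Measured instead against Higman's published proof, your outline is faithful to it. Your easy direction is complete and correct: the identity-words of a finitely presented group form a recursively enumerable subset of the free group, and the relator set of an embedded finitely generated subgroup is the preimage of that set under an effective substitution, hence recursively enumerable. Your hard direction reproduces exactly Higman's architecture --- benign subgroups (your definition agrees with Higman's), the closure properties proved via amalgamated products and HNN extensions over finitely generated subgroups, the reduction lemma that a benign normal subgroup $N\trianglelefteq F$ yields an embedding of $F/N$, and the ``rope trick'' showing the normal closure of a recursively enumerable relator set is benign.

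That said, what you have written is a correct roadmap rather than a proof. The content of Higman's theorem is concentrated precisely in the steps you name but do not execute: the verification of the closure properties (each one a nontrivial normal-form argument in an amalgam or HNN extension), the reduction lemma, and above all the encoding step --- showing that every recursively enumerable set of words arises from finitely many ``machine moves'' realized as benign subgroups of free groups, which in Higman's paper occupies the bulk of the argument and requires building up the class of recursive functions inside group theory (or, in modern treatments, a careful Turing-machine-to-group translation with its own normal-form bookkeeping). You flag points (a) and (b) as the delicate ones, and that self-assessment is accurate; until they are carried out, the central implication ``recursively presented $\Rightarrow$ benign normal closure'' rests on assertion. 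So: right approach, correct skeleton, easy direction done, but the theorem's actual difficulty remains untouched.
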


\begin{definition}\label{SCfg} 
A finitely generated and recursively presented group $A$ is {\it simply connected at $\infty$} if for any finitely presented group $G$ and subgroup $A'$ isomorphic to $A$, the subgroup $A'$ is simply connected at $\infty$ in $G$. 
\end{definition}

\begin{remark} If the finitely generated group $A$ is not recursively presented, the following is a potential simple connectivity at $\infty$ definition for $A$ (it is not clear how well this definition meshes with the one for recursively presented $A$.):

%Suppose the finitely generated group $A$ acts by covering transformations on a locally finite simply connected CW-complex $K$. 
Let $\mathcal A$ be a finite generating set for $A$ with Cayley graph $\Gamma (A, \mathcal A)$. Suppose $A$ acts as covering transformations on a locally finite CW-complex $K$ and $m:\Gamma(A,\mathcal A)\to K$ is $A$-equivariant. Say $A$ is {\it simply connected at $\infty$ in $K$}  if  for each compact set $C$ in $K$ there is a compact set $D$ in $K$ such that for any loop $\alpha$ in $\Gamma(\mathcal A,A)-m^{-1}(D)$, the loop $m(\alpha)$ is homotopically trivial in $K-C$. 

(Such a complex always exists: If $\Gamma(\mathcal A,A)$ is ``cusped off" as in \cite{GMa08}, then the resulting locally finite 2-complex $K$, is in fact simply connected at $\infty$ and $A$ acts as covering transformations on $K$.)

Say $A$ is {\it simply connected at $\infty$} if whenever $A$ acts by covering transformations on a simply connected locally finite CW-complex $K$, then $A$ is simply connected at $\infty$ in $K$.
\end{remark}

Suppose that $A$ is a finitely presented group and $A$ satisfies the simply connected at $\infty$ condition of Definition \ref{defss2},
then $A$ satisfies Definition \ref{SCfg}. If $A$ is a finitely presented group satisfying Definition \ref{SCfg}, then if we let $A=G$ in Definition \ref{SCfg} we see $A$ satisfies Definition \ref{defss2}, and there is no ambiguity. 

\begin{lemma} [Lemma 2.9, \cite{M6}] \label{equiv} 
Suppose $A$ is a finitely generated subgroup of the finitely presented group $G$. Then $A$ is simply connected at $\infty$ in $G$ if and only if:

\noindent $(\dagger)$ For $\mathcal P$ an arbitrary finite presentation for $G$, $N\geq 0$ an integer and $C$ a compact subset of $\Gamma (\mathcal P)$, there is a compact set $D(C,N)\subset \Gamma (\mathcal P)$ such that if $\alpha$ is an edge path loop in $\Gamma(\mathcal P)-D$ and each vertex of $\alpha$ is within $N$ of some vertex  of $A(\subset \Gamma (\mathcal P))$ then $\alpha$ is homotopically trivial in $\Gamma-C$. 
\end{lemma}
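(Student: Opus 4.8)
The statement is an equivalence, so I will prove the two implications separately; the forward implication $(\dagger)\Rightarrow$ ``$A$ is simply connected at $\infty$ in $G$'' is routine and the converse carries all the content. For the forward direction, fix any finite presentation $\mathcal P_0=\langle \mathcal A,\mathcal B\mid R_0\rangle$ of the special form required in Definition \ref{SCin}, with $\mathcal A$ generating $A$ and $S_0=\mathcal A\cup\mathcal B$ generating $G$ (such a $\mathcal P_0$ exists because $G$ is finitely presented). Applying $(\dagger)$ to $\mathcal P_0$ with $N=0$ and a given compact $C$ yields a compact $D$ so that every edge path loop in $\Gamma(\mathcal P_0)-D$ all of whose vertices lie in $A$ is null-homotopic in $\Gamma(\mathcal P_0)-C$. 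Every loop in $\Gamma(A,\mathcal A)-D$ has its vertices in $A$, so this is exactly the condition of Definition \ref{SCin} for $\mathcal P_0$, and $A$ is simply connected at $\infty$ in $G$.

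For the converse, assume Definition \ref{SCin} holds, witnessed by a special presentation $\mathcal P_0=\langle \mathcal A,\mathcal B\mid R_0\rangle$ and the property $(\star)$: for every compact $C$ there is a compact $D$ killing, in $\Gamma(\mathcal P_0)-C$, every loop of $\Gamma(A,\mathcal A)-D$. I will first reduce an arbitrary presentation $\mathcal P$ to $\mathcal P_0$, then upgrade $(\star)$ to the $N$-version. For the change of presentation, Theorem \ref{P1E} provides a proper $2$-equivalence between the universal covers $\Gamma(\mathcal P)$ and $\Gamma(\mathcal P_0)$; lifting a cellular $2$-equivalence of the standard complexes, the maps $f:\Gamma(\mathcal P)\to\Gamma(\mathcal P_0)$ and $g:\Gamma(\mathcal P_0)\to\Gamma(\mathcal P)$ may be taken $G$-equivariant, hence (after matching base vertices) the identity on the common vertex set $G$, with each sending edges to edge paths of bounded length. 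Properness transports ``far out'' loops to ``far out'' loops, and the vertex identification sends a loop within $N$ of $A$ to one within $N+c$ of $A$; using that $gf$ is properly homotopic to the inclusion to return a killed loop to $\Gamma(\mathcal P)$, we conclude that $(\dagger)$ for $\mathcal P$ with parameter $N$ follows from $(\dagger)$ for $\mathcal P_0$ with parameter $N+c$. So it suffices to prove $(\dagger)$ for $\mathcal P_0$ and every $N$.

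Now let $\alpha$ be an edge path loop in $\Gamma(\mathcal P_0)$ with each vertex within $N$ of $A$, lying far out; I will homotope $\alpha$, by a homotopy that avoids a prescribed $C$, onto a far-out loop $\beta'\subset\Gamma(A,\mathcal A)$ and then invoke $(\star)$. For each vertex $v_i$ of $\alpha$ choose $a_i\in A$ and a geodesic $\gamma_i$ from $v_i$ to $a_i$ of length $\le N$. Consecutive anchors satisfy $|a_i^{-1}a_{i+1}|_{S_0}\le 2N+1$; since the ball of radius $2N+1$ in the locally finite graph $\Gamma(G,S_0)$ is finite, $a_i^{-1}a_{i+1}$ ranges over a finite subset of $A$, and I fix for each such element one $\mathcal A$-word of length $\le L(N)$, giving $\mathcal A$-paths $\mu_i$ from $a_i$ to $a_{i+1}$ and a loop $\beta'=\mu_0\cdots\mu_{k-1}\subset\Gamma(A,\mathcal A)$. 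Each quadrilateral $e_i\cdot\gamma_{i+1}\cdot\mu_i^{-1}\cdot\gamma_i^{-1}$ is a null-homotopic word in $G$ of length at most $2N+1+L(N)$, so it bounds a van Kampen diagram over $R_0$ of area, hence diameter, at most a constant $M(N)$ independent of $\alpha$. Thus $\beta'$ and all filling disks lie within $N+L(N)+M(N)$ of $\alpha$, so they are far out once $D$ is taken large; stacking the bounded fillings gives a homotopy from $\alpha$ to $\beta'$ inside $\Gamma(\mathcal P_0)-C$, and $(\star)$ (applied to $C$ enlarged by $M(N)$) kills $\beta'$.

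The main obstacle is precisely this last step: guaranteeing that the homotopy pushing $\alpha$ onto $\Gamma(A,\mathcal A)$ never re-enters $C$, despite the fact that $A$ may be arbitrarily distorted in $G$. The distortion is neutralized by the observation that one only ever connects anchor points $a_i,a_{i+1}$ that are already $S_0$-close; finiteness of balls in $\Gamma(G,S_0)$ then forces the connecting $\mathcal A$-words, and the van Kampen diagrams filling the bounded quadrilaterals, to have size bounded by a constant depending only on $N$, so every auxiliary construction stays within a fixed neighborhood of the far-out loop $\alpha$ rather than dipping inward toward $C$.
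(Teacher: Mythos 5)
The paper never proves this lemma; it is quoted verbatim from \cite{M6} (Lemma 2.9) with no in-text argument, so there is nothing internal to compare against. Judged on its own, your proof is correct, and it is the argument one would expect for this statement. The forward direction (apply $(\dagger)$ with $N=0$ to a presentation of the special form of Definition \ref{SCin}) is exactly right. In the converse, both of your main moves are sound: (i) the change-of-presentation step via Theorem \ref{P1E}, where the lifted maps $f,g$ can indeed be arranged to be $G$-equivariant, the identity on vertex sets, and to send edges to uniformly bounded edge paths, so that $f^{-1}(D_0)$, $g^{-1}(C)$ and the track of the proper homotopy $gf\simeq \mathrm{incl}$ supply the needed compact sets with the correct quantifier order; and (ii) the distortion-killing step, where $d_{S_0}(a_i,a_{i+1})\le 2N+1$ places $a_i^{-1}a_{i+1}$ in a finite ball, so one fixed $\mathcal A$-word per element gives the uniform bound $L(N)$, and one fixed filling per (finitely many) null-homotopic word of length $\le 2N+1+L(N)$, transported by the covering action, gives the uniform bound $M(N)$, keeping the entire ladder homotopy from $\alpha$ to $\beta'\subset\Gamma(A,\mathcal A)$ in a controlled neighborhood of $\alpha$. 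One phrasing to repair: ``area, hence diameter'' is not a valid inference in general (and is unnecessary) --- the diameter bound comes from the finiteness argument you already have, namely choosing one compact singular-disk filling for each of the finitely many bounded-length null-homotopic words and translating it equivariantly; state it that way. With that cosmetic fix, and making explicit that $D$ is chosen after $D'=D'(C)$ so as to contain the $(N+L(N))$-neighborhood of $D'$ and the $(N+L(N)+M(N))$-neighborhood of $C$, the proof is complete.
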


\begin{remark} \label{R1}
For spaces (and finitely presented groups), simple connectivity at $\infty$ is stronger than semistability at $\infty$. In fact  Theorem \ref{ssequiv} (parts 1 and 2) state that a space $K$ is semistable at $\infty$ if and only if the fundamental pro-group of $K$ is pro-isomorphic to an inverse sequence of groups with epimorphic bonding maps. The space $K$ is simply connected at $\infty$ if and only if the fundamental pro-group of $K$ is pro-trivial. 
It is not clear whether or not our definition of simple connectivity at $\infty$ for a finitely generated group implies the group is semistable at $\infty$. Interestingly, these two notions can be combined in effective ways (see Lemma \ref{ss+sc}). 

Lemma \ref{equiv} implies the following. Suppose the finitely generated group $A$ is simply connected at $\infty$ in the finitely presented group $G$, $\mathcal P$ is a finite presentation for $G$, and  $v_1,\ldots, v_n$ are vertices of $\Gamma (\mathcal P)$. Then for any compact $C\subset \Gamma (\mathcal P)$ and integer $N\geq 0$ there is a compact set $D(C,N,\{v_1,\ldots ,v_n\})$ such that any loop in $\Gamma (\mathcal P)-D$, each of whose vertices is within $N$ of $v_iA$ for some $i\in \{1,\ldots ,n\}$, is homotopically trivial in $\Gamma (\mathcal P)-C$. 
What is not guaranteed is a compact set $D(C,N)$ satisfying the following: For all $v\in G$ and any edge path loop $\alpha$ in $\Gamma (\mathcal P)-D$ with each vertex of $\alpha$  within $N$ of $vA$, the loop $\alpha$ is homotopically trivial in $\Gamma-C$. We do gain this enhanced condition (see Lemma \ref{ss+sc}) if $A$ is both simply connected at $\infty$ in $G$ and semistable at $\infty$.
\end{remark}

\begin{lemma} [(Lemma 2.10, \cite{M6}] \label{ss+sc} 
Suppose $A$ is a finitely generated semistable at $\infty$ group and $A$ is simply connected at $\infty $ in the finitely presented group $G$, then

\noindent $(\ddagger)$ For $\mathcal P$ an arbitrary finite presentation for $G$, $N\geq 0$ an integer and $C$ a compact subcomplex of $\Gamma (\mathcal P)$, there is a compact set $D(C,N)\subset \Gamma (\mathcal P)$ such that if $\alpha$ is an edge path loop in $\Gamma (\mathcal P)-D$, $v$ is an element of $G$, and each vertex of $\alpha$ is within $N$ of the coset $vA(\subset \Gamma (\mathcal P)$ then $\alpha$ is homotopically trivial in $\Gamma (\mathcal P)-C$. 
\end{lemma}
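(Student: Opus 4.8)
The plan is to exploit the fact that, after fixing a convenient presentation, each coset $vA$ carries a copy of the semistable complex $\Gamma_{(A,\mathcal A)}(P_A)$ sitting in a bounded neighborhood of $vA$, and that homotopies performed \emph{inside} such a copy can be controlled without reference to the global position of the coset. By Lemma \ref{equiv} the condition $(\ddagger)$ is independent of the chosen finite presentation, so I would first fix $\mathcal P=\langle \mathcal A\cup\mathcal B\mid R\rangle$ with $\mathcal A$ a generating set for $A$ and with $R$ containing a finite set $P_A$ of $\mathcal A$-relations for which $Y:=\Gamma_{(A,\mathcal A)}(P_A)$ is semistable at $\infty$ (possible by Definition \ref{defss3}). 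Fix a constant $k$ so that the copy $Y_v$ of $Y$ carried by the coset $vA$ (the image of $Y$ under the covering translation $v$) lies within $k$ of $vA$ in $\Gamma(\mathcal P)$, and let $C'$ be the closed $(N+k+1)$-neighborhood of $C$. Since a compact set contains only finitely many vertices, only finitely many cosets $v_1A,\dots,v_mA$ meet $C'$. The obstruction identified in Remark \ref{R1} is that $(\dagger)$, applied to the base coset, only kills loops near $A$ in $\Gamma(\mathcal P)-v^{-1}C$ once they lie outside a compact set $D^{*}(v^{-1}C,N)$ that depends on the translate $v^{-1}C$, hence on $v$; the whole point is that semistability of $A$ removes this dependence.

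A preliminary reduction, valid for every coset, is to push a loop onto its coset copy. If $D$ is taken to contain a sufficiently large neighborhood of $C$ (large enough to contain the filling disks of the elementary squares produced below, which are of bounded size because $G$ acts cocompactly on the simply connected $\Gamma(\mathcal P)$), then a loop $\alpha$ within $N$ of $vA$ and lying in $\Gamma(\mathcal P)-D$ is homotopic, through a homotopy in $\Gamma(\mathcal P)-C$, to an edge loop $\alpha_0$ in $Y_v$: one slides each vertex of $\alpha$ to a nearest vertex of $vA$ along a path of length $\le N$ and fills the resulting squares. This reduces $(\ddagger)$ to killing such $\alpha_0\subset Y_v$ in $\Gamma(\mathcal P)-C$. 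When $vA$ is one of the finitely many cosets $v_1A,\dots,v_mA$ meeting $C'$, I would simply invoke the finitely-many-cosets enhancement of $(\dagger)$ recorded in Remark \ref{R1}: it furnishes a single compact set $D_1=D(C,N,\{v_1,\dots,v_m\})$ killing every loop within $N$ of $v_1A\cup\cdots\cup v_mA$ and lying outside $D_1$. Enlarging $D$ to contain $D_1$ disposes of this case.

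The essential case is $vA\cap C'=\varnothing$. Here $d(vA,C)>N+k$, so $Y_v$ is disjoint from $C$; equivalently $v^{-1}C$ is disjoint from $Y$. Consequently any homotopy carried out inside $Y_v$ automatically avoids $C$, and (after translating by $v^{-1}$) any homotopy inside $Y$ avoids $v^{-1}C$. Since $A$ is infinite, I would first re-choose the coset representative: pick $v'\in vA$ with $d(v',\alpha_0)$ larger than the radius of the compact set $D_Y^{*}\subset Y$ supplied by semistability of $Y$ (Theorem \ref{ssequiv}(3), equivalently Theorem \ref{SSloop}, applied with empty ambient compact set). Then $v'^{-1}\alpha_0$ lies outside $D_Y^{*}$ in $Y$, while $v'A=vA$ still avoids $C'$, so $v'^{-1}C$ is still disjoint from $Y$. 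Now apply $(\dagger)$ to the base coset $A$ with compact set $v'^{-1}C$ to obtain $D^{*}(v'^{-1}C,k)$, and use semistability of $Y$ to slide $v'^{-1}\alpha_0$ along a proper ray $r$ of $Y$ issuing from its basepoint — a homotopy contained in $Y$, hence avoiding $v'^{-1}C$ — to a loop $\gamma$ lying outside $D^{*}(v'^{-1}C,k)\cap Y$, and therefore outside $D^{*}(v'^{-1}C,k)$ itself. By $(\dagger)$ the loop $\gamma$ is homotopically trivial in $\Gamma(\mathcal P)-v'^{-1}C$; tracing back the sliding homotopy and translating by $v'$ shows that $\alpha_0$, and hence $\alpha$, is homotopically trivial in $\Gamma(\mathcal P)-C$. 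The crucial gain is that the threshold $D_Y^{*}$ for the sliding is intrinsic to $Y$ and thus independent of $v$, while the $v$-dependent set $D^{*}(v'^{-1}C,k)$ is never a constraint on $D$, since sliding within $Y$ pushes $\gamma$ past any compact subset of $Y$. Hence $D:=D_1\cup(\text{large neighborhood of }C)$ works uniformly in $v$. I expect the main obstacle to be exactly this uniformity bookkeeping: verifying that confining the Case-II homotopies to $Y_v$ (so that $C$ is avoided for free) together with re-basing the coset representative (so that the loop is far out in $Y$ no matter where the coset sits) genuinely eliminates every dependence on $v$, and checking the cocompactness-based bounded-filling estimate used in the push-onto-coset reduction.
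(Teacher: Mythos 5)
The paper does not actually prove this lemma (it is imported wholesale from \cite{M6}), so your argument can only be measured on its own terms; judged that way, your overall architecture is the right one. The reduction of $(\ddagger)$ to loops lying on the coset copy $Y_v$ of $\Gamma_{(A,\mathcal A)}(P_A)$, the disposal of the finitely many cosets near $C$ via Remark \ref{R1}, and above all the key mechanism in the far-coset case --- $Y_v$ misses $C$ entirely, so one may re-choose the representative $v'\in vA$ far from $\alpha_0$, place $v'^{-1}\alpha_0$ beyond a threshold intrinsic to $Y$, slide it inside $Y$ past the $v'$-dependent compact set $D^{*}(v'^{-1}C,k)$, and only then invoke $(\dagger)$ --- is exactly how the two hypotheses are meant to cooperate. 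However, there is a genuine gap at the heart of the sliding step. Theorem \ref{ssequiv}(3) and Theorem \ref{SSloop} are per-ray statements: they supply, for each proper ray $r$, a compact set valid only for loops based at points of that ray. In your argument the ray is chosen \emph{after} the loop (``a proper ray of $Y$ issuing from its basepoint''), whereas $D_Y^{*}$ must be fixed \emph{before} $\alpha$, $v$ and $v'$ are known; as written no single compact set does what you ask, and this quantifier problem is not cosmetic --- it is the same uniformity failure (thresholds depending on a varying parameter) that the lemma itself is designed to defeat, and it bites precisely because the basepoint of $v'^{-1}\alpha_0$ ranges over all of $Y$ and $A$ need not be 1-ended. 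The repair is available in the paper: Theorem \ref{ssequiv}(4) with $C=\emptyset$ gives one compact $D_Y^{*}$, uniform over basepoints and ends, such that any two proper rays based at the same vertex, with image in $Y-D_Y^{*}$ and converging to the same end, are properly homotopic rel basepoint in $Y$. Apply this to $r$ and $\beta\cdot r$, where $\beta=v'^{-1}\alpha_0$ and $r$ is a proper ray at the basepoint of $\beta$ inside $Y-D_Y^{*}$ (enlarge $D_Y^{*}$ by its bounded complementary components, via Theorem \ref{ES}, so such a ray exists), and truncate the resulting proper homotopy $H$ at a time $T$ with $H([T,\infty)\times[0,1])\cap E=\emptyset$: this exhibits $\beta$ as freely homotopic in $Y$ to a loop outside $E$, which is the statement your sliding step actually needs, and a free homotopy suffices for the final appeal to $(\dagger)$.

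A second, lesser, gap sits in the ``push $\alpha$ onto $Y_v$'' reduction. The squares you propose to fill do not have uniformly bounded perimeter: their fourth side must be an $\mathcal A$-path in $Y_v$, and $A$ may be arbitrarily distorted in $G$, so raw cocompactness of the $G$-action gives nothing. The standard repair: the elements $y_i^{-1}y_{i+1}\in A$ joining consecutive approximating coset vertices have $G$-length at most $2N+1$ and hence lie in a finite subset of $A$; fix once and for all an $\mathcal A$-word and a null-homotopy for each of the finitely many resulting model loops, and transport these by left translation. The same finiteness also covers the degenerate case of finite $A$, which you exclude without comment when you write ``Since $A$ is infinite.'' Finally, your appeal to Lemma \ref{equiv} for presentation-independence is misdirected --- that lemma concerns $(\dagger)$, not $(\ddagger)$ --- and since $(\ddagger)$ is quantified over all presentations you do need the (routine, equivariant proper $2$-equivalence) argument that verifying it on your one convenient presentation suffices.
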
 

\newpage
\subsection{Semistability and Simple Connectivity at Infinity Results for Classes of Groups}\label{Gresults}

The theory of simple connectivity at $\infty$ preceded the semistability theory. In what follows, there are far more semistability results than simple connectivity at $\infty$ results, but several results have both a semistability and simple connectivity at $\infty$ component. First some historical prospective.

\subsubsection{Semistability, Simple Connectivity at $\infty$ and Manifolds} \label{CC}
To our knowledge, the first papers to consider the fundamental group at $\infty$ of a finitely presented group were those of F. E. A. Johnson \cite{Joh74}, \cite{Joh75} and R. Lee and F. Raymond \cite{LR75}. All of these papers were primarily concerned with ends of manifolds. 
One of the first general simple connectivity at $\infty$ results was due to Stallings. 
In 1962, J. Stallings defined what it means for a space to be $n$-connected at $\infty$, and proved the following:

\begin{theorem}  [\cite{St62}] \label{StallSC}  If $V^n$, $n\geq 5$, is a contractible PL $n$-manifold without boundary, then $V$ is PL-homeomorphic to $\mathbb R^n$ (with standard PL structure) if and only if $V$ is simply connected at $\infty$. 
\end{theorem}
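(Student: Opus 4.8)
The plan is to treat the two implications separately, the forward direction being routine and the converse carrying all of the substantive content, which I would organize around Stallings' engulfing machinery. For the easy direction, suppose $V$ is PL-homeomorphic to $\mathbb{R}^n$. Given a compact set $C$, choose a round ball $B$ with $C\subseteq B$ and let $D$ be a slightly larger concentric ball. Using the radial product structure $\mathbb{R}^n\setminus\{0\}\cong S^{n-1}\times(0,\infty)$, any loop $\alpha$ in $\mathbb{R}^n\setminus D$ can be pushed radially outward onto a large sphere without entering $B$; since $n\geq 5$ gives $n-1\geq 2$, we have $\pi_1(S^{n-1})=0$, so $\alpha$ bounds a disk on that sphere by a homotopy staying in $\mathbb{R}^n\setminus B\subseteq\mathbb{R}^n\setminus C$. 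Hence $V$ is simply connected at $\infty$. (This step only needs $n\geq 3$; the full hypothesis $n\geq 5$ is forced by the converse.)

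For the converse the strategy is to exhibit $V$ as a monotone union of PL $n$-balls. First I would record the end-connectivity consequences of the hypotheses: since $V$ is contractible, Poincar\'e--Lefschetz duality forces the homology of the end to agree with that of $S^{n-1}$, and combining the given triviality of $\pi_1$ at infinity with a Hurewicz argument at infinity shows that the complements $V\setminus C$ of large compacta may be taken simply connected. The central tool is Stallings' Engulfing Theorem from \cite{St62}: in a PL $n$-manifold, an open set $U$ for which $(V,U)$ is sufficiently highly connected can be ambient-isotoped to contain any prescribed compact polyhedron $P$ of codimension $\geq 3$. Because $V$ is contractible, the relative connectivity of $(V,U)$ reduces to absolute connectivity of $U=V\setminus C$, which the first step supplies, and the codimension-$3$ requirement is exactly what forces $n\geq 5$, since the skeleta and their duals to be engulfed have dimension $\leq n-3$.

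The construction then proceeds inductively. Triangulating $V$ and taking a regular neighborhood $N$ of a prescribed compactum, one engulfs the dual skeleton so that $V\setminus N$ collapses onto $\partial N$, exhibiting $N$ as a PL $n$-ball that swallows the compactum and carries a collar $\partial N\times[0,1)$. Iterating over an exhaustion $C_1\subseteq C_2\subseteq\cdots$ of $V$ yields nested PL $n$-balls $B_1\subset B_2\subset\cdots$ with $B_i\subset\mathrm{int}\,B_{i+1}$ and product regions $B_{i+1}\setminus\mathrm{int}\,B_i\cong S^{n-1}\times[0,1]$; here one-endedness (immediate from contractibility together with simple connectivity at $\infty$) guarantees that we are building a single collared tower rather than several. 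Finally, a monotone union of PL $n$-balls joined by such product collars is PL-homeomorphic to $\mathbb{R}^n$ by M. Brown's monotone-union theorem, which gives the conclusion.

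The main obstacle is the engulfing step together with the bookkeeping needed to verify its connectivity hypotheses: one must arrange the polyhedra to be engulfed (a skeleton absorbed from one side, its dual from the other) so that a single application both swallows the compactum and liberates a collar, and it is precisely here that codimension $\geq 3$, hence $n\geq 5$, is indispensable. This dimension restriction is not an artifact of the method. The Whitehead contractible open $3$-manifold is a genuinely non-simply-connected-at-infinity contractible manifold not homeomorphic to $\mathbb{R}^3$, so the necessity of the simple-connectivity-at-$\infty$ hypothesis and the breakdown of engulfing in low codimension both reflect real geometric phenomena.
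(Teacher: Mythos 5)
First, a point of reference: the paper does not prove this statement at all; it quotes it from Stallings \cite{St62} as part of a historical survey, so your proposal can only be measured against Stallings' own engulfing proof. Your outline reproduces that proof's architecture correctly: the easy radial argument for the forward direction (which is fine as written), and for the converse, engulfing plus an exhaustion by open PL cells plus a monotone-union theorem.

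The gap is in the step where you verify the hypothesis of the engulfing theorem. You assert that contractibility, duality, and ``a Hurewicz argument at infinity'' allow the complements $V\setminus C$ of large compacta to be taken simply connected, and you then feed this into engulfing via the reduction of the relative connectivity of $(V,U)$ to the absolute connectivity of $U$. But simple connectivity at infinity is a statement about bonding maps: for each compact $C$ there is a compact $D$ such that $\pi_1(V-D)\to\pi_1(V-C)$ is trivial. Pro-triviality of this inverse system does not yield triviality of any cofinal subfamily of the groups themselves --- the inverse sequence $\mathbb Z\leftarrow\mathbb Z\leftarrow\cdots$ with all bonding maps zero is pro-trivial with no trivial terms --- and no Hurewicz argument converts one into the other, since Hurewicz compares $\pi_1$ with $H_1$, not a group with its image. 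Even in $\mathbb R^n$ itself ($n\geq 5$) complements of compacta need not be simply connected (consider the complement of a regular neighborhood of a knotted codimension-two sphere); what is true there is that round-ball complements form a cofinal simply connected family, and the existence of such a family in a general contractible $V$ is essentially equivalent to the conclusion of the theorem, so assuming it is close to circular. Since, as you correctly note, contractibility gives $\pi_2(V,U)\cong\pi_1(U)$, the pair-connectivity you need is exactly $\pi_1(U)=0$, which is precisely what the hypothesis does not give you. Stallings' argument never manufactures simply connected neighborhoods of infinity; the engulfing induction is run in relative form, pushing the open set across one simplex at a time and using the given condition --- loops in $V-D$ die in $V-C$ --- while sacrificing a controlled compact set at each stage (one engulfs a codimension-three skeleton into an open cell, the dual $2$-skeleton toward infinity, and then meshes regular neighborhoods; this is also somewhat different from the collapsing picture in your middle paragraph). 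A lesser issue: Brown's monotone-union theorem is a topological statement, while the theorem asserts a PL homeomorphism; one needs its PL analogue (every compact subset of an open PL $n$-cell lies in a PL $n$-ball, together with uniqueness of regular neighborhoods), which is standard and is handled in Stallings' paper, but it should be invoked in place of Brown.
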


The corresponding result for differentiable manifolds follows from work of J. Munkres \cite{Mun60}.

For $n\geq 3$,  $\mathbb R^n$ is simply connected at $\infty$. 
Stallings result can be extended to dimension 3 (following the work of Perelman) by L. Husch and T. Price (see \cite{HP70} and the addendum \cite{HP71}). For $\mathbb R^3$, DIFF, PL and TOP are the same. In dimension 4, there are exotic differential and PL structures on $\mathbb R^4$ (see S. Donaldson's paper \cite{Don83}). E. Luft's paper \cite{Luf67}, contains a topological version of Stallings theorem (in dimensions $n\geq 5$ and with an asymptotic hypothesis that is a bit stronger than simply connected at $\infty$). L. Siebenmann  improved Luft's result: 

\begin{theorem} [Theorem 1.1, \cite{Sie68}] 
Suppose $M^n$ is a contractible metrizable topological $n$-manifold without boundary, which is simply connected at $\infty$. If $n\geq 5$, then $M^n$ is homeomorphic to euclidean $n$-space $\mathbb R^n$.
\end{theorem}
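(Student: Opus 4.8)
The plan is to realize $M^n$ as the interior of a compact manifold with boundary and then recognize that boundary, hence the whole manifold, as a disk. First I would analyze the single end of $M$. Since $M$ is contractible and open, Poincar\'e--Lefschetz duality gives $H^1_c(M)\cong H_{n-1}(M)=0$ and $H^0_c(M)\cong H_n(M)=0$, and feeding this into the long exact sequence relating $H^*_c(M)$, $H^*(M)$ and the \v Cech cohomology of the end shows that $M$ has exactly one end for $n\geq 2$. The hypothesis that $M$ is simply connected at $\infty$ says precisely that the fundamental pro-group of this end is pro-trivial; in particular it is stable with trivial $\pi_1(\varepsilon)$. The remaining ingredient needed to invoke Siebenmann's end theory is \emph{tameness} of the end (finitely dominated neighborhoods of $\infty$), which follows from contractibility together with simple connectivity at $\infty$.

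Next I would apply Siebenmann's collaring (end) theorem. Its single obstruction to producing a collar $\partial W\times[0,\infty)$ on the end lives in $\tilde K_0(\mathbb Z[\pi_1(\varepsilon)])$; because $\pi_1(\varepsilon)$ is trivial this group is $\tilde K_0(\mathbb Z)=0$, so the obstruction vanishes automatically. The theorem then furnishes a compact manifold-with-boundary $W^n$ together with a homeomorphism $\operatorname{int}(W)\cong M$, where a neighborhood of the end is identified with $\partial W\times[0,\infty)$.

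With $W$ in hand I would identify its boundary. Since $W$ deformation retracts onto $M$ it is contractible, and since the end is simply connected $\partial W$ is simply connected; Poincar\'e--Lefschetz duality for the pair $(W,\partial W)$ combined with $H_*(W)\cong H_*(\mathrm{pt})$ forces $H_*(\partial W)\cong H_*(S^{n-1})$, so $\partial W$ is a homotopy $(n-1)$-sphere. Removing a small open disk from $\operatorname{int}(W)$ produces a cobordism from $\partial W$ to $S^{n-1}$ in which both inclusions are homotopy equivalences, i.e.\ an $h$-cobordism. By the topological $h$-cobordism theorem (valid for $n\geq 6$) this cobordism is a product, whence $W\cong D^n$ and therefore $M\cong\operatorname{int}(D^n)=\mathbb R^n$.

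The genuine obstacle is not the duality bookkeeping above but the deep input it rests on. Proving the collaring theorem requires Siebenmann's engulfing and handle-trading arguments, and carrying the entire scheme out in the TOP category (rather than PL or DIFF) depends on the Kirby--Siebenmann handle-straightening and topological $h$-cobordism machinery. The boundary dimension $n=5$ is the most delicate point, since both the end theorem and the $h$-cobordism theorem are cleanest for $n\geq 6$; handling $n=5$ needs an extra argument exploiting contractibility (or, in modern terms, $4$-dimensional topological surgery). I would therefore treat the collaring theorem and the $n=5$ case as the crux, with everything downstream being formal.
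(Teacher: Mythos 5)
This theorem is one the paper cites (as Theorem 1.1 of Siebenmann's \emph{On detecting open collars}) rather than proves, so there is no in-paper argument to compare against; your proposal has to be judged on its own terms and against the historical record. On its own terms it is a correct outline of the \emph{modern} route: one-endedness via duality, tameness of the end, vanishing of the $\tilde K_0(\mathbb Z[\pi_1(\varepsilon)])=\tilde K_0(\mathbb Z)=0$ obstruction in Siebenmann's end theorem to produce a compact $W$ with $\operatorname{int}(W)\cong M$, then Poincar\'e--Lefschetz duality plus the topological $h$-cobordism theorem to identify $W$ with $D^n$. You also correctly locate the genuinely deep inputs (TOP handle theory, the $n=5$ case). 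Two caveats are worth recording. First, the step you dispose of in one clause --- that tameness ``follows from contractibility together with simple connectivity at $\infty$'' --- is a real lemma, not a formality: one needs that clean neighborhoods $V$ of infinity have finitely generated homology (this does follow from $H_k(V)$ being a quotient of $H_{k+1}(M,V)\cong H^{n-k-1}(C)$ for a compact core $C$, by contractibility and duality) and then that pro-trivial $\pi_1$ upgrades this to finite domination; this is where Siebenmann's thesis does actual work. Second, your route is anachronistic relative to the cited 1968 paper: the end-theorem-plus-$h$-cobordism scheme in the TOP category needs Kirby--Siebenmann handle theory (1969--70), and your treatment of $n=5$ needs Freedman--Quinn four-dimensional technology (1980s). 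Siebenmann's actual proof, like Stallings' PL argument and Luft's topological version it improves, is engulfing-based and handles $n=5$ on the same footing as $n\geq 6$, without ever compactifying $M$. So: the proposal is sound as a proof-with-black-boxes by today's standards, but it is a genuinely different (and historically later) route than the one behind the theorem as cited, and the engulfing approach is what buys the clean $n\geq 5$ statement.
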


The topological version for $\mathbb R^4$ is due to M. Freedman (see Corollary1.2 of \cite{Fre82}). 
These results make it clear that there is great value in understanding when the fundamental group of a closed manifold group is simply connected at $\infty$.  

 M. Davis \cite{Davis83} used right angled Coxeter groups to produce closed manifolds with infinite fundamental group in dimensions $n\geq 4$ whose universal covers were contractible but not simply connected at $\infty$ (and hence not homeomorphic to $\mathbb R^n$).

Before Perelman's famous work, one of the major pieces to Thurston's Geometrization Conjecture was:
 
 \medskip
 
\noindent {\bf Covering Conjecture for 3-Manifolds}: All closed irreducible 3-manifolds with infinite fundamental group are covered by $\mathbb R^3$.
 
\medskip

Semistability at $\infty$ tends to be significantly weaker than simple connectivity at $\infty$. But not for 3-manifolds. M. Brinn and T. Thickstun made a remarkable connection between semistability and the Covering Conjecture.

\begin{theorem} [See Theorem B, \cite{BT87}] and the discussion that follows.) 
A Closed irreducible 3-manifold with infinite fundamental group $G$ is covered by $\mathbb R^3$ if and only $G$ is semistable at $\infty$
\end{theorem}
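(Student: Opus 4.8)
The plan is to split the biconditional into its two implications after recording some standard topological reductions, and to carry the substantive direction on the back of Brin--Thickstun's Theorem B. First I would fix the geometry of the situation. Since $M$ is closed and irreducible with $G=\pi_1(M)$ infinite, the Sphere Theorem gives $\pi_2(\tilde M)=\pi_2(M)=0$; as $\tilde M$ is a non-compact simply connected $3$-manifold it has the homology of a point, so Hurewicz forces $\tilde M$ to be contractible. Thus $M$ is a finite $K(G,1)$ and $\tilde M$ is a contractible open $3$-manifold on which $G$ acts freely and cocompactly by deck transformations. Moreover $G$ is $1$-ended: $G$ is torsion free (fundamental group of an aspherical manifold), so a $2$-ended $G$ would be $\mathbb{Z}$ and an infinitely-ended $G$ would split as a nontrivial free product by Theorem \ref{Stall}; either way $M$ would contain an essential $2$-sphere, contradicting irreducibility. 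Finally, because $M$ is a finite complex with fundamental group the $1$-ended finitely presented group $G$, Definition \ref{defss2} lets me pass freely between ``$G$ is semistable at $\infty$'' and ``$\tilde M$ has semistable fundamental group at $\infty$.''

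For the forward implication I would assume $\tilde M\cong\mathbb{R}^3$. Since $\mathbb{R}^n$ is simply connected at $\infty$ for $n\geq 3$ (as noted following Theorem \ref{StallSC}), $\tilde M$ is simply connected at $\infty$, hence semistable at $\infty$ by Theorem \ref{SCtoSS}, and therefore $G$ is semistable at $\infty$ by the reduction above. This direction is purely formal and uses no $3$-manifold machinery.

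For the reverse implication I would assume $G$, equivalently $\tilde M$, is semistable at $\infty$. Here $\tilde M$ is a $1$-ended contractible open $3$-manifold carrying a free cocompact $G$-action, and the cocompactness is exactly what distinguishes this from Whitehead-type contractible open $3$-manifolds, which are semistable at $\infty$ yet not homeomorphic to $\mathbb{R}^3$ and which admit no cocompact covering action. I would then invoke Theorem B of \cite{BT87}, whose content in this cocompact covering setting is precisely that semistability of the end upgrades to \emph{simple connectivity} at $\infty$ for $\tilde M$. With $\tilde M$ simply connected at $\infty$, the $3$-dimensional analogue of Stallings' characterization of Euclidean space, due to Husch and Price \cite{HP70}, \cite{HP71} and rendered unconditional by Perelman's solution of the Poincar\'e Conjecture, yields $\tilde M\cong\mathbb{R}^3$, completing the equivalence.

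The main obstacle is the single step promoting semistability to simple connectivity at $\infty$. In dimensions $\geq 5$ semistability is strictly weaker than simple connectivity at $\infty$, so this step cannot be formal and must be genuinely $3$-dimensional: it rests on the structure theory of ends of open $3$-manifolds (incompressibility, exhaustions by compact submanifolds) together with the homogeneity furnished by the cocompact $G$-action, which propagates the pro-epimorphism (Mittag--Leffler) property of the fundamental pro-group across the end and, in the $3$-manifold setting, forces the bonding maps to be pro-trivial rather than merely pro-epic. Controlling this interaction between the $\varprojlim^1$ behaviour of the end and the topology of incompressible surfaces is exactly the heart of Brin--Thickstun's theorem, and it is the step I would expect to be hardest to reproduce from first principles.
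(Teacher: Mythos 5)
Your proposal is sound and, at bottom, takes the same route as the paper: the paper offers no independent argument for this statement, deferring entirely to Theorem B of \cite{BT87} (and the discussion following it) for the substantive direction, exactly as your reverse implication does. Your added scaffolding is correct — asphericity of $M$ via the Sphere Theorem and Hurewicz, 1-endedness of $G$ via torsion-freeness together with Theorem \ref{Stall} and Kneser, the purely formal forward direction, and the Husch--Price/Perelman finish — with the one caveat that Brin--Thickstun's theorem in the covering setting concludes $\tilde M\cong\mathbb R^3$ outright, so your factorization of their result through ``semistability upgrades to simple connectivity at $\infty$'' is a plausible reconstruction of how their proof proceeds rather than a quotation of what Theorem B states.
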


The Covering Conjecture was shown to be true by the work of Perlman and the work that followed.

\begin{definition}\label{InTame}
A connected locally compact ANR $X$ is {\it inward tame} if for any compact set $C$ there is a compact set $D$ and homotopy $H:(X-D)\times [0,1]\to X$ such that $H(x,t) \in X-C$, $H(x,0)=x$ and $H(x,1)\in D-C$ for all $x\in X-D$ and all $t\in [0,1]$.
\end{definition}

Suppose $M^n$ and $N^n$  are closed (compact without boundary) orientable $n$-manifolds. Recall that $H_n(M^n)=\mathbb Z$. A continuous function $f:M^n\to N^n$ is {\it degree} $k$ if it induces a homomorphism on $H_n(M^n)$ that takes $1$ to $k$. C. Guilbault and F. Tinsley connect inward tame manifolds and semistability. 

\begin{theorem} [See \cite{GuTi2}] \label{Degree1}  
Suppose $M$ is an inward tame non-compact open $n$-manifold with 1-end then $M$ has semistable fundamental group at $\infty$. 
\end{theorem}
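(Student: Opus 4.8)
The plan is to reduce the statement to the Mittag--Leffler (pro-epimorphic) property of the fundamental pro-group at infinity and then extract that property from inward tameness. By Theorem \ref{ssequiv} (equivalence of parts 1 and 2) together with Theorem \ref{MLSS}, $M$ is semistable at infinity if and only if, for some (equivalently any) proper ray $\omega$ into the unique end, the fundamental pro-group $\pi_1(\varepsilon M,\omega)$ is semistable, i.e.\ pro-isomorphic to an inverse sequence with epimorphic bonding maps. So it suffices to produce a cofinal inverse system of fundamental groups of neighborhoods of infinity that is pro-epimorphic.

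First I would set up the neighborhoods of infinity. Since $M$ is a one-ended open $n$-manifold, Theorem \ref{ES} lets me choose a cofinal sequence of compact sets $C_0\subset C_1\subset\cdots$ such that each complement $N_i:=M\setminus C_i$ is a connected neighborhood of infinity; using the manifold structure I may take each $C_i$ to be a compact codimension-$0$ submanifold with bicollared boundary, so each $N_i$ is a clean neighborhood of infinity. Fixing basepoints $\omega(t_i)\in N_i$ along $\omega$ and writing $G_i=\pi_1(N_i,\omega(t_i))$, the inclusions $N_{i+1}\hookrightarrow N_i$ induce the bonding homomorphisms of $\pi_1(\varepsilon M,\omega)$.

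Next I would feed inward tameness into this system. Applying Definition \ref{InTame} with $C=C_i$ gives a larger compact set, which by cofinality I may assume is $C_{i+1}$, together with a homotopy $H_i\colon N_{i+1}\times[0,1]\to N_i$ with $H_i(\cdot,0)$ the inclusion and $H_i(\cdot,1)$ mapping $N_{i+1}$ into the compact collar $C_{i+1}\setminus C_i\subset N_i$, the whole homotopy staying in $N_i$. Two consequences follow. First, each clean neighborhood of infinity is finitely dominated (the composite $N_{i+1}\to C_{i+1}\setminus C_i\hookrightarrow N_i$ is homotopic in $N_i$ to the inclusion, the standard domination criterion), so each $G_i$ is finitely presented. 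Second, tracking the basepoint along the track $t\mapsto H_i(\omega(t_{i+1}),t)$ shows that the image of the bonding map $G_{i+1}\to G_i$ lies in the image of $\pi_1$ of the compact set $C_{i+1}\setminus C_i$, hence is finitely generated. Moreover, iterating the maps $H_i$ produces a homotopy idempotent representing the end.

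The main obstacle is to upgrade finite generation of the bonding images to genuine pro-epimorphism, i.e.\ to show that the decreasing subgroups $\operatorname{im}(G_k\to G_i)$ stabilize as $k\to\infty$. Here lies the real tension: inward tameness deforms neighborhoods of infinity \emph{inward}, whereas semistability asks that loops can be pushed \emph{outward} up to homotopy (the geometric content of Theorem \ref{ssequiv}), and reconciling the two is exactly where finite dimension and the manifold hypothesis enter. The route I would take is the algebraic one of Guilbault--Tinsley: use the finite domination from the first consequence together with the homotopy idempotent from the iterated $H_i$ to split the tower $\{G_i\}$ and to replace it, up to pro-isomorphism, by an inverse sequence of finitely presented groups whose bonding maps are surjective (indeed surjective with perfect kernels, i.e.\ ``perfectly semistable''). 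By Theorem \ref{MLSS} such a sequence is semistable, and since semistability of the fundamental pro-group is a pro-isomorphism invariant, $\pi_1(\varepsilon M,\omega)$ is semistable; by Theorem \ref{ssequiv} this is precisely the assertion that $M$ has semistable fundamental group at infinity. I expect this final promotion step --- converting the ``finitely-generated-image plus homotopy-idempotent'' data into an epimorphic bonding tower --- to be the crux, and the place where Wall-type finiteness obstruction theory and the finite dimensionality of $M$ do the essential work.
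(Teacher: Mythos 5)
Your reduction to the Mittag--Leffler property and your extraction of the two standard consequences of inward tameness (each clean neighborhood of infinity is finitely dominated, hence has finitely presented fundamental group; the bonding images $\operatorname{im}(G_{i+1}\to G_i)$ are finitely generated) are fine, but the argument stops exactly where the theorem begins. The step you yourself label the ``crux'' --- promoting finitely generated images to a pro-epimorphic tower --- is never carried out; you defer it to ``the algebraic one of Guilbault--Tinsley'' via homotopy idempotents and Wall-type finiteness obstruction theory. That mechanism cannot suffice, because everything you have used to that point is available for non-manifolds, and inward tameness alone does \emph{not} imply semistability. Concretely, the inverse mapping telescope of the sequence $S^1 \xleftarrow{z^2} S^1 \xleftarrow{z^2} \cdots$ is a one-ended, two-dimensional, locally finite complex that is inward tame in the sense of Definition \ref{InTame}: each closed sub-telescope is aspherical with $\pi_1\cong\mathbb Z$, so the inclusion of its boundary circle is a homotopy equivalence and the identity can be homotoped, within that sub-telescope, onto the boundary circle. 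Yet its fundamental pro-group at infinity is $\mathbb Z \xleftarrow{\times 2} \mathbb Z \xleftarrow{\times 2}\cdots$, whose images strictly decrease, so it is not semistable. This example enjoys every feature your outline invokes --- finite domination, finitely generated (indeed cyclic) bonding images, finite dimensionality, vanishing Wall obstructions, a homotopy idempotent at infinity --- so no argument built only from those ingredients can prove the theorem. The manifold structure must enter in an essential way, and your proposal uses it only to arrange bicollared boundaries.

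The missing ingredient is the one the paper flags immediately after the statement: Hopf's surjectivity theorem (Theorem \ref{HThm}), that a degree $\pm 1$ map between closed $n$-manifolds induces a surjection on $\pi_1$. In the Guilbault--Tinsley argument the boundaries of clean neighborhoods of infinity are closed $(n-1)$-manifolds, and the inward-tameness homotopies are used to manufacture degree-one maps between such closed manifolds; Hopf's theorem then converts degree data into genuine $\pi_1$-surjections, and it is these surjections --- not finite generation of images --- that make the pro-$\pi_1$ tower Mittag--Leffler. (Wall's finiteness obstruction enters that theory elsewhere, in the pseudo-collarability question, not in the semistability statement.) Until you supply this degree-theoretic step, or some substitute that genuinely uses Poincar\'e duality and the manifold hypothesis, your proposal is an accurate description of the setup but not a proof.
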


\begin{corollary} \label{Deg1}
Suppose $M$ is a closed manifold with inward tame and 1-ended universal cover. Then $\pi_1(M)$ has semistable fundamental group at $\infty$. 
\end{corollary}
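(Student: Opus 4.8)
The plan is to obtain this as an essentially immediate consequence of Theorem \ref{Degree1}, once the hypotheses are correctly translated back and forth between the group and its universal cover. Write $G=\pi_1(M)$ and let $\tilde M$ denote the universal cover, equipped with the deck action of $G$ by covering transformations. Since $M$ is a closed manifold it is compact and without boundary, so $G$ is finitely presented and $\tilde M$ is itself a manifold without boundary.

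First I would record that $\tilde M$ must be non-compact: a compact space has no ends, whereas $\tilde M$ is assumed $1$-ended. Hence the covering $\tilde M\to M$ is infinite-sheeted and $G$ is infinite. Thus $\tilde M$ is exactly a non-compact open $n$-manifold which, by hypothesis, is $1$-ended and inward tame, and these are precisely the hypotheses of Theorem \ref{Degree1}. Applying that theorem to $\tilde M$ gives that $\tilde M$ has semistable fundamental group at $\infty$.

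It then remains to convert this statement about the space $\tilde M$ into the corresponding statement about the group $G$ codified in Definition \ref{defss2}. For this I would first note that $G$ is $1$-ended: the number of ends of $G$ agrees with the number of ends of the universal cover of any finite complex realizing $G$ (the end-count theorem comparing $\tilde K$ with $\pi_1(K)$), and $\tilde M$ is $1$-ended. Realizing $G$ by a finite CW-structure on $M$, the space $\tilde M$ plays the role of the universal cover $\tilde X$ in Definition \ref{defss2}; since $\tilde M$ has semistable fundamental group at $\infty$ and $G$ is a $1$-ended finitely presented group, Definition \ref{defss2} (whose independence of the chosen finite complex is supplied by Corollary \ref{pro2}) yields that $G$ is semistable at $\infty$.

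The only point requiring genuine care, and the one I expect to be the main (if minor) obstacle, is the passage from the manifold $M$ to a finite CW-complex as demanded by Definition \ref{defss2}. One must either invoke that a closed manifold admits a finite CW-structure, so that $\tilde M$ literally is the $\tilde X$ of the definition, or else use the proper $2$-equivalence invariance of semistability (Theorem \ref{2-equiv}) to transport semistability from $\tilde M$ to the universal cover of a finite CW-complex homotopy equivalent to $M$. After that step, the argument is a direct unwinding of the definitions, with Theorem \ref{Degree1} providing all the substance.
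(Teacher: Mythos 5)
Your proposal is correct and matches the paper's (implicit) argument: the paper states this as an immediate consequence of Theorem \ref{Degree1}, applied to the universal cover $\tilde M$, which is a non-compact open manifold that is inward tame and $1$-ended by hypothesis. Your additional care about realizing $M$ as (or comparing it to) a finite CW-complex via Corollary \ref{pro2} or Theorem \ref{2-equiv} is exactly the right way to make the translation from the space $\tilde M$ to the group $\pi_1(M)$ rigorous, and introduces nothing beyond what the paper's definitions already supply.
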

A key principle underlying the proof of Theorem \ref{Degree1} is Hopf's surjectivity Theorem:

\begin{theorem} \label{HThm} 
If $M$ and $N$ are closed $n$-manifolds and $f:M\to N$ induces a degree $k\ne 0$ homomorphism on $n^{th}$ homology, then $f_\ast(\pi_1(M))$ has finite index $m$ in $\pi_1(N)$ and $m$ evenly divides $|k|$. In particular, if the degree of $f$ is $\pm 1$ then $f$ induces a surjection on $\pi_1$. 
\end{theorem}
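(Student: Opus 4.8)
The plan is to use covering space theory to convert the homological hypothesis into a statement about the index $m = [\pi_1(N):H]$, where $H = f_\ast(\pi_1(M))$ (all manifolds here being closed, connected and orientable, as in the setup preceding the statement, so that $H_n(M)\cong H_n(N)\cong \mathbb Z$ and the integer degree $k$ makes sense). First I would fix basepoints $x_0\in M$ and $y_0 = f(x_0)\in N$, and let $p:(\tilde N,\tilde y_0)\to (N,y_0)$ be the connected covering space corresponding to the subgroup $H\leq \pi_1(N,y_0)$. By construction $p_\ast(\pi_1(\tilde N,\tilde y_0)) = H$ and the number of sheets of $p$ equals $m$. Since $f_\ast(\pi_1(M,x_0)) = H\subseteq p_\ast(\pi_1(\tilde N,\tilde y_0))$ and $M$ is connected and locally path connected, the lifting criterion provides a map $\tilde f:M\to \tilde N$ with $p\circ \tilde f = f$ and $\tilde f(x_0)=\tilde y_0$.

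The crux is to show $m<\infty$. Here I would invoke the standard computation of top homology of manifolds: a connected $n$-manifold without boundary has $H_n(\,\cdot\,;\mathbb Z)\cong \mathbb Z$ when it is closed and orientable, but $H_n(\,\cdot\,;\mathbb Z)=0$ when it is non-compact. If $m$ were infinite, then $\tilde N$, being an infinite-sheeted cover of the closed manifold $N$, would be non-compact, so $H_n(\tilde N)=0$. But then the factorization $f_\ast = p_\ast\circ \tilde f_\ast$ on $H_n$ forces $f_\ast = 0$ on $H_n(M)$, contradicting that $f_\ast$ sends the fundamental class to $k\neq 0$. Hence $m$ is finite and $\tilde N$ is a closed manifold; moreover $\tilde N$ inherits an orientation by pulling back the orientation of $N$ (orientability always lifts to covers), so $H_n(\tilde N)\cong \mathbb Z$.

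With finiteness in hand, the divisibility follows from multiplicativity of degree. Orienting $\tilde N$ so that $p$ is locally orientation preserving, the $m$-sheeted covering $p$ has degree $m$, i.e. $p_\ast[\tilde N] = m[N]$. Writing $\tilde f_\ast[M] = d\,[\tilde N]$ for the integer degree $d$ of the lift (possible since $H_n(\tilde N)\cong \mathbb Z$), I would compute
$$k\,[N] = f_\ast[M] = p_\ast\tilde f_\ast[M] = d\,p_\ast[\tilde N] = dm\,[N],$$
so $k = dm$, and therefore $m$ divides $|k|$. Specializing to $k=\pm 1$ forces $m=1$, i.e. $H = \pi_1(N)$, which is exactly the asserted surjectivity of $f_\ast$ on fundamental groups.

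I expect the main obstacle to be the finiteness step: one must apply correctly both the vanishing of $H_n$ for a non-compact connected manifold and the observation that an infinite-sheeted connected cover of a closed manifold is non-compact, while simultaneously keeping track of basepoints and orientations so that the identity $f_\ast = p_\ast\circ \tilde f_\ast$ is genuinely valid on integral top homology. The remaining degree bookkeeping is routine once these points are secured.
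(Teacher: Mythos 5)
Your proposal is correct and follows essentially the same route as the paper's proof: pass to the cover $\bar N$ of $N$ corresponding to $f_\ast(\pi_1(M))$, lift $f$, rule out infinite index via the vanishing of top homology of a non-compact manifold, and get the divisibility from multiplicativity of degree under the factorization $f = p\circ\bar f$. Your write-up merely supplies the details (basepoints, orientations, and the identity $k = dm$) that the paper leaves implicit.
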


\begin{proof}
Let $(\bar N, p)$ be the cover of $N$ corresponding to the subgroup $f_\ast(\pi_1(M))$ of $\pi_1(N)$. The map $f$ lifts to $\bar f:M\to  \bar N$ (so that $p\bar f=f$). The index of $f_\ast(\pi_1(M))$ in $\pi_1(N)$ is not infinite since otherwise, the $n^{th}$ homology of $\bar N$ is trivial and so the rank of $\bar f$ would be $0$. But the rank of $f$ is the product of the rank of $\bar f$ and the rank of $p$.
\end{proof}

A manifold $N^n$  with compact boundary is called a {\it homotopy collar} if the inclusion $\partial N^n\to N^n$ 
is a homotopy equivalence. If $N^n$ contains arbitrarily small homotopy collar
neighborhoods of infinity, we call $N^n$ a {\it pseudocollar}. In \cite{GuTi}, Guilbault and Tinsley characterize ``near pseudocollarable" manifolds of dimension $n\geq 6$ (Theorem 1.3) in terms of inward tameness and semistable fundamental group at $\infty$. Recall that a group is {\it perfect} if when abelianized, it becomes trivial. 

\begin{theorem} [Theorem 1.2, \cite{GuTi}] Let $M^n$ be an inward tame $n$-manifold with compact boundary. Then
$M^n$ has an $AP$-semistable (almost perfectly semistable) fundamental group at each
of its finitely many ends.
\end{theorem}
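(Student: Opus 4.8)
The plan is to split the conclusion into its two halves—finiteness of the ends and $AP$-semistability at each end—and to extract both from the single hypothesis of inward tameness together with the underlying manifold structure. Throughout I would work with \emph{clean} neighborhoods of infinity: since $M^n$ is a manifold with compact boundary, every compact set sits inside a codimension-$0$ submanifold $N$ with bicollared frontier and compact complement, and these are cofinal among neighborhoods of infinity.

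First I would prove that $M^n$ has only finitely many ends. The key observation is that inward tameness (Definition~\ref{InTame}) says exactly that each clean neighborhood of infinity $N$ is finitely dominated: restricting the homotopy $H$ supplied by the definition shows that $N$ is dominated, up to homotopy, by the compact set $D$, which in turn is dominated by a finite complex. A finitely dominated space has finitely generated integral homology in every degree; in particular $H_0(N)$ is finitely generated, so each $N$ has only finitely many components. Running this over a cofinal sequence of clean neighborhoods of infinity and taking the inverse limit of the resulting finite sets of components shows that the space of ends is finite.

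Next, fix an end $\varepsilon$ and choose a cofinal nested sequence $N_1\supseteq N_2\supseteq\cdots$ of connected clean neighborhoods of infinity determining $\varepsilon$, together with a base ray $r$ limiting to $\varepsilon$. The fundamental pro-group at $\varepsilon$ is the inverse sequence $\pi_1(N_1,r)\xleftarrow{\lambda_1}\pi_1(N_2,r)\xleftarrow{\lambda_2}\cdots$ with bonding maps induced by inclusion. To obtain ordinary semistability (pro-epimorphic, hence semistable by Theorems~\ref{MLSS} and~\ref{ssequiv}) I would reuse the mechanism behind Theorem~\ref{Degree1}: inward tameness furnishes, for each $i$, a homotopy pushing a far-out neighborhood back into $N_i$ while remaining in a slightly larger neighborhood, and Poincar\'e--Lefschetz duality combined with Hopf's surjectivity principle (Theorem~\ref{HThm}) forces the inclusion-induced maps on $\pi_1$ to be surjective after passing to a cofinal subsequence. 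After relabeling I may assume every $\lambda_i$ is onto; since $M^n$ is finitely dominated at infinity, each $\pi_1(N_i)$ is finitely generated, so the sequence is pro-(finitely presented).

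The crux—and the step I expect to be the main obstacle—is upgrading ``semistable'' to ``$AP$-semistable,'' that is, arranging (up to a further pro-isomorphism) that the surjective bonding maps $\lambda_i\colon\pi_1(N_{i+1})\to\pi_1(N_i)$ have \emph{almost perfect} kernels $K_i$, a homological smallness condition weakening the requirement $K_i=[K_i,K_i]$. Here the full strength of inward tameness is used homologically: finite domination bounds the homology of the neighborhoods of infinity, and through the long exact sequences of the pairs $(N_i,N_{i+1})$ together with duality on $M^n$ one controls the abelianization of $K_i=\ker\lambda_i$. The geometric content is that a loop in $N_{i+1}$ that dies in $N_i$ bounds a controlled singular disk, and the finitely generated homology forces $H_1(K_i)$ to be small enough for the almost-perfect condition; a genuinely perfect kernel is precisely what is obstructed for arbitrary inward tame $M^n$, which is why only the ``almost'' version survives in general (the residual obstruction is what later distinguishes pseudocollarable manifolds). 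I would organize this as a one-step lemma—for two nested finitely dominated clean neighborhoods of infinity, the inclusion-induced $\pi_1$-surjection has almost perfect kernel—and then assemble the lemma along the cofinal sequence, using the pro-isomorphism class to absorb the finitely many exceptional indices introduced by the relabelings above. The delicate bookkeeping will be keeping the base ray consistent across the sequence and making the finite-generation estimates uniform enough to persist under the inverse limit.
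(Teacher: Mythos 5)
This survey does not actually prove the statement you were given: it is quoted verbatim from \cite{GuTi}, with no proof supplied, so your outline has to stand entirely on its own. Its three\-/part shape (finitely many ends; semistability; homological upgrade to ``almost perfect'') does match the architecture of the argument in \cite{GuTi}, but two of the three parts have genuine gaps, and the first is an outright fallacy. From ``each clean neighborhood of infinity is finitely dominated, hence has finitely many components'' you cannot conclude that the end space is finite: the finite sets of components can grow without bound as the neighborhoods shrink, and an inverse limit of finite sets need not be finite --- it can be a Cantor set. Indeed the statement you are implicitly proving, ``inward tame $\Rightarrow$ finitely many ends,'' is \emph{false} without the manifold hypothesis, which your step one never uses: the infinite trivalent tree $T$ is inward tame in the sense of Definition~\ref{InTame} (given compact $C$, let $D$ be a metric ball containing $C$; each component of $T-D$ deformation retracts, inside $T-C$, onto its frontier vertex, which lies in $D-C$), and every clean neighborhood of infinity in $T$ is a finite disjoint union of contractible subtrees --- finitely dominated, with finitely many components --- yet $\mathcal E(T)$ is a Cantor set. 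The repair genuinely needs the manifold structure: for a connected, finitely dominated, clean neighborhood of infinity $V$ with compact boundary in an $n$-manifold, the number of ends of $V$ is bounded by $1+\mathrm{rank}\, H^1_c(V)$, and Poincar\'e--Lefschetz duality identifies $H^1_c(V)$ with $H_{n-1}(V,\partial V)$, which is finitely generated because $V$ is finitely dominated. That duality step is exactly what is missing.

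The second gap is the crux you yourself flagged: upgrading semistable to AP-semistable. (The semistability step itself is acceptable only as a citation of Theorem~\ref{Degree1}/\cite{GuTi2} applied to a $1$-ended clean neighborhood of infinity representing the given end; Theorem~\ref{HThm} by itself concerns closed manifolds, so invoking ``the mechanism behind'' it is not a proof.) Whatever its precise formulation, ``almost perfect'' is a condition on the commutator structure of the kernel $K_i$ --- a weakening of $K_i=[K_i,K_i]$, involving for instance perfectness of $[K_i,K_i]$ or the position of $K_i$ inside $[\pi_1(N_{i+1}),\pi_1(N_{i+1})]$ --- and no such condition follows from ``$H_1(K_i)$ is small.'' These are vanishing statements about derived quotients, not finite\-/generation statements: a kernel with $H_1(K_i)\cong\mathbb Z/2$ is already not perfect, and control of $H_1(K_i)$ gives no information about $H_1([K_i,K_i])$. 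Worse, there is no long exact sequence of the pair $(N_i,N_{i+1})$ computing $H_1(K_i)$ at all: by relative Hurewicz/Hopf-formula considerations, $H_2(N_i,N_{i+1})$ detects only $\pi_1$-coinvariant information about the kernel, not its abelianization as a group, let alone its derived series. A genuine proof of this step (the route consistent with \cite{GuTi}) needs two missing ingredients: a theorem that inward tame manifolds have stable pro-homology at each end --- again finite domination \emph{plus} duality on $M^n$ --- and an algebraic lemma converting a semistable pro-$\pi_1$ whose induced homology tower is stable into a pro-isomorphic tower of surjections with almost perfect kernels. As written, your crux paragraph restates the goal rather than proving it.
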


\subsubsection{Normal, Subnormal, Commensurated and Subcommensurated Subgroups}\label{SubCom} Theorem \ref{MainCM} generalizes many of the results of this section. We list a sequence of results that lead up to that theorem. 
In 1974,   R. Lee and F. Raymond first considered the fundamental group of an end of a group. In particular, they considered groups that are simply connected at $\infty$. 

\begin{theorem}  [\cite{LR75}] \label{LR} Let $G$ be a finitely presented group with normal subgroup $N$ isomorphic to $\mathbb Z^k$ and quotient $K= G/N$. Assume when $k=1$ that $K$ is 1-ended and that when $k=2$ that $K$ is not finite, and no restrictions when $k>2$. Then $G$ is simply connected at $\infty$. 
\end{theorem}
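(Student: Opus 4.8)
The plan is to realize $G$ geometrically as the fundamental group of the total space of a flat torus bundle, and then reduce the simple connectivity at $\infty$ of its universal cover to the product result of Example \ref{prodss}. Since $N=\mathbb Z^k$ is a finitely generated normal subgroup of the finitely presented group $G$, the quotient $K=G/N$ is finitely presented, and the extension $1\to \mathbb Z^k\to G\to K\to 1$ is determined by the conjugation action $\rho:K\to GL(k,\mathbb Z)=\mathrm{Aut}(\mathbb Z^k)$ together with its class in $H^2(K;\mathbb Z^k)$. I would first dispose of the case where $K$ is finite. Under the stated hypotheses this can only occur when $k\geq 3$ (a finite group is never $1$-ended, ruling out $k=1$, and is not infinite, ruling out $k=2$). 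In that case $N$ has finite index in $G$, so $G$ is quasi-isometric to $\mathbb Z^k$ and hence to $\mathbb R^k$; since $\mathbb R^k$ is simply connected at $\infty$ for $k\geq 3$, Theorem \ref{QIsp} gives the conclusion.

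Assume now that $K$ is infinite. Let $B$ be a $K(K,1)$ complex and form the flat $T^k$-bundle $T^k\to E\to B$ whose monodromy is $\rho$ and whose twisting realizes the extension class; the long exact sequence of this fibration, together with $\pi_1(T^k)=\mathbb Z^k$, $\pi_1(B)=K$ and the asphericity of fiber and base, identifies $\pi_1(E)$ with $G$ and makes $E$ a $K(G,1)$. By Corollary \ref{pro2} it suffices to compute simple connectivity at $\infty$ for the universal cover $\tilde X:=\tilde E$. Pulling the bundle back along $\tilde B\to B$ exhibits $\tilde X$ as an $\mathbb R^k$-bundle over the simply connected (indeed contractible) base $\tilde B$. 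Because the structure group $GL(k,\mathbb Z)$ is discrete, the bundle is flat, and a flat bundle over a simply connected base has trivial monodromy and is therefore trivial; hence $\tilde X$ is homeomorphic to $\mathbb R^k\times \tilde B$.

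It then remains to feed $\mathbb R^k\times \tilde B$ into Example \ref{prodss}. Both factors are simply connected ($\tilde B$ is a universal cover and $\mathbb R^k$ is contractible) and both are infinite ($K$ is infinite, so $\tilde B$ is infinite). For $k\geq 2$ the factor $\mathbb R^k$ is $1$-ended, so Example \ref{prodss} applies directly and $\mathbb R^k\times \tilde B$ is simply connected at $\infty$. For $k=1$ the factor $\mathbb R$ is only $2$-ended, but the hypothesis that $K$ is $1$-ended makes $\tilde B$ (whose ends are those of $K$) the $1$-ended factor required by Example \ref{prodss}, again yielding simple connectivity at $\infty$. In every admissible case $\tilde X$, and hence $G$, is simply connected at $\infty$.

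I expect the main obstacle to be technical rather than conceptual: the machinery of Example \ref{prodss} is phrased for locally finite CW complexes, whereas a $K(K,1)$ for a merely finitely presented (not type $F_\infty$) group $K$ need not have finite skeleta, so $\tilde B$ and the product $\mathbb R^k\times\tilde B$ may fail to be locally finite. To handle this I would use that simple connectivity at $\infty$ depends only on the $2$-skeleton up to proper $2$-equivalence (the Remark following Theorem \ref{SCtoSS}, together with Theorem \ref{2-equiv}): replace $E$ by a finite presentation $2$-complex for $G$, whose universal cover is the locally finite Cayley $2$-complex, and transport the product decomposition of the previous paragraph across a proper $2$-equivalence, so that the ``push the loop into the $\mathbb R^k$ direction'' homotopy underlying Example \ref{prodss} remains proper on a locally finite model. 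Verifying that the bundle trivialization can be arranged to be such a proper $2$-equivalence is the delicate step; note that one cannot instead argue purely by quasi-isometry, since $\mathbb Z^k$ may be exponentially distorted in $G$ (as in $\mathrm{Sol}$), so the topological fibration structure is genuinely needed.
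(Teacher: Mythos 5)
Your finite-quotient case is fine, and your geometric picture (realize $G$ on a product $\mathbb{R}^k\times(\text{model for }K)$ and quote Example \ref{prodss}) is genuinely the right idea --- it is the spirit of Lee--Raymond's Seifert-fibering argument. But in the infinite-quotient case there is a fatal gap, and it is exactly the step you flag and then defer. You work over an aspherical base $B$ for $K$; when $K$ is merely finitely presented (nothing in the hypotheses rules out torsion, or failure of type $F_3$), no finite --- indeed no cocompact --- $K(K,1)$ exists, so $G$ does not act cocompactly on $\tilde E=\mathbb{R}^k\times\tilde B$. Simple connectivity at $\infty$ of $G$ is defined (Definition \ref{defss2}, Corollary \ref{pro2}) via the universal cover of a \emph{finite} complex, and a free non-cocompact action carries no information: the property ``$\mathbb{R}^k\times\tilde B$ is simply connected at $\infty$'' is not even an invariant of $G$. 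Concretely, take $k=1$, $K=\mathbb{Z}$, $G=\mathbb{Z}^2$. With $B=S^1$ your model is $\mathbb{R}\times\mathbb{R}=\mathbb{R}^2$, which is not simply connected at $\infty$; with the equally legitimate $K(\mathbb{Z},1)$ complex $B=S^1\times[0,\infty)$ your model is $\mathbb{R}^2\times[0,\infty)$, a half-space, which \emph{is} simply connected at $\infty$ --- yet $\mathbb{Z}^2$ is not. The same example refutes your parenthetical that the ends of $\tilde B$ are the ends of $K$ (here $\tilde B$ is a $1$-ended half-plane while $\mathbb{Z}$ is $2$-ended); worse, feeding this $\tilde B$ into your $k=1$ step (both factors simply connected, $\tilde B$ $1$-ended) would ``prove'' that $\mathbb{Z}^2$ is simply connected at $\infty$, which is false. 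So the proper $2$-equivalence you hope to arrange between $\mathbb{R}^k\times\tilde B$ and the Cayley $2$-complex of $G$ does not exist in general; it is not a delicate technicality but the entire content of the theorem.

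The repair is to trade asphericity of the base for cocompactness. Let $W$ be the Cayley $2$-complex of a finite presentation of $K$: simply connected, locally finite, with a free cocompact $K$-action (asphericity was never needed --- Example \ref{prodss} only asks for simple connectivity). The Seifert construction of Conner--Raymond--Lee realizes the extension $1\to\mathbb{Z}^k\to G\to K\to 1$ by a free, properly discontinuous, cocompact $G$-action on $\mathbb{R}^k\times W$ covering the $K$-action on $W$ (the obstruction lies in $H^2(K;\mathrm{Map}(W,\mathbb{R}^k))$, which vanishes for proper actions); this is essentially what Lee--Raymond do. Then $(\mathbb{R}^k\times W)/G$ is a compact complex with fundamental group $G$ and universal cover $\mathbb{R}^k\times W$, so Definition \ref{defss2} applies directly, and Example \ref{prodss} finishes: $\mathbb{R}^k$ is $1$-ended when $k\geq 2$, and cocompactness makes $W$ $1$-ended exactly when $K$ is, handling $k=1$. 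Incidentally, the Seifert construction is also needed because your ``flat bundle'' step is wrong as stated: a flat bundle with discrete structure group $GL(k,\mathbb{Z})$ has $\pi_1$ the split extension $\mathbb{Z}^k\rtimes_\rho K$, and nonzero extension classes are generally not realized by flat bundles at all (the Heisenberg extension of $\mathbb{Z}^2$ by $\mathbb{Z}$ is not a flat circle bundle over $T^2$, by Milnor--Wood); this particular error is harmless, since any fiber bundle over a contractible paracompact base is trivial, but it signals the same neglect of where the extension class actually gets realized. Finally, note that within this paper the theorem follows in three lines from results it records as its generalizations: for $K$ infinite apply Jackson's Theorem \ref{J} with $H=N=\mathbb{Z}^k$ (when $k\geq 2$ the normal subgroup $\mathbb{Z}^k$ is itself $1$-ended; when $k=1$ the quotient $K$ is $1$-ended by hypothesis), and for $K$ finite (so $k\geq 3$) use your quasi-isometry argument via Theorem \ref{QIsp}.
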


The next result is Theorem 1 of \cite{M87a}. It generalizes the case $k=1$ of Theorem \ref{LR} and is of particular value in showing the Sidki double of a group is simply connected at $\infty$ (see Theorem \ref{SCISD}).

\begin{theorem} [\cite{M87a}] \label{T187a} Let $1\to H\to G\to K\to 1$ be a short exact sequence of infinite groups. Assume that $G$ is finitely presented, $K$ is 1-ended, $H$ is abelian and $H$ contains an element of infinite order. Then $G$ is simply connected at $\infty$.
\end{theorem}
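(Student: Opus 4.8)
The plan is to realize $G$ as the fundamental group of a finite $2$-complex $X$, pass to the universal cover (Cayley $2$-complex) $Y=\tilde X$, and prove that $Y$ is simply connected at $\infty$; by Corollary \ref{pro2} this is independent of the chosen finite complex and is exactly the statement that $G$ is simply connected at $\infty$. The space $Y$ is simply connected, locally finite, and carries the free action of $G$ by covering transformations. The engine of the proof will be Theorem \ref{cosc2}: if a finitely generated group $J$ acting as covering transformations on a simply connected locally finite complex $Y$ is both co-connected and coaxial in $Y$ (Definition \ref{ss+cossD2}), then $Y$ is simply connected at each end. The natural choice dictated by the hypotheses is $J=\langle t\rangle\cong\mathbb Z$, where $t\in H$ is the given element of infinite order; $J$ is finitely generated and acts freely on $Y$. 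This is a generalization of the normal-$\mathbb Z$ case of Theorem \ref{LR}, and essentially all of the extra generality is to be absorbed into the coaxiality step.

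Before invoking Theorem \ref{cosc2} I would first record that $Y$ (equivalently $G$) is $1$-ended, so that ``simply connected at each end'' becomes ``simply connected at $\infty$.'' When $H$ is finitely generated this is the remark following Theorem \ref{cohen}. In general I would argue by contradiction using Stallings' Theorem \ref{Stall}: a nontrivial splitting of $G$ over a finite group gives a minimal $G$-tree $T$ with finite edge stabilizers; since $H\trianglelefteq G$ is infinite it cannot be elliptic (its normal core would lie in an edge group, hence be finite), so $H$ contains a hyperbolic element and its minimal subtree, being $G$-invariant, is all of $T$. Then $H$ itself splits nontrivially over a finite group, so $H$ has more than one end; as $H$ is abelian, Theorem \ref{free} forces $H$ to be $2$-ended, $T$ to be a line, and $G$ to be virtually $\mathbb Z$, contradicting that the quotient $K$ is $1$-ended.

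Next I would verify co-connectedness. Writing $W=Y/\langle t\rangle$, the quotient map identifies the $J$-unbounded components of $Y-J\cdot D$ with the unbounded components of $W-\bar D$, so $J$ is co-connected in $Y$ exactly when $W$ is $1$-ended. Since $\langle t\rangle$ is normal in the abelian group $H$, the group $H/\langle t\rangle$ still acts on $W$, and coarsely $W$ looks like the product of the (infinite) ``$K$-direction'' with $H/\langle t\rangle$; because $K$ is $1$-ended, $W$ is $1$-ended. (When $H$ is virtually $\mathbb Z$ one gets $W$ quasi-isometric to $K$ directly.)

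The main obstacle is coaxiality of $J=\langle t\rangle$: for every compact $C\subseteq Y$ I must produce a compact $D$ so that any loop in $Y-J\cdot D$ is null-homotopic in $Y-C$. Here the abelian normal structure enters decisively. The key computation is that for $v=gh$ in a right coset $gH$ one has $v^{-1}tv=h^{-1}(g^{-1}tg)h=g^{-1}tg$, because $g^{-1}tg$ lies in the abelian group $H$ and therefore commutes with $h$; thus left translation by $t$ agrees on the entire coset $gH$ with right translation by the fixed element $t_g=g^{-1}tg$. This yields a coherent ``vertical $t$-flow'' on each $H$-coset, which I would use to slide a far-out loop steadily in the $t$-direction: concatenating the $t$-translates of a single interpolating annulus from $\alpha$ to $t\alpha$ pushes $\alpha$ properly toward infinity, and since $t$ has infinite order this track eventually leaves every compact set. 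The delicate point, and the reason this is the crux rather than a formality, is routing these interpolating annuli so that they avoid the fixed compact set $C$ even though the conjugates $t_g$ vary in length as the coset varies; this is exactly where $1$-endedness of $K$ must be used, to splice the pushing homotopies across neighboring $H$-cosets without dipping back into $C$. Assembling the co-connected and coaxial conclusions, Theorem \ref{cosc2} gives that $Y$ is simply connected at its unique end, i.e. $G$ is simply connected at $\infty$.
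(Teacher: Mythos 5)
Your high-level strategy --- realize $G$ on the Cayley $2$-complex $Y$, take $J=\langle t\rangle$ for the given infinite-order element $t\in H$, and invoke Theorem \ref{cosc2} after verifying co-connectedness and coaxiality --- is coherent, and it is genuinely different from the source of this statement: the paper does not reprove Theorem \ref{T187a}, it cites \cite{M87a}, whose argument is a direct construction in the Cayley $2$-complex rather than an appeal to the (much later) coaxial machinery. Your $1$-endedness argument via Stallings' theorem and Bass--Serre theory is essentially correct, and the computation $v^{-1}tv=g^{-1}tg$ for $v=gh\in gH$ is the right algebraic germ. The problem is that the two hypotheses of Theorem \ref{cosc2} are exactly where all of the content of the theorem lives, and neither is actually established in your proposal.

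For coaxiality you must produce, for each compact $C$, a compact $D$ such that every loop $\alpha$ in $Y-J\cdot D$ is null-homotopic in $Y-C$. Your plan is to build annuli between $\alpha$ and $t\alpha$ and concatenate; the final contraction of a far translate $t^N\alpha$ by a translated disk is fine. But $\alpha$ meets many different cosets $gH$, and the connecting path from a vertex $w=gh$ to $tw$ is labeled by the conjugate $g^{-1}tg$, whose word length is unbounded as the coset varies; filling the resulting rectangles with $2$-cells of $Y$ while staying out of $C$ is precisely the hard part, and your proposal says only that ``this is exactly where $1$-endedness of $K$ must be used.'' That sentence restates the problem; it does not solve it, and since simple connectivity at $\infty$ of $Y$ trivially implies coaxiality of $\langle t\rangle$, this step is essentially equivalent to the theorem itself. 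The co-connectedness step also has a genuine gap: the claimed equivalence ``$J$ is co-connected in $Y$ iff $W=Y/\langle t\rangle$ is $1$-ended'' is false as stated. Since $Y\to W$ is a regular $\mathbb Z$-cover, the components of the preimage of an unbounded component $U$ of $W-\bar D$ are permuted transitively by $J$ and each is $J$-unbounded, and their number equals the index of the image of $\pi_1(U)\to\pi_1(W)\cong\mathbb Z$; so besides $1$-endedness of $W$ you must show that loops in $U$ surject onto the deck group, a condition of the same nature as the one you skipped. Moreover, $1$-endedness of $W$ itself is unjustified: because $\langle t\rangle$ is generally not normal in $G$, there is no cocompact group action on $W$, so the ``coarse product'' description of $W$ is an assertion, not an argument (contrast $Y/H$, which does carry a cocompact $K$-action and is therefore $1$-ended). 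In short, you have a plausible reduction of the theorem to Theorem \ref{cosc2}, but both reduced statements are left unproven, and the harder one carries the full weight of the theorem.
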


\begin{remark}
An elementary observation allows for a significantly simpler proof of Theorem \ref{T187a} than the one in \cite{M87a}. Suppose $g\in G$. If the (abelian) subgroup of $H$ generated by $h$ and $g^{-1}hg$ is not 2-ended, then it is 1-ended and Theorem \ref{P} implies $G$ is simply connected at infinity. So we may assume that $h$ and $g^{-1}hg$ generated a 2-ended subgroup of $H$. But then there are integers $m$ and $n$ (not equal to $0$) such that $g^{-1}h^mg=h^n$. 
This fact, (applied to each generator of $G$ - and its inverse) and combined with Lemma 1.1 of \ref{T187a} can be used to push each edge of a loop of the type in the conclusion of Lemma 1.1 arbitrarily far from the compact set $C$ of Lemma 1.1 between lines with all edge labels $h^{\pm 1}$. This allows one to kill the original loop by a homotopy avoiding $C$ and finishing the proof of Theorem \ref{T187a}. 
\end{remark}

The next result is a corollary of Theorem \ref{T187a}. 

\begin{corollary} \label{C187a1} 
Suppose $1\to H\to G\to K\to 1$ is a short exact sequence of infinite groups, $G$ is finitely presented and $h\in H$ is an element of infinite order such that $\langle h\rangle$ is normal in $H$. If $N$ is the normal closure of $h^2$ in $G$, then $N$ is abelian. If $N$ contains a copy of $\mathbb Z\oplus \mathbb Z$ or if $G/N$ is 1-ended, then $G$ is simply connected at infinity. \end{corollary} 
\begin{proof} If $N$ is abelian and $G/N$ is 1-ended then $G$ satisfies the hypothesis of Theorem \ref{T187a} and $G$ is simply connected at infinity. If $N$ contains a copy of $\mathbb Z\oplus \mathbb Z$, then $G$ satisfies the hypothesis of Theorem \ref{P}, implying $G$ is simply connected at infinity. 

It suffices to show that $N$ is abelian.  If $a\in H$ then $a^{-1}ha=h$ or $h^{-1}$ so that $a^{-2}ha^2=h$ and $h$ commutes with every squared element of $H$. Since $H$ is normal in $G$, this implies that for every $g\in G$, $g^{-1}hg$ commutes with every squared element of $H$. Every element of $N$ is a product of conjugates of elements of the form $g^{-1}h^{\pm 2}g$, so that $N$ is abelian. 
\end{proof}

\begin{remark}\label{R187a}
In the previous proof recall that if $g\in G$ then $g^{-1}hg$ commutes with every square element of $H$. Let $\bar H$ be the subgroup of $H$ generated by squares of elements of $H$. Every element of $N$ is a product of $G$-conjugates of $h^2$ and $h^{-2}$ and so $N\leq \bar H\leq H$ and $N$ is a central subgroup of $\bar H$. If $g\in G$ and $a\in H$, then $g^{-1}a^2g=(g^{-1}ag)^2\in \bar H$ so that $\bar H$ is normal in $G$. 
Theorem \ref{squares} shows that $H/\bar H$ is an abelian 2-group.
\end{remark}

 In 1982, B. Jackson generalized Theorem \ref{LR} and in 1983, M. Mihalik proved the first semistability at $\infty$ theorem for a class of finitely presented groups. These two  results serve as a starting point for this section. 

\begin{theorem} [\cite{J82a}] \label{J}  
If $H$ is an infinite, finitely presented, normal subgroup of infinite index in the finitely presented group $G$, and either $H$ or $G/H$ is 1-ended. Then $G$ is 1-ended and simply connected at $\infty$.   
\end{theorem}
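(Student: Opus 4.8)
The plan is to realize $G$ as the fundamental group of a finite $2$-complex whose universal cover carries a coarse ``twisted product'' structure with one factor modeled on $H$ and the other on $K$, and then to fill far-out loops by a filling scheme that interleaves moves in the two directions, in the spirit of the product computation of Example~\ref{prodss}. The $1$-endedness hypothesis will be what supplies the room to fill loops that neither factor could handle alone.

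First I would record two preliminaries. By Cohen's Proposition~\ref{cohen}, applied with $N=A=H$ (here $H$ is an infinite, hence non-locally-finite, finitely generated normal subgroup of infinite index), the group $G$ is $1$-ended, so simple connectivity at $\infty$ is meaningful for $G$. Next, since $G$ is finitely presented and $H$ is finitely generated as a group, $H$ is the normal closure of a finite set, and therefore $K=G/H$ is finitely presented as well. I would then fix compatible finite presentations of $H$ and $K$ and assemble a finite presentation $\mathcal P$ of $G$ whose generating set splits into (lifts of) $H$-generators together with (lifts of) $K$-generators, and whose relators are the $H$-relators, the lifted $K$-relators, and the finitely many conjugation relators $t^{-1}ht=w_{h,t}$ encoding the $K$-action on the normal subgroup $H$. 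All the work then takes place in the Cayley $2$-complex $\tilde X=\Gamma(\mathcal P)$; recall (the remark following Theorem~\ref{SCtoSS}) that simple connectivity at $\infty$ depends only on the $2$-skeleton.

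The geometric engine is the coset/band structure of $\tilde X$. The homomorphism $G\to K$ induces a cellular, $G$-equivariant projection $\pi\colon\tilde X\to\tilde Z$ onto the universal cover $\tilde Z$ of the $K$-presentation complex: edges labeled by $H$-generators collapse, while edges labeled by $K$-lifts map to $K$-edges. Here $\tilde Z$ is simply connected with $K$ acting cocompactly, each fiber $\pi^{-1}(\text{vertex})$ is a coset $Hg$ carrying a copy of the Cayley $2$-complex of $H$, and each conjugation relator contributes a ``band'' of $2$-cells joining adjacent cosets $Hg$ and $Hgt$. The essential property of the bands is that they realize, inside a bounded neighborhood, a homotopy carrying an edge path in one fiber to the corresponding edge path in the neighboring fiber. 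This realizes $\tilde X$ as a twisted product of the fiber model and $\tilde Z$, and reduces the theorem to a relative version of Example~\ref{prodss}, in which $A\times B$ (with $A,B$ infinite and simply connected and one of them $1$-ended) is simply connected at $\infty$ because a loop that cannot be filled using one factor can always be filled through the transverse infinite direction. I would adapt that filling scheme, treating the hypotheses ``$H$ is $1$-ended'' and ``$K$ is $1$-ended'' as the two choices of which factor plays the role of the $1$-ended factor.

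The main obstacle is that the product structure is only coarse: the extension need not split, so there is no genuine $A\times B$ and, crucially, no projection onto a factor that is itself simply connected at $\infty$ (indeed neither $H$ nor $K$ need be simply connected at $\infty$; e.g. $K=F_2\times F_2$ is $1$-ended but not). Consequently the filling must genuinely interleave moves within a fiber with moves across the bands, and the conjugation relators distort an $H$-loop as it is slid between cosets. The technical heart is to bound this distortion using the finite presentation of $H$ and to organize the interleaved filling so that, given a compact set $C$, the entire homotopy avoids $C$ as soon as the loop begins outside a sufficiently large compact $D$. This is exactly where $1$-endedness enters decisively: it rules out the $F_2\times F_2$-type ``wrapping around a missing direction'' that obstructs simple connectivity at $\infty$, playing the same role as $1$-endedness of a factor in Example~\ref{prodss}. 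Verifying that the fundamental pro-group at $\infty$ is pro-trivial, equivalently checking the appropriate condition of Theorem~\ref{ssequiv}, would then finish the argument.
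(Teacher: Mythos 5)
Your preliminary reductions are sound: Proposition \ref{cohen} does apply (an infinite finitely generated group is automatically non-locally-finite), so $G$ is 1-ended; the quotient $K=G/H$ is indeed finitely presented because $H$ is normal and finitely generated; and the adapted presentation of $G$, with its coset/band structure on the Cayley 2-complex and the projection to the universal cover of the $K$-complex, is the standard geometric model for such extensions. Note that the paper itself records this theorem only as a citation to \cite{J82a} and gives no proof, so the comparison can only be against what a complete argument would require; your setup is consistent with the classical route, and you have correctly identified both where the hypothesis ``either $H$ or $G/H$ is 1-ended'' must enter and why it cannot be dropped (Theorems \ref{JConv} and \ref{P2E} show the conclusion fails when neither is 1-ended).

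The genuine gap is that the entire content of the theorem is deferred at the decisive moment. After correctly observing that the extension need not split, that neither the fiber nor the base need be simply connected at infinity, and that band-crossings distort $H$-words, you write that the technical heart is to ``organize the interleaved filling so that, given a compact set $C$, the entire homotopy avoids $C$ as soon as the loop begins outside a sufficiently large compact $D$.'' That sentence is a restatement of the definition of simple connectivity at infinity (equivalently, pro-triviality of the fundamental pro-group, via Theorem \ref{ssequiv}), not an argument for it; likewise, saying that 1-endedness ``rules out the $F_2\times F_2$-type wrapping'' names the phenomenon to be excluded without excluding it. Concretely missing are: (i) an explicit filling scheme --- how a loop far from $C$ is pushed across bands along a path of fibers chosen to avoid the finitely many fibers meeting $C$, versus how a loop is pushed out to infinity inside a 1-ended fiber, and how these two kinds of moves are alternated; (ii) control of the compounded distortion of $H$-words under iterated band crossings, so that the intermediate stages of the homotopy stay off $C$; and (iii) separate treatments of the two cases, since the fiber and base directions are not interchangeable --- a fiber-direction homotopy leaves the base coordinate fixed, while a base-direction push distorts the fiber coordinate --- so ``$H$ is 1-ended'' and ``$G/H$ is 1-ended'' require genuinely different fillings, and your symmetric appeal to Example \ref{prodss} obscures this. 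As it stands, the proposal is a correct plan for a proof, not a proof.
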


The next result gives a converse to Theorem \ref{J}.
\begin{theorem} [Theorem 16.9.4, \cite{G}] \label{JConv}  
Suppose $1\to H\to G\to K\to 1$ is a short exact sequence of infinite finitely presented groups. Let $Y$, $X$ and $Z$ be finite path connected CW complexes whose fundamental groups are isomorphic to $H$, $G$ and $K$ respectively. Then the universal cover $\tilde X$ of $X$ is proper 2-equivalent to $\tilde Y\times \tilde Z$. 
\end{theorem}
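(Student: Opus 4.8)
The plan is to prove the equivalent statement that $G$ and $H\times K$ have properly $2$-equivalent universal covers, and to do so by building a convenient finite model for $G$ as the total space of the extension. First I would use finite presentability of all three groups: write finite presentations $H=\langle S\mid R_H\rangle$ and $K=\langle T\mid R_K\rangle$, lift each $t\in T$ to $\hat t\in G$, and assemble the standard finite presentation of the extension
\[
G=\langle S,\hat T \mid R_H,\ \{\hat t\,s\,\hat t^{-1}=\alpha_{t,s}\}_{t\in T,s\in S},\ \{\hat r=\beta_{r}\}_{r\in R_K}\rangle,
\]
where $\alpha_{t,s},\beta_r$ are words in $S$ recording that $H\trianglelefteq G$ and that lifted $K$-relators land in $H$. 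Let $X_0,Y_0,Z_0$ be the presentation $2$-complexes of these presentations of $G,H,K$. By Theorem \ref{P1E} and Corollary \ref{pro2}, the $2$-skeleton of $\tilde X$ is properly $2$-equivalent to $\tilde X_0$, and $\tilde Y\times\tilde Z$ is properly $2$-equivalent to $\tilde Y_0\times\tilde Z_0=\widetilde{Y_0\times Z_0}$; so it suffices to compare the two models.

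Next I would exhibit the fibration structure. Define the cellular map $q:X_0\to Z_0$ by $s\mapsto\ast$ for $s\in S$ and $\hat t\mapsto t$ for $t\in T$; the relators of each of the three types are carried to nullhomotopic loops or to relators of $Z_0$, so $q$ is well defined and induces the quotient $G\to K$ on $\pi_1$, with $Y_0\subset X_0$ (the subcomplex on the $S$-edges and $R_H$-cells) as fiber and fiber inclusion inducing the injection $H\hookrightarrow G$. Lifting to universal covers gives $\tilde q:\tilde X_0\to\tilde Z_0$; its point-preimages are copies of $\tilde Y_0$, and because the base $\tilde Z_0$ is simply connected the monodromy of this fibration-up-to-dimension-two is trivial. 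Thus $\tilde X_0$ is assembled from the product cells $\tilde Y_0\times(\text{cells of }\tilde Z_0)$ by attaching maps that, through dimension two, agree with those of the trivial product.

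With this in hand I would construct the proper $2$-equivalence directly, building $f:\tilde Y_0\times\tilde Z_0\to\tilde X_0$ and $g:\tilde X_0\to\tilde Y_0\times\tilde Z_0$ cell by cell over the simply connected base and checking that $gf$ and $fg$ are properly homotopic to the inclusions on $2$-skeleta. The obstruction to trivializing the bundle through dimension two lives in cohomology of $\tilde Z_0$ with coefficients in the low-dimensional homotopy of the simply connected fiber $\tilde Y_0$, and the relevant obstruction vanishes precisely because the base is simply connected and the fiber inclusion is a $\pi_1$-isomorphism (this is the feature that rules out the non-split counterexamples, such as Hopf-type fibrations, where the fiber inclusion fails to be $\pi_1$-injective).

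I expect the main obstacle to be keeping this trivialization proper and controlled simultaneously in dimension two: the local trivializations and gluing homotopies must be chosen with uniformly bounded size, so that the resulting global maps $f,g$ and the homotopies between $gf,fg$ and the inclusions have compact preimages of compacta. This is exactly where the finiteness of $X_0,Y_0,Z_0$ (equivalently, cocompactness of the deck actions) is indispensable, since it yields the uniform bounds that upgrade the cellular, dimension-two equivalence into a genuinely proper one. Finally, as $\tilde Y_0$ and $\tilde Z_0$ are universal covers of infinite finite complexes, Example \ref{prodss} recovers semistability at $\infty$ directly from the product description, confirming that the present theorem refines Theorem \ref{J}.
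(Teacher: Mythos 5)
Your first two steps are sound: the standard finite presentation of the extension, the complexes $X_0,Y_0,Z_0$, the collapse map $q:X_0\to Z_0$, and the observation that the preimage of $Y_0$ in $\tilde X_0$ breaks into copies of $\tilde Y_0$ indexed by $K$ (because $H\hookrightarrow G$ is injective) are exactly the right scaffolding, and the reduction via Theorem \ref{P1E} and Corollary \ref{pro2} is legitimate. The argument breaks, however, at the sentence claiming that because $\tilde Z_0$ is simply connected ``the monodromy of this fibration-up-to-dimension-two is trivial,'' so that $\tilde X_0$ is assembled from product cells with the same attaching maps as $\tilde Y_0\times\tilde Z_0$. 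Simple connectivity of the base kills monodromy around \emph{loops}; it says nothing about the gluing along individual \emph{edges}. Over the edge of $\tilde Z_0$ from $k$ to $kt$, the conjugation $2$-cells glue the copy of $\tilde Y_0$ at $kt$ to the copy at $k$ by a lift of a map $\phi_t:Y_0\to Y_0$ realizing the automorphism $h\mapsto \hat t h\hat t^{-1}$ of $H$, and such maps are in general not (properly) homotopic to the identity. Concretely, for $G=\mathbb Z^2\rtimes_A\mathbb Z$ with $A$ a hyperbolic matrix, the conjugation cells have boundary $\hat t\, s\, \hat t^{-1}\alpha_{t,s}^{-1}$ where $\alpha_{t,s}$ is the (long) word spelling $A(s)$, not $\hat t\, s\, \hat t^{-1}s^{-1}$ as in the product; $\tilde X_0$ is genuinely twisted, cell by cell different from $\tilde Y_0\times\tilde Z_0$, and bridging exactly this twisting is the entire content of the theorem. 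As written, your key step assumes the conclusion.

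The obstruction-theoretic paragraph does not repair this. The obstruction to fiber-homotopy triviality lives in cohomology of the base with coefficients in the homotopy of the space of self-homotopy-equivalences of the fiber, not in ``the low-dimensional homotopy of the fiber''; also the fiber inclusion induces the injection $H\hookrightarrow G$, not a $\pi_1$-isomorphism. More fundamentally, ordinary obstruction theory controls homotopy classes, and here both $\tilde X_0$ and $\tilde Y_0\times\tilde Z_0$ are simply connected, so an ordinary $2$-equivalence between them exists for free: all of the content is \emph{properness}, which no homotopy-theoretic obstruction argument sees. Nor can ``uniformly bounded'' local trivializations supply it, because the parallel-transport maps $\phi_{t_1}\circ\cdots\circ\phi_{t_n}$ along paths in the base have unbounded distortion as $n\to\infty$; no uniform bound exists. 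The missing idea is to work in the intermediate cover $X_H=\tilde X_0/H$: the map $\tilde q$ is $H$-invariant, so it descends to $X_H\to\tilde Z_0$, now with \emph{compact} fibers (copies of $Y_0$, one for each $k\in K$). One maps the fiber at $k$ to $Y_0\times\{k\}$ by a map realizing conjugation by a chosen lift $\hat k$, maps back using the inverse automorphisms, and extends over edges and $2$-cells using the homotopies furnished by the conjugation cells; properness of these maps, and of the homotopies comparing the composites with the inclusions of $1$-skeleta, is then automatic — not from any uniform bound, but because everything commutes with the projections to a locally finite base up to bounded distance while the fibers are compact. One then lifts to universal covers, checking that lifts of proper maps and proper homotopies inducing $\pi_1$-isomorphisms between these spaces are again proper. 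Some mechanism of this kind, which confronts the $\mathrm{Aut}(H)$-twisting and extracts properness from fiber-compactness rather than from bounded control, is what your outline lacks.
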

It is elementary to see that if both $H$ and $K$ have more than one end (so that $\tilde Y$ and $\tilde Z$ have more than one end) then $\tilde Y\times \tilde Z$ is not simply connected at infinity. In fact more can be said. 

\begin{theorem} [Corollary 5.2, \cite{CLQR20}] \label{P2E}  Suppose $1\to H\to G\to K\to 1$ is a short exact sequence of infinite finitely presented groups, then $G$ is proper 2-equivalent to one of the following groups: $\mathbb Z\times \mathbb Z\times\mathbb Z$, $\mathbb Z\times \mathbb Z$ or $\mathbb F_2\times \mathbb Z$. 
\end{theorem}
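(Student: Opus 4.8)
The plan is to reduce everything to the product $\tilde Y\times\tilde Z$ and then read off the proper $2$-equivalence type from the numbers of ends of $H$ and $K$. By Theorem \ref{JConv} the universal cover $\tilde X$ is proper $2$-equivalent to $\tilde Y\times\tilde Z$, so by Theorem \ref{2-equiv} it suffices to work with $\tilde Y\times\tilde Z$. Two facts hold in every case. First, $G$ is $1$-ended: taking $N=A=H$ (infinite, hence non-locally-finite, and of infinite index since $G/H=K$ is infinite) in Theorem \ref{cohen} applies, and equivalently $\tilde Y\times\tilde Z$ is $1$-ended as a product of two infinite complexes. Second, $\tilde Y\times\tilde Z$ is semistable at $\infty$ by Example \ref{prodss}, hence so is $G$ by Theorem \ref{2-equiv}(1). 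Since each of the three targets $\mathbb Z^3$, $\mathbb Z^2$, $\mathbb F_2\times\mathbb Z$ is also $1$-ended, finitely presented and semistable at $\infty$, Theorem \ref{fgi-2eq} reduces the whole problem to computing the fundamental pro-group $\pi_1(\varepsilon(\tilde Y\times\tilde Z))$ up to pro-isomorphism and matching it to the pro-group of one of the three models.

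The three models are separated by their pro-groups at infinity: $\mathbb Z^3$ is simply connected at $\infty$ (pro-trivial), $\mathbb Z^2$ has pro-group stably $\mathbb Z$, and $\mathbb F_2\times\mathbb Z$ has non-abelian pro-free pro-group. Recalling that each of $H,K$ has $1$, $2$ or $\infty$ ends, I would split into cases by the unordered pair of end-numbers. If at least one of $H,K$ is $1$-ended, then one of the simply connected factors $\tilde Y,\tilde Z$ is $1$-ended, so Example \ref{prodss} gives that $\tilde Y\times\tilde Z$ is simply connected at $\infty$; thus $G$ is simply connected at $\infty$ (Theorem \ref{2-equiv}(2)), its pro-group is pro-trivial, and Theorem \ref{fgi-2eq} places $G$ in the class of $\mathbb Z^3$. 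This disposes of every pair containing a $1$.

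In the remaining cases neither factor is $1$-ended, and the guiding idea is that the proper homotopy type of $\tilde Y\times\tilde Z$ at infinity depends only on the spaces of ends through their join: the ``link at infinity'' of the product is pro-homotopy equivalent to $\mathcal E(\tilde Y)\ast\mathcal E(\tilde Z)$. When both groups are $2$-ended, $\mathcal E(\tilde Y)=\mathcal E(\tilde Z)=S^0$ and $S^0\ast S^0=S^1$, giving pro-group stably $\mathbb Z$ and placing $G$ with $\mathbb Z^2$; alternatively one notes $H,K$ are quasi-isometric to $\mathbb Z$, so $\tilde Y,\tilde Z$ are each proper $2$-equivalent to $\mathbb R$ (Corollary \ref{qi-pnequiv}) and the product to $\mathbb R^2$. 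When neither factor is $1$-ended but at least one is $\infty$-ended, the relevant end space is a Cantor set $\mathcal C$ (Example \ref{EE}), and one approximates $\mathcal C\ast S^0=\Sigma\mathcal C$ or $\mathcal C\ast\mathcal C$ by joins of finite sets, i.e.\ by the complete bipartite graphs $K_{m,2}$ and $K_{m,n}$, whose fundamental groups are free of ranks $m-1$ and $(m-1)(n-1)$. This identifies the pro-group as semistable, non-abelian and pro-free, matching $\mathbb F_2\times\mathbb Z\ (\simeq T\times\mathbb R$, whose link is $\mathcal C\ast S^0)$.

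The hard part will be making the ``join at infinity'' computation rigorous and, above all, verifying the pro-isomorphisms needed in the last case. The delicate point is not that each sequence consists of finitely generated free groups of growing rank, but that the towers arising from $\mathcal C\ast S^0$ and from $\mathcal C\ast\mathcal C$ must be shown pro-isomorphic to one another (equivalently, both to the tower of $\mathbb F_2\times\mathbb Z$); since pro-isomorphism of pro-free groups is strictly finer than ``both pro-free,'' this will require a back-and-forth argument exploiting semistability (surjective bonding maps via Theorem \ref{MLSS}) and the freedom to choose the cofinal exhaustions. A further technical nuisance is that the factors are multi-ended, so $\tilde Y-A_n$ and $\tilde Z-B_n$ are disconnected and a naive van Kampen computation of $\pi_1$ of the complement of $A_n\times B_n$ fails; one must track basepoints along a fixed proper ray and use the fundamental-groupoid form. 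Where a $2$-ended factor is present, Theorem \ref{GGm} gives a cleaner route: translation along that $\mathbb R$-factor is a coaxial $\mathbb Z$-action, so $\tilde Y\times\tilde Z$ is proper $2$-equivalent to $\mathbb T\times\mathbb R$ for a tree $\mathbb T$, which is $\mathbb R^2$ or $T\times\mathbb R$ according as the other factor has $2$ or $\infty$ ends.
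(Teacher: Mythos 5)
First, a framing remark: the paper does not prove this statement at all --- it quotes it as [Corollary 5.2, \cite{CLQR20}], with Theorem \ref{JConv} supplying the geometric translation $\tilde X\sim \tilde Y\times\tilde Z$ --- so your proposal is being measured against what that reference must actually supply. Your reduction is the right one, and two of the three cases are essentially complete: when some factor is $1$-ended, Example \ref{prodss} plus Theorem \ref{2-equiv} gives simple connectivity at $\infty$, and Theorem \ref{fgi-2eq} then places $G$ with $\mathbb Z^3$; when both $H$ and $K$ are $2$-ended, $G$ is virtually $\mathbb Z^2$, so Corollary \ref{qi-pnequiv} finishes (this is cleaner than your ``product of proper $2$-equivalences'' step, which is itself unjustified because $(X_1\times X_2)^{1}\neq X_1^{1}\times X_2^{1}$). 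Two small slips: ``infinite, hence non-locally-finite'' is false in general --- you need that $H$ is finitely generated \emph{and} infinite for Theorem \ref{cohen}; and Theorem \ref{fgi-2eq} requires both groups being compared to be $1$-ended, finitely presented and semistable, which you do verify but should state when invoking it for the model groups.

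The genuine gap is the case you yourself label ``the hard part'': no factor $1$-ended and at least one factor $\infty$-ended. This case is the entire content of the theorem --- it is exactly the assertion that, e.g., $\mathbb F_2\times\mathbb F_2$, $\mathbb F_2\times\mathbb Z$, and every mixed extension lie in a single proper $2$-equivalence class --- and your proposal does not establish it. Both key steps are missing. (a) The ``link at infinity is $\mathcal E(\tilde Y)\ast\mathcal E(\tilde Z)$'' computation is asserted, not proved, and it is delicate precisely because the factors are multi-ended: a van Kampen analysis of $\tilde Y\times\tilde Z$ minus a box produces towers of free groups of ranks $(p_i-1)(q_i-1)$ (complete bipartite nerves), versus ranks $k_i-1$ for $\mathbb F_2\times\mathbb Z$. (b) Even granting (a), Theorem \ref{fgi-2eq} cannot be applied until these towers are shown \emph{pro-isomorphic} to the tower of $\mathbb F_2\times\mathbb Z$; being semistable, pro-free and of unbounded rank does not determine a tower up to pro-isomorphism, and the back-and-forth argument you allude to is never constructed --- it is precisely what \cite{CLQR20} supplies. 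Your fallback via Theorem \ref{GGm} does not close the gap either. When \emph{both} factors are $\infty$-ended, the translation $(h,1)$ is not coaxial: taking $D=A\times B$, one can build a loop in $Y-J\cdot D$ that crosses the $B$-region at two first-coordinates lying in components of $\tilde Y-J\cdot A$ hanging off opposite sides of $C$ along the axis, and crosses the axis-region at two second-coordinates in different components of $\tilde Z-B$; this loop maps to a nontrivial $4$-cycle class in the bipartite-nerve description of $\pi_1(Y-C)$, so it cannot be killed in $Y-C$. Hence Theorem \ref{GGm} is available only in the mixed ($2$-ended $\times$ $\infty$-ended) case, and even there it yields only a proper $2$-equivalence with $\mathbb T\times\mathbb R$ for \emph{some} unidentified infinite tree $\mathbb T$; if $\mathbb T$ had two, or finitely many, ends you would land in the $\mathbb Z^2$ or a stable class rather than that of $\mathbb F_2\times\mathbb Z$, so ruling this out returns you to the same unproven pro-isomorphism identification.
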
 
Note that when either $H$ or $K$ is 1-ended, then $G$ is proper 2-equivalent to $\mathbb Z\times \mathbb Z\times\mathbb Z$. When both $H$ and $K$ are 2-ended, then $G$ is proper 2-equivalent to $\mathbb Z\times \mathbb Z$.  In any other case, $G$ is proper 2-equivalent to $\mathbb F_2\times \mathbb Z$.

The first general semistability of groups result was published in 1983 in \cite{M1}. 
\begin{theorem} [\cite{M1})] \label{M1}  
If $H$ is an infinite, finitely generated, normal subgroup of infinite index in the finitely presented group $G$, then $G$ is 1-ended and semistable at $\infty$. 
\end{theorem}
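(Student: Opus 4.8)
The plan is to first reduce to the $1$-ended case and then verify semistability through the loop-pushing criterion of Theorem \ref{ssequiv}, using the normal subgroup $H$ as the engine that slides loops off to the end. Since $H$ is finitely generated and infinite it is not locally finite, and since it is normal and of infinite index, Theorem \ref{cohen} (with $N=A=H$) shows at once that $G$ is $1$-ended; so it suffices to treat that case. Fix a finite presentation $\mathcal P=\langle S\mid R\rangle$ of $G$ whose generating set $S$ contains a finite generating set $\mathcal A$ of $H$, and let $X=\Gamma(\mathcal P)$ be the associated Cayley $2$-complex: it is simply connected, locally finite, $1$-ended, and $G$ acts on it freely by covering transformations with $X^1=\Gamma(G,S)$. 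Because $X$ is simply connected, by Theorem \ref{ssequiv} (equivalence of conditions $1$ and $6$, together with the loop-pushing form of Theorem \ref{SSloop}) it is enough to show: for every compact $C\subset X$ there is a compact $D\subset X$ so that any edge-path loop $\alpha$ based at a point $r(x)$ of a proper edge-path ray $r$ and lying in $X-D$ can be properly homotoped, rel $r(x)$ and within $X-C$, so as to be slid along $r$ to infinity (Theorem \ref{geoend} lets us take all rays to be edge paths).

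The geometric engine is the interaction of two families of directions created by $H$. The cosets $gH$ are the fibres of $G\to K:=G/H$, and normality makes $H$ act transitively on the vertices of each fibre, so each fibre is a copy of the connected infinite graph $\Gamma(H,\mathcal A)$. Normality also supplies, for each $s\in S$ and $a\in\mathcal A$, a relation $s^{-1}as=w_{s,a}(\mathcal A)$ that is a consequence of $R$; the corresponding $2$-cells of $X$ have uniformly bounded size. These conjugation cells assemble into bounded-width ladders that carry any $\mathcal A$-loop in a fibre $gH$ to the corresponding $\mathcal A$-loop in an adjacent fibre $gsH$ by a homotopy whose diameter is bounded independently of how far out the loop lies. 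Since $K=G/H$ is infinite, one may choose a sequence of cosets marching to infinity (along a geodesic ray in $\Gamma(K,\overline S)$), and repeated application of the ladders transports a fibre-loop across these cosets in bounded-diameter steps, pushing it properly out the single end of $X$; the infiniteness of the fibre $\Gamma(H,\mathcal A)$ furnishes the complementary directions so that the transported loops genuinely leave every compact set. Stacking these successive finite homotopies exactly as in the proof of Theorem \ref{SSloop} then produces a single proper homotopy realising the slide.

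The main obstacle is uniform control, i.e.\ extracting a single $D$ from $C$, and there are two points requiring care. First, an arbitrary loop $\alpha$ in $X-D$ is not itself a fibre-loop, so one must first homotope it, within a controlled region of $X-C$, into a product of (translates of) $\mathcal A$-loops lying in fibres; this reduction is where the finite presentation of $G$ and the finite generation of $H$ are genuinely used, and it is the technical heart of the argument. Second, the bounded-diameter estimates for the conjugation ladders and for the within-fibre motion must be made uniform over all basepoints, using $G$-equivariance of the cell structure, so that once $\alpha$ lies outside a sufficiently large $D$ every stage of the slide, and hence the assembled proper homotopy, avoids the prescribed $C$. Granting these estimates, the stacking argument of Theorem \ref{SSloop} shows that $\pi_1(\varepsilon X,r)$ is semistable (Mittag-Leffler, Theorem \ref{MLSS}), so $X$, and therefore $G$, is semistable at $\infty$.
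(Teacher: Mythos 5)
Your reduction to the $1$-ended case is correct (since $H$ is finitely generated and infinite it cannot be locally finite, so Theorem \ref{cohen} applies with $N=A=H$), and the scaffolding you erect afterwards --- the Cayley $2$-complex, the loop-pushing criterion from Theorems \ref{ssequiv} and \ref{SSloop}, and the uniformly bounded conjugation ladders supplied by normality --- is genuinely the kind of machinery such proofs use. Note, however, that this survey does not prove Theorem \ref{M1} at all: it is cited from \cite{M1}. So your proposal has to stand on its own, and the part you explicitly defer is where it falls.

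The step you call ``the technical heart'' --- homotoping an arbitrary loop $\alpha$ in $X-D$, inside $X-C$, to a product of (conjugates of) $\mathcal A$-loops lying in fibres --- is not merely unproved; it is false. Take $G=F_2\times\mathbb Z$ and $H=\langle t\rangle$ the central $\mathbb Z$ factor: $H$ is infinite, finitely generated, normal, of infinite index, and $G$ is finitely presented, so the theorem applies. Each fibre $gH$ is a line in $X$, so every $\mathcal A$-loop in a fibre is null-homotopic inside its own image (a segment); hence if your reduction could be carried out, every loop in $X-D$ would be null-homotopic in $X-C$, i.e.\ you would have proved that $G$ is simply connected at $\infty$. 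But $G$ here is proper $2$-equivalent to $\mathbb F_2\times\mathbb Z$ (Theorem \ref{P2E}), whose associated universal cover is a tree times $\mathbb R$, and this is not simply connected at infinity (see the remark immediately following Theorem \ref{JConv}, with $H=\mathbb Z$ and $K=F_2$ both having more than one end, together with Theorem \ref{2-equiv}). The loops your scheme can never reach are exactly the ones that carry the fundamental pro-group: ``rectangles'' mixing fibre edges and transverse edges that enclose $C$. Such loops can only be slid off to infinity, never decomposed into fibre-loops, and this distinction is precisely why the theorem concludes semistability rather than simple connectivity at $\infty$. It is also why the actual argument of \cite{M1} is organized around proper rays rather than around a decomposition of loops: for instance its Corollary 3.1.4, restated as the first theorem of \S\ref{tech}, shows that proper rays whose projections to $\tilde X/H$ are bounded are properly homotopic, and the $H$-direction and the $G/H$-direction are then played against each other for arbitrary rays. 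Your ``engine'' (transporting genuine fibre-loops across cosets by ladders) is sound, but it can never be fed the loops that matter.
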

In their 1987 seminal ``BNS invariants" paper R. Bieri, W. Neumann and R. Strebel prove the following result. 
\begin{theorem} [Theorem D, \cite{BNS87}] \label{BNS} 
Suppose $G$ is a finitely presented group with no non-abelian free subgroups and $rank (G/G')\geq 2$. Then $G$ contains a finitely generated normal subgroup $H \leq G$ with infinite cyclic quotient $G/H$. In fact, every normal
subgroup $L \leq  G$ with $G/L$ free abelian of rank 2 is contained in such an $H$.
\end{theorem}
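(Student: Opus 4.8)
The plan is to deduce this from the theory of the Bieri--Neumann--Strebel invariant $\Sigma^1(G)$. Recall that the character sphere is $S(G) = (\mathrm{Hom}(G,\mathbb R)\setminus\{0\})/\mathbb R_{>0}$; since $\mathrm{rank}(G/G') = n \geq 2$ we have $S(G)\cong S^{n-1}$, which is connected. The invariant $\Sigma^1(G)\subseteq S(G)$ is open (a basic BNS structure theorem), and for a \emph{discrete} character $\chi\colon G\to\mathbb Z$ the theory gives the characterization: $\ker\chi$ is finitely generated whenever both $[\chi]$ and $[-\chi]$ lie in $\Sigma^1(G)$. Since $\ker\chi$ is automatically normal with $G/\ker\chi\cong\mathbb Z$, producing the required $N$ reduces to producing a discrete character $\chi$ with $[\chi],[-\chi]\in\Sigma^1(G)$.

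The input connecting the hypothesis ``no non-abelian free subgroup'' to $\Sigma^1$ is the dichotomy I would take as the principal lemma: if $G$ has no non-abelian free subgroup, then $S(G)=\Sigma^1(G)\cup(-\Sigma^1(G))$, equivalently the complement $\Sigma^1(G)^c$ contains no antipodal pair. Granting this, set $T=\Sigma^1(G)\cap(-\Sigma^1(G))$. Its complement is $\Sigma^1(G)^c\cup\bigl(-\Sigma^1(G)^c\bigr)$, a union of two closed sets which the lemma makes disjoint and which the fixed-point-free antipodal map interchanges. If $T$ were empty these two sets would cover $S(G)$; being disjoint closed subsets of the connected space $S(G)$, one of them would then have to be empty, forcing $\Sigma^1(G)=S(G)$ and hence $T=S(G)\neq\emptyset$, a contradiction. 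Thus $T$ is a nonempty open subset of $S(G)$. Characters with infinite cyclic image are exactly the rational points of $S(G)$ and are dense, so $T$ contains a discrete $\chi$, and $N=\ker\chi$ is the desired finitely generated normal subgroup with $G/N\cong\mathbb Z$.

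For the refinement, let $L\trianglelefteq G$ with $G/L\cong\mathbb Z^2$ and let $\pi\colon G\to\mathbb Z^2$ be the quotient. The characters factoring through $\pi$ form a great circle $C\subseteq S(G)$ (the image of $S(\mathbb Z^2)\cong S^1$), which is antipodally symmetric, and any discrete $\chi$ factoring through $\pi$ has $\ker\chi\supseteq L$. So it suffices to find such a $\chi$ in $T\cap C$, and I would rerun the connectedness argument on $C$: the sets $C\cap\Sigma^1(G)^c$ and $C\cap\bigl(-\Sigma^1(G)^c\bigr)$ are closed, disjoint, and antipodally interchanged, so two disjoint closed sets cannot cover the connected circle $C$ unless one is empty (in which case $C\subseteq\Sigma^1(G)$, whence $C=-C\subseteq-\Sigma^1(G)$ and $C\subseteq T$). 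Either way $T\cap C$ is nonempty and open in $C$, hence contains a rational point $[\chi]$; the corresponding $N=\ker\chi\supseteq L$ completes the proof.

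The main obstacle is the principal lemma $S(G)=\Sigma^1(G)\cup(-\Sigma^1(G))$ for groups with no non-abelian free subgroup---the genuinely deep ingredient, and where finite presentation (or at least finite generation with appropriate control) enters. The approach I would attempt routes through the characterization of $[\chi]\notin\Sigma^1(G)$ in terms of the $G$-action on $\mathbb R$ induced by $\chi$ together with the Bass--Serre/HNN geometry of $G$: if both $[\chi]$ and $[-\chi]$ fail to lie in $\Sigma^1(G)$, one extracts two elements with independent ``pushing'' dynamics on the relevant tree (or on $\mathbb R$) admitting a ping-pong configuration, producing a rank-$2$ free subgroup and contradicting the hypothesis. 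Controlling the constants in this ping-pong, which is where a finite presentation is used, is the crux; everything else is the soft topology of the character sphere recorded above.
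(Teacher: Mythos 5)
The paper gives no proof of this statement---it is quoted directly as Theorem D of \cite{BNS87}---so the relevant comparison is with the original Bieri--Neumann--Strebel argument, and your proposal reproduces that argument essentially exactly: openness of $\Sigma^1(G)$ in the character sphere $S(G)$, the criterion that a discrete character $\chi$ has finitely generated kernel when $[\chi]$ and $[-\chi]$ both lie in $\Sigma^1(G)$, the dichotomy that for a finitely presented group with no rank-$2$ free subgroup the complement of $\Sigma^1(G)$ contains no antipodal pair, and the connectedness/density-of-rational-points argument on $S(G)$ (and on the great circle of characters vanishing on $L$ for the refinement). Your soft topological derivation is correct, and the one deep ingredient you isolate as the principal lemma is indeed a theorem of \cite{BNS87} whose proof proceeds by the HNN/ping-pong analysis you sketch, with finite presentation entering exactly where you say it does.
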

The next result has the same proof as that of Theorem 1.2 of \cite{GM85} - the homological version of the same result. There are obvious consequences for solvable groups and amenable groups. 
\begin{theorem}\label{NOF2} 
Suppose $G$ is a finitely presented group which does not contain a free subgroup of rank 2, and suppose $\mathbb Z\oplus \mathbb Z$ is a quotient of $G$. Then $G$ is 1-ended and has semistable fundamental group at $\infty$. 
\end{theorem}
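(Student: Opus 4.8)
The plan is to manufacture an infinite, finitely generated, normal subgroup of infinite index in $G$ and then feed it into the two normal-subgroup results already assembled earlier in the chapter. The hypothesis that $\mathbb{Z}\oplus\mathbb{Z}$ is a quotient of $G$ gives an epimorphism $q\colon G\to\mathbb{Z}\oplus\mathbb{Z}$; set $L=\ker q$, a normal subgroup with $G/L\cong\mathbb{Z}\oplus\mathbb{Z}$. Since $q$ factors through the abelianization $G/G'$ and $\mathbb{Z}\oplus\mathbb{Z}$ is free abelian of rank $2$, the induced map $G/G'\to\mathbb{Z}\oplus\mathbb{Z}$ is onto; an abelian group surjecting onto $\mathbb{Z}\oplus\mathbb{Z}$ has torsion-free rank at least $2$, so $rank(G/G')\geq 2$. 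Together with the standing hypothesis that $G$ has no free subgroup of rank $2$, this puts $G$ exactly under the hypotheses of Theorem \ref{BNS}.

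Next I would invoke Theorem \ref{BNS} to obtain a \emph{finitely generated} normal subgroup $N$ of $G$ with $G/N\cong\mathbb{Z}$, chosen (using the final assertion of that theorem, applied to the rank-$2$ normal subgroup $L$) so that $L\subseteq N$. This is the essential step and the only nonelementary ingredient: one cannot simply work with $L$ itself, since the kernel of a map onto $\mathbb{Z}\oplus\mathbb{Z}$ need not be finitely generated, whereas Theorem \ref{M1} requires a finitely generated normal subgroup. With $N$ in hand the remaining verifications are routine. The subgroup $N$ has infinite index in $G$ because $G/N\cong\mathbb{Z}$ is infinite, and $N$ is infinite because $L\subseteq N$ and $(G/L)/(N/L)\cong G/N\cong\mathbb{Z}$, so that $N/L$ is the kernel of a surjection $\mathbb{Z}\oplus\mathbb{Z}\twoheadrightarrow\mathbb{Z}$ and hence $N/L\cong\mathbb{Z}$, forcing $N$ to be infinite.

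Having produced $N$ as an infinite, finitely generated, normal subgroup of infinite index in the finitely presented group $G$, I would read off semistability at $\infty$ directly from Theorem \ref{M1}. For one-endedness I would use the short exact sequence $1\to N\to G\to\mathbb{Z}\to 1$ of infinite, finitely generated groups: since $N$ is finitely generated and infinite it is non-locally finite, so taking the subgroup $A=N$ in Theorem \ref{cohen} yields that $G$ is $1$-ended. (Equivalently one may apply Theorem \ref{superC} to $N$, noting that $N$ is commensurated—indeed normal—so $gNg^{-1}\cap N=N$ is infinite for every $g\in G$.)

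I expect no serious obstacle beyond the correct application of Theorem \ref{BNS}; the entire difficulty of the argument is concentrated in extracting a finitely generated normal subgroup with infinite cyclic quotient, which that theorem supplies, and everything downstream is bookkeeping with the cited normal-subgroup theorems. As an independent consistency check one can confirm the end count without the sequence: $G$ is infinite since it surjects onto $\mathbb{Z}\oplus\mathbb{Z}$; it cannot be $2$-ended, for a $2$-ended group is virtually cyclic and hence has $rank(G/G')\leq 1$, contradicting $rank(G/G')\geq 2$; and it cannot be infinitely-ended, since by Theorem \ref{free} that would force a rank-$2$ free subgroup. This leaves $1$-ended as the only possibility, in agreement with the conclusion above.
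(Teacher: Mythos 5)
Your proof is correct and takes essentially the same approach as the paper's: Theorem \ref{BNS} produces the finitely generated normal subgroup $N$ with $G/N\cong\mathbb{Z}$, Theorem \ref{M1} then gives semistability at $\infty$, and the absence of rank-2 free subgroups (via Theorem \ref{free}) rules out infinitely many ends, forcing $G$ to be 1-ended. The paper's proof is simply a terser version of yours, leaving implicit the verifications (rank of $G/G'$, infiniteness and infinite index of $N$) that you spell out.
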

\begin{proof} 
Theorems \ref{BNS} and \ref{M1} imply $G$ is semistable at $\infty$. Note that since $H$ contains no free subgroup of rank 2, it is 1 or 2-ended.
\end{proof}

%It is unknown whether or not all finitely presented groups are semistable at $\infty$. It is also unknown whether or not for all finitely presented groups $G$, $H^2(G,\mathbb ZG)$ is free abelian (a question, we believe was originally asked by H. Hopf). 

The main theorem in the 1993 PhD dissertation of V. Ming Lew generalized Theorem \ref{M1} and the main theorem of the 1990 PhD dissertation of J. Profio generalized Theorem \ref{J}: 

\begin{theorem} [\cite{Lew93}] \label{L}
Suppose $H$ is an infinite,  finitely generated, subnormal subgroup of the finitely generated group $G$: 
$$H = N_0 \lhd N_1 \lhd N_2 \lhd \ldots  \lhd N_k = G, \hbox{ for } k\geq 1$$
and $H$ has infinite index in $G$. Then $G$ is 1-ended and semistable at $\infty$. 
\end{theorem}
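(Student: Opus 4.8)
The plan is to prove the two conclusions separately, obtaining one-endedness immediately from the commensurated-sequence theory and then working harder for semistability.

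\emph{One-endedness.} Every normal subgroup is commensurated, so each link $N_i \lhd N_{i+1}$ of the subnormal chain gives $N_i \prec N_{i+1}$, and hence $H = N_0 \prec N_1 \prec \cdots \prec N_k = G$ is a commensurated sequence in the sense of Theorem \ref{subCend}. Since $H$ and $G$ are finitely generated, $H$ is infinite, and $H$ has infinite index in $G$, Theorem \ref{subCend} applies verbatim and gives that $G$ is $1$-ended. The only point worth noting is that the hypotheses of \ref{subCend} constrain only the first and last terms of the sequence, so it is harmless that the intermediate $N_i$ need not be finitely generated.

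\emph{Semistability, base case.} I would prove semistability by building up along the chain, the seed being the case $k=1$: a finitely generated group $G$ with an infinite, finitely generated, normal subgroup $H$ of infinite index is semistable at $\infty$. This is the finitely generated analogue of Theorem \ref{M1}, whose stated form assumes $G$ finitely presented. To establish it I would fix a finite generating set $S$ of $G$ containing a generating set of $H$, choose a finite set $P$ of $S$-relations, pass to $X = \Gamma_{(G,S)}(P)$, and verify the criterion of Theorem \ref{ssequiv}(4)--(5): for each compact $C \subseteq X$ produce a compact $D$ so that proper rays based at a common vertex in $X - D$ and converging to the same end are properly homotopic in $X - C$. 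The mechanism is normality: because $gHg^{-1} = H$, the group $G$ carries loops and rays between translates of the coset family $\{Hg\}$, while the infinite, finitely generated subgroup $H$ supplies an infinite ``direction'' along which discrepancies can be slid to infinity, following Mihalik's proof of \ref{M1} but redone inside $\Gamma_{(G,S)}(P)$ (possibly after enlarging $P$ by finitely many relations) rather than inside the Cayley $2$-complex of a finite presentation.

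\emph{Semistability, propagation and the main obstacle.} With the base case in hand there are two easy reductions for the step from length $k-1$ to $k$, organized around $N_{k-1} \lhd G$. If $[G : N_{k-1}] < \infty$ then $N_{k-1}$ is itself finitely generated, $H$ is an infinite finitely generated subnormal subgroup of infinite index in $N_{k-1}$ of length $k-1$, so $N_{k-1}$ is semistable by induction, and $G$ inherits semistability from its finite-index subgroup $N_{k-1}$ (semistability being a quasi-isometry invariant of finitely generated groups). If $N_{k-1}$ has infinite index and is finitely generated, the base case applied to the normal subgroup $N_{k-1}$ finishes immediately. The genuine difficulty is the remaining possibility that $N_{k-1}$ is infinitely generated of infinite index, for then no term of the chain is available as a finitely generated infinite normal subgroup of $G$ and the induction on $k$ stalls. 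I expect this to be handled the way Theorem \ref{subCend} handles one-endedness for commensurated sequences: rather than inducting through the subgroups, prove by induction on $i$ a relative statement that each $N_i$ is ``semistable at $\infty$ in $\Gamma(G,S)$'', seeding it with $H = N_0$ and propagating $N_i \Rightarrow N_{i+1}$ by using the Hausdorff-distance bounds of Corollary \ref{C1} coming from $N_i \prec N_{i+1}$ to slide the connecting homotopies between translates $g N_i$ of the cosets. The hard part is precisely that, unlike in \ref{subCend} where it suffices to connect points (path components), here one must slide one-dimensional data, keeping the homotopies proper and outside the prescribed compacta; controlling this fundamental-group information across the coset translations, together with the fact that $H$ may be two-ended or infinitely-ended so that loops cannot simply be absorbed inside a single coset, is the heart of the theorem.
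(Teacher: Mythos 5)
Your one-endedness argument is correct and is exactly the paper's own mechanism: since normal subgroups are commensurated, the chain $H = N_0 \prec N_1 \prec \cdots \prec N_k = G$ satisfies the hypotheses of Theorem \ref{subCend} (which constrain only the first and last terms), so $G$ is 1-ended. The semistability half, however, contains a genuine gap: what you have written is a plan whose two essential steps are both deferred. First, your base case $k=1$ is ``Mihalik's proof of Theorem \ref{M1} redone inside $\Gamma_{(G,S)}(P)$''; but Theorem \ref{M1} is proved for finitely presented $G$ inside a simply connected Cayley 2-complex, and in the finitely generated setting $\Gamma_{(G,S)}(P)$ is not simply connected --- choosing and enlarging the finite relation set $P$ so that the argument survives is precisely the nontrivial content of the finitely generated theory built around Definition \ref{defss3} (compare Lemma \ref{rel} and Theorem \ref{FGSS6}), not a routine transcription. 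Second, and more seriously, you explicitly leave the inductive step in the only hard case (when $N_{k-1}$ is infinitely generated of infinite index) as something you ``expect'' to work like the proof of Theorem \ref{subCend}, using the Hausdorff-distance bounds of Corollary \ref{C1} to slide homotopies between cosets. That expectation is the theorem. Corollary \ref{C1} lets you connect a point of $gN_i$ to a nearby point of a translate by a short path, but it gives no control whatsoever on the homotopy class of that path, and assembling these uncontrolled connections into proper homotopies avoiding prescribed compact sets is exactly what Lew's dissertation has to accomplish; naming the difficulty is not the same as resolving it.

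A smaller point: your finite-index reduction asserts that semistability passes from the finite-index subgroup $N_{k-1}$ up to $G$ because semistability ``is a quasi-isometry invariant of finitely generated groups.'' The paper states quasi-isometry invariance only for finitely presented groups (Theorems \ref{QIsp} and \ref{ssqi}); for the finitely generated notion of Definition \ref{defss3} this inheritance also requires an argument. For comparison with the source: the paper itself does not prove Theorem \ref{L} --- it quotes it from \cite{Lew93}, and within the paper's own logic the result is subsumed by Theorem \ref{MainA} (a subnormal chain is a subcommensurated chain, since normal implies commensurated), which is in turn quoted from \cite{M6}. So the shortest complete route available inside this paper is the reduction to Theorem \ref{MainA}; but that, of course, only relocates the work your proposal leaves undone.
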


\begin{theorem} [\cite{P90}] \label{P}
Suppose $H\lhd N\lhd G$ is a normal series with $H$ and $G$ finitely presented, and $H$ 1-ended and of infinite index in $G$. Then $G$ is simply connected at $\infty$.
\end{theorem}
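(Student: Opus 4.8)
The plan is to use Jackson's Theorem \ref{J} as the engine and to reduce to it by cases on the positions of $N$ and $H$ inside $G$. Throughout, fix a finite presentation $\mathcal P=\langle S\mid R\rangle$ of $G$ with $S\supseteq S_H$ a finite generating set of $H$, and let $\Gamma=\Gamma(\mathcal P)$ be the associated Cayley $2$-complex. As a preliminary, note that $H$ is finitely generated (being finitely presented) and is subnormal of infinite index in the finitely generated group $G$ via $H\lhd N\lhd G$, so Lew's Theorem \ref{L} already shows $G$ is $1$-ended and semistable at $\infty$. By Remark \ref{R1}, simple connectivity at $\infty$ is equivalent to the fundamental pro-group at $\infty$ of $\Gamma$ being pro-trivial; since semistability gives that this pro-group is pro-epi, the remaining task is to kill these groups, i.e. to show that every loop sufficiently far out is null-homotopic far out.

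First I would dispose of the cases where a single application of Theorem \ref{J} suffices. If $[G:N]<\infty$, then $N$ is finitely presented (a finite-index subgroup of a finitely presented group), $[N:H]=\infty$ (because $[G:H]=\infty$), and $H\lhd N$ is infinite, finitely presented and $1$-ended; Theorem \ref{J} gives that $N$ is simply connected at $\infty$, and since $G$ and $N$ are quasi-isometric, Theorem \ref{QIsp} transfers this to $G$. If instead $[G:N]=\infty$ but $[N:H]<\infty$, then $N$ is finitely presented and, by quasi-isometry invariance of the number of ends (Theorem \ref{QIEFG}), $1$-ended, being a finite-index overgroup of the $1$-ended $H$. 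Thus $N\lhd G$ is an infinite, finitely presented, $1$-ended normal subgroup of infinite index, and Theorem \ref{J} applies directly to the pair $N\lhd G$.

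The remaining and genuinely hard case is $[N:H]=[G:N]=\infty$, where $N$ need not be finitely presented (nor even finitely generated), so a single invocation of Theorem \ref{J} is unavailable. Here I would run the geometric ``slide a loop to infinity'' argument directly in the finitely presented complex $\Gamma$, exploiting the two-step normal structure. The cosets $\{gN\}$ organize $\Gamma$ over the Cayley graph of $Q=G/N$, and inside each coset the $1$-ended subgroup $H$, together with its $G$-conjugates $gHg^{-1}$ (which lie in $N$ since $N\lhd G$), supplies the connectivity needed to fill loops. Concretely, given a compact set $C$ I would produce a compact $D$ so that a loop $\alpha$ in $\Gamma-D$ is first homotoped, using the $1$-endedness of the cosets of $H$ as in the proofs of Theorems \ref{M1} and \ref{J}, into a controlled position meeting only boundedly many $N$-cosets, and then capped off outside $C$ using the $1$-endedness of $G$ furnished by Lew's theorem.

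The main obstacle is precisely the lack of finite presentation of $N$ in this last case: the homotopies contracting $\alpha$ must be assembled inside $\Gamma$ rather than in a presentation $2$-complex of $N$. The key point to make rigorous is that the finite relator set $R_H$ of $H$ provides a \emph{uniform} bound on the number and size of $2$-cells needed to fill $H$-loops, and that these fillings, translated by coset representatives and stacked across the $Q$-directions, remain outside $C$ and combine into a proper null-homotopy. Establishing these uniform filling bounds independently of the coset, and checking properness of the assembled homotopy, is where the real work lies and is the heart of Profio's argument.
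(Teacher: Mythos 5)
Your two easy cases are handled correctly: when $[G:N]<\infty$, Jackson's Theorem \ref{J} applied to $H\lhd N$ together with quasi-isometry invariance (Theorem \ref{QIsp}) works, and when $[N:H]<\infty$, the finite-index overgroup $N$ of $H$ is finitely presented and $1$-ended, so Theorem \ref{J} applied to $N\lhd G$ works. But these cases are degenerate; the entire content of Profio's theorem is the case $[N:H]=[G:N]=\infty$, where $N$ need not be finitely presented or even finitely generated, and there your text is not a proof. You describe a plan (homotope a far-out loop "into a controlled position meeting only boundedly many $N$-cosets, and then cap off"), assert without justification that the finite relator set of $H$ gives "uniform filling bounds," and then concede that establishing those bounds and the properness of the assembled homotopy "is where the real work lies and is the heart of Profio's argument." Conceding the heart of the argument is conceding the theorem: nothing in your sketch explains how to keep the filling disks of translated $H$-loops out of the compact set $C$ (the relators of $H$ are consequences of the relators of $G$, so they bound disks in $\Gamma$, but those disks can a priori re-enter $C$, and no mechanism is given to push them away), nor how an arbitrary loop far from $D$ gets moved near boundedly many $N$-cosets in the first place. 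So the proposal has a genuine gap exactly where the theorem is nontrivial.

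Note that the paper itself gives no proof of this statement (it cites Profio's dissertation \cite{P90}), but it does supply a complete alternative route that would repair your proposal and subsume all three of your cases at once: since a normal subgroup is commensurated, the series $H\lhd N\lhd G$ yields a subcommensurated chain $H\prec N\prec G$, and Theorem \ref{MainA} (with $H$ infinite, finitely presented, $1$-ended, of infinite index, and $G$ finitely presented, hence recursively presented) concludes directly that $G$ is $1$-ended, semistable at $\infty$, and simply connected at $\infty$. The paper's discussion surrounding Theorem \ref{MainA} also indicates why your "uniform filling" sketch is unlikely to be completable by elementary means: the author states that even in the finitely presented case the simple connectivity conclusion could not be proved without the notion of simple connectivity at $\infty$ for finitely generated groups (Definition \ref{SCfg}) combined with semistability via Lemma \ref{ss+sc}, which is precisely the tool giving uniform control of loops lying near arbitrary translates $vA$ of a subgroup — the control your sketch postulates but does not construct.
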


\begin{theorem} [Theorem 1, \cite{MMSS96}]
Suppose $N\leq A\leq G$ are groups with $A$ 1-ended, finitely presented and of infinite index in the finitely presented group $G$. If $N$ is a non-locally finite normal subgroup of $G$, then $G$ is semistable at $\infty$. 
\end{theorem}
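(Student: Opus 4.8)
The plan is to realize $G$ on the universal cover of a finite complex and to apply the Main Theorem of \cite{GGM1} in the form of Theorem~\ref{Theorem: Special case of Main Theorem}, using $A$ itself as the acting subgroup $J$. Fix a finite complex $X$ with $\pi_1(X)\cong G$ and let $Y=\tilde X$ be its universal cover; then $Y$ is simply connected, locally finite, and $G$ (hence every subgroup of $G$) acts on $Y$ as cell-preserving covering transformations. Since $N$ is a non-locally finite normal subgroup of $G$ and $A$ is a finitely generated subgroup of infinite index, Theorem~\ref{cohen} gives that $G$, and therefore $Y$, is $1$-ended. Before invoking the machinery I would dispose of an easy case: if $N$ is finitely generated then, being infinite and of infinite index in $G$ (because $N\leq A$ and $[G:A]=\infty$), it is an infinite finitely generated normal subgroup of infinite index in the finitely presented group $G$, so Theorem~\ref{M1} already yields semistability. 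Hence I may assume $N$ is not finitely generated; since finite-index subgroups of the finitely generated group $A$ are finitely generated, it follows that $N$ has infinite index in $A$ as well.

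With $J=A$ --- an infinite, finitely generated group acting as covering transformations on $Y$ with infinite index in $G$ --- it suffices, by Theorem~\ref{Theorem: Special case of Main Theorem}, to verify that $A$ is both semistable at $\infty$ in $Y$ and co-semistable at $\infty$ in $Y$. I would verify these two conditions directly in $Y$ rather than trying to prove that $A$ is semistable as an abstract group (which is not available, since a $1$-ended finitely presented group need not be semistable); the extra room in $Y$, supplied by the normal subgroup $N$, is exactly what makes the $Y$-relative conditions hold.

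For semistability of $A$ in $Y$: given a compact $C_0$, fix a finite generating set $A^0$ and a $J$-equivariant map $z:\Gamma(A,A^0)\to Y$. Because $A$ is $1$-ended, two proper edge-path rays $r,s$ in $\Gamma(A,A^0)$ from a common vertex, whose $z$-images lie outside a suitable compact $C$, can be joined arbitrarily far out in $\Gamma(A,A^0)$ by a path whose image is the required connecting path $\delta$ in $Y-D$ (using properness of $z$). The substantive point is that the resulting loop is trivial in $Y-C_0$; here I would fill the loop in the simply connected $Y$ and then slide the filling disk off $C_0$ by translating through elements of $N$, the identity $gNg^{-1}=N$ guaranteeing that these $N$-translations can be distributed coherently along the various $A$-cosets so as to keep the homotopy outside $C_0$.

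The main obstacle is the co-semistability of $A$ in $Y$, in the sense of Definition~\ref{Coss}: for each compact $C_0$ I must produce a compact $C$ so that, for every $A$-unbounded component $U$ of $Y-A\cdot C$ and every $A$-bounded proper ray $r$ in $U$, each loop in $U$ based on $r$ can be pushed to infinity along $r$ by a homotopy in $Y-C_0$. The driving mechanism is again normality: because $N\lhd G$ is non-locally finite, its orbits run off to infinity in every ``direction orthogonal to $A$,'' and translating a loop along these $N$-orbits while filling the swept-out annuli (whose sizes are bounded because $G$ and $A$ are finitely presented) effects the required push. The difficulty, absent in the finitely generated case of Theorem~\ref{M1}, is that $N$ is only non-locally finite: I would choose a finitely generated infinite $N_0\leq N$ and work with the family of $G$-conjugates of $N_0$, and the crux is to show that these conjugates are distributed densely and uniformly enough along every $A$-unbounded component $U$ to carry out the pushing with a single choice of $C$ depending only on $C_0$. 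Granting these two conditions, Theorem~\ref{Theorem: Special case of Main Theorem} gives that $Y$ has semistable fundamental group at $\infty$, which is the assertion that $G$ is semistable at $\infty$.
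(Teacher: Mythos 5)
Your preliminary reductions are correct: when $N$ is finitely generated, Theorem \ref{M1} applies (since $N\leq A$ and $[G:A]=\infty$ force $[G:N]=\infty$); Theorem \ref{cohen} does give the $1$-endedness of $G$ needed to invoke the machinery of \cite{GGM1}; and $N$ not finitely generated does force $[A:N]=\infty$. Note that the paper itself offers no proof of this theorem (it is quoted from \cite{MMSS96}), so appealing to the later Theorem \ref{Theorem: Special case of Main Theorem} is a legitimate strategy, and since the converse of that theorem is trivial (Remark \ref{Remark in Intro}, item 3), the two conditions you set out to verify for $J=A$ are indeed true statements. The difficulty is that, given your setup, conditions (a) and (b) of Definition \ref{ss+cossD} for $J=A$ are \emph{equivalent} to the conclusion that $Y$ is semistable at $\infty$; so a proof consists precisely of verifying them, and your proposal does not do this --- it restates them with plans attached, and the plans as described do not work.

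Concretely, for semistability of $A$ in $Y$ you propose to ``fill the loop in the simply connected $Y$ and then slide the filling disk off $C_0$ by translating through elements of $N$.'' Translating a singular disk by $n\in N$ translates its boundary as well, so this yields a null-homotopy of the translated loop, not of the given one; Definition \ref{ss+cossD}(a) requires a proper homotopy rel the fixed base point, and nothing in the sketch bridges, inside $Y-C_0$, between a loop and its $N$-translate. That bridge is exactly the hard point, and it is the content of the tube/ladder arguments behind Theorem \ref{M1}. For co-semistability (Definition \ref{Coss}) you explicitly concede the key step (``the crux is to show that these conjugates are distributed densely and uniformly enough''), so by your own account condition (b) is not established. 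Your geometric picture there is also off: since $N_0\leq N\leq A$, the sets $gN_0\ast$ do not run ``orthogonal to $A$''; by normality $gN_0\ast\subseteq Ng\ast$, and $d(ng,A)=d(g,A)$ for every $n\in N$, so these tracks run \emph{parallel} to the orbit $A\ast$, at constant distance from it. (That constancy --- a pushing track through an $A$-unbounded component $U$ of $Y-A\cdot C$ never returns near $A\ast$, hence never near $C_0$ --- is what a correct pushing argument would exploit, together with conjugation relations to tile the tubes; your remark that the ``swept-out annuli'' have bounded size ``because $G$ and $A$ are finitely presented'' is unjustified, since fillings of such annular loops are controlled by no bound coming from finite presentability alone.) In sum, the proposal is a plausible outline that reduces the theorem to an equivalent pair of statements and then sketches, but does not prove, those statements.
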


\begin{question}
Under the hypothesis of the previous result, is $G$ simply connected at $\infty$? Does the conclusion of the previous result remain true if the hypothesis on $A$ is replaced by $A$ is finitely generated and infinite? What if the normality hypothesis on $N$ is replaced by subcommensurated? 
\end{question}

\begin{theorem} [(Theorem 2, \cite{MMSS96}] \label{Subset}  
Suppose $1\to H\to G\to K\to 1$ is a short exact sequence of infinite finitely generated groups, with $G$ finitely presented. If $K$ is 1-ended and $H$ is contained in a finitely presented subgroup $L$ of infinite index in $G$, then $G$ is simply connected at $\infty$.
\end{theorem}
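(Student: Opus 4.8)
The plan is to reduce to the finitely presented normal case (Jackson's Theorem \ref{J}) whenever possible, and otherwise to localize loops near a single coset of $L$ and invoke the combined semistable / relatively simply connected filling Lemma \ref{ss+sc}. First I would record the easy structural facts: since $H$ is the kernel of $G\twoheadrightarrow K$ it is normal in $G$, and as $K$ is infinite $H$ has infinite index. Being an infinite finitely generated normal subgroup of infinite index in the finitely presented group $G$, Theorem \ref{M1} already gives that $G$ is semistable at $\infty$; by Remark \ref{R1} it therefore suffices to prove that the fundamental pro-group of the Cayley $2$-complex $\Gamma(\mathcal P)$ is pro-trivial, i.e. that for each compact $C$ there is a compact $D$ with every loop in $\Gamma(\mathcal P)-D$ null-homotopic in $\Gamma(\mathcal P)-C$.

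Next I would dispose of the degenerate case $[L:H]<\infty$: here $H$ is a finite-index subgroup of the finitely presented group $L$, hence itself finitely presented, so $H$ is an infinite finitely presented normal subgroup of infinite index in $G$ with $G/H=K$ one-ended, and Jackson's Theorem \ref{J} finishes the proof. Thus I may assume $[L:H]=\infty$, equivalently that $\bar L:=L/H$ is an infinite subgroup of infinite index in $K$ (note $[G:L]=[K:\bar L]$ because $H\le L$ is normal in $G$); in this case Theorem \ref{M1} applied to the normal subgroup $H\lhd L$ shows that $L$ is itself semistable at $\infty$. I would then set up the fibering: choose a finite presentation $\mathcal P=\langle\mathcal A,\mathcal B\mid R\rangle$ of $G$ in which $\mathcal A$ generates $L$ (and contains a generating set of $H$) and $R$ contains the relators of a finite presentation of $L$. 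In $\Gamma(\mathcal P)$ each coset $gL$ carries a subcomplex isomorphic to the simply connected Cayley $2$-complex of $L$, and $G\to K$ induces a cellular map $\pi\colon\Gamma(\mathcal P)\to\Gamma(K,\bar S)$ collapsing the $H$-cosets (here $\bar S$ is the image of $\mathcal A\cup\mathcal B$), under which the $L$-sheets are indexed by the cosets of $\bar L$ in $K$.

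The two facts I want to extract are: (i) $L$ is \emph{simply connected at $\infty$ in} $G$ in the sense of Definition \ref{SCin}; and (ii) every far-out loop of $\Gamma(\mathcal P)$ can be pushed, by a homotopy staying in $\Gamma(\mathcal P)-C$, to a loop each of whose vertices lies within a bounded distance $N$ of a single coset $vL$. Granting (i) and (ii) the proof closes quickly: $L$ is finitely generated, semistable at $\infty$, and simply connected at $\infty$ in $G$, so Lemma \ref{ss+sc} supplies, for the given $C$ and the bound $N$ from (ii), a compact set killing every loop whose vertices are within $N$ of one coset $vL$; combining this with (ii) yields the required $D$.

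The main obstacle is establishing (i) and (ii), and both rest on the one-endedness of $K$. For (i) the point is that a loop in the base $L$-sheet can be filled inside that sheet, but the filling may meet $C$; to avoid $C$ one translates the loop to a sheet $gL$ with $\bar g$ deep in the single unbounded complementary component of $\pi(C)$ in the one-ended graph $\Gamma(K,\bar S)$, fills it there, and uses semistability of $L$ to control the vertical part of the homotopy while routing the connecting ``tube'' between the two sheets through that unbounded component so that it too avoids $C$; the infinite index of $\bar L$ in $K$ guarantees arbitrarily distant parallel sheets. For (ii) one again exploits that $K$ is one-ended, pushing the projected loop (while staying far out) so as to concentrate it near a single $\bar L$-coset and then lifting this push to $\Gamma(\mathcal P)$ via the semistability of $G$. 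I expect the careful bookkeeping of compact sets in these two pushing arguments — ensuring the constructed homotopies genuinely avoid $C$ while tracking base rays in the one-ended quotient — to be the delicate heart of the proof, exactly paralleling the model case where $H$ and $K$ are both finitely presented and $\widetilde X\simeq\widetilde Y\times\widetilde Z$ is simply connected at $\infty$ by Example \ref{prodss} and Theorem \ref{JConv}.
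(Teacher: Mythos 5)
Your preliminary reductions are correct: $H$ is an infinite finitely generated normal subgroup of infinite index (since $K$ is infinite), so Theorem \ref{M1} gives semistability of $G$; the degenerate case $[L:H]<\infty$ does follow from Jackson's Theorem \ref{J} because a finite-index subgroup of the finitely presented $L$ is finitely presented; in the main case $H\lhd L$ has infinite index, so Theorem \ref{M1} also makes $L$ semistable at $\infty$; and organizing the endgame around Lemma \ref{ss+sc} is a legitimate architecture. (Note the paper itself only cites \cite{MMSS96} for this theorem and contains no proof; note also that Definition \ref{SCin} and Lemma \ref{ss+sc} postdate \cite{MMSS96}, so your route is necessarily different from the original one.) The difficulty is that your claims (i) and (ii) are not side lemmas --- together they \emph{are} the theorem --- and the sketches you give for them fail as stated.

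For (i): left translation by $g$ is a covering transformation carrying the sheet $\Gamma(L,\mathcal A)$ onto $g\Gamma(L,\mathcal A)$, but there is no natural homotopy between $\alpha$ and $\tau(g\alpha)\tau^{-1}$; a ``tube'' is a map of an annulus, not a path, and cannot be ``routed'' through the unbounded component of $\Gamma(K,\bar S)-\pi(C)$. The only mechanism this setting provides for building such tubes is stacking conjugation $2$-cells $s^{-1}hs=w(\mathcal H)$, i.e.\ the normality of $H$ in $G$, which your main case never invokes. That mechanism applies only to $\mathcal H$-labeled loops (so you must first homotope an arbitrary $\mathcal A$-labeled loop of the $L$-sheet, inside $\Gamma(L)-C$, to a product of conjugates of $\mathcal H$-labeled loops --- essentially the content of the \emph{proof} of Theorem \ref{M1}, not of its statement), and the resulting slide is controlled only coset-by-coset: the rewritten $H$-loops wander unboundedly inside the cosets $vH$ they visit, so every intermediate coset, not merely the slide path, must miss a neighborhood of $C$. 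Semistability of $L$ cannot substitute for any of this: it yields homotopies inside a single sheet and says nothing about homotopies in $\Gamma(\mathcal P)$ joining distinct sheets. For (ii) there is a case your scheme cannot even begin on: loops far from $C$ in $\Gamma(\mathcal P)$ that lie over $p(C)$, where $p:\Gamma(\mathcal P)\to\Gamma(\mathcal P)/H$ is the covering projection (loops deep in the fiber direction). Their projections are not far out, so there is no far-out projected loop to push; and ``lifting the push via semistability of $G$'' is not an operation --- what lifts through $p$ are homotopies, and the lift avoids $C$ only when the downstairs homotopy avoids $p(C)$, a stronger statement you never address. These fiber-direction loops are exactly where normality enters yet again: since $Hs=sH$, adjacent fibers $vH$ and $vsH$ are at Hausdorff distance $1$, so loops over the filled-in compact $\widehat{p(C)}$ are automatically within bounded distance of a single $L$-coset; nothing in your sketch produces such an argument. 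Until (i) and (ii) are proved with this conjugation machinery and its compact-set bookkeeping made explicit, what you have is a plausible outline whose two load-bearing steps are missing.
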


\begin{example} 
The intersection of two finitely generated normal subgroups of a finitely generated group need not be finitely generated. Let $F$ and $ F'$, be free groups with generators $x,y$ and $x',y'$ respectively. Let $C$ and $C'$ be the commutator subgroups of $F$ and $F'$ respectively. For each element $c\in C$ let $c'\in C'$ be the element obtained from $c$ by replacing each $x$ by $x'$ and each $y$ by $y'$. Let $N_1$ be the normal closure in $F\times F'$ of $\mathcal C=\{c^{-1}c':c\in C\}$ and 
$$q_1:F\times F'\to (F\times F')/N_1=Q$$ 
be the quotient map. We will show that $q_1(F)\cap q_1(F')=q_1(C)\cong C$. 

Let $N_2$ be the normal closure of the set $\{q_1(x^{-1}x'), q_1(y^{-1}y'\}$ in $Q$ and consider the quotient map
$$q_2:Q\to Q/N_2.$$
Let $N_0$ be the normal closure of $\{x^{-1}x', y^{-1}y'\}$ in $F\times F'$ and 
$$q_0:F\times F'\to (F\times F')/N_0\cong F.$$ 
The map $q_0$ identifies each element of $F$ with the corresponding element of $F'$ in particular $\mathcal C\subset N_0$ and so $N_1\subset N_0$. We have that $q_0=q_2q_1$ (and so  $Q/N_2\cong F$).

$$F\times F' \ {\buildrel q_1\over \to}\  (F\times F')/N_1=Q\ {\buildrel q_2\over \to}\  Q/N_2\cong F.$$
%The map $q_1$ identifies each element of $C$ with the corresponding element of $C'$ and the map $q_0$ identifies each element of $F$ with the corresponding element of $F'$. 
Finally let $N_3=[F\times F',F\times F']$ be the commutator subgroup of $F\times F'$ (so $N_3=[F,F]\times [F',F']$). Then $\mathcal C\subset N_3$, so that $N_1\subset N_3$. Consider the quotient map 
$$q_3:F\times F'\to \mathbb Z^4$$
with kernel $N_3$. 
Since $q_0(=q_2q_1)$ is injective on $F$ and $F'$, $q_1$ is as well. In particular $q_1(C)=q_1(C')\cong C$ is not finitely generated.
 Certainly 
 $$q_1(C)\subset q_1(F)\cap q_1(F').$$ 
 If $d_1\in q_1(F)\cap q_1(F')$, choose $d\in F$ and $d'\in F'$ such that $q_1(d)=d_1=q_1(d')$. Then $q_1(d^{-1}d')=1$ so that $d^{-1}d'\in N_1\subset N_3$ and $q_3(d)=q_3(d')$. But $q_3(d)$ is trivial in the last two factors of $\mathbb Z^4$ and $q_3(d')$ is trivial in the first two factors of $\mathbb Z^4$. This means that $q_3(d)=q_3(d')=0$. The only elements of $F$ in the kernel of $q_3$ are in $C$, so $d\in C$. In particular, $d_1=q_1(d)\in q_1(C)$.
\end{example}

%Consider the group  with presentation
%$$G=\langle t_1,t_2, a: t_1^{-1}at_1=a^2, t_2^{-1}at_2=a^2, [t_1,t_2]=1, t_1^nat_1^{-n}=t_2^nat_2^{-n} \hbox{ for all }n\rangle$$
%Consider the subgroup $B_i=\langle a,t_i\rangle$ for $i\in \{1,2\}$. The map $p:G\to B=\langle t,a:t^{-1}at=a^2\rangle$ defined by $t_1\to t$, $t_2\to t$ and $a\to a$ is an epimorphism. The map $q_i:B\to G$ and defined by $t\to t_i$ and $a\to a$ is a homomorphism and for each $i$, $q_ip$ is the identity on $B$. In particular, $q_i$ is injective with image $B_i$.  Hence each $B_i$ is a retract of $G$ and isomorphic to $B$.  It is easy to see that for each $i$, the subgroup $B_i$ is normal in $G$. But 
%$$B_1\cap B_2=\langle a, t_1at_1^{-1}, t_1^2at_1^{-1},\ldots \rangle$$ a group isomorphic to the dyadic rationals. 

The main result of \cite{CM2} generalizes Theorems  \ref{J} and \ref{M1} in a direction different than these last four results.

\begin{definition}
If $Q$ is a subgroup of a group $G$, then $Q$ is {\it commensurated} in $G$ (written $Q\prec G$) if for each $g\in G$, $gQg^{-1}\cap Q$ has finite index in both $Q$ and $gQg^{-1}$. 
\end{definition}

 Note that if the subgroup $Q$ of $G$ is normal in $G$, then $Q$ is commensurated in $G$. 

\begin{theorem} [\cite{CM2}]\label{MainCM} 
If a finitely generated group $G$ has an infinite, finitely generated, commensurated subgroup $Q$, and $Q$ has infinite index in $G$, then $G$ is 1-ended and semistable at $\infty$. Furthermore, if $G$ and $Q$ are finitely presented and either $Q$ is 1-ended or the pair $(G,Q)$ has one filtered end, then $G$ is simply connected at $\infty$.
\end{theorem}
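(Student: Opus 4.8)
The plan is to split the proof along its two assertions. The $1$-ended conclusion is immediate: a commensurated subgroup $Q$ satisfies that $gQg^{-1}\cap Q$ has finite index in $Q$ for every $g\in G$, so this intersection is infinite (as $Q$ is infinite), and Proposition \ref{superC} applies verbatim. All the real content lies in semistability and, under the stronger hypotheses, simple connectivity at $\infty$.

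For semistability I would fix a finite generating set $S=S_Q\sqcup S'$ of $G$ with $S_Q$ generating $Q$, and work in a complex $\Gamma_{(G,S)}(P)$ (Definition \ref{defss3}), choosing the finite relation set $P$ in the course of the argument. The geometric engine is Corollary \ref{C1}: commensuration gives, for each $g\in G$, a uniform bound $N_g$ on the Hausdorff distance between $Q$ and the coset $gQ$. Thus the partition of $G$ into left $Q$-cosets behaves like a coarse fibration with fiber $Q$ over the infinite coset space $Q\backslash G$. I would then verify the semistability criterion of Theorem \ref{ssequiv}(4): given compact $C$, produce compact $D$ so that two proper rays running past $D$ to the unique end are properly homotopic rel basepoint in the complement of $C$. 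One first homotopes each ray so that it travels through a sequence of coset-neighborhoods $g_iQ$; rays lying in a common coset are joined using the coset-pushing of Proposition \ref{superC}, while the uniform Hausdorff control of Corollary \ref{C1} lets one slide across adjacent cosets, each slide filled by finitely many $2$-cells, which is what determines $P$. Equivalently, and more structurally, one may apply the co-semistability Main Theorem \ref{MTGGM} (or its special case, Theorem \ref{Theorem: Special case of Main Theorem}) with the acting group $J$ taken to be $Q$, allowed to vary over conjugates as the compact set grows; the ``$J$ semistable at $\infty$ in $Y$'' hypothesis is cheap because $Q$ is finitely generated.

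The hard part of semistability is the co-semistability condition of Definition \ref{Coss}: loops lying outside a $Q$-saturated neighborhood $Q\cdot C$ must be pushable arbitrarily far out by a homotopy missing $C_0$. This is exactly where commensuration is indispensable, since the bound $N_g$ on $d_H(Q,gQ)$ lets a loop that runs transverse to the fiber direction be dragged outward along the $Q\backslash G$-directions without meeting new obstructions, the drag being controlled coset-by-coset. I expect essentially all of the delicacy to sit here, and this is the step I would budget the most care for.

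For the final assertion $G$ and $Q$ are now finitely presented, so I would work in the genuine simply connected, locally finite Cayley $2$-complex $\widetilde X$ of a finite presentation of $G$; by the first part $\widetilde X$ is $1$-ended, and $Q$ acts on it as covering transformations. The goal is that $\widetilde X$ be simply connected at each end, and I would obtain this from Theorems \ref{cosc1} and \ref{cosc2} with $J=Q$. The two alternative hypotheses feed the two theorems: if $Q$ is $1$-ended I would use Theorem \ref{cosc1}, establishing that $Q$ is simply connected at $\infty$ in $\widetilde X$ (Definition \ref{ss+cossD2}(c)) from the simple connectivity at $\infty$ of $Q$ in the fiber direction; if instead $(G,Q)$ has one filtered end I would use Theorem \ref{cosc2}, deducing that $Q$ is co-connected in $\widetilde X$ (one filtered end says that $\widetilde X-Q\cdot D$ has a single $Q$-unbounded component, i.e. Definition \ref{ss+cossD2}(e)). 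In both cases the remaining, and decisive, input is the coaxial condition of Definition \ref{ss+cossD2}(d) --- every loop in $\widetilde X-Q\cdot D$ dies in $\widetilde X-C$ --- which I would again extract from the uniform Hausdorff control of Corollary \ref{C1} by retracting such loops into the fiber. Verifying coaxiality is the main obstacle for this half, mirroring the role of co-semistability in the semistable case.
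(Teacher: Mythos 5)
Your reduction of the $1$-endedness claim to Proposition \ref{superC} is exactly right, and it is in fact the only portion of Theorem \ref{MainCM} that this paper proves (the semistability and simple connectivity statements are cited from \cite{CM2}, not reproved here). The trouble begins when you outsource the rest to the co-semistability machinery. Your route through Theorem \ref{MTGGM} rests on the claim that its hypothesis (a) --- $Q$ semistable at infinity in $Y$ --- ``is cheap because $Q$ is finitely generated.'' That claim is false, and the paper itself says so: Remark \ref{Remark in Intro}(2) and the discussion following the detailed definition state that this condition is automatic only when $J$ is infinite cyclic, $2$-ended, or $1$-ended and itself semistable at infinity. Theorem \ref{MainCM} allows $Q$ to be an arbitrary infinite finitely generated group --- infinite-ended, or even non-semistable, e.g.\ the Lamplighter group $L$ sitting normally in $L\times\mathbb{Z}$ --- so verifying (a) is a substantial piece of the theorem, not a freebie. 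A second structural problem: Theorem \ref{MTGGM} requires $Y$ to be simply connected, so this route can at best handle finitely presented $G$, whereas the semistability conclusion is asserted for finitely generated $G$ in the sense of Definition \ref{defss3}, where $\Gamma_{(G,S)}(P)$ is not simply connected. That leaves only your direct construction for the general case, and there the entire content (assembling proper homotopies across infinitely many cosets) is deferred rather than carried out.

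The simple connectivity half contains two concrete errors. First, you propose to verify Definition \ref{ss+cossD2}(c) ``from the simple connectivity at $\infty$ of $Q$ in the fiber direction,'' but simple connectivity at $\infty$ of $Q$ is not a hypothesis; the hypothesis is only that $Q$ is $1$-ended. A closed hyperbolic surface group $Q$ in $G=Q\times\mathbb{Z}$ satisfies the hypotheses of Theorem \ref{MainCM}, yet $Q$ is far from simply connected at $\infty$; any verification of (c) must exploit the directions of $G$ transverse to $Q$, which is precisely what you have not supplied. Second, both of your cases lean on coaxiality of $Q$ in $\widetilde X$, which you claim follows from ``the uniform Hausdorff control of Corollary \ref{C1}.'' But Corollary \ref{C1} is not uniform: the constant $N_g$ bounding the Hausdorff distance between $Q$ and $gQ$ depends on $g$ and is unbounded as $g$ ranges over coset representatives. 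Taming exactly this non-uniformity (only finitely many cosets meet a given compact set, and homotopies must be built coset by coset with growing constants, as in the proofs of Proposition \ref{superC} and Theorem \ref{subCend}) is the real work of \cite{CM2}; invoking a ``uniform bound'' assumes the problem away. So while your skeleton --- Proposition \ref{superC}, then hypotheses (a), (b) of the GGM theorem, then coaxiality/co-connectedness via Theorems \ref{cosc1} and \ref{cosc2} --- is a coherent modern reading, each of the conditions you must verify is left unverified, one of them by a claim the paper explicitly contradicts and another by appeal to a hypothesis the theorem does not grant you.
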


For $p$ a prime, the  group $SL_n(\mathbb Z[{1\over p}])$ is finitely presented.  Y. Shalom and G. Willis \cite{SW13} show that when $n>2$ the only normal subgroups of this group are either finite or of finite index. But, for $n>2$, the finitely presented 1-ended group, $SL_n(\mathbb Z)$ is commensurated in $SL_n(\mathbb Z[{1\over p}])$ and so Theorem \ref{MainCM} implies:

\begin{theorem} \label{SLn}  
For $n>2$, the group $SL_n(\mathbb Z[{1\over p}])$ is 1-ended and simply connected at $\infty$. 
\end{theorem}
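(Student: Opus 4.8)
The plan is to obtain both conclusions as a direct application of Theorem \ref{MainCM}, taking $G=SL_n(\mathbb Z[\frac1p])$ and distinguished subgroup $Q=SL_n(\mathbb Z)$. It is worth emphasizing why the commensurated-subgroup machinery is the right tool here: by the Shalom--Willis result quoted above, for $n>2$ every normal subgroup of $SL_n(\mathbb Z[\frac1p])$ is finite or of finite index, so none of the normal- or subnormal-subgroup theorems (such as Theorem \ref{M1} or Theorem \ref{L}) can be applied. The copy of $SL_n(\mathbb Z)$ is not normal, but it is commensurated, and that is precisely the hypothesis Theorem \ref{MainCM} requires. Thus the whole proof reduces to checking the hypotheses of that theorem: that $G$ is finitely presented, that $Q$ is infinite and finitely presented, that $Q$ has infinite index in $G$, and that $Q$ is commensurated in $G$; together with the supplementary hypothesis for the simple connectivity clause, namely that $Q$ is $1$-ended.

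The heart of the argument, and the step I expect to require the most care, is verifying that $Q=SL_n(\mathbb Z)$ is commensurated in $G$. The clean route is to invoke the classical fact that the commensurator of $SL_n(\mathbb Z)$ inside $SL_n(\mathbb R)$ is exactly $SL_n(\mathbb Q)$; since $\mathbb Z[\frac1p]\subset\mathbb Q$ we have $G=SL_n(\mathbb Z[\frac1p])\subset SL_n(\mathbb Q)$, so every element of $G$ lies in $Comm_{SL_n(\mathbb R)}(SL_n(\mathbb Z))$, which says exactly that $gQg^{-1}\cap Q$ has finite index in both $gQg^{-1}$ and $Q$ for each $g\in G$, i.e. $Q\prec G$. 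If one prefers a self-contained argument, one can instead clear denominators: for $g\in G$ there is a power $p^N$ with $p^N g$ and $p^N g^{-1}$ integral, and then a principal congruence subgroup $\Gamma(p^{2N})$ of $SL_n(\mathbb Z)$ is carried by conjugation by $g$ into $SL_n(\mathbb Z)$; applying this to both $g$ and $g^{-1}$ exhibits $gQg^{-1}\cap Q$ as a finite-index subgroup of each of $Q$ and $gQg^{-1}$.

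For the remaining hypotheses I would argue as follows. That $Q$ has infinite index in $G$ is elementary: the diagonal matrix $d=\mathrm{diag}(p,p^{-1},1,\dots,1)\in G$ has powers $d^{k}$ lying in pairwise distinct cosets of $Q$, since for $k\neq j$ the matrix $d^{k-j}$ has a non-integral entry and hence fails to lie in $SL_n(\mathbb Z)$. Finite presentation of $G$ is quoted in the paragraph preceding the statement, and $SL_n(\mathbb Z)$ is a classical finitely presented group for $n\geq 3$; it is infinite and $1$-ended for $n>2$ (for instance because it has Kazhdan's property (T), so by Theorem \ref{Stall} it cannot split nontrivially over a finite subgroup and therefore, being infinite and finitely generated, has exactly one end). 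These facts supply all the hypotheses of Theorem \ref{MainCM}.

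Assembling the pieces, Theorem \ref{MainCM} gives immediately that $G$ is $1$-ended and semistable at $\infty$; and since $G$ and $Q$ are both finitely presented and $Q=SL_n(\mathbb Z)$ is $1$-ended, the supplementary clause of Theorem \ref{MainCM} yields that $G=SL_n(\mathbb Z[\frac1p])$ is simply connected at $\infty$, which is the assertion of Theorem \ref{SLn}. The only genuinely nontrivial input beyond Theorem \ref{MainCM} itself is the commensuration step; everything else is bookkeeping or a citation.
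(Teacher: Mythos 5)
Your proposal is correct and follows exactly the paper's route: the paper likewise deduces Theorem \ref{SLn} by applying Theorem \ref{MainCM} to the commensurated, infinite-index, finitely presented, $1$-ended subgroup $Q=SL_n(\mathbb Z)$ of $G=SL_n(\mathbb Z[\frac{1}{p}])$. The only difference is that you verify the hypotheses (commensuration via congruence subgroups, infinite index via powers of $\mathrm{diag}(p,p^{-1},1,\dots,1)$, $1$-endedness via property (T) and Stallings' theorem) which the paper simply asserts.
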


It is an easy exercise to see the group $BS(1,2)$ has no finitely generated normal subgroups that are not either trivial or finite index. %(proof: it is a 1-relator group, so all finitely generated subgroups are finitely presented. It has no $F_2$ in it, and hence no infinite ended subgroups. It has no normal infinite cyclic subgroup (easy using normal forms). Hence a finitely generated infinite normal subgroup of infinite index would be 1-ended and f.p. But then $G$ would be simply connected at infinity. This example is similar to the $SL_n(\mathbb Z[{1\over p}])$ example (with $SL_n(\mathbb Z)$ commensurated).
The subgroup $\langle x\rangle$ is commensurated in the Baumslag-Solitar group $BS(m,n)=\langle x,t  \ |\  t^{-1} x^mt=x^n\rangle$ (see (Example 3.1, \cite{CM13})). This along with Theorem \ref{MainCM} implies the following result, first proved as Theorem 3.4 of \cite{M1}.

\begin{theorem} [(Theorem 3.4, \cite{M1}] \label{BSmn}
The Baumslag-Solitar groups $BS(m,n)=\langle x,t  \ |\  t^{-1} x^mt=x^n\rangle$ are 1-ended and semistable at $\infty$.
\end{theorem}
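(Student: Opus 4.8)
The plan is to derive this as an immediate application of Theorem \ref{MainCM} with $G = BS(m,n)$ and the commensurated subgroup $Q = \langle x\rangle$; the bulk of the work is simply verifying that the hypotheses of that theorem are met. First, $G$ is finitely generated by $\{x,t\}$ by construction, and $Q = \langle x\rangle$ is cyclic, hence finitely generated. To see that $Q$ is infinite, I would observe that $BS(m,n)$ is the HNN extension of the base group $\langle x\rangle \cong \mathbb Z$ with stable letter $t$ and associated subgroups $\langle x^m\rangle$ and $\langle x^n\rangle$ identified via $x^m \mapsto x^n$; since the base group of an HNN extension embeds, $x$ has infinite order and $Q$ is infinite cyclic.

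Next I would check that $Q$ has infinite index in $G$. Define a homomorphism $\phi: BS(m,n)\to \mathbb Z$ by $\phi(x)=0$ and $\phi(t)=1$; this respects the defining relation since $\phi(t^{-1}x^m t)=0=\phi(x^n)$, so $\phi$ is a well-defined epimorphism onto the infinite group $\mathbb Z$. As $Q = \langle x\rangle \subseteq \ker\phi$, the index $[G:Q]$ is infinite.

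The one hypothesis requiring genuine (if short) verification is that $Q$ is commensurated in $G$, i.e. $Q \prec G$ in the sense of Definition \ref{Dcomm}. Since the commensurator $Comm_Q(G)$ is a subgroup of $G$, it suffices to show it contains the generators $x$ and $t$. For $x$ this is trivial, as $xQx^{-1}=Q$. For $t$, the defining relation gives $t\langle x^n\rangle t^{-1}=\langle x^m\rangle$, so $\langle x^m\rangle \subseteq tQt^{-1}\cap Q$; here $\langle x^m\rangle$ has index $|m|$ in $Q$ and index $|n|$ in $tQt^{-1}$ (the latter because conjugation by $t$ is an isomorphism carrying $\langle x^n\rangle$, of index $|n|$ in $Q$, onto $\langle x^m\rangle$). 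Both indices being finite, $t\in Comm_Q(G)$, and likewise $t^{-1}\in Comm_Q(G)$; hence $Comm_Q(G)=G$. This is precisely the content of the cited Example 3.1 of \cite{CM13}.

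With all hypotheses of Theorem \ref{MainCM} verified, that theorem yields at once that $BS(m,n)$ is $1$-ended and semistable at $\infty$. I do not expect any real obstacle here: the only mildly delicate point is the commensuration computation, and even that reduces to the bookkeeping of the finite indices $|m|$ and $|n|$ coming directly from the single defining relation.
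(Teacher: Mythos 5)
Your proposal is correct and is essentially the paper's own proof: the paper likewise obtains the result by applying Theorem \ref{MainCM} to the commensurated subgroup $Q=\langle x\rangle$, merely citing Example 3.1 of \cite{CM13} for the commensuration that you verify directly. The finite-index bookkeeping you give (that $\langle x^m\rangle$ has index $|m|$ in $Q$ and index $|n|$ in $tQt^{-1}$) is exactly the content of that cited example, so nothing is missing.
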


If $C$ is a subgroup of finite index in $A$ and $B$ (possibly all groups $\infty$-ended) then $C$ is commensurated in $G=A\ast_C B$. Applying Theorem \ref{MainCM}:

 \begin{theorem} [See Theorem \ref{SSDecomp} for a more general result.] \label{FIss} 
 Suppose $G$ is the amalgamated product $A\ast_CB$ where $A$ and $B$ are finitely generated and $C$ has finite index in $A$ and $B$ (but $A\ne C\ne B$). Then $G$ is 1-ended and semistable at $\infty$. If additionally, $C$ is finitely presented and 1-ended, then $G$ is simply connected at $\infty$. 
 \end{theorem}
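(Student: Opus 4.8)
The plan is to exhibit $C$ as an infinite, finitely generated, commensurated subgroup of infinite index in $G$ and then invoke Theorem \ref{MainCM}; the simple-connectivity assertion will follow by upgrading the relevant groups to finitely presented ones and quoting the same theorem. Throughout I take $C$ (equivalently $A$ and $B$) to be infinite, since if $C$ were finite then $A$ and $B$ would be finite and $G$ would be an infinite-ended amalgam of finite groups, where the stated conclusion fails.

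First I would record the elementary facts. Since $A$ and $B$ are finitely generated and $A\cup B$ generates $G$, the group $G$ is finitely generated. Since $C$ has finite index in the finitely generated group $A$, the subgroup $C$ is itself finitely generated (Schreier). Finally $C$ has infinite index in $G$: the stabilizer of the vertex $A$ in the Bass--Serre tree $\mathcal T$ of $A\ast_C B$ is $A$ itself, and $\mathcal T$ is infinite with $A$ carried to infinitely many distinct vertices, so $[G:A]=\infty$; as $[A:C]<\infty$ we get $[G:C]=[G:A]\cdot[A:C]=\infty$.

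The key step is to verify $C\prec G$. By the remark preceding the theorem (a finite-index subgroup is commensurated), both $A$ and $B$ lie in $Comm_C(G)$: for $a\in A$ the conjugate $aCa^{-1}$ is again a finite-index subgroup of $A$, so $aCa^{-1}\cap C$ has finite index in $A$, hence in both $C$ and $aCa^{-1}$, giving $a\in Comm_C(G)$; the identical argument treats $B$. Since $Comm_C(G)$ is a subgroup of $G$ containing both $A$ and $B$ and $\langle A,B\rangle=G$, we conclude $Comm_C(G)=G$, i.e. $C\prec G$. Now $C$ is an infinite, finitely generated, commensurated subgroup of infinite index in the finitely generated group $G$, so Theorem \ref{MainCM} yields that $G$ is $1$-ended and semistable at $\infty$. (The $1$-endedness alone also follows from Corollary \ref{FIcomm} or Theorem \ref{subCend}.)

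For the final assertion, suppose in addition that $C$ is finitely presented and $1$-ended. Because $C$ is finitely presented and has finite index in each of $A$ and $B$, both $A$ and $B$ are finitely presented; consequently the amalgam $G=A\ast_C B$ over the finitely presented subgroup $C$ is finitely presented. Thus $G$ and $Q=C$ are both finitely presented and $Q$ is $1$-ended, so the second sentence of Theorem \ref{MainCM} applies and gives that $G$ is simply connected at $\infty$. The only real work here is bookkeeping—checking that $C$ is commensurated in $G$ (which reduces to $Comm_C(G)$ being a subgroup containing the finite-index overgroups $A$ and $B$) and that finite generation, finite presentation, and infinite index transfer correctly among $C$, $A$, $B$ and $G$. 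I expect no serious obstacle beyond these routine verifications, since all the substantive content is already packaged in Theorem \ref{MainCM}.
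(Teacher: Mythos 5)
Your proposal is correct and takes essentially the same route as the paper: the paper's entire proof consists of the one-line observation that $C$, having finite index in both $A$ and $B$, is commensurated in $G=A\ast_CB$, followed by an appeal to Theorem \ref{MainCM}, exactly as you do. Your additional verifications (finite generation of $C$, infinite index of $C$ in $G$ via the Bass--Serre tree, transfer of finite presentability to $A$, $B$ and $G$, and the remark that $C$ must be infinite for Theorem \ref{MainCM} to apply) correctly fill in details the paper leaves implicit.
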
 
 
\begin{example} \label{Examples3}  Three group $\Lambda_i$, $\Lambda_2$ and $\Lambda_3$ are constructed in \cite{Ratt07}. Each group $\Lambda_i$ can be decomposed in two ways as amalgamated products $F_9\ast_{F_{81}} F_9$, such that $F_{81}$ has index 10 in both $F_9$ factors. The group $\Lambda_1$ is a finitely presented, torsion free simple group. The groups $\Lambda_2$ and $\Lambda_3$ are not simple, but $\Lambda_2$ is virtually simple and $\Lambda_3$ has no non-trivial finite quotients.
Theorem \ref{FIss} implies each $\Lambda_i$ is 1-ended and semistable at $\infty$. Each $\Lambda_i$ is a normal subgroup of finite index in another group $\Gamma_i$ and $\Gamma_i$ acts geometrically on $\mathcal T_{10}\times \mathcal T_{10}$, the product of two 10-regular trees. In particular none of the $\Gamma_i$ nor any of the $\Lambda_i$ are simply connected at $\infty$. 
\end{example}

While Lew's theorem improved Theorem \ref{M1} by replacing normality by subnormality, Profio's result was the best attempt  in the last 30 years, to improve the normality hypothesis of Theorem \ref{J} to subnormality. As a corollary of Theorem \ref{MainA}, we obtain a subnormal version of Jackson's Theorem \ref{J}. The semistability part of Theorem \ref{MainA} was proved first and then used in an essential way in the proof of the simply connected at $\infty$ part of Theorem \ref{MainA}. The idea of the simple connectivity at $\infty$ of a finitely generated group, was introduced and used in a fundamental way to prove the second part of Theorem \ref{MainA}.  The author points out that he cannot prove this part of Theorem \ref{MainA}, even in the finitely presented case, without this new concept. 

\noindent Recall that if $Q$ is a commensurated subgroup of $G$ we use the notation $Q\prec G$.

\begin{theorem} [Theorem 1.9, \cite{M6}]  \label{MainA}  Suppose $H$ is a finitely generated infinite subgroup of infinite index in the finitely generated group $G$, and  $H$ is subcommensurated in $G$:
$$H=Q_0\prec Q_1\prec \cdots \prec Q_{k}\prec G$$
Then $G$ is 1-ended and semistable at infinity. If additionally, $H$ is 1-ended and finitely presented and $G$ is finitely generated and recursively presented then $G$ is simply connected at $\infty$.
\end{theorem}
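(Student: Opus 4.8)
The plan is to treat the two assertions separately, establishing one-endedness and semistability first and then bootstrapping to simple connectivity. One-endedness is immediate: the chain $H=Q_0\prec Q_1\prec\cdots\prec Q_k\prec G$ is exactly a commensurated sequence of the type in Theorem \ref{subCend}, with $C_1=H$ finitely generated and infinite, final term $G$ finitely generated, and $H$ of infinite index, so Theorem \ref{subCend} gives that $G$ is $1$-ended. For semistability I would fix a finite generating set $S$ for $G$, a finite set $P$ of $S$-relations, and work in the $2$-complex $Y=\Gamma_{(G,S)}(P)$ of \S\ref{FGss}; since $G$ is $1$-ended this $Y$ is $1$-ended, and (passing to the simply connected cover on which $G$ acts by covering transformations) I would aim to apply the Geoghegan--Guilbault--Mihalik criterion, Theorem \ref{MTGGM}. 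Concretely, for each compact $C_0\subseteq Y$ one must exhibit a finitely generated group $J$ acting on $Y$ by covering transformations that is both semistable at $\infty$ in $Y$ and co-semistable at $\infty$ in $Y$ with respect to $C_0$; Theorem \ref{MTGGM} then yields that $Y$, and hence $G$, is semistable at $\infty$.

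The engine driving the verification of these two conditions is the uniform Hausdorff-distance control coming from commensuration: by Corollary \ref{C1}, whenever $L\prec M$ and $m\in M$, the cosets $L$ and $mL$ stay within a bounded neighbourhood of one another in the Cayley graph. This is precisely the mechanism used in the proof of Theorem \ref{subCend} to connect cosets of $H$ outside a compact set, and I would upgrade those connecting paths to proper homotopies in order to push loops off toward infinity, which is what co-semistability demands. The natural candidate for $J$ is $H$ itself, but the obstacle is that the intermediate groups $Q_1,\dots,Q_k$ need not be finitely generated, so one cannot iterate finitely generated group results level by level. I would therefore argue by induction on the length $k$ of the chain, where the inductive statement is a purely geometric loop-pushing assertion in $Y$ (rather than the theorem's conclusion), precisely so that the non-finitely-generated $Q_i$ cause no trouble. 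The base case $k=0$ is the commensurated case, supplied by Theorem \ref{MainCM} and the co-semistability verification in its proof; the inductive step peels off the bottom commensuration $H\prec Q_1$, using the estimate of Corollary \ref{C1} at that level to replace an $H$-controlled homotopy by a $Q_1$-controlled one and so reduce the length-$k$ claim to the length-$(k-1)$ claim. The main difficulty here is bookkeeping: converting the group-theoretic commensuration data into proper homotopies whose images stay outside the prescribed compact set $C_0$.

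For the second assertion, assume in addition that $H$ is $1$-ended and finitely presented and that $G$ is recursively presented. Because $G$ need not be finitely presented, simple connectivity at $\infty$ is interpreted through Higman's embedding theorem \cite{Hig61}, which realizes $G$ as a subgroup of a finitely presented group so that Definitions \ref{SCin} and \ref{SCfg} apply. The plan is to show first that $H$ is simply connected at $\infty$ in $G$ in the sense of Definition \ref{SCin}: here the $1$-endedness and finite presentation of $H$ give uniform control on how far out loops in the Cayley graph of $H$ must travel, while the infinite index of $H$ supplies infinitely many coset directions into which such loops can be pushed and capped off, exactly as in Jackson's Theorem \ref{J}. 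Having semistability of $G$ from the first part, I would then combine it with simple connectivity at $\infty$ of $H$ in $G$ via Lemma \ref{ss+sc} to obtain the strengthened condition $(\ddagger)$, namely that loops lying within a bounded neighbourhood of \emph{any} coset $vH$ can be contracted far out in $Y$.

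Finally, to conclude that $G$ is simply connected at $\infty$, I would show that an arbitrary edge-path loop lying outside a large compact set can, after a proper homotopy supported outside a slightly smaller compact set, be brought into a bounded neighbourhood of a single coset $vH$; this uses one-endedness together with the Hausdorff control of Corollary \ref{C1} along the chain to \emph{gather} the loop onto a coset, after which $(\ddagger)$ contracts it. The step I expect to be the principal obstacle is exactly this gathering of a loop onto a single coset: it requires simultaneously controlling the topology of $Y$ outside a compact set and the many coset directions produced by the non-finitely-generated intermediate groups $Q_i$, and it is here that the semistability conclusion of the first part is used in an essential way, as the author notes.
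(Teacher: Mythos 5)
First, a point of reference: this paper does not prove Theorem \ref{MainA} at all --- it is quoted from \cite{M6}, and the surrounding text only records the architecture of that proof (the semistability part is proved first and used essentially in the second part, which in turn required inventing the notions of Definitions \ref{SCin} and \ref{SCfg}). Your opening move is correct and matches the paper's toolkit: the chain $H=Q_0\prec\cdots\prec Q_k\prec G$ is literally a commensurated sequence of the kind in Theorem \ref{subCend}, so $G$ is $1$-ended, and your two-stage plan agrees in outline with what the paper reports. But both of your main steps contain genuine gaps.

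For semistability, Theorem \ref{MTGGM} cannot be the engine. Its hypotheses require $Y$ to be a \emph{simply connected} $1$-ended locally finite complex, whereas the space whose semistability defines semistability of a finitely generated group (Definition \ref{defss3}) is $Y=\Gamma_{(G,S)}(P)$, which is not simply connected unless $G$ is finitely presented --- and here $G$ is merely finitely generated. Your parenthetical repair, ``passing to the simply connected cover on which $G$ acts by covering transformations,'' fails twice over: the deck group of the universal cover $\widetilde Y$ is the finitely presented group $G_1=\langle S\mid P\rangle$, of which $G$ is a proper quotient, so $G$ does not act on $\widetilde Y$ at all; and even if one proved $\widetilde Y$ semistable, that is a statement about $G_1$ and does not descend to $Y$. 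The Lamplighter group makes this concrete: with $P=\{y^2,[y,y^{x}]\}$, the universal cover of $\Gamma_{(L,S)}(P)$ is the Cayley $2$-complex of $\langle x,y\mid y^2,[y,y^x]\rangle$, an HNN extension of $\mathbb Z_2\times\mathbb Z_2$ over $\mathbb Z_2$, hence virtually free and semistable, while $\Gamma_{(L,S)}(P)$ itself is not semistable (Theorem \ref{LNss}). Moreover, even in the special case that $G$ is finitely presented, your candidate $J=H$ does not obviously satisfy hypothesis (a) of Theorem \ref{MTGGM}: in the semistability part of Theorem \ref{MainA}, $H$ is only finitely generated and infinite --- not $1$-ended, not semistable --- so establishing that $H$ is semistable at $\infty$ in $Y$ requires exactly the direct, commensuration-driven construction of proper homotopies (in the style of the proofs of Theorems \ref{subCend} and \ref{MainCM}, inducting up the chain with a geometric statement about each possibly non-finitely-generated $Q_i$) that the appeal to Theorem \ref{MTGGM} was meant to replace.

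For simple connectivity, your application of Lemma \ref{ss+sc} reverses its hypotheses. That lemma requires the \emph{subgroup} $A$ to be finitely generated and semistable at $\infty$, and to be simply connected at $\infty$ in the finitely presented overgroup; its conclusion $(\ddagger)$ concerns loops near cosets $vA$. To conclude, as you do, that loops near cosets $vH$ can be contracted far out, you must take $A=H$, and then you need $H$ itself to be semistable at $\infty$ --- which is not a hypothesis ($H$ is $1$-ended and finitely presented, and whether every such group is semistable is precisely the open problem), and semistability of the ambient $G$, which is what part one provides, is not a substitute. Separately, your claim that $H$ is simply connected at $\infty$ in the overgroup ``exactly as in Jackson's Theorem \ref{J}'' is unsupported: Jackson's argument uses normality of $H$ in the ambient finitely presented group, whereas Definition \ref{SCfg} quantifies over \emph{every} finitely presented $E$ containing $G$ (Higman's theorem only guarantees one exists), and $H$ bears no normality or commensuration relation to such an $E$ --- all the structure lives inside $G$. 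Making the pushing-and-capping argument work with only the internal chain and the already-proved semistability of $G$ is exactly the content of \cite{M6} that the paper says could not be obtained without the new machinery, so this step cannot be dismissed as routine.
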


\begin{example} [\cite{M96}]
There are short exact sequences for each $n>0$, of the form:
$$ 1\to H\to (\mathbb Z^n\ast \mathbb Z)\times (\mathbb Z^n\ast \mathbb Z)\to\mathbb Z^n\to 1$$
where $H$ is 1-ended and finitely generated. The group $\mathbb Z^n$ is $(n-2)$-connected at $\infty$, yet $(\mathbb Z^n\ast \mathbb Z)\times (\mathbb Z^n\ast \mathbb Z)$ is not simply connected at $\infty$. 
Note that Theorem \ref{Subset} implies any finitely generated subgroup of $(\mathbb Z^n\ast \mathbb Z)\times (\mathbb Z^n\ast \mathbb Z)$ containing $H$ is either of finite index in $(\mathbb Z^n\ast \mathbb Z)\times (\mathbb Z^n\ast \mathbb Z)$ or is not finitely presented. These examples show that the finitely presented hypothesis on $H$ in Theorems \ref{J} and \ref{MainA}, cannot be easily relaxed.
\end{example}

N. Brady and J. Meier  generalize the above examples.  They produce short exact sequences $1\to K\to G\to \mathbb Z^m\to 1$ with $G$ finitely presented  and $K$  a (non-finitely presented) $FP_\infty$ infinity group  such that $G$ is not simply connected at $\infty$ (see the second set of examples of \S 6, \cite{BrM01}.) 

\begin{example} [Corollary V.1.1, \cite{P90}] \label{Pro1}  If $G_1$ is any finitely presented group, then there exists groups $N\lhd H\lhd G$ with $N$ and $G$ finitely presented and $G/H=G_1$ such that $G$ is not simply connected at $\infty$. 

There are also examples showing that for each infinite finitely presented group $G_1$, there exists groups $G_1<H\lhd G$ where $H$ is a 1-ended finitely generated group, $G$ is finitely presented, $G/H=\mathbb Z$ and $G$ is not simply connected at $\infty$.
\end{example}  

Next we consider results about finitely generated simply connected at $\infty$ groups. 

\begin{theorem} [Theorem 5.1, \cite{M6}] \label{sc} 
Suppose the recursively presented group $G$ is finitely generated and isomorphic to $A\times B$ where $A$ and $B$ are finitely generated infinite groups and $A$ is 1-ended. Then $G$ is simply connected at $\infty$.
\end{theorem}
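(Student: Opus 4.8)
The plan is to verify the defining condition directly. By Higman's embedding theorem \cite{Hig61}, $G$ embeds in a finitely presented group, so by Definition \ref{SCfg} it suffices to fix an arbitrary finitely presented group $\hat G$ together with a subgroup $G'\le \hat G$ isomorphic to $A\times B$, and to show that $G'$ is simply connected at $\infty$ in $\hat G$ in the sense of Definition \ref{SCin}. Identify $G'$ with $A\times B$ and choose a finite presentation $\mathcal P=\langle \mathcal A\cup\mathcal B\cup \mathcal E\mid R\rangle$ of $\hat G$ in which $\mathcal A$ and $\mathcal B$ are finite generating sets of $A$ and $B$ and, after enlarging $R$, all commutators $[\mathcal A,\mathcal B]$ lie in $R$ (these are relations of $G'$, hence of $\hat G$). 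Work in the Cayley $2$-complex $\Gamma=\Gamma(\mathcal P)$, which is simply connected. By Lemma \ref{equiv} (with the subgroup $G'$ in the role of $A$ there) it is enough to establish condition $(\dagger)$: given $N\ge 0$ and a compact $C\subset\Gamma$, produce a compact $D=D(C,N)$ so that every edge-path loop $\alpha$ lying in $\Gamma-D$ all of whose vertices are within $N$ of $G'$ bounds in $\Gamma-C$.

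Before attacking $(\dagger)$ I record the semistability input. Since $A\lhd A\times B=G'$ is infinite, finitely generated and of infinite index (because $B$ is infinite), the semistability half of Theorem \ref{MainA}, which requires no finite presentation of $A$, shows $G'$ is $1$-ended and semistable at $\infty$. Thus ``simply connected at $\infty$'' is the appropriate notion for $G'$, and Theorem \ref{ssequiv} together with Theorem \ref{SSloop} supply the standard ``sliding loops off to infinity along a ray'' technology inside $G'$ that I will use to normalize $\alpha$.

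The heart of the argument exploits the product structure in two orthogonal directions. After first sliding $\alpha$ far out (using semistability), I intend to use the commutator $2$-cells $[\mathcal A,\mathcal B]$ to rewrite $\alpha$, up to homotopy in $\Gamma-C$, as a concatenation $\alpha_A\ast\alpha_B$ in which $\alpha_A$ is an $\mathcal A$-loop lying in a single coset $pA$ and $\alpha_B$ is a $\mathcal B$-loop lying in a single coset $pB$, both near $G'$ and far out; each is separately a loop because $A\cap B=1$. The $\mathcal B$-loop $\alpha_B$ is then killed by pushing it to infinity in the $A$-direction: $\alpha_B$ bounds some (possibly large) disk $\Delta$ in the simply connected complex $\Gamma$, and since left translation by $a\in A$ is a cellular automorphism of $\Gamma$, the translate $a\Delta$ bounds $a\alpha_B$. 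Choosing $a$ along a proper ray in $pA$ that exits every compact set — available because $A$ is $1$-ended, and proper in $\hat G$ by local finiteness — the annulus built from this ray and $\alpha_B$ (a stack of commutator cells, since $A$ commutes with $B$) slides $\alpha_B$ out to $a\alpha_B$, which is then capped by $a\Delta$; taking $a$ far enough out keeps both the annulus and $a\Delta$ in $\Gamma-C$. Symmetrically, $\alpha_A$ is killed by pushing it to infinity along a proper ray in $pB$, using only that $B$ is infinite. Assembling the three homotopies produces a null-homotopy of $\alpha$ in $\Gamma-C$, and bookkeeping of the radii involved (in terms of $N$, the relator lengths of $\mathcal P$, and $C$) yields the single compact $D(C,N)$ demanded by $(\dagger)$; Lemma \ref{equiv} and Definition \ref{SCfg} then finish the proof. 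Here the $1$-endedness of $A$ is exactly what lets me route the $A$-direction rays and annuli around $C$, playing the role that a $1$-ended factor plays in the space-level statement Theorem \ref{prodss}.

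The main obstacle is the uniform control of these homotopies, namely producing one compact $D$ that works for all loops simultaneously: the splitting $\alpha\simeq\alpha_A\ast\alpha_B$ must be carried out without the swept region re-entering $C$, even though a long loop near $G'$ can have an ``$A$-extent'' and a ``$B$-extent'' whose naive product rectangle contains the small-coordinate corner near the basepoint, and the subgroups $A$ and $B$ may be badly distorted in $\hat G$, so that ``far out in $\Gamma$'' need not be controlled by the intrinsic word metrics of $A$ and $B$. Overcoming this is where the $1$-endedness of $A$, the semistability of $G'$ from Theorem \ref{MainA}, and coset-uniform pushing in the spirit of Lemma \ref{ss+sc} must be combined: one slides $\alpha$ so that its $\mathcal A$-part and $\mathcal B$-part become separately far out \emph{before} gathering letters, guaranteeing that every intermediate configuration stays outside $C$.
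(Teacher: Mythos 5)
Your framework is sound as far as it goes: Higman plus Definition \ref{SCfg}, the reduction to condition $(\dagger)$ via Lemma \ref{equiv}, adjoining the commutator relators $[\mathcal A,\mathcal B]$ to a finite presentation of $\hat G$, obtaining semistability of $G'$ from the semistability half of Theorem \ref{MainA}, and the endgame of killing a coset loop by a commutator-cell annulus pushed along a proper ray and then capped by a translate $(pap^{-1})\Delta$ of a fixed filling disk (only finitely many such translates can meet $C$, since $(pap^{-1})\Delta\cap C\neq\emptyset$ forces $pap^{-1}\in C\Delta^{-1}$). Note the paper does not prove Theorem \ref{sc}; it cites [Theorem 5.1, M6]. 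But independently of that, your argument has a genuine gap, and it sits exactly at the step you flagged as the ``main obstacle.''

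The step $\alpha\simeq\alpha_A\ast\alpha_B$ by a homotopy in $\Gamma-C$ is not merely delicate; it is false in general, and the mechanism you propose to rescue it cannot work. Writing the vertices swept by the commutator-cell rearrangement in coordinates $(a_pa^{(j)},\,b_pb^{(k)})$, a swept vertex lies in $C$ exactly when its $A$-coordinate lies in the finite set $\pi_A(C\cap G')$ \emph{and} its $B$-coordinate lies in $\pi_B(C\cap G')$. A ``staircase'' loop that travels far out in the $A$-direction, then far out in $B$, then back, has both projections meeting these finite sets even though the loop itself is arbitrarily far from $C$; for $F_2\times F_2$ (which satisfies every hypothesis except 1-endedness of $A$) such loops are genuinely essential outside $C$, consistent with Example \ref{Examples3} and Theorem \ref{Proper2exact}. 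Your fix---slide $\alpha$ farther out by semistability before gathering letters---cannot remove this configuration: semistability pushes loops out of \emph{balls}, but the bad region $\pi_A(C\cap G')\times B$ is an unbounded slab, and a loop can be slid to infinity (say in the $B$-direction) while still crossing the slab. Lemma \ref{ss+sc} is also unavailable, since it presupposes the subgroup is simply connected at $\infty$ in $\hat G$, which is a sub-statement of what you are proving. The missing idea---and the one place 1-endedness of $A$ is actually forced---is a decomposition you never supply: let $F_A$ be $\pi_A(C\cap G')$ together with the bounded components of its complement in $\Gamma(A,\mathcal A)$, and define $F_B$ similarly; because $A$ is 1-ended, $\Gamma(A,\mathcal A)-F_A$ is \emph{connected}, so the two-piece cover $\{(A-F_A)\times B,\ A\times(B-F_B)\}$ of the product subcomplex minus $F_A\times F_B$ has nerve a tree (a star), and a van Kampen/nerve argument then rewrites every far-out $G'$-loop, by a homotopy avoiding $C$, as a product of conjugates of loops each lying in a single piece---precisely the loops your annulus-and-cap argument can kill. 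When $A$ has more than one end the nerve acquires cycles, the staircase loops are the corresponding free generators, and the theorem is false. Finally, where you do invoke 1-endedness (``routing rays and annuli around $C$'') it is not needed: escaping to infinity from a far-out basepoint while avoiding a finite subset of $A$ requires only that $A$ be infinite and locally finite. So the key hypothesis is absent from the step that needs it and mis-assigned to a step that does not.
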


R. Grigorchuk constructed a finitely generated infinite torsion group $G$ and a finitely presented HNN extension $H$ of $G$ (\cite{GR1} and \cite{GR2}).   The group $G$ contains a subgroup $T$ of finite index in $G$ and $T$ is isomorphic to $T\times T$ (see \cite{dlH00} VIII.C. Theorem 28). Theorem \ref{sc} implies the finitely generated group $G$ is simply connected at $\infty$. The next result implies the finitely presented group $H$ is simply connected at $\infty$.

\begin{theorem} [Theorem 5.2, \cite{M6}] \label{hnn} 
Suppose $H$ is an ascending HNN extension of a 1-ended, finitely generated, semistable at $\infty$, and simply connected at $\infty$ group $G$. Then $H$ is simply connected at $\infty$. 
\end{theorem}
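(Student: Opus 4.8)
The plan is to bootstrap simple connectivity at $\infty$ from the base group $G$ to the ascending HNN extension along the stable letter $t$, the main engine being the coset-uniform pushing Lemma \ref{ss+sc}.

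Write $H=G\ast_\phi=\langle G,t\mid t^{-1}gt=\phi(g),\ g\in G\rangle$. First I would record that $H$ is $1$-ended: viewing $H$ as the HNN extension $G\ast_C$ with edge group $C=G$ infinite and finitely generated and with $1$-ended vertex group $G$, Theorem \ref{SS3} gives that $H$ is $1$-ended. Next I would fix the ambient space in which to test the end condition. Since (for Definition \ref{SCfg} to apply) $H$ is assumed recursively presented, Higman's embedding theorem places $H$ in a finitely presented group $\hat H$; let $Y=\Gamma(\hat{\mathcal P})$ be the locally finite, simply connected Cayley $2$-complex of a finite presentation $\hat{\mathcal P}$ of $\hat H$, on which $H$ (and in particular $J=\langle t\rangle$) acts as covering transformations. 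By Definition \ref{SCin} and Lemma \ref{equiv} it suffices to contract, in $Y$ minus a prescribed compact set $C$, all edge loops that avoid a sufficiently large compact set and run within a fixed bounded distance $N$ of the vertex set $H\subseteq Y^{0}$. When $H$ is itself finitely presented (as in the motivating application to Grigorchuk's HNN extension) one may take $\hat H=H$ and argue with Definition \ref{defss2} directly.

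The hypothesis on $G$ supplies the key mechanism. Since $G$ is finitely generated, semistable at $\infty$, and simply connected at $\infty$, it is simply connected at $\infty$ in $\hat H$, so Lemma \ref{ss+sc} furnishes the enhanced property $(\ddagger)$: for every compact $C$ and every $N$ there is a compact $D$ such that any loop lying within $N$ of an arbitrary coset $vG$ and avoiding $D$ is homotopically trivial in $Y-C$. This coset-uniformity is exactly what is needed, and it is where both hypotheses on $G$ are used essentially: by Remark \ref{R1}, simple connectivity at $\infty$ of $G$ alone controls only finitely many prescribed cosets, whereas a far-out loop running near $H$ may visit the cosets $hG$ for unboundedly many $h\in H$. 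With $(\ddagger)$ in hand I would decompose and slide. Using the normal form $t^{a}bt^{-c}$, the subgraph of $Y$ carried by $H$ is a union of $G$-sheets $hG$ joined by $t$-edges, with the relation $t^{-1}gt=\phi(g)$ recording how adjacent sheets are identified. Given a far-out loop $\alpha$ near $H$, I would cut it into subarcs each staying near a single sheet $hG$ together with the connecting $t$-edges, push each sheet-subarc out to infinity inside a neighborhood of its sheet by $(\ddagger)$, and thereby reduce $\alpha$ to a loop essentially supported along the $t$-flow; as $J=\langle t\rangle$ is automatically semistable in $Y$ (Remark \ref{Remark in Intro}(2)), the residual flow-directed loop can be slid arbitrarily far out and killed in $Y-C$. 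Since $H$ is $1$-ended, contracting all such loops establishes that $H$ is simply connected at $\infty$.

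The main obstacle is the decomposition-and-sliding step, and it is genuinely delicate because $\phi$ is injective but in general not onto: the $t$-flow contracts the sheets in the $t$-direction (gluing by the nested images $\phi^{n}(G)$) and expands them in the $t^{-1}$-direction, so a single loop may thread through unboundedly many sheets sitting at different $t$-heights. Arranging the sheet-by-sheet pushes and the subsequent flow-slide to be carried out coherently and to remain outside the fixed compact set $C$, uniformly in the face of this asymmetry, is the crux, and it is precisely this that forces the use of the coset-uniform Lemma \ref{ss+sc} rather than the bare definition of simple connectivity at $\infty$ for $G$. A preliminary reduction establishing that $H$ is itself semistable at $\infty$, via the ascending-HNN techniques of \S\ref{AHNNE}, so that loops admit a canonical push along the flow, would streamline this final sliding argument.
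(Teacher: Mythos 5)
Your setup is the right one, and it is almost certainly the setup of the cited proof: test Definition \ref{SCin} in the Cayley 2-complex $Y$ of a finitely presented overgroup, and extract the coset-uniform property $(\ddagger)$ from Lemma \ref{ss+sc} — which is exactly why both semistability and simple connectivity at $\infty$ of $G$ are hypotheses. But two things in the execution fail. A small one first: Definition \ref{SCfg} quantifies over \emph{every} finitely presented group containing a copy of $H$, so you cannot fix one $\hat H$ produced by Higman's theorem; $\hat H$ must be arbitrary (Higman only guarantees such overgroups exist). The fatal one is the central reduction. Property $(\ddagger)$ kills loops lying near a single coset $vG$; applied to a subarc of $\alpha$ it can only replace that subarc by another subarc near the \emph{same} coset with the \emph{same} endpoints. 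This never changes the pattern of $t$-edges of $\alpha$, so ``pushing the sheet-subarcs to infinity'' effects no reduction of the HNN structure at all, and there is no ``residual loop supported along the $t$-flow'' to hand off to anything. (Even relocating a transition point from $x$ to $y$ in the same sheet requires killing a loop that runs through \emph{two} sheets and two $t$-edges, which $(\ddagger)$ does not cover.) The only homotopical link between adjacent sheets $hG$ and $htG$ is the family of conjugation relations $t^{-1}gt=\phi(g)$: finitely many relations of $H$, which, since $H$ acts on $Y$ by covering transformations, bound uniformly bounded singular disks at every vertex of $H$ and assemble into ``ladders'' converting a subpath $t^{-1}ut$ into $\phi(u)$, and $t\,\phi(u)\,t^{-1}$ into $u$. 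You mention this relation only as bookkeeping for how sheets meet, never as a homotopy device; without it no argument can begin to collapse the tree-pattern of the loop.

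The device you substitute for this — ``$J=\langle t\rangle$ is automatically semistable in $Y$ by Remark \ref{Remark in Intro}(2), so the residual loop can be slid far out and killed'' — is a misreading of that remark: it says an infinite cyclic group automatically satisfies condition (a) of Definition \ref{ss+cossD}, a statement about proper rays in $\Gamma(J)\cong\mathbb R$ used as one input to the co-semistability machinery for proving \emph{semistability} of $Y$; it contracts no loops, and sliding a loop arbitrarily far out inside $Y-C$ does not make it null-homotopic in $Y-C$ (the unit circle in $\mathbb R^2-\{0\}$ can be pushed to infinity). Finally, even once the ladders are in place, the asymmetry you correctly flag must be resolved concretely, not just named: an ``up-down'' subpath $t\,u\,t^{-1}$ can be laddered down only after $u$ is first homotoped, near its coset and off the prescribed compacta, to a word in the letters $\phi(g)$ — this is where $(\ddagger)$ is actually spent, together with the fact that the 1-ended group $\phi(G)\cong G$ is 1-ended \emph{in} $\hat H$, so that far-out points of a $\phi(G)$-coset can be joined inside that coset's subgraph avoiding any given compact set. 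And since a loop outside the chosen $D$ may have arbitrarily many $t$-letters, one must still show that the bounded displacement incurred by each of the unboundedly many eliminations does not accumulate toward $C$. These steps are the actual content of the theorem; as written, your proposal lists the correct ingredients but supplies no mechanism for the part that constitutes the proof.
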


\subsubsection{The Bieri-Stallings Groups}\label{BSgps}

A space $X$ is {\it aspherical} if for all $n\geq 0$ any map of the $n$-sphere $\mathbb S^n$ into $X$ can be extended to the disk $\mathbb D^{n+1}$. A group $G$  is of type $\mathcal F_1$ when it can be finitely generated, $\mathcal F_2$ when it can be finitely
presented, and more generally $\mathcal F_n$ if there is an aspherical complex with fundamental group $G$ (a $K(G,1)$-complex) and finite $n$-skeleton.
A locally compact space $X$ is said to be {\it $r$-connected at $\infty$} if  for each compact set $K\subset  X$ there exists a compact set $L\subset X$ such that $K\subset L$ and any map $\alpha:\mathbb S^d\to X-L$ with $d\leq r$ extends to a map $\tilde \alpha : \mathbb D^{d+1}  \to  X -K$. For example, $X$ is $(-1)$-connected at $\infty$ if and only if it is non-compact, it is $0$-connected at $\infty$ if and only if it has $1$-end and it is 1-connected at $\infty$ if and only if it is 1-ended and simply connected at $\infty$.  Observe that if a group $G$ is not of type $\mathcal F_n$ then any finite complex $X$ with fundamental group $G$ is such that $\pi_{n-1}(X)\ne \{0\}$ and so $\pi_{n-1}(\tilde X)\ne \{0\}$ for $\tilde X$ the universal cover of $X$. In particular, $\tilde X$ is not $(n-1)$-connected at $\infty$ (for any compact $D\subset \tilde X$ there are maps of $\mathbb S^{n-1}\to \tilde X-D$ that cannot be extended to $\mathbb D^{n}$ in $X$).

\begin{theorem} [Theorem 17.1.6, \cite{G}] \label{PHT}  Let $n\geq 2$ and let $Y$
be a locally finite connected CW-complex. If $Y$ is 1-ended and simply connected at $\infty$ (properly 1-connected), then $Y$ is properly $n$-connected if and only if  $Y$ is properly $n$-acyclic
with respect to $\mathbb Z$.
\end{theorem}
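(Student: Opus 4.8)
The plan is to recognize the statement as a Hurewicz theorem ``at the end'' of $Y$ and to prove it by an induction that converts acyclicity into connectivity one degree at a time. First I would fix a cofinal sequence of compact sets $C_0 \subset C_1 \subset \cdots$ (possible since $Y$ is $\sigma$-compact and locally finite) and, enlarging the $C_i$ and invoking Theorem \ref{ES}, arrange that $Y - C_i$ has a single unbounded complementary component $N_i$, giving a tower of connected neighborhoods of infinity $N_0 \supset N_1 \supset \cdots$. Both hypotheses and both conclusions translate into pro-triviality statements about this tower: $Y$ being properly $k$-connected corresponds to pro-$\pi_k(\varepsilon Y)$ being pro-trivial, and $Y$ being properly $k$-acyclic corresponds to pro-$\tilde H_k(\varepsilon Y;\mathbb Z)$ being pro-trivial (the ordinary, non-infinity, parts being handled separately by the classical Hurewicz theorem, since $Y$ is simply connected). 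A choice of proper base ray supplies the basepoints needed to define the homotopy pro-groups, and ``simply connected at $\infty$'' is exactly the assertion that pro-$\pi_1(\varepsilon Y)$ is pro-trivial.

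The engine of the proof is a pro-Hurewicz theorem: if $Y$ is properly $1$-connected and properly $(d-1)$-connected for some $d \ge 2$, then the level-wise Hurewicz homomorphisms $\pi_d(N_i) \to \tilde H_d(N_i;\mathbb Z)$ assemble into a pro-isomorphism pro-$\pi_d(\varepsilon Y) \to$ pro-$\tilde H_d(\varepsilon Y;\mathbb Z)$. Granting this, both implications follow formally. For the forward direction, if $Y$ is properly $n$-connected then for every $d \le n$ the hypotheses of the pro-Hurewicz theorem hold (we have proper $(d-1)$-connectivity because $d-1 < n$), so pro-$\tilde H_d(\varepsilon Y)$ is pro-isomorphic to pro-$\pi_d(\varepsilon Y)$, which vanishes; hence $Y$ is properly $n$-acyclic. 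For the converse I would induct on $d$ from $1$ to $n$, the base case $d=1$ being the standing hypothesis of proper $1$-connectivity: assuming proper $(d-1)$-connectivity for $2 \le d \le n$, the pro-Hurewicz theorem gives a pro-isomorphism pro-$\pi_d(\varepsilon Y) \to$ pro-$\tilde H_d(\varepsilon Y)$, and the latter is pro-trivial because $Y$ is properly $n$-acyclic and $d \le n$; thus $Y$ is properly $d$-connected and the induction advances to $d=n$.

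The main obstacle is proving the pro-Hurewicz theorem itself, and the difficulty is that the individual neighborhoods $N_i$ need not be simply connected --- only the pro-fundamental group is pro-trivial, so the classical simply-connected Hurewicz theorem does not apply level-wise. I would resolve this by using proper $1$-connectivity to replace the tower $\{N_i\}$ by a pro-isomorphic tower of simply connected spaces: since simple connectivity at $\infty$ means that for each $i$ there is $j > i$ with $\pi_1(N_j) \to \pi_1(N_i)$ trivial, one can attach $2$-cells along a cofinal subsequence to kill these fundamental groups while preserving the bonding maps up to proper homotopy, producing a pro-isomorphic tower of $1$-connected complexes (this step also neutralizes the $\pi_1$-actions on the higher groups $\pi_d(N_i)$, which is precisely why proper $1$-connectivity is indispensable rather than merely convenient). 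On this improved tower the classical Hurewicz theorem applies to each space, yielding level-wise Hurewicz isomorphisms compatible with the bonding maps, hence a pro-isomorphism of towers; translating back along the pro-isomorphism gives the asserted statement. The remaining bookkeeping --- checking that the resulting maps of inverse systems fit into the commuting-ladder diagram defining a pro-isomorphism, and that pro-triviality is correctly detected via the Mittag--Leffler/semistability criteria of Theorems \ref{MLSS} and \ref{lim1} --- is routine, but it must be carried out with care so that every construction respects the base ray used to define the homotopy pro-groups.
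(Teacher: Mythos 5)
The paper does not actually prove Theorem \ref{PHT}; it quotes it from Theorem 17.1.6 of \cite{G}, so your proposal must be judged against the standard argument. Your overall architecture is the right one (translate both hypotheses into statements about towers of neighborhoods of infinity, prove a pro-Hurewicz theorem, induct on degree), but the central step fails as written. Attaching $2$-cells to $N_i$ to kill $\pi_1(N_i)$ does \emph{not} produce a tower pro-isomorphic to $\{N_i\}$: new $2$-cells create new elements of $\pi_2$ and $H_2$ (a circle with two $2$-cells attached along its generator is $S^2$), so the levels' invariants genuinely change, and when you try to verify a pro-isomorphism via the ladder diagrams of Definition \ref{DProI}, only one of the two required triangles commutes. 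The backward map $g\colon \hat N_j \to N_i$ does exist (extend the bonding map over the new $2$-cells by null-homotopies, using pro-triviality of pro-$\pi_1$), and $g$ composed with the inclusion $N_j \to \hat N_j$ is the bonding map; but the other composite, $\hat N_j \to N_i \to \hat N_i$, and the bonding map of the new tower are two different extensions over $2$-cells of the same map, and such extensions differ on each $2$-cell by an arbitrary element of $\pi_2(\hat N_i)$ --- exactly the group over which you have no control at this stage. The device that makes this step work is the universal cover, not cell attachment: pro-triviality of pro-$\pi_1$ lets you lift each bonding map to the universal cover of its target, and the \emph{unique lifting property} of covering spaces (with base points threaded along the base ray) makes the relevant diagrams commute on the nose, so $\{N_i\}$ is pro-isomorphic, as a tower of pointed spaces, to the tower of universal covers.

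There is a second gap of the same nature. Even granting a pro-isomorphic tower with simply connected levels, your claim that ``the classical Hurewicz theorem applies to each space, yielding level-wise Hurewicz isomorphisms'' is only true for $d=2$. For $d\geq 3$, classical Hurewicz requires each level to be $(d-1)$-connected, whereas proper $(d-1)$-connectivity gives only pro-triviality of pro-$\pi_k$ for $k\leq d-1$, not level-wise vanishing; conflating the pro-level property with the level-wise one is precisely the difficulty the whole theorem turns on, and your outline addresses it only for $\pi_1$. The repair is to iterate the covering trick through the dimensions: once the levels are $(k-1)$-connected, pro-triviality of pro-$\pi_k$ implies the bonding maps lift to the $k$-connected (Whitehead) covers of their targets, since the obstruction is the induced map on $\pi_k$; and two such lifts are homotopic because their difference is classified by $H^{k-1}$ of a $(k-1)$-connected complex, which vanishes. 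Iterating up to $k=d-1$ produces a pro-isomorphic tower of $(d-1)$-connected levels, where level-wise Hurewicz finally applies and naturality finishes the argument. With these two repairs --- covers in place of cell attachment, and iteration through all dimensions below $d$ --- your outline becomes a correct proof; as written, it has a genuine gap at its engine.
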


A discrete group $\Gamma$ is said to be {\it $r$-connected at $\infty$} if it acts co-compactly, freely, and simplicially on an $r$-connected simplicial complex $X$ which is $r$-connected at $\infty$. If $\Gamma$ acts cocompactly, freely, and simplicially on another $r$-connected simplicial complex $Y$, then $Y$ is also $r$-connected at $\infty$. If $\Gamma$ is not type $\mathcal F_n$ then $\Gamma$ is not $(n-1)$-connected at $\infty$. If $\Gamma_1$ is a subgroup of finite index in the group $\Gamma$ then $\Gamma_1$ is $r$-connected at $\infty$ if and only if $\Gamma$ is $r$-connected at $\infty$.

Let $F_2$ be the free group on two generators.  Consider
$$F_2\times F_2\equiv \langle a_1,a_2, b_1,b_2 \ |\  [a_i,b_j]=1\hbox{ for all } i,j\in \{1,2\}\rangle.$$ 
Stallings \cite{Stall63} showed that the kernel $K$ of the homomorphism $$f:F_2\times F_2\to \mathbb Z\hbox{ where } f(a_1)=f(a_2)=f(b_1)=f(b_2)=1,$$
is finitely generated but not finitely presented. Stallings' also proved that a certain HNN-extension of $K$ (now known as Stallings' group) is $\mathcal F_2$ but not $\mathcal F_3$.

Let $B_n$ ($n \geq 2$) denote the kernel of the homomorphism 
$$F_2 \times  \cdots (n) \cdots \times F_2 \rightarrow {\mathbb Z}$$
 sending all generators to the generator $1$. (Here $K$ is isomorphic to $B_2$ and Stallings' group is isomorphic to $B_3$.)  Bieri \cite{Bieri81} proved that that $B_n$ is $\mathcal F_{n-1}$ but not $\mathcal F_n$. These groups are often call the {\it Bieri-Stallings} groups. %Gersten \cite{Ger95} proved that for $n\geq 3$, $B_n$ admits quintic isoperimetric function. For Stallings' group this was improved to cubic by Baumslag, Bridson, Miller and Short \cite{BBMS97} and then to quadratic by Dison, Elder, Riley and Young \cite{DERY09}. This last paper also shows that the asymptotic cones of Stallings' group are all simply connected, but are not all 2-connected. Finally, Stallings' group admits a linear isodiametric function. 
 We have short exact sequences of the form 
 $$1 \rightarrow B_{n-1} \rightarrow B_n \rightarrow F_2 \rightarrow 1.$$ 
 %(To see this write $F_2^n$ as $F_2^{(1)}\times\cdots\times F_2^{(n)}$ where $F_2^{(i)}$ is isomorphic to $F_2$. Any element  $b\in B_n$ has form $(b_1,\ldots, b_n)$ with $b_i\in F_2^{(i)}$. Two elements $a,b\in B_n$ are identified in the quotient by $B_{n-1}$ if and only if $ab^{-1}\in B_{n-1}$ iff $a_n=b_n$ and $(a_1b_1^{-1},\ldots, a_{n-1}b_{n-1}^{-1})\in B_{n-1}$. But if $a_n=b_n$ then the exponent sum on $(a_1,\ldots, a_{n-1})$ and $(b_1,\ldots, b_{n-1})$ are the same and so $(a_1b_1^{-1},\ldots, a_{n-1}b_{n-1}^{-1})\in B_{n-1}$. Clearly every element of $F_2^{(n)}$ occurs as a last coordinate of some element of $B_n$.
\begin{theorem} [Theorem 17.3.6,\cite{G}] Let $1\to N\to G\to Q\to 1$  be a short exact sequence of infinite
groups of type $\mathcal F_n$, and let $R$ be a PID. Working with respect to $R$, let $N$ be
$s$-acyclic at infinity and let $Q$ be $t$-acyclic at infinity where $s\leq n-1$ and
$t \leq n-1$. Then $G$ is $u$-acyclic at infinity where $u=min\{s + t + 2, n-1\}$. If
$s\geq 0$ or $t\geq 0$  (i.e., if $N$ or $Q$ has one end) then $G$ is $u$-connected at infinity.
 \end{theorem}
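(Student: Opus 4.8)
The plan is to reduce the extension to a product of universal covers, run a Künneth computation for homology at infinity on that product, and finally use the proper Hurewicz theorem recorded as Theorem \ref{PHT} to upgrade acyclicity to connectivity.

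First I would fix models. Since $N$, $G$ and $Q$ all have type $\mathcal F_n$, choose $K(N,1)$, $K(G,1)$, $K(Q,1)$ with finite $n$-skeleta realizing the extension as a fibration $K(N,1)\to K(G,1)\to K(Q,1)$ through dimension $n$. Passing to universal covers gives a bundle $\tilde X_N\to \tilde X_G\to \tilde X_Q$ whose base $\tilde X_Q$ is contractible. Over a contractible base such a bundle is homotopy trivial; the essential geometric step is to promote this to a \emph{proper} statement: $\tilde X_G$ is properly $n$-equivalent to $\tilde X_N\times \tilde X_Q$. This is the higher-dimensional analogue of Theorem \ref{JConv} (the case $n=2$), built skeleton-by-skeleton from the cocompact actions. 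Because $r$-acyclicity and $r$-connectivity at infinity for $r\le n-1$ are proper $n$-equivalence invariants (the higher-degree version of Theorem \ref{2-equiv}), it then suffices to prove both conclusions for the product $W=\tilde X_N\times \tilde X_Q$.

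Next I would establish a Künneth theorem for homology at infinity of $W$, working over the PID $R$. Since $\tilde X_N$ and $\tilde X_Q$ are contractible, the long exact sequence relating ordinary homology $H_*$, locally finite homology $H_*^{lf}$, and reduced homology at infinity $H_*^e$ collapses to natural isomorphisms $\tilde H_k^e(\tilde X_N)\cong H_{k+1}^{lf}(\tilde X_N)$, and similarly for $\tilde X_Q$ and for $W$. Thus $N$ being $s$-acyclic at infinity says $H_i^{lf}(\tilde X_N)=0$ for $i\le s+1$, and $Q$ being $t$-acyclic at infinity says $H_j^{lf}(\tilde X_Q)=0$ for $j\le t+1$. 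The type $\mathcal F_n$ hypothesis makes all groups finitely generated through degree $n$, so the honest locally finite Künneth formula over $R$ is valid in that range (and, with more care, already at the level of the towers defining the homology at infinity):
\[
H_m^{lf}(W)\ \cong\ \bigoplus_{i+j=m}H_i^{lf}(\tilde X_N)\otimes_R H_j^{lf}(\tilde X_Q)\ \oplus\ \bigoplus_{i+j=m-1}\operatorname{Tor}_1^R\!\big(H_i^{lf}(\tilde X_N),\,H_j^{lf}(\tilde X_Q)\big).
\]
Converting back via $\tilde H_{m-1}^e(W)\cong H_m^{lf}(W)$ expresses $\tilde H_*^e(W)$ in terms of $\tilde H_*^e(\tilde X_N)$ and $\tilde H_*^e(\tilde X_Q)$.

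The degree bookkeeping is then immediate. A summand of $H_m^{lf}(W)$ is nonzero only if $H_i^{lf}(\tilde X_N)\ne 0$ and $H_j^{lf}(\tilde X_Q)\ne 0$ for some $i\ge s+2$ and $j\ge t+2$ with $i+j\in\{m,m-1\}$, forcing $m\ge s+t+4$. Hence $H_m^{lf}(W)=0$ for $m\le s+t+3$, so $\tilde H_k^e(W)\cong H_{k+1}^{lf}(W)=0$ for all $k\le s+t+2$; finite generation only through degree $n$ caps this at $k\le n-1$. Therefore $W$, and hence $G$, is $u$-acyclic at infinity with $u=\min\{s+t+2,\,n-1\}$. (As a check, $\tilde X_N=\mathbb R^p$ and $\tilde X_Q=\mathbb R^q$ give $s=p-2$, $t=q-2$, and $u=p+q-2$, the connectivity of $\mathbb R^{p+q}$ at infinity.) For the final clause, suppose $s\ge 0$ (the case $t\ge 0$ is symmetric), so $N$ is one-ended and $\tilde X_N$ is $1$-ended; as both factors are simply connected and one is $1$-ended, $W$ is $1$-ended and simply connected at infinity, exactly as in Example \ref{prodss}. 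Specializing to $R=\mathbb Z$, Theorem \ref{PHT} then converts proper $u$-acyclicity into proper $u$-connectivity, so $G$ is $u$-connected at infinity. The main obstacle is the first step: realizing the fibration-over-contractible-base as a genuine product \emph{properly} through dimension $n$ (the proper $n$-equivalence $\tilde X_G\simeq \tilde X_N\times\tilde X_Q$) and securing the pro-level control needed for the locally finite Künneth formula to hold for the towers at infinity; the homology count and the Hurewicz upgrade are then routine.
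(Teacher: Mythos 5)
The first thing to say is that the paper does not prove this statement: it is quoted verbatim from Geoghegan's book (Theorem 17.3.6 of \cite{G}) and used only to deduce connectivity at infinity of the Bieri--Stallings groups, so the only meaningful comparison is with the proof in \cite{G}. In outline your route is that proof: replace $\tilde X_G$ by $\tilde X_N\times\tilde X_Q$ via a proper $n$-equivalence (the type $\mathcal F_n$ analogue of Theorem \ref{JConv}, which is a genuine theorem proved in the same chapter of \cite{G}), compute homology at infinity of the product of two highly connected cocompact complexes, and upgrade acyclicity to connectivity via Theorem \ref{PHT}. Your degree bookkeeping and the $\mathbb R^p\times\mathbb R^q$ sanity check are correct. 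However, two steps have genuine gaps as written.

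First, your computation is carried out on $H^{lf}_*$, a single module per space, whereas ``$u$-acyclic at infinity'' is a statement about towers: the inverse system $\{\tilde H_k(X-C_i;R)\}$ must be pro-trivial for $k\le u$. Pro-triviality is strictly stronger than vanishing of $H^{lf}_*$ or of any limit; this distinction is precisely the content of Theorem \ref{GMEnd}, where semistability and pro-finiteness of the tower are invisible to limits. The sentence ``with more care, already at the level of the towers'' is where the actual content of the theorem lives. The clean repair avoids $H^{lf}$ altogether: a cofinal family of neighborhoods of infinity in $W=Y\times Z$ (with $Y=\tilde X_N$, $Z=\tilde X_Q$) is $(Y\times Z)-(C\times D)=[(Y-C)\times Z]\cup[Y\times(Z-D)]$, with intersection $(Y-C)\times(Z-D)$. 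Since $Y$ and $Z$ are highly connected, this union is the homotopy pushout of the two projections $(Y-C)\leftarrow (Y-C)\times(Z-D)\to (Z-D)$, i.e.\ it is homotopy equivalent to the join $(Y-C)\ast(Z-D)$, naturally in $(C,D)$. The reduced homology of a join over the PID $R$ consists of cross terms $\tilde H_i\otimes_R\tilde H_j$ in total degree $k-1$ and Tor terms in total degree $k-2$; pro-triviality of the factor towers through degrees $s$ and $t$ then kills every cross term in degrees $k\le s+t+2$ under passage to deeper neighborhoods, which is the required pro-triviality.

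Second, ``Specializing to $R=\mathbb Z$'' in the Hurewicz step is a non sequitur: the hypotheses are $R$-acyclicity for an arbitrary PID, and $R$-acyclicity at infinity does not imply $\mathbb Z$-acyclicity at infinity (a pro-nontrivial tower of torsion modules, e.g.\ sums of $\mathbb Z/2$ as in Davis-type examples, becomes pro-trivial over $\mathbb Q$), while Theorem \ref{PHT} is genuinely a $\mathbb Z$-statement. Your argument therefore establishes the connectivity clause only when the acyclicity hypotheses hold over $\mathbb Z$, and that is how the clause must be read; you should say this rather than ``specialize''. A smaller imprecision: the cap $u\le n-1$ does not come from ``finite generation through degree $n$'' but from the fact that for groups only of type $\mathcal F_n$ the cocompact models are the $n$-skeleta $\tilde X^n$, which are $(n-1)$-connected rather than contractible, so the identification of end homology with shifted locally finite homology (or the join computation above) is only valid through degree $n-1$.
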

 \medskip
 
 This theorem implies that $B_n$ is $(n-3)$-connected at infinity, for $n \geq 4$
 (since $B_{n-1}$ is type $\mathcal F_{n-2}$ and not $\mathcal F_{n-1}$, we cannot conclude that $B_n$ is $(n-2)$-connected at infinity.) 
  
 \begin{theorem} [Theorem 1.1,\cite{MM24}]\label{MM24}  
Stallings' group $B_3$ is simply connected (1-connected) at infinity.
\end{theorem}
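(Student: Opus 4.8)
The plan is to realize $B_3$ geometrically on a concrete simply connected locally finite complex and then to verify the hypotheses of the co-semistability/coaxiality criterion for simple connectivity at each end (Theorem \ref{cosc1} or Theorem \ref{cosc2}). First I would fix the model space. Since $B_3\le F_2\times F_2\times F_2$ acts freely on $X=T\times T\times T$, where $T$ is the Cayley tree of $F_2$, the complex $X$ is a $3$-dimensional $\mathrm{CAT}(0)$ cube complex. Extend the homomorphism $\phi\colon F_2^3\to\mathbb Z$ sending every generator to $1$ to an affine Morse function $f=h_1+h_2+h_3\colon X\to\mathbb R$. Then $B_3=\ker\phi$ preserves the level sets of $f$ and acts freely and cocompactly on the level set $Z=f^{-1}(0)$. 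The ascending and descending links of each vertex are joins $S^0*S^0*S^0\cong S^2$, which are simply connected, so Bestvina--Brady Morse theory shows $Z$ is simply connected (this is exactly the reason $B_3$ is of type $\mathcal F_2$), and $Z$ is locally finite.

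Next I would record that $B_3$ is $1$-ended, so that its topology at $\infty$ is well defined. In the sequence $1\to B_2\to B_3\to F_2\to 1$ the subgroup $B_2$ equals $B_3\cap(F_2\times F_2\times 1)$, an intersection of normal subgroups of $F_2^3$, hence is normal in $B_3$; it is infinite, finitely generated, and of infinite index. Thus Theorem \ref{cohen} applies with $N=A=B_2$ and gives that $B_3$ is $1$-ended. Because $B_3$ acts freely and cocompactly on the simply connected locally finite complex $Z$, the finite quotient $B_3\backslash Z$ has $\pi_1\cong B_3$ and universal cover $Z$; applying Corollary \ref{pro2} to $B_3\backslash Z$ and a finite presentation $2$-complex for $B_3$ reduces the theorem to showing that $Z$ is simply connected at each end.

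For that reduction I would apply the framework of Theorems \ref{cosc1}/\ref{cosc2}: choose a finitely generated subgroup $J\le B_3$ acting as covering transformations on $Z$ and verify that $J$ is coaxial in $Z$ (Definition \ref{ss+cossD2}(d)), together with either co-connectedness (then Theorem \ref{cosc2}) or the conditions that $J$ is $1$-ended and simply connected at $\infty$ in $Z$ (then Theorem \ref{cosc1}). The natural first candidate $J=B_2$ is $1$-ended, but $B_2\backslash Z$ exposes the infinitely many ends of $B_3/B_2=F_2$, so $B_2$ is not co-connected and one is forced onto the $1$-ended criterion Theorem \ref{cosc1}. Here I would use the flexibility emphasized in the general theorem (Theorem \ref{MTGGM} and Remark \ref{Intro remark 4}) that the acting group $J$ may depend on the compact set, selecting for each compact $C_0$ a subgroup filling two of the three tree directions so that the complement $Z-J\cdot D$ is governed by a single tree factor.

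The hard part will be the verification of coaxiality — equivalently, since $Z$ is simply connected, the co-semistability condition of Definition \ref{Coss}: that a loop lying in a $J$-unbounded component of $Z-J\cdot C$ can be pushed arbitrarily far out by a proper homotopy in $Z-C_0$ via the $J$-action, and then capped off. The simple connectivity of $Z$ guarantees that every loop bounds \emph{some} disk; the genuine difficulty is the ``at infinity'' refinement, that the bounding homotopies avoid the prescribed compact set while threading through the infinitely many ends of the tree factors. I expect the failure of the $S^2$ links to be $2$-connected — precisely why $B_3$ is not of type $\mathcal F_3$ — to obstruct only higher acyclicity at $\infty$ and not $\pi_1$ at $\infty$, so it should not block the argument; nonetheless, disentangling the one-dimensional filling data from this higher-dimensional obstruction, assembling the local push-outs into a single proper homotopy, and checking independence of the generating set and equivariant map demanded by the definitions is where essentially all of the work lies.
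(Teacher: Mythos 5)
Your preliminary reductions are all correct and well chosen: realizing $B_3$ on the Bestvina--Brady level set $Z\subset T\times T\times T$ (simply connected because the ascending and descending links are the octahedra $S^0\ast S^0\ast S^0\cong S^2$), observing that $B_2=B_3\cap(F_2\times F_2\times 1)$ is an infinite, finitely generated, normal subgroup of infinite index in $B_3$ so that Theorem \ref{cohen} gives 1-endedness, and invoking Corollary \ref{pro2} to replace the group $B_3$ by the space $Z$. But from that point on the proposal is a plan rather than a proof, and you concede this yourself: the verification of coaxiality/co-semistability is ``where essentially all of the work lies.'' That deferred verification \emph{is} the theorem. Nothing in the proposal establishes, for any concrete subgroup $J$, either pair of hypotheses needed by the framework: Theorem \ref{cosc2} needs $J$ co-connected and coaxial in $Z$, and Theorem \ref{cosc1} needs $J$ 1-ended, simply connected at $\infty$ in $Z$, and coaxial in $Z$.

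Worse, the one candidate you name puts you in a circle. Since $B_3/B_2\cong F_2$, the space $B_2\backslash Z$ is quasi-isometric to $F_2$, so $Z-B_2\cdot D$ has many $B_2$-unbounded components and co-connectedness fails, as you note; Theorem \ref{cosc1} is then the only route, and it requires proving that $B_2$ --- a group that is not even finitely presented --- is simply connected at $\infty$ in $Z$ (loops far out along the $B_2$-orbit bound disks in $Z-C$) \emph{and} that $B_2$ is coaxial in $Z$ (arbitrary loops in $Z-B_2\cdot D$ bound in $Z-C$). Each of these is a filling-control statement of exactly the same nature and difficulty as the conclusion sought, and no mechanism for either is offered (no normal forms, no explicit disk diagrams, no Morse-theoretic pushing argument). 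The appeal to varying $J$ with the compact set (Remark \ref{Remark in Intro}) produces nothing new: any subgroup of $B_3$ ``filling two of the three tree directions'' is again a copy of $B_2$ sitting in another pair of factors, so the same unproven claims recur. Note also that this is not a case where one can fall back on the survey's general machinery: Theorem \ref{J}, Theorem \ref{Subset}, and Theorem \ref{MainA} all fail to apply precisely because $B_2$ is not finitely presented and $F_2$ is not 1-ended --- which is why \cite{MM24} is a separate paper, whose proof is not reproduced here. As it stands, your write-up proves 1-endedness of $B_3$ and sets up a plausible stage, but the simple connectivity at infinity itself remains unproved.
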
 

\begin{lemma}  [Lemma 5.2,\cite{MM24}]  \label{OneE} The kernel $B_2$ of the map from $F_2\times F_2\to \mathbb Z$ (sending all generators to the generator $1$) is 1-ended. 
\end{lemma}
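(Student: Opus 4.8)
The plan is to deduce one-endedness from Cohen's criterion, Theorem~\ref{cohen}: it is enough to locate inside $B_2$ a non-locally-finite normal subgroup $N$ together with a finitely generated subgroup $A$ of infinite index satisfying $N\le A\le B_2$. (That $B_2$ is finitely generated, so that ``one-ended'' is meaningful, is part of the ambient Bieri--Stallings theory.) To set up, I would write $F_2\times F_2=F\times F'$ with $F=\langle a_1,a_2\rangle$ and $F'=\langle b_1,b_2\rangle$, and let $\phi\colon F\to\mathbb{Z}$ and $\psi\colon F'\to\mathbb{Z}$ be the exponent-sum homomorphisms, so that the defining map is $f(x,y)=\phi(x)+\psi(y)$ and $B_2=\{(x,y):\phi(x)=-\psi(y)\}$.

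First I would take $N=\{1\}\times\ker\psi$. The first-coordinate projection $p_1\colon B_2\to F$ is onto (given $x$, the element $(x,y)$ with $\psi(y)=-\phi(x)$ lies in $B_2$) and has kernel exactly $N$; in particular $N$ is normal in $B_2$, as the computation $(x,y)(1,z)(x,y)^{-1}=(1,yzy^{-1})$ with $\psi(yzy^{-1})=\psi(z)=0$ confirms. Since $\ker\psi$ contains the infinite-order element $b_1b_2^{-1}$, the subgroup $N$ is non-locally finite. Next I would set $A=p_1^{-1}(\langle a_1\rangle)$, so that $N\le A$ and $[B_2:A]=[F:\langle a_1\rangle]=\infty$. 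The crucial observation is that the second-coordinate projection restricts to an isomorphism $A\cong F'$: it is injective because $\phi$ is injective on $\langle a_1\rangle$, and surjective because for each $y\in F'$ the point $(a_1^{-\psi(y)},y)$ lies in $A$ and projects to $y$. Hence $A\cong F_2$ is finitely generated. With $N$, $A$, and $G=B_2$ in hand, Theorem~\ref{cohen} yields that $B_2$ is one-ended.

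The only non-routine step is the choice of $A$: although $A$ contains the infinitely generated subgroup $N=\{1\}\times\ker\psi$, it is itself free of rank two when viewed through the opposite projection, and so it furnishes exactly the finitely generated subgroup of infinite index that Cohen's theorem requires. I expect this ``the same subgroup looks infinitely generated through one factor but free of rank two through the other'' phenomenon to be the main conceptual point; the verifications of normality, of the index being infinite, and of the isomorphism $A\cong F'$ are all immediate from the exponent-sum description of $B_2$.
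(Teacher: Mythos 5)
Your proof is correct. One thing to note at the outset: the paper does not actually prove this lemma — it is imported as Lemma 5.2 of \cite{MM24} — so there is no in-paper argument to compare against; what you have written is a self-contained proof using only tools the paper does supply, chiefly Theorem \ref{cohen}. Every step checks: $N=\{1\}\times\ker\psi$ is exactly the kernel of the restriction to $B_2$ of the first-coordinate projection $p_1$, hence normal in $B_2$ (not merely in $A$, which is what Theorem \ref{cohen} requires), and it is non-locally finite since it contains the infinite-order element $(1,b_1b_2^{-1})$; surjectivity of $p_1|_{B_2}$ gives $[B_2:A]=[F:\langle a_1\rangle]=\infty$ for $A=p_1^{-1}(\langle a_1\rangle)$; and $A$ is precisely the graph $\{(a_1^{-\psi(y)},y): y\in F'\}$ of the homomorphism $y\mapsto a_1^{-\psi(y)}$, so the second-coordinate projection is an isomorphism $A\cong F'$, making $A$ free of rank $2$ and in particular finitely generated. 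With the sandwich $N\le A\le B_2$ in hand, Theorem \ref{cohen} applies verbatim, and the only external input is Stallings' theorem that $B_2$ is finitely generated, which you correctly flag. As a side remark, the same sandwich also feeds into Proposition \ref{superC}: for every $g\in B_2$ one has $gAg^{-1}\cap A\supseteq gNg^{-1}\cap N=N$, which is infinite, so that proposition yields 1-endedness of $B_2$ by an alternative route; your identification of a finitely generated subgroup that "looks infinitely generated through one factor but free of rank two through the other" is indeed the conceptual crux either way.
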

These last two results are evidence (and the first two cases) of:

\begin{conjecture}[Conjecture 1.2, \cite{MM24}] For $n\geq 2$ the group $B_n$ is $(n-2)$-connected at infinity.
\end{conjecture}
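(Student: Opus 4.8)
The plan is to prove the conjecture by induction on $n$, taking Lemma \ref{OneE} (that $B_2$ is $1$-ended, i.e. $0$-connected at $\infty$, the case $n=2$) and Theorem \ref{MM24} (that $B_3$ is simply connected at $\infty$, the case $n=3$) as base cases, and to feed the short exact sequence
$$1\to B_{n-1}\to B_n\to F_2\to 1$$
into an extension argument for connectivity at infinity. It is worth noting first that $(n-2)$ is the optimal target: since $B_n$ is of type $\mathcal F_{n-1}$ but not $\mathcal F_n$ (Bieri, \cite{Bieri81}), the observation in the text that a group failing $\mathcal F_n$ cannot be $(n-1)$-connected at $\infty$ shows the conjectured value is the largest possible. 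So the conjecture really asserts that $B_n$ attains the maximal connectivity at infinity allowed by its finiteness properties.

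The obvious tool is the extension theorem of Geoghegan (Theorem 17.3.6 of \cite{G}). Taking $R=\mathbb Z$, the quotient $F_2$ has infinitely many ends and so is $(-1)$-connected at $\infty$ but not $0$-connected, giving $t=-1$; the inductive hypothesis gives the fiber $B_{n-1}$ as $s$-connected at $\infty$ with $s=n-3$. The homological conclusion $u=\min\{s+t+2,\,m-1\}$, together with $s\ge 0$ (so the ``connected'' and not merely ``acyclic'' conclusion applies), would read $u=\min\{n-2,\,m-1\}$. The difficulty lies entirely in the finiteness ceiling $m$: the theorem requires $N$, $G$ and $Q$ all of type $\mathcal F_m$, and since $B_{n-1}$ is only $\mathcal F_{n-2}$ one is forced to take $m=n-2$, whence $u=\min\{n-2,\,n-3\}=n-3$. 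This is exactly the one-short bound recorded in the text, and it is the reason the naive induction stalls.

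Accordingly the real content is to recover the missing degree, and I would do this geometrically via Bestvina--Brady Morse theory rather than through the homological spectral sequence. Let $T$ be the Cayley graph of $F_2$ (the $4$-valent tree) and let $f\colon T^n\to\mathbb R$ be the height function induced by the homomorphism $F_2^n\to\mathbb Z$ sending every generator to $1$, so that $B_n=\ker$ acts freely and cocompactly on each level set $Z=f^{-1}(0)$. At every vertex of $T^n$ the ascending and descending links are joins of $n$ copies of $S^0$, hence are $(n-1)$-spheres and so are $(n-2)$-connected; the standard Morse-theoretic analysis then shows $Z$ is $(n-2)$-connected (this is precisely what yields $\mathcal F_{n-1}$ and no more). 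Since $B_n$ acts freely cocompactly on the $(n-2)$-connected complex $Z$, the definition of $r$-connectedness at $\infty$ for a group reduces the conjecture to proving that $Z$ itself is $(n-2)$-connected at $\infty$: for each compact $K\subset Z$ one must produce a larger compact $L$ so that every $\alpha\colon S^d\to Z-L$ with $d\le n-2$ bounds a disk in $Z-K$.

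The main obstacle is controlling $Z$ at infinity in the top degree $d=n-2$. The product structure of $T^n$ lets one push a sphere toward infinity one tree-factor at a time, but the ``horizontal'' directions inside a single level set are governed by the fiber $B_{n-1}$, whose failure of $\mathcal F_{n-1}$ reappears here as the absence of finitely many generating $(n-1)$-cells at infinity -- the same defect that capped the extension theorem. The point that must be exploited is that the descending and ascending links are genuinely $(n-1)$-spheres, carrying the full $(n-2)$-connectivity that the acyclicity bookkeeping of Theorem 17.3.6 discards; I expect the crux to be a careful Morse filtration of $Z$ (or of the slabs $f^{-1}[-t,t]$) by distance to a large compact set, combined with a general-position argument letting a top-dimensional sphere be swept off to infinity using the sphericity of the links. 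The low-dimensional semistability and simple-connectivity-at-$\infty$ machinery that powers the rest of this paper settles $n=2,3$ but does not reach $d\ge 2$, so this higher-dimensional sweeping argument is where the genuinely new work will lie.
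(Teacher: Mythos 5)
The statement you were asked to prove is Conjecture 1.2 of \cite{MM24}; the paper does not prove it. It records only the cases $n=2$ (Lemma \ref{OneE}) and $n=3$ (Theorem \ref{MM24}) as evidence, together with the observation---which you reproduce correctly---that Theorem 17.3.6 of \cite{G} applied to $1\to B_{n-1}\to B_n\to F_2\to 1$ yields only $(n-3)$-connectivity at infinity for $n\geq 4$, because the finiteness ceiling is capped at $\mathcal F_{n-2}$ by $B_{n-1}$ failing to be of type $\mathcal F_{n-1}$. So there is no proof in the paper to compare yours against, and your text must be judged as a research plan.

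As such, it has a genuine gap, and you acknowledge it yourself. The setup is sound: the $B_n$-action on the level set $Z=f^{-1}(0)\subset T^n$ is free and cocompact, the ascending and descending links are joins of $n$ copies of $S^0$ and hence $(n-1)$-spheres, so Bestvina--Brady Morse theory gives that $Z$ is $(n-2)$-connected, and the definition of $r$-connectivity at infinity for groups in \S\ref{BSgps} (including its independence of the chosen complex) correctly reduces the conjecture to showing that $Z$ is $(n-2)$-connected at infinity. But that last statement \emph{is} the conjecture, and your proposal stops exactly there: the ``Morse filtration plus general-position sweeping argument'' is named, not constructed. Nothing short of such an argument can work, because $(n-2)$-connectivity of $Z$ by itself carries no information about behavior at infinity---the paper's discussion of Davis's examples (\S\ref{WildPro}, \S\ref{RedH2-1}) shows that even contractible complexes admitting free cocompact actions can fail to be simply connected at infinity---and the semistability and simple-connectivity-at-$\infty$ machinery developed in this paper reaches only spheres of dimension $d\leq 1$. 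What you have written is therefore a correct diagnosis of why the known tools fall one degree short, plus a plausible framework, but the mathematical content of the conjecture (pushing $(n-2)$-spheres in $Z$ off arbitrary compacta) remains untouched.
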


\subsubsection{Out($F_n$)} \label{OUT} 
If $n\geq 3$ and $Out(F_n)$ is the outer automorphism group of the free group on $n$ generators, then there is an exact sequence of finitely generated groups:
$$1\to H\to Out (F_n)\to Gl(\mathbb Z,n)\to 1$$

 For $n\geq 3$, the group $H$ is 1-ended. For $n\geq 4$, it is unknown if $H$ is finitely presented, so Theorem \ref{J} cannot be applied here. S. Kristi\'c and J. McCool \cite{kMc97} proved that for $n=3$, $H$ is not finitely presented. K. Vogtmann \cite{Vo95} proved that for $n\geq 5$, $Out(F_n)$ is simply connected at $\infty$. 
J. Rickert \cite{Ric00} proved that $Out(F_4)$ is simply connected at $\infty$. A theorem of M. Bestvina and M. Feighn implies that for $n\geq 3$, $Out(F_n)$ is simply connected at $\infty$. In fact, they prove much more.

\begin{theorem} 
[Theorem 1.1, \cite{BFe00}] \label{BF1}  For $n\geq 2$, the group $Out(F_n)$ is $(2n-5)$-connected at infinity.
\end{theorem}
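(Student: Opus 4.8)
The plan is to exploit the action of $Out(F_n)$ on the spine $K_n$ of (reduced) Culler--Vogtmann Outer space. This spine is a contractible complex whose dimension equals the virtual cohomological dimension $2n-3$ of $Out(F_n)$, and $Out(F_n)$ acts on it cocompactly with finite stabilizers. Since $r$-connectivity at $\infty$ is a commensurability invariant (as recorded for finite-index subgroups in \S\ref{BSgps}) and $Out(F_n)$ is virtually torsion-free, I would first pass to a torsion-free finite-index subgroup $\Gamma$, which then acts freely, cocompactly and simplicially on $K_n$. It therefore suffices to prove that the contractible complex $K_n$ is $(2n-5)$-connected at $\infty$ in the sense of \S\ref{BSgps}; note that $2n-5=(2n-3)-2$ is exactly the bound forced by $Out(F_n)$ being a virtual duality group of dimension $2n-3$, and for $n=2,3$ this recovers the already-known "non-compact'' and "simply connected at $\infty$'' statements.

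First I would convert the problem from one about complements of compact sets in $K_n$ into one about a single boundary space. Modelling the construction on the Borel--Serre bordification of a symmetric space for an arithmetic group (and on the Solomon--Tits theorem), the idea is to build a cocompact contractible bordification $\overline{K_n}$ whose interior is proper homotopy equivalent to $K_n$ and whose frontier $\partial\overline{K_n}$ is assembled from "degenerate'' marked graphs, namely those obtained by letting a nonempty invariant subgraph collapse, equivalently by passing to a proper free factor system. Because $\overline{K_n}$ is contractible and the frontier collars nicely, a standard principle identifies the topology of $K_n$ at $\infty$ with the topology of $\partial\overline{K_n}$: the complex $K_n$ is $r$-connected at $\infty$ if and only if $\partial\overline{K_n}$ is $r$-connected. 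If convenient, Theorem \ref{PHT} lets me verify the higher connectivity homologically once $1$-connectivity at $\infty$ is in hand, and then reinstate $\pi_1$.

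The substantive step is to show $\partial\overline{K_n}$ is $(2n-5)$-connected. Here I would filter the frontier by the poset of proper free factor systems of $F_n$: each stratum is essentially a product of a lower-rank Outer-space factor (recording how the collapsed core moves) with the space parametrizing the collapse data, and the strata are glued along the incidence relations of the poset. A nerve/spectral-sequence argument would then assemble the connectivity of $\partial\overline{K_n}$ from two inputs: the connectivity of the complex of free factors, which is $(n-2)$-connected by Hatcher--Vogtmann, and the connectivity of the lower-rank bordifications, handled by induction on $n$. To run this cleanly I would introduce a discrete Morse function on $\overline{K_n}$ in the Bestvina--Brady spirit measuring how degenerate a marked graph is, and show that the descending links are highly connected by identifying them with joins of free-factor and sphere/graph complexes.

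The main obstacle will be the combinatorics of these descending links: precisely tracking how the $(n-2)$-connectivity of the free factor complex and the inductive lower-rank estimates combine across the incidence poset to yield the exact bound $2n-5$ and no less. A useful consistency check throughout is that the reduced homology of $\partial\overline{K_n}$ should be concentrated in the single top degree $2n-4$ (the dualizing-module degree); any loss in the Morse-theoretic link estimates would signal an error. Verifying that the bordification is genuinely cocompact and contractible, and handling the non-free locus before passing to $\Gamma$, are the remaining technical points, but these are modelled closely on the arithmetic-group case and I expect them to be routine relative to the link analysis.
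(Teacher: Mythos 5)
The paper offers no proof of Theorem \ref{BF1}: it is quoted directly from Bestvina--Feighn \cite{BFe00}, so your proposal has to be measured against their argument. Your scaffolding --- a Borel--Serre-style cocompact bordification $\overline{K_n}$ of (the spine of) Outer space, strata indexed by collapse/free-factor data, descending-link Morse theory, connectivity of the free factor complex, and induction on rank --- is genuinely the strategy of \cite{BFe00}, and your preliminary reductions (passing to a torsion-free finite-index subgroup, replacing $Out(F_n)$ by a cocompact contractible complex) are fine.

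The gap is the pivotal ``standard principle'' you invoke: that $K_n$ is $r$-connected at infinity if and only if $\partial\overline{K_n}$ is $r$-connected. This is false when the frontier is non-compact, and here it is non-compact (only its quotient by the group is compact). The very model you pattern this on is a counterexample: for $SL_3(\mathbb Z)$, the Borel--Serre boundary is homotopy equivalent to the rational Tits building, a wedge of infinitely many circles, hence not simply connected, while $SL_3(\mathbb Z)$ is simply connected at infinity (as this paper records in \S\ref{OUT}). Loops in the frontier can be filled by disks passing through the interior of arbitrarily deep complements of compact sets, and this is invisible to the homotopy type of the frontier alone; the honest relation between the boundary and the end is homological, with a degree shift, and even that uses Lefschetz duality, which is unavailable here since $\overline{K_n}$ is not a manifold. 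Bestvina--Feighn make no such reduction: their main technical theorem is a direct proof that the bordification itself is $(2n-5)$-connected at infinity, obtained by pushing maps of spheres lying in complements of compact sets outward through the corner structure. So your descending-link analysis would have to be re-aimed at complements of compacta in $\overline{K_n}$ rather than at $\partial\overline{K_n}$ --- and that analysis, which you explicitly defer, is the bulk of their paper. A smaller input error that matters for the bookkeeping: Hatcher--Vogtmann prove the free factor complex is spherical of dimension $n-2$, hence $(n-3)$-connected, not $(n-2)$-connected as you assert, so the count that is supposed to land exactly on $2n-5$ would be off by one as written.
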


A group $\Gamma$ that acts freely and cocompactly on a contractible cell complex
$X$ (by permuting cells) is a {\it duality group of dimension} $n$ if $H^i (\Gamma; \mathbb Z \Gamma)$ vanishes when $i\ne n$ and is torsion-free (as an abelian group) when $i = n$ (see R. Bieri and B. Eckmann's article \cite{BE73}). Theorem \ref{GME2} (2), (with $\Gamma$ and $X$ as above) implies the group $\Gamma$ is an $n$-dimensional duality group if and only if  (the reduced homology inverse system) $\bar H_r(\varepsilon X)$ is pro-trivial for all $r\leq n-2$. If $C_i$ is a cofinal sequence of compact sets in $X$, then $H_r(\varepsilon X)$ is the inverse system:
$$H_r(X-C_1)\leftarrow H_r(X-C_2) \leftarrow\cdots$$

\medskip 

\noindent ($\ast$) {\it If $G$ is a duality group of dimension $n\geq 3$ and $G$ is simply connected at $\infty$, then the Proper Hurewicz Theorem (\ref{PHT}) implies that $G$ is $k$-connected at $\infty$ for $k\leq n-2$.} 

\begin{theorem} 
[Theorem 1.4, \cite{BFe00}] \label{BF2}  For $n\geq 2$, $Out(F_n)$ is a virtual duality group of dimension $2n -3$.
\end{theorem}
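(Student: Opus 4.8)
The plan is to deduce this from the connectivity-at-infinity estimate of Theorem \ref{BF1} together with the homological characterization of duality groups recorded in Theorem \ref{GME2}. Throughout, write $d = 2n-3$ for the prospective duality dimension. First I would reduce to a torsion-free subgroup: since $Out(F_n)$ is residually finite and acts on Culler--Vogtmann Outer space with finite point stabilizers, it is virtually torsion-free, so by Selberg's lemma there is a finite-index torsion-free subgroup $\Gamma \leq Out(F_n)$. Because being a virtual duality group of dimension $d$ is insensitive to passage to finite-index subgroups, it suffices to prove that $\Gamma$ itself is a duality group of dimension $d$.

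Next I would fix the geometric model. Let $X$ be the spine of Outer space: it is a contractible CW complex of dimension exactly $2n-3$ on which $Out(F_n)$ acts cocompactly with finite stabilizers, so $\Gamma$ acts freely and cocompactly. Consequently $\Gamma$ is of type $FP$, and by the theorem of Culler and Vogtmann that the virtual cohomological dimension of $Out(F_n)$ equals $2n-3$, we have $\mathrm{cd}(\Gamma) = d$. Thus $\Gamma$ and $X$ satisfy the standing hypotheses of Theorem \ref{GME2}: a group of type $FP$ acting freely and cocompactly on a contractible complex whose dimension equals its cohomological dimension.

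The heart of the argument is then to verify the homological condition in Theorem \ref{GME2}(2), namely that the reduced homology inverse system $\bar H_r(\varepsilon X)$ is pro-trivial for all $r \leq d-2 = 2n-5$. This is exactly what Theorem \ref{BF1} supplies, once translated from homotopy to homology. Indeed, for $n \geq 3$ we have $2n-5 \geq 1$, so Theorem \ref{BF1} already guarantees that $X$ is $0$-connected at infinity (that is, $1$-ended) and $1$-connected at infinity (simply connected at infinity, hence properly $1$-connected). The Proper Hurewicz Theorem \ref{PHT} then converts the $(2n-5)$-connectivity at infinity of Theorem \ref{BF1} into $(2n-5)$-acyclicity at infinity, which is precisely the pro-triviality of $\bar H_r(\varepsilon X)$ for $r \leq 2n-5$. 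Feeding this into Theorem \ref{GME2}(2) yields that $\Gamma$ is a duality group of dimension $2n-3$, and therefore $Out(F_n)$ is a virtual duality group of that dimension. The remaining base case $n=2$ is handled directly: $Out(F_2) \cong GL_2(\mathbb Z)$ is virtually free, hence a virtual duality group of dimension $1 = 2 \cdot 2 - 3$.

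The step I expect to be the main obstacle is the torsion-freeness of the dualizing module $H^{d}(\Gamma;\mathbb Z\Gamma)$, which the definition of a duality group (following \cite{BE73}) demands in addition to the vanishing in degrees $\neq d$. Vanishing in degrees below $d$ is exactly what the pro-triviality of $\bar H_r(\varepsilon X)$ delivers, and vanishing above $d$ is automatic from $\mathrm{cd}(\Gamma)=d$; but torsion-freeness of the top group is a genuinely separate point. In the formulation of Theorem \ref{GME2}(2) this is folded into the stated equivalence, so I would lean on that statement; a self-contained treatment would instead identify $H^{d}(\Gamma;\mathbb Z\Gamma)$ with the top reduced homology $\bar H_{d-1}(\varepsilon X)$ of the end and argue its torsion-freeness directly from the structure of the end of the spine, as in the original argument of Bestvina and Feighn \cite{BFe00}.
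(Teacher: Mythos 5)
You should first be aware that the paper does not prove this theorem: it is quoted from Bestvina--Feighn \cite{BFe00}, and \S\ref{OUT} only assembles around the citation exactly the machinery you invoke --- the Bieri--Eckmann notion of duality group, the remark that Theorem \ref{GME2}(2) characterizes $d$-dimensional duality (for a group acting freely and cocompactly on a contractible complex of dimension $d$) by pro-triviality of $\bar H_r(\varepsilon X)$ for $r\leq d-2$, and the Proper Hurewicz Theorem \ref{PHT}. Your proposal is a correct assembly of these pieces, and it is essentially the route Bestvina and Feighn themselves take: their duality theorem is deduced from their connectivity-at-infinity theorem (Theorem \ref{BF1} here) together with Culler--Vogtmann's facts about the spine (contractible, of dimension $2n-3$, with cocompact action and finite stabilizers, whence $\mathrm{vcd}=2n-3$), the torsion-freeness of the dualizing module coming from the same homological analysis that Theorem \ref{GME2}(2) encodes. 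So the architecture is sound, and your closing paragraph correctly identifies where the genuine content sits.

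Two corrections of detail. First, Selberg's lemma is not available here: it applies to finitely generated linear groups, and $Out(F_n)$ is not linear for $n\geq 4$ (by Formanek--Procesi, via the embedding of $Aut(F_{n-1})$ into $Out(F_n)$). Virtual torsion-freeness is nevertheless standard, but the correct justification is the one your first clause gestures at: the spine action is proper and cocompact, so there are finitely many conjugacy classes of finite subgroups, and combining this with residual finiteness (or, most directly, citing the classical fact that the kernel of $Out(F_n)\to GL_n(\mathbb Z/3)$ is torsion-free) produces the finite-index torsion-free subgroup $\Gamma$. Residual finiteness alone would not suffice --- the lamplighter group is residually finite but not virtually torsion-free. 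Second, Theorem \ref{PHT} is stated only for $n\geq 2$, so in the case $n=3$ of your argument, where $2n-5=1$, you should observe directly that simple connectivity at infinity forces $H_1(\varepsilon X)$ to be pro-trivial (first homology is generated by loops, and every loop far out bounds a disk far out); for $n\geq 4$ the theorem applies as quoted. Neither point disturbs your argument.
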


A. Borel and J. P. Serre have results analogous to Theorems \ref{BF1} and \ref{BF2} (with an analogous approach) for Arithmetic groups \cite{BS73}

R. Lyman determines the number of ends of the outer automorphism group of $F$ a free product of finite and cyclic groups and shows $Out(F)$ is semistable at each end.  The group $F$ contains a free subgroup of finite index. The group $Out(F)$ is of type $FP_{\infty}$ and is virtually torsion free but need not  be a virtual duality group.  

\begin{theorem} [Theorem B, \cite{Lym25}]
Let $F = A_1\ast \cdots \ast A_n\ast F_k$ be a free product of the finite groups $A_i$ with a free group
of rank $k$. The invariants $n$ and $k$ determine the number of ends of $Out(F )$ in the following
way:
$$n \leq 1, k \leq  1 \hbox{ or } (n, k) = (2, 0) \ \ \  Out(F ) \hbox {is finite, i.e. has zero ends}.$$
$$\hskip -.1in (n, k) = (3, 0), (2, 1), (0, 2) \ \ \ \ \ \ \ \ Out(F ) \hbox{ has infinitely many ends.}$$
$$\hskip -1in \hbox{ otherwise }\hskip 1.6in Out(F ) \hbox{ has one end.}$$
\end{theorem}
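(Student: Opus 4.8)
The plan is to use the fact that a finitely generated group has $0$, $1$, $2$, or $\infty$ ends (the Corollary following Theorem \ref{E3inf}) and to determine which case occurs by combining Stallings' splitting theorem (Theorem \ref{Stall}) with the action of $Out(F)$ on a contractible complex. Specifically, $Out(F)$ acts properly and cocompactly, permuting cells, on the spine $K$ of the relative Outer space of $F$ (the McCullough--Miller / Guirardel--Levitt deformation space); by the Milnor--\v{S}varc lemma $Out(F)$ is quasi-isometric to $K^1$, so by Theorems \ref{QIEFG} and \ref{QIE} they have the same number of ends. First I would dispose of the finite (zero-ended) cases: in the first line of the table $Out(F)$ is finite, verified directly ($Out(\mathbb Z)=\mathbb Z_2$, $Out(A_1)$ finite, and for $A_1\ast A_2$ and $A_1\ast\mathbb Z$ the Fouxe--Rabinovitch generators yield only finitely many outer classes). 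In every remaining case $Out(F)$ is infinite, since it contains infinite-order partial conjugations or transvections, so its number of ends is $1$, $2$, or $\infty$.

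Next I would treat the infinitely-ended cases $(3,0)$, $(2,1)$, $(0,2)$. In each of these the deformation space $K$ is $1$-dimensional, so the proper cocompact action realizes $Out(F)$ as a group acting on a tree with finite stabilizers, hence virtually free. Each such group visibly contains a nonabelian free subgroup (for instance $GL_2(\mathbb Z)$ in case $(0,2)$, which is virtually $F_2$), so it is not virtually cyclic and therefore neither finite nor $2$-ended; a virtually free group that is neither must, by Theorem \ref{Stall}, split over a finite subgroup satisfying the large-index condition there, giving $\infty$ ends.

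For the ``otherwise'' cases the target is exactly one end, and this is the core of the argument. Since here $Out(F)$ is infinite and contains a nonabelian free subgroup, it is neither $0$- nor $2$-ended, so by Theorem \ref{Stall} it suffices to rule out a nontrivial splitting over a finite subgroup, i.e. to show $Out(F)$ is one-ended. I would argue in two regimes. When the free rank satisfies $k\ge 2$, the subgroup $N$ generated by all partial conjugations and twists is an infinite, non-locally-finite normal subgroup whose quotient surjects onto $GL_k(\mathbb Z)$ and is hence infinite, so $N$ has infinite index; Cohen's Theorem \ref{cohen} then yields that $Out(F)$ is one-ended. In the low-rank regime $k\le 1$ (where that quotient is finite and $N$ no longer has infinite index) I would instead induct on $(n,k)$ using the combination Theorem \ref{combE}: $Out(F)$ is generated by finitely many relative subgroups (stabilizers of proper free-factor systems and of the free part), each built from smaller $Out$-groups and hence one-ended by the inductive hypothesis, with consecutive ones meeting in infinite subgroups; iterating Theorem \ref{combE} shows $Out(F)$ is $1$- or $2$-ended, and the presence of $F_2$ eliminates the $2$-ended possibility.

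The hard part will be this last step. The difficulty is that no single infinite, finitely generated, infinite-index normal subgroup is available uniformly: for $k\le 1$, and in particular for pure free products $A_1\ast\cdots\ast A_n$ with $n\ge 4$, $Out(F)$ coincides with its twist subgroup up to finite index, so Theorem \ref{cohen} does not apply directly. Making the combination argument rigorous requires the Fouxe--Rabinovitch generating set together with verifications that the relevant relative $Out$-subgroups are finitely generated, one-ended, and generate $Out(F)$, and that consecutive subgroups in the chain intersect in infinite subgroups; equivalently, one must certify that the complement in $K$ of every compact set is connected, which is exactly where the combinatorics of Whitehead moves and ideal edges in relative Outer space must be controlled.
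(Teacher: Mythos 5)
A preliminary remark: the paper contains no proof of this statement --- it is quoted, with attribution, from R.~Lyman's preprint --- so your proposal can only be judged on its own terms, and on those terms it has genuine gaps. The most serious one is in what you yourself call the core step, the application of Theorem \ref{cohen} when $k\ge 2$. That theorem needs a chain $N\le A\le Out(F)$ with $N$ normal in $Out(F)$ and not locally finite, and with $A$ \emph{finitely generated} of \emph{infinite index}; you never produce such an $A$. If your $N$ is the subgroup generated by the partial conjugations alone, it is finitely generated but not normal: conjugating the partial conjugation $t_1\mapsto a t_1 a^{-1}$ (with $a\in A_1$) by the transvection $t_1\mapsto t_1t_2$ gives $t_1\mapsto a t_1 t_2 a^{-1}t_2^{-1}$, and since $a t_1 t_2 a^{-1}t_2^{-1}$ is cyclically reduced of length $5$ it is not conjugate to $t_1$, so the resulting outer class is not factor/basis-conjugating and lies outside $N$. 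If instead $N$ is enlarged to contain the transvections (your ``twists''), then its image under $Out(F)\to GL_k(\mathbb Z)$ contains all elementary matrices, so $N$ has finite index and Cohen's theorem cannot apply. The standard repair, $N=\ker\bigl(Out(F)\to GL_k(\mathbb Z)\bigr)$, is normal and of infinite index for $k\ge 2$, but then one must prove that $N$ (or some finitely generated subgroup of infinite index containing it) is finitely generated; for $n=0$ this is Magnus's theorem that $IA_k$ is finitely generated, and for $n\ge 1$ it is a genuinely nontrivial relative analogue that your proposal never addresses.

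There are two further gaps. In the infinitely-ended cases your argument is fine for $(0,2)$ and $(3,0)$, but for $(2,1)$ the claim that the deformation space is $1$-dimensional is false: for $A_1\ast A_2\ast\mathbb Z$ a reduced graph of groups may have two trivial valence-$3$ vertices joined by a double edge (four edges in all), and collapsing edges one at a time gives a chain of three reduced graphs (four edges, then three, then two), which puts a $2$-simplex in the spine. So ``acts on a tree with finite stabilizers, hence virtually free'' does not follow in that case, and it needs a separate argument --- for instance an explicit splitting of $Out(A_1\ast A_2\ast\mathbb Z)$ over a finite subgroup. Finally, for the $k\le 1$ one-ended cases you concede that the induction via Theorem \ref{combE} is only a plan: the relative $Out$-subgroups must be shown to be finitely generated, one-ended, to generate $Out(F)$, and to intersect pairwise in infinite subgroups, and none of this is carried out. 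In sum, the trichotomy and the high-level strategy are sensible, but as written this is a program rather than a proof.
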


\begin{theorem} [Theorem C, Lyman] Let $F = A_1\ast \cdots \ast A_n\ast F_k$ be a free product of the finite groups $A_i$ with a free group of rank $k$. The group $Out(F)$ is semistable at each end.
\end{theorem}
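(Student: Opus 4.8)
The plan is to reduce, according to the number of ends, to a setting where one of the normal/commensurated-subgroup criteria of this section applies. Recall from Theorem B that $Out(F)$ has zero, one, or infinitely many ends, and that $Out(F)$ is finitely presented (indeed of type $F_\infty$) and virtually torsion free. When $Out(F)$ is finite there is nothing to prove. When $Out(F)$ is infinite-ended it is finitely presented, hence accessible over finite subgroups by Theorem \ref{DunAcc}; by the reduction of \cite{M87}, $Out(F)$ is semistable at each end if and only if every $1$-ended vertex group of a Dunwoody decomposition of $Out(F)$ is semistable at $\infty$. Thus in every case it suffices to treat $1$-ended groups: $Out(F)$ itself in the one-ended case, and the $1$-ended Dunwoody factors in the infinite-ended case.

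So the heart of the matter is the one-ended case. Here I would produce an infinite, finitely generated, infinite-index normal (or, more generally, commensurated) subgroup $N\lhd Out(F)$ and invoke Theorem \ref{M1} (or Theorem \ref{MainCM}). The natural candidate for $N$ is the kernel of the homomorphism
\[
\rho \colon Out(F)\longrightarrow \Big(\textstyle\prod_i Out(A_i)\times GL_k(\mathbb Z)\Big)\rtimes \Sigma
\]
recording the permutation $\Sigma$ of the conjugacy classes of free factors, the induced outer automorphisms of the finite factors, and the action on the free part of $H_1(F;\mathbb Z)$. Using the Fouxe-Rabinovitch/McCullough-Miller generators for $Out(F)$, the subgroup $N$ (generated by the partial conjugations) is finitely generated. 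When $k\geq 1$ the image $\rho(Out(F))$ maps onto an infinite subgroup of $GL_k(\mathbb Z)$, so $N$ has infinite index; one checks that $N$ is also infinite in the one-ended cases, whence Theorem \ref{M1} applies and $Out(F)$ is semistable at $\infty$. (Consistently, Theorem \ref{MainCM} recovers the one-endedness of $Out(F)$ from the commensuratedness of $N$.)

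The main obstacle is the pure case $k=0$, i.e. $F=A_1\ast\cdots\ast A_n$ a free product of finite groups, which is one-ended exactly when $n\geq 4$. Here $\rho$ has finite image, so $N$ has finite index in $Out(F)$ and the homology map gives nothing; the normal-subgroup argument collapses. For these groups I would instead either (i) exhibit a commensurated (or subcommensurated) infinite, finitely generated subgroup of infinite index inside the Fouxe-Rabinovitch group $N$ and apply Theorem \ref{MainCM} or Theorem \ref{MainA}, or (ii) argue geometrically: pass to a torsion-free finite-index subgroup $G_0\leq Out(F)$ (which exists and is finitely presented, and for which semistability is equivalent to that of $Out(F)$ by the quasi-isometry invariance of semistability, Theorem \ref{QIsp}), let $G_0$ act freely and cocompactly on the contractible spine $Y$ of the relative Outer space, so that $Y$ is the universal cover of a finite aspherical complex with $\pi_1=G_0$, and prove directly that $Y$ is semistable at $\infty$. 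Establishing this last statement — the semistability at infinity of Outer space for free products — is where the real work lies, and I expect it to be handled by the co-semistability machinery: choose within $G_0$ a suitable finitely generated subgroup $J$ carrying the ``free directions'' of $Y$, and verify that $J$ is both semistable and co-semistable at $\infty$ in $Y$, whence $Y$ is semistable at $\infty$ by Theorem \ref{MTGGM} (or its special case, Theorem \ref{Theorem: Special case of Main Theorem}).

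Finally, for the infinite-ended cases $(n,k)\in\{(3,0),(2,1),(0,2)\}$ I would verify that the relevant groups are either virtually free — in which case a Dunwoody decomposition has only finite vertex groups and semistability at each end holds vacuously — or have $1$-ended Dunwoody factors that are themselves $Out$- or $GL$-type groups already covered by the one-ended analysis above. The decisive step throughout remains the semistability of $Out(F)$ (equivalently, of Outer space) in the one-ended pure case $k=0$; the cases with $k\geq 1$ are comparatively soft, following from Theorem \ref{M1} once $N$ is identified.
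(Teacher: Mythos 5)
First, a point of order: the paper does not prove this statement at all --- Theorems B and C here are quoted from Lyman's preprint, which the survey cites without reproducing any argument. So there is no in-paper proof to match your proposal against; it must be judged on its own terms, and on those terms it has a genuine gap.

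Your reductions are sound as far as they go: the Dunwoody/\cite{M87} reduction of the infinite-ended case to $1$-ended vertex groups (Theorems \ref{DunAcc} and \ref{Fsplit}), quasi-isometry invariance (Theorem \ref{QIsp}) to pass to a torsion-free finite-index subgroup, and the use of Theorem \ref{M1} or \ref{MainCM} whenever an infinite, finitely generated, infinite-index normal (or commensurated) subgroup is in hand. But the proof collapses exactly where you admit ``the real work lies.'' The one-ended pure case $k=0$, $n\geq 4$ is the genuinely new content of the theorem --- there, as you correctly observe, $\rho$ has finite image and the normal-subgroup argument gives nothing --- and for it you offer two strategies, neither of which is executed: (i) exhibit a commensurated infinite, finitely generated subgroup of infinite index (no candidate is named, and none is known in general), or (ii) verify the hypotheses of Theorem \ref{MTGGM} for some subgroup $J$ of a torsion-free finite-index subgroup acting on the spine of relative outer space. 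In (ii) you specify no $J$, and you check neither that $J$ is semistable at $\infty$ in $Y$ nor that it is co-semistable at $\infty$ in $Y$; verifying co-semistability is precisely the delicate geometric work that would constitute a proof. Deferring the decisive case to unexecuted machinery means the statement is not proved.

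There is also a flaw in the branch you call ``comparatively soft.'' For $k\geq 1$ you need $N=\ker\rho$ to be infinite, finitely generated, and of infinite index, and you assert finite generation by identifying $N$ with the subgroup generated by partial conjugations. That identification is false once $k\geq 1$: transvections $t\mapsto at$ with $a$ a torsion element act trivially on the free part of $H_1(F;\mathbb Z)$ and on each $Out(A_i)$, hence lie in $\ker\rho$, and for $k\geq 2$ so do Magnus's commutator-type automorphisms $x_i\mapsto x_i[x_j,x_l]$; none of these are products of partial conjugations. For $n=0$ finite generation of the kernel is Magnus's theorem on $IA_k$ (this is the sequence the paper itself records in \S\ref{OUT}), but for mixed free products a Magnus-type generation theorem for $\ker\rho$ is needed and you neither prove nor cite one. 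So even the $k\geq 1$ case, as written, rests on an unjustified --- and, as stated, incorrect --- claim, though that part is plausibly repairable; the $k=0$ case is the real missing proof.
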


\subsubsection{Mapping Class Groups of Surfaces}\label{MCG}

Our goal in this section is to understand the connectivity at infinity of mapping class groups of surfaces possibly with punctures and/or boundary components. Theorem \ref{MCGSCINF} collects the various results in this section and provides a list that breaks down mapping class groups into the finite, virtually free, virtual extensions of two finitely generated infinite free groups (and hence 1-ended and semistable, but not simply connected at infinity) and (for $n\geq 1$) $n$-connected at infinity groups. See \S \ref{BSgps} for the basics of $n$-connected at infinity groups.  All of these groups are semistable at infinity and all but finitely many are 1-ended and simply connected at infinity. These groups are virtual duality groups of known dimension. Combining our results with the Proper Hurewicz Theorem determines the higher connectivity at infinity of mapping class groups. Let $F=F_{g,r}^s$ be an orientable surface of genus $g$ with $s$ punctures and $r$ boundary components. The {\it mapping class group} $\Gamma_{g,r}^s$  is the group of isotopy classes of orientation preserving diffeomorphisms of $F$ which preserve the punctures of $F$ individually and restrict to the identity on $\partial F$. Here the isotopies keep $\partial F$ fixed point-wise. If $S$ is an orientable surface the notation $Mod(S)$ is also common for the mapping class group of $S$.  
 It follows from geometry results in the 1960's (Baily \cite{B60} and Deligne-Mumford \cite{DM69}) that $\Gamma_{g,0}^0$ is finitely presented (see \cite{FM12}, P128).
Also see \cite{McC75} and Theorem 4.3.D of N. Ivanov's manuscript - Mapping Class Groups (December 21, 1998) 

\noindent https://s3.amazonaws.com/nikolaivivanov/Ivanov1999MappingClassGroups.pdf. 
B. Wajnryb \cite{Waj96} proved that mapping class groups of compact surfaces with 0 or 1 boundary component are in fact generated by two elements. 

 %If $G=\langle A\cup B\rangle$ where $A$ and $B$ are 2 or 1-ended, $A\cap B$ contains an infinite finitely generated group and $A$ and $B$ are semistable implies $G$ is semistable. This implies (with a few exceptions) that all mapping class groups (of surfaces) are semistable at infinity. %The exceptions  (like $SL_2(\mathbb Z)$ which is the amalgamated product of two finite groups and hence semistable) will be semistable by using the Mihalik-Tschantz and the above results. 
%The mapping class groups are generated by certain Dehn twists. If two curves for Dehn twists are disjoint, then the corresponding generators commute.This fact in conjunction with Theorem \ref{MComb} implies most mapping class groups are semistable. Here we are more interested in showing mapping class groups are simply connected at infinity (we show all but 5 of them are simply connected at infinity). 

First we consider a few special cases: If $S$ is a torus or a torus with 1 puncture then its mapping class group is isomorphic to $SL_2(\mathbb Z)$. This group is an amalgamated product of two finite groups and hence contains a free subgroup of finite index. Since free groups are simply connected at infinity, $\Gamma_{1,0}^0$ and $\Gamma_{1,0}^1$ are simply connected at infinity. If $S$ is a sphere with 4 punctures then $\Gamma_{0,0}^4$ has a subgroup of finite index isomorphic to $PSL_2(\mathbb Z)$ and again is virtually free. If $S$ is a sphere with $\leq 3$ punctures, then $Mod(S)$ is finite. Finite groups are vacuously simply connected at infinity. 

We combine Theorem \ref{J} with two well known exact sequences of groups to show most mapping class groups are simply connected at infinity.
The Birman exact sequence relates the mapping class group of surfaces with the same genus and boundary but a different number of punctures. It was proven by Joan Birman in 1969 \cite{BES69}. Also see Equation 4.2 of \cite{Har86}.

 \begin{theorem} \label{BES} There is an exact sequence
    $$ 1\to \pi_1 ( F_{g,r}^{s-1} )\to \Gamma_{g,r}^s \to  \Gamma_{g,r}^{s-1} \to 1.$$ 
In the case where $s>1$ the mapping class group $\Gamma_{g,r}^s$ must be replaced by a finite-index subgroup. 
One must exclude the cases $(g,r,s)\in \{(1,0,1), (0,0,k)\}$ where $k<4$ (considered above) as well.
\end{theorem} 

Equation 4.3 of \cite{Har86} gives the well known short exact sequence:
\begin{theorem} \label{HES}
If $r>0$ then the following sequence is exact:
  $$1\to \mathbb Z\to \Gamma_{g,r}^s\to \Gamma_{g,r-1}^{s+1}\to 1.$$
\end{theorem}
Combining these two exact sequences with the fact that $\Gamma_{g,0}^0$ is finitely presented shows that all $\Gamma_{g,s}^r$ are finitely presented (the extension of a finitely presented group by a finitely presented group is finitely presented). 
Next observe that this last exact sequence implies that $\Gamma_{0,1}^1$ is virtually $\mathbb Z$. 

\begin{theorem} \label{punbd} 
If $g>1$ and $r+s\geq 1$ then the group $\Gamma_{g,r}^s$ is 1-ended and simply connected at infinity. 
\end{theorem} 
\begin{proof}
The proof is by induction on $r+s$. First consider $r+s=1$.
Theorem \ref{BES} applied to $\bar \Gamma_{g,0}^1$ (a subgroup of finite index in $\Gamma_{g,0}^1$) gives the exact sequence:
$$1\to \pi_1(F_{g,0}^0)\to  \bar \Gamma_{g,0}^1\to \Gamma _{g,0}^0\to 1$$
%where $\bar \Gamma_{g,0}^1$ has finite index in $\Gamma_{g,0}^1$. 
The group $\pi_1(F_{g,0}^0)$ is 1-ended and
Theorem \ref{J} implies that $\bar \Gamma_{g,0}^1$ (and hence $ \Gamma_{g,0}^1$) 
is 1-ended and simply connected at infinity. Theorem \ref{HES} gives the exact sequence
$$1\to \mathbb Z\to \Gamma_{g,1}^0\to \Gamma_{g,0}^1\to 1.$$
Theorem \ref{J} implies $\Gamma_{g,1}^0$ is 1-ended and simply connected at infinity. Hence if $r+s=1$ then $\Gamma_{g,r}^s$ is 1-ended and simply connected at infinity. Inductively assume that $n\geq 1$ and if $r+s=n$, then $\Gamma_{g,r}^s$ is 1-ended and simply connected at infinity. If $r+s=n$. then Theorem \ref{BES} implies the sequence:
$$1\to \pi_1(F_{g,r}^{s})\to \bar \Gamma_{g,r}^{s+1}\to \Gamma _{g,r}^s\to 1$$
is exact (where $\bar \Gamma_{g,r}^{s+1}$ has finite index in $\Gamma_{g,r}^{s+1}$). Since $\Gamma_{g,r}^s$ is 1-ended, Theorem \ref{J} implies $\bar \Gamma_{g,r}^{s+1}$ (and hence $\Gamma_{g,r}^{s+1}$) is 1-ended and simply connected at infinity. Theorem \ref{HES}
implies the sequence 
$$1\to \mathbb Z\to \Gamma_{g, r+1}^s\to \Gamma_{g,r}^{s+1}\to 1$$ 
is exact. Theorem \ref{J} implies $\Gamma_{g, r+1}^s$ is 1-ended and simply connected at infinity. This completes the induction step.
\end{proof}

\begin{theorem} \label{punbd2} 
If $g=1$ then for $(r,s)\in \{(0,0), (0,1)\}$ the group $\Gamma_{1,r}^s$ is virtually free (and hence simply connected at infinity). Each of the groups $\Gamma_{1,1}^0$ and $\Gamma_{1,0}^2$ is virtually an extension of two infinite finitely generated free groups. In particular, they are 1-ended and semistable at infinity, but not simply connected at infinity. Otherwise, $\Gamma_{1,r}^s$ is 1-ended and simply connected at infinity. 
\end{theorem} 
\begin{proof}
As noted earlier, $\Gamma_{1,0}^1$ is isomorphic to $SL_2(\mathbb Z)$ and so is virtually free. Theorem \ref{HES} gives the exact sequence:
$$1\to \mathbb Z \to \Gamma_{1,1}^0\to \Gamma_{1,0}^1\to 1.$$ 
Theorem \ref{cohen} implies that $\Gamma_{1,1}^0$ is 1-ended and Theorem \ref{M1} implies $\Gamma_{1,1}^0$ is semistable at infinity. Theorem \ref{JConv} implies that $\Gamma_{1,1}^0$ is proper 2-equivalent to $\mathbb Z \times \Gamma_{1,0}^1$ which is not simply connected at infinity by Theorem \ref{NotSc1}. Theorem \ref{2-equiv} implies that $ \Gamma_{1,1}^0$ is not simply connected at infinity. Theorem \ref {BES} gives the exact sequence (here $\bar \Gamma_{1,0}^2$ has finite index in $\Gamma_{1,0}^2$): 
$$1\to \pi_1(F_{1,0}^1) \to \bar \Gamma_{1,0}^2\to \Gamma_{1,0}^1\to 1.$$
Again, Theorems \ref{cohen}, \ref{M1}, \ref{JConv}, \ref{NotSc1} and \ref{2-equiv} imply $\Gamma _{1,0}^2$ is 1-ended and semistable at infinity, but not simply connected at infinity. Theorem \ref{HES} gives the exact sequence:
$$1\to \mathbb Z\to \Gamma_{1,1}^1\to \Gamma_{1,0}^2\to 1.$$
Theorem \ref{J} implies that $\Gamma_{1,1}^1$ is 1-ended and simply connected at infinity. Theorem \ref{HES} gives the exact sequence:
$$1\to \mathbb Z\to \bar \Gamma_{1,2}^0\to \Gamma_{1,1}^1\to 1.$$
Theorem \ref{J} implies $\Gamma_{1,2}^0$ is 1-ended and simply connected at infinity. All of the groups $\Gamma_{1,r}^s$ with $r+s=2$ are 1-ended. An induction argument as in Theorem \ref{punbd} shows the remaining groups are 1-ended and simply connected at infinity. 
\end{proof}

%In the case of a closed torus ($S_1=F_{1,0}^0$) $\Gamma_{1,0}^0=Mod(S_1)=SL_2(\mathbb Z)$ a virtually free group. So $\Gamma_{1,0}^0$ is simply connected at infinity. 

The following argument was provided by Dan Margalit. 
 
 \begin{theorem} \label{DM1} 
 The mapping class group $Mod(S_2)$ (of a closed surface of genus 2) is 1-ended and simply connected at infinity. 
 \end{theorem}
 \begin{proof}
 Let $S_{0,n}$ be the sphere with $n$ marked points.
Birman-Hilden theory gives a map $Mod(S_2) \to Mod(S_{0,6})$ with kernel $\mathbb Z/2$ (see page 257 of \cite{FM12}).
Next note that $PMod(S_{0,6})$ is an index 6 subgroup of $Mod(S_{0,6}$).  Simple connectivity at infinity is invariant under subgroups of finite index and quotients by finite normal subgroups.

By the Birman exact sequence (Theorem \ref{BES}): 

$$1 \to F_4 \to PMod(S_{0,6}) \to PMod(S_{0,5}) \to 1.$$
 The group $PMod(S_{0,5})$ is a semidirect product of $F_2$ and $F_3$ and hence is 1-ended.  Theorem \ref{J} implies that 
  $PMod(S_{0,6})$ (and hence $Mod(S_2)$) is simply connected at infinity
 \end{proof} 
 
%Once again, a semidirect product of (non-trivial) free groups is 1-ended but not simply connected at infinity. In particular, $\Gamma_{0,5}^0=Mod(S_{0,5})$ is 1-ended, but not simply connected at infinity. 
%Simple connectivity at infinity when $g=0$ is determined following Theorem \ref{MCGSC}. 
 
For $g>4$, $\Gamma_{g,0}^s$ exhibits a property significantly stronger than simply connected at infinity. If $x_0$ is a base point in a metric space $X$ and $n>0$, then a set $A$ is {\it $n$-avoidant} if $A$ does not intersect the $n$-ball about $x_0$. A {\it filling} of a loop is basically a homotopy killing the loop. 
  
 \begin{theorem} [Theorem 3.3, \cite{ABDDY12}] \label{FillingL}(Filling loops at infinity in the mapping class group). Suppose the closed surface $S$ has
genus at least 5 and any number of punctures, and let $X$ be the Cayley 2-complex
of $Mod(S)$. There is a constant $c > 0$ such that for any $n$, any $n$-avoidant loop of
length $l$ has an $n\over c$-avoidant filling of area $\leq  cn^2l^2$.
\end{theorem}
Note that if $x_0$ is the base point of $X$ and $\alpha$ avoids the $nc$-ball about $x_0$ then the filling given by this theorem defines a homotopy of $\alpha$ avoiding the $n$-ball about $x_0$. In particular, $X$ is simply connected at infinity and all mapping class groups considered in Theorem \ref{FillingL} are simply connected at infinity. 

The remaining two cases for closed surfaces are direct consequences of the next result.

\begin{theorem}[Theorem 1.1, \cite{MCG26}] \label{MCGPr} Suppose the group $G$ has a finite presentation $\langle S:R\rangle$ and $G$ contains a free abelian subgroup $A$ of rank $\geq 3$ with free generating set $T$ satisfying (1) and (2). Then $G$ simply connected at infinity.  

\noindent  (1) For $r\in R$ there is $t_r\in T$ such that $t_r$ commutes with each letter of $r$.

\noindent  (2) If $s\in S$ is a letter of $r\in R$ then some $z\in T-\{t_r\}$ commutes with $s$. 
\end{theorem}
% NOTE of caution: It may seem in (2) that we could allow $z\in S\cup T-\{t_r}$, (in fact $z=s$ seems a reasonable choice if other conditions hold) but that causes a problem. Not on P.14 where part (2) of the main theorem is used, but near the end of the proof, where two faces of Figure 5 are replaced by one face of a 3-cube. If an $r_{z}$ is introduced for $z\not\in T$ it may be that three consecutive $v\cdot r_{x}$ of Figure 5 are not such that the subscripts all commute. Then one cannot find a cube to replace two faces by one face...

Let $S=\{s_1,\ldots, s_n\}$. First observe that if $s_i$ does not appear in any relation of $R$, then $G$ splits as a free product $\mathbb Z_{s_i}\ast \langle S-\{s_i\}\rangle$. On the other hand, if for each $s\in S$ there is $r_s\in R$ such that $s$ is one of the letters of $r_s$, then $G$ is one-ended. (Apply Theorem \ref{combE} to $\langle s_i\rangle$ and $\langle T\rangle$ for each $i$ and then inductively to $\langle \{s_1,\ldots, s_i\}\cup T\rangle \cup \langle \{s_{i+1}\}\cup T\rangle$). 

 \begin{theorem} [Corollary 1.3, \cite{MCG26}] \label{MCGM26}
 The groups $\Gamma_{3,0}^0$ and $\Gamma_{4,0}^0$ are 1-ended and simply connected at infinity.
 \end{theorem}

Recall that a group $\Gamma$ that acts freely and cocompactly on a contractible cell complex
$X$ (by permuting cells) is a {\it duality group of dimension} $n$ if $H^i (\Gamma; \mathbb Z \Gamma)$ vanishes when $i\ne n$ and is torsion-free (as an abelian group) when $i = n$ (see R. Bieri and B. Eckmann's article \cite{BE73}). A group $\Gamma$ is a {\it virtual duality} group if it contains a duality group as a subgroup of finite index.

 \begin{theorem} [Theorem 4.1,\cite{Har86}] \label{MCGVDG}  
For $2g + r +s > 2$, the mapping class group $\Gamma_{g,r}^s$ is a virtual duality group  of dimension $d(g, r, s)$, where $d(g, 0,0)=4g-5$, $d(g,r,s)=4g+2r+s-4$, $g>0$ and $r+s>0$, and $d(0,r,s)=2r + s-3$. %In particular, the virtual cohomological dimension of $\Gamma_{g,r}^s$ is $d(g, r, s)$.
 \end{theorem}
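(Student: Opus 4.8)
The plan is to realize $\Gamma_{g,r}^s$ as a group that virtually acts freely and cocompactly on a contractible complex of the correct dimension, and then to verify the Bieri--Eckmann duality criterion through a connectivity-at-infinity computation. First I would pass to a torsion-free finite-index subgroup $\Gamma'$ of $\Gamma_{g,r}^s$; such subgroups exist because mapping class groups are virtually torsion free (via level structures on homology). Since both the property of being an $n$-dimensional duality group and the value of the cohomological dimension are invariant under passage to finite-index subgroups (compare the finite-index remark following the definition of $r$-connectedness at $\infty$), it suffices to prove that $\Gamma'$ is an $n$-dimensional duality group with $n=d(g,r,s)$; the conclusion for $\Gamma_{g,r}^s$ then follows, together with the adjective ``virtual.''

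Second, I would exhibit the contractible cocompact model $X$. Teichm\"uller space $\mathcal T$ of the bordered/punctured surface $F$ is contractible and $\Gamma'$ acts on it freely and properly discontinuously, but not cocompactly. The substitute is the \emph{spine} of $\mathcal T$: using the cell decomposition of decorated Teichm\"uller space coming from ideal-triangulation / ribbon-graph coordinates (Penner, Bowditch--Epstein, Harer), one obtains an equivariant deformation retract $X\subset \mathcal T$ that is a contractible simplicial complex of dimension exactly $d(g,r,s)$, on which $\Gamma'$ acts freely, simplicially and cocompactly. Matching $\dim X$ with the stated formula is a combinatorial dimension count for arc systems / ribbon graphs (top cells correspond to trivalent ribbon graphs, equivalently maximal filling arc systems), and this already identifies $d(g,r,s)$ as $\operatorname{cd}(\Gamma')$.

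Third, I would verify the duality criterion. By the Bieri--Eckmann definition of duality group used in this excerpt, $\Gamma'$ is an $n$-dimensional duality group if and only if $H^i(\Gamma';\mathbb Z\Gamma')=0$ for $i\neq n$ and $H^n(\Gamma';\mathbb Z\Gamma')$ is torsion free. Since $X$ is contractible, cocompact and $n$-dimensional, one has $H^i(\Gamma';\mathbb Z\Gamma')\cong H^i_c(X)$, and by proper-homotopy duality this is governed by the homology of $X$ at infinity. Invoking the reformulation of duality in terms of the reduced homology pro-system $\bar H_r(\varepsilon X)$ (Theorem \ref{GME2}(2)), it then suffices to show that $\bar H_r(\varepsilon X)$ is pro-trivial for all $r\leq n-2$, i.e. that $X$ is $(n-2)$-acyclic at infinity, together with torsion-freeness of the top group.

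The heart of the matter, and the step I expect to be by far the hardest, is this connectivity-at-infinity computation, which reduces to Harer's theorem on the homotopy type of the arc complex. The point is that the complex $\mathcal A_\infty(F)$ of isotopy classes of systems of disjoint essential arcs (endpoints at the punctures or on $\partial F$) is contractible, whereas the subcomplex $\mathcal A_\infty^{\partial}(F)$ of \emph{non-filling} arc systems, which models the frontier of the spine $X$ in $\mathcal T$ and hence the ``space of ends'' of $X$, is homotopy equivalent to a wedge of spheres of a single dimension. Establishing this spherical (Cohen--Macaulay) property is a delicate combinatorial-topological argument: one runs a surgery/discrete-Morse induction on arc systems, repeatedly using ``adding an arc'' and ``cutting along an arc'' to contract the filling locus and to show the non-filling locus is $(n-2)$-connected. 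The wedge-of-spheres conclusion simultaneously yields the vanishing of $\bar H_r(\varepsilon X)$ below the top degree (equivalently $H^i(\Gamma';\mathbb Z\Gamma')=0$ for $i<n$) and the torsion-freeness of the top homology (the dualizing module being the free abelian Steinberg-type module spanned by the top-dimensional spheres). Once this spherical property is in hand, the duality statement and the dimension formula follow formally, and the ``in particular'' assertion is immediate since $\operatorname{vcd}(\Gamma_{g,r}^s)=\dim X=d(g,r,s)$.
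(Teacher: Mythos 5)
The paper does not prove Theorem \ref{MCGVDG}; it quotes it from Harer's paper, so your proposal has to be measured against Harer's own argument. For surfaces with $r+s>0$ your outline essentially \emph{is} that argument: pass to a torsion-free finite-index subgroup, retract Teichm\"uller space equivariantly onto the ribbon-graph/arc-system spine, and verify the Bieri--Eckmann criterion (in the form of Theorem \ref{GME2}(2)) using contractibility of the arc complex together with the wedge-of-spheres theorem for the subcomplex of non-filling arc systems, whose top homology is the free abelian Steinberg-type dualizing module. One minor logical slip there: exhibiting the spine only gives the inequality $\mathrm{cd}(\Gamma')\leq \dim X$; the equality, and hence the ``in particular'' clause, is not available until the duality computation produces $H^{n}(\Gamma';\mathbb Z\Gamma')\neq 0$ in the top degree, so the dimension count does not ``already identify'' $d(g,r,s)$ as $\mathrm{cd}(\Gamma')$.

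The serious gap is the closed case $r=s=0$, which your proposal silently subsumes into the spine construction but which that construction cannot reach. Ideal-triangulation/ribbon-graph coordinates (Penner, Bowditch--Epstein, Harer--Mumford--Thurston) require at least one puncture or boundary component---on a closed surface there are no essential arcs at all---and no equivariant spine of Teichm\"uller space of dimension $4g-5$ is known for closed surfaces ($\dim \mathcal T=6g-6$ there, and producing a spine of dimension equal to the vcd is a well-known open problem). Harer instead deduces the closed case from the punctured case: for $g\geq 2$ (the only closed case permitted by the hypothesis $2g+s+r>2$) the Birman exact sequence $1\to\pi_1(F_g)\to\Gamma_{g}^{1}\to\Gamma_g\to 1$ exhibits a finite-index subgroup of $\Gamma_{g}^{1}$ as an extension by the Poincar\'e duality group $\pi_1(F_g)$, and the Bieri--Eckmann inheritance theorem for duality in extensions by Poincar\'e duality groups then shows $\Gamma_g$ is a virtual duality group of dimension $d(g,0,1)-2=(4g-3)-2=4g-5$. (Boundary components are likewise most cleanly handled not by the spine but by the central extensions $1\to\mathbb Z\to\Gamma_{g,r}^s\to\Gamma_{g,r-1}^{s+1}\to 1$ quoted immediately after the theorem in this paper, each raising the dimension by one.) Without an argument of this kind, your proof establishes the theorem only when $r+s>0$.
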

 Virtual duality groups are finitely presented. They are 1-ended if they have dimension $>1$ (see Theorem \ref{HEnds}). Next, we consider mapping class groups that satisfy the duality condition $2g+s+r>2$ but are not 1-ended. Note that if $g>0$ and $2g+r+s=2$ then $g=1$, $r=s=0$ and the mapping class group of the torus, $\Gamma_{1,0}^0=GL_2(\mathbb Z)$ is (virtually free and) simply connected at infinity, but not 1-ended. All other cases for $g=1$ are already considered.

 If $g=0$ and $r+s>2$,   the only pairs $(r,s)$ not considered are in $\{(0,3), (0,4), (1,2)\}$. As mentioned earlier $\Gamma_{0,4}^0$ is virtually free and for $k\leq 3$, $\Gamma_{0,0}^k$  is finite. Theorem \ref{MCGVDG} implies that $\Gamma_{0,1}^2$ has virtual cohomological dimension 1. Stallings famous theorem \cite{St68} implies  $\Gamma_{0,1}^2$ is virtually free. 

Next we turn our attention to the the remaining cases with $g=0$. 
%When $r=3$ and $s=0$, Theorem \ref{HES} implies the following sequence is exact.
%$$1\to \mathbb Z\to \Gamma_{0,3}^0\to \Gamma_{0,2}^1\to 1.$$
%Since $\Gamma_{0,2}^1$ is 1-ended Theorem \ref{J} implies $\Gamma_{0,3}^0$ is simply connected at infinity. 

\begin{theorem}\label{MCG7}
Suppose $g=0$ and $r+s>2$ (so that $\Gamma_{0,r}^s$ is a duality group - see Theorem \ref{MCGVDG}). The group $\Gamma_{0,r}^s$ is 1-ended if and only if one of the following holds: 
$$(r=0 \hbox{ and } s \geq 5),\  (r=1 \hbox{ and } s\geq 3),\   (r=2 \hbox{ and } s\geq 1), \  (r>2).$$
The group $\Gamma_{0,s}^r$ is 1-ended and simply connected at infinity if and only if one of the following holds:
$$(r=0 \hbox{ and } s>5),\ (r=1 \hbox{ and } s>3),\  (r=2 \hbox{ and } s>1),\  (r>2).$$
\end{theorem}
\begin{proof}
If $G$ is a duality group of dimension $n$ then $H^i(G,\mathbb ZG)=0$ for $i<n$ and $H^n(G,\mathbb ZG)$ is non-trivial (and torsion free as an abelian group). We have $d(0,r,s)=2r+s-3$ and $d(0,r,s)=2$ precisely when:
$$(r,s)\in \{(0,5), (1,3), (2,1)\}.$$
Theorem \ref{HEnds} implies that $\Gamma _{0,r}^s$ is 1-ended if and only if $d(0,r,s)\geq 2$. 
The first statement of the theorem follows. 

 If $d(0,r,s)=2$ then $H^2(\Gamma_{0,r}^s, \mathbb Z\Gamma_{0,r}^s)\ne 0$ and Corollary \ref{GM2} implies that $\Gamma_{0,r}^s$ is not simply connected at infinity. It remains to show that under the second set of conditions, $\Gamma_{0,r}^s$ is simply connected at infinity. 

If $r=0$ and $s>5$, consider the exact sequence given by Theorem \ref{BES}
$$1\to \pi_1(\Gamma_{0,0}^{s-1})\to \bar \Gamma_{0,0}^{s}\to \Gamma _{0,0}^{s-1}\to 1.$$
The last term is $\Gamma_{0,0}^{s-1}$ (for $s-1\geq 5$), a 1-ended group. Theorem \ref{J} implies $\bar\Gamma_{0,0}^s$ (and hence $\Gamma_{0,0}^s$) is simply connected at infinity. 

In the remaining cases, consider the exact sequence given by Theorem \ref{HES}:
$$1\to \mathbb Z\to \Gamma_{0,r}^s\to \Gamma_{0,r-1}^{s+1}\to 1.$$

If $r=1$ and $s>3$, then the last term is $\Gamma_{0,0}^k$ for $k\geq 5$, a 1-ended group. Theorem \ref{J} implies $\Gamma_{0,1}^s$ is simply connected at infinity. 

If $r=2$ and $s>1$ then the last term is $\Gamma_{0,1}^k$ for $k\geq 3$, a 1-ended group. Theorem \ref{J} implies $\Gamma_{0,r}^s$ is simply connected at infinity. 

If $r>2$, then the last term is $\Gamma_{0,r-1}^{s+1}$ for $r-1\geq 2$, a 1-ended group. Theorem \ref{J} implies $\Gamma_{0,r}^s$ is simply connected at infinity. 
\end{proof}

Now we turn our attention to higher connectivity at infinity for mapping class groups. 

\medskip

\noindent ($\ast$) {\it If $G$ is a simply connected at infinity duality group of dimension $n\geq 3$, then the Proper Hurewicz Theorem (\ref{PHT}) implies $G$ is $(n-2)$-connected at $\infty$.} 
  
\begin{theorem} \label{MCGSC}  If $g>1$ then the group $\Gamma_{g,r}^s$ is $d(g,r,s)-2$-connected at infinity.  
 \end{theorem} 
 \begin{proof} 
 Theorem \ref{punbd} states that if $g>1$ and $r+s\geq 1$ then $\Gamma_{g, r}^s$ is 1-ended and simply connected at infinity. Theorem \ref{DM1} implies that $\Gamma_{2,0}^0$ is simply connected at infinity. Theorem \ref{MCGM26} implies that $\Gamma_{3,0}^0$ and $\Gamma_{4,0}^0$ are simply connected at infinity. 
 If $g>4$ then Theorem \ref{FillingL} implies that $\Gamma_{g,0}^0$ is simply connected at infinity. Since $g>1$, we have $2g+r+s>2$. Theorem \ref{MCGVDG} implies that for $g>1$, the group $\Gamma_{g,r}^s$ is a virtual duality group of dimension $\geq 3$ (in particular, these groups are 1-ended). 
 Theorem \ref{MCGVDG} and ($\ast$) combine to finish the proof.
 \end{proof}
 
 Collecting results:
 
 \begin{theorem}\label{MCGSCINF}
 Let $\Gamma_{g,r}^s$ be the mapping class group of the surface of genus $g$ with $s$-punctures and $r$-boundary components. 
 
1)  $\Gamma_{g,r}^s$ is finite if and only if $g=0$, $r=0$ and $s\leq 3$,  or $g=0$, $r=1$ and $s=0$.

2) $\Gamma_{g,r}^s$ is infinite and virtually free (hence simply connected at infinity) if and only if $(g,r,s) \in \{ (0,0,4), (0,1,1),  (0,1,2), (0,2,0), (1,0,0), (1,0,1) \}$. 

3) $\Gamma_{g,r}^s$ is virtually an extension of two non-trivial finitely generated free groups  if and only if $(g,r,s) \in \{(0,0,5), (0,1,3), (0,2,1), (1,1,0), (1,0,2) \}$. In this case, $\Gamma_{g,r}^s$ is 1-ended and semistable at infinity, but not simply connected at infinity. 

4) Otherwise, $\Gamma_{g,r}^s$ is $d(g,r,s)-2$-connected at infinity. In particular, all remaining mapping class groups  are 1-ended and simply connected at infinity. 
 \end{theorem}
 \begin{proof}
 Part 1) is well known. 
 
 For part 2) we want to show each group described there is virtually free and any group not described in part 1) or 2) is 1-ended. Theorem \ref{punbd} implies if $(g,r,s)\in \{1,0,0), (1,0,1)\}$ then $\Gamma_{g,r}^s$ is virtually free. If $(g,r,s)\in \{1,0,0), (1,0,1), (0,2,0)\}$ then Theorem \ref{MCGVDG} implies $\Gamma_{g,r}^s$ is a duality group of dimension 1. Stallings Theorem implies $\Gamma_{g,r}^s$ is virtually free. The exact sequence of Theorem \ref{HES} implies that $\Gamma_{0,1}^1$ is virtually free. Theorem \ref{MCG7} implies that $\Gamma_{0,0}^4$ and $\Gamma_{0,1}^2$ are not 1-ended. Theorem \ref{MCGVDG} implies that they are duality groups of dimension 1. Stallings theorem implies that they are virtually free. To see that the groups not described in parts 1) and 2) are 1-ended, it is enough to show they are duality groups of dimension $\geq 2$. We list the groups not in parts 1) and 2). If $(g,r,s)=(0,0,s)$ then $s>4$ and the dimension is $s-3\geq 2$. If $(g,r,s)=(0,1,s)$ then $s\geq 3$ and the dimension is $2+s-3\geq 2$.
 If $(g,r,s)=(0,2,s)$ then $s\geq 1$ and the dimension is $2r+s-3\geq 2$. 
 If $(g,r,s)=(0,r,s)$ for $r>2$ then the dimension is $2r+s-3\geq 3$. This takes care of all cases with $g=0$. 
 If $(g,r,s)=(1,0,k)$ then $k\geq 2$ and the dimension is $4+k-4\geq 2$. If $(g,r,s)=(1,r,s)$ and $r\geq 1$ then the dimension is $4+2r+s-3\geq 3$. This takes care of all cases with $g=1$. If $g\geq 2$ then the dimension is either $4g-5\geq 3$ or $4g+2r+s-4\geq 4$. Part 2) is complete. 
 
 Next consider part 3). Theorem \ref{BES}  and part 2) imply that if 
 $$(g,r,s)\in \{(0,0,5), ((0,1,3)), (0,2,1)\}$$ 
 then $\Gamma_{r,s}^s$ is virtually an extension of two finitely generated infinite free groups. Theorem \ref{punbd2} implies that if 
 $$(g,r,s)\in \{(1,1,0), (1,0,2)\}$$ 
 then $\Gamma_{r,s}^s$ is virtually an extension of two finitely generated infinite free groups. If $G$ is virtually an extension of two finitely generated infinite free groups, then $G$ is 1-ended by Theorem \ref{cohen}, semistable at infinity by Theorem \ref{M1}, proper 2-equivalent to the direct product of two finitely generated infinite free groups by Theorem \ref{JConv}, and not simply connected at infinity by Theorems \ref{NotSc1} and \ref{2-equiv}. 

Finally, $(\ast)$ implies that it suffices to show that the groups not listed in part 1), 2) and 3) or of the form $\Gamma_{g,0}^0$ for $g\in \{3,4\}$, are duality groups of dimension $\geq 3$ and simply connected at infinity. It is straight forward to apply Theorem \ref{MCGVDG} to see that all groups considered here satisfy the duality condition and have dimension $\geq 3$.
 Suppose $(g,r,s)$ is not listed in parts 1), 2) and 3). If $(g,r,s)=(0,0,s)$ then $s>5$. If $(g,r,s)=(0,1,s)$ then $s>3$. If $(g,r,s)=(0,2,s)$ then $s>1$. In each of these three cases, Theorem \ref{MCG7} implies that $\Gamma_{g,r}^s$ is simply connected at infinity. Also if $(g,r,s)=(0,r,s)$ and $r\geq 3$ then $\Gamma_{0,r}^s$ is not listed in the first three parts and Theorem \ref{MCG7} implies $\Gamma_{g,r}^s$ is simply connected at infinity. We may assume $g>0$. If $(g,r,s)=(1,0,s)$, then $s>2$. If $(g,r,s)=(1,1,s)$ then $s>0$. In both of these cases, Theorem \ref{punbd2} implies $\Gamma_{g,r}^s$ is simply connected at infinity. Any other triple not considered in the first three parts of the form $(1,r,s)$ is such that $r>1$ and Theorem \ref{punbd2} implies $\Gamma_{g,r}^s$ is simply connected at infinity. Theorem \ref{MCGSC} implies that if $g>1$ and $r+s\geq 1$ then the group $\Gamma_{g,r}^s$ is simply connected at infinity. Theorems \ref{DM1} and \ref{FillingL} imply that if $g=2$ or $g>4$ then $\Gamma_{g,0}^0$ is simply connected at infinity. 
 \end{proof}
 
\subsubsection{Gl($\mathbb Z,n)$}\label{GLNZ}
 We do not have a firm reference for the next result, although several people have outlined proofs. 
 
\begin{theorem} [Paragraph 2, \S 1, \cite{Vo95})] \label{GLNZSC}
For $n\geq 3$, $GL_n(\mathbb Z)$ is 1-ended and simply connected at $\infty$. 
\end{theorem}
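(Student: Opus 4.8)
The plan is to reduce, via quasi-isometry invariance, to a torsion free finite index subgroup realized geometrically by the Borel--Serre bordification, to harvest one-endedness and homological $1$-connectivity at $\infty$ from Borel--Serre duality together with the Solomon--Tits theorem, and then to upgrade the homological statement to the homotopical one. The group $GL(n,\mathbb Z)$ is finitely presented (it is arithmetic, even of type $F_\infty$) and, by Minkowski's lemma, virtually torsion free; fix a torsion free subgroup $\Gamma$ of finite index. Since finite index subgroups are quasi-isometric to the ambient group, and both the number of ends and simple connectivity at $\infty$ are quasi-isometry invariants of finitely presented groups (Theorems \ref{QIEFG} and \ref{QIsp}), it suffices to prove that $\Gamma$ is $1$-ended and simply connected at $\infty$. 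By Borel--Serre, $\Gamma$ acts freely and cocompactly on the contractible bordification $\overline X$ of the symmetric space $X=SL_n(\mathbb R)/SO(n)$, so that $\overline X=\widetilde K$ is the universal cover of the finite aspherical complex $K=\Gamma\backslash\overline X$ with $\pi_1(K)\cong\Gamma$. By Definition \ref{defss2} and Corollary \ref{pro2}, the number of ends, semistability, and simple connectivity at $\infty$ of $\Gamma$ are exactly those of $\overline X$.

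Next I would extract the homological content essentially for free. Borel--Serre theory shows $\Gamma$ is a duality group of dimension $m=\mathrm{vcd}(\Gamma)=\dim X-\mathrm{rk}_{\mathbb Q}(SL_n)=\binom n2$, with dualizing module the Steinberg module $\widetilde H_{\,n-2}(\mathcal T)$ of the rational Tits building $\mathcal T$ of $SL_n$; the Solomon--Tits theorem says $\mathcal T$ is homotopy equivalent to a wedge of $(n-2)$-spheres, so this module is free abelian and the duality group property genuinely holds. For $n\geq 3$ we have $m=\binom n2\geq 3$. Because $\Gamma$ is a duality group of dimension $m$, we get $H^i(\Gamma;\mathbb Z\Gamma)=0$ for all $i\neq m$; in particular $H^1(\Gamma;\mathbb Z\Gamma)=0$, so Theorem \ref{HEnds} gives that $\Gamma$ is $1$-ended. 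The same duality, read through the relationship between duality groups and connectivity at infinity used in the discussion preceding Theorem \ref{BF2}, shows that the reduced homology inverse system $\bar H_i(\varepsilon\overline X)$ is pro-trivial for all $i\leq m-2$; since $m\geq 3$ this covers $i=0,1$, so $\overline X$ is $1$-ended and $1$-acyclic at $\infty$.

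The remaining and genuinely harder task is the homotopical upgrade from $1$-acyclicity at $\infty$ ($\bar H_1(\varepsilon\overline X)$ pro-trivial) to simple connectivity at $\infty$ ($\pi_1$-at-$\infty$ pro-trivial). Two ingredients are needed. First one must know that $\Gamma$ is semistable at $\infty$, so that the fundamental pro-group is pro-epimorphic and the fundamental group at $\infty$ is well defined; this I would obtain from the explicit corner structure of $\overline X$, whose boundary faces $e(P)$ (indexed by the rational parabolic subgroups $P$) are contractible and are glued along the building, which lets one slide loops coherently out toward $\infty$. Given semistability and the fact that $\overline X$ is simply connected with pro-trivial $\bar H_1(\varepsilon)$, a proper Hurewicz argument in the spirit of Theorem \ref{PHT} (run in degree $1$) forces $\pi_1$-at-$\infty$ to be pro-trivial, giving simple connectivity at $\infty$ of $\overline X$, hence of $\Gamma$, hence of $GL(n,\mathbb Z)$. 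The hypothesis $n\geq 3$ is exactly what makes $\mathrm{rk}_{\mathbb Q}(SL_n)=n-1\geq 2$ and $m\geq 3$; for $n=2$ the group is virtually free with infinitely many ends and the statement fails.

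The main obstacle is precisely this last homotopical step. Theorem \ref{PHT} takes simple connectivity at $\infty$ as a hypothesis, so a naive application is circular; the honest route is to prove $\pi_1$-at-$\infty$ pro-trivial directly, by a reduction-theory homotopy that contracts loops lying far out in $\overline X$ using the explicit description of the Borel--Serre boundary and the connectivity of its horospherical and parabolic pieces. Establishing semistability of $\Gamma$ is the other delicate point and is handled by the same geometric analysis. In effect, all the difficulty is transported into the topology of the Borel--Serre bordification and the Solomon--Tits theorem, with the paper's quasi-isometry invariance (Theorem \ref{QIsp}) and the end formula (Theorem \ref{HEnds}) supplying the clean reductions at the two ends of the argument.
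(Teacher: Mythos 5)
You should know at the outset that the paper contains no proof of this theorem: it is cited to paragraph 2 of \S 1 of \cite{Vo95}, immediately after the sentence ``we do not have a firm reference for the next result, although several people have outlined proofs.'' So your proposal stands on its own, and it splits cleanly in two. The one-endedness half is correct and complete: passing to a torsion-free finite-index subgroup $\Gamma$, Borel--Serre theory \cite{BS73} together with Solomon--Tits makes $\Gamma$ a duality group of dimension $\binom{n}{2}\geq 3$, hence $H^1(\Gamma,\mathbb Z\Gamma)=0$, Theorem \ref{HEnds} gives one end, and quasi-isometry invariance (Theorems \ref{QIEFG} and \ref{QIsp}) transports everything to $GL(n,\mathbb Z)$.

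The simple connectivity at $\infty$ half --- the actual content of the theorem --- has a genuine gap, and it cannot be closed by the route you sketch. You rightly observe that Theorem \ref{PHT} is circular here, but your fallback, that semistability plus pro-triviality of $\bar H_1(\varepsilon \overline X)$ should force pro-triviality of the fundamental pro-group, is false as a general implication, not merely unproven: the Davis groups of Example \ref{DavisEx} act freely and cocompactly on contractible manifolds, are semistable at $\infty$ (they are finite-index subgroups of right-angled Coxeter groups, Theorem \ref{ACSS}), have pro-trivial first homology at $\infty$ because their fundamental pro-groups are built from perfect groups (see the remark following Theorem \ref{stableproH}), and yet are not simply connected at $\infty$. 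So no degree-one Hurewicz bridge exists; pro-$\pi_1$ triviality must be proven directly from the geometry of the Borel--Serre bordification. That step is genuinely delicate: $\partial\overline X$ is homotopy equivalent to the rational Tits building, a wedge of $(n-2)$-spheres, which for $n=3$ is a wedge of circles and hence not even simply connected, so the required null-homotopies of loops near infinity must exploit the contractibility of the individual boundary faces $e(P)$ and reduction theory --- none of which you carry out. Your semistability claim is likewise asserted (``slide loops coherently out toward infinity'') rather than proven. Note finally that the paper's subsequent theorem runs Hurewicz in the legitimate direction, taking simple connectivity at $\infty$ as input to conclude $(d-2)$-connectivity at $\infty$; your proposal inverts this, and the inversion is exactly where it breaks.
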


The group $SL_n(\mathbb Z)$ is an index 2 subgroup of $GL_n(\mathbb Z)$. Symmetric space is $SL_n(\mathbb R)/SO(n)$ and $SL_n(\mathbb Z)$ is a virtual duality group with virtual cohomological dimension $d=vcd(SL_n(\mathbb Z))=dim(symm space)-(n-1)$. Since $SL_n(\mathbb Z)$ is simply connected at infinity, the Proper Hurewicz Theorem \ref{PHT} implies:

\begin{theorem}
For $n\geq 3$, $GL_n(\mathbb Z)$ (equivalently $SL_n(\mathbb Z)$) is 1-ended and $(d-2)$-connected at $\infty$ where $d=vcd(SL_n(\mathbb Z))$.
\end{theorem}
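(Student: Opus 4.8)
The plan is to combine the simple connectivity at $\infty$ of $GL(n,\mathbb Z)$ (the preceding theorem of Vogtmann) with its virtual duality group structure, feeding everything through the Proper Hurewicz Theorem \ref{PHT} exactly as packaged in statement $(\ast)$, and to move each conclusion across finite index. First I would record that the relevant asymptotic connectivity properties transfer freely between $GL(n,\mathbb Z)$ and $SL(n,\mathbb Z)$: since $SL(n,\mathbb Z)$ has index $2$ in $GL(n,\mathbb Z)$, the finite-index invariance of $r$-connectivity at $\infty$ noted in \S\ref{BSgps} shows that $SL(n,\mathbb Z)$ is $1$-ended (that is, $0$-connected at $\infty$) and simply connected at $\infty$ (that is, $1$-connected at $\infty$), inheriting both from $GL(n,\mathbb Z)$.

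Next, since $SL(n,\mathbb Z)$ is a virtual duality group, I would pass to a torsion-free finite-index subgroup $\Gamma_0$ that is an honest duality group of dimension $d=vcd(SL(n,\mathbb Z))$; such a $\Gamma_0$ acts freely and cocompactly on a contractible complex (for instance the Borel--Serre bordification of the symmetric space $SL(n,\mathbb R)/SO(n)$). Applying the same finite-index invariance in the other direction, $\Gamma_0$ is itself $1$-ended and simply connected at $\infty$. A short computation gives $d=\dim\!\big(SL(n,\mathbb R)/SO(n)\big)-(n-1)=\tfrac{(n-1)(n+2)}{2}-(n-1)=\tfrac{n(n-1)}{2}$, which is $\geq 3$ for every $n\geq 3$; this is precisely the hypothesis $d\geq 3$ required to invoke $(\ast)$.

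Finally I would apply $(\ast)$ to $\Gamma_0$: being a duality group of dimension $d$ forces the reduced homology pro-system $\bar H_r(\varepsilon X)$ to be pro-trivial for all $r\leq d-2$ (Theorem \ref{GME2}(2)), and since $\Gamma_0$ is simply connected at $\infty$, Theorem \ref{PHT} upgrades this proper acyclicity to proper connectivity, so $\Gamma_0$ is $k$-connected at $\infty$ for every $k\leq d-2$. Transporting this back across the finite-index inclusions $\Gamma_0\leq SL(n,\mathbb Z)\leq GL(n,\mathbb Z)$ yields that both $SL(n,\mathbb Z)$ and $GL(n,\mathbb Z)$ are $(d-2)$-connected at $\infty$, as desired. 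I expect the only point needing genuine care to be the passage to a torsion-free finite-index duality subgroup and the verification that the finite-index remark of \S\ref{BSgps} legitimately transports $(d-2)$-connectivity in both directions; the remainder is bookkeeping atop the cited results.
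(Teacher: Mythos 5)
Your proposal is correct and follows essentially the same route as the paper: Vogtmann's simple connectivity at $\infty$, the virtual duality group structure of $SL(n,\mathbb Z)$ with $d=\dim\bigl(SL(n,\mathbb R)/SO(n)\bigr)-(n-1)$, the Proper Hurewicz Theorem as packaged in $(\ast)$, and finite-index invariance of $r$-connectivity at $\infty$. The only difference is that you spell out the details the paper leaves implicit (passing to a torsion-free finite-index honest duality subgroup and verifying $d\geq 3$), which is a welcome clarification rather than a departure.
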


%Bestvina - Yeah, this has to do with the structure of the neutered symmetric space, as described e.g. in Dan Grayson's paper "Reduction theory using semistability". Let's look at $SL(3,\mathbb Z)$. The symmetric space is $SL(3,\mathbb R)/SO(3)$, which is 5-dimensional. To make the action cocompact you have to remove a collection of horoballs. They won't be disjoint like for $SL(2,\mathbb Z)$ but the nerve will be 1-dimensional. The horoballs are parametrized by rational lines and rational planes in $\mathbb R^3$ and intersections come from inclusion of a line in a plane (so the nerve is the Tits building). The boundary of each horoball is $\mathbb R^4$. Say you have a loop near infinity. It bounds a disk near infinity in the symmetric space since $\mathbb R^5$ is 1-connected at infinity, but the disk might intersect the horoballs. First remove intersections with the line-type horoballs using that $\mathbb R^4$ is 1-connected at infinity, keeping the disk close to infinity (do surgery). Then remove intersections with the plane-type horoballs, but make sure you stay disjoint from the line-type horoballs. This is possible because $\mathbb R^4$-(intersections with line type horoballs) is 1-connected at infinity.
%In fact, SL(n,Z) will be (d-2)-connected at infinity, where d=vcd(SL(n,Z))=dim(symm space)-(n-1) (this is the Borel-Serre theorem, n-1 is the Q-rank od SL(n,Z)).

\subsubsection{Solvable and Metanilpotent Groups}\label{SolMet}

The most general semistability result of this section is Theorem \ref{metanil} where all metanilpotent groups are shown to be semistable at $\infty$. 
A metanilpotent group is nilpotent by nilpotent. The class of virtually metanilpotent groups includes all polycyclic groups, all finitely generated linear (over an arbitrary field) solvable groups (by the Kolchin-Lie-Malsev Theorem), all metabelian groups, and Kharlampovich's examples of finitely presented solvable groups with undecidable word problem. In fact, it covers most known results of finitely presented solvable groups. Only recently examples of non-virtually-metanilpotent, finitely presented, solvable groups were found (see Solutions, Problem 16.35 in https://arxiv.org/pdf/1401.0300.pdf). 

\begin{example} The following are examples given in: {\it Unsolved Problems in Group Theory, The Kourovka Notebook, No. 20.}  Here, $p$ is any prime. The group 
$$G = \langle a, b, c, d \ |\  b^a = b^p, c^a = c^p, d^a = d, c^b = c, d^c = dd^b, [d, d^b] = 1, d^p = 1\rangle$$ 
is soluble of derived length 3, but is not metanilpotent-by-finite. 

It is straightforward to show these groups are semistable at $\infty$ (in fact, it may be possible to use Theorem \ref{MT87} to show that these groups are simply connected at $\infty$). Note that killing the normal closure of $\{d,c\}$ gives a retraction of $G$ to $\langle a,b \ |\  b^a=b^p\rangle$. Killing the normal closure of $\{b,d\}$ gives a retraction of $G$ to $\langle a,c \ | \  c^a=c^p\rangle$. Theorem \ref{BSmn} implies that these quotient groups are 1-ended and semistable at $\infty$ and so the subgroups of $G$ with generators $\{a,b\}$ and $\{a,c\}$ are 1-ended and semistable at $\infty$. Theorem \ref{MComb} implies the subgroup of $G$ generated by $\{a,b,c\}$ is 1-ended and semistable at $\infty$. The subgroup of $G$ generated by $\{a,d\}$ is 2-ended and so Theorem \ref{MComb} implies $G$ is semistable at $\infty$. 
\end{example}

\begin{theorem} [(Theorem 6, \cite{M4}] \label{solv1} 
If $G$ is an infinite, finitely generated solvable group with commutator (derived) series 
$$G=G^{(0)}\triangleright G^{(1)}\triangleright \cdots\triangleright G^{(n)}=1$$
and no $G^{(i)}$ is locally finite and infinite, then $G$ is either 2-ended or 1-ended and semistable at $\infty$. 
\end{theorem}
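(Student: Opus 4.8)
The plan is to produce, whenever $G$ is not $2$-ended, a finitely generated infinite subgroup that is subnormal and of infinite index in $G$, and then to invoke Lew's Theorem~\ref{L}, which delivers $1$-endedness and semistability at $\infty$ simultaneously. First I would locate the relevant layer of the derived series: let $m$ be the largest index with $G^{(m)}$ infinite, so that $G^{(m+1)}$ is finite and $A:=G^{(m)}/G^{(m+1)}$ is an infinite abelian group. Since $G^{(m)}$ is infinite, the hypothesis forbids it from being locally finite; but a finite-by-abelian group would be locally finite if $A$ were a torsion group (a torsion abelian group is locally finite, and finite-by-(locally finite) is locally finite), so $A$ must contain an element $\bar x$ of infinite order. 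This is the only place the hypothesis ``no $G^{(i)}$ is locally finite and infinite'' is used, namely for $i=m$.

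The heart of the argument is the construction of the subnormal subgroup, and the subtlety is that $G^{(m)}$ itself need not be finitely generated. The motivating example is $\mathbb Z\wr\mathbb Z$, whose derived subgroup is the infinitely generated (yet non-locally-finite) group $\bigoplus_{\mathbb Z}\mathbb Z$, so Lew's Theorem cannot be applied to $G^{(m)}$ directly. To get around this I would take $D$ to be the full preimage in $G^{(m)}$ of the cyclic group $\langle\bar x\rangle\le A$, that is $D=\langle x\rangle\,G^{(m+1)}$. Because the kernel $G^{(m+1)}$ is finite and $\langle\bar x\rangle$ is cyclic, $D$ is finitely generated (by $x$ together with generators of the finite group $G^{(m+1)}$) and infinite. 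Moreover $\langle\bar x\rangle$ is normal in the abelian group $A$, so $D\triangleleft G^{(m)}$, and since each $G^{(i+1)}$ is characteristic in $G^{(i)}$ we obtain the subnormal chain $D\triangleleft G^{(m)}\triangleleft G^{(m-1)}\triangleleft\cdots\triangleleft G$. Thus $D$ is a finitely generated infinite subnormal subgroup of $G$. Arranging finite generation of $D$ despite $G^{(m)}$ possibly failing to be finitely generated is the main obstacle, and the finiteness of $G^{(m+1)}$ is exactly what resolves it.

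It remains to control the index $[G:D]=[G:G^{(m)}]\cdot[A:\langle\bar x\rangle]$, and this is precisely where the dichotomy in the statement appears. If $[G:G^{(m)}]=\infty$, or if $A$ fails to be virtually cyclic, then I can arrange $[G:D]=\infty$: in the first case automatically, and in the second by choosing $\bar x$ so that $\langle\bar x\rangle$ has infinite index in $A$. The latter is always possible for a non-virtually-cyclic abelian group carrying an infinite-order element — if $A$ has torsion-free rank $\ge 2$ this is clear, and if $A$ has rank $1$ but is not virtually cyclic then it is not finitely generated, so the finitely generated subgroup $\langle\bar x\rangle\cong\mathbb Z$ cannot have finite index. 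In all these cases Theorem~\ref{L} applies to $D$ and shows $G$ is $1$-ended and semistable at $\infty$. The complementary case is $[G:G^{(m)}]<\infty$ together with $A$ virtually cyclic; then $G^{(m)}$ is finite-by-(virtually $\mathbb Z$), hence virtually $\mathbb Z$, so $G$, being virtually $G^{(m)}$, is virtually infinite cyclic and therefore $2$-ended. These two cases exhaust the possibilities and give exactly the stated alternative; as a consistency check, Theorem~\ref{free} already guarantees that an infinite solvable group cannot be infinitely-ended, so $1$-ended and $2$-ended are the only options.
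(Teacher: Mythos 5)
Your proof is correct. The key construction --- descending to the last infinite term $G^{(m)}$ of the derived series so that $G^{(m+1)}$ is finite, using the hypothesis (only at $i=m$) to extract an infinite-order element $\bar x$ of $A=G^{(m)}/G^{(m+1)}$, and forming the finitely generated infinite subnormal subgroup $D=\langle x\rangle G^{(m+1)}$ --- is sound; the normality, subnormality and index claims all check out, and the complementary case does force $G$ to be virtually infinite cyclic, hence $2$-ended.

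Be aware, though, that this paper never proves Theorem \ref{solv1}: it quotes it from \cite{M4} and remarks only that Theorems \ref{MT87} and \ref{metanil} generalize it. Your argument is thus necessarily different from the original, which predates Lew's thesis \cite{Lew93} and was built instead on the combination results of \cite{M4} (recorded here as Theorems \ref{MComb} and \ref{BaseSS}; the Kourovka example worked out in \S\ref{SolMet} shows that style of argument). The closest proof the paper does contain is Osin's proof of Theorem \ref{metanil}, and the contrast is instructive. There the dichotomy is on the kernel of $1\to N_1\to G\to N_2\to 1$: if $N_1$ has an element of infinite order, it generates a subnormal copy of $\mathbb Z$ and Theorem \ref{L} applies, exactly as in your main case; if $N_1$ is locally finite, Osin either maps $G$ onto $\mathbb Z\times\mathbb Z$ and invokes Theorem \ref{NOF2}, or reduces to a (locally finite)-by-(virtually cyclic) group and invokes Corollary \ref{OsHNN} --- and both of those tools genuinely require finite presentability, which is why Theorem \ref{metanil} concerns finitely presented groups. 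Your proof eliminates the locally finite branch altogether, since the hypothesis of Theorem \ref{solv1} outlaws infinite locally finite derived terms; consequently the only external input you need is Lew's Theorem \ref{L}, which holds for finitely generated groups, so the whole argument lives in the finitely generated category, as the statement demands. Two further points in your favor: exploiting the finiteness of $G^{(m+1)}$ is exactly the right device to manufacture a finitely generated subnormal subgroup even though derived terms (as in $\mathbb Z\wr\mathbb Z$) need not be finitely generated; and since you use the hypothesis only at $i=m$, you have in fact proved the theorem under the formally weaker assumption that the last infinite derived term is not locally finite. The one step you leave as standard --- that an infinite (finite)-by-(virtually cyclic) group is $2$-ended --- is easily supplied: pass to the finite-by-$\mathbb Z$ preimage of a finite-index $\mathbb Z\leq A$, split it as $F\rtimes\mathbb Z$, and note that a power of the stable letter centralizes $F$, yielding an infinite cyclic subgroup of finite index.
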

Theorems \ref{MT87} and \ref{metanil} generalize this result.

The Lamplighter group 
$$L=\langle x, y  \ |\  y^2=1, [y,x^{-k}yx^k]=1 \hbox{ for all } k>0\rangle$$
is a finitely generated (not finitely presented) metabelian group that is not semistable at infinity (see Theorem \ref{LNss}).
\begin{theorem} [Main Theorem, \cite{M87a}] \label{MT87} 
Let $G$ be a finitely presented solvable group with commutator (derived) series 
$$G=G^{(0)}\triangleright G^{(1)}\triangleright \cdots\triangleright G^{(n)}\triangleright G^{(n+1)}=1$$
If $G^{(n)}$ contains an element of infinite order, then either $G$ is simply connected at $\infty$ or $G$ contains a normal subgroup $N$ of finite index, and $N$ contains a finite normal subgroup $F$ such that $N/F$ is isomorphic to one of the groups $\langle x,y\ |\ x^{-1}yx=y^p\rangle$. 
\end{theorem}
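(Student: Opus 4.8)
The plan is to split on the number of ends of the quotient $K = G/G^{(n)}$. Write $A = G^{(n)}$ for the abelian, normal, last nontrivial term of the derived series; by hypothesis $A$ contains an element of infinite order, so $A$ is infinite. Since $K$ is a finitely generated solvable group it contains no free subgroup of rank $2$, so by Theorem \ref{free} it is finite, one-ended, or two-ended. The whole point is that the one-ended case is immediate, while the remaining cases either collapse to a low-dimensional Euclidean model (giving simple connectivity at $\infty$) or are forced into the Baumslag--Solitar exception. I would first dispose of the one-ended case: if $K$ is one-ended, then the short exact sequence $1 \to A \to G \to K \to 1$ has $G$ finitely presented, $A$ abelian with an element of infinite order, and $K$ one-ended, so Theorem \ref{T187a} applies verbatim and $G$ is simply connected at $\infty$. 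This step uses nothing about $A$ being finitely generated, which is essential, since the interesting non-example ($A \cong \mathbb{Z}[1/p]$) is not finitely generated.

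Next I would treat the case in which $K$ is finite or two-ended and $A$ is finitely generated, so $A \cong \mathbb{Z}^m \oplus T$ with $T$ the finite torsion subgroup and $m \geq 1$. Since $T$ is characteristic in $A$ and $A$ is normal in $G$, $T$ is a finite normal subgroup of $G$; as simple connectivity at $\infty$ is a quasi-isometry invariant of finitely presented groups (Theorem \ref{QIsp}) I may pass to the finitely presented group $\bar G = G/T$, which contains the normal subgroup $\bar A \cong \mathbb{Z}^m$ with $\bar G/\bar A \cong K$. For $m \geq 3$, Theorem \ref{LR} (case $k > 2$, no restriction on $K$) gives that $\bar G$, hence $G$, is simply connected at $\infty$; for $m = 2$ with $K$ two-ended (in particular infinite), Theorem \ref{LR} again applies (case $k = 2$, $K$ not finite). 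The residual subcases are exactly the Hirsch-length-two models: $m = 2$ with $K$ finite, and $m = 1$ with $K$ two-ended, both yielding $G$ virtually $\mathbb{Z}^2$. In these I would produce the exceptional conclusion by taking $N$ to be $A$ (when $K$ is finite) or the preimage of an infinite cyclic finite-index subgroup of $K$ (when $K$ is two-ended), and $F = T$, so that $N/F$ is an extension of $\mathbb{Z}$ by $\mathbb{Z}$ and hence isomorphic to $\langle x,y \mid x^{-1}yx = y^p\rangle$ with $p \in \{1,-1\}$. Finally, $m = 1$ with $K$ finite makes $G$ two-ended, hence simply connected at $\infty$ by the theorem that all finite and two-ended groups are simply connected at $\infty$.

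The main obstacle is the case where $A$ is not finitely generated. Here $K$ cannot be finite (otherwise $A$ would have finite index in the finitely generated group $G$ and so be finitely generated) and is not one-ended (handled above), so $K$ is two-ended. Using the structure theorem for two-ended groups to obtain a finite normal subgroup $E \lhd K$ with $K/E \cong \mathbb{Z}$ or $\mathbb{Z}_2 \ast \mathbb{Z}_2$, and passing to an index-two subgroup in the infinite dihedral case, I would reduce to a finite-index subgroup $G_0 \twoheadrightarrow \mathbb{Z}$ whose kernel $L \supseteq A$ satisfies $L/A$ finite, so $L$ is not finitely generated. This is exactly where finite presentation is indispensable: by the Bieri--Strebel theory of finitely presented solvable groups (the structural companion of Theorem \ref{BNS}), a finitely presented group that is an extension of a non-finitely-generated group by $\mathbb{Z}$ and contains no rank-two free subgroup must be a strictly ascending HNN extension over a finitely generated base. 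Solvability together with the near-abelian structure of $L$ should then pin the base to a virtually abelian group and the defining monomorphism to the map realizing $\langle x,y \mid x^{-1}yx = y^p\rangle$ with $|p| \geq 2$, up to a finite-index subgroup $N$ and a finite normal subgroup $F$.

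I expect the delicate part to be the last paragraph: verifying that the ascending endomorphism is forced into the normal form $y \mapsto y^p$, and, crucially, ruling out higher-rank non-finitely-generated abelian groups as a last derived term $G^{(n)}$ (which could otherwise produce ascending HNN extensions outside the stated family). The finitely-presented and solvable hypotheses must be combined precisely here, since dropping finite presentation admits non-semistable examples such as the lamplighter group. If that normalization step goes through, assembling the three cases yields the stated dichotomy.
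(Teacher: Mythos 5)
Your reduction and your first two cases are sound: the case split on $K=G/G^{(n)}$ via Theorem \ref{free} is the right skeleton, the $1$-ended case is indeed immediate from Theorem \ref{T187a}, and when $A=G^{(n)}$ is finitely generated your use of Theorem \ref{QIsp}, Theorem \ref{LR}, and the rank bookkeeping correctly yields either simple connectivity at $\infty$ or the $p=\pm 1$ exceptions. (Note the paper gives no proof to compare against\,---\,it cites \cite{M87a}\,---\,so your argument must stand alone.) The genuine gap is your final case, $A$ not finitely generated and $K$ two-ended, and it is not a ``delicate normalization'': the claim that Bieri--Strebel plus solvability forces $N/F\cong\langle x,y\mid x^{-1}yx=y^p\rangle$ is false. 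Consider $G_1=\langle x,y,t\mid [x,y]=1,\ t^{-1}xt=x^2,\ t^{-1}yt=y^2\rangle$, the strictly ascending HNN extension of $\mathbb Z^2$ by multiplication by $2$. It is finitely presented, metabelian, its last derived term is $G_1'\cong\mathbb Z[1/2]\oplus\mathbb Z[1/2]$ (not finitely generated, full of infinite-order elements), and $G_1/G_1'\cong\mathbb Z$, so it sits squarely in this case. Yet no finite-index subgroup $N$ of $G_1$ can satisfy $N/F\cong BS(1,p)$ with $F$ finite: every finite-index subgroup of $G_1$ contains a copy of $\mathbb Z^2$, whereas every abelian subgroup of $BS(1,p)$, $|p|\geq 2$, has torsion-free rank at most one (the centralizer of any element outside the normal $\mathbb Z[1/p]$ meets $\mathbb Z[1/p]$ trivially, hence is cyclic). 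So such groups cannot be ``ruled out,'' as your last paragraph hopes; they exist, they satisfy the hypotheses, and for them the theorem asserts simple connectivity at $\infty$. Proving that is the entire analytic content of this case, and your proposal contains no mechanism for it.

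The repair, incidentally, is available inside this paper, and it shows the dichotomy is governed by the rank of the Bieri--Strebel base rather than by any normal-form collapse. Replacing $G_0$ by the finite-index subgroup $A\rtimes\langle t\rangle$, you may take the base $B$ of the strictly ascending HNN decomposition inside $A$, so $B\cong\mathbb Z^m\oplus T_0$ is finitely generated abelian (hence finitely presented) and $\psi:B\to B$ is injective but not onto. If $m\geq 2$, then $B$ is $1$-ended and Theorem \ref{MM} says $B\ast_\psi$ is simply connected at $\infty$; Theorem \ref{QIsp} transfers this to $G$\,---\,this is exactly what handles $G_1$ above. If $m=1$, then $A\cong\varinjlim(B\xrightarrow{\ \psi\ }B\to\cdots)$ has finite torsion subgroup $F$ (since $\psi$ is bijective on $T_0$), $F$ is characteristic in $G$, and $\psi$ induces multiplication by an integer $p$ with $|p|\geq 2$ on $B/T_0\cong\mathbb Z$; only here does $N/F\cong BS(1,p)$ emerge, after finite-index and normality bookkeeping of the sort you already carried out in the finitely generated case. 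In short: your skeleton matches what a proof needs, but the higher-rank ascending HNN extensions must be \emph{proved} simply connected at $\infty$, not eliminated, and that step is missing.
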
 

In his PhD dissertation \cite{Silk1}, N. Silkin considers a finitely presented group $E$ (called the extended Lamplighter group) that is an ascending HNN extension of the Lamplighter group $L$. Recall
$$L=\langle x, y\ |\  y^2=1, [y,x^{-k}yx^k]=1 \hbox{ for all } k>0\rangle$$ 
The homomorphism $m:L\to L$ induced by $x\to x$ and $y\to x^{-1}yxy$ is a monomorphism with image an index 2 subgroup. 
The group $E$ is the ascending HNN-extension of $L$ over $m$. 
A finite presentation for $E$ (with stable letter $t$) is 
$$\langle x,y,t\ |\   y^2=1, [x,t]=1, t^{-1}yt=x^{-1}yxy\rangle.$$ 
The group $E$ is metabelian with normal subgroup $N$ (the normal closure of $y$ in $E$) an infinite sum of groups, each isomorphic to  $\mathbb Z_2$ and $E/N$ is isomorphic to $\mathbb Z\oplus \mathbb Z$.
The subgroup of $E$ generated by $y$ and $x$ is isomorphic to the Lamplighter group. Note that the map taking $x$ to $t$, $t$ to $x$ and $y$ to $y$ induces an isomorphism of $E$ to $E$. Hence the subgroup of $E$ generated by $t$ and $y$ is isomorphic to the Lamplighter group. 

\begin{theorem} (Silken, \cite{Silk1}) \label{SILK1}
The extended Lamplighter group is simply connected at $\infty$. 
\end{theorem}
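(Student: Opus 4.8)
The plan is to treat $E$ through its metabelian structure and then invoke the co-semistability machinery of \S\ref{SCS} for the hardest step. Writing $N$ for the normal closure of $y$, the given relations exhibit $N$ as the cyclic module over $\mathbb{F}_2[\langle x,t\rangle]=\mathbb{F}_2[x^{\pm1},t^{\pm1}]$ generated by $y$, on which $t^{-1}yt=x^{-1}yxy$ forces $t$ to act as the operator $x+1$; thus $N\cong\mathbb{F}_2[x^{\pm1},(x+1)^{\pm1}]$, an infinite locally finite elementary abelian $2$-group, and $E=N\rtimes\langle x,t\rangle$ with $\langle x,t\rangle\cong\mathbb{Z}^2$ the ``flat'' mapping isomorphically onto $E/N$. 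I will use freely the two Lamplighter subgroups $L=\langle x,y\rangle$ and $L'=\langle t,y\rangle$ and the order-two automorphism of $E$ interchanging $x$ and $t$.

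First I would establish that $E$ is $1$-ended and semistable at $\infty$. The subgroup $L=\langle x,y\rangle$ is infinite, finitely generated, and of infinite index in $E$: its intersection with $N$ is the $\langle x\rangle$-submodule $\mathbb{F}_2[x^{\pm1}]\,y$, which has infinite index in $N$. Moreover $L$ is commensurated. Conjugation by $t$ multiplies $N$ by the non-zero-divisor $x+1$, so $L\cap N=\mathbb{F}_2[x^{\pm1}]\,y$ and $tLt^{-1}\cap N$ differ by an index-$2$ subgroup, while both $L$ and $tLt^{-1}$ contain $\langle x\rangle$; hence $L$ and $tLt^{-1}$ are commensurable. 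Since $x$ and $y$ plainly normalize $L$, the commensurator $Comm_E(L)$ contains $\{x,y,t\}$ and therefore equals $E$, so $L\prec E$. Theorem \ref{MainCM} then gives that $E$ is $1$-ended and semistable at $\infty$ (the $1$-endedness also follows from Theorem \ref{MMFIE}, viewing $E$ as the ascending HNN extension of $L$).

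The remaining, and principal, task is simple connectivity at $\infty$, and here none of the clean criteria apply directly: $N$ is torsion, so $E$ has no element of infinite order in its derived subgroup and no normal $\mathbb{Z}^k$, which rules out Theorems \ref{LR}, \ref{T187a} and \ref{MT87}; the $1$-ended subgroup $L$ is not finitely presented, so the simple-connectivity clauses of Theorems \ref{MainCM} and \ref{MainA} cannot be used (and the flat $\langle x,t\rangle$, though finitely presented and $1$-ended, is not even subcommensurated, since $N$ is a domain and no nonzero $n\in N$ is fixed by a finite-index subgroup of $\mathbb{Z}^2$); and the HNN base $L$ is not semistable, so Theorem \ref{hnn} is unavailable. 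Instead I would work in the Cayley $2$-complex $Y=\Gamma(\mathcal P)$ of the finite presentation, which is simply connected, locally finite, and $1$-ended by the previous paragraph, and apply Theorem \ref{cosc2} to the action of $J=\langle x,t\rangle\cong\mathbb{Z}^2$ by covering transformations. Two conditions must be verified: that $J$ is co-connected in $Y$ (for each compact $D$, $Y-J\cdot D$ has a single $J$-unbounded component), which should follow from $1$-endedness together with the fact that the directions transverse to the flat are indexed by the locally finite group $N$; and that $J$ is coaxial in $Y$ (for each compact $C$ there is a compact $D$ with every loop in $Y-J\cdot D$ null-homotopic in $Y-C$).

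I expect coaxiality to be the main obstacle. A loop lying outside an $N$-orbit-neighborhood of the flat must be capped by a disk that can be pushed arbitrarily far from a prescribed compact set, and the filling has to avoid re-entering the flat. The strategy is a combinatorial filling argument that uses the relation $[x,t]=1$ to move fillings within the flat, the relations $y^{2}=1$ and $t^{-1}yt=x^{-1}yxy$ to absorb ``lamp'' excursions into the locally finite $N$-directions (where every finitely generated piece is finite, so obstructing cycles disappear after finitely many steps), and the $x\leftrightarrow t$ symmetry to treat the two flat directions identically. Once co-connectedness and coaxiality are in hand, Theorem \ref{cosc2} yields that $Y$ is simply connected at its unique end, that is, $E$ is simply connected at $\infty$ in the sense of Definition \ref{defss2}.
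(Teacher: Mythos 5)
Your first paragraph and your ``ruling out'' analysis are correct and show a good grasp of the structure of $E$: the identification $N\cong\mathbb F_2[x^{\pm1},(1+x)^{\pm1}]$ with $t$ acting as multiplication by $1+x$, the verification that $L=\langle x,y\rangle$ is an infinite, infinite-index, commensurated subgroup (since $Comm_E(L)$ is a subgroup containing $x,y,t$), so that Theorem \ref{MainCM} gives $1$-endedness and semistability, and the observation that Theorems \ref{LR}, \ref{T187a}, \ref{MT87}, \ref{hnn}, \ref{MainCM} (second clause) and \ref{MainA} are all inapplicable (your argument that $Comm_E(\langle x,t\rangle)=\langle x,t\rangle$, because $N$ is a domain on which the units $x^a(1+x)^b\ne 1$ act without nonzero fixed points, is right and rules out subcommensuration of the flat). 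But none of this is the theorem: semistability of $E$ is the easy half, and simple connectivity at $\infty$ is exactly what remains.

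For that remaining half your proposal is a program, not a proof. Theorem \ref{cosc2} is a legitimate, non-circular framework (coaxiality and co-connectedness of $J=\langle x,t\rangle$ are strictly weaker than simple connectivity at $\infty$ of $Y$), but you verify neither hypothesis. Co-connectedness is dismissed with ``should follow from $1$-endedness together with the fact that the directions transverse to the flat are indexed by $N$,'' which is not an argument: it amounts to a claim about the ends of the quotient complex $J\backslash Y$ (a complex with vertex set $N$ and moves $n\mapsto n+1$, $n\mapsto x^{\pm1}n$, $n\mapsto(1+x)^{\pm1}n$) and needs its own proof. Coaxiality -- that every loop in $Y-J\cdot D$ bounds in $Y-C$ -- is precisely where the entire geometric content of Silkin's theorem is concentrated, and your ``combinatorial filling argument'' names the relations that would be used without constructing any filling, giving any properness estimate, or confronting the central danger: the same lamp-configuration obstructions that make the subgroup $L$ \emph{non}-semistable (Theorem \ref{LNss}) could a priori force fillings of far-away loops back through a neighborhood of the flat or of the basepoint. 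The reason $E$ behaves differently from $L$ is that the $t$-moves let one rewrite and disperse lamp configurations via $y=(1+x)^{-1}(y+xy)$ without travelling to their support; an actual proof must turn this into an explicit, proper filling scheme, and that step is entirely missing. (For reference, the paper itself does not prove this statement; it cites Silkin's thesis, where the result is established by a direct geometric argument.)
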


The next several results are contributed by Denis Osin.

\begin{proposition}  \label{OHNN} 
Suppose that a finitely presented group $G$ admits a surjective homomorphism $\varepsilon \colon G\to \mathbb Z$. Then $G$ splits as an HNN-extension of a finitely generated subgroup $A\leq Ker(\varepsilon)$.
\end{proposition}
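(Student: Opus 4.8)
The plan is to realize $G$ explicitly as an HNN extension built from a finite presentation adapted to $\varepsilon$, following the classical Bieri--Strebel strategy. First I would choose $t \in G$ with $\varepsilon(t) = 1$ and, starting from any finite presentation, use Tietze transformations to arrange a presentation $G = \langle t, y_1, \dots, y_n \mid R\rangle$ in which $t$ is a distinguished generator, $\varepsilon(y_i) = 0$ for all $i$, and $R$ is finite. (Replacing a generator $x$ by $x t^{-\varepsilon(x)}$ pushes it into $\ker\varepsilon$.) I would then set $y_{i,k} = t^k y_i t^{-k}$ and record the basic fact that conjugation by $t$ shifts the index, $t\, y_{i,k}\, t^{-1} = y_{i,k+1}$, so that $\ker\varepsilon$ is generated by the $y_{i,k}$, $k \in \mathbb{Z}$.

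Next I would introduce the candidate base and associated subgroups as honest subgroups of $G$: for a large integer $L$ set
$$A = \langle y_{i,k} : 1 \le i \le n,\ 0 \le k \le L\rangle,\quad C = \langle y_{i,k} : 0 \le k \le L-1\rangle,\quad D = \langle y_{i,k} : 1 \le k \le L\rangle,$$
and let $\phi \colon C \to D$ be the restriction of conjugation by $t$, which is an isomorphism by the shift identity. Let $\hat G = \langle A, t \mid t c t^{-1} = \phi(c),\ c \in C\rangle$ be the corresponding HNN extension of the abstract group $A$. Since the inclusion $A \hookrightarrow G$ together with $t \mapsto t$ respects all defining relations of $\hat G$ (the relations of $A$ hold in $G$, and the HNN relations are literally conjugation by $t$ in $G$), there is a surjection $\alpha \colon \hat G \to G$. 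The content of the proposition is that $\alpha$ is an isomorphism.

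To invert $\alpha$ I would build $\beta \colon G \to \hat G$ by $t \mapsto t$, $y_i \mapsto y_{i,0}$, for which the only thing to check is that each relator $w \in R$ maps to $1$. Here the key device is the telescoping identity: writing $w$ as an alternating word and letting $k_j$ be the running $t$-exponent just before the $j$-th occurrence of a $y$-letter, one has $w = \prod_j t^{k_j} y_{i_j}^{\pm 1} t^{-k_j}$ in any group (using $\varepsilon(w)=0$, so the leftover power of $t$ vanishes). Provided every $k_j$ lies in $[0,L]$, each factor $t^{k_j} y_{i_j,0}^{\pm1} t^{-k_j}$ equals $y_{i_j,k_j}^{\pm1} \in A$ inside $\hat G$, so the image of $w$ equals the rewritten word $\hat w$, an element of $A$; and since $\hat w = w = 1$ in $G \supseteq A$, it is already trivial in $A$ and hence in $\hat G$. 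I expect the main obstacle to be exactly the requirement that all running exponents stay in a fixed finite window $[0,L]$: a priori these exponents can be negative. The fix is that each relator may be conjugated by a power of $t$ without changing the group (a Tietze move), which shifts all of its running exponents by a constant; doing this relator-by-relator makes every running exponent nonnegative, and finiteness of $R$ together with finiteness of each word bounds them above by some $L$. With that arranged, $\alpha$ and $\beta$ are visibly mutually inverse, so $G \cong \hat G$ is an HNN extension whose base $A$ is finitely generated and contained in $\ker\varepsilon$, as required.
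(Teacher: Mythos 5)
Your proof is correct and follows essentially the same route as the paper's: both normalize a finite presentation so that $t$ maps to a generator of $\mathbb Z$ and the remaining generators lie in $\ker(\varepsilon)$, rewrite each relator (after conjugation by a suitable power of $t$) as a product of conjugates $t^k y_i^{\pm 1} t^{-k}$ with $k$ in a bounded nonnegative window, and conclude that $G$ is an HNN-extension over the finitely generated subgroup generated by those bounded conjugates. The only difference is expository: the paper carries out the identification by Tietze transformations and leaves the final isomorphism with the HNN-extension implicit, whereas you verify it explicitly by constructing the mutually inverse homomorphisms $\alpha$ and $\beta$.
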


\begin{proof}
Consider a finite presentation
\begin{equation}\label{pres}
G=\langle t, a_1, \ldots, a_k\mid R_1, \ldots , R_m\rangle
\end{equation}
of the group $G$ such that $\varepsilon(t)$ generates $\varepsilon(G)=\mathbb Z$.
Note that the total sum of exponents of all occurrences of $t^{\pm 1}$ in each $R_i$ equals $0$. Let $N$ be the maximal number of occurrences of $t$ in the relations of $G$. Every $R_i$ can be rewritten (possibly after conjugation by a suitable power of $t$) as a product of words $b_{\alpha\beta}^{\pm 1}=t^\beta a_\alpha^{\pm 1} t^{-\beta}$, where $1\le \alpha\le k$ and $0 \le \beta\le N$. Let
$$
B=\{b_{\alpha\beta}\mid 1\le \alpha\leq k,\, 0 \leq \beta\leq N\},
$$
and let $S_i$ be the word in the alphabet $B^{\pm 1}$ obtained from $R_i$ after such a rewriting. By using Tietze transformations, we can rewrite (\ref{pres}) in the form
$$
G=\langle t, B \mid S_1, \ldots, S_m, \mathcal T\rangle ,
$$
where $\mathcal T$ is the set of all relations of the form
$$tb_{\alpha, \beta}t^{-1} = b_{\alpha, \beta +1}$$
for $1\leq \alpha\le k$ and $0 \leq \beta\le N-1$.
In particular, $G$ is an HNN-extension of the subgroup $A=\langle B\rangle \le Ker(\varepsilon)$.
\end{proof}

\begin{corollary} \label{OsHNN} 
Every finitely presented, (locally finite)--by--$\mathbb Z$ group is (finite)--by--$\mathbb Z$.
\end{corollary}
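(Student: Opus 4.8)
The plan is to feed the HNN decomposition of Proposition \ref{OHNN} into the end theory of Chapter~1. Write the hypothesis as a short exact sequence $1 \to L \to G \to \mathbb{Z} \to 1$ with $L$ locally finite, and let $\varepsilon\colon G \to \mathbb{Z}$ be the quotient map, so $L = \ker(\varepsilon)$. Since $G$ is finitely presented and $\varepsilon$ is surjective, Proposition \ref{OHNN} presents $G$ as an HNN extension with stable letter $t$ over a finitely generated base subgroup $A \leq \ker(\varepsilon) = L$, where $\varepsilon(t)$ generates $\mathbb{Z}$ and $\varepsilon(A)=0$. A finitely generated subgroup of a locally finite group is finite, so $A$ is finite; consequently both associated subgroups of this HNN extension are finite.

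The crux is to exclude the ``genuine'' (infinitely-ended) HNN case using the torsion hypothesis on $L$. First I would check that $G$ contains no free subgroup of rank $2$: if $F \leq G$ were free of rank $2$, then $F \cap L$ would be at once a subgroup of a free group and a periodic group (locally finite groups are periodic), forcing $F \cap L = 1$; then $\varepsilon|_F$ is injective, embedding the nonabelian $F$ into the abelian group $\mathbb{Z}$, which is impossible. By Theorem \ref{free}, a finitely generated group with infinitely many ends contains a rank-$2$ free subgroup, so $G$ cannot have infinitely many ends. Since the number of ends of a finitely generated group lies in $\{0,1,2,\infty\}$ and $G$ is infinite, $G$ therefore has one or two ends. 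On the other hand, $G$ is an HNN extension over a finite subgroup, so Stallings' Theorem \ref{Stall} forces $G$ to have more than one end. Hence $G$ has exactly two ends.

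Finally I would invoke the two-ended clause of Theorem \ref{Stall}: in the HNN case $G$ is two-ended precisely when the edge (associated) subgroup $C$ equals the base group $A$. Thus $C = A$, and since $A$ is finite the defining isomorphism $\phi$ is an automorphism of $A$, giving $G \cong A \rtimes_\phi \mathbb{Z}$. Reading off $\ker(\varepsilon)$ from this semidirect product identifies it with $A$, which is finite, so $G$ is (finite)-by-$\mathbb{Z}$, as claimed. I expect the main obstacle to be exactly the exclusion carried out in the second paragraph — ruling out the non-ascending HNN decomposition — and the key point to get right is that local finiteness of $L$ is what prohibits free subgroups and thereby caps the number of ends at two; the remaining steps are routine bookkeeping with Proposition \ref{OHNN} and Stallings' Theorem.
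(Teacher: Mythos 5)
Your proposal is correct and follows essentially the same route as the paper: both feed the splitting of Proposition \ref{OHNN} (whose base $A$, being a finitely generated subgroup of the locally finite group $L$, is finite) into the observation that a (locally finite)--by--$\mathbb Z$ group contains no free subgroup of rank $2$, thereby forcing the HNN extension to be degenerate and $G\cong A\rtimes\mathbb Z$ with $\ker(\varepsilon)=A$ finite. The only difference is how the contradiction is packaged: the paper quotes directly that an HNN extension with both associated subgroups proper in the base contains a non-cyclic free group, whereas you detour through end counts --- Theorem \ref{free} to rule out infinitely many ends, then both clauses of Stallings' Theorem \ref{Stall} to pin the number of ends at exactly two and conclude $C=A$ --- which is the same ping-pong fact dressed in end-theoretic language.
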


\begin{proof}
Suppose  $1\to L\to G\to\mathbb Z\to 1$ is exact with $G$ finitely presented and $L$ locally finite. By Proposition \ref{OHNN}, $G$ is an HNN-extension of a finite group $A\leq L$. If the associated subgroups are proper in $L$ then $G$ contains a non-cyclic free subgroup, which contradicts the assumption that $G$ is (locally finite)--by--$\mathbb Z$. Therefore, the associated subgroups coincide with $L$ and $G$ is an extension of $A$ by $\mathbb Z$.
\end{proof}

%\begin{theorem} \label{metss} {\bf (metss)} Finitely presented metabelian groups are semistable at $\infty$. 
%\end{theorem}
%\begin{proof} Suppose $M$ is a finitely presented metabelian group. There is a short exact sequence $1\to A\to M\to K\to 1$ where $A$ and $K$ are abelian. If $K$ or $A$ is finite, then $M$ is semistable. So we assume $K$ and $A$ are infinite. Finitely presented groups that contain no free subgroup of rank 2 and map onto $\mathbb Z\oplus \mathbb Z$ are semistable at $\infty$ (see Theorem \ref{NOF2}). Hence we may assume that $K$ is 2-ended. If $A$ contains an element of infinite order, then Theorem \ref{L} implies that $M$ is semistable. We may assume that $A$ is locally finite and infinite. As $K$ is 2-ended it contains an infinite cyclic subgroup $Z$ of finite index. Then $G$ contains a subgroup $G_1$ of finite index such that $1\to A\to G_1\to  Z\to 1$ is exact. By Corollary \ref{OsHNN} $A$ is finite, a case previously considered.
%\end{proof}
M. Gromov \cite{Gro81} proved that groups of polynomial growth are exactly the virtually nilpotent groups. 
\begin{lemma} (Osin) \label{nil1} 
Every finitely generated nilpotent group is either virtually cyclic or virtually surjects onto $\mathbb Z \times \mathbb Z$. (Combining with, Theorem \ref{NOF2}: Every finitely presented nilpotent group is semistable at $\infty$.)
\end{lemma}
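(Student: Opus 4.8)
The plan is to read off the dichotomy directly from the torsion-free rank of the abelianization $G^{ab}=G/[G,G]$, which is a finitely generated abelian group. Write $r$ for its torsion-free rank. First I would dispose of the easy direction: if $r\geq 2$, then $G^{ab}$ admits a surjection onto $\mathbb Z\oplus\mathbb Z$ (any finitely generated abelian group of torsion-free rank at least $2$ does), and composing with the quotient map $G\to G^{ab}$ shows that $G$ itself — not merely a finite-index subgroup — surjects onto $\mathbb Z\times\mathbb Z$. Thus the real content is to prove that $r\leq 1$ forces $G$ to be virtually cyclic.

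For that I would run the standard lower central series calculation. Let $\gamma_1=G\supseteq\gamma_2=[G,G]\supseteq\cdots\supseteq\gamma_{c+1}=1$ be the lower central series, and recall that commutation induces surjective bilinear maps $\gamma_1/\gamma_2\otimes\gamma_i/\gamma_{i+1}\twoheadrightarrow\gamma_{i+1}/\gamma_{i+2}$, the first of which factors through the alternating square $\Lambda^2(\gamma_1/\gamma_2)\twoheadrightarrow\gamma_2/\gamma_3$ since $[x,x]=1$. When $r\leq 1$ the group $\gamma_1/\gamma_2=G^{ab}$ is finite-by-(infinite cyclic), so $\Lambda^2(G^{ab})$ has torsion-free rank $0$, i.e. is finite; hence $\gamma_2/\gamma_3$ is finite. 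Feeding this back in, each subsequent $\gamma_1/\gamma_2\otimes\gamma_i/\gamma_{i+1}$ is a finitely generated abelian group tensored with a finite group, hence finite, so by induction every $\gamma_i/\gamma_{i+1}$ with $i\geq 2$ is finite. Since the series terminates, $[G,G]=\gamma_2$ is finite.

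I would then conclude the lemma as follows: $G$ is an extension of $G^{ab}$ — which, having torsion-free rank $\leq 1$, is finite or virtually infinite cyclic — by the finite group $[G,G]$, and an extension of a virtually cyclic group by a finite group is again virtually cyclic. This establishes the dichotomy, and in fact the sharper statement that in the non-virtually-cyclic case $G$ genuinely surjects onto $\mathbb Z\times\mathbb Z$, so the word ``virtually'' in the statement is needed only for bookkeeping.

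For the parenthetical consequence I would combine this with Theorem \ref{NOF2}. Being nilpotent, $G$ is solvable and hence contains no free subgroup of rank $2$. If $G$ is virtually cyclic it is finite or $2$-ended, and therefore semistable at $\infty$; otherwise the previous step produces an honest surjection $G\twoheadrightarrow\mathbb Z\oplus\mathbb Z$, and Theorem \ref{NOF2} (using that every finitely generated nilpotent group is polycyclic, hence finitely presented) yields that $G$ is $1$-ended and semistable at $\infty$. The only genuine obstacle is the lower central series computation: one must check carefully that torsion-free rank $\leq 1$ propagates to finiteness of every higher commutator quotient through the $\Lambda^2$ step and the tensor-with-finite steps. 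Everything else is assembling standard facts about extensions of virtually cyclic groups.
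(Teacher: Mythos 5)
Your proof is correct, but it takes a genuinely different route from the paper's. The paper argues by induction on the nilpotency class using the \emph{upper} central series: it passes to $N/Z(N)$, notes that if $N/Z(N)$ virtually surjects onto $\mathbb Z\times\mathbb Z$ then so does $N$, and that if $N/Z(N)$ is virtually cyclic then $N$ is virtually center-by-cyclic, hence virtually abelian (via the classical exercise that a group with cyclic central quotient is abelian), after which the abelian base case finishes the argument. You instead read the dichotomy off the torsion-free rank $r$ of $G^{ab}$ and use the \emph{lower} central series: the surjections $\Lambda^2(G^{ab})\twoheadrightarrow\gamma_2/\gamma_3$ and $G^{ab}\otimes\gamma_i/\gamma_{i+1}\twoheadrightarrow\gamma_{i+1}/\gamma_{i+2}$ show that $r\leq 1$ forces every higher quotient, and hence $[G,G]$ itself, to be finite, so $G$ is (finite)-by-(virtually cyclic) and therefore virtually cyclic. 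The paper's induction is shorter and leans on a single classical fact; your computation is a bit heavier but buys a sharper conclusion: when $G$ is not virtually cyclic you get an honest surjection $G\twoheadrightarrow\mathbb Z\oplus\mathbb Z$, not merely a virtual one, so Theorem \ref{NOF2} applies to $G$ directly, with no need to pass to a finite-index subgroup and invoke quasi-isometry invariance of semistability (Theorem \ref{QIsp}) as the paper's formulation implicitly requires. Your route also makes the case division effective (it is governed entirely by the rank of the abelianization). Both arguments are complete; the steps you flag as needing care --- bilinearity and surjectivity of the commutator pairings modulo lower central terms, and finiteness of a finitely generated abelian group tensored with a finite one --- are standard and hold as you state them.
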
  
\begin{proof}
The proof is by induction on the nilpotency class. If the finitely generated nilpotent group $N$ is abelian, this is obvious. Otherwise, consider $N/Z(N)$. If $N/Z(N)$ surjects onto $\mathbb Z \times \mathbb Z$, so does $N$. If $N/Z(N)$ is virtually cyclic, then $N$ is virtually (center-by-cyclic). Every center-by-cyclic group is abelian (exercise), hence $N$ is virtually abelian.
\end{proof}
\begin{theorem} (Osin) \label{metanil} 
All finitely presented virtually metanilpotent groups are semistable at $\infty$. 
\end{theorem}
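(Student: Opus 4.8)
The plan is to run the structure of a finitely presented virtually metanilpotent group through the three semistability engines available here: Theorem \ref{NOF2} (a $\mathbb Z\oplus\mathbb Z$ quotient suffices in the absence of a free subgroup of rank $2$), Theorem \ref{solv1} (finitely generated solvable groups with no infinite, locally finite term in the derived series), and Corollary \ref{OsHNN} (finite presentation collapses the locally-finite--by--$\mathbb Z$ case); the combinatorics of which engine applies is governed by Lemma \ref{nil1}. Throughout I would use that semistability at $\infty$ is a quasi-isometry invariant of finitely presented groups (Theorem \ref{QIsp}), that a finite-index subgroup of a finitely presented group is finitely presented and quasi-isometric to it, and that passing to the quotient by a finite normal subgroup is a quasi-isometry.

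First the reductions. Replacing $G$ by the normal core of a finite-index metanilpotent subgroup, I may assume $G$ is metanilpotent, say $1\to N\to G\to Q\to 1$ with $N$ and $Q$ nilpotent. Metanilpotent groups are solvable, hence contain no free subgroup of rank $2$, so by Theorem \ref{free} the group $G$ is finite, two-ended, or one-ended; the first two are semistable, so assume $G$ is one-ended, in particular infinite. If $Q$ is finite then $N$ is finite-index, finitely generated and nilpotent; if $N$ is finite then a centralizer argument shows $G$ is again virtually nilpotent. In either case Lemma \ref{nil1} together with Theorem \ref{NOF2} shows that finitely presented nilpotent groups are semistable, so $G$ is semistable by quasi-isometry invariance. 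Thus I may assume $N$ and $Q$ are both infinite. Since $Q$ is then an infinite finitely generated nilpotent group, Lemma \ref{nil1} gives two cases: either $Q$ virtually surjects onto $\mathbb Z\oplus\mathbb Z$, whence $G$ does too and Theorem \ref{NOF2} finishes; or $Q$ is virtually cyclic, and I pass to the finite-index subgroup lying over a copy of $\mathbb Z\le Q$ to reduce to a short exact sequence $1\to N\to G\to\mathbb Z\to 1$ with $N$ infinite nilpotent.

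The heart of the argument is the analysis of $R:=T(N)$, the torsion subgroup of $N$, which is characteristic in $N$ and hence normal in $G$, and which here coincides with the locally finite radical of $G$. If $R$ is finite, then $G$ is quasi-isometric to $G/R$; since $N/R$ is torsion-free nilpotent and $(G/R)/(N/R)\cong\mathbb Z$ is abelian, every term of the derived series of $G/R$ beyond the zeroth lies in the torsion-free group $N/R$, so no term is infinite and locally finite. Theorem \ref{solv1} then makes $G/R$ (one-ended, hence) semistable, and therefore so is $G$; note this subsumes the case where $N$ is finitely generated, which could alternatively be dispatched by Theorem \ref{M1}. If instead $R$ is infinite and $N=R$, so that $N$ is locally finite, then $G$ is a finitely presented (locally finite)-by-$\mathbb Z$ group, and Corollary \ref{OsHNN} makes $G$ (finite)-by-$\mathbb Z$, hence two-ended, contradicting one-endedness; so this case does not occur and in any event is semistable.

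The one remaining, and genuinely hard, case is the mixed one: $R$ is infinite while $N/R$ is a nontrivial torsion-free nilpotent group. Here, writing $d$ for the derived length of $G/R$, the term $G^{(d)}$ lies in $R$, is locally finite, and may be infinite, so Theorem \ref{solv1} does not apply directly; and it is not evident that $G$ virtually surjects onto $\mathbb Z\oplus\mathbb Z$, so Theorem \ref{NOF2} is not immediately available. My plan to break this case is to exploit finite presentation through Proposition \ref{OHNN}, realizing $G$ as an HNN extension with finitely generated nilpotent base $A\le N$, whose torsion subgroup $T(A)$ is finite. The expectation, in exact analogy with Corollary \ref{OsHNN}, is that finite presentation forbids the conjugates of the finite group $T(A)$ from accumulating to the infinite group $R$ unless the torsion-free part $N/R$ supplies a second, independent homomorphism to $\mathbb Z$; that is, I would try to prove the dichotomy that either the $\mathbb Z$-action on the abelianization of $N/R$ has nontrivial coinvariants, producing the needed $\mathbb Z\oplus\mathbb Z$ quotient and invoking Theorem \ref{NOF2}, or else the action forces $R$ to be finite, returning to the settled case. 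Making this precise --- controlling the torsion ``module'' $R$ by means of the finiteness built into Proposition \ref{OHNN} and Corollary \ref{OsHNN} --- is the step I expect to be the main obstacle, since it is exactly where the non-finitely-presented Lamplighter-type phenomena have to be excluded.
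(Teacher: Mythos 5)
Your proposal is correct up to, and stalls exactly at, the case you flag yourself: $R=T(N)$ infinite with $N/R\ne 1$. That is a genuine gap, not a routine verification --- the dichotomy you hope to extract from Proposition \ref{OHNN} is never proved, and your outline ends in a conjecture rather than an argument. The idea you are missing is that the torsion subgroup of $N$ is a red herring. The paper's proof splits into cases differently: either $N$ contains an element $g$ of infinite order, or $N$ is torsion (hence, being nilpotent, locally finite). In the first case, since every subgroup of a nilpotent group is subnormal, there is a chain $\langle g\rangle \lhd H_1\lhd \cdots \lhd N\lhd G$, so $\langle g\rangle$ is an infinite, finitely generated, subnormal subgroup of $G$; if it has finite index then $G$ is $2$-ended and semistable, and otherwise Lew's Theorem \ref{L} applies verbatim and gives that $G$ is $1$-ended and semistable at $\infty$. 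This single observation disposes of your unresolved mixed case and of your ``$R$ finite'' case simultaneously (no passage to $G/R$, no Theorem \ref{solv1}, no torsion analysis at all). What remains is exactly the case you did settle: $N$ locally finite, where Lemma \ref{nil1} applied to $Q$, Theorem \ref{NOF2}, and Corollary \ref{OsHNN} finish, as in your outline and in the paper.

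For what it is worth, your instinct to attack the mixed case through Proposition \ref{OHNN} can be completed, though not by the route you sketch. With $G\to\mathbb Z$ given, Proposition \ref{OHNN} exhibits $G$ as an HNN extension with finitely generated base $A\le N$ and associated subgroups $C,C'\le A$, and the kernel $N$ is an increasing union of iterated amalgams of conjugates of $A$ over conjugates of $C$. A nontrivial amalgam is never nilpotent (a factor of index $\ge 3$ forces a rank-$2$ free subgroup, and indices $(2,2)$ force a surjection onto the non-nilpotent group $\mathbb Z_2\ast\mathbb Z_2$), so every amalgam in this chain is degenerate; that is, $C=A$ or $C'=A$, and the extension is ascending. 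Thus $G=A\ast_\phi$ with $A$ finitely generated nilpotent --- hence finitely presented --- and infinite, and Theorem \ref{MM} gives directly that $G$ is $1$-ended and semistable at $\infty$. Incidentally this shows your mixed case is vacuous: $N$ is a nested union of conjugates of $A$, so $T(N)$ is a nested union of finite groups all of order $|T(A)|$, whence $R=T(N)$ is finite after all. Either repair works; the paper's, via Theorem \ref{L}, is the shorter.
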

\begin{proof} 
It suffices to prove that finitely presented metanilpotent groups $G$ are semistable at $\infty$. 
Consider the short exact sequence of groups:
$$1\to N_1\to G\to N_2\to 1$$
Where $N_1$ and $N_2$ are nilpotent. Suppose $N_1$ contains an element $g$ of infinite order. Every subgroup of a nilpotent group is subnormal, hence $\langle g\rangle$ is subnormal in $G$ and Theorem \ref{L} implies $G$ is semistable at $\infty$. Hence we may assume $N_1$ is locally finite. 

If $N_2$ maps onto $\mathbb Z\times \mathbb Z$, then so does $G$ and Theorem \ref{NOF2} implies $G$ is semistable at $\infty$. By Lemma \ref{nil1} we may assume that $N_2$ is virtually cyclic so that  $G$ is locally finite by virtually cyclic. Now apply Corollary \ref{OsHNN} to a subgroup of finite index in $G$ to finish the proof. 
\end{proof}

There are several results related to Theorem \ref{metanil}. 
The following is Corollary 0.2 of \cite{FW07}. A number of other results in the literature also imply Proposition \ref{AD1} (see the discussion in  \cite{FW07} preceding this result).
 
\begin{proposition} ( \cite{FW07}) \label{AD1}  Let $G$ be a finitely presented group. The asymptotic dimension of G is one if and only if G contains a non-trivial free group as a subgroup of finite index.
\end{proposition}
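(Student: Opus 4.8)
The plan is to prove the two implications separately, disposing of the easy direction first. Suppose $G$ contains a non-trivial free subgroup $F$ of finite index. Since asymptotic dimension is a quasi-isometry invariant and a finite-index subgroup is quasi-isometric to the ambient group, $\mathrm{asdim}(G)=\mathrm{asdim}(F)$. A non-trivial free group acts freely and cocompactly on a tree (its Cayley graph), and since trees have asymptotic dimension $1$ while $F$ is infinite, we get $\mathrm{asdim}(F)=1$. Hence $\mathrm{asdim}(G)=1$, settling the ``if'' direction.

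For the converse, assume $\mathrm{asdim}(G)=1$. Because a finitely generated group has asymptotic dimension $0$ exactly when it is finite, $G$ is infinite. I would next record two facts about asymptotic dimension that drive the argument: (i) asdim is monotone under passing to finitely generated subgroups, since a subgroup inclusion $H\hookrightarrow G$ is a proper coarse embedding, so $\mathrm{asdim}(H)\le\mathrm{asdim}(G)$; and (ii) a finitely presented one-ended group has asymptotic dimension at least $2$. Granting (ii), $G$ cannot be one-ended, so by the classification of the number of ends $G$ has either $2$ or infinitely many ends. Stallings' Theorem \ref{Stall} then gives a non-trivial splitting of $G$ over a finite group, and Dunwoody's accessibility Theorem \ref{DunAcc} produces a finite graph-of-groups decomposition of $G$ with finite edge groups in which every vertex group is finite or one-ended and finitely presented. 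By monotonicity (i), each vertex group $V$ satisfies $\mathrm{asdim}(V)\le\mathrm{asdim}(G)=1$; if some $V$ were one-ended, fact (ii) would force $\mathrm{asdim}(V)\ge 2$, a contradiction. Thus every vertex group is finite, so $G$ acts cocompactly on the associated Bass--Serre tree with finite vertex and edge stabilizers, whence $G$ is virtually free by the classical characterization of virtually free groups. As $G$ is infinite, the free subgroup of finite index is non-trivial, completing the proof.

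The main obstacle is fact (ii): no finitely presented one-ended group has asymptotic dimension $1$. This is the deep input supplied by \cite{FW07} and the related literature, and everything else is bookkeeping around the accessibility and monotonicity machinery. I would establish it by showing that $\mathrm{asdim}(G)=1$ forces the Cayley graph of $G$ to admit a family of uniformly bounded coarse separating sets, that finite presentation allows these coarse cuts to be promoted to genuine finite separating subsets of the Cayley $2$-complex, and that one-endedness contradicts the existence of such finite separations. The delicacy lies entirely in converting the purely coarse ``$1$-dimensional at infinity'' hypothesis into an honest splitting over a finite subgroup; once that is in hand, the ends theory of \S\ref{Egps} and the subgroup-monotonicity of asdim close the argument.
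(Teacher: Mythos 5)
Your ``if'' direction is fine, and your reduction of the ``only if'' direction is correct bookkeeping: monotonicity of asymptotic dimension under finitely generated subgroups, the classification of ends, Stallings' Theorem \ref{Stall}, Dunwoody accessibility (Theorem \ref{DunAcc}), and the characterization of groups acting cocompactly on trees with finite stabilizers are all invoked properly. But this machinery only reduces Proposition \ref{AD1} to your fact (ii) --- that no one-ended finitely presented group has asymptotic dimension one --- and fact (ii) is not a side lemma you may defer: given the reduction, it is equivalent to the proposition, and it is exactly the content of the cited result. (The paper itself offers no proof; Proposition \ref{AD1} is quoted as Corollary 0.2 of \cite{FW07}. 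The argument there is genuinely different from your sketch: a geodesic space that is coarsely simply connected and has asymptotic dimension one is quasi-isometric to an unbounded tree, with finite presentation entering only to guarantee coarse simple connectivity of the Cayley 2-complex; virtual freeness then follows from accessibility. Your Stallings/Dunwoody reduction parallels the alternative treatments in the literature, but in all of them the one-ended case is where the real work happens.)

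The three-line strategy you give for fact (ii) would not survive being written out, and the decisive obstruction is visible inside this very paper. Asymptotic dimension one supplies, for each $R$, a cover of the Cayley graph by two $R$-disjoint families of uniformly bounded sets; these pieces are bounded, and removing one of them, or the union of one family, never produces two unbounded complementary components --- which is the only kind of separation that one-endedness forbids. So ``uniformly bounded coarse separating sets'' do not follow from $\mathrm{asdim}=1$, and the step ``finite presentation allows these coarse cuts to be promoted to genuine finite separating subsets'' is precisely where the entire difficulty lives: the lamplighter group of \S\ref{NonSS} is finitely generated, one-ended, and has asymptotic dimension one (this is Osin's observation, Example 0.3 of \cite{FW07}, discussed immediately after the proposition), so any argument in which finite presentation appears only as a soft hypothesis proves something false. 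A correct proof must use relators of bounded length quantitatively --- for instance to fill loops arising from the nerve of an asdim-one cover, as in \cite{FW07} --- and your proposal contains no such mechanism. That is the gap.
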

Suppose $1\to A\to G\to \mathbb Z\to 1$ is exact such that $G$ is finitely presented and $A$ is locally finite. Note that $G$ does not contain a free group of rank 2. An observation of D. Osin (see Example 0.3 of  \cite{FW07} and the discussion that follows) explaining why the asymptotic dimension of $G$ is one. Proposition \ref{AD1} implies $G$ contains an infinite cyclic group of finite index and hence $G$ is 2-ended and semistable. This line of reasoning also leads to a proof of Theorem \ref{metanil}.

\subsubsection{Word Hyperbolic and CAT(0) Groups}\label{WHyp}

%\begin{definition} 
%A finitely generated group $G$ is {\it word hyperbolic} if the Cayley graph of $G$ with respect to some (equivalently any) finite generating set  hyperbolic. 
%\end{definition}

See \cite{ABC91} for the basic definitions for word hyperbolic groups and M. Bridson and A. Haefliger's book \cite{BrHa99} for CAT(0) groups. 

Word hyperbolic groups and CAT(0) groups have boundaries and these boundaries have deep connections to the corresponding groups. While the boundary of a word hyperbolic group is unique, a CAT(0) group may have more than one boundary \cite{CK}. If $G$ is 1-ended, and hyperbolic or CAT(0), then any boundary of $G$ is a compact connected metric space. 
The work of B. Bowditch \cite{Bow99B},  G. Levitt \cite{Lev98} and G. Swarup \cite{Swarup} show that if $G$ is a 1-ended word hyperbolic group then $\partial G$, the boundary of $G$, has no (global) cut point. M. Bestvina and G. Mess \cite{BM91}, show (Propositions 3.2 and 3.2) the absence of cut points in $\partial G$ imply $\partial G$ is locally connected. Combining this with the Hahn Mazurkiewicz Theorem (see Theorem 31.5 of \cite{W70}) we have:
 
 \begin{theorem} 
 The boundary of a 1-ended word hyperbolic group is the continuous image of the interval $[0,1]$.
 \end{theorem}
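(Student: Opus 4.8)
The plan is to read this statement as a direct instance of the Hahn--Mazurkiewicz theorem, which characterizes the continuous images of $[0,1]$ (the \emph{Peano continua}) as exactly the nonempty compact, connected, locally connected metrizable spaces. Since all of the hard analytic and geometric input is supplied by the results cited immediately above, the task reduces to verifying that $\partial G$ satisfies each of these four hypotheses and then quoting Hahn--Mazurkiewicz in the form given as Theorem 31.5 of \cite{W70}.

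First I would record the properties already in hand from the preceding discussion: for a $1$-ended word hyperbolic group $G$, the boundary $\partial G$ is a compact, connected metric space, which immediately supplies compactness, connectedness, metrizability, and nonemptiness. What remains is local connectivity. For this I would invoke the cited chain of results: by the theorem of Bowditch, Levitt and Swarup the compact connected space $\partial G$ has no global cut point, and then by the results of Bestvina and Mess \cite{BM91} the absence of cut points forces $\partial G$ to be locally connected. With local connectivity established, all four hypotheses of Hahn--Mazurkiewicz hold, and the theorem yields a continuous surjection $[0,1]\to\partial G$, which is exactly the assertion.

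The genuinely deep steps are the no-cut-point theorem and the passage from ``no cut point'' to ``locally connected,'' but since both are cited as black boxes, the only real obstacle in assembling the argument is bookkeeping: confirming that $\partial G$ is nonempty and that the formulation of Hahn--Mazurkiewicz being applied is the one stated for metrizable spaces, which is precisely the version in the cited reference. No separate argument is needed in the degenerate case where $\partial G$ reduces to a single point, since a constant map realizes a point as a continuous image of $[0,1]$ as well; for a $1$-ended hyperbolic group, however, $\partial G$ is a nondegenerate continuum, so this case does not in fact arise.
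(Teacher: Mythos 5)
Your proposal follows exactly the same route as the paper: cite Bowditch/Levitt/Swarup for the absence of global cut points in $\partial G$, cite Bestvina--Mess to upgrade this to local connectivity, note that $\partial G$ is already a nonempty compact connected metric space, and conclude via the Hahn--Mazurkiewicz theorem. The argument is correct and matches the paper's proof, with your remarks on nondegeneracy and the metrizable formulation of Hahn--Mazurkiewicz being harmless additional bookkeeping.
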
  
 
 It was pointed out by R. Geoghegan 
 that work of R. Geoghegan and J. Krasinkiewicz \cite{GK91}]  and J. Krasinkiewicz \cite{Kra77} imply that
 a word hyperbolic group $G$ is semistable at $\infty$  if and only if $\partial G$ has the shape of a locally connected continuum (see \cite{GS17} for details).  In particular:
 
\begin{theorem}\label{WHss} 
All word hyperbolic groups are semistable at $\infty$. 
\end{theorem}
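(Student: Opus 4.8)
The plan is to reduce to the 1-ended case and then invoke the shape-theoretic characterization of semistability for word hyperbolic groups recorded just above. Every word hyperbolic group $G$ is finitely presented, so by Dunwoody's accessibility (Theorem \ref{DunAcc}) it admits a Dunwoody decomposition: a finite graph of groups with finite edge groups in which each vertex group is either finite or a 1-ended finitely presented group. Since a vertex group of a splitting of a hyperbolic group over finite subgroups is quasi-convex, each such vertex group is again word hyperbolic. Invoking the reduction of \cite{M87} (quoted in the introduction to this chapter), $G$ is semistable at $\infty$ if and only if each 1-ended vertex group of its Dunwoody decomposition is semistable at $\infty$. Because finite and 2-ended groups are semistable (indeed simply connected) at $\infty$, it suffices to treat the case where $G$ is 1-ended and word hyperbolic.

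So suppose $G$ is a 1-ended word hyperbolic group. Then $\partial G$ is a compact connected metric space, that is, a continuum. By the work of Bowditch \cite{Bow99B}, Levitt \cite{Lev98} and Swarup \cite{Swarup} cited above, $\partial G$ has no global cut point, and by Bestvina--Mess \cite{BM91} the absence of cut points forces $\partial G$ to be locally connected. Hence $\partial G$ is a locally connected continuum, and in particular it trivially has the shape of a locally connected continuum. Applying the Geoghegan--Krasinkiewicz characterization recorded above (via \cite{GS17}), which asserts that a word hyperbolic group is semistable at $\infty$ if and only if its boundary has the shape of a locally connected continuum, we conclude that $G$ is semistable at $\infty$, completing the reduction.

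The substantive content is entirely imported: the no-cut-point theorem and the shape-theoretic characterization of semistability are the deep ingredients, and both are available as cited black boxes. The only point requiring care in assembling the argument is the reduction to the 1-ended case, specifically verifying that the 1-ended vertex groups produced by the Dunwoody decomposition are again word hyperbolic (so that the boundary characterization genuinely applies to each of them) and that the splitting reduction of \cite{M87} may be used, which it may since $G$, being hyperbolic, is finitely presented. Once these two checks are in place, the 1-ended case is a direct concatenation of the quoted facts, and I expect no further obstacle.
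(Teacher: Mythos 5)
Your proposal is correct and follows essentially the same route as the paper: the no-cut-point theorem of Bowditch--Levitt--Swarup, the Bestvina--Mess implication that absence of cut points forces local connectivity of $\partial G$, and the Geoghegan--Krasinkiewicz shape-theoretic characterization of semistability for word hyperbolic groups. The only difference is that you spell out the reduction to the 1-ended case (Dunwoody decomposition, quasi-convexity and hence hyperbolicity of the vertex groups, and the splitting reduction of \cite{M87}), which the paper leaves implicit; this is a welcome clarification rather than a different argument.
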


It is unknown if all CAT(0) groups are semistable at $\infty$ and this is one of the more heavily studied questions in the theory. The paper \cite{GS17} is a good source of information for this problem. 

\begin{theorem} [\cite{SSh}] \label{CATQ}  All CAT(0) groups that act geometrically on a proper CAT(0) cube complex are semistable at $\infty$.
\end{theorem}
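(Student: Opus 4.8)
The plan is to reduce the statement to the main theorem of \cite{GGM1} (Theorem \ref{MTGGM}) and to exploit the combinatorial convexity of hyperplanes in a CAT(0) cube complex. Let $X$ be the proper CAT(0) cube complex on which $G$ acts geometrically. Properness together with cocompactness forces $X$ to be locally finite, and being CAT(0) it is contractible, hence simply connected. If $G$ is finite or two-ended it is already simply connected at $\infty$, and if $G$ is infinitely-ended we pass to the one-ended vertex groups of a Dunwoody decomposition (which again act geometrically on convex subcomplexes), so by the reduction of \cite{M87} it suffices to treat the case where $G$, and hence $X$, is one-ended. Since semistability at $\infty$ is a quasi-isometry invariant of finitely presented groups (Theorem \ref{QIsp}) and geometric actions produce finitely presented groups, I would also pass to a torsion-free finite-index subgroup where one is available, so that the auxiliary subgroups $J\le G$ act on $X$ as covering transformations, as Theorem \ref{MTGGM} demands.

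Next comes the choice of the auxiliary groups $J$. For a given compact set $C_0\subseteq X$ I would take $J$ to be the stabilizer $\mathrm{Stab}_G(\hat H)$ of a suitably placed hyperplane $\hat H$ (or a torsion-free cyclic group generated by a hyperbolic isometry translating along the transverse direction, if freeness of the action must be arranged by hand). Because $X$ has only finitely many $G$-orbits of hyperplanes, $J$ acts geometrically on $\hat H$, which is itself a CAT(0) cube complex of dimension one less than $X$. This sets up an induction on the dimension of the cube complex: the zero- and one-dimensional cases are (virtually) free groups with finitely many ends and are semistable trivially, while in the inductive step $J$ acts geometrically on a lower-dimensional cube complex and is therefore semistable at $\infty$ (or finite, or two-ended) by the inductive hypothesis. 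As noted in \S\ref{SCS}, a one-ended semistable or two-ended group is automatically semistable at $\infty$ in $X$, so condition (a) of Theorem \ref{MTGGM} holds for this $J$ with essentially no further work.

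The crux is condition (b), co-semistability of $J$ in $X$ with respect to $C_0$ in the sense of Definition \ref{Coss}. Here the geometry of halfspaces is essential. Removing the $J$-orbit $J\cdot C$ of a large combinatorial neighborhood $C$ of the carrier of $\hat H$ leaves $J$-unbounded components lying essentially in the two halfspaces determined by $\hat H$, and the $J$-bounded proper rays in such a component run parallel to the hyperplane. I would verify that any loop in such a component can be pushed to infinity along such a ray by nearest-point projection onto a nested family of convex halfspaces transverse to the direction of the tracking ray, filling the intermediate annuli by combinatorial disk diagrams in the sense of Sageev and Wise. Convexity of hyperplanes and halfspaces is what guarantees that these projections, and hence the entire homotopy, can be kept outside the prescribed set $C_0$. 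With both conditions in hand, Theorem \ref{MTGGM} (or its special case, Theorem \ref{Theorem: Special case of Main Theorem}) yields that $X$ has semistable fundamental group at $\infty$, that is, $G$ is semistable at $\infty$.

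I expect the main obstacle to be precisely the uniform control required in condition (b): producing a single compact set $C$, depending only on $C_0$, that works simultaneously for every $J$-unbounded component and every $J$-bounded tracking ray, rather than one that must be enlarged as the loop slides outward. Keeping the sliding homotopy off $C_0$ while it tracks a ray that itself stays within a bounded neighborhood of the $J$-orbit is the delicate point, and it is exactly where the combinatorial convexity and the product-like neighborhood structure of hyperplanes in a CAT(0) cube complex must be invoked most carefully; the torsion-freeness reduction needed to make $J$ act by covering transformations is a secondary technical issue to be dispatched in the preliminary reductions.
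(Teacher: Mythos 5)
The paper does not prove Theorem \ref{CATQ}; it records the result as a citation to \cite{SSh}, so there is no internal argument to compare against, and your outline must stand on its own as a proof. It does not, for concrete reasons. First, Theorem \ref{MTGGM} requires each auxiliary group $J$ to act on $X$ by \emph{covering transformations}, i.e.\ freely. Your principal candidate $J=\mathrm{Stab}_G(\hat H)$ need not act freely: a geometric action on a CAT(0) cube complex only forces point stabilizers to be finite, a finite-order cellular automorphism of a CAT(0) complex always has a fixed point, and your proposed remedy of passing to a torsion-free finite-index subgroup is not known to be available --- virtual torsion-freeness of cubulated (indeed of general CAT(0)) groups is an open problem, settled only under extra hypotheses such as virtual specialness. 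The fallback of an infinite cyclic $J$ abandons the induction on dimension on which your verification of condition (a) rests. Second, even granting that $J=\mathrm{Stab}_G(\hat H)$ acts freely, your claim that condition (a) holds ``with essentially no further work'' invokes a remark that covers only $1$-ended semistable and $2$-ended $J$; hyperplane stabilizers can be infinite-ended (in the standard cubulation of $F_2\times\mathbb Z$ the hyperplane dual to the $\mathbb Z$-direction is a tree with stabilizer $F_2$), and for such $J$ semistability of $J$ at $\infty$ in $X$ is supplied by nothing you quote.

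Third, and decisively, condition (b) --- co-semistability of $J$ at $\infty$ in $X$ in the sense of Definition \ref{Coss} --- is where the entire content of the theorem lies, and you do not prove it: you sketch nearest-point projections onto nested convex halfspaces and then concede that the uniform choice of the compact set $C$ is an obstacle you ``expect.'' The sketched mechanism also pulls in the wrong direction: CAT(0) projection onto a convex set is $1$-Lipschitz \emph{toward} that set, and convexity alone does not explain why the tracks of the resulting homotopy avoid $C_0$, why the homotopy remains proper when infinitely many projection stages are stacked, or why a single $C$ serves simultaneously every $J$-unbounded component of $X-J\cdot C$ and every $J$-bounded tracking ray, which is exactly the uniformity Definition \ref{Coss} demands. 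Until (b) is established for a legitimate choice of $J$, the appeal to Theorem \ref{MTGGM} yields nothing; conditions (a) and (b) together are not a reduction of the problem but a reformulation of it. A smaller but genuine gap: in the infinite-ended case you assert that the $1$-ended vertex groups of a Dunwoody decomposition ``again act geometrically on convex subcomplexes''; vertex groups of splittings over finite groups are not obviously convex-cocompact in $X$, so this inheritance of cubulation also requires justification before Theorem \ref{Fsplit} or the reduction of \cite{M87} can be combined with your one-ended case.
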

 
As with word hyperbolic groups,  a 1-ended proper CAT(0) space $X$ is semistable at $\infty$ if and only if $\partial X$ has the shape of a locally connected continuum or equivalently is pointed 1-movable.  C. Plaut \cite{Pl22} introduces the notion of a {\it weakly chained space}. He shows that a compact metric space is weakly chained if and only if it is pointed 1-movable.  Plaut \cite{Pl23} also connects the weakly chained condition for boundaries of CAT(0) spaces to topological properties of those spaces. 

If a 1-ended group $G$ acts geometrically on a proper CAT(0) space $X$, then the boundary of $X$ ($\partial X$) is called a {\it boundary of $G$}. The group $G$ is quasi-isometric to $X$ and Theorem \ref{QIsp} implies $G$ is semistable at $\infty$ if and only if $X$ is semistable at $\infty$. 

A boundary of a 1-ended CAT(0) group does not have a cut point (P. Papasoglu and E. Swenson \cite{PS09} and E. Swenson \cite{Sw}). Unfortunately, this does not imply that $X$ is semistable at $\infty$ in the same way word hyperbolic groups are semistable at $\infty$. Boundaries of 1-ended CAT(0) groups need not be locally connected or even path connected \cite{CMT06} and \cite{CK}. There are a number of interesting questions as to when boundaries of CAT(0) groups are unique, path connected, locally connected, simply connected etc.

\begin{theorem} [Theorem 3.1, \cite{GS17}] Suppose $G$ is a 1-ended group acting geometrically on a CAT(0) or word hyperbolic space $X$. Let $\ast$ be a base point in $X$. If all geodesic rays based at $\ast$ are properly homotopic, then $X$ (and hence $G$) is semistable at $\infty$. 
\end{theorem}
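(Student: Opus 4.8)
The plan is to reduce the semistability of $X$ to a comparison of geodesic rays and then feed in the hypothesis. A geometric action forces $G$ to be finitely presented (every CAT(0) or word-hyperbolic group is) and forces $X$ to be $1$-ended, so every proper ray in $X$ converges to the unique end and, by Remark \ref{basess}, it suffices to prove that any two proper rays based at $\ast$ are properly homotopic. Since the hypothesis already supplies this for geodesic rays, everything reduces to one geometric claim, which I will call Lemma A: \emph{every proper ray $r$ with $r(0)=\ast$ is properly homotopic, keeping the initial point at $\ast$, to a geodesic ray issuing from $\ast$.}

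Granting Lemma A, the theorem is immediate. Given proper rays $r$ and $s$ based at $\ast$, Lemma A supplies geodesic rays $\gamma_r$ and $\gamma_s$ from $\ast$ with $r$ properly homotopic to $\gamma_r$ and $s$ properly homotopic to $\gamma_s$; the hypothesis gives $\gamma_r$ properly homotopic to $\gamma_s$, and transitivity of proper homotopy yields $r\simeq s$. By Remark \ref{basess}, $X$ is semistable at $\infty$. As $G$ acts geometrically, $G$ is quasi-isometric to $X$, so $G$ is semistable at $\infty$ by Theorem \ref{QIsp}.

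It remains to prove Lemma A, where the curvature hypotheses are used. First, $d(\ast,r(t))\to\infty$ because $r$ is proper and $X$ is $1$-ended, so after reparametrizing $r$ we may assume $r([n,\infty))$ misses $B(\ast,n)$. I would then construct the homotopy as a geodesic fan/comb over $r$, interpolating between $r$ and a geodesic ray by sweeping the family $c_t=[\ast,r(t)]$ outward. The word-hyperbolic case is essentially free: $G$ is then word hyperbolic, so $X$ is already semistable at $\infty$ by Theorem \ref{WHss}; equivalently, the Morse lemma shows $r$ fellow-travels a geodesic ray and the resulting thin ladder is a proper homotopy. In the CAT(0) case one uses convexity of the metric: for geodesics issuing from $\ast$ the function $u\mapsto d(c_t(u),c_{t'}(u))$ is convex and vanishes at $u=0$, while $d(\ast,c_t(u))=u$. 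These facts let one arrange the sweep so that each intermediate curve is a proper ray from $\ast$ whose points remain at radius comparable to their parameter --- precisely the constant-radius \textquotedblleft unwinding\textquotedblright{} homotopy that straightens a spiralling ray in the Euclidean plane.

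The main obstacle is properness of this homotopy in the CAT(0) case. There is no Morse lemma available, and the geodesics $c_n=[\ast,r(n)]$ need not converge: a proper ray may spiral so that $[\ast,r(n)]$ oscillates among infinitely many directions with no limit. Hence one cannot simply take $c_n\to\gamma$ and cone off to $\gamma$, since coning passes through $\ast$ and destroys properness. The homotopy must instead move $r(t)$ toward the target ray along paths that stay far from $\ast$, and the role of CAT(0) convexity is to guarantee that the swept family can be kept outside any prescribed ball, so that $d(\ast,H(t,s))\to\infty$ uniformly in $s$ and $H^{-1}(\overline{B}(\ast,R))$ is compact; the cocompactness of the action is available to make the required estimates uniform. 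Establishing that this outward sweep is simultaneously continuous and proper is the technical heart of the argument.
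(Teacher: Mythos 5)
Your overall skeleton is reasonable: since $X$ is $1$-ended, every proper ray converges to the unique end, so by Remark~\ref{basess} it would suffice to properly homotope an arbitrary proper ray based at $\ast$ to a geodesic ray based at $\ast$; the hypothesis and transitivity of proper homotopy would then finish the space-level claim. The problem is that this reduction, your Lemma A, \emph{is} the theorem in the CAT(0) case, and you do not prove it. You correctly identify why the naive constructions fail --- the geodesic cone/comb $(t,u)\mapsto$ (point of $[\ast,r(t)]$ at distance $u$ from $\ast$) has the entire strip $\{u\le R\}$ in the preimage of $\overline B(\ast,R)$, so it is never proper, and the segments $[\ast,r(n)]$ need not converge --- but you then assert, with no construction or estimate, that ``convexity guarantees the swept family can be kept outside any prescribed ball,'' and you explicitly defer ``the technical heart.'' Nothing in the proposal produces the required proper map $[0,\infty)\times[0,1]\to X$: there is no truncated comb domain with a diverging lower boundary, no Arzel\`a--Ascoli limit ray, no stacking of controlled homotopies, and no indication of where (if anywhere) the hypothesis on geodesic rays or the cocompactness of the action enters your Lemma A. As written, Lemma A is an unconditional assertion about proper CAT(0) spaces, stated but not established, so the proof has a hole exactly where the theorem's content lies.

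The hyperbolic branch is also flawed. The Morse lemma applies to quasi-geodesics, and a proper ray need not be a quasi-geodesic: a ray spiralling outward through circles of increasing radius in $\mathbb H^2$ is proper but stays within bounded distance of no geodesic ray, so ``$r$ fellow-travels a geodesic ray'' is false in general. Your fallback --- $G$ is word hyperbolic, hence semistable by Theorem~\ref{WHss}, hence $X$ is semistable --- runs the implication backwards: the theorem derives the semistability of $G$ from that of $X$, and transferring semistability from the group to the space requires a proper $2$-equivalence between $X$ and a complex for $G$ (Theorem~\ref{2-equiv}), which is not available when $X$ is merely a geodesic space (it need not even be simply connected). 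The same objection applies to your final step, which cites Theorem~\ref{QIsp}: that is a quasi-isometry invariance statement for finitely presented \emph{groups}, and does not by itself pass semistability between the space $X$ and the group $G$.
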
 

\begin{corollary} 
[Corollary 4.2, \cite{GS17}] If the CAT(0) group $G$ is not rank 1, then $G$ is semistable at $\infty$.
\end{corollary}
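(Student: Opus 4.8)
The plan is to derive this from the preceding theorem of \cite{GS17}, which says that a $1$-ended group acting geometrically on a CAT(0) (or word hyperbolic) space $X$ is semistable at $\infty$ as soon as all geodesic rays based at a fixed point $\ast$ are properly homotopic. So I must (i) reduce to the $1$-ended case and (ii) produce a proper homotopy between any two geodesic rays from $\ast$. Fix a geometric action of $G$ on a proper CAT(0) space $X$, a basepoint $\ast \in X$, and recall that finite and $2$-ended groups are already semistable at $\infty$, so I may assume $G$ is infinite and not virtually cyclic.

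The geometric input is the rank-rigidity dichotomy for geometric actions on CAT(0) spaces (Ballmann--Buyalo and its established cases): either the action contains a rank $1$ isometry, or $X$ (after passing, if necessary, to a geodesically complete model carrying the same action) is a higher-rank symmetric space, a Euclidean building, or splits nontrivially as a product $X_1\times X_2$. Since $G$ is by hypothesis not rank $1$, one of the latter alternatives holds, and in each of them the Tits boundary $\partial_T X$ is path connected: for a product it is the spherical join $\partial_T X_1 \ast \partial_T X_2$, and a join of nonempty spaces is path connected; for symmetric spaces and Euclidean buildings $\partial_T X$ is a connected spherical building, hence a connected---therefore path connected---simplicial complex. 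Because the identity map from the Tits topology to the coarser cone topology is continuous, the visual boundary is connected as well, which forces $G$ to be $1$-ended (in particular $G$ cannot have more than one end, as that would make the visual, and hence the Tits, boundary disconnected and reinstate a rank $1$ isometry).

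With path-connectivity of $\partial_T X$ in hand, the homotopy in (ii) is built directly. Two geodesic rays from $\ast$ correspond to boundary points $\xi_0,\xi_1$; pick a Tits-continuous path $u\mapsto \xi_u$, $u\in[0,1]$, joining them. As the cone topology is coarser than the Tits topology, $u\mapsto\xi_u$ is continuous for the cone topology too, so the rays $r_{\xi_u}$ issuing from $\ast$ vary continuously, and $(t,u)\mapsto r_{\xi_u}(t)$ is a homotopy rel $\ast$ from the first ray to the second. It is proper because $\{\xi_u\}$ is compact: geodesic rays from $\ast$ to a compact set of boundary points leave every bounded subset of $X$ uniformly, so preimages of compact sets are compact. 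Thus all geodesic rays at $\ast$ are properly homotopic, and the preceding theorem yields that $G$ is semistable at $\infty$. In the product case one can instead quote Example \ref{prodss} directly, since $X_1\times X_2$ is a product of two infinite, locally finite, connected complexes (after choosing CW models and using the quasi-isometry invariance of semistability, Theorem \ref{QIsp}).

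The step I expect to be the genuine obstacle is the invocation of rank rigidity in the second paragraph: stating it in a form valid for the proper CAT(0) spaces on which CAT(0) groups act, dealing with the geodesic-completeness hypothesis and the passage to a suitable model, and extracting path-connectivity of $\partial_T X$ uniformly across the symmetric-space, building, and product cases. By contrast, the reduction to the $1$-ended case and the properness of the constructed homotopy are routine once the boundary is understood.
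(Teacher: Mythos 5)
There is a genuine gap, and it is exactly at the step you flagged as the obstacle: the ``rank-rigidity dichotomy'' you invoke is not a theorem in the generality you need. Rank rigidity for geometric actions on CAT(0) spaces --- no rank one isometry implies the space is a nontrivial product, a higher-rank symmetric space, or a Euclidean building --- is a conjecture (due to Ballmann and Buyalo); it has been proved for Hadamard manifolds and for CAT(0) cube complexes, but it is open for general proper CAT(0) spaces. A CAT(0) group is merely assumed to act geometrically on \emph{some} proper CAT(0) space, which need not be of any of the known types, and the auxiliary move of ``passing to a geodesically complete model carrying the same action'' is also not available in general. Moreover, attributing the dichotomy to Ballmann--Buyalo is a misattribution: what they proved is a much weaker (but sufficient) statement.

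The statement that is actually available --- and that the proof in \cite{GS17} uses --- is the Ballmann--Buyalo theorem that if a group acts geometrically on a proper CAT(0) space $X$ and contains no rank one isometry, then the Tits diameter of $\partial X$ is at most $3\pi/2$. Since the Tits metric is a length metric, finite diameter already gives Tits-continuous paths between any two boundary points, which is all your classification was being used for. From that point on your argument is correct and is essentially the argument of \cite{GS17}: connectivity of the boundary gives 1-endedness; Tits continuity implies cone continuity; and the geodesic homotopy $(t,u)\mapsto r_{\xi_u}(t)$ is automatically proper, since $d(\ast, r_{\xi_u}(t))=t$ forces the preimage of any bounded set to lie in some $[0,R]\times[0,1]$. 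So the repair is to delete the appeal to rank rigidity (and the product/symmetric-space/building case analysis, including the shaky detour through Example \ref{prodss} and quasi-isometry invariance, which is stated in this paper only for finitely presented groups, not for arbitrary spaces) and replace it with the Tits-diameter bound; Theorem 3.1 of \cite{GS17} then applies as you intended.
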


\begin{definition}\label{cut}
A point $c$ of a continuum $Y$ is a {\it weak cut point} if there are two other points $a$ and $b$ of $Y$ such that any sub-continuum containing $a$ and $b$ must also contain $c$. In that case we say that $c$ {weakly separates} $a$ from $b$.
\end{definition}

\begin{theorem} [Theorem 5.5, \cite{GS17}] 
Suppose $X$ is a 1-ended proper CAT(0) space  and $X$ does not have semistable fundamental group at infinity then there are points $a, b, c\in \partial X$ such that $c$ weakly separates $a$ from $b$.
\end{theorem}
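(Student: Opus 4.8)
The plan is to argue at the level of the boundary, using the shape-theoretic characterization of semistability recalled earlier in this subsection. First I would fix the natural compactification $\bar{X}=X\cup\partial X$ of the proper CAT(0) space $X$. This $\bar{X}$ is a compact metrizable AR in which $\partial X$ sits as a $Z$-set, and since $X$ is one-ended, $\partial X$ is a nondegenerate continuum. The fundamental pro-group at $\infty$ of $X$ is then pro-isomorphic to the inverse system $\{\pi_1(U,\ast)\}$ where $U$ ranges over a neighborhood basis of $\partial X$ in $\bar{X}$ and the base point tracks a fixed proper geodesic ray. By the work of Geoghegan--Krasinkiewicz and Krasinkiewicz cited above, this yields the key equivalence: $X$ is semistable at $\infty$ if and only if $\partial X$ is pointed $1$-movable. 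Thus the theorem reduces to the purely continuum-theoretic assertion that \emph{a continuum that is not pointed $1$-movable must possess a weak cut point}.

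Granting this reduction, I would unwind what the failure of pointed $1$-movability provides. There is a neighborhood $U_0$ of $\partial X$ (equivalently an unbounded level in $X$) such that for no smaller neighborhood $V$ can every loop in $V$ be homotoped, inside $U_0$, arbitrarily close to $\partial X$. Translating back into $X$ through Theorem \ref{ssequiv}, this produces two proper geodesic rays $r$ and $s$ converging to boundary points $a=r(\infty)$ and $b=s(\infty)$, together with a sequence of connecting loops that cannot be slid toward infinity inside a controlled region. The geometric content I would aim to isolate is that the obstruction to moving these loops is concentrated at a single accumulation point $c\in\partial X$.

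The principal obstacle is then to verify that this $c$ is genuinely a weak cut point in the sense of Definition \ref{cut}: that \emph{every} subcontinuum of $\partial X$ containing both $a$ and $b$ must also contain $c$. I would argue by contradiction. A subcontinuum $K\ni a,b$ avoiding $c$ would, via the $Z$-set structure of $\partial X$ in $\bar{X}$, supply collar neighborhoods in which the connecting loops could be rerouted around $c$ and pushed toward $\partial X$, contradicting the chosen failure of movability. Making this rerouting precise is the delicate point, since pointed $1$-movability is a statement about the inverse system of fundamental groups while the weak-cut-point condition is about subcontinua; bridging the two requires approximating subcontinua of $\partial X$ by the separating behaviour of nested neighborhoods in $\bar{X}$ while keeping careful track of base points along $r$ and $s$. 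I expect the cleanest implementation to run entirely through the \v{C}ech/shape first-homotopy inverse system, where both ``pointed $1$-movable'' and ``every subcontinuum from $a$ to $b$ meets $c$'' admit parallel formulations, so that the extraction of the weak cut point $c$ becomes precisely the negation of movability.
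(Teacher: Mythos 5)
The paper itself offers no proof of this statement to compare against---it is quoted from \cite{GS17}---so your argument has to stand on its own merits, and it does not: the central reduction is to a false statement. After correctly invoking the equivalence (semistability at infinity of $X$ $\iff$ $\partial X$ is pointed $1$-movable), you claim the theorem ``reduces to the purely continuum-theoretic assertion that a continuum that is not pointed $1$-movable must possess a weak cut point.'' That assertion is false. Let $\Sigma$ be a dyadic solenoid and $Y=\Sigma\times[0,1]$. Writing $\Sigma$ as an inverse limit of circles with degree-$2$ bonding maps, the fundamental pro-group of $Y$ is $\mathbb Z\xleftarrow{\times 2}\mathbb Z\xleftarrow{\times 2}\cdots$, whose images in the first term strictly decrease, so it is not Mittag-Leffler and $Y$ is not pointed $1$-movable. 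Yet $Y$ has no weak cut point in the sense of Definition \ref{cut}: given points $(a,s)$, $(b,t)$ and a third point $(c,u)$ distinct from both, if $c\notin\{a,b\}$ the subcontinuum $(\{a\}\times I_1)\cup(\Sigma\times\{\tau\})\cup(\{b\}\times I_2)$, with $\tau\neq u$ and $I_1,I_2$ the intervals joining $s,t$ to $\tau$, contains the first two points and misses the third (and easy variants handle $c\in\{a,b\}$). So no argument of the kind you outline---Z-set collars, \v{C}ech systems, shape theory applied to $\partial X$ as an abstract continuum---can possibly prove the theorem; the CAT(0) geometry of $X$ (geodesics, convexity, the Tits metric) must enter essentially, as it visibly does in \cite{GS17}, whose sharpened version (Theorem 6.1, quoted just after this one) locates the separating set as a Tits ball $B_T(c,\pi/2)$.

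Even setting aside the counterexample, your sketch begs the question at its decisive step. The sentence ``the obstruction to moving these loops is concentrated at a single accumulation point $c\in\partial X$'' \emph{is} the theorem: nothing in your proposal explains why the failure of semistability cannot be diffused over the boundary with no single point responsible, which is exactly what happens in $\Sigma\times[0,1]$. The contradiction argument is also unworkable as stated: a subcontinuum $K\ni a,b$ of $\partial X$ need not be path connected or locally connected, so there is no general mechanism for ``rerouting the connecting loops around $c$'' along $K$ inside a collar neighborhood; constructing controlled homotopies near a non-locally-connected continuum is precisely the difficulty that forces the genuine CAT(0) input in the actual proof.
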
 

This result is improved.

\begin{theorem} [Theorem 6.1, \cite{GS17}]
Suppose $X$ is a 1-ended proper CAT(0) space that does not have semistable fundamental group at infinity then there are points $a, b, c\in \partial X$ such that the Tits ball $B_T(c,{\pi \over 2})$ separates $a$ from $b$.
\end{theorem}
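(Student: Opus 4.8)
The plan is to build on the preceding theorem (Theorem 5.5 of \cite{GS17}), which already produces a single weakly separating point, and to promote that point to a Tits ball of radius $\pi/2$. First I would apply Theorem 5.5 to the non-semistable space $X$ to obtain points $a_0, b_0, c \in \partial X$ with $c$ weakly separating $a_0$ from $b_0$; that is, every subcontinuum of the visual boundary $\partial X$ containing $a_0$ and $b_0$ also contains $c$. The goal is then to choose (possibly new) points $a, b$ lying Tits-far from $c$ --- at Tits distance at least $\pi/2$ on the two sides of $c$ --- and to show that removing the whole Tits ball $B_T(c,\pi/2)$, rather than just the point $c$, still separates $a$ from $b$ in the visual topology.

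The geometric heart of the argument, and the reason the radius is exactly $\pi/2$, is a coherence lemma for the Tits metric $d_T$. The key CAT(0) facts I would use are: the Tits distance dominates the Tits angle, $d_T \geq \angle_T$; two boundary points $\xi, \eta$ at Tits distance $d_T(\xi,\eta) > \pi$ are joined by a bi-infinite geodesic line in $X$, while no such line exists when $d_T(\xi,\eta) < \pi$ (see \cite{BrHa99}); and, by Papasoglu--Swenson \cite{PS09} and Swenson \cite{Sw}, the boundary of a one-ended CAT(0) space has no (global) cut point, so separations are forced to be genuinely ``two-sided''. The lemma I want states that any two points $\xi$ and $\eta$ both within Tits distance $\pi/2$ of $c$ satisfy $d_T(\xi,\eta) < \pi$ by the triangle inequality, hence cannot be the ideal endpoints of a geodesic line in $X$; this rigidity is what should prevent a continuum joining the $a$-side to the $b$-side from ``going around'' $c$ while avoiding the $\pi/2$-collar.

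With the lemma in hand, I would organize the separating points using the betweenness (pretree) structure that Papasoglu--Swenson attach to the set of points weakly separating $a$ from $b$: this set carries a natural order, and $c$ sits inside an interval of such separators. I would locate $c$ (replacing $a_0, b_0$ by deeper points $a, b$ in the two sides if necessary) so that the entire interval of separators meeting $B_T(c,\pi/2)$ is squeezed between $a$ and $b$. Feeding the Tits-coherence lemma into this order structure should let one conclude that every continuum joining $a$ to $b$ must meet $B_T(c,\pi/2)$, while $a$ and $b$ themselves lie outside the ball, which is the desired separation.

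The step I expect to be the main obstacle is exactly the passage from the Tits metric to the visual topology. The Tits topology on $\partial X$ is strictly finer than the visual topology, so a Tits ball is in general neither open nor closed visually, and a metric ``wall'' in $d_T$ need not be a topological wall for continua. Controlling the visual connectivity of $\partial X \setminus B_T(c,\pi/2)$ --- in particular ruling out a visual continuum that slips across $c$ using points outside the $\pi/2$-ball --- is where the non-semistability hypothesis and the no-cut-point structure theory must be combined carefully, and it is the part that genuinely upgrades Theorem 5.5 rather than merely restating it.
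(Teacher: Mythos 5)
There is a genuine gap, and it sits exactly where you flag it yourself: the passage from the Tits metric to the visual topology is not a technical loose end to be tightened at the end, it \emph{is} the content of the theorem, and nothing in your outline supplies it. The strategic problem is that you treat Theorem 5.5 of \cite{GS17} as a black box and try to upgrade its conclusion. But the bare statement that some $c$ weakly separates $a_0$ from $b_0$ carries no Tits-metric information about a neighborhood of $c$: the points of $B_T(c,\pi/2)$ need not be weak cut points, need not lie in any pretree interval of separators between $a_0$ and $b_0$ (that interval may well be the single point $\{c\}$, so there are no ``deeper'' points $a,b$ to pass to), and a visual continuum joining $a_0$ to $b_0$ is free, as far as the hypotheses you have allowed yourself, to run through $B_T(c,\pi/2)\setminus\{c\}$. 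As a general topological principle, a weakly separating point of a continuum need not have any separating neighborhood, so the upgrade cannot be formal; it must use non-semistability again. That is how the argument in \cite{GS17} actually goes (note that the present paper does not reprove this result, it only cites it): one re-enters the construction that produced $a$, $b$, $c$ from the failure of semistability, supposes a continuum $K\subset \partial X\setminus B_T(c,\pi/2)$ contains $a$ and $b$, and then, by covering $K$ and chaining geodesic rays toward points of $K$, builds a proper homotopy pushing the offending loops/rays off the given compact set --- contradicting non-semistability. The radius $\pi/2$ enters through estimates of the following type: if $x_n\to\xi\in\partial X$ and the geodesics $[p,x_n]$ subconverge to a ray $[p,\zeta)$, then $d_T(\zeta,\xi)\le \pi/2$; it does not come from a triangle-inequality rigidity argument.

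Two of your supporting claims are also wrong as stated. First, if $d_T(\xi,c)\le\pi/2$ and $d_T(\eta,c)\le\pi/2$, the triangle inequality gives only $d_T(\xi,\eta)\le\pi$, not $d_T(\xi,\eta)<\pi$: in the Euclidean plane take $\xi,\eta$ antipodal and $c$ at Tits distance exactly $\pi/2$ from both; then $\xi,\eta$ lie in the closed ball $B_T(c,\pi/2)$ and are the endpoints of a geodesic line. So the closed $\pi/2$-ball can contain endpoint pairs of geodesic lines, and even for the open ball (where the strict inequality does hold) the absence of geodesic lines between two boundary points says nothing about whether a visual continuum can cross from one side of the ball to the other --- that implication is never established, and it is the whole theorem. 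Second, the no-cut-point theorem of Papasoglu--Swenson and Swenson that you invoke is a theorem about CAT(0) \emph{groups}, i.e.\ it requires a geometric group action; Theorem 6.1 here is about a bare proper CAT(0) space, where no such action is assumed, so that structure theory is not available under the stated hypotheses.
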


\begin{example}
The group $G=\langle x,y,z,w \mid [x,y],[y,z],[z,w]\rangle$ is a 1-ended CAT(0) group. The group $G$ acts geometrically on a CAT(0) square complex $X$ and $\partial X$ has weak cut points, is connected, but not path connected and not locally path connected (see \cite{CMT06}). The subgroups $\langle x,y\rangle$, $\langle y,z\rangle$  and $\langle z,w\rangle$ of $G$ are all isomorphic to $\mathbb Z\oplus \mathbb Z$. Hence by several applications of Theorem \ref{MComb}, $G$ is semistable at $\infty$.
\end{example}

If $X$ is a connected CAT(0) cube complex, let $Op(w,v)$ denote the maximal subcomplex of $Lk(w, X )$ composed of simplexes which are further from $v$ than $w$. 

Reading the following theorem with $m=0$, $0$-connected means connected, $0$-connected at infinity means 1-ended and $\pi_1$-semistable means semistable at $\infty$. 

\begin{theorem} [Corollary 4.2, \cite{BrM01}] Let $X$ be a locally finite, CAT(0) cubical complex with vertex $v$. If the link of $v$ in $X$ is $m$-connected, and if for each vertex $w\ne v$, $Op(w, v)$ is $m$-connected, then $X$ is $m$-connected at infinity and $\pi_{m+1}$-semistable.
\end{theorem}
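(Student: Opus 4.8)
The plan is to run combinatorial Morse theory on $X$ using the proper function $f\colon X\to[0,\infty)$ given by $f(x)=d_{X^{(1)}}(v,x)$, the combinatorial distance from the base vertex $v$. The first step is to check that $f$ is a genuine Morse function in the sense of Bestvina--Brady adapted to cube complexes: because $X$ is CAT(0) (hence contractible and median, with the usual hyperplane structure), every cube $C$ has a unique vertex nearest $v$ and a unique vertex farthest from $v$, and along each edge $f$ changes by exactly $\pm 1$. With this in hand one identifies the \emph{ascending link} $Lk_\uparrow(w)$ --- the subcomplex of $Lk(w,X)$ spanned by the edge-directions along which $f$ increases --- with the combinatorial data appearing in the hypotheses: for $w\neq v$ every direction away from $v$ increases $f$, so $Lk_\uparrow(w)=Op(w,v)$, while at the base vertex every direction increases $f$, so $Lk_\uparrow(v)=Lk(v)$. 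Thus hypotheses (1) and (2) together say precisely that \emph{every} ascending link of $f$ is $m$-connected.

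The heart of the argument is the Morse lemma describing how the superlevel sets $N_t:=f^{-1}([t,\infty))$ change as $t$ decreases. Passing the critical level from $t$ to $t-1$ amounts, up to homotopy, to gluing onto $N_t$ a cone on the ascending link $Lk_\uparrow(w)$ for each vertex $w$ with $f(w)=t-1$ (the upward star of each newly included vertex is a cone on its ascending link). Since each ascending link is $m$-connected, coning it off attaches cells of dimension $\ge m+2$, so the inclusion $N_t\hookrightarrow N_{t-1}$ is $(m+1)$-connected: an isomorphism on $\pi_k$ for $k\le m$ and an epimorphism on $\pi_{m+1}$. Iterating these inclusions all the way down to $N_0=X$, and using that $X$ is contractible (being CAT(0)), we conclude $\pi_k(N_t)=0$ for all $k\le m$ and all $t$; this is exactly where the hypothesis on $Lk(v)=Lk_\uparrow(v)$ is consumed, since it governs the final step $N_1\hookrightarrow N_0=X$. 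Because $X-B_n$ deformation retracts onto a superlevel set $N_t$, each complement of a ball is $m$-connected, which gives $m$-connectivity at infinity directly: a map $S^d\to X-B_n$ with $d\le m$ is already null-homotopic in $X-B_n$. Simultaneously, the inclusions $N_t\hookrightarrow N_{t-1}$ being epimorphisms on $\pi_{m+1}$ say that the inverse system $\{\pi_{m+1}(N_t)\}$ is pro-epimorphic, i.e.\ Mittag--Leffler, which is the assertion of $\pi_{m+1}$-semistability (for $m=0$ this is literally semistability at $\infty$, recovering the reading noted before the statement).

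I expect the main obstacle to be the careful verification of the Morse lemma in this combinatorial setting --- that crossing a single level of $f$ changes the homotopy type only by coning off ascending links, with the correct identification of those links as $Op(w,v)$ and $Lk(v)$. This requires the CAT(0) cube geometry (uniqueness of nearest and farthest vertices in each cube, and that the simplices of $Lk(w)$ farther from $v$ assemble into the upward star) together with some bookkeeping to handle the possibly infinitely many vertices at a given level; local finiteness ensures $f$ is proper, that balls are compact, and that the levelwise statements upgrade to honest pro-homotopy and semistability statements. A cleaner route is to isolate this as a general combinatorial Morse-theoretic theorem --- all ascending links $m$-connected implies $X$ is $m$-connected at infinity and $\pi_{m+1}$-semistable --- and then obtain the stated result as the special case of the distance function from $v$.
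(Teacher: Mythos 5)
The paper itself gives no proof of this statement -- it is quoted verbatim from Brady--Meier \cite{BrM01} -- and your argument is essentially the original one, so there is nothing genuinely different to compare. Your outline is correct: the distance function $f=d(v,\cdot)$, extended affinely over cubes, is a Bestvina--Brady Morse function on a CAT(0) cube complex whose ascending links are exactly $Lk(v)$ at $v$ and $Op(w,v)$ at each $w\ne v$ (each cube has a unique vertex closest to $v$), and the Morse Lemma -- superlevel sets $f^{-1}([t,\infty))$ change by coning off ascending links as $t$ decreases -- gives both the $m$-connectivity of complements of balls and the epimorphisms on $\pi_{m+1}$ needed for semistability, which is precisely how Brady and Meier deduce their Corollary 4.2 from the general ascending-link statement in their paper.
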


\subsubsection{Relatively Hyperbolic Groups}\label{RHGs}
We use \cite{GMa08} as a reference for the basics on cusped spaces and relatively hyperbolic groups.
Note that there is no semistability hypothesis on the peripheral subgroups $P_i$ in the next result.

\begin{theorem} [Theorem 1.1, \cite{MS18}]\label{MainRH}  
Suppose $G$ is a 1-ended finitely generated group that is hyperbolic relative to a collection of 1-ended finitely generated proper subgroups ${\bf P}=\{P_1,\ldots, P_n\}$. If $\partial (G,{\bf P})$ has no cut point, then $G$ has semistable fundamental group at $\infty$. In particular, $H^2(G,\mathbb Z G)$ is free abelian. 
\end{theorem}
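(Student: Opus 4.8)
The plan is to transfer the boundary-based argument that establishes semistability for word hyperbolic groups (Theorem \ref{WHss}) to the relatively hyperbolic setting, using a cusped space as the hyperbolic model and handling the parabolic directions separately by means of the one-endedness of the $P_i$. First I would fix a finite generating set for $G$ and form the cusped space $X$ of $(G,\mathbf P)$ as in \cite{GMa08}, attaching a combinatorial horoball to each left coset $gP_i$ in the Cayley graph. Then $X$ is a $\delta$-hyperbolic, locally finite, simply connected complex on which $G$ acts by covering transformations (properly, but not cocompactly), and $\partial X$ is the Bowditch boundary $\partial(G,\mathbf P)$. Since $G$ is one-ended, $\partial(G,\mathbf P)$ is a continuum, and by hypothesis it has no cut point. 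The semistability we actually want is that of a cocompact $G$-complex (the Cayley $2$-complex, in the sense of Definition \ref{defss3}); the tension between the cocompact complex and the non-cocompact hyperbolic model $X$ is what the proof must mediate.

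The core of the argument is a proof by contradiction modeled on the analysis of \cite{GS17}. Suppose the relevant cocompact complex $K$ for $G$ fails to be semistable at $\infty$. By the negation of condition (5) of Theorem \ref{ssequiv}, there is a compact set $C$ so that, however large we take $D$, one finds proper rays $r,s$ (outside $D$) that cannot be joined by a path $\alpha$ in $K-E$ bounding a disk in $K-C$. Since $K$ is one-ended these rays may be arbitrary; I would promote them to geodesic-type rays in the cusped space $X$ and record their endpoints $\xi_r,\xi_s\in\partial(G,\mathbf P)$. Then, exactly as weak cut points are extracted in (Theorem 5.5, \cite{GS17}), I would argue that the persistent failure to connect $r$ and $s$ yields a point $c\in\partial(G,\mathbf P)$ that weakly separates $\xi_r$ from $\xi_s$ in the sense of Definition \ref{cut}.

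The separating point $c$ is either a conical limit point or a parabolic point, and the two cases are excluded by the two distinct hypotheses. If $c$ is conical, the local structure of $\partial X$ at $c$ is that of an ordinary hyperbolic boundary, so a weak cut point there would contradict local connectivity of $\partial(G,\mathbf P)$, which follows from the no-cut-point hypothesis exactly as in the word hyperbolic case (Bestvina--Mess \cite{BM91} together with the shape-theoretic equivalence of \cite{GK91} and \cite{Kra77} underlying Theorem \ref{WHss}). If $c$ is parabolic, it is fixed by a conjugate of some $P_i$ and the separation must occur through the corresponding cusp; here one-endedness of $P_i$ is precisely what prevents the horoball over $gP_i$ from locally disconnecting $\partial X$. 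Concretely, two rays entering the same cusp can be connected by pushing up into the horoball and across, using only that $P_i$ is one-ended and not any control on its fundamental pro-group at $\infty$. This is exactly why no semistability hypothesis on the $P_i$ is required.

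The step I expect to be the main obstacle is the parabolic case: making precise, and proper-homotopy-quantitative, the claim that a weak cut point cannot be hidden in a cusp when the peripheral group is one-ended, and doing so in the cocompact model $K$ rather than in the non-cocompact space $X$. The genuine difficulty is that $X$ is hyperbolic but not cocompact while $K$ is cocompact but not hyperbolic, so the conical-versus-parabolic dichotomy on $\partial(G,\mathbf P)$ must be converted into statements about proper rays and horoball-pushing homotopies in $K$, and these homotopies must be verified to be proper. Once semistability is established, the final assertion is a formal consequence: the relationship between semistability at $\infty$ and second cohomology (Corollary \ref{GM2}, together with the homological results of \S\ref{SSHom}) gives that $H^2(G,\mathbb ZG)$ is free abelian.
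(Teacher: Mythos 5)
The survey you are being compared against does not actually prove Theorem \ref{MainRH}; it quotes it from \cite{MS18}. So your proposal must stand on its own, and it has a genuine gap at its core: the extraction step. You propose to produce a weak cut point of $\partial(G,\mathbf P)$ from non-semistability ``exactly as in Theorem 5.5 of \cite{GS17}.'' That theorem concerns a proper CAT(0) space which is \emph{itself} non-semistable, and its proof extracts the separating point from the geometry of that same space. In your setting the two roles are played by different spaces: the hypothetically non-semistable object is the cocompact complex $K$ for $G$, while the boundary belongs to the cusped space $X$, on which $G$ acts non-cocompactly. This mismatch is fatal, not cosmetic. Two proper rays in $K$ that are not properly homotopic in $K$ may well be properly homotopic in $X$ (the homotopy passing through horoballs) and may even converge to the same point of $\partial(G,\mathbf P)$, so the failure of semistability of $K$ is invisible at the level of $X$ and $\partial X$ without substantial new work. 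Indeed, Theorem \ref{MS-SS} (Theorem 1.4 of \cite{MS19}) shows that, at least for finitely presented $G$ and $P_i$, the cusped space has semistable fundamental group at infinity with \emph{no} cut-point hypothesis whatsoever; consequently no GS17-style argument run on $X$ can ever detect your hypothesis. Any extraction of a boundary separation phenomenon from non-semistability of $K$ must exploit precisely the fact that homotopies are forbidden to use the horoballs --- that is, it must solve the ``cocompact $K$ versus non-cocompact hyperbolic $X$'' mediation problem that you explicitly defer as ``the main obstacle.'' That deferred step is not a technical verification; it is the entire content of the theorem. This is also why the argument of \cite{MS18} is not a contradiction argument at all but a direct construction: the no-cut-point hypothesis yields local connectedness of $\partial(G,\mathbf P)$ (Bowditch's theorem on limit sets of geometrically finite convergence actions, not Bestvina--Mess \cite{BM91}, whose argument needs a cocompact action and does not apply to the cusped space), proper homotopies between geodesic rays are then built inside the cusped space, and these are pushed off the horoballs using 1-endedness of the $P_i$; the non-cocompact machinery of Theorem \ref{MTGGM} was developed for exactly this kind of transfer.

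Even granting the extraction, the way you close the contradiction is incorrectly reasoned, though repairable. A weak cut point at a conical limit point does not ``contradict local connectivity'': local connectedness alone is perfectly compatible with weak cut points (every interior point of an arc is one). What is true is that in a \emph{Peano} continuum every weak cut point is a genuine cut point: if $c$ fails to separate $a$ from $b$, then the component of the complement of $c$ containing them is open and connected, hence path connected (open connected subsets of Peano continua are path connected), and an arc from $a$ to $b$ avoiding $c$ is a subcontinuum witnessing that $c$ does not weakly separate them. So the correct closure is simply: Bowditch gives that $\partial(G,\mathbf P)$ is Peano, hence your weak cut point would be an honest cut point, contradicting the hypothesis outright --- with no conical/parabolic dichotomy and no role for 1-endedness of the $P_i$ in this step. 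Your parabolic-case discussion also mislocates where 1-endedness of the peripherals actually enters: it is needed in the constructive half of the proof, to keep rerouted paths and homotopies proper when they are pushed out of horoballs into the cocompact part, not to analyze the boundary topology at parabolic points. A final loose end: Corollary \ref{GM2}, which you invoke for the $H^2(G,\mathbb ZG)$ conclusion, is stated for finitely presented groups, whereas Theorem \ref{MainRH} assumes only finite generation, so the homological assertion needs the corresponding statement from \cite{MS18} rather than a formal application of \ref{GM2}.
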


Generalizations, limitations and results related to this theorem are described in \S 2 of \cite{MS18}. A principal limitation of Theorem \ref{MainRH} is resolved by the following result. Note that there is no restriction on the number of ends of $G$ or the $P_i$.

\begin{theorem} [Theorem 1.5 \cite{HM20}] \label{HMSSMain}   
Suppose $G$ is a  finitely presented group that is hyperbolic relative to a collection of finitely generated subgroups ${\bf P}=\{P_1,\ldots, P_n\}$. If each $P_i$ has semistable fundamental group at $\infty$ then $G$ has semistable fundamental group at $\infty$.
\end{theorem}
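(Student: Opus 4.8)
The plan is to reduce the statement to the one-ended, no-cut-point situation already covered by Theorem \ref{MainRH}, and to use the geometry of the Groves--Manning cusped space \cite{GMa08} together with the semistability of the $P_i$ to absorb the two features that Theorem \ref{MainRH} forbids: peripheral subgroups with more than one end, and boundaries with cut points.

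First I would dispose of the degenerate cases. Finite and two-ended groups are semistable at $\infty$, and if $G$ has infinitely many ends then Dunwoody's accessibility theorem (Theorem \ref{DunAcc}) writes $G$ as a finite graph of groups with finite edge groups and one-ended or finite vertex groups; by the reduction recorded in the introduction to this chapter (\cite{M87}), $G$ is semistable at $\infty$ if and only if each one-ended vertex group is. Relative hyperbolicity is inherited by the vertex groups of a splitting over finite subgroups, with induced peripheral structure consisting of the infinite intersections $V\cap gP_ig^{-1}$; I will take as standard (from the relative hyperbolicity literature) that after this descent the peripheral subgroups are again conjugates of the $P_i$ and so remain semistable by hypothesis. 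Hence it suffices to prove the theorem when $G$ is one-ended.

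Next I would set up the cusped space $X=X(G,\mathbf{P})$ \cite{GMa08}, which is locally finite and $\delta$-hyperbolic, with $G$ acting properly and cofinitely away from the horoballs over the cosets $gP_i$, and whose boundary $M=\partial(G,\mathbf{P})$ is compact, connected and metrizable. Since $G$ acts cocompactly not on $X$ but on its Cayley $2$-complex $\widetilde{X}_G$, I would verify semistability directly in $\widetilde{X}_G$ via the ray-pushing criterion of Theorem \ref{ssequiv}(5), using $X$ only as a geometric template: the inclusion of the Cayley graph of $G$ into $X$ lets the hyperbolic geometry of $X$ guide homotopies built in $\widetilde{X}_G$. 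Given a compact $C$, I must produce a compact $D$ so that any two proper rays from a common basepoint outside $D$, converging to the same end, can be joined outside an arbitrary third compact $E$ by a path whose associated loop is null-homotopic outside $C$. As in the word-hyperbolic case (Theorem \ref{WHss} and the local-connectedness discussion preceding it), this pushing succeeds away from the points where $M$ fails to be locally connected, and by Bowditch's analysis of cut points in relatively hyperbolic boundaries those points are exactly the parabolic fixed points, each stabilized by a conjugate of some $P_i$. In a parabolic direction the rays run into a coset $gP_i$, where $\widetilde{X}_G$ restricts to a complex of the form $\Gamma_{(P_i,\cdot)}(\cdot)$; semistability of $P_i$ (Definition \ref{defss3}) then lets loops carried along such a ray be slid to infinity (Theorem \ref{SSloop}) within the $gP_i$-direction and brought back out. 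Combining the hyperbolic pushing in non-parabolic directions with the parabolic pushing supplied by the semistable $P_i$, and stacking the resulting homotopies as in the proof of Theorem \ref{ssequiv}, would produce the desired proper homotopy between the two rays.

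The hard part will be the interface between the two regimes. The delicate step is to transport a loop along a proper ray from the hyperbolic part of $X$ into the correct horoball, push it to infinity there using semistability of the single group $P_i$, and return it, all without the homotopy re-entering $C$; this demands uniform control over all cosets $gP_i$ simultaneously and over the non-cocompact transition regions, which does not follow formally from semistability of one $P_i$ but must be extracted from the horoball geometry together with properness and fellow-traveling (tracking) estimates in the hyperbolic part. This is precisely the mechanism by which the failure of $M$ to be locally connected at parabolic points is overcome, and it is where essentially all of the work of \cite{HM20} is concentrated.
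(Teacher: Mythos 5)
Your proposal is an outline whose central step is missing, and the place where it stops is exactly where the content of the theorem lies. (The paper itself offers no proof to compare against --- it cites \cite{HM20} --- so the proposal must stand on its own.) After setting up the cusped space you assert that ray-pushing ``as in the word-hyperbolic case'' succeeds away from points where the Bowditch boundary fails to be locally connected, that those points are exactly the parabolic fixed points, and that loops drifting into a coset $gP_i$ can be ``slid to infinity \dots and brought back out'' using semistability of $P_i$; you then concede that the required uniform control over all cosets and over the non-cocompact transition regions ``does not follow formally from semistability of one $P_i$'' and is ``where essentially all of the work of \cite{HM20} is concentrated.'' That concession names the gap without filling it. Semistability of $P_i$ (via Theorem \ref{SSloop}) only lets you push a loop lying along a single coset deeper into that peripheral direction, i.e.\ toward the parabolic point; it does not return the loop to the hyperbolic part, and it says nothing about a loop or homotopy track that enters and exits a horoball whose base coset passes through the excluded compact set $C$ --- and those are precisely the configurations that obstruct semistability. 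Moreover, the word-hyperbolic template you invoke (no cut points $\Rightarrow$ locally connected boundary $\Rightarrow$ semistable, Theorem \ref{WHss}) is a shape-theoretic argument that requires a cocompact action on the hyperbolic space whose boundary is being used; $G$ acts cocompactly on $\widetilde{X}_G$ but not on the cusped space, so that transfer genuinely fails --- this is why Theorems \ref{MainRH}, \ref{MS-SS} and \ref{HMSSMain} needed new arguments. The mechanism the literature actually uses --- first establish semistability of the cusped space itself (Theorem \ref{MS-SS}), then excise the portions of a proper homotopy that dip into horoballs and replace each excised disk by a homotopy near the corresponding coset supplied by peripheral semistability (the device of Theorem \ref{excise}, which the paper describes explicitly for the homology analogue, Theorem \ref{HomRH}) --- is exactly the work your sketch defers.

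Your opening reduction also has a flaw. If some $P_i$ has infinitely many ends, it need not fix a vertex of the Bass--Serre tree of a Dunwoody decomposition of $G$, so the peripheral subgroups induced on a vertex group $V$ are the infinite intersections $V\cap gP_ig^{-1}$, which are in general proper subgroups of conjugates of $P_i$ --- not ``again conjugates of the $P_i$'' --- and semistability does not pass to subgroups, so the hypothesis is not available for them. This can be repaired: first refine the peripheral structure by replacing each $P_i$ with the vertex groups of its own Dunwoody decomposition (using transitivity of relative hyperbolicity, and Theorem \ref{Fsplit} to see these pieces are still semistable), and only then split $G$, so that every peripheral subgroup is finite or $1$-ended and hence conjugate into a vertex group. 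But as written the descent is unjustified, and it matters here: as the paper emphasizes, the whole point of Theorem \ref{HMSSMain} over Theorem \ref{MainRH} is that no restriction is placed on the number of ends of $G$ or of the $P_i$.
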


See Theorem \ref{HomRH} for a homology version of Theorem \ref{HMSSMain}.

\begin{theorem} [Theorem 1.4, \cite {MS19}]\label{MS-SS}   
If a 1-ended finitely presented group $G$ is hyperbolic relative to $\mathcal P=\{P_1,\ldots ,P_n\}$ a set of 1-ended finitely presented subgroups (with $G\ne P_i$ for all $i$) then a cusped space for $(G,\mathcal P)$ has semistable fundamental group at $\infty$. 
\end{theorem}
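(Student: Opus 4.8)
The plan is to reduce semistability of the cusped space $X=X(G,\mathcal P)$ to a shape-theoretic property of its Gromov boundary, and then verify that property using the peripheral structure. First I would fix a convenient model: take finite presentations of $G$ and of each $P_i$, build the associated presentation $2$-complexes, and form the cusped space by attaching to the universal cover of the presentation complex of $G$ a two-dimensional combinatorial horoball over the presentation complex of $P_i$ along every left coset $gP_i$. This makes $X$ a proper, simply connected, locally finite $2$-complex which is $\delta$-hyperbolic by the Groves--Manning construction \cite{GMa08}, with $\partial X=\partial(G,\mathcal P)$. Since $G$ is $1$-ended and each $P_i$ is infinite, a short separation argument shows $X$ is $1$-ended, so $\partial X$ is a continuum (compact, connected, metrizable).

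The compactification $\bar X=X\cup\partial X$ is a $Z$-compactification of the hyperbolic complex $X$, so $\partial X$ is a $Z$-set and the end of $X$ has the same pro-homotopy type as the shape of $\partial X$. By the machinery relating the shape of the boundary to the fundamental pro-group at infinity (the principle invoked for word hyperbolic groups via \cite{GK91}, \cite{Kra77} and \cite{GS17}), $X$ has semistable fundamental group at $\infty$ if and only if $\partial X$ is pointed $1$-movable, equivalently has the shape of a locally connected continuum. Thus everything reduces to proving that $\partial(G,\mathcal P)$ is pointed $1$-movable. The crucial point, and the reason the hypotheses differ from those of Theorem \ref{MainRH}, is that passing to the cusped space caps off each peripheral coset to a single parabolic point, so we never need the (unknown) semistability of the groups $P_i$ themselves.

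To establish pointed $1$-movability of $Z=\partial(G,\mathcal P)$ I would argue locally. Away from the parabolic points, Bowditch's theory together with the Bestvina--Mess/Swarup no-cut-point-implies-locally-connected mechanism (\cite{BM91}, \cite{Swarup}) gives local connectivity at conical limit points. At a parabolic point $\xi$ fixed by a conjugate of $P_i$, I would analyze a neighborhood basis of $\xi$ in $Z$ directly inside the cusped space: neighborhoods of $\xi$ correspond to deep horoball regions together with the pieces of $X$ they bound, and the bonding maps of the resulting inverse system of fundamental groups are controlled by the contracting horoball geometry. Because the combinatorial horoball over the finitely presented $1$-ended group $P_i$ is itself simply connected at $\infty$ (loops high in a horoball bound, since horizontal distances contract exponentially with depth and finite presentation fills the residual loops), the inverse system of $\pi_1$ of punctured neighborhoods of $\xi$ is Mittag--Leffler, while the $1$-endedness of $P_i$ keeps the relevant links connected. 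Assembling the local data into a global pointed $1$-movability statement then finishes the argument.

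The main obstacle is this parabolic local analysis: establishing the Mittag--Leffler condition for the fundamental pro-group of neighborhoods of a parabolic point using only that $P_i$ is $1$-ended and finitely presented, while explicitly avoiding any semistability hypothesis on $P_i$. Everything turns on the interaction between the contracting geometry of the horoball (which renders the peripheral directions simply connected at infinity) and Bowditch's structural description of the cut points of $\partial(G,\mathcal P)$; the technical heart is showing that the only potential failures of local connectivity occur at these capped parabolic points and that the capping makes them harmless for the weaker, shape-theoretic notion of pointed $1$-movability.
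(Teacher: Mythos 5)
The paper contains no proof of this statement; it is quoted verbatim from \cite{MS19}, so your proposal can only be judged on its own completeness and against the strategy of the cited work. Your frame is the right one: work in the cusped space, whose Gromov boundary is the Bowditch boundary $\partial(G,\mathcal P)$, and observe that capping each peripheral coset to a parabolic point is what lets one avoid any semistability hypothesis on the $P_i$; your computation that a combinatorial horoball is simply connected at infinity is also correct. But the two steps that carry all the weight are asserted rather than proved. The first is the claim that $\bar X=X\cup\partial X$ is a $Z$-compactification, ``so'' the end of $X$ has the pro-homotopy type of the shape of $\partial X$ and semistability of $X$ is equivalent to pointed $1$-movability of $\partial X$. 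For Rips complexes of word hyperbolic groups this is Bestvina--Mess \cite{BM91}, but their proof that $\bar X$ is an ANR with $\partial X$ a $Z$-set uses cocompactness of the group action in an essential way: local contractibility near a boundary point is obtained by translating a single construction around by the group. The cusped space admits no cocompact action --- the horoballs are precisely where cocompactness fails --- and, unlike the CAT(0) case, there is no convexity to substitute for it. So neither the $Z$-set property nor the resulting identification of $\pi_1(\varepsilon X)$ with the pro-fundamental group of $\partial X$ is quotable; establishing them would require exactly the horoball-geometry work you defer to a later step, and you never supply it. Without this, the entire reduction to a shape property of $\partial(G,\mathcal P)$ is unsupported.

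The second gap is your verification of pointed $1$-movability itself. You propose local connectivity at conical limit points ``by Bestvina--Mess/Swarup'' (theorems proved for cocompact hyperbolic group boundaries, not available here in that form), a Mittag--Leffler condition on fundamental pro-groups of neighborhoods of each parabolic point, and then an unexplained assembly. Pointed $1$-movability is a global shape invariant, and there is no local-to-global principle of the kind you invoke; moreover, mixing a local $\pi_1$-condition at some points with local connectivity at others aims at the wrong target. What the hypotheses of the theorem actually buy --- and this is no accident --- is Bowditch's boundary theory for relatively hyperbolic groups: every global cut point of $\partial(G,\mathcal P)$ is a parabolic fixed point; when the peripherals are finitely presented and $1$-ended (and $G$ is $1$-ended, so the boundary is a continuum) there are no global cut points; and a connected boundary of a geometrically finite convergence action with no global cut points is locally connected. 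Since a locally connected continuum is pointed $1$-movable \cite{Kra77}, this package does everything your parabolic-point analysis was meant to do, and it is exactly what discharges the no-cut-point hypothesis that appears explicitly in Theorem \ref{MainRH}. The cited proof then converts local connectivity of $\partial X$ into proper homotopies between rays in the cusped space by a direct geometric construction (building homotopies from nested connected covers of the boundary, in the style of \cite{MS18}), bypassing $Z$-set shape theory altogether. To complete your outline you would need either to prove the $Z$-structure claim for cusped spaces or to replace it by such a direct construction, and to replace your local analysis at parabolic points with Bowditch's theorems.
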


Theorem \ref{HMSSMain} generalized a number of semistability results that preceded it and we list some of those results.

A group is {\it slender} if all of its subgroups are finitely generated, and it is {\it coherent} if all finitely generated subgroups are finitely presented.

\begin{theorem} 
[Theorem 1.1, \cite{HR17}] Let $(G,\mathbb P)$ be relatively hyperbolic with no non-central element of order two. Assume each peripheral subgroup $P\in\mathbb P)$ is slender and coherent and all subgroups of $P$ are semistable. Then $G$ is semistable at $\infty$. 
\end{theorem}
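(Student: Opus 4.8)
The plan is to deduce this from Theorem \ref{HMSSMain}, which already gives semistability for any finitely presented $G$ that is hyperbolic relative to a collection ${\bf P}=\{P_1,\ldots,P_n\}$ of finitely generated, semistable subgroups. This hypothesis is strictly weaker than the slender/coherent/all-subgroups-semistable conditions assumed here, so all of the work lies in checking that the present hypotheses fall under those of Theorem \ref{HMSSMain}.

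First I would verify the peripheral conditions. Slenderness of $P\in{\bf P}$ means every subgroup of $P$ is finitely generated; in particular $P$ itself is finitely generated. Coherence then upgrades ``finitely generated'' to ``finitely presented,'' so each $P$ is finitely presented. Since $P$ is a subgroup of itself and all subgroups of $P$ are assumed semistable, each $P$ is semistable at $\infty$. Thus the peripheral subgroups are precisely the finitely presented, semistable groups required by Theorem \ref{HMSSMain}. Next I would settle finite presentation of $G$: relative hyperbolicity is taken in the finitely generated setting, so $G$ is finitely generated, and by the standard fact (Osin) that a finitely generated group hyperbolic relative to finitely presented peripheral subgroups is itself finitely presented, $G$ is finitely presented. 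With all hypotheses of Theorem \ref{HMSSMain} now in force, that theorem yields that $G$ is semistable at $\infty$, completing the proof. (As a byproduct, Theorem \ref{MainRH} gives that $H^2(G,\mathbb{Z}G)$ is free abelian.)

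The remaining point — and the genuine content if one insists on a self-contained argument rather than citing Theorem \ref{HMSSMain} — is that the extra hypotheses ``no non-central element of order two'' and ``all subgroups of $P$ semistable'' are exactly what an earlier, hands-on proof required, and are now seen to be superfluous. A direct proof would work in the Groves--Manning cusped space $X$ for $(G,{\bf P})$, which is Gromov hyperbolic, cap each combinatorial horoball with a contractible complex built from a finite $K(P_i,1)$ (available by coherence), and establish semistability by pushing proper rays to infinity: rays eventually trapped in a single horoball region are controlled by the semistability of the relevant subgroup of $P_i$ — here one needs \emph{every} subgroup of $P_i$, not merely $P_i$, because the subgroups stabilizing the various capping complexes differ — while rays that repeatedly exit the horoballs are controlled by hyperbolicity of $X$ and the no-cut-point structure of the Bowditch boundary $\partial(G,{\bf P})$. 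The main obstacle on that route is the interface analysis, where a proper homotopy must cross between horoball regions and the hyperbolic core without re-entering a prescribed compact set; the order-two hypothesis is imposed precisely to exclude the $\mathbb{Z}_2\ast\mathbb{Z}_2$/dihedral pathologies that would otherwise obstruct this pushing argument. Reducing instead to Theorem \ref{HMSSMain} sidesteps all of this, which is why I would take the reduction as the primary line of proof.
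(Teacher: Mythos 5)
Your reduction is exactly how the paper treats this result: the theorem is stated as a citation from \cite{HR17} and presented as one of the results subsumed by Theorem \ref{HMSSMain}, with the same bookkeeping you give — slenderness makes each $P$ (and all its subgroups) finitely generated, coherence upgrades $P$ to finitely presented (hence $G$ is finitely presented by the standard relative hyperbolicity fact), and the all-subgroups hypothesis gives in particular that each $P$ itself is semistable, so Theorem \ref{HMSSMain} applies and the order-two and all-subgroups hypotheses are seen to be superfluous. One small correction to your parenthetical byproduct: the freeness of $H^2(G,\mathbb ZG)$ should be deduced from Corollary \ref{GM2} (semistability of a finitely presented group), not from Theorem \ref{MainRH}, whose hypotheses (1-ended $G$, 1-ended proper peripherals, no cut point in $\partial(G,\mathbb P)$) are not verified in this setting.
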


The only slender, coherent groups that we are aware of are the virtually polycyclic groups. Since virtually polycyclic groups are known to be semistable by \cite{M1} or Theorem \ref{metanil} (and all of their subgroups are again virtually polycyclic) the following corollary is immediate. 

\begin{corollary} 
[Corollary 1.2, \cite{HR17}] Let $(G,\mathbb P)$ be relatively hyperbolic with no non-central element of order two. If each $P\in \mathbb P$ is virtually polycyclic, then $G$ is semistable at $\infty$.
\end{corollary}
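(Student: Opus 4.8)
The plan is to deduce this corollary directly from the preceding theorem (Theorem 1.1 of \cite{HR17}) by checking that its three hypotheses on the peripheral subgroups are automatically satisfied when each $P\in\mathbb P$ is virtually polycyclic. That theorem requires each $P$ to be (i) slender, (ii) coherent, and (iii) such that \emph{all} of its subgroups are semistable at $\infty$; the ``no non-central element of order two'' hypothesis is shared verbatim by both statements and so passes through unchanged. Thus the entire task reduces to verifying (i)--(iii), all of which are classical facts about (virtually) polycyclic groups.

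For (i) and (ii), I would invoke the maximal condition: every polycyclic group satisfies the ascending chain condition on subgroups, so each of its subgroups is finitely generated, giving slenderness; and since every subgroup of a polycyclic group is again polycyclic and polycyclic groups are finitely presented, every (finitely generated) subgroup is finitely presented, giving coherence. Both properties are inherited by virtually polycyclic groups, since a finite-index polycyclic subgroup forces the maximal condition on the whole group and the relevant subgroup-closure statements survive passing to finite-index overgroups.

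For (iii), the key observation is that the class of virtually polycyclic groups is closed under passing to subgroups: any subgroup $Q$ of a virtually polycyclic group is again virtually polycyclic, hence finitely presented and (being polycyclic-by-finite, and therefore virtually metanilpotent as noted in the discussion preceding Theorem \ref{metanil}) is semistable at $\infty$ by Theorem \ref{metanil} --- or, in the finite and $2$-ended cases, trivially so, or alternatively via Theorem \ref{M1}. Applying the preceding theorem with hypotheses (i)--(iii) now verified yields that $G$ is semistable at $\infty$.

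I do not expect a genuine obstacle here, which is precisely why the paper records the statement as ``immediate'': the substantive content lives entirely in Theorem 1.1 of \cite{HR17}. The one point that actually earns the corollary is subgroup-closure of the virtually polycyclic class, since the hypothesis of the cited theorem is imposed on \emph{all} subgroups of each peripheral $P$ rather than on $P$ alone; it is exactly this closure property, together with the uniform semistability statement of Theorem \ref{metanil}, that lets the verification go through without any case-by-case analysis.
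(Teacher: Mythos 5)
Your proposal is correct and matches the paper's own reasoning: the paper derives the corollary from Theorem 1.1 of \cite{HR17} by noting that virtually polycyclic groups are slender and coherent, that the class is closed under passing to subgroups, and that such subgroups are semistable at $\infty$ by \cite{M1} or Theorem \ref{metanil}. Your write-up merely makes explicit the standard verifications (the maximal condition for slenderness, finite presentability of polycyclic subgroups for coherence) that the paper leaves implicit in calling the corollary immediate.
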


In his PhD dissertation, Michael Ben-Zvi proves the following result (which generalizes (Corollary 1.3, \cite{HR17})).
\begin{theorem} [\cite{BZ19}]
Let $G$ act geometrically on a CAT(0) space $X$ with isolated flats. Then $\partial X$ is path connected and hence $G$ is semistable at $\infty$.
\end{theorem}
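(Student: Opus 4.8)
The plan is to route the problem through the theory of relatively hyperbolic groups rather than attacking the CAT(0) boundary head on. First I would invoke the theorem of Hruska and Kleiner characterizing the isolated flats property: if $G$ acts geometrically on a CAT(0) space $X$ with isolated flats, then $G$ is hyperbolic relative to the collection $\mathbf P$ of stabilizers of the maximal flats, each of which is virtually $\mathbb Z^{k}$ with $k\geq 2$ and is therefore 1-ended and finitely presented. The substantive case is that $G$ is 1-ended: if $X$ has no flats then $G$ is word hyperbolic and hence semistable by Theorem \ref{WHss}, and if $G$ is itself virtually abelian it is directly semistable by Theorem \ref{metanil}. So I would treat the case where $G$ is 1-ended, the peripheral subgroups $\mathbf P$ are proper, and $X$ and $\partial X$ are connected.

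Next I would analyze $\partial X$ through the peripheral structure. By work of Tran (building on Hruska and Kleiner), the Bowditch boundary $\partial(G,\mathbf P)$ is the quotient of $\partial X$ obtained by collapsing the limit set of each peripheral subgroup — an embedded round sphere $S^{k-1}\subset\partial X$ — to a single parabolic point; conversely $\partial X$ is recovered from $\partial(G,\mathbf P)$ by blowing up each parabolic point back into such a sphere. Because $G$ is a 1-ended CAT(0) group, its boundary $\partial X$ has no cut point by Papasoglu--Swenson \cite{PS09} and Swenson \cite{Sw}, and the same should hold for the connected continuum $\partial(G,\mathbf P)$. The relative analogue of the Bestvina--Mess--Swarup theorem (due to Bowditch) then upgrades this to local connectedness of $\partial(G,\mathbf P)$.

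The path-connectedness of $\partial X$ would then follow. A connected, locally connected, compact metric space is path connected by Hahn--Mazurkiewicz, so $\partial(G,\mathbf P)$ is path connected; since each collapsed sphere $S^{k-1}$ with $k\geq 2$ is itself path connected, reinstating the spheres via the blow-up does not break path-connectedness, which gives the first conclusion. For semistability I would apply Theorem \ref{MainRH}: $G$ is 1-ended, hyperbolic relative to the 1-ended finitely generated proper subgroups $\mathbf P$, and $\partial(G,\mathbf P)$ has no cut point, so $G$ has semistable fundamental group at $\infty$. Equivalently, the local connectedness of $\partial X$ extracted above, together with the shape characterization of semistability for 1-ended proper CAT(0) spaces (pointed $1$-movability of the boundary), yields the same conclusion, which is exactly the sense in which ``$\partial X$ path connected, hence $G$ semistable'' holds.

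The hard part will be the boundary bookkeeping in the middle two steps: making precise the collapse/blow-up dictionary between $\partial X$ and $\partial(G,\mathbf P)$ and verifying that the no-cut-point property passes faithfully between them. The subtlety is that collapsing a sphere to a point could in principle manufacture a cut point, while blowing a point up into a sphere could introduce local pathology; controlling both directions, and in particular invoking the relative no-cut-point and local-connectedness results for Bowditch boundaries with the correct hypotheses on the virtually abelian peripherals, is where the genuine work lies. Everything else is either a citation or a standard point-set fact.
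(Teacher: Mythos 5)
The paper itself gives no proof of this statement---it is quoted from Ben-Zvi's dissertation \cite{BZ19}---so your proposal has to stand on its own; it does follow the general skeleton of Ben-Zvi's argument (Hruska--Kleiner relative hyperbolicity, Tran's collapsing map onto the Bowditch boundary, a Peano-continuum statement for that boundary, then a blow-up), but the two steps you set aside as ``bookkeeping'' are not bookkeeping: one is false as stated, and the other is the entire theorem. The false step is the transfer of the no-cut-point property to $\partial(G,\mathbf P)$, and with it your appeal to Theorem~\ref{MainRH}. Collapsing the peripheral spheres really does manufacture cut points whenever $G$ admits a peripheral splitting. Concretely, let $M$ be a one-cusped finite-volume hyperbolic $3$-manifold and $G=\pi_1(M)\ast_{\mathbb Z^2}\pi_1(M)$ the double along the cusp torus. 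By Leeb and Hruska--Kleiner, $G$ is one-ended and acts geometrically on a CAT(0) space with isolated flats (the lifts of the gluing torus), with peripheral structure the conjugates of the edge group $\mathbb Z^2$. The Bowditch boundary of each vertex pair $(\pi_1(M),\mathbb Z^2)$ is $S^2$, and the Bowditch boundary of $G$ is assembled from copies of $S^2$, one for each vertex of the Bass--Serre tree, glued along parabolic points according to the tree; each gluing point is a global cut point. So Theorem~\ref{MainRH} is inapplicable for exactly those isolated-flats groups that split over a peripheral $\mathbb Z^k$---this is the ``principal limitation'' of \cite{MS18} that the paper itself flags. Note also that Bowditch's local connectedness theorem for relatively hyperbolic boundaries is not of the form ``no cut point implies locally connected''; it is proved via peripheral splittings and accessibility and holds in the presence of parabolic cut points, so the Peano property of $\partial(G,\mathbf P)$ you want is true, but not by your route. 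If all you want is the semistability conclusion, the repair inside this paper's toolkit is to drop the boundary argument entirely and quote Theorem~\ref{HMSSMain}: the peripherals are virtually $\mathbb Z^k$ with $k\ge 2$, hence semistable at $\infty$, hence $G$ is semistable at $\infty$.

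The second gap is the blow-up step for path connectedness. It is not a point-set formality that a monotone quotient with path-connected fibers onto a Peano continuum has path-connected domain: collapsing the limit segment of the topologist's sine curve yields an arc, the unique nondegenerate fiber is itself an arc, yet the sine curve is not path connected. So ``reinstating the spheres does not break path-connectedness'' is precisely the assertion that must be proved, and it is where all of Ben-Zvi's work lies: one needs the isolated-flats geometry (the flat spheres form a null family, the convergence dynamics, control on how $\partial X$ approaches each sphere) to lift paths from $\partial(G,\mathbf P)$ to $\partial X$ and to rule out sine-curve-type oscillation into a sphere. Two smaller points. First, the statement implicitly requires $G$ to be one-ended, as in \cite{BZ19}: the group $\mathbb Z^2\ast\mathbb Z$ acts geometrically on a CAT(0) space with isolated flats and has disconnected boundary, and your three-case reduction (hyperbolic, virtually abelian, one-ended) does not cover such infinite-ended examples. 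Second, your closing alternative---``the local connectedness of $\partial X$ extracted above''---refers to something you never established: what you discussed was the Bowditch boundary, and local connectedness, like path connectedness, is not automatically preserved under the blow-up.
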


\subsubsection{Ascending HNN Extensions and Eventually Injective Endomorphisms}\label{AHNNE}
Since the early 1980's it has been suggested that the finitely presented groups that are ascending HNN extensions with {\it finitely generated} base may include a group with non-semistable fundamental group at $\infty$. In \cite{M7} it is shown that ascending HNN extensions naturally break into two non-empty classes, those with bounded depth and those with unbounded depth. Those with finitely presented base have bounded depth (by definition). The main theorem of \cite{M7} shows that bounded depth finitely presented ascending HNN extensions with  finitely generated base groups have semistable fundamental group at $\infty$.  A technique for constructing ascending HNN extensions with unbounded depth is developed in Section 4 (see Theorem 4.2) of \cite{M7}. We consider this  construction to be the best attempt so far to produce a finitely presented group with non-semistable fundamental group at $\infty$. 

\begin{definition} 
Let $A$ be a group with presentation $\langle \mathcal A \mid \mathcal R\rangle$ and $\phi:A\to A$ be a monomorphism. Let $p:F(\mathcal A)\to A$ be the homomorphism of the free group $F(\mathcal A)$ to $A$ that takes $a$ to $a$ for each $a\in \mathcal A$. Let $\hat \phi:F(S)\to F(S)$ such that $p\hat \phi(a)=\phi p(a)$ for all $a\in \mathcal A$. The {\it ascending HNN-extension} of $A$ by $\phi$ has presentation: 
$$\langle \mathcal A, t\mid \mathcal R, t^{-1}at=\hat \phi(a) \hbox{ for all } a\in \mathcal A\rangle$$
and is denoted $A\ast_\phi$, the {\it base} of the extension is $A$ and $t$ is the {\it stable letter}.
\end{definition}

If a group $G$ has presentation $\langle \mathcal A,t\mid \mathcal R, t^{-1}at=\hat \phi(a) \hbox{ for all }a\in \mathcal A\rangle$ where $\hat \phi(a)$ is an element in the free group $F(\mathcal A)$ for each $a\in \mathcal A$, and the elements of $\mathcal R$ are elements in the free group on $\mathcal A$, then $G$ is an ascending HNN extension of the subgroup $A$ of $G$ generated by $\mathcal A$. The map $\hat\phi$ induces a monomorphism from $A$ to $A$, but it may be that $\langle \mathcal A\mid \mathcal R\rangle$ is not a presentation of $A$. 

The first semistability/simple connectivity at $\infty$ theorem on the subject of ascending HNN-extensions is the following:

\begin{theorem} [Theorem 3.1, \cite{HNN1}] \label{MM}
 Suppose $H$ is an infinite finitely presented group, $\phi:H\to H$ is a monomorphism and $G=H\ast_\phi$ is the resulting ascending HNN extension. Then $G$ is 1-ended and semistable at $\infty$. If additionally, $H$ is 1-ended, then $G$ is simply connected at $\infty$. 
\end{theorem}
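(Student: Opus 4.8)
The plan is to treat the three assertions in turn, reusing the Bass--Serre/Touikan machinery of \S\ref{Touik} for the one-ended statement and the non-cocompact action machinery of \S\ref{SCS} for the asymptotic statements. Throughout, since $H$ is finitely presented so is $G=H\ast_\phi$, and the retraction $\varepsilon\colon G\to\mathbb Z$ sending $t\mapsto 1$ and $H\mapsto 0$ has kernel $B_\infty=\bigcup_{n\geq 0}t^{n}Ht^{-n}$, which is a normal subgroup with $G/B_\infty\cong\mathbb Z$ (this is just the normal form $(1)$--$(2)$ of \S\ref{AHNN}). Note $B_\infty$ is typically not finitely generated, so Theorem \ref{M1} does not apply directly and a genuinely geometric argument is required.

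\emph{One-endedness.} The ascending relation $t^{-1}Ht=\phi(H)$ exhibits $H$ as a vertex group of $G$ one of whose associated (edge) subgroups is the \emph{full} copy of $A=H$. This is precisely the degenerate case $H_{0}=H_{1}=H$ of Theorem \ref{MMFIE}, so $G$ is one-ended. Equivalently, one may rerun the Touikan argument: let $T_\infty$ be the Bass--Serre tree of $H\ast_\phi$, whose vertex stabilizers are conjugates of $H$ and whose edge stabilizers are conjugates of $H$ and of $\phi(H)$, all infinite and finitely generated. If $G$ had more than one end, take a finite splitting tree $T_{\mathcal F}$ and apply Theorem \ref{Toui} with $\mathcal H=\emptyset$ to get a vertex $v$ and a nontrivial $G_v$-tree $\hat T_v$ satisfying (i). The edge $f$ at $v$ coming from the full associated subgroup has $G_f=G_v$, so (i) forces $G_v$ to act elliptically on $\hat T_v$, contradicting nontriviality by Lemma \ref{noFI}.

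\emph{Semistability (the crux).} Let $Y=\tilde X$ be the Cayley $2$-complex of the finite presentation of $G$: a one-ended (by the previous step), simply connected, locally finite complex on which $G$ acts freely by covering transformations. Take $J=\langle t\rangle\cong\mathbb Z$; it injects onto $G/B_\infty\cong\mathbb Z$ and hence is infinite cyclic of infinite index. Because $J$ is infinite cyclic, condition (a) of Theorem \ref{Theorem: Special case of Main Theorem} is automatic by Remark \ref{Remark in Intro}(\ref{Intro remark 2}), so it suffices to verify (b), that $J$ is co-semistable at $\infty$ in $Y$. Given compact $C_0$ I would choose $C$ large and, for a loop $\alpha$ in a $J$-unbounded component of $Y-J\cdot C$ based on a $J$-bounded ray $r$, push $\alpha$ to infinity by translating it through $t\alpha,t^{2}\alpha,\dots$; the relation cells of the form $t^{-1}a\,t\,\hat\phi(a)^{-1}$ assemble into a proper ``prism'' homotopy realizing each translation, and since $t$-translation moves points monotonically toward the ascending end tracked by $r$, iterating slides $\alpha$ off to infinity. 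With (a) and (b) established, Theorem \ref{Theorem: Special case of Main Theorem} gives that $Y$, and hence $G$, is semistable at $\infty$. The main obstacle is exactly this co-semistability estimate: one must produce a single $C=C(C_0)$ valid for \emph{all} $J$-unbounded components and all $J$-bounded rays, and must control an \emph{arbitrary} loop $\alpha$ (not just one carried by a single vertex space), so that its possible wandering into downward branches of $T_\infty$ never drives the prism back into $C_0$ and keeps the total homotopy proper; finite presentability of $H$ is what bounds the relators needed to fill and slide, and the ascending structure is what guarantees the motion is toward infinity.

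\emph{Simple connectivity when $H$ is one-ended.} Now each vertex space of the tree-of-spaces structure on $Y$ is a copy of the simply connected Cayley $2$-complex of $H$ and is one-ended. This lets one upgrade the previous argument from ``pushing loops to infinity'' to ``killing loops'': given a far-out loop, one-endedness of $H$ permits amalgamating the parts of the loop lying in distinct downward branches into a single vertex space without approaching the compact core, and simple connectivity of that vertex space then contracts it. Concretely I would strengthen co-semistability of $J$ to the coaxial condition and verify co-connectedness of $J$ in $Y$, then invoke Theorem \ref{cosc2} to conclude that $Y$ is simply connected at each end; since $G$ is one-ended this is simple connectivity at $\infty$. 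The obstacle here is precisely the upgrade from ``push'' to ``contract,'' which is exactly where the hypothesis that $H$ be one-ended (rather than merely infinite) enters.
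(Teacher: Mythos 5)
First, a framing remark: the paper does not actually prove Theorem \ref{MM} --- it is quoted from \cite{HNN1}, whose original argument is a direct geometric construction in the universal cover of the presentation $2$-complex (pushing rays and loops toward the ascending direction using the conjugation cells and the simply connected vertex spaces). So your proposal can only be compared with that cited proof and with the related results the paper does prove. Your one-endedness step is correct and complete: taking $H_1=H_0=H$ in Theorem \ref{MMFIE} gives the statement verbatim, and your rerun of the Touikan argument (the upward edge at each vertex of $T_\infty$ has stabilizer equal to the full vertex stabilizer, so condition (i) of Theorem \ref{Toui} forces $G_v$ to act elliptically on $\hat T_v$, contradicting Lemma \ref{noFI}) is the same argument the paper gives for \ref{MMFIE}. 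You are also right that $\ker(G\to\mathbb Z)$ is generally not finitely generated, so Theorem \ref{M1} does not apply.

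The two asymptotic statements, however, have genuine gaps, and they sit exactly at the crux. For semistability you reduce, via Theorem \ref{Theorem: Special case of Main Theorem} with $J=\langle t\rangle$, to co-semistability of $J$ in $Y$; condition (a) is indeed free, but condition (b) is where the entire content of the theorem now lives, and you do not verify it --- you sketch a prism homotopy and then yourself list the reasons it does not suffice. Concretely: (1) the homotopy $\alpha\simeq t\hat\phi(\alpha)t^{-1}$ coming from the conjugation cells applies only to loops that are words in the $H$-generators, i.e.\ loops carried by a single vertex space; a loop in $Y-J\cdot C$ generally traverses $t^{\pm 1}$-edges through many vertex spaces and downward branches, and homotoping it into such standard form while staying far from $C_0$ is the real work --- it is here (via simple connectivity of the vertex spaces, which are universal covers of the presentation complex of $H$) that finite presentability of $H$ actually enters, not as a bound on ``relators needed to fill.'' (2) Definition \ref{Coss} quantifies over \emph{every} $J$-bounded proper ray $r$ in $U$, and such rays can track the \emph{descending} end of $\langle t\rangle$ (already in the model case $G=\mathbb Z^3$ with $J\cdot C$ a column); your mechanism pushes only toward the ascending end, and the conjugation cells cannot be run downward, since a loop need not lie in the image of $\hat\phi$. (3) Properness and avoidance of $C_0$ by the stacked prisms must be argued: the relation $wt=t\hat\phi(w)$ shows the vertices of $\hat\phi^k(\alpha)$ are the right translates $vt^k$ of the vertices $v$ of $\alpha$, and right multiplication by $t^k$ is not the $J$-action and need not move points monotonically away from $C_0$. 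The same criticism applies to your simple connectivity step: coaxiality of $\langle t\rangle$ (loops far from $J\cdot D$ must \emph{bound} far from $C$) and co-connectedness are precisely where $1$-endedness of $H$ and simple connectivity of the vertex spaces must be converted into actual null-homotopies, and you construct none of this, so Theorem \ref{cosc2} cannot yet be invoked. In short, the scaffolding (Touikan for ends, the machinery of \S\ref{SCS} for the rest) is sound, and if completed would give a route genuinely different from the cited original proof; but as written, the proofs of the semistability and simple connectivity assertions are missing at exactly the points you flag as ``obstacles.''
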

Compare this last result to Theorem \ref{hnn}.

The next result is similar to that of Theorem \ref{MM}, but the proof requires new ideas. It is important in the proof of Theorem \ref{SSDecomp}.

\begin{theorem} [Theorem 1.4, \cite{Mih22}]\label{HNNE2}
Suppose $H_0$ is an infinite finitely presented group, $H_1$ is a subgroup of finite index in $H_0$, $\phi:H_1\to H_0$ is a monomorphism and $G=H_0\ast_\phi$ is the resulting HNN extension. Then $G$ is 1-ended and semistable at $\infty$. If additionally, $H_0$ is 1-ended, then $G$ is simply connected at $\infty$. 
\end{theorem}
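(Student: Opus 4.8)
The first assertion requires no new work: since $H_0$ is finitely presented it is in particular finitely generated, so $G=H_0\ast_\phi$ satisfies the hypotheses of Theorem \ref{MMFIE}, which already gives that $G$ is $1$-ended. Everything below is therefore devoted to semistability, and, under the extra hypothesis, simple connectivity, at $\infty$.

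The plan for semistability is to work in the Cayley $2$-complex $\Gamma(\mathcal P)$ of the HNN presentation $\mathcal P=\langle H_0,t\mid t^{-1}ht=\phi(h),\ h\in H_1\rangle$ (with a finite presentation of $H_0$ built in), and to exploit its natural map onto the Bass--Serre tree $T_\infty$ of the splitting. Each vertex space is a copy of the Cayley $2$-complex of $H_0$ and each edge space is a copy of the complex for the finite-index subgroup $H_1$. The decisive structural difference from the ascending case of Theorem \ref{MM} is that, since $m=[H_0:H_1]>1$, the tree $T_\infty$ now branches $m$-fold in the ``incoming'' ($-t$) direction as well as in the ``outgoing'' ($+t$) direction; when $m=1$ the incoming direction is a single ray and one recovers exactly the situation of Theorem \ref{MM}. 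I would verify semistability through condition $(5)$ of Theorem \ref{ssequiv}: given a compact $C$ I must produce a compact $D$ so that any two proper rays $r,s$ based at a common vertex, converging to the same end and lying outside $D$, can be joined arbitrarily far out by a path $\alpha$ closing up to a loop that is homotopically trivial in $\Gamma(\mathcal P)-C$.

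The mechanism is to \emph{comb} $r$ and $s$ along $T_\infty$: using the $1$-endedness of $G$ just established, together with the fact that consecutive vertex spaces overlap in the finite-index (hence ``large'') edge space for $H_1$, I would slide the terminal portions of $r$ and $s$ into a common sequence of vertex spaces, reduce the problem inside each copy of the $H_0$-complex using the connectivity of that vertex space, and transport the connecting path across each edge by means of the coset structure of $H_1$ in $H_0$. The individual homotopies are then assembled by the stacking procedure already used in Theorems \ref{pi1} and \ref{SSloop}, and a standard argument on preimages of compacta shows the assembled homotopy is proper. The finiteness of $m$ is what keeps the backward branching under control: only finitely many sheets meet any given edge, so the combing terminates after boundedly many moves.

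The main obstacle is precisely this backward branching. In the genuinely ascending case ($H_1=H_0$) the $-t$ direction is a monotone ray, so a loop can be pushed ``inward'' along $t$ into smaller and smaller conjugates of $H_0$, and the proof of Theorem \ref{MM} relies on this monotonicity; here conjugation by $t$ sends only the finite-index subgroup $H_1$ into $H_0$, so no single self-map of $H_0$ organizes the sheets and the inward push distributes among the $m$ cosets. The technical heart of the argument, and where new ideas are needed, is to show that loops can be pushed coherently past this finite branching, i.e.\ that the finitely many incoming sheets can be amalgamated without obstructing the proper homotopy. For the final assertion, when $H_0$ is $1$-ended I would upgrade semistability to simple connectivity at $\infty$ by the same combing, now contracting (rather than merely connecting) loops near infinity: the extra room supplied by the $t$-direction lets one kill a loop lying in a single vertex space even though $H_0$ itself need not be simply connected at $\infty$, exactly as in the ascending case, with the finite-index branching again absorbed by the amalgamation of sheets.
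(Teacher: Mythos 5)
Your reduction of the first assertion to Theorem \ref{MMFIE} is correct and is exactly what the paper does (there the 1-endedness is proved via Touikan's Theorem \ref{Toui} applied to the Bass--Serre tree, with Lemma \ref{noFI} supplying the contradiction). The rest of the statement, however, is not proved in this paper at all --- it is cited from \cite{Mih22} --- and your proposal does not fill that role: it is a strategy outline whose decisive step is explicitly deferred. You write that ``the technical heart of the argument, and where new ideas are needed, is to show that loops can be pushed coherently past this finite branching,'' and then never supply those ideas. Identifying where the difficulty lies (correctly: conjugation by $t$ carries only the finite-index subgroup $H_1$ into $H_0$, so the monotone flow that drives the ascending case of Theorem \ref{MM} does not exist) is not the same as resolving it, and the paper itself warns that this theorem ``requires new ideas'' beyond Theorem \ref{MM}.

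There is also a concrete reason the combing, as you describe it, cannot work. The theorem assumes nothing about the asymptotic topology of $H_0$ beyond finite presentability --- in particular $H_0$ is \emph{not} assumed semistable at $\infty$ --- so your step ``reduce the problem inside each copy of the $H_0$-complex using the connectivity of that vertex space'' is unavailable: connectivity of a vertex space gives paths, but semistability demands proper homotopies that avoid prescribed compact sets, and no hypothesis on $H_0$ provides that control inside a vertex space. (This is precisely why the graph-of-groups combination theorem, Theorem \ref{MTComb}, must \emph{assume} semistability of vertex groups.) Any correct argument must instead use the $t$-direction of $\Gamma(\mathcal P)$ to push loops and rays off compact sets, and organizing that push across the $m$-fold backward branching is the whole content of the theorem --- the point at which your proposal stops. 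The stacking machinery of Theorems \ref{pi1} and \ref{SSloop} only assembles homotopies once they are constructed; it does not create them. The same objection applies to your final paragraph: asserting that the $t$-direction ``lets one kill a loop lying in a single vertex space'' when $H_0$ is 1-ended is a restatement of the conclusion, not an argument, and the role of the 1-endedness hypothesis is never actually used anywhere in your sketch.
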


\begin{proposition} [Proposition 2.2, \cite{M7}] \label{strongss}  
If $G$ is a finitely presented ascending HNN extension of a finitely generated infinite group $A$ and $t$ is the stable letter, then for all $N\geq 0$, $t^NAt^{-N}$ is  semistable at $\infty$ in $G$.
\end{proposition}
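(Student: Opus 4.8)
The plan is to pass to the Cayley $2$-complex, reduce the assertion for all $N$ to the single case $N=0$, and then exploit the finitely many conjugation relations to slide loops off to infinity in the $t$-direction. Fix a finite presentation $\mathcal P=\langle \mathcal A, t\mid \mathcal R,\ t^{-1}at=\hat\phi(a)\ (a\in\mathcal A)\rangle$ exhibiting $G$ as the ascending HNN-extension $A\ast_\phi$, and let $Y=\Gamma(\mathcal P)$ be its Cayley $2$-complex, on which $G$ acts by covering transformations. By definition the statement that a subgroup is semistable at $\infty$ in $G$ means that it is semistable at $\infty$ in $Y$ in the sense of Definition \ref{ss+cossD}(a). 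So I must show that for each $N$ and each compact $C\subseteq Y$ there is a compact $D$ so that any two proper rays in the Cayley graph of $t^NAt^{-N}$ based at a common point and lying outside $m^{-1}(D)$ have properly homotopic images in $Y-C$, where $m$ is an equivariant map.

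First I would reduce to $N=0$. Writing $A_N=t^NAt^{-N}$, left translation $L_{t^N}\colon Y\to Y$, $y\mapsto t^N\!\cdot y$, is a cellular homeomorphism intertwining the two actions: for $a\in A$ and $y\in Y$ one has $L_{t^N}(a\cdot y)=t^Nay=(t^Nat^{-N})\cdot L_{t^N}(y)$. Thus $L_{t^N}$ carries an $A$-equivariant map $m\colon\Gamma(A,\mathcal A)\to Y$ to an $A_N$-equivariant one, taking compacta to compacta, proper rays to proper rays, and proper homotopies to proper homotopies. Hence $A_N$ is semistable at $\infty$ in $Y$ if and only if $A=A_0$ is, and it is enough to treat $N=0$.

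For $A$ itself I would proceed as follows. Given proper edge-path rays $r,s$ in $\Gamma(A,\mathcal A)$ based at a common vertex $v$ and lying outside a large compact set $D$, I may assume (after the standard reduction) that they converge to a common end of $A$, so their far ends can be joined by an edge path $\alpha$ inside $\Gamma(A,\mathcal A)$ avoiding a prescribed ball. The loop $\ell$ formed by $\alpha$ and the initial segments of $r,s$ is labelled by a word $w$ in $\mathcal A^{\pm}$ which, since $A$ embeds in $G$, represents the identity of $A$ and therefore bounds a van Kampen diagram over $\mathcal P$. The essential feature is that $\mathcal R$ and the relations $t^{-1}at=\hat\phi(a)$ are finite: each relator $t^{-1}at\,\hat\phi(a)^{-1}$ provides a $2$-cell homotoping an $\mathcal A$-labelled path in a coset $t^{k}A$ to its $\phi$-image path in the adjacent coset $t^{k+1}A$, joined by $t$-edges, and observation $(2)$ of \S\ref{AHNN} (conjugation by a power of $t$ returns any element to $A$) controls how far such fillings must travel. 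Stacking these homotopies, as in the proof of Theorem \ref{SSloop}, would yield a proper homotopy of $mr$ to $ms$, and taking $D$ large relative to $C$ keeps it in $Y-C$.

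The hard part will be this last step. Because $A$ need not be semistable at $\infty$ as a group in its own right, the loop $\ell$ may be essential in $\Gamma(A,\mathcal A)$ and so cannot be capped within the level-$0$ copy of the Cayley graph of $A$; one must genuinely use the $t$-direction. The delicate point is to choose the filling so that it is simultaneously \emph{proper} and avoids the prescribed compact set $C$ — that is, to push the relation cells far enough out along $t$ while keeping the assembled homotopy proper. This is exactly where finiteness of the presentation and observation $(2)$ are indispensable, and it is the technical heart of the argument; by contrast the reduction to $N=0$ and the end-convergence of $r,s$ are comparatively routine.
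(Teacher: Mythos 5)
Your reduction to $N=0$ via left translation is correct (and is the routine part), but the main argument contains a fatal step and then defers the rest. First, the ``standard reduction'' you invoke does not exist: Definition \ref{ss+cossD}(a) (and likewise Definition \ref{SSin}) quantifies over \emph{all} pairs of proper rays in $\Gamma(A,\mathcal A)$ based at a common point, with no hypothesis that they converge to the same end of $A$. When $A$ has more than one end --- e.g.\ $A$ a free group, the motivating case for ascending HNN extensions --- the pairs of rays converging to \emph{distinct} ends of $A$ are exactly the essential case, and for them your construction cannot even begin: their far ends cannot be joined by any path lying in $\Gamma(A,\mathcal A)$, let alone one avoiding prescribed balls. (Already in $BS(1,2)=\langle a,t\mid t^{-1}at=a^2\rangle$ with $A=\langle a\rangle$, any path in $Y-C$ from $a^n$ to $a^{-n}$ must climb the $t$-direction; inside $\Gamma(A)$ every such path passes through the identity.) Second, even in the same-end case you do not produce the homotopy: capping one loop by a van Kampen diagram gives a singular disk with no control on properness or on avoiding $C$, and you explicitly set this aside as ``the technical heart.'' So the proposal proves neither case; its acknowledged hard part is missing and its easy part rests on an invalid reduction.

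The argument that actually works (this survey does not reprove the proposition --- it cites \cite{M7} --- so the comparison is with the argument there) never compares $r$ with $s$ directly, which is why no hypothesis on ends of $A$ is needed. One shows that \emph{each} ray separately is properly homotopic rel $\{v\}$ in $Y-C$ to the single $t$-ray $\tau=(v,vt,vt^2,\ldots)$ at the common basepoint: the conjugation cells give, for each $k\geq 0$, a proper band homotopy from the level-$k$ pushed ray (labelled $\hat\phi^k$ of the label of $r$, lying in the coset $t^kA$) to the $t$-edge followed by the level-$(k+1)$ pushed ray, and these bands are stacked as in Theorem \ref{SSloop}. Properness and avoidance of $C$ come from two facts your sketch never isolates: a compact set meets only finitely many cosets $t^kA$, so all bands at levels $k\geq K_C$ miss $C$ automatically; and injectivity of $\phi$ guarantees that, for the finitely many low levels, the pushed rays (hence their bands) miss $C$ provided $r$ misses a suitable finite set $D$. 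Since $mr$ and $ms$ are both homotoped rel $\{v\}$ to the same ray $\tau$, the conclusion follows for arbitrary pairs of rays. This is not a patch of your outline but a different architecture; the ladder-plus-filling scheme cannot be repaired without, in effect, rebuilding it this way.
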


Consider the ascending HNN-presentation:
$$(\ast) \ \ \ \ \ \ \ \ \ \ \ \ \ \mathcal P=\langle t, \mathcal A\mid \mathcal R, t^{-1}at=\hat \phi(a)\hbox{ for all }a\in \mathcal A\rangle$$
Here $\mathcal A$ is a finite generating set of $A$, $\mathcal R$ is a finite set of elements (words) in the free group $F(\mathcal A)$ and for each $a\in \mathcal A$, $\hat \phi(a)$ is an element of  $F(\mathcal A)$. Let $G$ be the group with presentation $\mathcal P$. 
The base group of this HNN extension is $A$, the subgroup of $G$ generated by $\mathcal A$. The function $\phi:\mathcal A\to F(\mathcal A)$ defines a monomorphism $\phi:A\to A$.  Note that $\hat \phi$ extends to a homomorphism $\hat \phi:F(\mathcal A)\to F(\mathcal A)$. Equivalently, one could obtain every finitely presented ascending HNN presentation by considering arbitrary homomorphisms $\hat\phi:F(\mathcal A)\to F(\mathcal A)$ (with $\mathcal A$ finite) and forming the presentation $\mathcal P$ in the same way.

In order to define what it means for an ascending HNN extension to have bounded depth, we must first understand $ker(p)$ where $p$ is the homomorphism $p:F(\mathcal A)\to A$ (defined by $p(a)=a$ for $a\in \mathcal A$).  Certainly $ker(p)$ contains $N_0(\mathcal R,\hat \phi)\equiv N( \cup _{i=0}^\infty \hat \phi^i (\mathcal R))$, where $N( \cup _{i=0}^\infty \hat \phi^i (\mathcal R))$ is the normal closure of $ \cup _{i=0}^\infty \hat \phi^i (\mathcal R)$ in $F(\mathcal A)$. But it may be that for some word 
$w\in F(\mathcal A)$ and some integer $m$, $\hat \phi^m(w)\in N_0(\mathcal R,\hat \phi)$, and $w\not \in N_0(\mathcal R,\hat \phi)$. Then $w\in ker(p)$. Consider the normal subgroup of $F(\mathcal A)$: 
$$N^{\infty}(\mathcal R,\hat \phi)\equiv \cup _{i=0}^{\infty} \hat \phi^{-i}(N_0(\mathcal R,\hat \phi))\triangleleft F(\mathcal A).$$
 By  (Theorem 4.1,\cite{M7}),  $\hat \phi^{-i}(N_0(\mathcal R,\hat\phi))< \hat\phi^{-i-1}(N_0(\mathcal R,\hat\phi))$ so that  $N^{\infty}(\mathcal R,\hat\phi)$ is an ascending union of normal subgroups of $F(\mathcal A)$ and that $N^{\infty}(\mathcal R,\hat\phi)$ is the kernel of $p$, so 
$$A=\langle \mathcal A\mid N^{\infty}(\mathcal R,\hat\phi)\rangle$$

\begin{definition} 
If there is an integer $B$ such that $N^\infty(\mathcal R,\hat \phi)=\cup _{i=0}^B\hat \phi^{-i}(N_0(\mathcal R,\phi))$ then the presentation $\mathcal P$ of $G$ has {\it bounded depth}. 
\end{definition}

\begin{theorem} [Theorem 1.2,\cite{M7}] \label{BDHNN}  
If the presentation $\mathcal P$ of the ascending HNN-extension $G$ has bounded depth, then $G$ is semistable at $\infty$. 
\end{theorem}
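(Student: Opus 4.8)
The plan is to prove that the Cayley $2$-complex $Y=\Gamma(\mathcal P)$ is semistable at $\infty$; since $\mathcal P$ is a finite presentation of $G$, this is equivalent to $G$ being semistable at $\infty$ (Definition \ref{defss3} and the remark following it). We may assume the base $A$ is infinite, so that $G$ is one-ended (finite and two-ended groups are already simply connected at $\infty$, and an ascending HNN extension of an infinite group is one-ended, as in the proof of Theorem \ref{MMFIE} via Theorem \ref{Stall}). Then $Y$ is a one-ended, simply connected, locally finite complex on which $G$ acts by covering transformations, which is exactly the setting of the Main Theorem of \cite{GGM1}, Theorem \ref{MTGGM}. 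Hence it suffices to produce, for each compact $C_{0}\subseteq Y$, a finitely generated subgroup $J=J(C_{0})\leq G$ acting as covering transformations on $Y$ that is both semistable and co-semistable at $\infty$ in $Y$ with respect to $C_{0}$.

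For $J$ I would take the conjugate $J(C_{0})=t^{N}At^{-N}=\langle\, t^{N}at^{-N}: a\in\mathcal A\,\rangle$ of the base, where $N=N(C_{0})$ is chosen to grow with $C_{0}$ (Theorem \ref{MTGGM} explicitly allows $J$ to vary with the compact set). This is a finitely generated subgroup acting by covering transformations, and condition (a), that $J$ is semistable at $\infty$ in $Y$ with respect to $C_{0}$, is supplied directly by Proposition \ref{strongss}, which asserts that $t^{N}At^{-N}$ is semistable at $\infty$ in $G$ for every $N\geq 0$; one only needs to match the over-group notion of Proposition \ref{strongss} with the ``semistable in $Y$'' notion of \S\ref{SCS}, both being measured in the one complex $\Gamma(\mathcal P)$ on which $G$ acts. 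This step uses nothing about the depth; it controls the ``$A$-directions'' transverse to the $t$-axis.

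The heart of the argument is condition (b), co-semistability of $J$ at $\infty$ in $Y$ (Definition \ref{Coss}), and this is precisely where bounded depth enters. The homomorphism $G\to\mathbb Z$ sending $t\mapsto 1$ organizes $Y$ into $t$-levels, and translating a loop one level by $t$ replaces its label $w$ by $\hat\phi(w)$ through a band of conjugation $2$-cells; pushing in the ``up'' direction is therefore always possible, while the obstruction lives in the ``down'' ($\hat\phi^{-1}$) direction. Since $Y$ is simply connected, every loop $\alpha$ has a null word $w\in\ker p=N^{\infty}(\mathcal R,\hat\phi)$, and bounded depth says $N^{\infty}(\mathcal R,\hat\phi)=\cup_{i=0}^{B}\hat\phi^{-i}(N_{0}(\mathcal R,\hat\phi))$, i.e. $\hat\phi^{B}(w)\in N_{0}(\mathcal R,\hat\phi)$ for every such $w$. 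Geometrically this means that after translating $\alpha$ a uniformly bounded number $B$ of steps in the $t$-direction, it becomes fillable by the forward relator $2$-cells $\hat\phi^{i}(\mathcal R)$. This uniform bound confines the filling and sliding homotopies to a band of bounded $t$-width; choosing $C\supseteq C_{0}$ to be a $B$-thickening of $C_{0}$ in the $t$-coordinate then keeps such homotopies out of $C_{0}$, and lets me iterate the push so as to carry $\alpha$ arbitrarily far out along any $J$-bounded ray while staying in $Y-C_{0}$, which is co-semistability.

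I expect the main obstacle to be exactly this verification of (b): turning the algebraic uniformity ``$\hat\phi^{B}(\ker p)\subseteq N_{0}$'' into an honest proper homotopy that simultaneously (i) avoids $C_{0}$, (ii) tracks the prescribed $J$-bounded ray at its basepoint, and (iii) pushes the loop out of every compact set — all while reconciling the bounded $t$-width control coming from depth with the transverse control coming from Proposition \ref{strongss}. It is the interplay of these two controls, packaged by the combination mechanism of Theorem \ref{MTGGM}, that does the work; bounded depth alone gives fillings of bounded vertical reach, but the filling disks still spread in the $A$-directions, and it is condition (a) that prevents them from re-entering $C_{0}$ there. Once (a) and (b) are established for every compact $C_{0}$, Theorem \ref{MTGGM} yields that $Y$, and hence $G$, has semistable fundamental group at $\infty$.
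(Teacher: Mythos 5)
Your overall architecture --- reduce to the $1$-ended case, feed $Y=\Gamma(\mathcal P)$ into Theorem \ref{MTGGM} with subgroups $J(C_0)=t^NAt^{-N}$ that vary with the compact set, obtain condition (a) from Proposition \ref{strongss}, and make bounded depth carry condition (b) --- is the right one, and it matches the structure of the proof in \cite{M7} (the paper itself only cites that proof; Proposition \ref{strongss} is precisely the lemma of \cite{M7} supplying condition (a), and the remark that $J$ may vary with $C_0$ exists exactly for this application). The genuine gap is in the step you yourself flag as the heart: your concrete mechanism for co-semistability is incorrect. Bounded depth says that after pushing a loop labelled $w\in\ker p$ up at most $B$ levels, its label $\hat\phi^{i}(w)$ lies in $N_0(\mathcal R,\hat\phi)=N(\cup_{j\geq 0}\hat\phi^{j}(\mathcal R))$. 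But a word in $N_0$ is a product of conjugates $u_k\hat\phi^{j_k}(r_k)^{\pm 1}u_k^{-1}$, and in $\Gamma(\mathcal P)$ the loop $\hat\phi^{j_k}(r_k)$ is capped only by carrying it \emph{down} $j_k$ levels through conjugation bands to an $\mathcal R$-cell (the words $\hat\phi^{j}(\mathcal R)$ need not lie in $N(\mathcal R)$; that failure is exactly why $N_0$ is defined as it is). Since the $j_k$ are unbounded over $\ker p$, fillings are \emph{not} confined to a band of bounded $t$-width: bounded depth bounds only their upward reach (a loop at level $\ell$ bounds a disk at levels $\leq \ell+B$), while their downward reach and their horizontal spread (through the conjugators $u_k$) are uncontrolled. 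Hence a $B$-thickening of $C_0$ in the $t$-coordinate protects $C_0$ only from loops lying \emph{below} it; loops at the levels of $C_0$ (being outside $J\cdot C$ only makes these horizontally far) and loops above $C_0$ have fillings that can dip down through $C_0$'s levels and sweep horizontally back into $C_0$.

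Handling those loops is exactly where the two ingredients you introduce but never use must do the work. The large choice of $N=N(C_0)$ is needed so that every vertex at the levels of $C_0$ and within a large horizontal radius of $C_0$ lies in $J\cdot C$; a loop outside $J\cdot C$ at those levels is then horizontally far from $C_0$, can be pushed up above $C_0$ (the push-up band spreads horizontally only by a bounded multiple of the number of levels pushed, so it misses $C_0$), and once above $C_0$ it can be translated horizontally far away by an element of the level-zero subgroup --- the translation tube staying at high level --- before being filled, so that the filling's unbounded downward dip occurs horizontally far from $C_0$; only for loops lying more than $B$ levels below $C_0$ does your ``fill in place'' argument work, because there the unbounded \emph{downward} spread is harmless and the bounded \emph{upward} reach is what protects $C_0$. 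None of this can be delegated to condition (a): in Theorem \ref{MTGGM}, conditions (a) and (b) are independent hypotheses verified separately, so your closing claim that ``it is condition (a) that prevents [the filling disks] from re-entering $C_0$'' is not available --- Proposition \ref{strongss} plays no role inside the verification of (b). As written, your proposal is a correct plan whose key step is unproved, and the one mechanism offered for that step is false; repairing it requires the below/at/above case analysis just described, which is the actual content of the proof in \cite{M7}.
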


There are many interesting examples of ascending HNN-extensions with and without bounded depth. We next develop this idea. 

\begin{lemma} \label{L1}  Suppose the following diagram of groups and homomorphisms commute:
$$A\ \ {\buildrel \phi\over \longrightarrow}\ \ A$$
$$\ \ \downarrow q\ \ \ \ \ \  \downarrow q$$
$$A_0\ {\buildrel  \phi_0\over \longrightarrow}\ \ A_0$$ 
Let $N_0=ker(q)$ and $K_0=\{1\}$. Inductively let $N_i=\phi^{-1} (N_{i-1})$ and $K_i=\phi_0^{-1}(K_{i-1})$ for $i\geq 1$. Then $N_i=q^{-1}(K_{i})$ for all $i\geq 0$. 
\end{lemma}

\begin{proof} The proof is by induction on $i$. First observe that $N_0=ker(q)=q^{-1}(K_0)$. 
Assume that $N_{i-1}=q^{-1}(K_{i-1})$ for some $i> 0$. We have:
$$a\in q^{-1}(K_i) \iff q(a)\in K_i\iff \phi_0q(a)\in K_{i-1} \iff q\phi(a)\in K_{i-1}\iff $$
$$\phi(a)\in N_{i-1} \iff a\in N_i.$$
\end{proof}
Next we apply Lemma \ref{L1} to a finite presentation of a group. 

\begin{lemma}\label{L2} 
Suppose the group $A_0$ has finite presentation $\langle \mathcal A\mid \mathcal R\rangle$ and $\phi_0:A_0\to A_0$ is a homomorphism with kernel $K_1$. 
Consider the following commutative diagram of groups: 
$$F(\mathcal A){\buildrel \phi\over \longrightarrow}F(\mathcal A)$$
$$\downarrow q\ \ \ \ \ \  \downarrow q$$
$$ A_0\ \ \ {\buildrel \phi_0\over \longrightarrow}\ \  A_0$$
Where $F(\mathcal A)$ is the free group on $\mathcal A$ and $q$ the quotient map with kernel $N_0$, the normal closure of $\mathcal R$ in $F(\mathcal A)$. Let $N_{i}=\phi^{-1}(N_{i-1})$, $K_0=\{1\}$ and $K_{i}=\phi_0^{-1}(K_{i-1})$ for $i\geq 0$. Then $N_i=q^{-1}(K_i)$ for all $i\geq 0$. 
\end{lemma}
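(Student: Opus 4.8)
The plan is to observe that Lemma \ref{L2} is nothing more than the specialization of the abstract Lemma \ref{L1} to the concrete situation in which the top group $A$ is the free group $F(\mathcal A)$, the vertical map is the quotient $q$ onto $A_0=\langle \mathcal A\mid \mathcal R\rangle$, and the two horizontal maps are $\phi$ (a lift of $\phi_0$ to $F(\mathcal A)$) and $\phi_0$ itself. So the real work is only to check that the data of Lemma \ref{L2} satisfy the hypotheses of Lemma \ref{L1}; once that is done, the conclusion $N_i=q^{-1}(K_i)$ is immediate.

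First I would match the base data of the two recursions. In Lemma \ref{L1} one sets $N_0=\ker(q)$ and $K_0=\{1\}$, and defines $N_i=\phi^{-1}(N_{i-1})$, $K_i=\phi_0^{-1}(K_{i-1})$. In Lemma \ref{L2}, $N_0$ is declared to be the normal closure of $\mathcal R$ in $F(\mathcal A)$, and by the very definition of the presentation $\langle \mathcal A\mid \mathcal R\rangle$ this normal closure equals $\ker(q)$. Similarly $K_0=\{1\}$, and the kernel $K_1$ of $\phi_0$ agrees with $\phi_0^{-1}(K_0)=\ker(\phi_0)$, so the inductively defined families $\{N_i\}$ and $\{K_i\}$ are literally the ones appearing in Lemma \ref{L1}.

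Next I would verify that the square commutes, i.e. $q\phi=\phi_0 q$. This is exactly how the lift $\phi$ is constructed: the homomorphism $\phi:F(\mathcal A)\to F(\mathcal A)$ is chosen so that $q\phi(a)=\phi_0 q(a)$ for each generator $a\in \mathcal A$, and since both sides are homomorphisms agreeing on a generating set they agree on all of $F(\mathcal A)$. This is precisely the diagram that defines $\hat\phi$ from $\phi_0$ in the ascending HNN setup. With commutativity established and the base data identified, Lemma \ref{L1} applies verbatim and yields $N_i=q^{-1}(K_i)$ for all $i\geq 0$.

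The only thing that even resembles an obstacle is the bookkeeping: confirming that the normal closure of $\mathcal R$ is genuinely $\ker(q)$ and that the two recursions coincide. There is no substantive difficulty, because Lemma \ref{L1} has already carried out the inductive argument using nothing beyond commutativity of the square. I therefore expect the proof to reduce to a single invocation of Lemma \ref{L1} once these identifications are recorded.
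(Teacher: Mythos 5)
Your proposal is correct and matches the paper exactly: the paper introduces Lemma \ref{L2} with the single sentence ``Next we apply Lemma \ref{L1} to a finite presentation of a group,'' i.e.\ its proof is precisely the specialization you describe, with $A=F(\mathcal A)$, $N_0=\ker(q)$ equal to the normal closure of $\mathcal R$, and $K_0=\{1\}$, after which Lemma \ref{L1} gives $N_i=q^{-1}(K_i)$ for all $i\geq 0$. Your extra check that $q\phi=\phi_0 q$ on generators is harmless but not even needed, since commutativity of the square is part of the hypothesis of Lemma \ref{L2}.
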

If $Q$ is a subset of a group $G$, then let $N(Q)$ be the normal closure of $Q$ in $G$. Consider the group $G$ with ascending HNN presentation 
$$\mathcal P\equiv \langle t,\mathcal A\mid \mathcal R, t^{-1}at=\phi(a) \hbox{ for all }a\in \mathcal A\rangle.$$
Recall, (Theorem 4.1 of \cite{M7}) the subgroup $A$ of $G$ generated by $\mathcal A$ has presentation 
$$A=\langle \mathcal A\mid N^{\infty}(\mathcal R,\phi)\equiv \cup_{i=0}^\infty\phi^{-i}(N(\cup _{j=0}^\infty \phi^j(\mathcal R)))\rangle.
$$ 
Let $N_0=N(\cup _{j=0}^\infty \phi^j(\mathcal R))$ and $N_i=\phi^{-i}(N_0)$ for all $i\geq 1$. 
Furthermore, we have the relations:
 
\begin{enumerate} 
\item  $N_i\subset N_{i+1} 
\hbox{ for all }i\geq 0,\hbox{ and}$
\item  $\phi(N^{\infty} (\mathcal R,\phi))\subset N^\infty(\mathcal R,\phi)=\phi^{-1}(N^\infty(\mathcal R,\phi)).$
\end{enumerate}

An endomorphism $\phi : G \to G$ is {\it eventually injective} if for
some $n$, the restriction of $\phi$  to $\phi^n(G)$ is an injection.
It then follows that the restriction of $\phi$ to $\phi^n(G)$ is injective for all $m \geq n$.
Eventual injectivity of $\phi$ is also equivalent to requiring that the sequence of
kernels $ker(\phi^n)$ terminates. Note that $\phi(\phi^n(G)) = \phi^n(\phi(G)) \subset \phi^n(G)$.

Combining results we have the following curious theorem:

\begin{theorem} \label{BD} 
Suppose the group $A_0$ has finite presentation $\langle \mathcal A\mid \mathcal R\rangle$ and
$\phi_0:A_0\to  A_0$ is a homomorphism such that the following diagram (with $F({\mathcal A})$ the free group on $\mathcal A$ and $q(a)=a$ for $a\in \mathcal A$) commutes:
$$F(\mathcal A){\buildrel \phi\over \longrightarrow}F(\mathcal A)$$
$$\ \ \downarrow q\ \ \ \ \ \  \downarrow q$$
$$\ \  A_0\ \ {\buildrel \phi_0\over \longrightarrow}\ A_0$$
The ascending sequence $\{K_i=\phi_0^{-i}(K_0)=ker(\phi_0^{i+1})\}$ of normal subgroups of $A_0$ stabilize (equivalently $\phi_0$ is eventually injective) if and only if the group $G$ with ascending HNN presentation 
$$\mathcal P\equiv \langle t,\mathcal A\mid \mathcal R, t^{-1}at=\phi(a) \hbox{ for all }a\in \mathcal A\rangle$$
has bounded depth (and so has semistable fundamental group at $\infty$). 
\end{theorem}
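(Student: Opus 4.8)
The plan is to reduce the entire statement to Lemma \ref{L2} together with the definition of bounded depth, the only genuine preliminary being a reconciliation of two a priori different normal subgroups that both get called $N_0$. In Lemma \ref{L2} the subgroup $N_0$ is the kernel $N(\mathcal R)$ of the quotient map $q:F(\mathcal A)\to A_0$, whereas the definition of bounded depth for the presentation $\mathcal P$ uses $N_0(\mathcal R,\phi)=N(\cup_{j\geq 0}\phi^j(\mathcal R))$ (the subgroup appearing in Theorem 4.1 of \cite{M7}). First I would show these coincide under the standing hypothesis $q\phi=\phi_0 q$. For each $r\in\mathcal R$ we have $q(r)=1$ in $A_0$, so $q(\phi^j(r))=\phi_0^j(q(r))=1$, giving $\phi^j(\mathcal R)\subset\ker q=N(\mathcal R)$ for all $j\geq 0$. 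Hence $\cup_{j\geq 0}\phi^j(\mathcal R)\subseteq N(\mathcal R)$, so $N_0(\mathcal R,\phi)\subseteq N(\mathcal R)$; the reverse inclusion is immediate since $\mathcal R=\phi^0(\mathcal R)$ lies in the union. Thus $N_0(\mathcal R,\phi)=N(\mathcal R)=\ker q$, and the ascending chain $N_i:=\phi^{-i}(N_0(\mathcal R,\phi))$ whose stabilization defines bounded depth is exactly the chain $N_i=\phi^{-i}(\ker q)$ of Lemma \ref{L2}.

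Next I would invoke Lemma \ref{L2} verbatim: it yields $N_i=q^{-1}(K_i)$ for all $i\geq 0$, where (in its convention) $K_i=\phi_0^{-i}(\{1\})=\ker(\phi_0^i)$. Because $q$ is surjective, $q^{-1}$ is injective on subgroups of $A_0$ (indeed $q(q^{-1}(S))=S$ for any $S\subseteq A_0$), so $N_i=N_{i+1}$ holds if and only if $K_i=K_{i+1}$. Consequently the ascending chain $\{N_i\}$ stabilizes (at a given level) exactly when the ascending chain $\{\ker(\phi_0^i)\}$ stabilizes. By definition, bounded depth of $\mathcal P$ is precisely the stabilization of $\{N_i\}$, while eventual injectivity of $\phi_0$ is precisely the termination of the kernel sequence $\{\ker(\phi_0^i)\}$, as recalled just before the theorem. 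The reindexing between the $K_i$ of Lemma \ref{L2} and the $K_i=\ker(\phi_0^{i+1})$ in the theorem statement is harmless, since it merely shifts the same ascending chain by one. This establishes the stated equivalence in both directions.

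Finally, the parenthetical conclusion is immediate: once $\mathcal P$ has bounded depth, Theorem \ref{BDHNN} gives that $G$ is semistable at $\infty$.

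I do not expect a serious obstacle, as the substantive work is already carried out in Lemma \ref{L2} and in Theorem 4.1 of \cite{M7}. The single point demanding care is the identification $N_0(\mathcal R,\phi)=\ker q$: without it, the chain controlling bounded depth and the chain furnished by Lemma \ref{L2} would be distinct objects and the equivalence would not go through. I would therefore make explicit that this identification is exactly where the commutativity hypothesis $q\phi=\phi_0 q$ is used.
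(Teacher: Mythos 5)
Your proposal is correct and is essentially the paper's own (implicit) argument: the paper proves Theorem \ref{BD} by "combining results," namely Lemma \ref{L2}, the definition of bounded depth via the chain $\phi^{-i}(N_0(\mathcal R,\phi))$, and Theorem \ref{BDHNN}, which is exactly what you do. Your explicit verification that $N_0(\mathcal R,\phi)=N(\mathcal R)=\ker q$ under the commutativity hypothesis $q\phi=\phi_0 q$ is precisely the glue the paper leaves unstated, and the rest (injectivity of $q^{-1}$ on subgroups, the harmless index shift) is handled correctly.
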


A group $G$ is {\it residually finite} if $\{1_G\}$ is the intersection of finite index
subgroups of $G$. A group $G$ is {\it Hopfian} if every surjective endomorphism
of $G$ is injective. Mal\'cev proved that finitely generated residually finite groups are Hopfian \cite{LS77}. Hirshon generalized Mal\'cev's result in \cite{Hir77} by showing that
if $\phi : G \to  G$ is an endomorphism of a finitely generated residually finite group, and
$[G : \phi (G)] < \infty$ then $\phi$ is eventually injective.  Sela proved in \cite{Sel99} that
every endomorphism of a torsion free word hyperbolic group is eventually
injective. Wise gave an example in \cite{Wise99} of a finitely generated residually finite group $G$
with an endomorphism $\phi : G \to G$ that is not eventually injective. The main theorem of a recent preprint of D. Wise is:
\begin{theorem} (Wise)
Suppose that $G$ is a finitely generated linear group. Then every endomorphism of $G$ is eventually injective. 
\end{theorem}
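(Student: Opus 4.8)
The plan is to reduce the statement to a question about algebraic groups and then bridge the gap between the geometry of the images of $\phi$ and the algebra of its kernels. First I would record the standard reformulation: an endomorphism $\phi\colon G\to G$ is eventually injective if and only if the ascending chain of normal subgroups $\ker\phi\subseteq\ker\phi^2\subseteq\cdots$ stabilizes, equivalently if and only if $\phi|_{\phi^m(G)}$ is injective for some (hence all large) $m$. A useful first reduction is that it suffices to prove the theorem for the restriction $\psi=\phi|_{H}$ where $H=\phi^m(G)$: writing $\phi^m\colon G\to H$ for the canonical surjection, one checks $\ker\phi^{m+j}=(\phi^m)^{-1}(\ker\psi^{j})$, so because $\phi^m$ is a fixed surjection the chain $\{\ker\phi^{m+j}\}_j$ terminates exactly when $\{\ker\psi^{j}\}_j$ does. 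Throughout I would embed $G\le GL_n(K)$ and, since $G$ is finitely generated, arrange that $K$ — indeed the subring $S\subseteq K$ generated by the matrix entries of a finite generating set together with their inverses — is a finitely generated commutative ring; by the Hilbert Basis Theorem $S$ is Noetherian, so $G\le GL_n(S)$ with $S$ Noetherian.

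Next I would exploit algebraic geometry to control the images. The groups $\phi^i(G)$ form a descending chain, so their Zariski closures $A_i=\overline{\phi^i(G)}$ form a descending chain of algebraic subgroups of $GL_n$. Algebraic groups satisfy the descending chain condition on Zariski-closed subgroups (dimension is non-increasing, and once the dimension is fixed the number of connected components is non-increasing), so $A_i=A_{i+1}=\cdots=:B$ for $i\ge m_0$. Applying the reduction of the previous paragraph with this $m_0$, I may assume from now on that $\overline{\phi^i(G)}=\overline{G}=B$ for all $i\ge 0$; that is, every image of a power of $\phi$ is Zariski dense in the fixed algebraic group $B$. This is the key geometric gain: the ``size'' of the images has been forced to stabilize.

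It remains to upgrade this geometric stabilization to stabilization of the \emph{kernels}, and this is where the hypothesis of linearity must be used beyond what residual finiteness alone provides — Wise's own example shows that a finitely generated residually finite group can carry an endomorphism that is not eventually injective, so the Noetherian/algebraic structure is indispensable. Here I would combine two facts about finitely generated linear groups: they are residually finite (Mal\'cev), and they are equationally Noetherian (Bryant; Baumslag--Myasnikov--Remeslennikov), i.e.\ the Zariski topology on each $G^k$ coming from the $GL_n(S)$-representations is Noetherian. Using equational Noetherianity one argues that the family of representations $\iota\phi^i\colon G\to GL_n(S)$ cannot produce an infinite strictly increasing chain of kernels: the conditions ``$w$ lies in $\ker\phi^i$'' are closed conditions on the representations, and Noetherianity of the ambient representation scheme forces finitely many of them to control the rest, which — together with a Hirshon-type analysis of the finite quotients afforded by residual finiteness — yields $\ker\phi^{m}=\ker\phi^{m+1}=\cdots$ for some $m$.

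The hard part will be exactly this last bridge. The subtlety is that $\phi$ is only an abstract group endomorphism, not a morphism of algebraic groups, so the Zariski machinery genuinely controls the closures of the images $\phi^i(G)$ but does not act on the kernels directly; one must transfer information across the non-algebraic map $\phi$. In positive characteristic this is delicate because a finitely generated linear group can contain an infinite torsion normal subgroup (linear realizations of lamplighter-type groups over $\mathbb F_p(t)$), so the kernels $\ker\phi^i$ can grow through locally finite pieces that are invisible to the dimension count; the role of equational Noetherianity, and of the finite quotients supplied by residual finiteness, is precisely to detect this growth and cut it off. Making the passage from ``images stably dense in $B$'' to ``kernels eventually constant'' fully rigorous — rather than merely plausible from the stabilization of dimension — is the crux of the argument.
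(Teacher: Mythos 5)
First, a point of comparison that matters here: the paper offers no proof of this statement at all --- it is quoted as the main theorem of an unpublished preprint of D.~Wise --- so your proposal has to stand or fall on its own. On its own terms it has a genuine gap, at exactly the step you flag as the crux. Stabilization of the Zariski closures $\overline{\phi^i(G)}$ is a statement about images only; it places no constraint on the ascending chain $\ker\phi\subseteq\ker\phi^2\subseteq\cdots$, and your third paragraph never actually derives the termination of that chain from it. Moreover, the two auxiliary tools you invoke there cannot jointly do the job as described: Hirshon's theorem applies only to endomorphisms with $[G:\phi(G)]<\infty$, which you have not arranged and cannot arrange in general, and residual finiteness by itself is insufficient --- as you yourself note, Wise's 1999 example is a finitely generated residually finite group with an endomorphism that is not eventually injective. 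What remains is the appeal to ``Noetherianity of the ambient representation scheme,'' which is the right idea but is left as a gesture rather than an argument.

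The good news is that this one ingredient closes the gap by itself, and in a way that makes most of your scaffolding unnecessary. Fix generators $g_1,\dots,g_k$ of $G$ and an embedding $\iota:G\hookrightarrow GL_n(K)$. For each $i\geq 0$ let $W_i\subseteq GL_n(K)^k$ consist of those tuples $A=(A_1,\dots,A_k)$ such that $w(A)=I$ for every word $w$ in the free group $F_k$ whose image in $G$ lies in $\ker\phi^i$. Each such condition is Zariski closed (the entries of $w(A)$ are polynomials in the entries of the $A_j$ and $\det(A_j)^{-1}$), so each $W_i$ is closed, and $W_0\supseteq W_1\supseteq W_2\supseteq\cdots$ because the kernels ascend. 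The Zariski topology on the affine variety $GL_n(K)^k$ is Noetherian (its coordinate ring is a finitely generated $K$-algebra), so there is $i_0$ with $W_j=W_{i_0}$ for all $j\geq i_0$. The tuple $A^{(i_0)}=\bigl(\iota\phi^{i_0}(g_1),\dots,\iota\phi^{i_0}(g_k)\bigr)$ lies in $W_{i_0}$, since evaluating any word at this tuple computes $\iota\phi^{i_0}$ of its image in $G$, and $\phi^{i_0}$ kills $\ker\phi^{i_0}$; hence $A^{(i_0)}\in W_j$ for every $j\geq i_0$, which unwinds to $\ker\phi^j\subseteq\ker(\iota\phi^{i_0})=\ker\phi^{i_0}$. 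So the chain of kernels terminates, which (as the paper notes just before the statement) is precisely eventual injectivity. This is the standard proof that linear groups over Noetherian rings are equationally Noetherian (Bryant, Baumslag--Myasnikov--Remeslennikov), specialized to the homomorphisms $\iota\phi^i$; once you run it, your first paragraph (the reduction to $\phi|_{\phi^m(G)}$), your second paragraph (the descending chain of algebraic subgroups), the residual finiteness, and the Hirshon analysis can all be deleted, and the positive-characteristic worries in your final paragraph never arise.
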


In the setting of Theorem \ref{BD}: When $A_0$ is linear or a torsion free word hyperbolic group the ascending HNN group $G$ is semistable at $\infty$. If the homomorphism $\phi_0$ is an epimorphism with non-trivial kernel (so that $A_0$ is not Hopfian) then $\mathcal P$ has unbounded depth.

It is not always the case that such ascending HNN extensions have bounded depth.  
 
\begin{theorem} [Theorem 4, \cite{M4}] \label{BaseSS} 
Let $G$ be a group with generators $\{x, h_1,\ldots, h_n\}$ such that the subgroup of $G$ generated by $\{h_1,\ldots, h_n\}$ is infinite. If the subset  $\{h_1,\ldots, h_n,x^{-1}h_1x,\ldots, x^{-1}h_nx\}$ of $G$ generates a 2-ended or 1-ended and semistable at $\infty$ subgroup of $G$, then $G$ is 2-ended or 1-ended and semistable at $\infty$. 
\end{theorem}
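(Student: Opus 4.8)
The plan is to exploit the self-similarity of the hypothesis under conjugation by $x$. Write $H=\langle h_1,\ldots,h_n\rangle$, which is infinite by assumption, and $A=\langle h_1,\ldots,h_n,x^{-1}h_1x,\ldots,x^{-1}h_nx\rangle=\langle H,x^{-1}Hx\rangle$, so that $A$ is finitely generated and, by hypothesis, is $2$-ended or $1$-ended and semistable at $\infty$. For each integer $k$ set $A_k=x^{-k}Ax^k=\langle x^{-k}Hx^k,\,x^{-(k+1)}Hx^{k+1}\rangle$. Since $A_k$ is conjugate to $A$, it is again $2$-ended or $1$-ended and semistable at $\infty$, and consecutive windows overlap infinitely: $A_k\cap A_{k+1}\supseteq x^{-(k+1)}Hx^{k+1}$, a conjugate of the infinite group $H$. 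Note also that $G=\langle A,x\rangle$, since $A$ already contains every $h_i$.

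First I would assemble these windows. For $m\geq 0$ let $E_m=\langle A_{-m},A_{-m+1},\ldots,A_m\rangle$, built one window at a time, always adjoining a new $A_i$ that shares an infinite conjugate of $H$ with the part already assembled. Applying Theorem \ref{combE} (for the number of ends) together with the semistability combination theorem (Theorem \ref{MComb}) at each step shows that every $E_m$ is finitely generated, is $2$-ended or $1$-ended, and is semistable at $\infty$. The increasing union $D=\bigcup_{m}E_m=\langle x^{-k}Hx^k:k\in\mathbb Z\rangle$ is normalized by $x$, since conjugation by $x$ merely shifts the index, and it contains $H$; hence $D\trianglelefteq G$ with $G/D$ cyclic, generated by the image of $x$.

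Next I would split into cases according to $G/D$. If $G/D$ is finite, then $D$ has finite index in the finitely generated group $G$, so $D$ is finitely generated; an increasing union of subgroups that is finitely generated equals one of its terms, so $D=E_m$ for large $m$ and $D$ is $2$-ended or $1$-ended and semistable at $\infty$. Then $G$ is quasi-isometric to $D$, hence has the same number of ends by Theorem \ref{QIEFG}, and is semistable at $\infty$ by finite-index invariance of semistability for finitely generated groups. If instead $G/D\cong\mathbb Z$, then $\langle x\rangle\cap D=1$ and $G=D\rtimes_\phi\mathbb Z$ is the mapping torus of the automorphism $\phi$ of $D$ given by conjugation by $x$, which shifts the exhausting windows via $\phi(E_m)=x^{-1}E_mx$. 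For the number of ends, a nontrivial splitting of $G$ over a finite group (Theorem \ref{Stall}) would, through its Bass--Serre action, force each $A_k$ together with its infinite neighbour $A_{k+1}$ to fix a common vertex; the shift by $x$ would then force $x$ to fix that vertex too, so $G=\langle A,x\rangle$ would fix a vertex, contradicting the splitting. Hence $G$ is $1$-ended (or $2$-ended). For semistability I would push proper rays and loops off to infinity using the exhaustion $\{E_m\}$ together with the shift $\phi$ to translate compact sets outward, following the template of the ascending HNN-extension argument (Theorem \ref{MM}) and the normal-subgroup argument (Theorem \ref{M1}).

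The hard part will be the infinite-cyclic case. There $D$ may be infinitely generated, so none of the normal-subgroup or HNN semistability theorems (Theorems \ref{M1}, \ref{cohen}, \ref{MM}) applies off the shelf, and semistability must be verified directly: one must control proper homotopies of rays in the Cayley complex of $G$ using both the infinitely many overlapping windows and the shift automorphism. A secondary subtlety is that a $2$-ended window $A_k$ need not fix a vertex in the Bass--Serre argument, so the ends step must treat the axis of such a piece separately, using that its infinite overlap with a neighbour pins down a common fixed point or a common axis. Once these geometric points are settled, the two cases combine to give that $G$ is $2$-ended or $1$-ended and semistable at $\infty$.
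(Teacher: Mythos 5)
The paper itself gives no proof of this statement (it is quoted from \cite{M4}), so your proposal has to stand on its own, and as it stands it does not. Your scaffolding is fine: the windows $A_k=x^{-k}Ax^k$ overlap in the infinite finitely generated conjugates $x^{-(k+1)}Hx^{k+1}$, Theorems \ref{combE} and \ref{MComb} do apply inductively to make each $E_m$ finitely generated, $1$- or $2$-ended and semistable at $\infty$, and $D=\bigcup_m E_m$ is normal in $G$ with cyclic quotient generated by the image of $x$. The genuine gap is that in the case $G/D\cong\mathbb Z$ you never prove semistability of $G$: you say you "would push proper rays and loops off to infinity \ldots following the template" of Theorems \ref{MM} and \ref{M1}, and then you yourself observe that neither template applies, since the proof of Theorem \ref{M1} requires the normal subgroup to be finitely generated and Theorem \ref{MM} requires a finitely presented base, whereas here $D$ is typically an infinitely generated union. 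But this case is not a residual technicality --- it is the entire content of the theorem. For example, for an ascending HNN-extension such as $BS(1,2)=\langle x,h\mid x^{-1}hx=h^2\rangle$ one has $A=\langle h\rangle$, $D=\mathbb Z[1/2]$ not finitely generated and $G/D\cong\mathbb Z$, so Corollary \ref{BaseSS2} (the main application) lands exactly in your deferred case. A proposal that reduces the theorem to its hard core and stops has restated the theorem, not proved it; what is missing is the direct construction of proper homotopies in $\Gamma_{(G,S)}(P)$ from the overlapping-window structure, which is the actual work in \cite{M4}.

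Two further points. Your Bass--Serre ends argument contains a non sequitur: from "each $A_k$ and $A_{k+1}$ fix a common vertex" you conclude that "the shift by $x$ would then force $x$ to fix that vertex too." It would not: $x$ does not normalize any $A_k$; it conjugates $A_k$ to $A_{k+1}$ and accordingly translates the fixed subtrees, $\mathrm{Fix}(A_{k+1})=x^{-1}\mathrm{Fix}(A_k)$, so no vertex is forced to be fixed by $x$. A correct argument exists but requires work you have not done: if $x$ were hyperbolic with axis $L$, choose $v\in \mathrm{Fix}(A_0)\cap \mathrm{Fix}(A_1)$ (nonempty, else the infinite group $A_0\cap A_1$ fixes the bridge between the two fixed subtrees, contradicting finite edge stabilizers); then $[v,xv]\subseteq\mathrm{Fix}(A_0)$, every edge of $L$ lies in some $x^{-k}[v,xv]\subseteq\mathrm{Fix}(A_k)$, and an infinite group stabilizes an edge --- contradiction; if $x$ is elliptic one must separately show $\mathrm{Fix}(x)\cap\mathrm{Fix}(A_0)\ne\emptyset$ (e.g.\ by comparing closest-point projections of a vertex of $\mathrm{Fix}(x)$ to $\mathrm{Fix}(A_0)$ and to $x^{-1}\mathrm{Fix}(A_0)$), and the case of $2$-ended windows, which you flag but do not treat, needs its own argument since such $A_k$ need not be elliptic. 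Finally, in the case $G/D$ finite you invoke finite-index invariance of semistability for \emph{finitely generated} groups; the paper records quasi-isometry invariance only for finitely presented groups (Theorem \ref{ssqi}), so that step too rests on an ingredient you would have to prove.
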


As an immediate corollary we have:

\begin{corollary} \label{BaseSS2}  
Suppose $H$ is a finitely generated 1-ended and semistable at $\infty$ group. If $G$ is an ascending HNN extension of $H$, then $G$ is semistable at $\infty$. 
\end{corollary}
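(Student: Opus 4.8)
The plan is to apply Theorem \ref{BaseSS} directly, with the stable letter of the HNN extension playing the role of $x$ and a finite generating set of $H$ playing the role of $\{h_1,\ldots,h_n\}$. Write $G=H\ast_\phi$, where $\phi\colon H\to H$ is the defining monomorphism and $t$ is the stable letter, so that $t^{-1}ht=\phi(h)$ for every $h\in H$. Fix a finite generating set $\{h_1,\ldots,h_n\}$ for $H$ (which exists since $H$ is finitely generated); then $\{t,h_1,\ldots,h_n\}$ generates $G$, and I would set $x=t$.

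First I would verify the standing hypothesis of Theorem \ref{BaseSS}, namely that the subgroup generated by $\{h_1,\ldots,h_n\}$ is infinite. This is immediate: that subgroup is exactly $H$, which is infinite because it is $1$-ended.

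The key observation is that the translated generators already lie in $H$. Since $t^{-1}h_it=\phi(h_i)\in\phi(H)\subseteq H$, the set $\{h_1,\ldots,h_n,t^{-1}h_1t,\ldots,t^{-1}h_nt\}$ is contained in $H$. The subgroup it generates contains $\langle h_1,\ldots,h_n\rangle=H$ and is contained in $H$, hence equals $H$. By hypothesis $H$ is $1$-ended and semistable at $\infty$, which is precisely the condition Theorem \ref{BaseSS} imposes on this generated subgroup.

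Theorem \ref{BaseSS} then yields that $G$ is either $2$-ended, or $1$-ended and semistable at $\infty$. In the first case $G$ is semistable at $\infty$ because every $2$-ended group is (in fact simply connected) at $\infty$; in the second case the conclusion is immediate. Either way $G$ is semistable at $\infty$, completing the argument. There is essentially no obstacle here beyond the bookkeeping of identifying the generated subgroup with $H$ and disposing of the $2$-ended alternative; the substance is carried entirely by Theorem \ref{BaseSS}.
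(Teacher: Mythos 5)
Your proposal is correct and matches the paper's approach exactly: the paper presents this corollary as an immediate consequence of Theorem \ref{BaseSS}, applied precisely as you do, with the stable letter $t$ playing the role of $x$ and a finite generating set of $H$ playing the role of the $h_i$, so that $t^{-1}h_it=\phi(h_i)\in H$ forces the subgroup generated by $\{h_1,\ldots,h_n,t^{-1}h_1t,\ldots,t^{-1}h_nt\}$ to be $H$ itself. Your handling of the 2-ended alternative is also fine (and in fact that case is vacuous, since $G$ contains the 1-ended subgroup $H$, while subgroups of 2-ended groups are finite or 2-ended).
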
 
The next result examines the question: Suppose $f:H_1\to H$ is an epimorphism of finitely generated ascending HNN extensions (taking stable letter to stable letter and base generators to base generators). If $H$ is semistable at $\infty$ then when is $H_1$ semistable at $\infty$? 

\begin{theorem} [Theorem 1.1, \cite{LM23}]  \label{thmfg} Suppose $S$ is a finite set, $\phi : S \to F(S)$ is a map to the free group generated by $S$, $R$ is a subset of $F(S)$ and the finitely generated ascending HNN-extension $H$ with presentation $\langle S, t \ |\  R, t^{-1} s t = \phi(s), s \in S \rangle$ is semistable at infinity. Let $\bar R$ be the kernel of the obvious epimorphism of the free group $F(\{t\}\cup S)$ to $H$.  Then there is a finite subset $R_0 \subseteq R$ such that those ascending HNN-extensions $H_1=\langle S, t \ |\  R_1, t^{-1} s t = \phi(s), s \in S \rangle$, with $R_0 \subseteq R_1 \subseteq \bar R$, are all $1$-ended and semistable at infinity as well. \end{theorem}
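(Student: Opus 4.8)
The plan is to choose a finite set of relators that already carries all the semistability data of $H$, and then to transport that data to $H_1$ through the natural epimorphism $H_1\to H$, controlling the geometry by means of the ascending HNN structure common to the two groups.

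First I would record a finite semistability witness for $H$. Since $H$ is finitely generated and semistable at $\infty$, Definition \ref{defss3} together with Theorem \ref{ssequiv} gives a finite set $P$ of words in $S\cup\{t\}$, each representing the identity of $H$, such that $K:=\Gamma_{(H,S\cup\{t\})}(P)$ is semistable at $\infty$; enlarging $P$, I may assume it contains the conjugation relators $t^{-1}st\,\phi(s)^{-1}$ for $s\in S$. Each $p\in P$ lies in $\bar R$, the normal closure of $R$ together with the conjugation relators, and is a finite product of conjugates, so only finitely many members of $R$ occur across all of $P$. Let $R_0$ be that finite subset. Then $P$ lies in the normal closure of $R_0$ and the conjugation relators, so for every $R_1$ with $R_0\subseteq R_1\subseteq\bar R$ each $p\in P$ equals the identity in $H_1$; and, since $R_1\subseteq\bar R$, every relator of $H_1$ holds in $H$. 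Hence there is a natural epimorphism $f\colon H_1\to H$ fixing $t$ and $S$, the complex $K_1:=\Gamma_{(H_1,S\cup\{t\})}(P)$ is well defined, and $f$ induces a cellular covering $\pi\colon K_1\to K$ with deck group $N=\ker f$.

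For one-endedness I would observe that $H_1$ is an ascending HNN-extension with stable letter $t$ over the infinite finitely generated base $A_1=\langle S\rangle\le H_1$ (infinite because $A_1$ surjects onto the infinite base $A$ of $H$; the degenerate finite-base situation produces a $2$-ended group and is treated directly). Such an extension admits no splitting over a finite group with the index pattern forced by Theorem \ref{Stall} (compare Theorems \ref{MMFIE} and \ref{MM}), so $H_1$ has one end; equivalently, once semistability is established, Theorem \ref{BaseSS} yields the dichotomy ``$2$-ended, or $1$-ended and semistable,'' and the infinitude of $A_1$ rules out the $2$-ended alternative.

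The substance is the semistability of $K_1$, which I would verify in its relator form (Theorem \ref{ssequiv}(3), in the spirit of Lemma \ref{rel} and Theorem \ref{FGSS6}) along the distinguished ray $r_1=(t,t^2,t^3,\dots)$, whose projection $\pi r_1$ is again proper since $t$ has infinite order of unbounded length in both groups. Given a compact $C_1\subseteq K_1$, I set $C=\pi(C_1)$, take the compact $D\subseteq K$ that the semistability of $K$ assigns to $C$ along $\pi r_1$, and seek a compact $D_1\subseteq K_1$ so that any loop based on $r_1$ and lying in $K_1-D_1$ slides arbitrarily far out along $r_1$ by a homotopy missing $C_1$. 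Downstairs such a sliding homotopy exists and is built from the $P$-cells of $K$, all of which are present in $K_1$, so it lifts cell by cell once a base vertex is fixed. The step I expect to be the main obstacle is that $\pi$ is an infinite covering: $D$ has non-compact preimage, so one cannot take $D_1=\pi^{-1}(D)$, and a lifted homotopy could fail to be proper by wandering through infinitely many sheets. To resolve this I would use that the deck group $N$ translates $K_1$ in the ``base directions'' while $r_1$ runs in the transverse $t$-direction, so a loop based at $r_1(x)=t^{x}$ with small image is confined to boundedly many sheets over a neighborhood of $\pi r_1(x)$, making only a compact portion of $\pi^{-1}(D)$ relevant. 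The normal form $t^{a}bt^{-c}$ and the identity $t^{-(N+a)}(t^{a}bt^{-c})t^{N+c}=\phi^{N}(b)$ supply the quantitative control, bounding how far the sliding homotopy spreads across sheets and hence producing a compact $D_1$ and a proper lift. This gives semistability of $K_1$, and with the one-endedness above completes the proof.
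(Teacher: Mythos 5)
The paper itself contains no proof of Theorem \ref{thmfg}: it is quoted from \cite{LM23}, and only the remark following it indicates the first step of that proof. Your opening move reproduces that step correctly: a finite semistability witness $P$ for $H$ (Definition \ref{defss3}), the observation that $P$ lies in the normal closure of the conjugation relators together with a finite subset $R_0\subseteq R$, the resulting epimorphism $f\colon H_1\to H$, and the covering $\pi\colon K_1\to K$ with deck group $N=\ker f$ (cf.\ \S\ref{FGss}) are all sound. The genuine gap is in the semistability transfer across this covering. To invoke semistability of $K$ for the projected loop $\pi\alpha$ of a loop $\alpha\subseteq K_1-D_1$, you need $\pi\alpha\subseteq K-D$; this holds for \emph{all} loops outside $D_1$ only if $D_1\supseteq\pi^{-1}(D)=N\cdot\tilde D$, which is non-compact whenever $N$ is infinite. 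For every compact $D_1$ there are loops based on $r_1$, arbitrarily far from $D_1$, that dive into far translates $n\tilde D$ ($n\in N$) and hence project to loops passing through $D$, near the identity of $K$ — and then the downstairs hypothesis gives nothing. Your remedy only treats loops ``confined to boundedly many sheets,'' i.e.\ loops of bounded length, whereas Theorems \ref{ssequiv}(3) and \ref{SSloop} quantify over all loops outside $D_1$; a loop of length $\ell$ can cross on the order of $\ell$ sheets. Meanwhile the difficulty you do work to control, properness of the lifted homotopy, is no difficulty at all: a lift of a proper map through a covering projection is automatically proper ($\tilde F^{-1}(C_1)$ is a closed subset of the compact set $F^{-1}(\pi(C_1))$). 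Worse, the transfer principle your scheme implicitly relies on — an epimorphism $G_1\to G$ of finitely generated groups and a finite set $P$ of $G_1$-relations with $\Gamma_{(G,S)}(P)$ semistable force $\Gamma_{(G_1,S)}(P)$ semistable — is false: take $G_1=L$ the lamplighter group, $G=\mathbb Z_2$ (via $x\mapsto 0$, $y\mapsto 1$) and $P=\{y^2\}$; then $\Gamma_{(\mathbb Z_2,\{x,y\})}(P)$ is compact, hence trivially semistable, while $\Gamma_{(L,\{x,y\})}(P)$ is not semistable by Theorem \ref{LNss} and Definition \ref{defss3}. So the quantitative use of the ascending HNN structure, which you assert the normal form ``supplies,'' is exactly the substance of the theorem and is never supplied.

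Two secondary problems. First, one-endedness of $H_1$: none of the results you cite applies — Theorem \ref{MMFIE} requires the edge group to have finite index in the base, Theorem \ref{MM} requires a finitely presented base, and Theorem \ref{BaseSS} is misread: its hypothesis is that the subgroup $\langle S\cup t^{-1}St\rangle$, which here is the base $A_1$ of $H_1$, be 2-ended or 1-ended and semistable, and nothing is known about $A_1$; establishing semistability of $H_1$ does not feed into that hypothesis. The fact you want (an ascending HNN extension of a finitely generated infinite base is 1-ended) is true but needs its own argument, for instance a direct Cayley graph argument using the $t$-exponent homomorphism $\theta\colon H_1\to\mathbb Z$: since $|g|\ge|\theta(g)|$, all far vertices with $\theta\geq -R$ connect into the region $\theta>R$ avoiding $B_R(1)$, and a vertex $t^abt^{-c}$ deep in the region $\theta<-R$ first travels at constant $\theta$-level by a long element of $\phi^c(A_1)$ (here infiniteness of the base enters) before climbing, so that the climb also avoids $B_R(1)$. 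Second, elements of $R_1\subseteq\bar R$ may involve $t$, so before quoting the structure theory of \S\ref{AHNNE} for $H_1$ you must replace each $r\in R_1$ by an equivalent relator in $F(S)$, using that modulo the conjugation relations $r$ equals $t^awt^{-a}$ with $w\in F(S)$; this is routine but cannot be skipped.
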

The authors also state:
\begin{remark}
Following the notation of the theorem, another presentation of $H$ is $\langle S, t \ |\  \bar R, t^{-1} s t = \phi(s), s \in S \rangle$. There is an  obvious epimorphism of $H_1$ onto $H$. If $R_0'$ is a finite subset of $\bar R$ such that $R_0'$ along with the conjugation relations $C=\{t^{-1} s t = \phi(s), s \in S\}$ are such that $\Gamma_{(H,S\cup \{t\})}(R_0'\cup C)$ is semistable at $\infty$, then it is an elementary matter to show that $R_0'$ is a consequence (in $F(\{t\}\cup S)$) of the conjugation relations and a finite set $R_0''\subset R$. We show that $R_0''$ can play the role of $R_0$ in Theorem \ref{thmfg}. Note that if the subset $R_1$ of $\bar R$ is finite then $H_1$ is a finitely presented ascending HNN-extensions that is semistable at infinity.
\end{remark}

\subsubsection{Thompson's Group $F$} \label{TGF} 
Let $x^y=y^{-1}xy$. Define the group $F$ to have the following infinite presentation.

$$F=\langle x_0,x_1,\ldots \mid x_n^{x_i}=x_{n+1} \hbox{ for all } 0\leq i<n\rangle$$
For $n=2$, $3$ or $4$, define $x_n=x_{n-1} ^{x_{n-2}}$. Then $F$ has finite presentation:
$$\langle x_0,x_1\mid  x_2^{x_0}=x_3, x_3^{x_1} =x_4\rangle$$

The group $F$ has many intriguing geometric and algebraic properties. It is clear from the first presentation that $F$ is an ascending HNN-extension of itself (with base group generated by $x_1, x_2, \ldots$ and stable letter $x_0$).  Theorem \ref{MM} implies $F$ is simply connected at $\infty$, but much more can be said. K. Brown and R. Geoghegan \cite{BG84} prove the remarkable facts that $F$ has type $F_{\infty}$ (see Theorem 9.3.19, \cite{G}) and that $H^n(F,\mathbb ZF)=0$ for all $n$ (see Theorem 13.11.1, \cite{G}). Combining this with the fact that $F$ is simply connected at $\infty$ implies that $F$ is $n$-connected at $\infty$ for all $n$. Collecting a number of results:

\begin{theorem} [\cite{BG84}] \label{ThompNC}
Thompson's group $F$ is a torsion free group of type $F_\infty$ that contains a non-finitely generated free abelian group and $F$ is $n$-connected at $\infty$ for all $n$. 
\end{theorem}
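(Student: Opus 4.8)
The statement is an assemblage of four assertions about $F$, so the plan is to dispatch each in turn, citing \cite{BG84} for the two deep inputs and supplying elementary arguments for the remaining parts. For torsion-freeness I would invoke the standard faithful action of $F$ on $[0,1]$ by orientation-preserving piecewise-linear homeomorphisms underlying the given presentation. If $f$ is orientation preserving and $f(y)\ne y$ for some $y$, then by monotonicity $f^{k}(y)$ is strictly monotone in $k$, so $f^{k}\ne\mathrm{id}$ for every $k\ne 0$; hence the embedding $F\hookrightarrow\mathrm{Homeo}^{+}([0,1])$ shows $F$ has no nontrivial element of finite order. (Equivalently, this action makes $F$ left-orderable, and left-orderable groups are torsion free.)

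For the non-finitely-generated free abelian subgroup I would stay in the same piecewise-linear picture, choosing a sequence of pairwise disjoint dyadic subintervals $I_{1},I_{2},\ldots$ of $[0,1]$ and, for each $n$, an element $g_{n}\in F$ supported in $I_{n}$ and nontrivial there. Elements with disjoint supports commute, and since each $g_{n}$ moves points in a region fixed by all the others, no nontrivial finite product can be trivial; therefore $\langle g_{1},g_{2},\ldots\rangle\cong\bigoplus_{n\ge 1}\mathbb Z$, a free abelian group of infinite rank, which is not finitely generated. The assertion that $F$ has type $F_{\infty}$ is then simply cited from Brown--Geoghegan \cite{BG84} (equivalently Theorem 9.3.19 of \cite{G}).

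The substantial part is $n$-connectivity at $\infty$ for all $n$, which I would build in four steps. First, $F$ is an ascending HNN extension of the copy of itself generated by $x_{1},x_{2},\ldots$ (with stable letter $x_{0}$), and this base is infinite and finitely presented (type $F_{\infty}\Rightarrow F_{2}$); Theorem \ref{MM} then gives that $F$ is $1$-ended, and, the base being $1$-ended as well, that $F$ is simply connected at $\infty$. Second, using type $F_{\infty}$ I would fix a $K(F,1)$ complex $X$ with finite skeleta and set $Y=\tilde X$, a contractible, locally finite, $1$-ended CW-complex carrying a free cocompact $F$-action and, by the first step, simply connected at $\infty$, i.e. properly $1$-connected. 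Third, the Brown--Geoghegan vanishing $H^{n}(F,\mathbb ZF)=0$ for all $n$ (Theorem 13.11.1 of \cite{G}) should be translated, via the identification $H^{*}(F,\mathbb ZF)\cong H^{*}_{c}(Y)$ for the contractible cocompact $F$-space $Y$ together with the end-cohomology sequence, into the statement that $Y$ is properly $n$-acyclic (with $\mathbb Z$ coefficients) for every $n$. Fourth, with $Y$ properly $1$-connected and properly $n$-acyclic in all degrees, the Proper Hurewicz Theorem \ref{PHT} upgrades acyclicity to connectivity, so $Y$ (hence $F$) is properly $n$-connected, that is $n$-connected at $\infty$, for all $n$.

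The main obstacle is the third step: making precise the passage from the purely cohomological vanishing $H^{n}(F,\mathbb ZF)=0$ to the geometric pro-homology hypothesis that $Y$ is $n$-acyclic at infinity for each $n$. This requires identifying $H^{*}(F,\mathbb ZF)$ with the compactly supported cohomology of the contractible cocompact space $Y$, relating the latter to the reduced cohomology of the end, and then controlling the universal-coefficient passage from cohomology at infinity to homology at infinity so that the hypotheses of Theorem \ref{PHT} are literally satisfied. Everything else in the argument is either elementary or a direct citation.
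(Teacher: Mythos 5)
Your proposal is correct and follows essentially the same route as the paper: the ascending-HNN structure plus Theorem \ref{MM} (bootstrapped through the base being a copy of the $1$-ended group $F$) gives simple connectivity at $\infty$, Brown--Geoghegan supplies type $F_\infty$ and $H^n(F,\mathbb ZF)=0$ for all $n$, and the Proper Hurewicz Theorem \ref{PHT} upgrades this to $n$-connectivity at $\infty$ --- the cohomology-to-homology-at-infinity translation you flag as the main obstacle is exactly what Theorem \ref{GMEnd} and Corollary \ref{GME2} provide, so it is already available in the paper. Your elementary arguments for torsion-freeness and the infinite-rank free abelian subgroup are fine, though the paper simply cites \cite{BG84} for those facts.
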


\subsubsection{Artin Groups and Coxeter Groups}\label{AGCG}
The semistability and simple connectivity at $\infty$ of Coxeter and right angled Artin groups is well understood. (See Section \ref{ACGE} for basic definitions.)

\begin{theorem} [Theorems 1.1 and 1.4, \cite{M96}] \label{ACSS} 
All finitely generated Artin groups and Coxeter groups are semistable at $\infty$.
\end{theorem}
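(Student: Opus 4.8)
The plan is to prove that all finitely generated Artin groups and Coxeter groups are semistable at $\infty$ by reducing to the one-ended case and then building up these groups from two-generator pieces using the combination theorem for semistability. This mirrors the strategy already used in this chapter for showing Artin groups are one-ended (Theorem \ref{ArtinE}), where Theorem \ref{combE} was applied inductively to two-generator dihedral-type subgroups. Since both classes of groups are finitely presented, Dunwoody accessibility (Theorem \ref{DunAcc}) lets us pass to a graph of groups decomposition with finite edge groups and vertex groups that are finite or one-ended finitely presented. By the reduction principle cited in the chapter introduction (\cite{M87}), an infinite-ended finitely presented group is semistable at $\infty$ if and only if each one-ended vertex group in such a Dunwoody decomposition is semistable at $\infty$. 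Hence it suffices to handle the one-ended case.

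For Coxeter groups the cleanest route is to invoke the CAT(0) structure. Every finitely generated Coxeter group acts geometrically on its Davis complex, which is a CAT(0) cube complex; thus by Theorem \ref{CATQ} (all CAT(0) groups acting geometrically on a proper CAT(0) cube complex are semistable at $\infty$) every finitely generated Coxeter group is semistable at $\infty$. This disposes of the Coxeter case immediately and avoids any case analysis on the presentation diagram. For Artin groups the situation is more delicate, since general Artin groups are not known to be CAT(0) and indeed are not even known to be one-ended in full generality without the connectedness hypothesis. Here I would follow the inductive combination approach: first establish that the two-generator Artin group $A_{ij} = \langle s_i, s_j \mid (s_i,s_j)_{m_{ij}} = (s_j,s_i)_{m_{ij}}\rangle$ is semistable at $\infty$. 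Each such group is a one-relator group with a torsion-free, one-ended structure (as shown in the proof of Theorem \ref{ArtinE}), and one-relator groups are known to be semistable at $\infty$; alternatively these two-generator Artin groups are either virtually $\mathbb{Z}\times\mathbb{Z}$ (when $m_{ij}=2$) or contain a central infinite cyclic subgroup with one-ended quotient, so Theorem \ref{M1} applies directly.

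With the two-generator base case in hand, I would apply Theorem \ref{combE}'s semistability analogue, namely Theorem \ref{MComb} (the semistability combination result referenced elsewhere in this section), inductively over the vertices of the Artin diagram. Concretely, if $A$ is a finitely generated Artin group on a connected diagram, one builds $A$ up by successively adjoining generators: at each stage the newly enlarged subgroup is generated by the union of a previously-established one-ended semistable subgroup and a two-generator semistable edge subgroup $A_{ij}$, whose intersection contains an infinite cyclic subgroup generated by a standard generator, hence is infinite. Theorem \ref{MComb} then guarantees the union is one-ended and semistable at $\infty$. For Artin groups on disconnected diagrams, the group is a free product of the Artin groups on the connected components, which is infinite-ended, and semistability then follows from the Dunwoody reduction together with the connected (one-ended) case. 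The main obstacle I anticipate is justifying the base case for two-generator Artin groups cleanly — specifically, confirming that $A_{ij}$ is semistable when $m_{ij}\ge 3$, where the group is a one-ended torsion-free group that one must exhibit as having an infinite cyclic normal (indeed central) subgroup with one-ended quotient so that Theorem \ref{M1} or Theorem \ref{NOF2} applies; verifying the centrality and the quotient's end count requires care but is standard for these groups.
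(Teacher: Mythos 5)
Your Coxeter case rests on a false premise. You claim every finitely generated Coxeter group acts geometrically on its Davis complex, ``which is a CAT(0) cube complex,'' and then invoke Theorem \ref{CATQ}. But the Davis--Moussong complex is a piecewise-Euclidean CAT(0) complex whose cells are Coxeter polytopes; it is a cube complex only for \emph{right-angled} Coxeter groups. The Niblo--Reeves cubulation does give a proper action of any finitely generated Coxeter group on a CAT(0) cube complex, but that action is not cocompact in general (it already fails for affine triangle groups), so Theorem \ref{CATQ} does not apply. Indeed, if your argument worked it would contradict the posture of this very paper: general Coxeter groups are the prototypical CAT(0) groups, and the paper states that semistability of CAT(0) groups is an open problem, with Theorem \ref{CATQ} covering only the cube-complex case. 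Nor can the Coxeter case be salvaged by the combination induction you run for Artin groups: in a Coxeter group the standard generators are involutions, so the edge subgroups $\langle s_i,s_j\rangle$ are \emph{finite} dihedral whenever $m_{ij}<\infty$, standard parabolic subgroups on connected subdiagrams can be finite or infinite-ended, and the intersection of consecutive pieces in your induction is generated by a single involution, hence finite. This violates both hypotheses of Theorem \ref{MComb} (the pieces must be 1- or 2-ended, and the intersection must contain an infinite finitely generated subgroup). Finally, the Dunwoody reduction does not let you recurse within the class: the 1-ended vertex groups of a Dunwoody decomposition of an infinite-ended Coxeter group are not a priori Coxeter groups (that such decompositions can be taken visual is the much later result \cite{MT09}). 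This is exactly why the Coxeter half of \cite{M96} requires a separate, presentation-level argument rather than a formal combination/induction scheme.

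Your Artin half, by contrast, is essentially sound and follows the expected route: dihedral base case plus Theorem \ref{MComb} induction over a connected diagram, with disconnected diagrams handled as free products (Theorem \ref{Fsplit} or the Dunwoody reduction). Two caveats there. First, your assertion that for $m_{ij}\geq 3$ the dihedral Artin group has an infinite cyclic central subgroup with \emph{one-ended} quotient is false --- for $m_{ij}=3$ the central quotient of $A_{ij}\cong B_3$ is $PSL(2,\mathbb Z)\cong \mathbb Z_2\ast\mathbb Z_3$, which is infinite-ended --- but this is harmless, since Theorem \ref{M1} requires only an infinite finitely generated normal subgroup of infinite index (alternatively, Theorem \ref{OneR} applies since $A_{ij}$ is a one-relator group). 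Second, your induction tacitly identifies the subgroup of the Artin group generated by $s_i,s_j$ with the abstract dihedral Artin group $A_{ij}$; a priori that subgroup is only a surjective image of $A_{ij}$, and semistability does not pass to quotients, so you need van der Lek's theorem that standard parabolic subgroups of Artin groups are Artin groups on the induced subdiagrams. That fact should be cited explicitly; with it, the Artin argument goes through.
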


If $\Lambda$ is the presentation diagram of $(W,S)$, and the vertices of a complete subgraph of $\Lambda$ generate a finite subgroup of $W$, then add a corresponding simplex to $\Lambda$. The resulting simplicial complex is the  {\it nerve} of $(W,S)$. 
The main theorem of \cite{DM02} implies:

\begin{theorem} \label{CoxSC}
The Coxeter groups $W$ (with Coxeter system $(W,S)$) is simply connected at infinity if and only if $L$ (the nerve of the system)  and the subcomplexes $L-\sigma$ (where $\sigma$ ranges over all simplexes in $L$) are simply connected.
\end{theorem}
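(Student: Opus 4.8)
The plan is to translate the assertion about the group $W$ into a statement about its Davis complex and then read off the answer from the Davis--Meier description of the topology at infinity. First I would pass from $W$ to its Davis complex $\Sigma = \Sigma(W,S)$. Recall that $\Sigma$ is a locally finite, contractible CW-complex (indeed a CAT(0) cube-like complex), realized as the basic construction $\mathcal U(W,K)$, where the fundamental chamber $K$ is the geometric realization of the poset of spherical subsets of $S$ --- concretely the cone on the barycentric subdivision of the nerve $L$. The group $W$ acts geometrically (properly, cocompactly, cellularly) on $\Sigma$, so $W$ is quasi-isometric to $\Sigma$, and by Theorem \ref{QIsp} together with Theorem \ref{QIE} the group $W$ is simply connected at $\infty$ if and only if $\Sigma$ is. (When $W$ is finite there is nothing to prove; otherwise $\Sigma$ is the appropriate noncompact model.) Thus it suffices to determine when the contractible complex $\Sigma$ is $1$-connected at $\infty$, i.e. $1$-ended with pro-trivial fundamental pro-group at $\infty$.

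Next I would exploit the chamber decomposition $\Sigma = \bigcup_{w\in W} wK$ to localize the topology at infinity. Removing a large metric ball from $\Sigma$ truncates each chamber $wK$, and the part of $wK$ facing infinity is controlled by the mirror structure: writing $\mathrm{In}(w)=\{s\in S:\ell(ws)<\ell(w)\}$ for the in-set (always a spherical subset, hence a simplex $\sigma=\sigma_{\mathrm{In}(w)}$ of $L$), the outward face of the truncated chamber is, up to homotopy, the pair $(K,K^{\mathrm{In}(w)})$. Davis--Meier identify $\bar H^{*}(K,K^{T})$ with the reduced (co)homology of the subcomplex $L-\sigma_T$ obtained from $L$ by deleting the open star of $\sigma_T$, with $\sigma_\emptyset$ empty so that a generic deep chamber contributes the full nerve $L$. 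This is exactly the mechanism producing the local models $L$ and $L-\sigma$ in the statement; and since $\mathrm{In}(w)$ ranges over all spherical subsets as $w$ ranges over $W$, every simplex $\sigma$ of $L$ (together with the empty case) occurs.

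The assembly step is then a Mayer--Vietoris / van Kampen computation over the nerve of the chamber cover of $\Sigma$ minus a ball, carried out compatibly for a cofinal family of balls so as to control the inverse system. The $0$-dimensional statement shows $\Sigma$ is $1$-ended precisely when $L$ and each $L-\sigma$ are connected; the $1$-dimensional statement, run at the level of $\pi_1$ rather than $H_1$, shows the fundamental pro-group at $\infty$ is pro-trivial precisely when each of these building blocks is moreover simply connected. Concretely, a loop lying outside a large ball is homotoped chamber by chamber into the union of the outward faces; simple connectivity of each local model $L$ or $L-\sigma$ then fills it by a disk pushed slightly outward, and the compatibility of the bonding maps --- modeled on inclusions among the subcomplexes $L-\sigma$ --- shows the filling is proper. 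Running both implications yields that $\Sigma$, hence $W$, is simply connected at $\infty$ if and only if $L$ and all $L-\sigma$ are simply connected.

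The main obstacle is precisely this last paragraph: Davis--Meier state their conclusions homologically, whereas simple connectivity at $\infty$ is a genuinely $\pi_1$-level (indeed pro-$\pi_1$) condition. Upgrading their Mayer--Vietoris bookkeeping to a van Kampen argument --- and verifying that the bonding homomorphisms in the fundamental pro-group, which are induced by chamber inclusions, are governed by the inclusions among the $L-\sigma$ --- is the delicate point. Once $1$-connectivity at $\infty$ is secured, the stated equivalence follows; and for the higher connectivity consequences one may invoke the Proper Hurewicz Theorem \ref{PHT}, which reduces them to the homological computation Davis--Meier already provide.
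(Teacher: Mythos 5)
Your geometric framework is the right one---the Davis complex $\Sigma$, its chamber decomposition, the in-sets $\mathrm{In}(w)$, and the local models $L$ and $L-\sigma$ are exactly the machinery behind this statement---but the proposal does not close, and it misreads the literature it leans on. The paper itself offers no proof here: it quotes the theorem as an immediate consequence of the main theorem of \cite{DM02}, and that main theorem is already a homotopy-level statement (it characterizes when $\Sigma$ is $m$-connected at infinity, not merely $m$-acyclic at infinity). Your central premise, that Davis--Meier's conclusions are homological and must be ``upgraded'' by a van Kampen argument, is therefore mistaken; the short correct proof is to invoke their theorem with $m=1$, which is precisely what the paper does.

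Read as a freestanding proof, your write-up has a genuine gap at exactly the point you label ``the main obstacle.'' The entire content of the theorem is the equivalence between simple connectivity of $L$ and of each $L-\sigma$ and pro-triviality of the fundamental pro-group of $\Sigma$, verified compatibly over a cofinal exhaustion; you describe what such an argument would involve (van Kampen over the chamber cover, control of the bonding maps by inclusions among the $L-\sigma$) but never carry it out, and nothing earlier in the proposal substitutes for it. The Proper Hurewicz Theorem \ref{PHT} cannot close this gap, since it takes simple connectivity at infinity as a hypothesis---it helps only with the higher-connectivity consequences, as you note---and a purely homological assembly would only yield semistability/$H^2(W,\mathbb ZW)$ conclusions via Corollary \ref{GM2}, which is strictly weaker. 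Secondarily, your reduction from $W$ to $\Sigma$ is not covered by the results you cite: Theorem \ref{QIsp} compares two finitely presented groups, whereas $W$ has torsion, so $\Sigma$ is not the universal cover of a finite complex with fundamental group $W$ in the sense of Definition \ref{defss2}; you need the separate (standard, but unproved here) fact that $\Sigma$ is proper $2$-equivalent to such a universal cover before any statement about $\Sigma$ says anything about $W$.
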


A simplicial complex is {\it flag} when every complete subgraph of its 1-skeleton bounds a simplex. If $L$ is a finite simplicial complex, then the {\it right angled Artin group} $A_L$ has finite presentation with generators $S$ the vertices of $L$ and relation $sts^{-1}t^{-1}=1$ if $s$ and $t$ are adjacent vertices in $L$. Each right angled Artin group $A_L$ has an associated $K(A_L, 1)$ constructed by gluing together tori; Brady-Meier \cite{BrM01} denote this complex by $K_L$ and note that the universal cover $\tilde K_L$ is a CAT(0) cubical complex. 

It is convenient to combine homology and homotopy at infinity results here.

\begin{theorem} [Theorem A, \cite{BrM01}] 
Right angled Artin groups are homotopically and homologicially semistable at infinity in all dimensions. 
\end{theorem}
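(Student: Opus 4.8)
The plan is to work entirely on the CAT(0) cube complex $X=\tilde K_L$, on which $A_L$ acts freely and cocompactly by cubical isometries, and to verify the Mittag--Leffler (semistability) condition on the pro-homotopy and pro-homology groups at infinity directly. I would fix a base vertex $v\in X$, use the combinatorial ($\ell^1$) metric, and take the balls $B_n=B_n(v)$ as a cofinal sequence of compacta. By the definitions of \S\ref{prosp} and Theorem \ref{MLSS}, it then suffices to prove, for every $k\ge 1$, that the inverse sequences
$$\pi_k(X-B_1)\leftarrow \pi_k(X-B_2)\leftarrow\cdots \qquad\text{and}\qquad H_k(X-B_1)\leftarrow H_k(X-B_2)\leftarrow\cdots$$
with inclusion-induced bonding maps are pro-isomorphic to sequences with epimorphic bonding maps. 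For $k=1$ this recovers ordinary semistability at $\infty$, consistent with Theorem \ref{ACSS}; the content of Theorem A is the analogous statement in every dimension and for homology.

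Next I would set up combinatorial Morse theory in the style of Bestvina--Brady, using $f(x)=d_{\ell^1}(x,v)$. As $n$ grows, $X-B_n$ is obtained from $X-B_{n+1}$ by gluing back the cells meeting the sphere of radius $n$, and the homotopical effect of passing a vertex $w$ is governed by its descending link, which in a CAT(0) cube complex is exactly the subcomplex $Op(w,v)$ appearing in the cubical criterion of \cite{BrM01} (Corollary 4.2 above). The crucial structural input is that, because $X=\tilde K_L$ is assembled from tori modelled on $L$, the vertex links $Lk(w,X)$ and the subcomplexes $Op(w,v)$ are full subcomplexes determined by the flag structure of $L$; by cocompactness of the $A_L$-action there are only finitely many orbits of cells, hence only finitely many isomorphism types of the local pairs $(Lk(w,X),Op(w,v))$ that can occur.

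The point that separates this theorem from the cubical criterion is that I only need the Mittag--Leffler condition, not pro-triviality: Corollary 4.2 of \cite{BrM01} yields $m$-connectivity at infinity (pro-triviality through dimension $m$) only when the links and the $Op(w,v)$ are $m$-connected, a hypothesis on $L$ that may fail, whereas semistability asks merely that images of bonding maps stabilize. I would prove this stabilization by producing, for each $m$, a $\phi(m)\ge m$ so that for all $n\ge\phi(m)$ the image of $\pi_k(X-B_n)$ in $\pi_k(X-B_m)$ equals the image of $\pi_k(X-B_{\phi(m)})$. Using the Morse description, a class represented by a $k$-spheroid in $X-B_n$ is pushed inward across finitely many sphere levels; at each level the ``new'' homotopy or homology that could obstruct surjectivity is created by the descending links $Op(w,v)$. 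Because these links together with their attaching data come from a finite list (cocompactness), and because $X$ is CAT(0) so that geodesic retraction toward $v$ keeps the relevant homotopies inside the appropriate complements, any such class is, after advancing one controlled amount deeper, already represented by a class coming from a uniformly bounded depth. The same filtration argument run with $\mathbb Z$-coefficients on the cellular chain complex gives the homological statement in parallel, and in fact more cleanly, since one manipulates cycles modulo boundaries directly.

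The hard part will be making ``uniformly bounded depth'' precise: one must show that the threshold $\phi(m)$ governing when images stabilize can be chosen independent of the individual spheroid or cycle, depending only on $k$, $m$, and the finite combinatorics of $L$. This is exactly where the cocompact action and the finiteness of the local models $(Lk(w,X),Op(w,v))$ do the essential work, converting the non-uniform, $L$-dependent connectivity behaviour into a uniform stability statement, with the CAT(0) geometry guaranteeing that the inward homotopies remain in the relevant complements so that the bonding maps are genuinely controlled. Once this uniform bound is established, the Mittag--Leffler property, and hence homotopical and homological semistability at infinity in all dimensions, follows immediately.
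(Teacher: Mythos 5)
This statement is quoted in the survey from Brady--Meier \cite{BrM01} without proof, so your proposal has to stand on its own, and it does not: the step you yourself flag as ``the hard part'' (the uniform stabilization of images) \emph{is} the theorem, and neither of the two ingredients you offer can deliver it. Finiteness of the local pairs $(Lk(w,X),Op(w,v))$ is automatic --- there are only finitely many full subcomplexes of the finite complex $S(L)$ --- and, more to the point, both of your ingredients (cocompactness giving finitely many local models, plus CAT(0) geometry) are available for \emph{every} cocompact CAT(0) cube complex, e.g.\ the Davis complex of any right-angled Coxeter group. So if your argument worked as stated it would prove that every such complex is homotopically and homologically semistable at infinity in all dimensions, a statement far stronger than Brady--Meier's theorem and precisely what their cubical criterion (Corollary 4.2 of \cite{BrM01}, quoted in \S\ref{WHyp}) avoids claiming by imposing $m$-connectivity of the links and of the $Op(w,v)$; even $\pi_1$-semistability for cube complexes (Theorem \ref{CATQ}) is a separate, hard theorem. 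The appeal to CAT(0) geometry is also the wrong tool: geodesic retraction toward $v$ preserves balls, not complements of balls, so it gives no control over the bonding maps of the inverse system. The real difficulty is pushing spheroids and cycles \emph{outward}, across the upward stars of the shell vertices, where the obstruction at a vertex $w$ lies in $\pi_{k-1}(Op(w,v))$ (respectively $H_{k-1}$), and these groups do not vanish: for a right angled Artin group the $Op(w,v)$ need not even be simply connected (take $L$ a $4$-cycle, so $A_L\cong F_2\times F_2$ and the relevant $Op(w,v)$ can be a $4$-cycle in $S(L)$).

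What is missing is any use of the structure specific to $\tilde K_L$, which is the actual engine of the theorem. Every vertex link of $\tilde K_L$ is the octahedralization $S(L)$, with two vertices $x_i^{+},x_i^{-}$ over each vertex $x_i$ of $L$; each $Op(w,v)$ is a \emph{full} subcomplex of $S(L)$; and since for each generator $x_i$ at most one of the two edges labelled $x_i$ at $w$ can decrease distance to $v$, the subcomplex $Op(w,v)$ contains at least one of $x_i^{+},x_i^{-}$ for every $i$. By fullness it therefore contains a full copy $L_{\varepsilon}$ of $L$, and the simplicial folding $x_i^{\pm}\mapsto x_i^{\varepsilon(i)}$ retracts $S(L)$ onto $L_{\varepsilon}$. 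This uniform ``section plus retraction'' structure of the outward links --- not the mere finiteness of their isomorphism types --- is what forces the images of the inclusion-induced maps on $\pi_k$ and $H_k$ of the complements to stabilize in every dimension with no connectivity hypotheses on $L$. Your outline never isolates or exploits any property of this kind, so the Mittag--Leffler claim at the end remains an assertion rather than a proof.
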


\begin{theorem} [Theorem B, \cite{BrM01}] \label{BRMB}
The group $A_L$ is $m$-connected ($m$-acyclic) at infinity if and only if the link of a vertex in $\tilde K_L$ is $m$-connected ($m$-acyclic.) 
\end{theorem}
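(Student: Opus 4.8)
The plan is to prove both directions, and both the homotopy and homology versions, through the combinatorial Morse theory on the CAT(0) cube complex $\tilde K_L$ that underlies the $m$-connectivity criterion recorded above (Corollary 4.2 of \cite{BrM01}). The first point to record is that $A_L$ acts freely, cocompactly, and vertex-transitively on $\tilde K_L$ by cube-permuting isometries, so every vertex link of $\tilde K_L$ is isomorphic to a single fixed flag complex, namely the link $\hat L$ of the base vertex. Concretely $\hat L$ is the ``octahedralization'' of $L$: its vertices are symbols $s^{+},s^{-}$ for $s\in S$, a pair $\{s^{+},s^{-}\}$ is never joined by an edge, and a set of symbols (using each generator at most once) spans a simplex exactly when the underlying generators are pairwise adjacent in $L$. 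Since connectivity at infinity of $A_L$ is by definition that of $\tilde K_L$, the theorem is the assertion that $\tilde K_L$ is $m$-connected (resp. $m$-acyclic) at infinity if and only if $\hat L$ is $m$-connected (resp. $m$-acyclic).

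For the ``if'' direction I would apply the $m$-connectivity criterion (Corollary 4.2 of \cite{BrM01}) with base vertex $v$. Its two hypotheses are that $Lk(v)=\hat L$ is $m$-connected and that each opposite link $Op(w,v)$ is $m$-connected; the first is exactly the assumption, so the work is a combinatorial lemma: $\hat L$ $m$-connected implies every $Op(w,v)$ is $m$-connected. For each vertex $w$ the simplices of $Lk(w)\cong\hat L$ pointing away from $v$ form a full subcomplex obtained by, for each generator $s$, retaining both of the antipodal vertices $\{s^{+},s^{-}\}$ or exactly one of them (the two directions of a single generator cannot both point toward $v$ in a CAT(0) cube complex). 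I would then prove that deleting one vertex of an antipodal pair never lowers connectivity: writing $\hat L=(\hat L-s^{-})\cup \overline{\mathrm{st}}(s^{-})$ with intersection $Lk(s^{-})=Lk(s^{+})$, the complex $\hat L$ is recovered from $\hat L-s^{-}$ by coning off a subcomplex already contractible there (it is coned by $s^{+}$), so $\hat L-s^{-}$ is at least as highly connected as $\hat L$. Iterating over the generators identifies each $Op(w,v)$ as at least $m$-connected, verifying the hypotheses and yielding $m$-connectivity at infinity. The homological statement runs identically with reduced homology and the homological form of the Morse lemma, the antipodal-deletion step being a Mayer--Vietoris computation; for $m\ge 1$ the two versions are in any case tied together by the Proper Hurewicz Theorem \ref{PHT} once simple connectivity at infinity (the case $m=1$) is known.

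For the ``only if'' direction I would argue contrapositively and in reverse along the same filtration. Filtering $\tilde K_L$ by combinatorial balls $B_n$ about $v$, the Morse analysis expresses the complements $\tilde K_L-B_n$, up to homotopy, as glued from cones on the opposite links $Op(w,v)$; the inverse system of homotopy (resp. homology) groups of these complements is therefore governed by the connectivity of the $Op(w,v)$, which by the antipodal-deletion lemma agrees in the relevant range with that of $\hat L$. If $\hat L$ carries a nontrivial class in $\pi_m$ (resp. $H_m$), I would promote it to an essential class living in a ``coordinate'' subcomplex running to infinity, exhibiting a compact set $K$ such that no enlargement $L\supset K$ allows every $d$-sphere in $\tilde K_L-L$ with $d\le m$ to be filled in $\tilde K_L-K$; this shows $\tilde K_L$ is not $m$-connected (resp. $m$-acyclic) at infinity.

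The main obstacle is this converse: producing a class at infinity that genuinely survives, rather than being capped off elsewhere in the highly connected cube complex, requires the sharp two-sided version of the combinatorial Morse theory---controlling the whole inverse (pro-)system of the complements, not merely the eventual vanishing used in the ``if'' direction---together with care that the non-connectivity of $\hat L$ is not an artifact that disappears after passing to the inverse limit. The antipodal-deletion lemma is the clean technical engine that makes both directions turn on the single complex $\hat L$, and verifying it, with its homology analogue, is where the combinatorial bookkeeping concentrates.
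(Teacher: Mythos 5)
The paper never proves Theorem \ref{BRMB}; it quotes it from \cite{BrM01}, so your proposal can only be measured against the cited source. Your ``if'' direction is correct and is essentially the argument of \cite{BrM01}: vertex-transitivity of the $A_L$-action identifies every vertex link of $\tilde K_L$ with the octahedralization $\hat L$ of $L$; each $Op(w,v)$ is the full subcomplex of $\hat L$ retaining, for each generator, at least one vertex of the antipodal pair (your gate/hyperplane justification is right); the deletion lemma $\hat L \simeq (\hat L - s^{-}) \vee \Sigma\, Lk(s^{-})$ --- valid because $Lk(s^{-}) = Lk(s^{+})$ is coned off by $s^{+}$ inside $\hat L - s^{-}$, and this iterates over full subcomplexes --- shows each $Op(w,v)$ inherits $m$-connectivity (resp.\ $m$-acyclicity, by Mayer--Vietoris); and Corollary 4.2 of \cite{BrM01} then yields $m$-connectivity at infinity.

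The genuine gap is the converse, and it is not mere bookkeeping. The construction you gesture at --- an essential $m$-class in the link of a vertex $w$ far from the basepoint --- provably cannot work: realize such a class on the boundary of the star of $w$; it bounds inside the closed star $\overline{st}(w) = w \ast Lk(w)$, a compact contractible subcomplex which misses any fixed compact $K$ as soon as $w$ is far enough from it. So for a fixed compact $K$, every class carried by a far-away vertex link dies in $\tilde K_L - K$; it is ``capped off elsewhere'' in exactly the way you worry about, and no refinement of the Morse filtration of complements of balls by itself rules this out. A correct converse needs cycles that cannot be coned off anywhere in the complement of a fixed compact set. This is obtained by localizing the failure of connectivity of $\hat L$ --- via the decomposition recorded as Lemma 5.1 of \cite{BrM01} in this paper --- to a simplex $\sigma$ of $L$ with $\tilde H_{k}(lk_L(\sigma)) \neq 0$ in the appropriate dimension, and then exploiting the convex product subcomplex $\tilde K_{\overline{st}(\sigma)} \cong \mathbb{R}^{|\sigma|} \times \tilde K_{lk(\sigma)}$: one uses cycles built from a large sphere in the flat factor together with the essential link cycle and proves non-bounding in complements of balls by a duality/linking argument; alternatively one computes $H^{*}(A_L;\mathbb{Z}A_L)$ (equivalently the compactly supported cohomology of $\tilde K_L$) in terms of the groups $\tilde H^{*}(lk_L(\sigma))$ and invokes the homological characterization of acyclicity at infinity (Theorem \ref{GMEnd}, Corollary \ref{GME2}), afterwards transferring to the homotopy statement via the Proper Hurewicz Theorem \ref{PHT} together with separately established cases $m \le 1$. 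Your sketch contains neither mechanism; you correctly identify the obstacle but do not overcome it, so the ``only if'' half remains unproven.
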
 
In fact the conditions of Theorem \ref{BRMB} can be replaced by conditions in $L$. 

\begin{theorem} [Lemma 5.1, \cite{BrM01}]The link of a vertex in $\tilde K_L$ is $m$-acyclic if and only if $L$ is $m$-acyclic, and for each simplex $\sigma\subset L$, the link of $\sigma$ in $L$ is $(m-|\sigma|-1)$-acyclic. The link of a vertex in $\tilde K_L$ is $m$-connected if, in addition to the conditions above, $L$ is simply connected. 
\end{theorem}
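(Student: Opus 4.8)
The plan is to identify the link explicitly as a "doubled" flag complex and then compute its reduced homology by a Mayer--Vietoris induction. Since $\tilde K_L$ is a CAT(0) cube complex, the link $Lk(w,\tilde K_L)$ of a vertex $w$ is the flag complex whose vertices are the directions at $w$: for each vertex $v$ of $L$ (each generator of $A_L$) there are exactly two directions $v^{+},v^{-}$, and a set of directions spans a simplex precisely when the corresponding cube occurs at $w$, which happens exactly when the underlying generators pairwise commute, i.e. span a simplex of $L$. Thus $Lk(w,\tilde K_L)=\widehat L$, the complex obtained from $L$ by replacing each vertex $v$ with the $0$-sphere $\{v^{+},v^{-}\}$ and declaring $\{v_0^{\epsilon_0},\dots,v_k^{\epsilon_k}\}$ a simplex iff $\{v_0,\dots,v_k\}$ is a simplex of $L$. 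Equivalently $\widehat L$ is the polyhedral join over $L$ of copies of $S^0$; over a single $k$-simplex this piece is the $(k{+}1)$-fold join $S^0\ast\cdots\ast S^0=S^{k}$, which is the geometric source of the degree shifts below.

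The heart of the argument is the reduced homology splitting
$$\tilde H_j(\widehat L)\;\cong\;\bigoplus_{\sigma\in L\cup\{\emptyset\}}\tilde H_{\,j-|\sigma|-1}\bigl(Lk_L(\sigma)\bigr),$$
where $|\sigma|=\dim\sigma$, with the conventions $\dim\emptyset=-1$, $Lk_L(\emptyset)=L$, and $\tilde H_{-1}(\emptyset)=\mathbb Z$ (so the $\emptyset$-term is just $\tilde H_j(L)$). I would prove this by induction, using the following geometric decomposition. Fixing a vertex $v$, write $\widehat L$ as the union of the contractible closed star $v^{+}\ast\widehat{Lk_L(v)}$ with the full subcomplex $B$ spanned by all remaining directions; their intersection is $\widehat{Lk_L(v)}$. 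Since $\{v\}\cup V(\tau)$ is a simplex of $L$ for every $\tau\in\widehat{Lk_L(v)}$, the subcomplex $\widehat{Lk_L(v)}$ also bounds the cone $v^{-}\ast\widehat{Lk_L(v)}$ inside $B$, so its inclusion into $B$ is null-homotopic and the Mayer--Vietoris connecting map vanishes. This yields short exact sequences
$$0\to \tilde H_j(B)\to \tilde H_j(\widehat L)\to \tilde H_{j-1}\bigl(\widehat{Lk_L(v)}\bigr)\to 0.$$
Applying the inductive hypothesis to the full double $\widehat{Lk_L(v)}$ and using $Lk_{Lk_L(v)}(\tau)=Lk_L(\{v\}\cup\tau)$ shows the right-hand term accounts for exactly the simplices of $L$ containing $v$, while $B$ accounts for those avoiding $v$; together they reassemble the stated formula. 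Reading off when every summand vanishes for $j\le m$ gives precisely the asserted equivalence: the $\emptyset$-term vanishes iff $L$ is $m$-acyclic, and for $\sigma\ne\emptyset$ the term $\tilde H_{j-|\sigma|-1}(Lk_L(\sigma))$ vanishes for all $j\le m$ iff $Lk_L(\sigma)$ is $(m-|\sigma|-1)$-acyclic.

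The main obstacle is the bookkeeping in this induction: the complement $B$ is only a \emph{partially} doubled complex (the vertex $v$ appears once, the others twice), so to close the recursion one must strengthen the inductive statement to all partially doubled complexes $L_T$ (doubling a chosen vertex set $T$), establishing $\tilde H_j(L_T)\cong\bigoplus_{V(\sigma)\subseteq T}\tilde H_{j-|\sigma|-1}(Lk_L(\sigma))$ and running the induction on $|T|$; the theorem is the case $T=V(L)$. Some care is also needed with the reduced-homology conventions for the empty complex, which are exactly what lets the top simplices contribute the spheres $S^{\dim\sigma}$. Finally, for the connectivity statement I would invoke the Hurewicz theorem: for $m\ge 1$ being $m$-connected is equivalent to being simply connected together with being $m$-acyclic, and the $m$-acyclicity is already handled. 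The extra hypothesis that $L$ be simply connected is used to force $\pi_1(\widehat L)=1$ via a parallel van Kampen induction on $\widehat L=\widehat{L\setminus v}\cup\bigl(S^0\ast\widehat{Lk_L(v)}\bigr)$; here the suspension piece is simply connected and the acyclicity hypotheses guarantee the gluing locus $\widehat{Lk_L(v)}$ is connected in the relevant range, so van Kampen collapses $\pi_1$ step by step, with the simple connectivity of $L$ entering to kill the fundamental group surviving at the bottom of the induction. I expect this endpoint --- matching $\pi_1(\widehat L)$ against $\pi_1(L)$ under exactly the acyclicity hypotheses that make van Kampen applicable --- to be the most delicate part.
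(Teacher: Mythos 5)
A point of reference first: the paper does not prove this statement, it only quotes it from Brady--Meier \cite{BrM01}, so your proposal has to stand on its own. Most of it does. The identification of the vertex link of $\tilde K_L$ with the doubled complex $\widehat L$ is correct, and your Mayer--Vietoris induction is sound: the two-cone trick (the gluing locus $\widehat{Lk_L(v)}$ cones off over $v^{+}$ inside the star and over $v^{-}$ inside $B$) kills both inclusion-induced maps, the long exact sequence therefore breaks into the short exact sequences you display, and the strengthened induction over partially doubled complexes $L_T$ closes because $Lk_{Lk_L(v)}(\tau)=Lk_L(\{v\}\cup\tau)$. (One terminological slip: what vanishes is the inclusion-induced map $\tilde H_j(\widehat{Lk_L(v)})\to \tilde H_j(\mathrm{star})\oplus\tilde H_j(B)$, not the connecting map; the connecting map is then surjective, which is exactly what produces your short exact sequences.) Reading off the vanishing of the summands, with the convention $\tilde H_{-1}(\emptyset)=\mathbb Z$ matching the paper's ``$(-1)$-acyclic means non-empty,'' gives the acyclicity equivalence exactly as you say.

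The step that would fail as written is the $\pi_1$ argument. You run van Kampen along $\widehat L=\widehat{L\setminus v}\cup\bigl(S^0\ast\widehat{Lk_L(v)}\bigr)$ and induct by deleting vertices, but the hypotheses of the lemma are not inherited by $L\setminus v$: deleting a vertex can destroy simple connectivity of $L$ and can disconnect the links $Lk_{L\setminus v}(w)=Lk_L(w)\setminus\{v\}$, so at later stages of that induction the gluing loci need not be connected and van Kampen no longer applies; moreover nothing ties the bottom of that induction to $\pi_1(L)$, which is why your ``surviving fundamental group at the bottom'' remark stays vague. The repair is to run $\pi_1$ along the same filtration you already use for homology: pass from $L_T$ to $L_{T\cup\{v\}}$ by gluing on the simply connected cone $v^{+}\ast P$, where $P$ is the partial double of $Lk_L(v)$, along $P$ itself. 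That locus $P$ is connected precisely because $Lk_L(v)$ is connected and every edge $\{v,w\}$ of $L$ lies in a $2$-simplex, and both facts are supplied by the hypotheses once $m\geq 1$ (vertex links are $(m-1)$-acyclic, edge links are $(m-2)$-acyclic, and $(-1)$-acyclic means non-empty). Van Kampen then makes $\pi_1(L_{T\cup\{v\}})$ a quotient of $\pi_1(L_T)$, so $\pi_1(\widehat L)$ is an iterated quotient of $\pi_1(L_{\emptyset})=\pi_1(L)=1$, and Hurewicz together with your acyclicity computation finishes the connectivity statement as you intend.
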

The condition $-1$-acyclic means non-empty and lower terms are vacuous. Combining the previous two results produces the following:
\begin{theorem} [Corollary 5.2, \cite{BrM01}] Let $L$ be a finite flag complex which is different from the 0- or 1-simplex. Then the right angled Artin group $A_L$  is simply connected at infinity if and only if $L$ is simply connected and contains no cut vertex.
\end{theorem}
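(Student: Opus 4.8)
The plan is to specialize the two preceding results to the single value $m=1$ and then carry out a purely combinatorial translation. Recall from the Bieri--Stallings discussion that a space is $1$-connected at infinity precisely when it is $1$-ended and simply connected at infinity, so the $m=1$ case of Theorem \ref{BRMB} already packages the endedness hypothesis. First I would invoke it to reduce the statement to a question about a single link: $A_L$ is simply connected at infinity if and only if the link of a vertex of $\tilde K_L$ is $1$-connected. The excluded cases must be set aside here for a genuine reason: when $L$ is a $1$-simplex one has $A_L\cong\mathbb Z\times\mathbb Z$, which satisfies the right-hand side (an edge is contractible and has no cut vertex) yet is not simply connected at infinity, so the equivalence really does fail for the small simplices.

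Next I would feed this into Lemma 5.1 of \cite{BrM01} with $m=1$. That lemma asserts the link of a vertex of $\tilde K_L$ is $1$-connected exactly when (a) $L$ is $1$-acyclic, (b) for every simplex $\sigma\subset L$ the link $\mathrm{lk}(\sigma,L)$ is $(1-|\sigma|-1)=(-\dim\sigma)$-acyclic, and (c) $L$ is simply connected. Since a simply connected complex is automatically $1$-acyclic (its $\tilde H_0$ and $\tilde H_1$ both vanish), condition (a) is subsumed by (c). The content of (b) then collapses to only two families of conditions, because for $\dim\sigma\geq 2$ the required acyclicity level is $\leq -2$ and hence vacuous: each vertex link $\mathrm{lk}(v,L)$ must be $0$-acyclic (connected and nonempty), and each edge link $\mathrm{lk}(e,L)$ must be $(-1)$-acyclic (nonempty).

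The heart of the argument is to show that the conjunction of these link conditions with ``$L$ simply connected'' is equivalent to ``$L$ is simply connected and has no cut vertex.'' For one direction I would prove the clean combinatorial fact that, for connected $L$, the link $\mathrm{lk}(v,L)$ is connected for every vertex $v$ if and only if $L$ has no cut vertex: any edge path inside $\mathrm{lk}(v,L)$ avoids $v$, so if $\mathrm{lk}(v,L)$ is connected then all neighbors of $v$ lie in a single component of $L\setminus\{v\}$, forcing $L\setminus\{v\}$ connected; conversely a cut vertex $v$ distributes its neighbors among distinct components of $L\setminus\{v\}$, so $\mathrm{lk}(v,L)$ is disconnected. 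This immediately yields the implication from the link conditions to ``no cut vertex.''

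The hard part will be the reverse implication, namely checking that ``simply connected and no cut vertex'' already forces the apparently extra edge condition, so that no constraint stronger than ``simply connected and no cut vertex'' is imposed. Here I would argue that if some edge $e=\{a,b\}$ lay in no triangle, i.e. $\mathrm{lk}(e,L)=\emptyset$, then $e$ is a free $1$-cell; writing $L=(L\setminus\mathrm{int}\,e)\cup e$ and computing $H_1$ via van Kampen or Mayer--Vietoris, the endpoints $a,b$ cannot lie in the same component of $L\setminus\mathrm{int}\,e$, since attaching $e$ across one component would create a nontrivial loop and contradict $H_1(L)=0$. Hence $e$ is a bridge between two components, and a short case analysis (removing $a$, and if $a$ is pendant removing $b$ instead) shows one of $a,b$ is a cut vertex unless $L$ is the excluded $1$-simplex, contradicting the hypotheses. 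Thus every edge automatically lies in a triangle, the edge condition is redundant, and the two descriptions coincide. Assembling these steps, together with the small-simplex exclusions noted above, gives the corollary.
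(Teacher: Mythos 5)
Your overall plan is exactly the paper's: the text derives this corollary by nothing more than specializing Theorem \ref{BRMB} and Lemma 5.1 of \cite{BrM01} to $m=1$ and translating the link conditions combinatorially, and your reading of $|\sigma|$ as $\dim\sigma$ (so that vertex links must be $0$-acyclic, edge links $(-1)$-acyclic, and all higher links are unconstrained) is the one that makes the statement come out right. Your edge-link argument is also sound: an edge in no triangle has open interior that is open in $L$, a cycle through it would survive in $H_1(L)$ because no triangle boundary involves that edge, so such an edge is a bridge, and a bridge forces a cut vertex unless $L$ is the $1$-simplex.

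The genuine gap is in the vertex-link step. Your ``clean combinatorial fact'' --- for connected $L$, every $\mathrm{lk}(v,L)$ is connected if and only if $L$ has no cut vertex --- is false, and your argument for it in fact proves only one direction twice: the clause beginning ``conversely'' (a cut vertex has disconnected link) is the \emph{contrapositive} of the first clause (connected link implies no cut vertex), not its converse. The converse fails in general: in the $4$-cycle (a flag complex) no vertex is a cut vertex, yet every vertex link is a pair of points, hence disconnected. As a result, when you turn to the reverse implication of the corollary you verify only the edge condition and silently assume that ``simply connected $+$ no cut vertex'' yields connected vertex links --- precisely the direction you never established. That implication is true, but it needs simple connectivity, not mere connectivity, and an actual homological argument: cover $L$ by the open star of $v$ (a cone, hence contractible) and $L\setminus\{v\}$; their intersection deformation retracts to $\mathrm{lk}(v)$, and $L\setminus\{v\}$ is connected because $v$ is not a cut vertex, so Mayer--Vietoris gives exactness of $H_1(L)\to\tilde H_0(\mathrm{lk}(v))\to\tilde H_0(\mathrm{star}(v))\oplus\tilde H_0(L\setminus\{v\})=0$, whence $H_1(L)=0$ forces $\mathrm{lk}(v)$ connected (nonemptiness is immediate since $L$ is connected and not a single point). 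With that lemma inserted your proof closes up; as written, the equivalence asserted at the heart of the argument is unproved and, at the stated level of generality, false. A last, minor point: you justify excluding only the $1$-simplex; the $0$-simplex must be excluded as well, since $A_L\cong\mathbb Z$ is $2$-ended and so fails the $1$-endedness built into ``$1$-connected at infinity,'' while a point vacuously satisfies the right-hand side.
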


\subsubsection{Sidki Doubles} \label{Sed}
The main theorems of this section are Theorems \ref{SDSSI} and \ref{SCISD}. 
Given a group $G$ the Sidki double of $G$ denoted $\mathfrak{X} (G)$ is obtained from the free product $G\ast \overline G$ (where $\overline G$ is another copy of $G$) by adding the commutation relations $[g,\overline g]$ for all $g\in G$. Our notation here means, that $\overline g$ is the copy of $g$ in the second factor of $G\ast \overline G$. 
Theorem A of \cite{BK19} is the main theorem of that paper:

\begin{theorem} [\cite{BK19}] \label{MainBK} 
The group $\mathfrak{X}(G)$ is finitely presented if and only if $G$ is finitely presented.
\end{theorem}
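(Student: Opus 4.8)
My plan is to route both implications through one elementary fact: \emph{if $E$ is finitely presented and $K\trianglelefteq E$ is the normal closure of finitely many elements, then $E/K$ is finitely presented} (simply adjoin finitely many relators to a finite presentation of $E$). Throughout write $c(g):=[g,\overline{g}]=g^{-1}\overline{g}^{-1}g\overline{g}$ and let $T$ be the normal closure of $\{c(g):g\in G\}$ in $G\ast\overline{G}$, so that $\mathfrak{X}(G)=(G\ast\overline{G})/T$. The whole theorem then becomes the assertion that $G\ast\overline{G}$ is finitely presented if and only if $G$ is, together with the fact that $T$ is always finitely normally generated. I may assume $G$ finitely generated throughout, since otherwise neither side holds.

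For the easy implication, suppose $\mathfrak{X}(G)$ is finitely presented. The assignment $g\mapsto g$, $\overline{g}\mapsto 1$ defines a homomorphism $r:\mathfrak{X}(G)\to G$ (each relator $c(g)$ maps to $[g,1]=1$), and $r$ restricts to the identity on the first free factor, so $G$ is a retract. Fixing a finite generating set $X$ of $G$, I would check that $\ker r$ is the normal closure in $\mathfrak{X}(G)$ of the finite set $\{\overline{x}:x\in X\}$: killing these elements kills $\overline{G}$ and returns $G=\mathfrak{X}(G)/\ker r$, and the resulting surjection between the two quotients is an isomorphism, forcing the two normal subgroups to coincide. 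The elementary lemma then yields that $G=\mathfrak{X}(G)/\ker r$ is finitely presented.

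For the hard implication, suppose $G$ is finitely presented. Then $G\ast\overline{G}$ is finitely presented (take the union of two finite presentations), so by the lemma it suffices to prove that $T$ is the normal closure in $G\ast\overline{G}$ of finitely many of the $c(g)$. Fixing the finite generating set $X$ of $G$, I would prove this by induction on word length, driven by the commutator identity
$$c(hx) = [h,\overline{x}]^{x}\, c(h)^{\overline{x}x}\, c(x)\, [x,\overline{h}]^{\overline{x}},$$
which holds in any group. Letting $T_0$ denote the normal closure of $\{c(s):s\in S\}$ for a suitable finite set $S\subseteq G$, this identity shows that modulo $T_0$ (which contains $c(h)$ for shorter $h$ by induction, $c(x)$, and all their conjugates) one has $c(hx)\equiv [h,\overline{x}]^{x}[x,\overline{h}]^{\overline{x}} \pmod{T_0}$. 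The same identity in the special case of two generators gives the degree-two relations $[y,\overline{x}]^{x}[x,\overline{y}]^{\overline{x}}\equiv 1$, so the natural candidate is to take $S$ to consist of the generators together with their pairwise products.

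The step I expect to be the genuine obstacle is closing this induction. The cross term $[h,\overline{x}]^{x}[x,\overline{h}]^{\overline{x}}$ re-introduces an element $h$ of full length, and the conjugating letters do not line up with those of $c(h)$, so a naive induction on $|h|$ does not terminate. Making it work requires a simultaneous induction that tracks these cross commutators and uses the relations of $G$ to bound the rewriting — this commutator bookkeeping, establishing that a single fixed finite $S$ normally generates every $c(g)$, is the technical heart of the argument. Once it is in place, the elementary lemma applied to $G\ast\overline{G}$ and $T$ immediately gives that $\mathfrak{X}(G)$ is finitely presented, completing the equivalence.
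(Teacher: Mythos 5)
First, a point of calibration: this paper does not prove Theorem \ref{MainBK} at all.  It records it as Theorem A of \cite{BK19}, explicitly calling it ``the main theorem of that paper,'' so your proposal has to be measured against the Bridson--Kochloukova proof, which is a long structural argument.  Your easy direction is correct and is the routine half: $G$ is a retract of $\mathfrak{X}(G)$, the kernel of the retraction is the normal closure of the finite set $\{\overline{x}:x\in X\}$, and a quotient of a finitely presented group by a finitely normally generated subgroup is finitely presented.

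The hard direction, however, contains a genuine gap, and you have located it yourself.  The reformulation ``given $G$ finitely presented, $\mathfrak{X}(G)$ is finitely presented iff $T=\langle\langle c(g):g\in G\rangle\rangle$ is the normal closure in $G\ast\overline{G}$ of finitely many elements'' is correct (one direction is adjoining relators; the other is the standard B.~H.~Neumann compactness argument), but it is a \emph{restatement} of the theorem, not a reduction of it: the entire content of \cite{BK19} is precisely that finitely many of the diagonal relators $c(g)$ suffice.  Your induction stalls exactly where you say it does --- modulo $T_0$ one is left with the cross-term $[h,\overline{x}]^{x}[x,\overline{h}]^{\overline{x}}$ in which $h$ still has full length --- and announcing that ``commutator bookkeeping'' will close it is not an argument; that bookkeeping \emph{is} the theorem.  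Moreover, the mechanism you hope will rescue the induction (``uses the relations of $G$ to bound the rewriting'') cannot be the key.  Writing $G=F/R$ with $F$ free of finite rank, one checks that $\mathfrak{X}(G)\cong\mathfrak{X}(F)/\langle\langle R\cup\overline{R}\rangle\rangle$ (images in $\mathfrak{X}(F)$), so the hard direction is equivalent to the single case $G=F$ free, where there are no relations of $G$ to exploit and the difficulty is carried entirely by the commutation relators; indeed your offhand assertion that $T$ is ``always'' finitely normally generated is, via this reduction, equivalent to the theorem for free groups.  That free case is what \cite{BK19} actually proves, by a delicate analysis built on the subgroups $D$, $L$, $W$ and the quotient $Q_G$ recalled in this section, and the finiteness behaviour there is genuinely subtle: as this paper notes after Theorem \ref{SCISD}, $W(F)$ is not finitely generated for free $F$ of rank $\geq 3$, and Theorem B of \cite{BK19} shows that finite-index subgroups of $\mathfrak{X}(F)$ have non-finitely-generated $H_3$.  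These are exactly the phenomena that should make one suspicious of a fixed small set $S$ such as ``generators and pairwise products.''  (A minor point: with your conventions the identity should read $c(hx)=[h,\overline{x}]^{x}\,c(x)\,c(h)^{x\overline{x}}\,[x,\overline{h}]^{\overline{x}}$; the misordering is harmless for the use you make of it.)
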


Sidki proves the following subgroups of $\mathfrak{X} (G)$ are normal (see Lemma 4.1.3 of \cite{Sid80}):
$$D=D(G):=[G,\overline G];\ \ \ \ L=L(G):= \langle g^{-1}\overline g\mid g\in G\rangle.$$

It follows that $D$ is the kernel of the natural map $q_D:\mathfrak{X} (G)\to  G \times \overline G$ and that $L$ is the kernel of the map $q_L:\mathfrak{X} (G) \to  G$ that sends both $g$ and $\overline g$ to $g$. This gives a short exact sequence:
$$(A)\ \ \ \ \ \ \ \ 1\to L\to \mathfrak{X}(G)\to G\to 1.$$
Let 
$$W=W(G):= D\cap L.$$
Sidki shows (Lemma 4.1.6 of \cite{Sid80}) that $D$ and $L$ commute and therefore $W$ is central in $DL$; in particular, $W$ is abelian. We have a quotient map $q_W:\mathfrak{X}(G)\to Q_G$ and exact sequence :
$$(B)\ \ \ \ \ \ \ \ 1\to W\to \mathfrak{X}(G)\to Q_G\to 1$$
When $G$ is finitely presented we have $Q_G$ is finitely presented (see Section 2 of \cite{BK19}).

\begin{proposition} [Proposition 2.3, \cite{BK19}] \label{Lfg} 
For all finitely generated groups $G$, $L(G)$ is finitely generated. 
\end{proposition}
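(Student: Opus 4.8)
The plan is to exploit the semidirect-product structure coming from the short exact sequence $(A)$ and then to convert the defining (Sidki) relations into a statement that cuts an \emph{a priori} infinite generating set of $L=L(G)$ down to a finite one. Throughout I fix a finite generating set $S=\{s_1,\dots,s_n\}$ of $G$, so that $\mathfrak X(G)=\langle s_1,\dots,s_n,\overline{s_1},\dots,\overline{s_n}\rangle$, and I set $t_i=s_i^{-1}\overline{s_i}\in L$ and, more generally, $t_g=g^{-1}\overline g$.

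First I would record the basic structure. The first-factor copy of $G$ is a section of $q_L\colon\mathfrak X(G)\to G$, so $\mathfrak X(G)=L\rtimes G$. A direct computation gives the cocycle identity $t_{gh}=(h^{-1}t_g h)\,t_h$; hence, by induction on word length, every $t_g$ is a product of $G$-conjugates of the elements $t_i^{\pm1}$. Since $L=\langle t_g\mid g\in G\rangle$ and $L\lhd\mathfrak X(G)$, it follows that $L$ is generated by the $G$-conjugates $\{h^{-1}t_i h\mid h\in G,\ 1\le i\le n\}$, and in fact $L=\langle\langle t_1,\dots,t_n\rangle\rangle$, the normal closure in $\mathfrak X(G)$ of the finite set $\{t_i\}$. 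The issue is thus not generation as a normal subgroup (which is finite) but generation as a plain group, where the $G$-orbit of the $t_i$ is what must be controlled.

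Next I would translate the relations: $[g,\overline g]=1$ is equivalent to $g$ commuting with $t_g$, i.e.\ to the fixed-point condition ${}^{g}t_g=t_g$ for every $g\in G$ (writing ${}^g x=gxg^{-1}$). The strategy is then to produce a \emph{finite} set $T\subseteq L$ with $t_i\in\langle T\rangle$ such that $\langle T\rangle$ is invariant under conjugation by each $s_k^{\pm1}$. Granting this, finite generation follows cleanly: because $\overline{s_k}=s_k t_k$ with $t_k\in\langle T\rangle$, invariance under the first-factor generators $s_k^{\pm1}$ forces invariance under $\overline{s_k}^{\pm1}$ as well, so $\langle T\rangle\lhd\mathfrak X(G)$; as $\langle T\rangle$ contains the $t_i$ it contains their normal closure $L$, while $\langle T\rangle\subseteq L$ by construction, whence $L=\langle T\rangle$ is finitely generated.

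The heart of the argument, and the step I expect to be the main obstacle, is the construction of $T$ together with the verification that conjugation by the $s_k$ does not escape $\langle T\rangle$. Here one must feed the fixed-point relations ${}^{g}t_g=t_g$ through composite elements: for example, combining ${}^{s_is_j}t_{s_is_j}=t_{s_is_j}$ with $t_{s_is_j}=(s_j^{-1}t_{s_i}s_j)\,t_{s_j}$ should express the "spread" of a conjugate $s_j^{-1}t_{s_i}s_j$ back in terms of bounded data, and one expects the conjugates $\{{}^{w}t_i\}$ to stabilise modulo the subgroup generated by conjugates over a fixed finite set of words $w$. Heuristically this is the statement that the $\mathbb Z G$-module $L^{\mathrm{ab}}$ generated by the images of the $t_i$ becomes finitely generated over $\mathbb Z$ once the relations $(g-1)\,\overline{t_g}=0$ are imposed (as one checks instantly for $G=\mathbb Z$, recovering $L(\mathbb Z)\cong\mathbb Z$); the delicate part is to carry this finiteness out honestly at the group level through the commutator calculus and to pin down the correct finite set $T$. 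Everything else in the argument is formal.
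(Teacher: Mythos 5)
The paper itself does not prove this proposition---it cites it from [BK19]---so your attempt must stand on its own, and it does not: what you have written is a correct reformulation of the problem, not a proof. Your completed steps are all valid: the splitting $\mathfrak{X}(G)=L\rtimes G$, the cocycle identity $t_{gh}=(h^{-1}t_gh)t_h$, the identity $t_{g^{-1}}={}^{g}(t_g^{-1})$, the conclusion that $L$ is the normal closure of the finite set $\{t_1,\dots,t_n\}$, and the ``granting this'' reduction. But you then defer the construction of the finite set $T$ and the verification that $\langle T\rangle$ is closed under conjugation by the $s_k^{\pm1}$, offering only heuristics. That deferred step is not a technical verification to be filled in later; it is the entire content of the proposition, and your reduction to it is essentially a restatement: such a finite $T$ exists if and only if $L$ is finitely generated. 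The module-theoretic heuristic would not close the gap either, since finite generation of $L^{\mathrm{ab}}$ as an abelian group does not imply finite generation of $L$, as you yourself half-concede.

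Here is a concrete way to see that what you established cannot suffice. Every step you actually prove is insensitive to the Sidki relations: the cocycle identity, $t_{g^{-1}}={}^{g}(t_g^{-1})$, the semidirect splitting, the normal-closure statement, and the reduction all hold verbatim in the free product $G\ast\overline{G}$ for the kernel $\hat L=\ker\bigl(G\ast\overline{G}\to G\bigr)$, with the same proofs. Yet for infinite $G$ that kernel is a free group of infinite rank---for $G=\mathbb Z$ it is the kernel of $F_2\to\mathbb Z$ sending both generators to $1$, a nontrivial normal subgroup of infinite index in a free group---so no such finite $T$ can exist there. Any correct proof must therefore use the relations ${}^{g}t_g=t_g$ for \emph{all} $g\in G$ in an essential, quantitative way. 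They do yield genuinely useful identities, e.g.\ $t_{g^{-1}}=t_g^{-1}$, and, writing $w=c\cdot(c^{-1}w)$ and using the relations for $w$ and $c^{-1}w$, the formula $(t_w)^{c}=t_c\,t_{c^{-1}w}$ for all $w,c\in G$, which expresses every conjugate of a generator as a product of two generators. But even these exchange length-$k$ elements for other length-$k$ elements rather than shortening them, so the stabilization you ``expect'' does not follow formally; the combinatorial argument that pins down a finite generating set---the actual proof in [BK19]---is precisely what is missing from your proposal.
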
 

Observe that if $g$ and $ h$ are distinct elements of $G$ then $g^{-1}\overline g\ne h^{-1}\overline h$ (otherwise $hg^{-1}=\overline h\overline g^{-1}$ and projecting $\mathfrak{X}(G)\to G\times \overline G$ implies  $g=h$). In particular, if $G$ is infinite then $L$ is infinite. 

Combine Propositions \ref{Lfg}, Theorem \ref{M1} and the short exact sequence (A) we have:

\begin{theorem} \label{SDSSI} 
If $G$ is an infinite finitely generated group then $\mathfrak{X}(G)$ is 1-ended and semistable at infinity. 
\end{theorem}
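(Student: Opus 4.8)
The plan is to exhibit the normal subgroup $L = L(G)$ as an infinite, finitely generated, normal subgroup of infinite index in the finitely generated group $\mathfrak{X}(G)$, and then to invoke a semistability criterion for such subgroups that yields both conclusions of the theorem at once.

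First I would record the elementary facts about $\mathfrak{X}(G)$ and $L$. Since $G$ is finitely generated, $G \ast \overline{G}$ is generated by a finite generating set of $G$ together with its copy in $\overline{G}$, and $\mathfrak{X}(G)$ is a quotient of $G \ast \overline{G}$; hence $\mathfrak{X}(G)$ is finitely generated. Proposition \ref{Lfg} gives that $L$ is finitely generated, and $L$ is normal by Sidki's Lemma 4.1.3 (quoted in the excerpt). To see $L$ is infinite I would use the injectivity observation recorded just above the statement: the map $g \mapsto g^{-1}\overline{g}$ is injective on $G$, since $g^{-1}\overline{g} = h^{-1}\overline{h}$ forces $g = h$ after projecting $\mathfrak{X}(G) \to G \times \overline{G}$. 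Thus the generating set $\{\, g^{-1}\overline{g} : g \in G \,\}$ of $L$ is infinite, so $L$ is infinite. Finally, the short exact sequence (A), namely $1 \to L \to \mathfrak{X}(G) \to G \to 1$, shows $\mathfrak{X}(G)/L \cong G$ is infinite, so $L$ has infinite index in $\mathfrak{X}(G)$.

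With these facts assembled, $L$ is an infinite, finitely generated, normal subgroup of infinite index in the finitely generated group $\mathfrak{X}(G)$. Since a normal subgroup is in particular commensurated, I would apply Theorem \ref{MainCM} to conclude directly that $\mathfrak{X}(G)$ is $1$-ended and semistable at $\infty$. (Equivalently, as a normal subgroup is subnormal, Lew's Theorem \ref{L} applies and gives the same two conclusions.)

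The one point deserving care — and the reason I would route the argument through Theorem \ref{MainCM} rather than Theorem \ref{M1} — is the gap between \emph{finitely generated} and \emph{finitely presented}. Theorem \ref{M1} as stated requires the ambient group to be finitely presented, whereas $\mathfrak{X}(G)$ is finitely presented only when $G$ is (Theorem \ref{MainBK}), while the present hypothesis is merely that $G$ is finitely generated. Theorem \ref{MainCM} (and Lew's Theorem \ref{L}) are precisely the finitely generated generalizations needed, and they additionally furnish $1$-endedness, so no separate argument for that is required. Consequently the only genuine content beyond citation is the verification that $L$ is infinite and of infinite index, both immediate from the injectivity observation and sequence (A); I do not anticipate any serious obstacle once the finitely generated criterion is used in place of the finitely presented one.
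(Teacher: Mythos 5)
Your proof is correct, and it follows essentially the same route as the paper: exhibit $L=L(G)$ as an infinite, finitely generated, normal subgroup of infinite index in $\mathfrak{X}(G)$ (via Proposition \ref{Lfg}, the injectivity of $g\mapsto g^{-1}\overline g$, and the exact sequence (A)), then cite a normal-subgroup semistability theorem. The one divergence is which theorem you cite, and here your version is actually the more careful one: the paper's own proof invokes Theorem \ref{M1}, whose hypothesis requires the ambient group to be \emph{finitely presented}, yet by Theorem \ref{MainBK} the group $\mathfrak{X}(G)$ is finitely presented only when $G$ is, while Theorem \ref{SDSSI} assumes only that $G$ is finitely generated. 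Your routing through Theorem \ref{MainCM} (or equivalently Lew's Theorem \ref{L}), whose hypotheses ask only for a finitely generated ambient group, closes that gap and simultaneously delivers $1$-endedness, which Theorem \ref{M1} as stated does not assert. So you have reproduced the paper's argument while repairing the one imprecision in its citation.
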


Let $L_1(G)=q_D(L)< G\times \bar G$. Then for each $g\in G$, $(g,\overline g^{-1})\in L_1$ and $L_1$ is infinite. Since $L_1=L/(D\cap L)=L/W$, the group $W$ has infinite index in $L$. This implies that $L_2(G)=q_W(L)<Q_G$, is infinite. Note that $L_2$ and $L_1$ are both isomorphic to $L/W$. 

Since $\mathfrak{X}(G)/D\cong G\times \overline G$ and $W< D$,  the quotient map  $q:Q_G\to G\times \overline G$ (by $q_W(D)$) isomorphically maps $L_2$ to $L_1$, the kernel of the map $G\times \overline G\to G$  that takes both $g$ and $\overline g$ to $g$ (recall that $D\cap L=W$, in order to see this is an isomorphism). This implies that $L_2$ has infinite index in $Q_G$. 

\begin{proposition}\label{INII}
For any infinite group $G$, $L_2(G)$ is an infinite normal subgroup of infinite index in $Q_G$.
\end{proposition}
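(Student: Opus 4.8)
The plan is to dispatch the three assertions of the proposition — that $L_2(G)$ is infinite, normal, and of infinite index in $Q_G$ — by collecting the computations already assembled in the discussion preceding the statement and supplying the one genuinely new observation, namely normality.

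First I would record normality, which is the only point not already explicit above. Since $L$ is one of the subgroups Sidki showed to be normal in $\mathfrak{X}(G)$ (Lemma 4.1.3 of \cite{Sid80}), and $q_W:\mathfrak{X}(G)\to Q_G$ is the surjective quotient homomorphism with kernel $W$ appearing in the exact sequence $(B)$, the image $L_2=q_W(L)$ is normal in $Q_G$. Explicitly, for any $\bar x=q_W(x)\in Q_G$ and any $q_W(m)\in L_2$ with $m\in L$, one has $\bar x\, q_W(m)\,\bar x^{-1}=q_W(xmx^{-1})\in q_W(L)=L_2$, using that $xmx^{-1}\in L$ by normality of $L$ in $\mathfrak{X}(G)$. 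This is the standard fact that a surjective homomorphism carries normal subgroups to normal subgroups.

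For infiniteness I would invoke the identifications already made: since $W=D\cap L\subseteq L$, we have $L_2=q_W(L)\cong L/W$, which is isomorphic to $L_1=q_D(L)$; as $L_1$ was shown to be infinite (it contains the distinct elements $(g,\overline g^{-1})$ for $g\in G$, and $G$ is infinite), $L_2$ is infinite as well. For infinite index I would use the quotient map $q:Q_G\to G\times\overline G$ by $q_W(D)$: it is surjective and, as noted above, restricts to an isomorphism $L_2\to L_1$, where $L_1=\ker(G\times\overline G\to G)$. Thus $[\,G\times\overline G : L_1\,]=|G|=\infty$, and since $q$ is onto we get $[\,Q_G:q^{-1}(L_1)\,]=\infty$; because $L_2\subseteq q^{-1}(L_1)$ this forces $[\,Q_G:L_2\,]=\infty$.

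Since essentially all the bookkeeping is already in place, there is no serious obstacle here; the only content to add is the one-line normality argument, and the mild care needed is to confirm that the isomorphism $q\colon L_2\to L_1$ together with surjectivity of $q$ really does transfer the infinite index of $L_1$ up to $Q_G$, which is the index inequality $[\,Q_G:L_2\,]\ge[\,Q_G:q^{-1}(L_1)\,]=[\,G\times\overline G:L_1\,]$.
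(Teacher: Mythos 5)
Your normality and infiniteness arguments are correct, and the normality observation (image of the normal subgroup $L$ under the surjection $q_W$) is indeed the one piece the paper leaves implicit. The genuine gap is in the infinite-index step, and it comes from taking at face value a loose parenthetical in the paper's preceding discussion: there is \emph{no} homomorphism $G\times\overline G\to G$ sending both $g$ and $\overline g$ to $g$ unless $G$ is abelian, because in the direct product \emph{all} commutators $[g,\overline h]$ are trivial and they would have to map to $[g,h]$. (The corresponding map does exist on $\mathfrak{X}(G)$ --- it is $q_L$, with kernel $L$ --- precisely because only the diagonal commutators $[g,\overline g]$ are killed there.) What is actually true is that $L_1=q_D(L)$ contains $[G,G]\times[\overline G,\overline G]$ and is the kernel of the natural map $G\times\overline G\to G/[G,G]$ given by $(g,\overline h)\mapsto gh\,[G,G]$. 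Hence $[\,G\times\overline G:L_1\,]=|G/[G,G]|$, not $|G|$; this is finite whenever $G$ has finite abelianization (for instance, $G$ an infinite simple group), and in that case your inequality $[\,Q_G:L_2\,]\ge[\,G\times\overline G:L_1\,]$ yields no information. So as written the proof does not establish infinite index for every infinite $G$.

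The repair is one line and is, in effect, the paper's exact sequence $(C)$, stated immediately after the proposition. Since $W=D\cap L\le L$, the surjection $q_L:\mathfrak{X}(G)\to G$ (kernel $L$) factors through $q_W$, giving a surjection $\bar q_L:Q_G\to G$ with $\bar q_L\circ q_W=q_L$ and $\ker \bar q_L=q_W(L)=L_2$. Thus $Q_G/L_2\cong G$, which simultaneously shows $L_2$ is normal and has infinite index $|G|$ in $Q_G$; combined with your argument that $L_2\cong L/W\cong L_1$ is infinite, this proves the whole proposition without routing the index computation through $G\times\overline G$ at all.
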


By Proposition \ref{Lfg}, when $G$ is finitely generated, $L_2(G)$ is finitely generated. We have a short exact sequence:
$$(C)\ \ \ \ \ \ \  1\to L_2\to Q_G\to G\to 1.$$
Combining Propositions \ref{Lfg} and \ref{INII}, Theorems \ref{M1} and \ref{J}, and the exact sequence (C) we have:

\begin{theorem}\label{QG1E} 
For any infinite finitely generated group $G$, $Q_G$ is 1-ended and semistable at $\infty$. If $G$ and $L_2(G)$ are finitely presented and infinite, then $Q_G$ is finitely presented and simply connected at $\infty$. 
\end{theorem}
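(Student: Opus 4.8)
The plan is to run the whole argument through the short exact sequence
$$(C)\qquad 1\to L_2\to Q_G\to G\to 1,$$
using the structural facts already in place. Here $L_2=L_2(G)=q_W(L)$ is normal in $Q_G$, being the image under the surjection $q_W$ of the normal subgroup $L\trianglelefteq\mathfrak X(G)$; it is infinite of infinite index by Proposition \ref{INII}; and it is finitely generated, since it is a homomorphic image of the finitely generated group $L$ (Proposition \ref{Lfg}). Because $G$ is finitely generated, so are $\mathfrak X(G)$ and its quotient $Q_G$. These four observations are the raw material for both halves of the theorem.

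For the finitely generated statement I would avoid Theorem \ref{M1} directly, since its overgroup must be finitely presented while $Q_G$ need only be finitely generated. Instead I would pass to a finitely generated generalization: a normal subgroup is in particular commensurated, so $L_2\prec Q_G$ is an infinite, finitely generated, commensurated subgroup of infinite index in the finitely generated group $Q_G$, and Theorem \ref{MainCM} yields immediately that $Q_G$ is $1$-ended and semistable at $\infty$. (Equivalently, since normal implies subnormal, Theorem \ref{L} gives the same conclusion in one step.) For the finitely presented statement, I would first record that $G$ finitely presented forces $\mathfrak X(G)$, and hence $Q_G$, finitely presented (Section 2 of \cite{BK19}), and then, with $L_2$ assumed finitely presented and infinite, apply Jackson's Theorem \ref{J} to $(C)$: $L_2$ is an infinite, finitely presented, normal subgroup of infinite index in the finitely presented group $Q_G$, so Theorem \ref{J} concludes simple connectivity at $\infty$ of $Q_G$ as soon as one of the groups $L_2$ or $G=Q_G/L_2$ is $1$-ended.

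The main obstacle is precisely this one-endedness hypothesis, and it is not a technicality that can be waived: if both $L_2$ and $G$ had more than one end, then by Theorem \ref{P2E} the group $Q_G$ would be proper $2$-equivalent to $\mathbb F_2\times\mathbb Z$, which is \emph{not} simply connected at $\infty$. Thus the genuine content of the second assertion is to prove that, under the stated hypotheses, one of $L_2$, $G$ is one-ended. I would pursue this for $L_2$: setting $W=D\cap L$, which Sidki shows is central (hence abelian) in $DL\supseteq L$, one has a central extension $1\to W\to L\to L_2\to 1$, and I would try to read off the number of ends of $L_2$ from this extension together with the tight control on $L$ inside the $1$-ended group $\mathfrak X(G)$ (Theorem \ref{SDSSI}); note that $L_2$ is genuinely not isomorphic to $G$ for non-abelian $G$, so one-endedness of $L_2$ is consistent with $G$ being multi-ended. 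If an unconditional proof of one-endedness resists, the honest formulation is to carry ``$L_2$ or $G$ is $1$-ended'' as the operative hypothesis, after which the invocation of Theorem \ref{J} is immediate and finishes the proof.
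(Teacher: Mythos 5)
Your proposal is, in outline, the paper's own proof: the paper obtains Theorem \ref{QG1E} by ``combining Propositions \ref{Lfg} and \ref{INII}, Theorems \ref{M1} and \ref{J}, and the exact sequence (C)'' --- precisely the route you describe. Your one deviation in the first half is an improvement rather than a different approach: since $Q_G$ is only known to be finitely \emph{generated} when $G$ is, Theorem \ref{M1} (which requires a finitely presented ambient group) is not quite the right citation, and your use of Theorem \ref{MainCM} (or Theorem \ref{L}, normal being subnormal) is what actually justifies the first sentence of the theorem in the finitely generated setting.

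The obstacle you isolate in the second half is not a defect of your write-up; it is a genuine gap in the paper's own proof, which invokes Theorem \ref{J} without verifying, or hypothesizing, that $L_2$ or $G$ is $1$-ended. Moreover, your fallback formulation is the only possible repair, because the unconditional $1$-endedness of $L_2$ that you hoped to extract from the central extension $1\to W\to L\to L_2\to 1$ is false. Take $G=\mathbb{Z}=\langle x\rangle$: the single commutation relation $[x,\overline{x}]$ already implies all the relations $[g,\overline{g}]$, so $\mathfrak{X}(\mathbb{Z})=\mathbb{Z}\times\mathbb{Z}$, hence $D=W=\{1\}$, $Q_{\mathbb{Z}}=\mathbb{Z}\times\mathbb{Z}$, and $L_2=L\cong\mathbb{Z}$ is the antidiagonal. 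Here $G$ and $L_2(G)$ are both infinite and finitely presented, yet $Q_{\mathbb{Z}}=\mathbb{Z}\times\mathbb{Z}$ is not simply connected at $\infty$; so the second assertion of Theorem \ref{QG1E} is false as stated. (This is the ``both factors $2$-ended'' case of Theorem \ref{P2E}, where $Q_G$ is proper $2$-equivalent to $\mathbb{Z}\times\mathbb{Z}$ rather than to $\mathbb{F}_2\times\mathbb{Z}$ as you wrote, but either way the conclusion fails.) With your added hypothesis that $L_2$ or $G$ is $1$-ended, Jackson's Theorem \ref{J} finishes the argument exactly as you say. Note also that the gap does not propagate into the paper's later use of this result: Theorem \ref{SCISD} quotes Theorem \ref{QG1E} only for the $1$-endedness of $Q_G$, which is the (correct) first half.
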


Combining Theorem \ref{T187a}, Theorem \ref{QG1E} (in order to see $Q_G$ has 1-end), and the exact sequence (B) we have:

\begin{theorem}\label{SCISD} 
If $G$ is a finitely presented infinite group and $W(G)$ contains an element of infinite order, then $\mathfrak{X}(G)$ is simply connected at $\infty$. 
\end{theorem}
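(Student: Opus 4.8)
The plan is to apply Theorem \ref{T187a} directly to the short exact sequence $(B)$, namely
$$1\to W\to \mathfrak{X}(G)\to Q_G\to 1,$$
taking $H=W=W(G)$, the middle group to be $\mathfrak{X}(G)$, and the quotient $K=Q_G$. Theorem \ref{T187a} concludes that the middle group is simply connected at $\infty$ precisely when all three groups are infinite, the middle group is finitely presented, the quotient is $1$-ended, and the kernel is abelian and contains an element of infinite order. So the entire argument reduces to verifying each of these hypotheses for $(B)$, and nearly every one of them is supplied by a result already established in this section.

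First I would check the finiteness and finite-presentation conditions. Since $G$ is finitely presented, Theorem \ref{MainBK} gives that $\mathfrak{X}(G)$ is finitely presented. The quotient $Q_G$ is $1$-ended by Theorem \ref{QG1E} (which applies because $G$ is infinite and finitely generated), so in particular $Q_G$ is infinite; and $\mathfrak{X}(G)$ is infinite since it contains a copy of $G$. The kernel $W$ is infinite by the standing hypothesis that $W(G)$ contains an element of infinite order. Thus all three groups in $(B)$ are infinite, $\mathfrak{X}(G)$ is finitely presented, and $Q_G$ is $1$-ended.

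Next I would confirm the two algebraic conditions on the kernel $W$. That $W$ is abelian is exactly Sidki's observation (Lemma 4.1.6 of \cite{Sid80}) recorded earlier: the normal subgroups $D$ and $L$ commute, so $W=D\cap L$ is central in $DL$ and hence abelian. That $W$ contains an element of infinite order is the precise hypothesis of the theorem we are proving. With both of these in hand, every hypothesis of Theorem \ref{T187a} is met, and I would invoke it to conclude that $\mathfrak{X}(G)$ is simply connected at $\infty$.

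In truth there is no serious obstacle here: the statement is a bookkeeping combination of Theorems \ref{MainBK}, \ref{QG1E}, and \ref{T187a} applied along the exact sequence $(B)$, with the abelianness of $W$ coming from Sidki's structural lemma. The only point that requires a little care is ensuring that the $1$-endedness of $Q_G$ is genuinely available, which is why the hypothesis that $G$ be infinite (and finitely generated) is essential; this is what Theorem \ref{QG1E} uses, and it is what makes the quotient eligible to play the role of the $1$-ended group $K$ in Theorem \ref{T187a}. Once that is noted, the proof is complete.
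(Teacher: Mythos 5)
Your proposal is correct and is exactly the paper's argument: the paper proves this theorem by combining Theorem \ref{T187a}, Theorem \ref{QG1E} (to see that $Q_G$ is 1-ended), and the exact sequence $(B)$, with the abelianness of $W$ coming from Sidki's lemma and the infinite-order element coming from the hypothesis. Your verification of the finite presentation of $\mathfrak{X}(G)$ via Theorem \ref{MainBK} and of the infiniteness of all three terms simply makes explicit the bookkeeping the paper leaves to the reader.
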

It is remarked in \cite{BK19} that if $F$ is a finitely generated free group of rank $\geq 3$ then $W(F)$ is not finitely generated. Theorem \ref{SCISD} implies $\mathfrak{X}(F)$ is simply connected at infinity. It is remarkable that a finitely presented 1-ended group that sits between $F\ast \overline F$ and $F\times \overline F$ is simply connected at infinity. 

%Theorems \ref{P} and \ref{MainBK}, and the short exact sequence (B) imply:

%\begin{theorem}
%If $G$ is a finitely presented infinite group and $W(G)$ has rank $\geq 2$, then $\mathfrak{X}(G)$ is simply connected at $\infty$.
%\end{theorem}

Theorem \ref{MainA} implies the following result in the finitely generated case:
\begin{theorem}
If $G$ is infinite, finitely generated and recursively presented, and $W(G)$ has rank $\geq 2$, then $\mathfrak{X}(G)$ is simply connected at $\infty$. 
\end{theorem}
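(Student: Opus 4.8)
The plan is to apply the simple-connectivity-at-$\infty$ half of Theorem \ref{MainA} to the group $\mathfrak{X}(G)$, using a copy of $\mathbb{Z}^2$ sitting inside $W(G)$ as the required subcommensurated subgroup.

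First I would record two ambient facts about $\mathfrak{X}(G)$. Since $G$ is finitely generated, the generators of $G$ together with their copies in $\overline{G}$ finitely generate $\mathfrak{X}(G)$. Since $G$ is recursively presented, so is $\mathfrak{X}(G)$: a recursive enumeration of the defining relations of $G$ and of $\overline{G}$, together with the recursively enumerable family of commutation relations $[w,\overline{w}]$ indexed by words $w$ in the generators, gives a recursively enumerable set of defining relations for $\mathfrak{X}(G)$. Thus $\mathfrak{X}(G)$ meets the standing hypotheses ``finitely generated and recursively presented'' demanded by the second conclusion of Theorem \ref{MainA}.

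Next I would produce the subgroup $H$. Because $W=W(G)$ is abelian and has torsion-free rank $\geq 2$, I can choose two $\mathbb{Q}$-linearly independent elements $a,b\in W$; then $H:=\langle a,b\rangle\cong \mathbb{Z}^2$ is finitely presented, infinite and $1$-ended. I claim $H$ is subcommensurated in $\mathfrak{X}(G)$ via the length-two chain $H\prec W\prec \mathfrak{X}(G)$. Indeed, $W$ is the kernel of $q_W$ in the exact sequence $(B)$, hence normal in $\mathfrak{X}(G)$, so $W\prec \mathfrak{X}(G)$; and since $W$ is abelian every subgroup of $W$ is normal in $W$, so in particular $H\lhd W$ and $H\prec W$. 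Finally $H$ has infinite index in $\mathfrak{X}(G)$: the quotient $\mathfrak{X}(G)/W\cong Q_G$ is $1$-ended by Theorem \ref{QG1E}, in particular infinite, so $W$ — and therefore $H\le W$ — has infinite index.

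With these verifications in hand, Theorem \ref{MainA} applies with over-group $\mathfrak{X}(G)$ and subcommensurated subgroup $H\cong \mathbb{Z}^2$: $H$ is finitely generated, infinite, of infinite index, $1$-ended and finitely presented, while $\mathfrak{X}(G)$ is finitely generated and recursively presented, so $\mathfrak{X}(G)$ is simply connected at $\infty$. The only genuinely delicate points are the two routine structural checks — that $\mathfrak{X}(G)$ inherits recursive presentability from $G$, and that ``rank $\geq 2$'' for the abelian group $W$ really delivers an embedded $\mathbb{Z}^2$ (here one uses that $\mathbb{Q}$-linear independence forbids any nontrivial relation $ma+nb=0$). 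I would emphasize that the hypothesis rank $\geq 2$ rather than rank $\geq 1$ is exactly what is needed: a rank-one choice would only give $H\cong \mathbb{Z}$, which is $2$-ended, and the $1$-ended hypothesis on $H$ in Theorem \ref{MainA} would fail. This is precisely why the present statement strengthens the $W$-hypothesis of Theorem \ref{SCISD} while trading ``finitely presented'' for the weaker ``recursively presented.''
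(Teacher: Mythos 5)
Your proof is correct and is exactly the route the paper takes: the paper obtains this statement by citing Theorem \ref{MainA}, applied precisely as you do, with $H\cong\mathbb{Z}^2\leq W(G)$ subcommensurated via the chain $H\lhd W\lhd \mathfrak{X}(G)$ (normal subgroups being commensurated). Your supporting verifications --- that $\mathfrak{X}(G)$ inherits finite generation and recursive presentability from $G$, that $W$ has infinite index because $Q_G$ is infinite, and that rank $\geq 2$ yields an honest $\mathbb{Z}^2$ (hence a $1$-ended, finitely presented $H$) --- are all sound and fill in the details the paper leaves implicit.
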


%Combining Theorems \ref{J} and \ref{MainBK} with the exact sequence (B) gives:

%\begin{theorem}
%If $G$ is finitely presented and  $W(G)$ is finitely generated and infinite, then $\mathfrak {X}(G)$ is simply connected at infinity.
%\end{theorem}

The structure of $W(G)$ is examined in the latter sections of \cite{BK19}.

\subsubsection{Stable Fundamental Group at $\infty$} \label{StablePP1}
An inverse sequence of groups is {\it stable} if it is pro-isomorphic to an inverse sequence where the bonding maps are isomorphisms. B. Bowditch determines all the possibilities that result in stable fundamental pro-group for finitely presented groups. See Theorem \ref{stableproH} for the corresponding homology result.

\begin{theorem} [\cite{Bo04}] \label{stablepro} Suppose $G$ is a finitely presented group and $\pi_1(\varepsilon G)$, the fundamental pro-group of $G$, is stable. Then either $G$ is simply connected at $\infty$ or $G$ is virtually a closed surface group and $\pi_1(\varepsilon G)$ is pro-isomorphic to an inverse sequence where each group is $\mathbb Z$ and each bonding map is an isomorphism.
\end{theorem}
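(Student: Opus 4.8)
The plan is to reduce first to the case where $G$ is $1$-ended, since that is where $\pi_1(\varepsilon G)$ carries real content. If $G$ is finite it is trivially simply connected at $\infty$; if $G$ is $2$-ended, then (as in the proof that $2$-ended groups are simply connected at $\infty$) each end is essentially a ray and $\pi_1(\varepsilon G)$ is pro-trivial. If $G$ is infinite-ended, a proper ray converges to a point of the Cantor set of ends whose neighborhoods of infinity are assembled from the $1$-ended or finite Dunwoody vertex pieces hanging off an infinite subtree; the resulting pro-group is built from ever-larger free products and fails to be stable unless it is already pro-trivial, i.e. unless $G$ is simply connected at $\infty$. So I would assume $G$ is $1$-ended, model it by the universal cover $\tilde X$ of a finite $2$-complex (a Cayley $2$-complex), on which $G$ acts freely and cocompactly, and note that since stability entails the pro-epi condition, $G$ is semistable at $\infty$ (Theorem \ref{MLSS}) and $\pi_1(\varepsilon G)$ is pro-isomorphic to a constant sequence $P \leftarrow P \leftarrow \cdots$ with isomorphic bonding maps, where $P = \pi_1^e(G)$.

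Next I would set up the dichotomy at the level of $P$. If $P$ is trivial, the pro-group is pro-trivial, which by Remark \ref{R1} is exactly simple connectivity at $\infty$, giving the first alternative. So suppose $P \neq 1$; the goal becomes to force $P \cong \mathbb Z$ with $G$ virtually a closed surface group. The structural tool is the cocompact action: each $g \in G$ moves the base vertex a bounded distance and hence coarsely preserves a cofinal filtration by neighborhoods of infinity $U_1 \supset U_2 \supset \cdots$ (with all inclusions inducing isomorphisms onto $P$), so $g$ induces a well-defined automorphism of $P$. Translates of a fixed spine of $\partial U_i$ then tile the end, and the end of $\tilde X$ acquires a cocompact pattern whose local model is a single complex with fundamental group $P$.

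I would then try to pin $P \cong \mathbb Z$ by passing to second cohomology. Semistability already gives that $H^2(G,\mathbb Z G)$ is free abelian (Corollary \ref{GM2}), and stability collapses the pro-system $H^1(U_1) \leftarrow H^1(U_2) \leftarrow \cdots$ to a single constant group, so $H^2(G,\mathbb Z G)$ is identified with a fixed finitely generated free abelian group determined by $P$. The crux is that a coarse counting argument, exploiting cocompactness and the fact that the filtration frontiers behave like the $1$-dimensional "boundary" of a $2$-complex, should cap the rank of this group at $1$ whenever it is nonzero, thereby ruling out the possibility that $P$ is, say, a nonabelian free group or a higher-rank abelian group. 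This would yield $H^2(G,\mathbb Z G) \cong \mathbb Z$ and $P \cong \mathbb Z$, so that the end of $\tilde X$ is proper homotopy equivalent to the plane's end $S^1 \times [0,\infty)$.

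The main obstacle concentrates in the last two steps: showing that a stable, nontrivial end of a cocompact simply connected $2$-complex can only be $\mathbb Z$, and that this forces $G$ to be virtually a closed surface group. This is precisely the deep content of Bowditch's theorem; it amounts to recognizing a coarse $PD_2$ structure (a universal cover of the proper homotopy type of $\mathbb R^2$) and invoking the Eckmann--M\"uller classification of $PD_2$ groups together with surface-group recognition, the dimension-$2$ analog of the Stallings--Siebenmann circle of ideas behind Theorem \ref{StallSC}. Once $G$ is known to be virtually a surface group, one verifies directly that $\tilde X \simeq \mathbb R^2$, whose neighborhoods of infinity are annuli, so that $\pi_1(\varepsilon G)$ is the constant sequence $\mathbb Z \leftarrow \mathbb Z \leftarrow \cdots$ with isomorphic bonding maps, completing the second alternative.
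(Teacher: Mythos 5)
The paper offers no proof of this statement at all: it is recorded purely as a citation to Bowditch \cite{Bo04}, so the only ``proof'' on record is that reference. Judged as a proof, your proposal has a genuine gap exactly where all the content lies. Your preliminary reductions are fine: finite and $2$-ended groups are simply connected at $\infty$; stability implies semistability (Theorem \ref{MLSS}), so $\pi_1(\varepsilon G)$ is pro-isomorphic to a constant sequence on $P=\pi_1^e(G)$; and pro-triviality of $P$ is exactly simple connectivity at $\infty$. But the two decisive claims are never carried out. First, the assertion that ``a coarse counting argument \ldots should cap the rank'' of $H^2(G,\mathbb ZG)$ at $1$ is not an argument; the tool that actually does this in the literature is Farrell's theorem (Theorem \ref{FFT1} in this paper), which says that for a finitely presented group containing an element of infinite order, $H^2(G,\mathbb ZG)$ is $0$, $\mathbb Z$, or not finitely generated, and it is stability (via Corollary \ref{GM2} and the collapse of the pro-system to a constant one) that excludes the non-finitely-generated case. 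Second, and fatally, the passage from $P\cong\mathbb Z$ to ``$G$ is virtually a closed surface group'' is not derived: you yourself write that this step ``is precisely the deep content of Bowditch's theorem'' and propose to invoke Eckmann--M\"uller plus surface-group recognition. Since the statement you were asked to prove \emph{is} Bowditch's theorem, this is circular --- you have reduced the theorem to itself.

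There is also an unproved step earlier in your outline: the claim that an infinite-ended group with stable fundamental pro-group must already be simply connected at $\infty$. The heuristic (``ever-larger free products,'' as in the Davis-type pro-groups of Example \ref{DavisEx}) is plausible, but you give no argument, and for a blind proof this reduction needs one, since the ends of an infinite-ended group form a Cantor set and the pro-group is based at a single ray whose behavior must be analyzed. An honest write-up at the level of this manual would do what the paper does: state the elementary reductions, quote Farrell (Theorem \ref{FFT1}) for the rank bound, and cite \cite{Bo04} for the virtually-surface-group recognition, flagging explicitly that this last step is deep and is being used, not reproved.
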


\subsubsection{Exotic Fundamental Pro-Group and Fundamental Group at $\infty$ for a Group}\label{WildPro} 

In the early stages of development of the semistability theory for groups, it was not clear whether or not the fundamental pro-group of a finitely presented group is always pro-free or not. 
\begin{example} \label{DavisEx} 
The first example of groups with non pro-free fundamental pro-group were exhibited by M. Davis  \cite{Davis83}. Examples were given where the fundamental pro-group of a finitely presented group $G$ were inverse sequences of $n$-fold free products of a group $H$ where the bonding map killed the last free factor and the group $H$ is the fundamental group of a homology 3-sphere which bounds a contractible 4-manifold. Thus $H$ can be chosen to be finite, giving torsion in $\pi_1^e(G)$. 
\end{example}

\begin{theorem} [\cite{ DM02}] \label{RecRA} For any recursively presented group $G$ there is a 1-ended right-angled Coxeter group $W$ with $G$ a subgroup of $\pi_1^e(W)$.
\end{theorem}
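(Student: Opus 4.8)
The plan is to realize $G$ inside the inverse limit of a tower of free products that has been engineered to be the fundamental pro-group at infinity of a right-angled Coxeter (Davis) complex. First I would invoke Higman's embedding theorem \cite{Hig61}: since $G$ is (finitely generated and) recursively presented, there is a finitely presented group $P$ with $G\leq P$. Being finitely presented, $P$ is the fundamental group of a finite $2$-complex, and after passing to a barycentric subdivision I may take this to be a finite flag simplicial complex. Following Davis's construction of exotic universal covers (\cite{Davis83}, see Example \ref{DavisEx}), I would build from it a finite flag complex $L$ serving as the nerve of a right-angled Coxeter system, arranging the nonsimply-connected data of $L$ so that $\pi_1(L)\cong P$, while keeping $L$ connected with no separating simplex. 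Let $W=W_L$ be the resulting right-angled Coxeter group. Since $L$ is finite, $W$ is finitely presented; by Theorem \ref{ACSS} it is semistable at $\infty$ (so that $\pi_1^e(W)$ is defined); and by Theorem \ref{CoxE} the absence of a separating (clique) simplex makes $W$ one-ended.

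The geometric heart of the argument is the computation, via the machinery of \cite{DM02} together with Davis's original calculation \cite{Davis83}, that the fundamental pro-group at infinity of the Davis complex $\Sigma_L$ is pro-isomorphic to the tower
$$\cdots \longrightarrow P\ast P\ast P \longrightarrow P\ast P \longrightarrow P,$$
in which each bonding map is the retraction that kills the most recently adjoined free factor. This is exactly the shape of Davis's examples, where the repeated free factor was the fundamental group of a homology sphere; here its role is played by $P=\pi_1(L)$. Because pro-isomorphic inverse sequences have isomorphic inverse limits, $\pi_1^e(W)$ is isomorphic to $\varprojlim\bigl(P^{\ast n},\rho_n\bigr)$, where $\rho_n\colon P^{\ast n}\to P^{\ast(n-1)}$ forgets the $n$-th factor.

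It then remains to observe that $P$ — and hence $G$ — embeds in this inverse limit. Writing $A_n=P_1\ast\cdots\ast P_n$ with each $P_i\cong P$, define $\iota\colon P\to\varprojlim A_n$ by sending $p$ to the compatible sequence whose $n$-th coordinate is the image of $p$ under the inclusion of the first factor $P_1\hookrightarrow A_n$. Each $\rho_n$ restricts to the identity on $P_1$, so $\iota(p)$ is genuinely an element of the inverse limit; $\iota$ is a homomorphism coordinatewise, and it is injective because the projection to $A_1=P$ recovers $p$. Thus $G\leq P\cong\iota(P)\leq\varprojlim A_n\cong\pi_1^e(W)$, as desired. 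The main obstacle is not this final algebra but the two construction steps flagged above: producing a single finite flag nerve $L$ that simultaneously carries $\pi_1\cong P$ in the manner that forces the free-product-tower pro-group, is one-ended, and fits Davis's topological bookkeeping — and then rigorously controlling the topology at infinity of $\Sigma_L$ to identify the bonding maps as factor-killing retractions. This is where the substantive work of \cite{Davis83} and \cite{DM02} lies.
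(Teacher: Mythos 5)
The paper itself offers no proof of this statement --- it is quoted from \cite{DM02} --- so I am comparing your proposal against the argument that reference actually runs. Your overall architecture (Higman embedding $G\leq P$ into a finitely presented group \cite{Hig61}, a Davis-type nerve $L$, identification of the fundamental pro-group at infinity with a factor-killing tower of free products, and the first-factor embedding of $P$ into the inverse limit) is exactly the right architecture, and your closing algebra --- that $\varprojlim\bigl(P^{\ast n},\rho_n\bigr)$ contains $P$ via the coherent sequence supported on the first factor --- is correct as written. The genuine gap is the step you flag but do not fill: you take $L$ to be a flag subdivision of a finite \emph{2-complex} with $\pi_1\cong P$ and assert that $\pi_1(\varepsilon\Sigma_L)$ is the tower $\cdots\to P\ast P\ast P\to P\ast P\to P$. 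For 2-dimensional (or general) nerves this is false. The topology at infinity of the Davis complex is controlled not only by $\pi_1(L)$ but by the fundamental groups of the punctured nerves $L-\sigma$; this is already visible in the Davis--Meier criterion quoted in \S\ref{AGCG}, which requires $L$ \emph{and every} $L-\sigma$ to be simply connected for simple connectivity at infinity. Puncturing a 2-complex changes $\pi_1$: delete a vertex (or a simplex, or its open star) from a flag triangulation of the real projective plane and $\pi_1$ jumps from $\mathbb{Z}_2$ to $\mathbb{Z}$. So with a 2-dimensional nerve the tower terms are generically free products of groups different from $P$, and nothing forces a copy of $P$ to survive into the inverse limit. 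Davis's bookkeeping in \cite{Davis83} yields the clean factor-killing tower precisely because his nerve is a (homology) manifold of dimension at least $3$, where removing a simplex star does not alter $\pi_1$; and you cannot simply let $P$ ``play the role'' of the homology-sphere group, since an arbitrary finitely presented group is not the fundamental group of any homology sphere (such groups are superperfect).

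The missing idea --- and the actual route of \cite{DM02} --- is to keep the nerve a closed manifold of dimension at least $3$ while giving up on it being sphere-like (equivalently, giving up on $\Sigma_L$ being a manifold). Realize $P$ as $\pi_1(M)$ for a closed orientable PL $4$-manifold $M$ (possible for every finitely presented group, e.g.\ by surgery or by taking the boundary of a regular neighborhood of a 2-complex in $\mathbb R^5$), and let $L$ be a flag triangulation of $M$, say a barycentric subdivision. Then: (i) $L$ is connected, is not a simplex, and no simplex separates it, so $W_L$ is 1-ended by Theorem \ref{CoxE}, and $\pi_1^e(W_L)$ is defined since $W_L$ is semistable at $\infty$ by Theorem \ref{ACSS}; (ii) because $\dim M\geq 3$, each punctured nerve $L-\sigma$ is $L$ minus an open PL ball up to homotopy, so $\pi_1(L-\sigma)\to\pi_1(L)$ is an isomorphism; and (iii) with exactly this input the Davis--Meier computation identifies $\pi_1(\varepsilon\Sigma_L)$, up to pro-isomorphism, with the factor-killing tower of free products of copies of $P$. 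Only after (i)--(iii) does your inverse-limit embedding legitimately deliver $G\leq P\leq\varprojlim P^{\ast n}\cong\pi_1^e(W_L)$.
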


In the next theorem, the {\it mapping class group of $\tilde X$}, denoted $\mathcal M(\tilde X)$ is the (discrete) group of ambient isotopy classes of self-homeomorphism of $\tilde X$. The {\it weak mapping class group}  $\mathcal W \mathcal M(\tilde X)$ is the (discrete) group of proper homotopy classes of self-homeomorphisms; it is a quotient of $\mathcal M(\tilde X)$.  

\begin{theorem} [Theorem A, \cite{GM96}] \label{fginf}  
Let the infinite group $G=\pi_1(X,v)$, where $X$ is a finite connected complex. If $G$ has 2-ends, $\pi_1^e(\tilde X,r)$ is trivial for any proper base ray $r$. Otherwise, one of the following holds: 
\begin{enumerate} 
\item $\tilde X$ is 1-ended and simply connected at $\infty$;
\item $\tilde X$ is 1-ended and semistable at $\infty$, and $\pi_1^e(\tilde X)$ is discrete and infinite cyclic;
\item $\tilde X$ is 1-ended and semistable at $\infty$, and $\pi_1^e(\tilde X)$ is freely generated by an infinite (pointed) compact metric space;
\item letting $\rho:G\to \mathcal{WM} (\tilde X)$ denote the natural representation of $G$ by covering transformations, every element of $\hbox{ker }\rho$ has finite order in $G$. 
\end{enumerate}

\end{theorem}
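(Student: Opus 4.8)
The plan is to organize the proof around the number of ends of $\tilde X$, which by the results of \S\ref{Egps} equals the number of ends of the infinite group $G$, together with the behaviour of the natural representation $\rho\colon G\to\mathcal{WM}(\tilde X)$ by covering transformations. First I would dispose of the $2$-ended case: if $G$ is $2$-ended then so is $\tilde X$, and by the theorem that all finite and $2$-ended groups are simply connected at $\infty$, the space $\tilde X$ is simply connected at $\infty$; hence $\pi_1^e(\tilde X,r)$ is trivial for every proper base ray $r$. This reduces everything to the case where $G$ (equivalently $\tilde X$) is $1$-ended or infinitely ended, where the four alternatives must be produced.

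For the remaining cases the governing dichotomy I would use is whether $\ker\rho$ contains an element of infinite order. If every element of $\ker\rho$ has finite order we are immediately in alternative $(4)$, so the real work is to show that an infinite-order element of $\ker\rho$ can only occur when $\tilde X$ is $1$-ended and semistable and lands us in $(1)$, $(2)$, or $(3)$. When $\tilde X$ is infinitely ended I would argue directly that $\ker\rho$ is torsion: Stallings' Theorem \ref{Stall} gives a nontrivial splitting of $G$ over a finite group, any infinite-order $g$ acts on the associated Bass--Serre tree and hence on $\mathcal E(\tilde X)$ with nontrivial dynamics, so $g$ cannot be properly homotopic to the identity, i.e.\ $g\notin\ker\rho$. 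Thus the infinitely ended case always yields $(4)$.

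The heart of the argument is the $1$-ended case. Suppose $\tilde X$ is $1$-ended and $g\in\ker\rho$ has infinite order. Then $J=\langle g\rangle\cong\mathbb Z$ acts freely by covering transformations on the $1$-ended, simply connected complex $\tilde X$, and since $g\in\ker\rho$ the homeomorphism $g$ is properly homotopic to the identity; by the lemma of Wright \cite{W92} this is exactly what is needed to make $g$ coaxial. Feeding this into Theorem \ref{GGm} shows $\tilde X$ is properly $2$-equivalent to $\mathbb T\times\mathbb R$ for a tree $\mathbb T$, hence $\tilde X$ has semistable, pro-free fundamental pro-group (equivalently, by Theorem \ref{Gg12}, a pro-group that is stably free). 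Consequently $\pi_1^e(\tilde X)$ is a free topological group on a pointed compact metric space, and I would finish with the trichotomy on this generating space: if it is empty then $\pi_1^e$ is trivial and $\tilde X$ is simply connected at $\infty$, giving $(1)$; if the pro-group is stable with $\pi_1^e$ finitely generated, Bowditch's Theorem \ref{stablepro} forces $\pi_1^e\cong\mathbb Z$ with $G$ virtually a closed surface group, giving $(2)$; otherwise the generating space is infinite, giving $(3)$. If instead $\ker\rho$ is torsion in this $1$-ended case, we are in $(4)$.

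The hardest part will be the precise identification of $\pi_1^e(\tilde X)$ as a \emph{free} topological group and its separation into the three regimes: the coaxiality lemma and Theorem \ref{GGm} deliver pro-freeness, but pinning down that a \emph{discrete} fundamental group at $\infty$ arising from a \emph{finite} complex must be trivial or infinite cyclic (rather than free of higher finite rank) is where the finiteness of $X$ and Bowditch's stability dichotomy do the genuine work. A secondary obstacle is the rigorous passage from ``$g$ properly homotopic to the identity'' to coaxiality and the careful treatment of the infinitely ended case, where one must rule out infinite-order elements of $\ker\rho$ using the dynamics of the $G$-action on $\mathcal E(\tilde X)$ so that alternative $(4)$ is forced.
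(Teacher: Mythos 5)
The paper never proves this statement: Theorem \ref{fginf} is quoted verbatim from \cite{GM96} with no internal argument, so there is no in-paper proof to compare against and your proposal has to stand on its own. Your overall architecture (dispose of the two-ended case via simple connectivity at $\infty$ of two-ended groups, dichotomize on whether $\ker\rho$ contains an infinite-order element, force alternative (4) when $\tilde X$ is infinitely-ended, and extract (1)--(3) from an infinite-order element of $\ker\rho$ in the one-ended case) does match the shape of the theorem, and the two-ended case is fine. But the central step has a genuine gap.

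You claim that an infinite-order $g\in\ker\rho$ is coaxial ``by the lemma of Wright.'' That is a misapplication: Wright's lemma, as used in this paper (Remark \ref{Remark in Intro}(2) and the discussion preceding Definition \ref{Coss}), derives coaxiality of the generator of \emph{any} infinite cyclic group of covering transformations from the hypothesis that the fundamental pro-group of the space is \emph{pro-monomorphic}. No pro-mono hypothesis is available in Theorem \ref{fginf}, and in the case you need most it can never hold: in alternative (3) the pro-group is semistable but not stable, whereas pro-mono together with semistable forces stability, which by Theorem \ref{stablepro} would put you in (1) or (2). What a proper homotopy $H$ from $\mathrm{id}_{\tilde X}$ to $g$ actually yields is the strictly weaker ``pushing'' property: if $D$ contains the projection of $H^{-1}(C)$, then for a loop $\alpha$ in $\tilde X-J\cdot D$ each homotopy $H(g^n\cdot,\cdot)$ restricted to $\alpha$ misses $C$ (since $g^n\alpha\subset \tilde X-J\cdot D$), so $\alpha$ can be homotoped in $\tilde X-C$ to $g^n\alpha$ for all $n$, and $g^n\alpha$ leaves every compact set. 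This is exactly co-semistability of $J=\langle g\rangle$ in $\tilde X$ (Definition \ref{ss+cossD}(b)), not coaxiality; the paper stresses that this weakening is precisely the point of Theorem \ref{MTGGM}. Combined with the automatic semistability of $\mathbb Z$ in $\tilde X$, Theorem \ref{Theorem: Special case of Main Theorem} then gives you semistability of $\tilde X$ --- but it does not give pro-freeness, the $\mathbb T\times\mathbb R$ structure of Theorem \ref{GGm}, or the free-topological-group description of $\pi_1^e$, and your entire trichotomy (1)/(2)/(3) rests on that description. Establishing it from ``$g$ properly homotopic to the identity'' is the hard content of \cite{GM96}; your proposal in effect assumes it. Your parenthetical ``(equivalently, by Theorem \ref{Gg12}, a pro-group that is stably free)'' exposes the same confusion: stably free would contradict alternative (3) outright, since semistable-and-pro-free is far from equivalent to stably free. (Incidentally, once one does have the tree structure, the exclusion of finite rank $\geq 2$ via Theorem \ref{stablepro} is legitimate here, though \cite{GM96}, predating Bowditch, got it from Farrell's theorem, Theorem \ref{FFT1}.)

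The infinitely-ended case is also gappier than you suggest. An infinite-order element need not act hyperbolically on the Bass--Serre tree of a Stallings splitting --- it can be elliptic, lying in an infinite vertex group --- and even for hyperbolic elements there is no proper map $\tilde X\to T$ (vertex groups may be infinite), so ``dynamics on the tree'' do not transfer functorially to $\mathcal E(\tilde X)$. The clean first step is the observation that properly homotopic proper maps induce the same map on end spaces, so $\ker\rho$ lies in the kernel of $G\to \mathrm{Homeo}(\mathcal E(\tilde X))$; what then remains --- and still requires a genuine argument --- is that when $\mathcal E(G)$ is a Cantor set this kernel is a torsion group, i.e.\ that no infinite-order element can fix every end. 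Handling the elliptic case (e.g.\ showing that an infinite-order $g$ in a vertex group $A$ of a splitting over finite $C$ permutes the branches at its fixed vertex in infinite orbits because $g^n\notin aCa^{-1}$ for $n\neq 0$, and then converting this into a statement about ends of $\tilde X$ rather than ends of $T$) is exactly the part your sketch glosses over.
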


Note that when $G$ is torsion free, 4 simplifies to:

 \medskip
 
{\it 4$'$. the representation $\rho$ is faithful.} 

\begin{corollary} [Corollary $A'$, \cite{GM96}] 
If in Theorem \ref{fginf}, $G$ is torsion free and 1-ended but is not semistable at $\infty$ then the representation $\rho:G\to \mathcal {WH}(\tilde X)$ is faithful.
\end{corollary}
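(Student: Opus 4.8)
The plan is to read the conclusion directly off the four-way dichotomy of Theorem \ref{fginf}, eliminating every alternative that is incompatible with the stated hypotheses. First I would note that, because $G$ is $1$-ended, it does not have two ends, so we land in the ``Otherwise'' case of Theorem \ref{fginf}: exactly one of conclusions (1)--(4) holds for $\tilde X$.

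Next I would rule out conclusions (1), (2) and (3). Each of these asserts semistability of $\tilde X$ at $\infty$: conclusion (1) states that $\tilde X$ is simply connected at $\infty$, which forces semistable fundamental group at $\infty$ by Theorem \ref{SCtoSS} (applied to the locally finite CW-complex $\tilde X$), while conclusions (2) and (3) assert semistability outright. By Definition \ref{defss2}, the hypothesis that $G$ is not semistable at $\infty$ means precisely that $\tilde X$ does not have semistable fundamental group at $\infty$. Hence none of (1)--(3) can occur.

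Consequently conclusion (4) must hold: every element of $\ker \rho$ has finite order in $G$. Since $G$ is torsion free, its only finite-order element is the identity, so $\ker \rho = \{1\}$ and $\rho$ is faithful. This is exactly the simplification (4$'$) recorded immediately after the statement of Theorem \ref{fginf}, so the corollary follows.

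There is essentially no genuine obstacle here beyond the bookkeeping of the elimination argument; the lone point that uses a cited fact rather than the literal wording of Theorem \ref{fginf} is that conclusion (1) implies semistability, and this is supplied by the implication ``simply connected at $\infty$ $\Rightarrow$ semistable at $\infty$'' of Theorem \ref{SCtoSS}. Thus the entire proof reduces to the observation that the hypotheses ``$1$-ended,'' ``not semistable at $\infty$,'' and ``torsion free'' successively exclude the two-ended case, conclusions (1)--(3), and all nontrivial kernel elements, leaving faithfulness of $\rho$ as the only possibility.
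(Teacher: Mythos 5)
Your proof is correct and is exactly the intended derivation: the paper states this corollary as an immediate consequence of Theorem \ref{fginf}, with the torsion-free simplification of conclusion (4) to (4$'$) already recorded right after the theorem, and your elimination of cases (1)--(3) via the non-semistability hypothesis (using Theorem \ref{SCtoSS} for case (1)) is the same bookkeeping. Nothing further is needed.
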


If $G$ is 1-ended and semistable at $\infty$ and the fundamental pro-group $\pi_1(\varepsilon G)$ is pro-isomorphic to an inverse sequence where each bonding map is an isomorphism, then either $G$ is simply connected at $\infty$ or (by Theorem \ref{stablepro}) $G$ is virtually a closed surface group and $\pi
_{1}^{e}(G)=\mathbb{Z}$. (Compare with Theorem \ref{fginf} (2).)

\subsubsection{The Lamplighter Group and Other Non-Semistable at $\infty$ Finitely Generated Examples }\label{NonSS}

It was conjectured in \cite{M4} that the ``Lamplighter group" with presentation: 
$$L\equiv \langle x,y\mid y^2=1, [y,y^{x^i}]=1 \hbox{ for all } i\in \mathbb Z\rangle$$
is  not semistable at $\infty$ in the sense of Definition \ref{defss3}. Here $a^b=bab^{-1}$. 

In N. Silkin's thesis \cite{Silk1}, he shows the homomorphism $\phi:L\to L$ induced by $x\to x$ and $y\to yxy^{-1}x^{-1}$ is a monomorphism with image a subgroup of index 2. The resulting HNN-extension (called the extended lamplighter group) has finite presentation:
$$E\equiv  \langle t,x,y\mid  y^2=1, [t,x]=1, t^{-1}yt=yxy^{-1}x^{-1}\rangle$$
Note that (since $y=y^{-1}$), $1=t^{-1}y^2t=yxy^{-1}x^{-1}yxy^{-1}x^{-1}=[y,y^x]$.

Silkin goes on to show that in $E$, $N(y)$, the normal closure of $y$ is an infinite direct sum of groups all isomorphic to $\mathbb Z_2$ and that $E/N(y)$ is isomorphic to $\mathbb Z\times\mathbb Z$. So $E$ is metabelian of rank 2. Then he shows (by a geometric argument) that  $E$ is simply connected at infinity. The lamplighter group is not semistable at $\infty$ in the sense of Definition \ref{defss3}.

\begin{theorem} [(Theorem 1, \cite{MML}] \label{LNss} 
The lamplighter group is not semistable at $\infty$.
\end{theorem}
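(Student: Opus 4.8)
The plan is to work directly from the finitely generated definition of semistability (Definition \ref{defss3}) and to show that the obstruction survives no matter how many defining relations are attached. First I would record two standing facts about $L=\mathbb Z_2\wr\mathbb Z$. It is one-ended: $L$ is infinite, solvable (hence contains no rank-two free subgroup) and is not virtually cyclic, so by Theorem \ref{free} it is neither infinitely-ended nor two-ended. Second, by Mihalik's generating-set independence result (\cite{M4}, quoted just before Definition \ref{defss3}), $L$ is semistable at $\infty$ if and only if for the fixed generating set $S=\{x,y\}$ there is some finite set $P$ of $S$-relations with $\Gamma_{(L,S)}(P)$ semistable. Thus it suffices to prove that $K_P:=\Gamma_{(L,S)}(P)$ fails to be semistable at $\infty$ for \emph{every} finite $P\subseteq R$, where $R=\ker(F(S)\to L)$. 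Since $L$ is one-ended and ends depend only on the $1$-skeleton (Theorem \ref{E1sk}), each $K_P$ is one-ended.

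The non-finite-presentability of $L$ enters as follows. As in the discussion following Definition \ref{defss3}, $\pi_1(K_P)\cong R/N(P)$, where $N(P)$ is the normal closure of $P$ in $F(S)$, and $K_P$ is covered by the simply connected Cayley $2$-complex $\Gamma_1$ of $\langle x,y\mid P\rangle$ with deck group $R/N(P)$. Writing $c_j:=[\,y,\,y^{x^j}\,]$ with $y^{x^j}=x^jyx^{-j}$, we have $R=N(\{y^2\}\cup\{c_j:j\ge 1\})$. For each finite $P$ I would show there is an index $j_0$ with $c_{j_0}\notin N(P)$: otherwise every $c_j$ and, after enlarging $P$ by $y^2$, also $y^2$ would lie in $N(P\cup\{y^2\})\subseteq R$, forcing $R=N(P\cup\{y^2\})$ and hence presenting $L$ finitely, a contradiction. (The structural reason the $c_j$ stay independent is that a finite $P$ constrains only lamps within a bounded window, so a graph-product-type quotient of $\langle x,y\mid y^2,P\rangle$ in which far-apart lamps remain non-commuting detects $c_{j_0}\notin N(P)$.) Fix one such $j_0$.

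Next I would produce the family witnessing failure of the Mittag--Leffler condition for the fundamental pro-group based at the ray $r(n)=x^n$. For $m\ge 1$ let $\alpha_m$ be the translated commutator loop $[\,y^{x^m},\,y^{x^{m+j_0}}\,]$, joined to $r$ by the initial segment of $r$ out to $x^m$; its image lies in $K_P-B_{m-c}(\ast)$ for a constant $c=c(j_0)$, so the $\alpha_m$ march to infinity. By Theorem \ref{ssequiv} together with Theorem \ref{MLSS}, $K_P$ is semistable at $\infty$ exactly when this pro-group is Mittag--Leffler, i.e. when for each inner level the images of the deep $\pi_1$'s stabilize. I would instead fix the inner compactum $C=B_0(\ast)$ and argue that the classes of the $\alpha_m$ in $\pi_1(K_P-B_0(\ast))$ cannot be represented arbitrarily far out by homotopies supported in $K_P-B_0(\ast)$: the commutator defect recorded by $c_{j_0}\notin N(P)$ is genuinely localized near lamplighter-position $m$ and cannot be transported to infinity.

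The main obstacle is precisely this last localization statement, and it is where I expect the real work to lie. The plan is to construct a proper, position-sensitive invariant of loops in $K_P-B_0(\ast)$ that is preserved under homotopies missing $B_0(\ast)$ yet strictly detects the depth $m$ of $\alpha_m$. Concretely I would pass to the covering $\Gamma_1\to K_P$ with deck group $R/N(P)$ and use the surviving long-range non-commutativity $c_{j_0}\notin N(P)$, together with the $\mathbb Z$-grading of $L$ by lamplighter position, to define a commutator-counting cocycle whose value on $\alpha_m$ forces any filling or outward homotopy to cross $B_0(\ast)$. Establishing that this invariant is homotopy-invariant in $K_P-B_0(\ast)$ and nonzero on every $\alpha_m$ contradicts the Mittag--Leffler condition at level $C=B_0(\ast)$; hence by Theorem \ref{ssequiv} no $K_P$ is semistable at $\infty$, and by the reduction above $L$ is not semistable at $\infty$.
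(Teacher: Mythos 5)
First, a caveat: the paper does not prove Theorem \ref{LNss} at all---it quotes the result from \cite{MML}---so your proposal can only be judged on its own merits rather than against an in-paper argument. The framing you set up is sound as far as it goes: reducing, via generating-set independence, to showing that $K_P=\Gamma_{(L,S)}(P)$ fails semistability for \emph{every} finite $P$ is the correct reading of Definition \ref{defss3}; the extraction, from the non-finite-presentability of $L$, of an index $j_0$ with $c_{j_0}\notin N(P)$ is a correct and clean argument; and formulating the failure of the Mittag--Leffler condition at the fixed compactum $B_0(\ast)$ using the loops $\alpha_m$ (via Theorems \ref{ssequiv} and \ref{MLSS}) is the right target.

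But the proof stops exactly where the theorem begins. The step you label ``localization''---that for each $m$ there is $k$ such that the class of $\tau_m\alpha_m\tau_m^{-1}$ in $\pi_1(K_P-B_0(\ast),r(0))$ (where $\tau_m$ is the initial segment of $r$) does not lie in the image of $\pi_1(K_P-B_k(\ast),r(k))$---is not a deferred verification; it \emph{is} the theorem, and your proposal never constructs the tool that would prove it. The ``commutator-counting cocycle'' is not defined, and the two properties it must have (invariance under homotopies avoiding $B_0(\ast)$, and nonvanishing on every $\alpha_m$ while vanishing on everything coming from level $k$) are jointly equivalent to the statement being proved. This is genuinely delicate rather than routine: $K_P-B_0(\ast)$ contains a $2$-cell for each relator of $P$ attached at \emph{every} vertex outside $B_0(\ast)$, so any candidate invariant must be killed by all of these local moves at all positions and all lamp configurations, while still distinguishing position $m$ from position $k$; and the naive invariant, the image in $\pi_1(K_P)\cong R/N(P)$, does not obviously separate classes arriving from level $m$ from those arriving from level $k$ without---again---exactly the analysis in question. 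Note also that what you have actually established (each $\alpha_m$ is essential in $K_P$ because $c_{j_0}\notin N(P)$) is far too weak on its own: a group like $F_2\times\mathbb Z$ has essential loops marching to infinity that bound nothing, yet is semistable at $\infty$ (Example \ref{prodss}). Non-semistability requires controlling \emph{all} homotopies rel the base ray in the complement of a compact set, and that control is precisely the missing content.
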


\begin{theorem} [Theorem 2, \cite{Silk1}]\label{ExLsc} 
The extended lamplighter group 
$$E=\langle x,y,t\mid  y^2=1, [x,t]=1, t^{-1}yt=x^{-1}yxy\rangle$$
is simply connected at $\infty$. 
\end{theorem}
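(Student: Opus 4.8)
The plan is to first record the algebraic structure of $E$, read off one-endedness and semistability for free, and then concentrate all the work on upgrading to simple connectivity at $\infty$. Writing $D=N(y)$ for the normal closure of $y$, the stated facts give a short exact sequence $1\to D\to E\to \mathbb Z^2\to 1$ in which $D$ is locally finite (an infinite direct sum of copies of $\mathbb Z_2$) and $\mathbb Z^2=\langle \bar x,\bar t\rangle$. Since $\langle x,t\rangle\cap D=1$, this sequence splits and $E\cong D\rtimes A$ with $A=\langle x,t\rangle\cong \mathbb Z^2$. Viewing $D$ as an $\mathbb F_2[A]$-module, the defining relation $t^{-1}yt=x^{-1}yxy$ reads $t\cdot y=(x+1)\cdot y$, so $D$ is the cyclic module $\mathbb F_2[x^{\pm},t^{\pm}]/(t+x+1)$; in particular the ``lamp'' generators $w y w^{-1}$, $w\in A$, are linked across the grid and can be transported by the $A$-action. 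Because $E$ is metabelian it contains no rank-two free subgroup, and it surjects onto $\mathbb Z\oplus\mathbb Z$, so Theorem \ref{NOF2} already gives that $E$ is $1$-ended and semistable at $\infty$. Thus the entire problem is to show that the (semistable) fundamental pro-group is in fact pro-trivial.

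For the upgrade I would work in the Cayley $2$-complex $\tilde X=\Gamma(\mathcal P)$ of the given finite presentation and exploit the subgroup $A=\langle x,t\rangle\cong\mathbb Z^2$, which is itself both semistable and simply connected at $\infty$ (its Cayley graph is the Euclidean grid). The strategy has two steps. First, show that $A$ is simply connected at $\infty$ \emph{in} $E$ in the sense of Definition \ref{SCin}; since $A$ is also semistable at $\infty$, Lemma \ref{ss+sc} then upgrades this to a \emph{uniform} statement: for every compact $C$ and every $N$ there is a compact $D(C,N)$ so that any loop in $\tilde X-D$ all of whose vertices lie within $N$ of a \emph{single} coset $vA$ is null-homotopic in $\tilde X-C$. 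Second, and this is the geometric heart, show that an arbitrary edge-loop lying far out in $\tilde X$ can be homotoped, within the complement of the prescribed compact set, to a loop lying within bounded distance of one coset $vA$. Combining the two steps kills every loop sufficiently far out in $\tilde X-C$, which is exactly simple connectivity at $\infty$.

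The reduction in the second step is where the two commuting directions $x,t$ are essential, and it is the analogue of Silkin's geometric argument. The idea is to push a loop outward along a ray in one $\mathbb Z^2$-direction using semistability (Theorem \ref{SSloop}, in the strengthened rel-basepoint form of Theorem \ref{FGSS6}), while simultaneously using the module relation $t\cdot y=(x+1)\cdot y$ to slide the loop's lamp content transversally along the grid; because $y^2=1$, lamps transported on top of one another cancel, so the accumulated lamp-configuration of the loop can be thinned out and concentrated, leaving a loop supported near a single $A$-coset. It is precisely the availability of a second independent direction transverse to the one along which the loop is being expelled that makes this possible: for the ordinary Lamplighter group $L=\langle x,y\rangle$ there is only one such direction, the lamp content cannot be pushed off transversally, and indeed $L$ fails to be even semistable (Theorem \ref{LNss}). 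The passage from $L$ to $E$ adds exactly the missing transverse direction $t$.

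The main obstacle is this control of the locally finite kernel $D$. Because $D$ is torsion, the commutator series of $E$ terminates in a torsion group, so none of the ``infinite-order element'' engines apply: Theorems \ref{T187a}, \ref{LR}, \ref{MT87} and the simply-connected half of Theorem \ref{MainA} all require an infinite-order element in the relevant normal/subnormal/subcommensurated subgroup, and one checks that $A=\mathbb Z^2$ is not even commensurated in $E$ (conjugating $A$ by $y$ meets $A$ trivially). Hence a direct geometric argument seems unavoidable, and the delicate bookkeeping is to make the ``thin out the lamps while contracting in $\mathbb Z^2$'' homotopy proper and supported in the complement of $C$. A cleaner route, which I would pursue in parallel, is to prove the general lemma that every finitely presented (locally finite)-by-$\mathbb Z^2$ group is simply connected at $\infty$ by a symmetric double sweep in the two $\mathbb Z$-directions; the $x\leftrightarrow t$ symmetry of $E$ noted above suggests such a symmetric argument will go through and specializes to the theorem.
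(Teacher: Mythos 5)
Your preliminary analysis is sound: $E\cong D\rtimes A$ with $A=\langle x,t\rangle\cong\mathbb Z\times\mathbb Z$ (the splitting follows, as you say, from $\langle x,t\rangle\cap D=1$), $D$ is the cyclic module $\mathbb F_2[x^{\pm},t^{\pm}]/(t+x+1)$, Theorem \ref{NOF2} does give that $E$ is $1$-ended and semistable at $\infty$, and you are right that the torsion kernel disables every ``infinite-order element'' engine (Theorems \ref{LR}, \ref{T187a}, \ref{MT87}, and the simple connectivity half of Theorem \ref{MainA}) and that $A$ is not commensurated in $E$ (indeed $yAy^{-1}\cap A=\{1\}$), so Theorem \ref{MainCM} is unavailable as well. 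Your logical scheme is also valid in outline: if $A$ were known to be simply connected at $\infty$ in $E$ in the sense of Definition \ref{SCin}, then Lemma \ref{ss+sc} (using that $\mathbb Z\times\mathbb Z$ is semistable) gives the coset-uniform killing statement, and if in addition every loop far out in $\Gamma(\mathcal P)$ could be pushed, inside the complement of the prescribed compact set, to within a uniformly bounded distance $N=N(C)$ of a single coset $vA$ (note $N$ must be fixed before $D(C,N)$ is chosen), the theorem would follow.

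The genuine gap is that neither of these two steps is proved, and together they carry the entire content of the theorem. Step 1 is not a soft fact: a large square loop in the grid $A$ encircling the compact set $C$ cannot be contracted inside the plane spanned by $A$ minus $C$, so killing it forces an excursion into the lamp directions, and the $y$-edge at a vertex $g\in A$ leads into the coset $(gyg^{-1})A$, which varies with $g$ --- exactly the wreath-product phenomenon that makes the ordinary lamplighter fail semistability (Theorem \ref{LNss}). Thus Step 1 is essentially the theorem itself restricted to grid loops, and you offer no argument for it; both it and Step 2 are in fact consequences of the theorem, so what you have produced is a pair of intermediate statements jointly equivalent to the goal, not an independent proof of either. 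The ``slide the lamp content transversally using $t\cdot y=(x+1)\cdot y$ and cancel via $y^2=1$'' homotopy is stated only as an intention --- no construction of the homotopy, no verification that it is proper, no control keeping it off $C$ --- and you concede the ``delicate bookkeeping'' is missing; the parallel route (every finitely presented (locally finite)-by-$\mathbb Z^2$ group is simply connected at $\infty$) is again an unproven claim at least as strong as the theorem. Be aware also that the paper itself contains no proof of this statement: it cites Silkin's dissertation \cite{Silk1}, describing his proof only as a direct geometric argument, which is precisely the part you have deferred. As written, your proposal is a plausible research plan whose core is empty.
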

  
Paradoxically, it is not clear whether or not the lamplighter group is simply connected at $\infty$ under Definition \ref{SCfg}.

\begin{question}
Is the lamplighter group simply connected at $\infty$?
\end{question}
Denis Osin suggests that the ideas behind the proof of Theorem \ref{LNss} may extended to a class of groups which contains all finitely generated groups $G$ that fit into a exact sequence:
$$1\to K\to G\to F\to 1$$ 
where $K$ is a locally finite infinite group and $F$ is an infinite finitely generated free group as well as finitely generated not finitely presented limits of virtually free groups. 

For this last class of groups there is a sequence of epimorphisms: 
$$F_0{\buildrel q_1\over  \twoheadrightarrow} F_1 {\buildrel q_2 \over \twoheadrightarrow}\cdots $$
where each $F_i$ is a finitely presented infinite virtually free group. If $K_i$ is the kernel of $q_i$ and $K^i$ is the pre-image of $K_i$ in $F_0$ then $K^1<K^2<\ldots$ is an ascending sequence of normal subgroups in $F_0$. Let $K=\cup_{i=1}^\infty K^i$ then the limit of the sequence of epimorphisms is  the group $G=F_0/K$. 
%Important facts here: Each $F_i$ has more than 1-end and is word hyperbolic, and the group $G$ has the ``bottleneck" property. %There are interesting elements of infinite order in $G$...
%D. Osin has been able to extend the ideas in the proof of Theorem \ref{LNss} to show that finitely generated, not finitely presented limits of virtually free groups are not semistable at $\infty$.

\medskip

\noindent {\bf A ``torsion free" version of the Lamplight group.} Recall that the Lamplighter group has presentation:
$$L\equiv \langle x,y\mid y^2=1, [y,y^{x^i}]=1 \hbox{ for all } i\in \mathbb Z\rangle .$$
Removing the relation $y^2=1$ gives a presentation:
$$H\equiv \langle x,y: [y,y^{x^i}]=1 \hbox{ for all } i\in \mathbb Z\rangle$$
Note that the normal closure of $y$ in $H$ is an infinite order free abelian subgroup of $H$ with basis $\{y^{x^i}:i\in \mathbb Z\}$. 
Let $F$ be the free group on $\{x,y\}$. First we show the map $\phi:F\to F$ determined by $x\to x$ and $y\to yy^x$ induces a monomorphism of $H$ to $H$. Observe that 
$$(\ast)\ \ \ \ \ \ \ \ \ \ \ \phi(y^{x^i})=x^iyy^xx^{-i}=y^{x^i}y^{x^{i+1}}.$$

In order to see that $\phi$ induces a homomorphism on $H$, it suffices to show that $\phi([y,y^{x^i}])$ is an $H$-relator for all $i\in \mathbb Z$.  First observe that in $H$
$$[y^{x^i},y^{x^j}]=1 \hbox{ for all } i,j\in \mathbb Z.$$
Applying $(\ast)$:
$$\phi([ y, y^{x^i}])=(yy^x)( y^{x^i} y^{x^{i+1}})( (y^{-1})^x y^{-1})  ((y^{-1})^{x^{i+1}}(y^{-1})^{x^i}).$$
All terms of this last expression commute in $H$ and each is paired with its inverse. Hence $\phi$ induces a homomorphism  $\phi':H\to H$.

First observe that in an $x,y$ word in the free group $F$ (or in $H$), the letters $x$ and $x^{-1}$ can be ``slid to the left" as follows:
$$yx=xy^{x^{-1}},\  y^{-1}x=x(y^{-1})^{x^{-1}},\  yx^{-1} =x^{-1}y^x\hbox{ and }y^{-1}x^{-1}=x^{-1}(y^{-1})^x$$ 
Hence, any element of $F$ (or $H$) can be written as 
$$(\ast\ast)\ \ \ \ \ \ \ \ h=x^ny_1^{t_1}y_2^{t_2}\cdots y_n^{t_n}$$
where $t_i=x^{k(i)}$ and $y_i=y^{j(i)}$ (here $j(i)$ and $k(i)$ are integers) for all $i$. The homomorphism $q:H \to \mathbb \langle x\rangle$ with kernel equal to the normal closure of $y$ is such that $q\phi'=q$. So that if $h\in ker(\phi')$ then $n=0$ .  Suppose $h\ne 1\in H$. The normal closure of $y$ in $H$ is a free abelian group with basis $\{y^{x^i}: i\in \mathbb Z\}$. Arrange the $t_i$ in $(\ast\ast )$ so that $k(1)<k(2)<\cdots <k(n)$. Then 
$$\phi'(h)= y_1^{x^{k(1)}}y_1^{x^{k(1)+1}}\cdots y_n^{x^{k(n)}}y_n^{x^{k(n)+1}}$$
and $\phi'(h)\ne 1$ since $y_n^{x^{k(n)+1}}$ does not cancel in this (abelian) expression. The homomorphism $\phi'$ is a monomorphism. 

The ascending HNN-group $G\equiv \langle t, H:t^{-1}ht=\phi(h) \hbox{ for all } h\in H\rangle$ has presentation:
$$\langle t,x,y:t^{-1}xt=x, t^{-1}yt=yy^{x}, [y,y^{x^i}]=1 \hbox{ for all }i\rangle.$$
The $H$-relation $[y, y^{x}]$ and the conjugation (by $t$) relations induce the relation  
$$\phi(yy^xy^{-1}(y^{-1})^x)=(yy^x)(y^xy^{x^2})((y^{-1})^xy^{-1})((y^{-1})^{x^2}(y^{-1})^x).$$
Conjugating the relation $[y, xyx^{-1}]$ by $x$ induces $[y^x,y^{x^2}]$ (so that $y^x$ commutes with both $y$ and $y^{x^2}$). These relations reduce the last expression to 
$$y(y)^{x^2}y^{-1}(y^{-1})^{x^2} (=[y,y^{x^2}]).$$
An induction argument shows that for each $i$, $[y,y^{x^i}]$ is a consequence of $[y, xyx^{-1}]$ (and the conjugation relations), reducing our presentation to:
$$G=\langle t,x,y:t^{-1}xt=x, t^{-1}yt=yxy^{-1}x^{-1}, [y,y^{x}]=1\rangle.$$

\subsubsection{Amalgamated Products, HNN-Extensions, 1-Relator Groups and Subgroup Combination Results} \label{Comb1R}

Theorem \ref{FIss} as well as the main results of Sections \ref{Sed} and \ref{GPofG} are combination results. In this section we concentrate on semistability results for graphs of groups decompositions of groups and subgroup combination theorems. 
Perhaps the most sophisticated result in the theory of semistability at $\infty$ for groups is the following. Note that it is non-trivial even when the groups $A$ and $B$ are free groups and $G=A\ast_CB$ (see Example \ref{Examples3}).

\begin{theorem} [(\cite{MT1992}] \label{MTComb}  
If $G$ is the fundamental group of a finite graph of groups where each vertex group is finitely presented with semistable fundamental group at $\infty$, and each edge group is finitely generated, then $G$ has semistable fundamental group at $\infty$. 
\end{theorem}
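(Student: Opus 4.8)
The theorem asserts that the fundamental group of a finite graph of groups, with finitely presented semistable vertex groups and finitely generated edge groups, is itself semistable at infinity. This is the Mihalik--Tschantz combination theorem, and the natural approach is to reduce the general finite graph of groups to the two building blocks, amalgamated products $A\ast_C B$ and HNN extensions $A\ast_C$, and then handle those two cases by a direct geometric argument in the Bass--Serre tree.

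The plan is first to set up the reduction. A finite graph of groups is built from its vertex and edge groups by iterated amalgamations and HNN extensions along a spanning tree and the remaining edges; so by induction on the number of edges it suffices to prove semistability is preserved under a single amalgamation $G = A\ast_C B$ (with $A,B$ finitely presented and semistable, $C$ finitely generated) and under a single HNN extension $G = A\ast_C$. The subtlety that makes the induction legitimate is that at each stage the new fundamental group is again finitely presented (since the edge groups are finitely generated and vertex groups finitely presented), and one must check that the intermediate groups remain semistable so that the inductive hypothesis applies. I would build a suitable space on which $G$ acts: take finite complexes $X_A$, $X_B$ realizing $A$, $B$ with $C$ carried by a subcomplex, glue along a mapping cylinder of the edge-group inclusion to produce a finite complex $X$ with $\pi_1(X)=G$, and pass to the universal cover $\tilde X$. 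The key feature of $\tilde X$ is that it is built out of copies of $\tilde X_A$ and $\tilde X_B$ (the ``tree of spaces'' / Bass--Serre picture) indexed by the vertices of the Bass--Serre tree $T$, with the copies of $\tilde C$ indexed by edges of $T$.

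The heart of the argument is the semistability verification for the one-edge cases, and this is where the main obstacle lies. Using Theorem~\ref{ssequiv}, semistability is equivalent to the following pushing condition: for each compact $C_0\subset \tilde X$ there is a compact $D$ so that any two proper rays based at a common point and lying outside $D$ can be connected ``near infinity'' by homotopies controlled in $\tilde X - C_0$. Given a proper ray in $\tilde X$, one analyzes how it travels through the tree of spaces: it passes through a sequence of vertex-copies, crossing edge-copies $\tilde C$ in between. The difficulty is that the edge groups $C$ are only assumed finitely generated, not finitely presented or semistable, so one cannot directly slide loops across an edge-space; the control must come from the semistability of the adjacent vertex spaces combined with the fact that each $\tilde C$ is a connected (indeed $0$-ended or higher, but path-accessible) subcomplex. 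The technical core is a careful bookkeeping argument showing that a loop based on a ray can be homotoped, stage by stage across the tree, always keeping the homotopy outside a prescribed compact set, by repeatedly invoking the vertex-group semistability (in the form of Theorem~\ref{SSloop} and Theorem~\ref{FGSS6}) to push loops off to infinity within each vertex-copy, and using finite generation of the edge groups to guarantee that the crossing data stays within bounded complexity. This ``stacking of homotopies across the tree'' is genuinely delicate because one must arrange the compact exhaustion of $\tilde X$ compatibly with the exhaustions in each vertex-copy simultaneously, uniformly over the infinitely many copies.

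Finally, I would assemble the pieces: having established the amalgamated-product and HNN cases, the induction over the finite graph of groups yields the theorem, with Theorem~\ref{P1E} and Corollary~\ref{pro2} ensuring that the conclusion is independent of the chosen finite complex. The expected main obstacle is precisely the uniform control of homotopies across edge-spaces with only finitely generated (possibly non-semistable) edge groups; I anticipate that the right way to tame it is to fix a proper edge-path representative of each ray (Theorem~\ref{geoend}), track the sequence of vertex- and edge-copies it meets, and prove a lemma that connecting paths and filling homotopies can be chosen within a bounded neighborhood of the ray's image in the tree of spaces, so that properness of the total homotopy follows from properness in each vertex-copy together with the local finiteness of $T$.
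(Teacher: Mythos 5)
The paper itself contains no proof of Theorem \ref{MTComb}: it is quoted from the Mihalik--Tschantz memoir \cite{MT1992}, so your proposal has to be judged against that argument. Your architecture does match it at the top level --- induction on the number of edges reducing everything to a single amalgamated product $A\ast_C B$ or HNN extension $A\ast_C$, realized as a tree of spaces over the Bass--Serre tree --- and the reduction step is sound, since the intermediate groups are finitely presented and inductively semistable.

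The genuine gap is in the one-edge case, which is exactly the part you leave as a description of what must be done rather than an argument. Segments of loops and of homotopy tracks lying in the edge spaces $\tilde C$ can neither be filled nor pushed there: since $C$ is only finitely generated, one has no control whatsoever over loops in, or the fundamental pro-group of, an edge space, and ``finite generation of the edge groups guarantees that the crossing data stays within bounded complexity'' is a restatement of the problem, not a mechanism. Moreover, the engine you propose --- repeatedly invoking Theorem \ref{SSloop} and Theorem \ref{FGSS6} inside vertex copies --- cannot be where the content lies: the paper itself notes that the theorem is nontrivial even when $A$ and $B$ are free (Example \ref{Examples3}), in which case the vertex spaces are trees and within-vertex-space semistability is vacuous, so the entire difficulty sits in the global combinatorics of edge-space crossings, precisely the part your sketch does not address. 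Finally, the lemma you anticipate --- that connecting paths and filling homotopies can be chosen in a bounded neighborhood of the rays' image in the tree of spaces --- is stronger than what semistability requires and is unjustified, and likely false: when the one-edge group is $1$-ended, two proper rays converging to the (unique) end may live in vertex spaces arbitrarily far apart in the Bass--Serre tree, so a proper homotopy between them must sweep through vertex and edge spaces far from either ray. Properness of a stacked homotopy is the routine part of such proofs; producing homotopies with the required control across infinitely many edge-space crossings is what occupies the bulk of \cite{MT1992}, and it is absent here.
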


\begin{conjecture}\label{NSSsplit} 
Suppose $G$ is a finitely presented group that does not have semistable fundamental group at $\infty$, then $G$ splits non-trivially.
\end{conjecture}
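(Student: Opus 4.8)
The plan is to prove the contrapositive: a finitely presented group that does not split non-trivially must be semistable at $\infty$. First I would reduce to the $1$-ended case. If $G$ has more than one end then, being finitely presented, Stallings' Theorem \ref{Stall} already exhibits a non-trivial splitting of $G$ over a finite subgroup, so the conclusion holds; finite groups are semistable at $\infty$, so any potential counterexample must be $1$-ended. Next I would exploit the rigidity that ``$G$ does not split'' imposes on a finitely presented group: by Proposition \ref{OHNN} every epimorphism $G\to\mathbb Z$ realizes $G$ as an HNN-extension, so a non-splitting $G$ has $H^1(G;\mathbb Z)=0$; combined with the absence of an amalgamated decomposition, Serre's characterization of property FA for finitely generated groups shows that a non-splitting finitely presented group is \emph{exactly} a finitely presented group with property FA. Thus the Conjecture is equivalent to the assertion that every finitely presented group with property FA is semistable at $\infty$, and this is the statement I would try to establish.

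A first, partial attack leverages the positive results of this chapter. Many FA groups carry an infinite, finitely generated, commensurated (or subcommensurated) subgroup of infinite index --- for instance the $SL_n(\mathbb Z)\prec SL_n(\mathbb Z[1/p])$ phenomenon --- and for such groups Theorems \ref{MainCM} and \ref{MainA} give semistability outright. More generally one can try to manufacture a finitely generated subgroup $J$ of infinite index acting by covering transformations on the universal cover $Y$ of a finite $K(G,1)$ and then verify that $J$ is both semistable and co-semistable at $\infty$ in $Y$, so that Theorem \ref{MTGGM} applies. This route succeeds class by class but does not obviously cover all FA groups, since an FA group need not possess a usable commensurated subgroup or an auxiliary action of the required type; I regard it as a source of special cases rather than a proof.

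The main line of attack returns to the contrapositive. Suppose the $1$-ended finitely presented group $G$ is not semistable at $\infty$; the failure is witnessed by two proper rays converging to one end that are not properly homotopic, equivalently by the fundamental pro-group at $\infty$ failing to be pro-isomorphic to an inverse sequence with epimorphic bonding maps (Theorem \ref{ssequiv}). I would attempt to convert this homotopical obstruction into coarse separation data: a non-trivial almost invariant subset of $G$, i.e.\ an equivariant codimension-one pattern at infinity whose induced partition of the end space is $G$-incompatible. Given such data, the codimension-one machinery of Dunwoody--Swenson and Scott--Swarup --- or, concretely, Sageev's cubulation of a codimension-one subgroup followed by an essentiality argument --- produces a non-trivial action of $G$ on a tree, hence a splitting, contradicting property FA. This mirrors the known dictionary for word hyperbolic groups, where non-semistability corresponds to a weak cut point in $\partial G$ and Bowditch-type JSJ theory then yields a splitting (cf.\ Theorem \ref{WHss}), and the CAT(0) situation, where \cite{GS17} already detects weak cut points and Tits-separation from the failure of semistability.

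The hard part is precisely the extraction step above. Non-semistability is a two-dimensional, homotopical invariant: it lives in the fundamental pro-group and is reflected in $H^2(G;\mathbb Z G)$ only in one direction, since by Corollary \ref{GM2} semistability forces $H^2(G;\mathbb Z G)$ to be free abelian but the converse can fail. By contrast, the splitting theorems consume one-dimensional, coarse, codimension-one separation data. No general mechanism is known for producing an almost invariant set out of a failure of proper homotopy; the construction is available when $G$ has a well-behaved boundary (word hyperbolic, or CAT(0) with controlled Tits geometry) but not in the abstract finitely presented setting. I therefore expect that settling the conjecture in full will require either a boundary theory for arbitrary finitely presented groups or a genuinely new bridge from the pro-$\pi_1$ obstruction at infinity to an equivariant codimension-one structure.
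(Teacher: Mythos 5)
The statement you are addressing is Conjecture \ref{NSSsplit}: it is an open problem, and the paper contains no proof of it, only motivation (the analogy with Stallings' Theorem \ref{Stall} via ``strong ends'') and consequences (if true, finite-index subgroups of FA groups, hence property $(T)$ groups such as $SL_3(\mathbb Z)$, would be semistable at $\infty$). Your submission is likewise not a proof but a research program, and to your credit you say so explicitly: your final paragraph concedes that the decisive step --- converting the failure of semistability, a pro-homotopy obstruction living in $\pi_1(\varepsilon G)$, into an equivariant codimension-one structure (an almost invariant set) --- is not available in the abstract finitely presented setting. That is precisely the missing idea, and it is why the statement remains a conjecture. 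Your correct preliminary reductions do track the paper's own framing: the multi-ended case is dispatched by Theorem \ref{Stall}, and your reformulation ``the conjecture holds iff every finitely presented FA group is semistable at $\infty$'' is exactly the observation the paper makes after stating the conjecture.

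Beyond the admitted gap, there is a second genuine flaw in the proposed bridge that you should be aware of: even if one could extract a nontrivial almost invariant set (equivalently, an essential codimension-one pattern) from non-semistability, the implication ``codimension-one data $\Rightarrow$ nontrivial splitting'' is false in general. Fundamental groups of closed non-Haken hyperbolic $3$-manifolds contain essential codimension-one (quasi-convex surface) subgroups and act essentially on CAT(0) cube complexes via Sageev's construction, yet they admit no nontrivial splitting whatsoever; an essential action on a cube complex does not reduce to an action on a tree. The Dunwoody--Swenson and Scott--Swarup machinery you invoke requires additional hypotheses on the almost invariant set (control of crossings, or algebraic restrictions on the subgroup it is over) that nothing in your construction would supply. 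So your program has two independent holes: the extraction step (admittedly conjectural) and the splitting step (provably insufficient as stated). Any serious attack on Conjecture \ref{NSSsplit} would have to produce almost invariant sets with the specific non-crossing properties those theorems consume, not merely codimension-one data.
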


The motivation for Conjecture \ref{NSSsplit} is Stallings' Splitting Theorem (\ref{Stall}). There is a natural notion of the {\it strong ends} of a space $X$ (the equivalence classes of proper maps $p:[0,\infty)\to X$ where $p$ and $q$ are equivalent if they are properly homotopic in $X$). Stallings' Theorem implies that groups with more than one end split. The analogy would be - more than one strong end forces the group to split. Note that if Conjecture \ref{NSSsplit} is true, then any group that can be embedded as a subgroup of finite index in an FA group would have semistable fundamental group at $\infty$. This observation would apply to groups with property $(T)$ (all subgroups of finite index of such groups also have property $(T)$) since such groups are FA. As an example $SL_3(\mathbb Z)$ has property $(T)$.

The next result implies Theorem \ref{MTComb} if vertex groups are 1 or 2-ended and edge groups are infinite.  

\begin{theorem} [Theorem 3, \cite{M4}] \label{MComb} 
Suppose  $A$ and $B$ are finitely generated 1 or 2-ended subgroups of the finitely generated group $G$ and both $A$ and $B$ are semistable at $\infty$.  If the set $A\cup B$ generates $G$ and the group $A\cap B$ contains a finitely generated infinite subgroup then $G$ has semistable fundamental group at $\infty$. 
\end{theorem}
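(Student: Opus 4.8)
The plan is to reduce to the one-ended case and then verify semistability of a conveniently chosen two-complex for $G$ by sliding loops through a system of ``sheets'' coming from the cosets of $A$ and $B$. First, Theorem \ref{combE} shows that $G$ is $1$- or $2$-ended, and since $2$-ended groups are semistable at $\infty$, I may assume $G$ is $1$-ended. Let $D\le A\cap B$ be a finitely generated infinite subgroup supplied by hypothesis, and fix a finite generating set $S=S_A\cup S_B$ of $G$ where $S_A$ generates $A$, $S_B$ generates $B$, and a generating set of $D$ lies in $S_A\cap S_B$. Because $A$ and $B$ are semistable, Definition \ref{defss3} provides finite relation sets $P_A$ (in $S_A$) and $P_B$ (in $S_B$) with $\Gamma_{(A,S_A)}(P_A)$ and $\Gamma_{(B,S_B)}(P_B)$ semistable at $\infty$. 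Put $P=P_A\cup P_B$ and $X=\Gamma_{(G,S)}(P)$; by Definition \ref{defss3} it suffices to prove that the locally finite, $1$-ended complex $X$ is semistable at $\infty$.

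The geometric setup is as follows. Left translation by $g\in G$ is a cellular automorphism of $X$, so for each coset $gA$ the subcomplex $X_{gA}$ on the vertices $gA$ (with its $S_A$-edges and $P_A$-cells) is a copy of $\Gamma_{(A,S_A)}(P_A)$, hence semistable at $\infty$; similarly for each $B$-sheet $X_{gB}$. When $X_{gA}$ and $X_{g'B}$ meet, their vertex overlap is a coset of $A\cap B$ and therefore contains a translate of the infinite, connected Cayley graph of $D$; this overlap carries a proper ray. These two facts are the engine of the argument: (I) \emph{intra-sheet sliding}, where Theorem \ref{SSloop} lets loops based on a proper ray inside a single sheet be slid to infinity within that sheet (and hence, since a given compact set meets only finitely many sheets, outside any prescribed compact set of $X$); and (II) \emph{inter-sheet transfer}, where the infinite $D$-graph in an overlap lets a proper ray be carried from one sheet into an adjacent sheet while staying arbitrarily far out.

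To assemble these, I would fix a base ray $r$ lying inside the $A$-sheet $X_A$ (available since $A$ is infinite), use Theorem \ref{geoend} to replace arbitrary proper rays by edge-path rays, and verify the criterion of Theorem \ref{ssequiv}(5): given a compact $C\subseteq X$, produce a compact $D'\subseteq X$ so that any two proper rays based at a common vertex and lying in $X-D'$ can be joined, outside an arbitrary further compact set, by a path whose associated loop dies in $X-C$. Concretely, I would decompose a ray $s$ into segments, each lying in a single sheet, separated by overlap crossings; repeatedly applying (I) to push each segment to infinity inside its sheet and (II) to cross each overlap, I would homotope $s$ (properly, and within $X-C$) into the sheet $X_A$, and then finish using the intra-sheet semistability of $A$ via Theorem \ref{FGSS6}, which says two proper rays of $X_A$ with a common base vertex are properly homotopic rel that vertex.

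The main obstacle is controlling the passage between sheets. Because $G$ need not split as $A\ast_{A\cap B}B$, the pattern of overlapping sheets is not a tree, so a general proper ray may re-enter sheets in a complicated, non-monotone way and there is no Bass--Serre tree along which to induct. The heart of the proof is therefore a ``reduced form'' (bounded-backtracking) argument that brings $s$ into a controlled staircase of sheet-segments and overlap-crossings, together with the careful bookkeeping needed to keep every sliding and transfer homotopy outside the prescribed compact set $C$ while respecting the quantifier order ($D'$ chosen after $C$ but before the auxiliary compact set). A secondary delicacy is that the sheets are only semistable, not simply connected at $\infty$, so loops cannot simply be filled; one must track the sliding homotopies of (I) and coordinate them with the transfers of (II) at each overlap.
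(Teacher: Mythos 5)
Your setup is the right frame and matches the strategy of the proof this paper cites (the paper does not reprove Theorem \ref{MComb}; it refers to \cite{M4}): reduce to $1$-ended $G$ via Theorem \ref{combE}, work in $X=\Gamma_{(G,S)}(P)$ as in Definition \ref{defss3}, exploit the coset sheets and the copies of the Cayley graph of $D$ in the overlaps. But the actual content of the proof is exactly the part you defer. What has to be done is this: decompose a proper ray $s$ into its maximal single-sheet segments $\alpha_1,\alpha_2,\dots$ with transition vertices $v_1,v_2,\dots$; choose at each $v_i$ a transfer ray $\rho_i$ inside the $D$-coset $v_iD$; produce, inside the sheet $Y_i$ containing $\alpha_i$, a proper homotopy $H_i$ rel $v_{i-1}$ from $\rho_{i-1}$ to $(\alpha_i,\rho_i)$; and stack the $H_i$ (as in the proofs of Theorems \ref{ssequiv} and \ref{FGSS6}) into a single homotopy from $\rho_0$ to $s$. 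The crux is that the infinite stack must be \emph{proper}, and this forces two non-obvious choices you never make: (a) the $\rho_i$ must be chosen, via Lemma \ref{LocFin} applied to the $D$-cosets, so that only finitely many of them meet any compact set --- this is not automatic, since $v_iD$ can pass close to the basepoint even when $v_i$ is far away, and since $s$ may re-enter the same sheet infinitely often; and (b) each $H_i$ must be chosen, via the compact-avoidance form of sheet semistability (part (4) of Theorem \ref{ssequiv}, or Theorem \ref{SSloop}), to miss a compact set that grows with $i$, using the fact that a compact subset of $X$ meets only finitely many sheets. Your proposal instead locates the difficulty in a ``reduced form / bounded backtracking'' normalization of $s$; no such normalization exists or is needed (there is no tree structure to normalize against, and re-entry of sheets is harmless once (a) and (b) are arranged), and the properness argument itself is dismissed as ``bookkeeping.'' Naming the heart of the proof is not the same as supplying it, so as written this is a plan, not a proof.

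There is also a concrete error in your closing step: you finish by applying Theorem \ref{FGSS6} inside $X_A$, but that theorem requires the group to be $1$-ended, and if $A$ is $2$-ended it fails badly --- two proper rays in a $2$-ended sheet based at the same vertex but tending to opposite ends of the sheet are not properly homotopic \emph{within the sheet} at all. The $2$-ended case is not marginal here: the paper's own applications of Theorem \ref{MComb} (for instance Theorems \ref{solv1} and \ref{metanil}) invoke it with $2$-ended subgroups. Handling it requires an extra mechanism: when, say, $A$ is $2$-ended, $D$ has finite index in $A$, the coset graph $v_iD$ is $2$-ended with its two ends going to the two ends of the sheet, and one must choose the \emph{direction} of each $\rho_i$ so that $\rho_{i-1}$ and $(\alpha_i,\rho_i)$ tend to the same end of $Y_i$ before the sheet's semistability can be used; and when both $A$ and $B$ are $2$-ended one needs a separate argument (for example, $D$ is then commensurated in $G$, so Theorem \ref{MainCM} applies). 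None of this appears in your proposal.
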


The proof of the next result completely parallels the proof of Theorem \ref{MComb}. When $A$ and $B$ are finitely presented it is used in the proof of Theorem \ref{SSDecomp}.

\begin{theorem} [Lemma 5.1, \cite{Mih22}] \label{SS2} 
Suppose $G=A\ast_CB$ or $G=A\ast_C$, where $C$ is infinite, finitely generated and in the first case, of finite index in $B$. If $A$ is finitely generated, 1-ended and semistable at $\infty$, then $G$ is 1-ended and semistable at $\infty$.
\end{theorem}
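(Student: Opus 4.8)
The plan is to treat the 1-endedness and the semistability separately, deriving the first from Theorem \ref{SS3} and the second by adapting the tree-of-spaces scheme used in the proof of Theorem \ref{MComb}. In the amalgamated case the hypothesis that $C$ is finitely generated and of finite index in $B$ forces $B$ to be finitely generated, so in either case $A$, $B$ and $C$ are all finitely generated and Theorem \ref{SS3} applies directly to give that $G$ is 1-ended. For semistability, following Definition \ref{defss3}, I would exhibit an explicit complex. First choose a finite generating set $\mathcal A$ of $A$ containing a generating set $\mathcal C$ of $C$, together with a finite set $P_A$ of $\mathcal A$-relations so that $\Gamma_{(A,\mathcal A)}(P_A)$ is semistable at $\infty$ (possible since $A$ is finitely generated and semistable). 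In the amalgamated case extend to $\mathcal A\cup\mathcal B$ with $\mathcal A\cap\mathcal B=\mathcal C$, as in \S\ref{Hspace}, and set $P=P_A\cup P_B$ for a suitable finite $P_B$; let $X=\Gamma_{(G,\mathcal A\cup\mathcal B)}(P)$. Then $X$ is a tree of spaces over the Bass--Serre tree $T$: the $A$-vertex spaces are copies of the semistable complex $\Gamma_{(A,\mathcal A)}(P_A)$, the $B$-vertex spaces are copies of $\Gamma_{(B,\mathcal B)}(P_B)$, and the edge spaces are copies of $\Gamma(C,\mathcal C)$, each unbounded because $C$ is infinite. The HNN case $G=A\ast_C$ is identical, with a single vertex-space type and the stable-letter edges.

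The heart of the argument is the criterion of Theorem \ref{ssequiv}(5): given a compact $C_0\subseteq X$ I must produce a compact $D$ so that for proper rays $r,s$ based at a common vertex $v$, converging to the same end and with image in $X-D$, there is a path connecting points of $r$ and $s$ making the resulting loop trivial in $X-C_0$. Two facts drive this. First, within a single $A$-vertex space the semistability of $\Gamma_{(A,\mathcal A)}(P_A)$ lets loops and ray-segments be slid off to infinity, away from $C_0$, by Theorem \ref{SSloop} (together with the rel-basepoint statement of Theorem \ref{FGSS6} applied to $A$). Second, the mechanism for crossing between adjacent $A$-vertex spaces: since $C$ is infinite and $A$ is 1-ended, $C$ is 1-ended in $A$ (every infinite subgroup of a 1-ended group is 1-ended in it, \S\ref{PairsG}), so in each edge space arbitrarily far-out points of a $C$-coset can be joined avoiding any preassigned compact set. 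This lets me connect the tail of $r$ as it leaves one vertex space to the tail of $s$ in the next, routing the connection through the unbounded part of the shared edge space and filling the created loops by the vertex-space homotopies just described.

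The finite-index hypothesis $[B:C]<\infty$ is precisely what permits $B$ (which may even be infinitely-ended) to be left uncontrolled at infinity. A $B$-vertex has finite valence $[B:C]$ in $T$, and each $B$-vertex space is a bounded neighborhood of the union of its finitely many edge spaces; hence any proper ray passing through a $B$-vertex space can be pushed, within bounded distance, onto the adjacent $C$-cosets, which lie in $A$-vertex spaces. Thus all genuinely unbounded behavior is carried by the $A$-vertex spaces and the infinite edge groups, while the $B$-vertices contribute only bounded detours. Because $r$ and $s$ are proper, each meets only finitely many vertex and edge spaces before any given compact set, so stacking the finitely many vertex-space homotopies along the segment of $T$ that the rays follow yields the required proper homotopy in $X-C_0$, exactly as in Theorem \ref{MComb}.

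The main obstacle I anticipate is the bookkeeping that makes the separate homotopies in distinct vertex spaces agree along the shared (unbounded) edge spaces while keeping the combined homotopy proper. One must track the moving base point along $r$ and $s$ as they cross the edge spaces, choose the far-out connecting points in the $C$-cosets compatibly with a fixed compact exhaustion of $X$, and verify that the union of homotopies has compact preimage of every compact set. This matching-and-properness step is the technical core; it is what the proof of Theorem \ref{MComb} already establishes, and the only genuinely new points needed here are that $B$ is finitely generated and that each $B$-vertex space collapses, up to bounded distance, onto its edge spaces, so that the semistability of $A$ together with the infiniteness of $C$ suffices even when $B$ and $C$ are not themselves semistable or finitely-ended.
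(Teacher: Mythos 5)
Your proposal is correct and is essentially the paper's own approach: the paper likewise derives the 1-ended conclusion from Theorem \ref{SS3} (which it proves, via Touikan's theorem, precisely to generalize the 1-ended part of Lemma 5.1 of \cite{Mih22} to finitely generated groups), and for semistability it simply remarks that the proof ``completely parallels the proof of Theorem \ref{MComb}'' --- the same tree-of-spaces scheme you outline, with semistable $A$-vertex spaces, unbounded edge spaces crossed using the fact that the infinite group $C$ is 1-ended in the 1-ended group $A$, and the finite-index hypothesis reducing $B$-vertex spaces to bounded detours onto their incident edge spaces. The matching-and-properness bookkeeping you identify as the technical core (including the choice of the finite relation set $P_B$ that makes the $B$-vertex pushes actual homotopies) is exactly what the cited argument of \cite{Mih22}, paralleling Theorem \ref{MComb}, supplies.
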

Theorem \ref{MTComb} is used to prove the following result:

\begin{theorem} [Theorem 1, \cite{MT92}] \label{OneR} 
All 1-relator groups are semistable at $\infty$.
\end{theorem}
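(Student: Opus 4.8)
The plan is to prove the theorem by induction, using the Magnus--Moldavanskii hierarchy to express a one-relator group $G$ as an HNN-extension of a one-relator group of smaller complexity over finitely generated free subgroups, and then to feed this splitting into the combination theorem \ref{MTComb}. Since one-relator groups are finitely presented, \ref{MTComb} is precisely the engine that lets a semistability statement about the simpler base group propagate to $G$: its hypotheses ask only that each vertex group be finitely presented and semistable and that each edge group be finitely generated, both of which the hierarchy will supply. Thus the real work is classical one-relator group theory; the hard analytic input (semistability of a graph of groups assembled from semistable vertex groups with finitely generated edge groups) is already granted by \ref{MTComb}.

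For the base of the induction, consider $G=\langle x_1,\dots,x_n \mid r\rangle$ with $r$ cyclically reduced. After applying an automorphism of the free group $F(x_1,\dots,x_n)$ realizing a suitable $GL_n(\mathbb Z)$-move on the abelianization, I may assume the exponent-sum vector of $r$ is $(d,0,\dots,0)$ with $d\ge 0$. If the resulting relator involves only $x_1$, then $r$ is (up to this automorphism) a power $x_1^{\pm d}$, so $G\cong \mathbb Z_d \ast F_{n-1}$, a free product of a cyclic group with finitely many infinite cyclic factors. Applying \ref{MTComb} to this free-product (trivial-edge-group) graph of groups, whose vertex groups $\mathbb Z_d$ and $\mathbb Z$ are semistable since they are finite or $2$-ended, shows $G$ is semistable at $\infty$. (Alternatively, one-relator groups with $r$ a proper power have torsion, are word hyperbolic, and hence are semistable by \ref{WHss}; this shortcut is not needed, but it disposes of the torsion case without any induction.)

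For the inductive step I may therefore assume $r$ involves at least two generators and, after the automorphism above, that some generator $t$ occurring in $r$ has exponent sum $0$. Writing the remaining generators as $y_1,\dots,y_m$ and setting $y_{i,j}=t^{-j}y_i t^{j}$, the Magnus rewriting expresses $r$ as a word $\rho$ in the letters $y_{i,j}$ with $\mu\le j\le M$ (the extreme indices occurring), so that $G$ is the HNN-extension
$$G=\big\langle\, H,\ t \ \mid\ t^{-1}At=B\,\big\rangle,\qquad H=\langle\, y_{i,j}:\mu\le j\le M \ \mid\ \rho\,\rangle,$$
where $A=\langle y_{i,j}:\mu\le j\le M-1\rangle$ and $B=\langle y_{i,j}:\mu+1\le j\le M\rangle$, and $t$ conjugates $A$ onto $B$ via the index shift. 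By the Freiheitssatz the Magnus subgroups $A$ and $B$ are free, and they are visibly finitely generated; meanwhile $H$ is again a one-relator group (hence finitely presented) whose relator $\rho$ omits every occurrence of $t^{\pm1}$ and so has strictly smaller complexity than $r$. By the inductive hypothesis $H$ is semistable at $\infty$, so \ref{MTComb} applied to this one-edge graph of groups — vertex group $H$ finitely presented and semistable, edge group $A$ finitely generated — yields that $G$ is semistable at $\infty$.

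The main obstacle is making the induction genuinely well-founded. The delicate point is the passage to a generator of exponent sum $0$: when no generator of $r$ already has exponent sum $0$, the automorphism that creates one can increase the word length of $r$, so the recursion cannot simply be run on $\ell(r)$. Here I would invoke the classical fact that the Magnus--Moldavanskii hierarchy terminates, i.e. that iterating the construction strictly decreases a well-ordered complexity and bottoms out at the free-product-of-cyclics base case above. The only remaining care is bookkeeping — verifying at each level that the base group is one-relator (so \ref{MTComb} is applicable) and that the associated Magnus subgroups are finitely generated — both of which are standard consequences of the Magnus method and the Freiheitssatz and neither of which requires $G$ to be torsion-free.
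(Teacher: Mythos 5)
Your overall strategy---running the Magnus--Moldavanskii hierarchy and feeding each level into Theorem \ref{MTComb}---is the strategy of the proof the paper points to (the paper itself gives no argument beyond attributing the result to \cite{MT92} and remarking that Theorem \ref{MTComb} is the engine), and two of your three steps are sound. The base case is fine, and so is the exponent-sum-zero case: when some generator occurring in $r$ has exponent sum $0$, the Magnus rewriting exhibits $G$ as an HNN extension of a one-relator group whose relator is strictly shorter, with finitely generated free Magnus subgroups as associated subgroups, so induction on relator length plus Theorem \ref{MTComb} closes that case.

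The genuine gap is the remaining case, where every generator has nonzero exponent sum in $r$. You identify this as the delicate point and then dispose of it by invoking ``the classical fact that the Magnus--Moldavanskii hierarchy terminates,'' but that fact does not apply to the construction you actually use. Your construction normalizes the exponent-sum vector to $(d,0,\dots,0)$ by an automorphism of $F(x_1,\dots,x_n)$; this is legitimate, but, as you concede, it can lengthen the relator, and after the rewriting the base relator has length equal to the number of non-stable letters of the \emph{new} relator, which need not be smaller than $|r|$. There is no classical theorem asserting that this automorphism-based recursion decreases any well-founded complexity. The hierarchy that classically does terminate handles this case by a different move: one first embeds $G$ into the one-relator overgroup $\hat G=G\ast_{\langle x_1\rangle}\langle t\rangle$ obtained by adjoining a root $x_1=t^{k}$ (with $k$ the exponent sum of a second generator, to repair a divisibility obstruction), and it is $\hat G$, not $G$, that then splits as an HNN extension of a one-relator group with strictly shorter relator. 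If you run your induction with that hierarchy, what you prove is that $\hat G$ is semistable at $\infty$. But $G$ is a vertex group of the splitting $\hat G=G\ast_{\langle x_1\rangle}\langle t\rangle$ and has infinite index in $\hat G$, and semistability at $\infty$ is not known to pass from a group to such a subgroup: Theorem \ref{MTComb} goes only from vertex groups up to the fundamental group of the graph of groups, never back down, and a general downward implication of this kind would settle instances of the main open problem. So each branch of what you call bookkeeping breaks: with automorphisms you have no termination, and with root adjunction you have termination but for the wrong group. Supplying the missing step---either a complexity that genuinely decreases for the automorphism version, or a descent argument for semistability across splittings $\hat G=G\ast_C B$ with $C$ of finite index in $B$---is precisely the content of this case in \cite{MT92} that your proposal leaves out.
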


The following result generalizes Theorem \ref{FIss}. 

\begin{theorem} [Theorem 1.3, \cite{Mih22}] \label{SSDecomp} 
Suppose $G$ is the fundamental group of a connected reduced graph of groups, where each edge group is infinite and finitely generated, and each vertex group is finitely presented and either 1-ended and semistable at $\infty$ or has an edge group of finite index. %Also assume that if there is only one vertex, that vertex group is 1-ended and semistable at $\infty$. 
Then $G$ is 1-ended and semistable at $\infty$. 
\end{theorem}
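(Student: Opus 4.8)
The plan is to reduce the statement to Theorem \ref{MTComb}, which already produces semistability for a \emph{finite} graph of groups whose vertex groups are finitely presented and semistable at $\infty$ and whose edge groups are finitely generated. (I will assume the graph is finite, as is implicit in ``fundamental group of a graph of groups'' with finitely presented vertex groups, so that $G$ is finitely presented.) Since finite and $2$-ended groups are automatically semistable, it suffices to transform the given decomposition $\mathcal G$ of $G$---without changing $G$ and while keeping every edge group infinite and finitely generated---into a graph of groups all of whose vertex groups are semistable at $\infty$. Call a vertex group \emph{good} if it is $1$-ended and semistable at $\infty$ (or $2$-ended), and otherwise \emph{bad}; the hypothesis guarantees that every bad vertex carries an incident edge whose edge group has finite index in it. The reduction will be an induction on the number of edges, and every collapse will manufacture a \emph{good} finitely presented vertex group, so that $1$-endedness is delivered together with semistability at each step. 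Reducedness will only be used to ensure the finite indices are at least $2$ (genuine moves); it may be relaxed on the intermediate graphs, since Theorem \ref{MTComb} does not require it.

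First I would carry out the easy structural moves. If a bad vertex $v$ carries a \emph{loop} edge whose edge group $E$ is finite index in $G_v$, internalize the loop: replace $G_v$ by the HNN extension $G_v\ast_E$ and delete the loop. This leaves $G$ unchanged, and Theorem~1.4 of \cite{Mih22} (the finitely presented strengthening of Theorem \ref{MMFIE}) shows $G_v\ast_E$ is $1$-ended and semistable, so the new vertex is good. For non-loop edges I use two merges. If a finite-index edge $e$ of a bad vertex $v$ joins $v$ to a vertex $u$ that is $1$-ended and semistable, collapse $e$: by Theorem \ref{SS2} (with the $1$-ended semistable side as $A$ and $E_e$ of finite index in $B=G_v$) the merged group $G_u\ast_{E_e}G_v$ is $1$-ended and semistable, hence good. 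If an edge $e$ has edge group of finite index in \emph{both} endpoints, collapse it using Theorem \ref{FIss} to a good vertex; note that whenever a neighbor $u$ is $2$-ended, an infinite edge group inside $G_u$ automatically has finite index in $G_u$, so that situation also falls under this move. In all three moves the surviving edge groups stay infinite and finitely generated and the remaining bad vertices retain their finite-index witnesses, so the hypotheses persist while the edge count strictly drops. In particular, if an incident edge group of a bad vertex is $2$-ended, that vertex group is itself $2$-ended (being commensurable with the edge group) and hence already good.

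The configuration these moves do not resolve is a \emph{cycle of bad vertices}, which I expect to be the main obstacle. Orienting, for each surviving bad vertex, one chosen finite-index incident edge toward its other endpoint gives a functional digraph; if no arrow reaches a good vertex and no edge is finite-index in both endpoints, the digraph contains a cycle $v_1\to v_2\to\cdots\to v_m\to v_1$ in which each $e_i$ is finite index in its tail $G_{v_i}$ but of infinite index in its head $G_{v_{i+1}}$. Let $C$ denote the fundamental group of this cyclic sub-graph-of-groups (which embeds in $G$). I would present $C$ as an HNN extension $P\ast_{E_m}$, where $P=G_{v_1}\ast_{E_1}\cdots\ast_{E_{m-1}}G_{v_m}$ is the amalgam along a spanning path and $E_m$ is the remaining infinite finitely generated edge group; by the HNN case of Theorem \ref{SS2} it then suffices to show the path amalgam $P$ is $1$-ended and semistable. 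Peeling $P$ from the $v_1$ end as $P=G_{v_1}\ast_{E_1}P''$, with $E_1$ of finite index in $G_{v_1}$, reduces via Theorem \ref{SS2} to the same statement for the shorter amalgam $P''=G_{v_2}\ast_{E_2}\cdots\ast_{E_{m-1}}G_{v_m}$. The genuine difficulty is that this induction bottoms out at a single bad vertex, which need not be semistable, so the chain of applications has no $1$-ended semistable seed.

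To furnish that seed I would argue directly rather than black-boxing the earlier theorems, and two routes look promising. The first is to adapt the homotopy-pushing argument that proves Theorem \ref{SS2} (Lemma~5.1 of \cite{Mih22}, whose proof parallels that of Theorem \ref{MComb}) to the case where the base is the amalgam $P$ and the finite-index data are distributed cyclically around the Bass--Serre tree; here the finite index of each $E_i$ in $G_{v_i}$ is exactly what should let loops based on a proper ray be slid off to infinity. The second, cleaner route is to exploit quasi-isometry invariance: since each $G_{v_i}$ is commensurable with its edge group $E_i$, I would attempt to build a finite-index subgroup $\widehat C\le C$ whose induced splitting has $1$-ended semistable vertex groups---so that $\widehat C$ is $1$-ended and semistable by the already-settled tree case---and then conclude that $C$ is $1$-ended and semistable because $\widehat C$ and $C$ are quasi-isometric and these are quasi-isometry invariants of finitely presented groups by Theorem \ref{QIsp}. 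Once the cyclic case is in hand, every cycle collapses to a single good vertex, all vertex groups become finitely presented and semistable, and Theorem \ref{MTComb} finishes the proof; the $1$-endedness of $G$ follows since each collapse produced a $1$-ended group and the terminal combination has only infinite edge groups.
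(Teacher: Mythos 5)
The paper itself contains no proof of Theorem \ref{SSDecomp} (it is cited as Theorem 1.3 of \cite{Mih22}), so your proposal can only be measured against the toolkit the paper indicates, namely Theorem \ref{SS2} (Lemma 5.1 of \cite{Mih22}) and Theorem 1.4 of \cite{Mih22}, with Theorem \ref{MTComb} finishing. Your outer reduction matches that toolkit and is sound as far as it goes: the good/bad dichotomy, the three collapsing moves (loops via Theorem 1.4 of \cite{Mih22}, doubly finite-index edges via Theorem \ref{FIss}, bad-to-good finite-index edges via Theorem \ref{SS2}), the preservation of hypotheses under collapse, and the identification of a directed cycle of bad vertices as the only irreducible configuration are all correct. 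The problem is that the cycle case is not a residual technicality --- it \emph{is} the theorem. The $m=1$ cycle is exactly Theorem 1.4 of \cite{Mih22}, which this paper explicitly says ``requires new ideas,'' and for $m\ge 2$ nothing you quote applies: writing $C=P\ast_{E_m}$ over the path amalgam $P$, \emph{both} associated subgroups lie inside end factors of a nontrivial amalgam and hence have infinite index in $P$, so Theorem 1.4 cannot be invoked, and your SS2 peeling has no seed, as you yourself observe. Route 1 (``adapt the homotopy-pushing argument'') is a statement of intent, not an argument; carrying it out for a ring of finite-index edges with no semistable factor anywhere is precisely the technical content of \cite{Mih22} that cannot be recovered from the quoted statements.

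Route 2 is worse than incomplete: it is circular. Any finite-index subgroup $\widehat C\le C$ acts on the Bass--Serre tree of the cycle decomposition with vertex stabilizers $\widehat C\cap gG_{v_i}g^{-1}$, and these have index at most $[C:\widehat C]$ in the conjugates $gG_{v_i}g^{-1}$. They are therefore finitely presented groups quasi-isometric to the $G_{v_i}$, so by Theorems \ref{QIEFG} and \ref{QIsp} they are 1-ended (resp.\ semistable) if and only if the $G_{v_i}$ are. Bad vertex groups need not be 1-ended (they can be free groups with a finite-index edge subgroup) and are not assumed semistable (that is the whole point of the theorem), so no choice of $\widehat C$ can have an induced splitting with 1-ended semistable vertex groups unless the conclusion you are trying to establish is already known for the $G_{v_i}$; and since $\widehat C$ is quasi-isometric to $C$ anyway, passing to finite index gains nothing in principle. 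A smaller, separate gap: your closing sentence overreaches, since Theorem \ref{MTComb} yields only semistability, so the 1-endedness of the terminal graph of groups still needs its own argument (e.g.\ iterating Theorem \ref{SS3} together with the fact that an amalgam of two 1-ended groups over an infinite group is 1-ended); that is repairable with results in the paper, but the missing cycle case is not.
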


 It is straightforward to see the following definition is independent of generating sets. 

\begin{definition}\label{SSSin} 
Suppose $A$ is a finitely generated subgroup of the finitely generated group $G$. Let $\mathcal A\subset \mathcal S$ where $\mathcal A$ generates $A$ and $\mathcal S$ generates $G$. We say that $A$ is {\it strongly semistable at $\infty$ in $G$}  if there is a finite set $R$ of $G$-relations (in the letters $\mathcal S$) such that:

For any compact set $C$ in  $\Gamma_{(G,\mathcal S)}(R)$ there is a compact set $D$ in $\Gamma_{(G,\mathcal S)}(R)$ so that if $r$ and $s$ are proper $\mathcal A$-edge path rays at the  vertex $v\in \Gamma_{(G,\mathcal S)}(R)-D$, then $r$ and $s$ are properly homotopic rel$\{\ast\}$ in $\Gamma_{(G,\mathcal S)}(R)-C$. 
\end{definition}

With minor adjustments the proof of Theorem \ref{MComb} implies the following result. Note that we do not assume that $A$ or $B$ is 1-ended.

\begin{theorem} \label{SSSComb} 
Suppose  $A$ and $B$ are finitely generated subgroups of the finitely generated group $G$ such that $A$ (respectively $B$) is a 2-ended group or is strongly semistable at $\infty$ in $G$,  the set $A\cup B$ generates $G$ and the group $A\cap B$ contains a finitely generated infinite subgroup $J$, then $G$ is 1-ended and semistable at $\infty$. 
\end{theorem}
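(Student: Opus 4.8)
The plan is to run the proof of Theorem~\ref{MComb} almost verbatim, after isolating the single property of the subgroups that it actually uses. Fix a finite generating set $\mathcal S$ of $G$ containing a generating set $\mathcal A$ of $A$ and a generating set $\mathcal B$ of $B$, with $\mathcal A\cap\mathcal B$ containing a finite generating set $\mathcal J$ of the infinite finitely generated subgroup $J\le A\cap B$. By Definition~\ref{SSSin} there are finite sets $R_A$ and $R_B$ of $G$-relations witnessing that $A$ and $B$ (when not merely $2$-ended) are strongly semistable at $\infty$ in $G$; put $R=R_A\cup R_B$ and $X=\Gamma_{(G,\mathcal S)}(R)$. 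Since enlarging the relation set only attaches $2$-cells, $A$ and $B$ remain strongly semistable at $\infty$ in $G$ with respect to $R$. By Definition~\ref{defss3} it suffices to show $X$ is semistable at $\infty$, and for this I would verify condition~(4) of Theorem~\ref{ssequiv}: for each compact $C\subset X$ produce a compact $D$ so that any two proper rays $r,s$ based at a common vertex $v$, with image in $X-D$ and converging to the same end, are properly homotopic $\mathrm{rel}\{v\}$ in $X-C$. By Theorem~\ref{geoend} I may assume $r$ and $s$ are edge-path rays.

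The engine is a combing of an edge-path ray. Each ray decomposes into maximal $\mathcal A$-subpaths and $\mathcal B$-subpaths, and every maximal $\mathcal A$-subpath lies in a single coset $gA$, every maximal $\mathcal B$-subpath in a single coset $gB$. The content of strong semistability of $A$ in $G$ is exactly that proper $\mathcal A$-rays based at a common far-out vertex are properly homotopic in $X-C$; this is precisely the consequence of ``$A$ $1$-ended and semistable'' that the proof of Theorem~\ref{MComb} extracts through Theorem~\ref{FGSS6}, so here I invoke it directly rather than deriving it. The transitions between segments are controlled by $J$: if an $\mathcal A$-subpath meets a $\mathcal B$-subpath at a vertex $w$, then $wA\cap wB=w(A\cap B)\supseteq wJ$, so a proper $\mathcal J$-ray at $w$ (geodesic, by Theorem~\ref{geodesicline} and Theorem~\ref{geoend}) is simultaneously a proper $\mathcal A$-ray and a proper $\mathcal B$-ray. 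I would use these $\mathcal J$-rays as bridges, sliding each $\mathcal A$-segment and each $\mathcal B$-segment onto the common $wJ$-direction by the corresponding strong-semistability homotopy, thereby funnelling $r$ and $s$ into a common $\mathcal J$-ray. Stacking these homotopies segment by segment, as in the proofs of Theorem~\ref{ssequiv} (Figures~\ref{Fig231b} and~\ref{FigSSloop}), produces the proper homotopy from $r$ to $s$.

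When $A$ (or $B$) is only $2$-ended the literal strong-semistability hypothesis is unavailable, but the needed input is supplied by geometry: a $2$-ended group is simply connected at $\infty$ and, by Theorem~\ref{line=2}, Theorem~\ref{geodesicline} and Theorem~\ref{geoend}, its $\mathcal A$-rays in a coset track a geodesic line, so any two converging to the same end are properly homotopic inside that coset. Feeding this into the same combing scheme disposes of the $2$-ended subcases uniformly. Finally, $G$ is infinite because $J$ is; its one-endedness follows from the combination results of this section (Theorem~\ref{combE} in the $1$- and $2$-ended subcases) together with the strong-semistability constraints forcing all $\mathcal A$- and $\mathcal B$-rays to converge to a single end of $X$, the sole genuinely degenerate exception being the virtually cyclic case in which $A$, $B$ and $G$ are all $2$-ended.

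The main obstacle, exactly as for Theorem~\ref{MComb}, is to keep the combing homotopy proper and inside $X-C$. Each use of strong semistability of $A$ or $B$ (or of the $2$-ended estimate) trades the compact set $C$ for a much larger $D$, while a single ray has infinitely many $\mathcal A$/$\mathcal B$ transitions; I must fix a cofinal nested sequence $C_0\subset C_1\subset\cdots$ and arrange that the $i$-th segment's homotopy has image in $X-C_{i-1}$ and pushes the ray into $X-C_{i+1}$, so that the stacked homotopy has compact preimages of compact sets. Controlling this uniformly across the two alternating families of cosets, and guaranteeing that the $\mathcal J$-bridge at each transition vertex is reached before the homotopy can fall back into $C$, is the delicate point; here the finite generation of $J$---equivalently its local finiteness at $\infty$ in $G$, so that each compact set meets only finitely many cosets $gJ$---provides the uniform escape bounds that make the bookkeeping close.
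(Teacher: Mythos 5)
Your overall strategy is exactly the paper's: the paper gives no independent argument for this theorem, stating only that ``with minor adjustments the proof of Theorem \ref{MComb} implies'' it, and your identification of strong semistability at $\infty$ in $G$ (Definition \ref{SSSin}) as precisely the property that the proof of Theorem \ref{MComb} extracts from ``1-ended and semistable at $\infty$'' is the intended adjustment. Your ladder/combing scheme, the use of proper $\mathcal J$-rays as rungs at the transition vertices, and the cofinal-sequence bookkeeping for properness are all faithful to that proof, and in the case where at least one of $A$, $B$ is strongly semistable at $\infty$ in $G$ your outline is sound. Note also that strong semistability applied with $C=\emptyset$ (after a short translation argument) makes \emph{every} pair of proper $\mathcal A$-rays at a common vertex properly homotopic in $X$, which together with Theorem \ref{SSto1E} is what actually yields the 1-endedness conclusion; your appeal to Theorem \ref{combE} alone would not suffice, since a strongly semistable subgroup need not be 1- or 2-ended as an abstract group.

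The genuine gap is your claim that the same combing ``disposes of the 2-ended subcases uniformly.'' Inside a coset of a 2-ended subgroup $A$, two proper rays are properly homotopic within that coset only if they converge to the same end of the coset, and in your ladder the two rays to be filled at a transition vertex $v_i$ --- the rung $\rho_i$ and (segment)$\cdot\rho_{i+1}$ --- can perfectly well converge to opposite ends of $v_iA$ (for instance when the segment represents an element acting by inversion on the normal infinite cyclic finite-index subgroup, as happens in $D_\infty$). No homotopy inside the coset can repair this, so the filling tool you propose fails there. In the mixed case (one subgroup 2-ended, the other strongly semistable) this is fixable by choosing the directions of the rungs consistently: the constraints come only from the 2-ended ladders, and these are pairwise disjoint because maximal $\mathcal A$- and $\mathcal B$-segments alternate, while the strongly semistable side fills \emph{any} pair of rays and so imposes no constraint. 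But when both $A$ and $B$ are 2-ended the constraints chain together and no combing argument can work; that case must be handled as in the proof of Theorem \ref{combE}: find $c\in A\cap B$ with $\langle c\rangle$ normal and of finite index in both $A$ and $B$, hence normal in $G$, and then either $\langle c\rangle$ has finite index in $G$ (so $G$ is 2-ended and simply connected at $\infty$) or Theorem \ref{L} applies. This is also exactly the case in which the stated conclusion ``$G$ is 1-ended'' degenerates (take $A=B=G=\mathbb Z$), which you correctly flagged as an exception but then still claimed to handle by the combing scheme.
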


The next definition is in analogy with Definition \ref{ss+cossD}, but weaker than Definition \ref{SSSin}.

\begin{definition}\label{SSin} 
Suppose $A$ is a finitely generated subgroup of the finitely generated group $G$. Let $\mathcal A\subset \mathcal S$ where $\mathcal A$ generates $A$ and $\mathcal S$ generates $G$. We say that $A$ is {\it  semistable at $\infty$ in $G$}  if there is a finite set $R$ of $G$-relations (in the letters $\mathcal S$) such that:

For any compact set $C$ in  $\Gamma_{(G,\mathcal S)}(R)$  there is a compact set $D$ in $\Gamma_{(G,\mathcal S)}(R)$  so that if $r$ and $s$ are proper $\mathcal A$-edge path rays at the vertex $v\in \Gamma_{(G,\mathcal S)}(R)$ , then $r$ and $s$ are properly homotopic rel$\{\ast\}$ in $\Gamma_{(G,\mathcal S)}(R)-C$. 
\end{definition}

\begin{question} \label{MMComb} 
Suppose  $A$ and $B$ are finitely generated subgroups of the finitely generated group $G$ such that $A$ (respectively $B$) is a 2-ended group or is semistable at $\infty$ in $G$,  the set $A\cup B$ generates $G$ and the group $A\cap B$ contains a finitely generated infinite subgroup $J$, then is $G$ either 1 or 2-ended and semistable at $\infty$? 
\end{question}

%\begin{proof}
%First note that if both $A$ and $B$ are 2-ended, then $A\cap B$ contains an infinite cyclic subgroup that is normal in both $A$ and $B$ and hence in $G$. By Theorem \ref{MainA}, $G$ is semistable at $\infty$. We assume that $A$ is not 2-ended. 

%First we do the case when $B$ is not 2-ended. Choose generators $\mathcal A$, $\mathcal B$ and $\mathcal J$ for $A$, $B$ and $J$ respectively. Let $\mathcal S=\mathcal A\cup\mathcal B\cup\mathcal J$ and $R$ be a set of $\mathcal S$-relations such that for any compact set $C$ in $\Gamma=\Gamma (S,R)$ there is a compact set $D$ so that two $\mathcal A\cup \mathcal J$ (respectively $\mathcal B\cup\mathcal J)$ rays at $v \in \Gamma-D$ are properly homotopic rel$\{v\}$ in $\Gamma-C$.   Choose a cofinal collection of compact sets $C_1,C_2,\ldots$ such that if $C_i$ plays the role of $C$ in Definition \ref{SSin}, then $C_{i+1}$ plays the role of $D$. 
%Let $r=(j_1,j_2,\ldots)$ be a $\mathcal J$-edge path ray at the vertex $\ast \in \Gamma$ and let $s=(s_1,s_2,\ldots)$ be an arbitrary $\mathcal S$ ray at $\ast$. It is enough to show that $r$ and $s$ are properly homotopic in $\Gamma$. 
%\end{proof}
%If  $B$ is 2-ended, then we may assume that $J$ is infinite cyclic,  $\mathcal J=\{j\}$ and $J$ is normal in $B$.
In the statement of the next four theorems, B. Jackson assumes that the subgroup $H$ of the group $G$ is finitely presented, but his proof only requires $H$ to be finitely generated (see the first remark in \cite{J82b}). We state the more general results. 

\begin{theorem} [Theorem 1, \cite{J82b}]  \label{JackI} 
Suppose $G=G_1\ast_HG_2$ where $G_1$ and $G_2$ are finitely presented and 1-ended and $H$ is finitely generated with more than 1-end, then $G$ has 1-end but is not stable at $\infty$. 
\end{theorem}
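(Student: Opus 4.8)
The plan is to prove the two assertions separately, establishing one-endedness first since it feeds the non-stability argument. Throughout I assume the amalgam is nontrivial, so that $H$ is a proper subgroup of each of $G_1$ and $G_2$; note that $H$ is infinite (it has more than one end) and $G$ is infinite (it contains $G_1$). To see that $G$ is one-ended I would argue by contradiction using Stallings' Theorem \ref{Stall}: if $G$ had more than one end it would split nontrivially over a finite group, giving a $G$-tree $T$ with finite edge stabilizers and no global fixed point. The finitely generated one-ended groups $G_1$ and $G_2$ cannot split over a finite group (again by \ref{Stall}), so each must act elliptically on $T$, fixing vertices $u_1$ and $u_2$ respectively, exactly as this manual uses repeatedly (for instance in the proof of Theorem \ref{combE}).

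Since $G_1\cap G_2=H$ inside the amalgam, $H$ fixes the geodesic segment $[u_1,u_2]$; but $H$ is infinite while every edge stabilizer of $T$ is finite, so $H$ can fix no edge and therefore $u_1=u_2=:u$. Then $G=\langle G_1\cup G_2\rangle$ fixes $u$, contradicting the nontriviality of the splitting. Hence $G$ is one-ended. One could alternatively invoke Theorem \ref{OneEndSplit}, but this direct tree argument is self-contained and uses only \ref{Stall}.

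For the failure of stability I would work with the tree of spaces $X$ on which $G$ acts, with vertex spaces modelled on $G_1$ and $G_2$ and edge spaces modelled on $H$, each edge space separating $X$. Being stable at infinity means the fundamental pro-group is pro-isomorphic to a sequence of isomorphisms, which requires it to be both Mittag--Leffler (pro-epimorphic, Theorem \ref{MLSS}) and pro-monomorphic; thus it suffices to show that the pro-monomorphic condition fails. The driving mechanism is the hypothesis that $H$ has at least two ends. In a deep complement $X-C_n$, a translate $gE$ of an edge space sends two of its ends off to infinity, yet these two ends lie in a single translate $gY_1$ of a one-ended vertex space and so are joined within it; pairing a path in $gY_1$ between them with a path across $gE$ produces a loop that is essential at its own level but becomes trivial one level inward. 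Because such translates proliferate through the Bass-Serre tree, loops of this kind recur at every level, so the kernels of the bonding maps do not stabilize up to pro-isomorphism. Consequently the fundamental pro-group is not pro-monomorphic (heuristically it is pro-free of unbounded rank), and $G$ is not stable at infinity.

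The main obstacle is precisely this pro-group computation: I must verify that the loops produced are genuinely nontrivial at their level and that their recurring appearance obstructs every pro-isomorphism to a constant sequence, rather than being removable one at a time. Controlling $\pi_1$ of complements of compacta in a tree of spaces is the technical core, and it should be insensitive to whether the vertex groups are semistable; indeed, by Theorem \ref{MTComb}, $G$ may well be semistable, so it is genuinely the pro-monomorphic condition, not semistability, that fails. Finally, in the finitely presented case (Jackson's original hypothesis that $H$ is finitely presented) I would record a shortcut: were $G$ stable, Bowditch's Theorem \ref{stablepro} would force $G$ to be either simply connected at infinity or virtually a closed surface group with $\pi_1^e(G)=\mathbb Z$. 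The surface alternative is incompatible with the splitting $G=G_1\ast_H G_2$, since a closed surface group (and, after passing to a finite-index subgroup, a virtually closed surface group) splits only with infinitely-ended free vertex groups and never with a one-ended vertex group such as $G_1$; and simple connectivity at infinity is excluded by the same multi-ended-$H$ analysis, which exhibits a nontrivial loop at infinity.
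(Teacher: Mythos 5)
A preliminary remark: the paper does not prove Theorem \ref{JackI}; it quotes it from Jackson's paper, and the only argument it supplies in this direction is the homological analogue, Theorem \ref{JHom}, proved by Mayer--Vietoris. So your proposal must be judged against the paper's surrounding machinery rather than against an in-paper proof. Your first half is correct: the Bass--Serre argument for one-endedness (elliptic $G_1,G_2$, the infinite group $H$ forcing $u_1=u_2$, and $G$ then fixing a vertex of its own tree) is sound and is exactly the style of argument the paper uses in Theorem \ref{combE}; note also that the nontriviality of the amalgam is automatic, since $H$ is multi-ended while $G_1,G_2$ are $1$-ended.

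The second half, as written, has a genuine gap, and it sits exactly where you flag it. Your loop around a translate $gE$ of the edge space is essential in $X-C_m$ only if $C_m$ contains a compact subset $gK\subset gE$ separating the two ends of $gE$ that the loop crosses (essentiality is proved relative to $X-gK$ and only transfers to $X-C_m$ when $gK\subseteq C_m$), while its nullhomotopy avoiding $C_n$ requires the entire slab $gY_1\cup gE\cup gY_2$ to miss $C_n$. For an arbitrary pair $C_n\subset C_m$ in a cofinal sequence there may be no translate satisfying both conditions (take $C_m=C_n\cup(\hbox{a ball far from every slab missing }C_n)$), so the slogan ``essential at its own level, trivial one level inward'' does not by itself refute pro-monomorphy: pro-isomorphism permits passing to subsequences, and the refutation needs the correct three-index kernel criterion together with a cofinal sequence built adaptively so that between any two consecutive compacta a whole slab is swallowed. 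Also, the essentiality claim itself is not free; it can be proved by the separation argument the paper uses in \S\ref{ac1} (mapping the two sides of $gE$ to the two arcs of a circle, as in the proof generalizing Theorem \ref{Acyclic}), which in fact gives homological essentiality. None of this is a wrong idea---it is the classical mechanism---but it is the technical core of the theorem, and your proposal defers it.

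By contrast, your closing ``shortcut'' can be upgraded into a complete proof using only results stated in the paper, and it does not require $H$ to be finitely presented: $G=G_1\ast_H G_2$ is finitely presented once $H$ is finitely generated, so Bowditch's Theorem \ref{stablepro} applies. If $\pi_1(\varepsilon G)$ were stable, then $G$ is simply connected at $\infty$ or virtually a closed surface group with $\pi_1^e(G)=\mathbb Z$. Both cases are excluded homologically, with no loop analysis: the Mayer--Vietoris argument of Theorem \ref{JHom} embeds $H^1(H,\mathbb ZG)$ into $H^2(G,\mathbb ZG)$, and by Theorems \ref{FreeS} and \ref{HEnds} (and $[G:H]=\infty$, which follows from Lemma \ref{noFI}) this is free abelian of infinite rank; but simple connectivity at $\infty$ forces $H^2(G,\mathbb ZG)=0$ by Corollary \ref{GM2}, and in the surface case $H_1(\varepsilon G)$ is stable with inverse limit $\mathbb Z$, so Theorem \ref{GMEnd}(4) together with Theorem \ref{TorF} forces $H^2(G,\mathbb ZG)$ to be free abelian of finite rank. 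Either contradiction finishes the proof. I recommend making this the actual proof and retaining your tree-of-spaces discussion only as motivation (or carrying it out in full, which amounts to redoing Jackson's original argument).
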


\begin{theorem} [Theorem 2, \cite{J82b}] \label{JackIi}
Suppose $G=G_1\ast_HG_2$ where $G_1$ and $G_2$ are finitely presented 1-ended and simply connected at $\infty$ and $H$ is finitely generated and 1-ended. Then $G$ is simply connected at $\infty$.
\end{theorem}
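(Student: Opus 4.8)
The plan is to realize $G$ as the fundamental group of a finite $2$-complex and to analyze the universal cover $\tilde X$ as a tree of spaces over the Bass-Serre tree $T$ of the splitting $G=G_1\ast_H G_2$. First note that since $G_1$ and $G_2$ are finitely presented and $H$ is finitely generated, $G$ is finitely presented: if $G_1=\langle S_1\mid R_1\rangle$, $G_2=\langle S_2\mid R_2\rangle$ and $H$ is generated by $c_1,\dots,c_k$ with $c_i=u_i(S_1)=v_i(S_2)$, then $G=\langle S_1,S_2\mid R_1,R_2,\,u_i=v_i\rangle$. By Corollary \ref{pro2} and the fact that simple connectivity at $\infty$ depends only on the $2$-skeleton, I may compute it in the Cayley $2$-complex $\tilde X$ of this presentation. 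Moreover $G$ is $1$-ended: $G_1$ and $G_2$ are $1$-ended, generate $G$, and $G_1\cap G_2=H$ is infinite, so Theorem \ref{combE} gives that $G$ is $1$- or $2$-ended, and $G$ is not $2$-ended since a $2$-ended group cannot contain the $1$-ended subgroup $G_1$.

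Give $\tilde X$ its tree-of-spaces structure: the vertex spaces lying over the $T$-vertices $gG_i$ are copies of the universal cover $\tilde X_i$ of a finite complex for $G_i$, and the edge spaces lying over the $T$-edges $gH$ are copies of a space on which $H$ acts cocompactly. The two crucial geometric inputs are: (a) each vertex space is $1$-ended and simply connected at $\infty$, by hypothesis; and (b) each edge space is $1$-ended, because $H$ is $1$-ended. Property (b) is exactly what prevents $\tilde X$ from pinching apart at the edge spaces, and it is where the hypothesis on $H$ is used.

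The core of the argument is a disk-bounding induction. Given a compact set $C$, I will produce a compact $D$ so that every edge-path loop $\alpha$ in $\tilde X-D$ bounds in $\tilde X-C$. For such an $\alpha$, let $T_\alpha\subset T$ be the finite subtree spanned by the vertex and edge spaces that $\alpha$ meets, and induct on the number of edges of $T_\alpha$. In the base case $T_\alpha$ is a single vertex, so $\alpha$ lies in one vertex space $V\cong\tilde X_i$; choosing $D\cap V$ large enough, simple connectivity at $\infty$ of $V$ lets $\alpha$ bound a disk in $V-C\subseteq\tilde X-C$. For the inductive step, pick a leaf $V$ of $T_\alpha$, glued to the rest of $\tilde X$ along a single edge space $E$. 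Each maximal subarc $\beta$ of $\alpha$ in $V$ has both endpoints on $E$. Since $E$ is $1$-ended and these endpoints are far out, I can join them by a path $\delta$ in $E-C$; then $\beta\cdot\delta^{-1}$ is a loop lying far out in $V$, which bounds a disk in $V-C$ by simple connectivity at $\infty$ of $V$. Replacing each such $\beta$ by the corresponding $\delta$ homotopes $\alpha$, inside $\tilde X-C$, to a loop $\alpha'$ whose tree support is $T_\alpha$ minus the leaf $V$; by induction $\alpha'$ bounds in $\tilde X-C$, and hence so does $\alpha$.

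The main obstacle is the uniform, simultaneous choice of $D$ making every step valid while keeping the entire homotopy out of $C$. Concretely I must ensure that for every translate of a vertex space the ``loops far out bound near $\infty$'' threshold and the ``far-out points of $E$ connect in $E-C$'' threshold can be met at once, and that the depth to which the disks reach back toward $C$ stays controlled as the induction proceeds. This uniformity is supplied by the cocompactness of the $G$-action on $\tilde X$: there are only finitely many $G$-orbits of vertex and edge spaces, so the compact sets guaranteed by simple connectivity at $\infty$ of $G_1,G_2$ and by $1$-endedness of $H$ can be chosen equivariantly and amalgamated into a single compact $D$. Verifying that this bookkeeping closes up --- in particular that only boundedly many spaces lie near $C$ and that the accumulated homotopies remain proper and miss $C$ --- is the technical heart of the proof.
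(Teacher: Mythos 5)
The paper never proves this statement; it is quoted directly from B.~Jackson's paper \cite{J82b} (with the remark that Jackson's argument only needs $H$ finitely generated), so your proposal can only be compared with the standard tree-of-spaces argument, which is indeed the route Jackson takes. Your architecture is the right one: $G$ is finitely presented, $\tilde X$ decomposes over the Bass--Serre tree with vertex spaces copies of $\tilde X_1,\tilde X_2$ (1-ended and simply connected at $\infty$) and edge spaces copies of the Cayley graph of $H$ (1-ended, properly mapped into $\tilde X$), and a leaf-pruning induction pushes a far-out loop off the tree one vertex space at a time.

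However, the inductive step as you wrote it has a genuine gap. You join the endpoints of a maximal subarc $\beta\subset V$ by a path $\delta$ in $E-C$ and then assert that $\beta\cdot\delta^{-1}$ is ``a loop lying far out in $V$,'' which bounds in $V-C$ by simple connectivity at $\infty$ of $V$. But $\delta$ is only known to avoid $C$: simple connectivity at $\infty$ of $V$ applied to $C\cap V$ produces a threshold compact set $D_V$, and the loop $\beta\delta^{-1}$ must avoid $D_V$, not merely $C$, before it is guaranteed to bound in $V-C$. So the compact sets must be nested ($\delta$ must avoid $D_V$, which forces the edge-space threshold to be chosen after $D_V$, and so on), and since you induct on the size of $T_\alpha$, which is unbounded over loops in $\tilde X-D$, a fixed $D$ cannot absorb an unbounded chain of nestings; appealing to cocompactness and equivariant choices does not repair this, because the obstruction is the number of nesting levels, not the number of orbit types. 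The missing idea is that pruning is \emph{free} at every vertex and edge space disjoint from $C$: there one needs only simple connectivity of $V$ and connectivity of $E$, and the disk and connecting path stay in $V\cup E\subseteq \tilde X-C$ with no threshold at all. Hence the only prunings that cost anything occur at the finitely many vertex and edge spaces meeting $C$; these span a finite subtree $T_C$ whose size $N$ depends only on $C$, each of its vertices is pruned at most once, and one can therefore fix \emph{in advance} a nested chain $C=K_0\subset K_1\subset\cdots\subset K_{2N}=D$, where each $K_{j+1}$ is obtained from $K_j$ by applying simple connectivity at $\infty$ to the (finitely many) vertex spaces meeting $K_j$ and 1-endedness of the edge spaces meeting $K_j$ (using properness of the edge-space maps, so that points far out in $\tilde X$ are far out in $E$ and bounded components can be swallowed). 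With this chain, each constrained pruning drops the loop and its homotopy by at most two levels of the chain, the induction terminates after at most $N$ constrained steps, and everything stays in $\tilde X-C$. You flagged this verification as ``the technical heart,'' but flagging it is not supplying it, and the step as literally written -- invoking simple connectivity at $\infty$ of $V$ for a loop that is only known to miss $C$ -- is false without the nesting and the free-pruning observation.
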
 

\begin{theorem} [Theorem $1^\ast$, \cite{J82b}] \label{JackII} 
Suppose $G$ is the HNN group $G_1 \ast _{f:H\to K}$ where $G_1$ is finitely presented and 1-ended and $H$ is finitely generated with more than 1-end, then $G$ is 1-ended, but is not stable at $\infty$. 
\end{theorem}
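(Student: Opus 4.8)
The plan is to treat the two assertions separately: $1$-endedness follows immediately from the combination machinery already developed, while non-stability is best attacked through the \emph{homology} pro-group of a tree of spaces, where the multiple ends of the edge group $H$ leave a visible and computable footprint. Note first that $G$ is finitely presented, being an HNN extension of the finitely presented group $G_1$ along the finitely generated associated subgroup $H$, so all the notions apply. For the first claim I would simply invoke Theorem \ref{SS3} in its HNN form $G=A\ast_C$: here $A=G_1$ is finitely generated and $1$-ended and $C=H$ is finitely generated and infinite (it is infinite because it has more than one end). Thus $G$ is $1$-ended, and the work is to show $G$ is not stable at $\infty$.

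For non-stability I would build the standard tree of spaces $Y$ (a model for the universal cover of a finite presentation $2$-complex of $G$): its vertex spaces are copies of the $1$-ended, simply connected universal cover of a finite complex for $G_1$, and its edge spaces are copies of the universal cover of a finite complex for $H$, glued according to the Bass--Serre tree. The conceptual heart is a tension between two facts. On one hand, $H$ has at least two ends as an abstract group, so \emph{each edge space of $Y$ has at least two ends}. On the other hand, since $G_1$ is $1$-ended, $H$ is $1$-ended \emph{in} $G_1$ by the result of \S\ref{PairsG} that every infinite subgroup of a $1$-ended group is $1$-ended in it; consequently, when the distinct ends of an edge space are pushed into an adjacent vertex space they all fall into the single end-direction of that vertex space and so become joined far out. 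The same reasoning shows each halfspace of $Y$ determined by an edge space is itself $1$-ended.

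From this I would extract, for each edge space $E$ separating $Y$ into halfspaces $Y^{+}$ and $Y^{-}$, and each compact $C$ large enough that $E-C$ has two unbounded components $E_1\ni\epsilon_1$ and $E_2\ni\epsilon_2$, an explicit loop $\gamma$ in $Y-C$: it runs out along $\epsilon_1$, returns through the vertex space on the $Y^{-}$ side (joining $\epsilon_1$ to $\epsilon_2$, which is possible because that halfspace is $1$-ended), crosses $E$ to the $Y^{+}$ side, and returns through the vertex space there. A Mayer--Vietoris computation at infinity, using $\tilde H_0(Y^{+}-C)=\tilde H_0(Y^{-}-C)=0$, shows that $[\gamma]$ maps to the nonzero class $[\epsilon_1]-[\epsilon_2]\in\tilde H_0(E-C)$, so $[\gamma]\neq 0$ in $H_1(Y-C)$; whereas when $C$ is shrunk to a compact $C'$ that does not separate the ends of $E$, one has $\tilde H_0(E-C')=0$ and $\gamma$ becomes nullhomologous (its two crossings of $E$ can be brought together within the now-connected $E-C'$).

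Running this over an exhaustion $\{C_i\}$ and over edge spaces marching outward along the Bass--Serre tree, the classes $\gamma$ are born (become essential) at arbitrarily late stages but die one stage inward. Fixing any early stage $i$, the kernels of the long bonding maps $H_1(Y-C_k)\to H_1(Y-C_i)$ then acquire unboundedly growing rank as $k\to\infty$, so $H_1(\varepsilon Y)$ is not pro-monomorphic and hence not stable. Since abelianizing a stable fundamental pro-group would yield a stable $H_1(\varepsilon Y)$, it follows that $G$ is not stable at $\infty$; this route also sidesteps any question about whether $G$ is semistable, because failure of pro-monomorphicity already defeats stability. I expect the main obstacle to be the uniform Mayer--Vietoris bookkeeping over the tree of spaces: verifying rigorously that each $\gamma$ is essential precisely at its separating scale and genuinely trivial one scale in, and that these witnesses occur cofinally so the kernels truly fail to stabilize.
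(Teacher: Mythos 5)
Your first claim is fine: Theorem~\ref{SS3} (HNN case) applies verbatim with $A=G_1$, $C=H$, and your overall strategy for the second claim is the right kind of attack --- note the paper itself never proves Theorem~\ref{JackII} (it cites Jackson), and its in-house relative, Theorem~\ref{JHom}, only yields $H^2(G,\mathbb ZG)\neq 0$, which cannot rule out stability (closed surface groups have $H^2=\mathbb Z$ and stable pro-$\pi_1$, cf.\ Theorem~\ref{stablepro}), so defeating pro-monomorphy of $H_1(\varepsilon Y)$ is exactly what is needed. Your ``aliveness'' step is also correct: if $\gamma$ crosses $E$ at $x_1,x_2$ lying in distinct unbounded components of $E-C$, then $\partial[\gamma]=[x_1]-[x_2]\neq 0$ in $\tilde H_0(E-C)$, so $[\gamma]\neq 0$ in $H_1(Y-C)$; this needs nothing about halfspaces. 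The genuine gap is the \emph{death} step. Rejoining the two crossings inside the connected set $E-C'$ only shows $[\gamma]=[\gamma^+]+[\gamma^-]$ with $\gamma^{\pm}$ loops in $Y^{\pm}-C'$; it does not show $[\gamma]=0$ in $H_1(Y-C')$. Those classes are typically nonzero --- your own construction, run at a deeper edge space inside $Y^{+}$ whose ends $C'$ \emph{does} separate, produces classes in $Y^{+}-C'$ that survive in $H_1(Y-C')$ --- and since your $\alpha^{\pm}$ are arbitrary paths wandering through the $1$-ended halfspaces, nothing in the construction prevents $\gamma$ from being exactly such a class. There is a second, quieter error in the bookkeeping: even granting the deaths, ``kernels of the long bonding maps acquire unboundedly growing rank'' does not defeat pro-monomorphy. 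The tower with all zero bonding maps (say on $\mathbb Z^n$'s) has kernels of unbounded rank yet is pro-isomorphic to the trivial tower, hence pro-monomorphic. What defeats pro-monomorphy is a witness that is simultaneously \emph{alive at an intermediate stage} and \emph{dead at the base stage}, produced cofinally: for every $N$ one needs an $m>N$ such that for every $n\geq m$ there is a class in $H_1(Y-C_n)$ with nonzero image in $H_1(Y-C_m)$ and zero image in $H_1(Y-C_N)$.

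Both problems are repaired by localizing $\gamma$, and the repair uses the one hypothesis you never touch: finite presentability of $G_1$, i.e.\ simple connectivity of the vertex spaces. (Also, the edge spaces of $Y$ are copies of the Cayley graph of $H$, not ``universal covers of a finite complex for $H$''; $H$ is only assumed finitely generated, the generality the paper insists on.) Given $N$, choose a Bass--Serre tree edge so deep that its edge space $E$ \emph{and both adjacent vertex spaces} $V^{\pm}\cong\widetilde X_1$ miss $C_N$, and let $m$ be the first stage at which $C_m$ separates two ends of $E$. For any $n\geq m$, pick $x_1,x_2$ along those two ends, outside $C_n$, and far enough out that their images can be joined by paths $\alpha^{\pm}$ inside $V^{\pm}-C_n$; this uses the \S\ref{PairsG} fact you quote, applied to \emph{both} associated subgroups $H$ and $f(H)$, which are infinite subgroups of the $1$-ended $G_1$ and hence $1$-ended in $G_1$. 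Set $\gamma=\alpha^{+}\cup\alpha^{-}$. Aliveness at stages $n$ and $m$ is your Mayer--Vietoris computation ($x_1,x_2$ lie in distinct components of $E-C_m$, hence of $E-C_n$). Death at stage $N$ is now genuine: $\gamma$ lies in $W=V^{-}\cup(E\times[0,1])\cup V^{+}$, which misses $C_N$, and $H_1(W)=0$ by Mayer--Vietoris since $H_1(V^{\pm})=0$ and $E$ is connected; hence $[\gamma]\mapsto 0$ in $H_1(Y-C_N)$. This supplies the quantifier pattern above, so $H_1(\varepsilon Y)$ is not pro-monomorphic; and since a stable $\pi_1(\varepsilon Y)$ would abelianize (taking the $Y-C_i$ connected, as one may for $1$-ended $Y$) to a stable, hence pro-monomorphic, $H_1(\varepsilon Y)$, the group $G$ is not stable at infinity. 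As a bonus, the halfspace $1$-endedness you argue only informally becomes irrelevant.
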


\begin{theorem} [Theorem $2^\ast$, \cite{J82b}]\label{JackIIi} 
Suppose $G$ is the HNN group $G_1 \ast _{f:H\to K}$ where $G_1$ is finitely presented, 1-ended and simply connected at $\infty$. If $H$ is finitely generated and 1-ended, then $G$ is 1-ended and simply connected at $\infty$
\end{theorem}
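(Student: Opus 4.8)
The plan is to prove the two conclusions separately, handling $1$-endedness first and then simple connectivity at $\infty$. One-endedness is immediate from the machinery already developed: since $H$ is infinite and finitely generated and $G_1$ is $1$-ended, the HNN case of Theorem \ref{SS3} (with $A = G_1$ and $C = H$) gives that $G = G_1 \ast_{f}$ is $1$-ended. Alternatively, Stallings' Theorem \ref{Stall} shows any splitting forcing extra ends must be over a finite group, which the infinite edge group $H$ is not. Note also that $G_1$, being simply connected at $\infty$, is semistable at $\infty$ by Theorem \ref{SCtoSS}; both properties of $G_1$ will be used below.

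For the simple connectivity at $\infty$, I would model $G$ geometrically as a tree of spaces. Fix a finite presentation of $G$ extending one of $G_1$,
$$\mathcal P = \langle \mathcal A, t \mid \mathcal R,\ t^{-1} h_i t = f(h_i),\ i=1,\dots,n\rangle,$$
where $\mathcal A$ presents $G_1$ and $h_1,\dots,h_n$ generate $H$. Its Cayley $2$-complex $\tilde X = \Gamma(\mathcal P)$ fibers over the Bass--Serre tree $T$ of the splitting: the vertex spaces are the $G$-translates $g\tilde X_1$ of the Cayley $2$-complex $\tilde X_1$ of $G_1$, and the edge spaces are translates of the (Cayley-level) complex $E$ of the edge group $H$, glued into the vertex spaces and across the $t$-edges. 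Two facts drive the proof: (i) each edge space is $1$-ended, since $H$ is $1$-ended, so any two points lying far out in an edge space can be joined by a path lying far out in that same edge space; and (ii) a uniform relative simple connectivity at $\infty$ for the vertex spaces. For (ii) I would argue that $G_1$ is simply connected at $\infty$ in $G$ in the sense of Definition \ref{SCin}, and then, because $G_1$ is also semistable at $\infty$, invoke Lemma \ref{ss+sc}: for every compact $C$ and every $N\ge 0$ there is a compact $D$ so that any loop within $N$ of \emph{any} coset $vG_1$ and lying outside $D$ is null-homotopic outside $C$. The uniformity over all cosets $vG_1$ (not just finitely many) is exactly what semistability of $G_1$ buys, and is what makes the final step work regardless of which vertex space a loop ends up in.

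The heart of the argument is a reduction of an arbitrary far-out loop to a loop supported in a single vertex space. Given a loop $\alpha$ lying outside a (large, to-be-chosen) compact $D$, I would first make it transverse to the edge spaces, so that $\alpha$ is a concatenation of arcs, each in a single vertex space, joined by crossings of edge spaces; this yields a (backtracking) edge-path in $T$. I would then induct on the number of edges of that $T$-path. Whenever the $T$-path has an outermost excursion -- $\alpha$ crosses an edge space into a vertex space and then crosses the \emph{same} edge space back -- the two crossing strands lie far out in a common $1$-ended edge space, so by (i) they can be connected by a path far out in that edge space; splicing in this path replaces the excursion by a subloop lying in a single vertex space together with a shorter $T$-path, and the excursion's contribution can be pushed across and absorbed by a homotopy staying outside $C$. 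Iterating collapses the $T$-path to a point, leaving a loop inside one vertex space $g\tilde X_1$; part of the bookkeeping is to track, at each stage, the compact set the homotopy must avoid and to choose $D$ large enough at the outset. Finally, a loop outside $D$ living in $g\tilde X_1$ lies outside a large compact subset of that vertex space, so the uniform statement (ii) kills it by a homotopy inside $g\tilde X_1 - C$, hence inside $\tilde X - C$, completing the proof.

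I expect the main obstacle to be the reduction step: ensuring that each time an excursion is removed the homotopy can be kept outside the prescribed compact set $C$ while the $T$-complexity strictly decreases, using \emph{only} the $1$-endedness (not simple connectivity) of the edge group $H$. A secondary, still delicate, point is establishing the uniform relative simple connectivity at $\infty$ of the vertex groups, i.e.\ verifying that $G_1$ is simply connected at $\infty$ in $G$ so that Lemma \ref{ss+sc} applies; this is where the finite generation and $1$-endedness of $H$, which keep the edge spaces from fragmenting at infinity, are used in an essential way. The amalgamated-product analogue Theorem \ref{JackIi} follows the same outline, and I would organize the HNN proof to parallel it, the only real difference being the single $G$-orbit of vertex spaces and the role of the stable letter $t$ in identifying the two associated edge spaces.
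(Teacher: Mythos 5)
The paper does not actually prove this statement: it is quoted directly from B.~Jackson's paper \cite{J82b} (his Theorem $2^\ast$, with the hypothesis on $H$ weakened from finitely presented to finitely generated, as the surrounding text explains). So there is no in-paper proof to compare against; what follows assesses your proposal on its own terms.

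Your outline is essentially sound and is the natural tree-of-spaces argument. Three points. First, your ``alternative'' one-endedness argument is invalid: Stallings' theorem says a many-ended group splits over \emph{some} finite group, and exhibiting one splitting of $G$ over the infinite group $H$ does not rule out a different splitting over a finite group (for instance $F_3 \cong F_2 \ast_{\mathbb Z} F_2$ splits over $\mathbb Z$ yet has infinitely many ends). Your primary argument via Theorem \ref{SS3} is the correct one. Second, the ``main obstacle'' you flag --- that repeated excursion-removal might degrade how far the loop stays from $C$ --- dissolves once you use three nested compact sets and observe that splicing creates no new edge-space crossings. Concretely: given $C$, let $C_1=D(C,N)$ be the compact set supplied by Lemma \ref{ss+sc}, where $N$ bounds the distance from an edge space to its adjacent $G_1$-cosets; only finitely many edge spaces meet $C_1$, and for each of these, one-endedness of $H$ gives a compact set outside of which two of its points can be joined \emph{inside that edge space} avoiding $C_1$ (edge spaces missing $C_1$ need no such set); let $D$ contain $C_1$ together with all these sets. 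If $\alpha$ lies outside $D$, then at an innermost backtrack of its tree-path the two crossing points lie outside $D$, so the spliced path $\gamma$ can be chosen in its edge space avoiding $C_1$; the excised subloop then lies outside $C_1$ and within $N$ of a single coset of $G_1$, hence dies outside $C$ by Lemma \ref{ss+sc}. Since $\gamma$ stays in its edge space, the surviving loop's edge-space crossings are all crossings of the \emph{original} $\alpha$, hence still outside $D$, so the induction never degrades; the terminal loop, lying in one vertex space outside $C_1$, also dies outside $C$. Third, a misattribution: establishing that $G_1$ is simply connected at $\infty$ in $G$ (Definition \ref{SCin}) requires nothing about $H$ at all --- the Cayley $2$-complex of $G_1$ embeds as a subcomplex of $\Gamma(\mathcal P)$, so far-out loops in it bound far out \emph{within that subcomplex}, which is stronger than the relative condition; the hypotheses on $H$ are used only in the edge-space splicing step, not in your item (ii).
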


See Theorem \ref{JHom} for the homological versions of Theorems \ref{JackI} and Theorem \ref{JackII}

\subsubsection{Graph Products of Groups}\label{GPofG}
Graph products of groups generalize right angled Artin groups (when all vertex groups are $\mathbb Z$)  and right angled Coxeter groups (when all vertex groups are $\mathbb Z_2$). 

\begin{theorem} [\cite {MMGP}]\label{GraphP}   
Suppose $G$ is a graph product on the finite connected graph $\Lambda$ where each vertex group ($G_v$ for the vertex $v$) is finitely presented. Then $G$ does not have semistable fundamental group at $\infty$ if and only if there is a vertex $v$ of $\Lambda$ such that: 

(1) $G_v$ does not have semistable fundamental group at $\infty$ and  

(2) the link of $v$ is a complete graph with each vertex group finite.
\end{theorem}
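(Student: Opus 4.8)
The plan is to first reformulate condition (2). A graph product is finite exactly when its defining graph is complete and all vertex groups are finite, so (2) is equivalent to $G_{\mathrm{lk}(v)}$ being finite (here $\mathrm{lk}(v)$ and $\mathrm{st}(v)=\{v\}\cup\mathrm{lk}(v)$ are the link and star of $v$). Thus the theorem reads: $G$ fails to be semistable at $\infty$ iff some vertex $v$ has $G_v$ non-semistable and $G_{\mathrm{lk}(v)}$ finite. Throughout I would use the visual splitting $G=G_{\Lambda-v}\ast_{G_{\mathrm{lk}(v)}}G_{\mathrm{st}(v)}$ (since $\mathrm{lk}(v)$ separates $\{v\}$ from $\Lambda-\mathrm{st}(v)$), together with $G_{\mathrm{st}(v)}=G_v\times G_{\mathrm{lk}(v)}$ and the fact that every full-subgraph graph product here is finitely presented. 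For the ($\Leftarrow$) direction, suppose $G_v$ is non-semistable (hence infinite) and $G_{\mathrm{lk}(v)}$ is finite. If $v$ is adjacent to every other vertex, $\Lambda$ is complete and $G=G_v\times G_{\mathrm{lk}(v)}$ is quasi-isometric to $G_v$, so $G$ is non-semistable by the quasi-isometry invariance of Theorem \ref{ssqi}. Otherwise the star splitting is a nontrivial splitting over the finite group $G_{\mathrm{lk}(v)}$, the vertex group $G_{\mathrm{st}(v)}=G_v\times G_{\mathrm{lk}(v)}$ is quasi-isometric to $G_v$ and so finitely presented and non-semistable; a Dunwoody decomposition of $G$ refines this finite splitting, so a non-semistable $1$-ended Dunwoody factor of $G_{\mathrm{st}(v)}$ produced by \cite{M87} is also a $1$-ended Dunwoody factor of $G$, and \cite{M87} applied to $G$ gives that $G$ is non-semistable.

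For the ($\Rightarrow$) direction I would prove the contrapositive by induction on $|V(\Lambda)|$: assuming the hypothesis (H) that every vertex $w$ satisfies ``$G_w$ semistable or $G_{\mathrm{lk}(w)}$ infinite,'' show $G$ is semistable. The engine is Example \ref{prodss}: a product of two infinite finitely presented groups is semistable. Its key consequence is that a non-semistable (hence infinite) vertex $v$ is cured by its star, since (H) forces $G_{\mathrm{lk}(v)}$ infinite and then $G_{\mathrm{st}(v)}=G_v\times G_{\mathrm{lk}(v)}$ is a product of two infinite groups, hence semistable. I would first dispose of the case where $\Lambda$ is a join $\Lambda_1\ast\Lambda_2$, where $G=G_{\Lambda_1}\times G_{\Lambda_2}$: this is semistable by Example \ref{prodss} when both factors are infinite, and otherwise is quasi-isometric to a smaller graph product to which (H) descends (a finite direct factor enlarges every link uniformly), so induction applies. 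This reduces matters to $\Lambda$ connected, not complete, with no cone vertex.

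To treat the remaining case I would pass to $1$-ended pieces. If $G$ is not one-ended, Varghese's Theorem \ref{OV} says $G$ visually splits over a finite group, i.e.\ along a finite separating clique; iterating yields a Dunwoody decomposition with finite edge groups whose vertex groups are graph products on full subgraphs. A clean point is that (H) descends to these subgraphs: a non-semistable vertex has an \emph{infinite} group, hence never lies on a finite separating clique, so its entire $\Lambda$-link is retained in its piece and $G_{\mathrm{lk}}$ stays infinite there. By \cite{M87}, $G$ is semistable once each $1$-ended piece is, so by induction it remains to treat a $1$-ended, non-join graph product satisfying (H). There one would split along the star of a vertex $v$ and apply the combination Theorem \ref{MTComb} (edge group $G_{\mathrm{lk}(v)}$ finitely generated, vertex groups finitely presented), the starred vertex group $G_{\mathrm{st}(v)}$ being semistable as above.

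The hard part will be precisely this last step: guaranteeing that the complementary vertex group $G_{\Lambda-v}$ in the star splitting is again semistable, i.e.\ that (H) survives deletion of $v$. Condition (H) can fail in $\Lambda-v$ at a non-semistable vertex $w$ adjacent to $v$ whose link becomes finite once $v$ is removed; by the already-proved ($\Leftarrow$) direction applied to $\Lambda-v$ this genuinely makes $G_{\Lambda-v}$ non-semistable, so a naive single-vertex star splitting is not always admissible. The resolution I would pursue is to choose the separating subgraph so that each non-semistable vertex keeps on its own side a finite ``witness'' (an infinite neighbour, or two non-adjacent neighbours) that already forces its link group to be infinite; keeping adjacent non-semistable vertices together is the prototype (a path $a-v-w-b$ with $G_v,G_w$ non-semistable must be split along the edge $\{v,w\}$, not along a single star). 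Producing, uniformly from any $\Lambda$ satisfying (H), a graph-of-groups decomposition with finitely generated edge groups in which every vertex group is a full-subgraph graph product still satisfying (H) is the combinatorial core of the argument, after which Theorem \ref{MTComb} assembles semistability of $G$.
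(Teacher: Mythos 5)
Your $(\Leftarrow)$ direction is correct (and can be shortened: the star splitting $G=\langle \mathrm{st}(v)\rangle \ast_{\langle \mathrm{lk}(v)\rangle}\langle \Lambda-v\rangle$ has finite edge group, $\langle \mathrm{st}(v)\rangle$ is quasi-isometric to the non-semistable group $G_v$, so Theorem \ref{Fsplit} together with Theorem \ref{ssqi} gives non-semistability of $G$ immediately, with no need for the Dunwoody refinement detour). Your reductions in the $(\Rightarrow)$ direction are also sound: the join case, the fact that hypothesis (H) descends under visual splittings over finite subgroups (a non-semistable vertex has infinite group, so it cannot lie in a finite separating subgraph and keeps its whole link), and the use of Theorem \ref{OV} and Theorem \ref{Fsplit}. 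The genuine gap is exactly the step you flag at the end: you reduce the hard direction to the assertion that every connected, non-complete, non-join $\Lambda$ satisfying (H) can be written as $\Lambda=\Lambda_1\cup\Lambda_2$ with $\Lambda_1,\Lambda_2$ proper full subgraphs each satisfying (H), and you do not prove this. This is not a routine detail: your own example (the path $a$--$v$--$w$--$b$ with $G_v,G_w$ non-semistable) shows that the natural candidates, the star splittings, can fail on both sides, so the whole content of the theorem has been pushed into an unproven combinatorial lemma. As written, the $(\Rightarrow)$ direction is a plan, not a proof.

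For what it is worth, the lemma is true and your ``witness'' heuristic can be completed by a trichotomy, so the approach is salvageable rather than doomed. (i) If some non-semistable vertex $w$ has only finite-group vertices in $\mathrm{lk}(w)$, split along $\mathrm{st}(w)$: (H) holds in $\mathrm{st}(w)$ (its only non-semistable vertex is $w$, whose link is untouched) and in $\Lambda-w$ (no non-semistable vertex is adjacent to $w$, since all of $\mathrm{lk}(w)$ carries finite groups, so no relevant link shrinks). (ii) Otherwise every non-semistable vertex has an infinite-group neighbour; if some infinite-group vertex $v$ has no non-semistable neighbour $u$ with $\mathrm{lk}(u)-v$ a clique of finite-group vertices, split along $\mathrm{st}(v)$: inside $\mathrm{st}(v)$ the vertex $v$ itself witnesses (H) for all its neighbours, and (H) holds in $\Lambda-v$ by the choice of $v$. (iii) Otherwise iterating the failure condition twice produces two \emph{adjacent} non-semistable vertices $u,v$ with $\mathrm{lk}(u)-v$ and $\mathrm{lk}(v)-u$ both cliques of finite-group vertices; then $\Lambda_1=\mathrm{st}(u)\cup \mathrm{st}(v)$ and $\Lambda_2=\Lambda-\{u,v\}$ satisfy (H) (in $\Lambda_1$ the vertices $u$ and $v$ witness each other, and no non-semistable vertex of $\Lambda_2$ is adjacent to $u$ or $v$), and the residual case $\Lambda=\mathrm{st}(u)\cup\mathrm{st}(v)$ is either a join (when the finite-group vertices form a clique, the complement graph is disconnected) or is covered by the two pieces $\Lambda-x$ and $\Lambda-y$ for a non-adjacent pair $x,y$ of finite-group vertices, both pieces containing the edge $\{u,v\}$ and hence satisfying (H). You would also need to run the induction over all finite graphs, not only connected ones, since your pieces (e.g.\ $\Lambda-v$, or a join factor) can be disconnected; that case is a free product, i.e.\ a graph of groups with trivial edge groups, handled by Theorem \ref{Fsplit} or Theorem \ref{MTComb}. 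Until something like this trichotomy is written out and verified, the hard direction of Theorem \ref{GraphP} remains unproven in your write-up.
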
 

If $\Lambda_1$ is a subgraph of $\Lambda$ then let $\langle \Lambda_1\rangle$ denote the subgroup of $G$ generated by the vertex groups of $\Lambda_1$. For $v$ a vertex of $\Lambda$, let $lk(v)$ (the link of $v$)  be the full subgraph of $\Lambda$ on the vertices adjacent to $v$. Let $st(v)$ (the star of $v$) be the full subgraph on $\{v\}\cup lk(v)$. 
When the graph product $G$ is not semistable (and $\Lambda$ is not complete), then $G$ ``visually" splits as an amalgamated product 
$$G=\langle st(v)\rangle \ast_{\langle lk(v)\rangle} \langle \Lambda-\{v\}\rangle$$
where $v$ is a vertex of $\Lambda$, $G_v$ is not semistable and $\langle lk(v)\rangle$ is a finite group (so $\langle st(v)\rangle$ is the direct product of the finite group $\langle lk(v)\rangle$ with the non-semistable group $G_v$ and so is not semistable at $\infty$). In this situation $G$ is covered by the next result.  

\begin{theorem} [Theorem 3.3, \cite{MMGP} or Proposition 3.1, \cite{CLQR21}] \label{Fsplit}
 Suppose the group $G$ has a graph of groups decomposition $\mathcal G$ where each edge group is finite and each vertex group is finitely presented. The group $G$ has semistable fundamental group at $\infty$  if and only if each vertex group of $\mathcal G$ has semistable fundamental group at infinity. 
\end{theorem}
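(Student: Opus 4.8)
The plan is to reduce both implications to accessibility together with the reduction theorem of \cite{M87}, which asserts that a finitely presented group is semistable at $\infty$ if and only if every $1$-ended vertex group occurring in a Dunwoody decomposition of it is semistable at $\infty$. Recall from the discussion following Theorem \ref{DunAcc} that a Dunwoody decomposition is a graph of groups with finite edge groups in which each vertex group is either finite or $1$-ended, and recall that finite and $2$-ended groups are automatically simply connected, hence semistable, at $\infty$. Since $G$ is finitely presented, the underlying graph of $\mathcal G$ may be taken finite and each vertex group $G_v$ is finitely presented.

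For the implication that semistable vertex groups force $G$ to be semistable, I would simply invoke Theorem \ref{MTComb}: the finite edge groups of $\mathcal G$ are in particular finitely generated and the vertex groups are finitely presented, so if each $G_v$ is semistable at $\infty$ then $G$ is semistable at $\infty$. The real content of the theorem therefore lies in the converse.

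For the converse, the idea is to manufacture a single Dunwoody decomposition of $G$ that simultaneously refines $\mathcal G$ and the individual decompositions of the vertex groups. Each $G_v$ is finitely presented, hence accessible over splittings by finite groups (Theorem \ref{DunAcc}), so it admits a Dunwoody decomposition $\mathcal D_v$ whose vertex groups are finite or $1$-ended and whose edge groups are finite. Because any splitting over a finite group is compatible with $\mathcal G$ (as explained after Theorem \ref{DunAcc}: a finite subgroup lies in a conjugate of a vertex group, so its splitting can be grafted into $\mathcal G$), I would blow up each vertex $v$ of $\mathcal G$, replacing $G_v$ by $\mathcal D_v$. The result is a graph of groups $\mathcal D$ for $G$ whose edge groups (those of $\mathcal G$ together with those of the $\mathcal D_v$) are all finite and whose vertex groups are all finite or $1$-ended; that is, $\mathcal D$ is a Dunwoody decomposition of $G$, and by construction its collection of $1$-ended vertex groups is exactly the union, over vertices $v$ of $\mathcal G$, of the $1$-ended vertex groups of $\mathcal D_v$. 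Now if $G$ is semistable at $\infty$, the reduction theorem of \cite{M87} shows every $1$-ended vertex group of $\mathcal D$ is semistable at $\infty$; hence for each $v$ every $1$-ended vertex group of $\mathcal D_v$ is semistable at $\infty$, and a second application of \cite{M87} (to $G_v$) yields that $G_v$ is semistable at $\infty$. Vertex groups that are finite or $2$-ended are semistable with no hypothesis.

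The main obstacle is making rigorous the assertion that the $1$-ended vertex groups of the assembled decomposition $\mathcal D$ are precisely those of the $\mathcal D_v$, and that the \cite{M87} criterion is insensitive to the choice of Dunwoody decomposition. Both points rest on accessibility: the compatibility of finite-group splittings guarantees that grafting the $\mathcal D_v$ into $\mathcal G$ produces a legitimate graph-of-groups structure for $G$ with finite edge groups, and the accessibility of $G$ (which follows from Theorem \ref{DunAcc}, since $G$ is finitely presented) guarantees that the $1$-ended pieces are determined up to isomorphism independently of how the decomposition is taken. Care is also needed to confirm that blowing up neither merges nor alters the $1$-ended factors; this is exactly the statement that the $1$-ended vertex groups of a graph of groups with finite edge groups are unchanged under refinement by further finite-group splittings, which is where the bulk of the bookkeeping would go.
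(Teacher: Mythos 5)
Your proposal is correct in outline, but it takes a genuinely different route from the proof the paper points to. The forward implication is the same in both treatments: Theorem \ref{MTComb} applies verbatim, since finite edge groups are in particular finitely generated. For the converse, the argument indicated by the paper (Theorem 3.3 of \cite{MMGP}; see the discussion immediately preceding Theorem \ref{excise}) is a direct geometric one: if some vertex group $A$ of $\mathcal G$ is not semistable at $\infty$, one works in the Cayley 2-complex of $G$ and uses the excision result, Theorem \ref{excise}, to cut out of a putative proper homotopy the disk pairs whose boundaries map to the (finite) cosets separating the subcomplex carried by $A$ from the rest of the complex, rerouting the homotopy back into that subcomplex; thus non-semistability of a vertex group obstructs semistability of $G$. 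You instead refine $\mathcal G$ by Dunwoody decompositions $\mathcal D_v$ of its vertex groups and quote the reduction theorem of \cite{M87} twice. Your route is shorter and requires no new geometric work, but it leans on two heavy external inputs (Dunwoody accessibility, Theorem \ref{DunAcc}, and the theorem of \cite{M87}) together with Bass--Serre bookkeeping; the excision route is self-contained, needs no accessibility hypothesis, and is the same engine the paper reuses for homological analogues.

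Two points in your sketch need to be made precise, though neither is a genuine obstruction. First, the blow-up exists because each finite edge group of $\mathcal G$ fixes a vertex of the Bass--Serre tree of $\mathcal D_v$, and the invariance you attribute to ``accessibility'' is really a fixed-point property: a 1-ended group acting on a tree with finite edge stabilizers fixes a vertex, so the 1-ended vertex groups of any decomposition of $G$ with finite edge groups are, up to conjugacy, exactly the maximal 1-ended subgroups of $G$. This single observation shows simultaneously that your refinement $\mathcal D$ neither merges nor alters the 1-ended factors and that the criterion of \cite{M87} is insensitive to which Dunwoody decomposition is used; that is where your ``bookkeeping'' lives, and it is standard. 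Second, the statement of \cite{M87} quoted in this paper concerns infinite-ended groups, so you cannot invoke it when $G$ has 0, 1, or 2 ends. Those cases are immediate, however: such a $G$ acting on the Bass--Serre tree of $\mathcal G$ with finite edge stabilizers either fixes a vertex or preserves an axis with finite vertex stabilizers, so every vertex group of $\mathcal G$ is finite except possibly one equal to $G$ itself, and both implications of the theorem are trivial. With these two patches your argument is complete.
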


%\begin{theorem}\label{Fsplit2}(Theorem 2, \cite{M87} and Theorem 3.3,\cite{MMGP}) {\bf (Fsplit2)}
%Suppose $C_1$ and $C_2$ are finite isomorphic subgroups of a finitely presented  group $A$. Let $\phi:C_1\to C_2$ be an isomorphism and $G=A\ast_\phi$ be the resulting HNN-extension. Then $G$ is semistable at $\infty$ if and only if $A$ is semistable at $\infty$.
%\end{theorem}

Part (1) of the next result follows directly from Theorem \ref{GraphP}. Part (2) follows directly from Theorems \ref{GraphP} and \ref{OV}.
 
\begin{corollary} \label{C11}
 Suppose $G$ is a graph product on the finite connected graph $\Lambda$ and each vertex group of $\Lambda$ is finitely presented. If either of the following conditions are met, then $G$ has semistable fundamental group at $\infty$:

 (1) Each vertex group has semistable fundamental group at $\infty$.
 
  (2) The group $G$ is 1-ended and $\Lambda$ is not complete. 
\end{corollary}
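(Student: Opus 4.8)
The plan is to prove both parts as short deductions from Theorem \ref{GraphP}, which characterizes exactly when a graph product of finitely presented vertex groups fails to be semistable at $\infty$. For Part (1), I would use the contrapositive form of that characterization: Theorem \ref{GraphP} asserts that $G$ fails to be semistable at $\infty$ if and only if some vertex $v$ has both $G_v$ non-semistable and $lk(v)$ complete with all vertex groups finite. If every vertex group is assumed semistable at $\infty$, then the first of these two conditions holds for no vertex, so the equivalence forces $G$ to be semistable at $\infty$. This part needs no further input.

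For Part (2) I would argue by contradiction. Suppose $\Lambda$ is not complete, $G$ is 1-ended, yet $G$ is not semistable at $\infty$. Theorem \ref{GraphP} then produces a vertex $v$ with $G_v$ non-semistable (hence infinite and 1-ended, since all finite and 2-ended groups are semistable at $\infty$) and with $lk(v)$ a complete graph all of whose vertex groups are finite. The key structural observation is that $st(v)=\{v\}\cup lk(v)$ is itself complete: the vertices of $lk(v)$ are pairwise adjacent because $lk(v)$ is complete, and each is adjacent to $v$ by the definition of the link. Consequently $\langle lk(v)\rangle$ is a finite direct product of finite groups, hence finite.

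Next I would assemble the visual splitting supplied by Theorem \ref{OV}. Taking the full subgraphs $\Gamma_1=st(v)$ and $\Gamma_2=\Lambda-\{v\}$, one has $\Gamma_1\cup\Gamma_2=\Lambda$ and $\Gamma_1\cap\Gamma_2=lk(v)$ with $\langle lk(v)\rangle$ finite. Because $st(v)$ is complete while $\Lambda$ is not, $st(v)\neq\Lambda$, so there is a vertex $w\neq v$ not adjacent to $v$; this $w$ lies in $\Gamma_2$ but not $\Gamma_1$, while $v$ lies in $\Gamma_1$ but not $\Gamma_2$. Thus neither subgraph contains the other, and the associated amalgam $G\cong\langle st(v)\rangle\ast_{\langle lk(v)\rangle}\langle\Lambda-\{v\}\rangle$ is a nontrivial splitting over the finite group $\langle lk(v)\rangle$. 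By Stallings' Theorem \ref{Stall}, a finitely generated group admitting such a splitting has more than one end, contradicting the hypothesis that $G$ is 1-ended. Hence $G$ must be semistable at $\infty$.

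The arguments are short, so the only point demanding care—and the step I expect to be the main obstacle—is verifying that the visual splitting is genuinely nontrivial \emph{as a splitting of groups}, that is, that $\langle lk(v)\rangle$ is a proper subgroup of each of $\langle st(v)\rangle$ and $\langle\Lambda-\{v\}\rangle$. This uses that $G_v$ and $G_w$ are nontrivial (indeed $G_v$ is infinite) together with the standard normal-form/retraction properties of graph products, which guarantee $\langle \Gamma'\rangle\subsetneq\langle \Gamma''\rangle$ whenever $\Gamma'\subsetneq\Gamma''$ are full subgraphs with nontrivial vertex groups. Once this is in place, Stallings' splitting theorem closes the argument.
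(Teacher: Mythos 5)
Your proof is correct and takes essentially the same route as the paper, which deduces part (1) directly from Theorem \ref{GraphP} and part (2) from Theorem \ref{GraphP} together with the visual splitting $G=\langle st(v)\rangle \ast_{\langle lk(v)\rangle}\langle \Lambda-\{v\}\rangle$ over the finite group $\langle lk(v)\rangle$. The only difference is at the final step of (2): you invoke Stallings' Theorem \ref{Stall} to conclude that a non-trivial splitting over a finite group forces more than one end, while the paper cites Theorem \ref{OV}; your citation is in fact the logically apt one, since Theorem \ref{OV} as stated gives the implication in the opposite direction (more than one end implies a visual splitting), so this is a point in your write-up's favor rather than a gap.
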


\begin{example}
Consider the graph which is a hexagon. Alternate vertex groups as $\mathbb Z_2$ and $\mathbb Z\ast \mathbb Z$. This graph group is 1-ended (Theorem \ref{OV})  and semistable (Theorem \ref{GraphP}).

\end{example}

%\subsubsection{Mapping Class Groups}\label{MCGs}

%???Jason Berstock and Spencer Dowdall point out: We know: If $G=\langle A\cup B\rangle$ where $A$ and $B$ are 2 or 1-ended, $A\cap B$ contains an infinite finitely generated group and $A$ and $B$ are semistable implies $G$ is semistable. This implies (with a few exceptions) that all Mapping Class Groups (of Surfaces) are semistable at infinity. The exceptions  (like $SL_2(\mathbb Z)$ which is the amalgamated product of two finite groups and hence semistable) will be semistable by using the Mihalik-Tschantz and the above results. 

%The mapping class groups are generated by Dehn twists and are finitely generated. If two curves for Dehn twists are disjoint, then the corresponding generators commute. Build a graph with one vertex for each Dehn twist and an edge between two if the curves are disjoint. When the graph is connected the group is semistable. Each mapping class group has a PURE mapping class group as a subgroup of finite index. You actually want to apply the combination result to the PURE ones. 

%See wikipedia for lots of information.
\subsubsection{Asymptotic Dimension 2 and Simple Connectivity at $\infty$}  \label{AD2}

If the asymptotic dimension of a finitely presented group is 2, then intuitively the group is 1-dimensional at $\infty$. This idea is supported by boundaries of hyperbolic and CAT(0) groups. For instance, surface groups are of asymptotic dimension 2 and have 1-dimensional boundary. Spaces that are 1-dimensional and contain a circle have non-trivial fundamental group. So, one would not expect groups with asymptotic dimension 2 that contain a surface group to be simply connected at $\infty$. The two groups of the next example seem counter-intuitive. 

\begin{example} The Extended Lamplighter group and the Sidki double of the free group $F_n$, for $n\geq 3$ have asymptotic dimension 2, but are simply connected at $\infty$ (see Theorems \ref{ExLsc}, \ref{SCISD} and the remarks that follow). The Sidki double of $F_n$ is torsion free, as well, and ``sits between" $F_n\ast F_n$ and $F_n\times F_n$.
$$F_n\ast F_n\to \mathfrak{X}(F_n) \to F_n\times F_n.$$
Note that the CAT(0) boundary of $F_n\times F_n$ is the (1-dimensional) join of two Cantor sets.
\end{example}

\subsubsection{Useful Technical Results}\label{tech}

The following is elementary.
\begin{theorem}\label{squares}
Suppose $H$ is a group. Let $\bar H$ be the subgroup of $H$ generated by all elements $h^2$ for $h\in H$. The group $\bar H$ is normal in $H$ and $H/ \bar H$ is an (abelian) 2-group (each non-trivial element has order 2). In particular, if $H$ is finitely generated, then $\bar H$ has finite index in $H$.   
\end{theorem}
\begin{proof}
If $h\in H$ then $h=h_1^2\cdots h_n^2$ for some collection of $h_i\in H$. If $g\in H$ then $g^{-1}hg=g^{-1}h_1^2g\cdots g^{-1}h_n^2g\in \bar H$, so $\bar H$ is normal in $H$. Consider the quotient space $K=H/\bar H$. Every non-trivial element of $K$ is order 2.  (Such groups are abelian: If $k\in K$, then $k^2=1$, and so $k=k^{-1}$ for all $k\in K$.  If $k_1,k_2\in K$ then $k_1k_2k_1^{-1}k_2^{-1}=k_1k_2k_1k_2=(k_1k_2)^2=1$.) 
\end{proof}

\begin{theorem} [Corollary 3.1.4, \cite{M1}]
Suppose 
\begin{enumerate}
\item The space $X$ is a finite complex such that $\pi_1(X)=G$ and $G$ is a 1-ended group. 

\item The group $H$ is a finitely generated subgroup of infinite index in $G$.

\item The $p: \tilde X\to \tilde X/H$ is the  quotient map. 

\item The group of covering transformations of $\tilde X/H$ contains a finitely generated infinite subgroup.  

\item The proper rays $r$ and $s$ in $\tilde X$ are such that both $pr$ and $ps$ have image in a compact subset of $\tilde X/H$.
\end{enumerate}
Then $r$ and $s$ are properly homotopic in $\tilde X$. 
\end{theorem}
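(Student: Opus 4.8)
The plan is to recast hypotheses (4) and (5) geometrically, reduce to a configuration governed by a normal subgroup, and then build the proper homotopy by a combing-and-stacking argument; the one genuinely delicate point will be the properness of the filling homotopies. First I would make the standard reductions. Since $G$ is $1$-ended and $X$ is finite, $\tilde X$ is $1$-ended, so $r$ and $s$ automatically converge to its unique end and no same-end hypothesis is required. By Theorem \ref{geoend} I may replace $r,s$ by properly homotopic edge-path rays and, after prepending a fixed edge path, assume $r(0)=s(0)=\ast$; since proper homotopy of edge-path rays is detected in the $2$-skeleton I work in $\tilde X^2$. The covering $p:\tilde X\to \tilde X/H$ has deck group $H$ and $p^{-1}(\mathrm{compact})$ is $H$-cocompact, so hypothesis (5) is equivalent to the assertion that $r,s\subseteq H\cdot\hat C$ for a single compact $\hat C\subseteq\tilde X$; that is, $r$ and $s$ are $H$-bounded. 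Writing $r(n)\in h_n\hat C$ and $s(n)\in h'_n\hat C$, proper discontinuity of the $G$-action forces $h_n,h'_n\to\infty$ in $G$.

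Next I would produce the normal structure. Let $J_0=\langle t_1,\dots,t_k\rangle$ be a finitely generated infinite subgroup of the deck group $N_G(H)/H$ of $\tilde X/H$ furnished by (4), and lift the $t_i$ to $\hat t_i\in N_G(H)\subseteq G$. Set $G_1=\langle H,\hat t_1,\dots,\hat t_k\rangle$. Then $H\triangleleft G_1$ is finitely generated and infinite, $G_1/H\cong J_0$ is infinite, so $H$ is non-locally-finite of infinite index in $G_1$, and Theorem \ref{cohen} shows $G_1$ is $1$-ended. Choosing a connected compact $\hat C_1\supseteq\hat C$, the subcomplex $W_1:=G_1\hat C_1$ is a connected $G_1$-cocompact subcomplex of $\tilde X$ containing $r$ and $s$ on which $G_1$ acts properly and cocompactly; by the Milnor--\v{S}varc lemma $W_1$, with the metric restricted from $\tilde X$, is quasi-isometric to $G_1$. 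In particular $W_1$ is $1$-ended and ``far out in $W_1$'' agrees coarsely with ``far out in $\tilde X$''.

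The main construction is a stacking argument in the spirit of the proofs of Theorems \ref{SSloop} and \ref{M1}. Using $1$-endedness of $W_1$, for each $n$ I connect $r(n)$ to $s(n)$ by a path $\beta_n$ lying in $W_1$ outside the ball of radius $N(n)$ about $\ast$, with $N(n)\to\infty$; these connectors are then proper. Together with $r$ and $s$ they cut $[0,\infty)\times[0,1]$ into squares bounded by the loops $L_n=r|_{[n,n+1]}\ast\beta_{n+1}\ast s|_{[n,n+1]}^{-1}\ast\beta_n^{-1}$, and it remains to fill the $L_n$ by homotopies escaping every compact subset of $\tilde X$ and then stack them as in Figure \ref{FigSSloop}. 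This properness of the fillings is the main obstacle: a loop lying far out in the $1$-ended but possibly not simply connected at $\infty$ space $\tilde X$ need not bound a disk that also lies far out, so one cannot fill through the middle. The resolution is to use that $H\triangleleft G_1$ is normal with infinite quotient, which is exactly the hypothesis under which the fundamental pro-group along an $H$-bounded ray is Mittag-Leffler, the mechanism driving Theorem \ref{M1}: rather than filling $L_n$, one pushes it outward along $r$ and $s$ by homotopies supported in $W_1$ and translated by the elements $h_n\to\infty$, which therefore leave every compact subset of $\tilde X$ by proper discontinuity. Invoking Theorem \ref{ssequiv}(4) to compare the two rays outside each prescribed compact set and assembling the pushes yields the proper homotopy; the point demanding the most care is that each stage leaves the chosen compact subset of $\tilde X$, not merely of $\tilde X/H$.
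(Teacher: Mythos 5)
Your setup is sound, and the opening move is a good one: lifting a finitely generated infinite subgroup of the deck group of $\tilde X/H$ into $N_G(H)$, forming $G_1=\langle H,\hat t_1,\dots,\hat t_k\rangle$ so that $H$ is a finitely generated infinite normal subgroup of infinite index in the finitely generated group $G_1$, deducing $1$-endedness of $G_1$ from Theorem \ref{cohen}, and then working in the connected $G_1$-cocompact subcomplex $W_1=G_1\hat C_1\subseteq\tilde X$ containing both rays. Up through the existence of proper connectors $\beta_n$ from $r(n)$ to $s(n)$ inside $W_1$ this can all be made rigorous, modulo two small points: $W_1$ is quasi-isometric to $G_1$ in its intrinsic path metric, not in the metric restricted from $\tilde X$; and the edge-path replacements of $r$ and $s$ must be chosen uniformly close to the originals so that hypothesis (5) survives the replacement.

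The genuine gap is at the step you yourself call the main obstacle, and your resolution of it is circular. To make the fillings of the squares $L_n$ escape compact sets you invoke (i) ``the mechanism driving Theorem \ref{M1},'' i.e.\ that $H\triangleleft G_1$ with infinite quotient forces the pro-$\pi_1$ along an $H$-bounded ray in $\tilde X$ to be Mittag-Leffler, and (ii) Theorem \ref{ssequiv}(4). Neither may be cited here. Theorem \ref{ssequiv}(4) is an equivalent \emph{reformulation} of semistability, so applying it to $\tilde X$ (or to $W_1$) presupposes that the space is already known to be semistable at infinity, which is not known ($1$-endedness of $G$ does not give it), and for $\tilde X$ is precisely what \cite{M1} proves \emph{by means of} the present corollary: the statement you are proving is a lemma on the way to Theorem \ref{M1}, not a consequence of it. The same objection applies to (i), and note in addition that Theorem \ref{M1} requires finite presentability while $G_1$ is only finitely generated. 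Your one concrete device, ``homotopies supported in $W_1$ and translated by the elements $h_n\to\infty$,'' does not make sense as stated: translating a homotopy by $h_n$ translates its boundary as well, and the loops $L_n$ are not translates of one another, so no fixed model homotopy can be moved around to fill them. A correct proof must manufacture the proper fillings by hand, and this is where normality genuinely enters: one fixes a lift $\hat t\in N_G(H)$ carrying the band $p^{-1}(\bar W)$ off a given compact set, a path $\delta$ from the band to $\hat t\cdot p^{-1}(\bar W)$, and, for each generator $h_i$ of $H$, a compact null-homotopy in the simply connected complex $\tilde X$ of the fixed ladder loop spelled by $h_i$, $\delta$, and an $H$-word for $\hat t^{-1}h_i\hat t$; the fillings of the $L_n$ are then assembled from $H$-translates of these finitely many models, attached along data based at $r(n)$ and $s(n)$, and they leave every compact set because the attaching elements do (compare the coset-translation scheme in the proof of Theorem \ref{subCend}). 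Alternatively, a non-circular version of your plan would apply Theorem \ref{L} or Theorem \ref{MainCM} to $G_1$ to conclude that $G_1$ is semistable at infinity as a finitely generated group, and then transfer this into $\tilde X$ via a proper $G_1$-equivariant map $\Gamma_{(G_1,S)}(P)\to\tilde X$ extended over $2$-cells using simple connectivity and cocompactness of $\tilde X$; but that transfer argument is also absent from your write-up, so as it stands the properness of the homotopy---the entire content of the theorem---remains unproved.
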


The next result has been used successfully in a number of situations. Its use arises when an over space is known to be  semistable at $\infty$ and certain subspaces have special properties. Recall that Cusped Spaces for relatively hyperbolic groups $(G,\mathcal P)$ may always be semistable at $\infty$ (Theorem \ref{MS-SS}).  If the first homology of the peripherals $P\in \mathcal P$ is known to be semistable, then the proof of Theorem \ref{HomRH} uses the next result effectively to show that $G$ has semistable first homology at $\infty$. If $G=A\ast_CB$, $C$ is finite and $A$ is  not semistable at $\infty$ the next result is used to prove that $G$ is not semistable at $\infty$ (Theorem \ref{Fsplit}).

The term {\it disk pair} in the theorem refers to a pair  $(E,\alpha)$ where $E$ is an open subset of $[0,\infty)\times [0,1]$ homeomorphic to $\mathbb R^2$, $E$ is a union of cells, $\alpha$ is an embedded edge path bounding $E$ and $E$ union $\alpha$ is a closed subspace of $[0,\infty)\times [0,1]$ homeomorphic to a closed ball or a closed half space in $[0,\infty)\times [0,1]$.

\begin{theorem} [Theorem 6.1, \cite{MMGP}] \label{excise} 
Suppose $M:[0,\infty)\times [0,1]\to  X$ is a proper simplicial homotopy rel$\{\ast\}$ of proper edge path rays  $r$ and $s$ into a connected locally finite simplicial 2-complex $X$, where $r$ and $s$ have image in a subcomplex $Y$ of $X$. Say $\mathcal Z=\{Z_i\}_{i=1}^\infty$ is a collection of connected subcomplexes of $Y$ such that only finitely many $Z_i$ intersect any compact subset of $X$.
Assume that each vertex of $X-Y$ is separated from $Y$ by exactly one $Z_i$. 

Then there is an index set $J$ and for each $j\in J$, there is a disk pair $(E_j,\alpha_j)$ in  $[0,\infty)\times [0,1]$ where the $E_j$ are disjoint, $M$ maps $\alpha_j$  to $Z_{i(j)}$ (for some $i(j)\in \{1,2,\ldots\}$) and $M([0,\infty)\times [0,1]-\cup_{j\in J} E_j)\subset Y$. Finally, if $Z_i$ is a finite subcomplex of $X$, then (the proper map) $M$ maps only finitely may $\alpha_j$ to $Z_i$, and each corresponding $E_j$ is bounded in $[0,\infty)\times [0,1]$.
\end{theorem}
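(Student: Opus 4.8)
The plan is to analyze the preimage $M^{-1}(X\setminus Y)$ directly, keeping everything combinatorial via simpliciality and exploiting planarity of the domain $P:=[0,\infty)\times[0,1]$ to cut that preimage into disks. First I would fix the triangulation of $P$ with respect to which $M$ is simplicial; then $K:=M^{-1}(Y)$ is a subcomplex of $P$ and $\Omega:=P\setminus K=M^{-1}(X\setminus Y)$ is an open union of open cells. Since $r$ and $s$ have image in $Y$ and the homotopy is rel$\{\ast\}$, the entire boundary $\partial P$ (the two rays together with the edge $\{0\}\times[0,1]$, which $M$ collapses to $\ast\in Y$) lies in $K$. Hence every component $W$ of $\Omega$ sits in the interior of $P$ and is an honest open planar region bounded away from $\partial P$. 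The $E_j$ will be manufactured by organizing these components.

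The first substantive step (Claim A) is to attach to each component $W$ of $\Omega$ a well-defined pocket index. Define, for each $i$, the \emph{pocket} $V_i$ to be the set of vertices of $X\setminus Y$ separated from $Y$ by $Z_i$; by hypothesis these partition the vertices of $X\setminus Y$. If $v,v'$ are vertices of $X\setminus Y$ joined by an edge (which is disjoint from $Y\supseteq Z_i$), then $v$ and $v'$ lie in the same component of $X\setminus Z_i$, so $Z_i$ separates $v$ from $Y$ exactly when it separates $v'$; the uniqueness clause then forces $v,v'$ into the same pocket. Thus any connected subcomplex of $X\setminus Y$ lies in a single pocket, and in particular $M(W)$ lies in one pocket $V_{i(W)}$. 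Moreover any edge of $X$ from a $V_{i(W)}$-vertex to a vertex of $Y$ must terminate in $Z_{i(W)}$ (otherwise it would connect the pocket to $Y$ inside $X\setminus Z_{i(W)}$), so simpliciality gives $M(\overline W\setminus W)\subseteq Z_{i(W)}$.

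Next I would promote components to disk pairs by a planar outermost argument. For a component $W$, let $U_W$ be the complementary component of $\overline W$ in $P$ meeting $\partial P$ (the ``outside''), and set $E_W:=P\setminus\overline{U_W}$, the region that $\overline W$ encloses; its frontier $\alpha_W$ is the outer boundary of $W$ and lies in $\mathrm{Fr}(W)\subseteq M^{-1}(Z_{i(W)})$, so $M(\alpha_W)\subseteq Z_{i(W)}$. Each $E_W$ is simply connected, hence a disk when $\alpha_W$ is a circle and a half-space when $E_W$ is unbounded and $\alpha_W$ is a line. Distinct regions $E_W$ are either nested or disjoint; letting $J$ index those maximal under inclusion, the chosen $E_j:=E_W$ (with $W$ outermost) are pairwise disjoint. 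To verify $M(P\setminus\bigcup_j E_j)\subseteq Y$, note that if $p\notin\bigcup_j E_j$ had $M(p)\in X\setminus Y$, then $p$ would lie in some component $W$ with $W\subseteq E_W\subseteq E_j$ for the outermost region above it, a contradiction. Finally, if $Z_i$ is finite then $M^{-1}(Z_i)$ is compact by properness, so it contains only finitely many of the disjoint embedded loops $\alpha_j$ mapping into $Z_i$, and each such $\alpha_j$, lying in a compact planar set, is a circle bounding a bounded disk $E_j$.

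The main obstacle is the planar topology concealed in the third step: one must realize the $\alpha_j$ as genuinely \emph{embedded} edge paths (simple closed edge loops or proper lines) lying in $M^{-1}(Z_{i(j)})$, and establish the dichotomy that outermost regions are disjoint. Frontiers of complementary domains need not a priori be embedded $1$-manifolds, so I expect the real work to be here, namely replacing $\overline{E_W}$ by a combinatorial regular neighborhood, using that $M^{-1}(Z_{i(W)})$ is a subcomplex to keep $\alpha_W$ on the frontier edges, and invoking Jordan curve/Schoenflies structure in the triangulated half-plane to certify the closed-ball/half-space homeomorphism type and the disjoint-or-nested alternative. By contrast, the separation in Claim A and the properness clause should be comparatively routine once this planar bookkeeping is settled.
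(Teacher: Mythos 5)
First, a point of order: the paper you are working from does not prove this statement at all --- Theorem \ref{excise} is quoted verbatim from [Theorem 6.1, MMGP] with no argument given --- so your proposal can only be measured against the theorem itself. Your overall plan (assign to each component $W$ of $M^{-1}(X-Y)$ a unique index $i(W)$ with $M(\overline{W}\setminus W)\subseteq Z_{i(W)}$, fill each $W$ to a region bounded by its outer frontier, and keep the outermost regions) is the natural strategy and almost certainly the shape of the argument in the cited source. But as written your proof has genuine gaps. The first is in Claim A: your edge argument only controls \emph{vertices} of the frontier image. The frontier $\overline{W}\setminus W$ is a subcomplex whose image can contain edges and $2$-simplices of $Y$ all of whose vertices lie in $Z_{i(W)}$ but which are not themselves simplices of $Z_{i(W)}$ (the $Z_i$ are not assumed full), so ``simpliciality gives $M(\overline{W}\setminus W)\subseteq Z_{i(W)}$'' does not follow from the vertex statement. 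What closes this is the topological content of ``separated'': if $u\notin Y$ has pocket index $i$ and $p$ is any point of $Y$ in the closed star of $u$, the straight segment from $u$ to $p$ inside a common simplex meets $Y$ only at $p$, forcing $p\in Z_i$; hence every simplex of $Y$ incident to a pocket-$i$ vertex lies entirely in $Z_i$, which is what the frontier containment actually needs. (Relatedly, a component $W$ whose image simplices have \emph{all} vertices in $Y$ has no pocket at all; some fullness-type hypothesis is implicitly required --- a defect of the statement as quoted, but one your proof must at least flag.)

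The more serious gap is the planar core, which you explicitly defer (``I expect the real work to be here''). The theorem \emph{is} the assertion that the bounding curves can be taken to be embedded circles or proper lines lying inside $M^{-1}(Z_{i(j)})$, so a proposal that postpones exactly this has not proved it; worse, the one concrete device you offer --- replacing $\overline{E_W}$ by a combinatorial regular neighborhood --- fails, because the boundary of a regular neighborhood lies off the frontier, in cells that map into $Y$ but in general \emph{not} into $Z_{i(W)}$, destroying the required property $M(\alpha_j)\subseteq Z_{i(j)}$. The curve must be located inside the subcomplex $\mathrm{Fr}(W)\subseteq M^{-1}(Z_{i(W)})$ itself, and this needs different ingredients: (i) your $U_W$ (``the complementary component of $\overline{W}$ meeting $\partial P$'') is not even well defined when $\overline{W}$ separates $\partial P$ (e.g.\ $W=(a,b)\times(0,1)$, where two complementary components meet $\partial P$); one should instead work in $\mathbb R^2\supseteq P$ and take the component of the complement of $\overline{W}$ containing $\mathbb R^2\setminus P$; (ii) one must prove a no-pinching lemma --- if $U_W$ had two local components at a frontier vertex $v$, an arc in $U_W$ joining them closes up through $v$ to a Jordan curve missing $W$ with points of $W$ on both sides, contradicting connectedness of $W$ --- which, combined with the observation that each frontier edge has $U_W$ on one side and $W$ on the other and with connectedness of the frontier of a complementary domain, shows $\mathrm{Fr}(U_W)$ is a single embedded circle or proper line, so that Schoenflies gives the ball/half-space structure; and (iii) your final covering step silently assumes every $W$ sits under a \emph{maximal} region (``the outermost region above it''), i.e.\ that there are no infinite strictly ascending chains $E_{W_1}\subsetneq E_{W_2}\subsetneq\cdots$; ruling these out is where properness of $M$ and local finiteness of $P$ enter (each $\alpha_{W_n}$ must cross a fixed compact arc from $W_1$ to $\partial P$, forcing infinitely many distinct components of $M^{-1}(X-Y)$ to accumulate on a compact set, which local finiteness forbids). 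Without (ii) and (iii) your construction can output regions that are not disk pairs, and a family that fails to cover $M^{-1}(X-Y)$.
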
 
The proof of the next result is basically the proof of Lemma 2 of \cite{M86}.

\begin{lemma} \label{LocFin} 
Suppose $\Lambda$ is a connected, locally finite, infinite graph. Then for each vertex $v\in \Lambda$ there is a proper edge path ray $r_v$ such that for any compact $K\subset \Lambda$, there are only finitely many vertices $v\in\Lambda$ such that the image of $r_v$ intersects $K$. 
\end{lemma}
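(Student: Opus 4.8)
The plan is to build all the rays from a single spanning tree and to route each vertex into the nearest available infinite branch, so that deep vertices never have to travel back toward a base point. Fix a base vertex $\ast$ and let $T$ be a geodesic (breadth-first) spanning tree of $\Lambda$ rooted at $\ast$; since $\Lambda$ is locally finite this tree is again locally finite and infinite, and the $T$-depth of a vertex equals its $\Lambda$-distance $d(\ast,\cdot)$, so passing from a vertex to one of its children increases $d(\ast,\cdot)$ by exactly $1$. For a vertex $u$ write $T_u$ for the subtree of its descendants. The ancestors of a vertex $v$ form a finite chain from $v$ to $\ast$, and at least $\ast$ has infinite subtree (as $T_\ast=\Lambda$ is infinite), so there is a deepest ancestor $w(v)$ of $v$ with $T_{w(v)}$ infinite; note $w(v)=v$ precisely when $T_v$ is infinite.

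Next I would define $r_v$. Put $w=w(v)$. Since $T_w$ is infinite and $w$ has only finitely many children, some child $c$ of $w$ has $T_c$ infinite; moreover when $v\ne w$ the child of $w$ lying on the $T$-path to $v$ has \emph{finite} subtree (otherwise a deeper ancestor of $v$ would have infinite subtree), so I may choose such a $c$ off the path toward $v$. By the compactness (K\"onig's lemma) argument already carried out in the proof of Theorem \ref{geoend}, the infinite locally finite subtree $T_c$ contains an infinite descending edge-path ray from $c$; prepending the edge from $w$ to $c$ yields a descending ray $\rho_v$ in $T_w$ starting at $w$. I then define $r_v$ to be the $T$-geodesic from $v$ up to $w$, followed by $\rho_v$. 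Along $\rho_v$ the depth strictly increases, so $\rho_v$, and hence $r_v$, is a proper edge-path ray based at $v$.

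The key observation that makes the finiteness work is that the minimum of $d(\ast,\cdot)$ along $r_v$ is attained exactly at $w(v)$: the upward part decreases depth monotonically to $d(\ast,w)$, and the tail $\rho_v$ increases it. Consequently, for a compact $K\subseteq B_N(\ast)$, the ray $r_v$ meets $B_N(\ast)$ if and only if $d(\ast,w(v))\le N$, i.e. $w(v)\in B_N(\ast)$. It therefore suffices to show that each fiber $\{v: w(v)=w\}$ is finite, for then $\{v: r_v\cap K\ne\emptyset\}\subseteq\{v: w(v)\in B_N(\ast)\}$ is a finite union of finite sets. For fixed $w$, a vertex $v$ with $w(v)=w$ is either $w$ itself or lies in $T_c$ for a child $c$ of $w$ with $T_c$ finite; there are only finitely many children and each such $T_c$ is finite, so the fiber is finite. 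Since every compact set lies in some $B_N(\ast)$, this proves the lemma.

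The main obstacle is precisely this finiteness control, and it is exactly why one routes through $w(v)$ rather than all the way to $\ast$: the naive construction sending every ray up to the root and out along a single backbone would force infinitely many rays through one small ball. Choosing $w(v)$ to be the deepest ancestor carrying an infinite subtree guarantees that the ray of a deep vertex stays deep (its minimal depth equals $d(\ast,w(v))$), which simultaneously delivers properness and confines the rays meeting $B_N(\ast)$ to the finitely many vertices whose deepest infinite-subtree ancestor already lies in $B_N(\ast)$.
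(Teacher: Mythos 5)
Your proof is correct, but it takes a genuinely different route from the paper's. The paper exhausts $\Lambda$ by a nested cofinal sequence of compact sets $C_1\subset C_2\subset\cdots$ arranged (via Theorem \ref{ES}) so that each complement $\Lambda-C_i$ is a union of unbounded components; for a vertex $v$ with $v\in C_N-C_{N-1}$ it builds $r_v$ by marching outward, connecting $v$ to a vertex outside $C_N$ within $\Lambda-C_{N-1}$, then to a vertex outside $C_{N+1}$ within $\Lambda-C_N$, and so on. The finiteness then falls out immediately: $r_v$ never re-enters $C_{N-1}$, so any ray meeting a fixed compact $K\subset C_M$ must come from a vertex $v\in C_M$, and $C_M$ contains only finitely many vertices. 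Your argument replaces this topological exhaustion with a combinatorial device: a breadth-first spanning tree, the map $v\mapsto w(v)$ to the deepest ancestor with infinite subtree, K\"onig's lemma for the descending tail, and finiteness of the fibers of $w$. Both proofs hinge on the same underlying principle --- the ray based at $v$ must stay at least as ``deep'' as some marker attached to $v$ (the set $C_{N(v)-1}$ in the paper, the depth $d(\ast,w(v))$ for you), and only finitely many vertices share any given marker value. What the paper's approach buys is brevity and portability: it reuses machinery already established for general locally compact spaces and would survive outside the graph category. What your approach buys is a more explicit construction: all rays live in a single spanning tree, each is eventually a descending (hence geodesic, by the BFS property) ray, and no appeal to the structure of unbounded complementary components is needed. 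One small point of care: your ``if and only if'' relating $r_v\cap B_N(\ast)\ne\emptyset$ to $d(\ast,w(v))\le N$ genuinely requires the geodesic property of the BFS tree (tree-depth equals $\Lambda$-distance), as you note; with an arbitrary spanning tree the containment you need would fail, so that hypothesis is not cosmetic.
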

\begin{proof}
Choose compact sets $C_1,C_2,\ldots$ such that for each $i$, $C_i$ lies in the interior of $C_{i+1}$, $\Lambda-C_i$ is a union of unbounded path connected sets and $\cup_{i=1}^\infty C_i=\Lambda$. Let $v$ be a vertex of $\Lambda$ and choose $N$ such that $v\in C_{N}$, but $v\not\in C_{N-1}$. 
Since $\Lambda-C_{N-1}$ is a union of unbounded path components, there is an edge path $\alpha_1$ in $\Lambda-C_{N-1}$ from $v$ to a vertex $v_1\in \Lambda-C_N$. Similarly there is an edge path $\alpha_2$ in $\Lambda-C_N$ from $v_1$ to a vertex $v_2\in \Lambda-C_{N+1}$. Define $\alpha_n$ inductively and define $r_v=(\alpha_1,\alpha_2,\ldots)$.
\end{proof}

\begin{theorem} \label{SSto1E} 
Suppose $X$ is a connected locally finite infinite CW-complex and any two proper rays in $X$ are properly homotopic. Then $X$ is 1-ended.
\end{theorem}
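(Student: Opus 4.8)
The plan is to show that the hypothesis forces $X$ to have exactly one end. Since $X$ is an infinite, connected, locally finite CW-complex it is non-compact, and a non-compact such complex always has at least one end: for a cofinal sequence $\{C_i\}$ of compact sets, each $X-C_i$ has at least one (and, by Theorem \ref{ES}, only finitely many) unbounded component, and the bonding maps between these finite non-empty sets of unbounded components are the surjections noted in \S\ref{Esp}; hence the inverse system has non-empty inverse limit and $X$ has at least one end, so in particular at least one proper ray exists. By Theorem \ref{EQend} the number of ends of $X$ equals the cardinality of $R(X)/{\sim}$, where $r\sim s$ means $r$ and $s$ converge to the same end. Thus it suffices to prove that, under the hypothesis, any two proper rays converge to the same end, which will force $|R(X)/{\sim}|=1$.

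The key step is the standard fact that if proper rays $r,s:[0,\infty)\to X$ are properly homotopic, then they converge to the same end. To see this, let $H:[0,\infty)\times[0,1]\to X$ be a proper homotopy with $H(t,0)=r(t)$ and $H(t,1)=s(t)$, and let $C\subset X$ be compact. Since $H$ is proper, $H^{-1}(C)$ is compact, hence contained in $[0,N]\times[0,1]$ for some $N$. For every $t\ge N$ the vertical path $u\mapsto H(t,u)$ joins $r(t)$ to $s(t)$ and misses $C$, while the connected sets $r([N,\infty))$ and $s([N,\infty))$ both lie in $X-C$; hence $r([N,\infty))\cup s([N,\infty))$ lies in a single unbounded component of $X-C$. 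As $C$ was arbitrary, $r$ and $s$ converge to the same end.

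Finally I would argue by contradiction. Suppose $X$ has at least two ends. Then $R(X)/{\sim}$ has at least two elements, so there exist proper rays $r$ and $s$ that do not converge to the same end. By hypothesis $r$ and $s$ are properly homotopic, and the key step above then shows they do converge to the same end, a contradiction. Therefore $X$ has exactly one end, i.e. $X$ is $1$-ended. The only genuine content is the key step relating proper homotopy to ends; everything else is bookkeeping with the definitions and the results of \S\ref{Esp}. I expect that step, and in particular the use of properness of $H$ to confine $H^{-1}(C)$ to a bounded strip so that the vertical connecting paths avoid $C$, to be the main (though routine) point.
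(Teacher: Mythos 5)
Your proof is correct, but it is organized differently from the paper's. You prove the general lemma that properly homotopic proper rays converge to the same end, and then reduce the theorem to the identification of ends with proper-ray classes (Theorem \ref{EQend}), together with a non-emptiness argument (an inverse limit of non-empty finite sets with surjective bonding maps is non-empty) to guarantee at least one end. The paper instead verifies the definition of one-endedness directly: it invokes Lemma \ref{LocFin} to attach to each vertex $v$ a proper ray $r_v$ so that only finitely many of the $r_v$ meet any given compact set; then, given compact $C$, it takes $D$ to contain $C$ and all vertices whose rays meet $C$, and joins any two vertices $a,b \in X-D$ inside $X-C$ by the path $r_a|_{[0,N]}$, followed by $H|_{\{N\}\times[0,1]}$, followed by the reverse of $r_b|_{[0,N]}$, where $H$ is the proper homotopy from $r_a$ to $r_b$ and $N$ is chosen so that $H([N,\infty)\times[0,1])$ misses $C$. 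The technical core is the same in both arguments --- properness of $H$ confines $H^{-1}(C)$ to a strip, and the vertical path at the edge of the strip is the connector --- but your route buys conceptual economy (no need for Lemma \ref{LocFin}) at the cost of leaning on Theorem \ref{EQend}, which the paper states without proof, whereas the paper's route is self-contained and exhibits explicitly, for each compact $C$, a compact $D$ such that all points of $X-D$ lie in a single component of $X-C$. One cosmetic repair: having placed $H^{-1}(C)\subseteq [0,N]\times[0,1]$, the vertical path at $t=N$ may still meet $C$, so you should restrict to $t>N$ (or simply replace $N$ by $N+1$).
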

\begin{proof} 
If $v$ is a vertex of $X$, let $r_v$ be the proper edge path ray at $v$ given by Lemma \ref{LocFin}. Suppose $C$ is compact in $X$. Choose $D$ compact containing $C$ and all vertices $v$ such that $r_v$ intersects $C$. It suffices to show vertices in $X-D$ can be connected by a path in $X-C$. Let $a$ and $b$ be vertices of $X-D$. Let $H:[0,\infty)\times [0,1]\to X$ be a proper homotopy such that $H(t,0)=r_a(t)$ and $H(t,1)=r_b(t)$. Since $H$ is proper, there is an integer $N$ such $H([N,\infty)\times [0,1])\cap C=\emptyset$. The path  $r_a|_{[0,N]}$ followed by $H|_{\{N\}\times[0,1]}$ followed by the inverse of the path $r_b|_{[0,N]}$, is from $a$ to $b$ and avoids $C$. 
\end{proof}

\newpage
\subsection{Semistable Homology at Infinity}\label{SSHom}

For the most part, we are interested in the question of whether or not $H^2(G,\mathbb ZG)$ is free abelian for all finitely presented groups.  We believe it dates back to H. Hopf. This question is connected to the question asking if all finitely presented groups are semistable at $\infty$. In fact, if $G$ is a finitely presented group then $H^2(G,\mathbb ZG)$ is free abelian if and only if the first homology at the end of $G$ is semistable (Corollary \ref{GM2}). 

In Section 13.8 of \cite{G}, an example of a finite aspherical 3-pseudomanifold is constructed. Hence, its fundamental group $G$ has type $F$ and geometric dimension 3, but $H^3(G,\mathbb ZG)$ is isomorphic to $\mathbb Z_2$. This was the first exhibited example of a group of type $F_n$ for which $H^n(G,\mathbb ZG)$ is not free abelian.

\subsubsection{Reducing the $H^2(G,\mathbb ZG)$ Problem to 1-ended Groups} \label{RedH2-1} 

The following result was proved by a geometric argument. It is generalized by Theorem \ref{Reduction}, which has a completely homological proof. Theorem \ref{Reduction} also appears in \cite{MS19}.

\begin{theorem} [Theorem 6, \cite{M87}]\label{Equival1}
If all finitely presented 1-ended groups $K$ are such that $H^2(K,\mathbb ZK)$ is free abelian, then $H^2(G,\mathbb ZG)$ is free abelian for all finitely presented groups.
\end{theorem}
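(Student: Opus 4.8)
The statement to prove is Theorem \ref{Equival1}: if every finitely presented 1-ended group $K$ has $H^2(K,\mathbb ZK)$ free abelian, then $H^2(G,\mathbb ZG)$ is free abelian for every finitely presented group $G$.

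\textbf{Overall approach.} The plan is to reduce an arbitrary finitely presented $G$ to its 1-ended pieces via Dunwoody accessibility (Theorem \ref{DunAcc}) and then to track what happens to $H^2(-,\mathbb Z(-))$ across the amalgamated-product and HNN splittings that assemble $G$ from those pieces. By Theorem \ref{DunAcc}, $G$ has a finite graph of groups decomposition in which every edge group is finite and every vertex group is either finite or a 1-ended finitely presented group. Since a finite graph of groups is built up by finitely many one-edge splittings (each either an amalgamated product $A\ast_C B$ or an HNN extension $A\ast_C$ over a finite $C$), it suffices to prove two things: (1) the base cases—finite and 1-ended finitely presented groups—have free abelian $H^2(-,\mathbb Z(-))$ (the 1-ended case is exactly the hypothesis, and finite groups have trivial cohomology with $\mathbb ZG$ coefficients in positive degrees); and (2) if the vertex groups of a one-edge splitting over a finite group have free abelian $H^2(-,\mathbb Z(-))$, then so does the total group. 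An induction on the number of edges then finishes the argument.

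\textbf{Key steps.} First I would set up the cohomological bookkeeping: for a splitting of $G$ over a finite subgroup $C$, I would invoke the Mayer--Vietoris sequence in the cohomology of groups with $\mathbb ZG$ coefficients associated to the amalgam or HNN decomposition. The relevant segment reads, for $G=A\ast_C B$,
\[
H^1(C,\mathbb ZG)\longrightarrow H^2(G,\mathbb ZG)\longrightarrow H^2(A,\mathbb ZG)\oplus H^2(B,\mathbb ZG)\longrightarrow H^2(C,\mathbb ZG),
\]
and analogously for the HNN case. Because $C$ is finite, $H^n(C,\mathbb ZG)=0$ for all $n\geq 1$ (induction on a $\mathbb ZG$-coefficient module is free over $\mathbb ZC$ up to finite index, so these cohomologies vanish in positive degrees). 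Hence the map $H^2(G,\mathbb ZG)\to H^2(A,\mathbb ZG)\oplus H^2(B,\mathbb ZG)$ becomes an isomorphism. The second step is a change-of-rings (Shapiro-type) identification: since $A$ has finite index behavior only locally, I would rewrite $H^2(A,\mathbb ZG)$ in terms of $H^2(A,\mathbb ZA)$ using that $\mathbb ZG$ is a free $\mathbb ZA$-module on the coset representatives $A\backslash G$, giving $H^2(A,\mathbb ZG)\cong H^2(A,\mathbb ZA)\otimes(\text{free abelian group on }A\backslash G)$, and similarly for $B$. A direct sum of free abelian groups is free abelian, so $H^2(G,\mathbb ZG)$ is free abelian whenever $H^2(A,\mathbb ZA)$ and $H^2(B,\mathbb ZB)$ are. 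The induction then propagates freeness from the 1-ended vertex groups up through all the finite splittings to $G$.

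\textbf{Main obstacle.} The delicate point is the change-of-rings identification and making sure the Mayer--Vietoris connecting maps genuinely split the way the vanishing of $H^*(C,\mathbb ZG)$ suggests; in particular one must confirm that the coset-indexed decomposition $\mathbb ZG\cong \bigoplus_{A\backslash G}\mathbb ZA$ as a $\mathbb ZA$-module is compatible with the coboundary maps so that freeness (not merely the property of being a subgroup of a free abelian group) is preserved—recall that subgroups of free abelian groups of infinite rank are again free abelian, which is what ultimately rescues any kernel/image subtlety. I expect this compatibility check, together with correctly handling the possibly infinite-rank coefficient modules and verifying the finiteness needed for the $FP_2$/finitely presented hypothesis to feed the Mayer--Vietoris sequence, to be the technically demanding part; everything else is a clean induction on edges driven by Theorem \ref{DunAcc} and the hypothesis on 1-ended groups. (The author's stated generalization, Theorem \ref{Reduction}, indicates a fully homological proof along exactly these lines, so I would aim to phrase the whole argument in terms of $H^2(-,\mathbb Z(-))$ and avoid the geometric detour of the original Theorem 6 of \cite{M87}.)
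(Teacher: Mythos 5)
Your proposal is correct and follows essentially the same route as the paper's own homological proof: Dunwoody's accessibility theorem (Theorem \ref{DunAcc}), induction on the edges of the decomposition using Mayer--Vietoris with $\mathbb ZG$ coefficients and the vanishing of cohomology over the finite edge groups (as in Theorems \ref{AmalRed}, \ref{HNNRed}, \ref{H20} and Corollary \ref{HDun}), followed by the coset-indexed decomposition $H^2(H,\mathbb ZG)\cong\bigoplus_{H\backslash G}H^2(H,\mathbb ZH)$ of Theorem \ref{FreeS}, which is exactly the content of Theorem \ref{Reduction}. The only cosmetic difference is that you obtain injectivity of the Mayer--Vietoris map from the full vanishing $H^1(C,\mathbb ZG)=0$ for finite $C$ (cohomological triviality of induced modules over finite groups), whereas the paper deduces it from the weaker fact that this group is torsion combined with the torsion-freeness of $H^2(G,\mathbb ZG)$ (Theorem \ref{TorF}); both are valid.
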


\begin{theorem} [See \S13.7.1 \cite{G}] \label{TorF} 
If $G$ is a finitely presented group and $R$ a ring, then the $R$-module $H^2(G,RG)$ is torsion free. 
\end{theorem}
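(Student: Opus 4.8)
The plan is to reduce torsion-freeness of $H^2(G,RG)$ to a surjectivity statement about $H^1$ with group-ring coefficients, where the answer is controlled by the space of ends, and then to run a Bockstein long exact sequence. Since $G$ is finitely presented it is of type $FP_2$, so $R$ admits over $RG$ a partial free resolution $P_2\to P_1\to P_0\to R\to 0$ by finitely generated free $RG$-modules. Applying $\mathrm{Hom}_{RG}(-,RG)$ yields a cochain complex $C^{*}$ each of whose terms $C^{i}=(RG)^{m_i}$ is a free $R$-module, and $H^{i}(G,RG)=H^{i}(C^{*})$ for $i\le 2$. The goal is to show $H^2(C^{*})$ has no nonzero $R$-torsion.

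First I would set up the reduction-of-coefficients device. Fix a central nonzerodivisor $r\in R$; such elements detect all torsion, and for the principal cases $R=\mathbb Z$ or $R$ a PID every nonzero nonzerodivisor qualifies. Because $RG$ is $R$-free, multiplication by $r$ gives a short exact sequence of coefficient $RG$-modules
$$0\to RG\xrightarrow{\ \cdot r\ } RG\to (R/rR)G\to 0,$$
whose long exact sequence in $H^{*}(G,-)$ contains
$$H^1(G,RG)\xrightarrow{\ \rho\ } H^1\!\big(G,(R/rR)G\big)\xrightarrow{\ \delta\ } H^2(G,RG)\xrightarrow{\ \cdot r\ } H^2(G,RG).$$
By exactness the $r$-torsion subgroup of $H^2(G,RG)$ equals $\mathrm{im}(\delta)\cong\mathrm{coker}(\rho)$. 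Thus the theorem reduces to proving that the coefficient-reduction map $\rho$ is surjective for every such $r$.

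The surjectivity of $\rho$ is where the end theory enters, and it is the main obstacle. The key input is the classical identification (implicit in Theorem \ref{HEnds} and carried out for general coefficients in \cite{G}, \S13): for a finitely generated group $G$ and any ring $S$, the module $H^1(G,SG)$ is naturally isomorphic to the $S$-module of locally constant $S$-valued functions on the space of ends $\mathcal E(G)$, modulo the constants, and this isomorphism is natural in $S$. Under this identification $\rho$ becomes post-composition with the ring surjection $R\to R/rR$. Since $\mathcal E(G)$ is compact, metrizable and totally disconnected (it is finite or a Cantor set), any locally constant $(R/rR)$-valued function on it factors through a finite clopen partition; lifting the finitely many values along $R\twoheadrightarrow R/rR$ (a mere set-lift, which preserves local constancy) produces a locally constant $R$-valued lift. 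Hence $\rho$ is onto, $\mathrm{coker}(\rho)=0$, and $H^2(G,RG)$ has no $r$-torsion.

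Finally I would assemble these: as $r$ ranges over the central nonzerodivisors of $R$, the vanishing of all $r$-torsion gives that $H^2(G,RG)$ is torsion-free. The one point requiring genuine care, and the reason to lean on \cite{G}, is the functorial ends-description of $H^1(G,SG)$ for a general coefficient ring $S$; the liftability argument above uses it only as a statement about locally constant functions on $\mathcal E(G)$, so establishing the naturality of that isomorphism is the substantive step, while the Bockstein bookkeeping and the lifting of functions on a totally disconnected compactum are routine.
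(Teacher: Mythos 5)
The paper itself contains no proof of Theorem \ref{TorF}; it is quoted from \S 13.7.1 of \cite{G}. Measured against that cited argument, your proof is correct in substance and follows the same underlying strategy: a coefficient-reduction (Bockstein) sequence for a central nonzerodivisor $r$, the identification of a first cohomology ``at infinity'' with locally constant functions on the end space modulo constants, and the lifting of such functions along $R\twoheadrightarrow R/rR$ using compactness and total disconnectedness of $\mathcal E(G)$. The difference is where the cohomology lives. The argument in \cite{G} is topological: $H^2(G,RG)$ embeds in $H^2_c(\tilde X;R)$ for $\tilde X$ the universal cover of a finite $2$-complex with fundamental group $G$ (this is exactly where finite presentation enters), and $H^1_c(\tilde X;S)\cong C(\mathcal E;S)/S$ because $\tilde X$ is simply connected. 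You instead run the Bockstein directly on $H^*(G,-)$ with the exact coefficient sequence $0\to RG\to RG\to (R/rR)G\to 0$ (exact since $RG$ is $R$-free), and your reduction of the theorem to surjectivity of $\rho$ is valid. The input you lean on, that $H^1(G,SG)\cong C(\mathcal E(G);S)/S$ naturally in $S$ for finitely generated $G$, is a true classical fact: it follows from the sequence $0\to SG\to \operatorname{Fun}(G,S)\to Q\to 0$, Shapiro's lemma (the middle term is coinduced, so its higher cohomology vanishes), and the Cayley-graph identification of $Q^G$ (almost invariant functions modulo finitely supported ones) with $C(\mathcal E(G);S)$. What your route buys is notable: after the first paragraph you never use finite presentation, only finite generation, so your argument actually proves the stronger statement for finitely generated groups, whereas the topological proof genuinely needs a finite $2$-complex. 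What it costs is that the naturality of the ends description of $H^1(G,SG)$ must be established rather than read off from simple connectivity of a universal cover; you correctly flag this as the substantive step.

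One claim in your setup is false, though fortunately never used: with only a partial free resolution $P_2\to P_1\to P_0\to R\to 0$, the dual complex $C^*$ computes $H^i(G,RG)$ only for $i\le 1$. In degree $2$ one has merely an inclusion $H^2(G,RG)\subseteq H^2(C^*)$, because $H^2(G,RG)=\ker(C^2\to C^3)/\operatorname{im}(C^1\to C^2)$ and not every element of $C^2$ is a cocycle for the full resolution (a non-minimal partial resolution of $R$ over the trivial group already gives $H^2(C^*)\neq 0=H^2(1,R)$). Since your Bockstein argument is carried out on $H^*(G,-)$ itself and not on $C^*$, this does not damage the proof --- and in fact the inclusion goes the helpful way, as a submodule of a torsion-free module is torsion-free --- but you should delete that sentence and state the goal as torsion-freeness of $H^2(G,RG)$ rather than of $H^2(C^*)$.
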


\begin{theorem}[Chapter VIII, \S5, Exercise 4A,  \cite{Br82}] \label{FreeS} 
Suppose $H$ is a finitely presented subgroup of the finitely presented group $G$, then $H^2(H; \mathbb ZG)$ is isomorphic to the direct sum of copies of $H^2(H,\mathbb ZH)$, one for each coset of $H$ in $G$.

If $H$ and $G$ are only finitely generated, then $H^1(H, \mathbb ZG)$ is isomorphic to the direct sum of copies of $H^1(H,\mathbb ZH)$, one for each coset of $H$ in $G$.
\end{theorem}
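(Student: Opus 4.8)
The plan is to reduce the statement to a single fact about how group cohomology interacts with an infinite direct sum of copies of $\mathbb ZH$, and then to verify that fact from a partial projective resolution. First I would fix the module structure: regard $\mathbb ZG$ as a left $\mathbb ZH$-module by restricting the left multiplication action of $G$. Choosing a transversal for the right cosets $H\backslash G$ (so that $G$ is the disjoint union of the cosets $Hg$), every element of $G$ is written uniquely as $h\cdot g$ with $h\in H$ and $g$ a representative, so the representatives form a free $\mathbb ZH$-basis and
\[
\mathbb ZG\ \cong\ \bigoplus_{Hg\,\in\,H\backslash G}\mathbb ZH
\]
as left $\mathbb ZH$-modules, with exactly one copy of $\mathbb ZH$ for each coset of $H$ in $G$. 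Applying $H^n(H;-)$, the theorem follows once the natural comparison map
\[
\bigoplus_{H\backslash G}H^n(H;\mathbb ZH)\ \longrightarrow\ H^n\!\Big(H;\ \bigoplus_{H\backslash G}\mathbb ZH\Big)
\]
is shown to be an isomorphism, for $n=2$ in the finitely presented case and $n=1$ in the finitely generated case.

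When $[G:H]$ is finite this is automatic, so the real content is the infinite-index case, where $H^n(H;-)$ need not commute with direct sums for an arbitrary group; this is exactly where the finiteness hypothesis is used, and it is the crux of the proof. A finitely presented group is of type $FP_2$ and a finitely generated group is of type $FP_1$, so in either case $H$ has a projective resolution $P_\bullet\to\mathbb Z$ over $\mathbb ZH$ with $P_k$ finitely generated for all $k\le n$. I would compute both sides from $P_\bullet$. Since homology commutes with direct sums, the left-hand side is the $H^n$ of $\bigoplus_{H\backslash G}\mathrm{Hom}_{\mathbb ZH}(P_\bullet,\mathbb ZH)$, and since $\mathrm{Hom}_{\mathbb ZH}(P_k,-)$ out of a finitely generated projective commutes with direct sums, the natural chain map
\[
\Phi:\ \bigoplus_{H\backslash G}\mathrm{Hom}_{\mathbb ZH}(P_\bullet,\mathbb ZH)\ \longrightarrow\ \mathrm{Hom}_{\mathbb ZH}\!\Big(P_\bullet,\ \bigoplus_{H\backslash G}\mathbb ZH\Big)
\]
is an isomorphism in every degree $\le n$, while in degree $n+1$ it is at least injective (injectivity of $\Phi$ holds in all degrees with no finiteness assumption). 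A short diagram chase then gives that $\Phi$ induces an isomorphism on $H^n$: the isomorphisms in degrees $n-1$ and $n$ identify the images of the differential $d^{n-1}$, and the isomorphism in degree $n$ together with injectivity in degree $n+1$ identifies the kernels of $d^{n}$.

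The main obstacle is precisely this degree bookkeeping. One is tempted to demand that $P_{n+1}$ also be finitely generated, which would force the stronger hypothesis $FP_{n+1}$; the key observation I would emphasize is that injectivity of $\Phi$ in degree $n+1$ is automatic, so finite generation only through degree $n$ — that is, $FP_n$, which is implied by finitely presented when $n=2$ and by finitely generated when $n=1$ — is exactly what the argument needs. Having established $H^n(H;\mathbb ZG)\cong\bigoplus_{H\backslash G}H^n(H;\mathbb ZH)$ in these two ranges, specializing to $n=2$ and $n=1$ gives the two asserted isomorphisms, indexed in both cases by the cosets of $H$ in $G$.
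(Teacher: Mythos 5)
Your proposal is correct, and it is essentially the intended argument: the paper gives no proof of its own (it simply cites Chapter VIII, \S 5, Exercise 4 of Brown's \emph{Cohomology of Groups}), and the standard solution to that exercise is exactly what you wrote --- decompose $\mathbb ZG$ as a free left $\mathbb ZH$-module $\bigoplus_{H\backslash G}\mathbb ZH$ indexed by cosets, then use that $H$ is of type $FP_n$ ($FP_2$ for finitely presented, $FP_1$ for finitely generated) so that $H^n(H;-)$ commutes with direct sums in the relevant degree. Your handling of the degree-$(n+1)$ term, using that the comparison map $\bigoplus\mathrm{Hom}(P_{n+1},M_i)\to\mathrm{Hom}\bigl(P_{n+1},\bigoplus M_i\bigr)$ is injective without any finiteness hypothesis, is exactly the point that makes $FP_n$ (rather than $FP_{n+1}$) suffice, and the diagram chase is sound.
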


For a ring $R$ and any finite group $G$, $H^k(G,R)$ is torsion for $k\geq 1$. Combining this with Theorems \ref{TorF} and \ref{FreeS}:

\begin{theorem} \label{H20} 
If $R$ is a ring and $H$ is a finite subgroup of a finitely presented group $G$ then  $H^2(H,RH)=0$ and $H^2(H,\mathbb ZG)=0$.
\end{theorem}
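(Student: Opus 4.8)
The statement to prove is Theorem \ref{H20}: for a ring $R$ and a finite subgroup $H$ of a finitely presented group $G$, we have $H^2(H,RH)=0$ and $H^2(H,\mathbb{Z}G)=0$. The plan is to assemble this directly from the three results quoted immediately beforehand, so the proof is really a matter of chaining them correctly rather than constructing anything new.

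First I would handle $H^2(H,RH)$. Since $H$ is a finite group, the standard fact stated just above the theorem applies: for any ring $R$ and any finite group $H$, the cohomology $H^k(H,R)$ is torsion for every $k\geq 1$. The subtlety is that we want coefficients in $RH$, not $R$, so the first step is to observe that $H^2(H,RH)$ is likewise a torsion $R$-module (or more carefully, that the relevant reduction goes through), using that $RH$ as an $RH$-module decomposes appropriately over the finite group $H$. Then I would invoke Theorem \ref{TorF}, which says $H^2(G,RG)$ is torsion free for finitely presented $G$ — but here I must be careful, since Theorem \ref{TorF} is stated for $H^2(G,RG)$ with $G$ finitely presented, and a finite group is certainly finitely presented, so applying it with $G$ replaced by $H$ gives that $H^2(H,RH)$ is torsion free. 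Combining "torsion" with "torsion free" forces $H^2(H,RH)=0$.

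Second, for $H^2(H,\mathbb{Z}G)$ I would apply Theorem \ref{FreeS}: since $H$ is finite it is in particular finitely presented, and $G$ is finitely presented, so $H^2(H;\mathbb{Z}G)$ is isomorphic to a direct sum of copies of $H^2(H,\mathbb{Z}H)$, one for each coset of $H$ in $G$. By the first part (taking $R=\mathbb{Z}$), each summand $H^2(H,\mathbb{Z}H)$ vanishes, hence the entire direct sum vanishes and $H^2(H,\mathbb{Z}G)=0$.

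The main obstacle — the only genuinely delicate point — is the very first reduction: justifying that $H^2(H,RH)$ is torsion when the coefficient module is the group ring $RH$ rather than the trivial module $R$. The clean way around this is to note that $RH$ is a free $R$-module and that for a finite group $H$ the group ring $RH$ is an induced/coinduced module, so Shapiro's lemma (or the multiplicativity of the transfer/restriction composite being multiplication by $|H|$) shows every element of $H^k(H,RH)$ for $k\geq 1$ is killed by $|H|$; this is exactly the statement that it is torsion. Everything else is a formal citation, so I expect the write-up to be short, with the bulk of the care concentrated on phrasing this torsion claim so that Theorem \ref{TorF} can be applied to conclude vanishing.
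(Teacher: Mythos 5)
Your proposal is correct and follows exactly the paper's own (very terse) argument: the paper's proof is precisely the remark preceding the theorem — finite-group cohomology is torsion in positive degrees — combined with Theorem \ref{TorF} applied to the finitely presented group $H$ and Theorem \ref{FreeS} to pass from $\mathbb{Z}H$ to $\mathbb{Z}G$ coefficients. If anything, your write-up is more careful than the paper's, since you explicitly justify (via the transfer/restriction argument) that the torsion claim holds for the group-ring coefficient module $RH$ and not just for trivial coefficients.
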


There are useful cohomological `Mayer-Vietoris' sequences associated to amalgamated products and HNN extensions. We state the amalgamated product version with $\mathbb ZG$ coefficients. 

\begin{theorem} [Theorem 2.10, \cite{Bieri76}] \label{AmalH2}
Let $G = G_1 \ast_S G_2$ $R$ be a ring and $A$ be a right $RG$ module. Then one has a natural long exact sequence:
$$\cdots \to H^k(G,A)\to H^k(G_1,A)\oplus H^k(G_2,A)\to H^k(S, A)\to H^{k+1}(G,A)\to\cdots$$
\end{theorem}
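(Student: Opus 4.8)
The plan is to extract the sequence from the action of $G=G_1\ast_S G_2$ on its Bass-Serre tree $T$, combined with the Eckmann-Shapiro lemma. Recall that $T$ has vertex set the disjoint union of the coset spaces $G/G_1$ and $G/G_2$, edge set $G/S$, and that $T$ is a tree, hence connected and acyclic. First I would write down the augmented simplicial chain complex of $T$ with coefficients in $R$. Because $T$ is connected with vanishing first homology, this is a short exact sequence of $RG$-modules
$$0\to C_1(T)\xrightarrow{\partial} C_0(T)\xrightarrow{\epsilon} R\to 0,$$
where $R$ carries the trivial $G$-action. The essential input from Bass-Serre theory is that the chain modules are permutation modules on the cell orbits: $C_0(T)\cong R[G/G_1]\oplus R[G/G_2]$ and $C_1(T)\cong R[G/S]$. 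Since the underlying groups are free abelian before tensoring with $R$, exactness of the sequence is preserved over an arbitrary ring $R$.

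Next I would identify each permutation module as an induced module, $R[G/H]\cong RG\otimes_{RH}R$ for $H\in\{G_1,G_2,S\}$, so that $C_0(T)\cong \mathrm{Ind}_{G_1}^G R\oplus \mathrm{Ind}_{G_2}^G R$ and $C_1(T)\cong \mathrm{Ind}_S^G R$. Applying the contravariant functor $\mathrm{Ext}^\ast_{RG}(-,A)$ to the displayed short exact sequence yields the long exact sequence
$$\cdots\to \mathrm{Ext}^k_{RG}(R,A)\to \mathrm{Ext}^k_{RG}(C_0(T),A)\to \mathrm{Ext}^k_{RG}(C_1(T),A)\to \mathrm{Ext}^{k+1}_{RG}(R,A)\to\cdots.$$
By definition $\mathrm{Ext}^k_{RG}(R,A)=H^k(G,A)$, while the Eckmann-Shapiro lemma—applicable because $\mathrm{Ind}_H^G=RG\otimes_{RH}-$ is exact and carries projectives to projectives, $RG$ being free over $RH$—gives $\mathrm{Ext}^k_{RG}(\mathrm{Ind}_H^G R,A)\cong \mathrm{Ext}^k_{RH}(R,A)=H^k(H,A)$. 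Substituting these identifications converts the Ext sequence into the claimed Mayer-Vietoris sequence, with middle term $H^k(G_1,A)\oplus H^k(G_2,A)$ and third term $H^k(S,A)$.

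The construction is functorial in $A$—a morphism of $RG$-modules induces a map of the associated Ext long exact sequences, and the Shapiro identifications are natural—so the resulting sequence is natural in $A$, as required. The main obstacle, and really the only delicate point, is verifying that the identification of the chain modules of $T$ together with the Shapiro isomorphisms are compatible with the boundary map $\partial$, so that the maps $H^k(G_1,A)\oplus H^k(G_2,A)\to H^k(S,A)$ coming from the Ext sequence agree with the restriction maps appearing in a Mayer-Vietoris sequence; this amounts to tracking the orbit representatives of an edge and its two endpoints and checking signs, which is routine once the induced-module descriptions are fixed. Everything else is formal homological algebra, and the same argument applied to the Bass-Serre tree of an HNN extension produces the corresponding HNN version.
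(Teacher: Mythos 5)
Your proposal is correct, but note that the paper does not prove this statement at all: it is quoted verbatim from Bieri's notes (Theorem 2.10 of \cite{Bieri76}) and used as a black box, so there is no in-paper proof to compare against. Your argument is the standard one and is essentially the proof in the cited source: both rest on the short exact sequence of $RG$-modules
$$0\to R[G/S]\to R[G/G_1]\oplus R[G/G_2]\to R\to 0,$$
followed by the contravariant $\mathrm{Ext}^{\ast}_{RG}(-,A)$ long exact sequence and the first-variable Shapiro isomorphism $\mathrm{Ext}^k_{RG}(RG\otimes_{RH}R,\,A)\cong H^k(H,A)$. The only difference is how that short exact sequence is obtained: you read it off as the augmented chain complex of the Bass--Serre tree, whereas Bieri derives it algebraically from the normal form theorem for amalgams; the two derivations are equivalent, and the topological one has the advantage of making exactness and the module identifications transparent (one small point worth making explicit in your write-up is that $G$ acts on the tree \emph{without inversions} --- each edge $gS$ joins $gG_1$ to $gG_2$, which lie in different orbits --- since this is what makes $C_1(T)\cong R[G/S]$ as a permutation module with no orientation-module twist). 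Your handling of the remaining points (splitness of the integral sequence so that it stays exact over any $R$, exactness and projectivity-preservation of induction for Shapiro, naturality in $A$) is sound, and the same tree argument indeed gives the HNN version, Theorem \ref{HNNH2}.
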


\begin{theorem} [Theorem 2.12, \cite{Bieri76}] \label{HNNH2} 
Let $G = H \ast_S$ be an HNN extension, $R$ be a ring and $A$ be a right $RG$ module. Then one has a long exact sequence:
$$\cdots \to H^{k-1}(S,A)\to H^k(G,A)\to H^k(H,A)\to H^k(S, A)\to\cdots$$
\end{theorem}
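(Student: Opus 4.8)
The plan is to derive the sequence from the Bass--Serre tree of the HNN splitting together with the long exact sequence of $\mathrm{Ext}$, identifying the terms by an induction (Shapiro) argument. First I would recall that $G=H\ast_S$ acts on its Bass--Serre tree $T$, whose vertex set is the coset space $G/H$ and whose edge set is $G/S$. Since $T$ is a tree, hence connected and acyclic, its augmented simplicial chain complex collapses to a short exact sequence of $RG$-modules
$$0\to R[G/S]\xrightarrow{\partial} R[G/H]\xrightarrow{\varepsilon} R\to 0,$$
where $\partial$ is the incidence (boundary) map, $\varepsilon$ the augmentation, and $R$ carries the trivial $G$-action. Here $R[G/H]\cong RG\otimes_{RH}R$ and $R[G/S]\cong RG\otimes_{RS}R$ are the permutation (induced) modules; exactness at the three spots is precisely the statement that $T$ is connected and has no cycles.

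Next I would apply the contravariant functor $\mathrm{Ext}^\ast_{RG}(-,A)$ to this short exact sequence. Because $\mathrm{Ext}^\ast_{RG}(-,A)$ is a cohomological $\delta$-functor, it produces a long exact sequence
$$\cdots\to\mathrm{Ext}^k_{RG}(R,A)\to\mathrm{Ext}^k_{RG}(R[G/H],A)\to\mathrm{Ext}^k_{RG}(R[G/S],A)\to\mathrm{Ext}^{k+1}_{RG}(R,A)\to\cdots,$$
whose terms involving $R$ are by definition $H^k(G,A)=\mathrm{Ext}^k_{RG}(R,A)$.

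The key identification step is Shapiro's lemma in its form for induced modules: for any subgroup $K\le G$, since $RG$ is free as an $RK$-module, inducing a projective $RK$-resolution of $R$ up to $RG$ yields a projective $RG$-resolution of $RG\otimes_{RK}R$, and the adjunction $\mathrm{Hom}_{RG}(RG\otimes_{RK}P,A)\cong\mathrm{Hom}_{RK}(P,\mathrm{Res}^G_K A)$ gives a natural isomorphism $\mathrm{Ext}^\ast_{RG}(RG\otimes_{RK}R,A)\cong H^\ast(K,A)$. Taking $K=H$ and $K=S$ identifies the two families of outer terms with $H^\ast(H,A)$ and $H^\ast(S,A)$ respectively, and substituting these into the long exact sequence produces exactly $\cdots\to H^{k-1}(S,A)\to H^k(G,A)\to H^k(H,A)\to H^k(S,A)\to\cdots$.

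I expect the main obstacle to be bookkeeping around this induction isomorphism rather than any genuine difficulty: one must confirm that the Shapiro isomorphism is \emph{natural}, so that it is compatible with the maps $\varepsilon$ and $\partial$ (which are themselves built from the inclusions $S\hookrightarrow H$ and the structure data of the HNN extension), and one must track the connecting homomorphism so that it realizes the correct degree shift $H^k(S,A)\to H^{k+1}(G,A)$. Naturality reduces to the standard fact that the induction/Shapiro isomorphism commutes with the restriction-of-scalars maps induced by subgroup inclusions, so once the short exact sequence of modules is in hand no further computation is required; the $\delta$-functor formalism supplies the rest.
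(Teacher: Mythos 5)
Your proposal is correct, and it is essentially the standard argument behind the cited result: the paper itself gives no proof (it quotes Theorem 2.12 of Bieri's notes), and Bieri's proof rests on exactly the short exact sequence of induced modules $0\to RG\otimes_{RS}R\to RG\otimes_{RH}R\to R\to 0$ (which you obtain geometrically from the Bass--Serre tree) followed by the $\mathrm{Ext}$ long exact sequence and Shapiro's lemma. Your identification of the terms, the ordering of the sequence, and the degree shift via the connecting homomorphism all check out, so nothing further is needed.
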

 
 Our goal is to reduce the problem about $H^2(G,\mathbb ZG)$ being free abelian, to 1-ended groups $G$. 
 
 \begin{theorem} \label{AmalRed} 
 Suppose the finitely presented group $G$ decomposes as $G_1\ast_SG_2$ where $S$ is finite. Then 
 $$H^2(G,\mathbb ZG)\cong  H^2(G_1,\mathbb ZG)\oplus H^2(G_2,\mathbb ZG).$$
 \end{theorem}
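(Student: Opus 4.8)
The plan is to apply the cohomological Mayer--Vietoris sequence for amalgamated products (Theorem \ref{AmalH2}) with the module $A = \mathbb ZG$, and to exploit the finiteness of the edge group $S$ to kill the relevant terms. Since $S$ is a finite subgroup of the finitely presented group $G$, Theorem \ref{H20} gives $H^2(S,\mathbb ZG)=0$. I would also need the analogous vanishing in degree $1$, namely $H^1(S,\mathbb ZG)=0$; this follows from the same circle of ideas, since for a finite group $S$ the cohomology $H^k(S,R)$ is torsion for $k\geq 1$, and combined with the torsion-freeness statement of Theorem \ref{TorF} (applied to $H^1(S,\mathbb ZG)$, viewing $\mathbb ZG$ as a free, hence torsion-free, abelian group over which the finite-group cohomology is torsion) one gets $H^1(S,\mathbb ZG)=0$.

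First I would write down the segment of the long exact sequence of Theorem \ref{AmalH2} around degree $2$:
$$H^1(S,\mathbb ZG)\to H^2(G,\mathbb ZG)\to H^2(G_1,\mathbb ZG)\oplus H^2(G_2,\mathbb ZG)\to H^2(S,\mathbb ZG).$$
With $H^1(S,\mathbb ZG)=0$ on the left and $H^2(S,\mathbb ZG)=0$ on the right, exactness immediately forces the middle map
$$H^2(G,\mathbb ZG)\to H^2(G_1,\mathbb ZG)\oplus H^2(G_2,\mathbb ZG)$$
to be both injective (zero kernel, since the incoming map from $H^1(S,\mathbb ZG)=0$ has trivial image) and surjective (zero cokernel, since the outgoing map lands in $H^2(S,\mathbb ZG)=0$). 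Hence it is an isomorphism, which is exactly the claimed decomposition
$$H^2(G,\mathbb ZG)\cong H^2(G_1,\mathbb ZG)\oplus H^2(G_2,\mathbb ZG).$$

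The main obstacle is purely the verification that both flanking terms $H^1(S,\mathbb ZG)$ and $H^2(S,\mathbb ZG)$ vanish; once that is in hand the result is a one-line exactness argument. The degree-$2$ vanishing is handed to me directly by Theorem \ref{H20}, so the only genuine point requiring care is the degree-$1$ vanishing $H^1(S,\mathbb ZG)=0$. I would justify this by the reasoning already used to establish Theorem \ref{H20}: as an abelian group $\mathbb ZG$ is free, for the finite group $S$ the functor $H^1(S,-)$ produces a torsion group on any coefficient module (standard finite-group cohomology vanishes after inverting $|S|$), and Theorem \ref{TorF} guarantees torsion-freeness, so the group is simultaneously torsion and torsion-free, hence trivial. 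I should double check that the hypotheses of Theorem \ref{AmalH2} are met here --- they are, taking $R=\mathbb Z$, the amalgam $G=G_1\ast_S G_2$, and $A=\mathbb ZG$ as a right $\mathbb ZG$-module --- and that $G_1$, $G_2$ inherit finite presentability from $G$ together with finiteness of $S$, which is the standard fact that a finitely presented group splitting over a finite subgroup has finitely presented factors, so that the coefficient and finiteness conventions are consistent throughout.
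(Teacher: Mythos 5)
Your overall skeleton (the Mayer--Vietoris sequence of Theorem \ref{AmalH2} with $A=\mathbb ZG$, followed by killing the two flanking terms) is the same as the paper's, and the degree-$2$ vanishing $H^2(S,\mathbb ZG)=0$ via Theorem \ref{H20} is fine. The gap is in your justification of $H^1(S,\mathbb ZG)=0$. Theorem \ref{TorF} is a statement about $H^2(G,RG)$ for a finitely presented group $G$ with coefficients in its \emph{own} group ring; it says nothing about degree $1$, nor about cohomology of the subgroup $S$ with coefficients in the larger module $\mathbb ZG$, so it cannot be ``applied to $H^1(S,\mathbb ZG)$.'' The fallback principle you invoke --- that finite-group cohomology with a torsion-free (even free) abelian coefficient module is itself torsion-free, hence zero --- is false: $H^2(\mathbb Z/n,\mathbb Z)\cong \mathbb Z/n$ for the trivial action, and $H^1(\mathbb Z/2,\mathbb Z)\cong\mathbb Z/2$ when the generator acts by $-1$. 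Torsion-freeness of coefficients does not pass to cohomology. (The conclusion $H^1(S,\mathbb ZG)=0$ happens to be true, but for a different reason: $H^1(S,\mathbb ZS)=0$ for finite $S$ --- a fact about group rings and ends, not a consequence of Theorem \ref{TorF} --- combined with the $H^1$ statement of Theorem \ref{FreeS}.)

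The paper sidesteps this entirely, and you can too: you never need $H^1(S,\mathbb ZG)$ itself to vanish, only its \emph{image} in $H^2(G,\mathbb ZG)$ to vanish. Since $S$ is finite, $H^1(S,\mathbb ZG)$ is annihilated by $|S|$ and hence is torsion, while $H^2(G,\mathbb ZG)$ is torsion-free by Theorem \ref{TorF} correctly applied to the finitely presented group $G$. Any homomorphism from a torsion group to a torsion-free group is zero, so the connecting map $H^1(S,\mathbb ZG)\to H^2(G,\mathbb ZG)$ has trivial image, giving injectivity of the middle map; surjectivity follows from $H^2(S,\mathbb ZG)=0$ exactly as you argue. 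With that one repair your proof coincides with the paper's.
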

 \begin{proof}
Theorem \ref{AmalH2} implies:  
$$H^1(S,\mathbb ZG)\to H^2(G,\mathbb ZG)\to H^2(G_1,\mathbb ZG)\oplus H^2(G_2,\mathbb ZG)\to H^2(S, \mathbb ZG)$$ Theorem \ref{H20} implies $H^2(S, \mathbb ZG)=0$. Theorem \ref{TorF} implies $H^2(G,\mathbb ZG)$ is torsion free. Since $H^1(S,\mathbb ZG)$ is torsion we have: 
$$H^2(G,\mathbb ZG)\cong  H^2(G_1,\mathbb ZG)\oplus H^2(G_2,\mathbb ZG).$$
 \end{proof}
 
\begin{theorem} \label{HNNRed} 
Suppose $R$ is a ring, $A$ is a right $RG$ module and the finitely presented group $G$ decomposes as $H\ast_S$ where $S$ is finite. Then 
 $$H^2(G,\mathbb ZG)\cong  H^2(H,\mathbb ZG).$$
 \end{theorem}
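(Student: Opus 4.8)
The plan is to mirror the proof of Theorem \ref{AmalRed}, substituting the HNN Mayer-Vietoris sequence (Theorem \ref{HNNH2}) for the amalgamated-product one. First I would apply Theorem \ref{HNNH2} with $A=\mathbb ZG$ and $k=2$ to the decomposition $G=H\ast_S$, which yields the portion of the long exact sequence
$$H^1(S,\mathbb ZG)\to H^2(G,\mathbb ZG)\to H^2(H,\mathbb ZG)\to H^2(S,\mathbb ZG).$$
The goal is then to show the two outer terms contribute nothing to the map in the middle, so that the central map is an isomorphism.

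The right-hand term vanishes immediately: since $S$ is finite and $G$ is finitely presented, Theorem \ref{H20} (with $R=\mathbb Z$) gives $H^2(S,\mathbb ZG)=0$. Hence the map $H^2(G,\mathbb ZG)\to H^2(H,\mathbb ZG)$ is surjective. For injectivity I would argue as in Theorem \ref{AmalRed}: the image of $H^1(S,\mathbb ZG)$ in $H^2(G,\mathbb ZG)$ is the kernel of the central map, so it suffices to show this image is trivial. Here $H^1(S,\mathbb ZG)$ is a torsion group (again because $S$ is finite, so $H^1(S,R)$ is torsion for any ring $R$, and by Theorem \ref{FreeS} the module $H^1(S,\mathbb ZG)$ is a direct sum of copies of the torsion group $H^1(S,\mathbb ZS)$), whereas by Theorem \ref{TorF} the module $H^2(G,\mathbb ZG)$ is torsion free. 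A torsion group can only map to a torsion-free group by the zero map, so the image of $H^1(S,\mathbb ZG)$ is trivial and the central map is injective. Combining surjectivity and injectivity gives
$$H^2(G,\mathbb ZG)\cong H^2(H,\mathbb ZG),$$
as claimed.

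I do not anticipate a genuine obstacle here, since the argument is structurally identical to the amalgamated-product case already proved; the only point requiring a moment of care is confirming that $H^1(S,\mathbb ZG)$ is torsion. For the amalgamated case this was invoked implicitly, and the cleanest justification is the second statement of Theorem \ref{FreeS}, which identifies $H^1(S,\mathbb ZG)$ with a direct sum of copies of $H^1(S,\mathbb ZS)$; each summand is torsion because $S$ is finite and $H^k(S,\cdot)$ is torsion for $k\geq 1$ over any coefficient ring. One should note that the statement of the theorem mentions a ring $R$ and module $A$ in its hypotheses but only asserts the conclusion for $\mathbb ZG$ coefficients; I would simply carry out the proof with $\mathbb ZG$ throughout, which is all that is needed for the reduction to one-ended groups that follows.
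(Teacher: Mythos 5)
Your proof is correct and follows essentially the same route as the paper's: the HNN Mayer-Vietoris sequence of Theorem \ref{HNNH2}, the vanishing of $H^2(S,\mathbb ZG)$ via Theorem \ref{H20}, and the observation that the torsion group $H^1(S,\mathbb ZG)$ must map trivially into the torsion-free group $H^2(G,\mathbb ZG)$ by Theorem \ref{TorF}. Your explicit justification that $H^1(S,\mathbb ZG)$ is torsion (via Theorem \ref{FreeS}) is a detail the paper leaves implicit, but it is the same argument in substance.
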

 \begin{proof}
Theorem \ref{HNNH2} implies the following sequence is exact:
$$H^1(S,\mathbb ZG)\to H^2(G,\mathbb ZG)\to H^2(H,\mathbb ZG)\to H^2(S, \mathbb ZG)$$
Theorem \ref{H20} implies $H^2(S, \mathbb ZG)=0$. Theorem \ref{TorF} implies $H^2(G,\mathbb ZG)$ is torsion free. Since $H^1(S,\mathbb ZG)$ is torsion we have: 
$$H^2(G,\mathbb ZG)\cong  H^2(H,\mathbb ZG).$$
 \end{proof}

%\begin{corollary} \label{HDun} {\bf(HDun)}
%Suppose $G$ is a finitely presented group and $H_1,\ldots, H_n$ are the 1-ended vertex groups of a Dunwoody decomposition of $G$ (see Theorem \ref{DunAcc}). Then 
%$$H^2(G,\mathbb ZG)\cong \oplus_{k=1}^\infty H^2(H_k;\mathbb ZG).$$ 
%\end{corollary}
%\begin{proof} 
%Simply apply Dunwoody's accessibility theorem, and Theorems \ref{AmalRed}, \ref{HNNRed} and \ref{H20} to $G$ and the successive factors of the decomposition (always keeping $\mathbb ZG$ coefficients).
%\end{proof}

\begin{corollary} \label{HDun} 
Suppose $G$ is a finitely presented group and $\mathcal G$ is a finite graph of groups decomposition of $G$ with finite edge groups. If $H_1,\ldots, H_n$ are the infinite vertex groups of $\mathcal G$  then 
$$H^2(G,\mathbb ZG)\cong \oplus_{k=1}^n H^2(H_k;\mathbb ZG).$$ 
%In particular, the same conclusion follows if $\mathcal G$ is a Dunwoody decomposition for $G$ and the $H_i$ are the 1-ended vertex groups of $\mathcal G$.
\end{corollary}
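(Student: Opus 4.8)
The plan is to induct on the number of edges of the underlying graph $\Gamma$ of $\mathcal G$, splitting off one edge at a time via Theorems \ref{AmalRed} and \ref{HNNRed}. The essential bookkeeping issue is that the coefficient module must stay $\mathbb ZG$ (for the fixed ambient group $G$) even though, after the first split, the groups being decomposed become proper subgroups of $G$. So first I would record $\mathbb ZG$-coefficient versions of those two theorems for subgroups: if $G'\leq G$ is finitely presented and splits as $G'=G_1'\ast_S G_2'$ (resp. $G'=H'\ast_S$) with $S$ finite and the factors (resp. base) finitely presented, then $H^2(G',\mathbb ZG)\cong H^2(G_1',\mathbb ZG)\oplus H^2(G_2',\mathbb ZG)$ (resp. $H^2(G',\mathbb ZG)\cong H^2(H',\mathbb ZG)$). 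The proofs are word-for-word those of Theorems \ref{AmalRed} and \ref{HNNRed}: feed $A=\mathbb ZG$, viewed as a $\mathbb ZG'$-module by restriction, into the Mayer--Vietoris sequences \ref{AmalH2} and \ref{HNNH2} for the splitting of $G'$; then $H^2(S,\mathbb ZG)=0$ by Theorem \ref{H20} (as $S$ is a finite subgroup of $G$) and $H^1(S,\mathbb ZG)$ is torsion (as $S$ is finite). The only place the original argument used that the coefficients were the full group ring is the torsion-freeness of the middle term, and this survives: by Theorem \ref{FreeS}, $H^2(G',\mathbb ZG)\cong\bigoplus_{G'\backslash G}H^2(G',\mathbb ZG')$, which is torsion free by Theorem \ref{TorF}. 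Hence the torsion map $H^1(S,\mathbb ZG)\to H^2(G',\mathbb ZG)$ is zero and the desired isomorphism follows.

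With these in hand I would prove, by induction on the number of edges of a connected sub-graph-of-groups $\mathcal G'$ of $\mathcal G$ with fundamental group $G'\leq G$, that $H^2(G',\mathbb ZG)\cong\bigoplus_k H^2(H_k',\mathbb ZG)$, the sum taken over the infinite vertex groups $H_k'$ of $\mathcal G'$; the corollary is the case $\mathcal G'=\mathcal G$, $G'=G$. For the base case $\mathcal G'$ is a single vertex, so $G'$ is that vertex group and the statement is a tautology when $G'$ is infinite, while when $G'$ is finite both sides vanish (Theorem \ref{H20} together with the empty direct sum). For the inductive step choose an edge $e$ of $\mathcal G'$. If $\mathcal G'-e$ is connected, then $G'=G''\ast_S$ is an HNN extension over the finite group $S=G_e$ with base $G''=\pi_1(\mathcal G'-e)\leq G$; the generalized Theorem \ref{HNNRed} gives $H^2(G',\mathbb ZG)\cong H^2(G'',\mathbb ZG)$, and since $\mathcal G'-e$ has the same vertex groups as $\mathcal G'$ but fewer edges, induction finishes this case. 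If instead $e$ separates $\mathcal G'$ into two components with fundamental groups $G_1',G_2'\leq G$, then $G'=G_1'\ast_S G_2'$, the generalized Theorem \ref{AmalRed} gives $H^2(G',\mathbb ZG)\cong H^2(G_1',\mathbb ZG)\oplus H^2(G_2',\mathbb ZG)$, and applying induction to each summand yields a direct sum over the infinite vertex groups of the two components, which together are exactly the infinite vertex groups of $\mathcal G'$.

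The main obstacle is not the induction itself but making sure the hypotheses of Theorems \ref{FreeS} and \ref{TorF} are available at every stage, i.e. that each intermediate group ($G''$, $G_1'$, $G_2'$, and all vertex groups) is finitely presented. For vertex groups this is the standard fact that a finitely presented group which is the fundamental group of a finite graph of groups with finite edge groups has finitely presented vertex groups; the partial fundamental groups appearing in the induction are themselves vertex groups of coarser such decompositions of $G$, so the same fact applies, and I would state this explicitly at the outset. The only other point to check carefully is the coset-counting in Theorem \ref{FreeS}, so that torsion-freeness of $H^2(G',\mathbb ZG)$ really does reduce to torsion-freeness of $H^2(G',\mathbb ZG')$ --- this is precisely the role played by Theorem \ref{FreeS} in the excerpt, and is what allows the single-edge arguments to run with the ambient coefficient module $\mathbb ZG$ rather than $\mathbb ZG'$.
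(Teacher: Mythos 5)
Your proposal is correct and takes essentially the same route as the paper's own proof: induction on the number of edges of $\mathcal G$, splitting off one edge at a time via Theorems \ref{AmalRed} and \ref{HNNRed} while keeping $\mathbb ZG$ coefficients throughout. You are in fact more careful than the paper, which silently applies those two theorems to proper subgroups of $G$ after the first split; your explicit subgroup-coefficient versions (with torsion-freeness of $H^2(G',\mathbb ZG)$ recovered from Theorems \ref{FreeS} and \ref{TorF}) and your remark on finite presentability of the intermediate groups supply exactly the details the paper's phrase ``always keeping $\mathbb ZG$ coefficients'' leaves implicit.
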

\begin{proof} 
We may assume $\mathcal G$ is connected since we allow the trivial edge group. The idea is to inductively apply Theorems \ref{AmalRed}, \ref{HNNRed} and \ref{H20} to $G$ and $\mathcal G$ (always keeping $\mathbb ZG$ coefficients).
The theorem is obviously true if the graph has no edges (and so is a single vertex) Assume the theorem is true for graphs with $N$ edges. Take a graph $\mathcal G$ with $N+1$ edges. If an edge $e$ of $\mathcal G$ separates $\mathcal G$, then $G$ is $A\ast_{G_e}B$ where $G_e$ is the edge group of $e$, and $A$ and $B$ are subgroups of $G$ with graphs of groups decompositions $\mathcal G_A$ and $\mathcal G_B$ (respectively) induced by $\mathcal G$. Theorem \ref{AmalRed} implies $H^2(G, \mathbb ZG)\cong H^2(A,\mathbb ZG)\oplus H^2(B,\mathbb ZG)$. Inductively $H^2(A,\mathbb ZG)$ is the direct sum of the groups $H^2(H,\mathbb ZG)$ for all 1-ended vertex groups $H$ of $\mathcal G_A$. Similarly for $B$ - completing the proof in this case.

If $e$ does not separate $G$ then let $A$ be the group with graph of groups decomposition $\mathcal G_A$ given by removing $e$ from $\mathcal G$. Then $G$ is an HNN extension of $A$ over the finite edge group $G_e$. Theorem \ref{HNNRed} implies $H^2(G,\mathbb ZG)\cong H^2(A,\mathbb ZG)$. Inductively $H^2(A,\mathbb ZG)$ is the direct sum of the groups $H^2(H,\mathbb ZG)$ for all 1-ended vertex groups of $\mathcal G_A$. But $H$ is a 1-ended vertex group of $\mathcal G_A$ if and only if it is a 1-ended vertex group of $\mathcal G$. 
\end{proof}

%For the next result see \cite{Br82} Chapter VIII, \S5, Exercise 4A.
%\begin{theorem} \label{FreeS} \label{DirPr} {\bf (DirPr)} Suppose $H$ is a finitely presented subgroup of the finitely presented group $G$, then $H^2(H, \mathbb ZG)$ is isomorphic to the direct sum of copies of $H^2(H,\mathbb ZH)$, one for each coset of $H$ in $G$. 

%If $H$ and $G$ are only finitely generated, then $H^1(H, \mathbb ZG)$ is isomorphic to the direct sum of copies of $H^1(H,\mathbb ZH)$, one for each coset of $H$ in $G$.
%\end{theorem}

The next result completes our reduction.

\begin{theorem}\label{Reduction} 
Suppose $G$ is a finitely presented group and $\mathcal G$ is a finite graph of groups decomposition of $G$ with finite edge groups. Suppose the infinite vertex groups of $\mathcal G$ are $H_1, \ldots, H_n$. The group $H^2(G,\mathbb ZG)$ is isomorphic to the direct sum $\oplus_{i=1}^nA_i$, where $A_i$ is isomorphic to $\oplus _{[H_i:G]}H^2(H_i,\mathbb ZH_i)$ (and $[H_i:G]$ is the index of $H_i$ in $G$).

In particular, if $\mathcal G$ is a Dunwoody decomposition for $G$ (see Theorem \ref{DunAcc}) then $H^2(G,\mathbb ZG)$ is free abelian if and only if  for each 1-ended vertex group $V$ of $\mathcal G$, $H^2(V,\mathbb ZV)$ is free abelian.
\end{theorem}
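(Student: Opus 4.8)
The plan is to reduce Theorem \ref{Reduction} to the already-established Corollary \ref{HDun} together with the coefficient-change result Theorem \ref{FreeS}. First I would invoke Corollary \ref{HDun} to write
$$H^2(G,\mathbb ZG)\cong \bigoplus_{k=1}^n H^2(H_k;\mathbb ZG),$$
where $H_1,\dots,H_n$ are the infinite vertex groups of $\mathcal G$. This already decomposes the cohomology over the individual vertex groups, but with the ambient coefficient module $\mathbb ZG$ rather than $\mathbb ZH_k$. The remaining task is purely to identify each summand $H^2(H_k;\mathbb ZG)$ in terms of the intrinsic group $H^2(H_k,\mathbb ZH_k)$.

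The key step is Theorem \ref{FreeS}: since each infinite vertex group $H_k$ of a Dunwoody-type decomposition is finitely presented (the hypothesis of the theorem gives $G$ finitely presented, and I would note that the vertex groups of a finite graph of groups decomposition of a finitely presented group with finite edge groups are themselves finitely presented, by Dunwoody's accessibility and the standard fact that vertex groups inherit finite presentability), Theorem \ref{FreeS} yields
$$H^2(H_k;\mathbb ZG)\cong \bigoplus_{[H_k:G]} H^2(H_k,\mathbb ZH_k),$$
one copy for each coset of $H_k$ in $G$. Substituting this into the displayed isomorphism gives precisely $H^2(G,\mathbb ZG)\cong \bigoplus_{i=1}^n A_i$ with $A_i\cong \bigoplus_{[H_i:G]}H^2(H_i,\mathbb ZH_i)$, which is the first assertion.

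For the final ``in particular'' claim, I would specialize to the case where $\mathcal G$ is a Dunwoody decomposition, so every infinite vertex group is $1$-ended. If each $H^2(V,\mathbb ZV)$ is free abelian for every $1$-ended vertex group $V$, then each $A_i$ is a direct sum of copies of a free abelian group, hence free abelian, and so is the finite direct sum $H^2(G,\mathbb ZG)$. Conversely, if $H^2(G,\mathbb ZG)$ is free abelian, then each direct summand $A_i$ is free abelian (a direct summand of a free abelian group is free abelian), and since $A_i$ contains $H^2(H_i,\mathbb ZH_i)$ as the summand indexed by the trivial coset, the latter is a subgroup of a free abelian group and hence itself free abelian. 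This gives the biconditional.

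The main obstacle I anticipate is not the homological algebra, which is essentially a two-line splice of Corollary \ref{HDun} and Theorem \ref{FreeS}, but the bookkeeping around finite presentability of the vertex groups: Theorem \ref{FreeS} requires $H_k$ finitely presented, so I must justify that the infinite vertex groups of the decomposition $\mathcal G$ are finitely presented. The cleanest route is to observe that $\mathcal G$ has finitely generated (indeed finite) edge groups and finitely many vertices, whence finite presentability of $G$ forces finite presentability of each vertex group; alternatively one passes to a Dunwoody decomposition via Theorem \ref{DunAcc}, whose $1$-ended vertex groups are finitely presented by construction. I would state this justification explicitly rather than leave it implicit, since it is the only point where the hypotheses of the cited theorems must be carefully matched.
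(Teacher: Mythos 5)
Your proposal is correct and follows essentially the same route as the paper, whose entire proof is ``Simply combine Corollary \ref{HDun} with Theorem \ref{FreeS}.'' Your additional care about the finite presentability of the vertex groups and your explicit argument for the ``in particular'' biconditional (using that direct summands and subgroups of free abelian groups are free abelian) only make explicit what the paper leaves implicit.
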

\begin{proof}
Simply combine Corollary \ref{HDun} with Theorem \ref{FreeS}.
\end{proof}
 
There is a homological generalization of B. Jackson's  Theorems \ref{JackI} and \ref{JackII}. Corollary \ref{GM2} implies that if a finitely presented group $G$ is simply connected at $\infty$, then $H^2(G,\mathbb ZG)=0$. 

\begin{theorem} \label{JHom} 
Suppose $G=G_1\ast_SG_2$ or $G=G_1\ast_S$ where $G_i$ is finitely presented and 1-ended and $S$ is finitely generated with more than 1 end. Then $H^2(G, \mathbb ZG)$ is non-trivial. 
\end{theorem}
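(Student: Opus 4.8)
The plan is to run the appropriate cohomological Mayer--Vietoris sequence of the splitting with coefficients in $\mathbb ZG$ and to locate a nonzero copy of $H^1(S,\mathbb ZG)$ sitting inside $H^2(G,\mathbb ZG)$. The governing principle is that the $1$-endedness of the vertex group(s) kills the $H^1$ terms they contribute, while the assumption that $S$ has more than one end keeps $H^1(S,\mathbb ZG)$ nonzero; exactness then pushes this nonvanishing into $H^2(G,\mathbb ZG)$.

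First I would establish the two homological inputs. Since each $G_i$ is finitely generated and $1$-ended, Theorem \ref{HEnds} (taken with $R=\mathbb Z$) gives $\mathrm{rank}_{\mathbb Z} H^1(G_i,\mathbb ZG_i)=1-1=0$, and because $H^1$ of a finitely generated group with group-ring coefficients is free abelian, this forces $H^1(G_i,\mathbb ZG_i)=0$. The finitely generated version of Theorem \ref{FreeS} then yields $H^1(G_i,\mathbb ZG)\cong\bigoplus H^1(G_i,\mathbb ZG_i)=0$, where the sum ranges over the cosets of $G_i$ in $G$. On the other hand $S$ is finitely generated with more than one end, so Theorem \ref{HEnds} gives $\mathrm{rank}_{\mathbb Z} H^1(S,\mathbb ZS)\geq 1$, whence $H^1(S,\mathbb ZS)\neq 0$; applying Theorem \ref{FreeS} once more, $H^1(S,\mathbb ZG)\cong\bigoplus H^1(S,\mathbb ZS)$ is a nonzero abelian group.

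Next I would feed these into the exact sequences. In the amalgamated case, Theorem \ref{AmalH2} with $k=1$ and $A=\mathbb ZG$ supplies the exact fragment $H^1(G_1,\mathbb ZG)\oplus H^1(G_2,\mathbb ZG)\to H^1(S,\mathbb ZG)\to H^2(G,\mathbb ZG)$. The left-hand term vanishes by the previous paragraph, so the map $H^1(S,\mathbb ZG)\to H^2(G,\mathbb ZG)$ is injective and $H^2(G,\mathbb ZG)$ contains a nonzero copy of $H^1(S,\mathbb ZG)$. In the HNN case, Theorem \ref{HNNH2} with $k=1$ and $A=\mathbb ZG$ supplies $H^1(G_1,\mathbb ZG)\to H^1(S,\mathbb ZG)\to H^2(G,\mathbb ZG)$; again the left-hand term is $0$, the middle-to-right map is injective, and $H^2(G,\mathbb ZG)\neq 0$. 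In either case the conclusion follows.

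The main point requiring care is the passage from $\mathrm{rank}=0$ to genuine vanishing of $H^1(G_i,\mathbb ZG_i)$: Theorem \ref{HEnds} only delivers the rank, so I must invoke the (standard) torsion-freeness, indeed freeness, of $H^1$ of a finitely generated group with group-ring coefficients in order to conclude $H^1(G_i,\mathbb ZG_i)=0$ outright rather than merely up to torsion. Everything else is formal bookkeeping with the Mayer--Vietoris sequences, using Theorem \ref{FreeS} to convert $\mathbb ZG_i$- and $\mathbb ZS$-coefficient computations into $\mathbb ZG$-coefficient ones. Note that Theorem \ref{TorF} (torsion-freeness of $H^2(G,\mathbb ZG)$) is not needed for nonvanishing, though it could be used if one wanted to say more about the structure of $H^2(G,\mathbb ZG)$.
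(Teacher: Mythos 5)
Your proposal is correct and follows essentially the same route as the paper's proof: the same Mayer--Vietoris sequences with $\mathbb ZG$ coefficients, with Theorem \ref{HEnds} killing $H^1(G_i,\mathbb ZG_i)$ and making $H^1(S,\mathbb ZS)$ nonzero, and Theorem \ref{FreeS} converting these into statements about $H^1(\,\cdot\,,\mathbb ZG)$. The only differences are cosmetic: you argue directly via injectivity rather than by contradiction, you cite Theorems \ref{AmalH2} and \ref{HNNH2} for the exact sequences (which is in fact the more accurate reference than the paper's citation of Theorems \ref{AmalRed} and \ref{HNNRed}), and you are more explicit than the paper about why rank zero plus freeness of $H^1$ with group-ring coefficients yields genuine vanishing.
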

 \begin{proof}
Suppose $H^2(G,\mathbb ZG)=0$.  Theorems \ref{AmalRed} and \ref{HNNRed} gives the exact sequences:
 $$H^1(G_1,\mathbb ZG)\oplus H^1(G_2,\mathbb ZG)\to H^1(S,\mathbb ZG)\to H^2(\mathbb ZG)$$
 $$H^1(G_1;\mathbb ZG)\to H^1(S,\mathbb ZG)\to H^2(G,\mathbb ZG)$$
 
 Theorem \ref{HEnds} implies $H^1(G_i,\mathbb ZG_i)=0$ for $i\in \{1,2\}$ and so Theorem \ref{FreeS} implies $H^1(G_i,\mathbb ZG)=0$. Inserting into the above exact sequences we have (in both cases) that $H^1(S,\mathbb ZG)=0$. Since $S$ has more than 1 end, Theorem \ref{HEnds} implies $H^1(S, \mathbb ZS)$ is non-trivial and Theorem \ref{FreeS} implies $H^1(S,\mathbb ZG)$ is non-trivial. 
 \end{proof}

M. Davis \cite{Davis83} used right angled Coxeter groups to produce closed manifolds with infinite fundamental group (in dimensions $n\geq 4$) whose universal covers were contractible but not simply connected at $\infty$ (and hence not homeomorphic to $\mathbb R^n$). The right angled Coxeter groups that Davis considered correspond to triangulations of compact homology $n$-spheres (with non-trivial fundamental group). His manifolds have fundamental group, a torsion free subgroup of finite index in this Coxeter group. Davis shows that the fundamental pro-group at infinity of the universal cover of his compact manifolds are inverse systems of free products of the fundamental group of the homology sphere.  Hence, when $G$ is the fundamental group of such a manifold, $G$ is not simply connected at $\infty$. But the fundamental group of a homology sphere abelianize to the trivial group and so the first pro-homology of $G$ is trivial. This implies $H^2(G,\mathbb ZG)=0$, and so Theorem \ref{JHom} restricts the type of splittings $G$ may have. Since $G$ is a subgroup of finite index in a  Coxeter group, there are many ``visual" splittings of such a group.

\subsubsection{Equivalent Versions of Homological Semistability}\label{EVHS}

When coefficients are omitted in homology, they  are ordinary $\mathbb Z$-coefficients. We use $\bar H_n$ for reduced homology. The universal cover of a CW-complex $X$ is $\tilde X$ and $\tilde X^n$ is the $n$-skeleton of the universal cover of $X$.

\begin{definition} A group $G$ with $K(G,1)$-complex with finite $n$-skeleton is said to be {\it of type} $F(n)$. When $G$ is finitely presented, this is equivalent to the algebraic finiteness property $FP_n$. 
\end{definition}

\begin{definition}\label{ProH} 
Let $X$ be a connected locally finite CW complex, and let $C_i$ be a cofinal sequence of compact sets in $X$. 
The inverse sequence of abelian groups:
$$(\ast){\hskip .4in} H_n(X-C_1)\leftarrow H_n(X-C_2) \leftarrow\cdots$$
(all bonding maps are induced by inclusion)
is called the $n^{th}$ {\it pro-homology group of the end of $X$} and is denoted $H_n(\varepsilon X)$. It is {\it semistable} if it is pro-isomorphic to an inverse sequence of groups with epimorphisms as bonding maps. Similarly for reduced homology. 
\end{definition}

\begin{remark} \label{GSS} 
Geometrically, $H_1(X-C_1)\leftarrow H_1(X-C_2) \leftarrow\cdots$ is semistable if and only if given a compact set $C$ there is a compact set $D(C)$ such that for any third compact set $E$ and loop $\alpha$ in $X-D(C)$, $\alpha$ is homologous in $X-C$ to a sum of loops $\Sigma_{i=1}^n\alpha_i$ in $X-E$. If $X$ is 1-ended and $X-E$ is path connected, then $\alpha$ is homologous in $X-C$ to a loop in $X-E$. (Simply take a point $e\in X-E$ and path $\tau_i$ in $E$ from the initial point of each  $\alpha_i$ to $e$. Since $\alpha_i$ is homologous in $X-E$ to $(\tau_i^{-1},\alpha_i, \tau_i)$, $\alpha$ is homologous in $X-C$ to $\Pi_{i=1}^n(\tau_i^{-1},\alpha_i, \tau_i)$.)
\end{remark}
\begin{definition} 
Let $X$ be a topological space. If $r,s:([0,\infty),\{0\})\to (X,\ast)$ are proper rays then we say that $r$ is {\it properly homologous} to $s$ (in $X$) if there is a 2-manifold $N$ with boundary $L(t)$ (for $t\in (-\infty,\infty)$) homeomorphic to a real line and a proper map $p:N\to X$ such that $p(L(t))=r(t)$ for $t\geq 0$ and $p(L(t))=s(t)$ for $t\leq 0$.
\end{definition}

\begin{lemma} If $N$ is a 2-manifold with real line boundary $L$, then the two ends of $L$ determine the same end of $N$. In particular, if $r$ and $s$ are properly homologous in $X$, then they determine the same end of $X$. 
\end{lemma}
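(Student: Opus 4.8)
The plan is to prove the first (main) sentence and then derive the ``in particular'' clause as a formal consequence of properness. Throughout I may assume $N$ is connected, since $\partial N=L$ is connected and hence lies in a single component of $N$, and the ends of $L$ only involve that component. Write the two ends of $L$ as the rays $r_+(u)=L(u)$ and $r_-(u)=L(-u)$ for $u\ge 0$. These are genuinely proper rays in $N$: the boundary $L=\partial N$ is closed in $N$, so $L\cap C$ is compact for every compact $C\subseteq N$. The goal is to show $r_+$ and $r_-$ converge to the same end of $N$.

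First I would argue by contradiction, supposing $r_+$ and $r_-$ converge to distinct ends. Then some compact set separates their tails, and using Theorem \ref{ES} together with the triangulability of surfaces I would upgrade this to a \emph{clean} separation: an open set $W\subseteq N$ with $L((T_0,\infty))\subseteq W$ and $L((-\infty,-T_0))\cap W=\emptyset$ for some $T_0$, whose topological frontier $\mathrm{Fr}(W)$ is a properly embedded compact $1$-manifold meeting $L$ transversally and satisfying $\partial(\mathrm{Fr}(W))=\mathrm{Fr}(W)\cap L$. Concretely, one takes $W$ to be the component containing the positive tail of $L$ in the complement of a compact subsurface $D$ chosen in general position with respect to $L$; this is the step where I would lean on standard PL/smoothing facts for surfaces.

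The heart of the proof is then a parity count. Since $\mathrm{Fr}(W)$ is a compact $1$-manifold whose only boundary points lie on $L$, and each of its components is either a circle (no endpoints) or an arc (exactly two endpoints), the crossing set $L\cap \mathrm{Fr}(W)$ has \emph{even} cardinality. Moreover the interior of each frontier arc lies in $\mathrm{int}(N)$ and hence misses $L=\partial N$, so $L$ meets $\mathrm{Fr}(W)$ \emph{only} at these endpoints; by transversality the indicator function $\mathbf 1_W$ restricted to $L$ flips its value at each such point. But along $L\cong\mathbb R$ this function equals $0$ near $-\infty$ (the negative tail avoids $W$) and $1$ near $+\infty$ (the positive tail lies in $W$), so it must flip an \emph{odd} number of times. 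This contradicts the evenness just established, so $r_+$ and $r_-$ converge to the same end of $N$.

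Finally, for the ``in particular'' statement I would invoke properness of $p$. After reparametrization $r=p\circ r_+$ and $s=p\circ r_-$, and a proper map carries proper rays converging to a common end of $N$ to proper rays converging to a common end of $X$ — this is precisely the functoriality of the induced map $\mathcal E(N)\to\mathcal E(X)$ on end spaces for proper maps. Hence $r$ and $s$ determine the same end of $X$. The main obstacle is the clean-separation step in the second paragraph, namely guaranteeing that the separating set may be taken with frontier a $1$-manifold transverse to $\partial N$; the parity count itself is short and robust once that is in place.
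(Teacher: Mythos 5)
Your argument is correct, and it reaches the contradiction by a different mechanism than the paper does. The paper also argues by contradiction, assuming a compact $K$ separates the two ends of $L$ with $L(-k)$ and $L(k)$ not joinable in $N-K$, and it invokes the same standard surface-topology input that you do (a compact submanifold $M$ of $N$ containing $K\cup L([-k,k])$ in its topological interior); but its punchline is constructive rather than a counting argument: the boundary component of $M$ containing $L([-k,k])$ is a circle $S$, and $\alpha=S\setminus L((-k,k))$ is a closed arc from $L(-k)$ to $L(k)$ which can meet $K$ only along $L$ (hence not at all, since $K\cap L\subseteq L((-k,k))$), so $\alpha$ joins the two tails inside $N-K$ --- an immediate contradiction. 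You instead rule out any clean separation by a mod-$2$ count: the frontier of $W$ is a compact properly embedded $1$-manifold with all of its boundary on $L$, so it meets $L$ an even number of times, while separation forces the indicator of $W$ along $L$ to flip an odd number of times. Both proofs rest on exactly the same unproved ingredient --- a compact subsurface (regular) neighborhood in general position with respect to $\partial N$, which you flag explicitly and the paper assumes silently --- so neither is more elementary at that step. The paper's route is shorter once $M$ is granted and exhibits the connecting path realizing the contradiction; your route avoids any analysis of which boundary circle contains the segment $L([-k,k])$, and the parity argument is the standard intersection-theoretic template, so it transports more readily to general codimension-one separation statements. A small additional merit of your write-up is that you justify the ``in particular'' clause via properness of $p$ and the induced map on end spaces, a point the paper leaves implicit.
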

\begin{proof}
Suppose $K$ is a compact subset of $N$ that separates the two ends of $L$. Assume that $k>0$ such $L(-k)$ cannot be joined to $L(k)$ by a path in $N-K$ and so that $L((-\infty,-k] \cup [k,\infty))\subset N-K$. Let $M$ be a compact submanifold of $N$ containing $K\cup L([-k,k])$ in its topological interior. The boundary of $M$ is a disjoint union of circles. One of these circles $S$ contains $L([-k,k])$ so that $S=L((-k,k))\cup \alpha$ where $\alpha$ is homeomorphic to a closed   interval that begins at $L(-k)$ and ends at $L(k)$. But $\alpha$ has image in $N-K$, contrary to our assumption.
\end{proof} 

\begin{theorem} [Proposition 12.8, \cite{GH81}] Suppose $f:S^1\to X$ is continuous. Then $f$ is homologicially trivial in $X$ if and only if $f$ extends to $f:W\to X$ where $W$ is an orientable surface with boundary $S^1$.
\end{theorem}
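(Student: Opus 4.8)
The plan is to prove the two implications separately, with the converse (that bounding an orientable surface forces homological triviality) being routine and the direct implication requiring a small geometric construction.

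For the easy direction, suppose $f$ extends to $\bar f\colon W\to X$ with $W$ a compact orientable surface and $\partial W=S^1$. Orientability provides a fundamental class $[W]\in H_2(W,\partial W;\mathbb Z)$, and the connecting homomorphism in the long exact sequence of the pair $(W,\partial W)$ carries $[W]$ to the fundamental class $[S^1]=[\partial W]\in H_1(\partial W)$. Writing $i\colon\partial W\hookrightarrow W$ for the inclusion, exactness of $H_2(W,\partial W)\xrightarrow{\partial}H_1(\partial W)\xrightarrow{i_\ast}H_1(W)$ gives $i_\ast[S^1]=i_\ast\partial[W]=0$, whence $f_\ast[S^1]=\bar f_\ast i_\ast[S^1]=0$, so $f$ is homologically trivial. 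I would only remark that everything here takes place in the path component of $X$ meeting $f(S^1)$.

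For the hard direction, suppose $f_\ast[S^1]=0$ in $H_1(X)$. Replacing $X$ by the path component containing $f(S^1)$ and choosing a basepoint together with a path to a point of $f(S^1)$, I would view $f$ as a based loop representing $[\gamma]\in\pi_1(X)$. By the Hurewicz theorem $H_1(X)\cong\pi_1(X)^{\mathrm{ab}}$, so $f_\ast[S^1]=0$ says exactly that $[\gamma]$ lies in the commutator subgroup; thus $[\gamma]=\prod_{i=1}^{g}[a_i,b_i]$ for some loops $a_i,b_i$ and some $g\ge 0$. Let $W=\Sigma_{g,1}$ be the compact orientable surface of genus $g$ with one boundary circle. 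It retracts by a homotopy equivalence $r\colon W\to Y$ onto a wedge $Y=\bigvee^{2g}S^1$ of circles (generators $a_1,b_1,\dots,a_g,b_g$), and the composite $\partial W\hookrightarrow W\xrightarrow{r}Y$ is homotopic to the loop $w=\prod_{i=1}^g[a_i,b_i]$. Defining $\mu\colon Y\to X$ to send the $i$-th pair of circles to $a_i,b_i$ and setting $\bar f=\mu\circ r$, the restriction $\bar f|_{\partial W}$ is freely homotopic to $\mu\circ w$, which represents $\prod[a_i,b_i]=[\gamma]$ and is therefore homotopic to $f$; a collar neighborhood of $\partial W$ then lets me modify $\bar f$ within its homotopy class so that $\bar f|_{\partial W}=f$ exactly. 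When $g=0$ this simply produces the disk $W=D^2$ over which the null-homotopic $f$ extends.

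The main obstacle is the direct implication, specifically passing from the algebraic fact that $[\gamma]$ is a product of $g$ commutators to an honest surface map. The key point that makes it work is that the one-holed genus-$g$ surface is characterized by having boundary word equal to the standard product of commutators; I must be careful that (i) the choices of basepoint and connecting path do not affect the $H_1$-class of $f$, (ii) the retraction $r$ genuinely carries $\partial W$ to the commutator word $w$, and (iii) the final collar adjustment replaces the boundary map by a freely homotopic one without disturbing the interior. The converse, by contrast, is a formal consequence of the existence of a $\mathbb Z$-fundamental class for an orientable surface and the exactness of the pair sequence.
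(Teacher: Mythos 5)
Your proof is correct, but there is nothing in the paper to compare it against: the paper does not prove this statement at all, it simply imports it as Proposition 12.8 of \cite{GH81} and uses it as a black box (e.g.\ in the definition of homological triviality in \S\ref{ac1} and in the proof of Theorem \ref{ssequivH}). Taken on its own terms, your argument is the standard one and is sound: the ``if'' direction via the fundamental class $[W]\in H_2(W,\partial W)$ and exactness of the pair sequence, and the ``only if'' direction via Hurewicz, writing $[\gamma]$ as a product of $g$ commutators, mapping in the one-holed genus-$g$ surface $\Sigma_{g,1}$ (whose boundary represents exactly the word $\prod_{i=1}^g[a_i,b_i]$ in its free fundamental group), and then absorbing the remaining free homotopy into a collar. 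Your three flagged caveats are exactly the right ones, and each is handled correctly; in particular the basepoint/connecting-path issue is harmless because conjugation dies in the abelianization, so the Hurewicz image depends only on the free homotopy class of $f$. One point worth recording explicitly: as literally stated, the theorem omits the word ``compact,'' and compactness of $W$ is genuinely necessary for the ``if'' direction --- every loop $f$ extends over the non-compact orientable surface $S^1\times[0,\infty)$, whose boundary is $S^1$, via $(x,t)\mapsto f(x)$. Your proof implicitly (and correctly) reads $W$ as compact in both directions, which is clearly the intended meaning both in \cite{GH81} and in the way the paper applies the result.
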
 

\begin{definition} The locally compact connected ANR $X$ has {\it semistable first homology at $\infty$} if there is a compact set $C$ such that for any point $\ast \in X-C$, any two proper rays $r,s:([0,\infty),\{0\}) \to (X-C,\ast)$ converging to the same end of $X$ are properly homologous. 
\end{definition} 

\begin{remark} 
Note that if $\ast_1$ and $\ast_2$ are points in the same (unbounded) path component of $X-C$ then this definition is satisfied for $\ast_1$ if and only if it is satisfied for $\ast_2$. 
\end{remark}

\begin{remark}\label{E1C} 
Note that if $H_1(X)=0$ and $X$ has semistable first homology at $\infty$ then any two proper rays $r,s:([0,\infty),\{0\}) \to (X,\ast)$ converging to the same end of $X$ are properly homologous. (Simply take a path $\alpha$ in $X-C$ connecting say $r(k)$ to $s(k)$, where $r([k,\infty)$ and $s([k,\infty)$ avoid $C$. Then $r|_{[k,\infty)}$ is properly homologous to $(\alpha, s|_{[k,\infty)})$. Since $H_1(X)=0$, $\alpha$ is homologous to the path that follows $r$ from $r(k)$ to $r(0)=s(0)$ and then follows $s$ from $s(0)$ to $s(k)$. Combining we have $r$ is properly homologous to $s$.) 
\end{remark}

The next result is the homological version of Theorem \ref{SSloop}.

\begin{theorem} \label{SSloop2}
Let $K$ be a connected, 1-ended locally finite CW-complex. Suppose $r$ is a proper ray in $K$ and $H_1(\varepsilon K)$ is semistable (Definition \ref{ProH}).  Then for any compact set $C\subset K$ there is a compact set $D_1(C)\subset K$ such that for any loop  $\alpha$ based at $r(x)$ and with image in $K-D_1(C)$, the proper rays $r|_{[x,\infty)}$ and $(\alpha, r|_{[x,\infty)})$ are properly homologous in $K-C$. Furthermore, there is a collection of compact 2-manifolds, each with circle boundary, that can be stacked (as in Figure \ref{FigSSloop2}) to build the 2-manifold $M$ with $\mathbb R^1$ boundary and a proper map $F:M\to K$ showing $r|_{[x,\infty)}$ and $(\alpha, r|_{[x,\infty)})$ are properly homologous in $K-C$.
\end{theorem}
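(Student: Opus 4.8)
The plan is to mirror the proof of Theorem \ref{SSloop} (the homotopy version), replacing ``homotopic'' by ``homologous'' and ``disk-filling'' by ``bounds an orientable surface'' throughout. First I would set up a cofinal sequence $\{C_i\}_{i=0}^\infty$ of compact subcomplexes with $r([x_i,\infty))\subset K-C_i$, and rephrase semistability of $H_1(\varepsilon K)$ geometrically as in Remark \ref{GSS}: given $C$ there is a $D(C)$ so that any loop in $K-D(C)$ is homologous in $K-C$ to a loop arbitrarily far out. Since $K$ is $1$-ended, Theorem \ref{ES} lets me assume each $K-C_i$ is a single unbounded path component, so the ``sum of loops'' in Remark \ref{GSS} can be consolidated into a single loop based on $r$ (using paths in the relevant compact set to a common basepoint on $r$, exactly as in the parenthetical of Remark \ref{GSS}). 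Choosing the $D(C_i)$ so that $D(C_i)\subset C_{i+1}$, I obtain the key one-step statement: a loop $\alpha$ based at a point of $r$ with image in $K-C_i$ is homologous in $K-C_{i-1}$ to a loop $\alpha'$ based further out on $r$ with image in $K-C_{i+1}$, where the homology is realized by a compact orientable surface with one boundary circle mapping to $(\alpha,\bar\alpha')$ together with the connecting segment of $r$.

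Next I would iterate this one-step statement. Starting from $\alpha=\alpha_1$ in $K-D_1(C)$ (with $D_1(C)=C_1$ playing the role of $D$), I produce a sequence $\alpha_1,\alpha_2,\dots$ with $\alpha_{n}$ in $K-C_n$, each $\alpha_n$ homologous to $\alpha_{n+1}$ by a compact surface $M_n$ mapping properly into $K-C_{n-1}$ with the appropriate tracking along $r$. The surfaces $M_n$ are then stacked along their boundary segments of $r$, precisely as in Figure \ref{FigSSloop} of the proof of Theorem \ref{SSloop} and as in the new Figure \ref{FigSSloop2}: concatenating the $M_n$ end-to-end along $r$ gives a single non-compact $2$-manifold $M$ whose boundary is a real line $L$, with the two ``ends'' of $L$ mapping respectively to $r|_{[x,\infty)}$ and to $(\alpha,r|_{[x,\infty)})$. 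The map $F:M\to K$ assembled from the $F_n$ agrees on the shared $r$-segments and so is well defined; by construction its image lies in $K-C$. This exhibits $r|_{[x,\infty)}$ and $(\alpha,r|_{[x,\infty)})$ as properly homologous in $K-C$ in the sense of Definition preceding the statement, and simultaneously yields the ``furthermore'' clause, since $M$ is manifestly built from the stacked compact circle-bounded surfaces $M_n$.

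The one genuinely non-routine point is verifying properness of $F:M\to K$ and confirming that the assembled boundary really is a single line realizing the two rays. For properness I would argue exactly as in the homotopy case: each $M_n$ maps into $K-C_{n-1}$, so for any compact $A\subset K$ with $A\subset C_m$ only finitely many $M_n$ (namely $n\le m+1$) have image meeting $A$, whence $F^{-1}(A)$ is a finite union of compact sets. The surface-boundary bookkeeping requires that when $\alpha_n$ is replaced by $\alpha_{n+1}$ the two outgoing $r$-segments match up so that the boundary circles glue to a line rather than introducing extra boundary components; this is where I must be careful that the ``tracking along $r$'' is literally a subsegment of $r$ shared by consecutive surfaces, so that the boundary of $M$ is $r|_{[x,\infty)}$ on one side and $(\alpha_1,r|_{[x,\infty)})$ on the other, with the interior seams of $r$ canceling. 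I expect this gluing/orientation verification to be the main obstacle, but it is handled by the same stacking picture that makes the homotopy version of Theorem \ref{SSloop} work, now invoking the orientable-surface filling of Proposition 12.8 of \cite{GH81} in place of disk-fillings.
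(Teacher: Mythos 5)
Your outline reproduces the paper's proof step for step: the one-step statement obtained from Remark \ref{GSS} together with one-endedness (via Theorem \ref{ES}) to consolidate the sum of loops into a single loop based farther out on $r$, the use of Proposition 12.8 of \cite{GH81} to fill the resulting null-homologous loop $(\alpha_n, r|_{[x_n,x_{n+1}]}, \alpha_{n+1}^{-1}, r|_{[x_n,x_{n+1}]}^{-1})$ by a compact orientable surface, the iteration along a cofinal sequence with $D(C_i)\subset C_{i+1}$, the stacking of the surfaces $M_n$, and the properness check (any compact set meets the images of only finitely many $M_n$) are exactly the steps the paper takes.

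One concrete correction to your gluing bookkeeping, which taken literally would fail: consecutive surfaces $M_n$ and $M_{n+1}$ do \emph{not} share a segment of $r$. The surface $M_n$ involves $r|_{[x_n,x_{n+1}]}$ while $M_{n+1}$ involves $r|_{[x_{n+1},x_{n+2}]}$; they meet only at the point $r(x_{n+1})$ and along the loop $\alpha_{n+1}$. So the seams are the $\alpha$-arcs, not $r$-segments: the arc of $\partial M_n$ carrying $\alpha_{n+1}^{-1}$ is identified with the arc of $\partial M_{n+1}$ carrying $\alpha_{n+1}$, and these become interior to $M$. Nothing involving $r$ ``cancels'': both arcs of $\partial M_n$ mapping to $r|_{[x_n,x_{n+1}]}$ (one with each orientation) survive into $\partial M$. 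Tracing the boundary of the assembled surface from the basepoint in one direction gives $(\alpha_1, r|_{[x_1,x_2]}, r|_{[x_2,x_3]},\ldots)$ and in the other direction gives $(r|_{[x_1,x_2]}, r|_{[x_2,x_3]},\ldots)$, so the two ends of the boundary line map to $(\alpha, r|_{[x,\infty)})$ and $r|_{[x,\infty)}$ as required. With the seams identified correctly, the ``single boundary line, no extra components'' issue you flagged as the main obstacle disappears, and the rest of your argument goes through as in the paper.
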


\begin{proof} 
%Since $H_1(\varepsilon K)$ is semistable, given a compact set $C$ there is a compact set $D(C)$ such that for any third compact set $E$ and loop $\alpha$ in $K-D(C)$, $\alpha$ is homologous in $K-C$ to a loop in $K-E$ (see Remark \ref{GSS}).
Let $D_1$ be compact containing $D(C)$ and also large enough so that if $r(x)\in K-D_1$, then $r([x,\infty))\subset K-C$.  Assume that $E$ is compact containing $D_1$ and that $K-E$ is path connected. Now let $\alpha$ be a loop based at $r(x)$ and with image in $K-D_1$.  By Remark \ref{GSS},  $\alpha$ is 
homologous in $K-C$ to a loop $\beta_i$ with image in $K-E$. Choose $y>x$ such that $r([y,\infty))\subset K-E$. If $\tau$ is a path in $K-E$ from $r(y)$ to $\beta(0)$ then $\beta$ is homologous to $(\tau,\beta,\tau^{-1})$ in $K-E$. %Since $(\tau_i,\beta_i,\tau_i^{-1})$ is homologous to $\beta_i$ in $K-C$, 
This implies that the loop $\alpha$ is homologous to the loop $\gamma= (\tau,\beta,\tau^{-1})$ in $K-C$. Since   $\gamma$ is homologous to $(r|_{[x,y]}, \gamma, r|_{[x,y]}^{-1})$ in $K-C$, the loop $\rho= (\alpha, r|_{[x,y]}, \gamma^{-1}, r|_{[x,y]}^{-1})$ is homologicially trivial in $K-C$. There is a 2-manifold $M$ with circle boundary $S$ and map $H:M\to K-C$ such that $F|_S=\rho$. 
 
\begin{figure}
\vbox to 3in{\vspace {-2in} \hspace {-.5in}
\hspace{-1 in}
\includegraphics[scale=1]{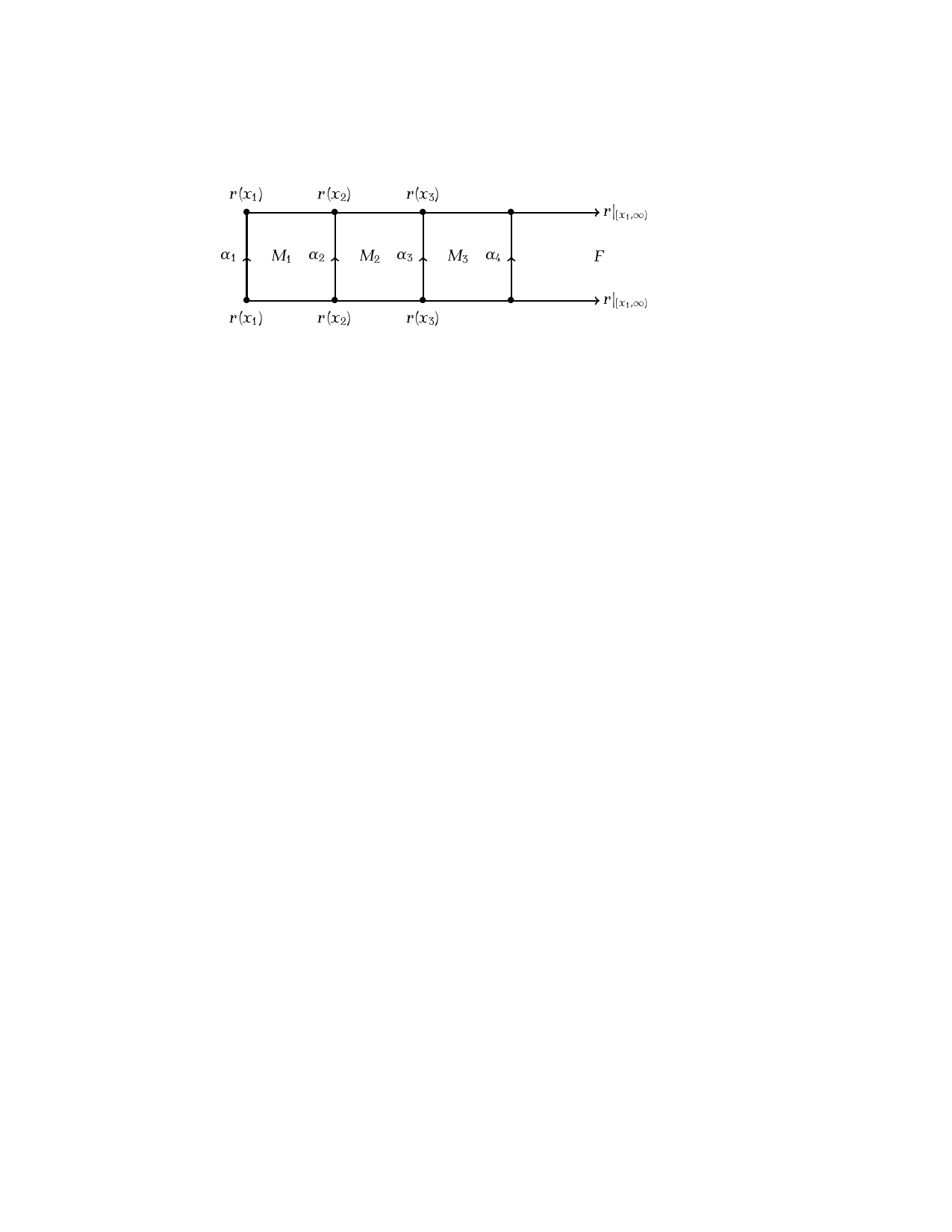}
\vss }
\vspace{-2.1in}
\caption{Stacking 2-Manifolds} 
\label{FigSSloop2}
\end{figure}

A simple stacking argument will finish the proof of the theorem (see Figure \ref{FigSSloop2}).
Let $C$ be a compact subset of $K$ and $r$ a proper ray in $K$. Let $C= C_0, C_1,\ldots$ be a cofinal sequence of compact subsets in $K$ such that $D(C_i)\subset C_{i+1}$ for all $i\geq 0$.  Also choose $C_i$ such that for all $i\geq 1$, if $r(x)\in K-C_i$, then $r([x,\infty))\subset K-C_{i-1}$. Let $x=x_1$ be such that $r(x_1)\in K-C_1$. For $i\geq  1$, let $x_i>0$ be a strictly increasing sequence of integers such $r(x_i)\in K-C_i$. Suppose $\alpha=\alpha_1$ is a loop in $K-C_1$ based at $r(x)=r(x_1)$. We have shown there is a 2-manifold $M_1$ with circle boundary $S_1$ and $F_1:M_1\to X-C_1$ such that $F_1|_{S_1}$ is the loop $(\alpha_1, r|_{[x_1,x_2]}, \alpha_2^{-1}, r|_{[x_1,x_2]}^{-1})$ where $\alpha_2$ has image in $K-C_2$. Inductively we have a 2-manifold $M_i$ for $i\geq 1$ with circle boundary $S_i$ and $F_i:M_i\to X-C_i$ such that $F_i|_{S_i}$ is the loop $(\alpha_i, r|_{[x_i,x_{i+1}]}, \alpha_{i+1}^{-1}, r|_{[x_i,x_{i+1}]}^{-1})$ and $\alpha_{i+1}$ has image in $K-C_{i+1}$. Stack these homotopies as in Figure \ref{FigSSloop2} to complete the proof.
\end{proof}

Theorem \ref{lim1} implies:

\begin{theorem}
If $X$ is a connected locally finite CW complex then $H_1(\varepsilon X)$ is semistable if and only if $\varprojlim ^1 H_1(\varepsilon X)$ is trivial. 
\end{theorem}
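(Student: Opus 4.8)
The plan is to derive the statement directly from Theorem \ref{lim1}, since by Definition \ref{ProH} the object $H_1(\varepsilon X)$ is an inverse sequence of abelian groups
$$H_1(X-C_1)\leftarrow H_1(X-C_2)\leftarrow\cdots$$
for any cofinal sequence $\{C_i\}$ of compact sets in $X$, and both the property of being semistable and the triviality of $\varprojlim^1$ are properties of such an inverse sequence to which Theorem \ref{lim1} applies verbatim. So the entire argument is a matter of checking that the hypotheses of the two halves of Theorem \ref{lim1} are met.

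First I would dispose of the forward implication: if $H_1(\varepsilon X)$ is semistable, then the first assertion of Theorem \ref{lim1} immediately yields that $\varprojlim^1 H_1(\varepsilon X)$ is trivial. This half needs no hypothesis on $X$ beyond the mere existence of the inverse sequence, so nothing further is required.

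The converse is where the countability hypothesis of Theorem \ref{lim1} must be supplied. The key observation is that a connected locally finite CW complex has only countably many cells (it is $\sigma$-compact, as assumed of all spaces in this chapter). I would choose the cofinal sequence $\{C_i\}$ to consist of finite subcomplexes of $X$, which is always possible in a locally finite complex; then each $X-C_i$ has the homotopy type of a countable CW complex, so $H_1(X-C_i)$ is a countable abelian group, being a subquotient of the free abelian group on the countably many $1$-cells. With every term of the inverse sequence countable, the second assertion of Theorem \ref{lim1} applies: triviality of $\varprojlim^1 H_1(\varepsilon X)$ forces $H_1(\varepsilon X)$ to be semistable.

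The only point requiring genuine care — and the nearest thing to an obstacle — is this countability step, since the converse direction of Theorem \ref{lim1} really does fail for inverse sequences of uncountable groups. Thus I would emphasize that local finiteness of $X$ is used precisely to guarantee countability of each $H_1(X-C_i)$. Finally, since semistability and the vanishing of $\varprojlim^1$ are invariants of the pro-isomorphism type of the inverse sequence (compare Theorem \ref{MLSS}), the conclusion does not depend on the chosen cofinal sequence, and the equivalence follows.
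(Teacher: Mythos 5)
Your proposal is correct and follows essentially the same route as the paper: the paper states this result as an immediate consequence of Theorem \ref{lim1}, exactly as you do. The only difference is that you explicitly verify the countability hypothesis needed for the converse direction (each $H_1(X-C_i)$ is countable because a connected locally finite CW complex has countably many cells), a point the paper leaves implicit.
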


The next result should be compared to Theorem \ref{ssequiv}. The proof is a simple variation of that for Theorem \ref{ssequiv}. For a more general version (any number of ends) see Theorems 16.1.8, 16.1.21 and 12.5.10 of \cite{G}.

\begin{theorem}\label{ssequivH} 
Suppose $K$ is a locally finite, connected and 1-ended CW-complex. Then the following are equivalent:
\begin{enumerate}
\item $K$ has  semistable first homology at $\infty$.
\item  The first pro-homology group of the end of $K$ ($H_1(\varepsilon K)$) is semistable (pro-isomorphic to an inverse system of groups with epimorphic bonding maps). 
\item For any compact set $C$, there is a compact set $D$ such that for any third compact set $E$ and loop $\alpha$  with image in $K-D$, $\alpha$ is homologous in $K-C$ to a loop in $K-E$. Equivalently, there is a compact 2-manifold $N$ and a map $m:N\to K-C$ such that $m$ takes one boundary component of $N$ to $\alpha$ and all others have image in $K-E$. 
\item For any compact set $C$ there is a compact set $D$ such that if $r$ and $s$ are proper rays based at $v$ and with image in $K-D$, then $r$ and $s$ are properly homologous in $K-C$. 
%\item If $C$ is compact in $Y$ there is a compact set $D$ in $Y$ such that for any third compact set $E$ and rays $r$ and $s$ based at a vertex $v$ and with image in $Y-D$, there is a path $\alpha$ in $Y-E$ connecting points of $r$ and $s$ such that the loop determined by $\alpha$ and the initial segments of $r$ and $s$ is homotopically trivial in $Y-C$.
\end{enumerate}

If $H_1(K)=\{0\}$, then a fifth equivalent condition can be added to this list:

\medskip

5. If $r$ and $s$ are proper rays based at $v$, then $r$ and $s$ are properly homologous.   
\end{theorem}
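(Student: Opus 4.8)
The statement to prove is Theorem \ref{ssequivH}, which asserts the equivalence of conditions (1)--(4) characterizing semistable first homology at infinity for a locally finite, connected, 1-ended CW-complex $K$, together with a fifth condition under the hypothesis $H_1(K)=0$. Let me sketch a proof plan.

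The plan is to establish the equivalences in a cyclic or near-cyclic fashion, mirroring the proof of Theorem \ref{ssequiv} but working with homology (homologous proper rays and bounding 2-manifolds) in place of homotopy (properly homotopic rays and homotopies). This is the homological analogue, so the structure should transfer with the replacements ``loop bounds a disk'' $\rightsquigarrow$ ``loop bounds an orientable surface'' (justified by Proposition 12.8 of \cite{GH81} quoted in the excerpt) and ``proper homotopy'' $\rightsquigarrow$ ``proper homology via a 2-manifold with line boundary.''

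\medskip

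\noindent\textbf{Outline of the implications.} First I would prove $(2)\Rightarrow(3)$ and $(3)\Rightarrow(2)$, which are essentially the unwinding of the Mittag-Leffler/semistability condition for the inverse sequence $H_1(\varepsilon K)$ into its geometric form; this is exactly the content of Remark \ref{GSS}, which already translates semistability of $H_1(\varepsilon K)$ into the statement that a loop in $K-D$ is homologous in $K-C$ to a sum of loops (hence, using 1-endedness and path-connectivity of $K-E$, to a single loop) in $K-E$. The second sentence of Condition (3) records precisely the 2-manifold reformulation, so I would note that a loop homologous in $K-C$ to a loop in $K-E$ is the same as the existence of a compact orientable surface $N$ mapping to $K-C$ with one boundary circle sent to $\alpha$ and the rest into $K-E$ (invoke the surface-bounding theorem and cap off). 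Next, $(3)\Rightarrow(4)$ is the key geometric step: given the compact set $C$, apply Theorem \ref{SSloop2} to obtain $D_1(C)$ so that any loop based on $r$ with image in $K-D_1$ can be ``slid to infinity'' along $r$ by a properly-homologous 2-manifold in $K-C$. Then given two proper rays $r,s$ based at $v$ with image in $K-D$ (for $D$ chosen containing $D_1$ so that both tails lie far out), I would connect them by a path, form the comparison loop, and use the slide-to-infinity 2-manifold together with a stacking argument (Figure \ref{FigSSloop2}) to build a single 2-manifold $M$ with $\mathbb{R}^1$ boundary and a proper map $F:M\to K-C$ exhibiting $r$ and $s$ as properly homologous in $K-C$. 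For $(4)\Rightarrow(1)$ I would simply take $C=\emptyset$ (or absorb it) to get the single compact set whose existence Definition of ``semistable first homology at $\infty$'' demands. Finally $(1)\Rightarrow(2)$ is the $\varprojlim^1$-style argument, dual to the ``1 implies 2'' case of Theorem \ref{ssequiv}: from the fact that any two proper rays to the same end are properly homologous, I would show that any two elements of $\prod_n H_1(K-C_n)$ are $\varprojlim^1$-equivalent, hence $\varprojlim^1 H_1(\varepsilon K)$ is trivial, and since each $H_1(K-C_n)$ is countable (abelian, in fact), Theorem \ref{lim1} gives semistability of $H_1(\varepsilon K)$.

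\medskip

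\noindent\textbf{The fifth condition.} Under $H_1(K)=0$, I would prove $(1)\Leftrightarrow(5)$ directly via Remark \ref{E1C}, which already shows that when $H_1(K)=0$ and $K$ has semistable first homology at $\infty$, any two proper rays $r,s$ to the same end (now based anywhere, not merely outside a fixed $C$) are properly homologous; the converse $(5)\Rightarrow(1)$ is immediate since $(5)$ plainly implies the defining property of $(1)$ with the compact set $C$ taken to be empty.

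\medskip

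\noindent\textbf{Main obstacle.} The step I expect to require the most care is $(3)\Rightarrow(4)$, the passage from ``loops can be pushed out homologically'' to ``proper rays are properly homologous.'' The difficulty is entirely in the infinite stacking: one must verify that the countable family of compact surfaces $M_i$ (each furnished by Theorem \ref{SSloop2}) glues along the common ray-segments into a single 2-manifold $M$ with a genuine line boundary, and that the assembled map $F:M\to K$ is proper. Properness follows because each $M_i$ maps into $K-C_i$ with the $C_i$ cofinal, so the preimage of any compact set meets only finitely many $M_i$ — but checking that the gluing yields a manifold (rather than merely a 2-complex) and that the boundary is a line requires the careful bookkeeping already modeled in the proof of Theorem \ref{SSloop2}. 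Since that theorem is available to cite, the remaining work is the bookkeeping of the comparison-loop-to-ray reduction, which parallels the ``2 implies 4'' argument of Theorem \ref{ssequiv} closely enough that I would present it by analogy, flagging only the homological substitutions.
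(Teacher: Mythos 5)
Your proposal is correct and follows essentially the same route as the paper's proof: the hard geometric step ($2$/$3 \Rightarrow 4$) via Theorem \ref{SSloop2} together with the infinite stacking of 2-manifolds and the properness check from the cofinal sequence, the trivial $4 \Rightarrow 1$, the $\varprojlim^1$ argument with Theorem \ref{lim1} for $1 \Rightarrow 2$, Remark \ref{GSS} for $2 \Leftrightarrow 3$, and Remark \ref{E1C} for condition 5. You also correctly identified the main technical burden (assembling the stacked surfaces into a genuine 2-manifold with line boundary and verifying properness), which is exactly where the paper spends its effort.
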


\begin{proof} ({\it 2} implies {\it 4}). 
 Suppose  the inverse sequence of groups 
 $$(\ast) \ \ \ \ \ \ \ \ H_1(K-C_0){\buildrel p_1\over \longleftarrow} H_1(K-C_1) {\buildrel p_2\over \longleftarrow}\cdots$$ 
 is semistable. Theorem \ref{SSloop2} implies that  for  any  compact set  $C\subset K$ there is a compact set $D(C)\subset K$, such that  for any loop, $\alpha$,  in $K - D$ (with $\alpha$  based at  $r(x)$),  the proper rays $r|_{[x,\infty)}$ and $(\alpha,r|_{[x,\infty)})$ are properly homologous in $K - C$. 
Without loss  assume that  $D(C_i)\subset C_{i+1}$ for all $i$. So that 

\medskip

\noindent ($\ast$) If $\alpha$ is a loop in  $K - C_i$, based at  $r(x)$,  then $r|_{[x,\infty)}$ is properly homologous to $(\alpha, r|_{[x,\infty)})$ in $K- C_{i-1}$. 

\medskip

Theorem \ref{ES} allows us to assume that $K-C_i$ is connected. Let $C$ be any compact subset of $X$. We assume $C\subset C_0$ and show any two proper rays in $K-C_1$ that are based at a vertex $v$ are properly homologous in $K-C_0$. First we let $v=r(x_1)$.  Let  $s:([0, \infty),\{0\}) \to (K-C_1, \{r(x_1)\})$ be  a  proper ray. 
  
We show $s$ and  $r|_{[x_1,\infty)}$ are properly homologous in $K-C_0$. Since $s$ is arbitrary, this implies any two proper rays based at $r(x_1)$ and with image in $K-C_1$ are properly homologous in $K-C_0$. For $i\geq 1$, let $\beta_i=r|_{[x_i,x_{i+1}]}$. Let $a_1=0$ (so that $r(x_1)=s(0)=s(a_1)$) and for $i>1$ choose $a_i\in [0, \infty)$ such that  $s([a_i, \infty)) \subset  K-C_i$. Without loss assume that  $a_i <  a_{i+1}$ for  all $i$. Let $\alpha_i=s|_{[a_{i},a_{i+1}]}$.   Let  $\gamma_1$ be the constant path at $r(x_1)$. For $i>1$, let $\gamma_i: [0,1] \to  K-C_i$, such that  $\gamma_i(0) = s(a_i)$ and  $\gamma_i(1) = r(x_i)$ (See Figure \ref{Fig231ac}).   
  
\begin{figure}
\vbox to 3in{\vspace {-2in} \hspace {.5in}
\hspace{-1 in}
\includegraphics[scale=1]{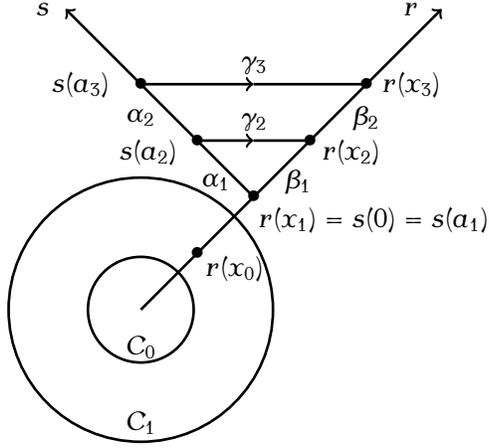}
\vss }
\vspace{-1in}
\caption{The case $v=r(x_1)$} 
\label{Fig231ac}
\end{figure}

\medskip

For $i\geq 1$, let $\delta_i$  be the loop $(\gamma_{i}^{-1},\alpha_i,\gamma_{i+1},\beta_i^{-1})$. It is easy to  show that  $s$, which we represent by $(\alpha_1,\alpha_2,\ldots)$, is properly homotopic to the map of $[0, \infty) \to K$ represented by  $(\delta_1,\beta_1,\delta_2,\beta_2,\ldots)$ (simply eliminate backtracking). For $i\geq 1$, ($\ast$) implies that $r|_{[x_i,\infty)}$ is properly homologous to $(\delta_i,r|_{[x_i,\infty)})$ in $K-C_{i-1}$. 

This means there is a 2-manifold $M_i$ with boundary equal to a line $L_i$ and proper map $F_i:M_i\to K-C_{i-1}$ such that on  one end of $L_i$, $F_i$ is $(\beta_i, \beta_{i+1}, \ldots)=r|_{[x_i,\infty)}$ and on the other end of $L_i$, $F_i$ is $(\delta_i, \beta_i,\beta_{i+1},\ldots)$. Combining the $F_i$ as in Figure \ref{Fig231bc}, produces a 2 manifold  $M$ with boundary a line $L$  and map $F:M\to K-C_0$, such that on one end of $L$, $F$ is $r=(\beta_1,\beta_2,\ldots )$ and on the other end of $L$, $F$ is $(\delta_1,\beta_1,\delta_2,\beta_2, \ldots)$.

\begin{figure}
\vbox to 3in{\vspace {-2in} \hspace {.5in}
\hspace{-1 in}
\includegraphics[scale=1]{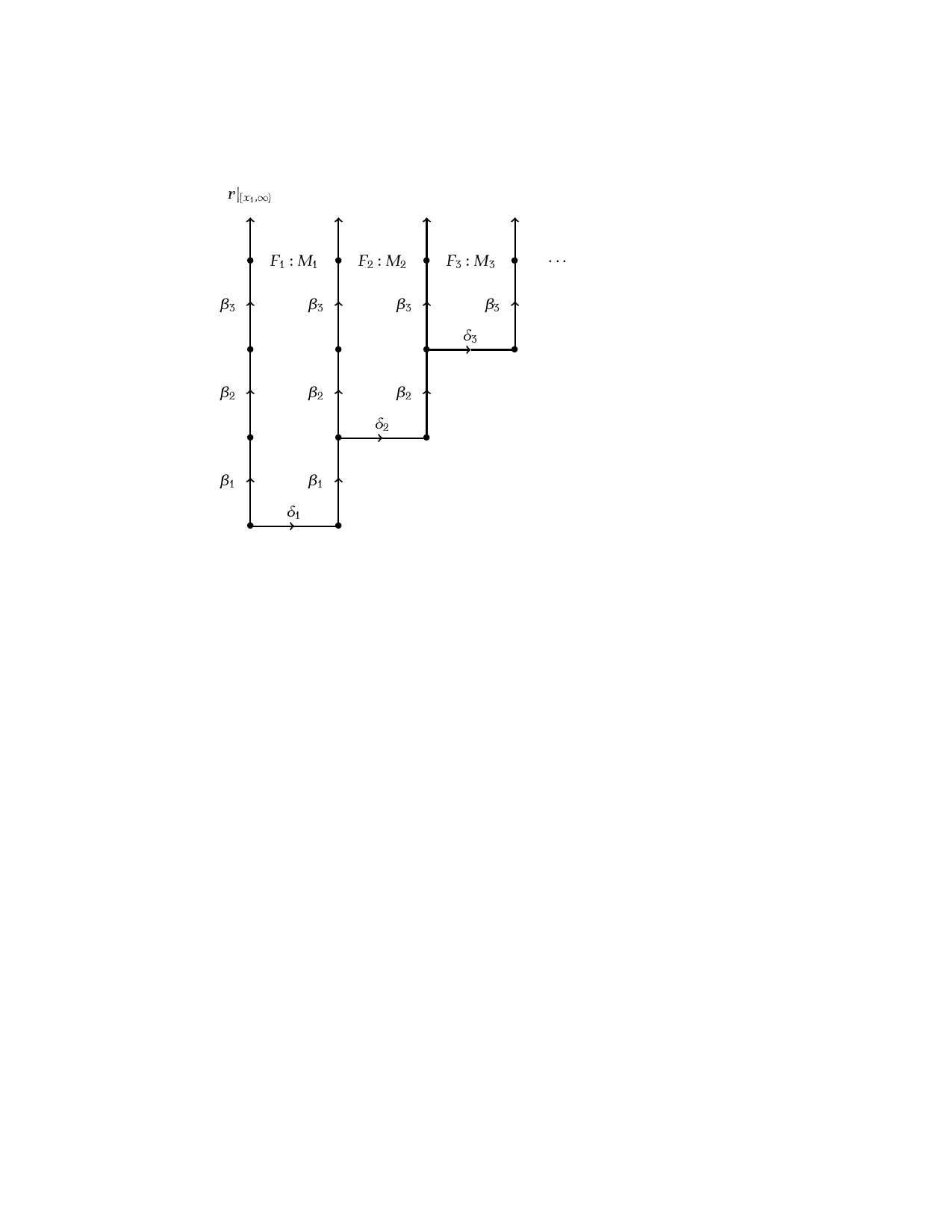}
\vss }
\vspace{-.1in}
\caption{Combining the $F_i$} 
\label{Fig231bc}
\end{figure}

\medskip

Next we show that $F$ is proper. Suppose $A$ is compact in $K$. Choose $n$ such that $A\subset C_n$. Then for $i\geq n+1$, the image of $F_i$ avoids $C_n$ (and hence avoids $A$). So $F^{-1}(A)=\cup_{i=1}^nF_i^{-1}(A)$ (a finite union of compact sets). So $F$ is proper. 

We have $r|_{[x_1,\infty)}$ is properly homologous to $(\delta_1, \beta_1,\delta_2,\beta_2,\ldots)$ in $K-C_0$ which is properly homologous to $s$ in $K-C_0$. As $s$ is an arbitrary proper ray in $K-C_1$ we have any two proper rays based at $r(x_1)$ and with image in $K-C_1$ are properly homologous in $K-C_0$. 

Next suppose $s_1$ and $s_2$ are proper rays based at the vertex $v$ and with image in $K-C_1$. Let $\tau$ be a path in $X-C_1$ from $r(x_1)$ to $v$. Then the proper rays $(\tau, s_1)$ and $(\tau,s_2)$ are properly homologous in $X-C_0$. This implies $s_1$ is properly homologous to $s_2$ in $K-C_0$. 
\end{proof} 

It is straightforward to see that  {\it 2} is equivalent to {\it 3}, and {\it 4} implies {\it 1}. In order to show the equivalence of the first four conditions, it is enough to prove {\it 1} implies {\it 3}. 
%\begin{proof}
%Let $C=\emptyset$. Then (4) implies there is a compact set $D$ such that any two rays in $K-D$ are properly homotopic. Say $r$ and $s$ are rays in $K$. Choose an integer $n$ such that $r([n,\infty)$ and $s([n,\infty))$ have image in $K-D$. Let $\tau$ be a path in $K-D$ from $r(n)$ to $s(n)$. Then there is a proper homotopy $H$ of $r|_{[n,\infty)}$ to $(\tau, s|_{[n,\infty)})$ rel$\{r(n)\}$. This gives a proper homotopy $H':[0,\infty)\times [0,1]\to K$ such that $H'(t,0)=r(t)$, $H'(t,1)=s(t)$ and $H'|_{\{0\}\times [0,1]} =(r|_{[0,n]}, \tau, s|_{[0,n]}^{-1})$
%\end{proof}

\begin{proof} ({\it 1} implies {\it 3})
%Suppose $C$ is compact. Let $C_1$ be the compact in $K$ such that any two proper rays (with the same initial point) are properly homologous. Let $r$ be a proper ray at $\ast$ in $K$. Let $D$ be a connected compact subset of $K$ containing $C$, $C_1$ and $r(0)$ and such that $K-D$ is path connected %and so that if $r(x)\in D$ the $r([x,\infty) \subset $K-(C\cup C_1)$. 
%Let $\alpha$ be loop based at $k\in K-D$. Pick $t>0$ such that for $r([t,\infty))\subset K-D$. Let $\beta$ be a path in $K-D$ from $r(t)$ to $k$. The path $(\beta, \alpha, \beta^{-1})$ is homologous to $\alpha$. By 1, the ray $r_1=r|_{[t,\infty)}$ is properly homologous to $(\beta,\alpha,\beta^{-1}, r_1)$ in $K-C$. Let $M$ be a 2-manifold with boundary a line $L$ and 

%DOESN'T work....

%Let $E$ be any compact subset of $K$. 

 Let  $G_1  {\buildrel \phi_1 \over \leftarrow} G_2{\buildrel \phi_2\over \leftarrow}\cdots$ be  an  inverse sequence of  groups. Recall $\varprojlim ^1 \{G_n\}$ is  the  pointed set  of  equivalent classes under the  equivalence relation on $\Pi_{n>0}G_n$  defined by  $\langle x_n\rangle \sim \langle y_n\rangle$ if  there is  $\langle g_n\rangle$ such that  $\langle y_n\rangle= \langle g_nx_n\phi_n(g_{n+1}^{-1})\rangle$. Theorem \ref{lim1} implies that if each $G_n$ is countable, then $\varprojlim^1\{G_n\}$ is trivial if and only if $\{G_n\}$ is semistable. 

\begin{figure}
\vbox to 3in{\vspace {-2in} \hspace {.5in}
\hspace{-1 in}
\includegraphics[scale=1]{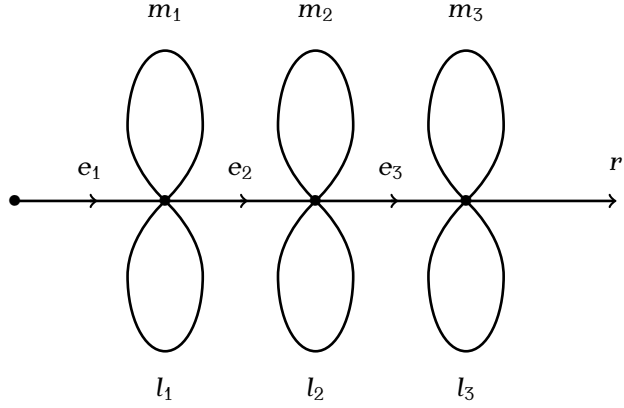}
\vss }
\vspace{-.3in}
\caption{Equivalent elements} 
\label{Fig231cc}
\end{figure}

Let $\{C_i\}_{i=1}^\infty$ be a cofinal sequence of compact sets in $K$, where $C_1$ is the compact set $C$ of condition 1. Our goal is to show that if  all proper maps $[0, \infty) \to  K-C_1$  with the same initial point are  properly homologous, then $\varprojlim H_1(\varepsilon K)$ is  trivial. Let $r$ be a proper ray in $X$. For simplicity assume we have reparametrized $r$ so that $r([n,\infty))\subset K-C_n$ for all $n\geq 1$.   Represent $r$ by $(e_1,  e_2,\ldots)$  where  $e_n(t) =  r(n-1  +  t)$ for $t\in [0,1]$.   Choose $\langle [m_n']\rangle$ and  $\langle [l_n']\rangle$  in  $\Pi_{n>0}H_1(K-C_n)$. Each $m_i$ and $l_i$ is homologous to a loop $m_i$ and $l_i$ respectively at $e_i(1)$ (see  Figure \ref{Fig231cc}).  We  show  $\langle [m_n]\rangle \sim\langle [l_n]\rangle$. 

Since all  proper maps $[0, \infty)\to K-C$  are  properly homologous there is a  2-manifold $M$ with boundary a line $L$ and a proper map $F:  M \to K$  such that on one end of $L$, $F$ is  $(e_1,  m_1,  e_2,m_2,\ldots)$ and on the other end of $L$, $F$ is $(e_1, l_1, e_2, l_2,\ldots)$. Let $M_i$ be a compact submanifold of $M$ such that $F^{-1} (C_i)\cup M_{i-1}$ is a subset of the topological interior of  $M_i$. Now the boundary of $M_i$ is a sum of circles. One is $S_i$ which contains part of the line $L$ mapped by $F$ to $(e_1,m_1,e_2,m_2,\ldots, m_{n(i)}, g_i, l_{n'(i)}^{-1}, \ldots, l_2^{-1}, e_2^{-1}, l_1^{-1} e_1^{-1})$ (where $g_i$ is a path in $K-C_i$) and the sum of the rest are say $Q_i$, where each term of $Q_i$ has image under $F$ in $K-C_i$. 
%Choose $n(i)$  so that  $F([2(n(i)), \infty)\times [0,1])\subset K- C_i$. (Note that the length of $(e_1,m_1,\ldots, e_{n(i)},m_{n(i)})$ is $2(n(i))$.)  
Let %$g_i:  [0,1] \to K$  be  defined by $g_i(t) =  F(2n(i),t)$ and let:
$$h_i =(m_i,e_{i+1},m_{i+1},e_{i+2},\cdots ,m_{n(i)},g_i,l_{n'(i)}^{-1},\cdots ,e_{i+2}^{-1},l_{i+1}^{-1},e_{i+1}^{-1}, l_i^{-1})+Q_i.$$

\begin{figure}
\vbox to 3in{\vspace {-2in} \hspace {-1in}
\hspace{-1 in}
\includegraphics[scale=1]{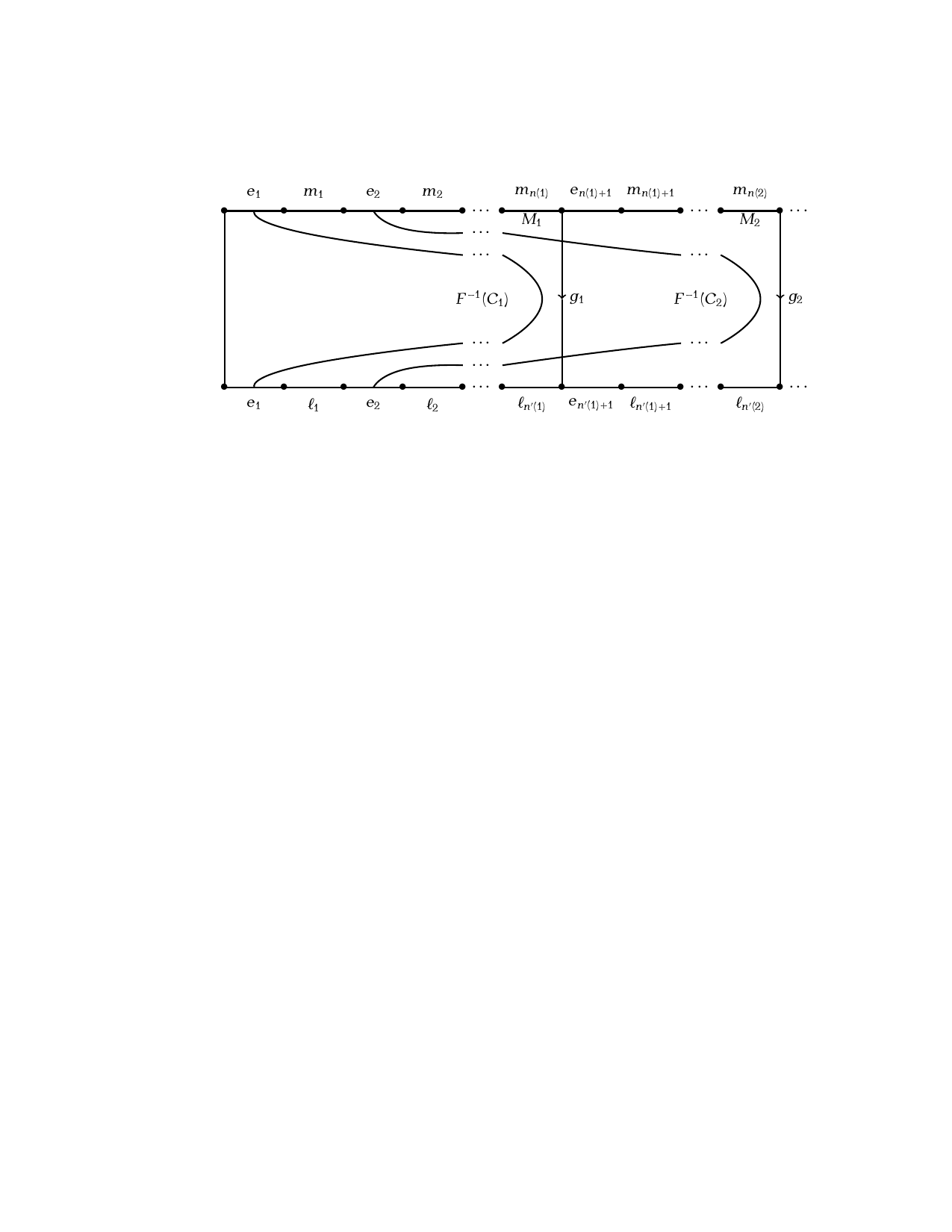}
\vss }
\vspace{-1.2in}
\caption{Avoiding compact sets} 
\label{Fig231dc}
\end{figure}
%\vspace{.1in}

See  Figure \ref{Fig231dc}  representing $M$. By definition, the  bonding maps $\phi_i$ of $H_1(\varepsilon K)$ are  such that  $\phi_i([h_{i+1}])=[(e_{i+1},h_{i+1},e_{i+1}^{-1})]=[h_{i+1}]$.  It remains to show that  $m_i$, is homologous to $(h_i,l_i,\phi_i(h_{i+1})^{-1})$  in $K-C_i$.  We  show $m_1$ is homologous to $(h_1, l_1, \phi_1(h_2)^{-1})$ in  $K -  C_1$.  The general case is completely analogous. 

$$(h_1,l_1, \phi_1(h_2)^{-1})=(m_1, e_2, m_2,\cdots, m_{n(1)}, g_1, l_{n'(1)}^{-1},\cdots, l_2^{-1},e_2^{-1},l_1^{-1},$$
$$l_1,e_2, l_2, e_3, l_3, \cdots l_{n'(2)}, g_2^{-1}, m_{n(2)}^{-1},\cdots, m_3^{-1},e_3^{-1},m_2^{-1},e_2^{-1})+Q_1-Q_2$$

Eliminate edges and  their inverses, and  the  chain:
$$(g_1,e_{n'(1)+1}, l_{n'(1)+1},\cdots, l_{n'(2)}, g_2^{-1}, m_{n(2)}^{-1},\cdots, m_{n(1)+1}^{-1},e_{n(1)+1}^{-1})+Q_1+Q_2$$
(which bounds a closed submanifold of $M_2$ in  $K-C_1$ - see Figure \ref{Fig231dc}).  What remains is $m_1$, and  Figure \ref{Fig231dc} shows the induced maps takes place in $K-C_1$
\end{proof}

\begin{proof} (If $H_1(K)$ is trivial, then {\it 1} is equivalent to {\it 5}.) 
Certainly {\it 5} implies {\it 1}. 

Suppose {\it 1}. Let $r$ and $s$ be proper rays at $v$. Choose $n$ large enough such that there is a path $\alpha$ from $r(n)$ to $s(n)$ in $K-C$. By hypothesis, $r|_{[n,\infty)}$ is properly homologous to $(\alpha, s|_{[n,\infty)})$. Since $H_1(K)$ is trivial, the loop $(r|_{[0,n]}, \alpha, s|_{[0,n]}^{-1})$ is homologicially trivial. Combining, we have $r$ is properly homologous to $s$. 
\end{proof} 
\noindent This completes the proof of Theorem \ref{ssequivH}.

%\subsubsection{Groups $G$ with Pro-Finite First Homology at $\infty$ and $H^2(G,\mathbb Z G)$=0} \label{ac1}

\subsubsection{Groups, 1-acyclicity at Infinity and Pro-finite First Homology at $\infty$} \label{ac1}

The space $X$ is {\it 1-acyclic at infinity}, has {\it pro-finite first homology at infinity}, or has {\it pro-torsion first homology at infinity}  if for any compact set $C$ there is a compact set $D$ such that the image of $H_1(X-D)$ in $H_1(X-C)$ under the homomorphism induced by the inclusion of $X-D$ into $X-C$ is respectively trivial, finite or torsion.  
(A loop $\alpha$ in $X-C$ is homologicially trivial if there is an orientable 2-manifold $M$ bounded by an embedded loop $\alpha'$ and a continuous map $H:M\to X-C$ such that $H$ restricted to $\alpha'$ maps (in the obvious way) to $\alpha$ (see Proposition 12.8, \cite{GH81}).)

\medskip

\noindent {\bf Note.} If a locally finite complex $Y$ is simply connected then the Mayer-Vietoris sequence for homology implies that the first homology at infinity of $Y$ is pro-finitely generated. In particular, the finitely presented group $G$ has pro-finite first homology at infinity if and only if it has pro-torsion first homology at infinity. 

\medskip

A finitely presented group $G$ is {\it 1-acyclic at infinity}, (has {\it pro-finite first homology at $\infty$}) if for some (equivalently any) finite complex $X$ with $\pi_1(X)=G$ the universal cover of $X$ is 1-acyclic at infinity (has pro-finite first homology at $\infty$). 
 
One of the main theorems of \cite{MS24} describes a splitting condition ensuring that a finitely presented 1-ended group $G$ does not have pro-finite first homology at infinity (equivalently $H^2(G,\mathbb ZG)\ne 0$ - see  Corollary \ref{GM2}). Such groups are not duality groups of dimension $\geq 3$. This result follows directly from the following technical result:

\begin{theorem} [Theorem 1.4, \cite{MS24}] \label{Acyclic}
Suppose $X$ is a locally finite CW-complex, and $X_1$ and $X_2$ are connected 1-ended subcomplexes of $X$ such that $X_1\cup X_2=X$. If $K$ is a finite subcomplex of $X$ (possibly empty) such that $(X_1\cap X_2)-K$ has more than one unbounded component, then $X$ does not have pro-finite first homology at infinity. 
\end{theorem}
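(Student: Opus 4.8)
The plan is to negate the defining property directly. Writing $Y:=X_1\cap X_2$, I will fix one compact set $C$ and show that for every compact $D$ the image of $H_1(X-D)$ in $H_1(X-C)$ contains an element of infinite order, hence is infinite (in particular not finite and not even torsion). The whole argument is a naturality-of-Mayer--Vietoris computation driven by the two unbounded components of $Y-K$. Before starting, a little bookkeeping: $Y$ is a locally finite subcomplex; let $U_1,U_2$ be two distinct unbounded components of $Y-K$. Replacing the cofinal family of compact sets by a cofinal family of subcomplexes and each ``$X-D$'' by the full subcomplex of $X$ on the cells disjoint from a regular neighborhood of $D$ (which alters the inverse system $\{H_1(X-C)\}$ only up to pro-isomorphism), I may assume that $X_1-D$, $X_2-D$, $Y-D$ are subcomplexes with $(X_1-D)\cup(X_2-D)=X-D$ and $(X_1-D)\cap(X_2-D)=Y-D$, so the reduced CW Mayer--Vietoris sequence applies to this triad. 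Since it suffices to treat a cofinal family of $D$ (if $D\subseteq D'$ and $H_1(X-D')\to H_1(X-C)$ has infinite image, then so does $H_1(X-D)\to H_1(X-C)$, as the former map factors through the latter), I may also take $D\supseteq K$ large enough that each of $X_1-D$, $X_2-D$ has a unique unbounded component, using that $X_1,X_2$ are $1$-ended.

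Fix $C:=C_0$, a subcomplex of the cofinal sequence with $K\subseteq C_0$, and let $D\supseteq C_0$ be as above. By Theorem \ref{ES}, $Y-D$ has unbounded components $U_1'\subseteq U_1$ and $U_2'\subseteq U_2$, and $U_1'\ne U_2'$ because $U_1\ne U_2$. Consider the reduced Mayer--Vietoris sequence
\[
H_1(X-D)\xrightarrow{\ \partial_D\ }\tilde H_0(Y-D)\xrightarrow{\ (i_*,j_*)\ }\tilde H_0(X_1-D)\oplus\tilde H_0(X_2-D).
\]
The class $[U_1']-[U_2']$ lies in $\ker(i_*,j_*)$: each $U_k'$ is an unbounded connected subset of $X_1-D$, so it lies in the unique unbounded component $W_1$ of $X_1-D$, whence $i_*[U_1']=[W_1]=i_*[U_2']$, and likewise for $j_*$. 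By exactness there is $z_D\in H_1(X-D)$ with $\partial_D z_D=[U_1']-[U_2']$. (Concretely, $z_D$ is the loop obtained by joining a point of $U_1'$ to a point of $U_2'$ through the unbounded part of $X_1-D$ and returning through the unbounded part of $X_2-D$.)

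Now detect $z_D$ after pushing it into $H_1(X-C_0)$. The inclusion of triads $(X-D;X_1-D,X_2-D)\hookrightarrow(X-C_0;X_1-C_0,X_2-C_0)$ gives a commuting ladder of Mayer--Vietoris sequences, so if $\bar z_D\in H_1(X-C_0)$ is the image of $z_D$, then $\partial_{C_0}\bar z_D$ is the image of $[U_1']-[U_2']$ under $\tilde H_0(Y-D)\to\tilde H_0(Y-C_0)$, namely $[V_1]-[V_2]$, where $V_k\supseteq U_k'$ is the component of $Y-C_0$ containing $U_k'$. Because $C_0\supseteq K$, components of $Y-C_0$ refine those of $Y-K$, so $V_k\subseteq U_k$; hence $V_1\ne V_2$, and both are unbounded. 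Thus $[V_1]-[V_2]$ is a nonzero element of the free abelian group $\tilde H_0(Y-C_0)$ and so has infinite order, and since $\partial_{C_0}$ is a homomorphism, $\bar z_D$ has infinite order as well. Therefore the image of $H_1(X-D)\to H_1(X-C_0)$ contains an infinite cyclic subgroup for every (cofinally large, hence every) $D$, so it is never finite; that is, $X$ does not have pro-finite first homology at infinity.

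The one genuinely load-bearing step is the middle one: it is precisely the $1$-endedness of $X_1$ and $X_2$ that forces $[U_1']-[U_2']$ into $\ker(i_*,j_*)$ and hence realizes it as $\partial_D z_D$ for an honest $1$-class $z_D$, while the hypothesis that $Y-K$ has two unbounded components is exactly what keeps the detected image $[V_1]-[V_2]$ nonzero (indeed of infinite order) in the free abelian group $\tilde H_0(Y-C_0)$. The only other place requiring care is the point-set passage to subcomplexes so that Mayer--Vietoris literally applies, and that is routine.
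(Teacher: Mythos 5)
Your proof is correct, but it takes a genuinely different route from the paper's. You argue forward through cellular Mayer--Vietoris: after replacing complements of compacta by complementary subcomplexes (harmless up to pro-isomorphism), the $1$-endedness of $X_1$ and $X_2$ forces $[U_1']-[U_2']$ into the kernel of $\tilde H_0(Y-D)\to \tilde H_0(X_1-D)\oplus\tilde H_0(X_2-D)$, exactness realizes it as $\partial_D z_D$ for a class $z_D\in H_1(X-D)$, and naturality of the connecting map detects the image of $z_D$ in $H_1(X-C_0)$ as an element of infinite order inside the free abelian group $\tilde H_0(Y-C_0)$. The paper's proof (given for its Hausdorff generalization, but described as essentially the proof of Theorem \ref{Acyclic} in \cite{MS24}) avoids Mayer--Vietoris entirely and argues by contradiction: assuming pro-torsion first homology at infinity, it produces a loop $(\alpha,\beta)$ crossing back and forth between two components of $(X_1\cap X_2)-K$ through the unbounded parts of $X_1$ and $X_2$, takes an orientable surface $M$ bounded by $(\alpha,\beta)^n$ mapping into $X-K$, and then uses the Tietze extension theorem to build a map $M\to S^1$ under which $\partial M$ has winding number $n\ne 0$, contradicting the fact that the boundary of an orientable surface is a product of commutators. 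Your approach buys a constructive and conceptually transparent argument that exhibits an infinite-order element in the image of $H_1(X-D)\to H_1(X-C_0)$ for every $D$ (so you in fact prove ``not pro-torsion,'' slightly stronger than the stated ``not pro-finite''); the paper's approach buys independence from excision and from the cell structure, which is exactly what allows it to generalize to closed decompositions of Hausdorff spaces (with the weaker hypothesis that points escape to infinity in distinct components of $X_0-K$), where singular Mayer--Vietoris for a union of two closed, non-open pieces is not available. Two steps in your write-up deserve an explicit line but are not gaps: $X_1\cap X_2$ need not be connected, so Theorem \ref{ES} should be applied to the closed component of $X_1\cap X_2$ containing $U_k$ (its intersection with $D$ is compact) to produce the unbounded component $U_k'\subseteq U_k$; and the uniqueness of the unbounded component of the subcomplex version of $X_i-D$ also comes from Theorem \ref{ES}, since the union of $N(D)$ with all bounded components of $X_i-N(D)$ is compact, so every unbounded component of the complementary subcomplex must contain the single unbounded component of $X_i-N(D)$.
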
 
As a direct corollary to this result we have: 
\begin{corollary} [Corollary 1.5, \cite{MS24}]
Suppose the 1-ended finitely presented group $G$ splits non-trivially as $A\ast_CB$  or HNN extension $A\ast_C$ where $A$ and $B$ are finitely presented, and $C$ has more than one end.
If the two halfspaces associated to $C$ are 1-ended then $H^2(G,\mathbb ZG)\ne \{0\}$. In particular, $G$ is not simply connected at $\infty$.
\end{corollary}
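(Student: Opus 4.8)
The plan is to deduce the statement directly from Theorem \ref{Acyclic}, exactly as the surrounding text advertises, by realizing the two halfspaces of the splitting as the subcomplexes $X_1,X_2$ of that theorem. First I would fix a finite $2$-complex $Y$ with $\pi_1(Y)\cong G$ (for instance the standard $2$-complex of a finite presentation of $G$) and let $X$ be its universal cover; since $G$ is $1$-ended and finitely presented, $X$ is a $1$-ended, simply connected, locally finite CW-complex, and by definition $G$ has pro-finite first homology at infinity if and only if $X$ does. Using the action of $G$ on the Bass--Serre tree $T$ of the given splitting $A\ast_CB$ (respectively $A\ast_C$), I would choose a single edge $e$ of $T$; removing the interior of $e$ disconnects $T$ into two subtrees, and the preimages in $X$ of these two sides give the two halfspaces $X_1$ and $X_2$ of \S\ref{Hspace}, with $X_1\cup X_2=X$ and $X_1\cap X_2$ equal to the ``wall'' $W$ lying over the edge $e$, i.e. the edge space on which (a conjugate of) $C$ acts cocompactly.

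Next I would check that $X_1,X_2,X_1\cap X_2$ satisfy the hypotheses of Theorem \ref{Acyclic}. Connectedness of each halfspace is standard for splittings of finitely generated groups, and the $1$-endedness of $X_1$ and $X_2$ is exactly the hypothesis that the two halfspaces of the splitting are $1$-ended. The crucial point is the intersection: because $C$ acts properly and cocompactly (geometrically) on the wall $W=X_1\cap X_2$, the space $W$ is quasi-isometric to $C$ and hence has the same number of ends as $C$. Since $C$ has more than one end, $W$ has more than one end, so by the theory of ends (Theorem \ref{ES}) there is a finite subcomplex $K$ of $X$ such that $W-K=(X_1\cap X_2)-K$ has more than one unbounded component. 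Theorem \ref{Acyclic} then applies and yields that $X$ does not have pro-finite first homology at infinity.

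Finally I would translate this back into cohomology. Since $X$ fails to have pro-finite first homology at infinity, so does $G$, and by the equivalence recorded in Corollary \ref{GM2} (pro-finite first homology at infinity for a finitely presented group is equivalent to $H^2(G,\mathbb ZG)=0$) we conclude $H^2(G,\mathbb ZG)\neq\{0\}$. The last sentence follows by the contrapositive of the other half of Corollary \ref{GM2}: a finitely presented group that is simply connected at $\infty$ has $H^2(G,\mathbb ZG)=0$, so $H^2(G,\mathbb ZG)\neq\{0\}$ forces $G$ not to be simply connected at $\infty$.

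The main obstacle I anticipate is not the homological bookkeeping but the geometric identification in the first paragraph: one must verify carefully that the two sides of the edge $e$ in $T$ lift to genuinely connected subcomplexes $X_1,X_2$ of the $2$-complex $X$ (not merely of its $1$-skeleton), that their union is all of $X$ and their intersection is exactly one copy $W$ of the edge space, and---most delicately---that the HNN case $A\ast_C$ is handled uniformly with the amalgamated case, since the halfspace definitions of \S\ref{Hspace} are phrased for $A\ast_CB$. Once the wall $W$ is correctly identified as a space admitting a geometric $C$-action, the count of unbounded components of $W-K$ follows from the hypothesis that $C$ is not one-ended, and the rest is a direct invocation of Theorem \ref{Acyclic} and Corollary \ref{GM2}.
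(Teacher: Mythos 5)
Your proposal is correct and follows essentially the same route as the paper: the paper presents this corollary as a direct application of Theorem \ref{Acyclic}, taking the two halfspaces as the subcomplexes $X_1, X_2$ whose intersection (the wall, quasi-isometric to $C$ and hence multi-ended) yields the required finite set $K$, and then converting the failure of pro-finite first homology at infinity into $H^2(G,\mathbb ZG)\ne\{0\}$ via Corollary \ref{GM2}. The verifications you flag (that the decomposition works at the level of the 2-complex and that the HNN case is handled alongside the amalgamated case) are exactly the details the paper defers to \cite{MS24}, so your write-up is, if anything, more explicit than the paper's own treatment.
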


\begin{corollary} [Corollary, \cite{MS24}] If $G$ is a finitely presented 1-ended group that splits non-trivially over a virtually free group then $H^2(G,\mathbb ZG)\ne \{0\}$. 
\end{corollary}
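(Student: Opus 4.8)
The plan is to deduce the statement from the immediately preceding corollary (Corollary 1.5 of \cite{MS24}), whose hypotheses call for a \emph{one-edge} splitting of $G$ with finitely presented vertex groups, an edge group having more than one end, and two $1$-ended halfspaces. Of these, the only condition that is not automatic is the $1$-endedness of the halfspaces, and Theorem \ref{Half1} is exactly the tool for manufacturing it; so the whole argument is an exercise in reshaping the given splitting into the form required by Corollary 1.5.

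First I would record that the virtually free edge group $C$ is infinite: since $G$ is $1$-ended, Stallings' Theorem \ref{Stall} rules out any non-trivial splitting of $G$ over a finite group, so $C$ cannot be finite. Being finitely generated and virtually free, $C$ is the fundamental group of a finite graph of finite groups, hence finitely presented and therefore accessible by Theorem \ref{DunAcc} (its torsion is bounded, so Theorem \ref{LinnAcc} gives accessibility as well). Thus the edge stabilizers of the given decomposition $\mathcal G$ are finitely generated and accessible, and Theorem \ref{Half1} applies: it produces a non-trivial decomposition $\mathcal G'$ of $G$ with $1$-ended halfspaces, with finitely generated edge stabilizers, and---by its clause (3)---with each edge group $G_{e'}$ a vertex group of a splitting of $C$ over finite subgroups. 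Since subgroups of virtually free groups are virtually free, each $G_{e'}$ is a finitely generated virtually free group, and since $G$ is $1$-ended, Stallings' Theorem \ref{Stall} again forces each $G_{e'}$ to be infinite; an infinite finitely generated virtually free group is either $2$-ended or infinitely-ended, so $G_{e'}$ has more than one end.

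I would then fix a single edge $e'$ of $\mathcal G'$ and collapse the remaining edges, producing a one-edge splitting $G=A'\ast_{G_{e'}}B'$ or $G=A'\ast_{G_{e'}}$. Because $G$ is finitely presented and $G_{e'}$ is finitely generated, the vertex groups $A'$ and $B'$ are finitely presented; the edge group $G_{e'}$ has more than one end by the previous paragraph; and the two halfspaces of this one-edge splitting are $1$-ended, being halfspaces of $\mathcal G'$. All hypotheses of Corollary 1.5 are now in place, and it delivers $H^2(G,\mathbb ZG)\neq\{0\}$ (equivalently, that $G$ fails to have pro-finite first homology at infinity, via Corollary \ref{GM2}). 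As a remark, when $C$ happens to be $2$-ended one may skip Theorem \ref{Half1} entirely: Corollary \ref{Half2} shows that the halfspaces of the \emph{original} splitting are already $1$-ended, so Corollary 1.5 applies to that splitting directly.

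The main obstacle I anticipate is the bookkeeping in the last step: one must check that collapsing the auxiliary edges of $\mathcal G'$ down to the single edge $e'$ genuinely preserves the $1$-ended halfspace condition in the sense demanded by Corollary 1.5 (and by the underlying Theorem \ref{Acyclic}), and that the resulting vertex groups are finitely presented. This is where the precise definition of halfspace for a multi-edge graph of groups must be matched against the single-edge formulation, and it is the one place where I would expect to argue rather than merely cite.
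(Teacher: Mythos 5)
Your overall strategy is the one the paper intends: Theorem \ref{Half1} (plus Stallings' Theorem \ref{Stall} to rule out finite edge groups) manufactures a decomposition with $1$-ended halfspaces whose edge groups are infinite, finitely generated, virtually free, hence multi-ended, and then the homological non-vanishing comes from the wall/halfspace picture together with Corollary \ref{GM2}. The gap is the step where you assert that the vertex groups $A'$, $B'$ of the collapsed one-edge splitting are finitely presented ``because $G$ is finitely presented and $G_{e'}$ is finitely generated.'' That principle is false, and the paper itself contains the counterexample (\S \ref{BSgps}): $F_2\times F_2$ is finitely presented and splits non-trivially as an HNN extension $B_2\ast_{B_2}$ (it is $B_2\rtimes\mathbb Z$), where $B_2=\ker(F_2\times F_2\to\mathbb Z)$ is finitely generated but \emph{not} finitely presented. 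So finite presentability of $G$ plus finite generation of the edge group does not pass to vertex groups. (In that example the edge group is $1$-ended by Lemma \ref{OneE}, so it does not contradict the corollary you are proving; but it does show the justification you invoke is not a theorem, and nothing in the paper establishes finite presentability of vertex groups even when the edge group is virtually free.) Consequently your appeal to Corollary 1.5 of \cite{MS24} — whose hypotheses explicitly require $A$ and $B$ to be finitely presented — is not available, and the same objection defeats your closing remark about handling the $2$-ended case via Corollary \ref{Half2} applied to the original splitting.

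The repair, and the reason the paper states this result as a corollary of Theorem \ref{Acyclic} rather than of Corollary 1.5, is to apply Theorem \ref{Acyclic} directly: its hypotheses involve only a locally finite complex $X$ covered by two connected $1$-ended subcomplexes $X_1,X_2$ whose intersection has more than one unbounded component off a finite set — no finiteness condition on vertex groups appears. Concretely, one takes $X$ to be a Cayley $2$-complex of the finitely presented group $G$ on a generating set adapted to $\mathcal G'$, and realizes the two halfspaces determined by an edge $e'$ as subcomplexes $X_1,X_2$, distributing the $2$-cells between the two sides at the cost of a bounded thickening of the wall; since $1$-endedness of the halfspaces (Theorem \ref{Half1}) and multi-endedness of the wall (coarsely the Cayley graph of $G_{e'}$) are coarse conditions, they survive this thickening, and then Theorem \ref{Acyclic} plus Corollary \ref{GM2} give $H^2(G,\mathbb ZG)\ne\{0\}$. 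This distribution-of-cells argument is exactly the work that the finite-presentability hypothesis in Corollary 1.5 was designed to trivialize (there one chooses every relator to be an $A$-word or a $B$-word), and it is the part that is genuinely missing from your proposal.
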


%\begin{corollary} [Corollary 1.6, \cite{MS24}] If G is the fundamental group of a closed aspherical $n$-manifold with $n \geq 3$, then there cannot be a splitting of $G$ over a finitely generated group $H$ where $H$ has more than 1-end and 1-ended halfspaces.
%\end{corollary} 
The proof of Theorem \ref{Acyclic} easily extends to more general spaces than locally finite CW-complexes. We state a more general result and include a proof that is basically the same as the one for Theorem \ref{Acyclic} in \cite{MS24}. First recall:

\medskip

\noindent {\bf Theorem} \ref{ES} {\it Suppose $X$ is connected, locally compact, locally connected
and Hausdorff and $C$ is compact in $X$,
then $C$ union all bounded components of $X - C$ is compact, and $X-C$ has only finitely many unbounded components.
Here bounded means compact closure.}

\begin{theorem}
	Suppose the connected Hausdorff space $X$ is the union of two closed subspaces $X_1$ and $X_2$ such that:
	
	1)  The subspaces $X_1$ and $X_2$ are path connected, locally path connected, locally compact and  1-ended. 
	
	2) The (closed) subspace $X_0=X_1\cap X_2$ is locally connected. 
	
	3) There is compact subset $K$ of $X_0$ (possibly empty) such that for any compact set $E\subset X_0$ containing $K$, $X_0-E$ contains points $a$ and $b$ in distinct components of $X_0-K$. 
	
	Then $X$ does not have pro-torsion first homology at infinity.
\end{theorem}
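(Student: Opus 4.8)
The plan is to fix the compact set $C=K$ (regarded as a compact subset of $X$, and noting $K\cap X_0=K$, so that $X_0-C=X_0-K$) and to prove that for \emph{every} compact set $D\subseteq X$ with $K\subseteq D$ the inclusion-induced map $H_1(X-D)\to H_1(X-K)$ has image containing an element of infinite order. This shows the defining condition of pro-torsion first homology at infinity fails at $C=K$, so $X$ cannot have that property. The infinite-order class will be represented by an explicit loop $\ell$ that runs out to infinity ``in the $a$-direction'' through $X_1$ and returns ``in the $b$-direction'' through $X_2$, thereby crossing the wall $X_0$ in an essential way.

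\emph{Construction of $\ell$.} Given $D$, put $D_i=D\cap X_i$, a compact subset of $X_i$. Each $X_i$ is connected, locally compact, locally connected and Hausdorff, so Theorem \ref{ES} applies: the set $\hat D_i:=D_i\cup(\text{all bounded components of }X_i-D_i)$ is compact, and since $X_i$ is $1$-ended, $V_i:=X_i-\hat D_i$ is the unique unbounded component of $X_i-D_i$; as $X_i$ is locally path connected, $V_i$ is open and path connected. Now set $E:=K\cup(\hat D_1\cap X_0)\cup(\hat D_2\cap X_0)$, a compact subset of $X_0$ containing $K$. By condition (3) there exist $a,b\in X_0-E$ lying in distinct components of $X_0-K$. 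Since $a,b\in X_0\subseteq X_i$ and $a,b\notin\hat D_i$, both points lie in $V_i$, so there is a path $\alpha_1$ from $a$ to $b$ in $V_1\subseteq X_1-D$ and a path $\alpha_2$ from $a$ to $b$ in $V_2\subseteq X_2-D$. Let $\ell=\alpha_1\ast\alpha_2^{-1}$, a loop in $X-D\subseteq X-K$.

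\emph{Detection.} I would use the connecting homomorphism $\partial\colon H_1(X-K)\to\tilde H_0(X_0-K)$ of the Mayer--Vietoris sequence for the cover $X-K=(X_1-K)\cup(X_2-K)$ with intersection $X_0-K$. Because $\ell=\alpha_1+\alpha_2^{-1}$ is already a sum of a chain in $X_1-K$ and a chain in $X_2-K$, one computes $\partial[\ell]=[\partial\alpha_1]=[b]-[a]$ in $\tilde H_0(X_0-K)$. As $a$ and $b$ lie in different components of $X_0-K$, they lie in different \emph{path} components, so projection onto the $\mathbb Z$-summand of $H_0(X_0-K)$ indexed by the path component of $a$ gives a homomorphism $\psi$ with $\psi([b]-[a])=-1$. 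Hence $\phi:=\psi\circ\partial\colon H_1(X-K)\to\mathbb Z$ satisfies $\phi[\ell]=-1\neq 0$, so $[\ell]$ has infinite order in $H_1(X-K)$. Since $\ell$ lies in $X-D$, the image of $H_1(X-D)\to H_1(X-K)$ is not torsion, and as $D$ was arbitrary the theorem follows.

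\emph{Main obstacle.} The construction of $\ell$ is routine once Theorem \ref{ES} and $1$-endedness are in hand. The delicate point is the validity of the connecting homomorphism $\partial$ on all of $H_1(X-K)$ for the \emph{closed} cover $\{X_1-K,\,X_2-K\}$ of the general Hausdorff space $X-K$ — equivalently, the excisiveness of the triad $(X-K;\,X_1-K,\,X_2-K)$ for singular homology. This is exactly the role of the local hypotheses: local path connectedness of $X_1$ and $X_2$ together with local connectedness of $X_0$ are what replace the CW structure used in Theorem \ref{Acyclic}. I expect the bulk of the work to be verifying, under these local conditions, that every $1$-cycle in $X-K$ is homologous to a \emph{decomposable} one (a sum of a chain in $X_1-K$ and a chain in $X_2-K$), so that $\partial$, and hence the detecting homomorphism $\phi$, is well defined on $H_1(X-K)$; granting this, the computation $\phi[\ell]=-1$ closes the argument.
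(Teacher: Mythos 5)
Your construction of the test loop is essentially the paper's: apply Theorem \ref{ES} and $1$-endedness to fatten $D$ inside each $X_i$, use hypothesis 3) to find points of $X_0$ beyond the fattening lying in distinct components of $X_0-K$, and join them by paths in $X_1-D$ and $X_2-D$. Up to that point the two arguments agree. The detection step, however, contains a genuine gap --- exactly the one you flag yourself and postpone. The Mayer--Vietoris connecting homomorphism $\partial\colon H_1(X-K)\to\tilde H_0(X_0-K)$ exists only if the closed triad $(X-K;\,X_1-K,\,X_2-K)$ is excisive for singular homology, and nothing in the hypotheses delivers that: the interiors of $X_1-K$ and $X_2-K$ need not cover $X-K$ (points of the wall $X_0-K$ typically lie in the topological boundary of both pieces), and local path connectedness of $X_1,X_2$ together with local connectedness of $X_0$ gives no cofibration or neighborhood-deformation-retract structure that would restore excision. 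So ``every $1$-cycle in $X-K$ is homologous to a decomposable one'' is not a routine verification you have deferred; it is precisely the obstruction this theorem is designed to get around, and under the stated hypotheses there is no apparent way to prove it. (It is available when $X$ is a CW-complex, which is why Theorem \ref{Acyclic} can be approached that way; the present theorem is the generalization in which that tool is gone.)

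The paper's proof avoids doing any homological algebra on $X-K$ at all. Arguing by contradiction, it takes $n$ with $(\alpha,\beta)^n$ null-homologous in $X-K$, replaces null-homology by the statement that $(\alpha,\beta)^n$ bounds a map $H\colon M\to X-K$ of a compact orientable surface (Proposition 12.8 of \cite{GH81}), and then works entirely on the compact surface $M$: the sets $H^{-1}(W)$ and $H^{-1}(X_0-W)$ are disjoint closed subsets of $M$ (here is where local connectedness of $X_0$ enters --- components of $X_0-K$ are open in $X_0$, so $X_0-W$ is closed and $K$ is excluded because $H$ misses it), and the Tietze extension theorem produces $f\colon M\to S^1$ sending $H^{-1}(X_1)$ into one closed arc and $H^{-1}(X_2)$ into the complementary arc, with the two preimage sets above going to the arcs' endpoints. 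The boundary of $M$ then maps to a loop of nonzero degree in $S^1$, while, being the boundary of a compact orientable surface, its class is a product of commutators in $\pi_1(M)$ and so must map trivially into $\pi_1(S^1)=\mathbb Z$ --- a contradiction. In effect, the detecting homomorphism you want is constructed only where it is needed, on the compact surface $M$, where no excisiveness hypothesis is required. Your loop construction can stand, but the Mayer--Vietoris paragraph must be replaced by an argument of this kind.
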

\begin{proof}
In a Hausdorff space compact sets are closed.
Assume that $X$ has pro-torsion first homology at infinity. 
Let $D$ be a compact subset of $X$ containing $K$ such that for any loop $\tau$ in $X-D$ there is $n>0$ such that $\tau^n$ is  homologicially trivial in $X-K$. For $i\in\{1,2\}$ let $E_i$ equal the union of  $D\cap X_i$ and  all bounded components of $X_i-D$. Theorem \ref{ES} implies that $E_i$ is compact in $X_i$ (and hence in $X$) and (since $X_i$ is 1-ended and locally path connected) the open subset $X_i-E_i$ of $X_i$ is path connected and unbounded. Let $E=E_1\cup E_2$. Then $E$ is a compact subset of $X$ containing $D$ such that any two points of $X_0-E$ can be joined by a path in $X_1-E_1$ and by a path in $X_2-E_2$. 
	By {\it 3)} there are points $v$ and $w$ in $X_0-E$ such that $v\in V$ and $w\in W$ where $V$ and $W$ are distinct components of $X_0-K$. Let $\alpha$ be a path in $X_1-E_1$ from $v$ to $w$ and let $\beta$ be a path in $X_2-E_2$ from $w$ to $v$. In particular, $\alpha$ and $\beta$ avoid $D$.
	There is an integer $n>0$ such that the loop $(\alpha, \beta)^n$ is homologicially trivial in $X-K$. Let $M$ be an orientable 2-manifold bounded by an embedded loop $(\alpha'_1,\beta_1',\ldots, \alpha_n',\beta_n')$ and $H:M\to X-K$ such that $H$ restricted to $\alpha_i'$ and $\beta_i'$ maps (in the obvious way) to the paths $\alpha$ and $\beta$ respectively. Let $v_i'$ and $w_i'$ be the initial and terminal points of $\alpha_i'$ (See Figure \ref{Fig1}).
	
	\begin{figure}
		\vbox to 3in{\vspace {-2in} \hspace {-.3in}
			\hspace{-1 in}
			\includegraphics[scale=1]{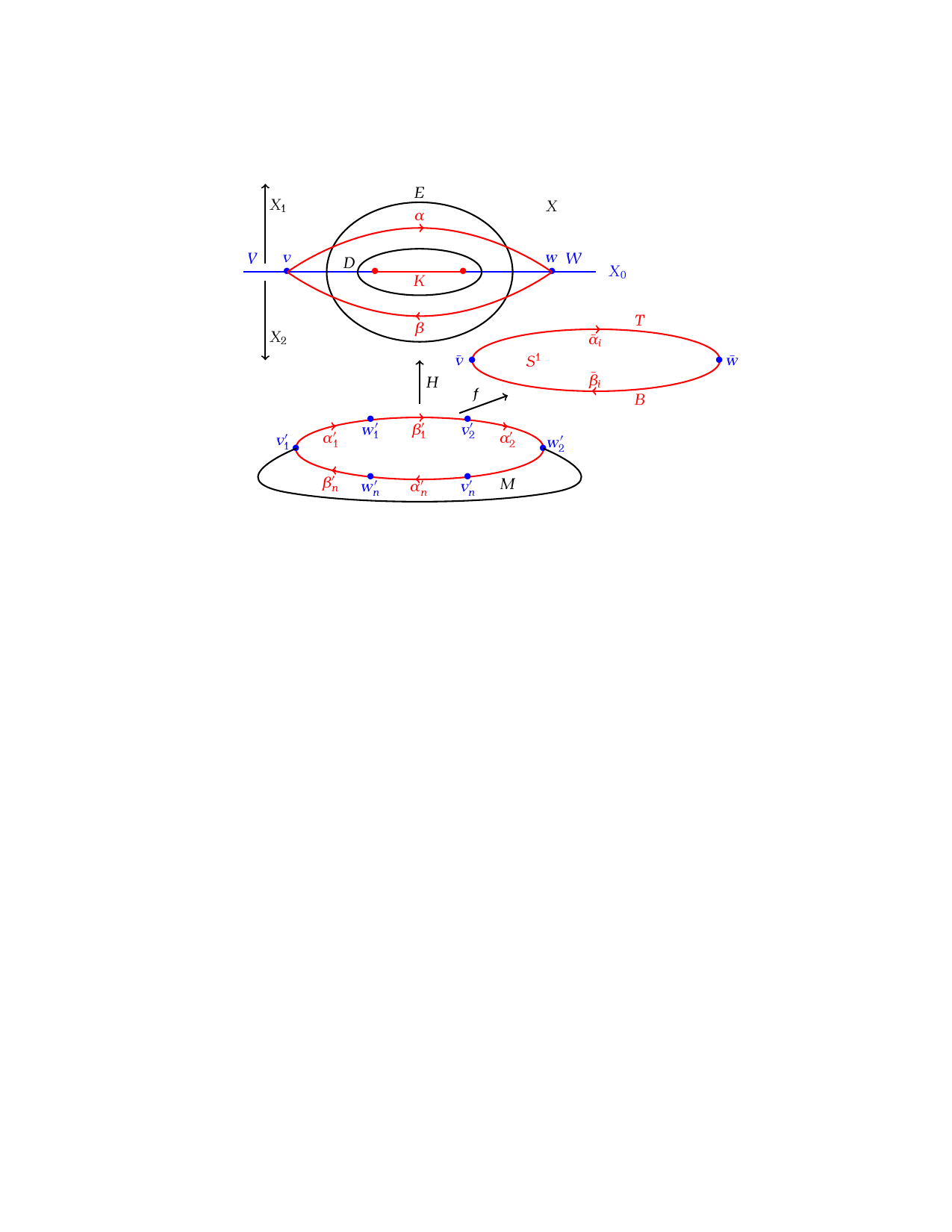}
			\vss }
		\vspace{-.5in}
		\caption{The Relevant Maps}
		\label{Fig1}
	\end{figure}

	\begin{lemma} \label{VW} The sets $H^{-1}(W)$ and $H^{-1}(X_0-W)$ are disjoint closed subsets of $M$ containing $\{w_1',\ldots   w_m'\}'$ and $v_1',\ldots, v_n'\}$ respectively. 
	\end{lemma}
	\begin{proof}
		First note that $H^{-1}(X_0)$ is a closed subset of $M$.  As $X_0$ is locally connected, each component of $X_0-K$ is open in $X_0$. In particular, $X_0-W$ is closed in $X_0$ and hence closed in $X$. Then $H^{-1}(X_0-W)$ is closed in $M$. The set $W\cup K$ is closed in $X_0$ and hence in $X$. Since the range of $H$ is $X-K$, the closed set $H^{-1}(W\cup K)$ is equal to $H^{-1}(W)$.	
\end{proof} 
	
	Consider the circle $S^1$ with diametrically opposite vertices $\bar v$ and $\bar w$. Let $T$ and $B$ be the two closed arcs in $S^1$ such that $T\cup B=S^1$ and $T\cap B=\{\bar v,\bar w\}$. Map the (disjoint) closed sets $H^{-1}(W)$ and $H^{-1} (X_0-W)$ in $H^{-1}(X_1)$ to $\{\bar w\}$ and $\{\bar v\}$ respectively. Extend this map by Tietze's extension theorem to a continuous map $f_1:H^{-1}(X_1)\to T$. Similarly define $f_2:H^{-1}(X_2)\to B$ (with $f_2(H^{-1}(W))=\{\bar w\}$ and $f_2(H^{-1}(X_0-W))=\{\bar v\}$). Since $f_1$ and $f_2$ agree on the closed set $H^{-1}(X_1)\cap H^{-1} (X_2)=H^{-1}(X_0)= H^{-1}(W)\cup H^{-1} (X_0-W)$, we have a continuous function $f:M\to S^1$ that agrees with $f_1$ and $f_2$ on $H^{-1}(X_1)$ and $H^{-1}(X_2)$ respectively. Note that for each $i$, $f\alpha_i'=f_1\alpha_i'=\bar \alpha_i$ is a path in $T$ from $\bar v$ to $\bar w$ and $f\beta_i'=f_2\beta_i'=\bar \beta_i$ is a path in $B$ from $\bar w$ to $\bar v$. Furthermore, for each $i$, the class of loop $(\bar \alpha_i, \bar \beta_i)$ at $\bar v$ generates $\pi_1(S_1,\bar v)=\mathbb Z$. The contradiction arises since the class of the loop $(\alpha_1', \beta_1',\ldots, \alpha_n',\beta_n')$ is a commutator in $\pi_1(M, v')$ and so maps to the trivial element under the homomorphism $f_\ast:\pi_1(M,v')\to \pi_1(S^1,\bar v)$. But instead this class is mapped by $f_\ast$ to  the class of $(\bar \alpha_1,\bar \beta_1,\ldots, \bar \alpha_n, \bar \beta_n)$, a non-trivial element of $\pi_1(S^1,\bar v)$. 
\end{proof}

We conclude this section with the following:

Theorem B of \cite{BrM01}] classifies the right angled Artin groups that are $m$-connected ($m$-acyclic) at infinity. (see \S \ref{AGCG})
 
\subsubsection{One-point Compactifications and Semistability}\label{OnePt}

The results in this section are partially contributed by G. Conner and C. Guilbault. We are interested in 1-ended groups/spaces in this section. If $X$ is a locally finite, connected 1-ended complex, let $X^{\ast}$ be its 1-point compactification.  Recall that if $Y$ is a connected finite complex, then $\tilde Y$ is the universal cover of $Y$. 
The proofs of the following lemmas are straightforward. 

\begin{lemma}\label{OnePt2E}
Suppose $X$ and $Y$ are non-compact, locally finite and connected complexes. If $X$ and $Y$ are proper 2-equivalent then $\pi_1(X^\ast)$ is isomorphic to $\pi_1(Y^\ast)$ and $X^\ast$ is locally simply connected if and only if $Y^\ast$ is locally simply connected. (Note that $X$ and $Y$ are locally contractible.)
\end{lemma}

Combining Lemma \ref{OnePt2E} with Theorem \ref{P1E} we have:
\begin{theorem}
Suppose $X$ and $Y$ are connected finite complexes with isomorphic fundamental groups $G$. If $G$ is 1-ended and the 1-point compactification of $\tilde X$ is simply connected (locally simply connected)  then the 1-point compactification of $Y$ is simply connected (locally simply connected).
\end{theorem}

\begin{theorem} Suppose $X$ is a non-compact, locally finite and connected 1-ended complex. If $X$ has semistable fundamental group at $\infty$ then $X^{\ast}$ is simply connected and locally simply connected.
\end{theorem}
\begin{proof}
We use part 4 of Theorem \ref{ssequiv} for our definition of semistability. There are finite subcomplexes $C_1,C_2,\ldots$ such that $\cup_{i=1}^\infty C_i=X$ where $C_i$ is a subset of the interior of $C_{i+1}$ and any two proper rays $r,s$ in $X-C_i$ and based at $v$ are properly homotopic rel$\{v\}$ by a homotopy in $X-C_{i-1}$.  Let $p$ be the additional point of the 1-point compactification of $X$. Let $\alpha:([0,1],\{0,1\})\to (X,\{p\})$ be a loop. If $(a,b)$ is a component of $[0,1]-\alpha^{-1}(p)$ then note that $\alpha(a)=\alpha(b)=p$. If $\alpha((a,b))$ has image in $X-C_i$ there there is a proper homotopy in $X-C_{i-1}$ between the two proper rays determined by $(a,b)$. This proper homotopy defines a null homotopy of the loop, $\alpha$ restricted to $[a,b]$. combining homotopies (possibly infinitely many) for each component of $[0,1]-\alpha^{-1}(p)$ defines a null homotopy for $\alpha$.  In particular, $X^{\ast}$ is simply connected. Note that the same argument implies that $X^{\ast}$ is locally simply connected at $p$. As $X$ is locally contractible, $X^{\ast}$ is locally simply connected at every point. 
\end{proof}

As and immediate corollary we have:
\begin{corollary}\label{OnePtSC}
If $G$ is a 1-ended, finitely presented group that is semistable at infinity. Then for any finite connected complex $Y$ with $\pi_1(Y)=G$, the 1-point compactification of the universal cover of $Y$ is both simply connected and locally simply connected. 
\end{corollary}
Note that if $G$ is only finitely generated then there is an analogous result since we do not require the space $X$ of the theorem to be simply connected. 

\begin{theorem} Suppose $X$ is a non-compact, locally finite and connected 1-ended complex. If $X^{\ast}$ is simply connected and locally simply connected then $X$ has semistable first homology at $\infty$ . 
\end{theorem}
\begin{proof}
Let $p$ be the point at infinity in $X^\ast$. If $N$ is a neighborhood of infinity, then $N^\ast$ is a neighborhood of $p$ in $X^\ast$. Let $M^\ast$ be a smaller neighborhood of $p$ so that loops in $M^\ast$ contract in $N^\ast$. Now let $a: S^1\to M$ be a loop in $M$. Then there is an extension $A: B^2\to N^\ast$. Consider the set $A^{-1}(p)$, which is a compact subset of $B^2$ disjoint from $S^1$. If $W$ is a manifold neighborhood of $A^{-1}(p)$ in $B^2$ then its outermost boundary circles cobound a surface (a punctured disk) with the loop $a$, so $a$ is homologous in $N$ to the sum of those loops appropriately oriented. Call that sum $a'$.  By choosing the neighborhood $W$ to be arbitrarily small, we can make $a'$ live in an arbitrarily small neighborhood of infinity.
\end{proof}

\begin{question}
Is the universal cover of every finite connected complex with 1-ended fundamental group both simply connected and locally simply connected?
\end{question}
 
 \subsubsection{A Catalogue of Homological Results} \label{CatRe}

Recall the previously mentioned results:
 \medskip
 
\noindent {\bf Theorem} \ref{TorF} If $G$ is a finitely presented group and $R$ a ring, then the $R$-module $H^2(G,RG)$ is torsion free. 

\medskip

\noindent {\bf Theorem} \ref{H20} If $G$ is finite $H^2(G,R)=0$.

\begin{theorem} [Corollary 5.2, \cite{FFT}] \label{FFT1} 
If $G$ is a finitely presented group containing an element of infinite order, then $H^2(G,\mathbb ZG)$ is either $0$, $ \mathbb Z$ or not finitely generated. 
\end{theorem}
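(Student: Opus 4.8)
The plan is to first reduce the finite-generation question to a rigidity statement and then split according to the number of ends of $G$. By Theorem \ref{TorF} the abelian group $H^2(G,\mathbb ZG)$ is torsion free, so if it is finitely generated it is free abelian, say $\mathbb Z^n$. Thus it suffices to rule out the case $n\geq 2$; equivalently, I must show that a finitely generated $H^2(G,\mathbb ZG)$ has rank at most $1$. Since $G$ contains an element of infinite order it is infinite, so the number of ends of $G$ is $1$, $2$, or $\infty$ (the finite case being excluded), and I would treat these three cases separately.

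If $G$ is $2$-ended, then $G$ is simply connected at $\infty$ (as holds for all finite and $2$-ended groups), so Corollary \ref{GM2} gives $H^2(G,\mathbb ZG)=0$. If $G$ is infinitely ended, Stallings' Theorem \ref{Stall} and Dunwoody's accessibility Theorem \ref{DunAcc} provide a finite graph of groups decomposition $\mathcal G$ with finite edge groups and finitely presented vertex groups. Theorem \ref{Reduction} then gives $H^2(G,\mathbb ZG)\cong\bigoplus_i\big(\bigoplus_{[H_i:G]}H^2(H_i,\mathbb ZH_i)\big)$, the outer sum running over the infinite (hence $1$-ended) vertex groups $H_i$. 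Because the splitting is non-trivial, Lemma \ref{noFI} shows each such $H_i$ has infinite index in $G$, so each inner sum is either $0$ or an infinite direct sum of a nonzero group. Hence in the infinitely-ended case $H^2(G,\mathbb ZG)$ is either $0$ or not finitely generated, and the conclusion holds without further input.

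The essential case is that of a $1$-ended group $G$, and this is where the infinite-order element $t$ and the cyclic subgroup $C=\langle t\rangle\cong\mathbb Z$ enter. Suppose $H^2(G,\mathbb ZG)\cong\mathbb Z^n$ is finitely generated. Passing to $\mathbb Q$, finite generation together with the type $FP_2$ property give $H^2(G,\mathbb QG)\cong\mathbb Q^n$, a finite-dimensional right $\mathbb QG$-module $M$. Following Farrell, I would exploit the short exact sequence of right $\mathbb QG$-modules
\[
0\longrightarrow \mathbb QG\xrightarrow{\;\cdot(t-1)\;}\mathbb QG\longrightarrow \mathbb Q[\langle t\rangle\backslash G]\longrightarrow 0,
\]
which arises by inducing the length-one resolution of the trivial module over $\mathbb Q\langle t\rangle=\mathbb Q[t^{\pm1}]$ up to $\mathbb QG$. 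The induced long exact sequence in $H^\ast(G,-)$ identifies the relevant endomorphism $M\to M$ with the action of $t-1$, so $\ker(t-1)$ is a quotient of $H^1(G,\mathbb Q[\langle t\rangle\backslash G])$ while $\operatorname{coker}(t-1)$ embeds in $H^2(G,\mathbb Q[\langle t\rangle\backslash G])$. Controlling these two permutation-coefficient cohomology groups — invariants of the ends of $G$ relative to $C$ — and combining the resulting constraints over $t$ and its powers should force $\dim_{\mathbb Q}M\leq 1$, contradicting $n\geq 2$.

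I expect the module-theoretic step in the $1$-ended case to be the main obstacle. The easy part of the long exact sequence only constrains the eigenvalue-$1$ summand of the automorphism $R_t$ of $M$; to bound the full dimension $n$ one must show that a nonzero finite-dimensional $H^2(G,\mathbb QG)$ cannot carry a $\mathbb QG$-module structure of dimension $\geq 2$, which requires a genuine finiteness analysis of $H^\ast(G,\mathbb Q[\langle t\rangle\backslash G])$ through the theory of ends. As a consistency check and an alternative route, Corollary \ref{GM2} translates finite generation of $H^2(G,\mathbb ZG)$ into semistability of the first pro-homology $H_1(\varepsilon)$ at infinity; the infinite cyclic action of $C$ by covering transformations is precisely the hypothesis that upgrades semistability to stability, whereupon the homological analog of Bowditch's dichotomy (Theorem \ref{stableproH}, compare Theorem \ref{stablepro}) forces $G$ to be either $1$-acyclic at infinity, giving $H^2(G,\mathbb ZG)=0$, or virtually a surface group with $H_1(\varepsilon)$ pro-isomorphic to the constant sequence $\mathbb Z$, giving $H^2(G,\mathbb ZG)\cong\mathbb Z$.
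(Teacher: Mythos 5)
The paper offers no proof of this statement at all: it is quoted verbatim as Corollary 5.2 of \cite{FFT} (Farrell's theorem), so your attempt has to stand on its own, and it does not. To give credit where due, your peripheral reductions are correct and use the paper's machinery well: torsion-freeness from Theorem \ref{TorF}; the $2$-ended case via simple connectivity at $\infty$ and Corollary \ref{GM2}; and the infinitely-ended case via Theorem \ref{DunAcc}, Theorem \ref{Reduction}, and Lemma \ref{noFI} (which indeed forces each infinite vertex group to have infinite index, so each summand $\oplus_{[H_i:G]}H^2(H_i,\mathbb ZH_i)$ is zero or infinitely generated). Note that neither of these cases uses the infinite-order element, and neither is where the content of the theorem lies.

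The genuine gap is the $1$-ended case, which is the entire substance of Farrell's theorem, and you do not prove it. Your Farrell-style setup (the exact coefficient sequence $0\to\mathbb QG\to\mathbb QG\to\mathbb Q[\langle t\rangle\backslash G]\to 0$ and the resulting long exact sequence identifying $\ker$ and $\mathrm{coker}$ of $(t-1)$ on $M=H^2(G,\mathbb QG)$) is the right opening move, but the assertion that controlling $H^1(G,\mathbb Q[\langle t\rangle\backslash G])$ and $H^2(G,\mathbb Q[\langle t\rangle\backslash G])$ ``should force'' $\dim_{\mathbb Q}M\leq 1$ is exactly the finiteness analysis that constitutes Farrell's proof; you name it as an obstacle rather than carry it out. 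Your fallback route is also broken at a specific point: Theorem \ref{GMEnd}(4) converts finite generation of $H^2(G,\mathbb ZG)$ into stability of $\bar H_1(\varepsilon\tilde X^2)$ only \emph{mod torsion}, whereas Theorem \ref{stableproH} requires $H_1(\varepsilon G)$ itself to be stable, and nothing you say bridges that mismatch. Moreover, your claim that the infinite cyclic action ``upgrades semistability to stability'' has no support in the paper or elsewhere; the only result of that flavor here, Theorem \ref{Gg12}, needs a pro-mono hypothesis (and concerns the fundamental pro-group), not the mere existence of a $\mathbb Z$-action by covering transformations. Finally, observe that your fallback never uses the element of infinite order at all: if it worked, it would prove the stronger statement that the paper attributes to Kapovich--Kleiner as a later improvement, which is itself a warning sign that a step is missing (and it trades a 1974 theorem for Bowditch's far deeper 2004 dichotomy, an exchange that at minimum needs a non-circularity check).
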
  

\noindent {\bf Note.} M. Kapovich and B. Kleiner claimed to have improved Theorem \ref{FFT1} by eliminating the hypothesis that $G$ contains an element of infinite order.  
 
 \medskip
 
We are interested in $H_1(\varepsilon \tilde X)$ when $\tilde X$ is the universal cover of a finite connected complex $X$ with fundamental group $G$. Just as  $\pi_1(\varepsilon \tilde X)$ only depends on $G$, $H_1(\varepsilon \tilde X)$ only depends on $G$ and we denote this by $H_1(\varepsilon G)$.

The next result should be compared to the corresponding result (Theorem \ref{stablepro}) for $\pi_1(\varepsilon G)$. 

\begin{theorem}[\cite{Bo04}] \label{stableproH}   Suppose $G$ is a finitely presented group and $H_1(\varepsilon G)$ is stable. Then either $H_1(\varepsilon G)$ is pro-trivial or $G$ is virtually a closed surface group and $H_1(\varepsilon G)$ is pro-isomorphic to an inverse sequence where each group is $\mathbb Z$ and each bonding map is an isomorphism.
\end{theorem}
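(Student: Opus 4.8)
The plan is to transcribe Bowditch's argument for the homotopy statement (Theorem~\ref{stablepro}) into homology, working with the first pro-homology $H_1(\varepsilon G)$ in place of $\pi_1(\varepsilon G)$, and then to isolate the genuinely homological inputs: a finiteness/rigidity step that pins the stable value down to $0$ or $\mathbb Z$, and a recognition step identifying the $\mathbb Z$ case with virtually surface groups. First I would realize $G$ as the fundamental group of a finite presentation and take $\tilde X$ to be the Cayley $2$-complex, which is simply connected, locally finite, $2$-dimensional, and admits a free cocompact $G$-action; since $H_1(\varepsilon G)$ depends only on the $2$-skeleton this costs nothing. The finite and $2$-ended cases give contractible or $\mathbb R$-like ends with $H_1(\varepsilon G)$ pro-trivial, and the infinitely-ended case reduces to the vertex groups of a Dunwoody decomposition by Theorem~\ref{Reduction}; so I would assume $G$ is $1$-ended. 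Stability of $H_1(\varepsilon G)$ means it is simultaneously pro-epimorphic (semistable, Theorem~\ref{ssequivH}) and pro-monomorphic; by the pro-finite-generation remark of \S\ref{ac1} each $H_1(\tilde X - C_i)$ may be replaced by a finitely generated abelian group, so the stable value $A:=\varprojlim H_1(\varepsilon G)$ is a finitely generated abelian group, and $\varprojlim^1 H_1(\varepsilon G)$ is trivial by Theorem~\ref{lim1}.

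Next I would record the algebraic structure carried by $A$. Covering translations carry $\tilde X - C$ to $\tilde X - gC$, so the stable value inherits a $\mathbb Z G$-module structure, and the conclusion to be proved is a statement about this module: either $A=0$ or $A\cong\mathbb Z$ with every bonding map an isomorphism. To control $A$ I would use the coarse duality long exact sequence for the simply connected complex $\tilde X$, relating compactly supported cohomology, ordinary cohomology, and the cohomology of the end. Because $\tilde X$ is $1$-connected one has $H^1(\tilde X)=0$, which makes the end cohomology $H^1_e(\tilde X)$ inject into $H^2_c(\tilde X)\cong H^2(G,\mathbb Z G)$; and because $G$ is $1$-ended the groups $H_0(\tilde X - C_i)$ are free, so universal coefficients at infinity identify $H^1_e(\tilde X)$ with $\mathrm{Hom}(A,\mathbb Z)\cong\mathbb Z^{\operatorname{rank} A}$. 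Thus $\mathbb Z^{\operatorname{rank} A}$ embeds in $H^2(G,\mathbb Z G)$.

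The dichotomy for $A$ then follows from the finiteness of $H^2(G,\mathbb Z G)$. Semistability of $H_1(\varepsilon G)$ forces $H^2(G,\mathbb Z G)$ to be free abelian (Corollary~\ref{GM2}), and finite generation of $A$ makes it finitely generated; by Theorem~\ref{FFT1} (in the Kapovich--Kleiner strengthening that removes the infinite-order hypothesis, cf.\ the Note after that theorem) it is therefore $0$ or $\mathbb Z$. Combined with the embedding this caps $\operatorname{rank} A\le 1$, and a parallel universal-coefficients analysis of the torsion of $A$ against the torsion-freeness of $H^2(G,\mathbb Z G)$ (Theorem~\ref{TorF}) and the vanishing of $H^3_c(\tilde X)$ (here $\tilde X$ is $2$-dimensional) rules out a nonzero finite stable value. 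This leaves exactly $A=0$, giving the pro-trivial alternative (which, for instance, is what happens for the Davis examples of Example~\ref{DavisEx}, whose homology spheres are perfect), or $A\cong\mathbb Z$ with isomorphic bonding maps.

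It remains to show that $A\cong\mathbb Z$ forces $G$ to be virtually a closed surface group, and I expect this to be where essentially all the difficulty lies. The homological hypothesis says only that the end of $\tilde X$ looks, to first homology, like the end of the plane; it is strictly weaker than the homotopy hypothesis of Theorem~\ref{stablepro}, so one cannot simply quote the planarity input used there. The plan is to promote the stable pro-$\mathbb Z$ together with its $G$-action into a genuinely geometric object --- a coarse circle at infinity on which $G$ acts as a convergence/Fuchsian-type group --- and then apply surface-group recognition to conclude that $G$ is virtually Fuchsian, hence a $PD_2$ group, hence virtually a closed surface group. Verifying that the homological end data is rigid enough to feed this recognition theorem, rather than the reductions and duality bookkeeping of the earlier steps, is the essential obstacle, and is precisely the content supplied by Bowditch~\cite{Bo04}.
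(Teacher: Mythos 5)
First, a point of comparison: the paper itself offers no proof of this statement --- both it and its homotopy analogue (Theorem \ref{stablepro}) are quoted wholesale from Bowditch \cite{Bo04} --- so your proposal is being measured against a pure citation. Your overall architecture (reduce via the Geoghegan--Mihalik duality results, Theorem \ref{GMEnd}, Corollary \ref{GM2}, Theorem \ref{TorF}, Theorem \ref{FFT1}, to the dichotomy $H^2(G,\mathbb ZG)\in\{0,\mathbb Z\}$, and then invoke Bowditch's recognition theorem for the $\mathbb Z$ case) is the natural route, and deferring the virtually-surface recognition to \cite{Bo04} is legitimate, since that is exactly the content being cited. One intermediate step is stated backwards, though: an embedding $\mathbb Z^{\operatorname{rank} A}\hookrightarrow H^2(G,\mathbb ZG)$ cannot ``make'' $H^2(G,\mathbb ZG)$ finitely generated; what you want is Theorem \ref{GMEnd}(4), which says that stability of $H_1(\varepsilon G)$ with finitely generated stable value $A$ directly yields that $H^2(G,\mathbb ZG)$ (torsion-free by Theorem \ref{TorF}) is free abelian of finite rank equal to $\operatorname{rank} A$; Theorem \ref{FFT1} then caps it at $0$ or $\mathbb Z$.

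The genuine gap is the case of a nonzero \emph{finite} stable value $A$, which your sketch does not eliminate. Your proposed argument --- torsion of $A$ against torsion-freeness of $H^2(G,\mathbb ZG)$ together with $H^3_c(\tilde X)=0$ --- fails for two reasons. The end cohomology $H^2_e(\tilde X)$, where $\operatorname{Ext}(A,\mathbb Z)$ would appear, is only a quotient of the torsion-free group $H^2(\tilde X)$, and quotients of torsion-free abelian groups can have torsion, so no contradiction arises there; and the degree-$3$ instance of Theorem \ref{GMEnd} that would control pro-torsion of $H_1(\varepsilon\tilde X)$ is unavailable, because that theorem requires $k\le n$ and a finitely presented group is only known to be of type $F(2)$. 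Everything you establish is consistent with, say, $A\cong\mathbb Z/2$: then $H^2(G,\mathbb ZG)=0$, $H_1(\varepsilon G)$ is pro-finite and stable, and the stated conclusion (pro-trivial or pro-$\mathbb Z$) is violated. Closing this case appears to require another appeal to Bowditch rather than the bookkeeping available in this paper: rerun the duality with $\mathbb Z_p$ coefficients for a prime $p$ dividing $|A|$, so that $H_1(\varepsilon G;\mathbb Z_p)$ is stable and nontrivial and $H^2(G,\mathbb Z_pG)$ is finite nontrivial, and then invoke the coefficient version of the Farrell--Bowditch theorem (proved in \cite{Bo04} over $\mathbb Z_2$, for almost finitely presented groups) to conclude $G$ is a virtual surface group, whose first homology at infinity is pro-$\mathbb Z$ and hence torsion-free --- a contradiction. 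Until that case is closed, your argument only shows ``stable value is $0$, $\mathbb Z$, or finite,'' which is strictly weaker than the theorem.
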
 

The Davis group $G$ of Example \ref{DavisEx}  is not simply connected at $\infty$, but $H_1(\varepsilon G)$ is pro-trivial. 

We know only a few first homology semistability results that do not follow from fundamental group semistability results (see Corollary \ref{GM2}).

\begin{theorem} [Theorem 1.2, \cite{GM85}] Suppose $G$ is a finitely presented group which does not contain a free subgroup of rank 2, and suppose $\mathbb Z\oplus\mathbb Z$ is a quotient of $G$. Then $H^2(G,\mathbb ZG)$ is free abelian.
\end{theorem}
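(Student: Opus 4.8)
The plan is to reduce the claim to the semistability theory already developed, using two facts: the hypotheses here are exactly those of Theorem \ref{NOF2}, and Corollary \ref{GM2} translates semistability at $\infty$ into freeness of $H^2(G,\mathbb ZG)$. First I would observe that since $\mathbb Z\oplus\mathbb Z$ is a quotient of $G$, the surjection factors through the abelianization, so $G/G'$ surjects onto $\mathbb Z^2$ and hence $\mathrm{rank}(G/G')\geq 2$; together with the standing hypothesis that $G$ has no free subgroup of rank $2$, this is precisely the input required by Theorem \ref{BNS}.

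Applying Theorem \ref{BNS}, I would extract a finitely generated normal subgroup $N\trianglelefteq G$ with $G/N\cong\mathbb Z$, chosen so that the kernel $L$ of a fixed epimorphism $G\twoheadrightarrow\mathbb Z\oplus\mathbb Z$ satisfies $L\leq N$ (the theorem guarantees every normal $L$ with $G/L$ free abelian of rank $2$ lies in such an $N$). Then $N/L$ is the kernel of the induced surjection $\mathbb Z\oplus\mathbb Z=(G/L)\twoheadrightarrow(G/N)=\mathbb Z$, so $N/L\cong\mathbb Z$; in particular $N$ is infinite. Thus $N$ is an infinite, finitely generated, normal subgroup of infinite index in the finitely presented group $G$, and Theorem \ref{M1} yields that $G$ has semistable fundamental group at $\infty$ (one may instead simply invoke Theorem \ref{NOF2} directly, which records exactly this under the present hypotheses). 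Finally, Corollary \ref{GM2} converts semistability at $\infty$ of the finitely presented group $G$ into the conclusion that $H^2(G,\mathbb ZG)$ is free abelian, completing the proof.

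Alternatively, and in keeping with the remark that Theorem \ref{NOF2} ``has the same proof'' as the present homological statement, one can argue entirely at the level of first homology at $\infty$. In that version one proves a homological analogue of Theorem \ref{M1}: an infinite, finitely generated, normal subgroup $N$ of infinite index in a finitely presented $G$ forces $H_1(\varepsilon\tilde X)$ to be semistable in the sense of Definition \ref{ProH}. The mechanism mirrors the $\pi_1$ argument: in the universal cover $\tilde X$ of a finite $2$-complex with $\pi_1=G$, the cosets $gN$ are unbounded (because $N$ is infinite) and there are infinitely many of them (because $[G:N]=\infty$), and one slides a cycle along a single coset, using the finitely many defining relations of $G$ and a finite generating set of $N$, to show that a $1$-cycle lying outside a large compact set is homologous, rel a smaller compact set, to a cycle pushed arbitrarily far toward infinity. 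This is exactly condition $(3)$ of Theorem \ref{ssequivH}, and one then invokes Corollary \ref{GM2} in its homological form (for finitely presented $G$, $H^2(G,\mathbb ZG)$ is free abelian iff $H_1(\varepsilon G)$ is semistable).

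The genuine content, and the main obstacle, is this homological normal-subgroup semistability step in the second route; the Bieri--Neumann--Strebel reduction and the $\varprojlim^1$/cohomology dictionary of Corollary \ref{GM2} are formal by comparison. The delicate point is producing, for each compact $C$, a compact $D$ so that every $1$-cycle in $\tilde X-D$ becomes, after a $2$-chain supported in $\tilde X-C$, a cycle avoiding any prescribed further compact set, and making the ``slide along an $N$-coset'' quantitative enough that the bounding $2$-chain remains in $\tilde X-C$. If one is content to cite the developed theory, however, the first route---Theorem \ref{NOF2} combined with Corollary \ref{GM2}---delivers the result immediately.
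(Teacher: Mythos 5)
Your first route is correct and is essentially the approach the paper itself indicates: the paper proves the homotopical twin of this statement (Theorem \ref{NOF2}) from Theorems \ref{BNS} and \ref{M1}, explicitly remarks that the present homological statement has ``the same proof,'' and records (in the discussion surrounding Corollary \ref{GM2}) that semistability at $\infty$ of a finitely presented group yields that $H_1(\varepsilon \tilde X^2)$ is semistable and hence that $H^2(G,\mathbb ZG)$ is free abelian. Your bookkeeping is also sound (in particular the verification that $N$ is infinite via $L\leq N$ and $N/L\cong\mathbb Z$), and your second, directly homological route is exactly what \cite{GM85} does historically; it is only sketched, but that is harmless since the first route already completes the proof.
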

 
 % This should be enough to show that if $G$ is solvable, then $H^2(G,\mathbb ZG)$ is free abelian.
 
 \begin{theorem} [Theorem1.1, \cite{MS19}] \label{HomRH} 
Suppose a finitely presented group $G$ is hyperbolic relative to $\mathcal P=\{P_1,\ldots ,P_n\}$ a set of 
finitely presented  subgroups (with $G\ne P_i$ for all $i$). If $H^2(P_i,\mathbb ZP_i)$ is free abelian for all $i$, then  $H^2(G;\mathbb ZG)$ is free abelian.
\end{theorem}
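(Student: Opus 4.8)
The plan is to reduce the statement to a purely homological semistability assertion and then run the homological analogue of the proof of Theorem \ref{HMSSMain}. By Corollary \ref{GM2}, for any finitely presented group $K$ the group $H^2(K,\mathbb ZK)$ is free abelian if and only if $K$ has semistable first homology at $\infty$, i.e. $H_1(\varepsilon K)$ is semistable in the sense of Definition \ref{ProH} (equivalently, the conditions of Theorem \ref{ssequivH} hold for a finite complex with fundamental group $K$). Thus the hypothesis rephrases as: each $P_i$ has semistable first homology at $\infty$; and the conclusion rephrases as: $G$ has semistable first homology at $\infty$. It therefore suffices to prove the homological companion of Theorem \ref{HMSSMain}: if $G$ is finitely presented and hyperbolic relative to the finitely presented collection $\mathbf P=\{P_1,\dots,P_n\}$, and each $P_i$ has semistable first homology at $\infty$, then so does $G$.

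Second, I would reduce to the case that $G$ is $1$-ended. By Theorem \ref{DunAcc}, $G$ has a Dunwoody decomposition $\mathcal G$ over finite edge groups with $0$- or $1$-ended vertex groups, and by Theorem \ref{Reduction}, $H^2(G,\mathbb ZG)$ is free abelian if and only if $H^2(V,\mathbb ZV)$ is free abelian for every $1$-ended vertex group $V$ of $\mathcal G$. A standard fact about splittings of relatively hyperbolic groups over finite subgroups is that each $P_i$ is elliptic, hence conjugate into a vertex group, and that each vertex group $V$ is itself hyperbolic relative to the (finitely presented) peripheral subgroups it contains, each of which inherits the hypothesis on its second cohomology; finite peripheral subgroups contribute nothing, since $H^2=0$ by Theorem \ref{H20}. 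Granting this, it is enough to treat a single $1$-ended $V$, so I assume henceforth that $G$ is $1$-ended.

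Third, the geometric heart: following the cusped-space construction for $(G,\mathbf P)$ used in Theorems \ref{MS-SS} and \ref{HMSSMain}, I would fix a locally finite, simply connected complex $\tilde X$ on which $G$ acts cocompactly (the universal cover of a finite complex with $\pi_1=G$), using the cusped space and its horoballs as the organizing geometric model. Relative hyperbolicity sorts the topology at $\infty$ into ``peripheral directions'' (along the cosets $gP_i$) and ``transverse directions,'' exactly in the spirit of the semistability/co-semistability splitting of Theorem \ref{MTGGM}. In each peripheral direction the hypothesis supplies homological semistability of $P_i$, and hence a homological push in the form of Theorem \ref{SSloop2} (the homology analogue of Theorem \ref{SSloop}): a loop supported near $gP_i$ and far out bounds, in the complement of a prescribed compact set, an orientable singular surface whose remaining boundary components lie arbitrarily far out. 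In the transverse directions the hyperbolic geometry of the cusped space provides the co-semistability analogue. The aim is to verify condition (3) (equivalently (4)) of Theorem \ref{ssequivH} for $\tilde X$: every loop outside a large compact $D$ is homologous, inside $\tilde X$ minus a prescribed $C$, to a sum of loops lying outside any prescribed $E$.

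The main obstacle is transcribing the pushing arguments of \cite{HM20} from the homotopical to the homological category. Most steps become easier in homology, since a single homotopy may be replaced by an orientable surface with several boundary components and loops may be \emph{added} rather than conjugated and concatenated; Mayer--Vietoris and the surface-bounding criterion (Proposition 12.8 of \cite{GH81}) replace the $\pi_1$-level van Kampen manipulations. The genuine labor is re-deriving the combinatorial-horoball distance estimates that keep these surfaces in the required complements, so that the image of $H_1(\tilde X-D)$ in $H_1(\tilde X-C)$ stabilizes, i.e. $H_1(\varepsilon \tilde X)$ is Mittag--Leffler. Once that semistability property is established, it passes to $G$ because $H_1(\varepsilon G)$ depends only on $G$, and Corollary \ref{GM2} then yields that $H^2(G,\mathbb ZG)$ is free abelian.
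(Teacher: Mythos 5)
Your opening moves are the right ones and match the intended strategy: Corollary \ref{GM2} converts both hypothesis and conclusion into semistability of first homology at $\infty$, and Theorem \ref{Reduction} (which indeed appears in \cite{MS19}) is the tool for passing to $1$-ended vertex groups. But there are two gaps, one minor and one fatal. Minor: your ``standard fact'' that each $P_i$ is elliptic in any splitting of $G$ over finite subgroups is false once a peripheral has more than one end (and Theorem \ref{HomRH} places no restriction on the ends of the $P_i$). For example, the free factor $\langle a,b\rangle$ is malnormal and quasiconvex in $F_4=\langle a,b,c,d\rangle$, so $F_4$ is hyperbolic relative to it, yet $\langle a,b\rangle$ is not conjugate into either vertex group of the splitting $\langle a\rangle \ast \langle b,c,d\rangle$ (compare abelianizations). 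The reduction must therefore allow the peripherals themselves to split, applying Theorem \ref{Reduction} to each $P_i$ as well as to $G$. Fatal: your third step is not a proof but a declaration of where the proof would have to go. You propose to transcribe the pushing arguments of \cite{HM20} into homology and you yourself identify ``re-deriving the combinatorial-horoball distance estimates'' as the genuine labor; that labor is exactly the mathematical content at issue, and there is no homological counterpart of the \cite{GGM1}/Theorem \ref{MTGGM} machinery available to quote. (Note also that \cite{MS19} predates \cite{HM20}, so this cannot be the route actually taken.)

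The key idea you are missing --- indicated in the discussion preceding Theorem \ref{excise} in \S \ref{tech} --- makes all of that labor unnecessary. Theorem \ref{MS-SS}, proved in the same paper \cite{MS19} with \emph{no} homological hypothesis on the peripherals, says that a cusped space $Y$ for $(G,\mathcal P)$ already has semistable fundamental group at $\infty$. So two proper rays in the $G$-cocompact part $X\subset Y$ converging to the same end admit a proper homotopy in $Y$; its only defect is that it may dive into horoballs. Now apply Theorem \ref{excise} with ambient complex $Y$, subcomplex $X$, and the peripheral cosets $gP_i$ (the horoball frontiers) as the family $\mathcal Z$: it excises disjoint disk pairs $(E_j,\alpha_j)$ from the domain so that off $\bigcup_j E_j$ the homotopy stays in $X$, while each boundary loop $\alpha_j$ maps into a single coset $gP_i$. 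Only at this point does the hypothesis enter: by Corollary \ref{GM2} each $P_i$ has semistable first homology at $\infty$, so each excised loop bounds an orientable surface lying arbitrarily far out near its coset (condition (3) of Theorem \ref{ssequivH}, in the form of Theorem \ref{SSloop2}); splicing these surfaces into the excised homotopy produces the proper homologies needed to conclude that $H_1(\varepsilon G)$ is semistable, and Corollary \ref{GM2} then gives that $H^2(G,\mathbb ZG)$ is free abelian. In short, the peripheral hypothesis is used only to repair loops on horoball frontiers; the relative-hyperbolic ``pushing'' is inherited wholesale from the homotopical Theorem \ref{MS-SS}, so nothing from \cite{HM20} needs to be redone in homology.
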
 

Recall that B. Jackson (Theorem \ref{J}) shows that  if $H$ is an infinite, finitely presented, normal subgroup of infinite index in the finitely presented group $G$, and either $H$ or $G/H$ is 1-ended. Then $G$ is simply connected at $\infty$. The converse follows from the next result. 

\begin{theorem} \label{Proper2exact} 
If $1\to N\to G\to H\to 1$ is a short exact sequence of infinite finitely presented groups, and neither $N$ nor $H$ is 1-ended then the first homology at $\infty$ of $G$ is non-trivial. Equivalently $H^2(G,\mathbb ZG)$ is non-trivial. 
\end{theorem}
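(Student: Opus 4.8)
The plan is to combine the structure theorem for extensions (Theorem~\ref{JConv}) with the splitting criterion of Theorem~\ref{Acyclic}, reducing everything to a statement about a product of two multi-ended spaces. First I would record two preliminary facts. Since $N$ and $H$ are infinite and finitely presented, $N$ is finitely generated and infinite, hence non-locally-finite, and of infinite index in $G$ (because $H=G/N$ is infinite); Theorem~\ref{cohen} therefore shows $G$ is $1$-ended, which is what makes Corollary~\ref{GM2} applicable at the end. Next, choose finite connected CW-complexes $Y,X,Z$ with fundamental groups $N$, $G$, $H$ respectively, and set $A=\tilde Y$, $B=\tilde Z$. Theorem~\ref{JConv} provides a proper $2$-equivalence $\tilde X\to A\times B$. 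As $N$ and $H$ are infinite and \emph{not} $1$-ended, each has at least two ends, so $A$ and $B$ are connected locally finite complexes each with more than one end. Because the property ``does not have pro-finite first homology at infinity'' is a proper $2$-equivalence invariant (a proper $2$-equivalence induces a pro-isomorphism of the first pro-homology $H_1(\varepsilon\,\cdot)$, exactly as in Theorem~\ref{2-equiv}(3)), it suffices to prove that $A\times B$ does not have pro-finite first homology at infinity.

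The heart of the argument is to exhibit $A\times B$ in the form required by Theorem~\ref{Acyclic}. Since $B$ has at least two ends, I would choose a \emph{connected} finite subcomplex $C_B$ of $B$ such that $B-C_B$ has at least two unbounded components; let $B_+$ be the closure of one such component together with $C_B$, and let $B_-$ be the closure of the union of all remaining components together with $C_B$. Then $B_+$ and $B_-$ are connected subcomplexes, each unbounded, with $B=B_+\cup B_-$ and $B_+\cap B_-=C_B$. Put $X_1=A\times B_+$ and $X_2=A\times B_-$. These are connected subcomplexes of the locally finite complex $A\times B$, their union is $A\times B$, and each is $1$-ended, being a product of the two non-compact connected complexes $A$ and $B_\pm$ (the complement of any compact set $C_1\times C_2$ in such a product is path connected, so the product has one end). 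Finally $X_1\cap X_2=A\times C_B$.

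Now I would verify the intersection hypothesis of Theorem~\ref{Acyclic}. Since $C_B$ is compact and connected, crossing with $C_B$ does not change the number of ends, so $A\times C_B$ has exactly the ends of $A$; as $A$ has at least two ends, there is a finite subcomplex $K$ of $A\times C_B$ with $(X_1\cap X_2)-K=(A\times C_B)-K$ having more than one unbounded component. Theorem~\ref{Acyclic} then gives that $A\times B$ does not have pro-finite first homology at infinity. Transporting this back across the proper $2$-equivalence shows $G$ (equivalently $\tilde X$) does not have pro-finite first homology at infinity; in particular $H_1(\varepsilon G)$ is not pro-trivial, so the first homology at infinity of $G$ is non-trivial. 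Since $G$ is $1$-ended and finitely presented, Corollary~\ref{GM2} identifies this with the assertion $H^2(G,\mathbb ZG)\neq\{0\}$, completing the proof.

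The main obstacle is conceptual rather than computational: it lies in extracting from the single hypothesis ``neither $N$ nor $H$ is $1$-ended'' the two \emph{distinct} geometric inputs the argument consumes. Multi-endedness of $B$ is precisely what allows the nontrivial decomposition $A\times B=X_1\cup X_2$ into $1$-ended pieces, while multi-endedness of $A$ is precisely what makes the intersection $A\times C_B$ multi-ended, i.e.\ supplies the separating subcomplex $K$ demanded by Theorem~\ref{Acyclic}. The routine points that still require care are that $A\times B_\pm$ is genuinely $1$-ended and that $A\times C_B$ inherits the ends of $A$; the only non-elementary external ingredient is the proper $2$-equivalence invariance of pro-finiteness of $H_1(\varepsilon\,\cdot)$, which runs parallel to Theorem~\ref{2-equiv}.
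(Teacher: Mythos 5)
Your proposal is correct and follows essentially the same route as the paper: the paper's entire proof is the one-line observation that, by Geoghegan's theorem (Theorem \ref{JConv}), $G$ is proper $2$-equivalent to $\tilde Y\times\tilde Z=A\times B$, which is exactly your central reduction. The remaining steps you supply --- $1$-endedness of $G$ via Theorem \ref{cohen}, the decomposition $A\times B=(A\times B_+)\cup(A\times B_-)$ feeding into Theorem \ref{Acyclic}, the invariance of non-pro-finite $H_1(\varepsilon\,\cdot)$ under proper $2$-equivalence, and the passage to $H^2(G,\mathbb ZG)\neq\{0\}$ via Corollary \ref{GM2} --- are precisely the details the paper leaves implicit, and you carry them out correctly.
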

\begin{proof}
Geoghegan shows $G$ is proper 2-equivalent to $N\times H$ (Theorem 16.8.4, \cite{G}). 
\end{proof}

\begin{theorem} [Theorem 1.1, \cite{BM17})] \label{Holo}
Let $G$ be a torsion-free holomorphically convex group. Then $H^2(G,\mathbb ZG)$ is a free abelian group.
\end{theorem}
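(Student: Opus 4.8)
The plan is to route the computation through the homological criterion that this chapter develops for $H^2(G,\mathbb ZG)$. By Corollary \ref{GM2}, for a finitely presented group $G$ the abelian group $H^2(G,\mathbb ZG)$ is free if and only if $G$ has semistable first homology at $\infty$; by Theorem \ref{ssequivH} this is in turn equivalent to saying that the pro-homology $H_1(\varepsilon\tilde X)$ is pro-isomorphic to an inverse sequence with epimorphic bonding maps (equivalently $\varprojlim^1 H_1(\varepsilon\tilde X)$ vanishes), for a finite complex $X$ with $\pi_1(X)\cong G$. Since $H_1(\varepsilon\tilde X)=H_1(\varepsilon G)$ depends only on $G$, it suffices to exhibit one convenient model. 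First I would fix a compact complex manifold $M$ with $\pi_1(M)\cong G$ whose universal cover $\tilde M$ is holomorphically convex; this is exactly what the hypothesis that $G$ is holomorphically convex provides, and torsion-freeness guarantees that $G$ acts freely and cocompactly on $\tilde M$ by deck transformations, so $\tilde M$ is the universal cover of the finite complex $M$. The goal thus becomes: show $H_1(\varepsilon\tilde M)$ is semistable.

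The geometric input is the Cartan--Remmert reduction. Because $\tilde M$ is holomorphically convex, there is a proper holomorphic map $R:\tilde M\to S$ onto a normal Stein space $S$, whose connected fibres are the maximal compact analytic subsets of $\tilde M$; moreover $R$ is equivariant for the deck action, with $G$ acting properly discontinuously and holomorphically on $S$. Properness of $R$ lets me transport the question to $S$: for a cofinal sequence of compact sets $C_i$ in $S$ the preimages $R^{-1}(C_i)$ form a cofinal sequence of compact sets in $\tilde M$, and $R$ induces a morphism of the inverse systems $H_1\bigl(\tilde M-R^{-1}(C_i)\bigr)\to H_1(S-C_i)$. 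The compactness of the fibres is what makes this morphism close to a pro-isomorphism. I would then prove that a Stein space has semistable first homology at $\infty$: exhausting $S$ by sublevel sets of a strictly plurisubharmonic exhaustion function and running Morse theory in the spirit of Andreotti--Frankel, one passes from a neighbourhood of infinity to a slightly smaller one by attaching handles of index $\le\dim_{\mathbb C}S$, and this bounds how the bonding maps of $H_1(\varepsilon S)$ behave, forcing the Mittag--Leffler (semistable) property via Theorem \ref{ssequivH}.

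The hard part will be the transfer step through the exceptional locus of $R$. The Remmert reduction is a proper homotopy equivalence only when its fibres are contractible; in general the positive-dimensional fibres are nontrivial compact analytic sets lying over a lower-dimensional analytic subset $E\subset S$, so I must control how much first homology at $\infty$ can be created or annihilated along $R^{-1}(E)$ and check that the Mittag--Leffler property inherited from $S$ survives. Concretely, the plan is to compare the inverse systems $H_1\bigl(\tilde M-R^{-1}(C_i)\bigr)$ and $H_1(S-C_i)$ using the Leray spectral sequence of $R$ together with a Mayer--Vietoris decomposition of the neighbourhoods of infinity, exploiting that each fibre is compact so that the higher direct images are controlled and, away from $E$, the $E_2$-terms reduce to those of $S$; one then verifies that the resulting pro-$H_1$ comparison is compatible with the bonding maps. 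A secondary technical point is that $S$ may be singular, so the connectivity-at-infinity statement must be formulated for Stein spaces rather than Stein manifolds, which I would arrange either by a normalization/resolution argument or by invoking the known topology-at-infinity of Stein spaces directly (the low-dimensional cases $\dim_{\mathbb C}M\le 1$ being classical, as the corresponding groups are free or surface groups, already known to have free abelian $H^2(G,\mathbb ZG)$). Once $H_1(\varepsilon\tilde M)=H_1(\varepsilon G)$ is shown to be semistable, Corollary \ref{GM2} delivers the conclusion that $H^2(G,\mathbb ZG)$ is free abelian.
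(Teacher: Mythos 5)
A preliminary remark: the paper you are being measured against does not actually prove Theorem \ref{Holo}; it records it in the catalogue of \S\ref{CatRe} as a citation of Theorem 1.1 of \cite{BM17}. So the only possible comparison is with the strategy of the cited paper, and your outline does share its skeleton: reduce via Corollary \ref{GM2} to semistability of $H_1(\varepsilon G)$, realize $G$ as $\pi_1$ of a compact complex manifold $M$ with holomorphically convex universal cover, and pass to the Cartan--Remmert reduction $R:\tilde M\to S$ with $S$ normal Stein. The problem is that what you call ``the hard part'' is not a technical point that Leray plus Mayer--Vietoris will dispatch; it is where the entire content of the theorem lives, and your proposal defers exactly this.

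Here is the gap, concretely. The Remmert reduction is a biholomorphism off the exceptional set: writing $E\subset\tilde M$ for the union of the positive-dimensional (compact, connected) fibres and $B=R(E)$, the map $R$ identifies $\tilde M-E$ with $S-B$. Hence the discrepancy between $H_1\bigl(\tilde M-R^{-1}(C_i)\bigr)$ and $H_1(S-C_i)$ is carried entirely by the exceptional set, in the first instance by the classes of $H_1(F_b)$ for fibres over $b\in B-C_i$. Your levelwise surjections $H_1\bigl(\tilde M-R^{-1}(C_i)\bigr)\to H_1(S-C_i)$ onto a semistable target carry no information about the source: an inverse sequence surjecting levelwise onto the trivial sequence can fail semistability arbitrarily badly. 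Since each $F_b$ is compact, it is eventually swallowed by $R^{-1}(C_j)$, so for the images of $H_1\bigl(\tilde M-R^{-1}(C_j)\bigr)$ in $H_1\bigl(\tilde M-R^{-1}(C_i)\bigr)$ to stabilize one must show that fibre classes either die in $\tilde M-R^{-1}(C_i)$ or are homologous there to cycles supported arbitrarily far out. Nothing in the spectral-sequence formalism forces this: the Leray term that records these classes, $H^0\bigl(S-C_i;R^1R_*\mathbb Z\bigr)$, is a group of sections of a constructible sheaf supported on $B-C_i$, and its restriction maps point the wrong way for semistability. An honest proof must bring in more geometry at this point --- for instance the cocompact $G$-action on the pair $(S,B)$, or an induction over the exceptional stratification using that $E$ is itself holomorphically convex with reduction $E\to B$ --- and none of that is sketched.

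Two secondary errors confirm that the outline is not self-correcting. First, your one invocation of torsion-freeness is vacuous: deck transformations always act freely. Where the hypothesis genuinely enters is downstream: any $g\in G$ fixing $s\in S$ preserves the compact fibre $F_s$, so point stabilizers of the induced action on $S$ are finite, and torsion-freeness is what makes $G$ act freely on $S$ (and on $B$ and $E$). A proposed proof that never uses a hypothesis appearing in the statement should worry you. Second, the assertion that Stein spaces have semistable first homology at infinity is false in complex dimension one: every open Riemann surface is Stein, and an infinite-genus one (e.g.\ the $\mathbb Z$-cover of a genus-two surface) is not $H_1$-semistable at infinity --- the class of a handle lying in $C_j-C_i$ pairs nontrivially under the intersection form with a compact dual cycle, while every class coming from $H_1$ of the complement of $C_j$ pairs trivially with it, so the images never stabilize. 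The Andreotti--Frankel/Hamm index bound gives surjective bonding maps only when $\dim_{\mathbb C}S\ge 2$. What rescues dimension one here is a fact your write-up never records: $S$ is simply connected, being the image of the simply connected $\tilde M$ under a proper surjection with connected fibres, so if $\dim_{\mathbb C}S=1$ then $S\cong\mathbb D$ or $\mathbb C$. Note that the same issue re-enters the transfer step, since $B$ is Stein but has no reason to be simply connected.
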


\begin{corollary} [Corollary 1.2, \cite{BM17}] \label{Proj} 
If $G$ is a linear torsion-free projective group, then $H^2(G,\mathbb ZG)$ is a free abelian group.
\end{corollary}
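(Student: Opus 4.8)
The plan is to recognize that Corollary \ref{Proj} is a direct consequence of Theorem \ref{Holo}, with the only real content being the passage from ``linear projective'' to ``holomorphically convex.'' First I would recall the relevant definitions. A \emph{projective group} is the fundamental group $G = \pi_1(X)$ of a connected smooth complex projective variety $X$; the group is \emph{linear} if it admits an almost faithful finite-dimensional representation into $\mathrm{GL}_n(\mathbb C)$ (or more generally over any field, which reduces to the $\mathbb C$-case). A group $G$ is \emph{holomorphically convex} in the sense required by Theorem \ref{Holo} when it is realized as $\pi_1(X)$ of such an $X$ whose universal cover $\tilde X$ is a holomorphically convex complex space.

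The key step is to verify that a torsion-free linear projective group is holomorphically convex. This is precisely the content of the \emph{linear Shafarevich conjecture}: if $X$ is a smooth projective variety whose fundamental group carries an almost faithful linear representation, then the universal cover $\tilde X$ is holomorphically convex. I would invoke this as the theorem of Eyssidieux, Katzarkov, Pantev and Ramachandran, which settles exactly this case of the Shafarevich uniformization problem. Granting it, a linear projective group $G = \pi_1(X)$ automatically satisfies the hypothesis of Theorem \ref{Holo}: the defining smooth projective model $X$ has holomorphically convex universal cover, so $G$ is a holomorphically convex group, and the torsion-freeness hypothesis carries over unchanged from the statement of the corollary.

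With this in hand the conclusion is immediate: Theorem \ref{Holo} applies verbatim to the torsion-free holomorphically convex group $G$, yielding that $H^2(G,\mathbb ZG)$ is free abelian. Thus the entire argument is a two-line deduction once the geometric input is granted.

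The main obstacle is entirely that geometric input, not the bookkeeping: all the difficulty is concentrated in the resolution of the linear Shafarevich conjecture, a deep result resting on non-abelian Hodge theory and the structure of reductive and solvable representation varieties of K\"ahler groups. Once it is cited as a black box, nothing further needs to be proved; the role of the corollary is simply to record that linearity of the fundamental group is exactly the hypothesis under which holomorphic convexity of $\tilde X$ is currently known, thereby making Theorem \ref{Holo} applicable. The one minor point I would still check is that the various notions of ``linear'' (faithful versus almost faithful, and the choice of ground field) all feed correctly into the hypothesis of the Shafarevich result as used in \cite{BM17}, so that the class of groups named in the corollary coincides with the class to which Theorem \ref{Holo} can legitimately be applied.
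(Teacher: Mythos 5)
Your proposal is correct and matches the intended derivation: the paper itself gives no proof, simply citing this as Corollary 1.2 of \cite{BM17}, where it is deduced exactly as you describe --- the Eyssidieux--Katzarkov--Pantev--Ramachandran resolution of the linear Shafarevich conjecture shows a torsion-free linear projective group is holomorphically convex, and then Theorem \ref{Holo} applies verbatim. Your closing caveat about matching the faithful/almost-faithful linearity hypotheses to the form of the Shafarevich result used in \cite{BM17} is the right detail to check, but it does not affect the validity of the argument.
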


The next two results were proved in 1985. They were improved in 1986, by changing $<$ to $\leq$ in parts 3 and 4. We state the improved versions. 

\begin{theorem} [Theorem 1.1, \cite{GM85}]\label{GMEnd}   
Let $G$ be a group of type $F(n)$, and let $X$ be a $K(G,1)$ CW-complex having finite $n$-skeleton and let $\tilde X$ be the universal cover of $X$. 
\begin{enumerate}
\item For $k\leq n$, $H^k(G,\mathbb ZG)$ mod torsion is free abelian if and only if $H_{k-1}(\varepsilon\tilde X^n)$ is semistable. 
\item For $k\leq n$, $H^k(G,\mathbb ZG)$ is torsion free if and only if $H_{k-2}(\varepsilon \tilde X^n)$ is pro-torsion free. 
\item For $G$ infinite, and $k\leq n$, $H^k(G,\mathbb ZG)$ is a torsion group if and only if $\bar H_{k-1}(\varepsilon \tilde X^n)$ is pro-finite.
\item For $G$ infinite, and $k\leq n$, $H^k(G, \mathbb ZG)$ mod torsion is free abelian of finite rank $R$ if and only if $\bar H_{k-1}(\varepsilon\tilde X^n)$ mod torsion is stable with free abelian inverse limit of finite rank $R$.
\end{enumerate}
\end{theorem}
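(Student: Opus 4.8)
The plan is to pass from the group cohomology $H^k(G,\mathbb ZG)$ to the topology at infinity of $\tilde X^n$ in two stages and then read off the four equivalences from the behaviour of the pro-homology inverse system. Since $G$ has type $F(n)$, the cellular chain complex $C_\ast(\tilde X)$ is a free $\mathbb ZG$-complex that is finitely generated in each dimension $\le n$, so for $k\le n$ the cochain complex $\mathrm{Hom}_{\mathbb ZG}(C_\ast(\tilde X),\mathbb ZG)$ is naturally identified, through dimension $n$, with the complex of finitely supported cellular cochains of $\tilde X^n$. Hence
$$H^k(G,\mathbb ZG)\cong H^k_c(\tilde X^n)\qquad (k\le n),$$
and everything reduces to computing the compactly supported cohomology of the $(n-1)$-connected contractible locally finite complex $\tilde X^n$. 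First I would record that this identification and the resulting pro-homology conditions are independent of the chosen cofinal sequence of compact sets and of the passage from $\tilde X$ to $\tilde X^n$ in the range $k\le n$; this is routine from Theorem \ref{lim1} and the material of \S12 and \S16 of \cite{G}.

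The bridge to homology at infinity is the second stage. Fix a cofinal sequence $\{C_i\}$ of compact subcomplexes. Compactly supported cohomology is always a colimit, $H^k_c(\tilde X)=\varinjlim_i H^k(\tilde X,\tilde X-C_i)$, and contractibility of $\tilde X$ collapses the pair sequence to $H^k(\tilde X,\tilde X-C_i)\cong H^{k-1}(\tilde X-C_i)$ for $k\ge 2$; in the range $k\le n$ these are computed on the locally finite complex $\tilde X^n$. Applying the universal coefficient theorem to each complement,
$$0\to\mathrm{Ext}\big(H_{k-2}(\tilde X^n-C_i),\mathbb Z\big)\to H^{k-1}(\tilde X^n-C_i)\to\mathrm{Hom}\big(H_{k-1}(\tilde X^n-C_i),\mathbb Z\big)\to 0,$$
and passing to the exact functor $\varinjlim_i$, we use that $\mathrm{Hom}(-,\mathbb Z)$ is torsion free while $\mathrm{Ext}(-,\mathbb Z)$ is torsion. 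This shows that the torsion subgroup of $H^k(G,\mathbb ZG)$ is exactly $\varinjlim_i\mathrm{Ext}(H_{k-2}(\tilde X^n-C_i),\mathbb Z)$, controlled by the torsion of $\bar H_{k-2}(\varepsilon\tilde X^n)$, and that
$$H^k(G,\mathbb ZG)/\mathrm{torsion}\;\cong\;\varinjlim_i\mathrm{Hom}\big(H_{k-1}(\tilde X^n-C_i),\mathbb Z\big),$$
the $\mathbb Z$-dual colimit of the free parts of $\bar H_{k-1}(\varepsilon\tilde X^n)$.

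The dictionary then rests on a duality for towers of finitely generated abelian groups. Writing $\{A_i\}$ for $\bar H_{k-1}(\varepsilon\tilde X^n)$ and $F_i=A_i/\mathrm{tors}$, the colimit $\varinjlim\mathrm{Hom}(A_i,\mathbb Z)=\varinjlim F_i^\ast$ is free abelian if and only if $\{F_i\}$ is semistable, and is free abelian of finite rank $R$ if and only if $\{F_i\}$ is pro-isomorphic to the constant tower $\mathbb Z^R$; moreover $\{A_i\}$ is semistable if and only if $\{F_i\}$ is, since torsion towers are automatically Mittag-Leffler and the Mittag-Leffler condition is closed under extensions. Combined with Theorems \ref{lim1} and \ref{MLSS} (semistable $\iff\varprojlim^1=0$ for countable towers) this yields part (1): $H^k(G,\mathbb ZG)$ mod torsion is free abelian iff $\bar H_{k-1}(\varepsilon\tilde X^n)$ is semistable. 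Part (4) is its finite-rank refinement: free abelian of rank $R$ iff $\bar H_{k-1}(\varepsilon\tilde X^n)$ is stable mod torsion with free abelian inverse limit of rank $R$. For $G$ infinite, part (3) says $H^k(G,\mathbb ZG)$ is a torsion group iff $\varinjlim\mathrm{Hom}=0$, i.e. the free parts are pro-trivial, i.e. $\bar H_{k-1}(\varepsilon\tilde X^n)$ is pro-finite. Part (2) is immediate from the torsion identification: $H^k(G,\mathbb ZG)$ is torsion free iff $\varinjlim\mathrm{Ext}(H_{k-2}(\tilde X^n-C_i),\mathbb Z)=0$ iff $\bar H_{k-2}(\varepsilon\tilde X^n)$ is pro-torsion free, the shift to $k-2$ being precisely the universal coefficient $\mathrm{Ext}$ index.

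The main obstacle will be the second and third paragraphs: proving the duality lemma between semistable (Mittag-Leffler) towers and free-abelian colimits of their $\mathbb Z$-duals, and correctly separating the torsion contribution (the $\mathrm{Ext}$ term, in degree $k-2$) from the free contribution (the $\mathrm{Hom}$ term, in degree $k-1$). This separation is exactly what permits the improved ``$\le$'' to replace the original ``$<$'' in parts (3) and (4), and it is the technical heart of the argument. Secondary care is needed at the top degree $k=n$, where $\tilde X^n$ is only $(n-1)$-connected so the pair-sequence collapse and the skeletal identification must be checked by hand, and in verifying independence of the cofinal sequence; the remaining translations are formal.
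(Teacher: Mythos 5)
Your outline follows the strategy that the paper attributes to \cite{GM85} (the paper itself gives no proof of Theorem \ref{GMEnd}, only the remark that the original argument connects $H^k_c(\tilde X^n)$, $H^k(G,\mathbb ZG)$ and $H_{k-1}(\varepsilon \tilde X^n)$), but your implementation has a genuine failure exactly at the top degree $k=n$, and this is not the ``secondary care'' you suggest: $k=n$ is the principal case of the theorem (Corollary \ref{GM2}, the form used throughout the paper, is $k=n=2$). The identification $H^k(G,\mathbb ZG)\cong H^k_c(\tilde X^n)$ is valid only for $k\le n-1$. In degree $n$, a group-cohomology cocycle is a finitely supported $n$-cochain on $\tilde X$ that must vanish on the boundary of every $(n+1)$-cell of $\tilde X$, whereas in $C^*_c(\tilde X^n)$ every finitely supported $n$-cochain is a cocycle; so $H^n(G,\mathbb ZG)$ is in general a proper subgroup of $H^n_c(\tilde X^n)$. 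Concretely, take $G=\mathbb Z^3$, $X$ the $3$-torus, $n=2$: then $H^2(G,\mathbb ZG)=0$, while the indicator cochain of a single square represents an infinite-order class in $H^2_c(\tilde X^2)$ (pair it with the locally finite $2$-cycle carried by the coordinate plane through that square; coboundaries of compactly supported $1$-cochains pair to zero). Since complements of large balls in $\tilde X^2$ deformation retract to $2$-skeleta of simply connected subcomplexes, $\bar H_1(\varepsilon\tilde X^2)$ is pro-trivial, so your chain of isomorphisms would contradict part (3) of the theorem in this example. The companion step also fails there: $H^n(\tilde X^n,\tilde X^n-C_i)\cong H^{n-1}(\tilde X^n-C_i)$ requires $H^n(\tilde X^n)=0$, but $\tilde X^n$ is only $(n-1)$-connected and $H^n(\tilde X^n)\neq 0$ enters the exact sequence of the pair. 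Any correct proof must keep the $(n+1)$-cells of $\tilde X$ in play at top degree, e.g.\ by identifying $H^n(G,\mathbb ZG)$ with $\varinjlim_i H^{n-1}(\tilde X^n-C_i)$ directly from the group cochain complex $\mathrm{Hom}_{\mathbb ZG}(C_*(\tilde X),\mathbb ZG)$, rather than passing through $H^n_c(\tilde X^n)$, which is simply the wrong group.

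A second, smaller gap: your universal-coefficient bookkeeping (``$\mathrm{Hom}(-,\mathbb Z)$ is torsion free while $\mathrm{Ext}(-,\mathbb Z)$ is torsion'') and your ``duality for towers of finitely generated abelian groups'' both require the groups $H_j(\tilde X^n-C_i)$, $j\le n-1$, to be finitely generated, which you never establish and which is false for general locally finite complexes ($\mathrm{Ext}(\mathbb Q,\mathbb Z)$ is a rational vector space, not torsion, so the torsion/free separation collapses without it). Finite generation does hold here, but it is a real step: since $\tilde X^n$ is $(n-1)$-connected, a Mayer--Vietoris argument over a compact neighborhood of $C_i$ shows the complement homology is finitely generated in degrees $\le n-1$ (the paper records the simply connected, $n=2$ case of this in \S\ref{ac1}); this must be proved before the $\mathrm{Ext}$/$\mathrm{Hom}$ splitting and the limit-of-duals dictionary can be invoked. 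Beyond these two points, the algebraic dictionary you state (colimit of $\mathbb Z$-duals free abelian iff the tower is semistable, of rank $R$ iff stable with limit $\mathbb Z^R$, zero iff pro-finite) is correctly formulated and is indeed the technical heart of \cite{GM85}; you appropriately flag it, but be aware it is a substantial theorem, not a formal verification.
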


\begin{corollary}\label{GME2}  
With $G$ and $X$ as in the previous theorem:
\begin{enumerate}
\item $H^r(G,\mathbb ZG)=0$ for all $r<n$ and $H^n(G,\mathbb ZG)$ is free abelian  if and only if $H_{r}(\varepsilon\tilde X^n)$ is  pro-trivial for all $r\leq n-2$ and $H_{n-1}(\varepsilon\tilde X^n)$ is semistable. 

\item  $H^r(G,\mathbb ZG)=0$ for all $r<n$ and $H^n(G,\mathbb ZG)$  is torsion free if and only if $\bar H_{r}(\varepsilon \tilde X^n)$ is pro-trivial for all $r\leq n-2$. 

\item  $H^r(G,\mathbb ZG)=0$ for all $r\leq n$ if and only if $\bar H_{r}(\varepsilon \tilde X^n)$ is pro-trivial for all $r\leq n-2$ and $\bar H_{n-1}(\varepsilon \tilde X^n)$ is pro-finite.

\item  $H^r(G, \mathbb ZG)=0$ for all $r\leq n-1$ and $H^n(G,\mathbb ZG)$ is free abelian of finite rank $R$ if and only if $\bar H_{r}(\varepsilon\tilde X^n)$ is pro-trivial for all $r\leq n-2$ and $H_{n-1}(\varepsilon \tilde X^n)$ mod torsion is stable with free abelian inverse limit of finite rank $R$.
\end{enumerate}
\end{corollary}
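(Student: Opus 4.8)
The plan is to deduce Corollary \ref{GME2} directly from Theorem \ref{GMEnd} by applying its four equivalences at all indices $k\leq n$ simultaneously and then assembling the outputs. Throughout I take $G$ to be infinite (the only case of interest, and the hypothesis under which parts 3 and 4 of Theorem \ref{GMEnd} apply), so that $H^0(G,\mathbb ZG)=0$ automatically. Two elementary principles do all the work. The first is purely algebraic: an abelian group $A$ satisfies $A=0$ if and only if $A$ is both torsion and torsion free; $A$ is free abelian if and only if $A$ is torsion free and $A$ mod torsion is free abelian; and likewise $A$ is free abelian of finite rank $R$ if and only if $A$ is torsion free and $A$ mod torsion is free abelian of rank $R$. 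The second is a matching principle for inverse sequences, established next.

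The pro-group principle I need is that an inverse sequence of reduced homology groups (taken along a cofinal sequence of compacta) is pro-trivial if and only if it is both pro-finite and pro-torsion free. The forward implication is immediate. For the converse, fix a compact $C$; choose $D_1\supseteq C$ with $\mathrm{im}(\bar H_\ast(\tilde X-D_1)\to \bar H_\ast(\tilde X-C))$ finite and $D_2\supseteq C$ with $\mathrm{im}(\bar H_\ast(\tilde X-D_2)\to \bar H_\ast(\tilde X-C))$ torsion free, and set $D=D_1\cup D_2$. Since $\tilde X-D\subseteq \tilde X-D_i$, the image of $\bar H_\ast(\tilde X-D)$ in $\bar H_\ast(\tilde X-C)$ is a subgroup of each of these two images, hence simultaneously finite and torsion free, hence trivial; this is exactly pro-triviality.

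With these in hand, combine parts 2 and 3 of Theorem \ref{GMEnd} into a single per-degree statement: for $1\leq k\leq n$ and $G$ infinite, $H^k(G,\mathbb ZG)=0$ if and only if $\bar H_{k-2}(\varepsilon\tilde X^n)$ is pro-torsion free and $\bar H_{k-1}(\varepsilon\tilde X^n)$ is pro-finite (using that $0$ is the only abelian group that is both torsion free and torsion). Running this over all $k$ with $H^k=0$ produces a chain of conditions: assuming $H^r(G,\mathbb ZG)=0$ for all $r\leq m$ gives $\bar H_s(\varepsilon\tilde X^n)$ pro-torsion free for $s\leq m-2$ and pro-finite for $s\leq m-1$, so by the pro-group principle $\bar H_s(\varepsilon\tilde X^n)$ is pro-trivial for $s\leq m-2$ while $\bar H_{m-1}(\varepsilon\tilde X^n)$ remains only pro-finite. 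Taking $m=n$ yields Corollary \ref{GME2}(3) outright. Taking $m=n-1$ and adjoining the top-degree hypothesis via the remaining parts of Theorem \ref{GMEnd} yields the others: $H^n$ torsion free upgrades $\bar H_{n-2}(\varepsilon\tilde X^n)$ from pro-finite to pro-trivial (part 2), giving (2); additionally requiring $H^n$ mod torsion to be free abelian contributes semistability of $H_{n-1}(\varepsilon\tilde X^n)$ (part 1), giving (1); and requiring instead that $H^n$ mod torsion be free abelian of rank $R$ contributes the stability-with-rank-$R$ statement (part 4), giving (4). Each converse follows by reading the same equivalences backwards, using that pro-trivial implies both pro-finite and pro-torsion free.

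The main point requiring care — rather than a genuine obstacle — is the reconciliation of reduced and unreduced homology together with the behaviour in the lowest degrees. Parts 1 and 2 of Theorem \ref{GMEnd} are stated with unreduced $H_\ast$ and parts 3 and 4 with reduced $\bar H_\ast$; for $s\geq 1$ the two agree and the bookkeeping above is literal, but the degree $s=0$ (and the degenerate $s=-1$) terms must be handled through the end theory. Here $\bar H_0(\varepsilon\tilde X^n)$ pro-finite is equivalent to $\bar H_0(\varepsilon\tilde X^n)$ pro-trivial and to $G$ being $1$-ended, matching $H^1(G,\mathbb ZG)=0$ via Theorem \ref{HEnds}; this is precisely the spot where the assumption that $G$ is infinite and the passage to reduced homology are forced. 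Once these edge degrees are verified, the assembly described above is routine.
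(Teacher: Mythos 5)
Your proposal is correct and takes the route the paper itself intends: Corollary \ref{GME2} is stated without a written proof precisely because it is assembled from the four parts of Theorem \ref{GMEnd} in just the way you describe. Your explicit matching lemma (pro-finite together with pro-torsion-free implies pro-trivial, via the union of the two compacta) and your handling of the degree-zero edge case (where $\bar H_0$ pro-finite forces pro-trivial, since finite subgroups of free abelian groups vanish, matching $H^1(G,\mathbb ZG)=0$ through Theorem \ref{HEnds}) supply exactly the bookkeeping the paper leaves implicit.
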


In order to prove these results, connections were drawn between the $k^{th}$ cohomology of $\tilde X^n$ with compact supports, $H^k(G,\mathbb ZG)$ and $H_{k-1}(\varepsilon \tilde X^n)$. The next result follows from the previous. It is also implicit in \cite{Ho77}, also see Theorems 16.5.1 and 16.5.2 of \cite{G}.

Note that $H_0(\varepsilon \tilde X^2)$ is always pro-free. If $G$ is $\pi_1(\varepsilon \tilde X^2)$ is semistable at infinity (simply connected at infinity) then $H_1(\varepsilon \tilde X^2)$ is semistable at infinity (pro-trivial). 
So all of the semistable fundamental group at $\infty$ and all simply connected at $\infty$ results have corresponding $H^2(G,\mathbb ZG)$ results. 

\begin{corollary} \label{GM2} 
Assume the hypotheses of Theorem \ref{GMEnd} (with $n=2$). The group $H^2(G,\mathbb ZG)$ is free abelian if and only if $ H_1(\varepsilon\tilde X^2)$ is semistable.  Also, $H^2(G,\mathbb ZG)=0$ if and only if $\bar H_1(\varepsilon \tilde X^2)$ is pro-finite. 
\end{corollary}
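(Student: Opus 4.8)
The plan is to deduce both equivalences directly from Theorem \ref{GMEnd} specialized to $n=k=2$, using the torsion-freeness supplied by Theorem \ref{TorF} to collapse the ``mod torsion'' and ``torsion group'' language into the clean statements we want. Since $G$ has type $F(2)$ it is finitely presented, so Theorem \ref{TorF} applies and $H^2(G,\mathbb ZG)$ is torsion free; I would record this at the outset and use it in both halves. I would also note that specializing to $k=n=2$ is legitimate, since the statements of Theorem \ref{GMEnd} are asserted for all $k\leq n$ and here $k=n$.

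For the first equivalence I would apply part (1) of Theorem \ref{GMEnd} with $k=n=2$, which says that $H^2(G,\mathbb ZG)$ modulo torsion is free abelian if and only if $H_1(\varepsilon \tilde X^2)$ is semistable. Because $H^2(G,\mathbb ZG)$ is already torsion free by Theorem \ref{TorF}, passing to the quotient by torsion changes nothing, so the phrase ``$H^2(G,\mathbb ZG)$ mod torsion is free abelian'' is literally the assertion that $H^2(G,\mathbb ZG)$ is free abelian. This yields the first claim with no further computation.

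For the second equivalence I would split on whether $G$ is finite. If $G$ is finite, then Theorem \ref{H20} gives $H^2(G,\mathbb ZG)=0$, while $\tilde X$ is compact, so the pro-homology of its end $\bar H_1(\varepsilon \tilde X^2)$ is pro-trivial and hence pro-finite; thus both sides hold. If $G$ is infinite, I would apply part (3) of Theorem \ref{GMEnd} with $k=n=2$, namely that $H^2(G,\mathbb ZG)$ is a torsion group if and only if $\bar H_1(\varepsilon \tilde X^2)$ is pro-finite. Combining this with torsion-freeness closes both directions: if $H^2(G,\mathbb ZG)=0$ it is (trivially) a torsion group, so $\bar H_1(\varepsilon \tilde X^2)$ is pro-finite; and conversely, pro-finiteness of $\bar H_1(\varepsilon \tilde X^2)$ forces $H^2(G,\mathbb ZG)$ to be torsion, which together with Theorem \ref{TorF} forces $H^2(G,\mathbb ZG)=0$, since a group that is simultaneously torsion and torsion free is trivial.

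I expect no genuine obstacle here; this is a packaging corollary. The only point requiring care is the interplay of hypotheses: one must remember that parts (3)--(4) of Theorem \ref{GMEnd} assume $G$ infinite, which is exactly why the degenerate finite-group case is the single place where I step outside Theorem \ref{GMEnd} and instead invoke Theorem \ref{H20} and the compactness of $\tilde X$. Everything else is the mechanical observation that torsion-freeness of $H^2(G,\mathbb ZG)$ removes the qualifiers in parts (1) and (3).
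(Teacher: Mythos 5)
Your proof is correct and takes essentially the same route as the paper: the paper's own proof also records torsion-freeness of $H^2(G,\mathbb ZG)$ via Theorem \ref{TorF}, applies Theorem \ref{GMEnd}(1) for the first equivalence, and applies part (3) for the second (citing part (2), which for $k=2$ just reproduces torsion-freeness, where you cite Theorem \ref{TorF} directly). Your explicit handling of the finite-$G$ case via Theorem \ref{H20} and compactness of $\tilde X$ is a small point of extra care that the paper's terse proof omits, since part (3) of Theorem \ref{GMEnd} assumes $G$ infinite.
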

\begin{proof}
Theorem \ref{TorF} implies that $H^2(G,\mathbb ZG)$ is torsion free. Now simply apply Theorem \ref{GMEnd} 1) for the first part. For the second part, apply parts 2) and 3) of Theorem \ref{GMEnd}. 
\end{proof}

\begin{question} If the finitely generated group $A$ is simply connected (semistable) at $\infty$ in the finitely presented group $G$ then what is the relationship of $H^2(G;\mathbb ZG)$ to  $H^2(A;\mathbb ZA)$?
\end{question}

One might expect that the inclusion map $i:A\to G$  would induce the trivial map $H^2(G,\mathbb ZG)\to H^2(A,\mathbb ZA)$ in the simply connected at $\infty$ setting. 
\newpage

\section{List of Simply Connected at Infinity Group Results}\label{LSCIR}

Theorem \ref{ssqi} - Quasi-isometry and Proper 2-equivalence]

Theorem \ref{Fin2E} - Finite and 2-Ended Groups

Theorem \ref{cosc1} and \ref{cosc2} - Coaxial Actions

Theorem \ref{ProTriv} - Pro-trivial Fundamental Group at Infinity

Theorem \ref{T187a} - Normal Abelian Subgroups

Theorem \ref{J} - Group Extensions

Theorem \ref{L} - Subnormal Subgroups

Theorem \ref{Subset} - Subnormal Subgroups 2

Theorem \ref{MainCM} - Commensurated Subgroups

Theorem \ref{MainA} - Subcommensurated Subgroups

Theorem \ref{sc} -  Finitely Generated Products

Theorem \ref{hnn} - Finitely Generated HNN-Extensions 

Theorem \ref{PHT} - The Proper Hurewicz Theorem

Theorem \ref{MM24} - Stallings Group

Theorem \ref{BF1} - Out$(F_n)$

Theorem \ref{MCGPr} - Special Presentations 

Theorem \ref{MCGSCINF} - Mapping Class Groups of Surfaces

Theorem \ref{GLNZSC} - $GL_n(\mathbb Z)$ and $SL_n (\mathbb Z)$

Theorem \ref{MT87} - Solvable Groups

Theorem \ref{SILK1} - Extended Lamplighter

Theorem \ref{MM} - Ascending HNN-Extensions

Theorem \ref{HNNE2} - Near Ascending HNN-Extensions

Theorem \ref{ThompNC} - Thompson's Group

Theorem \ref{CoxSC} - Coxeter Groups

Theorem \ref{BRMB} - Right Angled Artin Groups

Theorem \ref{SCISD} - Sidki Doubles

Theorem \ref{JackIi} - Amalgamated Products

Theorem \ref{JackIIi} - HNN-Extensions

Theorem \ref{OnePtSC} - One Point Compactifications

\section{Group/Subgroup Index}\label{GSI}

$\ \ \ \ $ Amalgamated Products of Groups \S \ref{Comb1R}

Amenable Groups (see Theorem \ref{NOF2})

Artin Groups \S \ref{ACGE}, \S \ref{AGCG}

Ascending HNN-Extensions \S \ref{AHNN}, \S \ref{AHNNE}

Baumslag-Solitar Groups \S \ref{SubCom}

Bieri-Stallings Groups \S \ref{BSgps}

CAT(0) Groups \S \ref{WHyp}

Combination Results for Groups \S \ref{Comb1R}

Commensurated and Subcommensurated Subgroups \S \ref{normalE}, \S \ref{SubCom} 

Coxeter Groups \S \ref{ACGE}, \S \ref{WildPro}

Davis Manifold Groups \S \ref{WildPro}

Eventually Injective Endomorphisms  \S \ref{AHNNE}

GL($\mathbb Z, n$)  \S \ref{OUT} 

Graph Products of Groups \S \ref{GPofG}

Groups with no Free Subgroup of Rank 2 \S \ref{noF2}

Groups of type $F_n$ or $FP_n$ \S \ref{Pr2eq}

HNN-Extensions of Groups \S \ref{Comb1R}

Holomorphically Convex Groups \ref{Holo}

Inward tame ANR's and Manifolds \ref{CC}

Knot Groups \S \ref{KGs}

Lamplighter Group and Extended Lamplighter Group \S \ref{NonSS}

Linear Groups \S \ref{AHNNE}

Linear Torsion-Free Projective Groups \S \ref{Proj}

Manifolds \S \ref{CC}

Mapping Class Groups  \S \ref{OUT} 

Metabelian Groups \S \ref{SolMet}

Metanilpotent Groups \S \ref{SolMet}

Normal and Subnormal Subgroups \S \ref{normalE}, \S \ref{SubCom} 

One Relator Groups \S \ref{Comb1R}

Out($F_n$)  \S \ref{OUT} 

Polynomial Growth \S \ref{SolMet}

Polycyclic Groups \S \ref{SolMet}

Relatively Hyperbolic Groups \S \ref{RHGs}

Sidki Doubles \S \ref{Sed}

Solvable Groups \S \ref{SolMet}

Stallings Group \S \ref{BSgps}

Thompson's Group $F$ \S \ref{TGF}

Three Manifold Groups \S \ref{CC}

Weakly Chained Space \S \ref{WHyp}

Word Hyperbolic Groups \S \ref{WHyp}

\bibliographystyle{amsalpha}
\bibliography{paper}{}

\end{document}